\documentclass[11pt]{report}
\usepackage{a4wide}

\setlength{\voffset}{2cm}
\setlength{\topmargin}{-17mm}
\setlength{\textheight}{220mm}

\usepackage{latexsym}
\usepackage{amsmath}
\usepackage{amsfonts}
\usepackage{verbatim}
\usepackage{amssymb}
\usepackage{epsfig}
\usepackage{color}
\usepackage{enumerate}
\usepackage{graphicx}
\usepackage{natbib} 
\usepackage{pst-all}
\usepackage{mathrsfs}
\usepackage{makeidx}
\usepackage{fancyhdr}

\cfoot{\fancyplain{}{}}
\lhead[\fancyplain{}{\textbf{\thepage}}]{\fancyplain{}{\textsc{\rightmark}}}
\rhead[\fancyplain{}{\textsc{\leftmark}}]{\fancyplain{}{\textbf{\thepage}}}

\addtolength{\headheight}{2.5pt}

\newcommand{\private}[1]{}

\newcommand{\m}{\mbox}

\def\BBox{\rule{2mm}{3mm}}
\def\QED{\hfill$\BBox$}
\newenvironment{proof}
{\begin{rm}\par\smallskip\noindent{\bf Proof.}\quad}{\QED\end{rm}}

\newtheorem{thm}{Theorem}[chapter]
\newtheorem{prop}[thm]{Proposition}
\newtheorem{cor}[thm]{Corollary}
\newtheorem{lem}[thm]{Lemma}

\def\su{\mathop{\it sum}\nolimits}
\def\bb{\mathbf{b}}
\def\cc{\mathbf{c}}

\includeonly{acknowledgements,abstract,resume,Overview,Introduction,Bidirected,Binet,RecBinet,digraphD,cyclic,bicyclic,Central,Camion,Onehalfbinet,Conclusion}

\title{Recognition of generalized network matrices}
\author{PhD thesis \\ 
by \\
Antoine Musitelli\\
\\
Mathematics Institute\\
EPFL, antoine.musitelli@epfl.ch\\
 Lausanne, Switerland}
\date{October 1, 2007}

\definecolor{shadecolor}{rgb}{0.9,0.9,0.9}

\makeindex

\begin{document}
\pagestyle{fancyplain}
\thispagestyle{empty}
\renewcommand{\chaptermark}[1]
{\markboth{#1}{}}
\renewcommand{\sectionmark}[1]
{\markright{\thesection\quad #1}}



\begin{center}
\Large
\textbf{Recognition of Generalized Network Matrices}\\[10mm]
\small
TH\`ESE Num\'ero \ 3938 ( 2007 )\\[5mm]
\small
PR\'ESENT\'EE LE 1 octobre 2007\\[5mm]
\`A LA FACULT\'E DES SCIENCES DE BASE\\[5mm]
Chaire de recherche op\'erationnelle SO\\[5mm]
PROGRAMME DOCTORAL EN MATH\'EMATIQUES\\[5mm]
\normalsize
\textbf{\'ECOLE POLYTECHNIQUE F\'ED\'ERALE DE LAUSANNE}\\[5mm]
\small
POUR L'OBTENTION DU GRADE DE DOCTEUR \`ES SCIENCES\\[5mm]
PAR\\[10mm]
\large
Antoine Musitelli
\\[5mm]
\footnotesize
math\'ematicien dipl\^om\'e de l'Universit\'e de Gen\`eve \\[3mm]
de nationalit\'e suisse et originaire de L'Abbaye (VD)\\[8mm]
\normalsize
accept\'ee sur proposition du jury :\\[5mm]
Prof. T. Mountford, pr\'esident du jury\\
Prof. T. Liebling, Prof. K. Fukuda, directeurs de th\`ese\\
Prof. M. Conforti, rapporteur\\
Prof. D. de Werra, rapporteur\\
Prof. J. Fonlupt, rapporteur
\end{center}
\clearpage
\thispagestyle{empty}
\cleardoublepage
\verb"   "
\newpage

\chapter*{Acknowledgements}

First of all, I would like to thank Professors Komei Fukuda and Thomas Liebling, my supervisors, for their guidance and support. Komei's personal attention was essential for accomplishing this work.

I am grateful for some helpful comments of Professors Michele Conforti, Jean Fonlupt and Dominique de Werra.

I am also indebted to my mother for her encouragement at any time. 

Finally, I should like to acknowledge the financial support of the EPFL, Adonet and the Swiss National Science Foundation Project 200021-105202, "Polytopes, Matroids and Polynomial Systems". 
\chapter*{Abstract}

In this thesis, we deal with binet matrices, an extension of network matrices.
The main result of this thesis is the following. A rational matrix $A$ of size $n\times m$ can be tested for being binet in time $O(n^6 m)$. If $A$ is binet, our algorithm outputs a nonsingular matrix $B$ and a matrix $N$ such that $[B\, N]$ is  the node-edge incidence matrix of a bidirected graph (of full row rank) and $A=B^{-1} N$.

Furthermore, we provide some results about Camion bases.
For a matrix $M$ of size $n\times m'$,
we present a new characterization of
Camion bases of   $M$, whenever $M$ is 
the node-edge incidence matrix of a connected digraph (with one row removed). Then, a general
characterization of Camion bases as well as a recognition 
procedure which runs in $O(n^2m')$ are given. An algorithm which finds a Camion basis is also presented. For totally
unimodular matrices, it is proven to run in time $O((nm)^2)$ 
where $m=m'-n$.

The last result concerns specific network matrices. We give a characterization of nonnegative $\{\epsilon,\rho$\}-noncorelated network matrices, where $\epsilon$ and $\rho$ are two given row indexes. It also results a polynomial recognition algorithm for these matrices.

\paragraph{Keywords: } network matrices, binet matrices and Camion bases.

\chapter*{R\'esum\'e}

Dans cette th\`ese, nous nous penchons sur la notion de matrice binet qui \'etend celle de matrices r\'eseau. Le r\'esultat majeur de la th\`ese est le suivant: on peut tester si une matrice donn\'ee $A$ \`a coefficients rationnels et de taille $n\times m$ est binet en temps $O(n^6 m)$. Si $A$ est binet, notre algorithme fournit une matrice inversible $B$ et une matrice $N$ telles que $[B\, N]$ est la matrice d'incidence sommet-ar\^ete d'un graphe biorient\'e de rang $n$ et $A=B^{-1} N$.

En outre, nous exposons quelques r\'esultats sur les bases de Camion. Pour une matrice $M$ de taille $n\times m'$, nous pr\'esentons une nouvelle caract\'erisation des bases de Camion de $M$, lorsque $M$ est la matrice d'incidence sommet-ar\^ete d'un graphe orient\'e connexe (auquel une ligne a \'et\'e \^ot\'ee). Puis, une caract\'erisation g\'en\'erale des bases de Camion ainsi qu'une proc\'edure de reconnaissance qui s'ex\'ecute en temps $O(n^2m')$ sont donn\'ees. Nous pr\'esentons \'egalement un algorithme permettant de trouver une base de Camion. Pour les matrices totallement unimodulaires, celui-ci n\'ecessite un temps de calcul de l'ordre de $O((nm)^2)$ o\`u $m=m'-n$.

Le dernier r\'esultat int\'eressant concerne certaines matrices r\'eseau sp\'ecifiques. Nous formulons une caract\'erisation des matrices r\'eseau non n\'egatives et non $\{\epsilon,\rho$\}-cor\'el\'ees, o\`u  $\epsilon$ et $\rho$ sont deux indices donn\'es de lignes. Il en r\'esulte un algorithme polynomial de reconnaissance pour cette classe de matrices.

\paragraph{mots-cl\'es: } matrices r\'eseau, matrices binet and bases de Camion.

\clearpage
\thispagestyle{empty}
\cleardoublepage
\verb"   "
\newpage

\tableofcontents

\chapter{Overview}\label{ch:Over}

\begin{flushright}

\emph{Science sans conscience n'est que ruine de l'\^ame.}

Rabelais (1485-1553).

\end{flushright}

\section{Introduction}\label{sec:OverIntro}

Network matrices play a central role in combinatorial optimization. They have been largely used for solving  flow problems and integer programs. Their interest extends also to the field of matroids, graphs embeddable on the plane, etc... 

There exist several polynomial-time algorithms to test if a given matrix is a network matrix. Such an algorithm was designed by Auslander and Trent \cite{AusTrentNetwork-59}, Gould \cite{GouldNetwork-58}, Tutte \cite{Tutte-60,Tutte-65,Tutte-67}, Bixby and Cunningham \cite{BixCun-80} and Bixby and Wagner \cite{BixWagner-88}. 
A famous one is Schrijver's method\label{mycounter3} developed in \cite{ShrAlex} which adapts the matroidal ideas of Bixby and Cunningham \cite{BixCun-80} to matrices. The algorithm works by reducing the problem to a set of smaller problems, which can be handled easily. The smaller problems consist of deciding if a matrix with at most two non-zeros per column is a network matrix or not. The reduction is done by identifying rows of the matrix that correspond to cut-edges of the spanning tree, and then carrying on with the smaller matrices associated with the components. The time complexity of the algorithm developed in \cite{BixCun-80}  is $O(n \alpha)$, where $n$ and $\alpha$ are the number of rows and nonzero elements, respectively, in the input matrix.

Binet matrices, a generalization of network matrices, have been investigated by Kotnyek \cite{KotThesis}. He provided an algorithm to determine the columns of a binet matrix using its graphical representation, and gave some geometrical properties of these matrices, by extending results about totally unimodular matrices (see Section \ref{sec:Ap}). However, one question is left open: 
is it possible to recognize whether a given matrix is binet or not in time polynomial in its size?

In a parallel direction, a matroid called the signed-graphic matroid has been introduced by Zaslavsky at the beginning of the eighties \cite{Zaslavsky-Signed-82}. Any binet matrix is a compact representation matrix of a signed-graphic matroid. Since then, this class of matroids continues to receive much attention in the mathematical literature, but several questions remain open, in particular the problem of determining whether a given matroid is signed-graphic or not.
(See for example \cite{SlilatyProjPlan-03}, \cite{BouchetFlow-83}, \cite{DeVosPhD-00}, \cite{GerardsGraphs-90}, \cite{PaganoPHD},
\cite{SlilatyDecomp-06},
\cite{Slilaty-07},
\cite{SlilatyCoSign-05},
\cite{SlilatyBiasunique-06}, \cite{Zaslavsky-Glos-99}  and \cite{Zaslavsky-Bib-98}.)

In this thesis, we turn to the problem of recognizing binet matrices.
Our central result is the following. \\

\noindent
{\bf Theorem \ref{thmRecBinetMain}}
\emph{A rational matrix of size $n\times m$ can be tested for being binet in time $O(n^6m)$ using the algorithm Binet.}\\
 
\noindent
Given a binet matrix $A$ as input, the algorithm Binet outputs a nonsingular matrix $B$ and a matrix $N$ such that $[B\, N]$ is  the node-edge incidence matrix of a bidirected graph (of full row rank)
and $A=B^{-1} N$.
Theorem \ref{thmRecBinetMain} has different applications that are discussed in Section \ref{sec:Ap}. Section \ref{sec:OverRecBinet} describes in some details the strategy employed for recognizing binet matrices and serves as a tool for reading the Chapters \ref{ch:Rec} to \ref{ch:central}.

One key element of the algorithm Binet consists of finding a Camion basis of the matrix $[I \, A]$. We provide some new results about Camion bases.
For a matrix $M$ of size $n\times m'$,
we present a characterization of
Camion bases of   $M$, whenever $M$ is 
the node-edge incidence matrix of a connected digraph (with one row removed).  
Then, a general
characterization of Camion bases as well as a recognition 
procedure which runs in $O(n^2m')$ are given. There is no known polynomial-time algorithm to find a Camion basis in general.
Fonlupt and  Raco \cite{FonRaco} described a finite procedure to find one  based on the results of Camion, and gave an algorithm which runs in time $O(n^3 \, m^2 )$ for totally
unimodular matrices.  We also present an algorithm which finds a
Camion basis. For totally
unimodular matrices, it is proven to run in time $O((nm)^2)$ where
$m=m'-n$.

The last interesting result concerns specific network matrices. We give a characterization of nonnegative $\{\epsilon,\rho$\}-noncorelated network matrices, where $\epsilon$ and $\rho$ are two given row indexes. It results a polynomial recognition algorithm for this class of matrices.

\section{Notions}\label{sec:IntNot}

We will present here the principal notions used in this thesis. More details are given in Chapters \ref{ch:Int}, \ref{ch:Bidirected} and \ref{ch:Binetdef}. We assume familiarity of the reader with the elements of linear algebra, such as linear (in)dependence, rank, determinant, matrix, non-singular matrix, inverse, Gauss' algorithm for solving a system of linear equations, etc.
A reader not familiar with the content of this section can consult, for example, Schrijver \cite{ShrAlex} and Welsh \cite{WelshMatTheory-76}.

As always, $\mathbb{Z}$, $\mathbb{Q}$, and $\mathbb{R}$ denote the set of integer, rational, and real numbers. $\mathbb{N}$ contains the nonnegative integers. The \emph{projective plane}\index{projective plane}, denoted by $\mathbb{P}^2$, is the sphere (in $\mathbb{R}^3$) where any two antipodal points ($x$ and $-x$) are identified.

Vectors and matrices whose elements are integers are called \emph{integral}\index{integral!vector or matrix}. That is, integral $m$-dimensional vectors are those in $\mathbb{Z}^m$, an integral matrix of size $n\times m$ is in $\mathbb{Z}^{n\times m}$. Similarly, \emph{rational}\index{rational vector or matrix} vectors and matrices have elements from $\mathbb{Q}$. \emph{Half-integral}\index{half-integral!vector or matrix} vectors and matrices have elements that are integer multiples of $\frac{1}{2}$.

A matrix $A$ is of \emph{full row rank}\index{full row rank}, if its rank equals the number of its rows, or equivalently, if its row vectors are linearly independent. If $A$ is of full row rank say $n$,
a \emph{basis}\index{basis} of
$A$ is a non-singular square submatrix of $A$ of size $n$.

A set $P$ of vectors in $\mathbb{R}^m$ is a \emph{polyhedron}\index{polyhedron} if $P=\{x \,| \, Ax \le b \}$ for an $n\times m$ matrix $A$ and $n$-dimensional vector $b$. The vectors of a polyhedron are called its \emph{points}\index{point of a polyhedron}. An \emph{extreme point}\index{extreme point} or \emph{vertex}\index{vertex of a polyhedron} of a polyhedron $P=\{x \,| \, Ax \le b \}$ is a point determined by $m$ linearly independent equations from $Ax =b$. Every extreme point of $P$ can arise as an optimal solution of $\max \{ c^T x \, | \, x \in P\}$ for a suitably chosen $c$. For any $1\le i \le n$, let us denote by $A_{i \bullet}$ the $i$th row of $A$ and let $P^{(i)}$ be the polyhedron obtained from $P$ by removing the $i$th row from the system $Ax \le b$. If $P^{(i)}\neq P$, then $P\cap \{A_{i \bullet}x=b_i\}$ is called a \emph{facet}\index{facet} of $P$.

If $P$ has at least one vertex, then $P$ is called \emph{integral}\index{integral!polyhedron}, if all of its vertices are integral. 
An integral polyhedron $P$ provides integral optimal solutions for $\max \{ c^T x \, | \,  x \in P\}$ for any $c$. Similarly, if $P$ is half-integral, then the optimal solutions are half-integral.

A set $S$ of vectors is \emph{convex}\index{convex} if it satisfies: if $x,y \in S$ and $0\le \lambda \le 1$, then $\lambda x+ (1-\lambda)y \in S$. The \emph{convex hull}\index{convex hull} of a set $S$ of vectors is the smallest convex set containing $S$, and is denoted $conv(S)$.

Let $M\in \mathbf{R}^{n\times m'}$ be a matrix of rank $n$, 
$\mathcal{H}(M)=\{\{x\in \mathbf{R}^n\,:\, c^Tx =0\}\,:
c\m{ is a column of }$\\
$M\}$ and  $B$ a basis of $M$.
$\mathcal{H}(M)$ splits up $\mathbf{R}^n$ into a set $S$ of full dimensional 
 cones (regions). A {\em simplex region} is one  which has exactly $n$ facets. The matrix 
$B$ is called a {\em Camion basis} if the 
corresponding hyperplanes determine the facets of a simplex region in $S$. 
It is known that there always exists a Camion basis. 
After some column permutations, we may write $M=[B\, N]$.  It is possible to 
show that $B$ is a Camion basis if and only if by multiplying some rows and columns of the matrix $B^{-1}N$, the resulting matrix is nonnegative. Geometrically,  
$B^{-1}N\geq 0$ means that the column vectors of $N$ are contained in the
cone generated by $B$. \\

An (\emph{undirected}) \emph{graph}\index{graph} is a pair $G=(V,E)$, where $V$ is a finite set, and $E$ is a family of unordered pairs of elements of $V$. A \emph{directed graph}\index{directed graph} or \emph{digraph}\index{digraph} is a pair $G=(V,E)$, where $V$ is a finite set, and $E$ is a finite family of ordered pairs of elements of $V$. The elements of $V$ are called the \emph{vertices}\index{vertex} or \emph{nodes}\index{node} of $G$, and the elements of $E$ are called the \emph{edges}\index{edge} of $G$.

The \emph{node-edge incidence matrix}\index{node-edge incidence matrix} of a graph $G$ has its rows and columns associated with the nodes and edges of the digraph. The non-zeros in a column associated with edge $e$ stand in the rows that correspond to the endnodes of $e$. If $G$ is directed, then heads get positive signs and tails get negative signs, otherwise all non-zero entries are equal to $1$. We denote by \emph{IMD}\index{IMD} the node-edge incidence matrix of a digraph. See Figure \ref{fig:OverviewDmat}.
The rank of the node-edge incidence matrix $In$ of a connected directed graph $G$ on $n$ nodes is $n-1$. Moreover, by deleting any row, $In$ can be made a full row rank matrix $In'$. The bases of $In'$ correspond to spanning trees of $G$. 
An \emph{RIMD}\index{RIMD}, or restricted IMD, is an IMD with (linearly) redundant rows removed. 

\begin{figure}[h!]
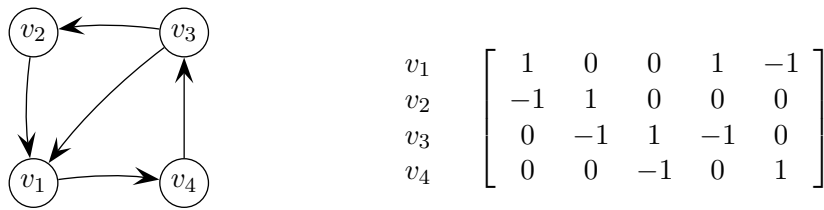

\vspace{0.2cm}
$
\begin{array}{cc}

\psset{xunit=1cm,yunit=1cm,linewidth=0.5pt,radius=3mm,arrowsize=7pt,
labelsep=1.5pt}

\pspicture(-1,1)(5,2)

\cnodeput(1,0){1}{$v_1$}
\cnodeput(1,2){2}{$v_2$}
\cnodeput(3,2){3}{$v_3$}
\cnodeput(3,0){4}{$v_4$}

\ncarc{<-}{1}{2}
\ncarc{<-}{2}{3}
\ncline{<-}{3}{4}

\ncarc{<-}{1}{3}

\ncarc{->}{1}{4}

\endpspicture  & 

\begin{tabular}{rl}

 & $\left. \begin{array}{cccc} \hspace{.3cm} & \hspace{.3cm} & \hspace{.3cm} & \end{array} \right.$ \\

\hspace{0.2cm}$\begin{array}{c} v_1 \\
v_2 \\
v_3 \\
v_4 
\end{array}$ &

$\left[ \begin{array}{ccccc} 1 & 0 & 0 & 1 & -1\\
-1& 1& 0 &0 & 0\\
0&-1&1&-1 & 0\\
0&0&-1&0 & 1
\end{array}\right] 

$ \\

\end{tabular}  
 
\end{array}$
\vspace{.3cm}
\caption{A digraph $G$ (on the left) and the node edge incidence matrix $In$ of $G$.}

\label{fig:OverviewDmat}
\end{figure}

Let $G=(V,E)$ be a digraph and $In$ the $V\times E$-incidence matrix of $G$. Let $In'$ be an RIMD obtained from $In$ and $B$ a basis of $In'$ such that $In'=[B\, N]$ (up to column permutations). The matrix $A=B^{-1}N$ is called a {\em network matrix}\index{network!matrix}.


\begin{figure}[h!]
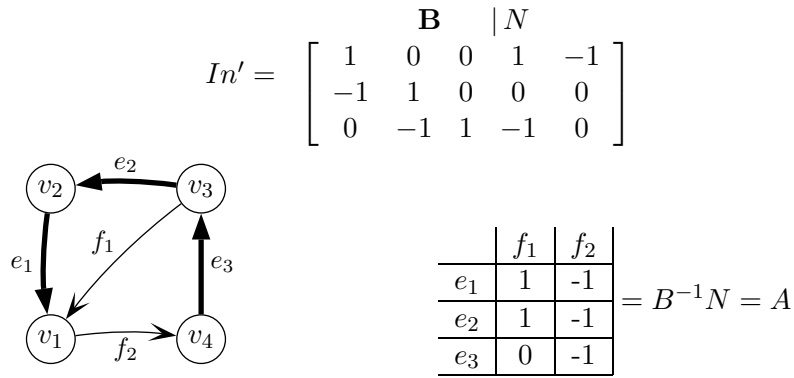


\psset{xunit=1cm,yunit=1cm,linewidth=0.5pt,radius=3mm,arrowsize=7pt,
labelsep=1.5pt}
\pspicture(0,0)(15,5)

\rput(3,2.5){
\psset{xunit=1cm,yunit=1cm,linewidth=0.5pt,radius=3mm,arrowsize=7pt,
labelsep=1.5pt}
\pspicture(0,1)(3,3)

\cnodeput(1,0){1}{$v_1$}
\cnodeput(1,2){2}{$v_2$}
\cnodeput(3,2){3}{$v_3$}
\cnodeput(3,0){4}{$v_4$}

\ncarc[linewidth=2pt,arrowinset=0]{<-}{1}{2}
 \naput{\small{$e_1$}}
\ncarc[linewidth=2pt,arrowinset=0]{<-}{2}{3}
 \naput{\small{$e_2$}}
\ncline[linewidth=2pt,arrowinset=0]{<-}{3}{4}
 \naput{\small{$e_3$}}

\ncarc[arrowinset=.5,arrowlength=1.5]{<-}{1}{3}
 \naput{\small{$f_1$}}

\ncarc[arrowinset=.5,arrowlength=1.5]{->}{1}{4}
\nbput{\small{$f_2$}}

\endpspicture  }

\rput(7.5,4){
$In'= \begin{tabular}{c}

  $\left. \begin{array}{cccc}  \hspace{.2cm}  &{\bf B} \hspace{.3cm} & | \,  N  &\end{array} \right.$ \\

$\left[ \begin{array}{ccccc} 1 & 0 & 0 & 1& -1\\
-1& 1& 0 &0 & 0\\
0&-1&1&-1 & 0
\end{array}\right] 

$ \\

\end{tabular}  $}

\rput(10,1){
$\begin{tabular}{c|c|c|}

& $f_1$ & $f_2$ \\
\hline
$e_1$ & 1& -1\\
\hline
$e_2$ & 1 & -1\\
\hline
$e_3$ & 0 & -1\\
\hline

\end{tabular}=B^{-1} N =A   $}

\endpspicture

\caption{An RIMD  $In'$ (above) obtained from the IMD $In$ described  in Figure \ref{fig:OverviewDmat} by removing the last row.
For the basis $B$ consisting of the three first columns of $In'$, the corresponding network matrix and network representation are given at the bottom.
The edges $e_1,e_2$ and $e_3$ corresponding to the basis $B$ form a spanning tree of $G$. The edges $f_1$ and $f_2$ correspond to the first and second column of $N$, respectively.}
\label{fig:OverviewRIMD}
\end{figure}

By assuming that $G$ is connected, the basis $B$ corresponds to a spanning tree of $G$ (as mentioned above)
whose subgraphs are called \emph{basic}\index{basic}.
The rows and columns of the network matrix are associated with the tree and non-tree edges, respectively. For any non-tree edge $f$, we find the unique cycle (called the fundamental cycle) which contains $f$ and some edges from the tree. The column of the network matrix corresponding to $f$ will contain $\pm 1$ in the rows of the tree edges in its fundamental cycle and $0$ elsewhere. The signs of the non-zeros depend on the directions of the edges. If walking through the tree along the fundamental cycle starting at the tail of $f$, a tree edge lies in the same direction, it gets a positive sign, if it lies in the opposite direction, it gets a negative sign. See Figure \ref{fig:OverviewRIMD}.

Let $A$ be a nonnegative connected network matrix, and suppose that we are given two row indexes, say $\epsilon$ and $\rho$ ($\epsilon \neq \rho$). Let $G(A)$ be a network representation of $A$ and $q$ a basic path in $G(A)$ containing $e_\epsilon$ and $e_\rho$. If $q$ passes through one of the edges $e_\epsilon$ and $e_\rho$ forwardly and through the other  backwardly, then we say that $e_\epsilon$ and $e_\rho$ are \emph{alternating}\index{alternating} in $G(A)$, otherwise \emph{nonalternating}.
If $A$ has a network representation in which $e_\epsilon$ and $e_\rho$ are alternating and another one in which they are nonalternating, then $N$ is said to be \emph{$\{ \epsilon,\rho\}$-noncorelated}, otherwise \emph{$\{ \epsilon,\rho\}$-corelated\index{corelated}}.

A matrix $A$ is \emph{totally unimodular}\index{totally unimodular} if each subdeterminant of $A$ is $0$, $+1$, or $-1$.
In particular, each entry in a totally unimodular matrix is $0$, $+1$, or $-1$. It is known that any network matrix is totally unimodular.\\

A bidirected graph $G=(V,E)$ on node set $V=\{v_1,\ldots,v_n\}$ and with edge set $E=\{e_1,\ldots,e_m\}$ may have four kinds of edges. A \emph{link}\index{link} is an edge with two distinct endnodes;  a \emph{loop}\index{loop} has two identical endnodes; a \emph{half-edge}\index{half-edge} has one endnode, and a \emph{loose edge}\index{loose edge} has no endnode at all.

Every edge is signed with $+$ or $-$ at its endnodes. That is, links or loops can be signed with $+-$, $++$ or $--$; half-edges have only one sign, $+$ or $-$. If an edge is signed with $+$ at an endnode, then this node is an \emph{in-node}\index{in-node} or \emph{head}\index{head} of the edge. An endnode signed with $-$ is called the \emph{out-node}\index{out-node} or \emph{tail}\index{tail} of the edge. One can think of the value $+$ as indicating that the edge is directed into the node, $-$ indicating direction away from the node. All edges, except directed edges, are called \emph{bidirected edges}\index{bidirected!edge}. See Figure \ref{fig:OverviewBidGraph}.

Every edge $e$ is given a \emph{sign}\index{sign}, denoted by $\sigma_e\in \{ +, -\}$. The signing convention we adopt is that the sign of an edge is negative if and only if it is bidirected.

\begin{figure}[h!]
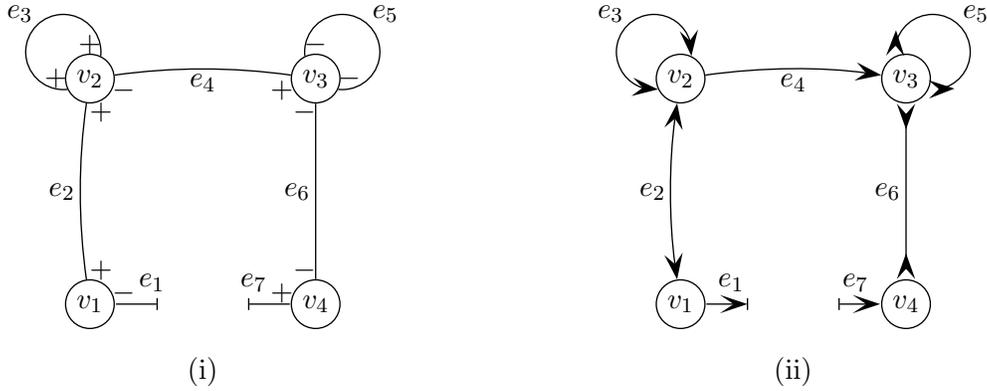

\psset{xunit=1.5cm,yunit=1.5cm,linewidth=0.5pt,radius=3mm,arrowsize=7pt,
labelsep=1.5pt}

\begin{center} 

$\begin{array}{cc}

\pspicture(0,0)(5,3)

\rput(2,-.6){(i)}
\cnodeput(1,0){1}{$v_1$}
\cnodeput(1,2){2}{$v_2$}
\cnodeput(3,2){3}{$v_3$}
\cnodeput(3,0){4}{$v_4$}

\ncarc{-}{1}{2}
\naput{$e_2$}
\rput(1.1,.3){$+$}
\rput(1.1,1.7){$+$}
\ncarc{-}{2}{3}
\nbput{$e_4$}
\rput(1.3,1.9){$-$}
\rput(2.7,1.9){$+$}
\ncline{-}{3}{4}
\nbput{$e_6$}
\rput(2.9,.3){$-$}
\rput(2.9,1.7){$-$}

\psline[arrowinset=.5,arrowlength=1.5]{-|}(1.23,0)(1.6,0)
\rput(1.55,0.19){$e_1$}
\rput(1.3,.1){$-$}

\psline[arrowinset=.5,arrowlength=1.5]{|-}(2.4,0)(2.77,0)
\rput(2.45,0.19){$e_7$}
\rput(2.7,.1){$+$}

\nccircle[angle=45]{-}{2}{0.5cm}
\nbput{$e_3$}
\rput(.7,2){$+$}
\rput(1,2.3){$+$}

\nccircle[angle=-45]{-}{3}{0.5cm}
\nbput{$e_5$}
\rput(3.3,2){$-$}
\rput(3,2.3){$-$}

\endpspicture  &

\pspicture(0,0)(4,3)

\rput(2,-.6){(ii)}
\cnodeput(1,0){1}{$v_1$}
\cnodeput(1,2){2}{$v_2$}
\cnodeput(3,2){3}{$v_3$}
\cnodeput(3,0){4}{$v_4$}

\ncarc{<->}{1}{2}
\naput{$e_2$}
\ncarc{->}{2}{3}
\nbput{$e_4$}
\ncline{>-<}{3}{4}
\nbput{$e_6$}

\psline[arrowinset=.5,arrowlength=1.5]{->|}(1.23,0)(1.6,0)
\rput(1.45,0.19){$e_1$}

\psline[arrowinset=.5,arrowlength=1.5]{|->}(2.4,0)(2.77,0)
\rput(2.55,0.19){$e_7$}

\nccircle[angle=45]{<->}{2}{0.5cm}
\nbput{$e_3$}
\nccircle[angle=-45]{>-<}{3}{0.5cm}
\nbput{$e_5$}

\endpspicture  

\end{array}$

\end{center}
\vspace{.8cm}

\caption{Possible graphical representations of a bidirected graph}\label{fig:overviewGraph}
\label{fig:OverviewBidGraph}
\end{figure}

Let us define the  \emph{(node-edge) incidence matrix }\index{node-edge incidence matrix} $In(G)$ of a bidirected graph $G$ called an IMB\index{IMB}. 
The rows and columns of $In(G)$ are identified with the nodes and edges of $G$, respectively. An entry $(i,j)$ of $In(G)$ is $1$ (resp., $-1$) if $e_j$ is a link or a half-edge entering (resp., leaving) $v_i$, $2$ (resp., $-2$) if $e_j$ is a negative loop entering (resp., leaving) $v_i$, $0$ otherwise. An \emph{RIMB}\index{RIMB}, or restricted IMB, is an IMB with (linearly) redundant rows removed. As an  example, the node-edge incidence matrix of the bidirected graph depicted in Figure \ref{fig:OverviewBidGraph} is:

$$In=\left[ \begin{matrix} -1 & 1 & 0 & 0& 0&0 & 0 \\
0& 1&2 &-1 &0 &0 &0 \\
0&0&0&1&-2&-1&0\\
0&0&0&0&0&-1&1
\end{matrix}\right]$$

\noindent

A matrix $A$ is called a \emph{binet matrix}\index{binet!matrix} if there exist a full row rank incidence matrix $In$  of a bidirected graph $G$ and a basis $B$ of it such that $In=[B\, N]$ (up to column permutations) and $A=B^{-1} N$. 
As any IMD is an IMB, the class of binet matrices contains all network matrices.\\

A \emph{matroid}\index{matroid} $M$ is a finite ground set $S$ and a collection $\phi$ of subsets of $S$  such that (I1)-(I3) are satisfied.

\begin{itemize}

\item[(I1)] $\emptyset \in \phi$.

\item[(I2)] If $X\in \phi$ and $Y\subseteq X$ then $Y\in \phi$.

\item[(I3)] If $X,Y$ are members of $\phi$ with $|X|=|Y|+1$ there exists $x\in X\verb"\" Y$ such that $Y\cup \{x\} \in \phi$.

\end{itemize}

Subsets of $S$ in $\phi$ are called the \emph{independent sets}\index{independent set}, a maximal independent subset in $S$ is a \emph{basis}\index{basis of a matroid} of $M$.  A \emph{circuit}\index{circuit of a matroid} is a minimal dependent subset of $S$. If $B$ is a basis of $M$ and $s\in S\verb"\" B$, then there is a unique circuit in $B\cup \{s\}$, called the \emph{fundamental circuit of $s$ with respect to $B$}\index{fundamental circuit in a matroid}. There are several different but equivalent ways to define a matroid. For example, one can define a matroid on a given ground set through its bases, rank-function, circuits, or fundamental circuits.

A standard example of matroids is when $S$ is a finite set of vectors from a vector space over a field $\mathbb{F}$ and $\phi$ contains the linearly independent subsets of $S$. This kind of matroid is called the \emph{linear matroid}\index{linear matroid}. If for a matroid $M$ there exists a field $\mathbb{F}$ such that $M$ is a linear matroid over $\mathbb{F}$, then $M$ is \emph{representable}\index{representable} over $\mathbb{F}$. The matrix $In$ made up of the vectors in $S$ is called a \emph{standard representation matrix}\index{standard representation matrix} of $M$. 
There is a one-to-one correspondence between linearly independent columns of $In$ and independent sets in $M$, so the linear matroid $M=M(In)$ can be fully given by its representation matrix $In$.

There is another, more compact representation matrix of a linear matroid $M(In)$. To get it, first delete linearly dependent (over $\mathbb{F}$) rows from $In$, if there are any, then choose a basis $B$ of $M$. It corresponds to a basis of $In$, also denoted by $B$, and let $N$ be the remaining matrix in $In$. 
Then the matrix $A=B^{-1} N$ (where the inverse of $B$ is computed over the field $\mathbb{F}$)
is called a \emph{compact representation matrix of $M$ over $\mathbb{F}$}\index{compact representation matrix}.

\section{Applications}\label{sec:Ap}

In combinatorial optimization and geometry, many of the most frequently used algorithms exploit the discrete structure and properties of the matrices involved in the problems: network simplex method, integer programs with totally unimodular constraint matrix, Tardos' method for linear programming,...  Our purpose here is a description of four applications resulting from a recognition procedure for binet matrices. Binet matrices have strong connections with some linear and integer programs that are easily solvable, and a class of polyhedrons with half-integral vertices. Moreover, they are related to relatively unknown matroids and a characterization of the graphs embeddable on the projective plane.\\

Consider the linear program
\begin{eqnarray}
\min  & c^T x \nonumber \\
 s.c. &  Ax = b \label{eqnOverviewLin}\\
      & 0\le x \le \alpha \nonumber
\end{eqnarray}
\noindent
 where $c,\alpha \in \mathbb{R}^m$, $b\in \mathbb{R}^n$, $A \in \mathbb{R}^{n \times m}$ for some $n,m\in \mathbb{N}$, and some entries of $\alpha$ may be infinite. If the constraint matrix in (\ref{eqnOverviewLin}) is an RIMD, then the problem is called a transshipment problem or more simply a network problem in the literature, and the interpretation of (\ref{eqnOverviewLin}) is the following. Variables correspond to flows in edges, while the constraints of the system "$Ax=b$" correspond to supply (if $b_r <0$), demand (if $b_r > 0$) or flow conservation (if $b_r=0$) requirements at the vertices; the inequalities  $0\le x \le \alpha$ are capacity constraints. The objective is to find a minimum cost flow subject to these constraints. The optimum can be found by using the \emph{network simplex method}, a successful cross between the algebra of the simplex method and the combinatorics of the flow algorithms.

If $A$ is an RIMB, then (\ref{eqnOverviewLin}) is a generalization of the network problem and is called a \emph{bidirected LP}. The optimal solution of a bidirected LP can be achieved by general-purpose methods, like the simplex algorithm, or the strongly polynomial algorithm of Tardos \cite{TardosPolAlg-86}. Kotnyek \cite{KotThesis} proved that a bidirected LP can be solved using a 
generalized network simplex method which  is not polynomial in the worst case, but for most of the practical problems it is much more efficient than the simplex method, or the strongly polynomial method of Tardos (see the reference notes in \cite{AhujaMagOrlinNet-93}).

Now assume that we are to solve (\ref{eqnOverviewLin}) with the additional constraint that $x$ is integral. Given any undirected graph $G=(V,E)$, if $A$ is the node-edge incidence matrix of $G$ and $\alpha_i=1$ for all $1\le i \le m$, then (\ref{eqnOverviewLin}) is known as the optimum $b$-matching problem: given a real numerical weight $c_e$ for each edge $e\in E$ and an integer $b_v$ for each node $v\in V$, find in $G$, if there is one, a subgraph $G'$ which has degrees $b_v$ at nodes $v$ and whose edges have maximum weight-sum. 
Whenever $A$ is an RIMB (and $x$ is required to be integral), the problem (\ref{eqnOverviewLin}) is called  a \emph{bidirected IP} and generalizes the $b$-matching problem.
Edmonds proved that bidirected IPs are solvable in polynomial time (see \cite{EdmondsMatching-67,EdmondsMatching2,LawlerOpt-76}).

It is known that the solution set of the system "$Ax=b$" is unchanged under elementary row operations, like multiplying a row by a nonzero constant or adding a row to another one. Using Gauss' algorithm, we may assume that the matrix $A$ in (\ref{eqnOverviewLin}) is in standard form (i.e., the first $n$ columns of $A$ form the identity matrix).  Then a natural question is whether one can convert (\ref{eqnOverviewLin}) to a bidirected LP or IP (when $x$ is required to be integral) using elementary row operations on the system "$Ax =b$". We show in Section \ref{sec:BinetOpt} that this is equivalent to the problem of recognizing binet matrices as stated in the next theorem.\\

{\bf Theorem \ref{thmOptEqui}} \emph{Let $A$ be a real matrix in standard form. Then $A$ can be converted to an RIMB using elementary row operations if and only if $A$ is binet.}\\

Our procedure for recognizing binet matrices provides a way of
transforming any rational matrix $A$ into an RIMB using elementary row operations, whenever $A$ is binet.\\

The second application deals with polyhedral geometry.
Given a rational matrix $A$ of size $n\times m$ and a rational vector $b$ of size $n$, consider $P=\{ x \, :\, x\geq 0, \, Ax \le b \}$. A primary problem of integer programming is to find the \emph{integer hull}\index{integer hull} $P_I=conv\{ P \cap \mathbb{Z}^m\}$  of the polyhedron $P$. Provided that $A$ is integral, it is known that $P=P_I$ for all integral right hand side vectors $b$, if and only if $A$ is totally unimodular. In terms of integer programming, totally unimodular matrices are the integral matrices for which $\max \{ c^T x \, | \, Ax \le b, \, x \geq 0 \}$ has integral optimal solutions for any $c$ and any integral $b$. 

There are situations, however, that take us beyond  total unimodularity. Do we know the non-integral matrices $A$ that ensure integral optimal solutions of $\max \{ c^T x \, | \, Ax \le b, \, x \geq 0 \}$ for any $c$ and any integral $b$? Or what can we say about matrices $A$ that ensure integral optimal solutions for only a special set of right hand side $b$? These questions are not independent. If $A$ is rational, then one can find a nonnegative integer $k$, such that if we multiply every row of $A$ by $k$, we get an integral matrix, $kA$. But then instead of inequalities $Ax \le b$, we have $k Ax \le kb $ and we deal with polyhedra that are required to be integral for  only special $b'$ vectors, namely for those whose elements are integer multiples of $k$. For example, if $k=2$, so the elements of $A$ are halves of integers, then we are to characterize integral matrices $A'$ for which $\{ x\, | \, A'x \le b', \, x \geq 0 \}$ is integral for all even vectors $b'$. Or equivalently, we examine matrices that provide half-integral vertices for polyhedra with integral right hand sides.

A matrix is called \emph{$k$-regular}\index{regular@$k$-regular} ($k\in \mathbb{N}$) if for each of its non-singular square submatrices $\pi$, $k \pi^{-1}$ is integral. $k$-regularity is the property that takes over the role of total unimodularity in the theory of rational matrices that ensure integral vertices for polyhedra with special right hand sides.\\

{\bf Theorem \ref{thmsubclassBkreg}} \emph{(Kotnyek \cite{KotThesis})  A rational matrix $A$ is $k$-regular, if and only if the polyhedron $\{x \, | \, Ax \le kb, \, x \geq 0 \}$ is integral for any integral vector $b$.}\\

Kotnyek \cite{KotThesis} proved that every binet matrix is $2$-regular. Binet matrices seem to form a very big subclass of $2$-regular matrices. It is even conjectured by Appa that a characterization theorem should exist for a decomposition of $2$-regular matrices into binet matrices, the transpose of binet matrices and probably other non-binet matrices. If such a decomposition theorem would exist, then a procedure for recognizing binet matrices would be an essential tool for recognizing $2$-regular matrices.\\

Now we describe an interpretation of binet matrices in the field of matroids. For any bidirected graph $G$, the linear matroid of the node-edge incidence matrix of $G$ is called signed-graphic. Thus any binet matrix based on the bidirected graph $G$ is the compact representation matrix (over $\mathbb{R}$) of a signed-graphic matroid. Furthermore, given a binet matrix $A$, by multiplying the columns of $A$ with nonempty $\pm \frac{1}{2}$-support by $-2$, we shall show that the resulting matrix is a compact representation matrix of a signed-graphic matroid over $GF(3)$ (the field of cardinality $3$).\\

Finally, we state a result discussed in Section \ref{sec:BinetMat}.

\begin{thm}
Let $G$ be a $2$-connected graph and $T$ a spanning tree. Let 
$\overrightarrow{G}$ be a digraph obtained from $G$ by orienting the edges, and
$A$ the network matrix with respect to $\overrightarrow{G}$ and $\overrightarrow{T}\subseteq \overrightarrow{G}$. Then $A^T$
is a binet matrix if and only if $G$ is embeddable on $\mathbb{P}^2$.
\end{thm}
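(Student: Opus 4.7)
The strategy is to recast the statement as a matroid-theoretic fact about the cographic matroid of $G$. Since $A$ is the network matrix of $\overrightarrow{G}$ with respect to the spanning tree $\overrightarrow{T}$, the column matroid of $[I\mid A]$ is the graphic matroid $M(G)$. Orthogonality of row spaces then yields that the column matroid of $[I\mid -A^T]$ is the cographic matroid $M^*(G)$. Hence $A^T$ is binet precisely when $M^*(G)$ admits a compact binet representation, which by the discussion in Section~\ref{sec:IntNot} means exactly that $M^*(G)$ is the signed-graphic matroid (frame matroid) of some bidirected graph.

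The proof is then a two-way reduction to the classical theorem of Zaslavsky, as further developed by Slilaty \cite{SlilatyProjPlan-03}: \emph{the cographic matroid of a $2$-connected graph $G$ is signed-graphic if and only if $G$ embeds on the projective plane}. For the "if" direction, I would start from a chosen embedding of $G$ in $\mathbb{P}^2$ and construct explicitly a bidirected dual $G^\sigma$: its underlying graph is the geometric dual of the embedding, and a dual edge receives the sign $-$ when the corresponding closed dual curve is orientation-reversing on $\mathbb{P}^2$. A surface-topological computation shows that the cycles and cocycles of $G$ correspond to balanced and unbalanced structures in $G^\sigma$, so that the frame matroid of $G^\sigma$ is isomorphic to $M^*(G)$. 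Choosing as basis of the incidence matrix of $G^\sigma$ the subset of dual edges corresponding to the non-tree edges of $\overrightarrow{G}$ (this is indeed a basis by tree/cotree duality), the resulting binet matrix represents the same matroid as $A^T$, and after a standard sign-rescaling of rows and columns — which preserves the bidirected structure — equals $A^T$ exactly.

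For the "only if" direction, assume $A^T$ is binet. Then $M^*(G)$ is the frame matroid of a bidirected graph $H$, and the classical theorem above forces $G$ to be embeddable on $\mathbb{P}^2$.

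The hard part will be controlling the matrix-level equality in the "if" direction: Zaslavsky's theorem produces a signed-graphic representation of $M^*(G)$, but we need the specific binet matrix to equal $A^T$ rather than merely being equivalent to it as a representation. The $2$-connectedness hypothesis plays the key role here, ruling out Whitney twists and ensuring that the basis induced by the cotree of $G$ is forced by the combinatorial duality; with the basis pinned down, matching the actual entries reduces to a routine check using the sign conventions of Section~\ref{sec:IntNot}. All other steps either are direct invocations of the matroid dictionary or rest on the black-box projective-planar duality for signed graphs.
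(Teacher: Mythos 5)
Your reduction of the statement to ``$M^*(G)$ is signed-graphic if and only if $G$ is projective-planar'' is exactly the route the paper takes: it presents the theorem as a direct matrix reformulation of Slilaty's result (Theorem \ref{thmBinetSliP2}), using the dictionary of Theorem \ref{thmMatroidBinetComp} between binet matrices and signed-graphic matroids, and it cites the projective-planar characterization as a black box rather than reproving it via geometric duals as you sketch. The matroid-to-matrix step you isolate at the end --- pinning the binet representation down to equal $A^T$ itself rather than merely representing the same matroid --- is indeed the only point with real content beyond the citation, and your appeal to unique representability of the regular matroid $M^*(G)$ together with sign-rescaling is the standard way to close it; the paper does not spell this out either.
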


Then an algorithm for recognizing binet matrices yields a way of 
testing if a given graph is embeddable on $\mathbb{P}^2$.
However, in the literature, there exist several methods for doing this that are much more efficient than the one deriving from our recognition algorithm for binet matrices (see \cite{MoharGraphSurface-01}).

\section{Recognizing binet matrices}\label{sec:OverRecBinet}

In this section, we give more technical details of the binet recognition algorithm. Before that, we provide different definitions and notations. This section may help for reading Chapters \ref{ch:Rec} to \ref{ch:central}.

Let $A$ be a rational matrix with row set $R$ and column set $S$. $(A)_{ij}$ or $A_{ij}$ or $a_{ij}$ denotes the element in row $i\in R$ and column $j\in S$. 
For  $R'\subseteq R$ and $S'\subseteq S$,
$A_{R' \bullet}$ (respectively, $A_{\bullet S'}$) denotes the set of rows of $A$ indexed by $R'$ (respectively, columns of $A$ indexed by $S'$). 

For any $j\in S$, denote by $s(A_{\bullet j})=
\{i\,:\, A_{ij}\neq 0\}$ the support of $A_{\bullet j}$ and by    
$s_k(A_{\bullet j})= \{i\, : \, A_{ij}=k\}$ the \emph{$k$-support}\index{support@$k$-support}, for any $k\in \mathbb{R}$. A \emph{$k$-entry}\index{entry@$k$-entry} of $A$ is an entry of $A$ equal to $k$. For a set $I\subseteq \mathbb{R}$, an \emph{$I$-matrix}\index{matrix@$I$-matrix} is a matrix all of whose entries are in $I$. For $R'\subseteq R$,  $f(R')=\{ j \, : \, s(A_{\bullet j}) \cap R' \neq \emptyset \}$
and $\chi_{R'}^{R}$ denotes the \emph{characteristic vector}\index{characteristic vector} or \emph{incidence vector}\index{incidence vector} of the subset $R'$ of $R$, given by $(\chi_{R'}^{R})_i = \left \{ \begin{array}{ll}
1 & \mbox{ if } i \in R' \\
0 & \mbox{ Otherwise }
\end{array} \right.
$ for all $i\in R$. 
$A^{\frac{1}{2}\rightarrow 1}$ denotes the matrix obtained from $A$ by replacing each $\frac{1}{2}$-entry by $1$.

A \emph{walk}\index{walk} in a bidirected graph is a sequence $(v_1,e_1,v_2,\ldots
v_{t-1},e_{t-1},v_t)$ where $v_i$ and $v_{i+1}$ are endnodes of edge $e_i$ ($i=1,\ldots,t-1$), including the case where $v_i=v_{i+1}$ and
$e_i$ is a half-edge. If the walk consists of only links and it does not cross itself, i.e $v_i\neq v_j$ for $1<i<t$, $1\le j\le t$, $i\neq j$, then it is a \emph{path}\index{path}. A closed walk which does not cross itself (except at $v_1=v_t$)
and goes through each edge at most once
is called a \emph{cycle}\index{cycle}. So a loop, a half-edge or a closed path
can make up a cycle. In Figure \ref{fig:OverviewBidGraph}, there are exactly four cycles. The \emph{sign of a cycle}\index{sign of a cycle} is the product of the signs of its edges, so we have a \emph{positive cycle}\index{positive cycle} if the number of negative edges (or bidirected edges) in the cycle is even; otherwise, the cycle is a \emph{negative cycle}\index{negative cycle}. Obviously, a negative loop or a half-edge always makes a negative cycle. A \emph{full}\index{full cycle} cycle in a bidirected graph is a cycle different from a half-edge.

A bidirected graph is \emph{connected}\index{connected!bidirected graph}, if there is a path between any two nodes. 
A \emph{tree}\index{tree} is a connected bidirected graph which does not contain a cycle.
A connected bidirected graph containing exactly one cycle is called a \emph{1-tree}\index{tree@1-tree}, indicative of the fact that a $1$-tree consists of a tree and one additional edge. If the unique cycle in a 1-tree is negative, then we will call it a \emph{negative 1-tree}\index{negative 1-tree}. 

Let $G$ be a bidirected graph and $In(G)$ its node-edge incidence matrix.
Any submatrix $Q$ of the incidence matrix $In(G)$ can be obtained by row and column deletions. By analoguous operations, a bidirected graph $G(Q)$ can be obtained from $G$ (see Section \ref{sec:Incidence} 
for more details).

Now let $B$ be a basis of $In(G)$, $N$ such that $In(G)=[B\, N]$ and $A=B^{-1} N$.
Edges in the subgraph $G(B)$ of $G$ are called \emph{basic}\index{basic!edge} edges. The edges of $G$ that are not in $G(B)$ (i.e., those of $G(N)$) are the \emph{nonbasic} edges. By Lemma \ref{lemBidirectedNonsing}, the graph $G(B)$ consists of negative $1$-tree components. The unique cycle in a basic component is called a \emph{basic}\index{basic!cycle} cycle.  
Basic edges, respectively nonbasic ones, are in one-to-one correspondance with rows of $A$, respectively columns of $A$.

The bidirected graph $G$ with the set of basic and nonbasic edges marked is called
a \emph{binet representation}\index{binet!representation} of $A$ (not unique in general) and is denoted as $G(A)$. By removing the nonbasic edges from $G(A)$, we obtain a subgraph of $G$ called a \emph{basic binet representation} of $A$. There is a similar definition for network matrices.
Let $m=m'-n$. Basic edges of the binet representation will be denoted by $e_1, e_2,\ldots, e_n$ and the nonbasic ones $f_1, f_2,\ldots, f_m$, so that $e_i$ ($1\le i \le n$), respectively $f_j$ ($1\le j \le m$), corresponds to the $i$th row, respectively $j$th column of $A$. For any nonbasic edge $f_j$, the subgraph of $G(A)$ with edge set $\{f_j\} \cup \{ e_i \, : \, i \in s(A_{\bullet j}) \}$ (and without any isolated node) is called the \emph{fundamental circuit of $f_j$}. One can prove that the fundamental circuit of any nonbasic edge $f_j$ falls in one of the following categories (see Figure \ref{fig:OverviewBinet} for an example).

\begin{itemize}

\item[(i)] it is a loose edge (equal to $f_j$), or
\item[(ii)] a positive cycle, or
\item[(iii)] a pair of negative cycles with exactly one common node, or
\item[(iv)] a pair of disjoint negative cycles along with a minimal connecting path.

\end{itemize}

\begin{figure}[h!]
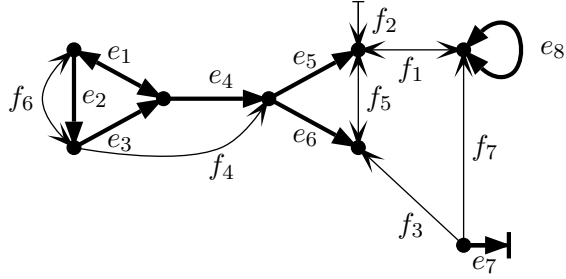


$
\begin{array}{cc}

\begin{tabular}{c|c|c|c|c|c|c|c|}
  & $f_1$ & $f_2$ & $f_3$ & $f_4$ & $f_5$ & $f_6$ & $f_7$  \\
  \hline
$e_1$  &$\frac{1}{2}$&$\frac{1}{2}$&$\frac{1}{2}$&0&1&1&0 \\
\hline
$e_2$ 
&$\frac{1}{2}$&$\frac{1}{2}$&$\frac{1}{2}$&0&1&0&0 \\
\hline
$e_3$  &$\frac{1}{2}$&$\frac{1}{2}$&$\frac{1}{2}$&1&1&-1&0 \\
\hline
$e_4$  &1&1&1&1&2&0&0 \\
\hline
$e_5$  &1&1&0&0&1&0&0 \\
\hline
$e_6$  &0&0&1&0&1&0&0 \\
\hline
$e_7$ &0&0&1&0&0&0&1 \\
\hline  
$e_8$  &$\frac{1}{2}$&0&0&0&0&0&$\frac{1}{2}$ \\
\hline
\end{tabular}

 &

\psset{xunit=1.4cm,yunit=1.3cm,linewidth=0.5pt,radius=0.1mm,arrowsize=7pt,
labelsep=1.5pt,fillcolor=black}

\pspicture(-1.2,1)(5,2.5)

\pscircle[fillstyle=solid](0,1){.1}
\pscircle[fillstyle=solid](0,2){.1}
\pscircle[fillstyle=solid](0.85,1.5){.1}
\pscircle[fillstyle=solid](1.85,1.5){.1}
\pscircle[fillstyle=solid](2.7,1){.1}
\pscircle[fillstyle=solid](2.7,2){.1}
\pscircle[fillstyle=solid](3.7,0){.1}
\pscircle[fillstyle=solid](3.7,2){.1}


\psline[linewidth=1.6pt,arrowinset=0]{<->}(0,2)(0.85,1.5)
\rput(0.44,1.95){$e_1$}

\psline[linewidth=1.6pt,arrowinset=0]{<-}(0,1)(0,2)
\rput(0.2,1.5){$e_2$}

\psline[linewidth=1.6pt,arrowinset=0]{->}(0,1)(0.85,1.5)
\rput(0.44,1.05){$e_3$}



\psline[linewidth=1.6pt,arrowinset=0]{->}(0.85,1.5)(1.85,1.5)
\rput(1.4,1.7){$e_4$}

\psline[linewidth=1.6pt,arrowinset=0]{->}(1.85,1.5)(2.7,2)
\rput(2.2,1.9){$e_5$}

\psline[linewidth=1.6pt,arrowinset=0]{->}(1.85,1.5)(2.7,1)
\rput(2.2,1.1){$e_6$}


\psline[linewidth=1.6pt,arrowinset=0]{-|}(3.7,0)(4.15,0)
\psline[linewidth=1.6pt,arrowinset=0]{->}(3.7,0)(4.15,0)
\rput(3.9,-0.2){$e_7$}

\pscurve[linewidth=1.6pt,arrowinset=0]{<->}(3.7,2)(4.15,2.25)(4.15,1.75)(3.7,2)
\rput(4.55,2){$e_8$}


\psline[arrowinset=.5,arrowlength=1.5]{<->}(2.7,2)(3.7,2)
\rput(3.2,1.8){$f_1$}

\psline[arrowinset=.5,arrowlength=1.5]{-|}(2.7,2)(2.7,2.5)
\psline[arrowinset=.5,arrowlength=1.5]{<-}(2.7,2)(2.7,2.5)
\rput(2.95,2.3){$f_{2}$}

\psline[arrowinset=.5,arrowlength=1.5]{<-}(2.7,1)(3.7,0)
\rput(3.2,.2){$f_3$}

\pscurve[arrowinset=.5,arrowlength=1.5]{->}(0,1)(1.4,1)(1.85,1.5)
\rput(1.4,.8){$f_4$}

\pscurve[arrowinset=.5,arrowlength=1.5]{<->}(0,2)(-.3,1.5)(0,1)
\rput(-0.5,1.5){$f_6$}

\psline[arrowinset=.5,arrowlength=1.5]{<->}(2.7,1)(2.7,2)
\rput(2.9,1.5){$f_5$}

\psline[arrowinset=.5,arrowlength=1.5]{->}(3.7,0)(3.7,2)
\rput(3.9,1){$f_7$}

\endpspicture

\end{array}
$\\
\vspace{.1cm}

\caption{An example of a binet matrix with a binet representation of it.}
\label{fig:OverviewBinet}
\end{figure}

A binet representation of $A$ is called \emph{proper}\index{proper} if
each basic component has exactly one bidirected edge (contained in the basic cycle), this one is entering, and 
there is another edge in the basic cycle entering one endnode of the basic bidirected edge; these two edges and the in-node incident with them are said to be \emph{central}.

A \emph{$\frac{1}{2}$-binet representation}\index{binet@$\frac{1}{2}$-binet!representation} is a proper binet representation in which every basic cycle is a half-edge. A matrix is said to be \emph{$\frac{1}{2}$-binet}\index{binet@$\frac{1}{2}$-binet!matrix} if it has a $\frac{1}{2}$-binet representation.

A \emph{cyclic representation}\index{cyclic!representation} of a matrix is a proper binet representation  of the matrix having exactly one basic cycle, and this one is full. For $R$ a row index set, an \emph{$R$-cyclic representation}\index{cyclic@$R$-cyclic!representation} of a matrix $A$ denotes a cyclic representation such that $R$ is the edge index set of the basic cycle, and this cycle is contained in the fundamental circuit of at least one nonbasic edge $f_j$ (or equivalently $R\subseteq s(A_{\bullet j})$ for at least one column index $j$ of $A$). A \emph{bicyclic representation}\index{bicyclic!representation} of a matrix is a proper binet representation  of the matrix having exactly two basic cycles, and these are full. 
At last, we say that a binet representation is an \emph{$\{\epsilon,\rho\}$-central representation}\index{central@$\{\epsilon,\rho\}$-central!representation}, if it is cyclic, $e_\epsilon$ and $e_\rho$ are edges of the basic cycle incident with one common central node, and one of the edges $e_\epsilon$ and $e_\rho$ is bidirected.
The matrix $A$ is said to be \emph{cyclic} (respectively, \emph{bicyclic},...) if and only if it has a cyclic (respectively, bicyclic,...) representation.\\

Let $A$ be a given rational matrix of size $n\times m$. 
Our purpose now is to determine whether $A$ is binet or not. To achieve this, we first transform $A$ into a nonnegative matrix $A'$ such that $A'$ is binet if and only if $A$ is binet, or outputs that $A$ is not binet (see Chapter \ref{ch:Camion}). We prove that even in the worst case we need at most $O((nm)^2)$ operations to get $A'$. We will show that any
entry of a binet matrix is in $\{0,\pm 1,\pm 2,\pm \frac{1}{2} \}$. Therefore, 
we also check whether each entry of $A'$ belongs to $\{0,1,2,\frac{1}{2}\}$, otherwise $A$ is not binet.

Now we may assume that $A$ is a matrix with $0$-, $1$-, $2$- or $\frac{1}{2}$-entries. In the second step of the algorithm, we carry out a procedure called Decomposition described in Chapter \ref{ch:Rec}. This procedure is performed on the matrix $A$ and the empty row index subset as input. The matrix
$A$ is called \emph{$\frac{1}{2}$-equisupported}\index{equisupported@$\frac{1}{2}$-equisupported} if for any column indexes $j$ and $j'$ such that $s_{\frac{1}{2}}(A_{\bullet j})\neq \emptyset$ and $s_{\frac{1}{2}}(A_{\bullet j'})\neq \emptyset$, we have 
$s_{\frac{1}{2}}(A_{\bullet j})=s_{\frac{1}{2}}(A_{\bullet j'})$.
We mention here a very important theorem.\\

\noindent
{\bf Theorem \ref{thmDecdecomp}} \emph{
The matrix $A$ is binet if and only if one of the following three statements is valid:}

\begin{itemize}

\item[1)] \emph{ $A$  is bicyclic and $\frac{1}{2}$-equisupported, or }

\item[2)] \emph{$A$ is cyclic and without any $\frac{1}{2}$-entry, or}

\item[3)] \emph{the procedure Decomposition with input $A$ and row index subset $Q=\emptyset$ provides a binet representation of $A$. }

\end{itemize}

By Theorem \ref{thmDecdecomp}, if the procedure Decomposition does not provide any binet representation of $A$, then we distinguish two cases: either $A$ has a $\frac{1}{2}$-entry and we test if $A$ is $\frac{1}{2}$-equisupported and bicyclic, or $A$ has no $\frac{1}{2}$-entry and we determine whether $A$ has a cyclic representation. If we fail, we conclude that $A$ is not binet.
Since any cyclic matrix has either an $\{i\}$-cyclic representation for some row index $i$, or an $\{\epsilon,\rho\}$-central representation for some row indexes $\epsilon$ and $\rho$, recognizing whether $A$ is cyclic can be reduced
to the recognition of $R^*$-cyclic ($|R^*|=1$) and $R^*$-central matrices ($|R^*|=2$).\\

The procedure Decomposition takes advantage of a simple property of binet matrices: if $A$ has a binet representation $G(A)$ and the columns $A_{\bullet j}$ and 
$A_{\bullet j'}$ have non-empty non-equal intersecting $\frac{1}{2}$-support for some $1\le j, j' \le m$, then $s_{\frac{1}{2}}(A_{\bullet j}) \cap s_{\frac{1}{2}}(A_{\bullet j'})$ is the edge index set of a basic cycle in $G(A)$. 
Provided that $A$ has at least one $\frac{1}{2}$-entry,
we locate different row index subsets, say $R_1,\ldots, R_\delta$, and related disjoint submatrices $A_1,\ldots,A_\delta$ in $A$. We also compute some matrix
$\tau$ and another one $A(\tau)$ obtained from $A$ by removing all rows and columns intersecting any submatrix $A_i$ ($1\le i \le \delta$) and adding $\tau$. 

We shall prove that 
whenever $A$ has a binet representation $G(A)$, 
the subgraph $G(A_i)$ is an $R_i$-cyclic representation of $A_i$,  for $i=1,\ldots,\delta$; moreover, a $\frac{1}{2}$-binet representation of $A(\tau)$ can be obtained from $G(A)$ as follows: delete in $G(A)$ all edges of the subgraphs $G(A_1),\ldots,G(A_\delta)$ and the remaining isolated nodes, then add a basic half-edge at each left node of any $G(A_i)$ for $1\le i\le \delta$. Using an $R_i$-cyclic representation of $A_i$, for $i=1,\ldots,\delta$, and a $\frac{1}{2}$-binet representation $G(A(\tau))$ of $A(\tau)$, one can compute  a binet representation of $A$. Thus the procedure Decomposition 
uses a subroutine for recognizing nonnegative $R^*$-cyclic matrices and another one for recognizing nonnegative $\frac{1}{2}$-binet matrices.\\

Let us describe some main ideas for the recognition of nonnegative $R^*$-cyclic matrices. See Chapters \ref{ch:multidiD} and \ref{ch:cyc} for more details.
Suppose that we are given the matrix $A$ and a row index subset $R^*$ of $A$. Let $S^*=\{ j \, : \, s(A_{\bullet j}) \cap R^* \neq \emptyset \}$. We decompose the matrix $A_{\overline{R^*}\times \overline{S^*}}$ into a maximum number of blocks and denote by $E_1,\ldots, E_b$ the row index sets of the different blocks. Each set $E_l$ is called a \emph{bonsai}. For all $1\le l\le b$, we compute row index subsets of $E_l$, called \emph{$E_l$-paths}, denoted by $E_l^1,\ldots,E_l^{m(l)}$, as well as a \emph{bonsai} matrix $N_l$. The matrix  $A_{E_l\times \overline{S^*}}$ (without zero columns) and the vectors $\chi_{E_l^k}^{E_l}$ are submatrices of $N_l$. Then we define a digraph denoted by $D$ and a matrix $O(R^*)$ containing the submatrix $A_{R^* \bullet}$. The graphical interpretation of these objects is the following.

Suppose that $A$ has an $R^*$-cyclic representation $G(A)$. Let $1\le l \le b$. The bonsai $E_l$ corresponds to the edge index set of a basic subtree in $G(A)$ (outside the basic cycle) called a \emph{bonsai} and denoted by $B_l$. For any $j\in S^*$, the intersection of the fundamental circuit of the nonbasic edge $f_j$ with some bonsai $B_l$ is either empty, or a directed path called a \emph{$B_l$-path}, or a union of two directed paths, called \emph{$B_l$-paths}, starting at a same node. 
Each $E_l$-path is the edge index set of a $B_l$-path (there is a one-to-one mapping between $B_l$-paths and $E_l$-paths). The bonsai $B_l$ is a basic network representation of the matrix $(N_l)_{E_l \bullet}$. The digraph $D$ contains some information about the feasible connections between the bonsais $B_l$ ($1\le l\le b$).
From $G(A)$ one can derive a basic network representation of $O(R^*)$ as follows: delete all nonbasic edges, contract some bonsais, switch at some nodes and "cut" the basic cycle at some node so that the basic cycle becomes a path, and add some directed edge at each endnode of this path. By computing some feasible spanning forest of $D$ as well as
a network representation of each bonsai matrix $N_l$ ($1 \le l\le b$) and of the matrix $O(R^*)$, if they exist, one can construct an $R^*$-cyclic representation of $A$.\\

The subroutine of the procedure Decomposition for 
recognizing nonnegative $\frac{1}{2}$-binet matrices is presented in Chapter \ref{ch:halfbinet}. The described method is based on the following fact: if $A$ has a $\frac{1}{2}$-binet representation $G(A)$ such that $S$ is the index set of all nonbasic bidirected edges (and these are entering), then $A'= \left[ \begin{array}{c}
A\\
(\chi_S^{\{1,\ldots, m \}})^T
\end{array} \right]$ has a $\{n+1\}$-cyclic representation. So 
our purpose is to compute a column index set $S$ 
such that whenever $A$ is binet $S$
corresponds to the index set of nonbasic bidirected edges in some $\frac{1}{2}$-binet representation of $A$. Let $S_2=\{ j \,: \, s_2(A_{\bullet j}) \neq \emptyset\}$. 
We compute a family $\mathscr{I}$ of pairwise disjoint row index subsets of $A$ and a column index subset $S(\mathscr{I})$ satisfying the following: whenever $A$ has a $\frac{1}{2}$-binet representation $G(A)$,
each element of $\mathscr{I}$ corresponds to the edge index set of a basic (negative) $1$-tree in $G(A)$, and the matrix 
$A'= \left[ \begin{array}{c}
A\\
(\chi_{S(\mathscr{I})\cup S_2}^{\{1,\ldots, m \}})^T
\end{array} \right]$ is $\{n+1\}$-cyclic.
Moreover, given a $\{n+1\}$-cyclic representation of $A'$, it is easy to deduce a $\frac{1}{2}$-binet representation of $A$ (by contracting the basic loop with index $n+1$).
 Thus, the recognition of nonnegative $\frac{1}{2}$-binet matrices is reduced to the recognition of nonnegative $R^*$-cyclic matrices with $|R^*|=1$. \\

Now we turn to the problem of recognizing nonnegative bicyclic matrices. This problem is deeply studied in Chapter \ref{ch:bicyc}. Suppose that we are given the matrix $A$, this one is $\frac{1}{2}$-equisupported and has at least one $\frac{1}{2}$-entry. Let $S_{\frac{1}{2}}=\{j\, :\, s_{\frac{1}{2}}(A_{\bullet j})\neq \emptyset\}$ and $R^*=s_{\frac{1}{2}}(A_{\bullet j})$ for any $j\in S_{\frac{1}{2}}$. We decompose the matrix $A_{\bullet \overline{S_{\frac{1}{2}}}}$ into blocks whose row index sets are denoted by $C_1,\ldots, C_r$ called \emph{cells}. 
Provided that $A$ is bicyclic, one can prove that $R^*$ is the edge index set of both basic cycles in any bicyclic representation of $A$; the difficulty consists in finding a partition of $R^*$ into subsets say $R_1$ and $R_2$ ($R^*=R_1 \uplus R_2$) such that $A$ has a bicyclic representation where $R_1$ and $R_2$ are the edge index sets of the basic cycles. Actually, we will partition the set $\mathcal{K}$ of cells intersecting $R^*$.

Suppose that $A$ has a bicyclic representation $G(A)$. One can prove that any cell $C_k$ ($1\le k \le r$)
is the edge index set of a subtree in $G(A)$. We say that $G(A)$ \emph{induces the bipartition $\Sigma(\mathcal{K})=\{\mathcal{K}_I, \mathcal{K}_{II} \}$}, where $\mathcal{K}_I$ (respectively, $\mathcal{K}_{II}$) is the set of cells corresponding to subtrees in one basic maximal $1$-tree of $G(A)$ (respectively, the other $1$-tree).
 We will prove that
if $A_{C_k \bullet}^{\frac{1}{2} \rightarrow 1}$ is not a network matrix for some $C_k \in \mathcal{K}$, then the corresponding subtree in $G(A)$ contains a central edge. Similarly, when $s_{\frac{1}{2}}(A_{\bullet j}) \nsubseteq R^*$ for some $j \in S_{\frac{1}{2}}$.

These observations restrict the choices of feasible bipartitions and motivate the definition of \emph{bicompatible} bipartition in $\mathcal{K}$. Then, we define an equivalence relation over the set of bicompatible bipartitions in $\mathcal{K}$. We prove that the cardinality of the resulting quotient set $\mathscr{S}$ is bounded by a constant, actually 18. Moreover, we show the following:
provided that $A$ is bicyclic, if a bicompatible bipartition $\Sigma(\mathcal{K})$ is induced by some bicyclic representation of $A$, then for any bicompatible bipartition $\Sigma'(\mathcal{K})$ equivalent to  $\Sigma(\mathcal{K})$, there exists a bicyclic representation of $A$ inducing $\Sigma'(\mathcal{K})$.

At last, for any representant $\Sigma(\mathcal{K})=\{ \mathcal{K}_I, \mathcal{K}_{II} \}$ of an equivalence class in $\mathscr{S}$, we denote by $M_i(\Sigma)$ the matrix $A_{\cup_{C_k \in \mathcal{K}_i} C_k \bullet }$ without zero columns and $R_i^*(\Sigma)=\cup_{C_k \in \mathcal{K}_i} C_k \cap R^*$ for $i=I$ and $II$; we also define a matrix $N(\Sigma)$ containing  every block of  $A_{\overline{R^*} \times S_{\frac{1}{2}}}$ whose row index set does not intersect $R^*$. We will prove that whenever $A$ has a bicyclic representation $G(A)$ inducing a bipartition $\Sigma(\mathcal{K})$, the subgraph $G(M_i(\Sigma))$ is an $R_i^*(\Sigma)$-cyclic representation of $M_i(\Sigma)$ for $i=I$ and $II$. Furthermore, if we consider the union 
of subtrees with edge index set $C_k$ ($C_k \notin \mathcal{K}$) and all fundamental circuits of nonbasic edges with index in $S_{\frac{1}{2}}$, then by deleting the nonbasic edges and replacing each basic cycle by a basic half-edge, we obtain a basic $\frac{1}{2}$-binet representation of $N(\Sigma)$ (with exactly two basic half-edges).
For any bicompatible bipartition $\Sigma(\mathcal{K})$, by  
computing an  $R_i^*(\Sigma)$-cyclic representation of $M_i(\Sigma)$ for $i=I$ and $II$ and some $\frac{1}{2}$-binet representation of $N(\Sigma)$, if they exist, we derive a bicyclic representation of $A$.\\

Finally, we describe the way of recognizing nonnegative $R^*$-central matrices ($|R^*|=2$)
without any $\frac{1}{2}$-entry. See Chapter \ref{ch:central} for more details.
Let $A$ be a $\{0,1,2\}$-matrix and $\epsilon$ and $\rho$ two row indexes. Our method for determining whether $A$ is $\{\epsilon,\rho\}$-central can be viewed as a generalization of Schrijver's method \cite{ShrAlex} for recognizing network matrices. Let $R^*=\{\epsilon,\rho\}$ and $S^*=\{j\, :\, s(A_{\bullet j}) \cap R^* \neq \emptyset \}$.
We decompose the matrix $A_{\overline{R^*} \times \overline{S^*}}$ into (a maximum number of)
blocks whose row index sets are denoted by $E_1,\ldots,E_b$ and called \emph{bonsais}, and compute a digraph $D$ in the same way as for the recognition of $R^*$-cyclic matrices (see Chapter \ref{ch:multidiD}). 

Our first task is to analyze the significative kinds of bonsais.
Let $S_0=\{j\in S^* \, :\, \epsilon,\rho \in s(A_{\bullet j})\}$,
$S_1=\{j\in S^* \, :\, \epsilon \in s(A_{\bullet j}),\, \rho \notin s(A_{\bullet j})\}$ and $S_2=\{j\in S^* \, :\, \epsilon \notin s(A_{\bullet j}), \,\rho \in s(A_{\bullet j})\}$. We say that a bonsai $E_l$ is \emph{shared} if there exist $j_1 \in S_1$ and $j_2 \in S_2$ such that $E_l \cap s(A_{\bullet j_1}) \neq \emptyset$ and $E_l \cap s(A_{\bullet j_2}) \neq \emptyset$.

Suppose that $A$ has an $\{\epsilon,\rho\}$-central representation $G(A)$. Each bonsai $E_l$ corresponds to the edge index set of a subtree denoted by $B_l$ in $G(A)$ and called a \emph{bonsai}. A bonsai $B_l$ ($1\le l \le b$) is said to be shared if and only if $E_l$ is shared.
Let $T$ be the basic maximal $1$-tree in $G(A)$.
A basic subgraph of $G(A)$ is \emph{on the right}\index{on the right}  \emph{of $\{e_\epsilon,e_\rho\}$} if it is contained in the basic connected subtree of  $T\verb"\" \{e_\epsilon,e_\rho\}$ containing the central node incident to $e_\epsilon$ and $e_\rho$, otherwise  \emph{on the left}\index{on the left}  \emph{of $\{e_\epsilon,e_\rho\}$}.
On the other hand, it may happen that two shared bonsais are in the situation described in Figure \ref{figOverviewAssumpA}. (It would be preferable in 
Figure \ref{figOverviewAssumpA} that $v_{l'}=v_{l,1}$ or $v_{l'}=v_{l,2}$.)
 
\begin{figure}[h!]
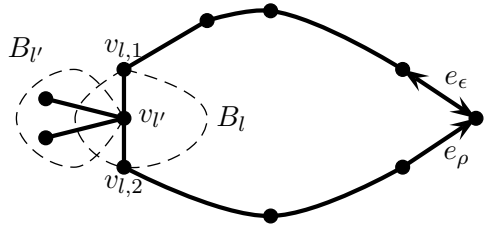


\vspace{0.8cm}

\psset{xunit=1.3cm,yunit=1.3cm,linewidth=0.5pt,radius=0.1mm,arrowsize=7pt,
labelsep=1.5pt,fillcolor=black}

\pspicture(-4,0)(3.5,2.5)

\pscircle[fillstyle=solid](-.8,1.7){.1}
\pscircle[fillstyle=solid](-.8,1.3){.1}

\pscircle[fillstyle=solid](0,1){.1}
\pscircle[fillstyle=solid](0,1.5){.1}
\pscircle[fillstyle=solid](0,2){.1}

\pscircle[fillstyle=solid](0.85,2.5){.1}
\pscircle[fillstyle=solid](1.5,.5){.1}
\pscircle[fillstyle=solid](1.5,2.6){.1}

\pscircle[fillstyle=solid](2.85,1){.1}
\pscircle[fillstyle=solid](2.85,2){.1}
\pscircle[fillstyle=solid](3.6,1.5){.1}

\psline[linewidth=1.6pt,arrowinset=0]{-}(0,1.5)(-.8,1.7)
\psline[linewidth=1.6pt,arrowinset=0]{-}(0,1.5)(-.8,1.3)
\pscurve[linestyle=dashed](0,1.5)(-.5,2)(-1.1,1.5)(-.5,1)(0,1.5)
\rput(-1,2.2){$B_{l'}$}
\rput(.3,1.5){$v_{l'}$}

\rput(0,2.2){$v_{l,1}$}
\rput(0,.8){$v_{l,2}$}

\pscurve[linewidth=1.6pt](0.85,2.5)(1.5,2.6)(2.85,2)
\psline[linewidth=1.6pt,arrowinset=0]{-}(0,2)(0.85,2.5)
\psline[linewidth=1.6pt,arrowinset=0]{-}(0,2)(0,1)
\pscurve[linewidth=1.6pt](0,1)(1.5,.5)(2.85,1)

\psline[linewidth=1.6pt]{<->}(2.85,2)(3.6,1.5)
\rput(3.4,1.9){$e_\epsilon$}

\psline[linewidth=1.6pt]{->}(2.85,1)(3.6,1.5)
\rput(3.4,1.1){$e_\rho$}

\pscurve[linestyle=dashed](0,2)(0.85,1.5)(0,1)(-0.5,1.5)(0,2)
\rput(1.1,1.5){$B_l$}

\endpspicture

\caption{An illustration of two (shared) bonsais $B_l$ and $B_{l'}$ 
in some basic $\{\epsilon,\rho\}$-central representation of $A$. The bonsais $B_l$ and $B_{l'}$ have a common node $v_{l'}$; the intersection of $B_{l}$ with the basic cycle is a path between two nodes say $v_{l,1}$ and $v_{l,2}$, $v_{l'}\neq v_{l,1}$ and 
$v_{l'}\neq v_{l,2}$.}

\label{figOverviewAssumpA}

\end{figure}

In order to bypass this situation, an initialization procedure is performed on $A$, and outputs a matrix $A'$ such that $A$ is $\{\epsilon,\rho\}$-central if and only if $A'$ is $\{1,\rho\}$-central, and
$A'$ satisfies a certain condition called the \emph{assumption $\mathscr{A}$} that prevents the situation described in Figure \ref{figOverviewAssumpA}. Then two cases are distinguished: either $S_0\neq \emptyset$ or $S_0=\emptyset$.

Assume first that $S_0=\emptyset$.
Suppose that $A$ has an $\{\epsilon,\rho\}$-central representation $G(A)$. The main observation is the following: if we look at the succession of bonsais intersecting the edge set of the basic cycle in $G(A)$ (see Figure \ref{figOverviewShared}), we may identify a first sequence of non-shared bonsais, a second of shared bonsais and a third of non-shared  bonsais.
In the sequence of shared bonsais (if not empty), the first and last bonsais, say $B_l$ and $B_{l'}$, are of particular interest: the pair $(E_l,E_{l'})$ or the singleton $(E_l)$ (in case $E_l=E_{l'}$) is called a \emph{left-extreme} set. 
If there is no shared bonsai intersecting the edge set of the basic cycle, then for any shared bonsai $B_l$ on the left of $\{e_\epsilon,e_\rho\}$, the set $(E_l)$ is also called \emph{left-extreme}. For instance, in Figure \ref{figOverviewShared} a (respectively, b), the pair $(E_3,E_5)$ (respectively, $(E_3)$ or $(E_4)$) is left-extreme. 

\begin{figure}[h!]
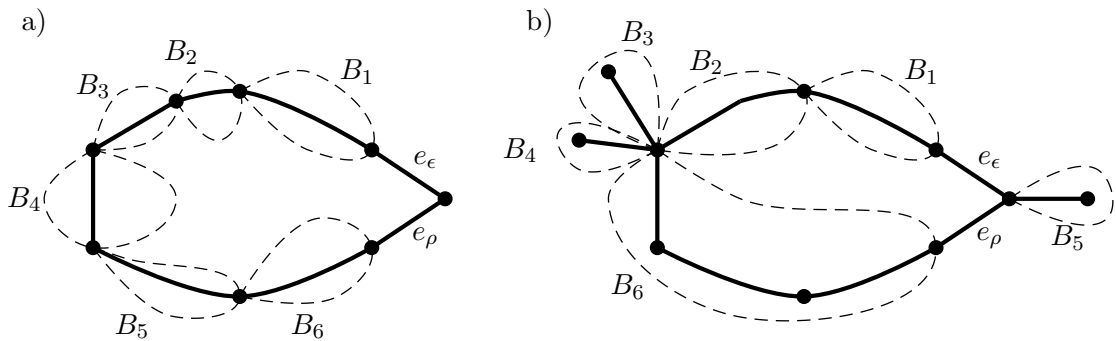


\vspace{1.5cm}

$
\begin{array}{cc}

\psset{xunit=1.3cm,yunit=1.3cm,linewidth=0.5pt,radius=0.1mm,arrowsize=7pt,
labelsep=1.5pt,fillcolor=black}

\pspicture(-1,0)(5.5,2.5)

\pscircle[fillstyle=solid](0,1){.1}
\pscircle[fillstyle=solid](0,2){.1}

\pscircle[fillstyle=solid](0.85,2.5){.1}
\pscircle[fillstyle=solid](1.5,.5){.1}
\pscircle[fillstyle=solid](1.5,2.6){.1}

\pscircle[fillstyle=solid](2.85,1){.1}
\pscircle[fillstyle=solid](2.85,2){.1}
\pscircle[fillstyle=solid](3.6,1.5){.1}

\rput(-.6,3.3){a)}

\pscurve[linewidth=1.6pt](0.85,2.5)(1.5,2.6)(2.85,2)
\psline[linewidth=1.6pt,arrowinset=0]{-}(0,2)(0.85,2.5)
\psline[linewidth=1.6pt,arrowinset=0]{-}(0,2)(0,1)
\pscurve[linewidth=1.6pt](0,1)(1.5,.5)(2.85,1)

\psline[linewidth=1.6pt](2.85,2)(3.6,1.5)
\rput(3.4,1.9){$e_\epsilon$}

\psline[linewidth=1.6pt](2.85,1)(3.6,1.5)
\rput(3.4,1.1){$e_\rho$}

\pscurve[linestyle=dashed](1.5,2.6)(2.2,2)(2.85,2)(2.2,2.8)(1.5,2.6)
\rput(2.7,2.8){$B_1$}

\pscurve[linestyle=dashed](0.85,2.5)(1.1,2.8)(1.5,2.6)(1.3,2.1)(.85,2.5)
\rput(.9,3){$B_2$}

\pscurve[linestyle=dashed](0,2)(0.6,2.1)(0.85,2.5)(0.3,2.6)(0,2)
\rput(0,2.65){$B_3$}

\pscurve[linestyle=dashed](0,2)(0.85,1.5)(0,1)(-0.5,1.5)(0,2)
\rput(-.7,1.5){$B_4$}

\pscurve[linestyle=dashed](0,1)(.8,.3)(1.5,.5)(.3,.9)(0,1)
\rput(.4,.2){$B_5$}

\pscurve[linestyle=dashed](1.5,.5)(2.5,.5)(2.85,1)(2.3,1.3)(1.5,.5)
\rput(2.2,.2){$B_6$}

\endpspicture &

\psset{xunit=1.3cm,yunit=1.3cm,linewidth=0.5pt,radius=0.1mm,arrowsize=7pt,
labelsep=1.5pt,fillcolor=black}

\pspicture(0,0)(3.5,2.5)

\pscircle[fillstyle=solid](-0.8,2.1){.1}
\pscircle[fillstyle=solid](-0.5,2.8){.1}

\pscircle[fillstyle=solid](0,1){.1}
\pscircle[fillstyle=solid](0,2){.1}

\pscircle[fillstyle=solid](1.5,.5){.1}
\pscircle[fillstyle=solid](1.5,2.6){.1}

\pscircle[fillstyle=solid](2.85,1){.1}
\pscircle[fillstyle=solid](2.85,2){.1}
\pscircle[fillstyle=solid](3.6,1.5){.1}

\pscircle[fillstyle=solid](4.4,1.5){.1}

\rput(-1.2,3.3){b)}

\psline[linewidth=1.6pt](-0.8,2.1)(0,2)
\psline[linewidth=1.6pt](-0.5,2.8)(0,2)

\pscurve[linewidth=1.6pt](0.85,2.5)(1.5,2.6)(2.85,2)
\psline[linewidth=1.6pt,arrowinset=0]{-}(0,2)(0.85,2.5)
\psline[linewidth=1.6pt,arrowinset=0]{-}(0,2)(0,1)
\pscurve[linewidth=1.6pt](0,1)(1.5,.5)(2.85,1)

\psline[linewidth=1.6pt](2.85,2)(3.6,1.5)
\rput(3.4,1.9){$e_\epsilon$}

\psline[linewidth=1.6pt](2.85,1)(3.6,1.5)
\rput(3.4,1.1){$e_\rho$}

\psline[linewidth=1.6pt](3.6,1.5)(4.4,1.5)

\pscurve[linestyle=dashed](1.5,2.6)(2.2,2)(2.85,2)(2.2,2.8)(1.5,2.6)
\rput(2.7,2.8){$B_1$}

\pscurve[linestyle=dashed](0,2)(1.3,2.1)(1.5,2.6)
(0.3,2.6)(0,2)
\rput(0.5,2.9){$B_2$}

\pscurve[linestyle=dashed](0,2)(-.2,3)(-.8,2.6)(0,2)
\rput(-0.2,3.2){$B_3$}

\pscurve[linestyle=dashed](0,2)(-1,2.2)(-.8,1.8)(0,2)
\rput(-1.4,2){$B_4$}

\pscurve[linestyle=dashed](3.6,1.5)(4.6,1.3)(4.5,1.8)(3.6,1.5)
\rput(4.2,1.1){$B_5$}

\pscurve[linestyle=dashed](0,2)(0.85,1.5)(2.85,1)
(.8,.3)(-0.5,1.5)(0,2)
\rput(-.3,.6){$B_6$}

\endpspicture 

\end{array}
$

\caption{An illustration of the bonsais intersecting the basic cycle in some basic $\{\epsilon,\rho\}$-central representation of a binet matrix $A$, where $B_3$, $B_4$ and $B_5$ are assumed to be shared. The sequence of shared bonsais intersecting the edge set of the basic cycle is $B_3$, $B_4$ and $B_5$ in case a, and is empty in case b. In case b however, $B_3$ and $B_4$ have exactly one common node with the basic cycle.}

\label{figOverviewShared}

\end{figure}
  
Now consider any shared bonsai $E_l$. By assuming that there exists an $\{\epsilon,\rho\}$-central representation $G(A)$ of $A$ such that $B_l$ is on the left of $\{e_\epsilon,e_\rho\}$, is it possible to compute the indexes of edges belonging to the basic cycle and $B_l$? Surprisingly it is, provided that the left-extreme set is known and of cardinality 2.
This tremendous fact is at the core of our procedure.

The notion of left-extreme set has motivated the definition of left-compatible set (depending only on the matrix $A$)
as we will see.
Provided that $A$ has an $\{\epsilon,\rho\}$-central representation $G(A)$, a left-extreme set of bonsais is necessarily left-compatible. For any left-compatible set $U$ ($|U| \le 2$), we define a $U$-spanning pair $(j_1,j_2)$ of column indexes as follows: $j_1\in S_1$, $j_2 \in S_2$; if $U=(E_u)$, then $s(A_{\bullet j_i}) \cap E_u \neq \emptyset$ for $i=1$ and $2$, and if $U=(E_u,E_{u'})$, then 
$s(A_{\bullet j_1}) \cap E_{u'} \neq \emptyset$ and
$s(A_{\bullet j_2}) \cap E_u \neq \emptyset$.
We also define $V(j_1,j_2)= \{ E_l \,: s(A_{\bullet j_i}) \cap E_l  \neq \emptyset \m{ for } i=1 \m{ or }
2, \m{ or } E_l \m{ is shared} \}$. We will prove that whenever $A$ has an $\{\epsilon,\rho\}$-cyclic representation $G(A)$ and $U$ is left-extreme, the union of the bonsais $B_l$ (with $E_l \in V(j_1,j_2)$) and $\{e_\epsilon,e_\rho\}$ yields
 a basic $1$-tree in $G(A)$. We also define an instance $\Omega(U,j_1,j_2)$ of $2$-SAT, where each variable $x_l$ is associated to a bonsai $E_l \in V(j_1,j_2)$. One necessary condition for a variable to receive a value $0$ or $1$ is that some matrix containing the matrix $A_{E_l \cup \{\epsilon, \rho \} \bullet }$ or $A_{E_l  \bullet }$ is a network matrix.

Given a truth assignment of $\Omega(U,j_1,j_2)$ and a network representation of every bonsai matrix $N_l$ with $E_l \notin V(j_1,j_2)$, we can construct an $\{\epsilon,\rho\}$-central representation $G(A)$ of $A$ such that for any $E_l \in V(j_1,j_2)$, we have 
$x_l=0 \Leftrightarrow B_l$ is on the left of $\{e_\epsilon,e_\rho \}$ in $G(A)$, and for any $E_u \in U $ $x_u=0$.

If we consider a left-compatible set $U$ with only one element say $E_u$, we may have to determine whether the matrix
$A_{E_u \cup \{\epsilon, \rho \} \bullet }$ has a network representation such that $e_\epsilon$ and $e_\rho$ are nonalternating. Thus it is necessary to develop a procedure for recognizing nonnegative $\{\epsilon,\rho\}$-noncorelated network matrices. This is the purpose of Section \ref
{sec:CentralNetcorelated}.

To deal with the case $S_0 \neq \emptyset$, the strategy is very similar to the case $S_0= \emptyset$. We define a notion of $S_0$-straight bonsai. A necessary condition for a bonsai $E_l$ to be $S_0$-straight is that $s(A_{\bullet j}) \cap E_l = 
s(A_{\bullet j'}) \cap E_l \neq \emptyset$ for any $j,j' \in S_0$.

Suppose that $A$ has an $\{\epsilon, \rho\}$-central representation $G(A)$ and $S_0\neq \emptyset$. We prove that if a bonsai $B_l$ intersects the edge set of the basic cycle then $E_l$ is $S_0$-straight. However, it might happen that a bonsai $B_l$ is on the right of $\{e_\epsilon,e_\rho\}$ and $E_l$ is $S_0$-straight. A bonsai $E_u$ is called \emph{right-extreme} if $E_u$ is 
$S_0$-straight, $B_u$ is on the right of $\{ e_\epsilon,e_\rho\}$
and there is no $S_0$-straight bonsai in $G(A)$ "closer" than $B_u$ to the basic cycle. The set of right-extreme bonsais is proved to be of cardinality at most two. This set has motivated the definition of \emph{right-compatibe} set. 

Let $V_0'=\{ E_l \, : \, s(A_{\bullet j}) \cap E_l \neq \emptyset \m{ for some } j\in S_0 \m{ or Ê} E_l \m{ is shared}  \}$.
For any right-compatible set $U$, we define an instance $\Lambda(U)$ of $2$-SAT, where each variable $x_l$ is associated to a bonsai $E_l \in V_0'$. Given  a truth assignment of $\Lambda(U)$ and a network representation of every bonsai matrix $N_l$ with $E_l \notin V_0'$, we can construct an $\{\epsilon,\rho\}$-central representation $G(A)$ of $A$ such that for any $E_l \in V_0'$, we have 
$x_l=0 \Leftrightarrow B_l$ is on the left of $\{e_\epsilon,e_\rho \}$ in $G(A)$.

\section{Contents}\label{sec:Cont}

Chapter \ref {ch:Over} resumes the thesis.  In Section \ref{sec:OverIntro} we introduce the main results of the thesis, and in Section \ref{sec:IntNot} the necessary terms. Some applications
are presented in Section \ref{sec:Ap}. Section \ref{sec:OverRecBinet}
provides some technical details of the most significative part of the thesis: a binet recognition algorithm. This section, Section \ref{sec:Cont}, gives the content of each chapter and section.

In Chapter \ref{ch:Int},  we introduce the main notations and notions. Section \ref{sec:IntbasicMat} is about vectors, matrices and polyhedrons, Section \ref{sec:IntbasicGr} about undirected and directed graphs. Section \ref{sec:IntNet} contains the definition of network matrices and totally unimodular matrices with some known related results.

In Chapter \ref{ch:Bidirected} we define bidirected graphs as a common generalization of undirected and directed graphs. Basic definitions about bidirected graphs are in Section \ref{sec:notions}. In Section \ref{sec:Incidence} we define the node-edge incidence matrix of a bidirected graph and characterize linear independence and dependence in this matrix. We also present some operations on bidirected graphs and the corresponding operations in their node-edge incidence matrix.

We embark on generalizing network matrices in Chapter \ref{ch:Binetdef}, where we define binet matrices. We describe the graphical interpretation of binet matrices in Section \ref{sec:BinetDefi} and some operations on binet matrices in Section \ref{sec:BinetOp}. Section \ref{sec:Binetrep} introduces some particular binet representations. In Section \ref{sec:BinetOpt} we discuss linear and integer programming with the node-edge incidence matrix of a bidirected graph or a binet matrix as constraint matrix. Section \ref{sec:Binetreg} is about
the class of $2$-regular matrices which contains all binet matrices. Section \ref{sec:BinetMat} deals with matroids. To exhibit the connection of binet matrices to existing special classes of matroids, we introduce signed graphs and then the signed-graphic matroid based on these graphs. Some elementary notions about matroids are given in Subsection \ref{sec:BinetMatSub}.

Chapter \ref{ch:Camion} is about Camion bases. We present some characterizations of Camion bases in Sections \ref{sec:di} and \ref{sec:rec} and an algorithm which finds a Camion basis in Section \ref{sec:search}. This Chapter can be read independently of the other ones. 

In Chapter \ref{ch:Rec}, we turn to the problem of recognizing binet matrices by using the results of Chapters \ref{ch:multidiD} to 
\ref{ch:central}. Section \ref{sec:RecBinet} describes the general framework of an algorithm called Binet which determines whether a given matrix is binet in time polynomial in its size. Section \ref{sec:decomppro} describes the second main step of the algorithm Binet, that is a subroutine called Decomposition.

In Chapter \ref{ch:multidiD}, we develop a structure for the recognition of $R^*$-cyclic and $R^*$-central matrices.
Section \ref{sec:mainthm} serves as a tool for the next section. In Section \ref{sec:bonsaimat}, we compute some matrices called bonsai matrices. In
Section \ref{sec:DefDigraphD}, we define a digraph denoted by $D$, and  in Section \ref{sec:FordigaphD}, a feasible spanning forest of $D$.  Section \ref{sec:For} describes an algorithm for computing a feasible spanning forest of $D$, if one exists.

Given a rational matrix $A$ and a row index subset $R^*$ of $A$, one determines whether $A$ has an $R^*$-cyclic representation in Chapter \ref{ch:cyc}. An informal sketch of the recognition procedure is given in Section \ref{sec:infcyc}, and the procedure itself, called RCyclic, with the formal proof of its correctness appears in Section \ref{sec:cyc}.

Similarly, Chapters \ref{ch:halfbinet} and \ref{ch:bicyc}   are about the recognition of $\frac{1}{2}$-binet and bicyclic matrices, respectively. 

In Chapter \ref{ch:central}, given a rational matrix $A$ and two row indexes, say $\epsilon$ and $\rho$, we turn to the problem of recognizing whether $A$ has an $\{\epsilon, \rho\}$-central representation. By letting $S_0=\{j \, :\, \epsilon ,\rho \in s(A_{\bullet j})\}$, a recognition procedure is presented in Section \ref{sec:S0empty} for the case $S_0=\emptyset$, and in Section \ref{sec:S0notempty} for the case $S_0 \neq \emptyset$. Section \ref{sec:PreCentral} provides some definitions and an initialization procedure for the main procedures of Sections \ref{sec:S0empty} and \ref{sec:S0notempty}.  In Section \ref{sec:CentralNetcorelated}, we give a characterization
 of nonnegative $\{\epsilon,\rho\}$-noncorelated matrices and a polynomial recognition procedure for these matrices.

In Chapter \ref{ch:conclude}, we analyze the pertinence of our procedure for recognizing binet matrices and present possible alternatives and future developments.



\chapter{Basic Notions and Definitions}\label{ch:Int}

Here we define the notions used throughout this thesis. The notations we used are standard. We assume familiarity of the reader with the elements of linear algebra, such as linear (in)dependence, rank, determinant, matrix, non-singular matrix, inverse, Gauss' algorithm for solving a system of linear equations, etc.
A reader not familiar with the content of this section can consult, for example, Schrijver \cite{ShrAlex} or Diestel \cite{Diestelgraph}.

As always, $\mathbb{Z}$, $\mathbb{Q}$, and $\mathbb{R}$ denote the set of integer, rational, and real numbers. $\mathbb{N}$ contains the nonnegative integers. The
set of positive real and integer numbers are $\mathbb{R}_+$ and $\mathbb{N}_+$, respectively. The greatest integer smaller than $x\in \mathbb{R}$ is denoted by $\lfloor x\rfloor$. The \emph{projective plane}\index{projective plane}, denoted by $\mathbb{P}^2$, is the sphere (in $\mathbb{R}^3$) where any two antipodal points ($x$ and $-x$) are identified.

For a set $S$, $|S|$ denotes the cardinality of $S$.
Two sets $S$ and $S'$ are called \emph{disjoint}\index{disjoint sets}, if $S\cap S'=\emptyset$.
If $S$ and $S'$ are two disjoint sets, then $S\uplus S'$ denotes the disjoint union of $S$ and $S'$.

A \emph{partial order}\index{partial order} is a binary relation "$\le$" over a set $S$ which is reflexive, antisymmetric, and transitive, i.e., for all $a$, $b$ and $c$ in $S$, we have:

\begin{itemize}

\item $a\le a$ (reflexivity);
\item if $a\le b$ and $b\le a$ then $a=b$ (antisymmetry);
\item if $a\le b$ and $b\le c$ then $a\le c$ (transitivity).

\end{itemize}

A \emph{partially ordered set} (or \emph{poset}\index{poset}) is a set equipped with a partial order relation.

We write $f(x)=O(g(x))$ for real-valued functions $f$ and $g$, if there exists a constant $C$ such that $f(x)\le C g(x)$ for all $x$ in the domain.

If we consider an optimization problem like 

$$ \min\{ \phi(x) \,| \, x\in S\}$$

\noindent 
where $S$ is a set and $\phi: S \rightarrow \mathbb{R}$, then any element $x$ of $S$ is called a \emph{feasible solution}\index{feasible solution} for the minimization problem. A feasible solution attaining the maximum is called an \emph{optimum}\index{optimum} (or \emph{optimal solution}).

\section{Vectors, matrices and polyhedrons}\label{sec:IntbasicMat}

If $x=[x_1,\ldots,x_n]$ and $y=[y_1,\ldots,y_n]$ are row vectors, we write $x\le y$ if $x_i \le y_i$ for $i=1,\ldots,n$.
Similarly for column vectors. If $A$ is a matrix of size $n\times m$, and $b$ is a column vector of size $n$, then the matrix $A$  is called the \emph{constraint matrix}\index{constraint matrix} of the system of linear equations "$Ax=b$".

Vectors and matrices whose elements are integers are called \emph{integral}\index{integral!vector or matrix}. That is, integral $m$-dimensional vectors are those in $\mathbb{Z}^m$, an integral matrix of size $m\times n$ is in $\mathbb{Z}^{m\times n}$. Similarly, \emph{rational}\index{rational vector or matrix} vectors and matrices have elements from $\mathbb{Q}$. \emph{Half-integral}\index{half-integral!vector or matrix} vectors and matrices have elements that are integer multiples of $\frac{1}{2}$.
For  a set $R$ and $R' \subseteq R$,
we denote by $\chi_{R'}^{R}$ the \emph{characteristic vector}\index{characteristic vector} or \emph{incidence vector}\index{incidence vector} of the subset $R'$ of $R$, given by $(\chi_{R'}^{R})_i = \left \{ \begin{array}{ll}
1 & \mbox{ if } i \in R' \\
0 & \mbox{ Otherwise }
\end{array} \right.
$ for all $i\in R$.

Let $A$ be a rational matrix with row set $R$ and column set $S$. $(A)_{ij}$ or $A_{ij}$ or $a_{ij}$ denotes the element in row $i\in R$ and column $j\in S$. 
For  $R'\subseteq R$ and $S'\subseteq S$,
$A_{R'\times S'}$ denotes the submatrix of $A$ whose rows are indexed by $R'$ and columns by $S'$.
$A_{R' \bullet}$ (respectively, $A_{\bullet S'}$) denotes the set of rows of $A$ indexed by $R'$ (respectively, columns of $A$ indexed by $S'$). $A_{R^2}$ is equal to $A_{R \times R}$. For $S' \subseteq S$,
$\overline{S'}=S\verb"\"S'$. $O_{n\times m}$ is a zero $n\times m$ matrix for some integers $n$ and $m$. Sometimes, $I$ denotes a square matrix with ones in the diagonal and zeros elsewhere.
We denote by $kA$ the matrix obtained from $A$ by multiplying all elements by $k$ ($k\in \mathbb{R}$). Thus $A$ is a half-integral matrix, if and only if $2A$ is integral.
$A^T$ is the transpose of $A$.

For any $j\in S$, denote by $s(A_{\bullet j})=
\{i\,:\, A_{ij}\neq 0\}$ the support of $A_{\bullet j}$ and by    
$s_k(A_{\bullet j})= \{i\, : \, A_{ij}=k\}$ the \emph{$k$-support}\index{support@$k$-support}, for any $k\in \mathbb{R}$. A \emph{$k$-entry}\index{entry@$k$-entry} of $A$ is an entry of $A$ equal to $k$. For a set $I\subseteq \mathbb{R}$, an \emph{$I$-matrix}\index{matrix@$I$-matrix} is a matrix all of whose entries are in $I$. For $R'\subseteq R$, we denote $f(R')=\{ j \, : \, s(A_{\bullet j}) \cap R' \neq \emptyset \}$.
$A^{\frac{1}{2}\rightarrow 1}$ denotes the matrix obtained from $A$ by replacing each $\frac{1}{2}$-entry by $1$.
$A$ is called \emph{$\frac{1}{2}$-equisupported}\index{equisupported@$\frac{1}{2}$-equisupported} if for any column indexes $j$ and $j'$ such that $s_{\frac{1}{2}}(A_{\bullet j})\neq \emptyset$ and $s_{\frac{1}{2}}(A_{\bullet j'})\neq \emptyset$, we have 
$s_{\frac{1}{2}}(A_{\bullet j})=s_{\frac{1}{2}}(A_{\bullet j'})$.

If $A=\left[ \begin{tabular}{c|c}
$A_1$ & \\
\hline
       & $A_2$ 
\end{tabular} \right]$ is a matrix that has submatrices $A_1$ and $A_2$ in the upper left and bottom right corners, respectively, and zeros outside them, then this structure of $A$ is its \emph{decomposition}\index{decomposition of a matrix} to components $A_1$ and $A_2$; the matrices $A_1$ and $A_2$ are called \emph{blocks}\index{block} of $A$. A matrix $A$ is said to be \emph{connected}\index{connected!matrix} if and only if $A$ has no zero row or column and $A$ can not be decomposed into two blocks using row and column permutations.

The determinant of a square matrix $A$ is denoted $\det(A)$. The \emph{rank}\index{rank} of a matrix $A$ is denoted $rank(A)$. A matrix is of \emph{full row rank}\index{full row rank}, if its rank equals the number of its rows, or equivalently, if its row vectors are linearly independent. 
A \emph{basis}\index{basis} of a full row rank
$A$ is a non-singular square submatrix of $A$ of size $rank(A)$. The rank of the node-edge incidence matrix $A$ of a connected directed graph $G$ on $n$ nodes is $n-1$. Moreover, by deleting any row, $A$ can be made a full row rank matrix $A'$. The bases of $A'$ correspond to spanning trees of $G$.

A set $S$ of vectors is \emph{convex}\index{convex} if it satisfies: if $x,y \in S$ and $0\le \lambda \le 1$, then $\lambda x+ (1-\lambda)y \in S$. The \emph{convex hull}\index{convex hull} of a set $S$ of vectors is the smallest convex set containing $S$, and is denoted $conv(S)$.

A matrix whose entries are zeros and ones, is said to be an \emph{interval} matrix\index{interval matrix} if its rows can be permuted in such a way that the 1's in each column occur consecutively.

Let $A$ and $A'$ be $n\times m$ matrices. We say that $A$ is \emph{projectively equivalent}\index{projectively equivalent} to $A'$ if there is a nonsingular $n\times n$ matrix $\pi$ such that $A'= \pi A$.

An operation on a matrix $A$ that will be frequently used is \emph{pivoting}\index{pivoting}, that is, the following transformation:

\[
A=\left [\begin{array}{cc} \alpha &
\cc\\ \bb & D \end{array} \right ]  \longrightarrow A'=\left [\begin{array}{cc} \frac{1}{\alpha} &
\frac{\cc}{\alpha}\\ -\frac{\bb}{\alpha} & D- \frac{\bb \cc}{\alpha}\end{array} \right ],
\]

\noindent
where $\alpha$ is a non-zero entry, $\bb$ a
column vector, $\cc$  a row vector, and $D$ a submatrix of $A$.

A set $P$ of vectors in $\mathbb{R}^m$ is a \emph{polyhedron}\index{polyhedron} if $P=\{x \,| \, Ax \le b \}$ for an $n\times m$ matrix $A$ and $n$-dimensional vector $b$. The vectors of a polyhedron are called its \emph{points}\index{point of a polyhedron}. An \emph{extreme point}\index{extreme point} or \emph{vertex}\index{vertex of a polyhedron} of a polyhedron $P=\{x \,| \, Ax \le b \}$ is a point determined by $m$ linearly independent equations from $Ax =b$. Every extreme point of $P$ can arise as an optimal solution of $\max \{ cx \, | \, x \in P\}$ for a suitably chosen $c$.

If $P$ has at least one vertex (in which case $P$ is called \emph{pointed}\index{pointed polyhedron}), then $P$ is called \emph{integral}\index{integral!polyhedron}, if all of its vertices are integral. 
An integral polyhedron $P$ provides integral optimal solutions for $\max \{ cx \, | \,  x \in P\}$ for any $c$. Similarly, if $P$ is half-integral, then the optimal solutions are half-integral.

\section{Graphs and digraphs}\label{sec:IntbasicGr}

An (\emph{undirected}) \emph{graph}\index{graph} is a pair $G=(V,E)$, where $V$ is a finite set, and $E$ is a family of unordered pairs of elements of $V$. The elements of $V$ are called the \emph{vertices}\index{vertex} or \emph{nodes}\index{node} of $G$, and the elements of $E$ are called the \emph{edges}\index{edge} of $G$. For $v,w\in V$, an edge in $E$ connecting $v$ and $w$ is denoted $(v,w)$. The term "family" in the definition of graph means that a pair of vertices may occur several times in $E$.

A \emph{directed graph}\index{directed graph} or \emph{digraph}\index{digraph} is a pair $G=(V,E)$, where $V$ is a finite set, and $E$ is a finite family of ordered pairs of elements of $V$. The elements of $V$ are called the \emph{vertices}\index{vertex} or \emph{nodes}\index{node} of $G$, and the elements of $E$ are called the \emph{edges}\index{edge} or \emph{arcs}\index{arc} of $G$. For $v,w\in V$, an arc in $E$ from $v$ to $w$ is denoted $(v,w)$. The vertices $v$ and $w$ are called the \emph{tail}\index{tail} and the \emph{head}\index{head} of the arc $(v,w)$, respectively. So the difference with undirected graphs is that orientations are given to the pairs. 

For an (undirected or directed) graph $G=(V,E)$, a pair occuring more than once in $E$ is called a \emph{multiple edge}\index{multiple edge}. Graphs without multiple edges are called \emph{simple}. For a graph $G$,
$V(G)$ denotes the vertex set of $G$ and $E(G)$ the edge set. 
Two vertices $v$ and $w$ are \emph{adjacent}\index{adjacent} if they are contained in a same edge. The edge $(v,w)$ is said to be \emph{incident with}\index{incident with} the vertex $v$ and with the vertex $w$, and conversely. The vertices $v$ and $w$ are called the \emph{endnodes}\index{endnode} of the edge $(v,w)$.
The number of edges incident with a vertex $v$ is called the \emph{degree of $v$}\index{degree}. 

For $G=(V,E)$ and $G'=(V',E')$ two graphs, we set $G\cup G':= (V\cup V', E\cup E')$ and $G\cap G':= (V\cap V', E\cap E')$. A graph $G'=(V',E')$ is a \emph{subgraph}\index{subgraph} of $G=(V,E)$ if $V'\subseteq V$ and $E'\subseteq E$. If $E'$ is the family of all edges of $G$ which have both endnodes in $V'$, then $G'$ is said to be \emph{induced by}\index{induced by, subgraph} $V'$. 
For a graph $G$ and a subgraph $G' \subseteq G$, the \emph{edge incidence vector}\index{edge incidence vector} of $G'$ is the vector in $\mathbb{R}^{|E(G)|}$, denoted by $\chi_{G'}$, satisfying

$$(\chi_{G'})_e = \left\{ \begin{array}{ll}
1 & \m{ if } e \in E(G'), \\
0 & \m{ otherwise.}
\end{array}\right.
$$

A simple graph is \emph{complete}\index{complete} if $E$ is the set of all pairs of vertices. Two graphs $G=(V,E)$ and $G'=(V',E')$ are called \emph{isomorphic}\index{isomorphic}, denoted $G\simeq G'$, if there is a bijection $f:V \rightarrow V'$ satisfying, for all $v,w\in V$,

$$ (v,w) \in E \Leftrightarrow (f(v),f(w)) \in E'.$$

\noindent
A graph $G=(V,E)$ is called \emph{bipartite}\index{bipartite} if $V$ can be partitioned into two classes $V_1$ and $V_2$ such that each edge of $G$ contains a vertex in $V_1$ and a vertex in $V_2$. The sets $V_1$ and $V_2$ are called \emph{colour classes}\index{colour classes}.

Let $G=(V,E)$ be an undirected or directed graph.
A \emph{walk}\index{walk}  is a sequence $(v_1,e_1,v_2,\ldots
v_{t-1},$\\
$e_{t-1},v_t)$ where $v_i$ and $v_{i+1}$ are endnodes of edge $e_i$ ($i=1,\ldots,t-1$). The node $v_1$ (resp., $v_t$) is called the \emph{initial node}\index{initial node} (resp., \emph{terminal node}\index{terminal node} or \emph{endnode}\index{endnode}) of the walk.
If the walk does not cross itself, i.e $v_i\neq v_j$ for $1<i<t$, $1\le j\le t$, $i\neq j$, then it is a \emph{path}\index{path}. If $v_1=v_t$, the walk is called \emph{closed}\index{closed walk}. A closed path is called a \emph{cycle}\index{cycle}. In all these subgraphs the directions of the edges are not relevant. When they are, we speak about \emph{directed paths, directed cycle,}\index{directed path}\index{directed cycle}
etc.. The \emph{length}\index{length of a walk} of a walk is its number of edges.

\emph{Removing an edge}\index{removing an edge!from a graph} $e$ from $G$ or \emph{deleting an edge}\index{deleting an edge!in a graph} means removing it and we write $G-\{e\}=(V,E-\{e\})$. 
\emph{Removing a node}\index{removing a node!from a graph} from $G$ or \emph{deleting a node}\index{deleting a node!in a graph} means removing it and all edges incident with it. We write $G-\{v\}$ the graph obtained from $G$ by deleting $v$. For an edge $e=(x,y)$, we denote by $G\verb"/" e$ the graph obtained from $G$ by \emph{contracting}\index{contraction} the edge $e$ into a new vertex $v_e$, which becomes adjacent to all the former neighbours of $x$ and $y$ (if $G$ is directed, we keep the same orientation of the edges).
When we call a graph \emph{minimal}\index{minimal graph} or \emph{maximal}\index{maximal graph} with some property, we are refering to the subgraph relation.

The graph $G$ is \emph{connected}\index{connected!graph} if $G$ is not empty and there is a path between  any two nodes. A \emph{tree}\index{tree} is a connected graph which does  not contain a cycle. A \emph{subtree}\index{subtree} is a connected subgraph of a tree. A subgraph $G'=(V',E')$ of $G=(V,E)$ is a \emph{spanning tree}\index{spanning tree} of $G$ if $V'=V$ and $G'$ is a tree. A \emph{star}\index{star} is a tree with at most one vertex of degree larger than $1$.

A maximal connected subgraph of $G$ is called a \emph{component}\index{component of a graph} of $G$.  A vertex $v$ is called a \emph{cutvertex}\index{cutvertex} if $G-\{v\}$ has one more component than $G$. 
An edge is a \emph{cut-edge}\index{cut-edge}, if it separates two parts of the graph, i.e., after deleting a cut-edge the graph has one more connected component.
$G$ is called $2$-connected if $|G|> 2$ and $G-\{v\}$ is connected for any vertex $v\in G$.

Sometimes, it is convenient to consider one vertex of a tree as special; such a vertex is then called the \emph{root}\index{root} of this tree. A tree with a fixed root is called a \emph{rooted tree}\index{root!rooted tree}. If a rooted tree consists of a set of directed paths entering (respectively, leaving) the root, then it is called an \emph{in-rooted tree}\index{root!in-rooted tree} (respectively, \emph{out-rooted tree}\index{root!out-rooted tree}). 

Suppose that $G=(V,E)$ is a digraph. $G$ is called \emph{strongly connected}\index{strongly connected digraph} if between any two nodes $v$ and $w$ there exists a directed path from $v$ to $w$. A set $V'\subseteq V$ is said to be  \emph{closed in $G$}\index{closed vertex set} if for any arc $(v,v')$ in $G$ with $v\in V'$, we have that $v'\in V'$. A vertex $v$ in $G$ is called a \emph{sink}\index{sink}, if $\{v\}$ is closed in $G$.
If $G$ has no directed cycle, then the \emph{height}\index{height} of a vertex $v \in V$ is the length of the shortest directed path in $G$ from $v$ to a sink vertex, and the set of sink vertices is denoted by $Sink(G)$; moreover, if there is a directed path from a node $v$ to a node $w$, then $w$ is called a \emph{descendant}\index{descendant} of $v$, and $v$ an \emph{ancestor}\index{ancestor} of $w$.

The \emph{node-edge incidence matrix}\index{node-edge incidence matrix} of a graph $G$ has its rows and columns associated with the nodes and edges of the digraph. The non-zeros in a column associated with edge $e$ stand in the rows that correspond to the endnodes of $e$. If $G$ is directed, then heads get positive signs and tails get negative signs, otherwise all non-zero entries are equal to $1$. We denote by \emph{IMD}\index{IMD} the node-edge incidence of a digraph.  An \emph{RIMD}\index{RIMD}, or restricted IMD, is an IMD with (linearly) redundant rows removed. 

An undirected graph $G$ is said to be \emph{embeddable on a surface $S$}\index{embeddable} (for instance $S=\mathbb{R}$) if it is isomorphic to a graph (or pair) $G'=(V',E')$ with the following properties:

\begin{itemize}

\item[(i)] $V', E'\subseteq S$;

\item[(ii)]  different vertices are different points on $S$; 

\item[(iii)] every edge is a curve, that is a continuous injective mapping from the interval $[0,1]$ to $S$, between its endnodes;

\item[(iv)] the interior of an edge contains no vertex and no point of any other edge.

\end{itemize}

\section{Network matrices and totally unimodular matrices}\label{sec:IntNet}

In this Section, we introduce the very well-known class of network matrices and totally unimodular matrices. As an example,
we see that interval matrices are particular network matrices and we show that network matrices constitute a subclass of totally unimodular matrices. 
Moreover, we describe Hoffman's and Kruskal's characterization of totally unimodular matrices, providing the link between total unimodularity and integer linear programming. Then, we present the transshipment problem and the related question of deciding whether a matrix is a network matrix with an application to integer programming. 
We also discuss a deep theorem stating that each totally unimodular matrix arises by certain compositions from network matrices and from certain $5\times 5$ matrices. 
Finally, we provide some definitions that are used in Chapters \ref{ch:multidiD} and \ref{ch:central}.

Let $G=(V,E)$ be a digraph and $In$ the $V\times E$-incidence matrix of $G$. Let $In'$ be an RIMD obtained from $In$, $B$ a basis of $In'$ and suppose that $In'=(B\, N)$. The matrix $A=B^{-1}N$ is called a {\em network matrix}\index{network!matrix}.

Edges in $G$ corresponding to columns of $B$ (resp., $N$) are called \emph{basic}\index{basic!edge} (resp., \emph{nonbasic}) edges. Basic edges, respectively nonbasic ones, are in one-to-one correspondance with rows of $A$, respectively columns of $A$.
The digraph $G$ with the indication of basic and nonbasic edges is called a \emph{network representation}\index{network!representation} of $A$ (not unique in general) and is denoted $G(A)$.

Suppose now that $G$ is connected. It is not difficult to see that the basis $B$ of $In'$ corresponds to a spanning tree $T=(V,E_0)$ of $G$. It is known that for $e\in E_0$ and an edge $f=(u,v)\in E$:

\begin{eqnarray}
a_{e,f} :=  \left \{
\begin{array}{cl}
1 & \m{if the unique } \m{u-v-path in } T \m{ passes through }  e \m{
forwardly},\\
-1 & \m{if the unique } \m{u-v-path in } T \m{ passes through }  e \m{
backwardly},\\
0 & \m{if the unique } \m{u-v-path in } T \m{ does not pass through }  e  .
\end{array}
\right. \label{eqn:Introfund}
\end{eqnarray}

\noindent
The unique closed path going from $u$ to $v$ through $f$ and then from $v$ to $u$ in $T$ is called the {\em fundamental cycle of $f$}\index{fundamental cycle}.

Interval matrices are special network matrices, in which the spanning tree is a directed path. The non-tree edges then can be associated with subpaths. By definition, the non-zero elements in each column of an interval matrix are equal to $1$, and there is a permutation of the rows such that the resulting matrix has consecutive ones in each column. Thus the columns can be considered as characteristic vectors of intervals on a line with finite segments. Hence the name interval matrix.

A matrix $A$ is \emph{totally unimodular}\index{totally unimodular} if each subdeterminant of $A$ is $0$, $+1$, or $-1$.
In particular, each entry in a totally unimodular matrix is $0$, $+1$, or $-1$. A matrix of full row rank is \emph{unimodular}\index{unimodular} if $A$ is integral, and each basis of $A$ has determinant $\pm 1$. It is easy to see that a matrix $A$ is totally unimodular if and only if the matrix $[I \, A]$ is unimodular.

We prove here that every network matrix is totally unimodular.

\begin{thm}\label{thmSubclassNNetTot}
Network matrices are totally unimodular.
\end{thm}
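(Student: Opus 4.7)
The plan is to derive the total unimodularity of $A=B^{-1}N$ from two classical ingredients: the total unimodularity of the node--edge incidence matrix of a digraph, and the fact that pivoting on a $\pm 1$ entry preserves total unimodularity.

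First I would show that the node--edge incidence matrix $In$ of any digraph is totally unimodular, by induction on the size $k$ of a square submatrix $M$. Every column of $M$ has at most two nonzero entries, and when it has two they are $+1$ and $-1$. If some column of $M$ is zero then $\det(M)=0$; if some column has a single nonzero $\pm 1$, expand along it and apply the induction hypothesis to the $(k-1)\times(k-1)$ minor; otherwise every column has one $+1$ and one $-1$, so the rows of $M$ sum to the zero vector and $\det(M)=0$. Since $In'$ is obtained from $In$ by removing (linearly) redundant rows, $In'$ inherits total unimodularity; in particular, the basis $B$ satisfies $\det(B)=\pm 1$.

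Next I would invoke the standard fact that a single pivoting operation on a $\pm 1$ entry of a totally unimodular matrix yields another totally unimodular matrix. The key tool is the Schur complement identity
\[
\det\!\begin{pmatrix}\alpha & c\\ b & D\end{pmatrix}\;=\;\alpha\,\det\!\left(D-\tfrac{1}{\alpha}bc\right),
\]
which expresses every minor of the pivoted matrix (using the pivot formula of Section~\ref{sec:IntbasicMat}) as, up to sign, a minor of the original matrix that includes the pivot row and column, divided by the pivot entry $\pm 1$. Now, writing $In'=[B\,N]$ (up to column permutation) and performing $n$ successive pivots on appropriate $\pm 1$ entries of the $B$-block---such entries exist at each stage because the reduced block remains nonsingular and totally unimodular---transforms $[B\,N]$ into $[I\,B^{-1}N]=[I\,A]$. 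Hence $[I\,A]$ is totally unimodular, and so is its submatrix $A$.

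The only delicate step is the pivoting lemma itself; it reduces to the Schur complement identity above together with the observation that, because the pivot entry $\alpha$ is $\pm 1$, each new entry $d_{ij}-b_ic_j/\alpha$ of the pivoted matrix equals, up to sign, the $2\times 2$ minor $\alpha d_{ij}-b_ic_j$ of the original matrix and is therefore in $\{0,\pm 1\}$; the same identity applied to larger minors shows inductively that the $\{0,\pm 1\}$ property of all minors persists.
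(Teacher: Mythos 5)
Your proof is correct, but it follows a different route from the paper's. The paper's argument is a single determinant computation: given a nonsingular square submatrix $S$ of $A$, it extends $S$ to a basis $S'$ of $[I\,A]$ with $\det(S)=\det(S')$, observes that $BS'$ and $B$ are both bases of the IMD $[B\ In]$ and hence have determinant $\pm 1$ (a fact it asserts without proof), and concludes $\det(S)=\det(BS')/\det(B)=\pm 1$; no pivoting lemma is invoked. You instead prove the total unimodularity of the digraph incidence matrix explicitly (supplying exactly the step the paper dismisses as ``not difficult'') and then route through the lemma that pivoting on a $\pm 1$ entry preserves total unimodularity, applied $n$ times to turn the $B$-block into an identity. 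That second step is standard and sound, though your phrasing ``transforms $[B\,N]$ into $[I\,B^{-1}N]$'' is slightly imprecise: with the pivot formula of Section~\ref{sec:IntbasicMat} the diagonal entries of the reduced block come out as $1/\alpha=\pm 1$ rather than $1$, and the column bookkeeping of iterated pivots produces $B^{-1}N$ only up to row/column sign changes and permutations --- all of which are harmless since total unimodularity survives signing, permuting, and passing to submatrices. What each approach buys: yours is more self-contained and modular, resting on two reusable classical lemmas; the paper's is shorter and avoids the bookkeeping of iterated pivots, at the cost of leaving the $\pm 1$-determinant property of IMD bases unproved.
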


\begin{proof}
Let $A$ be a network matrix. By definition, there exist an
RIMD  $In$ and a basis $B$ of $ In$ such that $ In =[B \, N]$ and $A=B^{-1}N$.  Let $S$ be a square submatrix of $A$ with nonzero determinant. Then $det(S)=det(S')$ for some basis $S'$ of $[I \, A]$. Since the matrix  $[B \,  In]$ is 
an IMD, it is not difficult to prove that any basis of $[B \, In]$ has a determinant equal to $\pm 1$. This implies that $\det(B)=\pm 1$ and $\det(B S')=\pm 1$ (because $BS'$ is a basis of $[B \, In]$). Thus $\det(S)=\pm 1$.
\end{proof}\\

The link between unimodularity or total unimodularity and integer linear programming is given by the following fundamental results (see \cite{ShrAlex} for the proof).

\begin{thm}\label{thmSubclassuni} Let $A$ be an integral matrix
of full row rank. Then $A$ is unimodular if and only if for each integral vector $b$ the polyhedron $\{x \,| \, x \geq 0; Ax = b \}$ is integral.
\end{thm}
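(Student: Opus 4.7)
The plan is to prove the two directions separately using the one-to-one correspondence between vertices of the polyhedron $P(b) = \{x \,|\, x \geq 0, Ax = b\}$ and bases of $A$. Throughout, write $A$ as an $n \times m$ integral matrix of full row rank $n$.

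For the forward direction, assume $A$ is unimodular and let $b$ be an integral vector. I would take an arbitrary vertex $x^{*}$ of $P(b)$. By the definition of vertex given in Section~\ref{sec:IntbasicMat}, the point $x^{*}$ is determined by $m$ linearly independent equations from the system $Ax = b$, $x \geq 0$; together with the $n$ equations $Ax = b$ this forces at least $m - n$ of the inequalities $x_j \geq 0$ to be tight. Let $J$ be a set of $m - n$ such indices chosen so that the submatrix $B := A_{\bullet \overline{J}}$ is nonsingular (this is possible because $A$ has full row rank). Then $x^{*}_{\overline{J}} = B^{-1} b$ and $x^{*}_{J} = 0$. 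Since $B$ is a basis of $A$ and $A$ is unimodular, $\det(B) = \pm 1$, so $B^{-1}$ is integral by the adjugate formula. Hence $x^{*}$ is integral, proving that $P(b)$ is integral.

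For the backward direction, assume every $P(b)$ with integral $b$ is integral, and let $B$ be an arbitrary basis of $A$. The goal is to show $\det(B) = \pm 1$, which amounts to showing $B^{-1}$ is integral. I would pick an arbitrary integral vector $y \in \mathbb{Z}^n$ and try to realise $B^{-1} y$ as the nonzero part of a vertex of some $P(b)$. The trick is to translate into the nonnegative orthant: choose an integer $k$ large enough that $z := B^{-1} y + k \mathbf{1} \geq 0$, and set $b := y + k B \mathbf{1}$, which is integral. Define $x^{*}$ by placing $z$ in the coordinates of $B$ and zeros elsewhere; then $x^{*} \geq 0$ and $A x^{*} = B z = b$, so $x^{*} \in P(b)$. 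The key observation is that $x^{*}$ is a vertex of $P(b)$: the $n$ equations $Ax = b$ together with the $m - n$ equations $x_j = 0$ for $j$ outside the column set of $B$ form a linearly independent system precisely because $B$ is nonsingular. By hypothesis $x^{*}$ is integral, hence $z$ is integral, hence $B^{-1} y = z - k \mathbf{1}$ is integral. Since $y$ was arbitrary, $B^{-1}$ maps $\mathbb{Z}^n$ to $\mathbb{Z}^n$ and is therefore integral, which combined with $\det(B) \cdot \det(B^{-1}) = 1$ in $\mathbb{Z}$ gives $\det(B) = \pm 1$.

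The only delicate step is the vertex verification in the backward direction: one must check that $x^{*}$ is genuinely a vertex and not merely a feasible point, and this is where the nonsingularity of $B$ is used in a slightly different way than in the forward direction (there it produced an integer inverse; here it produces enough linearly independent tight constraints). Everything else reduces to Cramer's rule and the standard basic-feasible-solution characterization of vertices, so no substantial obstacle is expected beyond that translation argument.
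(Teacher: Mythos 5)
Your proof is correct. Note, however, that the paper does not actually prove this theorem: it states it as a known result and defers to Schrijver's book for the proof, so there is no in-paper argument to compare against. What you have written is precisely the classical Veinott--Dantzig argument one finds there -- the forward direction via extending the nonzero support of a vertex to a basis $B$ with $\det(B)=\pm 1$ and applying Cramer's rule, and the backward direction via translating $B^{-1}y$ into the nonnegative orthant to realise it as a vertex of $P(b)$ for a suitable integral $b$. Both directions are sound, including the one delicate step you flag (verifying that $x^{*}$ is a vertex, which follows since the $n$ rows of $A$ together with the $m-n$ unit rows indexed by the nonbasic columns have rank $m$ exactly when $B$ is nonsingular), and the polyhedron $\{x \mid x\geq 0,\ Ax=b\}$ is pointed whenever nonempty, so the integrality hypothesis applies to the vertex you construct.
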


\begin{thm}[(Hoffman and Kruskal's theorem)]\label{thmSubclassNHof} Let $A$ be an integral matrix. \\ Then $A$ is totally unimodular if and only if for each integral vector $b$ the polyhedron $\{x \,| \, x \geq 0; Ax \le b \}$ is integral.
\end{thm}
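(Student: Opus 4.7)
The plan is to prove the two implications separately. In both directions, note that the polyhedron $P(b) := \{x : x \geq 0,\, Ax \leq b\}$ lies in the nonnegative orthant and is therefore pointed, so ``integral'' means that every vertex of $P(b)$ is integer, where each vertex is the unique solution of a system of $m$ tight linearly independent constraints drawn from $Ax \leq b$ and $x \geq 0$.

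For the ``only if'' direction, suppose $A$ is totally unimodular. I would first check that the augmented matrix $\left[\begin{smallmatrix} A \\ -I \end{smallmatrix}\right]$ is totally unimodular as well: any square submatrix either sits entirely in the $A$-block or contains a row with at most one nonzero entry (namely $-1$), in which case Laplace expansion along that row reduces to a smaller submatrix of the same augmented matrix. Then for any vertex $x^*$ of $P(b)$ there is a non-singular $m \times m$ submatrix $A'$ of $\left[\begin{smallmatrix} A \\ -I \end{smallmatrix}\right]$ together with the corresponding integral sub-vector $b'$ of $\left[\begin{smallmatrix} b \\ 0 \end{smallmatrix}\right]$ such that $x^* = (A')^{-1} b'$. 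Since $|\det(A')| = 1$ by total unimodularity, Cramer's rule yields $x^* \in \mathbb{Z}^m$.

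For the ``if'' direction, let $B$ be any non-singular $k \times k$ submatrix of $A$. It is enough to show that $B^{-1}$ is integral: then $\det(B)$ and $\det(B^{-1})$ are reciprocal integers, forcing $\det(B) = \pm 1$. So I would pick an arbitrary $b^* \in \mathbb{Z}^k$ and aim to prove that $B^{-1} b^* \in \mathbb{Z}^k$. After a row and column permutation of $A$ (which preserves the hypothesis), I may assume $A = \left[\begin{smallmatrix} B & A_1 \\ A_2 & A_3 \end{smallmatrix}\right]$. Choose an integer vector $y \in \mathbb{Z}^k$ with $x^* := y + B^{-1} b^* \geq 0$ componentwise (possible by taking $y$ large), and set
$$ \hat{x}^* := \begin{bmatrix} x^* \\ 0 \end{bmatrix} \in \mathbb{R}^m, \qquad b := \begin{bmatrix} By + b^* \\ \lceil A_2 x^* \rceil \end{bmatrix} \in \mathbb{Z}^n. $$
A direct computation shows that $\hat{x}^* \in P(b)$, that the first $k$ rows of $Ax \leq b$ and the last $m-k$ nonnegativity constraints are tight at $\hat{x}^*$, and that their coefficient matrix $\left[\begin{smallmatrix} B & A_1 \\ 0 & I_{m-k} \end{smallmatrix}\right]$ has determinant $\det(B) \neq 0$. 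Hence $\hat{x}^*$ is a vertex of $P(b)$, which by hypothesis must be integer, forcing $y + B^{-1} b^*$ and therefore $B^{-1} b^*$ itself to be integer.

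The main obstacle is the construction $(\hat{x}^*, b)$ in the reverse direction: one must engineer an \emph{integer} right-hand side $b$ so that an actual vertex of $P(b)$ reveals the desired integrality of $B^{-1} b^*$. The nonnegativity shift $y$ and the ceiling $\lceil A_2 x^* \rceil$ are precisely the bookkeeping devices that keep $b$ integral without destroying either the feasibility of $\hat{x}^*$ or its status as a vertex.
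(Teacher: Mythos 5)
The paper does not prove this theorem at all: it states it as a known result and defers to Schrijver's book for the proof, so there is no internal argument to compare against. Your proof is correct and is essentially the standard (Veinott--Dantzig) argument found in that reference: the forward direction via total unimodularity of $\bigl[\begin{smallmatrix} A \\ -I \end{smallmatrix}\bigr]$ and Cramer's rule at a vertex, and the converse via engineering an integral right-hand side $b$ so that $\bigl(y+B^{-1}b^*,\,0\bigr)$ becomes a vertex of $\{x \geq 0,\ Ax \leq b\}$, with the shift $y$ and the ceiling $\lceil A_2 x^*\rceil$ doing exactly the bookkeeping needed. The only cosmetic remark is that after concluding $\det(B)=\pm 1$ for every nonsingular square submatrix $B$ you should add the one-line observation that singular submatrices have determinant $0$, so all subdeterminants lie in $\{0,\pm 1\}$ and $A$ is totally unimodular.
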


Consider the linear programm 
\begin{eqnarray}
\min  & c^T x \nonumber \\
 s.c. &  Ax = b \label{eqnSubclassNprog}\\
      & x\geq 0 \nonumber
\end{eqnarray}
\noindent
 where $c\in \mathbb{R}^m$, $b\in \mathbb{R}^n$ and $A \in \mathbb{R}^{n \times m}$ for some $n,m\in \mathbb{N}$. Suppose that the constraint matrix $A$ is integral and of full row rank.
Then by Theorem \ref{thmSubclassNHof}, $A$ is unimodular if and only if for any integral right-hand side $b$, there exists an integral optimal solution. 

If the constraint matrix in (\ref{eqnSubclassNprog}) is an RIMD, then the problem is called a transshipment\label{mycounter1} problem or more simply a network problem in the literature and the interpretation of (\ref{eqnSubclassNprog}) is the following. Variables correspond to flows in edges, while the constraints correspond to supply (if $b_r <0$), demand (if $b_r > 0$) or flow conservation (if $b_r=0$) requirements at the vertices; the objective is to find a minimum cost flow subject to these constraints.

Very efficient solution techniques are available for (\ref{eqnSubclassNprog}) such as the Edmonds-Karp scaling method \cite{EdmKarpNetwork-70}, and the network simplex method \cite{CunWH-NetSimMeth,CunNetwork-79,OrlinNetSimpMethod-96}). Second, if $b$ is integral, then any of the standard solution techniques will find an integral solution.

Suppose that the constraint matrix $A$ in (\ref{eqnSubclassNprog}) is not an RIMD and we wish to convert the problem (\ref{eqnSubclassNprog}) to a transshipment problem in order to take advantage of the integrality properties of transshipment problems. It would be nice to have an algorithm for deciding when a given matrix can be converted to an RIMD by using only elementary row operations. This is equivalent to the problem of recognizing network matrices as stated in the next theorem (see \cite{BixCun-80}).

\begin{thm}\label{thmSubclassNEqui}
Let $A$ be a real $n\times m$ matrix in standard form (the first $n$ columns of $A$ form the identity matrix).
Then $A$ is projectively equivalent to an RIMD if and only if $A$ is a network matrix.
\end{thm}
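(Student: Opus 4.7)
The plan is to unfold both definitions directly; the theorem will follow without any combinatorial content. Write $A = [I \, N_0]$ in its standard form, where $N_0$ is the $n \times (m-n)$ non-identity block. By the definition recalled just above, calling $A$ a network matrix amounts to saying that $N_0 = B^{-1} N$ for some RIMD $[B \, N]$ (up to column permutation) whose basis is $B$.

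For the backward implication, assume $A$ is a network matrix with associated RIMD $In' = [B \, N]$ and $N_0 = B^{-1} N$. I take $\pi = B$, which is nonsingular since it is a basis. Then
$$\pi A = B [I \, N_0] = [B \, B N_0] = [B \, N] = In',$$
so $A$ is projectively equivalent to the RIMD $In'$.

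For the forward implication, assume $\pi A = In'$ for some nonsingular $n \times n$ matrix $\pi$ and some RIMD $In'$. Because the first $n$ columns of $A$ form the identity, the first $n$ columns of $\pi A$ are precisely the columns of $\pi$ itself. Hence $\pi$ appears as an $n \times n$ submatrix of the RIMD $In'$, and being nonsingular it is in fact a basis of $In'$. Setting $B := \pi$ and letting $N$ denote the remaining columns of $In'$, we obtain $In' = [B \, N]$ together with $\pi N_0 = N$, i.e.\ $N_0 = B^{-1} N$, which is exactly the definition of a network matrix.

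The only step needing any thought is the forward direction, where the standard form of $A$ is what forces $\pi$ to embed inside $In'$ as a basis; the main (very mild) obstacle is to notice that the column permutations permitted in the definition of network matrix are harmless here, since the property of being an RIMD is closed under column permutations.
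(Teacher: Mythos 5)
Your proof is correct and follows essentially the same route as the paper's: in the forward direction you observe that the standard form forces $\pi$ to be the first $n$ columns of the RIMD, hence a basis $B$ with $N_0=B^{-1}N$, and in the backward direction you multiply by $B$ to recover the RIMD. The paper's own proof is the same two-line unfolding of the definitions (including the same implicit identification of "$A$ is a network matrix" with "the non-identity block of $A$ is a network matrix").
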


\begin{proof}
We have $A=[I\, A']$ where $A'$ is a submatrix of $A$. Suppose that there exists a nonsingular matrix $\pi$ such that $\pi [I\, A'] = In$ and $In$ is an RIMD. It suffices to prove that $A'$ is a network matrix.
Denote $N=\pi A'$. So $\pi$ is a basis of the RIMD $In=[\pi N]$ and $A'=\pi^{-1} N$.

Conversely, if $A$ is a network matrix, then $A'=B^{-1}N$ where $[B \, N]$ is an RIMD and $B$ a basis of $[B \, N]$, then $B A$ is an RIMD.
\end{proof}\\

There exist several algorithms to test if a given matrix is a network matrix. Such an algorithm was designed by Auslander and Trent \cite{AusTrentNetwork-59}, Gould \cite{GouldNetwork-58}, Tutte \cite{Tutte-60,Tutte-65,Tutte-67}, Bixby and Cunningham \cite{BixCun-80} and Bixby and Wagner \cite{BixWagner-88}. 
A famous one is Schrijver's method\label{mycounter3} developed in \cite{ShrAlex} which adapts the matroidal ideas of Bixby and Cunningham \cite{BixCun-80} to matrices. The algorithm works by reducing the problem to a set of smaller problems, which can be handled easily. The smaller problems consist of deciding if a matrix with at most two non-zeros per column is a network matrix or not. The reduction is done by identifying rows of the matrix that correspond to cut-edges of the spanning tree, and then carrying on with the smaller matrices associated with the components.
We mention here a theorem stated in \cite{BixCun-80} that will be used several times in the following chapters.

\begin{thm}\label{thmSubclassNTutteCunNet}
Let $A$ be a real matrix of size $n\times m$ and $\alpha$ be the number of nonzeros in $A$. Then there exists an algorithm with input $A$ that provides matrices $B$ and $N$ such that $A=B^{-1} N$ and $[B\, N]$ is an RIMD, or proves that $A$ is not a network matrix. The time complexity of the algorithm is $O(n \alpha)$.
\end{thm}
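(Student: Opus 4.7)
The plan is to follow the Bixby--Cunningham strategy, adapted to matrices as in Schrijver, and organize the algorithm as a recursive decomposition that terminates at easily recognizable base cases. By Theorem \ref{thmSubclassNEqui}, testing whether $A$ is a network matrix is equivalent to deciding whether there is a nonsingular $\pi$ such that $\pi[I\, A]$ is an RIMD; if such $\pi$ exists, then $A=\pi^{-1}(\pi A)$ exhibits the desired factorization with $B=\pi$ and $N=\pi A$. So throughout the proof I work with $M:=[I\, A]$ and try to realize it as an RIMD, which amounts to constructing a connected digraph $G$ whose restricted incidence matrix agrees with $M$ up to column permutations (with the columns of $I$ forming a spanning tree).

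First I would handle the reductions to a \emph{connected matrix}. If $A$ has a zero row or zero column, these trivially correspond to isolated tree-edges or loose arcs and can be stripped off. If $A$ decomposes into blocks $A_1,A_2$, then $A$ is a network matrix if and only if each $A_i$ is, and their digraphs can be joined at a single node; this is what is meant by identifying rows associated with cut-edges of the spanning tree. Concretely, a row $i$ corresponds to a cut-edge of the tree exactly when deleting row $i$ (and suitable columns) splits the remaining matrix into blocks; such rows can be detected from the support pattern of $A$ in time linear in the number of nonzeros they touch, and removing them allows the problem to be solved independently on the two sides.

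Next, the base case. After all cut-edge reductions, we may assume we face a \emph{$3$-connected} piece, and the main structural lemma underlying the Bixby--Cunningham approach is that such a piece can in turn be reduced so that every column of $A$ has at most two nonzeros. A column with at most two nonzeros is precisely the incidence vector of a fundamental cycle of length $\le 3$, and can therefore be interpreted directly as an arc of $G$ joining the two tree-edges indicated by its support, with orientation prescribed by the pair of signs. For such matrices, building $G$ reduces to a straightforward graph-realization problem on the "edge graph'' obtained by putting one node per basic edge and joining two nodes if a nonbasic column has both of them in its support; signs are then checked for consistency by a union-find style sign-propagation. This either produces $G$ (and hence $B=\pi$, $N=\pi A$) or yields an obstruction witnessing that $A$ is not a network matrix.

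Finally, the complexity analysis. Each reduction step (detecting a zero row/column, block decomposition, or cut-edge split) and the base-case graph realization can be performed in time linear in the number of nonzeros of the current submatrix, using standard union-find and bucket-sort data structures indexed by rows; the work charged to each nonzero across the whole recursion is $O(n)$ because each nonzero can participate in at most $O(n)$ support comparisons before being frozen into the eventual digraph. Summing over all nonzeros gives the overall bound $O(n\alpha)$. The main obstacle in writing out the proof is precisely this amortized analysis of the cut-edge / block reductions: showing that support queries and sign propagations can be implemented so that their total cost is charged linearly to the nonzeros is the delicate point, whereas the structural correctness of each individual reduction step follows cleanly from the characterization \eqref{eqn:Introfund} of a network matrix in terms of fundamental cycles of a spanning tree.
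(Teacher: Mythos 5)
First, a point of order: the thesis does not prove this theorem at all — it is imported verbatim as a known result of Bixby and Cunningham, so there is no in-paper proof to measure your argument against; your proposal has to stand on its own. Judged that way, it has a genuine gap at its center. The decisive step — ``after all cut-edge reductions, we may assume we face a $3$-connected piece, and \ldots\ such a piece can in turn be reduced so that every column of $A$ has at most two nonzeros'' — is unjustified and false as stated: $3$-connectivity does not bound the length of fundamental cycles. Take $K_5$ with a Hamiltonian path as spanning tree; the column corresponding to the chord joining the two path endpoints has four nonzeros, the associated network matrix is connected, and no zero-row, block, or cut-edge reduction applies. The actual Bixby--Cunningham/Schrijver reduction is of a different nature: one selects a row (tree edge $e$), observes that every column vanishing in that row must lie entirely in one of the two subtrees of $T-e$, determines the assignment of the resulting blocks to the two sides via a bipartiteness test on an auxiliary graph (an odd cycle there being the certificate of non-realizability), and recurses on the two pieces until every column has at most two nonzeros. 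That row-splitting lemma — its correctness (uniqueness of the split up to the two-coloring) and its failure certificate — is the heart of the algorithm and is entirely absent from your sketch; ``detecting rows whose deletion splits the matrix into blocks'' is not the right criterion and would not make progress on examples like the one above.

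Second, the $O(n\alpha)$ bound is the real content of the statement, and you concede in your final paragraph that the amortized analysis is ``the delicate point'' without actually carrying it out. Asserting that each nonzero participates in at most $O(n)$ support comparisons before being ``frozen'' is a restatement of the desired bound, not a proof of it; one has to exhibit data structures for the row-splitting, block detection, and sign propagation whose cost per recursion level is $O(\alpha)$ over at most $n$ levels. The surrounding material — passing to $[I\,A]$ via Theorem \ref{thmSubclassNEqui}, stripping zero rows and columns, splitting into blocks, and the two-nonzeros-per-column base case with sign propagation — is reasonable and consistent with the informal description of Schrijver's method given in the thesis, but without the row-splitting lemma and the complexity accounting the proposal does not establish the theorem.
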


Network matrices form the basis for all totally unimodular matrices. This was shown by a beautiful theorem of Seymour \cite{SeymourRegular-80} (cf \cite{SeymourRegularAppl-85}). Not all totally unimodular matrices are network matrices or their transposes, as is shown by the matrices (cf Bixby \cite{BixKurWag-77}):

\begin{eqnarray}\label{eqnSubclassNmat2}
\left[
\begin{array}{ccccc}
1 &-1& 0&0 &-1 \\
-1 &1 &-1 &0 &0 \\
0 &-1 &1 &-1 &0 \\
0 &0 &-1 &1 &-1 \\
-1 &0 &0 &-1 &1 \\
\end{array}
\right], &\quad 
\left[
\begin{array}{ccccc}
1 &1& 1&1&1 \\
1 &1 &1 &0 &0 \\
1 &0 &1 &1 &0 \\
1 &0 &0 &1 &1 \\
1 &1 &0 &0 &1 \\
\end{array}
\right]
\end{eqnarray}

Seymour showed that each totally unimodular matrix arises, in a certain way, from network matrices and the matrices \ref{eqnSubclassNmat2}.

To describe the characterization theorem, first observe that total unimodularity is preserved under the following operations:

\begin{itemize}

\item[(i)] permuting rows or columns;

\item[(ii)] taking the transpose;

\item[(iii)] signing rows or columns;

\item[(iv)] pivoting;

\item[(v)] adding an all-zero row or column, or adding a row or column with one nonzero, being $\pm 1$;

\item[(vi)] repeating a row or a column;

\end{itemize}

Moreover, total unimodularity is preserved under the following compositions:

\begin{itemize}

\item[(vii)] $A \bigoplus_1 B := \left[\begin{array}{cc}
A & 0 \\
0 & B 
\end{array}\right]$ \quad \quad \quad \quad \quad \quad \quad\quad\quad\quad \quad\quad(1-sum);

\item[(viii)] $\left[\begin{array}{cc}
A & a \\
\end{array}\right] \bigoplus_2 \left[\begin{array}{cc}
b \\
B
\end{array}\right]:= \left[\begin{array}{cc}
A & ab \\
0 & B 
\end{array}\right]$ \quad \quad \quad \quad\quad\quad \quad (2-sum);

\item[(ix)] $\left[\begin{array}{ccc}
A & a & a \\
c & 0 & 1  
\end{array}\right] \bigoplus_3 \left[\begin{array}{ccc}
1 & 0 & b  \\
d & d & B 
\end{array}\right] := \left[\begin{array}{ccc}
A & ab \\
dc & B 
\end{array}\right]$ \quad \quad \,\,(3-sum);

\end{itemize}

(here $A$ and $B$ are matrices, $a$ and $d$ are column vectors, and $b$ and $c$ are row vectors, of appropriate sizes). It is not difficult to see that total unimodularity is maintained under these compositions, e.g. with Ghouila-Houri's characterization (see \cite{ShrAlex}).

\begin{thm}\label{thmSubclassNSey}
[Seymour's decomposition theorem for tot. uni. matrices.] A matrix $A$ is totally unimodular if and only if $A$ arises from network matrices and the matrices \ref{eqnSubclassNmat2} by applying the operations (i), (ii), ... (ix). Here the operations (vii), (viii) and (ix) are applied only if both for $A$ and for $B$ we have: the number of rows plus the number of columns is at least $4$.
\end{thm}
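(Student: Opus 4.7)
The plan is to split the theorem into the sufficiency (``if'') direction, which is routine, and the necessity (``only if'') direction, which contains essentially all the content of Seymour's contribution.

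For the ``if'' direction I would first verify that the two $5\times 5$ matrices in~(\ref{eqnSubclassNmat2}) are totally unimodular by direct enumeration of their subdeterminants, and then invoke Theorem~\ref{thmSubclassNNetTot} to get the same conclusion for every network matrix. It then suffices to check that each of the operations (i)--(ix) preserves total unimodularity. Operations (i)--(vi) are immediate from the definition (pivoting, in particular, sends subdeterminants to subdeterminants up to sign via Schur's identity, which is also remarked in~\cite{ShrAlex}). For the $k$-sums (vii)--(ix) the cleanest route is Ghouila-Houri's characterization, as flagged in the remark just before the theorem: a matrix is totally unimodular iff every subset of its columns admits a $\pm 1$-signing whose row sums lie in $\{-1,0,+1\}$. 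Given such signings for $A$ and $B$, one glues them along the shared row/column structure of the $k$-sum to produce a witnessing signing, treating the $1$-, $2$-, $3$-sum cases separately but in parallel.

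For the ``only if'' direction the natural framework is matroid theory. The class of totally unimodular matrices over $\mathbb{R}$, taken modulo the reducing operations (i)--(vi), corresponds bijectively to the class of \emph{regular} matroids (matroids representable over every field): network matrices correspond to graphic matroids, transposes of network matrices to cographic matroids, and the two $5\times 5$ exceptional matrices to two representations of the sporadic regular matroid $R_{10}$. Moreover, the $k$-sums (vii)--(ix) correspond exactly to matroidal $k$-sums along nontrivial exact $k$-separations, the ``rows-plus-columns $\geq 4$'' proviso translating into the requirement that the $k$-separation be nontrivial on both sides. With this dictionary in place the theorem becomes a structure theorem for regular matroids, which I would prove by induction on the number of elements of the matroid.

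The hard work, and the only real obstacle, is this structure step: one must show that every $3$-connected regular matroid is either graphic, cographic, or isomorphic to $R_{10}$, and that every regular matroid which is not of one of these three types admits a nontrivial $k$-separation ($k\leq 3$) exhibiting it as a $k$-sum of two strictly smaller regular matroids. The natural ingredients are Tutte's excluded-minor characterization of regular matroids (no $U_{2,4}$, $F_7$, or $F_7^{*}$ minor), Seymour's splitter theorem to propagate distinguished minors under $3$-connectivity, and the sporadic matroid $R_{12}$ as the pivot: whenever a $3$-connected regular matroid contains simultaneously a nongraphic and a noncographic minor, one shows it must contain an $R_{12}$-minor, and an $R_{12}$-minor forces the required $3$-separation. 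Pinning down this last case — isolating $R_{12}$ as the unique obstruction and extracting the $3$-separation from it — is the technical heart of~\cite{SeymourRegular-80} and is far too intricate to reproduce here, so I would invoke Seymour's theorem at that point rather than reprove it.
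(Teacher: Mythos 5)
The paper gives no proof of this theorem: it is quoted from Seymour \cite{SeymourRegular-80}, with the easy direction dispatched by the remark that the $k$-sums preserve total unimodularity via Ghouila-Houri, and the hard direction attributed in Section \ref{sec:BinetMat} to the combination of the regular-matroid decomposition theorem (Theorem \ref{thmMatroidReg}) and Tutte's theorem (Theorem \ref{thmMatroidTutteUni}). Your outline follows exactly this route — Ghouila-Houri for sufficiency, the dictionary to regular matroids plus Seymour's structure theorem for necessity — so it is consistent with how the paper treats the result, and correctly defers the genuinely hard step ($R_{12}$ and the $3$-separation) to the cited source.
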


Finally, let $A$ be a nonnegative connected network matrix, and suppose that we are given two row indexes, say $\epsilon$ and $\rho$ ($\epsilon \neq \rho$). Let $G(A)$ be a basic network representation of $A$ and $q$ a path in $G(A)$ containing $e_\epsilon$ and $e_\rho$. If $q$ passes through one of the edges $e_\epsilon$ and $e_\rho$ forwardly and through the other one backwardly, then we say that $e_\epsilon$ and $e_\rho$ are \emph{alternating}\index{alternating} in $G(A)$, otherwise \emph{nonalternating}.
If $A$ has a basic network representation in which $e_\epsilon$ and $e_\rho$ are alternating and an other one in which they are nonalternating, then $N$ is said to be \emph{$\{ \epsilon,\rho\}$-noncorelated}, otherwise \emph{$\{ \epsilon,\rho\}$-corelated\index{corelated}}.

For $R$ a row index subset, an \emph{$R$-network}\index{network@$R$-network!representation} representation denotes a network representation such that $R$ is the index set of one basic edge. A matrix is called an \emph{$R$-network}\index{network@$R$-network!matrix} matrix if it has an $R$-network representation. 
A \emph{basic}\index{basic!subgraph} subgraph of a network representation $G(A)$, is a subgraph consisting of only basic edges.


\chapter{Bidirected graphs}\label{ch:Bidirected}

In this section, we describe bidirected graphs. The notion of bidirected graphs was introduced by Edmonds \cite{EdmondsMatching1} as a common 
generalization of both directed and undirected graphs. 
In a directed graph, if an endnode of an edge is its tail, then the other endnode of the edge must be its head. Undirected graphs can be viewed as graphs in which each edge has two heads. In a bidirected graph,  endnode of an edge can be its head or tail, independently from each other. These graphs serve as a background for introducing binet matrices in Chapter \ref{ch:Binetdef}.

Largely, we adopt the terminology and definitions  given by Appa and Kotnyek \cite{KotThesis}, \cite{Appa-06} and Zaslavsky \cite{Zaslavsky-Glos-99}. Bidirected graphs have appeared in the literature several times. Schrijver \cite{ShrAlex-Dis-91} gave a necessary and sufficient condition for the existence of an integer solution to a linear inequality system $A x \le b$ in which $A$ is the edge-node incidence matrix of a bidirected graph. Gerards and Schrijver \cite{GerSchAlex-Edm-86} characterized bidirected graphs which lead to matrices with strong Chv\'atal rank 1.

\section{Basic notions}\label{sec:notions}

A bidirected graph $G=(V,E)$ on node set $V=\{v_1,\ldots,v_n\}$ and with edge set $E=\{e_1,\ldots,e_m\}$ may have four kinds of edges. A \emph{link}\index{link} is an edge with two distinct endnodes;  a \emph{loop}\index{loop} has tow identical endnodes; a \emph{half-edge}\index{half-edge} has one endnode, and a \emph{loose edge}\index{loose edge} has no endnode at all. 

Every edge is signed with $+$ or $-$ at its endnodes. That is, links or loops can be signed with $+-$, $++$ or $--$; half-edges have only one sign, $+$ or $-$. If an edge is signed with $+$ at an endnode, then this node is an \emph{in-node}\index{in-node} or \emph{head}\index{head} of the edge. An endnode signed with $-$ is called the \emph{out-node}\index{out-node} or \emph{tail}\index{tail} of the edge. One can think of the value $+$ as indicating that the edge is directed into the node, $-$ indicating direction away from the node. 
Figure \ref{fig:bidirectedGraph} shows two possible graphical representations of the same bidirected graph. Heads and tails can be represented with signs as in (i), or arrows as in (ii).
A link or loop with two different signs at its endnodes is said to be a \emph{directed edge}\index{directed edge}.
For simplicity, a directed edge is represented by only one arrow 
instead of two (see $e_4$ for example in Figure \ref{fig:bidirectedGraph}).
All edges, except directed and loose edges, are called \emph{bidirected edges}\index{bidirected!edge}.
If $e_j$ is a link or a loop with endnodes $v_i$ and $v_{i'}$ ($v_i=v_{i'}$ in the case of a loop), then we may denote $e_j$ as $[v_i,v_{i'}]$, $]v_i,v_{i'}[$, $[v_i,v_{i'}[$  or $]v_i,v_{i'}]$ depending on the sign at the endnode; actually a closed (resp., open) bracket at $v_i$ means that one sign at this endnode is $+$ (respectively, $-$). Similarly, a half-edge incident with $v_i$ may be denoted by $[v_i]$ or $]v_i[$.
For instance in Figure \ref{fig:bidirectedGraph}, $e_1=]v_1[$, $e_2=[v_1,v_2]$, $e_3=[v_2,v_2]$ and $e_4=]v_2,v_3]$.

\begin{figure}[h!]
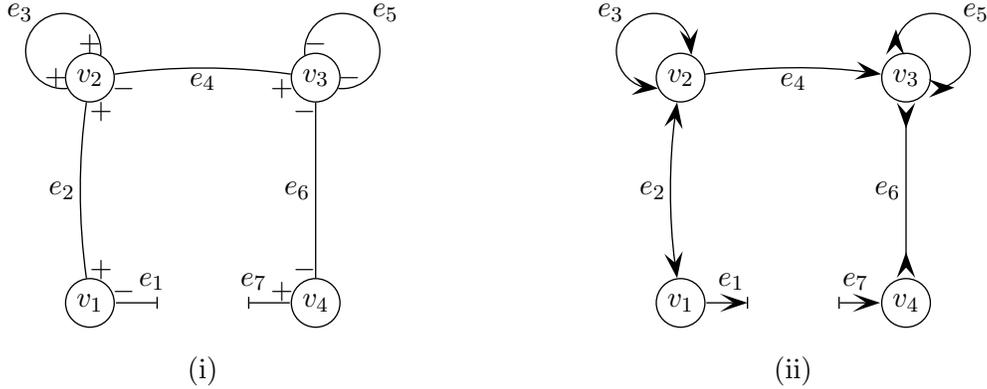

\psset{xunit=1.5cm,yunit=1.5cm,linewidth=0.5pt,radius=3mm,arrowsize=7pt,
labelsep=1.5pt}

\begin{center} 

$\begin{array}{cc}

\pspicture(0,0)(5,3)

\rput(2,-.6){(i)}
\cnodeput(1,0){1}{$v_1$}
\cnodeput(1,2){2}{$v_2$}
\cnodeput(3,2){3}{$v_3$}
\cnodeput(3,0){4}{$v_4$}

\ncarc{-}{1}{2}
\naput{$e_2$}
\rput(1.1,.3){$+$}
\rput(1.1,1.7){$+$}
\ncarc{-}{2}{3}
\nbput{$e_4$}
\rput(1.3,1.9){$-$}
\rput(2.7,1.9){$+$}
\ncline{-}{3}{4}
\nbput{$e_6$}
\rput(2.9,.3){$-$}
\rput(2.9,1.7){$-$}

\psline[arrowinset=.5,arrowlength=1.5]{-|}(1.23,0)(1.6,0)
\rput(1.55,0.19){$e_1$}
\rput(1.3,.1){$-$}

\psline[arrowinset=.5,arrowlength=1.5]{|-}(2.4,0)(2.77,0)
\rput(2.45,0.19){$e_7$}
\rput(2.7,.1){$+$}

\nccircle[angle=45]{-}{2}{0.5cm}
\nbput{$e_3$}
\rput(.7,2){$+$}
\rput(1,2.3){$+$}

\nccircle[angle=-45]{-}{3}{0.5cm}
\nbput{$e_5$}
\rput(3.3,2){$-$}
\rput(3,2.3){$-$}

\endpspicture  &

\pspicture(0,0)(4,3)

\rput(2,-.6){(ii)}
\cnodeput(1,0){1}{$v_1$}
\cnodeput(1,2){2}{$v_2$}
\cnodeput(3,2){3}{$v_3$}
\cnodeput(3,0){4}{$v_4$}

\ncarc{<->}{1}{2}
\naput{$e_2$}
\ncarc{->}{2}{3}
\nbput{$e_4$}
\ncline{>-<}{3}{4}
\nbput{$e_6$}

\psline[arrowinset=.5,arrowlength=1.5]{->|}(1.23,0)(1.6,0)
\rput(1.45,0.19){$e_1$}

\psline[arrowinset=.5,arrowlength=1.5]{|->}(2.4,0)(2.77,0)
\rput(2.55,0.19){$e_7$}

\nccircle[angle=45]{<->}{2}{0.5cm}
\nbput{$e_3$}
\nccircle[angle=-45]{>-<}{3}{0.5cm}
\nbput{$e_5$}

\endpspicture  

\end{array}$

\end{center}
\vspace{.8cm}

\caption{Possible graphical representations of a bidirected graph}\label{fig:bidirectedGraph}
\end{figure}

Every edge $e$ is given a \emph{sign}\index{sign}, denoted by $\sigma_e\in \{ +, -\}$. The signing convention we adopt is that the sign of a link or loop is $-$ times the products of the signs of its endnodes, the sign of a half-edge is always negative, and the sign of a loose edge is positive. Thus, a positive link or loop is a directed edge; as this is just like an ordinary directed graph, a directed graph is the same as a bidirected all-positive signed graph. The case of a positive loop arises when dealing with contraction of graphs, an operation defined later in the section. The normal case we encounter is that of a negative loop (where its endnode is either an in-node or an out-node). In Figure \ref{fig:bidirectedGraph}, the links $e_2$ and $e_6$ and loops $e_3$ and $e_5$ are negative, while the link $e_4$ is positive.

A \emph{walk}\index{walk} in a bidirected graph is a sequence $(v_1,e_1,v_2,\ldots
v_{t-1},e_{t-1},v_t)$ where $v_i$ and $v_{i+1}$ are endnodes of edge $e_i$ ($i=1,\ldots,t-1$), including the case where $v_i=v_{i+1}$ and
$e_i$ is a half-edge. If the walk consists of only links and it does not cross itself, i.e $v_i\neq v_j$ for $1<i<t$, $1\le j\le t$, $i\neq j$, then it is a \emph{path}\index{path}. In the bidirected graph of Figure \ref{fig:bidirectedGraph}, we have the path $(v_1,[v_1,v_2],v_2,]v_2,v_3],v_3,]v_3,v_4[,v_4)$. A closed walk which does not cross itself (except at $v_1=v_t$)
and goes through each edge at most once
is called a \emph{cycle}\index{cycle}. So a loop, a half-edge or a closed path
can make up a cycle. In Figure \ref{fig:bidirectedGraph}, there are exactly four cycles. The \emph{sign of a cycle}\index{sign of a cycle} is the product of the signs of its edges, so we have a \emph{positive cycle}\index{positive cycle} if the number of negative edges (or bidirected edges) in the cycle is even; otherwise, the cycle is a \emph{negative cycle}\index{negative cycle}. Obviously, a negative loop or a half-edge always makes a negative cycle. A \emph{full}\index{full cycle} cycle in a bidirected graph is a cycle different from a half-edge.

A bidirected graph is \emph{connected}\index{connected!bidirected graph}, if there is a path between any two nodes. 
A \emph{tree}\index{tree} is a connected bidirected graph which does not contain a cycle.
A connected bidirected graph containing exactly one cycle is called a \emph{1-tree}\index{tree@1-tree}, indicative of the fact that a $1$-tree consists of a tree and one additional edge. If the unique cycle in a 1-tree is negative, then we will call it a \emph{negative 1-tree}\index{negative 1-tree}. A \emph{path-wheel}\index{path-wheel} in a bidirected graph is a subgraph consisting of a negative  cycle and a path from a node of the cycle to another one not in the cycle.

In a walk $(v_0,e_0,v_1,e_1\ldots,v_{t-1},e_{t-1},v_t)$ of $G$, a node $v_i$ is \emph{consistent}\index{consistent} if the signs at the endnode $v_i$ of edges $e_{i-1}$ and $e_i$ are different.
(This definition applies to $v_0$, with subscripts modulo $t$, if $v_0=v_t$ and $t>0$.) A path is said to be \emph{consistently oriented}\index{consistently oriented} if all nodes, except the first and last ones, are consistent. A \emph{directed}\index{directed path!in a bidirected graph} path is a consistently oriented path with only directed edges. 

\section{Incidence matrix}\label{sec:Incidence}

In this section, we define the  \emph{(node-edge) incidence matrix }\index{node-edge incidence matrix} $In(G)$ of a bidirected graph $G$ and called an IMB\index{IMB}. We describe natural operations on $In(G)$ and the corresponding operations on $G$, and study linear independence and dependence of columns in $In(G)$.
The rows and columns of $In(G)$ are identified with the nodes and edges of $G$, respectively. An entry $(i,j)$ of $In(G)$ is $1$ (resp., $-1$) if $e_j$ is a link or a half-edge entering (resp., leaving) $v_i$, $2$ (resp., $-2$) if $e_j$ is a negative loop entering (resp., leaving) $v_i$, $0$ otherwise. As an  example, the node-edge incidence matrix of the bidirected graph depicted in Figure \ref{fig:bidirectedGraph} is:

$$In=\left[ \begin{matrix} -1 & 1 & 0 & 0& 0&0 & 0 \\
0& 1&2 &-1 &0 &0 &0 \\
0&0&0&1&-2&-1&0\\
0&0&0&0&0&-1&1
\end{matrix}\right]$$

\noindent
An \emph{RIMB}\index{RIMB}, or restricted IMB, is an IMB with (linearly) redundant rows removed.
The relationship of incidence matrices and bidirected graphs is two-way (or bidirectional). Given an $n\times m$ integral matrix $In$ satisfying

\begin{equation}\label{eqnBidirected}
\sum_{i=1}^n |(In)_{ij}| \le 2 \m{ for } j=1,\ldots,m,
\end{equation}

\noindent
one can find a bidirected graph $G(M)$ with $n$ nodes and $m$ edges such that its node-edge incidence matrix is $In$. In other words, property (\ref{eqnBidirected}) characterizes the node-edge incidence matrices of bidirected graphs. 

Operations on $In(G)$ that maintain (\ref{eqnBidirected}) can be translated to operations on bidirected graphs. Such operations are, for example, multiplying a row or column with $-1$, or deleting a row or column. It can be easily verified that the following transformation also maintains (\ref{eqnBidirected}).

\begin{equation}\label{eqnContraction}
In= \left [\begin{array}{cc} \alpha &
\cc \\ \bb & D  \end{array} \right ] \rightarrow
In'=D-\alpha \bb \cc
\end{equation}

\noindent
where $\alpha$ is a non-zero entry, $b$ is a column vector, $c$ is a row vector, and $D$ is a submatrix of $In$.

We now give the graphical equivalents of these operations. They are extensions of standard operations on directed or undirected graphs, such as edge or node deletion, and edge contraction, taking into account the signs of the edges. When multiplying the $j$th column of $In$ with $-1$, we simply change the signs at the endnodes of edge $e_j$. In-nodes of the edge become out-nodes and vice versa, but the sign of the edge is unchanged. We call this operation \emph{reversing the orientation}\index{reversing the orientation} of an edge. Multiplying a row with $-1$ changes the signs at the corresponding endnode of all the incident edges. If the node was an in-node of an edge, it becomes its out-node and vice versa. Consequently, the sign of the incident links or loops change, positive links or loops become negative and vice versa. We call this operation \emph{switching}\index{switching} at a node.

Column deletion easily translates to bidirected graphs, it is equivalent to \emph{deleting an edge}\index{deleting an edge} from the graph. Deletion of a row corresponds to the removal of the associated node together with edge-ends incident to the node. That is, links connected to this node become half-edges while loops and half-edges located at the deleted node become loose edges. All other edges and nodes remain unchanged. we call this operation \emph{deleting a node}\index{deleting a node!in a bidirected graph} or \emph{removing a node}\index{removing a node!from a bidirected graph}.

Finally, the transformation (\ref{eqnContraction}) translates to a \emph{contraction}\index{contraction} in the bidirected graph. If $e_1$ is a negative loop or a half-edge, then $A'=D$, that is, edge $e_1$ and node $v_1$ are deleted from $G$. If $e_1$ is a positive link, then we get the ordinary graph contraction, that is, deletion of the edge and unification of its endnodes. If $e$ is a negative link, we first switch at $v_1$ and then contract the now positive $e_1$. 

Let $Q$ be a submatrix of the incidence matrix $In(G)$. It can be obtained by row and column deletions. By analogous operations, a bidirected graph $G(Q)$ can be obtained from $G$. It is clear that $G(Q)$ does not depend on the order of row and column deletions. This graph can be achieved by edge and node deletions, but strictly speaking, it is not a subgraph of $G$, as it can contain half-edges and loose edges that are not present in the original edge set $E$. However, when it does not create confusion, we will call it a subgraph of $G$.

In the rest of this section, we give a graphical characterization of linear independence and dependence in the node-edge incidence matrix of a bidirected graph. It is easily shown that columns of the matrix corresponding to a tree in the graph are linearly independent. However, contrary to the directed case, in bidirected graphs there are other linearly independent structures, as stated in Lemma \ref{lemBidirectedNonsing}. A subset $C$ of edges in $G$ is called a \emph{circuit}\index{circuit} if the columns of the matrix $In(C)$ form a minimal dependent set.
The propositions listed below are needed to establish Lemma \ref{lemBidirectedNonsing}. We omit their proofs as they can be found in \cite{KotThesis}, or derived from very similar results appearing in, for example, \cite{GerSchAlex-Edm-86,Zaslavsky-Signed-82}.

\begin{prop}\label{propBidirectedmatQ}
Let $Q$ be a square matrix of size $n\geq 2$ whose nonzero elements are $q_{ii}=1$ for $i=1,\ldots,n$; $q_{i+1,i}=-1$ for $i=1,\ldots, n-1$ and $Q_{1 n}=\pm 1$. That is, $Q$ is of the following form 

\begin{equation}\label{equ:Rcycle}
Q=\left [ \begin{array}{ccccc}
1 & & & & \pm 1 \\
-1 & 1 & & & \\
   & -1 & \ddots & & \\
   & & \ddots & 1 & \\
   & & & -1 & 1\\
\end{array} \right ] 
\end{equation}

\noindent
Then $det(Q)=0$ if $q_{1n}=-1$ and $det(Q)=2$ if $q_{1n}=1$.
\end{prop}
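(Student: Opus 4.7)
The plan is to compute $\det(Q)$ by cofactor expansion along the first row, exploiting the fact that the first row has only two nonzero entries: $q_{11}=1$ and $q_{1n}=\pm 1$. This reduces the problem to evaluating two $(n-1)\times(n-1)$ minors, each of which is triangular and therefore immediate.

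First I would handle the minor $M_{11}$ obtained by deleting row $1$ and column $1$. The remaining matrix has diagonal entries $q_{ii}=1$ for $i=2,\ldots,n$ and subdiagonal entries $q_{i+1,i}=-1$, with no other nonzero entries (the corner entry $q_{1n}$ having been removed). This matrix is lower triangular with $1$'s on the diagonal, so $M_{11}=1$.

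Next I would handle $M_{1n}$, the minor obtained by deleting row $1$ and column $n$. After deletion, its rows come from rows $2,\ldots,n$ of $Q$ with the last entry removed; the result is the $(n-1)\times(n-1)$ upper triangular matrix with $-1$'s on the diagonal and $1$'s on the superdiagonal. Hence $M_{1n}=(-1)^{n-1}$.

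Combining, $\det(Q) = 1\cdot M_{11} + (-1)^{1+n}\, q_{1n}\, M_{1n} = 1 + q_{1n}(-1)^{1+n}(-1)^{n-1} = 1 + q_{1n}(-1)^{2n} = 1 + q_{1n}$, which yields $0$ when $q_{1n}=-1$ and $2$ when $q_{1n}=1$. There is no serious obstacle here: the only minor point to verify carefully is the sign bookkeeping in the cofactor of position $(1,n)$, but the parity $(-1)^{2n}=1$ makes the two cases collapse to the single formula $\det(Q)=1+q_{1n}$.
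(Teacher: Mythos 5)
Your computation is correct: both minors are bidiagonal (hence triangular), $M_{11}=1$, $M_{1n}=(-1)^{n-1}$, and the cofactor sign $(-1)^{1+n}$ cancels the parity of $M_{1n}$ to give $\det(Q)=1+q_{1n}$, exactly as claimed. The paper itself omits the proof of this proposition (it defers to the cited references), so there is nothing to compare against; your cofactor expansion along the first row is the standard and perfectly adequate argument.
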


\begin{prop}\label{propBidirectedSwitch}
Switchings at nodes do not change the sign of a cycle. 
\end{prop}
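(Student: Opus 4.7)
The plan is to track how switching at a fixed node $v$ changes the sign of each edge type, and then count the affected edges on a given cycle $C$.

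First I would record the effect of a switching at $v$ on a single edge, using the signing convention of the paper ($\sigma_e=-\sigma_{v_1}\sigma_{v_2}$ for a link or loop, $\sigma_e=-$ for a half-edge, $\sigma_e=+$ for a loose edge). For a link $[x,y]$ with $x\neq y$, switching at $v$ flips the endnode sign at $v$ only, so $\sigma_e$ gets multiplied by $-1$ precisely when $v\in\{x,y\}$, and is unchanged otherwise. For a loop at $v$, switching flips both of the loop's endnode signs at $v$, so their product, and hence $\sigma_e$, is preserved; a loop not incident to $v$ is clearly untouched. Half-edges and loose edges carry a fixed sign by convention, so they are unaffected by any switching.

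Next I would do a short case analysis on the cycle $C$, using the paper's trichotomy: $C$ is a half-edge, a single (necessarily negative) loop, or a closed path consisting only of links. In the first two cases, $C$ consists of one edge whose sign is preserved by the step above, so the cycle sign is preserved. In the closed-path case, if $v\notin V(C)$ no edge of $C$ changes sign; otherwise, because a closed path does not cross itself, exactly two edges of $C$ are incident to $v$, and each of these link-signs is multiplied by $-1$. Hence the product of edge signs along $C$ is multiplied by $(-1)^2=1$, and the sign of $C$ is preserved.

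Finally, since an arbitrary sequence of switchings is a composition of single-node switchings, the invariance extends immediately from one switching to any sequence. The only subtlety worth a careful word in the write-up is the loop case, where one must note that a switching flips two endnode signs simultaneously (one for each ``end'' of the loop at $v$), so the product is unchanged; otherwise the argument is a direct parity count and no real obstacle arises.
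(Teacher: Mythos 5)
Your proof is correct; the paper itself omits the argument (deferring to Kotnyek and Zaslavsky), and your parity count — only the two cycle-links incident with the switched node change sign, so the product is multiplied by $(-1)^2$ — is exactly the standard proof one would supply. Your careful handling of the loop case is not just a nicety but necessary: the paper's informal description of switching ("positive links or loops become negative and vice versa") is loosely worded for loops, whereas the incidence-matrix operation ($\pm 2\mapsto\mp 2$) shows, as you observe, that both endnode signs at $v$ flip and the loop's sign is preserved — without which the proposition would fail for a cycle consisting of a single loop.
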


It is easy to verify (and available as lemmas 3.1 in \cite{Zaslavsky-Signed-82}) that any tree graph can be transformed to a directed graph by switching. More generally, this leads to:

\begin{prop}
A negative $1$-tree can be transformed to a directed tree together with one extra bidirected edge forming a negative cycle with the tree by switchings. 
\end{prop}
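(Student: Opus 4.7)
The plan is to reduce the claim to the preceding proposition stating that any tree can be made directed by switchings, exploiting the fact that the sign of the unique cycle is a switching-invariant.

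First I would fix an arbitrary edge $e^{*}$ lying on the unique (negative) cycle $C$ of the given negative $1$-tree $G$. Deleting $e^{*}$ leaves a connected acyclic bidirected graph $T := G \setminus \{e^{*}\}$, which is a tree in the sense of Section \ref{sec:notions}. By the previous proposition, there is a sequence of switchings performed at nodes of $T$ after which every edge of $T$ is directed (i.e.\ positive). I would apply exactly that sequence of switchings to all of $G$; since switching is a node operation, it acts on $e^{*}$ as well, but it does not delete or add any edge.

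Next, I would argue that $e^{*}$ is bidirected in the resulting graph. The key tool is Proposition \ref{propBidirectedSwitch}, which asserts that switchings preserve the sign of every cycle. In particular, the sign of $C$ after switching is still negative. On the other hand, all edges of $C$ other than $e^{*}$ are edges of $T$, hence positive (directed). Since the sign of $C$ is the product of the signs of its edges, the remaining edge $e^{*}$ must be negative, i.e.\ bidirected. This handles the case in which $C$ is a closed path, and the same argument covers the degenerate cases: if $C$ is a half-edge then $e^{*}$ is automatically signed negative by the sign convention of Section \ref{sec:notions}, and if $C$ is a (negative) loop then switching at its unique endnode flips both of its endpoint signs simultaneously, so a $++$ or $--$ loop remains of the same type and thus bidirected.

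Putting the two steps together, after the chosen switchings the subgraph $T$ is a directed tree and the single remaining edge $e^{*}$ is bidirected and, together with $T$, still forms the negative cycle $C$. The main obstacle I would anticipate is purely bookkeeping rather than conceptual: one must verify that the switchings chosen to directify $T$ do not accidentally turn $e^{*}$ into a directed edge. This is exactly what Proposition \ref{propBidirectedSwitch} rules out, so once that invariant is invoked the argument closes immediately.
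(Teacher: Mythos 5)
Your proof is correct and is exactly the argument the thesis intends: the paper states this proposition without proof, deferring to Zaslavsky's lemma that trees can be directified by switching, and your argument (remove one cycle edge, directify the remaining spanning tree, then invoke Proposition \ref{propBidirectedSwitch} to force the removed edge to stay negative, hence bidirected) is the natural and complete way to fill that gap. The handling of the degenerate half-edge and loop cases is also sound, since both remain bidirected under switching by the sign conventions of Section \ref{sec:notions}.
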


\begin{lem}\label{lemBidirectedNonsing}
A square submatrix $Q$ of the incidence matrix of a bidirected graph $G$ is non-singular if and only if each connected component of $G(Q)$ is a negative $1$-tree.
\end{lem}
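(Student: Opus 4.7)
My plan is to reduce the lemma to the connected case and then analyze the cycle inside each component via switching and leaf-pivoting, invoking Proposition~\ref{propBidirectedmatQ} for the final cycle determinant.

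First I would decompose $Q$ along the components of $G(Q)$. After permuting rows and columns, $Q$ becomes block-diagonal with one block $Q_i$ per component $G_i$ of $G(Q)$, where $Q_i$ has $n_i$ rows and $m_i$ columns ($n_i$ nodes, $m_i$ edges). Since $Q$ is square and the sums $\sum n_i$ and $\sum m_i$ are both equal to the size of $Q$, a straightforward rank argument (any block with $n_i\neq m_i$ forces a linear dependence among rows or columns of the whole matrix) shows that $Q$ is nonsingular only if every block is square, and in that case $\det(Q)=\prod_i\det(Q_i)$. Thus it suffices to prove: for a connected bidirected graph $G'$ with equal numbers of nodes and edges, the incidence matrix is nonsingular iff $G'$ is a negative $1$-tree.

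Next, note that a connected graph with $n'=m'$ contains exactly one cycle, hence is a $1$-tree. Using the proposition that switching at nodes leaves cycle signs unchanged and the proposition that a negative $1$-tree can be switched into a directed spanning tree plus one bidirected edge forming the cycle, I can, after switching (which only multiplies rows of $Q_i$ by $\pm 1$ and therefore preserves non-singularity up to sign), assume that all edges outside the cycle, together with all but one edge of the cycle, are directed. Then I would perform leaf-pivoting: iteratively choose a pendant tree edge lying outside the cycle; the corresponding column has a single nonzero entry (the leaf node is incident only to that edge), so pivoting on it strictly reduces the problem to the submatrix corresponding to the unique cycle.

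What remains is the determinant of the cycle submatrix, and I would split on the cycle type. If the cycle is a half-edge or a negative loop, the cycle submatrix is $[\pm 1]$ or $[\pm 2]$ respectively, which is nonsingular; a positive loop yields $[1-1]=[0]$, singular. For a closed-path cycle, after the switchings above the directed tree edges contribute $\pm 1$ entries exactly in the form of Proposition~\ref{propBidirectedmatQ}, and the one remaining cycle edge gives $q_{1n}=-1$ if it is directed (making the cycle positive, determinant $0$) or $q_{1n}=+1$ if it is bidirected (making the cycle negative, determinant $\pm 2$). In every case the block is nonsingular iff the unique cycle of the component is negative, which is the definition of a negative $1$-tree, completing both implications.

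The main obstacle I anticipate is bookkeeping rather than conceptual: keeping precise track of how switchings alter the signs in $Q_i$ so that after the leaf-peeling the residual cycle submatrix genuinely matches the hypothesis of Proposition~\ref{propBidirectedmatQ}. Once that normalization is established, the determinant computation follows directly.
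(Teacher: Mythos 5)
Your proposal is correct and follows essentially the same route as the paper's proof: reduce to a connected component (which must be a square block corresponding to a $1$-tree), eliminate pendant edges to isolate the cycle submatrix (your leaf-pivoting is the paper's cofactor expansion along pendant-node rows), and then apply Proposition~\ref{propBidirectedmatQ} after switchings and orientation reversals to conclude that the cycle block is nonsingular exactly when the cycle is negative. The only cosmetic difference is that you spell out the block-squareness rank argument and the positive-loop (zero-column) case, which the paper dispatches implicitly.
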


This result is derived in \cite{Zaslavsky-Signed-82}, Theorem 5.1, in matroidal terminology. We give here the proof as in \cite{Appa-06} without recourse to matroids.

\begin{proof}
Is is enough to prove the theorem for the case where $G(Q)$ is a connected graph and $Q$ has no zero column.
$Q$ is a square submatrix, so it corresponds to a $1$-tree subgraph. Expanding $\det(Q)$ by rows corresponding to the pendant nodes of $G(Q)$, it is easy to establish that the singularity of $Q$ depends on the singularity of its submatrix $In(C)$ corresponding to the cycle $C$ of the $1$-tree. If $C$ is a loop or a half-edge, then it is obviously negative and $In(C)$ is non-singular. Otherwise, by permutations and row and column multiplications with $-1$ on $In(C)$, or equivalently by switchings and orientation reversals in $C$, $In(C)$ can be brought to a form of (\ref{equ:Rcycle}). It follows then from the previous propositions that $C$ is negative if and only if $In(C)$ is non-singular.
\end{proof}\\

This lemma has the following easy consequences.

\begin{cor}
Let $In$ be a full row rank node-edge incidence matrix of a bidirected graph, and $Q$ a collection of linearly independent columns of $In$. Then each connected component of $G(Q)$ either forms a tree or a negative $1$-tree. Conversely, if every component of a subgraph $G(Q)$ forms a tree or a negative $1$-tree, then the columns of $Q$ are linearly independent.
\end{cor}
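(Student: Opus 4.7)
The plan is to deduce the corollary from Lemma~\ref{lemBidirectedNonsing} by a two-way argument that first decouples contributions from distinct components, and then, within each component, either applies the lemma directly or uses a small ``augmentation'' trick to reduce to a square non-singular instance.

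For the forward direction, suppose $Q$ has linearly independent columns, and let $H$ be any connected component of $G(Q)$. Because the endpoints of every edge of $H$ lie in $V(H)$, the columns of $In$ associated with edges of $H$ have their support contained in the $|V(H)|$ rows indexed by $V(H)$. First I would observe that if $|E(H)|>|V(H)|$, then $H$ contributes more than $|V(H)|$ columns lying in a $|V(H)|$-dimensional coordinate subspace, so these columns are already dependent---contradiction. Hence $|E(H)|\le |V(H)|$, and in the boundary case $|E(H)|=|V(H)|$ the subgraph $H$ is a $1$-tree. In that case $In(H)$ is a square submatrix of $Q$; by Lemma~\ref{lemBidirectedNonsing} it is non-singular iff the unique cycle of $H$ is negative, so linear independence forces this cycle to be negative. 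Thus every component is a tree or a negative $1$-tree.

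For the converse, assume every component of $G(Q)$ is a tree or a negative $1$-tree, and suppose $\sum_j \lambda_j q_j=0$ for columns $q_j$ of $Q$. Because distinct components have disjoint vertex sets and hence disjoint row supports, this relation decouples into one relation per component: for each component $H$, $\sum_{j:\,e_j\in E(H)} \lambda_j q_j = 0$. It therefore suffices to prove that for each component $H$, the columns of $In(H)$ are linearly independent. If $H$ is a negative $1$-tree, $In(H)$ is square and Lemma~\ref{lemBidirectedNonsing} directly gives non-singularity, hence independence. If $H$ is a tree, I would augment it by attaching one additional half-edge at some node $v\in V(H)$; call the augmented graph $H'$. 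Then $H'$ is connected with $|V(H')|=|V(H)|$ vertices and $|E(H')|=|V(H)|$ edges, and its unique cycle is the half-edge, which is negative. Hence $H'$ is a negative $1$-tree, so by Lemma~\ref{lemBidirectedNonsing} the columns of $In(H')$ are linearly independent; deleting the added column preserves independence, giving that the columns of $In(H)$ are linearly independent.

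The main obstacle is the tree case in the converse: trees give rectangular (non-square) submatrices, so Lemma~\ref{lemBidirectedNonsing} does not apply as-is. The augmentation by an extra half-edge circumvents this by producing a square negative $1$-tree incidence submatrix whose non-singularity contains the desired independence as an immediate consequence. Everything else is essentially bookkeeping: the decoupling over components in the converse, and the pigeonhole observation that too many edges in a single component automatically force dependence in the forward direction.
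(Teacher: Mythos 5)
Your proof is correct and follows the route the paper intends: the text states this corollary without proof as an ``easy consequence'' of Lemma \ref{lemBidirectedNonsing}, and your argument supplies exactly the missing derivation, with the component-wise decoupling and the edge-count dichotomy $|E(H)|\le |V(H)|$ handling the forward direction and the square case of the lemma handling the rest. The half-edge augmentation for the tree case is a clean way to force a square instance; the paper's alternative (stated just before the lemma) is the direct observation that tree columns are independent, e.g.\ by expanding along pendant nodes, but your reduction is equally valid since the lemma applies to the full incidence matrix of the augmented graph viewed as a bidirected graph in its own right.
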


\begin{cor}\label{corBidirectedCircuit}
A circuit in a bidirected graph falls in one of the following categories.

\begin{itemize}

\item[(i)] it is a loose edge, or
\item[(ii)] a positive cycle, or
\item[(iii)] a pair of negative cycles with exactly one common node, or
\item[(iv)] a pair of disjoint negative cycles along with a minimal connecting path.

\end{itemize}
\end{cor}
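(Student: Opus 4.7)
The plan is to characterize minimal dependent column-sets of $In(G)$ using Lemma \ref{lemBidirectedNonsing} and its preceding corollary, which identify independent sets of columns as those whose associated bidirected subgraph decomposes into trees and negative $1$-tree components. First I would show that $G(C)$ is connected: otherwise $In(C)$ is block diagonal, and any nontrivial linear dependence restricts to a single block, yielding a proper dependent subset of $C$ and contradicting minimality. If $|C|=1$, the single column must be the zero vector, so the edge is either a loose edge (case (i)) or a positive loop, which is itself a positive cycle of length one (case (ii)). Henceforth assume $|C|\ge 2$, in which case $C$ contains no loose edge and no positive loop, since each would be a strictly smaller circuit.

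Next, a vertex-edge count. Let $v=|V(G(C))|$ and $k=|C|$. For every $e\in C$, $C\setminus\{e\}$ is independent, so every component of $G(C\setminus\{e\})$ is either a tree or a negative $1$-tree. Summing $|E_i|=|V_i|-1$ (trees) and $|E_i|=|V_i|$ (negative $1$-trees) over components gives $k-1 = v-t_0$, where $t_0\ge 0$ is the number of tree components; hence $k\le v+1$. Connectedness forces $k\ge v-1$, and $k=v-1$ would make $G(C)$ a tree and thus independent, so $k\in\{v,v+1\}$. The case $k=v$ already yields case (ii): $G(C)$ is a connected $1$-tree whose unique cycle must be positive (otherwise $G(C)$ is a negative $1$-tree, hence independent), and any pendant edge outside the cycle could be removed to leave the positive cycle as a strictly smaller dependent set, violating minimality; so $G(C)$ equals the positive cycle.

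For $k=v+1$, the equality $t_0=0$ forces minimum degree $\ge 2$ in $G(C)$. By the block decomposition of connected graphs of cyclomatic number $2$ with minimum degree $2$---allowing half-edges and negative loops as cycles of length one---$G(C)$ is topologically either a theta graph, a figure-eight (two cycles sharing exactly one vertex), or a handcuff (two vertex-disjoint cycles joined by a path). I would rule out the theta by a sign-parity argument: let $P_1,P_2,P_3$ be the three internally-disjoint paths between the two degree-$3$ vertices, with sign-products $s_1,s_2,s_3\in\{+,-\}$. Removing an edge of $P_i$ leaves the unique cycle $P_j\cup P_k$, which must be negative by Lemma \ref{lemBidirectedNonsing}, so $s_js_k=-1$ for all three pairs. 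But then $(s_1s_2)(s_1s_3)(s_2s_3)=(s_1s_2s_3)^2=+1$, contradicting $(-1)^3=-1$. In the remaining figure-eight and handcuff cases, each of the two cycles must itself be negative---a positive cycle alone would be a strictly smaller dependent subset of $C$---yielding exactly cases (iii) and (iv).

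The main obstacle I expect is the topological classification in the $k=v+1$ case: handling the degenerate cycles (half-edges, negative loops) uniformly within the block decomposition, and verifying that no other minimum-degree-$2$ connected shape with cyclomatic number $2$ arises. The sign argument that excludes the theta graph, though short, is the crucial structural reason why a circuit of a bidirected graph cannot contain three negative cycles.
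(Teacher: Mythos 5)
Your proof is correct and follows exactly the route the paper intends: the corollary is stated there without proof as an ``easy consequence'' of Lemma \ref{lemBidirectedNonsing}, and your argument is the natural elaboration of that derivation, with the connectivity and edge-count reduction to $k\in\{v,v+1\}$ and the sign-parity exclusion of the theta graph supplying the details the paper omits. The one step worth stating explicitly is that an isolated vertex counts as a (trivial) tree component, which is what makes the claim ``$t_0=0$ forces minimum degree $\ge 2$'' go through when a pendant edge is deleted.
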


\noindent
The latter two types are called \emph{handcuffs}\index{handcuff}.
For an illustration of Corollary \ref{corBidirectedCircuit}, see Figure \ref{fig:bidirectedCircuit}.

\begin{figure}[h!]
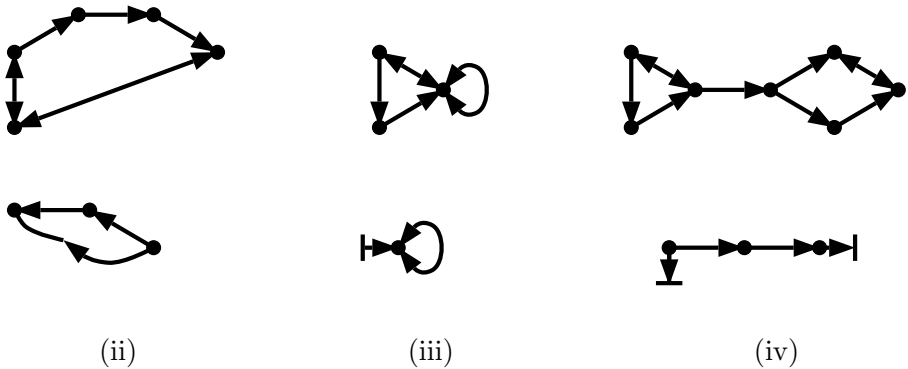


\vspace{.7cm}
$
\begin{array}{ccc}

\psset{xunit=1cm,yunit=1cm,linewidth=0.5pt,radius=0.1mm,arrowsize=7pt,
labelsep=1.5pt,fillcolor=black}

\pspicture(-1,0)(3,1.5)

\pscircle[fillstyle=solid](0,0){.1}
\pscircle[fillstyle=solid](0,1){.1}
\pscircle[fillstyle=solid](0.85,1.5){.1}
\pscircle[fillstyle=solid](1.85,1.5){.1}
\pscircle[fillstyle=solid](2.7,1){.1}

\psline[linewidth=1.6pt,arrowinset=0]{->}(0,1)(0.85,1.5)

\psline[linewidth=1.6pt,arrowinset=0]{<->}(0,0)(0,1)

\psline[linewidth=1.6pt,arrowinset=0]{->}(0.85,1.5)(1.85,1.5)

\psline[linewidth=1.6pt,arrowinset=0]{->}(1.85,1.5)(2.7,1)

\psline[linewidth=1.6pt,arrowinset=0]{<->}(0,0)(2.7,1)


\endpspicture & 

\psset{xunit=1cm,yunit=1cm,linewidth=0.5pt,radius=0.1mm,arrowsize=7pt,
labelsep=1.5pt,fillcolor=black}

\pspicture(-0.5,0)(4,1)

\pscircle[fillstyle=solid](1,0){.1}
\pscircle[fillstyle=solid](1,1){.1}
\pscircle[fillstyle=solid](1.85,0.5){.1}

\psline[linewidth=1.6pt,arrowinset=0]{<->}(1,1)(1.85,0.5)

\psline[linewidth=1.6pt,arrowinset=0]{<-}(1,0)(1,1)

\psline[linewidth=1.6pt,arrowinset=0]{->}(1,0)(1.85,0.5)

\pscurve[linewidth=1.6pt,arrowinset=0]{<->}(1.85,0.5)(2.3,0.8)(2.3,0.2)(1.85,0.5)


\endpspicture

 &

\psset{xunit=1cm,yunit=1cm,linewidth=0.5pt,radius=0.1mm,arrowsize=7pt,
labelsep=1.5pt,fillcolor=black}

\pspicture(0,0)(4,1)

\pscircle[fillstyle=solid](0,0){.1}
\pscircle[fillstyle=solid](0,1){.1}
\pscircle[fillstyle=solid](0.85,0.5){.1}
\pscircle[fillstyle=solid](1.85,0.5){.1}
\pscircle[fillstyle=solid](2.7,0){.1}
\pscircle[fillstyle=solid](2.7,1){.1}
\pscircle[fillstyle=solid](3.55,0.5){.1}

\psline[linewidth=1.6pt,arrowinset=0]{<->}(0,1)(0.85,0.5)

\psline[linewidth=1.6pt,arrowinset=0]{<-}(0,0)(0,1)

\psline[linewidth=1.6pt,arrowinset=0]{->}(0,0)(0.85,0.5)

\psline[linewidth=1.6pt,arrowinset=0]{->}(0.85,0.5)(1.85,0.5)

\psline[linewidth=1.6pt,arrowinset=0]{->}(1.85,0.5)(2.7,1)

\psline[linewidth=1.6pt,arrowinset=0]{->}(1.85,0.5)(2.7,0)

\psline[linewidth=1.6pt,arrowinset=0]{->}(2.7,0)(3.55,0.5)

\psline[linewidth=1.6pt,arrowinset=0]{<->}(2.7,1)(3.55,0.5)


\endpspicture  \\  &  \\ &  \\

\psset{xunit=1cm,yunit=1cm,linewidth=0.5pt,radius=0.1mm,arrowsize=7pt,
labelsep=1.5pt,fillcolor=black}

\pspicture(-1,0)(3,1)

\pscircle[fillstyle=solid](0,1){.1}
\pscircle[fillstyle=solid](1,1){.1}
\pscircle[fillstyle=solid](1.85,0.5){.1}

\psline[linewidth=1.6pt,arrowinset=0]{<-}(0,1)(1,1)

\psline[linewidth=1.6pt,arrowinset=0]{<-}(1,1)(1.85,0.5)

\pscurve[linewidth=1.6pt,arrowinset=0]{-}(0,1)(0.15,0.78)(0.65,0.6)
\pscurve[linewidth=1.6pt,arrowinset=0]{<-}(0.65,0.6)(1.25,0.3)(1.85,0.5)

\put(1,-1){ (ii)
}

\endpspicture

  & 

\psset{xunit=1cm,yunit=1cm,linewidth=0.5pt,radius=0.1mm,arrowsize=7pt,
labelsep=1.5pt,fillcolor=black}

\pspicture(-0.5,0)(3.5,0.5)

\pscircle[fillstyle=solid](1,0.5){.1}

\pscurve[linewidth=1.6pt,arrowinset=0]{<->}(1,0.5)(1.45,0.8)(1.45,0.2)(1,0.5)

\psline[linewidth=1.6pt,arrowinset=0]{|->}(.5,0.5)(1,0.5)

\put(1,-1){ (iii)
}

\endpspicture &

\psset{xunit=1cm,yunit=1cm,linewidth=0.5pt,radius=0.1mm,arrowsize=7pt,
labelsep=1.5pt,fillcolor=black}

\pspicture(0,0)(3,0.5)

\pscircle[fillstyle=solid](0,0.5){.1}
\pscircle[fillstyle=solid](1,0.5){.1}
\pscircle[fillstyle=solid](2,0.5){.1}

\psline[linewidth=1.6pt,arrowinset=0]{->|}(2,0.5)(2.5,0.5)

\psline[linewidth=1.6pt,arrowinset=0]{->}(1,0.5)(2,0.5)

\psline[linewidth=1.6pt,arrowinset=0]{->}(0,0.5)(1,0.5)

\psline[linewidth=1.6pt,arrowinset=0]{->}(0,0.5)(0,0)
\psline[linewidth=1.6pt,arrowinset=0]{-|}(0,0.5)(0,0)

\put(1,-1){ (iv)
}

\endpspicture

\end{array}
$
\vspace{1.2cm}

\caption{Examples of circuits.
}\label{fig:bidirectedCircuit}
\end{figure}

Finally, Using Proposition \ref{propBidirectedmatQ} and Lemma \ref{lemBidirectedNonsing}, it follows that any nonsingular square submatrix of an IMB has a determinant equal to a power of $2$ times $\pm 1$.

\begin{thm}\label{thmBidirected2mod}
Let $In$  be a full row rank node-edge incidence matrix of a bidirected graph. Then, for any nonsingular square submatrix $Q$ of $In$, $det(Q)=\pm 2^r$ for some $r\in \mathbb{N}$.
\end{thm}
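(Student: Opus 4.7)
The plan is to reduce $Q$ to a block-diagonal form using Lemma \ref{lemBidirectedNonsing} and then analyze each block with Proposition \ref{propBidirectedmatQ}. First, since $Q$ is nonsingular, Lemma \ref{lemBidirectedNonsing} guarantees that every connected component of $G(Q)$ is a negative $1$-tree. Grouping the rows and columns of $Q$ by components changes $\det Q$ only by a sign and puts $Q$ in block-diagonal form, with one square block $Q_i$ per component. Then $\det Q = \pm\prod_i \det Q_i$, so it suffices to show that $|\det Q_i|$ is a power of $2$ for each block corresponding to a negative $1$-tree.

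Next, I would strip off all tree-edges of each negative $1$-tree by repeated Laplace expansion. Any node $v$ that is pendant in the $1$-tree but does not lie on its unique negative cycle $C_i$ has degree $1$ in $G(Q_i)$ and is incident to a link; hence its row in $Q_i$ has a single nonzero entry equal to $\pm 1$. Expanding along this row gives $\det Q_i = \pm \det Q_i'$, where $Q_i'$ is obtained by deleting the row and column of $v$ and its incident edge. Iterating, I obtain $\det Q_i = \pm \det In(C_i)$, reducing the problem to evaluating the determinant of the incidence matrix of a single negative cycle.

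Finally, I would evaluate $\det In(C_i)$ by cases on the type of cycle. If $C_i$ is a half-edge, then $In(C_i) = [\pm 1]$, so $|\det In(C_i)| = 1 = 2^{0}$. If $C_i$ is a negative loop, then $In(C_i) = [\pm 2]$, so $|\det In(C_i)| = 2 = 2^{1}$. Otherwise $C_i$ is a negative full cycle with at least two links; by performing switchings at nodes (multiplying rows by $-1$) and reversing orientations of edges (multiplying columns by $-1$), both of which alter $\det In(C_i)$ only by a sign, I can bring $In(C_i)$ to the canonical form of Proposition \ref{propBidirectedmatQ}. By Proposition \ref{propBidirectedSwitch} these operations preserve the sign of the cycle, so the canonical form must satisfy $q_{1n}=1$, yielding $|\det In(C_i)| = 2$. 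Combining all blocks gives $\det Q = \pm 2^r$ with $r\in\mathbb{N}$.

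The main obstacle is essentially bookkeeping rather than conceptual: one must track signs carefully through the Laplace expansion and through the switching/reversal reduction, and argue that the resulting canonical matrix really has $q_{1n}=1$ (and not $-1$) precisely because the cycle is negative. With Lemma \ref{lemBidirectedNonsing} and Propositions \ref{propBidirectedmatQ}--\ref{propBidirectedSwitch} available, no new structural ideas are needed.
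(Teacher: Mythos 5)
Your proof is correct and follows exactly the route the paper intends: the paper only remarks that the theorem "follows from Proposition \ref{propBidirectedmatQ} and Lemma \ref{lemBidirectedNonsing}", and your argument — splitting $Q$ into blocks given by the negative $1$-tree components, peeling off pendant links by Laplace expansion, and evaluating the remaining cycle block via the canonical form of Proposition \ref{propBidirectedmatQ} together with the sign-invariance of Proposition \ref{propBidirectedSwitch} — is precisely the elaboration of that remark (and mirrors the proof of Lemma \ref{lemBidirectedNonsing} itself). No gaps.
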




\chapter{Binet matrices}\label{ch:Binetdef}

This chapter is devoted to present known fundamental results on binet matrices which will be used frequently in later chapters.
It describes the generalization of network matrices for bidirected graphs. Largely, the results and explanations are taken from \cite{KotThesis,AppaKotBinet} and \cite{Zaslavsky-Bi-06}.

Network matrices can be defined in two equivalent ways. The graphical definition starts with a connected directed graph with a given spanning tree in it. The rows and columns of the network matrix are associated with the tree and non-tree edges, respectively. For any non-tree edge $f$, we find the unique cycle (called the fundamental cycle) which contains $f$ and some edges from the tree. The column of the network matrix corresponding to $f$ will contain $\pm 1$ in the rows of the tree edges in its fundamental cycle and $0$ elsewhere. The signs of the non-zeros depend on the directions of the edges. If walking through the tree along the fundamental cycle starting at the tail of $f$, a tree edge lies in the same direction, it gets a positive sign, if it lies in the opposite direction, it gets a negative sign.

In the algebraic derivation of the network matrices, the incidence matrix $In$ of the directed graph is used. To make it full row rank, an arbitrary row is deleted. Every basis in this full row rank matrix $In'$ corresponds to a spanning tree in the graph. If basis $B$ is associated with the given spanning tree, and we denote the remaining part of $In'$ as $N$, then the network matrix equals $B^{-1} N$.

We apply these methods to bidirected graphs to get the bidirected analogue of network matrices, the \emph{binet matrices}\footnote{the term binet is used as a short form for \emph{bi}direted \emph{net}work, but by coincidence it also matches the name of Jacques Binet (1786-1856) who worked on the foundations of matrix theory and gave the rule of matrix multiplication.}. We define binet matrices in the algebraic way but, in parallel with network matrices, we also provide an algorithm to determine the columns of a binet matrix using its graphical representation. Similarly to network matrices, if the graphical definition is not available, then the analysis of binet matrices would be more cumbersome. That is why we will spend a relatively large part of the chapter on explaining the graphical method of deriving binet matrices. Most of Section \ref{sec:BinetDefi} is devoted to this task.

In Section \ref{sec:BinetOp}, we describe operations on binet matrices.
Section \ref{sec:Binetrep} introduces some particular binet representations. In Section \ref{sec:BinetOpt} we discuss linear and integer programming with an RIMB or a binet matrix as constraint matrix. Section \ref{sec:Binetreg} is about
the class of $2$-regular matrices which contains all binet matrices. Section \ref{sec:BinetMat} deals with matroids. To exhibit the connection of binet matrices to existing special classes of matroids, we introduce signed graphs and matroids in Subsections \ref{sec:BinetMatSign} and \ref{sec:BinetMatSub}, respectively.
Then the signed-graphic matroid based on signed graphs is defined in Subsection \ref{sec:BinetMatGraph}.

\section{Definition and graphical representation}\label{sec:BinetDefi}

We first define binet matrices algebraically and then show that there exists an equivalent graphical definition.

{\bf Definition.}
A matrix $A$ is called a \emph{binet matrix}\index{binet!matrix} if there exist
 a full row rank incidence matrix $In$ of size $n\times m'$ of a bidirected graph $G$ and a basis $B$ of it such that $In=[B\, N]$ and $A=B^{-1} N$. \\

From this definition, since any RIMD is an RIMB, it follows that any network matrix is a binet matrix.

Edges in the subgraph $G(B)$ of $G$ are called \emph{basic}\index{basic!edge} edges. The edges of $G$ that are not in $G(B)$ (i.e., those of $G(N)$) are the \emph{nonbasic} edges. By Lemma \ref{lemBidirectedNonsing}, the graph $G(B)$ consists of negative $1$-tree components. The unique cycle in a basic component is called a \emph{basic}\index{basic!cycle} cycle.  
Basic edges, respectively nonbasic ones, are in one-to-one correspondance with rows of $A$, respectively columns of $A$.

The bidirected graph $G$ with the indication of basic and nonbasic edges is called
a \emph{binet representation}\index{binet!representation} of $A$ (not unique in general) and is denoted $G(A)$. Let $m=m'-n$. Basic edges of the binet representation will be noted $e_1, e_2,\ldots, e_n$
and the nonbasic ones $f_1, f_2,\ldots, f_m$, so that $e_i$ ($1\le i \le n$), respectively $f_j$ ($1\le j \le m$), corresponds to the $i$th row, respectively $j$th column of $A$. Sometimes, we will identify some row  or column of $A$ with its corresponding basic or (respectively) nonbasic edge. See the binet matrix (\ref{equdefiInc}) and Figure \ref{fig1:BinetRep}.

The same binet matrix may arise from different incidence matrices, that is, it may have different binet representations. For example, the two bidirected graphs in Figure \ref{fig1:BinetRep} give two possible binet representations of binet matrix $A$ below, as can be checked by writing out the incidence matrix of the graphs, taking the inverse of the basis $B$ (corresponding to the edges $e_1$, $e_2$ and $e_3$) and multiplying it with $N$ (corresponding to the edges $f_1$, $f_2$ and $f_3$).

\begin{equation}\label{equdefiInc}
A=\begin{tabular}{c|c|c|c|}
& $f_1$ & $f_2$ & $f_3$ \\
\hline
$e_1$ & 1 & 0 & 1 \\
\hline
$e_2$ & 1 & 1 & 0 \\
\hline
$e_3$ & 0 & 1 & 1 \\
\hline
\end{tabular}
\end{equation}

\begin{figure}[ht!]
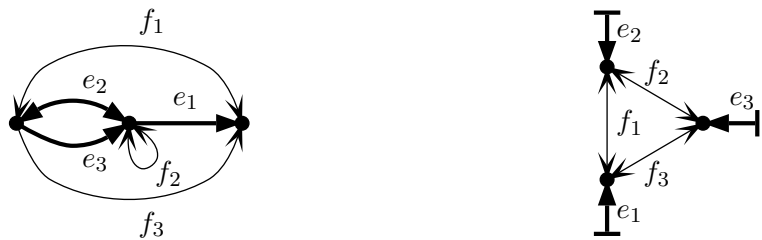


$
\begin{array}{cc}

\psset{xunit=1.5cm,yunit=1.5cm,linewidth=0.5pt,radius=0.1mm,arrowsize=7pt,
labelsep=1.5pt,fillcolor=black}

\pspicture(-1.5,1)(5,2)

\pscircle[fillstyle=solid](0,1.5){.1}
\pscircle[fillstyle=solid](1,1.5){.1}
\pscircle[fillstyle=solid](2,1.5){.1}


\psline[linewidth=1.6pt,arrowinset=0]{->}(1,1.5)(2,1.5)
\rput(1.5,1.7){$e_1$}

\pscurve[linewidth=1.6pt,arrowinset=0]{<->}(0,1.5)(.5,1.7)(1,1.5)
\rput(.7,1.85){$e_2$}

\pscurve[linewidth=1.6pt,arrowinset=0]{->}(0,1.5)(.5,1.3)(1,1.5)
\rput(.7,1.15){$e_3$}

\pscurve[arrowinset=.5,arrowlength=1.5]{<->}(0,1.5)(.25,2)(1.7,2)(2,1.5)
\rput(1.2,2.4){$f_1$}

\pscurve[arrowinset=.5,arrowlength=1.5]{<->}(1,1.5)(1.25,1.2)(1.1,1.1)(1,1.5)
\rput(1.35,1.05){$f_2$}

\pscurve[arrowinset=.5,arrowlength=1.5]{->}(0,1.5)(.3,1)(1.7,1)(2,1.5)
\rput(1.2,.6){$f_3$}

\endpspicture &

\psset{xunit=1.5cm,yunit=1.5cm,linewidth=0.5pt,radius=0.1mm,arrowsize=7pt,
labelsep=1.5pt,fillcolor=black}

\pspicture(0,1)(1.5,3)

\pscircle[fillstyle=solid](0,1){.1}
\pscircle[fillstyle=solid](0,2){.1}
\pscircle[fillstyle=solid](0.85,1.5){.1}


\psline[linewidth=1.6pt,arrowinset=0]{|->}(0,0.5)(0,1)
\rput(.2,0.7){$e_1$}

\psline[linewidth=1.6pt,arrowinset=0]{|->}(0,2.5)(0,2)
\rput(.2,2.3){$e_2$}

\psline[linewidth=1.6pt,arrowinset=0]{<-|}(0.85,1.5)(1.35,1.5)
\rput(1.2,1.7){$e_3$}

\psline[arrowinset=.5,arrowlength=1.5]{<->}(0,1)(0,2)
\rput(0.2,1.5){$f_1$}

\psline[arrowinset=.5,arrowlength=1.5]{<->}(0,2)(0.85,1.5)
\rput(0.44,1.95){$f_2$}

\psline[arrowinset=.5,arrowlength=1.5]{<->}(0,1)(0.85,1.5)
\rput(0.44,1.05){$f_3$}

\endpspicture 

\end{array}
$

\vspace{1cm}

\caption{Different binet representations of binet matrix (\ref{equdefiInc}).} 
\label{fig1:BinetRep}
\end{figure}

Let $1\le j \le m$. Since $A_{\bullet j}=B^{-1} N_{\bullet j }
\Leftrightarrow B A_{\bullet j}=N_{\bullet j}$, the column $A_{\bullet j}$
represents the unique coordinates of vector $N_{\bullet j}$ in the basis given by $B$.
Let $\{B_{\bullet k_1},B_{\bullet k_2},\ldots,B_{\bullet k_t}\}$ be the subset of columns of $B$
where the coordinates are non-zero and  $B'=[N_{\bullet j},B_{\bullet k_1},\ldots,B_{\bullet k_t}]$. 
The columns of the matrix $B'$ form a minimal dependent set in $\mathbf{R}^n$. The subgraph $G(B')$ of $G$ is called the 
\emph{fundamental}\index{fundamental circuit} circuit of $f_j$. 
A \emph{basic}\index{basic!subgraph} subgraph of a binet representation G(A) is a subgraph consisting of only basic edges. The \emph{basic}\index{basic!fundamental circuit} fundamental circuit of $f_j$ is the fundamental circuit of $f_j$ without $f_j$. If the fundamental circuit of $f_j$ is a handcuff of type (iii) (see Corollary \ref{corBidirectedCircuit}), then it is called a \emph{pathcycle}\index{pathcycle}.

By Corollary \ref{corBidirectedCircuit}, we can
describe the different types of fundamental circuits according to $f_j$. Later (see Lemma \ref{lemBinetRepBid}), we will prove that a binet matrix has always a binet representation in which each component of $G(B)$ has exactly one bidirected edge (contained in the basic cycle). So, in what follows, we assume that each component of $G(B)$ has exactly one bidirected edge contained in the basic cycle.
Let $v,v'$ be the end-nodes of $f_j$ ($v=v'$ if $f_j$ is a half-edge or loop). The vertex
$v$ is contained in some maximal negative 1-tree, call it $ T$. 
Let $p$ be the path in 
$ T$ from $v$ to the first node, say $w$, in the basic cycle
($v=w$ if $v$ lies on the basic cycle). In the same way, define 
$ T'$, $p'$, $w'$  and $q'$ with respect to $v'$. Each of the 
paths $p$ and $p'$ is called a \emph{ stem}\index{stem} issued from $f_j$. 
A subpath of a stem is called a \emph{substem}\index{substem}. If $T \neq T'$, then $f_j$ is called a \emph{$2$-edge}\index{edge@$2$-edge}, otherwise a \emph{$1$-edge}\index{edge@$1$-edge}.

\begin{figure}[h!]
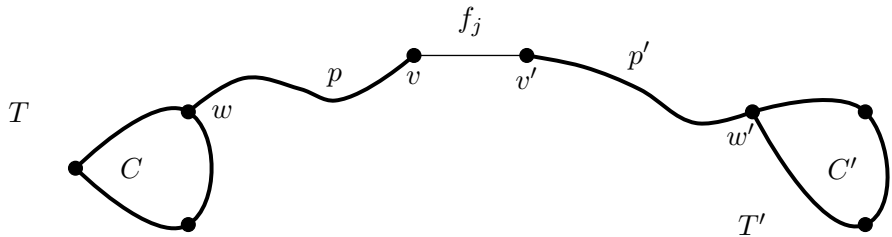

\vspace{.5cm}
\begin{center}

\psset{xunit=1.5cm,yunit=1.5cm,linewidth=0.5pt,radius=0.1mm,arrowsize=7pt,
labelsep=1.5pt,fillcolor=black}

\pspicture(0,.3)(8.5,2)

\pscircle[fillstyle=solid](1,1){.1}
\pscircle[fillstyle=solid](2,0.5){.1}
\pscircle[fillstyle=solid](2,1.5){.1}
\pscircle[fillstyle=solid](4,2){.1}
\pscircle[fillstyle=solid](5,2){.1}
\pscircle[fillstyle=solid](7,1.5){.1}
\pscircle[fillstyle=solid](8,1.5){.1}
\pscircle[fillstyle=solid](8,0.5){.1}





\psline(4,2)(5,2)
\rput(4.5,2.3){$f_j$}

\pscurve[linewidth=1.5pt,arrowinset=0]{-}(1,1)(2,1.5)(2,0.5)(1,1)

\pscurve[linewidth=1.5pt,arrowinset=0]{-}(5,2)(5.5,1.9)(6,1.7)(6.5,1.4)(7,1.5)
\rput(6,2){$p'$}

\pscurve[linewidth=1.5pt,arrowinset=0]{-}(7,1.5)(8,1.5)(8,0.5)(7,1.5)

\pscurve[linewidth=1.5pt,arrowinset=0]{-}(2,1.5)(2.5,1.8)(3,1.7)(3.3,1.6)(4,2)
\rput(3.3,1.8){$p$}

\rput(0.5,1.5){$T$}
\rput(7,0.5){$T'$}
\rput(4,1.8){$v$}
\rput(5,1.8){$v'$}
\rput(2.3,1.5){$w$}
\rput(6.9,1.3){$w'$}
\rput(1.5,1){$C$}
\rput(7.8,1){$C'$}

\endpspicture 
\end{center}

\caption{An illustration of the fundamental circuit of $f_j$ in case $T\neq T'$.} 
\label{figalpha1:2basicComponents}

\end{figure}

Assume $ T\neq  T'$. \label{mycounter2} Let $C$ and $C'$ be the (negative) cycles in $T$ and $T'$, respectively. The fundamental circuit of $f_j$ is the bidirected graph formed by 
$f_j$, $p$, $C$, $p'$ and $C'$. See Figure
\ref{figalpha1:2basicComponents}.

Now assume $ T= T'$. 
Let $C_1$ (respectively, $C_2$) be the path in the basic cycle from $w$ to $w'$ containing the 
bidirected edge (respectively, only directed edges).
If $f_j$ is bidirected, then the
fundamental circuit of $f_j$ is the subgraph of $G$ consisting of $f_j$, $p$, $C_1$ and $p'$. 
If $f_j$ is directed and $w\neq w'$ then the
fundamental circuit of $f_j$ is the subgraph of $G$ consisting of 
$f_j$, $p$, $C_2$ and $p'$.
If $f_j$ is directed and $w= w'$, then the
fundamental circuit of $f_j$ is the bidirected graph whose edge set contains  $f_j$, the edges of $p$ not in $p'$ and the edges of $p'$ not in
$p$. See Figure \ref{figalpha2:fundCircuits} for an illustration.

\begin{figure}[h!]
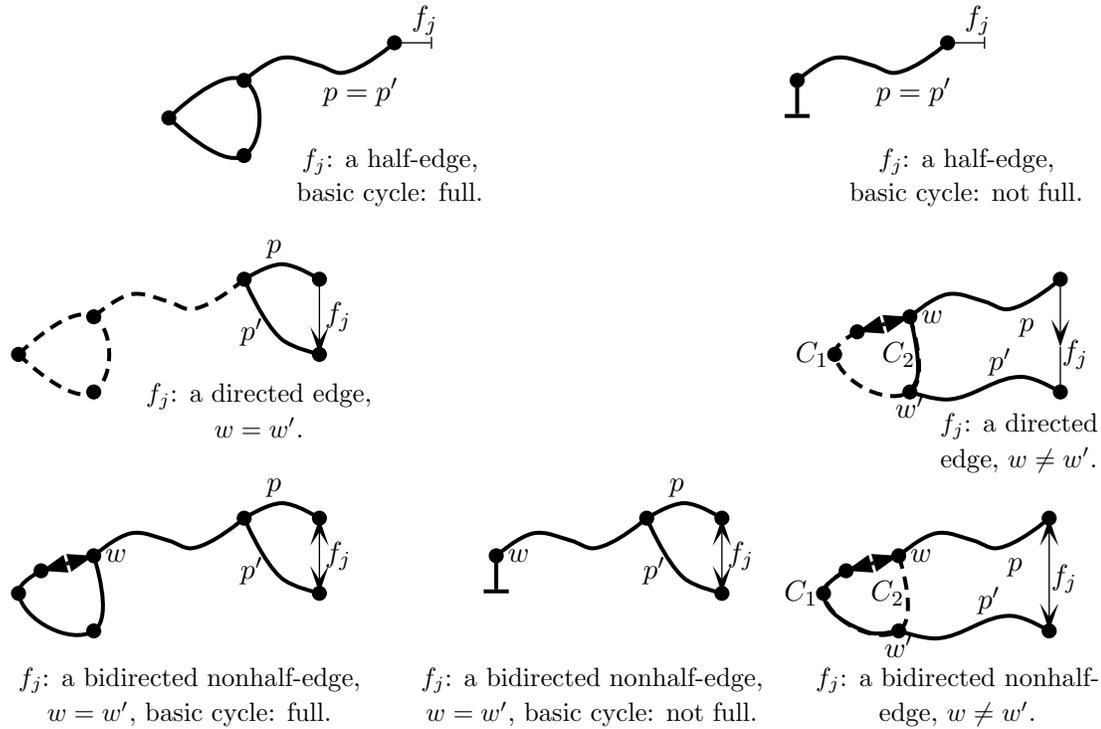


$\begin{array}{cc}

\psset{xunit=1cm,yunit=1cm,linewidth=0.5pt,radius=0.1mm,arrowsize=7pt,
labelsep=1.5pt,fillcolor=black}

\pspicture(0,0)(5,3)

\pscircle[fillstyle=solid](1,1){.1}
\pscircle[fillstyle=solid](2,0.5){.1}
\pscircle[fillstyle=solid](2,1.5){.1}
\pscircle[fillstyle=solid](4,2){.1}

\psline[arrowinset=.5,arrowlength=1.5]{-|}(4,2)(4.5,2)
\rput(4.4,2.3){$f_j$}

\pscurve[linewidth=1.5pt,arrowinset=0]{-}(1,1)(2,1.5)(2,0.5)(1,1)

\pscurve[linewidth=1.5pt,arrowinset=0]{-}(2,1.5)(2.5,1.8)(3,1.7)(3.3,1.6)(4,2)

\rput(3.55,1.35){$p=p'$}


\put(2.7,-0.1){\shortstack{\small{$f_j$: a half-edge,}\\ \small{basic cycle: full.}}}

\endpspicture  &

\psset{xunit=1cm,yunit=1cm,linewidth=0.5pt,radius=0.1mm,arrowsize=7pt,
labelsep=1.5pt,fillcolor=black}

\pspicture(0,0)(8,3)

\pscircle[fillstyle=solid](2,1.5){.1}
\pscircle[fillstyle=solid](4,2){.1}

\psline[arrowinset=.5,arrowlength=1.5]{-|}(4,2)(4.5,2)
\rput(4.4,2.3){$f_j$}

\psline[linewidth=1.5pt,arrowinset=0]{-|}(2,1.5)(2,1)

\pscurve[linewidth=1.5pt,arrowinset=0]{-}(2,1.5)(2.5,1.8)(3,1.7)(3.3,1.6)(4,2)


\rput(3.55,1.35){$p=p'$}

\put(2.7,-0.1){\shortstack{\small{$f_j$: a half-edge,}\\ \small{basic cycle: not full.}}}

\endpspicture \\

\psset{xunit=1cm,yunit=1cm,linewidth=0.5pt,radius=0.1mm,arrowsize=7pt,
labelsep=1.5pt,fillcolor=black}

\pspicture(0,0)(9,3)

\pscircle[fillstyle=solid](1,1){.1}
\pscircle[fillstyle=solid](2,0.5){.1}
\pscircle[fillstyle=solid](2,1.5){.1}
\pscircle[fillstyle=solid](4,2){.1}
\pscircle[fillstyle=solid](5,2){.1}
\pscircle[fillstyle=solid](5,1){.1}

\pscurve[linewidth=1.5pt,arrowinset=0,linestyle=dashed]{-}(1,1)(2,1.5)(2,0.5)(1,1)

\pscurve[linewidth=1.5pt,arrowinset=0]{-}(4,2)(4.5,2.2)(5,2)

\pscurve[linewidth=1.5pt,arrowinset=0]{-}(4,2)(4.5,1.2)(5,1)

\pscurve[linewidth=1.5pt,arrowinset=0,linestyle=dashed]{-}(2,1.5)(2.5,1.8)(3,1.7)(3.3,1.6)(4,2)

\psline[arrowinset=.5,arrowlength=1.5]{->}(5,2)(5,1)
\rput(5.25,1.5){$f_j$}

\rput(4.4,2.4){$p$}\rput(4.1,1.3){$p'$}

\put(2.7,-0.1){\shortstack{\small{$f_j$: a directed edge,}\\ \small{$w=w'$.}}}

\endpspicture  &

\psset{xunit=1cm,yunit=1cm,linewidth=0.5pt,radius=0.1mm,arrowsize=7pt,
labelsep=1.5pt,fillcolor=black}

\pspicture(0,0)(5,3)

\pscircle[fillstyle=solid](1,1){.1}
\pscircle[fillstyle=solid](1.3,1.3){.1}
\pscircle[fillstyle=solid](2,0.5){.1}
\pscircle[fillstyle=solid](2,1.5){.1}
\pscircle[fillstyle=solid](4,2){.1}
\pscircle[fillstyle=solid](4,0.5){.1}

\pscurve[linewidth=1.5pt,arrowinset=0,linestyle=dashed]{-}(2,1.5)(2,0.5)(1,1)(1.3,1.3)

\pscurve[linewidth=1.5pt,arrowinset=0]{-}(2,1.5)(2.1,.7)(2,0.5)

\pscurve[linewidth=1.5pt,arrowinset=0]{-}(2,0.5)(2.5,0.4)(3.5,0.7)(4,0.5)

\psline[linewidth=1.5pt,arrowinset=0]{<->}(1.3,1.3)(2,1.5)

\psline[arrowinset=.5,arrowlength=1.5]{->}(4,2)(4,1.1)\psline(4,1.1)(4,0.5)
\rput(4.2,1){$f_j$}

\pscurve[linewidth=1.5pt,arrowinset=0]{-}(2,1.5)(2.5,1.8)(3,1.7)(3.3,1.6)(4,2)

\rput(3.2,0.9){$p'$}
\rput(3.55,1.35){$p$}
\rput(0.7,1){$C_1$}
\rput(1.85,1){$C_2$}
\rput(2.3,1.5){$w$}
\rput(2,.3){$w'$}

\put(2.4,-0.5){\shortstack{\small{$f_j$: a directed }\\ \small{edge, $w\neq w'$.}}}

\endpspicture 

\end{array}$

$\begin{array}{ccc}

\psset{xunit=1cm,yunit=1cm,linewidth=0.5pt,radius=0.1mm,arrowsize=7pt,
labelsep=1.5pt,fillcolor=black}

\pspicture(0,0)(5,3)

\pscircle[fillstyle=solid](1,1){.1}
\pscircle[fillstyle=solid](1.3,1.3){.1}
\pscircle[fillstyle=solid](2,0.5){.1}
\pscircle[fillstyle=solid](2,1.5){.1}
\pscircle[fillstyle=solid](4,2){.1}
\pscircle[fillstyle=solid](5,2){.1}
\pscircle[fillstyle=solid](5,1){.1}

\pscurve[linewidth=1.5pt,arrowinset=0]{-}(2,1.5)(2,0.5)(1,1)(1.3,1.3)

\psline[linewidth=1.5pt,arrowinset=0]{<->}(1.3,1.3)(2,1.5)

\pscurve[linewidth=1.5pt,arrowinset=0]{-}(4,2)(4.5,2.2)(5,2)

\pscurve[linewidth=1.5pt,arrowinset=0]{-}(4,2)(4.5,1.2)(5,1)

\pscurve[linewidth=1.5pt,arrowinset=0]{-}(2,1.5)(2.5,1.8)(3,1.7)(3.3,1.6)(4,2)

\psline[arrowinset=.5,arrowlength=1.5]{<->}(5,2)(5,1)
\rput(5.25,1.5){$f_j$}


\rput(4.4,2.4){$p$}\rput(4.1,1.3){$p'$}
\rput(2.3,1.5){$w$}

\put(1,-0.7){\shortstack{\small{$f_j$: a bidirected nonhalf-edge,}\\ 
\small{$w=w'$, basic cycle: full.}}}

\endpspicture &

\psset{xunit=1cm,yunit=1cm,linewidth=0.5pt,radius=0.1mm,arrowsize=7pt,
labelsep=1.5pt,fillcolor=black}

\pspicture(0,0)(5,3)

\pscircle[fillstyle=solid](2,1.5){.1}
\pscircle[fillstyle=solid](4,2){.1}
\pscircle[fillstyle=solid](5,2){.1}
\pscircle[fillstyle=solid](5,1){.1}

\psline[linewidth=1.5pt,arrowinset=0]{-|}(2,1.5)(2,1)

\pscurve[linewidth=1.5pt,arrowinset=0]{-}(4,2)(4.5,2.2)(5,2)

\pscurve[linewidth=1.5pt,arrowinset=0]{-}(4,2)(4.5,1.2)(5,1)

\pscurve[linewidth=1.5pt,arrowinset=0]{-}(2,1.5)(2.5,1.8)(3,1.7)(3.3,1.6)(4,2)

\psline[arrowinset=.5,arrowlength=1.5]{<->}(5,2)(5,1)
\rput(5.25,1.5){$f_j$}


\rput(4.4,2.4){$p$}\rput(4.1,1.3){$p'$}
\rput(2.3,1.5){$w$}

\put(1,-0.7){\shortstack{\small{$f_j$: a bidirected nonhalf-edge,}\\ 
\small{$w=w'$, basic cycle: not full.}}}

\endpspicture &

\psset{xunit=1cm,yunit=1cm,linewidth=0.5pt,radius=0.1mm,arrowsize=7pt,
labelsep=1.5pt,fillcolor=black}

\pspicture(0,0)(5,3)

\pscircle[fillstyle=solid](1,1){.1}
\pscircle[fillstyle=solid](1.3,1.3){.1}
\pscircle[fillstyle=solid](2,0.5){.1}
\pscircle[fillstyle=solid](2,1.5){.1}
\pscircle[fillstyle=solid](4,2){.1}
\pscircle[fillstyle=solid](4,0.5){.1}

\pscurve[linewidth=1.5pt,arrowinset=0,linestyle=dashed]{-}(2,1.5)(2,0.5)(1,1)(1.3,1.3)

\pscurve[linewidth=1.5pt,arrowinset=0]{-}(2,0.5)(1.6,0.48)(1,1)(1.3,1.3)

\pscurve[linewidth=1.5pt,arrowinset=0]{-}(2,0.5)(2.5,0.4)(3.5,0.7)(4,0.5)

\psline[linewidth=1.5pt,arrowinset=0]{<->}(1.3,1.3)(2,1.5)

\psline[arrowinset=.5,arrowlength=1.5]{<->}(4,2)(4,0.5)
\rput(4.2,1.2){$f_j$}

\pscurve[linewidth=1.5pt,arrowinset=0]{-}(2,1.5)(2.5,1.8)(3,1.7)(3.3,1.6)(4,2)

\rput(3.2,0.9){$p'$}
\rput(3.55,1.35){$p$}
\rput(0.7,1){$C_1$}
\rput(1.85,1){$C_2$}
\rput(2.3,1.5){$w$}
\rput(2,.3){$w'$}

\put(.9,-0.7){\shortstack{\small{$f_j$: a bidirected nonhalf-}\\ 
\small{edge, $w\neq w'$.}}}

\endpspicture 

\end{array}$
\vspace{1cm}

\caption{An illustration of the different types of fundamental circuit of a nonbasic edge $f_j$ in case $T= T'$. (The fundamental circuit of $f_j$ is formed by $f_j$ and heavy edges.)} 
\label{figalpha2:fundCircuits}

\end{figure}

A \emph{minimal covering closed walk}\index{minimal covering walk} of a circuit $C$ in $G$ is a closed walk of minimal length that covers each edge and is denoted by $w(C)$. Using Corollary \ref{corBidirectedCircuit}, we deduce that the walk $w(C)$ covers each edge of a connecting
path twice (provided that $C$ is a handcuff) and each other edge exactly once.  For any edge $f$ in $C$, the walk $w(C)$ is said to be \emph{consistently oriented according to $f$}\index{consistently oriented!according to a nonbasic edge} if every node in $w(C)$, except the endnodes of $f$, is consistent. It is easy to verify that for any edge $f$ in $C$, one may orient the edges of $C\verb"\" \{f\}$ so that $w(C)$ becomes consistently oriented according to $f$. (One may consult \cite{Zaslavsky-Or-91} for a detailed discussion of how to orient a signed graph. We note that in articles by Zaslavsky the word used for "consistent" is "coherent", while in \cite{Appa-06}, the corresponding term is "incoherent".)

In what follows, we give a graphical method adapted from \cite{Zaslavsky-Bi-06} to obtain a binet matrix from its binet representation . This spares us the need to compute the inverse of the basis, makes handling of binet matrices much easier, and helps in proving properties of binet matrices in later sections.

\begin{tabbing}
\textbf{Procedure\,\,WeightCircuit}\\

\textbf{Input: }\= A bidirected graph $G$, a basis $B$ of the incidence matrix $In(G)$,\\
\> and a non-zero column $N_{\bullet j}$ of $In$ not in $B$.\\
\textbf{Output:} The column $A_{\bullet j}=B^{-1} N_{\bullet j}$.\\ 

1)\verb"  "\= let $T$ be the edge set corresponding to the basis $B$ and $C$ the fundamental circuit of $f_j$\\
\>  in $G$ and $w(C)$ a minimal covering closed walk of $C$;\\
2) \> reorient edges of $C\verb"\" \{f_j\} $ so that $w(C)$ becomes consistently oriented according to $f_j$;\\
3) \> assign weights $+1$ to each singly covered edge (except half-edges), \\
\> $+2$ to each doubly covered edge or half-edge, and $0$ to the other edges in $T$; \\
4) \> negate the values assigned to edges that were reoriented; divide by $2$ if necessary \\
\>  to ensure that $f_j$ has weight $+1$;\\
\> output the edge weights on $T$ that correspond to the different coefficients of $A_{\bullet j}$.
\end{tabbing}

From this procedure, it is easy to deduce the following two lemmas.

\begin{lem}\label{lemdefiWeight1}
For any nonbasic edge $f_j$ and $e_i$ a basic edge in the fundamental circuit of $f_j$, we have 

$
A_{ij}= \left\{
\begin{array}{ll}

 \pm \frac{1}{2} & \m{ if }  f_j  \m{ is a 2-edge or a half-edge, and } e_i  \m{ is in a full basic cycle}; \\
  \pm 2  & \m{ if } f_j  \m{  is a bidirected nonhalf 1-edge, and } e_i \m{  is a half-edge or belongs to } \\   
 & \m{ both stems issued from } f_j; \\
 \pm 1 & otherwise.

\end{array} 
\right.
$


\end{lem}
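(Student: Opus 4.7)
The plan is to derive the formula directly from the procedure WeightCircuit. The key observation is that, by Corollary~\ref{corBidirectedCircuit}, the fundamental circuit $C$ of $f_j$ is either a positive cycle (type~(ii)) or a handcuff (type~(iii) or~(iv)); in either case a minimal covering closed walk $w(C)$ traverses each edge of the connecting path (if $C$ is a handcuff) twice and every other edge exactly once. Hence, in absolute value, step~3 of WeightCircuit assigns $1$ to a singly covered non-half-edge and $2$ to a doubly covered edge or a half-edge, and the rescaling of step~4 divides every weight by $2$ precisely when $f_j$ itself carries weight $\pm 2$ after step~3, i.e.\ precisely when $f_j$ is a half-edge or $f_j$ lies in the connecting path of a handcuff and is therefore doubly covered.

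I would then match the type of $f_j$ to the shape of $C$, using the description of fundamental circuits given on the preceding pages. If $f_j$ is a $2$-edge, the subtrees $T$ and $T'$ are distinct, so $C$ is a type--(iv) handcuff whose connecting path is $p\cup\{f_j\}\cup p'$; in particular $f_j$ is doubly covered and the rescaling takes place. If $f_j$ is a half-edge, $C$ is a type--(iii) handcuff in which $f_j$ is itself one of the two negative cycles and the connecting path is the single stem $p$; rescaling takes place because $f_j$ is a half-edge. If $f_j$ is a directed $1$-edge, $C$ is a positive cycle and no rescaling occurs. Finally, if $f_j$ is a bidirected non-half $1$-edge, one distinguishes the sub-cases $w\neq w'$, in which $C$ is the positive cycle $f_j\cup p\cup C_1\cup p'$, and $w=w'$, in which the stems $p$ and $p'$ share a subpath $p''$ and $C$ is a handcuff whose connecting path is exactly $p''$ and whose two negative cycles are $C_1$ and the cycle formed by $f_j$ together with the non-shared portions of $p$ and $p'$; in both sub-cases $f_j$ is singly covered and non-half, so no rescaling takes place.

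Reading off $|A_{ij}|$ now yields the three cases of the lemma. When $f_j$ is a $2$-edge or a half-edge and $e_i$ lies on a full basic cycle of $C$, the edge $e_i$ is singly covered and is not a half-edge, hence leaves step~3 with weight $\pm 1$; the rescaling brings it to $\pm\tfrac{1}{2}$. When $f_j$ is a bidirected non-half $1$-edge and $e_i$ is either a half-edge or lies in both stems of $f_j$, the edge $e_i$ carries weight $\pm 2$ after step~3 (either as a half-edge, or as a doubly covered edge of the connecting path $p''$) and, since no rescaling occurs, $|A_{ij}|=2$. In every remaining situation either $e_i$ enters step~3 with weight $\pm 1$ and no rescaling occurs, or $e_i$ enters with weight $\pm 2$ while rescaling by $\tfrac{1}{2}$ does take place; both outcomes give $|A_{ij}|=1$.

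The main subtlety I expect lies in the bidirected non-half $1$-edge case, where one must carefully separate $w=w'$ from $w\neq w'$ and, in the former sub-case, correctly identify the shared substem $p''$ as the connecting path of the resulting handcuff; once this structural point is in place, the remainder of the proof is a routine bookkeeping of covering multiplicities and half-edge weights driven by WeightCircuit.
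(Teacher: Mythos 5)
Your proposal is correct and is exactly the argument the paper intends: the paper offers no explicit proof beyond the remark that the lemma "is easy to deduce" from the procedure WeightCircuit, and your case analysis (rescaling occurs iff $f_j$ is doubly covered or a half-edge, combined with the classification of fundamental circuits from Corollary~\ref{corBidirectedCircuit} and the $w=w'$ versus $w\neq w'$ discussion) is precisely the bookkeeping that remark elides, matching the weights displayed in Figure~\ref{fig:Apositive}.
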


\begin{lem}\label{lemdefiWeight2}
Let $f_j$ be a nonbasic edge, $C$ its fundamental circuit and $w(C)$ a minimal covering closed walk of $C$. Then $w(C)$ is 
consistently oriented according to $f_j$ if and only if $A_{\bullet j}$ is nonnegative.
\end{lem}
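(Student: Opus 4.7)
The plan is to read both implications directly from Procedure WeightCircuit above the lemma, which computes $A_{\bullet j}$ in four steps and makes the link between orientations of edges of $C$ and the signs of entries of $A_{\bullet j}$ explicit. The key observation that drives both directions is that every edge $e_i$ of $C\setminus\{f_j\}$ is assigned a strictly positive pre-weight ($+1$ or $+2$) in step~3, and hence contributes a nonzero entry to $A_{\bullet j}$; moreover, its sign in $A_{\bullet j}$ is positive precisely when $e_i$ is \emph{not} reoriented in step~2. (That each such $e_i$ gives a nonzero coefficient is consistent with Lemma~\ref{lemdefiWeight1}.)

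For the forward direction I would assume that $w(C)$ is already consistently oriented according to $f_j$ in the given orientation of $G$. Then step~2 performs no reorientation at all, so step~4 negates none of the weights produced by step~3. Since step~3 assigns only the nonnegative values $+1$, $+2$, or $0$, and the possible division by $2$ in step~4 preserves signs, every entry of $A_{\bullet j}$ comes out nonnegative.

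For the backward direction I would argue by contrapositive: if $w(C)$ is not consistently oriented according to $f_j$ in the given orientation, then step~2 must reorient at least one edge $e_i$ of $C\setminus\{f_j\}$. The strictly positive pre-weight assigned to $e_i$ in step~3 is then negated in step~4, and the subsequent division by $2$ does not restore positivity, so $A_{ij}<0$ and $A_{\bullet j}$ is not nonnegative. The main (and essentially only) subtle point is to be sure that reorienting an edge of $C\setminus\{f_j\}$ always produces a strictly negative coefficient rather than accidentally zero; this is guaranteed because step~3 never assigns the weight $0$ to an edge of the fundamental circuit.
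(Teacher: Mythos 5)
Your proof is correct and follows exactly the route the paper intends: the paper offers no separate argument for this lemma beyond the remark that it "is easy to deduce" from Procedure WeightCircuit, and your reading of steps 2–4 (every edge of $C\setminus\{f_j\}$ receives a strictly positive pre-weight, which is negated precisely when that edge must be reoriented, with the normalizing division by $2$ preserving signs) is the intended deduction spelled out. No gap.
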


From these lemmas, we can describe the different types of
fundamental circuits of a nonbasic edge $f_j$ with weights on the edges. By assuming that each component of $G(B)$ has exactly one bidirected edge (which is entering), an illustration is given in 
Figure \ref{fig:Apositive} (where $p$, $p'$, $C$, $C'$, $C_1$ and $C_2$ are defined at page \pageref{mycounter2}).

\begin{figure}[h!]

\begin{center}

\psset{xunit=1cm,yunit=1cm,linewidth=0.5pt,radius=0.1mm,arrowsize=7pt,
labelsep=1.5pt,fillcolor=black}

\pspicture(0,0)(9,3)

\pscircle[fillstyle=solid](1,1){.1}
\pscircle[fillstyle=solid](2,0.5){.1}
\pscircle[fillstyle=solid](2,1.5){.1}
\pscircle[fillstyle=solid](1.55,1.6){.1}
\pscircle[fillstyle=solid](4,2){.1}
\pscircle[fillstyle=solid](5,2){.1}
\pscircle[fillstyle=solid](7,1.5){.1}

\psline[arrowinset=.5,arrowlength=1.5]{<->}(4,2)(5,2)
\rput(4.5,2.3){$f_j$}

\psline[linewidth=1.5pt,arrowinset=0]{->}(1,1)(1.4,0.6)

\psline[linewidth=1.5pt,arrowinset=0]{->}(1,1)(1.4,0.6)(2,0.5)

\psline[linewidth=1.5pt,arrowinset=0]{->}(2,0.5)(2.2,0.8)(2,1.5)

\psline[linewidth=1.5pt,arrowinset=0]{<->}(1,1)(1.55,1.6)

\psline[linewidth=1.5pt,arrowinset=0]{->}(1.55,1.6)(2,1.5)

\pscurve[linewidth=1.5pt,arrowinset=0]{-}(5,2)(5.5,1.9)(6,1.7)
\pscurve[linewidth=1.5pt,arrowinset=0]{<-}(6,1.7)(6.5,1.4)(7,1.5)
\rput(6,2){$p'$}

\psline[linewidth=1.5pt,arrowinset=0]{<-|}(7,1.5)(7.5,1.5)

\pscurve[linewidth=1.5pt,arrowinset=0]{->}(2,1.5)(2.5,1.8)(3,1.7)
\pscurve[linewidth=1.5pt,arrowinset=0]{-}(3,1.7)(3.3,1.6)(4,2)
\rput(3.3,1.8){$p$}

\rput(4,1.8){$v$}
\rput(5,1.8){$v'$}
\rput(2.3,1.5){$w$}
\rput(6.9,1.25){$w'$}
\rput(1.35,1.8){$\frac{1}{2}$}\rput(1.4,0.22){$ \frac{1}{2}$}
\rput(2.5,0.8){$ \frac{1}{2}$} \rput(2.5,2){$ 1$}
\rput(3.5,1.3){$ 1$}\rput(5.5,2.2){$ 1$}

\rput(6.2,1.2){$ 1$}\rput(7.5,1.1){$ 1$}
\put(-3.2,2){\shortstack{The case $T\neq T'$:}}

\endpspicture 
\end{center}

$\begin{array}{cc}

\psset{xunit=1cm,yunit=1cm,linewidth=0.5pt,radius=0.1mm,arrowsize=7pt,
labelsep=1.5pt,fillcolor=black}

\pspicture(0,0)(5,3)

\pscircle[fillstyle=solid](1,1){.1}
\pscircle[fillstyle=solid](1.55,1.6){.1}
\pscircle[fillstyle=solid](2,0.5){.1}
\pscircle[fillstyle=solid](2,1.5){.1}
\pscircle[fillstyle=solid](4,2){.1}

\psline[arrowinset=.5,arrowlength=1.5]{<-|}(4,2)(4.5,2)
\rput(4.4,2.3){$f_j$}

\psline[linewidth=1.5pt,arrowinset=0]{->}(1,1)(1.4,0.6)

\psline[linewidth=1.5pt,arrowinset=0]{->}(1,1)(1.4,0.6)(2,0.5)

\psline[linewidth=1.5pt,arrowinset=0]{->}(2,0.5)(2.2,0.8)(2,1.5)

\psline[linewidth=1.5pt,arrowinset=0]{<->}(1,1)(1.55,1.6)

\psline[linewidth=1.5pt,arrowinset=0]{->}(1.55,1.6)(2,1.5)
\pscurve[linewidth=1.5pt,arrowinset=0]{->}(2,1.5)(2.5,1.8)(3,1.7)
\pscurve[linewidth=1.5pt,arrowinset=0]{-}(3,1.7)(3.3,1.6)(4,2)
\rput(3.4,1.9){$ 1$}

\rput(1.35,1.8){$ \frac{1}{2}$}\rput(1.4,0.22){$ \frac{1}{2}$}
\rput(2.5,0.8){$ \frac{1}{2}$} \rput(2.5,2){$ 1$}
\rput(3.55,1.35){$p=p'$}

\put(-2,3){\shortstack{The case $T=T'$:}}

\put(2.7,-0.1){\shortstack{\small{$f_j$: a half-edge,}\\ \small{basic cycle: full.}}}

\endpspicture  &

\psset{xunit=1cm,yunit=1cm,linewidth=0.5pt,radius=0.1mm,arrowsize=7pt,
labelsep=1.5pt,fillcolor=black}

\pspicture(0,0)(8,3)

\pscircle[fillstyle=solid](2,1.5){.1}
\pscircle[fillstyle=solid](4,2){.1}

\psline[arrowinset=.5,arrowlength=1.5]{<-|}(4,2)(4.5,2)
\rput(4.4,2.3){$f_j$}

\psline[linewidth=1.5pt,arrowinset=0]{<-|}(2,1.5)(2,1)

\pscurve[linewidth=1.5pt,arrowinset=0]{->}(2,1.5)(2.5,1.8)(3,1.7)
\pscurve[linewidth=1.5pt,arrowinset=0]{-}(3,1.7)(3.3,1.6)(4,2)

\rput(3.4,1.9){$ 1$}
\rput(1.6,1){$ 1$}
\rput(2.5,2){$ 1$}

\rput(3.55,1.35){$p=p'$}

\put(2.7,-0.1){\shortstack{\small{$f_j$: a half-edge,}\\ \small{basic cycle: not full.}}}

\endpspicture \\

\psset{xunit=1cm,yunit=1cm,linewidth=0.5pt,radius=0.1mm,arrowsize=7pt,
labelsep=1.5pt,fillcolor=black}

\pspicture(0,0)(9,3)

\pscircle[fillstyle=solid](1,1){.1}
\pscircle[fillstyle=solid](2,0.5){.1}
\pscircle[fillstyle=solid](2,1.5){.1}
\pscircle[fillstyle=solid](4,2){.1}
\pscircle[fillstyle=solid](5,2){.1}
\pscircle[fillstyle=solid](5,1){.1}

\pscurve[linewidth=1.5pt,arrowinset=0,linestyle=dashed]{-}(1,1)(2,1.5)(2,0.5)(1,1)

\pscurve[linewidth=1.5pt,arrowinset=0]{-}(4,2)(4.5,2.2)(5,2)
\psline[linewidth=1.5pt,arrowinset=0]{<-}(4.499,2.2)(4.501,2.1995)

\pscurve[linewidth=1.5pt,arrowinset=0]{->}(4,2)(4.5,1.2)(5,1)

\pscurve[linewidth=1.5pt,arrowinset=0,linestyle=dashed]{-}(2,1.5)(2.5,1.8)(3,1.7)(3.3,1.6)(4,2)
\rput(3.4,1.9){$0$}

\psline[arrowinset=.5,arrowlength=1.5]{->}(5,2)(5,1)
\rput(5.25,1.5){$f_j$}

\rput(1.35,1.7){$0$}\rput(1.4,0.22){$0$}
\rput(2.5,0.8){$0$} \rput(2.5,2){$0$}
\rput(4.5,1.9){$ 1$}\rput(4.5,0.9){$ 1$}
\rput(4.4,2.4){$p$}\rput(4.1,1.3){$p'$}

\put(2.7,-0.1){\shortstack{\small{$f_j$: a directed edge,}\\ \small{$w=w'$.}}}

\endpspicture  &

\psset{xunit=1cm,yunit=1cm,linewidth=0.5pt,radius=0.1mm,arrowsize=7pt,
labelsep=1.5pt,fillcolor=black}

\pspicture(0,0)(5,3)

\pscircle[fillstyle=solid](1,1){.1}
\pscircle[fillstyle=solid](1.3,1.3){.1}
\pscircle[fillstyle=solid](2,0.5){.1}
\pscircle[fillstyle=solid](2,1.5){.1}
\pscircle[fillstyle=solid](4,2){.1}
\pscircle[fillstyle=solid](4,0.5){.1}

\pscurve[linewidth=1.5pt,arrowinset=0,linestyle=dashed]{-}(2,1.5)(2,0.5)(1,1)(1.3,1.3)

\pscurve[linewidth=1.5pt,arrowinset=0]{-}(2,1.5)(2.1,.7)(2,0.5)
\psline[linewidth=1.5pt,arrowinset=0]{->}(2,1.5)(2.14,0.8)

\pscurve[linewidth=1.5pt,arrowinset=0]{->}(2,0.5)(2.5,0.4)(3,0.6)
\pscurve[linewidth=1.5pt,arrowinset=0]{-}(3,0.6)(3.5,0.7)(4,0.5)

\psline[linewidth=1.5pt,arrowinset=0]{<->}(1.3,1.3)(2,1.5)

\psline[arrowinset=.5,arrowlength=1.5]{->}(4,2)(4,1.1)\psline(4,1.1)(4,0.5)
\rput(4.2,1){$f_j$}

\pscurve[linewidth=1.5pt,arrowinset=0]{-<}(2,1.5)(2.5,1.8)(3,1.7)
\pscurve[linewidth=1.5pt,arrowinset=0]{-}(2.9,1.74)(3.3,1.6)(4,2)
\rput(3.4,1.9){$ 1$}

\rput(1.35,1.7){$0$}\rput(1.4,0.22){$0$}
\rput(2.5,0.9){$ 1$} \rput(2.5,2){$ 1$}\rput(3.5,0.45){$ 1$}
\rput(3.2,0.9){$p'$}
\rput(3.55,1.35){$p$}
\rput(0.7,1){$C_1$}
\rput(1.83,1){$C_2$}
\rput(2.3,1.5){$w$}
\rput(2,.3){$w'$}

\put(2.4,-0.5){\shortstack{\small{$f_j$: a directed }\\ \small{edge, $w\neq w'$.}}}

\endpspicture 

\end{array}$

$\begin{array}{ccc}

\psset{xunit=1cm,yunit=1cm,linewidth=0.5pt,radius=0.1mm,arrowsize=7pt,
labelsep=1.5pt,fillcolor=black}

\pspicture(0,0)(5,3)

\pscircle[fillstyle=solid](0.8,1){.1}
\pscircle[fillstyle=solid](1.3,1.3){.1}
\pscircle[fillstyle=solid](2,0.5){.1}
\pscircle[fillstyle=solid](2,1.5){.1}
\pscircle[fillstyle=solid](4,2){.1}
\pscircle[fillstyle=solid](5,2){.1}
\pscircle[fillstyle=solid](5,1){.1}

\pscurve[linewidth=1.5pt,arrowinset=0]{-}(2,1.5)(2,0.5)(1.5,0.52)(0.8,1)

\psline[linewidth=1.5pt,arrowinset=0]{->}(0.8,1)(1.5,0.52)

\psline[linewidth=1.5pt,arrowinset=0]{-<}(2,1.5)(2.14,0.8)

\psline[linewidth=1.5pt,arrowinset=0]{<->}(1.3,1.3)(2,1.5)

\psline[linewidth=1.5pt,arrowinset=0]{->}(1.3,1.3)(.8,1)

\pscurve[linewidth=1.5pt,arrowinset=0]{-}(4,2)(4.5,2.2)(5,2)
\psline[linewidth=1.5pt,arrowinset=0]{->}(4.499,2.1995)(4.501,2.2)

\pscurve[linewidth=1.5pt,arrowinset=0]{->}(4,2)(4.5,1.2)(5,1)

\pscurve[linewidth=1.5pt,arrowinset=0]{->}(2,1.5)(2.5,1.8)(3,1.7)
\pscurve[linewidth=1.5pt,arrowinset=0]{-}(3,1.7)(3.3,1.6)(4,2)

\psline[arrowinset=.5,arrowlength=1.5]{<->}(5,2)(5,1)
\rput(5.25,1.5){$f_j$}

\rput(1.35,1.7){$ 1$}\rput(1.4,0.22){$ 1$}
\rput(2.5,0.9){$ 1$}

\rput(2.5,2){$ 2$}\rput(3.4,1.9){$ 2$}
\rput(4.5,1.9){$ 1$}\rput(4.5,0.9){$ 1$}
\rput(4.4,2.4){$p$}\rput(4.1,1.3){$p'$}
\rput(2.3,1.5){$w$}

\put(1,-0.7){\shortstack{\small{$f_j$: a bidirected nonhalf-edge,}\\ 
\small{$w=w'$, basic cycle: full.}}}

\endpspicture &

\psset{xunit=1cm,yunit=1cm,linewidth=0.5pt,radius=0.1mm,arrowsize=7pt,
labelsep=1.5pt,fillcolor=black}

\pspicture(0,0)(5,3)

\pscircle[fillstyle=solid](2,1.5){.1}
\pscircle[fillstyle=solid](4,2){.1}
\pscircle[fillstyle=solid](5,2){.1}
\pscircle[fillstyle=solid](5,1){.1}

\psline[linewidth=1.5pt,arrowinset=0]{<-|}(2,1.5)(2,1)

\pscurve[linewidth=1.5pt,arrowinset=0]{-}(4,2)(4.5,2.2)(5,2)
\psline[linewidth=1.5pt,arrowinset=0]{->}(4.499,2.1995)(4.501,2.2)

\pscurve[linewidth=1.5pt,arrowinset=0]{->}(4,2)(4.5,1.2)(5,1)

\pscurve[linewidth=1.5pt,arrowinset=0]{->}(2,1.5)(2.5,1.8)(3,1.7)
\pscurve[linewidth=1.5pt,arrowinset=0]{-}(3,1.7)(3.3,1.6)(4,2)

\psline[arrowinset=.5,arrowlength=1.5]{<->}(5,2)(5,1)
\rput(5.25,1.5){$f_j$}

\rput(1.6,1){$ 2$}

\rput(2.5,2){$ 2$}\rput(3.4,1.9){$ 2$}
\rput(4.5,1.9){$ 1$}\rput(4.5,0.9){$ 1$}
\rput(4.4,2.4){$p$}\rput(4.1,1.3){$p'$}
\rput(2.3,1.5){$w$}

\put(1,-0.7){\shortstack{\small{$f_j$: a bidirected nonhalf-edge,}\\ 
\small{$w=w'$, basic cycle: not full.}}}

\endpspicture &

\psset{xunit=1cm,yunit=1cm,linewidth=0.5pt,radius=0.1mm,arrowsize=7pt,
labelsep=1.5pt,fillcolor=black}

\pspicture(0,0)(5,3)

\pscircle[fillstyle=solid](0.8,1){.1}
\pscircle[fillstyle=solid](1.3,1.3){.1}
\pscircle[fillstyle=solid](2,0.5){.1}
\pscircle[fillstyle=solid](2,1.5){.1}
\pscircle[fillstyle=solid](4,2){.1}
\pscircle[fillstyle=solid](4,0.5){.1}

\pscurve[linewidth=1.5pt,arrowinset=0,linestyle=dashed]{-}(2,1.5)(2,0.5)(1.5,0.52)(0.8,1)

\psline[linewidth=1.5pt,arrowinset=0]{->}(0.8,1)(1.5,0.52)

\psline[linewidth=1.5pt,arrowinset=0]{<->}(1.3,1.3)(2,1.5)

\psline[linewidth=1.5pt,arrowinset=0]{->}(1.3,1.3)(.8,1)

\pscurve[linewidth=1.5pt,arrowinset=0]{-}(2,0.5)(1.6,0.48)(0.8,1)

\pscurve[linewidth=1.5pt,arrowinset=0]{->}(2,0.5)(2.5,0.4)(3,0.6)
\pscurve[linewidth=1.5pt,arrowinset=0]{-}(3,0.6)(3.5,0.7)(4,0.5)

\psline[linewidth=1.5pt,arrowinset=0]{<->}(1.3,1.3)(2,1.5)

\psline[arrowinset=.5,arrowlength=1.5]{<->}(4,2)(4,0.5)
\rput(4.2,1.2){$f_j$}

\pscurve[linewidth=1.5pt,arrowinset=0]{->}(2,1.5)(2.5,1.8)(3,1.7)
\pscurve[linewidth=1.5pt,arrowinset=0]{-}(3,1.7)(3.3,1.6)(4,2)
\rput(3.4,1.9){$ 1$}

\rput(1.35,1.7){$ 1$}\rput(1.4,0.22){$ 1$}
\rput(2.4,1){$0$} \rput(2.5,2){$ 1$}\rput(3.5,0.45){$ 1$}
\rput(3.2,0.9){$p'$}
\rput(3.55,1.35){$p$}
\rput(0.5,1){$C_1$}
\rput(1.85,1){$C_2$}
\rput(2.3,1.5){$w$}
\rput(2,.3){$w'$}

\put(.9,-0.7){\shortstack{\small{$f_j$: a bidirected nonhalf-}\\ 
\small{edge, $w\neq w'$.}}}

\endpspicture 

\end{array}$
\vspace{1cm}

\caption{An illustration of the different types of fundamental circuit of a nonbasic edge $f_j$ with weights on the edges, when $A$ is nonnegative. Each basic $1$-tree contains exactly one bidirected edge (which is entering). (In case $T\neq T'$, the basic full cycle may be replaced by a half-edge and the half-edge by a full cycle.)
}  
\label{fig:Apositive}

\end{figure} 

\section{Operations on binet matrices}\label{sec:BinetOp}

In this section we give some operations that, when applied to a binet matrix, result in another binet matrix.
The content of the section can be found in \cite{Appa-06,KotThesis}.

Let us start with a graphical operation which does not change the binet matrix. Switching at a node of a binet representation keeps the parity of the number of negative edges in a fundamental circuit (see Proposition \ref{propBidirectedSwitch}), so clearly does not affect the calculations (see the procedure WeightCircuit in Section \ref{sec:BinetDefi}). The matrix operation equivalent to switching is multiplying a row of the node-edge incidence matrix $In$ by $-1$. The effect of this change is easy to detect: a column in the inverse of the basis $B$ and the corresponding row in the nonbasic part $N$ is multiplied by $-1$. So $B^{-1} N$ does not change.

\begin{lem}\label{lemdefiSwi}
Switching at a node of a binet representation of a binet matrix keeps the binet matrix unchanged.
\end{lem}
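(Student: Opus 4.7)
The plan is to translate the graphical operation of switching at a node into an algebraic operation on the incidence matrix, and then show that this algebraic operation leaves $B^{-1}N$ invariant.

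First I would recall from Section \ref{sec:Incidence} that switching at a node $v_i$ of a bidirected graph corresponds exactly to multiplying the $i$-th row of the node-edge incidence matrix $In(G)$ by $-1$. Writing $D$ for the diagonal $n\times n$ matrix with $D_{ii}=-1$ and $D_{jj}=+1$ for $j\neq i$, the operation thus replaces $In(G)$ by $D\cdot In(G)$. Since the binet representation $G(A)$ comes equipped with a decomposition $In(G)=[B\,N]$ (up to column permutation), after switching we obtain $[DB\,DN]$, with $DB$ playing the role of the new basis and $DN$ the new nonbasic part.

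Next I would verify that $DB$ really is a basis of $D\cdot In(G)$. Since $D$ is nonsingular (indeed $\det(D)=-1$) and $B$ is nonsingular by hypothesis, we have $\det(DB)=-\det(B)\neq 0$, so $DB$ is a nonsingular square submatrix of the new full row rank incidence matrix. Hence $[DB\,DN]$ together with the basis $DB$ defines a binet representation, and the associated binet matrix is
\[
(DB)^{-1}(DN)=B^{-1}D^{-1}D\,N=B^{-1}N=A.
\]
Therefore switching at $v_i$ produces the same binet matrix $A$, proving the lemma.

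There is no real obstacle here: the whole argument hinges on the already-established correspondence between switching and left multiplication by a sign-diagonal matrix, together with the trivial cancellation $D^{-1}D=I$. The one thing worth being explicit about is that the labelling of rows and columns of $A$ is preserved by the operation, since switching only changes the signs of entries in row $i$ of $In(G)$ and does not permute rows or columns; thus the identification of basic edges with rows of $A$ and nonbasic edges with columns of $A$ is carried over verbatim.
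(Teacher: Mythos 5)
Your proof is correct and follows essentially the same route as the paper: the paper likewise observes that switching corresponds to multiplying the $i$-th row of $In$ by $-1$, which multiplies the $i$-th column of $B^{-1}$ and the $i$-th row of $N$ by $-1$, so that $B^{-1}N$ is unchanged — exactly your cancellation $(DB)^{-1}(DN)=B^{-1}D^{-1}DN=B^{-1}N$. Your write-up merely makes the sign-diagonal matrix $D$ explicit, which is a harmless formalization of the same argument.
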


Permuting rows or columns of a binet matrix obviously results in another binet matrix. Define a {\em
signing} of a vector as multiplying it by $-1$. Signing a row or a column of a binet matrix also results in a binet matrix, as it is equivalent to reversing the orientation of the corresponding basic, or nonbasic, edge.

\begin{lem}\label{lemdefiRowCol}
Let $A$ be a binet matrix. Matrix $A'$ obtained by the following operations from $A$ is also a binet matrix.

\begin{itemize}

\item[a)] deleting a row or a column,
\item[b)] repeating a row or a column,
\item[c)] signing a row or a column,
\item[d)] adding a unit row or a unit column. 

\end{itemize}
\end{lem}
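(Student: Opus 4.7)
My plan is to address each of the four operations (a)--(d) in turn by exhibiting a modification of the binet representation $G(A)$ that realises the operation on the binet matrix $A=B^{-1}N$, and then to verify that the modified incidence matrix $[B'\,N']$ still satisfies the bidirected property (\ref{eqnBidirected}) of Section~\ref{sec:Incidence} and that $(B')^{-1}N'$ equals the desired matrix.

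The column operations should be straightforward. Deleting column $j$ of $A$ amounts to deleting $N_{\bullet j}$, which corresponds to removing the nonbasic edge $f_j$ from $G(A)$. Signing column $j$ multiplies $N_{\bullet j}$ by $-1$, that is, reverses the orientation of $f_j$. Repeating column $j$ appends to $N$ a copy of $N_{\bullet j}$, realised by adding a parallel nonbasic edge. Finally, adding a unit column $\pm e_k$ to $A$ corresponds to appending the column $\pm B_{\bullet k}$ to $N$, since $B^{-1}(\pm B_{\bullet k})=\pm e_k$; graphically this adjoins a nonbasic edge parallel (up to orientation) to the basic edge $e_k$. In all four cases $B$ is unchanged, so $[B\,N']$ remains an incidence matrix of a bidirected graph.

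For the row operations, the first step is to modify $B$ directly. Signing row $i$ corresponds to multiplying $B_{\bullet i}$ by $-1$, i.e.\ reversing the orientation of the basic edge $e_i$: if $D$ is the diagonal matrix negating coordinate $i$, then $(BD)^{-1}N=DB^{-1}N=DA$, whose $i$-th row is $-A_{i\bullet}$. Deleting row $i$ will be realised by contracting $e_i$ in $G(A)$. By the operation (\ref{eqnContraction}), contraction amounts to pivoting on a nonzero entry of column $B_{\bullet i}$ (such an entry exists because $B$ is nonsingular) and then deleting the pivot row and column. Because pivoting commutes with left-multiplication by $B^{-1}$, applying this pivot-and-delete to $[I\,A]=B^{-1}[B\,N]$ produces $[I_{n-1}\,A']$ with $A'$ equal to $A$ with row $i$ removed; performing the same operation on $[B\,N]$ yields a valid incidence matrix $[B'\,N']$ with $(B')^{-1}N'=A'$.

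The remaining two operations, repeating a row and adding a unit row, are best realised by \emph{subdivision}. To repeat row $i$, I would subdivide the basic edge $e_i$ by inserting a new node $v^*$ that splits $e_i$ into two basic edges in series; the signs at $v^*$ are chosen so that the sign of the basic cycle containing $e_i$ (if any) is preserved, and so that for every nonbasic $f_j$ the fundamental circuit of $f_j$ either misses both new basic edges (when it missed $e_i$) or contains both, with the same weight and sign as the original $e_i$. The two new rows of $A'$ are then both equal to $A_{i\bullet}$. To add a unit row $\pm e_k^T$, I would similarly subdivide the nonbasic edge $f_k$: insert a new node $v^*$ to split $f_k$ into a new nonbasic edge $f_k'$ on one side of $v^*$ and a new basic edge $e^*$ on the other; if $f_k$ is a loose edge or a half-edge the subdivision degenerates, and $e^*$ is simply a basic half-edge at $v^*$ with one end of $f_k$ reattached at $v^*$. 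Choosing the signs at $v^*$ so that the minimal covering walk of the new fundamental circuit of $f_k'$ remains consistently oriented, the procedure WeightCircuit of Section~\ref{sec:BinetDefi} together with Lemma~\ref{lemdefiWeight2} shows that $e^*$ appears with weight $\pm 1$ in the fundamental circuit of $f_k'$ and, because $v^*$ is incident to no other edge, in no other fundamental circuit. Hence the new row indexed by $e^*$ equals $\pm e_k^T$, so $A'=\bigl(\begin{smallmatrix}A\\[-1pt]\pm e_k^T\end{smallmatrix}\bigr)$ is binet. The main obstacle will be the case analysis for these subdivision constructions: for each subtype of subdivided edge (link, loop, half-edge; directed or bidirected) one has to verify that (\ref{eqnBidirected}) is preserved and that the new fundamental-circuit weights behave as claimed, consulting the cases illustrated in Figure~\ref{fig:Apositive}, possibly after a preliminary switching at a node of $G(A)$ to place $e_i$ or $f_k$ into a convenient orientation.
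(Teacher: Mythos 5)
The paper never proves Lemma \ref{lemdefiRowCol}: it defers entirely to \cite{Appa-06}. Your argument is the standard graphical proof one would expect to find there, and I see no gap in it. The column operations and row signing are immediate as you say; row deletion as contraction of $e_i$ is correct, and your justification can be checked directly from $BA=N$ (writing $B'$, $N'$ for the pivoted-and-truncated blocks, the identity $B'A'=N'$ with $A'$ equal to $A$ minus row $i$ falls out, and $B'$ stays nonsingular). The two subdivision constructions also do what you claim, for the reason you implicitly use: the new node has degree two, so it lies on no fundamental circuit except those that already contained $e_i$ (respectively, except that of $f_k$), and the minimal covering walks of those circuits traverse the new edge(s) exactly as often as they traversed the old one, so WeightCircuit reproduces the old weights on the old rows and assigns $\pm 1$ (not $\pm 2$ or $\pm\tfrac12$, by Lemma \ref{lemdefiWeight1}, since the pendant edge $e^*$ is neither a half-edge in a $1$-edge's circuit, nor on both stems, nor in a full basic cycle) to the new pendant basic edge. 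What remains is exactly the finite sign bookkeeping you flag — preserving (\ref{eqnBidirected}), the sign of the subdivided basic cycle, and consistent orientation of the covering walks — which is a routine check over the edge types of Figure \ref{fig:Apositive}.
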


The proof of Lemma \ref{lemdefiRowCol} can be found in \cite{Appa-06}. Part a) of Lemma \ref{lemdefiRowCol} can be rephrased.

\begin{thm}\label{thmBinetsub}
Every submatrix of a binet matrix is binet.
\end{thm}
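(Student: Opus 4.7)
The plan is to deduce Theorem \ref{thmBinetsub} directly from part a) of Lemma \ref{lemdefiRowCol}, which has just been established and asserts that deleting a single row or a single column of a binet matrix yields a binet matrix. The main point is simply that any submatrix can be obtained by a finite sequence of single row and single column deletions.

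More precisely, let $A$ be a binet matrix with row set $R$ and column set $S$, and let $A'=A_{R'\times S'}$ with $R'\subseteq R$, $S'\subseteq S$ be an arbitrary submatrix. I would enumerate the rows in $R\setminus R'$ as $i_1,\ldots,i_p$ and the columns in $S\setminus S'$ as $j_1,\ldots,j_q$, and then define a sequence of matrices $A=A^{(0)},A^{(1)},\ldots,A^{(p+q)}=A'$, where $A^{(k)}$ is obtained from $A^{(k-1)}$ by deleting row $i_k$ for $1\le k\le p$, and then by deleting column $j_{k-p}$ for $p+1\le k\le p+q$. Applying Lemma \ref{lemdefiRowCol}(a) at each step (and proceeding by induction on $k$), each $A^{(k)}$ is binet, so in particular $A^{(p+q)}=A'$ is binet.

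There is really no obstacle here: the theorem is just a convenient reformulation of Lemma \ref{lemdefiRowCol}(a) iterated finitely many times. The only thing worth noting, for the reader's benefit, is that the order of deletions is irrelevant (since row and column deletions commute), and that the chain of binet representations underlying the induction is obtained, at each step, by the corresponding edge or node deletion in the bidirected graph of a binet representation of the previous matrix, as described in Section \ref{sec:Incidence}. Thus a single short paragraph of proof, citing Lemma \ref{lemdefiRowCol}(a) and invoking a trivial induction on $|R\setminus R'|+|S\setminus S'|$, suffices.
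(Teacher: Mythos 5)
Your argument is correct and is exactly how the paper treats this statement: Theorem \ref{thmBinetsub} is presented there as a rephrasing of Lemma \ref{lemdefiRowCol}(a), obtained by iterating single row and column deletions. Your explicit induction just spells out what the paper leaves implicit.
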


Using Theorem \ref{thmBinetsub}, we may deduce Lemma \ref{lemdefiBlock} below. This lemma  will enable us to assume that a given matrix $A$ is connected for the recognition problem discussed in Chapter \ref{ch:Rec}.

\begin{lem}\label{lemdefiBlock}
A matrix which may be decomposed into blocks is binet 
if and only if all blocks are binet. 
\end{lem}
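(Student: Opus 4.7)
The plan is to handle the two directions separately; both are short, and the main work is just making precise what ``decomposition into blocks'' does to an incidence matrix of a bidirected graph.

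For the forward direction, suppose $A$ is binet and decomposes as
\[
A=\left[\begin{array}{cc} A_1 & 0 \\ 0 & A_2 \end{array}\right].
\]
Then $A_1$ and $A_2$ are submatrices of $A$ (obtained by selecting the appropriate rows and columns, or equivalently by the row/column deletions of Lemma \ref{lemdefiRowCol}(a)). By Theorem \ref{thmBinetsub}, every submatrix of a binet matrix is binet, so both $A_1$ and $A_2$ are binet. This generalises by induction to any finite decomposition.

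For the converse, assume $A_1$ and $A_2$ are binet, so there exist bidirected graphs $G_1$ and $G_2$ with full row rank incidence matrices $[B_1\, N_1]$ and $[B_2\, N_2]$ (after suitable column permutations), where $B_i$ is a basis and $A_i = B_i^{-1} N_i$ for $i=1,2$. Let $G$ be the disjoint union of $G_1$ and $G_2$. Its node-edge incidence matrix, after reordering columns to place basic edges first, has the block-diagonal form
\[
In(G)\;=\;\left[\begin{array}{cccc} B_1 & 0 & N_1 & 0 \\ 0 & B_2 & 0 & N_2 \end{array}\right]\;=\;[\,B\;\; N\,],
\]
where $B=\mathrm{diag}(B_1,B_2)$ and $N=\mathrm{diag}(N_1,N_2)$. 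Since $B_1$ and $B_2$ are nonsingular, so is $B$, and $[B\, N]$ has full row rank. A direct computation gives
\[
B^{-1}N\;=\;\left[\begin{array}{cc} B_1^{-1}N_1 & 0 \\ 0 & B_2^{-1}N_2 \end{array}\right]\;=\;\left[\begin{array}{cc} A_1 & 0 \\ 0 & A_2 \end{array}\right]\;=\;A,
\]
so $A$ is binet by definition. The statement for an arbitrary number of blocks follows by iterating this construction.

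There is essentially no obstacle: the forward direction is an immediate application of Theorem \ref{thmBinetsub}, and the reverse direction rests only on the observation that taking the disjoint union of two bidirected graphs concatenates their incidence matrices in block-diagonal fashion, which is compatible with the selection of a basis on each component.
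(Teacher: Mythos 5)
Your proof is correct and follows the route the paper intends: the forward direction is exactly the application of Theorem \ref{thmBinetsub} that the paper cites, and the converse is the standard disjoint-union construction (block-diagonal incidence matrix, block-diagonal basis) that the paper leaves implicit. No gaps.
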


Another operation that preserves binetness is pivoting.

\begin{lem}\label{lemBinetPivot}
Let $A$ be a binet matrix. Matrix $A'$ obtained by pivoting on a non-zero element of $A$ is binet.
\end{lem}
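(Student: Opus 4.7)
The plan is to reinterpret pivoting on $A=B^{-1}N$ as a basis exchange inside the same incidence matrix $[B\,N]$, so that the same bidirected graph $G$ serves as a binet representation of the pivoted matrix as well.

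Concretely, write $A=B^{-1}N$ where $[B\,N]$ is a full row rank RIMB of some bidirected graph $G$, and suppose we pivot on the non-zero entry $A_{ij}=\alpha$. First I would form a new square matrix $B'$ by replacing the $i$th column of $B$ with the $j$th column of $N$, and a new matrix $N'$ by replacing the $j$th column of $N$ with the $i$th column of $B$. The matrix $[B'\,N']$ is the same collection of columns as $[B\,N]$, only permuted, so it is again an RIMB of $G$ (with the rows still indexed by the same nodes and the columns by the same edges).

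The heart of the proof will be to verify the two assertions: (a) $B'$ is non-singular (so that it really is a basis), and (b) $(B')^{-1}N'$ equals the pivoted matrix $A'$ given by the formula in Section~\ref{sec:IntbasicMat} (after appropriate row/column permutation). For (a), I would use the identity $B^{-1}B'=I-e_ie_i^T+e_iA_{\bullet j}^T$ (the identity with its $i$th row replaced by the $i$th row of $B^{-1}N_{\bullet j}=A_{\bullet j}$); expanding the determinant along the $i$th row gives $\det(B^{-1}B')=A_{ij}=\alpha\neq 0$, so $B'$ is non-singular. For (b), I would compute $(B')^{-1}N'$ column by column: the $j$th column equals $(B')^{-1}B_{\bullet i}$, which is $\tfrac{1}{\alpha}e_i$ after expressing $B_{\bullet i}=\tfrac{1}{\alpha}(N_{\bullet j}-\sum_{k\neq i}A_{kj}B_{\bullet k})$, giving the column $[\tfrac{1}{\alpha};-\tfrac{\bb}{\alpha}]$; the other columns $k\neq j$ give $(B')^{-1}N_{\bullet k}=A_{\bullet k}-\tfrac{A_{ik}}{\alpha}A_{\bullet j}+\tfrac{A_{ik}}{\alpha}e_i$, which yields exactly the block $D-\tfrac{\bb\cc}{\alpha}$ and row $\tfrac{\cc}{\alpha}$ of the pivot formula.

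Once (a) and (b) are in hand, the conclusion is immediate: $A'=(B')^{-1}N'$ where $B'$ is a basis of the incidence matrix $[B'\,N']=[B\,N]$ (up to column permutation) of the bidirected graph $G$, so $A'$ is a binet matrix by definition. The main obstacle I expect is the second step: keeping the bookkeeping of column permutations and the explicit algebra of the basis-exchange clean enough that the resulting matrix lines up exactly with the pivot formula $A'=\left[\begin{smallmatrix}\tfrac{1}{\alpha}&\tfrac{\cc}{\alpha}\\-\tfrac{\bb}{\alpha}&D-\tfrac{\bb\cc}{\alpha}\end{smallmatrix}\right]$ without sign errors; everything else is a routine consequence of the definition of binet matrices and the fact that pivoting does not alter the underlying set of columns of the incidence matrix.
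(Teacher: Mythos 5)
Your proposal is correct and takes essentially the same route as the paper, whose entire proof is the one-line remark that pivoting is equivalent to changing the basis of the incidence matrix $[B\,N]$; you simply carry out the basis-exchange verification explicitly (note only that $B^{-1}B'$ is the identity with its $i$th \emph{column}, not row, replaced by $A_{\bullet j}$, i.e.\ $I-e_ie_i^T+A_{\bullet j}e_i^T$, though the determinant computation and conclusion are unaffected).
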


\begin{proof}
The proof follows from the fact that pivoting is equivalent to changing the basis.
\end{proof}\\

\begin{lem}\label{lemBinetconnectedPivot}
Let $A$ be a connected matrix. The matrix $A'$ obtained by pivoting on a non-zero element of $A$ is connected.
\end{lem}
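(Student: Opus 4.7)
The plan is to carry out a case analysis on how individual entries are affected by the pivot, and then to translate indecomposability into a connectivity statement on a bipartite support graph, where the argument becomes transparent.

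First I would fix notation. Let $R$ and $S$ be the row and column index sets of $A$, and suppose the pivot is performed on the entry $A_{rs}=\alpha\neq 0$. Writing $R=\{r\}\uplus R'$, $S=\{s\}\uplus S'$, $b_i=A_{is}$ for $i\in R'$, $c_j=A_{rj}$ for $j\in S'$, and $D_{ij}=A_{ij}$ for $(i,j)\in R'\times S'$, the pivot formula gives $A'_{rs}=1/\alpha$, $A'_{rj}=c_j/\alpha$, $A'_{is}=-b_i/\alpha$ and $A'_{ij}=D_{ij}-b_ic_j/\alpha$. The key observation is the preservation of supports along the pivot row and column: $s(A'_{r\bullet})=s(A_{r\bullet})$ and $s(A'_{\bullet s})=s(A_{\bullet s})$; and the ``exchange rule'' for $(i,j)\in R'\times S'$: whenever $A'_{ij}$ differs from $A_{ij}$ (in particular whenever one is zero and the other is not) one must have $b_i\neq 0$ and $c_j\neq 0$, i.e. $A_{is}\neq 0$ and $A_{rj}\neq 0$.

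Next I would show that $A'$ has no zero row or column. The pivot row and column of $A'$ are nonzero since $1/\alpha\neq 0$. For $i\in R'$, if the $i$th row of $A'$ were zero then $b_i=0$ (from the $s$-entry) and $D_{ij}=b_ic_j/\alpha=0$ for every $j\in S'$; hence the $i$th row of $A$ is zero as well, contradicting the connectedness of $A$. The argument for columns is symmetric.

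Next I would establish indecomposability using the bipartite support graph $G(M)$ whose vertex set is $R\cup S$ and whose edges are the pairs $(i,j)$ with $M_{ij}\neq 0$; for a matrix with no zero row or column, the matrix is connected in the sense of the paper if and only if $G(M)$ is connected. I would show that $G(A)$ connected implies $G(A')$ connected by proving that each edge of $G(A)$ corresponds to a path in $G(A')$ between the same endpoints. Indeed, let $(i,j)$ be an edge of $G(A)$. If $A'_{ij}\neq 0$, the same edge lies in $G(A')$. Otherwise, by the exchange rule, $A_{is}\neq 0$ and $A_{rj}\neq 0$; by preservation of supports along the pivot row and column, the edges $(i,s)$, $(s,r)$, $(r,j)$ all lie in $G(A')$, so $i$ and $j$ are connected in $G(A')$ via the path $i\!-\!s\!-\!r\!-\!j$. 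Consequently any two vertices connected in $G(A)$ are connected in $G(A')$; since $G(A)$ is connected, so is $G(A')$, and $A'$ cannot be decomposed into two blocks. Combined with the absence of zero rows and columns, this gives connectedness of $A'$.

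The only slightly delicate point—the main obstacle—is the possibility that entries outside the pivot row and column may become zero through the cancellation $D_{ij}-b_ic_j/\alpha=0$, so one cannot simply claim that the support of $A'$ contains that of $A$. The exchange rule tells us that every such cancellation happens only at positions $(i,j)$ for which both $(i,s)$ and $(r,j)$ remain edges of $G(A')$; this is precisely what is needed to reroute the affected edges through the pivot pair $(r,s)$ and preserve connectivity.
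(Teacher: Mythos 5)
Your proof is correct and complete. The paper itself gives no argument here — its ``proof'' consists of the single sentence that a proof by contradiction is straightforward — so there is no substantive approach to compare against. Your argument fills that gap cleanly: the two facts you isolate (preservation of the supports of the pivot row and column, and the observation that an entry $A'_{ij}$ with $(i,j)\in R'\times S'$ can differ from $A_{ij}$ only when $A_{is}\neq 0$ and $A_{rj}\neq 0$) are exactly what is needed, and rerouting any cancelled edge of the bipartite support graph through the path $i\!-\!s\!-\!r\!-\!j$ gives connectivity of $G(A')$ directly rather than by contradiction. The only point worth making explicit is the equivalence, for matrices without zero rows or columns, between the paper's notion of connectedness (indecomposability into blocks) and connectivity of the bipartite support graph; you state this and it is standard, so the proof stands as written.
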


\begin{proof}
The proof by contradiction is straightforward.
\end{proof}\\

Switching at a node of the binet representation of a binet matrix keeps the binet matrix unchanged, which provides a method to find a special binet representation of a binet matrix.

\begin{lem}\label{lemBinetRepBid}
There always exists a representation of a binet matrix where each connected component of the basis contains only one bidirected edge.
\end{lem}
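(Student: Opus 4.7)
The plan is to use the earlier Proposition (in Section \ref{sec:Incidence}) stating that any negative $1$-tree can be transformed by switchings into a directed tree together with one extra bidirected edge forming a negative cycle with the tree, combined with Lemma \ref{lemdefiSwi} which says that switching at a node keeps the binet matrix unchanged.

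I would proceed as follows. Let $G$ be an arbitrary binet representation of the binet matrix $A$, and let $B$ be the associated basis of the incidence matrix $In(G)$. By Lemma \ref{lemBidirectedNonsing}, every connected component of the basic subgraph $G(B)$ is a negative $1$-tree. Applying the Proposition mentioned above independently to each such component yields a sequence of switchings at nodes of $G$ which transforms each component of the basic subgraph into a directed (hence all-positive) spanning tree together with one additional bidirected edge lying on the unique basic cycle. In the resulting bidirected graph $G'$, every component of $G'(B)$ therefore contains exactly one bidirected edge, as required.

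It remains to observe that $G'$ is still a binet representation of the same matrix $A$. This is immediate from Lemma \ref{lemdefiSwi}: switching at any node corresponds algebraically to multiplying the corresponding row of $In(G)$ by $-1$, which simultaneously multiplies the corresponding column of $B^{-1}$ and the corresponding row of $N$ by $-1$, leaving the product $B^{-1}N = A$ unchanged. Since the switchings used above affect all incident edges uniformly (basic and nonbasic alike), the graph $G'$ remains a valid bidirected graph, $B$ remains a basis of $In(G')$, and $G'$ represents $A$.

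The only potentially delicate point is to notice that the switchings applied to different components of $G(B)$ can indeed be combined without interference: each switching is performed at a specific node, and as the basic components of $G(B)$ are vertex-disjoint, the chosen switchings within distinct components act on disjoint sets of nodes. Hence there is no conflict and no need to reconcile the transformations across components. Combining these observations completes the proof.
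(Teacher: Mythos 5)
Your proof is correct and follows exactly the route the paper intends: the paper states this lemma without a written proof, only remarking beforehand that switching preserves the binet matrix and "provides a method to find a special binet representation," which is precisely your combination of the Proposition on transforming negative $1$-trees by switchings with Lemma \ref{lemdefiSwi}. Your additional observation that the basic components are vertex-disjoint, so the switchings do not interfere, is a welcome explicit check of a point the paper leaves implicit.
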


Finally, we show that the number of nonequal columns of a binet matrix is bounded by a quadratic function of  the number of rows.

\begin{lem}\label{lemBinetColBound}
Let $A$ be a binet matrix of size $n\times m$ having no two identical columns. Then $m\le 4\left( \begin{array}{c}
n\\
2 
\end{array} \right) + 2n +1 $.
\end{lem}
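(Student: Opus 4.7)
The plan is to exhibit a binet representation $G(A)$ of $A$ and bound $m$ by counting distinct columns of its nonbasic part. I would begin by writing $In(G)=[B\,N]$ with $A=B^{-1}N$; since $B$ is nonsingular, the linear map $v\mapsto B^{-1}v$ is injective, so two columns $A_{\bullet j}$ and $A_{\bullet j'}$ coincide if and only if $N_{\bullet j}=N_{\bullet j'}$. Hence $m$ is at most the number of distinct column vectors that a submatrix of an IMB on $n$ nodes can contain.

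Next, I would enumerate the distinct columns of an IMB using the constraint $\sum_i|(In)_{ij}|\le 2$ from \eqref{eqnBidirected}: a nonzero column has support of size at most two with entries in $\{\pm 1,\pm 2\}$. A short case analysis by edge type (Section \ref{sec:notions}) yields three families: (a) the zero column, contributed by loose edges and positive loops (one column); (b) link columns $\pm e_i\pm e_j$ with $i<j$, from the four sign patterns $\{++,--,+-,-+\}$ of a link between $v_i$ and $v_j$, giving $4\binom{n}{2}$ columns in total; and (c) single-vertex columns $\pm e_i$ (from half-edges) and $\pm 2e_i$ (from negative loops) at each vertex $v_i$.

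The main task, and the hard part of the proof, is to show that family (c) contributes at most $2n$ distinct columns rather than the naive $4n$. My plan is to exploit the binet-preserving operations of Section \ref{sec:BinetOp} (switching at a node, signing rows or columns) together with Lemma \ref{lemBinetRepBid} to normalize $G(A)$ so that at each vertex $v_i$ at most two of the four possible single-vertex columns $\pm(B^{-1})_{\bullet i},\pm 2(B^{-1})_{\bullet i}$ actually arise in $A$ without duplication: intuitively a nonbasic negative loop at $v_i$ produces a column exactly twice that of a nonbasic half-edge at $v_i$, and by switching and signing one can reduce to a representation where only one of the two edge types at $v_i$ contributes genuinely new columns beyond those already counted in (a) and (b). Combining the three contributions $1+4\binom{n}{2}+2n$ then yields the stated bound $m\le 4\binom{n}{2}+2n+1$.
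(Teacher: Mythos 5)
Your setup---reducing the count to the number of distinct columns an IMB on $n$ nodes can have, via injectivity of $v\mapsto B^{-1}v$, and splitting into the zero column, the $4\binom{n}{2}$ link columns, and the single-vertex columns---is exactly the paper's approach. The gap is in your family (c). The four single-vertex columns $\pm B^{-1}e_i$ and $\pm 2B^{-1}e_i$ at a node $v_i$ are pairwise distinct vectors whenever they occur (since $B^{-1}e_i\neq 0$), so no choice of representation can let you discard the loop columns on the grounds that they are ``exactly twice'' the half-edge columns: they are different columns of $A$, and $A$ is the fixed matrix whose distinct columns you must count. The tools you invoke cannot close this. Switching at a node leaves $A$ unchanged (Lemma \ref{lemdefiSwi}), so it cannot merge distinct columns; reversing the orientation of (i.e.\ signing) a nonbasic edge negates the corresponding column of $A$, so it replaces $A$ by a different matrix rather than re-representing the same one; and Lemma \ref{lemBinetRepBid} only normalizes the basic part. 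The honest output of this counting argument is therefore $m\le 4\binom{n}{2}+4n+1$. Indeed, for $n=1$ take $B=[2]$ and $N=[\,1\ \ {-1}\ \ 2\ \ {-2}\,]$: then $A=[\,\frac{1}{2}\ \ {-\frac{1}{2}}\ \ 1\ \ {-1}\,]$ is binet with four pairwise distinct columns, exceeding $4\binom{1}{2}+2\cdot 1+1=3$.

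For comparison, the paper's own proof is the same enumeration and simply never mentions nonbasic negative loops---it counts directed links ($2\binom{n}{2}$), bidirected links ($2\binom{n}{2}$), half-edges ($2n$) and the loose edge ($1$)---so it does not resolve the difficulty you correctly isolated; the $+2n$ term should arguably be $+4n$. Nothing downstream is affected, since the lemma is only invoked (in the procedure Camion and in the complexity analysis) to certify that $m=O(n^2)$, but you should not present the reduction from $4n$ to $2n$ as achievable by switching and signing.
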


\begin{proof}
Let $G(A)$ be a binet representation of $A$. We know that $G(A)$ contains $n$ nodes.
So, since no two columns in $A$ are identical, the number of distinct nonbasic directed links is at most $2\left( \begin{array}{c}
n\\
2 
\end{array} \right)$ (which is two times the number of pairs of nodes). Similarly, for nonbasic bidirected links. Moreover,  the number of nonbasic half-edges is bounded by $2n$ and there is at most one loose edge (whose corresponding column is zero). This concludes the proof.
\end{proof}\\

\section{Some binet representations}\label{sec:Binetrep}

In this section, we define some particular binet representations and related results. Given a binet representation of a matrix $A$, by removing the nonbasic edges we obtain a union of basic components which is called a \emph{basic binet representation}\index{basic!binet representation} of $A$. Using $A$ and a basic binet representation of $A$, it is easy to construct a binet representation of $A$.

A binet representation of $A$ is called \emph{proper}\index{proper} if
each basic component has exactly one bidirected edge (contained in the basic cycle), this one is entering, and 
one end-node of the basic bidirected edge is not a consistent node of the basic cycle. A node in the basic cycle which is not consistent is called a \emph{central node}\index{central!node}. A \emph{central} edge\index{central!edge} is an edge in a basic cycle incident with a central node.

\begin{lem}
If the matrix $A$ is binet, then it has a proper binet representation.
\end{lem}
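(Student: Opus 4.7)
The plan is to start from an arbitrary binet representation of $A$ and successively switch at nodes---an operation which, by Lemma~\ref{lemdefiSwi}, leaves $A$ unchanged---until the three conditions in the definition of \emph{proper} are met. First, I apply Lemma~\ref{lemBinetRepBid} to reduce to a binet representation in which every basic component has exactly one bidirected edge. Since, by Lemma~\ref{lemBidirectedNonsing}, each basic component is a negative $1$-tree, and a negative $1$-tree has a negative basic cycle, the sole bidirected edge of a given component is forced to lie on its basic cycle: otherwise the basic cycle would consist entirely of directed edges and thus be positive. This establishes the first clause of the definition. Since basic components are pairwise node-disjoint, from now on I argue within a single component $B_i$: switchings confined to $V(B_i)$ affect only the signs of edges incident to $V(B_i)$ and do not disturb what has been achieved in other basic components.

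Next, I establish the parity fact that every full negative cycle contains at least one central node. Walking once around the cycle and taking the product over $i$ of $s_{i-1}^{-}\, s_{i}^{+}$, where $s_i^\pm$ denote the signs at the two endnodes of $e_i$, shows that the number of non-consistent nodes has the same parity as the number of bidirected edges on the cycle. A negative cycle has an odd number of bidirected edges, hence an odd (and therefore nonzero) number of central nodes. In the degenerate case where the basic cycle is a loop or a half-edge, its unique endnode is trivially central. Crucially, the set of central nodes of the basic cycle is invariant under switching: switching at a cycle-node $v$ flips both cycle-edge signs at $v$ simultaneously, preserving their equality or inequality, while switching anywhere else does not touch the signs at $v$.

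With this in hand, I fix a central node $v$ of the basic cycle of $B_i$ and choose either of the two basic cycle edges at $v$; call it $e^{\star}$. I claim that by switching in $V(B_i)$ one can realize the sign assignment in which $e^{\star}$ is the unique bidirected edge of $B_i$, is entering (signed $++$), and every other basic edge is directed. Both the original sign assignment and this target sign assignment on $B_i$ have as their unique negative cycle the basic cycle (in each case the lone bidirected edge in the component is the one that makes the cycle sign negative). The standard switching-equivalence theorem for signed graphs---two sign assignments on a connected graph are switching-equivalent if and only if they induce the same set of negative cycles---then guarantees that a switching inside $V(B_i)$ transforming the original into the target exists. Since centrality at $v$ is preserved under switching, $v$ remains a central endnode of $e^{\star}$ in the new representation, and the three clauses of the definition of proper hold in $B_i$. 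Applying this in every basic component independently yields the desired proper binet representation.

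The main obstacle is the last step: producing a switching that simultaneously positions the unique bidirected edge at a prescribed cycle edge \emph{and} forces its signs to be $++$. The switching-equivalence theorem dispatches it elegantly, but if an explicit construction is preferred, one may first consistently orient a spanning tree of $B_i$ by a root-to-leaves pass of switchings and then repair the two endnodes of $e^{\star}$, propagating any imbalances through the tree, since a tree admits a unique consistent orientation up to a global sign flip. Either route completes the proof.
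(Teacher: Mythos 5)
Your overall strategy is sound and genuinely different from the paper's. The paper, after invoking Lemma~\ref{lemBinetRepBid}, locates a central node explicitly: it follows the longest directed path along the basic cycle starting at an endnode of the bidirected edge, observes that the node where this path stops (or the far endnode of the bidirected edge, if the path goes all the way round) is non-consistent, and then performs a single explicit block of switchings -- at the cycle nodes between the bidirected edge and that node, together with their pendant trees -- which slides the bidirected edge onto the central node while keeping it entering. Your parity argument for the existence of a central node on a negative cycle, and the observation that the set of central nodes is switching-invariant, are both correct and arguably cleaner than the paper's path-following argument; they make transparent \emph{why} a central node must exist.

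However, your last step has a genuine gap. The switching-equivalence theorem you invoke is a statement about \emph{signed} graphs, i.e.\ about the edge-sign function $\sigma_e$, not about the pair of end-signs of each edge. Applied to end-sign assignments the statement is false: on a path $u$--$v$--$w$ the assignments $\{]u,v],\ ]v,w]\}$ and $\{]u,v],\ [v,w[\}$ have the same (empty) set of negative cycles but are not switching-equivalent. What the theorem does give you is a set $S\subseteq V(B_i)$ such that switching at $S$ makes $e^{\star}$ the unique bidirected edge and all other edges directed; it cannot distinguish the entering configuration $++$ from the leaving one $--$, and after switching at $S$ you may land on $--$, which violates the second clause of properness and cannot be repaired by further switchings without destroying the first clause. (Your target avoids the analogous problem for the directed edges only because you leave their orientations unspecified.) The gap is easy to close: the only two node sets realizing the target signed graph on the connected graph $B_i$ are $S$ and $V(B_i)\setminus S$, and these produce opposite end-sign pairs on $e^{\star}$, so exactly one of them makes $e^{\star}$ entering; since centrality of $v$ is switching-invariant, either choice keeps $v$ central. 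With that sentence added (or with your fallback tree-orientation construction actually carried out), the proof is complete.
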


\begin{proof}
Consider a binet representation $G(A)$ of $A$. We may suppose that $G(A)$ has exactly one basic maximal negative $1$-tree, call it $T$, containing a full basic cycle $C$ which is not a loop. By Lemma \ref{lemBinetRepBid}, we may suppose that exactly one edge of $T$  is bidirected and this edge is entering. Denote by $w_1,\ldots,w_\rho$ the vertices of the cycle in clockwise order and assume that $[w_1,w_\rho]\in G(A)$. Consider the longest directed path from $w_1$ to a vertex, say $w_i$, in $C$. If $i=1$ or $\rho$, then one directed edge of the cycle is entering an end-node of the bidirected edge. Otherwise, we have directed edges $]w_{j-1},w_j]$ for $j=2,\ldots,i$ and $]w_{i+1},w_i]$ in $G(A)$. By switching at all nodes of the trees in $G(A)\verb"\"E(C)$ containing some node $w_j$ with $1\le j \le i-1$, we obtain a new binet representation $G'(A)$ of $A$ such that $[w_{i-1},w_i],]w_{i+1},w_i]\in G'(A)$, and $[w_{i-1},w_i]$ is the unique bidirected edge in the maximal basic $1$-tree. 
\end{proof}\\

A \emph{$\frac{1}{2}$-binet representation}\index{binet@$\frac{1}{2}$-binet!representation} is a proper binet representation in which every basic cycle is a half-edge. A matrix is said to be \emph{$\frac{1}{2}$-binet}\index{binet@$\frac{1}{2}$-binet!matrix} if it has a $\frac{1}{2}$-binet representation.

A \emph{cyclic representation}\index{cyclic!representation} of a matrix is a proper binet representation  of the matrix having exactly one basic cycle, and this one is full. A matrix  is said to be \emph{cyclic}\index{cyclic!matrix}, if it has a cyclic representation. 
For $R$ a row index set, an \emph{$R$-cyclic representation}\index{cyclic@$R$-cyclic!representation} of a matrix $A$ denotes a cyclic representation such that $R$ is the edge index set of the basic cycle, and this cycle is contained in the fundamental circuit of at least one nonbasic edge $f_j$ (or equivalently $R\subset s(A_{\bullet j})$ for at least one column index $j$ of $A$). A matrix is said to be \emph{$R$-cyclic}\index{cyclic@$R$-cyclic!matrix}, if it has an $R$-cyclic representation. If a matrix $A$ has an $R$-cyclic representation $G(A)$, up to row permutations we may assume that $w_1,\ldots,w_\rho$ are the vertices of the basic cycle, $e_1=[w_1,w_\rho]$, $e_\rho=]w_{\rho-1},w_\rho] \in G(A)$ and for $i=2,\ldots, \rho$, $e_i$ is the edge incident with $w_{i-1}$ and $w_i$.

A \emph{bicyclic representation}\index{bicyclic!representation} of a matrix is a proper binet representation  of the matrix having exactly two basic cycles, and these are full. A matrix  is said to be \emph{bicyclic}\index{bicyclic!matrix}, if it has a bicyclic representation.

Finally, we say that a binet representation is an \emph{$\{\epsilon,\rho\}$-central representation}\index{central@$\{\epsilon,\rho\}$-central!representation}, if it is cyclic, $e_\epsilon$ and $e_\rho$ are edges of the basic cycle incident with one common central node, and one of the edges $e_\epsilon$ and $e_\rho$ is bidirected.
Let us mention here that if a binet representation is $\{\epsilon,\rho\}$-central and $e_\rho$ is bidirected, then up to switching operations it is possible to transform it into an 
$\{\epsilon,\rho\}$-central representation such that $e_\epsilon$ is bidirected.
We say that a matrix is an \emph{ $\{\epsilon,\rho\}$-central}\index{central@$\{\epsilon,\rho\}$-central!matrix} matrix, if it has an $\{\epsilon,\rho\}$-central representation.
Suppose that a matrix $A$ has an $\{\epsilon,\rho\}$-central representation $G(A)$ and $T$ is the basic maximal $1$-tree in $G(A)$. We denote by $G_1(A)$ the connected component of $T\verb"\" \{e_\epsilon,e_\rho\}$ containing the central node incident to $e_\epsilon$ and $e_\rho$, and by $G_0(A)$ the other connected component. We say that a basic subgraph of $G(A)$ is \emph{on the right}\index{on the right} (respectively, \emph{on the left}\index{on the left}) \emph{of $\{e_\epsilon,e_\rho\}$}, if it is contained in $G_1(A)$ (respectively, $G_0(A)$).

Using Lemma \ref{lemdefiWeight1}, we may deduce the following important lemma.

\begin{lem}\label{lemBinetintegral}
Let $A$ be an integral matrix. Then $A$ is binet if and only if it is either cyclic or $\frac{1}{2}$-binet.
\end{lem}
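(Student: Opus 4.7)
The approach is to establish the nontrivial direction---that an integral binet matrix admits a cyclic or $\frac{1}{2}$-binet representation---by starting from an arbitrary proper binet representation and letting the integrality constraint, read through Lemma \ref{lemdefiWeight1}, force the structure of the basic cycles. The reverse implication is immediate, since cyclic and $\frac{1}{2}$-binet representations are by definition special binet representations.

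First I would reduce to the connected case via Lemma \ref{lemdefiBlock}, and then fix a proper binet representation $G(A)$ of $A$, whose existence is guaranteed in Section \ref{sec:Binetrep}. The engine of the proof is Lemma \ref{lemdefiWeight1}, which singles out $\pm\frac{1}{2}$ entries as exactly those $A_{ij}$ with $f_j$ a $2$-edge or a half-edge and $e_i$ lying on a full basic cycle. Since $A$ is assumed integral, this configuration is forbidden; hence in $G(A)$ every basic cycle that appears in the fundamental circuit of some nonbasic $2$-edge or half-edge must consist of a single half-edge, this being the only non-full basic cycle allowed in a proper representation.

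Next I would split on the number of basic components of $G(A)$. If there is a single basic component, nothing more is required: its basic cycle is either full, in which case $G(A)$ is cyclic, or a half-edge, in which case $G(A)$ is $\frac{1}{2}$-binet. If there are at least two basic components, the connectedness of $A$ forces each basic component to be incident to at least one nonbasic $2$-edge---otherwise the rows indexing that component would share nonzero columns only among themselves, producing a block decomposition of $A$ and contradicting connectedness. Each such $2$-edge drags both of the incident basic cycles into its fundamental circuit (by the classification in Corollary \ref{corBidirectedCircuit}), and the previous observation then shows that every basic cycle of $G(A)$ is a half-edge, so $G(A)$ is $\frac{1}{2}$-binet.

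The main obstacle I anticipate is not the argument itself but ensuring that one may legitimately work in a proper representation; once that reduction is in place, the classification of fundamental circuits in Corollary \ref{corBidirectedCircuit} together with the single conditional of Lemma \ref{lemdefiWeight1} drives the case analysis to the end.
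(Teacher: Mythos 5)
Your proof is correct and takes exactly the route the paper intends: the lemma is stated there without an explicit proof, justified only by the remark that it follows from Lemma \ref{lemdefiWeight1}, and your argument is the natural expansion of that remark. The reduction to a connected matrix with a proper representation, followed by the case split on the number of basic components (with connectedness forcing a $2$-edge between any two components and hence, via Lemma \ref{lemdefiWeight1} and Corollary \ref{corBidirectedCircuit}, half-edge basic cycles throughout), is precisely the intended reasoning.
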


\section{Linear and integer programming}\label{sec:BinetOpt}

In this section, we deal with linear and integer programming in which the constraint matrix is the node-edge incidence matrix of a bidirected graph (IMB). Let us designate a linear (respectively, integer) programming problem with an IMB constraint matrix a \emph{bidirected LP}\index{bidirected!LP} (respectively, \emph{bidirected IP})\index{bidirected!IP}. It is shown that efficient methods are available to solve bidirected LPs and IPs. Finally, we see that the problem of converting some linear or integer program to a bidirected LP or IP, respectively, using elementary row operations on the constraint matrix  is equivalent to the recognition of binet matrices.

Consider the linear programm 
\begin{eqnarray}
\min  & c^T x \nonumber \\
 s.c. &  Ax = b \label{eqnOptprog}\\
      & 0\le x \le \alpha \nonumber
\end{eqnarray}
\noindent
 where $c,\alpha \in \mathbb{R}^m$, $b\in \mathbb{R}^n$, $A \in \mathbb{R}^{n \times m}$ for some $n,m\in \mathbb{N}$ and some entries of $\alpha$ might be infinite.

If $A$ is an RIMB, then problem (\ref{eqnOptprog}) is a bidirected LP, and (\ref{eqnOptprog}) can be seen as a generalization of the transshipment problem described at page \pageref{mycounter1}. The optimal solution of a bidirected LP can be achieved by general-purpose methods, like the simplex algorithm, or the strongly polynomial algorithm of Tardos \cite{TardosPolAlg-86}. This latter states that for rational linear programming problems in which the elements of the constraint matrix are bounded, there exists an algorithm to solve the problem that uses arithmetic operations whose number is a polynomial function of the dimension of the problem and which act on rationals of size polynomially bounded by the size of the input. More about this ingenious algorithm can be found in \cite{ShrAlex}. Since the elements of an RIMB are between $-2$ and $2$, Tardos' algorithm on bidirected LP has a strongly polynomial worst-case running time. However, despite this attrative theoretical complexity result, the up-to-date implementations of the simplex algorithm usually outperform Tardos' strongly polynomial method on practical instances (see \cite{KotThesis}).

Alternatively, Kotnyek \cite{KotThesis} proved that a bidirected LP can be converted to a generalized network problem.  A \emph{generalized network}\index{generalized network} is defined on a connected digraph $G$. There is a real non-zero multiplier $p_e$ associated with each edge $e=(i,j)$ of $G$. We assume that if a unit flow leaves the tail $i$ of $e$, then $p_e$ units arrive at $j$. $G$ can also contain loops, i.e., edges whose tail and head coincide. It is assumed that the multiplier of a loop cannot be $+1$, as it would mean that the same flow leaves and enters the node on such a loop, making the loop redundant. Trivially, if all multipliers are equal to $1$, then we have the well-known pure network. 

A way of describing a generalized network is with its node-edge incidence matrix. The column of the incidence matrix that corresponds to a non-loop edge $e=(i,j)$ has $-1$ in row $i$ and $p_e$ in row $j$, zeros elsewhere. If $e$ is a loop at node $i$, then its column has only one non-zero, $(p_e-1)$ in row $i$. A \emph{generalized network problem}\index{generalized network!problem} is a linear program in which the constraint matrix is the node-edge incidence matrix of a generalized network. See \cite{AhujaMagOrlinNet-93,MurtyNetwork-92}.

The idea behind the network simplex method is that the main steps of the simplex method (such as calculating the primal and dual solutions corresponding to a basis, or changing the basis) can be executed on the network associated with the constraint matrix. This idea can be followed in generalized networks too, leading to the \emph{generalized network simplex method}\index{generalized network!simplex method}. It shoul be noted that the generalized network simplex method is not polynomial in the worst case, but for most of the practical problems it is much more efficient than the simplex method, or the strongly polynomial method mentioned above (see the reference notes in \cite{AhujaMagOrlinNet-93}).

Now we turn to the integer case, that is, we are facing with the following integer program:
\begin{eqnarray}
&\min  & c^T x \nonumber \\
& s.c. &  Ax = b \label{eqnOptprogI}\\
&      & 0\le x \le \alpha, \nonumber \\
&      & x \m{ integral } \nonumber
\end{eqnarray}
\noindent
where $c,\alpha \in \mathbb{R}^m$, $b\in \mathbb{R}^n$, $A \in \mathbb{R}^{n \times m}$ for some $n,m\in \mathbb{N}$ and some entries of $\alpha$ might be infinite.

If $A$ is an RIMB, then (\ref{eqnOptprogI}) is a bidirected IP. The bidirected IP problem was introduced by Edmonds \cite{EdmondsMatching-67} as a generalization of the following problem. Given any graph $G=(V,E)$ with a real numerical weight $c_e$ for each edge $e\in E$ and an integer $b_v$ for each node $v\in V$, find in $G$, if there is one, a subgraph $G'$ which has degrees $b_v$ at nodes $v$ and whose edges have maximum weight-sum. This is called the \emph{optimum $b$-matching problem}\index{optimum $b$-matching problem} on graph $G$. A \emph{$b$-matching}\index{matching@$b$-matching} $E'$ in $G$ is a subset $E'\subseteq E$ of edges such that $b_v$ edge-ends of edges in $E'$ meet node $v$. Obviously there is a $1-1$ correspondence between the $b$-matchings in a graph $G$ and the $b$-degree subgraphs of $G$ that contain all the nodes of $G$.

If $A$ is an IMB with respect to a graph $G$ (all edges are bidirected entering links) and $\alpha_e=1$ for every edge in $G$, then (\ref{eqnOptprogI}) is simply the optimum $b$-matching problem relative to $G$. Edmonds
showed that any bidirected IP is equivalent
to a $b$-matching problem (see also \cite{LawlerOpt-76}).
Moreover, Edmonds and Johnson \cite{EdmondsMatching2} proved that there exists a polynomial algorithm to solve $b$-matching problems, even on bidirected graphs.

At last, suppose  that $A$ is in standard form (i.e., the first $n$ columns of $A$ constitute an identity matrix) and we wish to 
convert the problem (\ref{eqnOptprog}) (resp., (\ref{eqnOptprogI})) to a bidirected LP (resp., IP) using only elementary row operations on $A$. This is equivalent to the problem of recognizing binet matrices as stated in the next theorem.

\begin{thm}\label{thmOptEqui}
Let $A$ be a real matrix in standard form. Then $A$ is projectively equivalent to an RIMB if and only if $A$ is a binet matrix.
\end{thm}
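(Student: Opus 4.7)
The plan is to mirror the proof of Theorem \ref{thmSubclassNEqui} (the analogous equivalence for network matrices and RIMDs), replacing RIMDs by RIMBs and network matrices by binet matrices throughout. The only nontrivial ingredient is that the definition of a binet matrix is invariant under the standard-form/non-standard-form split in exactly the way one needs, and this is what makes the argument go through.

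Write $A=[I\, A']$ where $A'$ collects the last $m-n$ columns of $A$. The key observation I will use is that $A$ is binet if and only if $A'$ is binet: indeed, if $A'=B^{-1}N$ with $[B\, N]$ an IMB and $B$ a basis, then $B[I\, A']=[B\, N]$, so $A=B^{-1}[B\, N]$ exhibits $A$ itself as $B^{-1}(\text{IMB})$ with basis $B$, and conversely any such representation of $A$ restricts to one for $A'$. So I can work with $A'$ throughout.

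For the forward direction, suppose $\pi$ is nonsingular with $\pi A = \pi[I\, A'] = In$, where $In$ is an RIMB. Then $\pi = \pi I$ is the submatrix of $In$ consisting of its first $n$ columns, and being nonsingular of size $n\times n$ it is a basis of $In$; call it $B:=\pi$. The remaining columns form $N:=\pi A'$, so $In=[B\, N]$. Hence $A'=B^{-1}N$ by construction, and $A'$ (and therefore $A$) is a binet matrix. For the converse, suppose $A'$ is binet, so $A'=B^{-1}N$ for some IMB $[B\, N]$ with basis $B$. Since $[B\, N]$ contains the nonsingular square submatrix $B$, it has full row rank, hence it is actually an RIMB. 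Setting $\pi:=B$, one checks $\pi A = B[I\, A'] = [B\, BA']=[B\, N]$, which is an RIMB, so $A$ is projectively equivalent to an RIMB.

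There is really no hard step here: the argument is an almost verbatim adaptation of the proof given for Theorem \ref{thmSubclassNEqui}. The only conceptual point worth flagging is the reduction ``$A$ binet iff $A'$ binet'', which is immediate from the definition but deserves to be stated explicitly because the theorem is phrased in terms of $A$ while the defining identity $A'=B^{-1}N$ is more naturally about $A'$; once that reduction is in hand, both implications are one-line verifications using the fact that full row rank of $[B\, N]$ (equivalent to the existence of the basis $B$) is exactly what distinguishes an RIMB from a general IMB.
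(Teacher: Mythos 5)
Your proof is correct and follows essentially the same route as the paper: the paper's proof of Theorem \ref{thmOptEqui} is literally the one-line remark that it is similar to the proof of Theorem \ref{thmSubclassNEqui}, and your argument is exactly that adaptation (with the reduction to $A'$ made explicit, which the paper's network-case proof also uses implicitly via ``it suffices to prove that $A'$ is a network matrix''). Nothing is missing.
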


\begin{proof} The proof is similar to the proof of Theorem \ref{thmSubclassNEqui}.
\end{proof}\\

\noindent
By Theorems \ref{thmOptEqui} and \ref{thmRecBinetMain}, it results that there is an $O(\max(n^6m,n^2 m^3))$ algorithm (the algorithm Binet) for determining whether $A$ is projectively equivalent to an IMB, and thus whether the problem (\ref{eqnOptprog}) (resp., (\ref{eqnOptprogI})) is convertible to a bidirected LP (resp., IP). If such a conversion is possible, the algorithm Binet, in effect, carries it out.

\section{A subclass of $2$-regular matrices}\label{sec:Binetreg}

In this section we introduce the definition of $k$-regular matrices given by Appa and Kotnyek \cite{AppaKotkReg-04,KotThesis}, as a generalization of totally unimodular matrices, and  provide a geometrical interpretation. Then we concentrate on the case $k=2$. It is proved that $2$-regular matrices contain binet matrices. Moreover, we present some known theorems about binet matrices or their transposes with strong Chv\'atal rank $1$. Finally, examples of minimally non-binet $2$-regular matrices are given. The content of this section is mainly taken from \cite{KotThesis}.


Given a rational matrix $A$ of size $n\times m$ and a rational vector $b$ of size $n$, consider $P=\{ x \, :\, x\geq 0, \, Ax \le b \}$. A primary problem of integer programming is to find the \emph{integer hull}\index{integer hull} $P_I=conv\{ P \cap \mathbb{Z}^m\}$  of the polyhedron $P$. A rephrasing of Theorem \ref{thmSubclassNHof} is that in the case of an integral matrix $A$, we have $P=P_I$ for all integral right hand side vectors $b$, if and only if $A$ is totally unimodular. In terms of integer programming, totally unimodular matrices are the integral matrices for which $\max \{ cx \, | \, Ax \le b, \, x \geq 0 \}$ has integral optimal solutions for any $c$ and any integral $b$. 

There are situations, however, that take us beyond  total unimodularity. Theorem \ref{thmSubclassNHof} holds provided that $A$ is integral. What if the matrix in question is not integral? Or what can we say about matrices $A$ that ensure integral optimal solutions for only a special set of right hand side? These questions are not independent. If $A$ is rational, then one can find a nonnegative integer $k$, such that if we multiply every row of $A$ by $k$, we get an integral matrix, $kA$. But then instead of inequalities $Ax \le b$, we have $k Ax \le kb $ and we deal with polyhedra that are required to be integral for  only special $b'$ vectors, namely for those whose elements are integer multiples of $k$. For example, if $k=2$, so the elements of $A$ are halves of integers, then we are to characterize integral matrices $A'$ for which $\{ x\, | \, A'x \le b', \, x \geq 0 \}$ is integral for all {\bf even} vectors $b'$. Or equivalently, we examine matrices that provide half-integral vertices for polyhedra with integral right hand sides.

A matrix is called \emph{$k$-regular}\index{regular@$k$-regular} ($k\in \mathbb{N}$) if for each of its non-singular square submatrices $\pi$, $k \pi^{-1}$ is integral. $k$-regularity is the property that takes over the role of total unimodularity in the theory of rational matrices that ensure integral vertices for polyhedra with special right hand sides. One important theorem states this (see \cite{KotThesis}).

\begin{thm}\label{thmsubclassBkreg}
A rational matrix $A$ is $k$-regular, if and only if the polyhedron $\{x \, | \, Ax \le kb, \, x \geq 0 \}$ is integral for any integral vector $b$.
\end{thm}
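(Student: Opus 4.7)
The plan is to adapt the standard proof of Hoffman and Kruskal's theorem (Theorem \ref{thmSubclassNHof}) from the totally unimodular setting to the $k$-regular setting. Both directions follow the same skeleton: vertices of the polyhedron are determined by non-singular square submatrices, and $k$-regularity is exactly what is needed to control the integrality of the resulting Cramer's-rule expressions when the right-hand side is $kb$.

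For the forward direction, I would start from a vertex $x^{\ast}$ of $P=\{x : Ax\le kb,\,x\ge 0\}$ with $b$ integral. Since $P$ is pointed, $x^{\ast}$ is determined by $m$ linearly independent tight constraints, which split into non-negativity constraints $x_j=0$ for $j\notin B$ and row constraints $A_{i\bullet}x=kb_i$ for $i\in J$, for some index sets $B\subseteq\{1,\dots,m\}$ and $J\subseteq\{1,\dots,n\}$ with $|B|=|J|=:r$. Linear independence forces $\pi:=A_{J\times B}$ to be a non-singular square submatrix, so that $x^{\ast}_B=k\pi^{-1}b_J=(k\pi^{-1})b_J$. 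Since $k\pi^{-1}$ is integral by $k$-regularity and $b_J$ is integral, $x^{\ast}_B$ is integral, and $x^{\ast}_{\bar B}=0$ is trivially integral.

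The reverse direction is more delicate. Given an arbitrary non-singular square submatrix $\pi=A_{J\times B}$ of size $r\times r$ and an arbitrary integer vector $u\in\mathbb{Z}^r$, I want to show that $k\pi^{-1}u$ is integral; doing this column by column (taking $u$ to be the standard basis vectors) yields the integrality of $k\pi^{-1}$. The idea is to exhibit $k\pi^{-1}u$ (up to an integer shift) as the vector of basic components of a vertex of an appropriate polyhedron. Since $A$ is rational, pick a positive integer $M$ with $M\pi$ integral, choose $z\in M\mathbb{Z}^B$ large enough that $\pi^{-1}u+z\ge 0$, and define $y\in\mathbb{R}^m$ by $y_B:=k(\pi^{-1}u+z)$ and $y_{\bar B}:=0$. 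Define $b\in\mathbb{Z}^n$ by $b_J:=u+\pi z$ (an integer vector, because $\pi z$ is integral by the choice $z\in M\mathbb{Z}^B$) and $b_i:=\lceil A_{i\bullet}y/k\rceil$ for $i\notin J$.

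A short check then shows that $y$ is feasible for $P=\{x:Ax\le kb,\,x\ge 0\}$ and that the tight constraints $x_j=0$ ($j\notin B$) together with $A_{J\bullet}x=kb_J$ form $m$ linearly independent equalities (the independence follows from $\pi$ being non-singular), so $y$ is a vertex of $P$. By hypothesis $y$ is integral, hence $y_B=k\pi^{-1}u+kz$ is integral, which gives $k\pi^{-1}u\in\mathbb{Z}^B$, as required.

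The main obstacle I anticipate is the construction of $b$ in the reverse direction: one has to ensure simultaneously that $b$ is integer, that $y$ is feasible, and that the tight constraints at $y$ have the prescribed structure so that $y$ is a vertex whose basic part encodes $k\pi^{-1}u$. The use of the scaling $M$ to make $\pi z$ integer, together with choosing $z$ large enough to enforce $y\ge 0$, is what makes all three requirements compatible simultaneously; the rest of the argument is routine linear algebra once this setup is in place.
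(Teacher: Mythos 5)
Your proof is correct. Note that the paper itself states Theorem \ref{thmsubclassBkreg} without proof, attributing it to Kotnyek \cite{KotThesis}, so there is no in-paper argument to compare against; your adaptation of the Hoffman--Kruskal scheme is exactly the expected one, and the two delicate points in the reverse direction (making $\pi z$, and hence $b_J=u+\pi z$, integral via the scaling $M$, and forcing $y\ge 0$ by taking $z$ large) are handled correctly, with the non-singularity of $\pi=A_{J\times B}$ guaranteeing that the $m$ tight constraints at $y$ are linearly independent so that $y$ is indeed a vertex.
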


The second nice result is the following.

\begin{thm}\label{thmsubclassBbinet}
Every binet matrix is $2$-regular.
\end{thm}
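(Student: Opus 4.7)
The plan is to split the argument in two. First, I will establish the inverse version of Theorem \ref{thmBidirected2mod}: for every nonsingular square submatrix $Q$ of the incidence matrix of a bidirected graph, the matrix $2Q^{-1}$ is integral. Second, I will transfer this property from the ambient IMB $[B\,N]$ down to $A=B^{-1}N$ by embedding any nonsingular submatrix of $A$ into an $n\times n$ matrix whose inverse exhibits $\pi^{-1}$ as a block.

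For the first part, let $Q$ be a nonsingular $k\times k$ submatrix of an IMB. By Lemma \ref{lemBidirectedNonsing}, the associated subgraph is a disjoint union of negative $1$-trees, so after a permutation of rows and columns $Q$ becomes block-diagonal $\mathrm{diag}(Q_1,\dots,Q_c)$ with each $Q_l$ the IMB of one connected negative $1$-tree, and correspondingly $Q^{-1}=\mathrm{diag}(Q_1^{-1},\dots,Q_c^{-1})$. It therefore suffices to handle one block $Q_l$. Examining the proof of Theorem \ref{thmBidirected2mod} (via Proposition \ref{propBidirectedmatQ}) shows that $|\det Q_l|=1$ when the unique cycle of the $1$-tree is a half-edge and $|\det Q_l|=2$ otherwise, so in every case $|\det Q_l|\le 2$. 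Writing the entries of $Q_l^{-1}$ through Cramer's rule,
\[
(Q_l^{-1})_{ij}=\pm\frac{\det Q_l^{(ji)}}{\det Q_l},
\]
and noting that $Q_l^{(ji)}$ is a submatrix of an integer matrix (entries in $\{0,\pm1,\pm2\}$) and hence has integer determinant, multiplication by $2$ clears the denominator. Thus $2Q_l^{-1}\in\mathbb{Z}^{k\times k}$, and globally $2Q^{-1}$ is integral.

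For the second part, let $\pi=A_{R'\times S'}$ be a nonsingular $k\times k$ submatrix of $A$ and put $R''=R\setminus R'$. Form the $n\times n$ submatrix $M$ of $[I\,A]$ that retains the $n-k$ columns of $I$ indexed by $R''$ together with the $k$ columns of $A$ indexed by $S'$. After permuting rows so that those in $R'$ come first,
\[
M=\begin{pmatrix} 0 & \pi \\ I & A_{R''\times S'}\end{pmatrix},\qquad M^{-1}=\begin{pmatrix} -A_{R''\times S'}\pi^{-1} & I \\ \pi^{-1} & 0\end{pmatrix},
\]
so $\pi^{-1}$ appears as a block of $M^{-1}$. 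Because $A=B^{-1}N$, the product $Q:=BM$ consists of the columns of $B$ indexed by $R''$ together with the columns of $N$ indexed by $S'$; hence $Q$ is a nonsingular $n\times n$ submatrix of the IMB $[B\,N]$. The identity $M^{-1}=Q^{-1}B$, the integrality of $B$, and the first part together imply $2M^{-1}=2Q^{-1}B\in\mathbb{Z}^{n\times n}$, whence $2\pi^{-1}\in\mathbb{Z}^{k\times k}$, which is the required $2$-regularity of $A$.

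The main obstacle is the first part, specifically the bound $|\det Q_l|\le 2$ for a connected negative $1$-tree; once this is in hand, Cramer's rule and the embedding $\pi\hookrightarrow M$ via the factorization $M=B^{-1}Q$ make the remainder of the argument essentially routine.
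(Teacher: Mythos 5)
Your proof is correct, but it takes a genuinely different route from the paper's. The paper first invokes Theorem \ref{thmBinetsub} (every submatrix of a binet matrix is binet) to reduce at once to nonsingular $n\times n$ submatrices $\pi$ of $A$, then observes that $\pi^{-1}=(B')^{-1}B$ is itself a binet matrix (a change of basis within the same RIMB $[B\ B'\ N]$), and concludes from Lemma \ref{lemdefiWeight1} that every entry of $\pi^{-1}$ is half-integral. You instead prove directly that $2Q^{-1}$ is integral for every nonsingular square submatrix $Q$ of an IMB --- via the block decomposition into negative $1$-trees from Lemma \ref{lemBidirectedNonsing}, the bound $|\det Q_l|\le 2$ per connected block coming from Proposition \ref{propBidirectedmatQ}, and Cramer's rule --- and then transfer this to $A$ through the correspondence $M=B^{-1}Q$ between bases of $[I\ A]$ and bases of $[B\ N]$; this is the same device the paper itself uses in its proof of Theorem \ref{thmSubclassNNetTot} that network matrices are totally unimodular, here adapted from determinants to inverses. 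The trade-off: the paper's argument is shorter given its toolbox, but it leans on Theorem \ref{thmBinetsub} and on the graphical weight computation behind Lemma \ref{lemdefiWeight1}, both of which are imported from external work; your argument is self-contained modulo Lemma \ref{lemBidirectedNonsing} and Proposition \ref{propBidirectedmatQ}, handles submatrices of every size without the closure theorem, and isolates the general principle that $k$-regularity of $B^{-1}N$ follows from the determinant bound $\pm 1,\pm k$ on the connected blocks of the underlying incidence matrix. Both arguments are sound.
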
 

\begin{proof}
Let $A$ be a binet matrix of size $n\times m$. By Theorem \ref{thmBinetsub}, we only have to prove that for any non-singular $n\times n$ submatrix $\pi$ of $A$ the matrix $2 \pi^{-1}$ is integral. Let $\pi$ be a non-singular $n\times n$ submatrix of $A$. Then, by definiton of a binet matrix, there exists an RIMB $In=[B\,\, B' \,\, N ]$ where $B$ and $B'$ are two disjoint bases of $In$, $A=B^{-1}[ B'\,\, N]$ and $\pi=B^{-1} B'$. Thus $(B')^{-1}B=\pi^{-1}$ is also binet.
By Lemma \ref{lemdefiWeight1}, every entry of $\pi^{-1}$ is half-integral, which completes the proof.
\end{proof}\\

Now let us define the notion of strong Chv\'atal rank $1$.
Given a polyhedron $P$, in several theoretical and practical problems, we have $P\neq P_I$. To tackle these cases and find integer solutions, different methods have been developed. One approach is the cutting plane method, pioneered by Gomory \cite{GomoryIP-58}. Its most basic concept is the \emph{Chv\'atal-Gomory (CG) cut}\index{Chv\'atal-Gomory (CG) cut}, defined as follows. Given a rational $n\times m$ matrix $A$ and a rational vector $b$ of size $n$, a CG cut of the polyhedron $P=\{ x \, :\, Ax \le b \}$ is an inequality of the form $\lambda^T Ax \le \lfloor \lambda^T b \rfloor$ where $\lambda \in \mathbb{R}^n_+$ and $\lambda^T A\in \mathbb{Z}^m$ ($\lfloor . \rfloor$ denotes the lower integer part).

The intersection of $P$ with the half-spaces induced by all possible CG cuts is its \emph{rank-$1$ closure}\index{rank-$1$ closure}, denoted $P_1$. Obviously, $P_I\subseteq P_1 \subseteq P$. It is also known that $P=P_I$ holds if and only if $P=P_1$.
Matrices $A$ for which the integer hull $P_I$ of $P=\{ x \, :\, x\geq 0, \, Ax \le b \}$ is the same as the rank-$1$ closure $P_1$ for any integral $b$ are said to have \emph{Chv\'atal rank $1$}\index{Chv\'atal rank $1$}.

A stronger requirement is to assume that we have lower and upper bounds on $Ax$ and $x$, so the polyhedron is of the form $P=\{ x \, | \, \alpha \le x \le \beta, \, a \le Ax \le b\}$. Matrix $A$ has \emph{strong Chv\'atal rank $1$}\index{strong Chv\'atal rank $1$}, if $P_1=P_I$ for any integral choice for $\alpha$, $\beta$, $a$ and $b$ (including $\infty$).

Edmonds and Johnson \cite{EdmJohnMatEu-73,EdmondsMatching2} showed that if $A$ is the node-edge incidence matrix if a bidirected graph, then it has strong Chv\'atal rank $1$. (That is why matrices with strong Chv\'atal rank $1$ are sometimes said to have the \emph{Edmonds-Johnson property}.) In \cite{GerSchAlex-Edm-86}, Gerards and Schrijver  gave a characterization of matrices that are edge-node incidence matrices of bidirected graphs and have strong Chv\'atal rank $1$. The key matrix in their characterization is $In(K_4)$, the edge-node incidence matrix of $K_4$, the complete graph on four nodes:

$$In(K_4)= \left[ 
\begin{array}{cccc}
1 &1&0&0 \\
1&0&1&0\\
1&0&0&1\\
0&1&1&0 \\
0&1&0&1 \\
0&0&1&1 \\
\end{array}\right] $$

The edge-node incidence matrices of bidirected graphs are exactly the integral matrices $In$ (of size $m \times n$) that satisfy the transposed version of (\ref{eqnBidirected}), namely,

\begin{equation}\label{eqnsubclassBEN}
\sum_{j=1}^n |(In)_{ij}| \le 2 \m{ for } i=1,\ldots,m,
\end{equation}

The characterization appearing in \cite{GerSchAlex-Edm-86} is now the following.

\begin{thm}\label{thmsubclassBGerSch}(Gerards and Schrijver) An integral matrix satisfying 
\ref{eqnsubclassBEN} has strong Chv\'atal rank $1$, if and only if it cannot be transformed to $In(K_4)$ by a series of following operations:

\begin{itemize}
\item[(i)] deleting or permuting rows or columns, or multiplying them by $-1$;
\item[(ii)] replacing matrix $\left[
\begin{array}{cc}
1  & g \\
f & D
\end{array} \right]$ by the matrix $D-fg$.
\end{itemize}

\end{thm}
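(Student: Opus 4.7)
The plan is to prove both directions of this excluded-minor characterization separately, working at the interface between the polyhedral property (strong Chv\'atal rank $1$) and the algebraic minor operations (i) and (ii).

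For the necessity direction (\emph{only if}), I would first verify that operations (i) and (ii) preserve the strong Chv\'atal rank $1$ property. Operation (i) is immediate: deleting or permuting rows/columns and multiplying them by $-1$ correspond to projections, coordinate relabelings, and sign flips of the polyhedron, each of which manifestly preserves SCR $1$. For operation (ii), the pivot transformation corresponds, via the associated bidirected graph, to contracting an edge (using the contraction interpretation of (\ref{eqnContraction}) established in Section \ref{sec:Incidence}); at the polyhedral level it amounts to substituting a variable pinned by a unit-coefficient equation, and one checks that both the integer hull and the rank-$1$ closure descend compatibly under this substitution. Combined with the classical fact that $In(K_4)$ itself lacks SCR $1$ (the matching polytope of $K_4$ is the original Edmonds-Johnson example in which plain CG cuts fail to generate the required odd-set inequalities), this shows that any IMB reducing to $In(K_4)$ cannot have SCR $1$ either.

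For the sufficiency direction (\emph{if}), suppose $In$ is an IMB that does not reduce to $In(K_4)$ under operations (i) and (ii). I would proceed by induction on the size of $In$, aiming to show SCR $1$ directly. The inductive step rests on a structural decomposition theorem for the class of bidirected graphs whose incidence matrix excludes the $K_4$-reduction: either the graph admits a local reduction that produces a strictly smaller IMB in the class, to which the induction hypothesis applies, or it splits along a small cut into pieces whose rank-$1$ closures can be recombined to yield the integer hull of the full polyhedron. When no further reduction is possible, the matrix falls into a restricted base family for which SCR $1$ can be verified directly by combinatorial arguments in the spirit of Edmonds-Johnson's polyhedral description of $b$-matchings.

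The main obstacle is the structural decomposition underlying the sufficiency direction: one must translate the algebraic pivot (ii) into a clean combinatorial minor operation on the bidirected graph (a suitably signed contraction), and then establish a decomposition theorem strong enough to drive the induction while simultaneously accommodating links, loops, half-edges, and switchings. The resulting case analysis is substantially heavier than in the purely undirected setting, and this is where the bulk of the original Gerards and Schrijver argument is concentrated.
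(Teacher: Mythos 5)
First, a point of comparison: the paper does not prove this theorem at all. It is quoted from Gerards and Schrijver \cite{GerSchAlex-Edm-86} and used as a black box, so there is no in-paper argument to measure your proposal against; it has to stand on its own. Your necessity direction is sound in outline: the logic (the operations preserve strong Chv\'atal rank $1$, $In(K_4)$ lacks it, hence nothing reducing to $In(K_4)$ can have it) is correct, although the pivot step (ii) is not ``immediate'' --- you must actually verify that the rank-$1$ closure commutes with eliminating the pivoted variable, which requires tracking the CG multipliers through the substitution. One small inaccuracy: the standard witness that $In(K_4)$ fails SCR $1$ is the stable-set relaxation $\{x : 0 \le x \le 1,\ In(K_4)x \le \mathbf{1}\}$, whose integer hull needs the clique inequality $x_1+x_2+x_3+x_4 \le 1$ that no single CG cut produces; calling it the matching polytope of $K_4$ conflates the node-edge and edge-node incidence matrices (Edmonds and Johnson prove SCR $1$ holds unconditionally for the former, and the present theorem concerns the latter, i.e.\ matrices satisfying (\ref{eqnsubclassBEN})).

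The genuine gap is the sufficiency direction, which is where the entire content of the theorem lives. You invoke ``a structural decomposition theorem for the class of bidirected graphs whose incidence matrix excludes the $K_4$-reduction'' without stating what that decomposition is, what the irreducible base cases are, or why strong Chv\'atal rank $1$ survives recombination along a small cut. As written, the inductive step is a placeholder for the theorem itself: a complete argument would have to (a) translate operations (i)--(ii) into a signed-graph minor (signed contraction plus switching), (b) prove a structure theorem for signed graphs with no odd-$K_4$ minor, and (c) verify SCR $1$ for each piece and show the property is preserved by the gluing. That programme is essentially the whole of the Gerards--Schrijver paper, and acknowledging that ``this is where the bulk of the original argument is concentrated'' does not discharge it.
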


Here we extend the set of matrices with strong Chv\'atal rank $1$ by stating that integral binet matrices are such \cite{KotThesis}.

\begin{thm}\label{thmsubclassBBinIn}
If $A$ is an integral binet matrix, then it has strong Chv\'atal rank $1$.
\end{thm}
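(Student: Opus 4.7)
The plan is to lift an integer program with constraint matrix $A$ to one whose constraint matrix is (essentially) the IMB from which $A$ arises, exploit the Edmonds--Johnson result stated just before this theorem, and then project back. Since $A$ is integral and binet, write $A = B^{-1}N$ with $[B\ N]$ an RIMB of a bidirected graph $G$. For any integral choice of $\alpha, \beta, a, b$ (with possibly infinite entries), set
\begin{equation*}
P = \{x : \alpha \le x \le \beta,\ a \le Ax \le b\},\qquad Q = \{(x,y) : \alpha \le x \le \beta,\ a \le y \le b,\ Nx - By = 0\}.
\end{equation*}
The matrix $[N\ -B]$ appearing in the equation is obtained from $[B\ N]$ by a column permutation and by negating some columns; since property (\ref{eqnBidirected}) only involves absolute values, $[N\ -B]$ is itself the node-edge incidence matrix of a bidirected graph (namely $G$ with some edges reoriented). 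Hence the Edmonds--Johnson theorem applies to the system defining $Q$ and yields $Q_1 = Q_I$.

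Since $A$ is integral, the integer points of $Q$ are exactly the pairs $(x, Ax)$ with $x \in P \cap \mathbb{Z}^m$, so projection onto the $x$-coordinates gives $\pi_x(Q_I) = P_I$. The key step is to show that every Chv\'atal--Gomory cut of $Q$, when restricted to the affine subspace $\{y = Ax\}$, becomes a CG cut of $P$. A generic CG cut of $Q$ arises from nonnegative multipliers $\nu_1, \nu_2$ on the $x$-bounds, $\mu_1, \mu_2$ on the $y$-bounds, and a real multiplier $\tau$ on the equations, producing integer coefficient vectors $c^x = (\nu_1 - \nu_2) - N^T\tau$ and $c^y = (\mu_1 - \mu_2) + B^T\tau$, together with right-hand side $\gamma = \lfloor \mu_1^T b - \mu_2^T a + \nu_1^T \beta - \nu_2^T \alpha \rfloor$. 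Using $BA = N$, equivalently $A^T B^T = N^T$, the coefficient on $x$ after restriction is
\begin{equation*}
c^x + A^T c^y = (\nu_1 - \nu_2) - N^T\tau + A^T(\mu_1 - \mu_2) + A^T B^T \tau = (\nu_1 - \nu_2) + A^T(\mu_1 - \mu_2),
\end{equation*}
while the right-hand side $\gamma$ is unchanged, since the equations contribute $0$ to it. This is exactly a CG cut of $P$, formed from the same nonnegative multipliers $\mu_i, \nu_i$.

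Putting the pieces together: if $x^* \in P_1$, then $(x^*, Ax^*) \in Q$ and satisfies every CG cut of $Q$ by the identity just proved, so $(x^*, Ax^*) \in Q_1 = Q_I$, whence $x^* \in \pi_x(Q_I) = P_I$. Combined with the standard inclusion $P_I \subseteq P_1$, this yields $P_1 = P_I$ for every integral choice of bounds, which is precisely the definition of strong Chv\'atal rank one. The main subtlety is the cancellation $-N^T\tau + A^T B^T \tau = 0$, which hinges on the relation $BA = N$ between the binet matrix and its basis; this is what makes the equation-multiplier $\tau$ disappear from the final cut in $x$ and certifies that the restriction of a $Q$-cut is a genuine $P$-cut with the same nonnegative multipliers and the same integer right-hand side.
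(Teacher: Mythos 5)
Your argument is correct, but note that the paper does not actually prove this theorem: it is stated with a citation to Kotnyek's thesis, so there is no in-paper proof to compare against. Your lifting-and-projection route is sound on its own terms. The polyhedron $Q$ is defined by box constraints together with $0 \le [N\ {-B}](x,y) \le 0$, and since $[B\ N]$ satisfies the column condition $\sum_i |(\cdot)_{ij}|\le 2$, so does $[N\ {-B}]$ after the permutation and sign changes, so the Edmonds--Johnson result quoted in Section \ref{sec:Binetreg} does apply and gives $Q_1=Q_I$. The cancellation $-N^T\tau + A^TB^T\tau=0$ from $BA=N$ is the right mechanism, and your identification $\pi_x(Q_I)=P_I$ is justified because projection commutes with taking convex hulls. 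It is worth making explicit that the integrality of $A$ enters in exactly two places: first, $x\in P\cap\mathbb{Z}^m$ forces $y=Ax$ integral, so the integer points of $Q$ project onto those of $P$; second, the restricted coefficient vector $c^x+A^Tc^y=(\nu_1-\nu_2)+A^T(\mu_1-\mu_2)$ is integral only because $A$ and $c^y$ both are, which is what certifies the restriction as a genuine CG cut of $P$. Both steps break down for rational binet matrices, which is consistent with the paper's explicit $3\times 3$ counterexample with $\frac{1}{2}$-entries following the theorem. The only cosmetic gaps are the routine handling of infinite bounds (multipliers supported on the finite constraints only) and the observation that an RIMB is again the incidence matrix of a bidirected graph (row deletion turns links into half-edges), both of which are standard in the paper's framework.
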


Note that Theorem \ref{thmsubclassBBinIn} cannot be extended to rational binet matrices, as the following example shows.

$$A=B^{-1} N = \left[
\begin{array}{ccc}
\frac{1}{2}  & 1 & 0  \\
0 & 1 & 1 \\
\frac{1}{2} & 0 & 1
\end{array} \right] \m{ with } B= \left[
\begin{array}{ccc}
1  & 0 & -1  \\
1 & -1 & 1 \\
0 & 1 & 0
\end{array} \right], N= \left[
\begin{array}{ccc}
0  & 1 & -1  \\
1 & 0 & 0 \\
0 & 1 & 1
\end{array} \right].$$

Binet matrix $A$ does not have strong Chv\'atal rank $1$, because the non-zero integral solutions of the polyhedron $P= \{ x \, : \, 0\le x \le 1, 0 \le Ax \le 1 \}$ are : $(1,0,0)$, $(0,1,0)$ and $(0,0,1)$, so $x_1 + x_2 + x_3 \le 1$ is a facet of $P_I$. But $(1,\frac{1}{2},\frac{1}{2})\in P$, so $\delta=2$ is the smallest value for which $x_1 + x_2 + x_3 \le \delta$ is valid for $P$.
(In \cite{KotThesis} the notion of half-integral cut is discussed.)

Now let us study some minimally non-binet matrices. The graphical equivalent of the matrix operations in Theorem \ref{thmsubclassBGerSch} can also be given, following \cite{GerSchAlex-Edm-86} and Section \ref{sec:BinetOp}. Deleting a row or a column of an edge-node incidence matrix is equivalent to deleting an edge or a node from the graph. Multiplying a row with $-1$ translates to reversing the direction of an edge, whil multiplying a column with $-1$ corresponds to a switching. Operation (ii) has already appeared as (\ref{eqnContraction}), so it is the same as the contraction of an edge (note that (\ref{eqnContraction}) is symmetric to transposing). Thus, the edge-node incidence matrix of a bidirected graph has strong Chv\'atal rank $1$, if and only if the graph cannot be transformed to $K_4$ by a series of edge and node deletions, edge direction reversals, switching and contractions. Combining Theorems \ref{thmsubclassBGerSch} and \ref{thmsubclassBBinIn} we get:

\begin{thm}
If a bidirected graph can be transformed to $K_4$ by a series of edge and node deletions, edge direction reversals, switching and contractions, then its edge-node incidence matrix is not binet.
\end{thm}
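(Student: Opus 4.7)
The plan is to argue by contraposition, directly combining Theorems~\ref{thmsubclassBGerSch} and~\ref{thmsubclassBBinIn} together with the graph/matrix dictionary recalled just before the theorem.

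First I would make the translation between graph operations on $G$ and matrix operations on the edge-node incidence matrix $In(G)$ explicit. In the edge-node incidence matrix, rows correspond to edges and columns to nodes, so deleting an edge (respectively a node) of $G$ is the deletion of a row (respectively a column) of $In(G)$; reversing the direction of an edge and switching at a node correspond to multiplying a row or a column by $-1$; finally contraction of an edge corresponds exactly to operation~(ii) of Theorem~\ref{thmsubclassBGerSch}, as recorded in the discussion around equation~(\ref{eqnContraction}). Consequently, if $G$ can be transformed to $K_4$ by the prescribed series of graph operations, then $In(G)$ can be transformed to $In(K_4)$ by the corresponding series of operations (i) and (ii) of Theorem~\ref{thmsubclassBGerSch}.

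Next, since $In(G)$ is integral and satisfies (\ref{eqnsubclassBEN}) (being an edge-node incidence matrix of a bidirected graph), Theorem~\ref{thmsubclassBGerSch} applies, and the transformation to $In(K_4)$ just exhibited shows that $In(G)$ does \emph{not} have strong Chv\'atal rank $1$. By the contrapositive of Theorem~\ref{thmsubclassBBinIn}, an integral matrix failing to have strong Chv\'atal rank $1$ cannot be binet. Hence $In(G)$ is not binet, which is the desired conclusion.

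I do not foresee a real obstacle: the statement is essentially a straightforward chain of three facts (operation dictionary, Gerards--Schrijver characterization, and the Chv\'atal-rank-$1$ property of integral binet matrices). The only point that deserves a careful sentence is the verification that the five graphical moves listed in the theorem match exactly the two matrix operations of Theorem~\ref{thmsubclassBGerSch}, and that $In(G)$ is automatically integral so that Theorem~\ref{thmsubclassBBinIn} is applicable.
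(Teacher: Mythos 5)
Your argument is correct and is exactly the route the paper takes: the paragraph preceding the theorem sets up the same dictionary between the five graph operations and operations (i)--(ii) of Theorem~\ref{thmsubclassBGerSch}, and the theorem is then stated as the combination of Theorem~\ref{thmsubclassBGerSch} with the contrapositive of Theorem~\ref{thmsubclassBBinIn}. Nothing further is needed.
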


Furthermore, let $In(K_6)$ the edge-node incidence matrix of the complete graph on six nodes. Kotnyek proved that we cannot change the signs of some entries of $In(K_6)$ to make it binet. On the other hand, if the edges of $K_6$ are oriented so that the graph is directed, then the corresponding edge-node incidence matrix is the transpose of a network matrix, so it is totally unimodular. Thus, we have an example of a matrix which is totally unimodular, but not binet. A complete list of minimally non-binet totally unimodular matrices follows from a submitted paper of Hongxun Qin, Daniel C. Slilaty and Xiyngqian Zhou \cite{Slilaty-07}.

There are also minimally non-binet $2$-regular matrices that are not totally unimodular. A trivial example is $[\frac{1}{2}\, 2]$. No binet matrix can have a $\pm 2$ and a $\pm \frac{1}{2}$ in the same row or column, for example because then pivoting on the $\pm \frac{1}{2}$ would result in a $\pm 4$. The matrix 

$$\left[
\begin{array}{cccc}
1  & 1 & 1 & 1  \\
-1 & 1 & 1 & -1 \\
1 & 1 & -1 & -1
\end{array} \right]$$

\noindent 
and its transpose are minimally non-binet, as can be shown by careful analysis of all the cases, but they are $2$-regular and clearly not totally unimodular.

At last, we saw in Section \ref{sec:BinetDefi} that binet matrices extend the class of network matrices, and in Section \ref{sec:IntNet} that every totally unimodular matrix is built up by simple operations from network matrices and two further matrices (\ref{eqnSubclassNmat2}) that are not network.
We mention here that the matrices (\ref{eqnSubclassNmat2}) were proved to be binet by Kotnyek  \cite{KotThesis}. Since the class of $2$-regular matrices extends the binet matrices,
it is conjectured that a characterization theorem should exist for a decomposition of $2$-regular matrices into binet matrices and probably other non-binet matrices.

\section{Matroids related to binet matrices}\label{sec:BinetMat}

In Subsection \ref{sec:BinetMatSign}, we introduce a generalization of undirected graphs, called signed graphs. Signed graphs can be achieved from bidirected graphs by ignoring the signs at the ends of the edges and focusing on only whether an edge is bidirected or directed. Subsection \ref{sec:BinetMatSub} contains an overview of the necessary theory about matroids. In Subsection \ref{sec:BinetMatGraph}, we define the signed-graphic matroid, that is a combinatorial structure associated with a signed graph. We will show that binet matrices are representative matrices of signed-graphic matroids, therefore results about bidirected graphs
have consequences in terms of these matroids. We define also the near-regular matroid as a generalization of regular matroids.

The most important reference about signed graphs is Thomas Zaslavsky's work. His research is the basis of this chapter. Signed graphs were introduced by Harary \cite{HararyFSigned-54}.
Zaslavsky's annotated bibliography of signed and gain graphs \cite{Zaslavsky-Bib-98}, which contains hundreds of references, is an essential tool for anyone interested in the subject. Our notations and results mainly follow the Glossary of Signed and Gain Graphs \cite{Zaslavsky-Glos-99}, by Zaslavsky, and Kotnyek's thesis \cite{KotThesis}.

\subsection{Signed graphs}\label{sec:BinetMatSign}

A \emph{signed graph}\index{signed graph} is a pair $\Sigma=(G,\sigma)$, where $G$ is an undirected graph with possibly loose edges (with no endnodes) and half-edges (having exactly one endnode), and the edges of $G$ are labelled by $+$ or $-$, that is, there is a mapping $\sigma\,: \, E \rightarrow \{+,-\}$ on the edges. It is also required that the label of a loose edge is $+$ and that of a half-edge is $-$.

An edge of a signed graph with two distinct endnodes is called a \emph{link}\index{link}. A \emph{path}\index{path} is a sequence of links $e_1,\ldots, e_k$ where $e_i$ and $e_{i+1}$ ($i=1,\ldots, k-1$) have a common endnode, but none of these nodes is repeated. If $e_1$ and $e_k$ also have a common endnode for $k\geq 3$ (and two common endnodes for $k=2$), then the path is \emph{closed}\index{closed}. A closed path, a loop or a half-edge is called a \emph{cycle}\index{cycle}. A cycle is called \emph{positive}\index{positive cycle} (\emph{negative}\index{negative cycle}), if the product of the signs of its edges is positive (negative).

Clearly, a bidirected graph is a signed graph, bidirected edges are negative, directed and loose edges are positive. Conversely, the edges of a signed graph $\Sigma$ can be \emph{oriented}\index{orient} to get a bidirected graph, denoted $\vec \Sigma$. To do so, we allocate arbitrary signs to the ends of every edge so that positive edges become directed and negative edges become bidirected. More formally, if $u$ and $v$ are (possibly coinciding) endnodes of a link or loop $e$ and the sign of $e$ at $u$ is $s(e,u)$, then the sign of $e$ at $v$ is $s(e,v)=-\sigma (e) s(e,u)$. 

Basic operations on signed graphs, such as deletion, contraction and switching are defined in the same way as for bidirected graphs (see Section \ref{sec:Incidence}). A subgraph of a signed graph achieved by deletions and contractions of edges is sometimes called the \emph{minor}\index{minor} of the graph. Two signed graphs that can be obtained from each other by switchings are called \emph{switching equivalent}\index{switching equivalent}.

Now we derive the node-edge incidence matrix of a signed graph, it is the node-edge incidence matrix of the bidirected graph obtained by an orientation. Different orientations yield different node-edge incidence matrices. That is, the incidence matrix of a signed graph is not unambiguously defined, but any incidence matrix conveys all the information about the signed graph. In fact, two bidirected graphs can be oriented versions of two switching equivalent signed graphs if and only if one can be obtained from the other by switchings and edge direction reversals.

\subsection{Matroids}\label{sec:BinetMatSub}

A \emph{matroid}\index{matroid} $M$ is a finite ground set $S$ and a collection $\phi$ of subsets of $S$  such that (I1)-(I3) are satisfied.

\begin{itemize}

\item[(I1)] $\emptyset \in \phi$.

\item[(I2)] If $X\in \phi$ and $Y\subseteq X$ then $Y\in \phi$.

\item[(I3)] If $X,Y$ are members of $\phi$ with $|X|=|Y|+1$ there exists $x\in X\verb"\" Y$ such that $Y\cup \{x\} \in \phi$.

\end{itemize}

Subsets of $S$ in $\phi$ are called the \emph{independent sets}\index{independent set}, a maximal independent subset in $S$ is a \emph{basis}\index{basis of a matroid} of $M$. The \emph{rank}\index{rank of a matroid} of $M$, which is the cardinality of a basis, is called the rank of the matroid, denoted as $r(M)$. A \emph{circuit}\index{circuit of a matroid} is a minimal dependent subset of $S$. If $B$ is a basis of $M$ and $s\in S\verb"\" B$, then there is a unique circuit in $B\cup \{s\}$, called the \emph{fundamental circuit of $s$ with respect to $B$}\index{fundamental circuit in a matroid}. There are several different but equivalent ways to define a matroid. For example, one can define a matroid on a given ground set through its bases, rank-function, circuits, or fundamental circuits.
A matroid $M$ is called \emph{connected}\index{connected matroid} if for every pair of distinct elements $x$ and $y$ of $S$ there is a circuit of $M$ containing $x$ and $y$. Otherwise, it is disconnected.

An important class of matroids is that of the \emph{uniform matroids}\index{uniform matroid}. If $|S|=m$, and $\phi$ contains all the subsets with at most $r$ elements ($r\le m$), then $(S,\phi)$ is the uniform matroid of rank $r$ on $m$ elements, denoted as $U_m^r$.

An other standard example of matroids is when $S$ is a finite set of vectors from a vectorspace over a field $\mathbb{F}$ and $\phi$ contains the linearly independent subsets of $S$. This kind of matroid is called the \emph{linear matroid}\index{linear matroid}. If for a matroid $M$ there exists a field $\mathbb{F}$ such that $M$ is a linear matroid over $\mathbb{F}$, then $M$ is \emph{representable}\index{representable} over $\mathbb{F}$. Matroids representable over $GF(2)$, the field with two elements, are called \emph{binary}\index{binary}, and matroids representable over $GF(3)$, the field with three elements, are called \emph{ternary}\index{ternary}. A matroid representable over every field is \emph{regular}\index{regular}. For a matroid to be regular, it must obviously be binary and ternary. It turns out that this condition is also sufficient.

\begin{thm}[Tutte \cite{TutteMat-58,Tutte-65}]
A matroid is regular if and only if it is $GF(2)$- and $GF(3)$-representable.
\end{thm}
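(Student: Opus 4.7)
The plan is to handle the two implications separately. The forward direction is immediate from the definition of regularity: a regular matroid is by definition representable over every field, so in particular it is representable over $GF(2)$ and over $GF(3)$. No further work is needed here.

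For the converse, the most efficient route is through excluded-minor characterizations, which package a large amount of Tutte's work. Specifically, one invokes the following three classical theorems, each due to Tutte: a matroid is binary if and only if it has no $U_2^4$ minor; a matroid is ternary if and only if it has no minor isomorphic to $U_2^5$, $U_3^5$, $F_7$, or $F_7^*$; and a matroid is regular if and only if it has no minor isomorphic to $U_2^4$, $F_7$, or $F_7^*$. Assuming $M$ is both $GF(2)$- and $GF(3)$-representable, the first characterization rules out $U_2^4$ as a minor, and the second rules out $F_7$ and $F_7^*$ as minors. Hence the list of forbidden minors for regularity is excluded from $M$, and the third characterization yields that $M$ is regular.

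An alternative, more constructive route is to start from a $GF(2)$-representation of $M$ in standard form $[I\,\,B_2]$, lift $B_2$ entrywise to a $\{0,1\}$-matrix $\tilde{B}$ over $\mathbb{Z}$, and then use a $GF(3)$-representation $[I\,\,B_3]$ to assign a sign $\pm 1$ to each nonzero entry of $\tilde{B}$ so that the resulting integer matrix $[I\,\,B]$ has the same set of bases as $M$ over both fields, and in fact over $\mathbb{Q}$. One then shows that any square submatrix of $B$ has determinant in $\{0,\pm 1\}$: the $GF(2)$-representation forces the determinant to be odd precisely on bases, while the $GF(3)$-representation forces the determinant modulo $3$ to be nonzero precisely on bases and to be $\pm 1$ otherwise, so lifting through the Chinese Remainder Theorem and a straightforward signing argument gives total unimodularity.

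The genuine difficulty in either route lies not in the proof of the stated equivalence but in the deep theorems it depends on: the excluded-minor characterizations themselves, or equivalently the signing lemma that lets one pass from a binary standard representation to a totally unimodular one guided by a ternary one. Since the statement is cited from Tutte's original papers \cite{TutteMat-58,Tutte-65}, the intended presentation here is to invoke these characterizations as black boxes, and the excluded-minor argument of the second paragraph is then a one-line deduction.
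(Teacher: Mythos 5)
The paper does not prove this theorem: it is stated as a classical result with a citation to Tutte, and the only in-text remark is that the forward direction is obvious, so there is no argument of the paper's to compare yours against. Your sketch is nevertheless sound. The forward implication is indeed immediate from the paper's definition of regularity (representable over every field). For the converse, your excluded-minor deduction works, but note two things. First, you invoke more than you need: the full excluded-minor characterization of ternary matroids is due to Bixby and Seymour, not Tutte, and is considerably harder than what the argument requires. All you actually need is that $U_4^2$ (in the paper's notation, rank $2$ on $4$ elements) is not binary, that $F_7$ and $F_7^*$ are not ternary, and that representability over a field is preserved under minors (which the paper records); combined with Tutte's excluded-minor theorem for regular matroids this gives the conclusion in one line. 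Second, your alternative "constructive" paragraph compresses the entire difficulty into "a straightforward signing argument"; that signing step is exactly the content of the paper's Theorem \ref{thmMatroidTutteUni} and is the hard part of Tutte's work, so as written that paragraph is a pointer to the literature rather than a proof. Since the paper itself treats the statement as a black-box citation, your second paragraph's level of detail — quoting the excluded-minor theorems and making the one-line deduction — is the appropriate one.
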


Linear matroids can be represented by matrices. Let $M$ be a linear matroid on a finite set of vectors over $\mathbb{F}$. The matrix $In$ made up of these vectors is a \emph{standard representation matrix}\index{standard representation matrix} of $M$. There is a one-to-one correspondence between linearly independent columns of $In$ and independent sets in $M$, so the linear matroid $M=M(In)$ can be fully given by its representation matrix $In$. Obviously, deleting linearly dependent (over $\mathbb{F}$) rows from $In$ does not have any effect on the structure of independent columns. Therefore, we can assume that the rows of $A$ are linearly independent.

If $In$ is the node-edge incidence matrix of a (connected) undirected graph $G$, then the corresponding binary matroid $M(In)$ is called \emph{graphic}\index{graphic matroid}. A graphic matroid can be viewed as defined on the edges of the graph, the independent sets are the (edge-sets of) forests, a circuit is a cycle, a basis is a spanning tree.  It is known (see \cite{WelshMatTheory-76}) that for a given connected graph $G$ with no loop and at least three vertices, the graphic matroid $In(G)$ is connected if and only if $G$ is $2$-connected.

There is another, more compact representation matrix of a linear matroid $M(In)$. To get it, first delete linearly dependent (over $\mathbb{F}$) rows from $In$, if there are any, then choose a basis $B$ of $M$. It corresponds to a basis of $In$, also denoted by $B$. By multiplying $In$ by the inverse of $B$ (computed over the fielf $\mathbb{F}$), the submatrix $B$ can be converted to an identity matrix. It is clear that the independence is not affected by this operation, so the transformed matrix also represents $M$. This remains true if the full-row-size identity submatrix is deleted. The remaining matrix, denoted as $A$, is a \emph{compact representation matrix of $M$ over $\mathbb{F}$}\index{compact representation matrix}. The rows (respectively, columns) of $A$ correspond to the vectors in (respectively, out of) the basis $B$. Note that a matroid might have several different compact representation matrices, depending on the selection of basis $B$. A column $s$ of $A$ gives us the fundamental circuit of $s$ with respect to the chosen basis $B$. In fact, the basic vectors whose rows contain non-zeros in column $s$ are the basic elements of the fundamental circuit of $s$. As an example, take the following matrix over $GF(3)$.
 
\begin{eqnarray}\label{eqnBinetU42}
A=\begin{tabular}{c|c|c|}
 & $s_3$ & $s_4$ \\
\hline
$s_1$ & 1 & 1 \\
\hline
$s_2$ & 1 & -1 \\
\hline
\end{tabular}
\end{eqnarray}

The ternary matroid represented by $A$ is the uniform matroid $U_4^2$. Its ground set has four vectors $\{s_1,s_2,s_3,s_4\}$. Subset $B=\{s_1,s_2\}$ is a basis. The fundamental circuit of say $s_3$ is $\{s_1,s_2,s_3\}$.

Graphic matroids provide an other example. Take a connected undirected graph $G$ and its node-edge incidence matrix $In$, which is the standard representation of the graphic matroid based on $G$. First we delete a row to make $In$ a full row rank matrix $In'$. Selecting a basis $B$ of $In'$, which correspond to a spanning tree of $G$, and pivoting on its elements is equivalent to premultiplying $In'$ by the inverse of $B$ (all the operations are done modulo $2$). As a result, we get a matrix $A$ the columns of which give the fundamental cycles of $G$ with respect to $B$, i.e., the unique cycle that contains exactly one non-tree edge. In other words, $A$ is an unsigned (i.e., modulo $2$) network matrix (see Section \ref{sec:IntNet}).

\begin{lem}\label{lemMatroidNetComp}
Any compact representation matrix of a graphic matroid  can be signed with $\{+,-\}$ to obtain a network matrix. Conversely, the binary support of any network matrix is the compact representation matrix of a graphic matroid.
\end{lem}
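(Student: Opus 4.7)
The plan is to pin down both directions by tracking, column by column, the support of the compact representation (over $GF(2)$) and of the network matrix (over $\mathbb{R}$), and showing that they both equal the tree-edge set of the fundamental cycle of the corresponding non-tree edge. I would begin by fixing a connected undirected graph $G$ whose graphic matroid is $M$, deleting a (linearly) redundant row from its node-edge incidence matrix $In(G)$ to make it of full row rank, and choosing a basis $B$, which corresponds to a spanning tree $T$ of $G$. Write the full-row-rank incidence matrix as $[B\,N]$ (up to column permutations), so that the compact representation of $M$ is $A = B^{-1} N$ computed over $GF(2)$.

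For the first direction, I would use the standard fact about graphic matroids that the fundamental circuit of a non-tree edge $f$ with respect to $T$ is exactly the (edge-set of the) unique cycle $C_f$ in $T \cup \{f\}$. Since $A_{\bullet f}$ expresses the $GF(2)$-coordinates of $N_{\bullet f}$ in the basis $B$, the support of $A_{\bullet f}$ is precisely the set of tree edges in $C_f$. Now orient $G$ arbitrarily to obtain a digraph $\vec{G}$. Applying the same basis choice to the (signed) full-row-rank incidence matrix of $\vec{G}$ yields a network matrix $A' = B^{-1} N$ computed over $\mathbb{R}$. By equation (\ref{eqn:Introfund}), the entries of $A'_{\bullet f}$ are $\pm 1$ on precisely the tree edges of $C_f$ and $0$ elsewhere. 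Hence the support of $A'_{\bullet f}$ coincides with that of $A_{\bullet f}$, and $A'$ is a $\{+,-\}$-signing of $A$, which establishes the forward direction.

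For the converse, I would start with a network matrix $A'$ arising from a digraph $\vec{G}$ with spanning tree $\vec{T}$, let $G$ be the underlying undirected graph, and consider the graphic matroid $M(G)$ with basis given by $T$. By the same argument via (\ref{eqn:Introfund}), the $\{0,1\}$-support of $A'$ has a $1$ in position $(e,f)$ exactly when $e \in C_f$, which is also what the entries of the compact representation of $M(G)$ over $GF(2)$ encode. Therefore the binary support of $A'$ is a compact representation matrix of the graphic matroid $M(G)$.

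The only mildly technical point to check is consistency between the two fields: the basis $B$ of the IMD has $\det(B) = \pm 1$ (as used in the proof of Theorem \ref{thmSubclassNNetTot}), so $B^{-1}$ is integral and $B^{-1}N$ computed over $\mathbb{Z}$ reduces modulo $2$ to $(B \bmod 2)^{-1}(N \bmod 2)$ over $GF(2)$; since the unsigned incidence matrix of $G$ equals the signed incidence matrix of $\vec{G}$ modulo $2$, the two compact representations have identical support patterns, which is exactly what both directions of the lemma require. I do not expect a genuine obstacle; the content of the lemma is essentially the observation that fundamental cycles in graphs are indifferent to orientation, combined with the standard correspondence between graphic-matroid circuits and graph cycles.
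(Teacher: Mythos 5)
Your proof is correct and follows essentially the same route as the paper, which states this lemma without a formal proof but justifies it in the preceding paragraph by exactly your observation: pivoting the incidence matrix modulo $2$ on a spanning-tree basis produces a matrix whose columns record the fundamental cycles, i.e., an unsigned network matrix, and by equation (\ref{eqn:Introfund}) the real network matrix for any orientation is a $\pm 1$-signing of that same support. Your added remark that $\det(B)=\pm 1$ reconciles the computation over $\mathbb{Z}$ with the one over $GF(2)$ is a reasonable way to make the field-consistency explicit, but it introduces no new idea beyond the paper's argument.
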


Basic operations on matroids are dualization, deletion and contraction. If $M=(S,\phi)$ is a matroid, then $M^* =(S, \{S\verb"\" X \, : \, X\in \phi\})$ is also a matroid, called the \emph{dual}\index{dual} of $M$. If $M$ is linear and $A$ is a compact representation matrix of $M$, then $A^T$ is a compact representation matrix of $M^*$. A consequence of this fact is that if $M$ is representable over a field $\mathbb{F}$, then so is the dual of $M$. In matroid terminology, dualization is usually expressed by the 'co' prefix\index{co-matroid}. Thus, if $M^*$ is graphic, then $M$ is \emph{cographic}. We state one of the most  beautiful theorems in combinatorial theory characterizing
the graphs embeddable in the plane by use of their co-graphic matroid (see \cite{WelshMatTheory-76}). 

\begin{thm}\label{thmMatroidBeautiful}
Let $G$ be an undirected graph. The co-graphic matroid $M^*(G)$ is graphic if and only if $G$ is planar.
\end{thm}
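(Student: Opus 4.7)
The plan is to prove the two directions separately, leaning on the classical planar-dual construction in one direction and on Kuratowski's excluded-minor characterization of planarity in the other.

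For the forward implication, suppose $G$ is embeddable in the plane. Fix a planar embedding and construct the planar dual $G^*$: its node set is the set of faces of the embedding, and for each edge $e$ of $G$ one puts an edge $e^*$ of $G^*$ joining the two faces incident to $e$. This produces a natural bijection between the edge sets of $G$ and $G^*$. Under this bijection, the inclusion-minimal edge cuts of $G$ (i.e., the bonds of $G$) correspond exactly to the cycles of $G^*$. Since the circuits of $M^*(G)$ are by definition the bonds of $G$, and the circuits of $M(G^*)$ are the cycles of $G^*$, the two matroids share the same ground set and the same family of circuits, and are therefore equal. Hence $M^*(G) = M(G^*)$ is graphic.

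For the converse, I would argue by contrapositive: assume $G$ is not planar. By Kuratowski's theorem (or Wagner's reformulation) $G$ contains a subgraph that is a subdivision of either $K_5$ or $K_{3,3}$. Subdivision and edge deletion at the graph level correspond to contraction and deletion at the matroid level, so $M(G)$ has $M(K_5)$ or $M(K_{3,3})$ as a minor. The standard interchange of minor operations under matroid duality (deletion and contraction swap under $(\cdot)^*$) then shows that $M^*(G)$ has $M^*(K_5)$ or $M^*(K_{3,3})$ as a minor. Because the class of graphic matroids is closed under taking minors, it is enough to show that neither $M^*(K_5)$ nor $M^*(K_{3,3})$ is graphic; this will force $M^*(G)$ to be non-graphic and close the implication.

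The main obstacle is therefore the non-graphicness of $M^*(K_5)$ and $M^*(K_{3,3})$, and this is where I expect the real work to lie. I would approach it by a cardinality-plus-rank comparison combined with a circuit-structure argument. For $M^*(K_5)$ one has $10$ elements and rank $6$; if $M^*(K_5) = M(H)$ for some connected graph $H$, then $H$ must have $10$ edges and $7$ nodes, and its cocircuits (the bonds of $H$) must coincide with the cocircuits of $M^*(K_5)$, which are precisely the cycles of $K_5$ (all of length $3$, $4$, or $5$). A short case analysis on such a graph $H$, driven by the degree sequence forced by the triangles of $K_5$ appearing as bonds in $H$, leads to a contradiction; alternatively, one may invoke Whitney's $2$-isomorphism theorem, using $3$-connectedness of $K_5$ to reduce the existence of such an $H$ to checking that $K_5$ itself is not cographic, which is immediate from the rank/element count. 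An analogous bond-counting argument, using the fact that the cocircuits of $M^*(K_{3,3})$ are the $4$- and $6$-cycles of $K_{3,3}$, rules out any graph $H$ with $M(H) = M^*(K_{3,3})$ and completes the proof.
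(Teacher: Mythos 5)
The paper does not actually prove this theorem: it is quoted as a classical result with a pointer to Welsh \cite{WelshMatTheory-76}, so there is no in-text argument to measure yours against. Your proposal is the standard Whitney-style proof and is correct in outline. The forward direction via the planar dual and the bond--cycle correspondence is fine, and in the converse the chain Kuratowski $\Rightarrow$ $M(K_5)$ or $M(K_{3,3})$ is a minor of $M(G)$ $\Rightarrow$ $M^*(K_5)$ or $M^*(K_{3,3})$ is a minor of $M^*(G)$ (deletion and contraction interchange under duality) together with minor-closedness of graphic matroids correctly isolates the non-graphicness of $M^*(K_5)$ and $M^*(K_{3,3})$ as the only remaining content. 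The cleanest way to finish that step, which your sketch gestures at, is a minimum-degree-versus-girth count: a graph $H$ with $M(H)=M^*(K_5)$ and no isolated nodes has $10$ edges and rank $6$, hence at least $7$ nodes and therefore a vertex of degree at most $2$; the star of that vertex is a nonempty edge cut and so contains a bond of size at most $2$, whereas every cocircuit of $M^*(K_5)$ is a cycle of $K_5$ and has at least $3$ elements. The same count ($9$ edges, rank $4$, so at least $5$ nodes and a vertex of degree at most $3$, against girth $4$ of $K_{3,3}$) disposes of $M^*(K_{3,3})$. One caveat: your proposed shortcut via Whitney's $2$-isomorphism theorem does not work as stated --- ``checking that $K_5$ is not cographic'' is literally the assertion that $M^*(K_5)$ is not graphic, so no reduction has taken place, and the rank/element count alone yields no contradiction (a connected graph with $7$ nodes and $10$ edges certainly exists); the girth argument is still needed there. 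Since that remark is offered only as an alternative to a correct primary argument, it does not affect the validity of the proof.
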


The \emph{deletion}\index{deletion} of $X\subseteq S$ from $M$ results in a matroid (denoted as $M\verb"\" X$) on $S\verb"\"X$ the independent sets of which are in $\{Y \subseteq S\verb"\"X \, :\, Y\in \phi \}$. The matroid resulting from the \emph{contraction}\index{contraction} of a set $X\subseteq S$ in $M$ is defined as $M/X = (M^* \verb"\" X)^*$. Deletion and contraction in graphic matroids are naturally expressed by deletion and contraction of edges. A matroid achieved by contractions and deletions in $M$ is called a \emph{minor}\index{minor} of $M$. For any minor $N$ of a linear matroid $M$ one can find a compact representation matrix $A$ of $M$ such that $N$ is represented by a submatrix of $A$. As a corollary, representability over a field is maintained under minor-taking. It is a classical result that binary matroid can be characterized by forbidden uniform minors.

\begin{thm}(Tutte \cite{TutteMat-58}) A matroid is binary if and only if it does not have $U_4^2$ minors.
\end{thm}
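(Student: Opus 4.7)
The theorem has the usual two directions.

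For the necessity direction, I would first observe that the class of binary matroids is closed under minors: given a $GF(2)$-representation $In$ of $M$, the matrix $In$ with the column of $e$ deleted represents $M\setminus e$, while pivoting $In$ on a nonzero entry in the column of $e$ and then deleting that row and column yields a representation of $M/e$ (if $e$ is a loop, one simply deletes the column). It therefore suffices to verify that $U_4^2$ itself is not binary. Any compact $GF(2)$-representation of $U_4^2$ would be a $2\times 2$ matrix $A$ over $GF(2)$ such that the four columns of $[I_2\ A]$ are pairwise linearly independent in $GF(2)^2$; but $GF(2)^2$ contains only three nonzero vectors, a contradiction.

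For the sufficiency direction, I would argue by contrapositive and invoke the standard circuit--cocircuit characterization of binary matroids: $M$ is binary if and only if $|C\cap C^*|$ is even for every circuit $C$ and every cocircuit $C^*$ of $M$. Assume accordingly that some circuit $C$ and cocircuit $C^*$ of $M$ satisfy $|C\cap C^*|$ odd. Since in any matroid one has $|C\cap C^*|\neq 1$, this intersection has size at least $3$. The plan is to produce a $U_4^2$ minor: fix $\{e_1,e_2,e_3\}\subseteq C\cap C^*$ and build a minor $N$ of $M$ by contracting an appropriate subset of $C\setminus\{e_1,e_2,e_3\}$ and deleting an appropriate subset of $C^*\setminus\{e_1,e_2,e_3\}$, chosen so that in $N$ the set $\{e_1,e_2,e_3\}$ remains both a circuit and a cocircuit of rank $2$. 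A further simplification step — exploiting that the circuit/cocircuit pair is preserved under the corresponding dual operations — produces a fourth element $e_4$ of $N$ that is independent of each $e_i$ individually but dependent with every pair $\{e_i,e_j\}$; the restriction of the resulting minor to $\{e_1,e_2,e_3,e_4\}$ is then $U_4^2$.

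\textbf{The main obstacle.} Both the circuit--cocircuit characterization and the implication ``binary $\Rightarrow$ all $|C\cap C^*|$ even'' are classical and follow by linear algebra over $GF(2)$ applied to the rows and columns of a representation. The real difficulty lies in the constructive step of the sufficiency direction: orchestrating the contractions inside $C$ and deletions inside $C^*$ so as to keep $\{e_1,e_2,e_3\}$ simultaneously a circuit and a cocircuit of the minor, and then locating (or manufacturing, via further contractions) the fourth element $e_4$ with exactly the dependence pattern of $U_4^2$. This is the technical heart of Tutte's original argument and where all the combinatorial care is needed.
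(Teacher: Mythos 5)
The paper does not prove this statement at all: it is quoted as a classical result of Tutte \cite{TutteMat-58} and used as a black box, so there is no in-paper argument to compare yours against. Judged on its own terms, your necessity direction is complete and correct: minor-closure of binary matroids via column deletion and pivoting, plus the counting argument that $GF(2)^2$ has only three nonzero vectors, so the four columns of a putative representation $[I_2\ A]$ of $U_4^2$ cannot be pairwise independent.

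The sufficiency direction, however, is a plan rather than a proof, and the gap is exactly where you locate it. First, you invoke the circuit--cocircuit parity characterization (``$M$ is binary iff $|C\cap C^*|$ is even for all circuits $C$ and cocircuits $C^*$'') as a standard lemma; only the easy implication (binary $\Rightarrow$ even intersections) is linear algebra over $GF(2)$, while the converse is itself a nontrivial theorem that you would need to prove or reduce away. Second, and more seriously, the constructive step --- choosing which elements of $C\setminus\{e_1,e_2,e_3\}$ to contract and which of $C^*\setminus\{e_1,e_2,e_3\}$ to delete so that $\{e_1,e_2,e_3\}$ survives as both a circuit and a cocircuit, and then exhibiting the fourth element $e_4$ with the dependence pattern of $U_4^2$ --- is asserted to exist but never carried out. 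You acknowledge this is ``the technical heart of Tutte's original argument,'' which is precisely why leaving it unexecuted means the theorem is not yet proved. To close the gap you would need either to supply that minor-extraction argument in detail (e.g., reduce to a minor in which some circuit and cocircuit meet in exactly three elements, observe that such a minor has rank $2$ and corank $2$ on a four-element set, and verify it is $U_4^2$), or to take the alternative classical route of analyzing a minor-minimal non-binary matroid directly and showing it must be $U_4^2$.
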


Another kind of characterization is when matroids are decomposed to simpler matroids in special ways. The most striking of this kind of results is the decomposition of regular matroid, due to Seymour \cite{SeymourRegular-80}. It claims that the building blocks of a regular matroid are graphic matroids, cographic matroids, or matroids represented by the following compact representation matrix.

$$ 
A(R_{10})=\left[ \begin{array}{ccccc}
1 & 1 & 1 & 1 & 1 \\
1 & 1 & 1 & 0 & 0 \\
1 & 0 & 1 & 1 & 0 \\
1 & 0 & 0 & 1 & 1 \\
1 & 1 & 0 & 0 & 1 \\
\end{array} \right] $$

The special way regular matroids are built up are through $1$-sums, $2$-sums and $3$-sums, which we do not define here. The interested reader can find the definitions in e.g., \cite{TruemperMat-92}. The decomposition theorem of regular matroids goes as follows.

\begin{thm}\label{thmMatroidReg}
Every regular matroid can be produced from graphic and cographic matroids and $R_{10}$ by consecutive $1-$, $2-$, and $3-$sums. Conversely, every matroid produced this way is regular.
\end{thm}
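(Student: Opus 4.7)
The plan is to handle the two directions separately, with the converse being routine and the main direction being a deep induction on $|E(M)|$. For the converse, I would show each of $\bigoplus_1, \bigoplus_2, \bigoplus_3$ preserves regularity. By Tutte's theorem quoted earlier in the chapter, regular $=$ binary and ternary, so it suffices to show each $k$-sum of two $\mathbb{F}$-representable matroids is $\mathbb{F}$-representable for $\mathbb{F} \in \{GF(2), GF(3)\}$. The $1$-sum case is block-diagonal and immediate; for the $2$- and $3$-sums one combines the two given compact representations using the explicit matrix-level formulas displayed in Section \ref{sec:IntNet} and checks directly that the resulting matrix represents the $k$-sum matroid over $\mathbb{F}$. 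Since graphic and cographic matroids are regular (graphic by Theorem \ref{thmSubclassNNetTot} together with Lemma \ref{lemMatroidNetComp}; cographic by dualization, which preserves representability) and $R_{10}$ is regular by direct verification on its compact representation $A(R_{10})$, induction on the number of summing operations completes this direction.

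For the main direction, I would follow Seymour's inductive argument. First reduce to $3$-connected $M$: any non-trivial $1$- or $2$-separation of $M$ gives a proper $1$- or $2$-sum decomposition, so by induction both summands decompose and hence so does $M$. Now assume $M$ is $3$-connected and regular. Strengthen Tutte's characterization to the excluded-minor form ``binary with no $F_7$ or $F_7^*$ minor'' (valid because $F_7$ and $F_7^*$ are binary but not ternary, so they are forbidden by Tutte). The argument then splits by cases on which distinguished minor $M$ contains: (a) if $R_{10}$ is a minor of $M$, Seymour's splitter theorem applied with splitter $R_{10}$ forces $M = R_{10}$; (b) if $R_{12}$ is a minor of $M$ but $R_{10}$ is not, lift the canonical exact $3$-separation of $R_{12}$ to an exact non-trivial $3$-separation of $M$, yielding $M = M_1 \bigoplus_3 M_2$ with both $|E(M_i)| < |E(M)|$, and apply induction; (c) if neither $R_{10}$ nor $R_{12}$ is a minor of $M$, a further case analysis based on the splitter theorem applied to suitable graphic and cographic starting minors forces $M$ itself to be graphic or cographic.

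The main obstacle is the lifting step in case (b): given an exact $3$-separation of a minor $N$ of $M$, one must show that either $M$ inherits a corresponding exact $3$-separation, or else $M$ contains one of the forbidden minors $U_4^2$, $F_7$, $F_7^*$, contradicting regularity. The technical heart is a careful analysis of how successive contractions and deletions interact with a $3$-separator and how ``glueing'' elements propagate across it; this occupies the majority of \cite{SeymourRegular-80}. A second substantive prerequisite is the splitter theorem itself — that a $3$-connected proper minor of a $3$-connected matroid $M$ can be reached from $M$ by single-element reductions each preserving $3$-connectivity, modulo the wheel/whirl exceptions — which I would cite rather than reprove. Since the full argument runs to many pages of matroid-theoretic case analysis and lies well outside the binet-matrix focus of this thesis, I would cite \cite{SeymourRegular-80} for the detailed proof and invoke only the statement in the applications made in Section \ref{sec:IntNet}.
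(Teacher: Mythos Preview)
The paper does not prove this theorem at all: it is stated as Seymour's result with a citation to \cite{SeymourRegular-80}, and the thesis only invokes the statement (via its matrix-level consequence, Theorem \ref{thmSubclassNSey}). Your sketch of Seymour's argument is an accurate outline of the original proof, and your final conclusion---to cite \cite{SeymourRegular-80} rather than reproduce the many pages of case analysis---is exactly what the paper does.
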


The importance of regular matroids is very much connected to the following theorem.

\begin{thm}\label{thmMatroidTutteUni}
(Tutte \cite{TutteMat-58,Tutte-65})
A matroid is regular if and only if it has a binary compact representation matrix the $1$s of which can be replaced by $\pm 1$ so that the resulting real matrix is totally unimodular.
\end{thm}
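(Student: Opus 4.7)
The plan is to prove both implications of the equivalence, starting with the simpler direction.

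For the direction ``existence of a TU signing $\Rightarrow$ $M$ regular'': let $A_2$ be a binary compact representation of $M$ and $A \in \{-1,0,1\}^{n\times m}$ a totally unimodular signing of $A_2$. I would show that $A$ represents $M$ over every field $\mathbb{F}$. A set $T$ of $n$ columns of $[I\, A]$ is a basis over $\mathbb{F}$ iff the corresponding square submatrix $Q_T$ satisfies $\det(Q_T)\neq 0$ in $\mathbb{F}$. Total unimodularity forces $\det(Q_T) \in \{-1,0,1\}\subseteq \mathbb{Z}$, and since $\pm 1$ is nonzero in every field, $\det(Q_T)\neq 0$ over $\mathbb{F}$ iff $\det(Q_T)=\pm 1$ iff $\det(Q_T) \equiv 1 \pmod{2}$ iff the corresponding submatrix of $[I\, A_2]$ is nonsingular over $GF(2)$ iff $T$ is a basis of $M$. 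Hence the bases of $[I\, A]$ over $\mathbb{F}$ coincide with those of $M$ for every $\mathbb{F}$, so $M$ is regular.

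For the converse ``$M$ regular $\Rightarrow$ TU signing exists'': I would invoke Seymour's decomposition theorem (Theorem \ref{thmMatroidReg}) to reduce the claim to the building blocks. Graphic matroids admit TU signings by Theorem \ref{thmSubclassNNetTot} and Lemma \ref{lemMatroidNetComp}: the signed network matrix $B^{-1}N$ is TU, and its binary support is precisely the compact representation matrix of the corresponding graphic matroid. Cographic matroids are duals of graphic ones, and transposition preserves total unimodularity, so their compact representation is the transpose of a TU matrix. The sporadic matroid $R_{10}$ admits a TU signing that is verified by direct computation on the $5\times 5$ compact representation $A(R_{10})$ exhibited in the excerpt. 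Finally, one checks that the $1$-, $2$-, and $3$-sum compositions preserve total unimodularity of a signed binary representation, using the explicit formulas for these sums (analogous to operations (vii)--(ix) in Section \ref{sec:IntNet}) and the fact that those operations preserve total unimodularity for numerical matrices.

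The main obstacle is showing that the sum operations at the matroid level correspond to additive operations on compact representation matrices in a way that simultaneously preserves the binary support structure and the TU property. An alternative direct route, not using Seymour's theorem, would start from an integral $\mathbb{Q}$-representation of $M$ (which exists because $M$ is in particular representable over $\mathbb{Q}$) and perform repeated pivots to force all entries into $\{-1,0,1\}$. The difficulty there is to exclude the appearance of a $2\times 2$ submatrix of determinant $\pm 2$ during the process: such a submatrix would survive reduction modulo $3$ as a nonsingular block while collapsing to a singular one modulo $2$, contradicting the simultaneous $GF(2)$- and $GF(3)$-representability guaranteed by regularity. Either route ultimately rests on the deep structural content of regularity as simultaneous representability over $GF(2)$ and $GF(3)$.
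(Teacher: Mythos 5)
The paper states this theorem without proof, citing Tutte, so there is no in-paper argument to compare against; your proposal has to stand on its own. Your first direction (TU signing $\Rightarrow$ regular) is correct and standard: total unimodularity pins every relevant subdeterminant of $[I\, A]$ to $\{-1,0,1\}$, so nonsingularity over an arbitrary field coincides with nonsingularity over $GF(2)$, and the bases of $[I\, A]$ over every field agree with those of $M$.

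The converse direction has a genuine gap. You reduce it to Seymour's decomposition theorem but explicitly leave open what you yourself call the main obstacle: that $1$-, $2$-, and $3$-sums at the matroid level induce operations on signed compact representation matrices that simultaneously preserve the binary support structure and total unimodularity. That compatibility is precisely the technical content a proof along these lines would have to supply; without it the reduction establishes nothing. The route is also organizationally backwards relative to this paper: the author derives Seymour's TU decomposition (Theorem \ref{thmSubclassNSey}) \emph{from} Theorem \ref{thmMatroidReg} \emph{together with} the very theorem you are proving, and the standard proof of Seymour's matroid decomposition works with regular matroids via their TU representations, so one risks circularity. A self-contained proof (Gerards' argument, which the paper cites in its conclusion) signs the binary compact representation directly: fix the signs of the entries corresponding to a spanning tree of the bipartite graph of nonzero positions of $A_2$, which determines the whole signing up to row and column scaling, and then use $GF(3)$-representability of $M$ to rule out any square submatrix of determinant outside $\{-1,0,1\}$, arguing on a minimal violating submatrix. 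Your sketched alternative route gestures at exactly this ($GF(2)$ versus $GF(3)$ behaviour of a determinant-$\pm 2$ block) but does not carry out the minimal-counterexample step, so as written neither branch of your converse is complete.
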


Theorem \ref{thmSubclassNSey} in Section \ref{sec:IntNet} is a consequence of Theorems \ref{thmMatroidReg} and \ref{thmMatroidTutteUni}.

\subsection{The signed-graphic matroid}\label{sec:BinetMatGraph}

In Lemma \ref{lemMatroidNetComp} we stated the well-known result that unsigned network matrices are compact representation matrices over $GF(2)$ of graphic matroids. In graphical terms this means that with any direction of the edges of $G$, the undirected graph underlying the graphical matroids, leads to a network matrix defined on the now directed graph $G$. We see an analogous result with binet matrices.

Let $\Sigma$ be a signed graph. The \emph{signed-graphic matroid of $\Sigma$}\index{signed-graphic matroid} is denoted by $M(\Sigma)$. The element set of $M(\Sigma)$ is $E(\Sigma)$ and a circuit of $M(\Sigma)$ falls in one the following categories.

\begin{itemize}

\item[(i)] it is a loose edge, or
\item[(ii)] a positive cycle, or
\item[(iii)] a pair of negative cycles with exactly one common node, or
\item[(iv)] a pair of disjoint negative cycles along with a minimal connecting path.

\end{itemize}

Now take a bidirected graph $G$,  remove the orientation of the edges to get a signed graph $\Sigma$. By Corollary \ref{corBidirectedCircuit}, the linear matroid of the node-edge incidence matrix of $G$ is the signed-graphic matroid of $\Sigma$. Thus, any binet matrix based on the bidirected graph $G$ is the compact representation matrix (over $\mathbb{R}$) of the signed-graphic matroid of $\Sigma$, as it is obtained from the node-edge incidence matrix by $\mathbb{R}$-pivots.

\begin{thm}\label{thmMatroidBinetComp}
If $M(\Sigma)$ is  a signed-graphic matroid based on signed graph $\Sigma$, and $\vec \Sigma$ is obtained from $\Sigma$ by orienting the edges, then any binet representation based on 
$\vec \Sigma$ is a compact representation matrix of $M(\Sigma)$ over $\mathbb{R}$.
\end{thm}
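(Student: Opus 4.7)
The plan is to prove this by directly unwinding the three definitions involved—binet representation, signed-graphic matroid, and compact representation matrix—and tying them together via Corollary \ref{corBidirectedCircuit}.

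First, I would fix notation. Let $In = In(\vec{\Sigma})$ be the node-edge incidence matrix of $\vec{\Sigma}$, let $In'$ be an RIMB obtained from $In$ by removing redundant rows, and let $B$ be a basis of $In'$. A binet representation of $A$ based on $\vec{\Sigma}$ then gives, after a column permutation, $In' = [B \; N]$ and $A = B^{-1}N$. On the other side, by Subsection \ref{sec:BinetMatSub}, the construction "delete linearly dependent rows, then multiply by $B^{-1}$ for a chosen basis $B$" applied to a standard representation matrix produces a compact representation matrix (over the field in question, here $\mathbb{R}$) of the same linear matroid. Thus, to conclude that $A = B^{-1}N$ is a compact representation matrix of $M(\Sigma)$ over $\mathbb{R}$, it suffices to show that the linear matroid $M(In)$ over $\mathbb{R}$ coincides with the signed-graphic matroid $M(\Sigma)$.

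The next step, which is the heart of the argument, is to verify $M(In) = M(\Sigma)$. Both matroids share the same ground set $E(\Sigma) = E(\vec{\Sigma})$, so it is enough to identify their circuits. The circuits of $M(In)$ are, by definition, the minimal linearly dependent subsets of columns of $In$. Corollary \ref{corBidirectedCircuit} characterizes precisely these subsets: they are the loose edges, positive cycles, and the two types of handcuffs (negative cycles sharing a vertex, or joined by a minimal path). These are exactly the sets listed in the definition of $M(\Sigma)$ in Subsection \ref{sec:BinetMatGraph}. Since the circuit conditions (being a positive cycle, being a handcuff, etc.) depend only on the signs of the edges of $\Sigma$ and not on the particular orientation $\vec{\Sigma}$, the two lists of circuits match term by term. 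Hence $M(In) = M(\Sigma)$.

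Combining the two steps: $A = B^{-1}N$ is the compact representation matrix over $\mathbb{R}$ obtained from the standard representation matrix $In$ of $M(In) = M(\Sigma)$ by selecting the basis $B$, so $A$ is a compact representation matrix of $M(\Sigma)$ over $\mathbb{R}$, as required. The main obstacle is really a conceptual one rather than a computational one: one must be careful that switchings and orientation reversals (the operations that produce different $\vec{\Sigma}$'s from $\Sigma$) do not alter the underlying matroid. This is ensured by Lemma \ref{lemdefiSwi} and Proposition \ref{propBidirectedSwitch}, which guarantee that the binet matrix—and thus the represented matroid—is invariant under these operations, matching the fact that $M(\Sigma)$ depends only on the signed graph and not on its orientation.
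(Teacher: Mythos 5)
Your proposal is correct and follows the same route as the paper: the paper's (informal) justification is precisely that Corollary \ref{corBidirectedCircuit} identifies the minimal dependent column sets of $In(\vec\Sigma)$ with the circuits defining $M(\Sigma)$, so the linear matroid of the incidence matrix is $M(\Sigma)$, and $A=B^{-1}N$ is then by construction a compact representation matrix over $\mathbb{R}$. Your closing remark on invariance under switching and reorientation is a harmless elaboration consistent with the paper's discussion following the theorem.
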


We know from Lemma \ref{lemdefiSwi} that switchings in a bidirected graph do not alter its binet matrices. This phenomenon can be expressed in matroidal terms too. Notably, if two signed graphs are switching equivalent, then their signed-graphic matroids are the same.

As binet matrices generalize network matrices, the class of signed-graphic matroids contains all graphic matroids.
To be even more specific, if all cycles in a signed graph $\Sigma$ are positive, then $M(\Sigma)$ is the graphic matroid of $\Sigma$. At the other end of the scale, 
the signed-graphic matroid of an undirected graph, which is a signed graph where all the edges are negative, is the even-cycle matroid, employed by Doob \cite{DoobMat-73}.

It is shown in \cite{ZaslavskyBias-91} that minors of the signed-graphic matroid of $\Sigma$ correspond to the minors of $\Sigma$. This is equivalent to saying that deletions and contractions of edges in a signed graph correspond to deletions and contractions in its signed-graphic matroid.

\begin{thm}
The class of signed-graphic matroid is closed under minor-taking.
\end{thm}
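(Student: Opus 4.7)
The plan is to reduce the theorem to the two identities
$$M(\Sigma)\setminus e \,=\, M(\Sigma \setminus e) \quad\text{and}\quad M(\Sigma)/e \,=\, M(\Sigma/e),$$
valid for every signed graph $\Sigma$ and every edge $e \in E(\Sigma)$. Since any minor of $M(\Sigma)$ is obtained by iterating single-element deletions and contractions, these two identities imply that any such minor is realized as $M(\Sigma')$ for the corresponding signed-graph minor $\Sigma'$, which is precisely what we want. To prove them I would fix an orientation $\vec{\Sigma}$ of $\Sigma$ so that, by Theorem~\ref{thmMatroidBinetComp}, the bidirected node-edge incidence matrix $In(\vec{\Sigma})$ is a linear representation of $M(\Sigma)$; the idea is then to match graph operations on $\vec{\Sigma}$ with matrix operations on $In(\vec{\Sigma})$ and with matroid operations on $M(\Sigma)$.

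The deletion identity is immediate: erasing the column of $In(\vec{\Sigma})$ indexed by $e$ simultaneously realizes edge deletion in $\vec{\Sigma}$ (as described in Section~\ref{sec:Incidence}) and element deletion in the underlying linear matroid, yielding $M(\Sigma\setminus e) = M(\Sigma)\setminus e$. For contraction I would split into cases according to the type of $e$. If $e$ is a loose edge or a positive (directed) loop, its column in $In(\vec{\Sigma})$ is zero, so $e$ is a loop of $M(\Sigma)$ and contraction coincides with deletion in both the matroid and the signed graph. Otherwise $e$ has a nonzero column, with either a single nonzero entry (half-edge or negative loop) or two nonzero entries (link). In both cases I would first, if necessary, switch at an endnode of $e$, an operation which by Lemma~\ref{lemdefiSwi} leaves $M(\Sigma)$ unchanged and puts $e$ into a form on which the transformation~(\ref{eqnContraction}) applies. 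That transformation is precisely matrix pivoting on the chosen entry of the column of $e$ followed by deletion of its row and column; general matroid theory then gives that the resulting matrix represents $M(\Sigma)/e$, while the discussion of Section~\ref{sec:Incidence} shows that this same matrix is $In(\vec{\Sigma/e})$. Hence $M(\Sigma/e) = M(\Sigma)/e$.

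The main obstacle is the bookkeeping in the contraction step, particularly ensuring that the graphical recipe of Section~\ref{sec:Incidence} for contracting a negative link (switch first, then contract as if positive) is faithfully mirrored by the pivot~(\ref{eqnContraction}) on the appropriate $\pm 1$ entry of its column. Once this correspondence is verified uniformly across edge types, the two displayed identities hold, and iterating them over the elements of a minor yields the theorem.
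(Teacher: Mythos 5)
Your proposal is correct, and its top-level reduction is exactly the one the paper relies on: the theorem follows once single-edge deletion and contraction in $\Sigma$ are matched with deletion and contraction in $M(\Sigma)$. The difference is that the paper does not prove this correspondence at all --- it simply cites Zaslavsky's result that minors of $M(\Sigma)$ correspond to minors of $\Sigma$ and states the theorem as a consequence --- whereas you actually carry out the verification through the incidence matrix $In(\vec{\Sigma})$. That is the natural route given the machinery of Section~\ref{sec:Incidence}, and your case split (zero column for loose edges and positive loops, single-entry column for half-edges and negative loops, two-entry column for links, with a switch normalizing a negative link first) covers all edge types. One small point worth making explicit when you do the bookkeeping: the paper's transformation~(\ref{eqnContraction}) sends $D$ to $D-\alpha\,\bb\,\cc$, whereas the standard matroid pivot gives $D-\frac{\bb\,\cc}{\alpha}$; these agree precisely because in an IMB either $\alpha=\pm 1$ (links) or $\bb=\mathbf{0}$ (half-edges and negative loops, where both reduce to $D$), so your identification of~(\ref{eqnContraction}) with matroid contraction is valid but rests on that coincidence rather than being a pivot verbatim. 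With that observation, and the fact that row negation (switching) does not change the linear matroid, your two identities hold and the theorem follows; the argument is self-contained where the paper's is only a citation.
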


It is a standard technique of matroid theory to find \emph{minimal violators} for a given property, i.e., matroids that do not have this property but all their minors do. Zaslavsky gave some minimal violators of signed-graphic matroids.

\begin{thm} (Zaslavsky \cite{Zaslavsky-Signed-82})
$U_4^2$ is a signed-graphic matroid. $U_5^2$ is not a signed-graphic matroid. 
\end{thm}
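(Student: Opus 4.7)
The plan for $U_4^2$ is to exhibit an explicit small signed graph and verify directly that its circuits are the four $3$-subsets. Take $\Sigma$ on two vertices $v_1,v_2$ with four edges: a negative loop $\ell_i$ at $v_i$ for $i=1,2$, a positive link $e_3$ joining $v_1$ and $v_2$, and a negative link $e_4$ joining $v_1$ and $v_2$. The only cycles of $\Sigma$ are $\ell_1$, $\ell_2$, and the digon $\{e_3,e_4\}$, which has sign $(+)(-)=-$, so every cycle in $\Sigma$ is negative. By Corollary \ref{corBidirectedCircuit} (applied in the signed-graph setting, as noted in Section \ref{sec:BinetMatGraph}) there are no type (i) or (ii) circuits; the type (iii) circuits, formed by two negative cycles meeting at exactly one vertex, are $\{\ell_1,e_3,e_4\}$ and $\{\ell_2,e_3,e_4\}$; the type (iv) circuits, formed by two disjoint negative cycles joined by a minimal path, are $\{\ell_1,\ell_2,e_3\}$ and $\{\ell_1,\ell_2,e_4\}$. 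These four $3$-subsets are precisely the circuits of $U_4^2$ on the ground set $\{\ell_1,\ell_2,e_3,e_4\}$, so $M(\Sigma)=U_4^2$.

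For $U_5^2$ the plan is a short case analysis after a contradiction hypothesis. Assume $U_5^2=M(\Sigma)$ for some signed graph $\Sigma$ with five edges. Since $U_5^2$ has no $1$-element circuit, $\Sigma$ has no loose edge, no half-edge, and no positive loop (each would be a size-$1$ circuit of type (i) or (ii)). Since $U_5^2$ has no $2$-element circuit, no two parallel links may share a sign (else a positive digon, a type (ii) circuit of size $2$) and no two negative loops may sit at the same vertex (else a type (iii) circuit of size $2$). Finally, since any two elements of $U_5^2$ lie in a common $3$-circuit, $M(\Sigma)$ is matroid-connected and cannot split as a direct sum; hence $\Sigma$ has a unique component carrying all five edges. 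Using Lemma \ref{lemBidirectedNonsing}, the rank of the incidence matrix of a connected signed graph equals its number of vertices when it is unbalanced and one less when it is balanced, so $r(M(\Sigma))=2$ forces that component to be either unbalanced on $2$ vertices or balanced on $3$ vertices.

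In the $2$-vertex unbalanced subcase, the five edges split into at most one negative loop at each vertex and a remainder of at least three parallel links between $v_1$ and $v_2$; as only two signs are available, the pigeonhole principle yields two links of equal sign, producing the forbidden $2$-circuit. In the $3$-vertex balanced subcase, negative loops are excluded by balance and positive loops by the preceding step; moreover any digon is balanced only when its two edges share a sign, but such a digon is a positive $2$-cycle and again forbidden, so $\Sigma$ must be simple on $3$ vertices, hence has at most $\binom{3}{2}=3$ edges, contradicting $|E(\Sigma)|=5$. Both subcases yield a contradiction, so $U_5^2$ is not signed-graphic. The main obstacle is marshalling the circuit classification of Corollary \ref{corBidirectedCircuit} and the rank characterization coming from Lemma \ref{lemBidirectedNonsing} to cut the a priori unbounded space of candidate signed graphs down to these two subcases; once that reduction is in place, the pigeonhole and edge-count arguments are immediate.
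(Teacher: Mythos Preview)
Your proof is essentially correct, and for $U_4^2$ it uses the same signed graph as the paper (up to relabelling); you verify via the circuit list while the paper simply observes that every pair of edges is a basis.

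For $U_5^2$ there is one slip: a half-edge is \emph{not} a $1$-element circuit. Its incidence column is a single $\pm 1$ entry, hence a nonzero vector and independent; in the classification of Corollary~\ref{corBidirectedCircuit} a half-edge is a negative cycle, not a loose edge or positive cycle. Fortunately this does not break your argument. In the balanced $3$-vertex subcase half-edges are still excluded, since a half-edge is a negative cycle and balance forbids those. In the unbalanced $2$-vertex subcase a half-edge at $v_i$ together with a negative loop (or another half-edge) at $v_i$ is a $2$-element circuit of type (iii), so ``at most one negative loop or half-edge per vertex'' still holds and the pigeonhole step producing three parallel links goes through unchanged.

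Your route to the case split for $U_5^2$ differs from the paper's. You use matroid connectivity plus the rank formula coming from Lemma~\ref{lemBidirectedNonsing} (a connected signed graph on $n$ vertices has rank $n$ if unbalanced and $n-1$ if balanced) to land directly on the dichotomy ``$2$ vertices unbalanced'' versus ``$3$ vertices balanced''. The paper instead argues ad hoc through the number of components and then the number of nodes (one node, more than three nodes, three nodes, two nodes), ruling out each by exhibiting a forbidden independent set or circuit. Your reduction is cleaner and shorter; the paper's version is more hands-on but covers the same ground without explicitly naming the rank formula.
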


\begin{proof}
The matroid $U_4^2$ is signed-graphic since the compact representation matrix $A$ in (\ref{eqnBinetU42}) is a binet representation of the following bidirected graph, where every pair of edges corresponds to a basis of $U_4^2$.

\begin{figure}[h!]
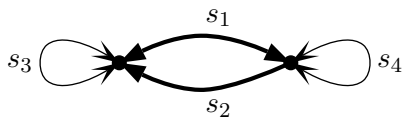

\psset{xunit=1.2cm,yunit=1.2cm,linewidth=0.5pt,radius=0.1mm,arrowsize=7pt,
labelsep=1.5pt,fillcolor=black}

\pspicture(-4,0)(5,2)

\pscircle[fillstyle=solid](1,1){.1}
\pscircle[fillstyle=solid](2.9,1){.1}

\pscurve[linewidth=1.6pt,arrowinset=0]{<->}(1,1)(1.9,1.3)(2.9,1)
\rput(2.1,1.5){$s_{1}$}

\pscurve[linewidth=1.6pt,arrowinset=0]{<-}(1,1)(1.9,.7)(2.9,1)
\rput(2.1,.5){$s_2$}

\pscurve[arrowinset=.5,arrowlength=1.5]{<->}(1,1)(.2,1.2)(.2,.8)(1,1)
\rput(-.1,1){$s_3$}

\pscurve[arrowinset=.5,arrowlength=1.5]{<->}(2.9,1)(3.7,1.2)(3.7,.8)(2.9,1)
\rput(4,1){$s_4$}

\endpspicture 
\caption{A binet representation of the matrix $A$ given in (\ref{eqnBinetU42}).} 
\label{fig:BinetU42}

\end{figure}

To show that $U_5^2$ is not signed-graphic, we eliminate all the cases. First, if  a signed graph $\Sigma$ has the signed-graphic matroid $U_5^2$, then it must have $5$ edges and any two of them form a basis. In other words, any subgraph with at least three edges is not independent, but all subgraphs with two edges are such. This rules out signed graphs on a single node, as they do not have two independent edges. Let us assume that there are no isolated nodes in $\Sigma$. If a signed graph has more than one component, then the union of independent subgraphs of the components is also independent. It follows that $\Sigma$ cannot have more than two components. If it has two components, then at least one of them has at least two edges. Taking two edges forming a basis from this component and one edge from the other component would result in an independent set wih three edges, a contradiction. 

Thus, $\Sigma$ is connected. There cannot be three parallel edges in $\Sigma$ because they would form a circuit, which is impossible in signed graphs by definition. If $\Sigma$ has more than three nodes, then there would be a tree with three edges in it, i.e., an independent set with more than two edges. For a similar reason, there cannot be a half-edge or a loop in $\Sigma$ if it has three nodes. If $\Sigma$ has three nodes, the only possible structure left is a triangle in which two edges are repeated so that the graph contains two cycles of length $2$.  Both cycles of length $2$ must be negative to form a basis, so adding an extra edge to either of them would form an independent set with more than two edges.

We are left with the case when $\Sigma$ has exactly two nodes. It can not have more than two of half-edges and loops because then two of them would be incident to the same node and they would not be independent. But then $\Sigma$ must contain three parallel links, which we already ruled out. We examined all the possible cases and they all led to a  contradiction, so $U_5^2$ is not signed-graphic.
\end{proof}\\

A very recent result due to Daniel Slilaty \cite{SlilatyCoSign-05} and independently by Hongxun Qin and Thomas A. Dowling  gives a technique to find minimal forbidden minors for signed-graphic matroids. They proved that a connected cographic matroid $M^*(G)$ (i.e., a matroid that is not a $1$-sum of two smaller matroids, and its dual is the graphic matroid  based on the undirected graph $G$) is signed-graphic, if and only if $G$ can be embedded into the projective plane (see also \cite{SlilatyCharImb-06}). Thus connected cographic matroids of minimally non-embeddable graphs are all minimal forbidden minors for signed-graphic matroids. This is an extension of  Theorem \ref{thmMatroidBeautiful}.

\begin{thm}\label{thmBinetSliP2}(Slilaty \cite{SlilatyCoSign-05})
A connected and cographic matroid $M^*(G)$ is signed-graphic  if and only if $G$ is a $2$-connected projective-planar graph (save perhaps for some isolated vertices).
\end{thm}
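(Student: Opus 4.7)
The plan is to translate the statement into matroidal language and then combine the correspondence between binet matrices and signed-graphic matroids (Theorem \ref{thmMatroidBinetComp}) with Slilaty's characterization of signed-graphic cographic matroids (Theorem \ref{thmBinetSliP2}).

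First I would set up the matroidal picture. By the definition of a network matrix and Lemma \ref{lemMatroidNetComp}, $A$ is a compact representation of the graphic matroid $M(G)$ over $\mathbb{R}$ with respect to the basis given by the edges of $T$. Transposing a compact representation yields a compact representation of the dual matroid, so $A^{T}$ is a compact representation over $\mathbb{R}$ of the cographic matroid $M^{*}(G)$, with respect to the matroid basis $E(G)\setminus E(T)$. Since $G$ is $2$-connected, $M(G)$ is connected, and hence so is its dual $M^{*}(G)$. Moreover, graphic matroids are regular and regularity is closed under duality, so $M^{*}(G)$ is regular; in particular, $A^{T}$ is totally unimodular, being the transpose of a network matrix.

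For the forward direction, assume $A^{T}$ is binet. Then by Theorem \ref{thmMatroidBinetComp}, its linear matroid $M(A^{T})=M^{*}(G)$ is signed-graphic. Being connected and cographic, Slilaty's Theorem \ref{thmBinetSliP2} then yields that $G$ is embeddable on $\mathbb{P}^{2}$. For the converse, assume $G$ is embeddable on $\mathbb{P}^{2}$. By Theorem \ref{thmBinetSliP2}, $M^{*}(G)$ is signed-graphic, so there is a signed graph $\Sigma$ with $M(\Sigma)=M^{*}(G)$. Fix an orientation $\vec{\Sigma}$, and select a basis $B^{*}$ of $In(\vec{\Sigma})$ whose columns index the matroid basis $E(G)\setminus E(T)$ of $M^{*}(G)$. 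Writing $In(\vec{\Sigma})=[B^{*}\;N^{*}]$ after permutation, the matrix $A^{*}:=(B^{*})^{-1}N^{*}$ is by definition a binet matrix, and a compact representation of $M^{*}(G)$ over $\mathbb{R}$ with respect to the same basis as $A^{T}$. Since $M^{*}(G)$ is regular and binary, the binet matrix $A^{*}$ is forced to have entries in $\{0,\pm 1\}$ (the $\pm\tfrac12$ and $\pm 2$ entries allowed by Lemma \ref{lemdefiWeight1} cannot occur, as they would contradict the binary structure of $M^{*}(G)$). By the classical uniqueness of totally unimodular representations of a regular matroid with respect to a fixed basis, $A^{T}$ is obtained from $A^{*}$ by signing some rows and columns; since this operation preserves binetness by Lemma \ref{lemdefiRowCol}, $A^{T}$ is binet.

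The main obstacle I expect is the last step: bridging the gap between \emph{``$M^{*}(G)$ has some binet representation''} and \emph{``the specific matrix $A^{T}$ is binet''}. Two things have to combine: (i) the regularity of $M^{*}(G)$ forces the binet representation $A^{*}$ produced by Theorem \ref{thmMatroidBinetComp} to be totally unimodular (via Theorem \ref{thmMatroidTutteUni} applied to $M^{*}(G)$), and (ii) two totally unimodular compact representations of the same regular matroid with respect to the same basis differ only by sign flips of rows and columns, an operation under which binet matrices are closed. Together these transfer binetness from $A^{*}$ to $A^{T}$ itself, which is what the statement demands.
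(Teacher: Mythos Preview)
Your proposal does not prove the stated theorem. Theorem~\ref{thmBinetSliP2} is Slilaty's result, which the paper \emph{cites} from \cite{SlilatyCoSign-05} without proof; the paper offers no argument of its own for this statement, so there is nothing to compare against. More to the point, your argument explicitly \emph{invokes} Theorem~\ref{thmBinetSliP2} as a tool (``Slilaty's characterization of signed-graphic cographic matroids (Theorem~\ref{thmBinetSliP2})''), so as a proof of Theorem~\ref{thmBinetSliP2} it is circular.

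What you have actually written is a proof of the \emph{next} theorem in the paper, the matrix reformulation stating that $A^{T}$ is binet if and only if $G$ is embeddable on $\mathbb{P}^{2}$. For that statement your outline is essentially what the paper intends: the paper simply says ``A reformulation of Theorem~\ref{thmBinetSliP2} in terms of matrices is as follows'' and gives no details, so your translation via $M^{*}(G)$, Theorem~\ref{thmMatroidBinetComp}, and the uniqueness-up-to-signing of totally unimodular compact representations is a correct way to fill that gap. The subtlety you flag --- passing from ``some compact representation of $M^{*}(G)$ is binet'' to ``the specific matrix $A^{T}$ is binet'' --- is the only nontrivial point, and your resolution via regularity and Lemma~\ref{lemdefiRowCol} is sound. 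But be clear that this is a derivation of the corollary \emph{from} Slilaty's theorem, not a proof of Slilaty's theorem itself.
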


A reformulation  of Theorem \ref{thmBinetSliP2} in terms of matrices is as follows.

\begin{thm}
Let $G$ be a $2$-connected graph and $T$ a spanning tree. Let 
$\overrightarrow{G}$ be a digraph obtained from $G$ by orienting the edges, and
$A$ the network matrix with respect to $\overrightarrow{G}$ and $\overrightarrow{T}\subseteq \overrightarrow{G}$. Then $A^T$
is a binet matrix if and only if $G$ is embeddable in $\mathbb{P}^2$.
\end{thm}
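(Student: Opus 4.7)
The plan is to translate the statement into matroid language and invoke Slilaty's Theorem \ref{thmBinetSliP2}. First I would identify the matroids involved: since $A$ is the network matrix of $\overrightarrow{G}$ relative to $\overrightarrow{T}$, Lemma \ref{lemMatroidNetComp} makes $A$ a compact representation of the graphic matroid $M(G)$ over $\mathbb{R}$ with basis $E(T)$; dualizing via the orthogonal complement $[-A^T\ I]$ of $[I\ A]$, the matrix $A^T$ is (up to signings of rows) a compact representation of the cographic matroid $M^*(G)$ over $\mathbb{R}$ with basis $E(G)\setminus E(T)$. The hypothesis that $G$ is $2$-connected ensures that $M(G)$, and hence $M^*(G)$, is connected---precisely the hypothesis required by Slilaty's theorem.

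The forward direction is then immediate: if $A^T$ is binet, Theorem \ref{thmMatroidBinetComp} writes $A^T$ as a compact $\mathbb{R}$-representation of some signed-graphic matroid $M(\Sigma)$. Since the linear matroid of an $\mathbb{R}$-matrix is uniquely determined, $M(\Sigma)=M^*(G)$ is signed-graphic, so Theorem \ref{thmBinetSliP2} yields that $G$ embeds in $\mathbb{P}^2$.

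For the converse, assume $G$ embeds in $\mathbb{P}^2$. Theorem \ref{thmBinetSliP2} supplies a signed graph $\Sigma$ with $M(\Sigma)=M^*(G)$; choosing an orientation $\overrightarrow{\Sigma}$ and a basis of $In(\overrightarrow{\Sigma})$ corresponding to the cotree $E(G)\setminus E(T)$, Theorem \ref{thmMatroidBinetComp} produces a binet matrix $\pi$ representing $M^*(G)$ over $\mathbb{R}$ with the same basis as $A^T$ (applying pivoting via Lemma \ref{lemBinetPivot} if needed to bring the natural basis of $\pi$ to this cotree). The principal obstacle is then the final step: from the existence of \emph{some} binet $\pi$ representing $M^*(G)$ one must deduce that $A^T$ \emph{itself} is binet. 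Since $M^*(G)$ is regular (being cographic, cf.\ Theorem \ref{thmMatroidTutteUni}), two $\mathbb{R}$-representations with a common basis and with entries in $\{-1,0,+1\}$ coincide up to row- and column-signings. By arranging the orientation of $\overrightarrow{\Sigma}$ (using Lemma \ref{lemBinetRepBid} to place each basic negative $1$-tree in the normal form with a single bidirected edge) one forces the entries of $\pi$ to lie in $\{-1,0,+1\}$, so $\pi$ and $A^T$ differ only by row and column signings. Such signings preserve the binet property by Lemma \ref{lemdefiRowCol}, whence $A^T$ is binet. The delicate point of the whole argument is exactly this last verification that $\pi$ can be chosen totally unimodular, and this will be the main obstacle.
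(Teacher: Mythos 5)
The paper itself offers no proof of this theorem: it is presented, both in the Overview and in Section \ref{sec:BinetMat}, simply as ``a reformulation of Theorem \ref{thmBinetSliP2} in terms of matrices.'' Your translation into matroid language is exactly the intended one, and your forward direction is sound: $[I\,A^T]$ represents $M^*(G)$, Theorem \ref{thmMatroidBinetComp} makes a binet $A^T$ a compact representation of a signed-graphic matroid, $2$-connectivity of $G$ gives connectivity of $M^*(G)$, and Slilaty's theorem applies.

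The converse, however, has a genuine gap at precisely the step you flag, and the repair you propose does not work. You need to pass from ``\emph{some} compact $\mathbb{R}$-representation of $M^*(G)$ with basis $E(G)\setminus E(T)$ is binet'' to ``$A^T$ itself is binet,'' and your route is to force the binet matrix $\pi$ coming from $\vec\Sigma$ to be a $\{0,\pm1\}$-matrix ``by arranging the orientation of $\vec\Sigma$.'' But reorientation cannot achieve this: switching at a node leaves the binet matrix unchanged (Lemma \ref{lemdefiSwi}), and reversing the orientation of an edge only multiplies a row or column by $-1$ (Lemma \ref{lemdefiRowCol}), so the \emph{magnitudes} of the entries of $\pi$ are invariant under every reorientation of $\Sigma$. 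Whether a $\pm 2$ or $\pm\frac{1}{2}$ occurs is dictated by Lemma \ref{lemdefiWeight1} through the combinatorics of the fundamental circuits relative to the chosen basis of $In(\vec\Sigma)$ (which basic components have full cycles, which nonbasic edges are $2$-edges or bidirected $1$-edges), not by the orientation; and Lemma \ref{lemBinetRepBid} normalizes the number of bidirected edges per basic component without touching these magnitudes. So the claim that $\pi$ can be made totally unimodular is unsupported, and without it the Camion-uniqueness step (two TU representations of a regular matroid on a common basis agree up to signings) never gets off the ground. Note also that the converse is genuinely delicate: by Theorem \ref{thmMatroidConf}, a compact representation matrix of a signed-graphic matroid need not itself be binet --- one may have to rescale columns by $-\frac{1}{2}$ to reach a binet matrix --- so some argument specific to the regular/cographic case is unavoidable here, and it is missing.
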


Then an algorithm for recognizing binet matrices yields a way of 
testing if a given graph is embeddable in $\mathbb{P}^2$.
However, in the literature, there exist several methods for doing this that are much more efficient than the one deriving from our recognition algorithm for binet matrices (see \cite{MoharGraphSurface-01} ).

Theorem \ref{thmMatroidBinetComp} is not all that we can say about the representability of signed-graphic matroids. In fact, they are representable over any field where $2$ is not a zero-divisor, which in technical terms is refered to as a field the characterisitic of which is not equal to $2$.

\begin{thm}\label{thmMatroidZasGF2}(Zaslavsky \cite{Zaslavsky-Signed-82}) 
A signed-graphic matroid is representable over any field of characteristic not equal to $2$.
\end{thm}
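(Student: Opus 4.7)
The plan is to represent $M(\Sigma)$ over $\mathbb{F}$ using the \emph{same} combinatorial recipe as in the real case: orient $\Sigma$ arbitrarily to obtain a bidirected graph $\vec{\Sigma}$, and take the node-edge incidence matrix $In(\vec{\Sigma})$ whose entries lie in $\{0,\pm 1,\pm 2\}$, viewed now as elements of $\mathbb{F}$. Since $\mathrm{char}(\mathbb{F})\neq 2$, the entry $\pm 2$ is still nonzero in $\mathbb{F}$, so $In(\vec{\Sigma})$ carries the same combinatorial information over $\mathbb{F}$ as over $\mathbb{R}$. The task is to show that the linear matroid of $In(\vec{\Sigma})$ over $\mathbb{F}$ equals $M(\Sigma)$, that is, a set of columns is $\mathbb{F}$-linearly independent iff the corresponding edge set induces a subgraph in which every component is either a tree or a negative $1$-tree (by Corollary~\ref{corBidirectedCircuit} this is the correct independence notion).

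First I would reprove Proposition~\ref{propBidirectedmatQ} over $\mathbb{F}$. The computation $\det(Q)=0$ if $q_{1n}=-1$ and $\det(Q)=2$ if $q_{1n}=1$ is an integer-coefficient cofactor expansion; it descends to $\mathbb{F}$ verbatim, and $2\neq 0$ in $\mathbb{F}$ guarantees the nonzero case stays nonzero. Next I would reprove Lemma~\ref{lemBidirectedNonsing} over $\mathbb{F}$. The argument there is to reduce a square submatrix $Q$ corresponding to a $1$-tree to the incidence matrix of its unique cycle by repeated cofactor expansion along rows of degree-$1$ vertices (each such row has a single entry $\pm 1$). This reduction is integral, hence valid over $\mathbb{F}$. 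For the cycle itself, there are three cases: a negative loop or a half-edge (a $1\times 1$ matrix with entry $\pm 2$, nonzero in $\mathbb{F}$); a longer cycle, which after switchings and orientation reversals (operations that multiply rows/columns by $\pm 1$) takes the form $Q$ of \eqref{equ:Rcycle}, with sign $q_{1n}=+1$ iff the cycle is negative, and where the determinant over $\mathbb{F}$ is $0$ or $2$ as above. Together these show that a square submatrix is $\mathbb{F}$-nonsingular iff each component of the corresponding subgraph is a negative $1$-tree.

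From this the independence structure follows. If every component of $G(Q)$ is a tree or a negative $1$-tree, then (after possibly adjoining extra edges to complete each tree component to a negative $1$-tree on the same vertex set, which only adds columns) the columns of $Q$ are $\mathbb{F}$-independent. Conversely, if some component of $G(Q)$ contains two cycles or a positive cycle, then the argument above supplies a square submatrix whose determinant is $0$ over $\mathbb{F}$, witnessing linear dependence. Hence minimal dependent sets of columns are exactly the four types of subgraphs listed in Corollary~\ref{corBidirectedCircuit} (loose edge, positive cycle, tight handcuff, loose handcuff), which are precisely the circuits of $M(\Sigma)$. Therefore the $\mathbb{F}$-linear matroid of $In(\vec{\Sigma})$ equals $M(\Sigma)$, proving representability.

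The only point where $\mathrm{char}(\mathbb{F})\neq 2$ is essential — and this is the main obstacle to remove from the $\mathbb{R}$-proof — is the nonvanishing of $\det(Q)=2$ on negative cycles and of the $\pm 2$ entries on negative loops and half-edges. In characteristic $2$ both collapse to $0$, negative cycles become indistinguishable from positive cycles, and the construction fails; our assumption on the characteristic is exactly what keeps the negative-cycle determinants alive. Once this single numerical fact is secured, the entire combinatorial-to-algebraic correspondence transports unchanged from $\mathbb{R}$ to $\mathbb{F}$.
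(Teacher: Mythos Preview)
The paper does not actually prove this theorem; it is stated with attribution to Zaslavsky and used as a black box. Your proof is correct and is exactly the natural argument: the integral determinant calculations underlying Proposition~\ref{propBidirectedmatQ}, Lemma~\ref{lemBidirectedNonsing} and Theorem~\ref{thmBidirected2mod} transfer verbatim to any field in which $2\neq 0$, so the node--edge incidence matrix of any orientation $\vec\Sigma$ represents $M(\Sigma)$ over~$\mathbb{F}$. This is the same mechanism the paper itself invokes (via Theorem~\ref{thmBidirected2mod}) when it argues in the proof of Theorem~\ref{thmMatroidConf} that independence over $\mathbb{R}$ and over $GF(3)$ coincide.

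One small imprecision worth tightening: in your converse step you write that when a component ``contains two cycles'' the earlier argument produces a square submatrix with vanishing determinant. That is not quite what happens---if all cycles present are negative you will not locate a singular square block that way. The clean justification is simply a dimension count: a connected component on $n$ nodes with at least $n+1$ edges gives more than $n$ columns living in the $n$-dimensional row space supported on those nodes, hence a dependence. With that adjustment the argument is complete.
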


Following an idea of Michele Conforti, one can prove the following.








\begin{thm}\label{thmMatroidConf}
A matroid is signed-graphic if and only if it has a compact representation matrix $A$ over $GF(3)$ so  that by applying the following operations on $A$, the resulting matrix is binet:

\begin{itemize}

\item[1)] Replacing some $entries$ equal to $1$ or $2$ by $-2$ and $-1$, respectively;

\item[2)] Multiplying  some columns in $A$ by $-\frac{1}{2}$ (working over $\mathbb{R}$).

\end{itemize}

\end{thm}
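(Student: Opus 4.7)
The plan is to exploit a bridge between representations over $\mathbb{R}$ and over $GF(3)$ for binet matrices. Specifically, if $A = B^{-1}N$ is a binet matrix arising from the incidence matrix $[B\,N]$ of a bidirected graph, then by Theorem \ref{thmBidirected2mod} every non-zero subdeterminant of $[B\,N]$ is $\pm 2^r$ with $r\in\mathbb{N}$; since each subdeterminant of $A$ is a ratio of two such subdeterminants (via the basis-exchange formula), it has the form $\pm 2^k$ with $k\in\mathbb{Z}$ (or is zero). Because $\gcd(2,3)=1$, such a power of $2$ is never zero modulo $3$, so reduction modulo $3$ preserves which submatrices are non-singular. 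This observation is the engine of both implications.

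For the direction $(\Rightarrow)$, I would start from a binet matrix $A$ representing the signed-graphic matroid $M$ over $\mathbb{R}$, obtained via Theorem \ref{thmMatroidBinetComp}. By Lemma \ref{lemdefiWeight1}, the entries of $A$ lie in $\{0,\pm 1,\pm 2,\pm\tfrac{1}{2}\}$ and, moreover, the conditions producing $\pm\tfrac{1}{2}$-entries and $\pm 2$-entries are mutually exclusive per column, so any column containing a $\pm\tfrac{1}{2}$-entry has its remaining entries in $\{0,\pm 1\}$. I would reduce $A$ modulo $3$ (using $\tfrac{1}{2}\equiv 2$ and $-\tfrac{1}{2}\equiv 1$) to obtain a $\{0,1,2\}$-matrix $A'$; the bridge above guarantees that $A'$ is a compact representation of $M$ over $GF(3)$, consistent with Theorem \ref{thmMatroidZasGF2}. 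A short case analysis on each entry (tabulating the six non-zero possible values of $A_{ij}$ against the corresponding $A'_{ij}$) then shows that applying operation (2) exactly to the columns of $A'$ coming from columns of $A$ with a $\pm\tfrac{1}{2}$-entry, together with the appropriate sign replacements via operation (1), reconstructs $A$ from $A'$. For example, $A_{ij}=\tfrac{1}{2}$ forces $A'_{ij}=2$, and applying (1) to flip $2\to -1$ followed by (2) scaling $(-1)\cdot(-\tfrac{1}{2})=\tfrac{1}{2}$ recovers the entry.

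For $(\Leftarrow)$, suppose $A'$ is a compact $GF(3)$-representation of $M$ and let $A$ be the binet matrix obtained by applying (1) and (2) to $A'$. I would introduce the intermediate matrix $A''$ produced by operation (1) alone, so $A = A''D$ with $D$ diagonal having entries in $\{1,-\tfrac{1}{2}\}$, and $A''\equiv A'\pmod 3$. Since $A$ is binet, Theorem \ref{thmMatroidBinetComp} yields a signed-graphic matroid $M_A$ represented by $A$, hence by $A''$ (column scaling preserves matroid structure), over $\mathbb{R}$. The main obstacle is to identify $M_A$ with $M$: a priori the matrices $A''$ over $\mathbb{R}$ and $A'$ over $GF(3)$ could represent different matroids since these are different fields. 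This is exactly where the bridge is required. Every non-zero subdeterminant of $A$ is $\pm 2^k$ with $k\in\mathbb{Z}$, and column-scaling by $-2$ to pass to $A''$ multiplies such a subdeterminant by an extra power of $\pm 2$; integrality of $A''$ forces all resulting exponents to be non-negative, so every non-zero subdeterminant of $A''$ is $\pm 2^m$ with $m\in\mathbb{N}$, hence non-zero modulo $3$. Therefore the non-singular submatrices of $A''$ over $\mathbb{R}$ coincide with those of $A'=A''\pmod 3$ over $GF(3)$, which means $M_A=M$, so $M$ is signed-graphic and the proof is complete.
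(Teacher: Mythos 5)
Your proposal is correct and follows essentially the same route as the paper: the engine in both is Theorem \ref{thmBidirected2mod}, i.e., that the relevant nonzero (sub)determinants are $\pm 2^r$ and hence nonzero modulo $3$, so linear independence over $\mathbb{R}$ and over $GF(3)$ coincide; you merely apply this to subdeterminants of $A=B^{-1}N$ directly where the paper phrases it for columns of the incidence matrix, and you spell out the entry-by-entry bookkeeping for operations (1) and (2) and the converse direction that the paper compresses into one line.
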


\begin{proof}
Let $\Sigma$ be a signed graph, and orient the edges of $\Sigma$ to get a bidirected graph $\vec \Sigma$. Let $A$ be a binet matrix based on $\vec \Sigma$ and $A'$ be obtained from $A$ by multiplying the columns with nonempty $\pm \frac{1}{2}$-support by $-2$ ($-2=1\mod 3$). Since a
column of a binet matrix with nonempty $\pm \frac{1}{2}$-support has no $\pm 2$-entry, it follows that $A'$ is a $\{0,\pm 1, \pm 2\}$-matrix. The signed-graphic matroid of $\Sigma$ is represented by the node-edge incidence matrix $In$ of $\vec \Sigma$. By Theorem \ref{thmBidirected2mod}, it follows that a set of column vectors in $In$ are linearly independent over $\mathbb{R}$ if and only if they are linearly independent over $GF(3)$. So, since $A$ is a compact representation matrix of $M(\Sigma)$ over $\mathbb{R}$, $A'$ is a compact representation matrix of $M(\Sigma)$ over $GF(3)$. This proves both parts of the theorem.  
\end{proof}\\

So it would be interesting to determine when $M(\Sigma)$ is representable over fields of characteristic two. It is shown in \cite{WhittleGF3-97} that if a matroid $M$ is representable over $GF(3)$, $\mathbb{Q}$, and a field of characteristic two, then $M$ is representable over all fields except maybe $GF(2)$. From this result and Theorem \ref{thmMatroidZasGF2}, it follows that a signed-graphic matroid can be one of the following three types:

\begin{itemize}

\item[(i)] regular,

\item[(ii)] representable over any field except $GF(2)$, or

\item[(iii)] representable over any field of characteristic other than $2$.

\end{itemize}

From this description of signed-graphic matroids, we deduce that $M(\Sigma)$ is binary if and only if $M(\Sigma)$ is regular. Moreover, provided that $M(\Sigma)$ is not regular, if $M(\Sigma)$ is quaternary
(i.e., representable over $GF(4)$), then it is of type (ii), otherwise of type (iii). See \cite{SlilatyDecomp-06} for some results about binary and quaternary signed-graphic matroids. The complete list of regular excluded minors for the class of signed-graphic matroids is given in \cite{Slilaty-07} by Hongxun Qin, Daniel C. Slilaty and Xiangqian Zhou. So it remains to find when $M(\Sigma)$  is quaternary.

We saw that graphic matroids constitute the main building blocks of the class of regular matroids. Moreover, we know that signed-graphic matroids generalize graphic matroids. A particularly natural generalisation of regular matroids is 
the class of near-regular matroids, that are the matroids representable over all fields except possibly $GF(2)$, while regular matroids are the ones representable over all fields. Whittle conjectures in \cite{WhittleMat-05} that there is a theorem for near-regular matroids similar to Theorem \ref{thmMatroidReg} that uses signed-graphic matroids and co-signed-graphic matroids as the basic terms in the decomposition.



\clearpage
\thispagestyle{empty}
\cleardoublepage
\verb"   "
\newpage

\chapter{Camion bases}\label{ch:Camion}

In what follows, we will consider a matrix as a set of column vectors. 
Let $M\in \mathbf{R}^{n\times m'}$ be a matrix of rank $n$, 
$\mathcal{H}(M)=\{\{x\in \mathbf{R}^n\,:\, c^Tx =0\}\,:
c\in M\}$ and  $B$ a basis of $M$.
$\mathcal{H}(M)$ splits up $\mathbf{R}^n$ into a set $S$ of full dimensional 
 cones (regions). A {\em simplex region} is one  which has exactly $n$ facets. 
$B$ is called a {\em Camion basis} if the 
corresponding hyperplanes determine the facets of a simplex region in $S$. 
It is known that there always exists a Camion basis. 
After some column permutations, we may write $M=[B\, N]$.  If $A$ is a matrix, we write
$A\geq 0$ if each entry of $A$ is nonnegative.
It is possible to 
show that $B$ is a Camion basis if and only if there exists a signing of some
columns of $M$ so that $B^{-1}N\geq 0$. Geometrically,  
$B^{-1}N\geq 0$ means that the column vectors of $N$ are contained in the
cone generated by $B$. 
We define $A=B^{-1}N$ and denote by $m=m'-n$ the number 
of columns of $A$.

Camion bases were first investigated by Camion \cite{Camionmod}, who proved that one always exists. 
The geometric counterpart was studied  by Shannon \cite{s-scah-79},
who gave the best lower bound of the number of simplex regions,
namely twice the number of distinct hyperplanes
in $\mathcal{H}(M)$.  Note that this lower bound does not translate 
immediately to a lower bound for the Camion bases, because one Camion basis 
might be associated with many (and at least two) simplex regions.

There is no known polynomial-time algorithm to find a Camion basis in general.
Fonlupt and  Raco \cite{FonRaco} described a finite procedure to find one  based
on the results of Camion. They also gave an algorithm which runs 
in time $O(n^3 \, m^2 )$ for totally
unimodular matrices.   

In this Chapter, we present a new characterization of
Camion bases, in the case where $M$ is the column set of the 
node-edge incidence matrix
(without one row) of a given connected digraph.  
Then, a general
characterization of Camion bases as well as a recognition 
procedure which runs in $O(n^2m')$ are given. Finally, an algorithm which finds a
Camion basis is presented. For totally
unimodular matrices, it is proven to run in time $O((nm)^2)$.

\section{Camion bases of digraphs}\label{sec:di}

Let $G=(V,E)$ be a connected digraph and $M$ the $V\times E$-incidence matrix of
$G$. Let $\tilde M$ be any submatrix of $M$ obtained by deleting one row. Let $B$ a basis of $\tilde M$, $T=(V,E_0)$ the corresponding spanning tree and 
suppose $\tilde M=[B\, N]$.  We call $T$ 
a {\em Camion tree} if $B$ is a Camion basis. The matrix $A=B^{-1} N$ is a network matrix.

Define an auxiliary graph $H$ associated with $T$
 as follows. The set of vertices is $E_0$ and for
$e,f\in E_0$, $e$ and $f$ are \emph{adjacent} if and only if they share a 
common end-point in $G$ and are in a same fundamental cycle. Then we have:

\begin{prop}\label{cambip}
A spanning tree $T$ is a Camion tree if and only if the auxiliary graph $H$ 
is bipartite.
\end{prop}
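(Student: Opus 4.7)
The plan is to interpret the Camion condition directly in terms of signings and then reduce it to a parity computation on cycles of $H$. Recall that $B$ is a Camion basis iff there exist sign vectors $\sigma\in\{\pm 1\}^{E_0}$ and $\tau\in\{\pm 1\}^{E\setminus E_0}$ such that $\sigma_e\tau_f\,a_{e,f}=+1$ whenever $a_{e,f}\neq 0$. I would first show that, for any two tree edges $e,e'$ that are $H$-adjacent at a common vertex $v$ and both lie in the fundamental cycle of some non-tree edge $f$, the quantity $a_{e,f}\,a_{e',f}$ depends only on the orientations of $e$ and $e'$ at $v$ (and not on the choice of $f$): by (\ref{eqn:Introfund}), traversing the cycle through $v$ one arrives along one of the two edges and leaves along the other, so the product is $-1$ when $e,e'$ have the same orientation at $v$ (both heads at $v$ or both tails at $v$) and $+1$ when they have opposite orientations.

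Using this, I would regard $H$ as a signed graph by assigning to each $H$-edge $\{e,e'\}$ with common endpoint $v$ the sign $\varepsilon_{e,e'}=a_{e,f}\,a_{e',f}\in\{\pm 1\}$. Given any $\sigma$, the value $\tau_f=\sigma_e\,a_{e,f}$ is well-defined across all $e$ in the fundamental cycle of $f$ exactly when $\sigma_e\sigma_{e'}=a_{e,f}a_{e',f}$ holds for every pair in that cycle; since consecutive tree edges along the cycle are $H$-adjacent, telescoping reduces this to requiring $\sigma_e\sigma_{e'}=\varepsilon_{e,e'}$ on each $H$-edge. By the standard balancedness criterion for signed graphs, such a $\sigma$ exists iff the product of edge-signs along every cycle of $H$ is $+1$.

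The core calculation is to evaluate this product on an arbitrary cycle $e_1,\ldots,e_k,e_1$ of $H$, with $w_i$ denoting the common vertex of $e_i$ and $e_{i+1}$. Writing $\iota(e,v)\in\{\pm 1\}$ for $+1$ if $v$ is the head of $e$ and $-1$ if $v$ is the tail, one has $\varepsilon_{e_i,e_{i+1}}=-\iota(e_i,w_i)\,\iota(e_{i+1},w_i)$, and regrouping by tree edges yields
\[
\prod_{i=1}^{k}\varepsilon_{e_i,e_{i+1}}\;=\;(-1)^k\prod_{i=1}^{k}\iota(e_i,w_{i-1})\,\iota(e_i,w_i).
\]
Call $e_i$ \emph{pinned} if $w_{i-1}=w_i$; its factor is then $+1$. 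Otherwise $w_{i-1}$ and $w_i$ are the two distinct endpoints of $e_i$, one head and one tail, so the factor is $-1$. If $p$ denotes the number of pinned edges, the whole product equals $(-1)^p$.

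The main obstacle, and the bridge to the bipartiteness statement, is to prove $p\equiv k\pmod 2$ on every cycle of $H$. I would use that a tree is bipartite: fix a proper $2$-coloring of $V(G)$ and trace the colors of $w_0,w_1,\ldots,w_{k-1},w_0$. A non-pinned $e_i$ swaps the color (its two endpoints lie in opposite classes) while a pinned one preserves it, so the total number of color changes equals $k-p$; returning to the starting color forces $k-p$ to be even. Combining with the preceding steps, $(H,\varepsilon)$ is balanced iff every cycle of $H$ has even length, which is equivalent to $H$ being bipartite.
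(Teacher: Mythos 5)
Your proof is correct, and it takes a genuinely different route from the paper's. The paper works with the reformulation of Camion-ness as the existence of a \emph{proper orientation} of $G$ (via (\ref{eqn:Introfund})): necessity is proved by contradiction, where the \emph{minimality} of an odd cycle of $H$ is used to force the corresponding tree edges into a star of $T$, after which a parity argument at the central vertex blocks any proper orientation; sufficiency is proved constructively by the procedure Orientation\_Propagation together with an induction maintaining the invariants a) and b). You instead start from the signing characterization of Camion bases, observe that $\varepsilon_{e,e'}=a_{e,f}a_{e',f}$ is a well-defined sign on the edges of $H$, and reduce the whole statement to balancedness of the signed graph $(H,\varepsilon)$; a single computation, using that $T$ is bipartite to get $p\equiv k\pmod 2$, shows that every cycle of $H$ has sign $(-1)^{\mathrm{length}}$, so balance is equivalent to bipartiteness and both implications drop out at once. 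Your argument needs no reduction of odd cycles to stars (it handles arbitrary cycles of $H$ directly), and it makes the kinship with Proposition \ref{propweight}'s mod-$4$ cycle-weight criterion transparent, since both are balance conditions; on the other hand, the constructive content of the sufficiency direction is delegated to the standard proof of the balance criterion (propagating $\sigma$ along a spanning tree of $H$), which is essentially what the paper's Orientation\_Propagation does explicitly and which feeds into the algorithm IS\_CAMION of Section \ref{sec:rec}. One minor point of wording: the ``proper $2$-coloring of $V(G)$'' should be a proper $2$-coloring with respect to the tree edges only ($G$ itself need not be bipartite), but since every $e_i$ in your cycle is a tree edge this is exactly what your argument uses.
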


\begin{proof}
By observation (\ref{eqn:Introfund}), 
a spanning tree  $T$ is a camion tree if and only if there 
exists an orientation
of the edges of $G$ so that for each non-basic edge $g=uv$ of $G$, all basic
edges of the unique u-v-path in $T$ are forward edges. Such an orientation will
be called a \emph{proper} orientation. 

Suppose that there is a minimal odd cycle $\mathcal{C}$ in $H$. By minimality of
$\mathcal{C}$, the subgraph of $H$ induced by $\mathcal{C}$ 
has no chord. Using the definition of $H$, we deduce that the vertices of 
$\mathcal{C}$ determine a star in $T$. Denote the central vertex of the star by
$v_0$. Since $\mathcal{C}$ is an odd cycle, for each orientation of $G$, there
are two adjacent vertices in $\mathcal{C}$ corresponding to edges in $G$ that
are both either entering $v_0$ or leaving it.
Thus, there is no proper orientation of the edges of $T$.

Now suppose that $H$ is bipartite. We may suppose $H$ is 
 connected (otherwise what follows can
be applied to each connected component of $H$). 
Here is a little procedure that finds a proper orientation of $G$.

\begin{quote}
{\bf Procedure Orientation\_Propagation.} Choose any element $e_0 \in E_0$ and orient it in an arbitrary way in $G$. Let $F=\{e_0\}$. Then
successively choose an element $e\in E_0 \verb"\" F$ adjacent in $H$ to some $f\in F$; 
put $e$ in $F$ and orient it so that $\{e,f\}$ determines a directed path in $G$. 
\end{quote}

\noindent
Denote by $T(F)$ the subgraph of $G$ whose edge set is $F$. 
Let us prove by induction on the cardinality of $F$ that during the above procedure, $F$ 
always satisfies properties a) and b) below:

\begin{enumerate}
\item[a)] $T(F)$ is a tree.
\item[b)] For all $f_1,f_2 \in F$ such that $(f_1,f_2)$ is an edge in $H$, the reoriented edges
$f_1$ and $f_2$ determine a directed path in $G$.
\end{enumerate}

\noindent 
Let $k=|F|$.
If $k=1$, then clearly a) and b) are true.
Now suppose $k\geq 1$ and let $e\in E_0\verb"\" F$, $f\in F$ such that $(e,f)$ is an edge of $H$.
Using the definition of $H$ and the induction hypothesis, we have that 
$T( F\cup\{e\})$ is a tree in $G$. Let $N_F(e)=\{g\in F\,: \, (e,g)\in H\}$.
Since $e$ is a hanging edge of the tree $T(F\cup\{e\})$, from the definition of $H$ we deduce 
that $N_F(e)\cup \{e\}$ determines a star in $G$ with a central node, say $v_0$.

Let $f_1,f_2\in N_F(e)$. As $H$ is bipartite and the subgraph of $H$ induced by $F$ is connected
by construction,
there is a minimal path of even length in $H$ between $f_1$ and $f_2$. Since the path linking
$f_1$ to $f_2$ is of minimal length, all its vertices correspond to edges of the star. 
So $f_1$ and $f_2$ are both
entering into $v_0$ or leaving this node. It follows that the elements of $N_F(e)$ are all 
either entering into $v_0$ or leaving it. Thus we can orient $e$ in such a way that $F\cup \{e\}$
satisfies b).

Finally, some elements of $E_0$ might not be in $F$. But 
such edges are not in any fundamental cycle and their orientation can be arbitrarily 
chosen. The orientation of $F$ given by the above procedure induces a proper 
orientation of $G$.
\end{proof}

Hoffman and Kruskal \cite{HofKrus} and Heller and Hoffman \cite{HellerHof} 
gave a characterization of positive network 
matrices. A directed graph is called {\em alternating} if in each circuit the
edges are oriented alternately forwards and backwards. In Schrijver \cite{ShrAlex}
(p. 278-279), it is shown that a $\{0,1\} $-matrix is a network matrix if and only
if its columns are the incidence vectors of some directed paths in an
alternating digraph. To prove the necessary part of the condition, the
alternating digraph $G'=(E_0,F')$ is defined, where for edges $e,e'\in E_0$,
$(e,e')$ is in $F'$ if and only if the head of $e$ is the same as the tail of
$e'$. In proposition \ref{cambip}, $H$ is simply a subgraph of $G'$ considered 
as an unoriented graph.

\section{A polynomial-time recognition algorithm}\label{sec:rec}

We are going to see a characterization of Camion bases and a procedure 
that recognizes them in polynomial time. Let us remark that when one of 
$m'$ or $n$ is fixed, there is a trivial polynomial algorithm. We will present 
an algorithm to check for a given basis $B$ whether $B$ is a Camion basis in 
time $O(n^2m')$. 

Now suppose that $B$ (a basis), $N$ and $A=B^{-1}N$ are given.
Consider the bipartite digraph $G(A)$, whose vertex set is the index set of the rows
and columns of $A$ and $e=(i,j)$ is an edge of $G(A)$ if and only if 
$a_{ij}\neq 0$. Set a weight function $w$ on the set of edges:
\[w((i,j))=\left \{
\begin{array}{ll}
2 & \m{ if } sign(a_{ij})=+,\\
0 & \m{ if } sign(a_{ij})=-.
\end{array}\right.
\]
If $C$ is a cycle of $G(A)$, set $w(C)=\sum_{e\in C} w(e)$. 
 The weight function on the edges of the graph $G(A')$
will be denoted by $w'$.  
Then, we have the following characterization of Camion bases.

\begin{prop}\label{propweight}
A basis $B$ is a Camion basis if and only if each non-oriented cycle of 
$G(A)$ has a total weight equal to $0 \mod (4)$.
\end{prop}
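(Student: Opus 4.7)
The plan is to reduce the statement to the classical fact, already recalled in the introduction of the chapter (and in Section \ref{sec:IntNot}), that $B$ is a Camion basis if and only if the entries of $A=B^{-1}N$ can be made nonnegative by multiplying some rows and some columns of $A$ by $-1$. Thus I would set the whole proof up as an equivalence between (i) the existence of such a row/column signing of $A$ and (ii) the weight-modulo-$4$ condition on non-oriented cycles of $G(A)$.

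First, I would encode the signing. Let $r_i,c_j\in\{+1,-1\}$ be the proposed signs of row $i$ and column $j$. For each non-zero entry $a_{ij}$, the requirement "$r_i a_{ij} c_j\ge 0$" is equivalent to $r_ic_j=\mathrm{sign}(a_{ij})$. Interpreting the edge $e=(i,j)$ of $G(A)$ as carrying the label $\mathrm{sign}(a_{ij})\in\{+1,-1\}$, the existence of such a signing is exactly the question whether the $\{\pm 1\}$-edge-labelling of the bipartite graph $G(A)$ is the boundary of a $\{\pm 1\}$-vertex-labelling. By a standard gauge/cocycle argument (apply it on each connected component of $G(A)$, choose $r_{i_0}=+1$ at an arbitrary base vertex, propagate $r,c$ along a spanning tree, and check consistency on the remaining edges), such a vertex labelling exists if and only if for every non-oriented cycle $C$ of $G(A)$ the product $\prod_{e\in C}\mathrm{sign}(a_{ij})$ equals $+1$, that is, $C$ contains an even number of negative edges.

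The last step is a direct translation between this parity condition and the modulo-$4$ condition on the weight $w$. Since $G(A)$ is bipartite, every cycle $C$ has even length $2k$; if it contains $p$ positive edges and $q=2k-p$ negative edges, then $p$ and $q$ have the same parity, and by definition
\[
w(C)=2p.
\]
Hence $w(C)\equiv 0\pmod 4$ if and only if $p$ is even, if and only if $q$ is even, if and only if $\prod_{e\in C}\mathrm{sign}(a_{ij})=+1$. Combining this with the reduction of the previous paragraph and with the characterization of Camion bases recalled at the start of the proof gives the claimed equivalence.

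I do not expect any genuine obstacle: the only point requiring minor care is the bipartite/cocycle argument, where one must notice that in a bipartite cycle each row vertex and each column vertex appears exactly twice in the product $\prod r_ic_j$, so the $r$'s and $c$'s cancel and only the edge signs remain. Once this is spelled out, the proof is a short chain of equivalences, which I would present in a single displayed "$B$ Camion $\Leftrightarrow \ldots \Leftrightarrow w(C)\equiv 0\pmod 4$ for every non-oriented cycle $C$".
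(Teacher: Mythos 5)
Your proof is correct and follows essentially the same route as the paper: both arguments start from the characterization of Camion bases via row/column signings of $A=B^{-1}N$, and both reduce the cycle condition to a spanning-tree sign-propagation on the bipartite graph $G(A)$ (your gauge argument is exactly the paper's procedure IS\_CAMION). The only cosmetic difference is that you translate $w(C)\equiv 0 \pmod 4$ directly into ``even number of negative edges'' using bipartiteness, whereas the paper instead proves a lemma that signing operations preserve $w(C)\bmod 4$ and computes $w'(C)=2|C|$ for the nonnegative matrix; the content is the same.
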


\begin{lem}\label{zij}
Assume that $A'$ is obtained from $A$
by successive applications of signing operations. If $C$ is a cycle of
$G(A)$, $w'(C)\equiv w(C) \mod (4)$.\\

\begin{proof} We can assume that $A'$ is obtained from $A$ by application of a signing 
operation. Note that $G(A)=G(A')$. We can assume that $C$ is an elementary cycle of
$G(A)=G(A')$. But clearly $w'(C)=w(C)\pm 4$ or $w'(C)=w(C)$ and the result
follows.
\end{proof}\\
\end{lem}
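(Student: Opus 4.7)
The plan is to prove the lemma by induction on the number of signing operations applied to obtain $A'$ from $A$, so it suffices to handle the single-signing case that the authors have already isolated. Since signing a row or column only multiplies nonzero entries by $-1$ without creating or destroying zeros, the support of the matrix is preserved, hence $G(A)=G(A')$, and $C$ remains a (non-oriented, elementary) cycle in both graphs.

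By the bipartite structure of $G(A)$ (row vertices on one side, column vertices on the other), an elementary cycle $C$ meets each of its vertices in exactly two edges. A signing of row $i$ (the column case being symmetric) changes the sign of $a_{ij}$ for every $j$, so the weight of an edge $(i,j)$ incident to the row vertex $i$ is swapped between $0$ and $2$, while edges not incident to $i$ keep their weight. Thus $w'(e)=w(e)$ for every edge $e$ of $C$ unless the row vertex $i$ lies on $C$. If $i$ does lie on $C$, let $e_1$ and $e_2$ be the two edges of $C$ incident to $i$, with previous weights $w_1,w_2\in\{0,2\}$; after the signing they become $2-w_1$ and $2-w_2$, so
\[
w'(C)-w(C)=(2-w_1)+(2-w_2)-w_1-w_2 = 4-2(w_1+w_2)\in\{-4,0,+4\}.
\]
In every case $w'(C)-w(C)\equiv 0\pmod 4$, which gives the desired congruence for a single signing.

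Finally, for an arbitrary sequence $A=A^{(0)}\to A^{(1)}\to\cdots\to A^{(k)}=A'$ of signings, apply the single-step result to each pair $(A^{(t)},A^{(t+1)})$ (noting that $G(A^{(t)})=G(A^{(t+1)})$, so $C$ is simultaneously a cycle of all the graphs involved) and chain the congruences to conclude $w'(C)\equiv w(C)\pmod 4$.

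The only delicate point is the reduction to elementary cycles claimed in the paper's remark; this is handled by observing that any closed walk in $G(A)$ decomposes (as a multiset of edges) into a disjoint union of elementary cycles, so its weight is a sum of weights of elementary cycles, and the congruence is preserved under addition. This is where I would place any extra care, but it is routine given the bipartite structure.
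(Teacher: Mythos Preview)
Your proof is correct and follows essentially the same approach as the paper's: reduce to a single signing, note that $G(A)=G(A')$, and observe that signing affects exactly the two edges of $C$ incident to the signed vertex, yielding a change in $\{-4,0,+4\}$. You have simply supplied the explicit computation and the bipartite-cycle observation that the paper compresses into the word ``clearly.''
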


\noindent
{\bf Proof of Proposition \ref{propweight}.}
First we prove the necessity. Let $B$ be a Camion basis. 
There exists a matrix $A'\geq 0$ which can be obtained
from $A$ by successive applications of signing operations. 
If $C$ is a cycle of $G(A')=G(A)$, $w'(C)=2|C|$ and $w'(C)\equiv 0\mod (4)$ since
$C$ is even. The result follows from the previous lemma.

To prove the sufficiency part, we can assume that $G(A)$ is
connected and we use the procedure below. In particular, the procedure
returns a cycle with a total weight equal to $2 \mod (4)$ if $B$ is not a Camion basis.
For a given spanning tree $T$ of $G(A)$ and an edge $e\in G(A)-T$, let $C_e$ denote the unique cycle in $T\cup \{e\}$.

\begin{tabbing}
\textbf{Procedure\,\, IS\_CAMION($A$)}\\
\textbf{Input:} A matrix $A\in \mathbf{R}^{n\times m}$, where 
$A=B^{-1}N$ such that $G(A)$ is connected. \\
\textbf{Output:}Yes (if $B$ is a Camion basis) with a matrix $A'\geq 0$ or No (otherwise) \\
with a certificate, that is a cycle  
in $G(A')$ of total weight equal to $ 2 \mod (4)$, \\
where $A'$ is $A$ after some signing operations.\\
1)\verb"  "\=  
Let $T$ be a subset of edges of $G(A)$ which induce a spanning tree of $G(A)$.\\
\> Assign to an initial node $v$ of $G(A)$ the label $l(v)=+1$, the other nodes are
unlabelled.\\ 
2) \> {\bf while} \= $\exists$ $(i,j)\in T$ with  $i$ (resp. $j$) labelled and  $j$ (resp. $i$) unlabelled {\bf do} \\ 
 \> \> label $j$ (resp. $i$) with the label $+1$
or $-1$ in such a way that 
$a_{ij}\cdot l(i) \cdot l(j)>0$.\\
3) \>Let $A'$ be the matrix obtained by multiplying each row $i$ of $A$ by its label
$l(i)$ \\ 
\> and each column $j$ of $A$ by its label $l(j)$. If $A'\geq 0$, output Yes and $A'$. \\
\> Otherwise, there exists $e=(i,j) \notin T$ such that $a_{ij}'<0$. Output No and $C_e$. \\
\end{tabbing}

\noindent
All the nodes of $G(A)$ are labelled by $+1$ or $-1$ at the end of this procedure.
Let $A'$ be the matrix constructed at step 3). Note that for all
$(i,j)\in T$, $a_{ij}'=a_{ij}\cdot l(i) \cdot l(j)>0$.\\ 
If $B$ is not a
Camion basis, there exists $e=(i,j) \notin T$ such that $a_{ij}'<0$ and 
$w'(e)=0$. We have $w'(C_e)\equiv 2\mod (4)$ since $w'(C_e)=2(|C_e|-1)$ and $C_e$ is even. But by the previous lemma we
have also $w(C_e)\equiv 2 \mod (4)$.\\ 
If $B$ is Camion, then for each edge $e=(i,j)\in G(A)-T$,  $w'(C_e)\equiv 0 \mod (4)$ as proved above and since $w'(C_e)=2(|C_e|-1)+w'(e)$, it follows that  $w'(e)=2$ and $a_{ij}'>0$.
\begin{rm}\par\smallskip \quad  \QED\end{rm}

\begin{cor}
There is a polynomial-time algorithm to check whether a basis $B$ is Camion.
\end{cor}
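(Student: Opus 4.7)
The plan is to observe that the desired algorithm is essentially the procedure IS\_CAMION displayed just above the corollary, and that its correctness has already been established in the proof of Proposition~\ref{propweight}. So the proof reduces to a complexity analysis together with a preprocessing step that reduces the general situation to the connected case required by the procedure.

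First I would describe the full algorithm: given the basis $B$ (and the matrix $M=[B\,N]$), compute $A=B^{-1}N$ by Gaussian elimination, which takes polynomial time in the size of $M$. Then build the bipartite digraph $G(A)$ in time $O(nm)$ by scanning the nonzero entries of $A$. If $G(A)$ is disconnected, apply the procedure IS\_CAMION separately to each connected component; the disjoint union of certificates (or of nonnegative submatrices) yields the correct answer for $B$ itself, since Proposition~\ref{propweight} is a statement about every non-oriented cycle of $G(A)$, and every such cycle lies inside a single connected component.

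Next I would bound the cost of a single call to IS\_CAMION on a connected component with $n'$ rows and $m'$ columns and $\alpha'$ nonzero entries. Step~1 builds a spanning tree of $G(A)$ in time $O(n'+m'+\alpha')=O(\alpha')$ by a BFS or DFS, and simultaneously initializes the label of the root. Step~2 is the natural label-propagation along the edges of~$T$, which visits each tree edge once and so runs in $O(n'+m')$ time; each label $l(v)\in\{+1,-1\}$ is determined from the already-labeled neighbour using a single sign comparison with $a_{ij}$. Step~3 scans all entries of $A$ once to form $A'$ and either certifies $A'\ge 0$ or exhibits a negative entry $(i,j)\notin T$, from which the cycle $C_e$ is recovered by tracing the unique $T$-path from $i$ to $j$ in time $O(n'+m')$. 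Summed over components, the total running time is polynomial, say $O(nm)$ once $A$ has been computed.

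Finally, correctness of the algorithm follows verbatim from the proof of Proposition~\ref{propweight}: if the output is ``Yes'' then $A'\ge 0$ is obtained from $A$ by signings, so $B$ is Camion; if the output is ``No'' then the certificate cycle $C_e$ has weight $\equiv 2\pmod 4$ in $G(A')$ and hence, by Lemma~\ref{zij}, in $G(A)$, so $B$ is not Camion by Proposition~\ref{propweight}. The only mildly non-routine point is the reduction to connected components of $G(A)$, and this is immediate because both the cycle criterion of Proposition~\ref{propweight} and the existence of a global nonnegative signing decompose along components.
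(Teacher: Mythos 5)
Your proposal is correct and follows the same route as the paper: apply \texttt{IS\_CAMION} to $A=B^{-1}N$, invoke the correctness argument already given for Proposition \ref{propweight}, and observe that the $O(n^2m')$ cost of computing $B^{-1}N$ dominates. The extra details you supply (component-by-component reduction, step-by-step cost of the labelling) are consistent with what the paper leaves implicit.
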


\begin{proof} 
To check whether a given basis $B$ is a Camion basis, we simply 
apply the procedure IS\_CAMION to the matrix $A=B^{-1} N$.
Since the complexity of calculating $B^{-1}N$ is $O(n^2m')$, computing $A$ dominates the procedure IS\_CAMION. Thus, the complexity of checking a given
basis $B$ is $O(n^2m')$.
\end{proof}

\section{Searching for a Camion basis}\label{sec:search}

Suppose that $B$, $N$ (and $A=B^{-1}N$) are given. 
A switching operation
corresponds to the replacement of a basic vector by a nonbasic one. 
Remark that a pivoting operation is described in general on the standard matrix $M=[I,A]$. For our purpose, it is convenient to use the "condensed" matrix by ignoring the identity matrix part.
Finding a Camion basis is equivalent to transforming  the matrix $A$ into a nonnegative one by
application of the following two operations:

\begin{itemize}
\item pivoting operations.
\item signing operations.
\end{itemize}

Signing a row of $A$ is equivalent to signing an associated basic vector.
A matrix $A'$ obtained from $A$ after some signing and pivoting operations will be
called \emph{equivalent} to $A$. 
Denote by $\epsilon(A)$ the set of matrices that are equivalent to $A$. 
Since the number of bases of $A$ and possibilities 
of signing some columns of $A$ is finite, the cardinality of $\epsilon(A)$ is 
finite. \\
Let us remark that for an
implementation of the algorithm that is described below, it is important to
maintain the correspondence between the rows of $A$ and the basic vectors and
between the columns of $A$ and the nonbasic ones. \\

We will see an algorithm, called Simp, that
runs in $O(\Delta^3(n\, m)^2 )$ if $M$ is an integral matrix, 
where $\Delta$ is the greatest
determinant (in absolute value) of a basis. 
So, for the particular case of 
totally unimodular matrices (where $\Delta=1$), Simp is
faster than the algorithm of Fonlupt and Raco \cite{FonRaco}. 
Moreover, the procedure Simp applied to real matrices is also finite.

For $i\in \{1,\ldots,n\}$ and $j\in \{1,\ldots,m\}$, let us denote by
$A_{i}$ the $i$th row of $A$ and  $A_{\bullet j}$ the $j$th column of $A$.
Moreover, $A^{(i,j)}$ denotes the matrix obtained from $A$ by pivoting on
$a_{i\,j}$. 
For any vector or matrix $V$, the function
$\su(V)$ evaluates the sum of all components of $V$.\\

Let us present the algorithm Simp.
\begin{tabbing} 
\textbf{Procedure:\,\,Simp($A$)}\\
\textbf{Input:} A real matrix $A$ of size $n\times m$.\\
\textbf{Output:} A nonnegative matrix $A'$ (obtained from $A$ by a
sequence of pivots and signings).\\
1)\verb"  " \= {\bf for} \=$i=1,\ldots, n$ {\bf do}\\
            \>  \> {\bf if}  $\su(A_{i})<0$, {\bf then}
 multiply the $i$th row by 
$-1$;\\
2) \> {\bf for} \, $j=1,\ldots ,m$  {\bf do}\\
             \>\> {\bf if} $A_{\bullet j}\le 0$, {\bf then} multiply 
the $j$th  column by $-1$; \\
3)\>  {\bf if}  $A\geq 0$ , {\bf return} $A$;\\
   \> {\bf otherwise}, let $k$, $k'$ and $l$  such that $a_{k\,l}>0$ 
   and  $a_{k'\,l}<0$; {\bf do}  \\
  \>\>  {\bf if} $\su(A_{\bullet l})< 1$, {\bf then} 
    pivot on $a_{k\,l}$;\\
  \>\>  {\bf otherwise}  pivot on $ a_{k'\,l}$;\\
  \>  {\bf go to} step 1;\\
\end{tabbing}

\begin{thm}\label{thmCamionSimp1}
For any real matrix $A$, the procedure Simp returns a nonnegative matrix 
after a finite number of steps. Furthermore if $M$ is integral, then
Simp runs in time $O(\Delta^3(n\, m)^2 )$.
\end{thm}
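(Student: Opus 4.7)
The strategy is to introduce the potential $\Phi(A)=\su(A)$ and show that it is nondecreasing along the run of the procedure, with a positive increment at each ``non-degenerate'' pivot. Finiteness then follows because $\epsilon(A)$ is finite, and the complexity in the integer case follows from explicit bounds on the size and granularity of $\Phi$.

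First I would dispense with Steps~1 and~2. Multiplying a row $i$ by $-1$ changes $\su$ by $-2\,\su(A_i)$, which is nonnegative precisely when $\su(A_i)<0$; so Step~1 can only weakly increase $\Phi$. Similarly, multiplying a column $j$ by $-1$ changes $\su$ by $-2\,\su(A_{\bullet j})\ge 0$ whenever $A_{\bullet j}\le 0$, so Step~2 can only weakly increase $\Phi$. Hence all of the monotonicity work lies in Step~3.

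Next I would establish the pivot identity: if $A'$ is obtained from $A$ by pivoting on $a_{ij}=\alpha$, and we write $r=\su(A_i)$ and $c=\su(A_{\bullet j})$, then a direct computation using the pivot formula yields
\[
\su(A')\,=\,\su(A)\,+\,\frac{(1-c)(1+r)}{\alpha}.
\]
Since Steps~1--2 were executed just before the pivot, one has $r\ge 0$, hence $1+r>0$. The pivoting rule now gives the desired sign: in the case $\su(A_{\bullet l})<1$ we pivot on a positive $a_{k\,l}$, so $(1-c)/\alpha>0$ and $\Phi$ strictly increases; in the case $\su(A_{\bullet l})\ge 1$ we pivot on a negative $a_{k'\,l}$, so $(1-c)/\alpha\ge 0$ and $\Phi$ weakly increases. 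In both cases $\Phi$ is nondecreasing, and together with Steps~1--2 this makes $\Phi$ monotone across the entire iteration.

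For finiteness over $\mathbb{R}$, since $\epsilon(A)$ is finite, $\Phi$ takes finitely many values. Combined with weak monotonicity, this forces termination once I rule out cycling within a level set of $\Phi$ -- the only way that can happen is in the degenerate subcase $c=1$, $\alpha<0$, $r=0$, and no signing flip triggered afterwards. I would handle this subcase by a secondary lexicographic argument on the multiset of column sums $\su(A_{\bullet j})$, showing that a revisit of any matrix in $\epsilon(A)$ is impossible along a $\Phi$-plateau. For the complexity statement, assume $M\in\mathbb{Z}^{n\times m'}$ and let $d=|\det B|\le\Delta$. By Cramer's rule every entry of $A=B^{-1}N$ lies in $\tfrac{1}{d}\mathbb{Z}$ with $|a_{ij}|\le\Delta$, so $|\su(A)|\le nm\Delta$; the pivot identity shows any strict increase of $\Phi$ has denominator dividing $d^{3}\le\Delta^{3}$, hence is at least $1/\Delta^{3}$. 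This bounds the number of strict-increase pivots by $O(nm\Delta^{4})$, and a mild refinement (using that the denominator of $(1-c)(1+r)/\alpha$ actually divides $d^{2}\cdot\Delta$ rather than $\Delta^{3}$) sharpens this to $O(nm\Delta^{3})$ pivots. Each sweep through Steps~1--3 costs $O(nm)$ arithmetic operations, giving the claimed $O(\Delta^{3}(nm)^{2})$.

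\textbf{Main obstacle.} The delicate point is the boundary case $c=1$: here the pivot alone gives $\su(A')=\su(A)$, so monotonicity of $\Phi$ is not enough to prevent cycling. The main work of the proof is to rule out such a cycle -- either by tightening the monotonicity to a lexicographic potential that strictly decreases on the $\Phi$-plateau, or by a direct combinatorial argument that no matrix of $\epsilon(A)$ is visited twice.
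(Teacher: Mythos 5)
Your overall strategy coincides with the paper's: the potential is $\su(A)$, the pivot identity $\su(A^{(i,l)})-\su(A)=\frac{1-\su(A_{\bullet l})}{a_{il}}(\su(A_i)+1)$ is exactly the one the paper derives, finiteness rests on $|\epsilon(A)|<\infty$, and the integral case uses Cramer's rule to bound entries by $\Delta$ and to lower-bound each strict increase. Two small corrections first: when $\su(A_{\bullet l})>1$ you pivot on $a_{k'l}<0$, so $(1-c)/\alpha>0$ and the increase is in fact \emph{strict}, not merely weak -- the only genuine plateau is $c=1$; and that plateau occurs for every $r=\su(A_i)\ge 0$, not only for $r=0$ as you state.

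The genuine gap is precisely the point you flag as the ``main obstacle'' and then leave open: ruling out cycling (and, for the complexity bound, bounding the length of a plateau) when $\su(A_{\bullet l})=1$. Your proposed ``secondary lexicographic argument on the multiset of column sums'' is not carried out, and without it neither termination nor the operation count follows, since a long plateau would not be charged against any strict increase of $\Phi$. The paper closes this with a one-line computation that you should adopt: after pivoting on $a_{k'l}<0$, the new row $k'$ has sum $\su(A^{(k',l)}_{k'})=\frac{1}{a_{k'l}}\bigl(1+\sum_{j\neq l}a_{k'j}\bigr)$, and since step~1 guaranteed $\su(A_{k'})\ge 0$ we get $\sum_{j\neq l}a_{k'j}\ge -a_{k'l}>0$, so this row sum is strictly negative; hence the very next pass through step~1 multiplies that row by $-1$ and strictly increases $\su(A)$. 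Thus every plateau has length one, $\su(A)$ strictly increases after at most two passages through step~1, and the rest of your argument goes through. For the granularity, note also that $\su(A)$ and $\su(A')$ are each an integer over a basis determinant of absolute value at most $\Delta$, so a positive difference is already at least $1/\Delta^{2}$; combined with $|\su(A)|\le \Delta nm$ this gives $O(\Delta^{3}nm)$ passages directly, without the $O(nm\Delta^{4})$ detour and the unproved ``mild refinement.''
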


\begin{proof}
Suppose that $\su(A_{i})<0$ at step 1 or $A_{\bullet j}\le 0$ at step 2
for some $i$ or some $j$. 
Let $A'$ be the matrix obtained from $A$ by multiplying its $i$th row by $-1$ (at step 1) or its 
$j$th column by $-1$ (at step 2). Clearly, we have $\su(A')>\su(A)$.

At the beginning of step 3, $\su(A_{i})\geq 0$ for all $i$ and there is no
column $A_{\bullet j}$ such that $A_{\bullet j}\le 0$.
So, if there exists an entry $a_{k'\,l}<0$, then there exists $k$ such that 
$a_{k\,l}>0$. 
Denote by $\cc$ the $i$th row of $A$ without the $l$th column and
$\bb$ the $l$th column of $A$ without the $i$th row.  
We have that 

\[
A^{(i,l)}-A=\left( \begin{matrix} \frac{1}{\alpha}-\alpha &
\frac{\cc}{\alpha}-\cc\\ -\frac{\bb}{\alpha}-\bb  & -\frac{\bb \cc}{\alpha}
\end{matrix}\right)=\frac{1}{\alpha} \left( \begin{matrix} 1-\alpha\\-\bb
\end{matrix}\right)\cdot  \left( \begin{matrix} 1+\alpha & \cc
\end{matrix}\right).\]

\noindent
(Note that in the middle of this equation, the matrix is written as if $i=l=1$.) Thus,

\[
\su(A^{(i,l)})-\su(A)=\frac{1-\su(A_{\bullet l})}{a_{i\,l}}(\su(A_{i})+1) 
\m{ for } 1\le i\le n.
\]

\noindent
So, if $\su(A_{\bullet l})\neq 1$,  $\su(A)$ will increase  
by pivoting either on $a_{k\,l}$ or $a_{k'\,l}$.
If $\su(A_{\bullet l})=1$, then $\su(A)$ neither increases nor decreases. 
However, since $\sum_{j\neq l} a_{k'\,j}\geq 0$ and $a_{k'\,l}<0$, we deduce 
that $\su(A^{(k',l)}_{k'})=\frac{1}{a_{k'\,l}}(1+\sum_{j\neq l} a_{k'\,j})<0$. 
Therefore, $\su(A)$ will increase at the next step 1.

As $|\epsilon (A)|<\infty $, it follows that Simp generates 
a well-oriented matrix after a finite number of steps.

Now, suppose that $M$ is integral. By Cramer's rule,
for each matrix $A'$ equivalent to $A$, we have $a'_{i\,j}=\pm \frac{\det(B'')}
{\det(B')}$ $\forall i,j$, where $B',B''$ are two bases of $M$.
Since $\det(B'')\le \Delta $ and $|\det(B')|\geq 1 $, we deduce that $a_{ij}'\le
\Delta$ $\forall i,j$. Moreover, if $A'=(B')^{-1} N'$ where $B'$ is a basis 
and $N'$ a submatrix of $M$ after some signing operations of columns of $M$, 
then we may write each entry of $A'$ as a fraction of an
integer over $\det(B')$. 
Thus $\su(A')$, respectively $\su(A)$, 
is a ratio of some integer to $\det(B')$, respectively $\det(B)$.
So, if $A'$ is obtained from $A$ at some step
and $\su(A')-\su(A)>0$, then $\su(A')-\su(A)\geq \frac{1}{\Delta^2}$. Since
$\su(A)$ never decreases, but increases after at most two passages
at step 1 and is between $-\Delta nm$ and $\Delta nm$, the 
number of passages at step 1 is $O(\Delta^3\,n\,m)$.

Since the number of elementary operations at steps 1,2 and 3 is $O(n\,m)$, we conclude that the complexity of Simp is $O(\Delta^3(n\,m)^2)$.
\end{proof}\\

For dealing with matrices having entries in $\{0,\pm 1, \pm \frac{1}{2}, \pm 2\}$, for instance binet matrices, we easily deduce the following procedure.

\begin{tabbing} 
\textbf{Procedure:\,\,Camion($A$)}\\
\textbf{Input:} A real connected matrix $A$ of size $n\times m$
with no two identical columns.\\
\textbf{Output: }\=  Either a connected matrix $A'\in \{0, \frac{1}{2},1,2\}^{n\times m}$ 
such that $A'$ is binet and\\
\>  $m\le 4\left( \begin{array}{c}
n\\
2 
\end{array} \right) + 2n +1 $ if and only if the input matrix $A$ is binet, or determines\\
\>  that $A$ is not binet.\\

1) \verb"  "\= if $m > 4\left( \begin{array}{c}
n\\
2 
\end{array} \right) + 2n +1$, then STOP: output  that $A$ is not binet; \\

2) \> check the elements of $A$; if it has an element other than $0$, $\pm 1$, $\pm \frac{1}{2}$, or $\pm 2$
, then STOP: \\
\> output  that $A$ is not binet; \\
3)  \> {\bf for} \=$i=1,\ldots, n$ {\bf do}\\
            \>  \> {\bf if}  $\su(A_{i})<0$, {\bf then}
 multiply the $i$th row by 
$-1$;\\
4) \> {\bf for} \, $j=1,\ldots ,m$  {\bf do}\\
             \>\> {\bf if} $A_{\bullet j}\le 0$, {\bf then} multiply 
the $j$th  column by $-1$; \\
5)\>  {\bf if}  $A\geq 0$ , {\bf return} $A'=A$;\\
   \> {\bf otherwise}, let $k$, $k'$ and $l$  such that $a_{k\,l}>0$ 
   and  $a_{k'\,l}<0$; {\bf do}  \\
  \>\>  {\bf if} $\su(A_{\bullet l})< 1$, {\bf then} 
    pivot on $a_{k\,l}$;\\
  \>\>  {\bf otherwise}  pivot on $ a_{k'\,l}$;\\
  \>  {\bf go to} step 2;\\
\end{tabbing}

\begin{thm}\label{thmCamionSimp2}
The output of procedure Camion is correct. The running time of Camion is $O((n m)^2)$.
\end{thm}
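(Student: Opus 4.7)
The plan is to lift the analysis of the generic procedure Simp (Theorem \ref{thmCamionSimp1}) to the half-integral, binet setting, and to establish correctness by invariants that are preserved under pivoting and row/column signing. First I would verify that each of the three possible exits is correct. Step 1 is immediate: by Lemma \ref{lemBinetColBound}, a binet matrix with no two identical columns has at most $4\binom{n}{2}+2n+1$ columns, so exceeding this bound certifies $A$ is not binet. Step 2 combines Lemma \ref{lemdefiWeight1} (every entry of a binet matrix lies in $\{0,\pm 1,\pm 2,\pm \tfrac{1}{2}\}$) with Lemmas \ref{lemBinetPivot} and \ref{lemdefiRowCol} (pivoting and row/column signings preserve binetness): the appearance of any entry outside that set certifies $A$ is not binet. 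For step 5, the same preservation lemmas, together with Lemma \ref{lemBinetconnectedPivot} and the obvious fact that signing a row or column preserves connectedness, show that the returned $A'$ is connected, lies in $\{0,\tfrac{1}{2},1,2\}^{n\times m}$, and is binet if and only if $A$ is.

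For termination and complexity, I would reuse the monotonicity argument from the proof of Theorem \ref{thmCamionSimp1}: steps 3 and 4 never decrease $\su(A)$, and the identity
\[
\su(A^{(i,l)})-\su(A)=\frac{1-\su(A_{\bullet l})}{a_{i,l}}\bigl(\su(A_i)+1\bigr)
\]
combined with the case distinction in step 5 shows that within two consecutive passes through step 5, $\su(A)$ strictly increases. Under the invariant that every entry of $A$ remains in $\{0,\pm 1,\pm 2,\pm \tfrac{1}{2}\}$ (maintained until either step 2 exits or the procedure terminates), $\su(A)$ is a multiple of $\tfrac{1}{2}$ lying in $[-2nm,\,2nm]$, so any strict increase is by at least $\tfrac{1}{2}$. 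Hence the main loop executes at most $O(nm)$ times; each iteration performs row-sum checks, column-sign checks, the selection of a pivot, and at most one pivot, all in $O(nm)$ arithmetic operations. The total cost is therefore $O((nm)^2)$.

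The delicate point I expect to need care is the discreteness of $\su(A)$. The generic Simp bound $\tfrac{1}{\Delta^2}$ on the increase is useless here since basis determinants of an IMB can be as large as $\pm 2^n$ (Theorem \ref{thmBidirected2mod}); the polynomial bound is only available because the checking step 2 forces all intermediate matrices to remain half-integral and entrywise bounded by $2$, at which point the discrete lower bound $\tfrac{1}{2}$ on any strict increase of $\su(A)$ takes the place of $\tfrac{1}{\Delta^2}$. I would therefore emphasise that step 2, although phrased as a correctness filter for non-binet inputs, is simultaneously the mechanism that guarantees the polynomial running time, since without it the pivots could produce arbitrarily small rationals and destroy the half-integer invariant.
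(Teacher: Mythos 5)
Your proposal is correct and follows essentially the same route as the paper: correctness of the exits via Lemma \ref{lemBinetColBound}, Lemma \ref{lemdefiWeight1} and the closure lemmas \ref{lemdefiRowCol}, \ref{lemBinetPivot}, \ref{lemBinetconnectedPivot}, and the iteration bound from the monotone increase of $\su(A)$ in the proof of Theorem \ref{thmCamionSimp1} combined with the half-integrality and boundedness enforced by step 2. Your closing remark that step 2 is what replaces the $\frac{1}{\Delta^2}$ bound by a discrete $\frac{1}{2}$ increment is exactly the (tersely stated) "because of step 2" in the paper's own argument.
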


\begin{proof}
If $A$ is binet, then by Lemma \ref{lemBinetColBound} $m\le 4\left( \begin{array}{c}
n\\
2 
\end{array} \right) + 2n +1 $, and Camion does not stop in step 1.
By Lemmas \ref{lemdefiRowCol}, \ref{lemBinetPivot} and \ref{lemBinetconnectedPivot}, the class of connected binet matrices is closed under pivoting and signing operations. Moreover, by Lemma \ref{lemdefiWeight1}, any entry of a binet matrix is equal to $0$, $\pm \frac{1}{2}$, $\pm 1$, or $\pm 2$.
So, the number of passages through step 5 is clearly bounded by
$3nm+1$, because of step 2 and $sum(A)$ increases either at step 3 or 5 (see the  proof of Theorem \ref{thmCamionSimp1}). This concludes the proof of Theorem \ref{thmCamionSimp2}.
\end{proof}\\


\clearpage
\thispagestyle{empty}
\cleardoublepage
\verb"   "
\newpage

\chapter{Recognizing binet matrices}\label{ch:Rec}

In this chapter, we turn to the problem of recognizing binet matrices. This problem is in the complexity class $\mathcal{N}\mathcal{P}$, because it is easy to verify that a matrix is binet, one only has to give a binet representation. Appa and Kotnyek formulated it as a mixed integer programming (MIP) problem (see \cite{KotThesis}). Unfortunately, their method is not polynomial. We present here a combinatorial polynomial-time algorithm, called Binet. Throughout this chapter, using Lemmas \ref{lemdefiRowCol} and \ref{lemdefiBlock}, we will assume that any input matrix 
for our recognition problem does not have two identical columns and is connected. We shall prove the following main result.

\begin{thm}\label{thmRecBinetMain}
A rational matrix of size $n\times m$ can be tested for being binet in time $O(n^6m)$ using the algorithm Binet.
\end{thm}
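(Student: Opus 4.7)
The plan is to establish Theorem \ref{thmRecBinetMain} as the correctness-plus-complexity statement of the algorithm Binet whose outline is given in Section \ref{sec:OverRecBinet}. Since Binet is a layered procedure that dispatches to subroutines developed in Chapters \ref{ch:Camion}--\ref{ch:central}, I would assume those subroutines have been shown correct and to meet the complexity bounds claimed for them in their respective chapters, and concentrate on assembling the pieces.

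First I would preprocess $A$ by the Camion procedure of Chapter \ref{ch:Camion}. By Theorem \ref{thmCamionSimp2} this either certifies that $A$ is not binet or returns, in time $O((nm)^2)$, a connected nonnegative matrix $A' \in \{0,\tfrac{1}{2},1,2\}^{n\times m}$ with no duplicate columns, projectively equivalent to $A$, and satisfying $m \le 4\binom{n}{2}+2n+1$. Lemmas \ref{lemdefiRowCol}, \ref{lemdefiBlock}, \ref{lemBinetPivot}, \ref{lemBinetconnectedPivot}, and \ref{lemBinetColBound} guarantee that $A'$ is binet if and only if $A$ is; in particular, $m=O(n^2)$ for the remainder of the algorithm. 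Moreover, by Lemma \ref{lemdefiWeight1}, if any entry of $A'$ lies outside $\{0,\tfrac{1}{2},1,2\}$ then $A$ is not binet.

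Next I would apply the procedure Decomposition of Section \ref{sec:decomppro} to $A'$ with the empty row-index set; if it succeeds it outputs a binet representation and we are done. Otherwise, invoking the dichotomy of Theorem \ref{thmDecdecomp}, I would branch according to whether $A'$ has a $\tfrac{1}{2}$-entry: if yes, test $\tfrac{1}{2}$-equisupportedness (an $O(nm)$ scan) and then test bicyclicity via the procedure of Chapter \ref{ch:bicyc}; if no, test cyclicity, which by the $\{i\}$-cyclic/$\{\epsilon,\rho\}$-central decomposition of Section \ref{sec:OverRecBinet} reduces to running the subroutine RCyclic of Chapter \ref{ch:cyc} for each single row index $i$ and the central recognizer of Chapter \ref{ch:central} for each ordered pair $(\epsilon,\rho)$. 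If no test succeeds, output NO. Correctness is then an immediate consequence of Theorem \ref{thmDecdecomp} together with the correctness of the individual subroutines.

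The main obstacle is the complexity bookkeeping. The Camion preprocessing contributes $O((nm)^2)$, which is $O(n^6 m)$ once $m=O(n^2)$ is exploited. Decomposition itself makes a controlled number of calls to RCyclic and to the $\tfrac{1}{2}$-binet subroutine, each polynomial in $n$ and $m$ and underpinned by the Bixby--Cunningham network-recognition algorithm of Theorem \ref{thmSubclassNTutteCunNet}. The most expensive phase is the central test, where one iterates over the $O(n^2)$ pairs $(\epsilon,\rho)$ and, for each, enumerates a constant number (at most $18$, coming from the bound on the quotient set $\mathscr{S}$ of bicompatible bipartitions) of left- or right-compatible configurations, solving a $2$-SAT instance of size $O(n)$ and a few RCyclic instances per configuration. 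Establishing that each central-test dispatch costs $O(n^4 m)$ in the reduced dimensions, so that multiplying by the outer $O(n^2)$ loop yields $O(n^6 m)$, is the technical heart of the running-time analysis; combined with the Camion contribution, this produces the claimed $O(n^6 m)$ overall bound.
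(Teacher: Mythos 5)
Your proposal follows essentially the same route as the paper: run Camion, then Decomposition with $Q=\emptyset$, then branch via Theorem \ref{thmDecdecomp} into the bicyclic test or the cyclic test (RCyclic over all $n$ row indexes, CentralI/CentralII over all $O(n^2)$ pairs), and sum the subroutine costs using $m=O(n^2)$ and $\alpha'\le nm$ to get $O(n^6m)$, exactly as in the paper's proof. One small slip: the constant bound of $18$ belongs to the quotient set of bicompatible bipartitions in the \emph{bicyclic} procedure, whereas the central test enumerates $O(n^2)$ candidate sets $U$ of shared bonsais per pair $(\epsilon,\rho)$; since you defer to the stated $O(n^3\alpha')$ bounds of Theorems \ref{thmcentralCarEmpty} and \ref{thmcentralCarNotEmpty}, this does not affect your final count.
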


For network matrices, the parallel question is answered and discussed in Section \ref{sec:IntNet}.
The approach of Schrijver's method outlined on page \pageref{mycounter3} (see \cite{ShrAlex}) cannot be directly adapted to binet matrices because the basis of a binet graph, a forest of negative $1$-trees is more complex than a tree. Nevertheless, in the case where $A$ is an integral and cyclic matrix, the algorithm Binet can be viewed as a generalization of Schrijver's method.

Section \ref{sec:RecBinet} is devoted to the description of the algorithm Binet. In Section \ref{sec:decomppro}, we provide an important subroutine of the algorithm Binet called Decomposition and a related central theorem. Other subroutines are depicted and analyzed in the next chapters.

\section{The algorithm Binet}\label{sec:RecBinet}

The algorithm Binet can be schematized by a flow chart in Figure \ref{fig:RecBinetSketch}. It takes a rational (connected) matrix $A$ of size $n\times m$ as input and returns a binet representation $G(A)$ of $A$, or determines that none exists. 

Let us describe the algorithm and analyze its correctness.
The first task is to transform the matrix $A$ into a nonnegative one by using pivoting and signing operations, and this is performed by the procedure Camion described in Section \ref{sec:search}. By Theorem \ref{thmCamionSimp2}, Camion returns a connected  matrix $A'$ with entries $0$, $\frac{1}{2}$, $1$ or $2$ and check that $m\le 4\left( \begin{array}{c}
n\\
2 
\end{array} \right) + 2n +1 $, or determines that $A$ is not binet (see Lemma \ref{lemdefiWeight1}); moreover, $A'$ is binet if and only if $A$ is binet.
This first step is a key ingredient, at the center of fundamental properties in the design of the recognition algorithm.

\begin{figure}[t!]
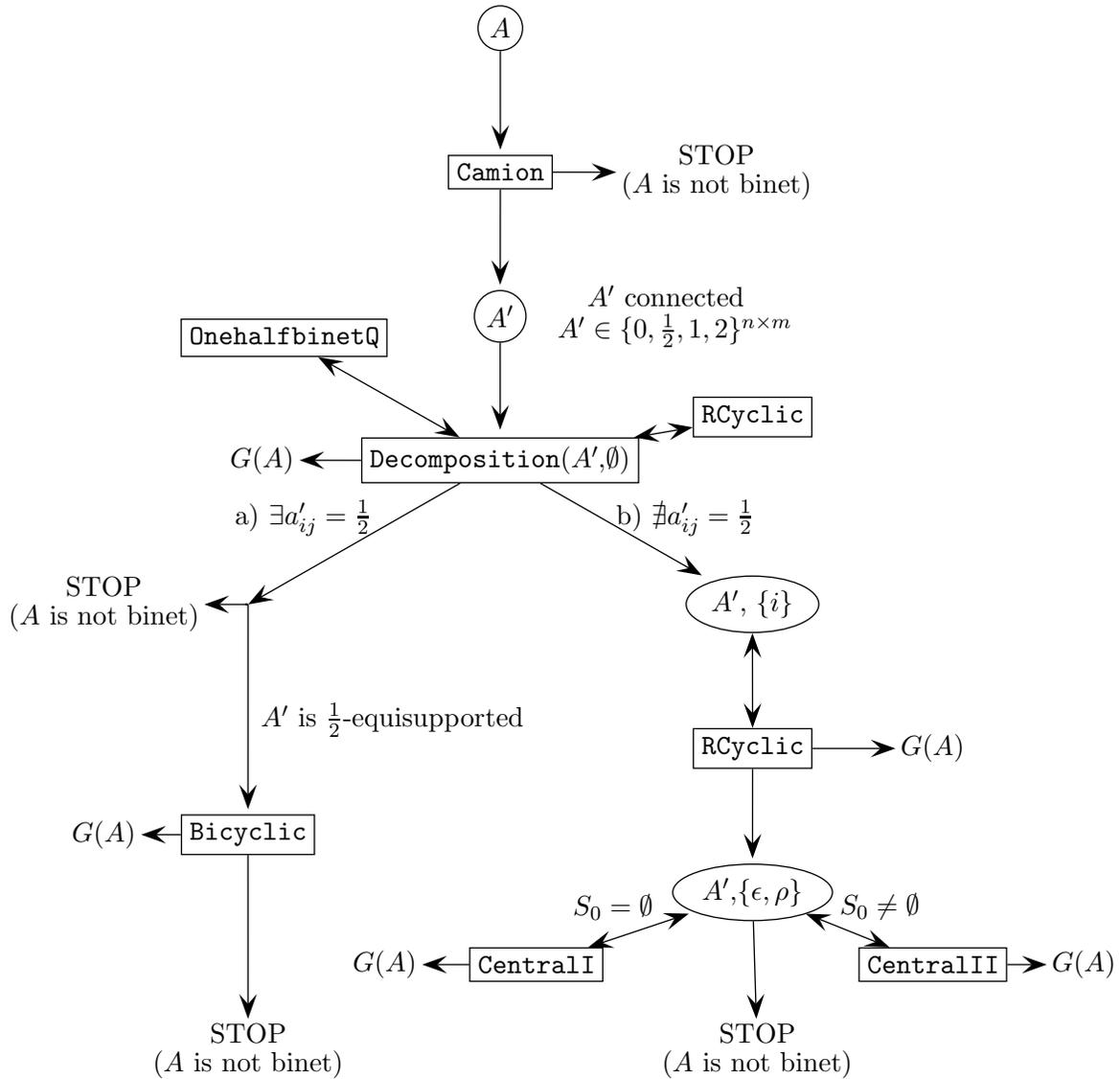

\begin{center}

\psset{xunit=1cm,yunit=1cm,linewidth=0.5pt,radius=0.1mm,arrowsize=7pt,
labelsep=1.5pt,fillcolor=white}

\pspicture(0.4,0)(16,10)

\rput(7.5,10){\rnode{A}{\pscirclebox{$A$}}
}


\rput(7.5,8){\rnode{Simp}{\psframebox{{\tt Camion}}}
}
\rput(10.5,8){\rnode{S1}{\shortstack{ STOP\\
($A$ is not binet)}}
}
\ncline[nodesepB=2pt]{->}{A}{Simp}
\ncline[nodesepB=2pt]{->}{Simp}{S1}

\rput(7.5,6){\rnode{A'}{\pscirclebox{$A'$}}
}
\rput(9.8,6){\shortstack{$A'$ connected\\
\,\,\,\,$A' \in\{0,\frac{1}{2},1,2\}^{n\times m}$}
}
\ncline[nodesepB=2pt]{->}{Simp}{A'}

\rput(7.5,4){\rnode{De}{\psframebox{{\tt Decomposition}($A'$,$\emptyset$)}}
}
\rput(4.2,4){\rnode{G5}{$G(A)$}
}
\rput(4.5,5.7){\rnode{OneQ}{\psframebox{{\tt OnehalfbinetQ}}}
}
\rput(11,4.6){\rnode{RCycDe}{\psframebox{{\tt RCyclic}}}
}
\ncline[nodesepB=3pt]{->}{A'}{De}
\ncline{<->}{De}{OneQ}
\ncline{<->}{De}{RCycDe}
\ncline[nodesepB=2pt]{->}{De}{G5}
\rput(10,3.2){b) $\nexists a'_{ij}=\frac{1}{2}
$}
\rput(4.7,3.2){a) $\exists a'_{ij}=\frac{1}{2}
$}

\rput(4,2){\rnode{cir}{\circle*{.01}}
}
\rput(2,2){\rnode{ST}{\shortstack{ STOP\\
($A$ is not binet)}}
}
\ncline{->}{De}{cir}
\ncline[nodesepB=2pt]{->}{cir}{ST}
\rput(6,.4){$A'$ is $\frac{1}{2}$-equisupported
}

\rput(4,-1.2){\rnode{Bi}{\psframebox{{\tt Bicyclic}}}
}
\rput(2,-1.2){\rnode{G2}{$G(A)$}
}
\ncline[nodesepB=2pt]{->}{cir}{Bi}
\ncline[nodesepB=2pt]{->}{Bi}{G2}
\rput(4,-4.2){\rnode{SC}{\shortstack{ STOP\\
($A$ is not binet)}}
}
\ncline[nodesepB=2pt]{->}{Bi}{SC}

\rput(11,2){\rnode{Ai}{\psovalbox{$A'$, $\{ i\}$}}
}
\ncline[nodesepB=0pt]{->}{De}{Ai}

\rput(11,0){\rnode{RCyc}{\psframebox{{\tt RCyclic}}}
}
\rput(13.5,0){\rnode{G0}{$G(A)$}
}
\rput(12.7,-2.2){$S_0\neq\emptyset$}
\rput(9,-2.2){$S_0= \emptyset$}
\ncline[nodesepB=2pt]{->}{RCyc}{G0}
\ncline{<->}{RCyc}{Ai}

\rput(11,-2){\rnode{Aepsrho}{\psovalbox{$A'$,$\{\epsilon,\rho\}$}}
}
\ncline[nodesepB=2pt]{->}{RCyc}{Aepsrho}

\rput(8,-3){\rnode{CI}{\psframebox{{\tt CentralI}}}
}
\rput(5.9,-3){\rnode{G3}{$G(A)$}
}
\rput(11,-4.2){\rnode{S2}{
\shortstack{ STOP\\
($A$ is not binet)} }}
\ncline[nodesepA=-3pt]{<->}{Aepsrho}{CI}
\ncline[nodesepB=2pt]{->}{CI}{G3}
\ncline[nodesepB=2pt]{->}{Aepsrho}{S2}

\rput(13.5,-3){\rnode{CII}{\psframebox{{\tt CentralII}}}
}
\rput(15.6,-3){\rnode{G4}{$G(A)$}
}

\ncline[nodesepA=-3pt]{<->}{Aepsrho}{CII}
\ncline[nodesepB=2pt]{->}{CII}{G4}

\endpspicture

\end{center}
\vspace{4cm}

\caption{A flow chart of the algorithm Binet.}
\label{fig:RecBinetSketch}

\end{figure}

Then one makes use of
the procedure Decomposition described in Subsection \ref{subsec:decomppro}. This procedure takes as input a matrix $A'$ and a row index subset of it denoted by $Q$; it contains subroutines called RCyclic and OnehalfbinetQ that are described in Sections \ref{sec:cyc} and \ref{sec:recOnehalfbinet}, respectively; and RCyclic is also a subroutine of OnehalfbinetQ.
The procedure Decomposition is applied to matrix $A'$ with $Q=\emptyset$.  If one does  not get a binet representation of $A'$, then by Theorem \ref{thmDecdecomp}, it follows that $A'$ is binet if and only if $A'$ is $\frac{1}{2}$-equisupported and bicyclic, or it is cyclic and without any $\frac{1}{2}$-entry. Thus two cases (a and b) are distinguished.

\begin{enumerate}

\item[a) ]  The matrix $A'$ has a $\frac{1}{2}$-entry.\\
If $A'$ is $\frac{1}{2}$-equisupported, then
the procedure Bicyclic described in Section \ref{sec:recbicyc} is performed on $A'$. Let us mention here that the procedure Bicyclic uses the procedure RCyclic as subroutine. 
In the case where $A'$ is not $\frac{1}{2}$-equisupported or
the procedure Bicyclic does not find any binet representation of $A'$, by Theorem \ref{thmbicyclicpro} we conclude that $A'$ is not binet, and so $A$ is not binet.

\item[b) ] The matrix $A'$ has no $\frac{1}{2}$-entry.\\
So $A'$ is binet if and only if it is cyclic, or equivalently $A'$ is $\{i\}$-cyclic for some row index $i$ or $\{\epsilon,\rho\}$-central for some pair $\{\epsilon,\rho\}$ of row indexes. Thus one computes whether $A'$ has an $\{i\}$-cyclic representation for some row index $i$ using the procedure RCyclic; 
if not, one checks whether $A'$ has an
$\{\epsilon,\rho\}$-central representation for some pair $\{\epsilon,\rho\}$ of row indexes; by letting  $S_0=\{ j \, : \, \epsilon,\rho \in s(A_{\bullet j}) \}$, this is done using the procedure CentralI, if $S_0=\emptyset$, or CentralII otherwise.
By Theorems \ref{thmcyclicproCyc}, \ref{thmcentralCarEmpty} and \ref{thmcentralCarNotEmpty}, one obtains a cyclic representation of $A'$ if and only if such a representation exists.
\end{enumerate}

In case a or b, whenever a binet representation of $A'$ has been found, one easily deduces a binet representation of $A$. Given a binet representation $G(A')$ of $A'$, one simply computes the inverse of all operations performed in step 1, and applies the corresponding operations on $G(A')$, in reverse order.
Mathematically, the algorithm Binet is stated as follows.

\begin{tabbing}
\textbf{Algorithm\,\,Binet(A)}\\

\textbf{Input:} A (connected) matrix $A$ of size $n \times m$.\\
\textbf{Output:} Either a binet representation $G(A)$ of $A$, or determines that none exists.\\

1)\verb"  "\= let $A'=A$, call { \tt Camion($A'$)} of Section \ref{sec:search}  by keeping in memory \\
\>  all pivoting and signing operations; \\

2)  \> call {\tt Decomposition($A'$,$Q=\emptyset$)} of Section \ref{sec:decomppro}; if we obtain a binet representation\\
\> $G(A')$  of $A'$, then go to 6; if $A'$ has no $\frac{1}{2}$-entry, then go to 4;\\

3) \>  if $A'$ is $\frac{1}{2}$-equisupported, then call {\tt Bicyclic($A'$)} of Section \ref{sec:recbicyc}; go to 6;\\

4) \> {\bf for } \= every row index $i$, {\bf do} \\
\> \> call {\tt RCyclic($A'$,$\{i\}$)} of Section \ref{sec:recOnehalfbinet}; \\
\>\> if we obtain an $\{i\}$-cyclic representation $G(A')$ of $A'$, then go to 6;\\
 \> {\bf endfor } \\

5) \> {\bf for } \= every pair $\{\epsilon,\rho\}$ of row indexes, 
{\bf do} \\
\> \> let $S_0=\{ j \, : \, \epsilon,\rho \in s(A_{\bullet j}) \}$;  \\
\> \>{\bf if} \= $S_0= \emptyset$ {\bf then} \\
\> \>\> call {\tt CentralI($A'$,$\{\epsilon,\rho\}$)} of Section \ref{sec:S0empty}; \\
\> \>{\bf otherwise}\\
\> \>\> call {\tt CentralII($A'$,$\{\epsilon,\rho\}$)} of Section \ref{sec:S0notempty}; \\
\> \>{\bf endif}\\
\>  \> if we have an $\{\epsilon,\rho\}$-central representation $G(A')$ of $A'$, then
go to 6; \\
\> {\bf endfor}\\

6) \> if we have a binet representation $G(A')$ of $A'$, then  compute in reverse order \\
\> the inverse of all operations performed in step 1  on $G(A')$ and output the resulting \\
\> binet representation $G(A)$ of $A$, otherwise output that $A$ is not binet; \\

\end{tabbing}

We mention here that the procedures RCyclic, CentralI and  CentralII make use of a digraph, called $D$, and described in chapter \ref{ch:multidiD}. If $A$ has an $R^*$-cyclic, $R^*$-central or $R^*$-network representation $G(A)$, where $R^*$ is a row index subset of $A$, then the basic forest in $G(A)$ with edge index set $\overline{R^*}$ has some nice properties that are recognizable thanks to the digraph $D$. A subroutine of the procedures RCyclic and CentralI called Forest and given in Section \ref{sec:For} computes some feasible spanning forest of $D$.

Using the results of the next chapters, we prove here Theorem \ref{thmRecBinetMain}.\\

\noindent
{\bf Proof of Theorem \ref{thmRecBinetMain}.}
The correctness of the algorithm has been proved earlier. Let us analyze its running time. By Theorem \ref{thmCamionSimp2}, the procedure Camion takes time $O(n^2 m^2)$.
Let $\alpha'$ be the number of nonzero entries of the matrix $A'$  in step 2.
By Theorem  \ref{thmDecdecomp}, the procedure  Decomposition takes time $O(n m^2 \alpha')$. By Theorem \ref{thmbicyclicpro}, the procedure Bicyclic works in time $O(nm \alpha')$.
Moreover, by Theorem \ref{thmcyclicproCyc},
the procedure RCyclic takes time $O(nm \alpha')$, and the number of passages through step 4 does not exceed $n$.
Finally, by Theorems \ref{thmcentralCarEmpty} and \ref{thmcentralCarNotEmpty}, the procedures CentralI and CentralII work in time $O(n^3\alpha')$, and the number of passages through step 5 does not exceed $n^2$. If $m> 4\left( \begin{array}{c}
n\\
2 
\end{array} \right) + 2n +1 $, then in step 1 the subroutine Camion directly outputs that $A$ is not binet. Otherwise, using previous arguments, we deduce that the running time of the algorithm Binet is $O(\max(n^2m^2, n m^2 \alpha', n^2 m \alpha',n^5 \alpha'))$. Hence, since $\alpha'\le nm$, the computational effort of the algorithm Binet
is $O(n^6 m)$.
{\hfill$\BBox{\rule{.3mm}{3mm}}$} \\

\section{The procedure Decomposition}\label{sec:decomppro}

Suppose we are given a connected matrix $A$ in $\{0,\frac{1}{2},
1, 2\}^{n\times  m}$. Let $\alpha$ be the number of nonzero entries in $A$.
The goal of the present section is to decompose the matrix $A$ into smaller pieces by reducing the binet recognition problem to the simpler case of $R$-cyclic matrices and  $\frac{1}{2}$-binet matrices. For that purpose, we describe a procedure called Decomposition which takes the matrix $A$ and a row index subset $Q$ of $A$ as input. Under certain conditions, the procedure computes a binet representation of $A$ such that each basic edge with index in $Q$ is a half-edge; the name of the procedure comes from the fact that it "decomposes" the matrix $A$ into two matrices (if $A$ has a $\frac{1}{2}$-entry), and then works iteratively on one of both matrices. A proof of the following theorem is given.

\begin{thm} \label{thmDecdecomp}
The matrix $A$ is binet if and only if one of the following three statements is valid:

\begin{itemize}

\item[1)] $A$  is bicyclic and $\frac{1}{2}$-equisupported, or

\item[2)] $A$ is cyclic and without any $\frac{1}{2}$-entry, or

\item[3)] the procedure Decomposition with input $A$ and row index subset $Q=\emptyset$ provides a binet representation of $A$. 

\end{itemize}
Moreover, the running time of the procedure Decomposition is $O(nm^2 \alpha)$.
\end{thm}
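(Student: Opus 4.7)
The plan is to leverage Lemma \ref{lemdefiWeight1}: in any binet representation $G(A)$ of a nonnegative matrix, a $\frac{1}{2}$-entry $A_{ij}$ arises only when $f_j$ is a $2$-edge or a half-edge and $e_i$ lies on a full basic cycle. A first step is to turn this into an intersection lemma stating that, for every column $j$ with $s_{\frac{1}{2}}(A_{\bullet j})\neq \emptyset$, the set $s_{\frac{1}{2}}(A_{\bullet j})$ coincides with the edge-index set of the unique full basic cycle met by the fundamental circuit of $f_j$; and whenever two such columns have intersecting but distinct $\frac{1}{2}$-supports, the intersection itself is forced to be the edge-set of a full basic cycle. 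Consequently all $\frac{1}{2}$-supports lying in one ``$\frac{1}{2}$-intersection class'' share a common full basic cycle of $G(A)$, and the row-subsets $R_1,\ldots,R_\delta$ produced from these classes are exactly the edge-index sets of the full basic cycles of any binet representation.

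From the intersection lemma I would formalise the decomposition. For each $i$, $A_i$ is the submatrix of rows in $R_i$ and the columns whose $\frac{1}{2}$-support meets $R_i$; $A(\tau)$ is obtained by deleting all rows and columns used by any $A_i$ from $A$ and appending a row $\tau$ encoding the remaining $\frac{1}{2}$-information. The central structural claim is the equivalence ``$A$ is binet $\Longleftrightarrow$ each $A_i$ is $R_i$-cyclic and $A(\tau)$ is $\frac{1}{2}$-binet''. The forward direction reads the $G(A_i)$ off from $G(A)$, verifies $R_i$-cyclicity, and collapses each full basic cycle with index set $R_i$ into a single basic half-edge, yielding a $\frac{1}{2}$-binet representation of $A(\tau)$. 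The reverse direction reopens, inside the $\frac{1}{2}$-binet representation of $A(\tau)$, each such half-edge into its corresponding $R_i$-cyclic subgraph; a direct application of the procedure WeightCircuit then checks that the reconstructed column weights indeed match $A$. This equivalence is exactly what Decomposition mechanises by calling RCyclic on each $A_i$ and OnehalfbinetQ on $A(\tau)$, and it produces case $3)$ of the theorem.

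The remaining two degenerate regimes have to be isolated. If $A$ carries no $\frac{1}{2}$-entry the family $\{R_i\}$ is empty, $A(\tau)=A$, and the central equivalence reduces to ``$A$ binet iff $A$ is $\frac{1}{2}$-binet''; any binet $A$ for which OnehalfbinetQ fails must therefore admit a binet representation with at least one full basic cycle but, in the absence of $\frac{1}{2}$-entries, at most one such cycle, which is case $2)$. If $A$ has $\frac{1}{2}$-entries and is $\frac{1}{2}$-equisupported, then $\delta=1$ and all $\frac{1}{2}$-columns share the same support $R_1$; the single call to RCyclic on $A_1$ cannot succeed whenever the two full basic cycles of a binet representation are both supported on $R_1$, which is precisely the bicyclic case $1)$. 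The hard part will be the glueing step of the reverse implication in the central equivalence: one must show that the switching choices made locally inside each $R_i$-cyclic representation of $A_i$ are mutually compatible with those of the $\frac{1}{2}$-binet representation of $A(\tau)$ so that no spurious fundamental circuit is created by reopening a half-edge. Lemma \ref{lemdefiSwi} (switching invariance) and the properness convention for binet representations will be the main tools. For complexity, each outer iteration of Decomposition locates one class in $O(m\alpha)$, invokes RCyclic on some $A_i$ at cost $O(nm\alpha)$ (and OnehalfbinetQ once on $A(\tau)$ at the same cost); with at most $O(m)$ such iterations the claimed bound $O(nm^2\alpha)$ follows.
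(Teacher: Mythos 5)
Your overall route is the one the paper takes: the same appeal to Lemma \ref{lemdefiWeight1} to tie $\frac{1}{2}$-supports to full basic cycles, the same peeling off of submatrices $A(R_i)$ tested for $R_i$-cyclicity with a residual matrix $A(\tau)$ tested for half-edge representability, the same central equivalence (the paper's Proposition \ref{propDecX}), and the same isolation of the cyclic and bicyclic degenerate regimes via Lemma \ref{lemBinetintegral}. Two points, however, need repair.

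First, your ``intersection lemma'' is false as stated. For a $2$-edge $f_j$ whose fundamental circuit is a handcuff joining two full basic cycles, $s_{\frac{1}{2}}(A_{\bullet j})$ is the union of the edge index sets of \emph{both} cycles, not of a unique one. The paper therefore needs three separate statements: when a connective pair exists, the intersection of the two $\frac{1}{2}$-supports is one full cycle (Lemma \ref{lemhalf1}); when no connective pair exists \emph{and the representation is not bicyclic}, connectivity of $A$ forces a third cycle and hence a connective pair would exist if the support covered two cycles, so the support is one cycle (Lemma \ref{lemhalf2}); and a bicyclic non-equisupported matrix always admits a connective pair (Lemma \ref{lemhalf1b}). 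You do push the bicyclic case into a terminal exception, but the structural claim you rely on --- that every class $R_i$ is the edge index set of a single full basic cycle --- needs this case split \emph{before} you form the classes, not after; otherwise a single column whose $\frac{1}{2}$-support spans two cycles silently produces a wrong $R_i$ in a non-bicyclic representation, and nothing in your argument catches it.

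Second, the complexity accounting does not give the claimed bound. OnehalfbinetQ costs $O(nm^2\alpha)$ (Theorem \ref{thmOnhalfbinetQ}), not $O(nm\alpha)$, and is invoked only once, at the bottom of the recursion; this single call already accounts for the whole $O(nm^2\alpha)$. The iteration count is controlled by the number of rows carrying a $\frac{1}{2}$-entry (the induction variable of Proposition \ref{propdecRep}), so it is at most $n$, not $O(m)$; a crude ``iterations $\times$ per-iteration cost'' bound then yields $O(n^2m\alpha)$, which is not $O(nm^2\alpha)$ in general since nothing forces $n\le m$. The paper instead uses that the sets $R_i'$ produced across the recursion are pairwise disjoint (guaranteed by the check $Q\cap R'=\emptyset$ in step 5), so that $\sum_i n_i\le n$ and $\sum_i\alpha_i\le\alpha$ and the aggregate cost of all RCyclic calls telescopes to $O(nm\alpha)$. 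You need this disjointness argument; the per-iteration bound alone does not close the estimate.
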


Before stating the procedure Decomposition and proving Theorem 
\ref{thmDecdecomp}, we provide in Subsection \ref{subsec:infdec} 
some intuitions and graphical ideas on which these are based in an informal way. Then, in Subsection \ref{subsec:decomppro}, the procedure Decomposition and a formal proof of Theorem \ref{thmDecdecomp} are given.

\subsection{An informal sketch of the procedure}\label{subsec:infdec}

Let us consider the binet matrix $A$ given in Figure \ref{fig:exampleA}. Our aim is to construct a binet representation of $A$, for instance the one given in Figure \ref{fig:exampleA}, without knowing that such a representation exists. We describe some steps of the procedure Decomposition applied on $A$.

\begin{figure}[ht!]
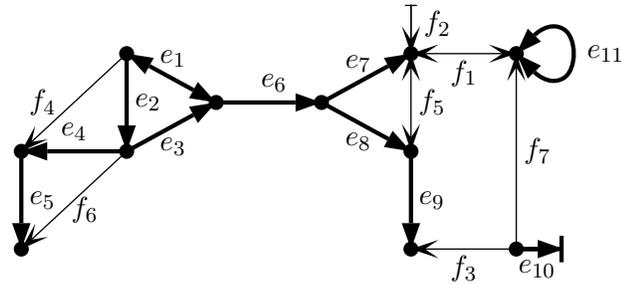


$
\begin{array}{cl}

\begin{tabular}{c|c|c|c|c|c|c|c|}
  & $f_1$ & $f_2$ & $f_3$ & $f_4$ & $f_5$ & $f_6$ & $f_7$  \\
  \hline
$e_1$  &$\frac{1}{2}$&$\frac{1}{2}$&$\frac{1}{2}$&0&1&0&0 \\
\hline
$e_2$  &$\frac{1}{2}$&$\frac{1}{2}$&$\frac{1}{2}$&1&1&0&0 \\
\hline
$e_3$  &$\frac{1}{2}$&$\frac{1}{2}$&$\frac{1}{2}$&0&1&0&0 \\
\hline
$e_4$  &0&0&0&1&0&1&0 \\
\hline
$e_5$  &0&0&0&0&0&1& 0\\
\hline
$e_6$  &1&1&1&0&2&0&0 \\
\hline
$e_7$  &1&1&0&0&1&0&0 \\
\hline
$e_8$  &0&0&1&0&1&0&0 \\
\hline
$e_{9}$  &0&0&1&0&0&0&0 \\
\hline
$e_{10}$ &0&0&1&0&0&0&1 \\
\hline  
$e_{11}$  &$\frac{1}{2}$&0&0&0&0&0&$\frac{1}{2}$ \\
\hline
\end{tabular}  &

\psset{xunit=1.4cm,yunit=1.3cm,linewidth=0.5pt,radius=0.1mm,arrowsize=7pt,
labelsep=1.5pt,fillcolor=black}

\pspicture(-1.5,0)(5,2.5)

\pscircle[fillstyle=solid](-1,0){.1}
\pscircle[fillstyle=solid](-1,1){.1}
\pscircle[fillstyle=solid](0,1){.1}
\pscircle[fillstyle=solid](0,2){.1}
\pscircle[fillstyle=solid](0.85,1.5){.1}
\pscircle[fillstyle=solid](1.85,1.5){.1}
\pscircle[fillstyle=solid](2.7,0){.1}
\pscircle[fillstyle=solid](2.7,1){.1}
\pscircle[fillstyle=solid](2.7,2){.1}
\pscircle[fillstyle=solid](3.7,0){.1}
\pscircle[fillstyle=solid](3.7,2){.1}

\psline[linewidth=1.6pt,arrowinset=0]{<->}(0,2)(0.85,1.5)
\rput(0.44,1.95){$e_1$}

\psline[linewidth=1.6pt,arrowinset=0]{<-}(0,1)(0,2)
\rput(0.2,1.5){$e_2$}

\psline[linewidth=1.6pt,arrowinset=0]{->}(0,1)(0.85,1.5)
\rput(0.44,1.05){$e_3$}

\psline[linewidth=1.6pt,arrowinset=0]{<-}(-1,1)(0,1)
\rput(-0.5,1.2){$e_4$}

\psline[linewidth=1.6pt,arrowinset=0]{<-}(-1,0)(-1,1)
\rput(-0.8,0.5){$e_5$}

\psline[linewidth=1.6pt,arrowinset=0]{->}(0.85,1.5)(1.85,1.5)
\rput(1.4,1.7){$e_6$}

\psline[linewidth=1.6pt,arrowinset=0]{->}(1.85,1.5)(2.7,2)
\rput(2.2,1.9){$e_7$}

\psline[linewidth=1.6pt,arrowinset=0]{->}(1.85,1.5)(2.7,1)
\rput(2.2,1.1){$e_8$}

\psline[linewidth=1.6pt,arrowinset=0]{->}(2.7,1)(2.7,0)
\rput(2.9,0.5){$e_9$}

\psline[linewidth=1.6pt,arrowinset=0]{-|}(3.7,0)(4.15,0)
\psline[linewidth=1.6pt,arrowinset=0]{->}(3.7,0)(4.15,0)
\rput(3.9,-0.2){$e_{10}$}

\pscurve[linewidth=1.6pt,arrowinset=0]{<->}(3.7,2)(4.15,2.25)(4.15,1.75)(3.7,2)
\rput(4.55,2){$e_{11}$}


\psline[arrowinset=.5,arrowlength=1.5]{<->}(2.7,2)(3.7,2)
\rput(3.2,1.8){$f_1$}

\psline[arrowinset=.5,arrowlength=1.5]{-|}(2.7,2)(2.7,2.5)
\psline[arrowinset=.5,arrowlength=1.5]{<-}(2.7,2)(2.7,2.5)
\rput(2.95,2.3){$f_{2}$}

\psline[arrowinset=.5,arrowlength=1.5]{<-}(2.7,0)(3.7,0)
\rput(3.2,-.2){$f_3$}

\psline[arrowinset=.5,arrowlength=1.5]{<-}(-1,1)(0,2)
\rput(-0.8,1.5){$f_4$}

\psline[arrowinset=.5,arrowlength=1.5]{<-}(-1,0)(0,1)
\rput(-0.4,0.4){$f_6$}

\psline[arrowinset=.5,arrowlength=1.5]{<->}(2.7,1)(2.7,2)
\rput(2.9,1.5){$f_5$}

\psline[arrowinset=.5,arrowlength=1.5]{->}(3.7,0)(3.7,2)
\rput(3.9,1){$f_7$}

\endpspicture

\end{array}$
\vspace{.1cm}

\caption{A binet matrix $A$ with a binet representation of $A$.}
\label{fig:exampleA}

\end{figure}

Provided that there exists a binet representation of $A$, one arising question is whether one can locate the edge index set of a basic full cycle. We observe that the $\frac{1}{2}$-support of the first and second column intersect and are not equal. Using this simple property, it will be proved that $s_{\frac{1}{2}}(A_{\bullet 1}) \cap s_{\frac{1}{2}}(A_{\bullet 2})$ is the edge index set of a full basic cycle in any binet representation of $A$, if one exists. Can we derive a general statement? 

Once some row index subset $R$ of $A$, for instance $R=\{1,2,3\}=s_{\frac{1}{2}}(A_{\bullet 1}) \cap s_{\frac{1}{2}}(A_{\bullet 2})$ has been located, we consider any column $A_{\bullet j}$, namely $j=4$ and $j=5$, whose support intersects $R$ and such that $R \nsubseteq s_{\frac{1}{2}}(A_{\bullet j})$. Provided that $A$ has a binet representation $G(A)$, it will be proved that the nonbasic edges $f_4$ and  $f_5$ correspond to $1$-edges in $G(A)$ whose fundamental circuit intersects the full cycle with edge index set $R$; then, since  $s(A_{\bullet 6}) \cap s(A_{\bullet 4}) \neq \emptyset$ and $R \nsubseteq s_{\frac{1}{2}}(A_{\bullet 6})$, 
\begin{figure}[h!]
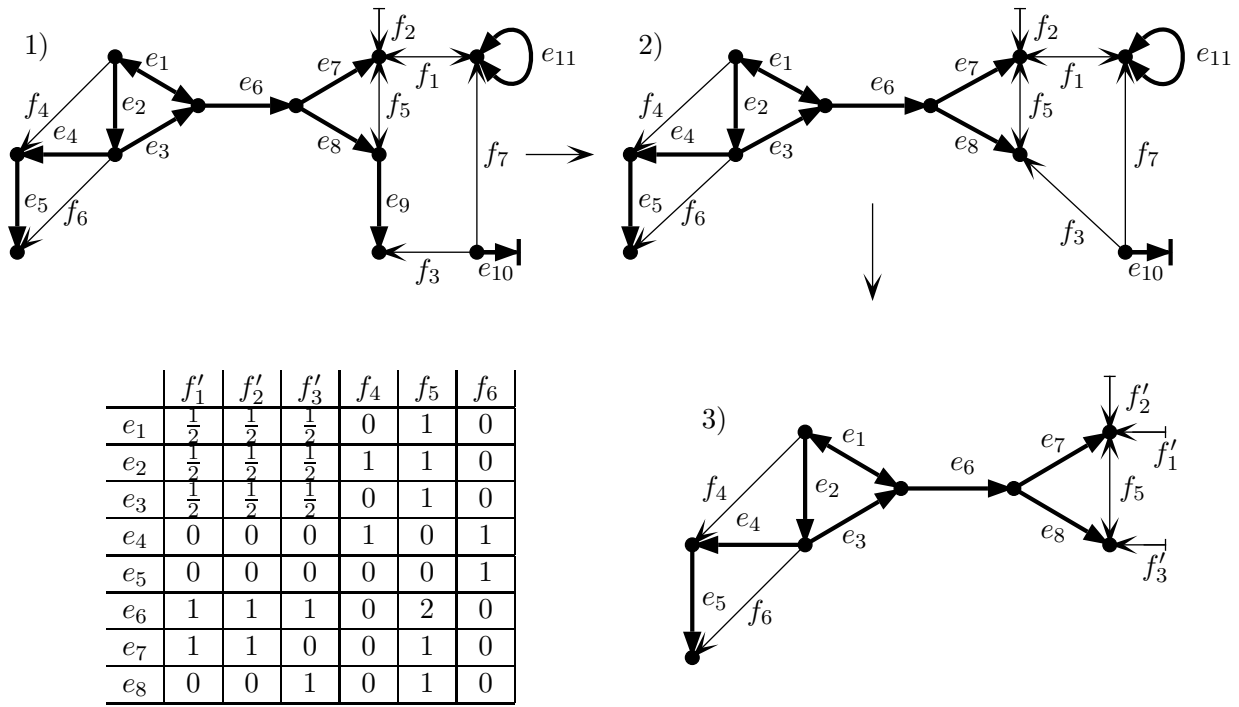


$
\begin{array}{cc}

\psset{xunit=1.3cm,yunit=1.3cm,linewidth=0.5pt,radius=0.1mm,arrowsize=7pt,
labelsep=1.5pt,fillcolor=black}

\pspicture(-1,0)(5,2.5)

\pscircle[fillstyle=solid](-1,0){.1}
\pscircle[fillstyle=solid](-1,1){.1}
\pscircle[fillstyle=solid](0,1){.1}
\pscircle[fillstyle=solid](0,2){.1}
\pscircle[fillstyle=solid](0.85,1.5){.1}
\pscircle[fillstyle=solid](1.85,1.5){.1}
\pscircle[fillstyle=solid](2.7,0){.1}
\pscircle[fillstyle=solid](2.7,1){.1}
\pscircle[fillstyle=solid](2.7,2){.1}
\pscircle[fillstyle=solid](3.7,0){.1}
\pscircle[fillstyle=solid](3.7,2){.1}

\rput(-.8,2.1){1)}

\psline[linewidth=1.6pt,arrowinset=0]{<->}(0,2)(0.85,1.5)
\rput(0.44,1.95){$e_1$}

\psline[linewidth=1.6pt,arrowinset=0]{<-}(0,1)(0,2)
\rput(0.2,1.5){$e_2$}

\psline[linewidth=1.6pt,arrowinset=0]{->}(0,1)(0.85,1.5)
\rput(0.44,1.05){$e_3$}

\psline[linewidth=1.6pt,arrowinset=0]{<-}(-1,1)(0,1)
\rput(-0.5,1.2){$e_4$}

\psline[linewidth=1.6pt,arrowinset=0]{<-}(-1,0)(-1,1)
\rput(-0.8,0.5){$e_5$}

\psline[linewidth=1.6pt,arrowinset=0]{->}(0.85,1.5)(1.85,1.5)
\rput(1.4,1.7){$e_6$}

\psline[linewidth=1.6pt,arrowinset=0]{->}(1.85,1.5)(2.7,2)
\rput(2.2,1.9){$e_7$}

\psline[linewidth=1.6pt,arrowinset=0]{->}(1.85,1.5)(2.7,1)
\rput(2.2,1.1){$e_8$}

\psline[linewidth=1.6pt,arrowinset=0]{->}(2.7,1)(2.7,0)
\rput(2.9,0.5){$e_9$}

\psline[linewidth=1.6pt,arrowinset=0]{-|}(3.7,0)(4.15,0)
\psline[linewidth=1.6pt,arrowinset=0]{->}(3.7,0)(4.15,0)
\rput(3.9,-0.2){$e_{10}$}

\pscurve[linewidth=1.6pt,arrowinset=0]{<->}(3.7,2)(4.15,2.25)(4.15,1.75)(3.7,2)
\rput(4.55,2){$e_{11}$}


\psline[arrowinset=.5,arrowlength=1.5]{<->}(2.7,2)(3.7,2)
\rput(3.2,1.8){$f_1$}

\psline[arrowinset=.5,arrowlength=1.5]{-|}(2.7,2)(2.7,2.5)
\psline[arrowinset=.5,arrowlength=1.5]{<-}(2.7,2)(2.7,2.5)
\rput(2.95,2.3){$f_{2}$}

\psline[arrowinset=.5,arrowlength=1.5]{<-}(2.7,0)(3.7,0)
\rput(3.2,-.2){$f_3$}

\psline[arrowinset=.5,arrowlength=1.5]{<-}(-1,1)(0,2)
\rput(-0.8,1.5){$f_4$}

\psline[arrowinset=.5,arrowlength=1.5]{<-}(-1,0)(0,1)
\rput(-0.4,0.4){$f_6$}

\psline[arrowinset=.5,arrowlength=1.5]{<->}(2.7,1)(2.7,2)
\rput(2.9,1.5){$f_5$}

\psline[arrowinset=.5,arrowlength=1.5]{->}(3.7,0)(3.7,2)
\rput(3.9,1){$f_7$}

\psline[arrowinset=.5,arrowlength=1.5]{->}(4.2,1)(4.9,1)

\endpspicture &

\psset{xunit=1.4cm,yunit=1.3cm,linewidth=0.5pt,radius=0.1mm,arrowsize=7pt,
labelsep=1.5pt,fillcolor=black}

\pspicture(-1,0)(5,2.5)

\pscircle[fillstyle=solid](-1,0){.1}
\pscircle[fillstyle=solid](-1,1){.1}
\pscircle[fillstyle=solid](0,1){.1}
\pscircle[fillstyle=solid](0,2){.1}
\pscircle[fillstyle=solid](0.85,1.5){.1}
\pscircle[fillstyle=solid](1.85,1.5){.1}
\pscircle[fillstyle=solid](2.7,1){.1}
\pscircle[fillstyle=solid](2.7,2){.1}
\pscircle[fillstyle=solid](3.7,0){.1}
\pscircle[fillstyle=solid](3.7,2){.1}

\rput(-.8,2.1){2)}

\psline[linewidth=1.6pt,arrowinset=0]{<->}(0,2)(0.85,1.5)
\rput(0.44,1.95){$e_1$}

\psline[linewidth=1.6pt,arrowinset=0]{<-}(0,1)(0,2)
\rput(0.2,1.5){$e_2$}

\psline[linewidth=1.6pt,arrowinset=0]{->}(0,1)(0.85,1.5)
\rput(0.44,1.05){$e_3$}

\psline[linewidth=1.6pt,arrowinset=0]{<-}(-1,1)(0,1)
\rput(-0.5,1.2){$e_4$}

\psline[linewidth=1.6pt,arrowinset=0]{<-}(-1,0)(-1,1)
\rput(-0.8,0.5){$e_5$}

\psline[linewidth=1.6pt,arrowinset=0]{->}(0.85,1.5)(1.85,1.5)
\rput(1.4,1.7){$e_6$}

\psline[linewidth=1.6pt,arrowinset=0]{->}(1.85,1.5)(2.7,2)
\rput(2.2,1.9){$e_7$}

\psline[linewidth=1.6pt,arrowinset=0]{->}(1.85,1.5)(2.7,1)
\rput(2.2,1.1){$e_8$}


\psline[linewidth=1.6pt,arrowinset=0]{-|}(3.7,0)(4.15,0)
\psline[linewidth=1.6pt,arrowinset=0]{->}(3.7,0)(4.15,0)
\rput(3.9,-0.2){$e_{10}$}

\pscurve[linewidth=1.6pt,arrowinset=0]{<->}(3.7,2)(4.15,2.25)(4.15,1.75)(3.7,2)
\rput(4.55,2){$e_{11}$}


\psline[arrowinset=.5,arrowlength=1.5]{<->}(2.7,2)(3.7,2)
\rput(3.2,1.8){$f_1$}

\psline[arrowinset=.5,arrowlength=1.5]{-|}(2.7,2)(2.7,2.5)
\psline[arrowinset=.5,arrowlength=1.5]{<-}(2.7,2)(2.7,2.5)
\rput(2.95,2.3){$f_{2}$}

\psline[arrowinset=.5,arrowlength=1.5]{<-}(2.7,1)(3.7,0)
\rput(3.2,.2){$f_3$}

\psline[arrowinset=.5,arrowlength=1.5]{<-}(-1,1)(0,2)
\rput(-0.8,1.5){$f_4$}

\psline[arrowinset=.5,arrowlength=1.5]{<-}(-1,0)(0,1)
\rput(-0.4,0.4){$f_6$}

\psline[arrowinset=.5,arrowlength=1.5]{<->}(2.7,1)(2.7,2)
\rput(2.9,1.5){$f_5$}

\psline[arrowinset=.5,arrowlength=1.5]{->}(3.7,0)(3.7,2)
\rput(3.9,1){$f_7$}

\psline[arrowinset=.5,arrowlength=1.5]{->}(1.3,.5)(1.3,-.5)

\endpspicture \\

\begin{tabular}{c|c|c|c|c|c|c|}
  & $f_1'$ & $f_2'$ & $f_3'$ & $f_4$ & $f_5$ & $f_6$  \\
  \hline
$e_1$  &$\frac{1}{2}$&$\frac{1}{2}$&$\frac{1}{2}$&0&1&0\\
\hline
$e_2$  &$\frac{1}{2}$&$\frac{1}{2}$&$\frac{1}{2}$&1&1&0 \\
\hline
$e_3$  &$\frac{1}{2}$&$\frac{1}{2}$&$\frac{1}{2}$&0&1&0 \\
\hline
$e_4$  &0&0&0&1&0&1 \\
\hline
$e_5$  &0&0&0&0&0&1 \\
\hline
$e_6$  &1&1&1&0&2&0\\
\hline
$e_7$  &1&1&0&0&1&0\\
\hline
$e_8$  &0&0&1&0&1&0 \\
\hline
\end{tabular} &

\psset{xunit=1.5cm,yunit=1.5cm,linewidth=0.5pt,radius=0.1mm,arrowsize=7pt,
labelsep=1.5pt,fillcolor=black}

\pspicture(-1.5,1)(4,3.5)

\pscircle[fillstyle=solid](-1,0){.1}
\pscircle[fillstyle=solid](-1,1){.1}
\pscircle[fillstyle=solid](0,1){.1}
\pscircle[fillstyle=solid](0,2){.1}
\pscircle[fillstyle=solid](0.85,1.5){.1}
\pscircle[fillstyle=solid](1.85,1.5){.1}
\pscircle[fillstyle=solid](2.7,1){.1}
\pscircle[fillstyle=solid](2.7,2){.1}

\rput(-.8,2.1){3)}

\psline[linewidth=1.6pt,arrowinset=0]{<->}(0,2)(0.85,1.5)
\rput(0.44,1.95){$e_1$}

\psline[linewidth=1.6pt,arrowinset=0]{<-}(0,1)(0,2)
\rput(0.2,1.5){$e_2$}

\psline[linewidth=1.6pt,arrowinset=0]{->}(0,1)(0.85,1.5)
\rput(0.44,1.05){$e_3$}

\psline[linewidth=1.6pt,arrowinset=0]{<-}(-1,1)(0,1)
\rput(-0.5,1.2){$e_4$}


\psline[linewidth=1.6pt,arrowinset=0]{<-}(-1,0)(-1,1)
\rput(-0.8,0.5){$e_5$}

\psline[linewidth=1.6pt,arrowinset=0]{->}(0.85,1.5)(1.85,1.5)
\rput(1.4,1.7){$e_6$}

\psline[linewidth=1.6pt,arrowinset=0]{->}(1.85,1.5)(2.7,2)
\rput(2.2,1.9){$e_7$}

\psline[linewidth=1.6pt,arrowinset=0]{->}(1.85,1.5)(2.7,1)
\rput(2.2,1.1){$e_8$}

\psline[arrowinset=.5,arrowlength=1.5]{<-}(2.7,2)(3.2,2)
\psline[arrowinset=.5,arrowlength=1.5]{-|}(2.9,2)(3.2,2)
\rput(3.2,1.8){$f_1'$}

\psline[arrowinset=.5,arrowlength=1.5]{-|}(2.7,2)(2.7,2.5)
\psline[arrowinset=.5,arrowlength=1.5]{<-}(2.7,2)(2.7,2.5)
\rput(2.95,2.3){$f_{2}'$}

\psline[arrowinset=.5,arrowlength=1.5]{<-}(2.7,1)(3.2,1)
\psline[arrowinset=.5,arrowlength=1.5]{-|}(3,1)(3.2,1)
\rput(3.1,0.8){$f_3'$}


\psline[arrowinset=.5,arrowlength=1.5]{<-}(-1,1)(0,2)
\rput(-0.8,1.5){$f_4$}

\psline[arrowinset=.5,arrowlength=1.5]{<-}(-1,0)(0,1)
\rput(-0.4,0.4){$f_6$}

\psline[arrowinset=.5,arrowlength=1.5]{<->}(2.7,1)(2.7,2)
\rput(2.9,1.5){$f_5$}


\endpspicture

\end{array}
$\\
\vspace{.1cm}

\caption{the matrix $A(R)$ obtained from $A$ given in Figure \ref{fig:exampleA} by the procedure Decomposition, and a $\{1,2,3\}$-cyclic representation of $A(R)$ on the right of $A(R)$ obtained from $G(A)$ (from 1) to 2) edge $e_9$ is contracted, and from 2) to 3) edges $e_{10}$ and $e_{11}$ are contracted).}
\label{fig:A1}
\end{figure}
one can show that $f_6$ is also a $1$-edge and $R' = R \cup s(A_{\bullet 4}) \cup s(A_{\bullet 5}) \cup s(A_{\bullet 6})$ is 
the edge index set of a $1$-tree in $G(A)$. The procedure Decomposition builds up the matrix $A(R)=A_{R' \times \{1,\ldots,6 \}}$. Given a binet representation $G(A)$ of $A$, if one exists, one can construct an $R$-cyclic representation
$G(A(R))$ of $A(R)$, by contracting all basic edges with index in $\overline{R'}$ and deleting the remaining loose edges
as illustrated in  Figure \ref{fig:A1}.

We may write $A= \left[ \begin{array}{c}
A(R) \,\, O_{8\times 1} \\
A'
\end{array}\right]
$, where $A'$ is a submatrix of $A$. In general, it is not sufficient to have binet representations $G(A(R))$ and $G(A')$ of the matrices $A(R)$ and $A'$, respectively, to compute a binet representation of $A$. The procedure Decomposition constructs a matrix $A(\tau)$ whose $A'$ (without zero columns) 
and $\tau$ are submatrices.

Suppose that $A$ has a binet representation $G(A)$.
Let us look at the graphical interpretation of $A(\tau)$, without giving an explicit definition of this matrix. Delete in $G(A)$ the edges in $G(A(R))$, and the remaining isolated nodes. We are left with a connected bidirected graph containing some nodes of $G(A(R))$. By adding a basic half-edge entering each left node of $G(A(R))$, we obtain a binet representation of the matrix $A(\tau)$. We have $\tau=\left( \begin{array}{ccc}
1 & 1 & 0\\ 0& 0& 1 \end{array} \right)$ and
$A(\tau)$ has $q:=2$ more rows than $A'$ (see Figure \ref{fig:exampleAtau}).

\begin{figure}[ht!]
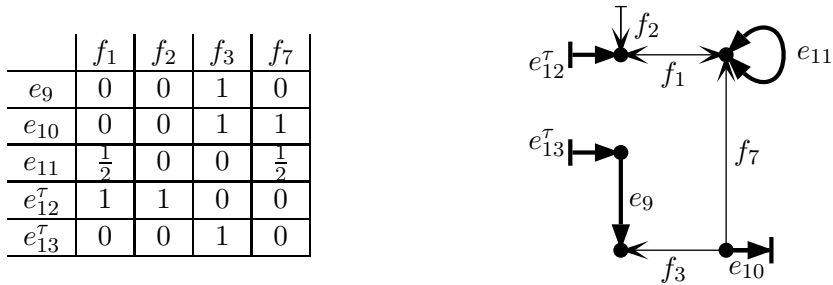


\begin{center}
$
\begin{array}{cl}

\begin{tabular}{c|c|c|c|c|c|c|c|}
  & $f_1$ & $f_2$ & $f_3$ & $f_7$  \\
\hline 
$e_{9}$  &0&0&1&0 \\
\hline
$e_{10}$ &0&0&1&1 \\
\hline  
$e_{11}$  &$\frac{1}{2}$&0&0&$\frac{1}{2}$ \\
\hline
$e_{12}^\tau$  &1&1&0&0 \\
\hline
$e_{13}^\tau$  &0&0&1& 0\\
\hline
\end{tabular} &

\psset{xunit=1.4cm,yunit=1.3cm,linewidth=0.5pt,radius=0.1mm,arrowsize=7pt,
labelsep=1.5pt,fillcolor=black}

\pspicture(0,1)(5,2.5)

\pscircle[fillstyle=solid](2.7,0){.1}
\pscircle[fillstyle=solid](2.7,1){.1}
\pscircle[fillstyle=solid](2.7,2){.1}
\pscircle[fillstyle=solid](3.7,0){.1}
\pscircle[fillstyle=solid](3.7,2){.1}

\psline[linewidth=1.6pt,arrowinset=0]{|->}(2.2,2)(2.7,2)
\rput(2,1.9){$e_{12}^\tau$}

\psline[linewidth=1.6pt,arrowinset=0]{|->}(2.2,1)(2.7,1)
\rput(2,1.1){$e_{13}^\tau$}

\psline[linewidth=1.6pt,arrowinset=0]{->}(2.7,1)(2.7,0)
\rput(2.9,0.5){$e_9$}

\psline[linewidth=1.6pt,arrowinset=0]{-|}(3.7,0)(4.15,0)
\psline[linewidth=1.6pt,arrowinset=0]{->}(3.7,0)(4.15,0)
\rput(3.9,-0.2){$e_{10}$}

\pscurve[linewidth=1.6pt,arrowinset=0]{<->}(3.7,2)(4.15,2.25)(4.15,1.75)(3.7,2)
\rput(4.55,2){$e_{11}$}


\psline[arrowinset=.5,arrowlength=1.5]{<->}(2.7,2)(3.7,2)
\rput(3.2,1.8){$f_1$}

\psline[arrowinset=.5,arrowlength=1.5]{-|}(2.7,2)(2.7,2.5)
\psline[arrowinset=.5,arrowlength=1.5]{<-}(2.7,2)(2.7,2.5)
\rput(2.95,2.3){$f_{2}$}

\psline[arrowinset=.5,arrowlength=1.5]{<-}(2.7,0)(3.7,0)
\rput(3.2,-.2){$f_3$}

\psline[arrowinset=.5,arrowlength=1.5]{->}(3.7,0)(3.7,2)
\rput(3.9,1){$f_7$}

\endpspicture 

\end{array}$
\end{center}
\vspace{.3cm}
\caption{The binet matrix $A(\tau)$ and a binet representation of $A(\tau)$, where $A$ is given in Figure \ref{fig:exampleA}.}
\label{fig:exampleAtau}

\end{figure}

The procedure Decomposition works iteratively by searching for a  binet representation of $A(\tau)$ such that its last $q$ rows correspond to half-edges denoted by $e_{12}^\tau$ and $e_{13}^\tau$.
If binet representations $G(A(R))$ and $G(A(\tau))$ of $A(R)$ and $A(\tau)$, respectively, have been found as in Figures \ref{fig:A1} and \ref{fig:exampleAtau}, it is possible to
construct a binet representation of $A$ as follows. Identify the endnode of $e_{12}^\tau$ (respectively, $e_{13}^\tau$) with the endnode of $f_1'$ and $f_2'$ (respectively, $f_3'$). Then delete the edges $e_{12}^\tau$, $e_{13}^\tau$, $f_1'$, $f_2'$ and $f_3'$. See Figure \ref{fig:exampleARtau-A}.

\begin{figure}[ht!]
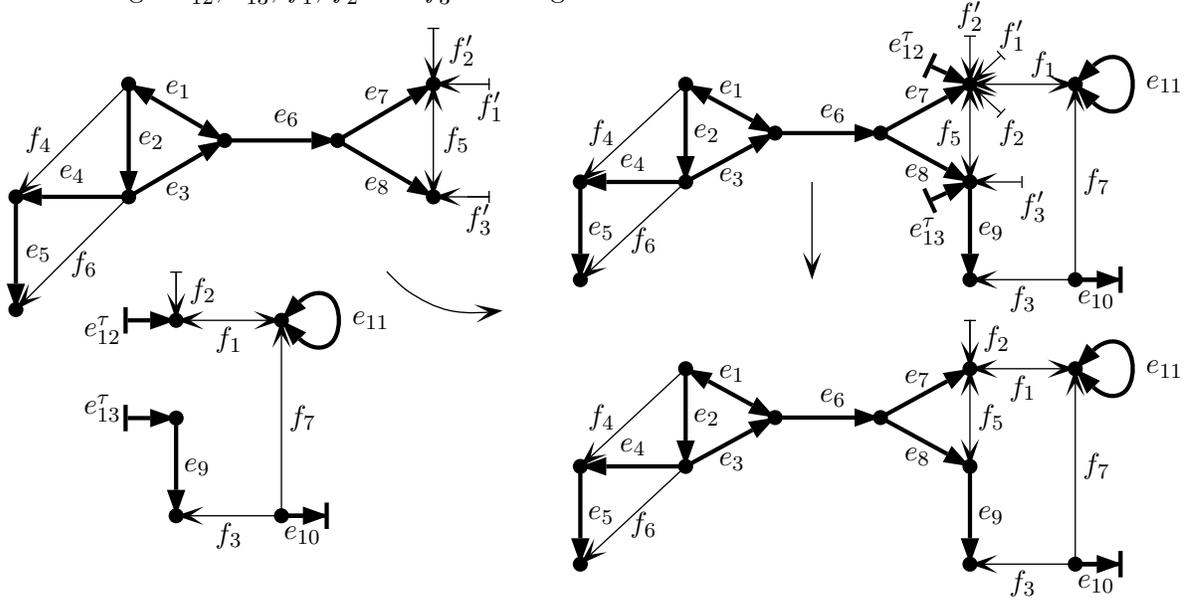


$
\begin{array}{cc}
\begin{array}{c}

\psset{xunit=1.5cm,yunit=1.5cm,linewidth=0.5pt,radius=0.1mm,arrowsize=7pt,
labelsep=1.5pt,fillcolor=black}

\pspicture(-1.2,0)(4,2)

\pscircle[fillstyle=solid](-1,0){.1}
\pscircle[fillstyle=solid](-1,1){.1}
\pscircle[fillstyle=solid](0,1){.1}
\pscircle[fillstyle=solid](0,2){.1}
\pscircle[fillstyle=solid](0.85,1.5){.1}
\pscircle[fillstyle=solid](1.85,1.5){.1}
\pscircle[fillstyle=solid](2.7,1){.1}
\pscircle[fillstyle=solid](2.7,2){.1}

\psline[linewidth=1.6pt,arrowinset=0]{<->}(0,2)(0.85,1.5)
\rput(0.44,1.95){$e_1$}

\psline[linewidth=1.6pt,arrowinset=0]{<-}(0,1)(0,2)
\rput(0.2,1.5){$e_2$}

\psline[linewidth=1.6pt,arrowinset=0]{->}(0,1)(0.85,1.5)
\rput(0.44,1.05){$e_3$}

\psline[linewidth=1.6pt,arrowinset=0]{<-}(-1,1)(0,1)
\rput(-0.5,1.2){$e_4$}


\psline[linewidth=1.6pt,arrowinset=0]{<-}(-1,0)(-1,1)
\rput(-0.8,0.5){$e_5$}

\psline[linewidth=1.6pt,arrowinset=0]{->}(0.85,1.5)(1.85,1.5)
\rput(1.4,1.7){$e_6$}

\psline[linewidth=1.6pt,arrowinset=0]{->}(1.85,1.5)(2.7,2)
\rput(2.2,1.9){$e_7$}

\psline[linewidth=1.6pt,arrowinset=0]{->}(1.85,1.5)(2.7,1)
\rput(2.2,1.1){$e_8$}

\psline[arrowinset=.5,arrowlength=1.5]{<-}(2.7,2)(3.2,2)
\psline[arrowinset=.5,arrowlength=1.5]{-|}(2.9,2)(3.2,2)
\rput(3.2,1.8){$f_1'$}

\psline[arrowinset=.5,arrowlength=1.5]{-|}(2.7,2)(2.7,2.5)
\psline[arrowinset=.5,arrowlength=1.5]{<-}(2.7,2)(2.7,2.5)
\rput(2.95,2.3){$f_{2}'$}

\psline[arrowinset=.5,arrowlength=1.5]{<-}(2.7,1)(3.2,1)
\psline[arrowinset=.5,arrowlength=1.5]{-|}(3,1)(3.2,1)
\rput(3.1,0.8){$f_3'$}


\psline[arrowinset=.5,arrowlength=1.5]{<-}(-1,1)(0,2)
\rput(-0.8,1.5){$f_4$}

\psline[arrowinset=.5,arrowlength=1.5]{<-}(-1,0)(0,1)
\rput(-0.4,0.4){$f_6$}

\psline[arrowinset=.5,arrowlength=1.5]{<->}(2.7,1)(2.7,2)
\rput(2.9,1.5){$f_5$}


\endpspicture \\

\psset{xunit=1.4cm,yunit=1.3cm,linewidth=0.5pt,radius=0.1mm,arrowsize=7pt,
labelsep=1.5pt,fillcolor=black}

\pspicture(1.5,0)(6,2)

\pscircle[fillstyle=solid](2.7,0){.1}
\pscircle[fillstyle=solid](2.7,1){.1}
\pscircle[fillstyle=solid](2.7,2){.1}
\pscircle[fillstyle=solid](3.7,0){.1}
\pscircle[fillstyle=solid](3.7,2){.1}

\psline[linewidth=1.6pt,arrowinset=0]{|->}(2.2,2)(2.7,2)
\rput(2,1.9){$e_{12}^\tau$}

\psline[linewidth=1.6pt,arrowinset=0]{|->}(2.2,1)(2.7,1)
\rput(2,1.1){$e_{13}^\tau$}

\psline[linewidth=1.6pt,arrowinset=0]{->}(2.7,1)(2.7,0)
\rput(2.9,0.5){$e_9$}

\psline[linewidth=1.6pt,arrowinset=0]{-|}(3.7,0)(4.15,0)
\psline[linewidth=1.6pt,arrowinset=0]{->}(3.7,0)(4.15,0)
\rput(3.9,-0.2){$e_{10}$}

\pscurve[linewidth=1.6pt,arrowinset=0]{<->}(3.7,2)(4.15,2.25)(4.15,1.75)(3.7,2)
\rput(4.55,2){$e_{11}$}

\pscurve[arrowinset=.5,arrowlength=1.5]{->}(4.7,2.5)(4.9,2.3)(5.3,2.1)(5.8,2.1)


\psline[arrowinset=.5,arrowlength=1.5]{<->}(2.7,2)(3.7,2)
\rput(3.2,1.8){$f_1$}

\psline[arrowinset=.5,arrowlength=1.5]{-|}(2.7,2)(2.7,2.5)
\psline[arrowinset=.5,arrowlength=1.5]{<-}(2.7,2)(2.7,2.5)
\rput(2.95,2.3){$f_{2}$}

\psline[arrowinset=.5,arrowlength=1.5]{<-}(2.7,0)(3.7,0)
\rput(3.2,-.2){$f_3$}

\psline[arrowinset=.5,arrowlength=1.5]{->}(3.7,0)(3.7,2)
\rput(3.9,1){$f_7$}

\endpspicture 
\end{array}  & 

\begin{array}{c}

\psset{xunit=1.4cm,yunit=1.3cm,linewidth=0.5pt,radius=0.1mm,arrowsize=7pt,
labelsep=1.5pt,fillcolor=black}

\pspicture(-0.5,-.3)(5,2.5)

\pscircle[fillstyle=solid](-1,0){.1}
\pscircle[fillstyle=solid](-1,1){.1}
\pscircle[fillstyle=solid](0,1){.1}
\pscircle[fillstyle=solid](0,2){.1}
\pscircle[fillstyle=solid](0.85,1.5){.1}
\pscircle[fillstyle=solid](1.85,1.5){.1}
\pscircle[fillstyle=solid](2.7,0){.1}
\pscircle[fillstyle=solid](2.7,1){.1}
\pscircle[fillstyle=solid](2.7,2){.1}
\pscircle[fillstyle=solid](3.7,0){.1}
\pscircle[fillstyle=solid](3.7,2){.1}

\psline[linewidth=1.6pt,arrowinset=0]{<->}(0,2)(0.85,1.5)
\rput(0.44,1.95){$e_1$}

\psline[linewidth=1.6pt,arrowinset=0]{<-}(0,1)(0,2)
\rput(0.2,1.5){$e_2$}

\psline[linewidth=1.6pt,arrowinset=0]{->}(0,1)(0.85,1.5)
\rput(0.44,1.05){$e_3$}

\psline[linewidth=1.6pt,arrowinset=0]{<-}(-1,1)(0,1)
\rput(-0.5,1.2){$e_4$}

\psline[linewidth=1.6pt,arrowinset=0]{<-}(-1,0)(-1,1)
\rput(-0.8,0.5){$e_5$}

\psline[linewidth=1.6pt,arrowinset=0]{->}(0.85,1.5)(1.85,1.5)
\rput(1.4,1.7){$e_6$}

\psline[linewidth=1.6pt,arrowinset=0]{->}(1.85,1.5)(2.7,2)
\rput(2.2,1.9){$e_7$}

\psline[linewidth=1.6pt,arrowinset=0]{->}(1.85,1.5)(2.7,1)
\rput(2.2,1.1){$e_8$}

\psline[linewidth=1.6pt,arrowinset=0]{->}(2.7,1)(2.7,0)
\rput(2.9,0.5){$e_9$}

\psline[linewidth=1.6pt,arrowinset=0]{-|}(3.7,0)(4.15,0)
\psline[linewidth=1.6pt,arrowinset=0]{->}(3.7,0)(4.15,0)
\rput(3.9,-0.2){$e_{10}$}

\pscurve[linewidth=1.6pt,arrowinset=0]{<->}(3.7,2)(4.15,2.25)(4.15,1.75)(3.7,2)
\rput(4.55,2){$e_{11}$}


\psline[arrowinset=.5,arrowlength=1.5]{<-|}(2.7,2)(3,1.7)
\rput(3.1,1.5){$f_2$}

\psline[arrowinset=.5,arrowlength=1.5]{<->}(2.7,2)(3.7,2)
\rput(3.4,2.2){$f_1$}

\psline[arrowinset=.5,arrowlength=1.5]{<-|}(2.7,2)(3,2.3)
\rput(3.1,2.5){$f_1'$}

\psline[arrowinset=.5,arrowlength=1.5]{<-|}(2.7,2)(2.7,2.5)
\rput(2.7,2.7){$f_{2}'$}

\psline[linewidth=1.6pt,arrowinset=0]{<-|}(2.7,2)(2.3,2.2)
\rput(2.1,2.4){$e_{12}^\tau$}

\psline[arrowinset=.5,arrowlength=1.5]{<-}(2.7,0)(3.7,0)
\rput(3.2,-.2){$f_3$}

\psline[arrowinset=.5,arrowlength=1.5]{<-}(-1,1)(0,2)
\rput(-0.8,1.5){$f_4$}

\psline[arrowinset=.5,arrowlength=1.5]{<-}(-1,0)(0,1)
\rput(-0.4,0.4){$f_6$}

\psline[linewidth=1.6pt,arrowinset=0]{|->}(2.3,.8)(2.7,1)
\rput(2.3,.5){$e_{13}^\tau$}

\psline[arrowinset=.5,arrowlength=1.5]{<->}(2.7,1)(2.7,2)
\rput(2.5,1.5){$f_5$}

\psline[arrowinset=.5,arrowlength=1.5]{<-|}(2.7,1)(3.2,1)
\rput(3.3,.75){$f_3'$}

\psline[arrowinset=.5,arrowlength=1.5]{->}(3.7,0)(3.7,2)
\rput(3.9,1){$f_7$}

\psline[arrowinset=.5,arrowlength=1.5]{->}(1.2,1)(1.2,0)

\endpspicture \\

\psset{xunit=1.4cm,yunit=1.3cm,linewidth=0.5pt,radius=0.1mm,arrowsize=7pt,
labelsep=1.5pt,fillcolor=black}

\pspicture(-0.5,0)(5,2.5)

\pscircle[fillstyle=solid](-1,0){.1}
\pscircle[fillstyle=solid](-1,1){.1}
\pscircle[fillstyle=solid](0,1){.1}
\pscircle[fillstyle=solid](0,2){.1}
\pscircle[fillstyle=solid](0.85,1.5){.1}
\pscircle[fillstyle=solid](1.85,1.5){.1}
\pscircle[fillstyle=solid](2.7,0){.1}
\pscircle[fillstyle=solid](2.7,1){.1}
\pscircle[fillstyle=solid](2.7,2){.1}
\pscircle[fillstyle=solid](3.7,0){.1}
\pscircle[fillstyle=solid](3.7,2){.1}

\psline[linewidth=1.6pt,arrowinset=0]{<->}(0,2)(0.85,1.5)
\rput(0.44,1.95){$e_1$}

\psline[linewidth=1.6pt,arrowinset=0]{<-}(0,1)(0,2)
\rput(0.2,1.5){$e_2$}

\psline[linewidth=1.6pt,arrowinset=0]{->}(0,1)(0.85,1.5)
\rput(0.44,1.05){$e_3$}

\psline[linewidth=1.6pt,arrowinset=0]{<-}(-1,1)(0,1)
\rput(-0.5,1.2){$e_4$}

\psline[linewidth=1.6pt,arrowinset=0]{<-}(-1,0)(-1,1)
\rput(-0.8,0.5){$e_5$}

\psline[linewidth=1.6pt,arrowinset=0]{->}(0.85,1.5)(1.85,1.5)
\rput(1.4,1.7){$e_6$}

\psline[linewidth=1.6pt,arrowinset=0]{->}(1.85,1.5)(2.7,2)
\rput(2.2,1.9){$e_7$}

\psline[linewidth=1.6pt,arrowinset=0]{->}(1.85,1.5)(2.7,1)
\rput(2.2,1.1){$e_8$}

\psline[linewidth=1.6pt,arrowinset=0]{->}(2.7,1)(2.7,0)
\rput(2.9,0.5){$e_9$}

\psline[linewidth=1.6pt,arrowinset=0]{-|}(3.7,0)(4.15,0)
\psline[linewidth=1.6pt,arrowinset=0]{->}(3.7,0)(4.15,0)
\rput(3.9,-0.2){$e_{10}$}

\pscurve[linewidth=1.6pt,arrowinset=0]{<->}(3.7,2)(4.15,2.25)(4.15,1.75)(3.7,2)
\rput(4.55,2){$e_{11}$}


\psline[arrowinset=.5,arrowlength=1.5]{<->}(2.7,2)(3.7,2)
\rput(3.2,1.8){$f_1$}

\psline[arrowinset=.5,arrowlength=1.5]{-|}(2.7,2)(2.7,2.5)
\psline[arrowinset=.5,arrowlength=1.5]{<-}(2.7,2)(2.7,2.5)
\rput(2.95,2.3){$f_{2}$}

\psline[arrowinset=.5,arrowlength=1.5]{<-}(2.7,0)(3.7,0)
\rput(3.2,-.2){$f_3$}

\psline[arrowinset=.5,arrowlength=1.5]{<-}(-1,1)(0,2)
\rput(-0.8,1.5){$f_4$}

\psline[arrowinset=.5,arrowlength=1.5]{<-}(-1,0)(0,1)
\rput(-0.4,0.4){$f_6$}

\psline[arrowinset=.5,arrowlength=1.5]{<->}(2.7,1)(2.7,2)
\rput(2.9,1.5){$f_5$}

\psline[arrowinset=.5,arrowlength=1.5]{->}(3.7,0)(3.7,2)
\rput(3.9,1){$f_7$}

\endpspicture
\end{array}

\end{array}
$

\vspace{.3cm}

\caption{How to obtain a binet representation of $A$ (at the bottom right) using binet representations of $A(R)$ and $A(\tau)$, where $A$ is given in Figure \ref{fig:exampleA}.}
\label{fig:exampleARtau-A}

\end{figure}

\vspace{.3cm}

\subsection{The procedure Decomposition}\label{subsec:decomppro}

Let us introduce the main definitions and lemmas 
involved in the description of the procedure Decomposition and the proof of Theorem \ref{thmDecdecomp}.
A pair of columns $(A_{\bullet j},A_{\bullet j'})$ with $1 \le j,j' \le m$ such that $\emptyset \neq 
s_{\frac{1}{2}}(A_{\bullet j})\neq s_{\frac{1}{2}}(A_{\bullet j'})
\neq \emptyset$ and 
$s_{\frac{1}{2}}(A_{\bullet j})\cap s_{\frac{1}{2}}(A_{\bullet j'})\neq \emptyset$ is called a \emph{connective pair}\index{connective pair}.

\begin{lem}\label{lemhalf1} Suppose that $A$ is binet and let $1\le j,j'\le m$ be such that  the pair $(A_{\bullet j},A_{\bullet j'})$ is connective. Then in any binet representation of $A$,
$s_{\frac{1}{2}}(A_{\bullet j})\cap s_{\frac{1}{2}}(A_{\bullet j'})$ is the edge index set of a basic full cycle.
\end{lem}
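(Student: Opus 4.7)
The plan is to unpack the hypothesis using what has already been established about the shape of the $\frac{1}{2}$-support of a column and the combinatorial structure of the basic subgraph. Fix any binet representation $G(A)$ of $A$, and let $B$ denote the corresponding basis. By Lemma~\ref{lemBidirectedNonsing}, the basic subgraph $G(B)$ is a disjoint union of negative $1$-trees, so each connected component of $G(B)$ contains exactly one basic cycle. In particular, two distinct basic full cycles in $G(A)$ are vertex-disjoint, hence edge-disjoint.

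First I would read off, from Lemma~\ref{lemdefiWeight1}, a precise description of $s_{\frac{1}{2}}(A_{\bullet j})$ for any column index $j$: an entry $A_{ij}$ equals $\pm\frac{1}{2}$ if and only if $f_j$ is a $2$-edge or a half-edge and $e_i$ belongs to a full basic cycle of the fundamental circuit of $f_j$. Combining this with the enumeration of fundamental circuits given in Corollary~\ref{corBidirectedCircuit} (and illustrated in Figure~\ref{fig:Apositive}), the $\frac{1}{2}$-support $s_{\frac{1}{2}}(A_{\bullet j})$ is the disjoint union of the edge sets of the \emph{full} basic cycles appearing in the fundamental circuit of $f_j$. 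Since the fundamental circuit of $f_j$ contains at most two basic cycles, there are at most two such full cycles; denote this collection by $\mathcal{C}(f_j)$, so that $s_{\frac{1}{2}}(A_{\bullet j})=\bigcup_{C\in\mathcal{C}(f_j)} E(C)$.

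Next, applying this description to both $j$ and $j'$, and using that two distinct basic full cycles are edge-disjoint, the intersection decomposes as
\[
s_{\frac{1}{2}}(A_{\bullet j})\cap s_{\frac{1}{2}}(A_{\bullet j'})=\bigcup_{C\in\mathcal{C}(f_j)\cap\mathcal{C}(f_{j'})} E(C).
\]
The hypothesis of being a connective pair says that this intersection is nonempty, so $\mathcal{C}(f_j)\cap\mathcal{C}(f_{j'})\neq\emptyset$; on the other hand $s_{\frac{1}{2}}(A_{\bullet j})\neq s_{\frac{1}{2}}(A_{\bullet j'})$ forces $\mathcal{C}(f_j)\neq\mathcal{C}(f_{j'})$. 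The main step is now to exclude $|\mathcal{C}(f_j)\cap\mathcal{C}(f_{j'})|=2$: since both $|\mathcal{C}(f_j)|$ and $|\mathcal{C}(f_{j'})|$ are at most $2$, equality of the intersection with either set would force $\mathcal{C}(f_j)=\mathcal{C}(f_{j'})$ and hence equality of the $\frac{1}{2}$-supports, a contradiction. Therefore $|\mathcal{C}(f_j)\cap\mathcal{C}(f_{j'})|=1$, and the intersection is exactly the edge index set $E(C)$ of a single basic full cycle $C$, as claimed.

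The only delicate point, and the one I would treat most carefully, is the bookkeeping of the small number of geometric types of fundamental circuit (half-edge vs.\ $2$-edge, full basic cycle vs.\ half-edge cycle) to justify the inequality $|\mathcal{C}(f_j)|\le 2$ and the disjointness of distinct full basic cycles; everything else is just a clean set-theoretic consequence of the component structure of $G(B)$ guaranteed by Lemma~\ref{lemBidirectedNonsing}.
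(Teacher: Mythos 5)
Your argument is correct and follows essentially the same route as the paper: both invoke Lemma~\ref{lemdefiWeight1} and Corollary~\ref{corBidirectedCircuit} to identify $s_{\frac{1}{2}}(A_{\bullet j})$ with the union of the (at most two) full basic cycles in the fundamental circuit of $f_j$, and then conclude by the disjointness of distinct basic cycles. Your write-up merely makes explicit the final set-theoretic step (ruling out a two-cycle intersection via the non-equality of the supports) that the paper leaves implicit.
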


\begin{lem}\label{lemhalf1b} Suppose that $A$ is bicyclic and not $\frac{1}{2}$-equisupported. For any column index $j$ ($1\le j\le m$) such that $s_{\frac{1}{2}}(A_{\bullet j})\neq \emptyset$,   there exists $j'$ ($1 \le j' \le m$) such that the pair $(A_{\bullet j},A_{\bullet j'})$ is connective.
\end{lem}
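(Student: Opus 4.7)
The plan is to fix a bicyclic representation $G(A)$ of $A$, read off the only three possible values that $s_{\frac{1}{2}}(A_{\bullet j})$ can take in that representation, and then perform a short case analysis exploiting connectedness of $A$. Denote by $R_1$ and $R_2$ the edge index sets of the two basic (full) cycles; since these cycles sit in different basic components, $R_1\cap R_2=\emptyset$. By Lemma \ref{lemdefiWeight1}, a nonbasic edge $f_j$ contributes a $\pm\frac{1}{2}$ entry only when $f_j$ is either a $2$-edge or a half-edge, and then precisely at the basic edges of the full basic cycles contained in its fundamental circuit. Inspecting Figure \ref{fig:Apositive} (and using Corollary \ref{corBidirectedCircuit} for the shape of the fundamental circuit of a $2$-edge) gives the clean trichotomy: $s_{\frac{1}{2}}(A_{\bullet j})$ equals either $R_1$ or $R_2$ (if $f_j$ is a half-edge planted in the first, resp.\ second, basic component), or else $R_1\cup R_2$ (if $f_j$ is a $2$-edge).

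Next I would show, using connectedness of $A$, that at least one column $j^\star$ satisfies $s_{\frac{1}{2}}(A_{\bullet j^\star})=R_1\cup R_2$, i.e.\ at least one nonbasic edge is a $2$-edge. Indeed, if every nonbasic edge were a $1$-edge, then after labelling each nonbasic index by the unique basic component containing its fundamental circuit and grouping the row indexes in the same way, the matrix $A$ would split into two blocks, contradicting the standing assumption of connectedness of $A$.

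With $j^\star$ in hand, the lemma falls out by a two-case argument. Given any $j$ with $s_{\frac{1}{2}}(A_{\bullet j})\neq\emptyset$, either $s_{\frac{1}{2}}(A_{\bullet j})\in\{R_1,R_2\}$, in which case $(A_{\bullet j},A_{\bullet j^\star})$ is connective because $s_{\frac{1}{2}}(A_{\bullet j^\star})=R_1\cup R_2$ properly contains $s_{\frac{1}{2}}(A_{\bullet j})$ and hence intersects it non-trivially without coinciding with it; or $s_{\frac{1}{2}}(A_{\bullet j})=R_1\cup R_2$, in which case the hypothesis that $A$ is not $\frac{1}{2}$-equisupported supplies a column $j'$ with nonempty $\frac{1}{2}$-support different from $R_1\cup R_2$, which by the trichotomy forces $s_{\frac{1}{2}}(A_{\bullet j'})\in\{R_1,R_2\}$, a proper nonempty subset of $R_1\cup R_2$, making $(A_{\bullet j},A_{\bullet j'})$ connective. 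The only delicate step is the trichotomy itself: one must rule out any ``mixed'' $\frac{1}{2}$-support, which is where the care in reading off the fundamental circuit of a $2$-edge---its mandatory traversal of \emph{both} basic cycles in full---becomes essential; the remainder of the argument is a direct unpacking of the definitions.
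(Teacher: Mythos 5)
Your proof is correct and follows essentially the same route as the paper: both classify the possible $\frac{1}{2}$-supports in a bicyclic representation via Lemma \ref{lemdefiWeight1} and Corollary \ref{corBidirectedCircuit} (a half-edge gives one full basic cycle, a $2$-edge gives both), and then produce the partner column by the same case split. You go one useful step further than the paper's very terse argument by explicitly justifying, via connectedness of $A$, that a $2$-edge (hence a column with $\frac{1}{2}$-support $R_1\cup R_2$) must exist — the step needed when the given $f_j$ is a half-edge, which the paper leaves implicit.
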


\begin{lem}\label{lemhalf2} Suppose that $A$ has a binet representation $G(A)$ which is not bicyclic. For any column index $j$ ($1\le j\le m$) such that $s_{\frac{1}{2}}(A_{\bullet j})\neq \emptyset$ and there is no connective pair $(A_{\bullet j},A_{\bullet j'})$ with $1 \le j' \le m$, the set $s_{\frac{1}{2}}(A_{\bullet j})$ corresponds to the edge index set of a (basic) full cycle in $G(A)$.
\end{lem}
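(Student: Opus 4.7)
My plan is to leverage Lemma \ref{lemdefiWeight1} to pin down the graphical type of $f_j$, and then use the standing assumption that $A$ is connected to force the desired conclusion by a combinatorial case analysis.

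First I would invoke Lemma \ref{lemdefiWeight1}: the very existence of a $\frac{1}{2}$-entry in $A_{\bullet j}$ forces $f_j$ to be either a half-edge or a $2$-edge, and such an entry appears in row $i$ only when $e_i$ lies in a \emph{full} basic cycle. If $f_j$ is a half-edge, its fundamental circuit consists of a single stem plus one basic cycle $C_1$, and $s_{\frac{1}{2}}(A_{\bullet j})$ is nonempty precisely when $C_1$ is full; in that case $s_{\frac{1}{2}}(A_{\bullet j})=E(C_1)$ and the lemma is immediate. So the substantive case is when $f_j$ is a $2$-edge, with stems reaching basic components $T$ and $T'$ and basic cycles $C_1\subset T$, $C_2\subset T'$. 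If at most one of $C_1,C_2$ is full, say only $C_1$, then again $s_{\frac{1}{2}}(A_{\bullet j})=E(C_1)$ and we are done; the remaining situation to rule out is that both $C_1$ and $C_2$ are full, in which case $s_{\frac{1}{2}}(A_{\bullet j})=R_1\uplus R_2$ with $R_i=E(C_i)$.

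The goal in this last subcase is to derive a contradiction. Because $G(A)$ is a proper binet representation and each basic component contains exactly one cycle, having exactly two basic components with both cycles full would force $G(A)$ to be bicyclic, against our hypothesis. Hence there is at least one further basic component $T''$, contributing rows $R(T'')$ disjoint from $R(T)\cup R(T')$. Since $A$ is assumed connected (as stated at the start of Chapter~\ref{ch:Rec}), the bipartite graph on rows and columns of $A$ defined by its nonzero entries contains a path linking a row of $R_1$ to a row of $R(T'')$. I would then observe that a $1$-edge is by definition confined to a single basic component, so its column has support inside one of $R(T)$, $R(T')$, $R(T'')$; consequently, the only nonbasic edges whose columns can bridge $R(T)\cup R(T')$ with $R(T'')$ are $2$-edges with one stem in $T$ or $T'$ and the other in $T''$. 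Reading Lemma \ref{lemdefiWeight1} backwards for such a $2$-edge $f_{j'}$ yields $s_{\frac{1}{2}}(A_{\bullet j'})=R_k\cup R''$ with $k\in\{1,2\}$ and $R''=E(C'')$ if the cycle $C''$ of $T''$ is full, $R''=\emptyset$ otherwise. In every subcase this set meets $R_1\cup R_2$ through $R_k$ but differs from $R_1\cup R_2$ (because $T''\notin\{T,T'\}$, so $R''$ is disjoint from both $R_1$ and $R_2$ and cannot equal the missing one). Thus $(A_{\bullet j},A_{\bullet j'})$ is a connective pair, contradicting the hypothesis.

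The main obstacle I anticipate is not conceptual but bookkeeping: one has to carefully list the possible graphical types of the basic cycle $C''$ in $T''$ (full cycle, half-edge, negative loop) and of the bridging nonbasic edge, and verify in each subcase using Lemma \ref{lemdefiWeight1} that a connective pair is produced. A subtle point to check is that one really needs a bridge from $R(T)\cup R(T')$ to $R(T'')$ rather than merely from $R_1\cup R_2$, which is why the argument uses the bipartite-graph interpretation of connectedness of $A$ together with the fact that $1$-edges cannot cross basic components; once this is set up, the connective-pair contradiction falls out uniformly across subcases.
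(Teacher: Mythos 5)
Your proof is correct and follows essentially the same route as the paper's: classify $f_j$ via Lemma \ref{lemdefiWeight1}, reduce to the case where $s_{\frac{1}{2}}(A_{\bullet j})$ covers two full basic cycles, use non-bicyclicity to produce a third basic cycle, and use connectivity of $A$ to find a bridging $2$-edge whose column forms a connective pair with $A_{\bullet j}$. The paper's version is just a terser statement of the same argument (it proves the contrapositive in one sentence where you spell out the bipartite-graph bookkeeping), so there is nothing substantively different to report.
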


\noindent
{\bf Proof of Lemmas \ref{lemhalf1}, \ref{lemhalf1b} and \ref{lemhalf2}.} \quad 
Let $G(A)$ be a binet representation of $A$. By Lemma \ref{lemdefiWeight1}, an entry $a_{ij}$ of $A$ equal to $\frac{1}{2}$ corresponds to the weight of an edge $e_i$ belonging to a basic full cycle in the fundamental circuit of a half-edge or 2-edge $f_j$. Then, by Corollary \ref{corBidirectedCircuit} the $\frac{1}{2}$-support of a half-edge (respectively, a 2-edge) represents the edge index set of one basic full cycle (respectively, one or two basic full cycles). This implies Lemma \ref{lemhalf1}.

Let $j$ ($1\le j\le m$) be a column index such that $s_{\frac{1}{2}}(A_{\bullet j})\neq \emptyset$. Let us prove Lemma \ref{lemhalf1b}.  If $A$ is bicyclic and not $\frac{1}{2}$-equisupported, then by above arguments it follows that there exists a column index $j'$ such that $f_j$ and $f_{j'}$ are a $2$-edge and a half-edge, respectively, or vice versa. Hence $(A_{\bullet j},A_{\bullet j'})$ is a connective pair. 

Now let us show Lemma \ref{lemhalf2}. We assume that $G(A)$ is not bicyclic. Suppose that $s_{\frac{1}{2}}(A_{\bullet j})$ is the edge index set of two basic full cycles $C_1$ and $C_2$. Since $G(A)$ is not bicyclic, there exists at least a third basic cycle. Thus, using the fact that $A$ is connected, there exists a 2-edge $f_{j'}$ containing $C_1$ or $C_2$ in its 
fundamental circuit and a basic cycle different from these both. So the pair $(A_{\bullet j},A_{\bullet j'})$ is connective. This 
proves the contrapositive of Lemma \ref{lemhalf2}.
{\hfill$\BBox{\rule{.3mm}{3mm}}$} \\

Given a row index subset $R$ of $A$, the procedure Decomposition computes a row index subset $R'$ ($\supseteq R$)
and column index subsets $S_{\frac{1}{2}}$ and $S_2$ of $A$
as well as a submatrix $A(R)$ by using a subroutine called MatRcyclic. Then, thanks to a subroutine called Mattau, a number $q$ and matrices denoted by $\tau$ and $A(\tau)$ are computed, where $q$ corresponds to the number of rows of $\tau$. Moreover, the set $S_{\frac{1}{2}}$ is also partitioned into subsets $U_1,\ldots,U_q$. Later, we shall prove the following: 
If $A(R)$ has an $R$-cyclic representation $G(A(R))$, then the nonbasic edges with index in $S_{\frac{1}{2}}$ are half-edges denoted by $f_j'$ for all $j\in S_{\frac{1}{2}}$; further, for  $i=1,\ldots,q$, the nonbasic edges $f_j'$ with $j\in U_i$ have a common endnode denoted as $u_i$.
If $A(\tau)$ has a binet representation $G(A(\tau))$, then 
the basic edges corresponding to the rows of $\tau$ are denoted by  $e_{i_{max}+1}^\tau,\ldots,e_{i_{max}+q}^\tau$, where $i_{max}$ is the largest row index of $A$. 

\begin{tabbing}
\textbf{Procedure\,\,Decomposition($A$,$Q$)}\\

\textbf{Input:} A connected matrix $A$ with entries $0$, $1$, $2$ or $\frac{1}{2}$ and a row index subset $Q$ of $A$.\\
\textbf{Output: }\= a binet representation $G(A)$ of $A$ such that each element in $Q$ is the index \\
\>  of a basic half-edge, or stops.\\
 
1)\verb"  "\= {\bf if }\=  $A$ has a $\frac{1}{2}$-entry, {\bf then}\\
2)  \>            \> let $j$ be such that $s_{\frac{1}{2}}(A_{\bullet j}) \neq \emptyset$;\\ 
3)  \>            \> {\bf if } \=  there exists a connective pair $(A_{\bullet j},A_{\bullet j'})$,  {\bf then}\\
  \>            \> \> let $R= s_{\frac{1}{2}}(A_{\bullet j})\cap s_{\frac{1}{2}}(A_{\bullet j'})$;\\
\>            \> {\bf otherwise } \\
 \>            \> \> let $R= s_{\frac{1}{2}}(A_{\bullet j})$; \\
\>            \> {\bf endif } \\
4) \>            \> call {\tt MatRcyclic}($A$,$R$) which outputs sets $R'$, $S_{\frac{1}{2}}$ and $S_2$ and a submatrix $A(R)$ of $A$; \\
\>            \> then call {\tt Mattau}($A$,$R'$,$S_{\frac{1}{2}}$,$S_2$) which outputs a number $q$, subsets $U_1,\ldots,U_q$ of $S_{\frac{1}{2}}$ \\
\>            \> and matrices $\tau$ and $A(\tau)$; \\
5) \>            \> {\bf if }  $Q\cap R' \neq \emptyset$
or $q > |R'|$,  {\bf then}  STOP:  $A$ does not have a binet representation such \\
\> \> that $R$ is the index set of a basic cycle and $Q$ an index set of basic half-edges;\\
\> \> {\bf endif }; \\
6) \>            \> let $i_{max}$ be the largest row index of $A$, $Q=Q\cup \{i_{max}+1, \ldots , i_{max}+ q\}$;\\
\>            \> let $G(A(\tau))= {\tt Decomposition(A(\tau),Q)}$ and $G(A(R))={\tt RCyclic}(A(R),R)$;\\
7) \> \> {\bf for}  \= $i=1,\ldots,q$, identify $u_i$ with the endnode of $e_{i_{max}+i}^\tau$ {\bf endfor};  \\
\> \> delete $f_j'$ for all $j\in S_{\frac{1}{2}}$ and $e_{i_{max}+i}^\tau$ for all $1\le i \le q$; then output the binet\\
\> \> representation $G(A)$ of $A$; \\
\> {\bf otherwise}\\
8) \> \> call {\tt OnehalfbinetQ($A$,$Q$)} and output a binet representation of $A$ if we have one;\\
\> {\bf endif}\\

\end{tabbing}

For a matrix $A'$, we define the graph $H(A')$\index{graph!$H(A')$} with respect to $A'$ 
as follows. The set of vertices is the column index set of $A'$, 
and two vertices $j$ and $j'$ are adjacent if and only if 
$s(A_{\bullet j}')\cap s(A_{\bullet j'}')\neq \emptyset $. 
Let us state the subroutine MatRcyclic. See Figures \ref{fig:nonbinet} and \ref{fig:Sbinet} for an illustration of all sets computed by this procedure.

\begin{tabbing}
\textbf{Procedure\,\,MatRcyclic($A$,$R$)}\\

\textbf{Input:} A matrix $A$ and a row index subset $R$ of $A$.\\
\textbf{Output: }\=  A row index subset $R'$ and column index subsets $S_{\frac{1}{2}}$ and $S_2$ of $A$, \\
\> and a submatrix $A(R)$ of $A$.\\
 
1) \verb"  "\=  let $S_0=\{j\,:\,  s(A_{\bullet j}) \cap R \neq \emptyset \}$,
$S_{\frac{1}{2}}=\{j\,:\, R \subseteq s_{\frac{1}{2}}(A_{\bullet j})\}$,
$S_1= S_0\verb"\"S_{\frac{1}{2}}$ \\
\> and $S_2$ be the set of all nodes in $H(A_{\bullet \overline{S_{\frac{1}{2}}} })$ reachable from $S_1$; \\
\> let $R'=\underset{j\in S_2}{\cup}s(A_{\bullet j})\cup R$ and 
$A(R)=A_{R' \times (S_{\frac{1}{2}}\cup S_2)}$;\\
\> output $R'$, $S_{\frac{1}{2}}$, $S_2$ and $A(R)$;
\end{tabbing}

\begin{figure}[ht]

\psset{xunit=1cm,yunit=1cm,linewidth=0.5pt,radius=0.3mm,arrowsize=1pt,
labelsep=1.5pt,fillcolor=black}

\pspicture(0,0)(6,8)

\put(1.4,2.6){\rotateleft{$\overbrace{\hspace{3.3cm}}$}}
\put(.9,3.7){$R'$}

\put(2.4,4.2){\rotateleft{$\overbrace{\hspace{1.7cm}}$}}
\put(1.9,4.5){$ R$}

\put(2.9,6.1){$\overbrace{\hspace{2.2cm}}$}
\put(3.4,6.55){$ S_{\frac{1}{2}}$}

\put(5.1,6.1){$\overbrace{\hspace{1.9cm}}$}
\put(5.4,6.4){$ S_1$}

\put(5.1,6.65){$\overbrace{\hspace{3.8cm}}$}
\put(7.5,6.95){$ S_2$}

\put(2.9,7){$\overbrace{\hspace{4.1cm}}$}
\put(4.2,7.4){$ S_0$}

\rput(8,3){
\begin{tabular}{|ccc|ccc|ccc|ccccc|}
\hline
$\frac{1}{2}$ & \ldots & $\frac{1}{2}$ & 1 &  & & 0 & \ldots & 0 & 0 &&
\ldots&& 0\\
\vdots & & \vdots & 1 & 1 & &\vdots  & & \vdots &\vdots&&&&\vdots \\
$\frac{1}{2}$ & \ldots& $\frac{1}{2}$& & & 1 & 0 & \ldots & 0 & 0& &\ldots& &0 \\
\hline
1& & & & & 1 &  &  & 2 &0 & & \ldots& & 0\\
& & 1&2& 1 &  & 1 &  & &\vdots & & & &\vdots \\
& & & & &  & $\frac{1}{2}$ & 1  & &0 & &\ldots & & 0\\
\hline
1& 1 & &0 & & 
\multicolumn{2}{c}
\ldots 
 & & 0 & 1 &  & & &1\\
 
 1&  & & & & 
\multicolumn{2}{c}

 & &  &  & 1 & & &1\\
 
 &  &1 &\vdots & & 
\multicolumn{2}{c}

 & & \vdots &  &  & &2 &\\
 
 & $\frac{1}{2}$ & & & & 
\multicolumn{2}{c}

 & &  &  &  &$\frac{1}{2}$ & &\\
 
 &  & &0 & & 
\multicolumn{2}{c}
\ldots 
 & & 0 &  &\ldots  & &\ldots &\\

\hline
\end{tabular}
}

\endpspicture

\caption{An example of a (non-binet) matrix $A$ to illustrate the sets $R'$, $S_{\frac{1}{2}}$, $S_0$, $S_1$  
and $S_2$, with respect to $R$.}
\label{fig:nonbinet}

\end{figure}
\vspace{.3cm}

\begin{figure}[ht!]
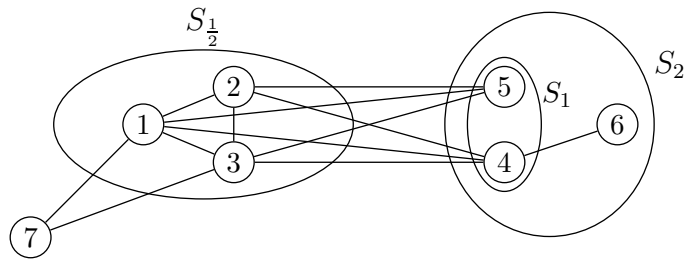


\begin{center}

\psset{xunit=1cm,yunit=1cm,linewidth=0.5pt,radius=0.3mm,arrowsize=1pt,labelsep=1.5pt,fillcolor=black}

\pspicture(0,0)(10,2.5)

\cnodeput(2.2,1){1}{1}
\cnodeput(3.4,1.5){2}{2}
\cnodeput(3.4,.5){3}{3}
\cnodeput(7,.5){4}{4}
\cnodeput(7,1.5){5}{5}
\cnodeput(8.5,1){6}{6}
\cnodeput(0.7,-0.5){7}{7}

\psellipse(3,1)(2,1)
\rput(3,2.3){\large{$S_{\frac{1}{2}}$}}

\psellipse(7,1)(.5,.9)
\rput(7.7,1.4){$S_1$}

\psellipse(7.6,1)(1.4,1.5)
\rput(9.2,1.8){\large{$S_2$}}

\ncline{-}{1}{2}
\ncline{-}{2}{3}
\ncline{-}{3}{1}
\ncline{-}{7}{1}
\ncline{-}{7}{3}
\ncline{-}{1}{4}
\ncline{-}{1}{5}
\ncline{-}{2}{4}
\ncline{-}{2}{5}
\ncline{-}{3}{4}
\ncline{-}{3}{5}
\ncline{-}{4}{6}

\endpspicture
\end{center}

\vspace{1cm}

\caption{An illustration of the graph $H(A)$ and the sets $S_{\frac{1}{2}}$, $S_1$ and $S_2$ ($S_0=S_{\frac{1}{2}} \uplus S_1$) in function of $R=\{1,2,3\}$ and $A$ given in Figure \ref{fig:exampleA} ($R'=\{1,2,3,4,5,6,7,8\}$).}
\label{fig:Sbinet}

\end{figure}

Suppose that $A$ has a binet representation $G(A)$ such that $ R$ is the edge index set of a full 
basic cycle, say $C$. Let $S_{\frac{1}{2}}$, $S_0$, $S_1$, $S_2$ and $R'$ be the sets computed by MatRcyclic.
The columns with index in $S_0$ correspond to nonbasic edges whose
fundamental circuit contains edges of $C$. As mentioned earlier, an entry $a_{ij}$ of $A$ equal to $\frac{1}{2}$ corresponds to the weight of an edge $e_i$ belonging to a basic full cycle in the fundamental circuit of a half-edge or 2-edge $f_j$. Moreover,
the $\frac{1}{2}$-support of a half-edge (respectively, a 2-edge) represents the edge index set of one basic full cycle (respectively, one or two basic full cycles). Thus 
$S_1$ is the subset of $S_0$ of $1$-edge (except half-edge) indexes, and $S_{\frac{1}{2}}$ is the index set of nonbasic half-edges and 2-edges whose fundamental circuit  
contains $C$. The set $S_2$ is an index set of $1$-edges (without any half-edge). So the $\frac{1}{2}$-support of the columns with index in $S_2$ is empty. We observe that 
 $R'$ corresponds to the edge index set of a negative $1$-tree in $G(A)$. 

Since $H(A)$ is connected, the connected components of $H(A_{\bullet \overline{S_{\frac{1}{2}}}})$ that do not 
have any vertex of $S_1$ are linked to at least one vertex of $S_{\frac{1}{2}}$ in $H(A)$. Moreover, $S_{\frac{1}{2}}$ induces a 
clique in $H(A)$ and each element of $S_{\frac{1}{2}}$ is adjacent to each vertex in 
$S_1$.

\begin{lem}\label{lemA1}
If $A$ has a binet representation such that $R$ 
is the edge index set of a basic full cycle, then $A(R)$ is $R$-cyclic.
\end{lem}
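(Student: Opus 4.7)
The plan is to build an explicit $R$-cyclic representation of $A(R)$ by carving one out of $G(A)$. First I would give graph-theoretic meaning to the sets returned by \texttt{MatRcyclic}, building on the discussion preceding the statement: let $T$ be the basic maximal negative $1$-tree of $G(A)$ containing the cycle $C$ with edge index set $R$. Using Lemma \ref{lemdefiWeight1} and Corollary \ref{corBidirectedCircuit}, $S_{\frac{1}{2}}$ is the index set of the nonbasic half-edges and $2$-edges whose fundamental circuit contains $C$; $S_1=S_0\setminus S_{\frac{1}{2}}$ is the index set of non-half $1$-edges in $T$ whose fundamental circuit meets $C$; and, by induction on the BFS order in $H(A_{\bullet\overline{S_{\frac{1}{2}}}})$ starting from $S_1$, every $j\in S_2$ indexes a $1$-edge of $T$ whose fundamental circuit is linked to that of some $S_1$-edge via shared basic edges. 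From this I would conclude that $R'$ is the edge index set of a connected sub-$1$-tree $T'\subseteq T$ whose unique (negative) cycle is $C$.

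Next I would construct the candidate representation $G'$ from $G(A)$ in three steps: (i) delete every nonbasic edge with index outside $S_{\frac{1}{2}}\cup S_2$; (ii) contract every basic edge with index in $\overline{R'}$ (both the edges of $T\setminus T'$ and the edges of all other basic $1$-trees); (iii) remove the resulting loose edges and isolated nodes. After these operations the basic part of $G'$ is exactly $T'$, hence a single negative $1$-tree with cycle $C$, so $G'$ is a proper binet representation (its unique basic cycle has edge index set $R$, and any $f_j$ with $j\in S_{\frac{1}{2}}$ witnesses that $R$ is contained in the fundamental circuit of at least one nonbasic edge, as required for an $R$-cyclic representation).

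The remaining task is to check that for every $j\in S_{\frac{1}{2}}\cup S_2$ the fundamental circuit of $f_j$ in $G'$ produces precisely the column $(A(R))_{\bullet j}=(A_{\bullet j})_{R'\bullet}$. For $j\in S_2$ one has $s(A_{\bullet j})\subseteq R'$ by definition of $R'$, so the fundamental circuit of $f_j$ lies entirely inside $T'$ and is untouched by the contractions in step (ii), whence $(A(R))_{\bullet j}$ is read off by the procedure WeightCircuit exactly as in $G(A)$. For $j\in S_{\frac{1}{2}}$, I would apply Lemmas \ref{lemdefiWeight1} and \ref{lemdefiWeight2} case by case on the four types of fundamental circuits listed in Corollary \ref{corBidirectedCircuit}: contracting a basic edge $e_i$ with $i\notin R'$ that lies in a stem, in a connecting path, or in a second basic cycle $C'$ of $f_j$ simply removes row $i$ from $A_{\bullet j}$ and, in the case where $C'$ lies entirely in $\overline{R'}$, collapses $C'$ to a single node so that $f_j$ is transformed into a half-edge of $G'$ whose fundamental circuit is $C$ together with the surviving portion of the stem of $f_j$; in every case the weights assigned by WeightCircuit on the remaining basic edges coincide with the entries of $(A_{\bullet j})_{R'\bullet}$.

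The main obstacle is this last case-analysis for $j\in S_{\frac{1}{2}}$ when $f_j$ is a non-half $2$-edge whose second basic cycle $C'$, the connecting path, and the far stem all fall outside $R'$: one must verify carefully that the successive contractions leave an admissible bidirected structure (a half-edge) attached to a node of $T'$ and that the resulting fundamental circuit in $G'$ is consistently orientable exactly as in $G(A)$, so that no sign discrepancy is introduced. Once this verification is completed, $G'$ is by construction an $R$-cyclic representation of $A(R)$, which proves the lemma.
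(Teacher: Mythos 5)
Your proposal is correct and follows essentially the same route as the paper's own proof: identify $R'$ as the edge index set of a basic negative $1$-tree $T'$ containing the cycle $C$, contract all basic edges outside $T'$ (so that $2$-edges with one endnode in $T'$ become half-edges), and delete the leftover loose nonbasic edges. The paper states this construction in two sentences and leaves the column-by-column verification implicit, whereas you spell it out; no substantive difference in the argument.
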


\begin{proof} Let $G(A)$ be a binet representation of $A$ such that $R$ is the edge index set of a basic full cycle. As mentioned earlier, the set $R'$ computed by MatRcyclic  corresponds to the edge index set of a basic negative $1$-tree denoted by $T$. Let us contract all basic edges of
$G(A)$ not in $T$ (in particular, 2-edges with one endnode in $T$ become half-edges). Then by deleting all left nonbasic loose edges, one obtains a binet representation $G(A(R))$ of $A(R)$ such that $R$ corresponds to a full basic cycle (see Figure \ref{fig:A1} for an example).
\end{proof}\\

Now, let us state the subroutine Mattau.

\begin{tabbing}
\textbf{Procedure\,\,Mattau($A$,$R'$,$S_{\frac{1}{2}}$,$S_2$)}\\

\textbf{Input:} A matrix $A$, a row index subset $R$ and column index subsets $S_{\frac{1}{2}}$ and $S_2$ of $A$.\\
\textbf{Output: }\= a number $q$, subsets $U_1\ldots,U_q$ of $S_{\frac{1}{2}}$ and matrices $\tau$ and $A(\tau)$
such that $q$ is\\
\> the number of rows of $\tau$.\\
 
1) \verb"  "\= let us partition $S_{\frac{1}{2}}$ into subsets $U_1,\ldots, U_q$ so that $j, \,j'\in S_{\frac{1}{2}}$ are in a
same subset\\
\> if and only if  $s(A_{\bullet j})\cap R'=s(A_{\bullet j'})\cap R'$;\\
2)\>  let $\tau$ be the matrix given by  
$\tau_{ij}=\left \{
\begin{array}{ll}
1 & \m{if } j\in U_i\\
0 & \m{otherwise},
\end{array}\right.$ for $i=1,\ldots,q$ and $j\in S_{\frac{1}{2}}$,\\
\>  $l=|\overline{S_{\frac{1}{2}}\cup S_2}|$ and
$A(\tau)=\left( \begin{array}{c}  
A_{\overline{R'}\times \overline{ S_2}}\\
\left( \begin{array}{cc} \tau & O_{q\times l} \end{array} \right) \end{array}\right)$;\\
\> output $q$, $U_1\ldots,U_q$, $\tau$ and $A(\tau)$;
\end{tabbing}

For an example of the sets and matrices computed by Mattau, see 
Figure \ref{fig:exampleAtau}, where $U_1=\{1,2\}$, $U_2=\{3\}$ and 
$\tau = \left(
\begin{matrix} 1 & 1 & 0 \\
                0 & 0 & 1 \\
\end{matrix}\right)$. 
Notice that if $S_2\neq \emptyset$, then the number of columns of $A(\tau)$ is strictly smaller than the number of columns of
$A$. In the following lemmas and propositions, if we are given row index subsets $R$ and $Q$ of $A$, then we assume that the objects $R'$, $A(R)$, $q$, $U_1,\ldots,U_q$, $\tau$ and $A(\tau)$ have been computed by the subroutines MatRcyclic and Mattau. Further, $i_{max}$ denotes the largest row index of $A$. The next lemma justifies the definition of the nodes $u_1,\ldots,u_q$ used in step 7.

\begin{lem}\label{lemRecBinetui}
Let $R$ be a row index subset of $A$ and suppose that $A(R)$ has an $R$-cyclic representation $G(A(R))$. Then, for $i=1,\ldots,q$, the nonbasic edges $f_j'$ with $j\in U_i$ are half-edges having a common endnode denoted by $u_i$.
\end{lem}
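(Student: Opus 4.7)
The plan is to first pin down, using the hypothesis $R\subseteq s_{\tfrac12}(A(R)_{\bullet j})$, the structure of the fundamental circuit of each $f_j'$ with $j\in S_{\tfrac12}$ in the representation $G(A(R))$, and then use the combinatorial definition of the partition $U_1,\dots,U_q$ together with tree-uniqueness of paths to locate their endnodes.

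First I would set up the following observations. Since $G(A(R))$ is an $R$-cyclic representation, it has exactly one basic cycle $C$ (the full cycle with edge index set $R$), hence by Lemma~\ref{lemBidirectedNonsing} exactly one basic connected component; in particular it contains no $2$-edges. For $j\in S_{\tfrac12}$, the condition $R\subseteq s_{\tfrac12}(A(R)_{\bullet j})$ combined with Lemma~\ref{lemdefiWeight1} (the only way a $1$-edge can contribute $\tfrac12$-entries on a full basic cycle is by being a half-edge, since $2$-edges have been ruled out) forces $f_j'$ to be a half-edge whose fundamental circuit is of type (iii) in Corollary~\ref{corBidirectedCircuit}: the handcuff consisting of $f_j'$ itself, a (possibly empty) stem $p_j$ from the endnode $u_j$ of $f_j'$ to a node $w_j\in C$, together with $C$. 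The basic edge indices appearing in this fundamental circuit equal $s(A(R)_{\bullet j})=s(A_{\bullet j})\cap R'$, so the edge set of $p_j$ is precisely $(s(A_{\bullet j})\cap R')\setminus R$.

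The next step is to prove that $w_j$ must be the central node of $C$ for every $j\in S_{\tfrac12}$. Since $G(A(R))$ is proper, $C$ carries a unique bidirected edge and exactly one other cycle edge entering its distinguished endnode, and these identify a single central (non-consistent) node $c\in C$. Running the procedure WeightCircuit on $f_j'$, if $w_j=c$ then the minimal covering walk traverses $C$ starting and ending at $c$, so every internal position on $C$ is a non-central (hence consistent) node and no cycle edge needs to be reoriented; thus all cycle edges receive weight $+\tfrac12$. Conversely, if $w_j\neq c$, the walk necessarily visits $c$ in an internal position where consistency fails, and by Lemma~\ref{lemdefiWeight2} at least one cycle edge must then be reoriented, producing a $-\tfrac12$ entry in $A(R)_{\bullet j}$ and contradicting the assumption $R\subseteq s_{\tfrac12}(A(R)_{\bullet j})$. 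Hence $w_j=c$ for every $j\in S_{\tfrac12}$.

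Finally, I would conclude by the tree-uniqueness of the stems. Fix $i\in\{1,\dots,q\}$ and $j,j'\in U_i$; by definition of the partition, $s(A_{\bullet j})\cap R'=s(A_{\bullet j'})\cap R'$, so the stems $p_j$ and $p_{j'}$ have exactly the same edge set. Since both stems are paths living in the forest obtained from $G(A(R))$ by removing the basic cycle $C$, and any path in a forest is uniquely determined by its edge set (its endpoints being the two degree-$1$ vertices in the induced subgraph), either both stems are empty or $p_j=p_{j'}$ as subgraphs. In the empty case both $u_j$ and $u_{j'}$ coincide with $w_j=w_{j'}=c$; in the non-empty case both stems share their two endpoints, one being $c$ and the other being $u_j=u_{j'}$. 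Setting $u_i$ to be this common endnode then proves the lemma. The main technical obstacle I anticipate is the second paragraph, since the support of $A(R)_{\bullet j}$ does not by itself distinguish among the nodes of $C$; the issue is resolved only by exploiting the properness of $G(A(R))$ (hence uniqueness of the central node) together with the sign information carried by the nonnegativity of the $\tfrac12$-entries through Lemma~\ref{lemdefiWeight2}.
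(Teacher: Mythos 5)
Your proof is correct and follows essentially the same route as the paper's: both deduce that the $f_j'$ are half-edges via Lemma~\ref{lemdefiWeight1} and Corollary~\ref{corBidirectedCircuit}, use Lemma~\ref{lemdefiWeight2} (nonnegativity) to force the stem to meet the basic cycle at the central node, and then obtain the common endnode from the equality of supports defining $U_i$; you merely spell out the consistency argument that the paper only asserts. (One cosmetic slip: a handcuff with a nonempty connecting path is of type (iv), not (iii), in Corollary~\ref{corBidirectedCircuit}.)
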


\begin{proof}
Using arguments similar to those preceding Lemma \ref{lemA1}, one can prove that each nonbasic edge $f_j'$ with $j\in S_{\frac{1}{2}}$ is a half-edge. From the way of partitioning $S_{\frac{1}{2}}$ into subsets $U_1,\ldots,U_q$ and by Corollary \ref{corBidirectedCircuit} and Lemma \ref{lemdefiWeight2}, it follows that for any $i$ ($1\le i \le q$),
$s(A_{\bullet j}) \cap (R' \verb"\" R)$  is the edge index set of a (directed) path denoted $P_i$ for all $j\in U_i$. Notice that the initial node of $P_i$ is a central node, and we denote by $u_i$ the terminal node of $P_i$, which is the endnode of $f_j'$ for all $j\in U_i$.

\end{proof}\\

\begin{lem}\label{lemA2}
Let $R$ and $Q$ be row index subsets of $A$.
If $A$ has a binet representation such that $R$ 
is the edge index set of a basic full cycle and each element in $Q$ is the index of a basic half-edge, then $q\le |R'|$,
$Q\cap R'= \emptyset$ and $A(\tau)$ has a binet representation such that each element in $Q\cup \{i_{max}+1,\ldots,i_{max}+q\} $ is a basic half-edge index.
\end{lem}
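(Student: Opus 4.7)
The plan is to proceed graphically. Let $G(A)$ be a binet representation satisfying the hypotheses, and let $T$ be the basic negative $1$-tree in $G(A)$ containing the full cycle $C$ of edge-index set $R$; by the proof of Lemma~\ref{lemA1}, $E(T)=R'$, so $|V(T)|=|E(T)|=|R'|$. The inclusion $Q\cap R'=\emptyset$ is immediate, since any $e_i$ with $i\in Q$ is a basic half-edge and thus its own negative $1$-tree component (whose unique cycle is the half-edge itself), disjoint from $T$ (whose unique cycle is the full cycle $C$). For $q\le|R'|$ I would apply Lemma~\ref{lemA1} to build the $R$-cyclic representation $G(A(R))$ of $A(R)$ by contracting every basic edge of $G(A)$ not in $T$; Lemma~\ref{lemRecBinetui} then applies to $G(A(R))$, and the map $i\mapsto u_i$ from classes to nodes of $T$ is injective, because two half-edges $f_j',f_{j'}'$ sharing a common endnode $u$ in $G(A(R))$ have the same fundamental circuit (consisting of the half-edge, the cycle $C$, and the unique path in $T$ from $u$ to $C$), and hence satisfy $s(A_{\bullet j})\cap R'=s(A_{\bullet j'})\cap R'$, placing $j$ and $j'$ in the same $U_i$. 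Thus the $u_i$ are pairwise distinct nodes of $V(T)$ and $q\le|V(T)|=|R'|$.

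The main task is to exhibit a binet representation of $A(\tau)$ with the prescribed half-edges. I would construct $G(A(\tau))$ from $G(A)$ by (i) deleting every basic edge with index in $R'$, (ii) deleting every nonbasic edge with index in $S_2$, (iii) deleting the nodes of $T$ that thereby become isolated, and (iv) attaching a new basic entering half-edge $e_{i_{max}+i}^\tau$ at $u_i$ for each $i=1,\ldots,q$. The basic edges of $G(A(\tau))$ are then those of $G(A)$ outside $T$, still forming their original negative $1$-tree components, together with the $q$ new single-edge negative $1$-tree components; in particular, the basic half-edges indexed by $Q$ are preserved and the new $e_{i_{max}+i}^\tau$ are half-edges by construction. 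The nonbasic edges of $G(A(\tau))$ are exactly those of $G(A)$ with index in $\overline{S_2}=S_{\frac{1}{2}}\uplus\overline{S_0}$: for $j\in\overline{S_0}$ the fundamental circuit of $f_j$ is disjoint from $T$ (any fundamental circuit meeting $T$ must, by Corollary~\ref{corBidirectedCircuit}, use the unique cycle $C$ of $T$), so $f_j$ is untouched; and for $j\in S_{\frac{1}{2}}$ the $T$-endnode of $f_j$ in $G(A)$ is the terminal node $u_i$ of its stem into $C$, with $i$ the unique index satisfying $j\in U_i$, so $f_j$ survives with $u_i$ as one of its endnodes.

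Finally, I would verify that the binet matrix of $G(A(\tau))$ equals $A(\tau)$ column by column, using the procedure WeightCircuit of Section~\ref{sec:BinetDefi} and Lemma~\ref{lemdefiWeight1}. For $j\in\overline{S_0}$ the fundamental circuit is unchanged, so the $\overline{R'}$-rows reproduce $A_{\overline{R'}\times\{j\}}$ and the $q$ new rows vanish; for $j\in S_{\frac{1}{2}}$ the $T$-portion of the original fundamental circuit (the cycle $C$ and the stem(s) into it) is replaced by the single new half-edge $e_{i_{max}+i}^\tau$ at $u_i$, and because $e_{i_{max}+i}^\tau$ is a half-edge (hence not in a full basic cycle) while $f_j$ is a $2$-edge or half-edge, Lemma~\ref{lemdefiWeight1} forces its weight to be $\pm 1$, matching $\tau_{ij}\in\{0,1\}$, while the $\overline{R'}$-entries of $f_j$'s column are inherited unchanged from $A$. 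The main obstacle I foresee is the sign bookkeeping: I must orient each $e_{i_{max}+i}^\tau$ (entering $u_i$) and possibly switch at $u_i$ so that the new weight is $+\tau_{ij}$ rather than $-\tau_{ij}$ and so that the signs of the surviving $\overline{R'}$-weights of $f_j$ are preserved; by Lemma~\ref{lemdefiWeight2}, this reduces to checking that a minimal covering closed walk of the new fundamental circuit can be made consistently oriented according to $f_j$ whenever the old one is, a verification that should go through using the definition of central node but requires care.
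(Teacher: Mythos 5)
Your construction is the same as the paper's: the paper forms the subgraph $G_1$ on the basic edges indexed by $R'$ together with the nonbasic edges indexed by $S_2$, deletes it, and attaches an entering basic half-edge at each node of $G_1\cap G'=\{u_1,\ldots,u_q\}$; your steps (i)--(iv) are exactly this, and your counting argument for $q\le|R'|$ (the $u_i$ are distinct nodes of a $1$-tree with $|R'|$ edges, hence $|R'|$ nodes) is the paper's remark that ``$|R'|$ corresponds to the number of nodes of $G_1$'' made explicit. The argument for $Q\cap R'=\emptyset$ is likewise identical.

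One step as written is false, though the construction survives it. You assert that the columns of $A(\tau)$ are indexed by $\overline{S_2}=S_{\frac{1}{2}}\uplus\overline{S_0}$ and that for $j\in\overline{S_0}$ the fundamental circuit of $f_j$ is disjoint from $T$ because ``any fundamental circuit meeting $T$ must use the cycle $C$.'' Neither claim holds: $S_2$ is the set of all vertices of $H(A_{\bullet\overline{S_{\frac{1}{2}}}})$ reachable from $S_1$, so it may strictly contain $S_1$, and then $\overline{S_2}=S_{\frac{1}{2}}\uplus\overline{S_{\frac{1}{2}}\cup S_2}$ is strictly smaller than $S_{\frac{1}{2}}\uplus\overline{S_0}$. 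In the paper's own running example (Figure \ref{fig:exampleA}) the column $f_6$ lies in $S_2\setminus S_1\subseteq\overline{S_0}$, and its fundamental circuit sits entirely inside the tree part of $T$ without touching $C$ --- which also refutes your parenthetical, since a fundamental circuit of type (ii) of Corollary \ref{corBidirectedCircuit} (a positive cycle) can meet $T$ without containing its unique cycle. The correct statement is that the columns of $A(\tau)$ outside $S_{\frac{1}{2}}$ are exactly those of $\overline{S_{\frac{1}{2}}\cup S_2}$, and for such $j$ one has $s(A_{\bullet j})\cap R'=\emptyset$ directly from the reachability definition of $S_2$: any column not in $S_{\frac{1}{2}}$ whose support met $R'$ would be reachable from $S_1$ in $H(A_{\bullet\overline{S_{\frac{1}{2}}}})$ and hence would lie in $S_2$. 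With that substitution your column-by-column verification, and the sign bookkeeping you flag at the end, go through as in the paper.
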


\begin{proof}
Let $G(A)$ be a binet representation  of $A$ such that
$R$ is the edge index set of
a basic full cycle, and each element in $Q$ is the index of a basic half-edge. 
We have already observed that 
$ R'$ is the edge index set of a negative $1$-tree whose
basic cycle has an index set equal to $R$. Thus $Q \cap  R'=\emptyset$. 

Let $G_1$ be the connected subgraph of $G(A)$ formed by 
basic edges with index in $R'$ and nonbasic ones indexed by $ S_2$.  
We may suppose that $G_1$ has a unique basic bidirected edge, and this one is entering. 
For each set $U_i$ ($1\le i \le q$), let $P_i$ be the directed path in $G(A)$ with edge index set $s(A_{\bullet j}) \cap (R' - R)$ for all $j\in U_i$ (as in Lemma \ref{lemRecBinetui}). Notice that the initial node of $P_i$ is a central node, and we denote by $u_i$ the terminal node of $P_i$.
Clearly, $q\le |R'|$ since $|R'|$ corresponds to the number of nodes of $G_1$. Delete from $G(A)$ all
edges of $G_1$  and then the isolated nodes. Let us call $G'$ the remaining graph. 
We observe that $G_1\cap G'=\{u_1, \ldots, u_q \}$.
Then, by adding a basic entering half-edge to
each left node of $G_1\cap G'$ it yields a binet representation of $A(\tau)$ such that for $i=i_{max}+1,\ldots,i_{max}+ q$ the $i$th row of $A(\tau)$ corresponds to a basic half-edge incident with $u_i$ and each element in $Q$ is the index of a basic half-edge. 
\end{proof}\\

\begin{prop}\label{propDecX}
Let $R$ and $Q$ be row index subsets of $A$. Then the matrix $A$ has a binet representation such that $R$ 
is the edge index set of a basic full cycle and any element in $Q$ is a basic half-edge index if and only if $q\le |R'|$,
$Q\cap R'=\emptyset$, 
$A(R)$ is $R$-cyclic and
$A(\tau)$  has a binet representation such that each element in
$Q \cup \{i_{max}+1,\ldots,i_{max}+ q\}$  is a basic half-edge index.
\end{prop}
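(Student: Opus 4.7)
\textbf{Proof plan for Proposition \ref{propDecX}.}
The plan is to handle the two directions separately, using Lemmas \ref{lemA1}, \ref{lemA2}, and \ref{lemRecBinetui} as the main tools, and then to verify that the gluing construction described in steps 6--7 of the procedure Decomposition is correct.

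For the necessity direction, suppose $A$ admits a binet representation $G(A)$ in which $R$ indexes a basic full cycle and every element of $Q$ indexes a basic half-edge. Then Lemma \ref{lemA1} immediately gives that $A(R)$ is $R$-cyclic, and Lemma \ref{lemA2} gives both the numerical conditions $q\le |R'|$ and $Q\cap R'=\emptyset$, as well as a binet representation of $A(\tau)$ in which each element of $Q\cup\{i_{max}+1,\ldots,i_{max}+q\}$ is the index of a basic half-edge. So this direction is essentially a packaging of the two lemmas and requires no new argument.

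For the sufficiency direction, assume we are given an $R$-cyclic representation $G(A(R))$ of $A(R)$ and a binet representation $G(A(\tau))$ of $A(\tau)$ in which every index in $Q\cup\{i_{max}+1,\ldots,i_{max}+q\}$ corresponds to a basic half-edge. By Lemma \ref{lemRecBinetui}, for each $i=1,\ldots,q$ the nonbasic half-edges $f_j'$ with $j\in U_i$ share a common endnode $u_i$ in $G(A(R))$; by hypothesis the rows $i_{max}+1,\ldots,i_{max}+q$ of $A(\tau)$ correspond to basic half-edges $e_{i_{max}+i}^\tau$ in $G(A(\tau))$. I propose to perform the construction of step 7: take the disjoint union of $G(A(R))$ and $G(A(\tau))$, identify for each $i$ the node $u_i$ with the unique endnode of $e_{i_{max}+i}^\tau$, and then delete the now-redundant half-edges $f_j'$ for $j\in S_{\frac{1}{2}}$ and $e_{i_{max}+i}^\tau$ for $i=1,\ldots,q$. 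Call $G(A)$ the resulting bidirected graph.

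The main obstacle, and the heart of the proof, is to check that $G(A)$ is indeed a binet representation of the original matrix $A$, with the correct basic/nonbasic marking. The basic subgraph of $G(A)$ is the union of the basic subgraph of $G(A(R))$ (a negative $1$-tree whose unique cycle has edge index set $R$) and the basic subgraph of $G(A(\tau))$ after removing the $q$ auxiliary half-edges; connectedness, correct rank, and the fact that $R$ indexes a full basic cycle and every $i\in Q$ indexes a basic half-edge follow from how the identification is made and the condition $Q\cap R'=\emptyset$. Column-by-column, I plan to verify the fundamental-circuit weights using the procedure WeightCircuit and Lemmas \ref{lemdefiWeight1}--\ref{lemdefiWeight2}, splitting the column index set of $A$ into four cases: $j\in S_{\frac{1}{2}}$ (where $f_j$ becomes a half-edge or $2$-edge whose fundamental circuit meets the basic cycle with index set $R$ along the path $P_i$ associated with $U_i\ni j$, producing the required $\tfrac12$-support on $R$, the $1$-support on $s(A_{\bullet j})\cap(R'\setminus R)$, and the correct entries elsewhere from the $\tau$-block of $A(\tau)$); $j\in S_1$ (a $1$-edge whose fundamental circuit is entirely contained in $G(A(R))$); $j\in S_2\setminus S_1$ (a $1$-edge inside $G(A(R))$ not meeting $R$); and $j\in\overline{S_0\cup S_2}$ (where the fundamental circuit lies in the part coming from $G(A(\tau))$, untouched by the identification). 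Once each case produces the correct column $A_{\bullet j}$, the proposition follows. The delicate point to watch is that the identification of $u_i$ with the endnode of $e_{i_{max}+i}^\tau$ correctly converts, for $j\in S_{\frac{1}{2}}$, the basic half-edge $e_{i_{max}+i}^\tau$ of $G(A(\tau))$ (which carried weight $1$ in the fundamental circuit of $f_j$ in $G(A(\tau))$) into the directed path $P_i$ together with the basic cycle indexed by $R$ (producing weights $1$ on $P_i$ and $\tfrac12$ on $R$), so that the resulting column is exactly $A_{\bullet j}$ rather than the column of $A(\tau)$ or $A(R)$.
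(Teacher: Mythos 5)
Your proposal is correct and follows essentially the same route as the paper: the necessity direction is exactly the packaging of Lemmas \ref{lemA1} and \ref{lemA2}, and the sufficiency direction performs the step-7 identification (justified by Lemma \ref{lemRecBinetui}) and verifies the resulting fundamental circuits via Corollary \ref{corBidirectedCircuit} and Lemmas \ref{lemdefiWeight1}--\ref{lemdefiWeight2}. Your column-by-column case analysis is simply a more explicit spelling-out of what the paper leaves to the reader; the only normalization you leave implicit that the paper states is that $G(A(R))$ may be taken with exactly one basic bidirected edge, and this one entering.
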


\begin{proof}
The "only if" part has been proved in Lemmas \ref{lemA1} and \ref{lemA2}.\\
$\Leftarrow:$
Let $G(A(R))$ be an $R$-cyclic representation of
$A(R)$ having exactly one basic bidirected edge (and this one is entering).
Let $G(A(\tau))$ be a binet representation of $A(\tau)$ such
that each element $i\in Q \cup \{ i_{max}+ 1,\ldots,i_{max}+ q\}$ is the index of a basic half-edge $e_i^\tau$. By Corollary \ref{corBidirectedCircuit} and Lemma \ref{lemdefiWeight1}, it follows that the columns of $A(\tau)$ with index in $S_{\frac{1}{2}}$ correspond to half-edges or 2-edges in $G(A(\tau))$. By performing step 7 of the procedure Decomposition and using Corollary \ref{corBidirectedCircuit} and Lemmas \ref{lemdefiWeight1} and \ref{lemdefiWeight2}, we obtain a binet representation of 
$A$ such that $R$ corresponds to a basic full cycle and each element in $Q$ is a basic half-edge index 
(see Figure \ref{fig:exampleARtau-A}).
\end{proof}\\

\begin{prop}\label{propdecRep}
Given a matrix $A$ and a row index subset $Q\neq \emptyset$ of $A$, if the matrix $A$ has a binet representation such that $Q$ is an index set of half-edges then the procedure Decomposition with input $A$ and $Q$ outputs such a representation.
\end{prop}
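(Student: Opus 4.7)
The natural approach is induction on some measure of complexity of $A$, for instance the number of $\frac{1}{2}$-entries of $A$ together with its number of rows, so that each recursive call to {\tt Decomposition} strictly decreases the measure. The overall skeleton of the argument simply verifies that each branch of the procedure returns what Proposition \ref{propDecX} promises to exist.

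For the inductive step, assume $A$ has at least one $\frac{1}{2}$-entry and fix, as guaranteed by hypothesis, a binet representation $G(A)$ of $A$ in which the indices in $Q$ label half-edges. Because $Q\neq \emptyset$, at least one basic cycle of $G(A)$ is a half-edge, hence $G(A)$ is not bicyclic. The first task is to show that the set $R$ chosen in step 3 is precisely the edge index set of a basic full cycle of $G(A)$: if the chosen column $j$ participates in a connective pair $(A_{\bullet j}, A_{\bullet j'})$, this follows from Lemma \ref{lemhalf1}; otherwise it follows from Lemma \ref{lemhalf2}, whose hypothesis ``not bicyclic'' is met thanks to $Q\neq \emptyset$. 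With this in hand, Proposition \ref{propDecX} directly yields $q\leq |R'|$, $Q\cap R' = \emptyset$, the $R$-cyclicity of $A(R)$, and the existence of a binet representation of $A(\tau)$ in which $Q\cup \{i_{max}+1,\ldots,i_{max}+q\}$ indexes half-edges. In particular, the test at step 5 does not abort the procedure.

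The recursive call to {\tt Decomposition} on $(A(\tau), Q\cup \{i_{max}+1,\ldots,i_{max}+q\})$ is then legitimate by induction, since the rows with indices in $R\subseteq R'$ that are discarded contain at least one $\frac{1}{2}$-entry (any entry $A_{ij}$ with $i\in R$ and $j\in S_{\frac{1}{2}}$, and $j\notin S_2$ because $S_{\frac{1}{2}}\cap S_2=\emptyset$), while the $q$ appended rows of $\tau$ contain only $0$'s and $1$'s; by the inductive hypothesis the call produces a binet representation of $A(\tau)$ of the required form. In parallel, the subroutine {\tt RCyclic}$(A(R),R)$ returns an $R$-cyclic representation of $A(R)$ by the correctness theorem of Chapter \ref{ch:cyc}. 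Step 7 then identifies the node $u_i$ defined in Lemma \ref{lemRecBinetui} with the unique endnode of the auxiliary half-edge $e_{i_{max}+i}^{\tau}$ for each $i$, and deletes these auxiliary half-edges together with the half-edges $f_j'$ for $j\in S_{\frac{1}{2}}$; the verification that this gluing produces a genuine binet representation of $A$ with $R$ a basic full cycle and $Q$ an index set of basic half-edges is exactly the $\Leftarrow$ direction of the proof of Proposition \ref{propDecX}. The base case, when $A$ has no $\frac{1}{2}$-entry, is handled by the call to {\tt OnehalfbinetQ}$(A,Q)$ in step 8, whose correctness is proved in Chapter \ref{ch:halfbinet}.

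The main obstacle is conceptual rather than computational: one must be confident that the set $R$ extracted from local data (one or two columns) genuinely is the edge index set of a basic full cycle of whichever binet representation of $A$ realises $Q$ as half-edges, independently of the particular representation chosen. This is precisely the content of Lemmas \ref{lemhalf1}, \ref{lemhalf1b} and \ref{lemhalf2}, together with the nontrivial observation that $Q\neq \emptyset$ excludes the bicyclic case and thereby unlocks Lemma \ref{lemhalf2}. Everything else reduces to clean bookkeeping delegated to Proposition \ref{propDecX} and to the correctness of the subroutines {\tt RCyclic} and {\tt OnehalfbinetQ}.
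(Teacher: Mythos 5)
Your proof is correct and follows essentially the same route as the paper: induction on a measure that decreases with the rows carrying $\frac{1}{2}$-entries, Lemmas \ref{lemhalf1} and \ref{lemhalf2} to identify $R$ as a basic full cycle, the two directions of Proposition \ref{propDecX} for the decomposition and reassembly, and Theorems \ref{thmcyclicproCyc} and \ref{thmOnhalfbinetQ} for the subroutines. Your explicit remark that $Q\neq\emptyset$ forces $G(A)$ to be non-bicyclic (so that Lemma \ref{lemhalf2} applies) is a point the paper leaves implicit, and is a welcome clarification.
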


\begin{proof}
The proof is by induction on the number of rows of $A$ having a $\frac{1}{2}$-entry. Suppose that there exists a binet representation $G(A)$ of $A$ such that $Q$ is an index set of half-edges.
If $A$ has no $\frac{1}{2}$-entry, then it follows that $G(A)$ is $\frac{1}{2}$-binet (see Lemma \ref{lemBinetintegral}). By Theorem \ref{thmOnhalfbinetQ}, in step 8 the procedure Decomposition outputs a $\frac{1}{2}$-binet representation of $A$ such that $Q$ is an index set of half-edges.

Now suppose that $A$ has at least one $\frac{1}{2}$-entry.
Let $R$ be the set computed in step 3. By Lemmas \ref{lemhalf1} and \ref{lemhalf2}, we deduce that $R$ is the edge index set of a full cycle in $G(A)$. 
By the "only if" part of Proposition \ref{propDecX}, it follows that the procedure does not stop in step 5, $A(R)$ is $R$-cyclic and $A(\tau)$ has a binet representation such that each element in $Q \cup \{i_{max}+1,\ldots,i_{max}+ q\}$  is a basic half-edge index. By Theorem \ref{thmcyclicproCyc}, the subroutine RCyclic with input $A(R)$ and $R$ outputs an $R$-cyclic representation  of $A(R)$. Since the number of rows in $A(\tau)$ with at least one $\frac{1}{2}$-entry is smaller than in $A$, by induction hypothesis, in step $6$ the procedure Decomposition computes a binet representation $G(A(\tau))$ of $A(\tau)$ such that $Q\cup \{i_{max}+1,\ldots,i_{max}+ q\}$ is an index set of basic half-edges.
Using the "if part" of Proposition \ref{propDecX} (see the proof), this concludes the proof of Proposition \ref{propdecRep}.
\end{proof}\\

\noindent
{\bf Proof of Theorem \ref{thmDecdecomp}.} \quad The "if part" is straightforward.

Let us prove the other implication.
Suppose that $A$ has a binet representation $G(A)$. Assume that if $A$ is bicyclic then it is not $\frac{1}{2}$-equisupported, and if $A$ has no $\frac{1}{2}$-entry then it is not cyclic. 
Let us show that the procedure Decomposition with input $A$ and $Q=\emptyset$ outputs a binet representation of $A$.

If $A$ has no $\frac{1}{2}$-entry, then by assumption it is not cyclic and by Lemma \ref{lemBinetintegral} it follows that $A$ is $\frac{1}{2}$-binet. By Theorem \ref{thmOnhalfbinetQ}, in step 8 the procedure Decomposition outputs a $\frac{1}{2}$-binet representation of $A$.

Now suppose that $A$ has at least one $\frac{1}{2}$-entry. Let $R$ be the set computed in step 3. Using Lemmas \ref{lemhalf1}, \ref{lemhalf1b} and \ref{lemhalf2}, 
we deduce that $R$ is the edge index set of a full cycle in $G(A)$. 
By the "only if" part of Proposition \ref{propDecX}, it follows that the procedure does not stop in step 5, $A(R)$ is $R$-cyclic and  $A(\tau)$  has a binet representation such that each element in $\{i_{max}+1,\ldots,i_{max}+ q\}$  is a basic half-edge index. So in step 6, by Theorem \ref{thmcyclicproCyc}, the subroutine RCyclic with input $A(R)$ and $R$ outputs an $R$-cyclic representation  of $A(R)$, and
by Proposition \ref{propdecRep}, the procedure Decomposition computes a binet representation $G(A(\tau))$ of $A(\tau)$ such that $\{i_{max}+ 1,\ldots,i_{max}+ q\}$ is an index set of basic half-edges.
Using the "if part" of Proposition \ref{propDecX} (see the proof), the procedure Decomposition outputs a binet representation of $A$.

Let us analyze the time needed to construct a binet representation $G(A)$ of $A$. One calls at most 
one time the procedure OnehalfbinetQ with input a matrix $A'$, and thanks to step 5, the number of rows of $A'$ does not exceed $n$. Since the number of nonzero elements in $A'$ is at most $\alpha$, by Theorem \ref{thmOnhalfbinetQ} the subroutine OnehalfbinetQ takes time $O(nm^2 \alpha)$. Denote by $R_1, \ldots, R_\delta$ all row index subsets computed in step 3. The procedure Decomposition constructs an $R_i$-cyclic representation of some matrix $A(R_i)$, for $i=1,\ldots,\delta$. Let
$n_i$, $m_i$ and $\alpha_i$ be the number of rows, columns and
nonzero elements of $A(R_i)$, respectively, for $i=1,\ldots,\delta$. Thanks to step 5 of the procedure Decomposition, we have $n_1+n_2+ \ldots + n_\delta \le n$
and $\alpha_1+\alpha_2+ \ldots + \alpha_\delta \le \alpha$ (and $m_i\le m$ for all $1\le i \le \delta$). By Theorem \ref{thmcyclicproCyc}, the computational effort to obtain an $R_i$-cyclic representation $G(A(R_i))$ of $A(R_i)$ is $O(n_i m_i \alpha_i)$.  Altogether, the time required to construct a binet representation of $A$ is bounded by 

$$C( nm^2 \alpha + n_1 m_1 \alpha_1 + \ldots + n_\delta m_\delta \alpha_\delta) \le 2Cn m^2 \alpha,$$

\noindent
for some constant $C$. This completes the proof of Theorem \ref{thmDecdecomp}.
{\hfill$\BBox{\rule{.3mm}{3mm}}$} \\


\clearpage
\thispagestyle{empty}
\cleardoublepage
\verb"   "
\newpage

\chapter{Encoding of a global structure:
a digraph $D$}\label{ch:multidiD}

Let $A$ be a connected matrix with entries $0$, $1$, $2$ or $\frac{1}{2}$, and $R^*$ a row index subset of $A$. If $A$ has an $R^*$-cyclic, $R^*$-central or $R^*$-network representation $G(A)$, then the basic forest in $G(A)$ with edge index set $\overline{R^*}$ has some nice properties that have to be recognized. For that purpose, we construct a digraph $D$ with respect to $R^*$ as well as matrices, called bonsai matrices, related with the vertices of $D$. These objects  play a significant role in the recognition of $R^*$-cyclic and $R^*$-central matrices. They are also useful for the recognition of $\{1,\rho\}$-corelated network matrices in Section \ref{sec:CentralNetcorelated}. Notice that if $A$ is $R^*$-cyclic, then for any column index $j$ such that $s_{\frac{1}{2}}(A_{\bullet j})\neq \emptyset$ we have $s_{\frac{1}{2}}(A_{\bullet j})=R^*$ (see Lemmas \ref{lemdefiWeight1} and \ref{corBidirectedCircuit}). Further, in Chapter \ref{ch:central}, we deal with $R^*$-central matrices without any $\frac{1}{2}$-entry.
So, throughout this chapter, we assume that if  
$s_{\frac{1}{2}}(A_{\bullet j})\neq \emptyset$ for some column index $j$, then $s_{\frac{1}{2}}(A_{\bullet j})= R^*$.

Let $S^*=\{j\, : \, s(A_{\bullet j})\cap R^* \neq \emptyset\}$.
We partition $\overline{ R^*}$ into subsets $E_1,\ldots,E_b$ corresponding to the vertices of $D$ as follows.
For $i,i'\in \overline{ R^*}$, $i$ and $i'$ are in a same subset $E_\ell$ ($1\le \ell \le b$) if and only if there exist column indexes $j$ and $j'$ such that $i\in s(A_{\bullet j})$, $i'\in s(A_{\bullet j'})$, and $j$ and $j'$ are in the same connected component of the graph $H(A_{\bullet \overline{ S^*} })$. See Figure \ref{fig:AH}.

\begin{figure}[h!]
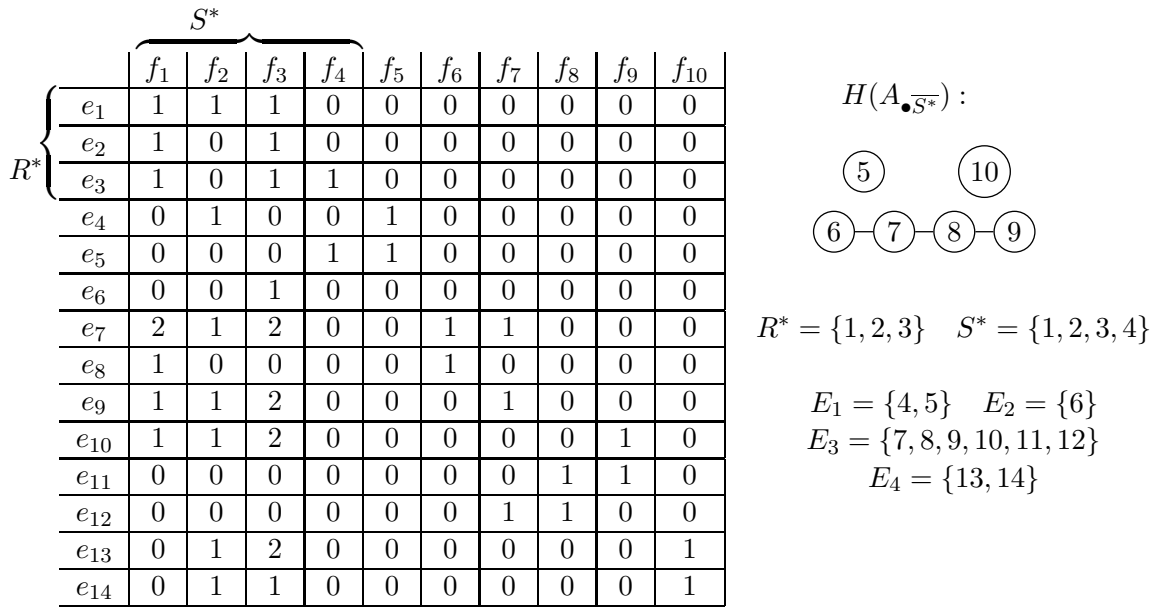

\vspace{.2cm}
$
\begin{array}{cc}

\psset{xunit=1cm,yunit=1cm,linewidth=0.5pt,radius=0.3mm,arrowsize=1pt,
labelsep=1.5pt,fillcolor=black}

\pspicture(0,0)(9,8)

\put(1.6,7.7){$\overbrace{\hspace{3cm}}$}
\put(2.3,8){$S^*$}

\put(.3,5.73){\rotateleft{$\overbrace{\hspace{1.5cm}}$}}
\put(-.1,6){$R^*$}

\rput(5,4){
\begin{tabular}{c|c|c|c|c|c|c|c|c|c|c|}
  & $f_1$ & $f_2$ & $f_3$ & $f_4$ & $f_5$ & $f_6$ & $f_7$ & $f_8$ & $f_9$ & $f_{10}$ \\
\hline
$e_1$  &1&1&1&0&0&0&0&0&0&0 \\
\hline
$e_2$  &1&0&1&0&0&0&0&0&0&0 \\
\hline
$e_3$  &1&0&1&1&0&0&0&0&0&0 \\
\hline
$e_4$  &0&1&0&0&1&0&0&0&0&0 \\
\hline
$e_5$  &0&0&0&1&1&0&0&0&0&0 \\
\hline
$e_6$  &0&0&1&0&0&0&0&0&0&0 \\
\hline
$e_7$ &2&1&2&0&0&1&1&0&0&0 \\
\hline
$e_8$  &1&0&0&0&0&1&0&0&0&0 \\
\hline
$e_9$  &1&1&2&0&0&0&1&0&0&0 \\
\hline
$e_{10}$  &1&1&2&0&0&0&0&0&1&0 \\
\hline
$e_{11}$  &0&0&0&0&0&0&0&1&1&0 \\
\hline
$e_{12}$  &0&0&0&0&0&0&1&1&0&0 \\
\hline
$e_{13}$  &0&1&2&0&0&0&0&0&0&1 \\
\hline
$e_{14}$  &0&1&1&0&0&0&0&0&0&1 \\
\hline
\end{tabular} }

\endpspicture

&

\psset{xunit=0.8cm,yunit=1cm,linewidth=0.5pt,radius=0.1mm,arrowsize=7pt,
labelsep=1.5pt,fillcolor=black}

\pspicture(-0.4,-5)(5,0)

\put(1.3,2){$H(A_{\bullet \overline{S^*}}):$}

\cnodeput(2,1.1){5}{5}
\cnodeput(1.5,0.3){6}{6}
\cnodeput(2.5,0.3){7}{7}
\cnodeput(3.5,0.3){8}{8}
\cnodeput(4.5,0.3){9}{9}
\cnodeput(4,1.1){10}{10}

\rput(3.5,-1){$R^*=\{1,2,3\}$\quad $S^*=\{1,2,3,4\}$}

\rput(3.5,-2){$E_1=\{4,5\}$\quad $E_2=\{6\}$}
\rput(3.5,-2.5){$E_3=\{7,8,9,10,11,12\}$}
\rput(3.5,-3){$E_4=\{13,14\}$}

\ncline{-}{6}{7}
\ncline{-}{7}{8}
\ncline{-}{8}{9}

\endpspicture 

\end{array}
$

\caption{A binet matrix $A$ and the graph $H(A_{\bullet \overline{S^*}})$.}  
\label{fig:AH}
\end{figure}

\begin{figure}[h!]
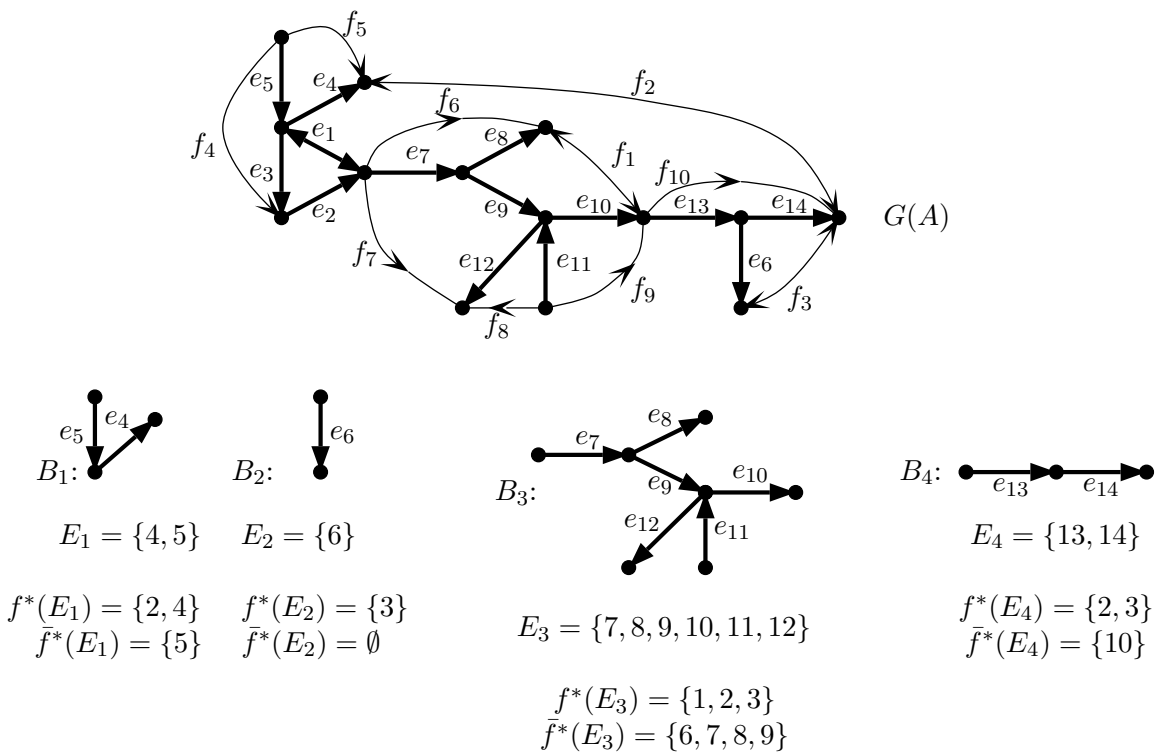


\vspace{0.4cm}
\begin{center}
\psset{xunit=1.3cm,yunit=1.2cm,linewidth=0.5pt,radius=0.1mm,arrowsize=7pt,labelsep=1.5pt,fillcolor=black}

\pspicture(0,0)(5.8,3)

\pscircle[fillstyle=solid](0,1){.1}
\pscircle[fillstyle=solid](0,2){.1}
\pscircle[fillstyle=solid](0,3){.1}
\pscircle[fillstyle=solid](0.85,2.5){.1}
\pscircle[fillstyle=solid](0.85,1.5){.1}
\pscircle[fillstyle=solid](1.85,1.5){.1}
\pscircle[fillstyle=solid](1.85,0){.1}
\pscircle[fillstyle=solid](2.7,1){.1}
\pscircle[fillstyle=solid](2.7,0){.1}
\pscircle[fillstyle=solid](2.7,2){.1}
\pscircle[fillstyle=solid](3.7,1){.1}
\pscircle[fillstyle=solid](4.7,1){.1}
\pscircle[fillstyle=solid](4.7,0){.1}
\pscircle[fillstyle=solid](5.7,1){.1}
\rput(6.5,1){$G(A)$}

\psline[linewidth=1.6pt,arrowinset=0]{<->}(0,2)(0.85,1.5)
\rput(0.44,1.95){$e_1$}

\psline[linewidth=1.6pt,arrowinset=0]{->}(0,1)(0.85,1.5)
\rput(0.44,1.05){$e_2$}

\psline[linewidth=1.6pt,arrowinset=0]{<-}(0,1)(0,2)
\rput(-0.2,1.5){$e_3$}

\psline[linewidth=1.6pt,arrowinset=0]{->}(0,2)(0.85,2.5)
\rput(0.45,2.5){$e_4$}

\psline[linewidth=1.6pt,arrowinset=0]{<-}(0,2)(0,3)
\rput(-0.2,2.5){$e_5$}

\psline[linewidth=1.6pt,arrowinset=0]{->}(4.7,1)(4.7,0)
\rput(4.9,0.5){$e_6$}

\psline[linewidth=1.6pt,arrowinset=0]{->}(0.85,1.5)(1.85,1.5)
\rput(1.4,1.7){$e_7$}

\psline[linewidth=1.6pt,arrowinset=0]{->}(1.85,1.5)(2.7,2)
\rput(2.2,1.9){$e_8$}

\psline[linewidth=1.6pt,arrowinset=0]{->}(1.85,1.5)(2.7,1)
\rput(2.2,1.1){$e_9$}

\psline[linewidth=1.6pt,arrowinset=0]{->}(2.7,1)(3.7,1)
\rput(3.2,1.15){$e_{10}$}

\psline[linewidth=1.6pt,arrowinset=0]{->}(2.7,0)(2.7,1)
\rput(3,0.5){$e_{11}$}

\psline[linewidth=1.6pt,arrowinset=0]{<-}(1.85,0)(2.7,1)
\rput(2,0.5){$e_{12}$}

\psline[linewidth=1.6pt,arrowinset=0]{->}(3.7,1)(4.7,1)
\rput(4.2,1.15){$e_{13}$}

\psline[linewidth=1.6pt,arrowinset=0]{->}(4.7,1)(5.7,1)
\rput(5.2,1.15){$e_{14}$}

\pscurve[arrowinset=.5,arrowlength=1.5]{<->}(2.7,2)(3.1,1.7)(3.7,1)
\rput(3.5,1.7){$f_1$}

\pscurve[arrowinset=.5,arrowlength=1.5]{<->}(0.85,2.5)(3.7,2.3)(5.2,1.8)(5.7,1)
\rput(3.7,2.5){$f_2$}

\pscurve[arrowinset=.5,arrowlength=1.5]{<->}(4.7,0)(5.1,0.2)(5.7,1)
\rput(5.3,0.1){$f_3$}

\pscurve[arrowinset=.5,arrowlength=1.5]{->}(0,3)(-.6,2)(0,1)
\rput(-.8,1.8){$f_4$}

\pscurve[arrowinset=.5,arrowlength=1.5]{->}(0,3)(.5,3.1)(0.85,2.5)
\rput(.75,3.15){$f_5$}

\pscurve[arrowinset=.5,arrowlength=1.5]{->}(0.85,1.5)(1,1.8)(1.85,2.1)
\pscurve[arrowinset=.5,arrowlength=1.5]{-}(1.85,2.1)(2.2,2.1)(2.7,2)
\rput(1.7,2.3){$f_6$}

\pscurve[arrowinset=.5,arrowlength=1.5]{->}(0.85,1.5)(1,0.7)(1.3,0.4)
\pscurve[arrowinset=.5,arrowlength=1.5]{-}(1.3,0.4)(1.31,0.39)(1.85,0)
\rput(0.85,0.6){$f_7$}

\psline[arrowinset=.5,arrowlength=1.5]{-<}(1.85,0)(2.4,0)
\psline[arrowinset=.5,arrowlength=1.5]{-}(2.3,0)(2.7,0)
\rput(2.2,-0.2){$f_8$}

\pscurve[arrowinset=.5,arrowlength=1.5]{-<}(3.7,1)(3.65,0.6)(3.4,0.3)
\pscurve[arrowinset=.5,arrowlength=1.5]{-}(3.5,0.4)(3.2,0.15)(2.7,0)
\rput(3.7,0.2){$f_9$}

\pscurve[arrowinset=.5,arrowlength=1.5]{->}(3.7,1)(4,1.3)(4.7,1.4)
\pscurve[arrowinset=.5,arrowlength=1.5]{-}(4.7,1.4)(5.4,1.25)(5.7,1)
\rput(4,1.5){$f_{10}$}

\endpspicture 
\end{center}

$
\begin{array}{ccc}

\begin{array}{c}

\psset{xunit=1cm,yunit=1cm,linewidth=0.5pt,radius=0.1mm,arrowsize=7pt,
labelsep=1.5pt,fillcolor=black}

\pspicture(0,0)(4,.5)

\pscircle[fillstyle=solid](.5,0){.1}
\pscircle[fillstyle=solid](.5,1){.1}
\pscircle[fillstyle=solid](1.3,0.7){.1}
\pscircle[fillstyle=solid](3.5,0){.1}
\pscircle[fillstyle=solid](3.5,1){.1}

\rput(0,0){$B_1$:}

\psline[linewidth=1.6pt,arrowinset=0]{->}(.5,0)(1.3,0.7)
\rput(.8,.7){$e_{4}$}
\psline[linewidth=1.6pt,arrowinset=0]{->}(.5,1)(.5,0)
\rput(.2,.5){$e_{5}$}

\rput(2.6,0){$B_2$:}

\psline[linewidth=1.6pt,arrowinset=0]{<-}(3.5,0)(3.5,1)
\rput(3.8,0.5){$e_{6}$}

\endpspicture\\ \\

E_1=\{4,5\}\, \quad \, E_2=\{6\} \\ \\ 
f^*(E_1)=\{2,4\}\, \quad \,f^*(E_2)=\{3\}\\ 
\bar f^*(E_1)=\{5\}\, \quad \,\bar f^*(E_2)=\emptyset  

\end{array} &

\begin{array}{c}

\psset{xunit=1.2cm,yunit=1cm,linewidth=0.5pt,radius=0.1mm,arrowsize=7pt,
labelsep=1.5pt,fillcolor=black}

\pspicture(0,0)(4.5,2)

\pscircle[fillstyle=solid](0.85,.5){.1}
\pscircle[fillstyle=solid](1.85,.5){.1}
\pscircle[fillstyle=solid](1.85,-1){.1}
\pscircle[fillstyle=solid](2.7,0){.1}
\pscircle[fillstyle=solid](2.7,1){.1}
\pscircle[fillstyle=solid](2.7,-1){.1}
\pscircle[fillstyle=solid](2.7,0){.1}
\pscircle[fillstyle=solid](3.7,0){.1}

\psline[linewidth=1.6pt,arrowinset=0]{->}(0.85,.5)(1.85,.5)
\rput(1.4,.7){$e_7$}

\psline[linewidth=1.6pt,arrowinset=0]{->}(1.85,.5)(2.7,1)
\rput(2.2,1){$e_8$}

\psline[linewidth=1.6pt,arrowinset=0]{->}(1.85,.5)(2.7,0)
\rput(2.2,.1){$e_9$}

\psline[linewidth=1.6pt,arrowinset=0]{->}(2.7,0)(3.7,0)
\rput(3.2,.25){$e_{10}$}

\psline[linewidth=1.6pt,arrowinset=0]{->}(2.7,-1)(2.7,0)
\rput(3,-0.5){$e_{11}$}

\psline[linewidth=1.6pt,arrowinset=0]{<-}(1.85,-1)(2.7,0)
\rput(2,-.4){$e_{12}$}

\rput(0.6,0){$B_3$:}

\endpspicture \\ \\ \\ \\

E_3=\{7,8,9,10,11,12\}   \\ \\
f^*(E_3)=\{1,2,3\}\\
\bar f^*(E_3)=\{6,7,8,9\}

\end{array}  &

\begin{array}{c}

\psset{xunit=1.2cm,yunit=1cm,linewidth=0.5pt,radius=0.1mm,arrowsize=7pt,
labelsep=1.5pt,fillcolor=black}

\pspicture(0,0)(3,.5)

\pscircle[fillstyle=solid](.5,0){.1}
\pscircle[fillstyle=solid](1.5,0){.1}
\pscircle[fillstyle=solid](2.5,0){.1}

\psline[linewidth=1.6pt,arrowinset=0]{->}(.5,0)(1.5,0)
\rput(1,-0.2){$e_{13}$}

\psline[linewidth=1.6pt,arrowinset=0]{->}(1.5,0)(2.5,0)
\rput(2,-.2){$e_{14}$}

\rput(0,0){$B_4$:}

\endpspicture 

\\ \\ 

E_4=\{13,14\} \\ \\
f^*(E_4)= \{2,3\}\\
\bar f^*(E_4)= \{10 \}

\end{array}

\end{array}
$
\vspace{0.3cm}
\caption{A binet representation $G(A)$ of the matrix $A$ given in Figure \ref{fig:AH} and the corresponding bonsais.}  
\label{fig:G(A)bon}
\end{figure}

For all $1\le \ell\le b$, we define the following objects. The set 
$E_\ell$ is called a \emph{bonsai}\index{bonsai $E_\ell$}. Recall that $f(E_\ell)= \{ j \, : s(A_{\bullet j})\cap E_\ell \neq \emptyset\}$. The set 
$f^*(E_\ell)=f(E_\ell) \cap S^*$ is called 
the \emph{global connecter set of $E_\ell$}\index{global connecter set} and $\bar f^*(E_\ell)= f(E_\ell) \cap \overline{S^*} $ the \emph{local connecter set of $E_\ell$}\index{local connecter set}.
From the way of partitioning $\overline{R^*}$ into $E_1,\ldots,E_b$, it results that $\bar f^*(E_\ell)= \{j\, : \, s(A_{\bullet j})\subseteq E_\ell \}$. See Figure \ref{fig:G(A)bon}.

Suppose that $A$ has an $R^*$-cyclic, $R^*$-central or $R^*$-network representation $G(A)$. Let $1\le \ell \le b$.
From the way of partitioning $\overline{R^*}$, it follows that the bonsai $E_\ell$ is the edge index set of a tree in $G(A)$, denoted as $B_\ell$, which is called a \emph{bonsai}\index{bonsai $B_\ell$}.
For any $j$ in the local connecter set $\bar f^*(E_\ell)$ of $E_\ell$, the set $s(A_{\bullet j})\cap E_\ell$ corresponds to the edge index set of a (basic) directed path in $B_\ell$.
Any column with index in the global connecter set $f^*(E_\ell)$ of $E_\ell$ corresponds to a nonbasic edge whose fundamental circuit intersects the edge set of $B_\ell$ and the edge set of the basic cycle. By Lemma \ref{lemdefiWeight2},
for any $j$ in the global connecter set $ f^*(E_\ell)$ of $E_\ell$, the intersection of the fundamental circuit of $f_j$  with $B_\ell$ forms either a directed path, called  a \emph{$B_\ell$-path generated by  $f_\beta$}\index{path@$B_\ell$-path generated by $f_\beta$}, or a union of two distinct maximal directed paths called \emph{$B_\ell$-paths} and leaving a common node (see also Figures \ref{fig:lsubstem1} and \ref{fig:lsubstem2}).

To illustrate these notions, let us study an example.
In Figure \ref{fig:G(A)bon}, we have a $\{1,2,3\}$-cyclic representation of the matrix given in Figure \ref{fig:AH} and the corresponding bonsais. Observe that the path made up of $e_{13}$ and $e_{14}$ is a $B_4$-path generated by $f_2$ and $f_3$, and the path consisting of $e_{13}$ is also 
a $B_4$-path generated by $f_3$.

Furthermore,
the nonbasic edge $f_1$ generates two $B_3$-paths, one with edge index set $\{ 7,8\}$ and another one with edge index set $\{ 7,9,10\}$. This raises the following questions. Does there exist a binet representation of $A$ such that $f_1$ generates a $B_3$-path with edge index set not equal to $\{7,8\}$ nor $\{ 7,9,10\}$? Actually, there is no such representation. Is it possible to compute the row index sets $\{7,8\}$ and $\{ 7,9,10\}$ corresponding to $B_3$-paths without knowing any binet representation of $A$? Section \ref{sec:mainthm} states a useful theorem in this direction.
In Section \ref{sec:bonsaimat}, for all $1\le \ell \le b$, we compute row index subsets of $E_\ell$, called $E_\ell$-paths, denoted as $E_\ell^1,\ldots, E_\ell^{m(\ell)}$, and define a bonsai matrix $N_\ell$.  The matrix  $A_{E_\ell\times \bar f^*(E_\ell)}$ and the vectors $\chi_{E_\ell^k}^{\{1,\ldots,n\}}$ are submatrices of $N_\ell$. 
We present the construction of a digraph $D$ in Section \ref{sec:DefDigraphD}. The vertex set of $D$ is $\{E_1,\ldots,E_b\}$. 
Provided that $A$ has an $R^*$-cyclic representation $G(A)$, an $E_\ell$-path
corresponds to the edge index set of a $B_\ell$-path,  and an edge $(E_\ell,E_{\ell'})$ in $D$ labeled $E_{\ell'}^k$, for some $1\le k \le m(\ell')$, means the following:

\begin{itemize}

\item For any nonbasic edge $f_\beta$ generating at least one $B_\ell$-path (or equivalently, $\beta \in f^*(E_\ell)$), $f_\beta$ generates a $B_{\ell'}$-path with edge index set $E_{\ell'}^k$.

\item If $f_\beta$ generates two distinct $B_\ell$-paths or $s_2(A_{\bullet \beta})\neq \emptyset$, then $f_\beta$ generates exactly one $B_{\ell'}$-path with edge index set
$E_{\ell'}^k=s(A_{\bullet \beta})\cap E_{\ell'}=s_2(A_{\bullet \beta})\cap E_{\ell'}$.

\end{itemize}

In Figure \ref{fig:G(A)bon},  
consider $B_3$ and $B_4$. We have $f^*(E_4)=\{2,3\}$.
The nonbasic edge $f_2$ generates exactly one $B_4$-path with edge index set $\{13,14\}$ and a $B_3$-path with edge index set $\{7,9,10\}$. Moreover, $f_3$ generates two $B_4$-paths with edge index sets $\{13,14\}$ and $\{13\}$, respectively, and a $B_3$-path with edge index set $\{7,9,10\}=s(A_{\bullet 3})\cap E_3=s_2(A_{\bullet 3})\cap E_3$. Thus $(E_4,E_3)$ is an edge in $D$ labeled by $\{7,9,10\}$. 

On the other hand, observe that $B_4$ and $B_2$ have a common node, corresponding to the endnode of a $B_4$-path with edge index set $\{13\}$; and $B_4$ is closer to the basic cycle than $B_2$. This implies that $(E_2,E_4)$ is an edge in $D$ labeled by $\{13\}$.
More generally, $G(A)$ induces a spanning  forest of $D$ denoted by $T_{G(A)}$ and defined as follows: for any $B_\ell,\, B_{\ell'} \subseteq G(A)$, $(E_\ell,E_{\ell'})_{E_{\ell'}^k} \in T_{G(A)}$ if and only if one node of $B_\ell$ coincides with the endnode of the $B_{\ell'}$-path with edge index set $E_{\ell'}^k$ (and $B_{\ell'}$ is closer than $B_\ell$ to the basic cycle). This forest is studied in Section \ref{sec:FordigaphD} and motivates the definition of a feasible spanning forest of $D$. Section \ref{sec:For} deals with a procedure computing (in some cases) a feasible spanning forest of $D$.

\section{An important theorem}\label{sec:mainthm}
 
Let $N$ be a connected nonnegative network matrix, $G(N)$ a network representation of $N$ and $\mathcal{I}$ a row index subset of $N$. We suppose that $\mathcal{I}$ is the edge index set of two (basic) disjoint directed paths $p$ and $p'$ having exactly one common node $v$ in $G(N)$ which is an endnode of both $p$ and $p'$ as in Figure \ref{fig7:pathThm1}. In this section, we provide a theorem which allows us to distinguish the edge indexes of $p$ from those of $p'$, by using the matrix $N$ only.

\begin{figure}[h!]
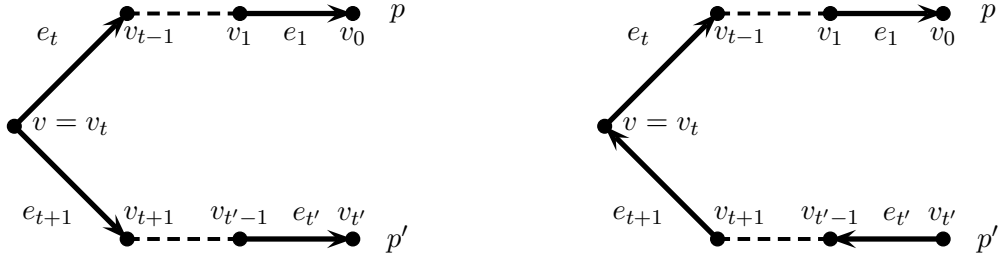

\vspace{0.5cm}
$
\begin{array}{cc}

\psset{xunit=1.5cm,yunit=1.5cm,linewidth=0.5pt,radius=0.1mm,arrowsize=7pt,
labelsep=1.5pt,fillcolor=black}

\pspicture(0,0)(5,2)

\pscircle[fillstyle=solid](0,1){.1}
\pscircle[fillstyle=solid](1,2){.1}
\pscircle[fillstyle=solid](2,2){.1}
\pscircle[fillstyle=solid](3,2){.1}
\pscircle[fillstyle=solid](1,0){.1}
\pscircle[fillstyle=solid](2,0){.1}
\pscircle[fillstyle=solid](3,0){.1}

\psline[linewidth=2pt]{->}(0,1)(1,0)
\rput(0.3,0.2){$e_{t+1}$}

\psline[linewidth=2pt]{->}(0,1)(1,2)
\rput(0.3,1.8){$e_{t}$}

\psline[linewidth=1.5pt,linestyle=dashed](1,2)(2,2)

\psline[linewidth=2pt]{->}(2,2)(3,2)
\rput(2.5,1.8){$e_{1}$}

\psline[linewidth=1.5pt,linestyle=dashed](1,0)(2,0)

\psline[linewidth=2pt]{->}(2,0)(3,0)
\rput(2.6,0.2){$e_{t'}$}

\rput(0.5,1){$v=v_t$}\rput(1.2,1.8){$v_{t-1}$}\rput(2,1.8){$v_1$}
\rput(3,1.8){$v_0$}\rput(1.2,0.2){$v_{t+1}$}
\rput(2,0.2){$v_{t'-1}$}\rput(3,0.2){$v_{t'}$}
\rput(3.4,2){$p$}\rput(3.4,0){$p'$}

\endpspicture & 

\psset{xunit=1.5cm,yunit=1.5cm,linewidth=0.5pt,radius=0.1mm,arrowsize=7pt,
labelsep=1.5pt,fillcolor=black}

\pspicture(0,0)(5,2)

\pscircle[fillstyle=solid](0,1){.1}
\pscircle[fillstyle=solid](1,2){.1}
\pscircle[fillstyle=solid](2,2){.1}
\pscircle[fillstyle=solid](3,2){.1}
\pscircle[fillstyle=solid](1,0){.1}
\pscircle[fillstyle=solid](2,0){.1}
\pscircle[fillstyle=solid](3,0){.1}

\psline[linewidth=2pt]{<-}(0,1)(1,0)
\rput(0.3,0.2){$e_{t+1}$}

\psline[linewidth=2pt]{->}(0,1)(1,2)
\rput(0.3,1.8){$e_{t}$}

\psline[linewidth=1.5pt,linestyle=dashed](1,2)(2,2)

\psline[linewidth=2pt]{->}(2,2)(3,2)
\rput(2.5,1.8){$e_{1}$}

\psline[linewidth=1.5pt,linestyle=dashed](1,0)(2,0)

\psline[linewidth=2pt]{<-}(2,0)(3,0)
\rput(2.6,0.2){$e_{t'}$}

\rput(0.5,1){$v=v_t$}\rput(1.2,1.8){$v_{t-1}$}\rput(2,1.8){$v_1$}
\rput(3,1.8){$v_0$}\rput(1.2,0.2){$v_{t+1}$}
\rput(2,0.2){$v_{t'-1}$}\rput(3,0.2){$v_{t'}$}
\rput(3.4,2){$p$}\rput(3.4,0){$p'$}

\endpspicture

\end{array}
$

\caption{the paths $p$ and $p'$ in two possible configurations.}
\label{fig7:pathThm1}
\end{figure}

Up to row permutations and reversing the orientation of all edges if necessary, we may 
suppose that the vertices of $p$ are $v_0$, $v_1, \ldots, v_t=v$, those of $p'$ are $v_t=v,v_{t+1}, \ldots,v_{t'}$, and 
 $e_i=(v_{i},v_{i-1})$ for $i=1,\ldots,t$; moreover, either $e_i=(v_{i-1},v_{i})$ for all $i=t+1,\ldots,t'$ or $e_i=(v_{i},v_{i-1})$ for all $i=t+1,\ldots, t'$. See Figure \ref{fig7:pathThm1}. A path $h$ in $H(N)$ between two nodes $k$ and $k'$ is said to be \emph{minimal}\index{minimal path} if any two non-consecutive vertices of $h$ are not adjacent in $H(N)$.
We denote by  $H_{\mathcal{I}}(N)$ the graph 
$H(N)$ in which each vertex $k$ is blue if $s(N_{\bullet k})\cap \mathcal{I}\neq \emptyset$ and yellow otherwise, and each edge $(k,k')$ has a blue color if  $s(N_{\bullet k})\cap s(N_{\bullet k'})
\cap \mathcal{I}\neq \emptyset$ and yellow otherwise. See Figure
\ref{fig7:thm1}.
Using these preliminaries, we can state the main theorem.

\begin{thm}\label{stem}
Let $k$ and $k'$ be two blue vertices in $H_{\mathcal{I}}(N)$. 
Then $\mathcal{I}\cap s(N_{\bullet k})$
and $\mathcal{I}\cap s(N_{\bullet k'})$ are edge index subsets of a same basic directed path of $G(N)$ if and only if all minimal  paths between $k$ and $k'$ in $H_{\mathcal{I}}(N)$ have an even number of yellow edges.
\end{thm}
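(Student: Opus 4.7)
My plan is to treat each column $N_{\bullet k}$ as a directed path $\pi_k$ in the spanning tree $T$ (a reading guaranteed by $N\geq 0$ together with equation~\eqref{eqn:Introfund}), and to reduce the theorem to a parity-preservation statement about an auxiliary labeling on the vertices of $H(N)$. First I would prove a structural lemma: for every blue column $k$, the set $s(N_{\bullet k})\cap\mathcal{I}$ is a contiguous subpath of exactly one of $p$ or $p'$. This is forced by the configurations of Figure~\ref{fig7:pathThm1}: $p$ and $p'$ meet only at $v$, and in each orientation case the directed subpaths of $T$ compatible with forward traversal cannot use edges from both halves of $p\cup p'$ in a non-contiguous way. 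This yields a well-defined $\sigma(k)\in\{p,p'\}$ for each blue $k$, and the goal becomes to show $\sigma(k)=\sigma(k')$ iff every minimal $k$--$k'$ path in $H_{\mathcal{I}}(N)$ carries an even number of yellow edges.

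The easy half is immediate: if $(k,k')$ is a blue edge, the shared row lies in $\mathcal{I}$ and hence in a single one of $p$ or $p'$, so $\sigma(k)=\sigma(k')$; consequently any subpath consisting of blue edges (and blue vertices) keeps $\sigma$ constant. The substantive content lies in handling yellow edges and yellow intermediate vertices. To that end, I would extend $\sigma$ to a $\{+,-\}$-valued labeling $\tilde\sigma$ defined on all columns, using the canonical direction that each tree edge inherits from the nonnegativity of $N$ (all $\pi_k$ crossing a given tree edge agree in direction) together with the position of $\pi_k$ relative to the branching vertex $v$. On blue columns $\tilde\sigma$ matches $\sigma$ up to a sign convention.

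The central technical claim is that along a \emph{minimal} path $k_0,k_1,\ldots,k_m$ in $H_{\mathcal{I}}(N)$, the label $\tilde\sigma$ flips across the edge $(k_i,k_{i+1})$ exactly when that edge is yellow; summing flips modulo $2$ then proves the theorem. To verify this claim I would run a case analysis on how the shared subpath $\pi_{k_i}\cap\pi_{k_{i+1}}$ sits inside $T$ and on how the two paths diverge at its endpoints. Minimality is essential: it rules out chords in the path that would allow two flips to cancel without a matching yellow-edge contribution. The principal obstacle is precisely this case analysis---showing that $\tilde\sigma$ is well-defined on yellow vertices and flips exactly once per yellow edge along minimal paths---and it will require careful use of the tree structure of $T$, the forced-forward traversal of each $\pi_k$, and the induced-path hypothesis, but no machinery beyond the elementary fundamental-cycle description~\eqref{eqn:Introfund}.
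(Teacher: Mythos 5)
Your decomposition of minimal paths into blue and yellow pieces, and the observation that blue edges preserve the relevant invariant (this is exactly Lemma \ref{lemstem1} of the paper), are the right general shape; but the invariant you propose to track is the wrong one, and this breaks the argument at its foundation. Your structural lemma --- that for every blue $k$ the set $s(N_{\bullet k})\cap\mathcal{I}$ lies inside exactly one of $p$ or $p'$ --- is false in the second configuration of Figure \ref{fig7:pathThm1}, where $p'$ is directed into $v$ and $p$ out of it: there the union of $p'$ and $p$ is itself a single directed path through $v$, so a fundamental circuit may contain both $e_{t+1}$ and $e_t$ and its $\mathcal{I}$-support straddles the two halves; hence $\sigma(k)\in\{p,p'\}$ is not well defined. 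Worse, even where $\sigma$ happens to be defined, your reformulation ``$\sigma(k)=\sigma(k')$ iff every minimal path carries an even number of yellow edges'' is not equivalent to the theorem: in that same configuration one blue column can be supported only on $p$ and another only on $p'$, so $\sigma(k)\neq\sigma(k')$, yet both supports lie on the single directed path formed by $p'$ followed by $p$, and the theorem (correctly) asserts that every minimal path between them has an even number of yellow edges. Consequently your central claim that $\tilde\sigma$ flips exactly at yellow edges cannot be true as stated, since it would manufacture an odd minimal path in this situation.

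The quantity that actually behaves well is not ``which of $p,p'$'' but a local orientation condition at a vertex attached to each maximal yellow subpath. In the paper's argument, for a yellow minimal subpath between blue vertices $j$ and $j'$ one isolates a vertex $v_{i_{\max}}$ of $p\cup p'$ determined by their $\mathcal{I}$-supports, shows that every intermediate fundamental circuit passes through $v_{i_{\max}}$ using exactly one entering and one leaving edge of the star $E^*$ there, and that consecutive circuits share exactly one star edge (Lemma \ref{lemthm1h}); the entering/leaving alternation around this star over the length of the subpath produces the parity, and the odd case forces both $e_{i_{\max}}$ and $e_{i_{\min}}$ to leave $v_{i_{\max}}=v$ (Lemma \ref{lemstem2}). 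Note that this bookkeeping is local to each yellow subpath --- different subpaths localize at different vertices --- so there is no global $\pm$ labeling of all columns to extend, and minimality of the full path is invoked once more at the end to exclude two odd yellow subpaths (each would force $t$ or $t+1$ into the supports of non-consecutive vertices, creating a chord). To salvage your plan you would need to replace $\sigma$ by the predicate ``the two $\mathcal{I}$-supports are consistently oriented through their nearest common vertex of $p\cup p'$'' and carry out essentially this star-alternation analysis.
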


\begin{figure}
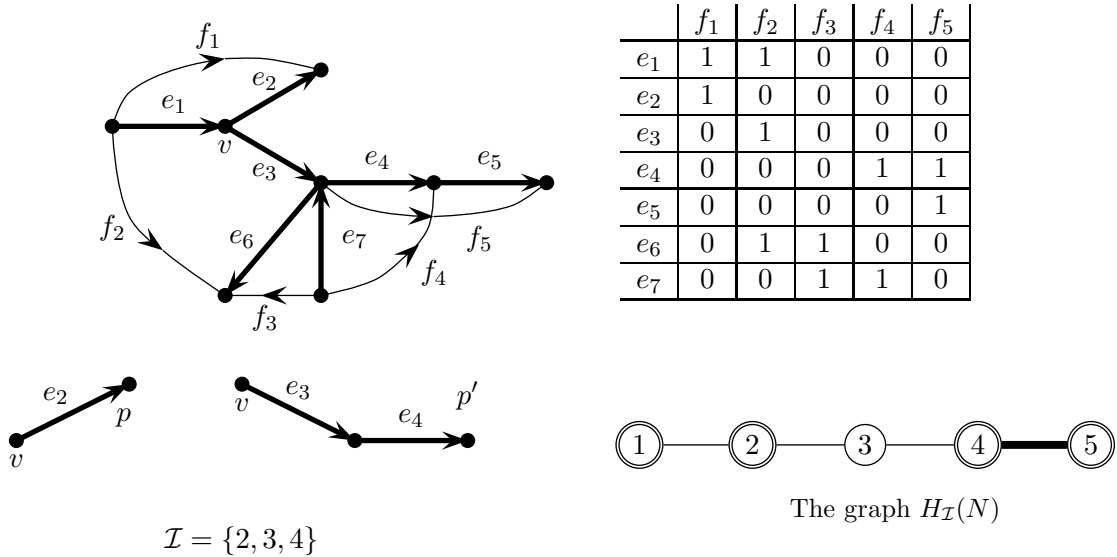


$
\begin{array}{cc}
\begin{array}{c}
\\ \\
\psset{xunit=1.5cm,yunit=1.5cm,linewidth=0.5pt,radius=0.1mm,arrowsize=7pt,
labelsep=1.5pt,fillcolor=black}

\pspicture(0,0)(5,2)

\pscircle[fillstyle=solid](0.85,1.5){.1}
\pscircle[fillstyle=solid](1.85,1.5){.1}
\pscircle[fillstyle=solid](1.85,0){.1}
\pscircle[fillstyle=solid](2.7,1){.1}
\pscircle[fillstyle=solid](2.7,0){.1}
\pscircle[fillstyle=solid](2.7,2){.1}
\pscircle[fillstyle=solid](3.7,1){.1}
\pscircle[fillstyle=solid](4.7,1){.1}

\rput(1.85,1.32){$v$}

\psline[linewidth=2pt]{->}(0.85,1.5)(1.85,1.5)
\rput(1.4,1.7){$e_1$}

\psline[linewidth=2pt]{->}(1.85,1.5)(2.7,2)
\rput(2.2,1.9){$e_2$}

\psline[linewidth=2pt]{->}(1.85,1.5)(2.7,1)
\rput(2.2,1.1){$e_3$}

\psline[linewidth=2pt]{->}(2.7,1)(3.7,1)
\rput(3.2,1.2){$e_{4}$}

\psline[linewidth=2pt]{->}(3.7,1)(4.7,1)
\rput(4.2,1.2){$e_{5}$}

\psline[linewidth=2pt]{<-}(1.85,0)(2.7,1)
\rput(2,0.5){$e_6$}

\psline[linewidth=2pt]{->}(2.7,0)(2.7,1)
\rput(3,0.5){$e_7$}

\pscurve{->}(0.85,1.5)(1,1.8)(1.85,2.1)
\pscurve{-}(1.85,2.1)(2.2,2.1)(2.7,2)
\rput(1.7,2.3){$f_1$}

\pscurve{->}(0.85,1.5)(1,0.7)(1.3,0.4)
\pscurve{-}(1.3,0.4)(1.31,0.39)(1.85,0)
\rput(0.85,0.6){$f_2$}

\psline{-<}(1.85,0)(2.4,0)
\psline{-}(2.3,0)(2.7,0)
\rput(2.2,-0.2){$f_3$}

\pscurve{-<}(3.7,1)(3.65,0.6)(3.4,0.3)
\pscurve{-}(3.5,0.4)(3.2,0.15)(2.7,0)
\rput(3.7,0.2){$f_4$}

\pscurve{->}(2.7,1)(3,0.8)(3.7,0.7)
\pscurve{-}(3.7,0.7)(4.4,0.8)(4.7,1)
\rput(4.1,0.5){$f_5$}

\endpspicture 
\end{array} &

\begin{tabular}{c|c|c|c|c|c|}
  & $f_1$ & $f_2$ & $f_3$ & $f_4$ & $f_5$ \\
\hline
$e_1$  &1&1&0&0&0 \\
\hline
$e_2$  &1&0&0&0&0 \\
\hline
$e_3$ &0&1&0&0&0 \\
\hline
$e_4$  &0&0&0&1&1 \\
\hline
$e_5$  &0&0&0&0&1 \\
\hline
$e_6$  &0&1&1&0&0 \\
\hline
$e_7$  &0&0&1&1&0 \\
\hline
\end{tabular} 

\end{array}
$

\vspace{1cm}

$
\begin{array}{ccccccc}

\begin{array}{c}

\psset{xunit=1.5cm,yunit=1.5cm,linewidth=0.5pt,radius=0.1mm,arrowsize=7pt,
labelsep=1.5pt,fillcolor=black}

\pspicture(0,0)(4,0.5)

\pscircle[fillstyle=solid](0,0){.1}
\pscircle[fillstyle=solid](1,0.5){.1}
\pscircle[fillstyle=solid](2,0.5){.1}
\pscircle[fillstyle=solid](3,0){.1}
\pscircle[fillstyle=solid](4,0){.1}

\psline[linewidth=2pt]{->}(0,0)(1,0.5)
\rput(0.35,0.4){$e_2$}

\psline[linewidth=2pt]{->}(2,0.5)(3,0)
\rput(2.5,0.45){$e_3$}

\psline[linewidth=2pt]{->}(3,0)(4,0)
\rput(3.5,0.2){$e_4$}

\rput(0,-0.18){$v$}

\rput(2,0.32){$v$}

\rput(0.95,0.2){$p$}

\rput(4,0.4){$p'$}

\endpspicture\\ \\ \\

\mathcal{I}=\{2,3,4\}
\end{array}& & & & & &

\psset{xunit=1.5cm,yunit=1.5cm,linewidth=0.5pt,radius=0.1mm,arrowsize=7pt,
labelsep=1.5pt,fillcolor=black}

\pspicture(0,0)(4,0.5)

\cnodeput[doubleline=true](0,0.3){2}{1}
\cnodeput[doubleline=true](1,0.3){3}{2}
\cnodeput(2,0.3){4}{3}
\cnodeput[doubleline=true](3,0.3){5}{4}
\cnodeput[doubleline=true](4,0.3){6}{5}

\ncline{-}{2}{3}
\ncline{-}{3}{4}
\ncline{-}{4}{5}
\ncline[linewidth=3pt]{-}{5}{6}

\put(2,-0.5){\shortstack{\small{The graph $ H_{\mathcal{I}}(N)$}}}

\endpspicture

\end{array}$

\vspace{0.5cm}

\caption{A network matrix with a network representation, two directed paths 
$p$ and $p'$ starting at $v$ and the 
graph $H_{\mathcal{I}}(N)$. (Heavy edges correspond to blue edges and vertices with a double 
circle to blue ones.)}
\label{fig7:thm1}
\end{figure}

A \emph{blue}\index{blue path} path is a path 
all of whose vertices and edges are blue.  
A \emph{yellow}\index{yellow path} path is a path whose endnodes are blue and all inner vertices and edges yellow. 
For proving Theorem \ref{stem}, we view any path in 
$H_{\mathcal{I}}(N)$ as a succession of blue and yellow paths. So we need 
the following lemma.

\begin{lem}\label{lemstem1}
Let $(k,k')$ be a blue edge in $H_{\mathcal{I}} (N)$. Then $\mathcal{I}\cap s(N_{\bullet k})$ and $\mathcal{I}\cap s(N_{\bullet k'})$ are edge index subsets of a same basic directed path of $G(N)$.
\end{lem}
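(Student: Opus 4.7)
The plan is to use the basic interpretation of network matrix columns (observation (\ref{eqn:Introfund})) together with the hypothesis $N\geq 0$ to show that the $\mathcal{I}$-support of any single column cannot jump from $p$ to $p'$ across the node $v$, except when doing so respects orientation.

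First I would observe that for any nonbasic edge $f_j$ with endnodes $u$ and $w$, the support $s(N_{\bullet j})$ is exactly the edge set of the unique $u$--$w$ path $q_j$ in the spanning tree of $G(N)$. Since $\mathcal{I}$ is the edge set of $p\cup p'$ and $p\cup p'$ is itself a path in the tree, the intersection $s(N_{\bullet j})\cap \mathcal{I}=E(q_j)\cap E(p\cup p')$ is a (possibly empty) contiguous subpath of $p\cup p'$, whose endnodes I call $x_j$ and $y_j$. By (\ref{eqn:Introfund}), the coefficient $N_{ij}$ equals $+1$ exactly when $e_i$ is traversed \emph{forwardly} as one walks along $q_j$ from $u$ to $w$, and equals $-1$ when traversed backwardly; so $N\geq 0$ forces every edge of $q_j$ (hence every edge of the subpath from $x_j$ to $y_j$) to be traversed in its own direction.

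Next I would split into the two configurations of Figure \ref{fig7:pathThm1}. In Case B, where $e_i=(v_i,v_{i-1})$ for \emph{every} $i=1,\dots,t'$, the union $p\cup p'$ is itself a single basic directed path (from $v_{t'}$ to $v_0$), and the conclusion of the lemma is immediate: both $\mathcal{I}\cap s(N_{\bullet k})$ and $\mathcal{I}\cap s(N_{\bullet k'})$ are edge index subsets of this directed path. The nontrivial case is Case A, where $e_t=(v_t,v_{t-1})$ and $e_{t+1}=(v_t,v_{t+1})$ both leave $v=v_t$, so that $p$ and $p'$ diverge at $v$. I claim that in Case A the subpath $s(N_{\bullet j})\cap \mathcal{I}$ cannot properly cross $v$. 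Indeed, if $x_j=v_a$ with $a<t$ and $y_j=v_b$ with $b>t$ (or vice versa), then walking along $q_j$ from $x_j$ to $y_j$ would use, among its $p$-edges, the edge $e_{a+1}=(v_{a+1},v_a)$ traversed in the direction $v_a\to v_{a+1}$, hence backwardly; this would yield $N_{e_{a+1},j}=-1$, contradicting $N\geq 0$. The same obstruction occurs if one of $x_j,y_j$ equals $v$ itself and the other lies strictly on the opposite branch. Therefore $s(N_{\bullet j})\cap \mathcal{I}$ is entirely contained in the edge set of $p$ or entirely in the edge set of $p'$.

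Finally, since $(k,k')$ is a blue edge of $H_{\mathcal{I}}(N)$, there exists $i\in s(N_{\bullet k})\cap s(N_{\bullet k'})\cap \mathcal{I}$, i.e.\ $e_i$ belongs to $p$ or to $p'$. Applying the preceding paragraph to $j=k$ and to $j=k'$, both $\mathcal{I}\cap s(N_{\bullet k})$ and $\mathcal{I}\cap s(N_{\bullet k'})$ lie in the same branch as $e_i$, and hence form edge index subsets of a single basic directed path ($p$, $p'$, or $p\cup p'$ in Case B). The only delicate step is the orientation bookkeeping at $v$ in Case A; making this rigorous requires distinguishing according to which of $v_t,v_{t-1},v_{t+1}$ serve as $x_j$ or $y_j$, but in each of these subcases the same forward/backward mismatch near $v$ produces the negative coefficient that contradicts $N\geq 0$.
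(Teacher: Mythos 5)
Your proof is correct and follows essentially the same route as the paper's: nonnegativity of $N$ forces each set $\mathcal{I}\cap s(N_{\bullet j})$ to be the edge set of a directed subpath of $p\cup p'$ (hence contained in a single branch when $p$ and $p'$ diverge at $v$), and the common edge supplied by the blue edge $(k,k')$ places both sets in the same branch. The paper compresses this into two sentences; your version just spells out the forward/backward bookkeeping at $v$ that the paper leaves implicit.
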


\begin{proof}
Since $N$ is nonnegative, $\mathcal{I}\cap s(N_{\bullet k})$ and $\mathcal{I}\cap s(N_{\bullet k'})$ are edge index sets of (basic) directed paths included in $p\cup p'$.
If  $k$ and $k'$ are linked by a blue
edge, then the fundamental cycles of $f_{k}$ and $f_{k'}$
have a common edge contained in $p$ or $ p'$, hence
$\mathcal{I}\cap s(N_{\bullet k})$ and $\mathcal{I}\cap 
s(N_{\bullet k'})$ are edge index subsets of a same directed path.
\end{proof}\\

Let $j$ and $j'$ be two blue vertices in $H_{\mathcal{I}}(N)$ and $h'$ a yellow minimal path from $j$ to $j'$. 
Up to column permutations, we may note $h'=(j=1,(1,2),2,\ldots, j')$. Since $h'$ is a yellow minimal path, it follows that $s(N_{\bullet 1})\cap 
s(N_{\bullet j'}) \cap \mathcal{I}= \emptyset.$ Then,
using the ordering of the edges in
$p\cup p'$, we may suppose that 
$i_{\max}=\max\{i\,:\,i\in s(N_{\bullet 1})\cap \mathcal{I}\}<i_{\min}=\min\{i\,:\,i\in s(N_{\bullet j'})\cap \mathcal{I}\}$ (the case $\min\{i\,:\,i\in s(N_{\bullet 1})\cap \mathcal{I}\}>\max\{i\,:\,i\in s(N_{\bullet j'})\cap \mathcal{I}\}$ is similar for the proof of next lemmas). 
We denote by $T$ the basic subgraph of $G(N)$ with edge index set $\cup^{j'}_{\beta =1} s(N_{\bullet \beta})$. Clearly, $T$ is connected. The following lemma asserts that $e_{i_{max}}$ and 
$e_{i_{min}}$ are adjacent.

\begin{lem}
The edges $e_{i_{max}}$ and $e_{i_{min}}$ are adjacent in $T$.
\end{lem}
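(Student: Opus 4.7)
The plan is to exploit the fundamental property of network matrices recalled in (\ref{eqn:Introfund}): the support $s(N_{\bullet \beta})$ of any column $\beta$ is the edge index set of a directed path (the fundamental cycle of $f_\beta$ minus $f_\beta$) inside the spanning tree underlying $G(N)$. Since $\mathcal{I}$ itself is the edge index set of the path $p\cup p'$ (viewed in the underlying undirected tree as the vertex sequence $v_0,v_1,\ldots,v_{t'}$ with $e_i$ joining $v_{i-1}$ and $v_i$), the intersection of $s(N_{\bullet \beta})$ with $\mathcal{I}$ is always a subpath of $p\cup p'$, and so its indices form a contiguous block in $\{1,\ldots,t'\}$.

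With this in hand, I would identify the key endpoints. Applied to $\beta=1$, the set $s(N_{\bullet 1})\cap \mathcal{I}$ is a subpath whose largest index is $i_{\max}$, hence one of its endpoints in $G(N)$ is the vertex $v_{i_{\max}}$. Applied to $\beta=j'$, the set $s(N_{\bullet j'})\cap \mathcal{I}$ is a subpath whose smallest index is $i_{\min}$, hence one of its endpoints is $v_{i_{\min}-1}$. The claim of the lemma is exactly that $i_{\min}=i_{\max}+1$, so that $e_{i_{\max}}$ and $e_{i_{\min}}$ share the vertex $v_{i_{\max}}=v_{i_{\min}-1}$ in $T$.

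To force this equality I would use the connectivity of $T$ together with the fact that $T$ sits inside the spanning tree of $G(N)$, so $T$ is itself a subtree. Since $v_{i_{\max}}$ belongs to the subpath coming from column $1$ and $v_{i_{\min}-1}$ belongs to the subpath coming from column $j'$, both lie in $T$, and thus $T$ contains the unique path of the spanning tree joining them. That tree-path is precisely the stretch $e_{i_{\max}+1},\ldots,e_{i_{\min}-1}$ of $p\cup p'$. Hence every edge $e_i$ with $i_{\max}<i<i_{\min}$ would have to lie in $T$.

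The main obstacle — and the crux of the argument — is now to rule out such edges being in $T$. Here is where the yellow minimality of $h'$ is used: each intermediate column $\beta\in\{2,\ldots,j'-1\}$ is a yellow vertex, so $s(N_{\bullet \beta})\cap \mathcal{I}=\emptyset$; an index $i$ with $i_{\max}<i<i_{\min}$ is in $\mathcal{I}$ but lies in $s(N_{\bullet 1})\cap\mathcal{I}$ only for $i\le i_{\max}$ and in $s(N_{\bullet j'})\cap\mathcal{I}$ only for $i\ge i_{\min}$, contradiction. Therefore the intermediate stretch is empty, that is, $i_{\min}=i_{\max}+1$, and $e_{i_{\max}}$, $e_{i_{\min}}$ share the common endpoint $v_{i_{\max}}$ in $T$, which is the desired adjacency.
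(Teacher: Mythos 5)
Your proof is correct and follows essentially the same route as the paper's: both arguments rest on the connectivity of $T$ as a subgraph of the spanning tree, combined with the observation that any edge $e_i$ with $i_{\max}<i<i_{\min}$ lies in $\mathcal{I}$ but in none of the supports $s(N_{\bullet\beta})$ for $1\le\beta\le j'$ (by the definitions of $i_{\max}$, $i_{\min}$ and the yellowness of the intermediate vertices of $h'$), hence cannot belong to $T$. The paper phrases this as a contradiction with the connectedness of $T$, while you phrase it directly as forcing the tree-path between $v_{i_{\max}}$ and $v_{i_{\min}-1}$ to be trivial; the substance is identical.
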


\begin{proof}
Suppose, to the contrary, that $e_{i_{max}}$ and $e_{i_{min}}$ are not adjacent. Then there would exist a basic edge, say $e_{i_0}$, with $i_0\in R$ between 
$e_{i_{max}}$ and $e_{i_{min}}$ in $T$. By definition of $e_{i_{max}}$ and $e_{i_{min}}$, $e_{i_0}$ would not be in the fundamental cycle of $f_{1}$ and
$f_{j'}$. Furthermore, since $\beta$ is yellow for all $1<\beta<j'$, $e_{i_0}$ would not be in the fundamental cycle of $f_\beta$ for all $1<\beta<j'$. Therefore $e_{i_{max}}, e_{i_{min}}\in T$ and $e_{i_0}\notin T$. Thus $T$ would not be connected, which is a contradiction. 
\end{proof}\\

\begin{figure}[h!]
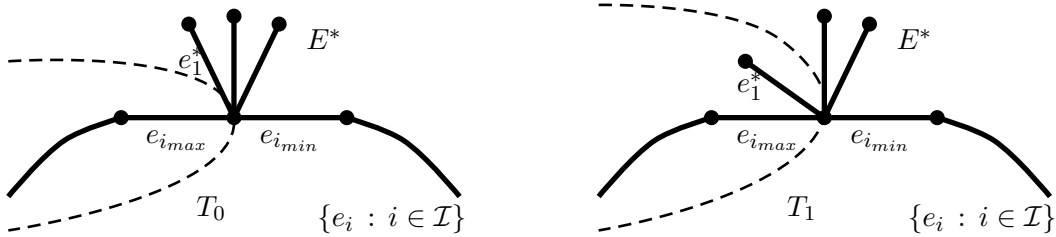

$\begin{array}{cc}
\psset{xunit=1.5cm,yunit=1.5cm,linewidth=0.5pt,radius=0.1mm,arrowsize=7pt,
labelsep=1.5pt,fillcolor=black}

\pspicture(0,0)(5,2)

\pscircle[fillstyle=solid](1,1){.1}
\pscircle[fillstyle=solid](2,1){.1}
\pscircle[fillstyle=solid](3,1){.1}
\pscircle[fillstyle=solid](2,1.9){.1}
\pscircle[fillstyle=solid](2.4,1.83){.1}
\pscircle[fillstyle=solid](1.6,1.83){.1}

\psline[linewidth=2pt]{-}(1,1)(2,1)
\rput(1.5,0.8){$e_{i_{max}}$}

\psline[linewidth=2pt]{-}(1.6,1.83)(2,1)
\rput(1.62,1.5){$e_1^*$}

\psline[linewidth=2pt]{-}(2,1.9)(2,1)

\psline[linewidth=2pt]{-}(2.4,1.83)(2,1)

\psline[linewidth=2pt]{-}(2,1)(3,1)
\rput(2.5,0.8){$e_{i_{min}}$}

\rput(2.8,1.7){$E^*$}

\pscurve[linewidth=2pt](0,0.3)(0.5,0.8)(1,1)

\pscurve[linewidth=2pt](3,1)(3.5,0.8)(4,0.3)
\rput(3.4,0.1){$\{e_i\,: \, i\in \mathcal{I}\}$}

\pscurve[linestyle=dashed,linewidth=1pt](0,1.5)(2,1)(0,0)
\rput(1.8,0.2){$T_0$}

\endpspicture &

\psset{xunit=1.5cm,yunit=1.5cm,linewidth=0.5pt,radius=0.1mm,arrowsize=7pt,
labelsep=1.5pt,fillcolor=black}

\pspicture(0,0)(5,2)

\pscircle[fillstyle=solid](1,1){.1}
\pscircle[fillstyle=solid](2,1){.1}
\pscircle[fillstyle=solid](3,1){.1}
\pscircle[fillstyle=solid](1.3,1.5){.1}
\pscircle[fillstyle=solid](2,1.9){.1}
\pscircle[fillstyle=solid](2.4,1.83){.1}

\psline[linewidth=2pt]{-}(1,1)(2,1)
\rput(1.5,0.8){$e_{i_{max}}$}

\psline[linewidth=2pt]{-}(1.3,1.5)(2,1)
\rput(1.34,1.28){$e_1^*$}

\psline[linewidth=2pt]{-}(2,1)(2,1.9)

\psline[linewidth=2pt]{-}(2.4,1.83)(2,1)

\psline[linewidth=2pt]{-}(2,1)(3,1)
\rput(2.5,0.8){$e_{i_{min}}$}

\rput(2.8,1.7){$E^*$}

\pscurve[linewidth=2pt](0,0.3)(0.5,0.8)(1,1)

\pscurve[linewidth=2pt](3,1)(3.5,0.8)(4,0.3)
\rput(3.4,0.1){$\{e_i\,: \, i\in \mathcal{I} \}$}

\pscurve[linestyle=dashed,linewidth=1pt](0,2)(1.6,1.7)(2,1)(0,0)
\rput(1.8,0.2){$T_1$}

\endpspicture 

\end{array}$

\caption{An illustration of $T_0$ and $T_1$.}
\label{fig:TimaxBare}
\end{figure}

Let $E^*$ be the set of basic edges in $G(N)$ incident to  $v_{i_{\max}}$. From the following lemma we may deduce that the fundamental cycle of any edge with index in $h'$ contains $v_{i_{max}}$.

\begin{lem}\label{lemthm1h}
For $1\le \beta \le j'$, the set $s(N_{\bullet \beta})$ contains the indexes of one edge entering $v_{i_{max}}$ and one edge leaving $v_{i_{max}}$. Moreover, for $1\le \beta < j'$, there is exactly one edge of $E^*$ with index in $s(N_{\bullet \beta})\cap s(N_{\bullet \beta+1})$.
\end{lem}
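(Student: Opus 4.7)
The plan is to work with the directed-path structure of each column support. Because $N\ge 0$, Lemma \ref{lemdefiWeight2} gives that for every $\beta$ with $s(N_{\bullet \beta})\neq\emptyset$, the set $T_\beta:=\{e_i : i\in s(N_{\bullet \beta})\}$ is the edge set of a directed path in the spanning tree $T$. The hypothesis that $h'$ is a yellow minimal path translates into two clean facts about these paths: (i) consecutive $T_\beta$ and $T_{\beta+1}$ share at least one edge (adjacency in $H(N)$); (ii) $T_\beta$ and $T_{\beta'}$ share no edge whenever $|\beta-\beta'|\ge 2$ (minimality); and (iii) no edge of $\mathcal{I}$ appears in $T_\beta$ for an interior (yellow) vertex $\beta$, while for $\beta=1$ all edges of $T_1\cap\{e_i:i\in\mathcal{I}\}$ have index $\le i_{max}$ and for $\beta=j'$ all have index $\ge i_{min}$.

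I would then assemble the union $U=\bigcup_{\beta=1}^{j'}T_\beta$, which is a connected subtree of $T$ (by (i)) containing both $e_{i_{max}}$ and $e_{i_{min}}$ but, by (iii) together with the choice of $i_{max},i_{min}$, no edge $e_i$ with $i\in\mathcal{I}$ and $i_{max}<i<i_{min}$. Since $e_{i_{max}}$ and $e_{i_{min}}$ are adjacent at $v_{i_{max}}$, this node lies in $U$. The main step, and the place where I expect the most care, is to upgrade ``$v_{i_{max}}\in U$'' to claim (a), namely that $v_{i_{max}}$ is an \emph{interior} vertex of every single directed path $T_\beta$, $1\le\beta\le j'$. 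My plan is to argue by an inductive ``propagation'' along the chain. For the base case, I would rule out the possibility that $v_{i_{max}}$ is an endpoint of $T_1$: if it were, then $f_1$ itself would be incident to $v_{i_{max}}$, so the edge of $T$ adjacent to $T_1$ at $v_{i_{max}}$ on the ``other side'' would have to be reached only through the remaining chain $T_2,\dots,T_{j'}$; combining (ii) with the tree property and with the presence of $e_{i_{min}}$ in $T_{j'}$ then forces a non-consecutive overlap, contradicting minimality. Once $v_{i_{max}}$ is interior to $T_1$, induction on $\beta$ carries the property forward: the shared directed subpath $T_\beta\cap T_{\beta+1}$ is a directed subpath of $T$, and since $T_\beta$ passes through $v_{i_{max}}$ interiorly, one shows that the chain cannot ``leave'' the vicinity of $v_{i_{max}}$ without creating a non-consecutive overlap, forcing $T_{\beta+1}$ also to pass interiorly through $v_{i_{max}}$.

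Claim (b) then drops out of the directed-subpath analysis. Two directed paths $T_\beta, T_{\beta+1}$ in a tree that share an edge share a single directed subpath, and since both pass interiorly through $v_{i_{max}}$, this shared subpath meets $v_{i_{max}}$. If it contained both an edge entering and an edge leaving $v_{i_{max}}$ (that is, two edges of $E^*$), then $T_\beta$ and $T_{\beta+1}$ would agree on both of the edges of $E^*$ used at $v_{i_{max}}$; combined with the fact that a directed path in a tree is determined by its pair of edges at any interior vertex together with direction, this agreement propagates and forces either $T_\beta=T_{\beta+1}$ (yielding duplicate columns and contradicting that the vertices of $H(N)$ are distinct) or else a violation of (ii) via the neighbour on the opposite side. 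Therefore exactly one edge of $E^*$ is common to $T_\beta$ and $T_{\beta+1}$, which is precisely statement (b).

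The hard part will be the interior-passage step for intermediate $\beta$: a careless induction can leave open the case in which $T_\beta$ ``touches'' $v_{i_{max}}$ only as an endpoint and then hands the chain off to $T_{\beta+1}$ on another edge. I expect the cleanest way to rule this out is to contract $e_{i_{max}}$ (or delete $e_{i_{min}}$) to split $T$ along $v_{i_{max}}$ and to track on which side each $T_\beta$ lies; the non-consecutive disjointness in (ii) then forbids the chain from ``jumping'' across $v_{i_{max}}$ without every intermediate $T_\beta$ actually crossing it.
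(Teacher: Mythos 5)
Your overall strategy---cut $T$ at $v_{i_{max}}$ into the branches hanging off the edges of $E^*$, and use minimality of $h'$ to force every $T_\beta$ to cross $v_{i_{max}}$---is the same as the paper's. The paper runs a single induction: with $e_{\beta_0}^*,e_{\beta_0'}^*$ the two $E^*$-edges of $T_{\beta_0}$, it shows $T_{\beta_0+1}$ cannot be contained in the component of $(T-E^*+\{e_{\beta_0}^*,e_{\beta_0'}^*\})$ containing them (otherwise the first later column to escape toward $e_{i_{min}}$ would be adjacent to $\beta_0$ non-consecutively), and then reads off \emph{both} claims at once: a path meets $v_{i_{max}}$ in at most two edges, so $T_{\beta_0+1}$ uses exactly one of $e_{\beta_0}^*,e_{\beta_0'}^*$ to stay attached and exactly one new edge of $E^*$ to escape. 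Your part (a) is essentially this argument, with one omission worth fixing: when the escape happens at the very next column, the overlap is \emph{consecutive}, so the contradiction is not with minimality but with yellowness --- the shared edge is then $e_{i_{max}}$, whose index lies in $\mathcal I$, so the corresponding edge of $h'$ would be blue. The paper's phrase ``either minimal or without any blue edge'' is covering exactly this case.

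The genuine gap is in part (b). You argue that if $T_\beta$ and $T_{\beta+1}$ shared both $E^*$-edges, then ``a directed path in a tree is determined by its pair of edges at any interior vertex together with direction,'' so the agreement propagates and forces $T_\beta=T_{\beta+1}$. That determination claim is false: two distinct paths in a tree can share the same two edges at an interior vertex (indeed an arbitrarily long common middle segment) and still differ at their ends, so nothing propagates and no duplicate column arises; the fallback ``violation of (ii) via the neighbour on the opposite side'' is too vague to carry the step. The correct route is the one you already use for (a): if $T_{\beta+1}$ contained both of $T_\beta$'s $E^*$-edges, its own two $E^*$-edges would coincide with $T_\beta$'s, so $T_{\beta+1}$ would lie entirely in the two branches of $T_\beta$ at $v_{i_{max}}$; since $e_{i_{min}}$ is not one of those two edges (because $i_{min}\in\mathcal I$ and $\beta<j'$ is yellow, or for $\beta=1$ because $s(N_{\bullet 1})\cap s(N_{\bullet j'})\cap\mathcal I=\emptyset$), the chain must escape at some $\delta+1\ge\beta+2$, and that column would contain one of $T_\beta$'s $E^*$-edges, giving a non-consecutive adjacency with $\beta$. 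Together with the observation that any shared edge forces a shared $E^*$-edge (both paths must use the gateway edge of whichever branch the shared edge lies in), this yields ``exactly one'' without any propagation.
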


\begin{proof}
We prove the lemma by induction on $\beta$ (with $1\le \beta \le j'$).\\
Let $T_0$ be the connected component  of
$(T-E^*+\{e_{i_{max}}\})$ containing $e_{i_{max}}$. See Figure \ref{fig:TimaxBare}. Suppose by contradiction
that the basic edges with index in $s(N_{\bullet 1})$ are all in
$T_0$. Let $\delta_0=\min \{ \beta\in \mathbb{N} \, : \, \{e_i \, : \, i \in s(N_{\bullet \beta+ 1}) \} \not\subseteq T_0 \}$. Clearly, $1 \le \delta_0 < j'$. As $s(N_{\bullet {\delta_0}})
\cap s(N_{\bullet {\delta_0+1}})\neq \emptyset$, we deduce that $i_{max}\in 
s(N_{\bullet \delta_0+1})$. So $(1,\delta_0+1)$ is a blue edge in $H_\mathcal{I}(N)$, which contradicts the fact that $h'$ is either minimal or without any blue edge.
Let $e_1^*$ ($\neq e_{i_{max}}$) be the edge belonging to $E^*$ and  the
fundamental cycle of $f_{1}$. Denote by $T_1$ the connected component of 
$(T-E^*+\{e_{i_{max}},e_1^*\})$ containing $e_1^*$ (and $e_{i_{max}}$). Notice that $e_1^* \neq e_{i_{min}}$ since $(1,j')$ is not blue. Hence,
if $\{ e_i \, :\, i \in s(N_{\bullet 2}) \} \subseteq T_1$, then the node $1$ would be adjacent to $\min \{ \beta \in h' \, :\, \{ e_i \, : \, i \in s(N_{\bullet \beta}) \} \nsubseteq T_1 \} \neq 2$ in $H_\mathcal{I}(N)$, contradicting the minimality of $h'$. Since $(1,2)$ is yellow, it results that the fundamental cycle of $f_2$ contains $e_1^*$ and an edge in $E^*-T_1$.

Let $1\le \beta_0 < j'$ and suppose that the lemma is true for $\beta=1, \ldots, \beta_0$. Denote by $e_{\beta_0}^*$ and $e_{\beta_0'}^*$ the edges of $E^*$ in the fundamental cycle of $f_{\beta_0}$. Let $T_{\beta_0}$ be the connected component of $(T-E^*+ \{ e_{\beta_0}^*,e_{\beta_0'}^*\})$ containing $e_{\beta_0}^*$. As previously, if $\{e_i\, :\, i \in s(N_{\beta_0+1}) \} \subseteq T_{\beta_0}$, then $\beta_0$ would be adjacent to a vertex of $h'$ distinct from $\beta_0+1$, a contradiction. So, exactly one of the edges $e_{\beta_0}^*$ and $e_{\beta_0'}^*$ with one other edge of $E^*$ are
in the fundamental cycle of $f_{\beta_0+1}$. 
Since $N$ is nonnegative, for $1\le \beta \le j'$, one both edges belonging to the fundamental cycle of $f_\beta$ and $E^*$ is entering $v_{i_{max}}$ and the other is leaving $v_{i_{max}}$. This completes the proof.
\end{proof}\\

The proof of Theorem \ref{stem} is based on the following result (see Figure \ref{fig:EX}).

\begin{lem}\label{lemstem2}
The sets $\mathcal{I}\cap s(N_{\bullet j})$
and $\mathcal{I}\cap s(N_{\bullet j'})$ are edge index subsets of the same basic directed path of $G(N)$ if and only if $h'$ has an even number of (yellow) edges.\\
If $h'$ has an odd number of yellow edges, then the index $t$ (resp., $t+1$) belongs to $\mathcal{I}\cap s(N_{\bullet j})$ (resp., $\mathcal{I}\cap s(N_{\bullet j'})$) or vice versa, and the paths $p$ and $p'$ are both leaving $v$.
\end{lem}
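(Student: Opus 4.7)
The plan is to track, for each column index $\beta$ along $h'$, the pair of edges of $E^*$ that appear in the fundamental cycle of $f_\beta$, and to see how this pair evolves as $\beta$ runs from $1$ to $j'$. By Lemma~\ref{lemthm1h}, for each such $\beta$ there is a well-defined pair $P_\beta = \{e_\beta^{in}, e_\beta^{out}\}$, where $e_\beta^{in}$ (resp.\ $e_\beta^{out}$) is the unique edge of $E^*$ entering (resp.\ leaving) $v_{i_{\max}}$ that lies in the fundamental cycle of $f_\beta$, and $|P_\beta \cap P_{\beta+1}| = 1$ for every $1 \le \beta < j'$. Because the edge $(\beta,\beta+1)$ of $h'$ is yellow, the single shared edge lies in $s(N_{\bullet\beta}) \cap s(N_{\bullet\beta+1})$ but outside $\mathcal{I}$; in particular it differs from $e_{i_{\max}}$ and $e_{i_{\min}}$, which are the only two elements of $E^* \cap \mathcal{I}$ (they are the two edges of $p \cup p'$ incident to the internal vertex $v_{i_{\max}}$).

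Next I would invoke the minimality of $h'$, which forces $s(N_{\bullet\beta}) \cap s(N_{\bullet\beta'}) = \emptyset$ for all non-consecutive $\beta,\beta'$, hence also $P_\beta \cap P_{\beta'} = \emptyset$. Together with the previous step, this produces a sequence $a_0, a_1, \ldots, a_{j'}$ of pairwise distinct elements of $E^*$ with $P_\beta = \{a_{\beta-1}, a_\beta\}$ for every $\beta$. Since $e_{i_{\max}} \in P_1 \cap \mathcal{I}$ while $a_1 \notin \mathcal{I}$, the endpoint $a_0$ must equal $e_{i_{\max}}$; the symmetric argument at the other end gives $a_{j'} = e_{i_{\min}}$. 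Finally, because each pair $P_\beta$ contains exactly one entering and one leaving edge at $v_{i_{\max}}$, the consecutive $a_{\beta-1}$ and $a_\beta$ have opposite orientations at $v_{i_{\max}}$, so the sequence alternates between ``entering'' and ``leaving''. Consequently $e_{i_{\max}} = a_0$ and $e_{i_{\min}} = a_{j'}$ share the same orientation at $v_{i_{\max}}$ if and only if $j'$ is even, i.e., if and only if $h'$ has an odd number $j'-1$ of (yellow) edges.

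The last step translates this orientation parity into the ``same basic directed path'' condition. A direct inspection of the orientation $e_i = (v_i, v_{i-1})$ for $i \le t$ and of either possible orientation of $e_i$ for $i > t$, evaluated at $v_{i_{\max}}$, shows that $e_{i_{\max}}$ and $e_{i_{\min}}$ have the same orientation at $v_{i_{\max}}$ exactly when $v_{i_{\max}} = v$, $i_{\max} = t$, $i_{\min} = t+1$, and $p'$ leaves $v$—that is, exactly when both $p$ and $p'$ leave $v$. In every other situation the two edges have opposite orientations at $v_{i_{\max}}$, which means $p \cup p'$ forms a single directed sub-path through $v_{i_{\max}}$; then the intersections $s(N_{\bullet j}) \cap \mathcal{I}$ and $s(N_{\bullet j'}) \cap \mathcal{I}$, which by cycle simplicity are sub-paths of $p \cup p'$ lying on opposite sides of $v_{i_{\max}}$, are both edge-index subsets of this common directed sub-path of $G(N)$. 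Combining this with the parity dichotomy of the previous paragraph yields the equivalence (same directed path $\Leftrightarrow$ even number of edges) and the precise description of the odd case stated in the lemma.

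The main obstacle is establishing the simple-path structure $a_0, \ldots, a_{j'}$ in the second paragraph. Both the yellow-edge hypothesis (to rule out the shared $E^*$-edge lying in $\mathcal{I}$) and the minimality of $h'$ (to guarantee pairwise disjointness of non-consecutive $P_\beta$'s) are needed simultaneously, and one must be careful to identify which end of $P_1$ (resp.\ $P_{j'}$) serves as the extreme element $a_0$ (resp.\ $a_{j'}$). Once this structural step is in place, the alternation of orientations along the sequence and the concluding case analysis at $v_{i_{\max}}$ are straightforward.
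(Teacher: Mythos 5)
Your proof is correct and follows essentially the same route as the paper's: both rest on Lemma~\ref{lemthm1h} to set up the alternating chain of entering/leaving edges of $E^*$ at $v_{i_{\max}}$, deduce the parity statement relating the orientations of $e_{i_{\max}}$ and $e_{i_{\min}}$ to the length of $h'$, and finish with the case analysis at $v_{i_{\max}}$ identifying the odd case with $i_{\max}=t$, $i_{\min}=t+1$ and both $p$, $p'$ leaving $v$. The only difference is that you make explicit the chain $a_0,\ldots,a_{j'}$ and the role of yellowness and minimality in pinning $a_0=e_{i_{\max}}$ and $a_{j'}=e_{i_{\min}}$, details the paper leaves implicit.
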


\begin{proof}
Assume that $e_{i_{max}}$ is leaving $v_{i_{max}}$. By Lemma \ref{lemthm1h}, it results that $e_{i_{min}}$ is entering $v_{i_{max}}$ if and only if $h'$ has an even length.
We deduce that $h'$ has an even length if and only if
$\mathcal{I}\cap s(N_{\bullet 1})$ and $\mathcal{I}\cap s(N_{\bullet j'})$ 
are edge index subsets of a same directed path. In the case where $h'$ has an odd length, we have 
$e_{i_{\max}}=e_t$ and $e_{i_{\min}}=e_{t+1}$ (since $i_{max} < i_{min}$).
See Figure \ref{fig:EX}.
\end{proof}\\

\vspace{0.3cm}

\begin{figure}[h!]
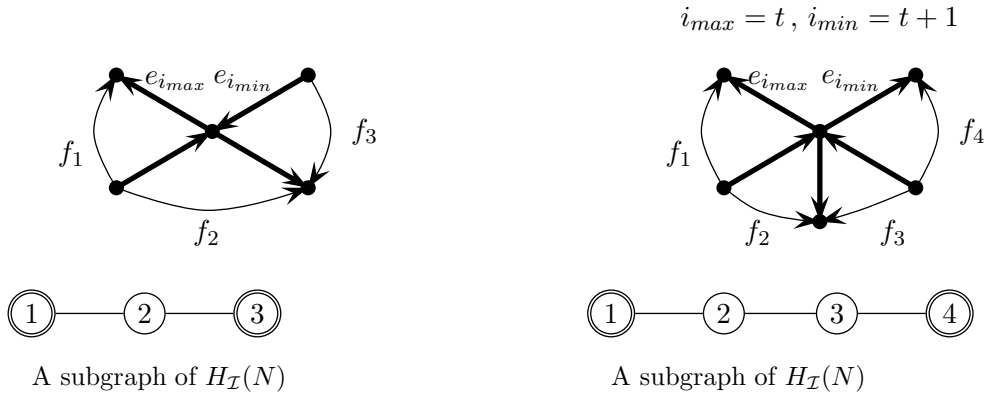

\begin{center}
$\begin{array}{ccc}
\psset{xunit=1.5cm,yunit=1.5cm,linewidth=0.5pt,radius=0.1mm,arrowsize=7pt,
labelsep=1.5pt,fillcolor=black}

\pspicture(0,0)(2.5,2)

\pscircle[fillstyle=solid](0,0.5){.1}
\pscircle[fillstyle=solid](0,1.5){.1}
\pscircle[fillstyle=solid](0.85,1){.1}
\pscircle[fillstyle=solid](1.7,1.5){.1}
\pscircle[fillstyle=solid](1.7,0.5){.1}

\psline[linewidth=2pt]{<-}(0,1.5)(0.85,1)
\rput(0.53,1.45){$e_{i_{max}}$}

\psline[linewidth=2pt]{->}(0,.5)(0.85,1)

\psline[linewidth=2pt]{->}(0.85,1)(1.7,.5)

\psline[linewidth=2pt]{<-}(0.85,1)(1.7,1.5)
\rput(1.13,1.45){$e_{i_{min}}$}

\pscurve{<-}(0,1.5)(-0.2,1)(0,.5)
\rput(-0.4,.8){$f_{1}$}

\pscurve{<-}(1.7,.5)(0.8,0.3)(0,.5)
\rput(0.8,0.1){$f_{2}$}

\pscurve{->}(1.7,1.5)(1.9,1)(1.7,.5)
\rput(2.2,1){$f_{3}$}

\endpspicture & 

\hspace{1cm} & 

\psset{xunit=1.5cm,yunit=1.5cm,linewidth=0.5pt,radius=0.1mm,arrowsize=7pt,
labelsep=1.5pt,fillcolor=black}

\pspicture(0,0)(2,2)

\pscircle[fillstyle=solid](0,0.5){.1}
\pscircle[fillstyle=solid](0,1.5){.1}
\pscircle[fillstyle=solid](0.85,1){.1}
\pscircle[fillstyle=solid](1.7,1.5){.1}
\pscircle[fillstyle=solid](1.7,0.5){.1}
\pscircle[fillstyle=solid](0.85,0.2){.1}

\psline[linewidth=2pt]{<-}(0,1.5)(0.85,1)
\rput(0.49,1.45){$e_{i_{max}}$}

\psline[linewidth=2pt]{->}(0,.5)(0.85,1)

\psline[linewidth=2pt]{->}(0.85,1)(0.85,.2)

\psline[linewidth=2pt]{<-}(0.85,1)(1.7,.5)

\psline[linewidth=2pt]{->}(0.85,1)(1.7,1.5)
\rput(1.13,1.45){$e_{i_{min}}$}

\pscurve{<-}(0,1.5)(-0.2,1)(0,.5)
\rput(-0.4,.8){$f_{1}$}

\pscurve{->}(0,.5)(0.3,0.3)(0.85,0.2)
\rput(0.3,0.1){$f_{2}$}

\pscurve{<-}(0.85,0.2)(1.2,0.3)(1.7,.5)
\rput(1.5,0.1){$f_{3}$}

\pscurve{<-}(1.7,1.5)(1.9,1)(1.7,.5)
\rput(2.2,1){$f_{4}$}

\rput(0.85,2){$i_{max}=t \, , \,i_{min}=t+1$}

\endpspicture \\
& & \\

\psset{xunit=1.5cm,yunit=1.5cm,linewidth=0.5pt,radius=0.1mm,arrowsize=7pt,
labelsep=1.5pt,fillcolor=black}

\pspicture(0,0)(4,0.5)

\cnodeput[doubleline=true](0,0.3){1}{1}
\cnodeput(1,0.3){2}{2}
\cnodeput[doubleline=true](2,0.3){3}{3}

\ncline{-}{2}{3}
\ncline{-}{1}{2}

\put(0,-0.5){\shortstack{\small{A subgraph of $ H_\mathcal{I}(N)$}}}

\endpspicture

 & &

\psset{xunit=1.5cm,yunit=1.5cm,linewidth=0.5pt,radius=0.1mm,arrowsize=7pt,
labelsep=1.5pt,fillcolor=black}

\pspicture(0,0)(4,0.5)

\cnodeput[doubleline=true](0,0.3){1}{1}
\cnodeput(1,0.3){2}{2}
\cnodeput(2,0.3){3}{3}
\cnodeput[doubleline=true](3,0.3){4}{4}

\ncline{-}{2}{3}
\ncline{-}{3}{4}
\ncline{-}{1}{2}

\put(0,-0.5){\shortstack{\small{A subgraph of $H_\mathcal{I}(N)$}}}

\endpspicture
\end{array}$

\end{center}

\caption{An illustration of Lemma \ref{lemstem2} in the case where $h'$ has an even length (on the left) and an odd length (on the right).}
\label{fig:EX}
\end{figure}

We are now able to prove Theorem \ref{stem}.\\

\noindent
{\bf Proof of Theorem \ref{stem}.}
Let $h$ be a minimal path between $k$ and $k'$.
By minimality, $h$ has at most one yellow subpath of odd length. Otherwise, using Lemma \ref{lemstem2}, $t$ or $t+1$ would be in the
support of two non-consecutive vertices of $h$, a contradiction. 
Viewing $h$ as a succession of blue subpaths and yellow subpaths, 
it follows from Lemmas \ref{lemstem1} and \ref{lemstem2} that $h$ has an even
number of yellow edges if and only if  $\mathcal{I}\cap s(N_{\bullet k})$
and $\mathcal{I}\cap s(N_{\bullet k'})$ are edge index subsets of a same (basic) directed path.
{\hfill$\BBox{\rule{.3mm}{3mm}}$}\\

In Figure \ref{fig7:thm1}, $\mathcal{I}\cap s(N_{\bullet 1})$ and $\mathcal{I}\cap s(N_{\bullet 4})$
are not edge index subsets of a same directed path, because the minimal path from 
\pscirclebox[linewidth=0.5pt]{1} to \pscirclebox[linewidth=0.5pt]{4} in $H_{\mathcal{I}}(N)$ has 
three yellow edges. But 
$\mathcal{I}\cap s(N_{\bullet 2})$ and $\mathcal{I}\cap s(N_{\bullet 5})$
are edge index subsets of a same directed path ($p'$), since the minimal path from 
\pscirclebox[linewidth=0.5pt]{2} to \pscirclebox[linewidth=0.5pt]{5} in $H_{\mathcal{I}}(N)$ has 
two yellow edges.

\section{Bonsai matrices}\label{sec:bonsaimat}

Let $A$ be a nonnegative connected matrix, $R^*$ a row index subset of $A$ and 
$E_1,\ldots,E_b$ a partitioning of $\overline{ R^*}$ as described in the introduction of the chapter.
The object of the present section is to compute subsets of 
$E_\ell$ called $E_\ell$-paths, and to define a bonsai matrix associated with $E_\ell$, for all $1\le \ell \le b$.
We will also prove some results about bonsai matrices.

For all $1\le \ell\le b$ and $\beta\in f^*(E_\ell)$, let us compute subsets of $E_\ell$ as follows. 
If $|E_\ell| > 1$, then by the way of partitioning $\overline{R^*}$, for all $i\in E_\ell$, there exists a column index $j_0 \in \bar f^*(E_\ell)$ such that $i\in s(A_{\bullet j_0}) $. Let us consider the following procedure.

\begin{tabbing}
\textbf{Procedure\,\,E$\ell$path($A$,$E_\ell$,$\beta$)}\\

\textbf{Input:} A matrix $A$, a bonsai $E_\ell$ ($1\le \ell \le b$) and some $\beta \in f^*(E_\ell)$.\\
\textbf{Output:} Row index subsets $E_\ell^I(A_{\bullet \beta})$ and 
$E_\ell^{II}(A_{\bullet \beta})$ of $E_\ell$.\\
 
\verb"  "\= let $\mathcal{I}=s_1(A_{\bullet \beta}) \cap E_\ell$; \\
\> {\bf if }\=  $|\mathcal{I}| > 1$, {\bf then } \\
\> \> let $j_0 \in \bar f^*(E_\ell)$ such that $s(A_{\bullet j_0}) \cap \mathcal{I}\neq \emptyset$ and  $S_0=\{j_0 \}$;\\
\> \> compute a spanning tree $T_{ \mathcal{I}}$ in 
$H_\mathcal{I}(A_{E_\ell \times \bar f^*(E_\ell)})$ rooted at $j_0$ such that  \\
\> \> for each $j\in V(T_\mathcal{I})$, the path from $j$ to $j_0$ in $T_\mathcal{I}$ is minimal in $H_\mathcal{I}(A_{E_\ell \times \bar f^*(E_\ell)})$;\\
\> \> {\bf for }\=  every $j\in \bar f^*(E_\ell)$ such that $s(A_{\bullet j}) \cap \mathcal{I} \neq \emptyset$, {\bf do} \\
 \> \> \> if the path from $j_0$ to $j$ in $T_\mathcal{I}$ has an even number of yellow edges, \\
 \> \> \>  then add $j$ in $S_0$; \\
\> \> {\bf endforÊ}\\
\> \> let $\mathcal{I}^I=s(A_{\bullet S_0})\cap \mathcal{I}$ and $\mathcal{I}^{II}=\mathcal{I}\verb"\" \mathcal{I}^I$; \\
\> {\bf otherwise } \\
\> \> let $\mathcal{I}^I= \mathcal{I}$ and $\mathcal{I}^{II}=\emptyset$; \\
\> {\bf endif }\\
\> let $E_\ell^I(A_{\bullet \beta})=\{i\in E_\ell\,:\,
A_{i\beta} =2\}\cup \mathcal{I}^I$,  $E_\ell^{II}(A_{\bullet \beta})=\{i\in E_\ell\,:\,
A_{i\beta} =2\}\cup  \mathcal{I}^{II}$;\\
\>output $E_\ell^I(A_{\bullet \beta})$ and 
$E_\ell^{II}(A_{\bullet \beta})$;
\end{tabbing}

\noindent
For all $1\le \ell \le b$ and $\beta\in f^*(E_\ell)$, since $s_{\frac{1}{2}}(A_{\bullet \beta})= R^*$ by assumption (see page \pageref{ch:multidiD}), we observe that 
$s_1(A_{\bullet \beta})\neq \emptyset$ or $s_2(A_{\bullet \beta})\neq \emptyset$, hence $E_\ell^I(A_{\bullet \beta})\neq \emptyset$. The procedure above is motivated by the following lemma.

\begin{lem}\label{lemdigraphBlI}
Suppose that $A$ has an $R^*$-cyclic, $R^*$-central or $R^*$-network representation $G(A)$. Then for any bonsai $E_\ell$ ($1\le \ell \le b$) and $\beta \in f^*(E_\ell)$,  the row index subsets  $E_\ell^I(A_{\bullet \beta})$ and $E_\ell^{II}(A_{\bullet \beta})$ (if not empty) output by the procedure E$\ell$path are the edge index sets of $B_\ell$-paths generated by $f_\beta$.
\end{lem}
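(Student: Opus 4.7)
Fix a bonsai $E_\ell$ and $\beta \in f^*(E_\ell)$, and denote by $N_\ell = A_{E_\ell \times \bar f^*(E_\ell)}$. Since $B_\ell$ is a basic tree in $G(A)$ and the columns indexed by $\bar f^*(E_\ell)$ are exactly the nonbasic edges whose fundamental circuit lies entirely inside $B_\ell$, the matrix $N_\ell$ is a nonnegative network matrix whose network representation is $B_\ell$. By Corollary \ref{corBidirectedCircuit} and Lemma \ref{lemdefiWeight2} the intersection of the fundamental circuit of $f_\beta$ with $B_\ell$ is either a single directed path $P_I$ or a union $P_I \cup P_{II}$ of two maximal directed paths sharing exactly one endnode $w$ (the ``branching node''). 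The aim is to show that $E_\ell^I(A_{\bullet\beta})$ and $E_\ell^{II}(A_{\bullet\beta})$ produced by the procedure are exactly the edge index sets of $P_I$ and $P_{II}$.

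The first step is to translate the weight classification of Lemma \ref{lemdefiWeight1} to our situation. Because $s_{\frac12}(A_{\bullet\beta}) = R^*$ is disjoint from $E_\ell$, no row of $E_\ell$ carries a $\frac12$-entry for $f_\beta$; so the intersection of the fundamental circuit with $B_\ell$ is recorded entirely by $s_1(A_{\bullet\beta})\cap E_\ell = \mathcal I$ and $s_2(A_{\bullet\beta})\cap E_\ell$. By Lemma \ref{lemdefiWeight1} applied to $f_\beta$, the weight-$2$ edges are exactly the edges common to both stems issued by $f_\beta$, hence the edges of $P_I\cap P_{II}$, while the weight-$1$ edges are those lying in exactly one of $P_I,P_{II}$. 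Consequently $\mathcal I$ is the edge index set of two \emph{disjoint} directed subpaths $p$ and $p'$ of $B_\ell$, both having $w$ as one endnode (the other endnodes being the extremities of $P_I$ and $P_{II}$ away from the basic cycle); one of these subpaths may be empty, and in the degenerate case where $P_I=P_{II}$ within $B_\ell$ one simply has $\mathcal I = \emptyset$.

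The second step treats the two easy cases directly. If $|\mathcal I|=0$ then $E_\ell^I = s_2(A_{\bullet\beta})\cap E_\ell$ is the edge index set of the shared part $P_I=P_{II}$ and $E_\ell^{II}=\emptyset$, as required. If $|\mathcal I|=1$ then exactly one of $p,p'$ consists of the unique element of $\mathcal I$ and the other is empty, so $E_\ell^I = (s_2(A_{\bullet\beta})\cap E_\ell)\cup\mathcal I$ is the edge index set of one full $B_\ell$-path while $E_\ell^{II}$, obtained by adjoining the weight-$2$ shared portion to the empty side, is the other (possibly reduced to the shared stem). Both identifications are immediate from the preceding paragraph.

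The heart of the argument is the case $|\mathcal I|>1$, where Theorem \ref{stem} is invoked on the network matrix $N_\ell$ with the distinguished index set $\mathcal I$: by the previous step, $\mathcal I$ satisfies exactly the hypothesis of Theorem \ref{stem} (two disjoint basic directed paths of $B_\ell$ sharing a single endnode $w$). Hence, for any two blue vertices $j, j'$ of $H_{\mathcal I}(N_\ell)$, $s(N_{\ell,\bullet j})\cap \mathcal I$ and $s(N_{\ell,\bullet j'})\cap \mathcal I$ lie in the same one of $p,p'$ if and only if every minimal $j$-$j'$-path in $H_{\mathcal I}(N_\ell)$ carries an even number of yellow edges. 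The procedure selects a blue root $j_0$, builds a spanning tree $T_{\mathcal I}$ of $H_{\mathcal I}(N_\ell)$ whose root paths are minimal, and puts into $S_0$ precisely those blue $j$ whose root path has even yellow length. By Theorem \ref{stem} the union $s(A_{\bullet S_0})\cap\mathcal I$ is thus exactly the weight-$1$ part of the $B_\ell$-path containing $\mathcal I\cap s(A_{\bullet j_0})$, and its complement in $\mathcal I$ is the weight-$1$ part of the other $B_\ell$-path. Re-adding the shared weight-$2$ edges to both sets yields exactly $P_I$ and $P_{II}$.

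The main obstacle is the verification that, after stripping the weight-$2$ shared stem, the weight-$1$ edges really form the configuration required by Theorem \ref{stem} (two disjoint directed subpaths of $B_\ell$ with a common endpoint), and to confirm that the choice of $j_0$ and the minimality property in $T_{\mathcal I}$ are legitimate regardless of which connected component of $H_{\mathcal I}(N_\ell)$ one starts from; the latter follows because each blue vertex of $H_{\mathcal I}(N_\ell)$ is connected by a blue edge to any other blue vertex whose support meets it along the same side, so all blue vertices on each of $p, p'$ lie in a common component containing $j_0$, which ensures that the reachability computation in $T_{\mathcal I}$ indeed recovers $S_0$ correctly.
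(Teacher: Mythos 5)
Your proposal is correct and follows essentially the same route as the paper: identify $\mathcal I=s_1(A_{\bullet\beta})\cap E_\ell$ as the edge index set of at most two disjoint directed subpaths of $B_\ell$ sharing a common endnode (via Lemma \ref{lemdefiWeight1}, Lemma \ref{lemdefiWeight2} and the classification of fundamental circuits), note that the $2$-support forms the shared stem attached at that node, and then invoke Theorem \ref{stem} to separate the two sides; your extra care with the degenerate cases $|\mathcal I|\le 1$ and with the connectivity of $H_{\mathcal I}(A_{E_\ell\times\bar f^*(E_\ell)})$ only makes explicit what the paper leaves implicit (the sole nitpick being that for $|\mathcal I|=0$ the procedure gives $E_\ell^{II}=s_2(A_{\bullet\beta})\cap E_\ell$ rather than $\emptyset$, which does not affect the conclusion).
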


\begin{proof}
Suppose that $A$ is $R^*$-cyclic (the cases $R^*$-central or $R^*$-network are similar or simpler).
Let $1\le \ell\le b$, $\beta \in f^*(E_\ell)$ and $\mathcal{I}=s_1(A_{\bullet \beta}) \cap E_\ell$.
Notice that if $|\mathcal{I}| > 1$, then the way of partitioning $\overline{R^*}$ implies that for all $i\in \mathcal{I}$ there exists some $j_0 \in \bar f^*(E_\ell)$ such that $A_{ij_0}=1$. By Lemmas \ref{lemdefiWeight1} and \ref{lemdefiWeight2} and the description of the different types of fundamental circuits given in page \pageref{mycounter2} (see 
 Figure \ref{fig:Apositive}), it results that $\mathcal{I}$ is the edge index set of a directed path, say $p$, or two directed paths, say $p$ and $p'$, having their initial node in common.
Furthermore, if $s_2(A_{\bullet \beta})\cap E_\ell\neq \emptyset$, then the set $s_2(A_{\bullet \beta})\cap E_\ell$ is  the edge index set of a directed path, say $p''$, and
the initial node of $p$ and $p'$ coincides with the terminal node of $p''$ (the terminal node of $p''$ is the unique node of $p''$ in common with  $p$ and $p'$).
Then the proof of Lemma \ref{lemdigraphBlI} follows from Theorem \ref{stem}.
\end{proof}\\

For any bonsai $E_\ell$, let $E_\ell^{I}(A_{\bullet \beta})$ and $E_\ell^{II}(A_{\bullet \beta})$ be output by the procedure E$\ell$path and $m(\ell)$\index{$m(\ell)$}  the cardinality of the set 
$\{E_\ell^{I}(A_{\bullet \beta}), E_\ell^{II}(A_{\bullet \beta}):\,\,\, \beta \in f^*(E_\ell)\}\verb"\"
\{\emptyset\}$, and denote by  $E^1_\ell,E^2_\ell,\ldots,E^{m(\ell)}_\ell$ all distinct elements of this set. 
For all $1\le k \le m(\ell)$, $E_\ell^k$ is called an \emph{$E_\ell$-path}\index{path@$E_\ell$-path}, and let 
$f^*(E^k_\ell)=\{\beta \in f^*(E_\ell) \,\,:\,\, E^k_\ell= E_\ell^{I}(A_{\bullet \beta})\m{ or }
E_\ell^{II}(A_{\bullet \beta})\}$. By Lemma \ref{lemdigraphBlI}, if $A$ has an $R^*$-cyclic, $R^*$-central or $R^*$-network representation $G(A)$, then for any $1 \le \ell \le b$, $1\le k \le m(\ell)$ and $\beta \in f^*(E_\ell^k)$, the $E_\ell$-path $E_\ell^k$ is the edge index set of a $B_\ell$-path\index{path@$B_\ell$-path} generated by $f_\beta$, denoted as $B_\ell^k$.
For any $\beta \in f^*(E_\ell^k)$, we say that \emph{$A_{\bullet \beta}$ generates the $E_\ell$-path $E_\ell^k$}\index{path@$E_\ell$-path generated by $A_{\bullet \beta}$}. See Figure \ref{fig:digraphD}.

\begin{figure}[t!]
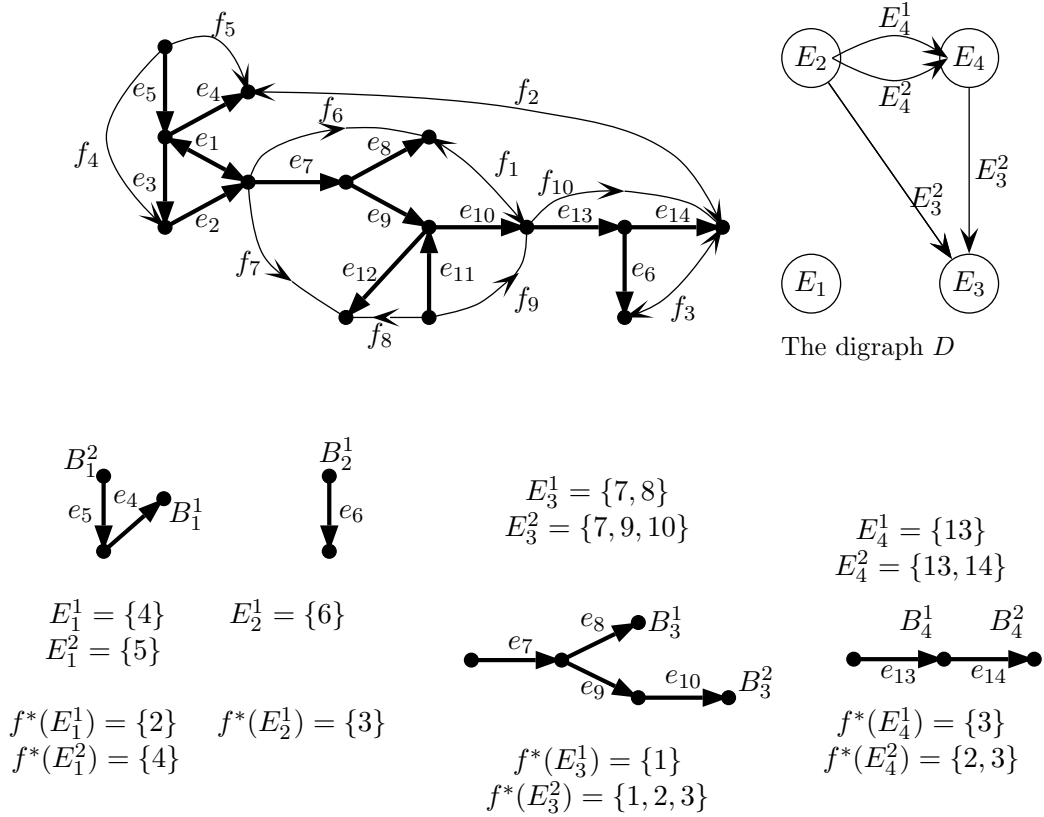


\vspace{0.8cm}
\begin{center}
$
\begin{array}{cc}
\psset{xunit=1.3cm,yunit=1.2cm,linewidth=0.5pt,radius=0.1mm,arrowsize=7pt,labelsep=1.5pt,fillcolor=black}

\pspicture(0,0)(5.8,3)

\pscircle[fillstyle=solid](0,1){.1}
\pscircle[fillstyle=solid](0,2){.1}
\pscircle[fillstyle=solid](0,3){.1}
\pscircle[fillstyle=solid](0.85,2.5){.1}
\pscircle[fillstyle=solid](0.85,1.5){.1}
\pscircle[fillstyle=solid](1.85,1.5){.1}
\pscircle[fillstyle=solid](1.85,0){.1}
\pscircle[fillstyle=solid](2.7,1){.1}
\pscircle[fillstyle=solid](2.7,0){.1}
\pscircle[fillstyle=solid](2.7,2){.1}
\pscircle[fillstyle=solid](3.7,1){.1}
\pscircle[fillstyle=solid](4.7,1){.1}
\pscircle[fillstyle=solid](4.7,0){.1}
\pscircle[fillstyle=solid](5.7,1){.1}

\psline[linewidth=1.6pt,arrowinset=0]{<->}(0,2)(0.85,1.5)
\rput(0.44,1.95){$e_1$}

\psline[linewidth=1.6pt,arrowinset=0]{->}(0,1)(0.85,1.5)
\rput(0.44,1.05){$e_2$}

\psline[linewidth=1.6pt,arrowinset=0]{<-}(0,1)(0,2)
\rput(-0.2,1.5){$e_3$}

\psline[linewidth=1.6pt,arrowinset=0]{->}(0,2)(0.85,2.5)
\rput(0.45,2.5){$e_4$}

\psline[linewidth=1.6pt,arrowinset=0]{<-}(0,2)(0,3)
\rput(-0.2,2.5){$e_5$}

\psline[linewidth=1.6pt,arrowinset=0]{->}(4.7,1)(4.7,0)
\rput(4.9,0.5){$e_6$}

\psline[linewidth=1.6pt,arrowinset=0]{->}(0.85,1.5)(1.85,1.5)
\rput(1.4,1.7){$e_7$}

\psline[linewidth=1.6pt,arrowinset=0]{->}(1.85,1.5)(2.7,2)
\rput(2.2,1.9){$e_8$}

\psline[linewidth=1.6pt,arrowinset=0]{->}(1.85,1.5)(2.7,1)
\rput(2.2,1.1){$e_9$}

\psline[linewidth=1.6pt,arrowinset=0]{->}(2.7,1)(3.7,1)
\rput(3.2,1.15){$e_{10}$}

\psline[linewidth=1.6pt,arrowinset=0]{->}(2.7,0)(2.7,1)
\rput(3,0.5){$e_{11}$}

\psline[linewidth=1.6pt,arrowinset=0]{<-}(1.85,0)(2.7,1)
\rput(2,0.5){$e_{12}$}

\psline[linewidth=1.6pt,arrowinset=0]{->}(3.7,1)(4.7,1)
\rput(4.2,1.15){$e_{13}$}

\psline[linewidth=1.6pt,arrowinset=0]{->}(4.7,1)(5.7,1)
\rput(5.2,1.15){$e_{14}$}

\pscurve[arrowinset=.5,arrowlength=1.5]{<->}(2.7,2)(3.1,1.7)(3.7,1)
\rput(3.5,1.7){$f_1$}

\pscurve[arrowinset=.5,arrowlength=1.5]{<->}(0.85,2.5)(3.7,2.3)(5.2,1.8)(5.7,1)
\rput(3.7,2.5){$f_2$}

\pscurve[arrowinset=.5,arrowlength=1.5]{<->}(4.7,0)(5.1,0.2)(5.7,1)
\rput(5.3,0.1){$f_3$}

\pscurve[arrowinset=.5,arrowlength=1.5]{->}(0,3)(-.6,2)(0,1)
\rput(-.8,1.8){$f_4$}

\pscurve[arrowinset=.5,arrowlength=1.5]{->}(0,3)(.5,3.1)(0.85,2.5)
\rput(.6,3.25){$f_5$}

\pscurve[arrowinset=.5,arrowlength=1.5]{->}(0.85,1.5)(1,1.8)(1.85,2.1)
\pscurve[arrowinset=.5,arrowlength=1.5]{-}(1.85,2.1)(2.2,2.1)(2.7,2)
\rput(1.7,2.3){$f_6$}

\pscurve[arrowinset=.5,arrowlength=1.5]{->}(0.85,1.5)(1,0.7)(1.3,0.4)
\pscurve[arrowinset=.5,arrowlength=1.5]{-}(1.3,0.4)(1.31,0.39)(1.85,0)
\rput(0.85,0.6){$f_7$}

\psline[arrowinset=.5,arrowlength=1.5]{-<}(1.85,0)(2.4,0)
\psline[arrowinset=.5,arrowlength=1.5]{-}(2.3,0)(2.7,0)
\rput(2.2,-0.2){$f_8$}

\pscurve[arrowinset=.5,arrowlength=1.5]{-<}(3.7,1)(3.65,0.6)(3.4,0.3)
\pscurve[arrowinset=.5,arrowlength=1.5]{-}(3.5,0.4)(3.2,0.15)(2.7,0)
\rput(3.7,0.2){$f_9$}

\pscurve[arrowinset=.5,arrowlength=1.5]{->}(3.7,1)(4,1.3)(4.7,1.4)
\pscurve[arrowinset=.5,arrowlength=1.5]{-}(4.7,1.4)(5.4,1.25)(5.7,1)
\rput(4,1.5){$f_{10}$}

\endpspicture  &

\psset{xunit=1.4cm,arrows=->,yunit=1.5cm,linewidth=0.5pt,radius=0.1mm,arrowsize=7pt,
labelsep=1.5pt,fillcolor=black}

\pspicture(0,0)(2,2)

\put(0.3,-0.5){\small{The digraph $D$}}

\cnodeput(0.5,0.3){1}{$E_1$}
\cnodeput(2,0.3){3}{$E_3$}
\cnodeput(0.5,2.3){2}{$E_2$}
\cnodeput(2,2.3){4}{$E_4$}

\ncline{2}{3}
\ncline{2}{3}
\rput(1.6,1.05){$E_3^2$}
\pscurve(0.7,2.3)(1.3,2.5)(1.8,2.3)
\rput(1.3,2.65){$E_4^1$}
\pscurve(0.7,2.3)(1.3,2.1)(1.8,2.3)
\rput(1.3,1.95){$E_4^2$}
\ncline{4}{3}
\naput{$E_3^2$}
      
\endpspicture 

\end{array}$
\end{center}

\vspace{1.5cm}

$
\begin{array}{ccc}

\begin{array}{c}

\psset{xunit=1cm,yunit=1cm,linewidth=0.5pt,radius=0.1mm,arrowsize=7pt,
labelsep=1.5pt,fillcolor=black}

\pspicture(0,0)(3.5,.5)

\pscircle[fillstyle=solid](.5,0){.1}
\pscircle[fillstyle=solid](.5,1){.1}
\pscircle[fillstyle=solid](1.3,0.7){.1}
\pscircle[fillstyle=solid](3.5,0){.1}
\pscircle[fillstyle=solid](3.5,1){.1}


\psline[linewidth=1.6pt,arrowinset=0]{->}(.5,0)(1.3,0.7)
\psline[linewidth=1.6pt,arrowinset=0]{<-}(.5,0)(.5,1)
\rput(.8,.7){$e_{4}$}
\rput(.2,.5){$e_{5}$}
\rput(1.6,0.5){$B_1^1$}
\rput(.2,1.2){$B_1^2$}


\psline[linewidth=1.6pt,arrowinset=0]{<-}(3.5,0)(3.5,1)
\rput(3.8,0.5){$e_{6}$}
\rput(3.6,1.3){$B_2^1$}

\endpspicture \\ \\

E_1^1=\{4\}\, \,\,\,\,\,\quad \, E_2^1=\{6\}\\
E_1^2=\{5\}\hspace{2.5cm}\\ \\
f^*(E_1^1)=\{2\}\, \quad \, f^*(E_2^1)=\{3\}\\
f^*(E_1^2)=\{4\} \hspace{2.7cm}\\

\end{array} &

\begin{array}{c}

E^1_3=\{7,8\} \\
E^2_3=\{7,9,10\} \\ 

\psset{xunit=1.2cm,yunit=1cm,linewidth=0.5pt,radius=0.1mm,arrowsize=7pt,
labelsep=1.5pt,fillcolor=black}

\pspicture(.5,0)(4,2)

\pscircle[fillstyle=solid](0.85,.5){.1}
\pscircle[fillstyle=solid](1.85,.5){.1}
\pscircle[fillstyle=solid](2.7,0){.1}
\pscircle[fillstyle=solid](2.7,1){.1}
\pscircle[fillstyle=solid](3.7,0){.1}

\psline[linewidth=1.6pt,arrowinset=0]{->}(0.85,.5)(1.85,.5)
\rput(1.4,.7){$e_7$}

\psline[linewidth=1.6pt,arrowinset=0]{->}(1.85,.5)(2.7,1)
\rput(2.2,1){$e_8$}

\psline[linewidth=1.6pt,arrowinset=0]{->}(1.85,.5)(2.7,0)
\rput(2.2,.1){$e_9$}

\psline[linewidth=1.6pt,arrowinset=0]{->}(2.7,0)(3.7,0)
\rput(3.2,.25){$e_{10}$}



\rput(3,1){$B_3^1$}
\rput(4,0.2){$B_3^2$}

\endpspicture \\ \\ 

f^*(E_3^1)=\{1\}\\
f^*(E_3^2)=\{1,2,3\}\\

\end{array}  &

\begin{array}{c}

E_4^1=\{13\}\\
E_4^2=\{13,14\}\\ \\

\psset{xunit=1.2cm,yunit=1cm,linewidth=0.5pt,radius=0.1mm,arrowsize=7pt,
labelsep=1.5pt,fillcolor=black}

\pspicture(0,0)(2.5,.5)

\pscircle[fillstyle=solid](.5,0){.1}
\pscircle[fillstyle=solid](1.5,0){.1}
\pscircle[fillstyle=solid](2.5,0){.1}

\psline[linewidth=1.6pt,arrowinset=0]{->}(.5,0)(1.5,0)
\rput(1,-0.2){$e_{13}$}

\psline[linewidth=1.6pt,arrowinset=0]{->}(1.5,0)(2.5,0)
\rput(2,-.2){$e_{14}$}

\rput(1.2,0.5){$B_4^1$}
\rput(2.2,.5){$B_4^2$}

\endpspicture 

\\ \\ 

f^*(E_4^1)=\{3\}\\
f^*(E_4^2)=\{2,3\}\\

\end{array}

\end{array}
$

\vspace{.3cm}
\caption{An $R^*$-cyclic representation of the matrix $A$ given in Figure \ref{fig:AH}, the digraph $D$ and all computed $E_\ell$-paths and corresponding $B_\ell$-paths, where $E_1,\ldots,E_4$ are defined in Figure \ref{fig:AH}.}
\label{fig:digraphD}
\end{figure}

Given the set $\{E_\ell^1,\ldots,E_\ell^{m(\ell)}\}$ of $E_\ell$-paths for some $1\le \ell \le b$, the following procedure partitions this 
set into two classes. A graphical interpretation
of these classes is given in Lemma \ref{lemdigraphutile} below.

\begin{tabbing}
\textbf{Procedure\,\,TwoClasses($A$,$\{E_l^1,\ldots,E_l^{m(l)}\}$)}\\

\textbf{Input:} A matrix $A$ and a set of $E_\ell$-paths $\{E_\ell^1,\ldots, E_\ell^{m(\ell)}\}$ for some $1\le \ell \le b$.\\
\textbf{Output:} Two sets $J_\ell^1$ and $J_\ell^2$ such that $\{E_\ell^1,\ldots, E_\ell^{m(\ell)}\}= J_\ell^1 \biguplus J_\ell^2$.\\ 

\verb"  "\= let $J_\ell^1= \{ E_\ell^1\}$; $J_\ell^2= \{ \} $
and $j_1\in \bar f^*(E_\ell)$ such that $s(A_{\bullet j_1}) \cap E_\ell^1 \neq \emptyset$;\\
\>  compute a spanning tree $T$ in 
$H(A_{E_\ell \times \bar f^*(E_\ell)})$ rooted at $j_1$ such that for each $j\in V(T)$\\
\>  the path from $j$ to $j_1$ in $T$ is minimal in $H(A_{E_\ell \times \bar f^*(E_\ell)})$;\\
 \> {\bf for }\=  $k=2,\ldots, m(\ell)$, {\bf do}\\ 
 \> \>let $j_k \in V(T)$  such that $s(A_{\bullet j_k})\cap E_\ell^k\neq \emptyset$ and $h$ be the path from $j_1$ to $j_k$ in $T$;\\
 \> \> color $h$ as if $T\subseteq H_{E_\ell^1 \cup E_\ell^k}(A_{ E_\ell \times \bar f^*(E_\ell)})$;\\
\> \> {\bf if }\= $E_\ell^1 \cap E_\ell^k \neq \emptyset$ or
$h$ has an odd number of yellow edges, {\bf then}\\
\> \> \> let $J_\ell^1=J_\ell^1 \cup \{ E_\ell^k \}$;\\
\> \>  {\bf otherwise }\\
\> \> \> let $J_\ell^2=J_\ell^2 \cup \{ E_\ell^k \}$;\\
\> \> {\bf endif }\\
 \> {\bf endfor }\\
\> output $J_\ell^1$ and $J_\ell^2$;
\end{tabbing}

For all $1\le \ell \le b$, let $J_\ell^1$\label{mycounter6} and $J_\ell^2$ be output by the procedure TwoClasses; for any $E_\ell$-paths $E_\ell^k$ and $E_\ell^{k'}$ ($1\le k,k' \le m(\ell)$), we define the following relation\index{relation!$\sim_{E_\ell}$}: $E_\ell^k \sim_{E_\ell} E_\ell^{k'}$ if and only if $E_\ell^k$ and $E_\ell^{k'}$ belong to the same class ($J_\ell^1$ or $J_\ell^2$). 
In Figure \ref{fig:digraphD}, $E_1^1$ and $E_1^2$ are two $E_1$-paths. Since $s(A_{\bullet 5}) \cap E_1^k \neq \emptyset$ for $k=1$ and $2$ and the constant path at node $5$ in $H_{E_1^1 \cup E_1^2} (A_{E_1 \times \bar f^*(E_1)})$ has zero yellow edges, $E_1^1 \nsim_{E_1} E_1^2$ and $J_1^k=\{E_1^k\}$ for $k=1$ and $2$.

\begin{lem}\label{lemdigraphutile}
Suppose that $A$ has an $R^*$-cyclic, $R^*$-central or $R^*$-network representation $G(A)$. Let $1\le \ell\le b$ and suppose that all $B_\ell$-paths in $G(A)$ have a common endnode say $v_\ell$. Then for all $1\le k, k' \le m(\ell)$, the $B_\ell$-paths  $B_\ell^k$ and $B_\ell^{k'}$ both enter, or leave $v_\ell$ if and only if $E_\ell^k \sim_{E_\ell} E_\ell^{k'}$.
\end{lem}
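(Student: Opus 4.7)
My approach is to reduce the lemma to a direct application of Theorem \ref{stem} on the network submatrix $N:=A_{E_\ell\times \bar f^*(E_\ell)}$, whose network representation is the basic bonsai $B_\ell$ together with the nonbasic edges indexed by $\bar f^*(E_\ell)$ (all of which have fundamental cycle inside $B_\ell$). Note that since $A$ is nonnegative and both relations ``$\sim_{E_\ell}$'' and ``same orientation at $v_\ell$'' are equivalence relations with at most two classes, it suffices to prove the lemma with $k'=1$, i.e., to show that $E_\ell^k\in J_\ell^1$ if and only if $B_\ell^k$ and $B_\ell^1$ enter/leave $v_\ell$ in the same way.

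First I would treat the easy case $E_\ell^1\cap E_\ell^k\neq\emptyset$. Because $B_\ell$ is a tree and $B_\ell^1,B_\ell^k$ are directed paths sharing the endnode $v_\ell$, any shared edge must belong to a common initial segment issued from $v_\ell$; in particular both paths have the same first edge at $v_\ell$ and therefore the same orientation there. Since the procedure \texttt{TwoClasses} puts $E_\ell^k$ in $J_\ell^1$ as soon as $E_\ell^1\cap E_\ell^k\neq\emptyset$, both sides of the equivalence hold.

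Next I would handle the disjoint case $E_\ell^1\cap E_\ell^k=\emptyset$ by setting $\mathcal I:=E_\ell^1\cup E_\ell^k$ and applying Theorem~\ref{stem} to $N$ with this $\mathcal I$; here the two disjoint directed paths $B_\ell^1$ and $B_\ell^k$ share the endnode $v_\ell$, exactly fitting the hypothesis of the theorem. The path $h$ chosen by the procedure is minimal in $H(A_{E_\ell\times\bar f^*(E_\ell)})$ by construction, and minimality is independent of the colouring, so $h$ is also a minimal path in $H_{\mathcal I}(N)$. The core geometric observation I would then prove is the following dichotomy: if $B_\ell^1$ and $B_\ell^k$ leave $v_\ell$ in the \emph{same} direction (both entering, or both leaving), then since $v_\ell$ is then either a source or a sink of the subgraph $B_\ell^1\cup B_\ell^k$, any directed path in $B_\ell$ (in particular $s(A_{\bullet j})\cap B_\ell$ for $j\in\bar f^*(E_\ell)$, using nonnegativity of $A$) can meet at most one of $E_\ell^1,E_\ell^k$, so $\mathcal I\cap s(A_{\bullet j_1})\subseteq E_\ell^1$ and $\mathcal I\cap s(A_{\bullet j_k})\subseteq E_\ell^k$ lie in two \emph{different} basic directed paths; if instead $B_\ell^1$ and $B_\ell^k$ have \emph{opposite} orientations at $v_\ell$, then $B_\ell^1\cup B_\ell^k$ is itself a single directed path and the two supports trivially lie in a common directed path. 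Combining this observation with Theorem~\ref{stem} (and Lemma~\ref{lemstem2}, which ensures that the parity of the yellow edges is the same for every minimal path, so the particular choice of $j_k\in V(T)$ does not matter) yields: same orientation at $v_\ell$ iff $h$ has an odd number of yellow edges, which is exactly the criterion the procedure uses to place $E_\ell^k$ in $J_\ell^1$.

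The main obstacle I anticipate is making the geometric dichotomy in the disjoint case watertight, i.e., showing that when $B_\ell^1$ and $B_\ell^k$ have the same orientation at $v_\ell$, no support $s(A_{\bullet j_1})\cap \mathcal I$ can spill from one $B_\ell$-path into the other. This rests on nonnegativity (forcing $s(A_{\bullet j_1})$ to be a directed sub-path of $B_\ell$) together with the fact that $v_\ell$ is a pure source or pure sink of $B_\ell^1\cup B_\ell^k$ in that case; once stated cleanly, the rest of the argument is a routine unwrapping of Theorem~\ref{stem} and Lemma~\ref{lemstem2}, augmented by the reduction to $k'=1$ mentioned at the outset.
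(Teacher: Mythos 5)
Your proposal is correct and follows essentially the same route as the paper: the paper's proof is exactly an application of Theorem \ref{stem} to $A_{E_\ell\times\bar f^*(E_\ell)}$ with $\mathcal{I}=E_\ell^k\cup E_\ell^{k'}$, translated into the intersect-or-odd-parity criterion used by the procedure \texttt{TwoClasses}. The only difference is that you make explicit several steps the paper leaves implicit (the reduction to $k'=1$ via transitivity, the source/sink dichotomy converting ``same basic directed path'' into ``opposite orientations at $v_\ell$'', and the independence of the parity from the choice of minimal path), all of which are sound.
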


\begin{proof}
By assumption, all $B_\ell$-paths in $G(A)$ have a common endnode $v_\ell$. Let  $1\le k,k'\le m(\ell)$. By Theorem \ref{stem}, the $B_\ell$-paths $B_\ell^k$ and $B_\ell^{k'}$ both  enter, or leave $v_\ell$ if and only if $E_\ell^k \cap E_\ell^{k'}\neq \emptyset$, or for any $j,j'$ such that $s(A_{\bullet j})\cap E_\ell^k\neq \emptyset$ and $s(A_{\bullet j'})\cap E_\ell^{k'}\neq \emptyset$, all minimal paths in $H_{E_\ell^{k} \cup E_\ell^{k'}}(A_{ E_\ell \times \bar f^*(E_\ell)})$ between $j$ and $j'$ have an odd number of yellow edges. Hence, by construction of $J_\ell^1$ and $J_\ell^2$, the $B_\ell$-paths $B_\ell^k$ and $B_\ell^{k'}$ both enter, or leave $v_\ell$ if and only if $E_\ell^k \sim_{E_\ell} E_\ell^{k'}$. 
\end{proof}\\

For all $1\le \ell \le b$, let $n(\ell)$ denote the cardinality of the set $J_\ell^1$.
Without loss of generality, we may assume that $J_\ell^1=\{E_\ell^1,\ldots, E_\ell^{n(\ell)}\}$. 
Notice that in general the bipartition $J_\ell^1 \uplus J_\ell^2$ given by the procedure TwoClasses may be not unique.

Now we can define bonsai matrices.
For all $1\le \ell \le b$, if $J_\ell^2=\emptyset$, then let  
$$N_\ell=\left( \begin{array}{cc}
A_{E_\ell\times \bar f^*(E_\ell)} &  \chi_{E_\ell^1}^{E_\ell} \cdots \chi_{E_\ell^{m(\ell)}}^{E_\ell} \\
\bf{ 0_{1\times |\bar f^*(E_\ell)|}} & \bf{ 1_{1\times m(\ell) }}
\end{array} \right); $$
otherwise ($J_\ell^1\neq \emptyset$ and $J_\ell^2\neq \emptyset$), let $$N_\ell=\left( \begin{array}{cccc}
A_{E_\ell\times \bar f^*(E_\ell)} &  \chi_{E_\ell^1}^{E_\ell} \cdots \chi_{E_\ell^{n(\ell)}}^{E_\ell} &
\chi_{E_\ell^{n(\ell)+1}}^{E_\ell} \cdots \chi_{E_\ell^{m(\ell)}}^{E_\ell} & \bf{ 0_{|E_\ell|\times 1}} \\
\bf{ 0_{1\times |\bar f^*(E_\ell)|}} & \bf{ 1_{1\times n(\ell) }} & 
\bf{ 0_{1\times (m(\ell)-n(\ell)) }} & 1 \\
\bf{ 0_{1\times |\bar f^*(E_\ell)|}} & \bf{ 0_{1\times n(\ell) }} & 
\bf{ 1_{1\times (m(\ell)-n(\ell)) }} & 1 
\end{array} \right). $$

\noindent
The matrix $N_\ell$ is called the \emph{bonsai}\index{bonsai matrix} matrix associated with the bonsai $E_\ell$. Any row of $N_\ell$ of type $[A_{\{i \} \times \bar f^*(E_\ell) } \cdots ]$ for some $i\in E_\ell$ is indexed by $i$. A row (respectively, a column) of $N_\ell$ that is not indexed by an element in $E_\ell$ (respectively, $\bar f^*(E_\ell)$) is said to be   \emph{artificial}\index{artificial edge} as well as the corresponding edge in any network representation of $N_\ell$ (provided that such a representation exists). 
In the case where $R^*=\{i^*\}$ and $J_\ell^2= \emptyset $, the index of the artificial row of $N_\ell$ is denoted as $i^*$.

\begin{lem}\label{lembonsaicel}
Let $1\le \ell\le b$ and suppose that $A$ has an $R^*$-cyclic, $R^*$-central or $R^*$-network representation $G(A)$. If all $B_\ell$-paths in $G(A)$ have a common endnode $v_\ell$, then the bonsai matrix $N_\ell$ is a network matrix.
\end{lem}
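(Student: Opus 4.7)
The plan is to construct explicitly a digraph $H_\ell$ together with a spanning tree $T_\ell$ such that $N_\ell$ is exactly the network matrix associated with the pair $(H_\ell,T_\ell)$.

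\textbf{Step 1 (the bonsai as a directed tree).} By performing switching operations at the nodes of $B_\ell$ (which leaves the matrix $A$ unchanged, see Lemma \ref{lemdefiSwi}), I may assume that every basic edge of $B_\ell$ is directed, so that $B_\ell$ is a directed tree. For every $j\in\bar f^*(E_\ell)$ the support $s(A_{\bullet j})$ is contained in $E_\ell$, hence the fundamental circuit of $f_j$ in $G(A)$ lies entirely inside $B_\ell\cup\{f_j\}$; by Corollary \ref{corBidirectedCircuit} this circuit is a positive cycle, and by Lemma \ref{lemdefiWeight2} together with the nonnegativity of $A$, the column $A_{E_\ell\times\{j\}}$ is the characteristic vector of a directed subpath of $B_\ell$. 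Consequently, $A_{E_\ell\times\bar f^*(E_\ell)}$ is already the (nonnegative) network matrix obtained from the directed tree $B_\ell$ with the non-tree edges $\{f_j:j\in\bar f^*(E_\ell)\}$.

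\textbf{Step 2 (adjoining artificial basic and non-basic edges).} It remains to account for the extra columns $\chi^{E_\ell}_{E_\ell^k}$ together with the one or two artificial rows. I proceed by cases.

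\emph{Case $J_\ell^2=\emptyset$.} All $B_\ell$-paths share the endnode $v_\ell$, and by Lemma \ref{lemdigraphutile} they all enter $v_\ell$, or all leave $v_\ell$; say they all enter $v_\ell$. I add a new node $v^{*}$ and a basic arc $e^{*}=(v_\ell,v^{*})$, and for every $k\in\{1,\dots,m(\ell)\}$ I add a nonbasic arc $g_k$ from $v^{*}$ to the endpoint $u_k$ of $B_\ell^k$ distinct from $v_\ell$. The tree $B_\ell\cup\{e^{*}\}$ is a spanning tree of the augmented digraph, the fundamental cycle of $g_k$ is $\{e^{*}\}\cup B_\ell^k$ traversed in the forward direction, and the resulting network matrix is exactly $N_\ell$.

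\emph{Case $J_\ell^1,J_\ell^2\neq\emptyset$.} By Lemma \ref{lemdigraphutile}, after possibly exchanging roles I may assume that every $B_\ell^k$ with $E_\ell^k\in J_\ell^1$ enters $v_\ell$ and every $B_\ell^k$ with $E_\ell^k\in J_\ell^2$ leaves $v_\ell$. I introduce two new nodes $v^1, v^2$ and two artificial basic arcs $e^1=(v_\ell,v^1)$ and $e^2=(v^2,v_\ell)$. I then add: for each $k\le n(\ell)$ a nonbasic arc from $v^1$ to the far endpoint $u_k$ of $B_\ell^k$; for each $n(\ell)<k\le m(\ell)$ a nonbasic arc from the far endpoint $u_k$ of $B_\ell^k$ to $v^2$; and finally a nonbasic arc from $v^1$ to $v^2$, which accounts for the last column of $N_\ell$. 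A direct check of each fundamental cycle shows that the network matrix of this digraph with respect to the spanning tree $B_\ell\cup\{e^1,e^2\}$ is exactly $N_\ell$.

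\textbf{Main difficulty.} The substantive content is entirely in verifying that, in the second case, the orientations of the two artificial basic arcs $e^1,e^2$ and of the auxiliary nonbasic arcs can be chosen so that every entry of the two artificial rows is $+1$ (never $-1$). This requires precisely the separation of the $B_\ell$-paths into the two families $J_\ell^1,J_\ell^2$ provided by Lemma \ref{lemdigraphutile}; without that lemma there would be no guarantee that all the paths in one family enter $v_\ell$ and all the paths in the other family leave $v_\ell$, and the resulting matrix might contain negative entries in the artificial rows.
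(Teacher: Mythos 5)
Your proof is correct and takes essentially the same route as the paper's: the paper likewise augments $B_\ell$ with one artificial directed edge at $v_\ell$ (if $J_\ell^2=\emptyset$) or two, one entering and one leaving (if $J_\ell^2\neq\emptyset$), invokes Lemma \ref{lemdigraphutile} to align the two families of $B_\ell$-paths with the two artificial rows, and observes that every column of $N_\ell$ is then the edge incidence vector of a directed path in the augmented tree; you merely spell out the auxiliary nonbasic arcs and fundamental cycles that the paper leaves implicit. The one slip is that your auxiliary nonbasic arcs are all oriented backwards under the paper's sign convention (a column gets $+1$ on tree edges traversed forwardly on the walk from the tail of the nonbasic arc to its head), so as written they would produce $-1$ entries; reversing each of them (e.g.\ $g_k$ should run from $u_k$ to $v^{*}$) yields exactly $N_\ell$.
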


\begin{proof}
Suppose first that $J_\ell^2\neq \emptyset$. Let us denote $B_\ell'$ the subtree $B_\ell$ of $G(A)$ with two more basic directed edges (one entering $v_\ell$ and the other one leaving $v_\ell$). Since $v_\ell$ is an endnode of every $B_\ell$-path, by Lemma \ref{lemdigraphutile}, 
we deduce that each column of $N_\ell$ is the edge incidence vector of a directed path in  $B_\ell'$.

Now suppose that $J_\ell^2= \emptyset$. This implies that either all $B_\ell$-paths of $G(A)$ enter $v_\ell$ or leave $v_\ell$. Let $B_\ell'$ be the subtree $B_\ell$ of $G(A)$ with one more basic directed edge which either enters $v_\ell$ (if all $B_\ell$-paths leave $v_\ell$) or leaves $v_\ell$ (otherwise) and we conclude as before.
\end{proof}\\

Let $1\le \ell\le b$ and suppose that $N_\ell$ has a basic network representation $G(N_\ell)$. In the case where $J_\ell^2=\emptyset$, since $A_{E_\ell\times \bar f^*(E_\ell)}$ is a connected matrix, the artificial edge, call it $\tilde e$, is an end-edge in $G(N_\ell)$. We denote by $v_\ell$  the endnode of $\tilde e$ which is a cutvertex. In the case where $J_\ell^2 \neq \emptyset $, due to the last column of $N_\ell$, we may denote by $v_\ell$\index{vertex!$v_\ell$} the node in $G(N_\ell)$ incident with both artificial edges. 

\begin{lem}\label{lembonsainet2}
Let $1\le \ell\le b$ and suppose that the bonsai matrix $N_\ell$ has a network representation $G(N_\ell)$. Then, for all $1\le k \le m(\ell)$, $E_\ell^k$ is the edge index set of a path with one endnode equal to $v_\ell$. Moreover, for any $1\le k, k' \le m(\ell)$, both paths in $G(N_\ell)$ with edge index sets $E_\ell^k$ and $E_\ell^{k'}$ enter or leave $v_\ell$ if and only if $E_\ell^k \sim_{E_\ell} E_\ell^{k'}$.
\end{lem}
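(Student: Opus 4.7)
The plan is to exploit the structure of the spanning tree underlying $G(N_\ell)$, forcing its shape from the columns $A_{E_\ell\times \bar f^*(E_\ell)}$, $\chi^{E_\ell}_{E_\ell^k}$ and, when $J_\ell^2\neq\emptyset$, the final $(0,\dots,0,1,1)^T$ column. First I would argue that the $|E_\ell|$ basic tree edges indexed by $E_\ell$ form a connected subtree $T_\ell$ (because $A_{E_\ell\times \bar f^*(E_\ell)}$ is connected), so the one or two artificial tree edges must each attach to $T_\ell$ at a single vertex without creating a cycle. In the case $J_\ell^2\neq\emptyset$, the support $\{\tilde e_1,\tilde e_2\}$ of the last column has to be a tree path of length two, which forces $\tilde e_1$ and $\tilde e_2$ to share a common vertex, namely the $v_\ell$ defined in the excerpt; the other endpoints $z_1,z_2$ of the artificial edges are then leaves of $G(N_\ell)$. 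In the case $J_\ell^2=\emptyset$ the analogous structural conclusion is already recorded in the excerpt.

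Next I would treat each $E_\ell^k$ separately. The column of $N_\ell$ corresponding to $E_\ell^k$ has support $E_\ell^k\cup\{\tilde e_a\}$, where $a=1$ if $k\le n(\ell)$ and $a=2$ otherwise. Since this support must be the unique tree path between the endpoints of the associated nonbasic edge, and since $z_a$ is a leaf whose only incident tree edge is $\tilde e_a$, that tree path must terminate at $z_a$ on one side. Consequently the edges of $E_\ell^k$ form a sub-path of $T_\ell$ attached to $\tilde e_a$ at $v_\ell$; in particular $v_\ell$ is an endnode of the $E_\ell^k$-path, proving the first assertion of the lemma.

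For the orientation statement I would use the defining identity $a_{e,f}=+1$ iff the tail-to-head walk of $f$ in the tree traverses $e$ forwardly (equation (\ref{eqn:Introfund})). Because every entry of the column associated with $E_\ell^k$ is $+1$ on its support, every edge of the fundamental cycle is traversed forwardly; this pins the orientation of the $E_\ell^k$-path at $v_\ell$ to be opposite to the orientation of $\tilde e_a$ at $v_\ell$, so all $E_\ell^k$ with $k$ in the same class $J_\ell^a$ enter $v_\ell$ together or leave $v_\ell$ together. The main obstacle will be the last column: from its all-ones entries $(\dots,1,1)^T$ I would read off that the walk from the tail to the head of the associated nonbasic edge traverses both $\tilde e_1$ and $\tilde e_2$ forwardly, which is only possible if one of $\tilde e_1,\tilde e_2$ enters $v_\ell$ while the other leaves it. Combining this with the opposite-orientation conclusion above, paths $E_\ell^k$ with $k\in J_\ell^1$ and $E_\ell^{k'}$ with $k'\in J_\ell^2$ have opposite orientations at $v_\ell$, yielding the desired biconditional. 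In the case $J_\ell^2=\emptyset$ the equivalence is trivially verified since every pair of $E_\ell$-paths lies in $J_\ell^1$ and shares the common orientation imposed by $\tilde e$.
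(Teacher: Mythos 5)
Your proposal is correct and follows essentially the same route as the paper: both arguments use the last column of $N_\ell$ to force the two artificial edges to meet at a common node $v_\ell$ lying in $B_\ell$, then read off from the columns $[(\chi_{E_\ell^k}^{E_\ell})^T\,1\,0]^T$ and $[(\chi_{E_\ell^k}^{E_\ell})^T\,0\,1]^T$ that each $E_\ell^k$ is a path hanging at $v_\ell$, leaving or entering it according to which artificial edge it extends. The only point you gloss over — that $v_\ell$ (rather than one of the outer endpoints $z_1,z_2$) is the attachment node to the subtree on $E_\ell$ — is exactly the small contradiction argument the paper supplies, and it is recovered by your own observation that each column's support must be a connected tree path.
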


\begin{proof}
Suppose $J_\ell^2\neq \emptyset$ (the case $J_\ell^2=\emptyset$ is similar and simpler). 
Up to a reversing of the orientation of all edges, we may suppose that the first artificial edge goes from a node $x$ to $v_\ell$ and the second one (corresponding to the last row) is equal to $(v_\ell,y)$.
If $x$ is a node of $ B_\ell$, then each column of the matrix 
$\left( \begin{array}{c}
\chi_{E_\ell^{n(\ell)+1}}^{E_\ell} \cdots \chi_{E_\ell^{m(\ell)}}^{E_\ell} \\
\bf{ 0_{1\times (m(\ell)-n(\ell)) }} \\
\bf{ 1_{1\times (m(\ell)-n(\ell)) }}  
\end{array} \right) $
can not be the edge incidence vector of a path in $G(N_\ell)$. We have a similar contradiction with $y\in B_\ell$ instead of $x\in B_\ell$. So $v_\ell$ belongs to $B_\ell$. Since $(x,v_\ell)$ is entering $v_\ell$ and $[(\chi_{E_\ell^k}^{E_\ell})^T \, 1 \, 0 ]^T$ is the incidence vector of a path for $k=1,\ldots, n(\ell)$, it follows that $E_\ell^k$ is the edge index set of a path leaving $v_\ell$ for $k=1,\ldots, n(\ell)$. Similarly, we prove that for $k=n(\ell) +1,\ldots, m(\ell)$, $E_\ell^k$ is the edge index set of a path entering $v_\ell$. 
\end{proof}\\

If the matrix $N_\ell$ has a network representation $G(N_\ell)$, by Lemma \ref{lembonsainet2} and for ease of notation, the path in $G(N_\ell)$ with edge index set $E_\ell^k$ ($1\le k \le m(\ell)$) is called a \emph{$B_\ell$-path}\index{path@$B_\ell$-path}.
Moreover, the basic connected subgraph of $G(N_\ell)$ with edge index set $E_\ell$ is called a \emph{bonsai}\index{bonsai $B_\ell$} and is denoted $B_\ell$.
We say that the bonsai $B_\ell$ is a \emph{$v_\ell$-rooted network representation of $N_\ell$}\index{rooted@$v_\ell$-rooted!network representation}. Given $N_\ell$ and a $v_\ell$-rooted network representation of $N_\ell$ it is easy to construct a network representation of $N_\ell$.

\begin{lem}\label{lembonsaicontra}
Let $1\le \ell\le b$ and $1\le k,k'\le m(\ell)$. Suppose that the bonsai matrix $N_\ell$ has a $v_\ell$-rooted network representation $B_\ell$. Suppose also that the matrix $A_{E_\ell\times \bar f^*(E_\ell)}$ has a network representation $G(A_{E_\ell\times \bar f^*(E_\ell)})$ in which $E_\ell^k$ and $E_\ell^{k'}$ are edge index sets of directed paths having exactly one common endnode, say $z_\ell$. Then the paths with edge index sets $E_\ell^k$ and $E_\ell^{k'}$ in $G(A_{E_\ell\times \bar f^*(E_\ell)})$  both enter, or leave $z_\ell$ if and only if $E_\ell^k \sim_{E_\ell} E_\ell^{k'}$.
\end{lem}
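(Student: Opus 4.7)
My strategy is to show that both sides of the equivalence admit a common combinatorial characterization in terms of parities of minimal yellow paths in $H_{E_\ell^k\cup E_\ell^{k'}}(A_{E_\ell\times \bar f^*(E_\ell)})$, a quantity that depends only on the submatrix $A_{E_\ell\times \bar f^*(E_\ell)}$ and not on the chosen network representation. The representation $G(A_{E_\ell\times \bar f^*(E_\ell)})$ and the restriction of the $v_\ell$-rooted $B_\ell$ to the nonartificial edges and columns (which is itself a network representation of $A_{E_\ell\times \bar f^*(E_\ell)}$) are both amenable to the characterization of Section~\ref{sec:mainthm}, and comparing the two will yield the claim.

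First I would deal with the degenerate case $E_\ell^k\cap E_\ell^{k'}\neq\emptyset$. Two directed paths that share an edge and also share an endnode must traverse that endnode with the same orientation, so in $G(A_{E_\ell\times \bar f^*(E_\ell)})$ both paths enter, or both leave, $z_\ell$. The same reasoning applied inside $B_\ell$ shows that the corresponding $B_\ell$-paths enter or leave $v_\ell$ together, hence by Lemma~\ref{lembonsainet2} we obtain $E_\ell^k\sim_{E_\ell} E_\ell^{k'}$; both sides of the equivalence are thereby established simultaneously.

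The heart of the argument is the case $E_\ell^k\cap E_\ell^{k'}=\emptyset$. Then $\mathcal{I}:=E_\ell^k\cup E_\ell^{k'}$ is the edge index set of two basic disjoint directed paths of $G(A_{E_\ell\times \bar f^*(E_\ell)})$ having $z_\ell$ as their unique common endnode, which is precisely the setting of Section~\ref{sec:mainthm}. Pick column indices $j_k,j_{k'}\in\bar f^*(E_\ell)$ with $s(A_{\bullet j_k})\cap E_\ell^k\neq\emptyset$ and $s(A_{\bullet j_{k'}})\cap E_\ell^{k'}\neq\emptyset$. Lemma~\ref{lemstem2}, applied inside $G(A_{E_\ell\times \bar f^*(E_\ell)})$, asserts that the two paths both enter or both leave $z_\ell$ if and only if a minimal path from $j_k$ to $j_{k'}$ in $H_{\mathcal{I}}(A_{E_\ell\times \bar f^*(E_\ell)})$ has an odd number of yellow edges. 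Exactly the same statement, applied to the network representation of $A_{E_\ell\times \bar f^*(E_\ell)}$ read off from $B_\ell$ (where the two paths share the endnode $v_\ell$), shows that this very parity condition is also equivalent to the two $B_\ell$-paths both entering or leaving $v_\ell$, which by Lemma~\ref{lembonsainet2} is in turn equivalent to $E_\ell^k\sim_{E_\ell} E_\ell^{k'}$. Since the parity of a minimal yellow path in $H_{\mathcal{I}}(A_{E_\ell\times \bar f^*(E_\ell)})$ is intrinsic to the matrix, the two characterizations must agree, and the lemma follows.

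The main obstacle will be the bookkeeping required to apply Lemma~\ref{lemstem2} legitimately in both representations: I will need to check that the parity of a minimal yellow path in $H_{\mathcal{I}}$ does not depend on the particular blue vertices $j_k$ and $j_{k'}$ chosen, and to unpack the definition of $\sim_{E_\ell}$ via the procedure TwoClasses (which is phrased relative to the reference path $E_\ell^1$) in order to show that it indeed encodes the pairwise condition ``having the same incidence behavior at the common endnode'' for any two $E_\ell$-paths. Both facts follow by a routine transitivity argument based on Theorem~\ref{stem} applied to pairs of blue vertices supported inside a single $E_\ell^k$, and once they are established the equivalence passes between the two network representations as described above.
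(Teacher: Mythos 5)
Your proposal is correct and follows essentially the same route as the paper: apply the machinery of Section~\ref{sec:mainthm} (Theorem~\ref{stem} / Lemma~\ref{lemstem2}) to both network representations, observe that the resulting parity condition on minimal paths in $H_{E_\ell^k\cup E_\ell^{k'}}(A_{E_\ell\times\bar f^*(E_\ell)})$ is intrinsic to the matrix, and transfer the conclusion to $\sim_{E_\ell}$ via Lemma~\ref{lembonsainet2}. The paper's proof is just a terser version of the same argument, invoking Theorem~\ref{stem} once for each representation with the intersecting case absorbed into its statement.
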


\begin{proof}
By Theorem \ref{stem}, the $B_\ell$-paths  $B_\ell^k$ and $B_\ell^{k'}$ in $B_\ell$ both enter, or leave $v_\ell$ if and only if $E_\ell^k \cap E_\ell^{k'}\neq \emptyset$, or for any $j,j'$ such that $s(A_{\bullet j})\cap E_\ell^k\neq \emptyset$ and $s(A_{\bullet j'})\cap E_\ell^{k'}\neq \emptyset$, all minimal paths in $H_{E_\ell^{k} \cup E_\ell^{k'}}(A_{ E_\ell \times \bar f^*(E_\ell)})$ between $j$ and $j'$ have an odd number of yellow edges.
Similarly, the basic  paths in $G(A_{E_\ell\times \bar f^*(E_\ell)})$ with edge index set $E_\ell^k$ and $E_\ell^{k'}$  both enter, or leave $z_\ell$ if and only if $E_\ell^k \cap E_\ell^{k'}\neq \emptyset$, or for any $j,j'$ such that $s(A_{\bullet j})\cap E_\ell^k\neq \emptyset$ and $s(A_{\bullet j'})\cap E_\ell^{k'}\neq \emptyset$, all minimal paths in $H_{E_\ell^{k} \cup E_\ell^{k'}}(A_{ E_\ell \times \bar f^*(E_\ell)})$ between $j$ and $j'$ have an odd number of yellow edges.
Using Lemma \ref{lembonsainet2}, this completes the proof.
\end{proof}\\

\section{A digraph $D$}\label{sec:DefDigraphD}

Let $A$ be a connected $\{0,1,2,\frac{1}{2}\}$-matrix of size $n\times m$ and $R^*$ a row index subset of $A$. Let us denote by $\alpha$ the number of nonzero elements in $A$.
We assume that $\overline{R^*}$ has been partitioned into subsets $E_1,\ldots,E_b$ and for all $\ell=1,\ldots,b$, the classes $J_\ell^1$ and $J_\ell^2$ have been computed in Section \ref{sec:bonsaimat}.
We give the definition of a digraph denoted by $D$ and prove related results.

The vertex set of $D$ is $V=\{E_1,\ldots,E_{b}\}$ and its edge index set is denoted by $\Upsilon$ ($D=(V,\Upsilon)$). Up to column permutations, we may assume $S^*=\{1,\ldots,s\}$ where $s=|S^*|$. Let  
$g: \begin{array}{clll}
V \cup_{\ell,k} E_\ell^k & \rightarrow & \{0,1,2\}^{s} 
\end{array}
$ be the function given by

\begin{center}
$$
 g_\beta(E_\ell^k)=\left\{
\begin{array}{ll}
2 & \mbox{if}  \,\,\, E_\ell^k=E_\ell^I(A_{\bullet \beta})=E_\ell^{II}(A_{\bullet \beta})  \\
1 &\mbox{if}  \,\,\, E_\ell^k=E_\ell^i(A_{\bullet \beta}) \,\,\,(i=I\m{ or } II) \,\m{ and }\, E_\ell^I(A_{\bullet \beta})\neq E_\ell^{II}(A_{\bullet \beta}) \\
0 & \mbox{otherwise }  
\end{array}
\right.   $$\label{mycounter7}
\end{center}

\noindent
and $g_\beta(E_\ell)=\displaystyle{\sum_{h=1}^{m(\ell)}} g_\beta(E_\ell^{h})$,
for all $1\le \ell\le b$, $1\le k \le m(\ell)$ and $\beta\in S^*$.
Observe that for all $1\le \ell\le b$ and $1\le k \le m(\ell)$, we have the inequality $g(E_\ell)\geq g(E_\ell^k)$. For $1\le \ell,\ell' \le b$ and $1\le k \le m(\ell)$, there is an arc $(E_\ell,E_{\ell'})
\in \Upsilon$ labeled $E_{\ell'}^k$ if and only if  $ g(E_\ell)\le g(E^{k}_{\ell'})$ (componentwise) and $J_\ell^2$ is empty. 
We denote by $(E_\ell,E_{\ell'})_{E_{\ell'}^k}$ the arc $(E_\ell,E_{\ell'})$ labeled $E_{\ell'}^k$. We mention that if $A$ is $R^*$-cyclic, then for all $1\le \ell\le b$ the assumption of Lemma \ref{lemdigraphutile} holds; this implies that the definition of $D$ is unique. \label{mycounter}
We also define a relation \emph{$\prec_D$}\index{relation!$\prec_D$} on $V$ as follows: $E_{\ell'}\prec_D E_\ell$ if and only if $(E_\ell,E_{\ell'})\in D$. The relation $\prec_D$ is clearly not symmetric, but transitive.
An example of the digraph $D$ with respect to an $R^*$-cyclic matrix is given in Figure \ref{fig:digraphD}. 

\begin{lem}\label{lemdigraphDtimeD}
The computational effort required for the construction of $D$ is bounded by $O(nm\alpha)$.
\end{lem}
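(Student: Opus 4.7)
The plan is to walk through each phase of the construction of $D$ and bound its cost, using the standing bounds $b \le n$, $\sum_{\ell} |E_{\ell}| \le n$, $\sum_{\ell} |\bar f^*(E_{\ell})| \le m$, and $m(\ell) \le 2|f^*(E_{\ell})|$.

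First I would bound the partitioning of $\overline{R^*}$ into bonsais. Building the graph $H(A_{\bullet \overline{S^*}})$ and extracting its connected components with a union-find on the columns, then translating column-classes into row-classes, can be done with a single pass over the $\alpha$ nonzero entries of $A$, giving $O(\alpha)$ (up to an inverse-Ackermann factor). Another $O(\alpha)$ pass produces $f^*(E_{\ell})$, $\bar f^*(E_{\ell})$ and the submatrices $A_{E_{\ell}\times \bar f^*(E_{\ell})}$.

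Next I would bound the calls to E$\ell$path. For a fixed $\ell$ and a fixed $\beta\in f^*(E_{\ell})$, the procedure colours the vertices and edges of $H_{\mathcal{I}}(A_{E_{\ell}\times \bar f^*(E_{\ell})})$ with $\mathcal{I}=s_1(A_{\bullet \beta})\cap E_{\ell}$, grows one BFS-tree rooted at $j_0$, and performs one root-to-leaf traversal per neighbour to check parity of yellow edges. Each of these operations is linear in the size of $H(A_{E_{\ell}\times \bar f^*(E_{\ell})})$, which is $O(|\bar f^*(E_{\ell})|\cdot |E_{\ell}|)$; the graph itself is built once per $\ell$ and reused. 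Summing over the at most $m$ values of $\beta$ and then over $\ell$ gives $O\!\bigl(m \cdot \sum_{\ell} |E_{\ell}|\cdot |\bar f^*(E_{\ell})|\bigr) \le O(m\alpha)$, since $\sum_{\ell}|E_{\ell}|\cdot|\bar f^*(E_{\ell})|$ is bounded by the number of nonzero entries contributed to the submatrices $A_{E_{\ell}\times \bar f^*(E_{\ell})}$, i.e.\ at most $\alpha$. An identical analysis handles TwoClasses (one rooted spanning tree per bonsai, plus one traversal per $E_{\ell}^{k}$), contributing $O(m\alpha)$ as well.

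Finally I would compute the vectors $g(E_{\ell}^{k})$, $g(E_{\ell})$ and the arcs. Each $g(E_{\ell}^{k})\in\{0,1,2\}^{s}$ has length $s\le m$ and can be filled by scanning, for every $\beta\in f^*(E_{\ell})$, the sets $E_{\ell}^{I}(A_{\bullet \beta})$ and $E_{\ell}^{II}(A_{\bullet \beta})$ already stored by E$\ell$path; this is $O\!\bigl(\sum_{\ell} m(\ell)\cdot m\bigr) \le O(nm)$. To insert arcs, I would iterate over the $O(b^2)$ ordered pairs $(E_{\ell},E_{\ell'})$ with $J^2_{\ell}=\emptyset$ and for each label $E_{\ell'}^{k}$ compare $g(E_{\ell})$ with $g(E_{\ell'}^{k})$ componentwise in $O(m)$ time; since $\sum_{\ell'} m(\ell')=O(nm)$, this phase costs $O(b\cdot nm \cdot m)=O(n^2 m^2)$, which is absorbed by $O(nm\alpha)$ as soon as $\alpha \ge n$ (otherwise $A$ has fewer than $n$ nonzero entries and one trivially compresses the problem by Lemma~\ref{lemdefiBlock}). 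The hard part, as I see it, is the careful amortisation in the E$\ell$path/TwoClasses stage: one must avoid recolouring $H(A_{E_{\ell}\times \bar f^*(E_{\ell})})$ from scratch for every $\beta$, and instead reuse the rooted spanning tree across calls, so that the $|E_{\ell}|$ factor is paid once per bonsai rather than per $\beta$. Granted that, summing all contributions yields the claimed $O(nm\alpha)$ bound.
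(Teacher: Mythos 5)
Your overall strategy — bound the partitioning, then the calls to E$\ell$path and TwoClasses per bonsai and per $\beta$, then the arc insertion — matches the paper's proof, which simply observes that each tree computation in E$\ell$path/TwoClasses costs $O(n_\ell\alpha_\ell)$ (with $n_\ell$, $\alpha_\ell$ the row count and nonzero count of $A_{E_\ell\times\bar f^*(E_\ell)}$), that there are at most $m$ values of $\beta$ per bonsai, and that $\sum_\ell n_\ell\le n$, $\sum_\ell\alpha_\ell\le\alpha$ give $\sum_\ell n_\ell\alpha_\ell\le n\alpha$. However, two of your cost estimates are wrong as written.

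First, you assert that $\sum_{\ell}|E_{\ell}|\cdot|\bar f^*(E_{\ell})|$ is bounded by the number of nonzero entries of the submatrices, i.e.\ by $\alpha$. The inequality runs the other way: $|E_\ell|\cdot|\bar f^*(E_\ell)|$ is the total number of entries of $A_{E_\ell\times\bar f^*(E_\ell)}$, which is an upper bound on its nonzero count $\alpha_\ell$, not a lower one; a bonsai submatrix of dimensions $\tfrac{n}{2}\times\tfrac{m}{2}$ with $O(n)$ nonzeros already violates your claim. The conclusion of that stage survives, but only via a different route, e.g.\ $\sum_\ell|E_\ell|\,|\bar f^*(E_\ell)|\le n\sum_\ell|\bar f^*(E_\ell)|\le nm$, which multiplied by the $m$ choices of $\beta$ gives $O(nm^2)\le O(nm\alpha)$ since $\alpha\ge m$ for a connected matrix — or via the paper's bound $\sum_\ell n_\ell\alpha_\ell\le n\alpha$.

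Second, and more seriously, your arc-insertion phase is costed at $O(b\cdot nm\cdot m)=O(n^2m^2)$ and you claim this is absorbed by $O(nm\alpha)$ "as soon as $\alpha\ge n$". But $n^2m^2\le nm\alpha$ is equivalent to $nm\le\alpha$, i.e.\ to $A$ being essentially dense; for a sparse connected matrix with $\alpha=\Theta(n+m)$ your bound exceeds the target by a factor of roughly $\min(n,m)$. The fix is to tighten your bound on the number of labels: since the row sets $E_{\ell'}$ are pairwise disjoint and each $j\in f^*(E_{\ell'})$ contributes at least one nonzero of $A$ in the rows of $E_{\ell'}$, one has $\sum_{\ell'}m(\ell')\le 2\sum_{\ell'}|f^*(E_{\ell'})|\le 2\alpha$, so the arc phase costs $O(b\cdot\alpha\cdot m)=O(nm\alpha)$. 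Without these corrections the proof does not establish the stated bound.
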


\begin{proof}
For all $1\le \ell \le b$, let $n_\ell$ and $\alpha_\ell$ be the number of rows and nonzero elements of $A_{E_\ell \times \bar f^*(E_\ell)}$. For a given bonsai $E_\ell$, the
time needed to compute the tree $T_\mathcal{I}$ and $T$ in the procedures E$\ell$path and TwoClasses, respectively, is bounded by $O(n_\ell \alpha_\ell)$, and we need to compute E$\ell$path($A$,$E_\ell$,$\beta$) for all $\beta \in f^*(E_\ell)$. Since $n_1 + \ldots + n_b \le n$ and $\alpha_1+ \ldots + \alpha_b \le \alpha$, we deduce that the construction of $D$ requires $O(nm\alpha)$ operations.
\end{proof}\\

If $A$ has an $R^*$-cyclic or $R^*$-network representation $G(A)$, then for all $1\le \ell \le b$ we denote by $v_\ell$\index{vertex!$v_\ell$} the closest vertex of $B_\ell$ to the basic subgraph in $G(A)$ with edge index set $R^*$; for any node $v_i$ in $G(A)$, we denote by $v_i^*$\index{vertex!$v_i^*$}\label{mycounter5}
the (other) endnode of the basic path joining $v_i$ and the previous subgraph. If $A$ has a $\{1,\rho\}$-central representation $G(A)$,  for any bonsai $B_\ell$ on the right of $\{e_1,e_\rho\}$ we define $v_\ell$ and $v_\ell^*$ as previously. 
Provided that $A$ has an $R^*$-cyclic, $R^*$-network or $R^*$-central representation $G(A)$, for all $1\le \ell\le b$ and $\beta\in S^*$, we distinguish four cases (see Figures \ref{fig:lsubstem1} and \ref{fig:lsubstem2}).

\begin{itemize}

\item[$a_0$)] The nonbasic edge $f_\beta$ does not generate any $B_\ell$-path.

\item[$a_1$)] The nonbasic edge $f_\beta$ generates exactly one $B_\ell$-path\\ and $A_{ij}=1$ for all $i\in E_\ell \cap s(A_{\bullet j})$.

\item[$a_2$)] The nonbasic edge $f_\beta$ generates exactly one $B_\ell$-path\\ and $A_{ij}=2$ for all $i\in E_\ell \cap s(A_{\bullet j})$.

\item[$a_3$)] The nonbasic edge $f_\beta$ generates two distinct $B_\ell$-paths.

\end{itemize}

\begin{figure}[h!]
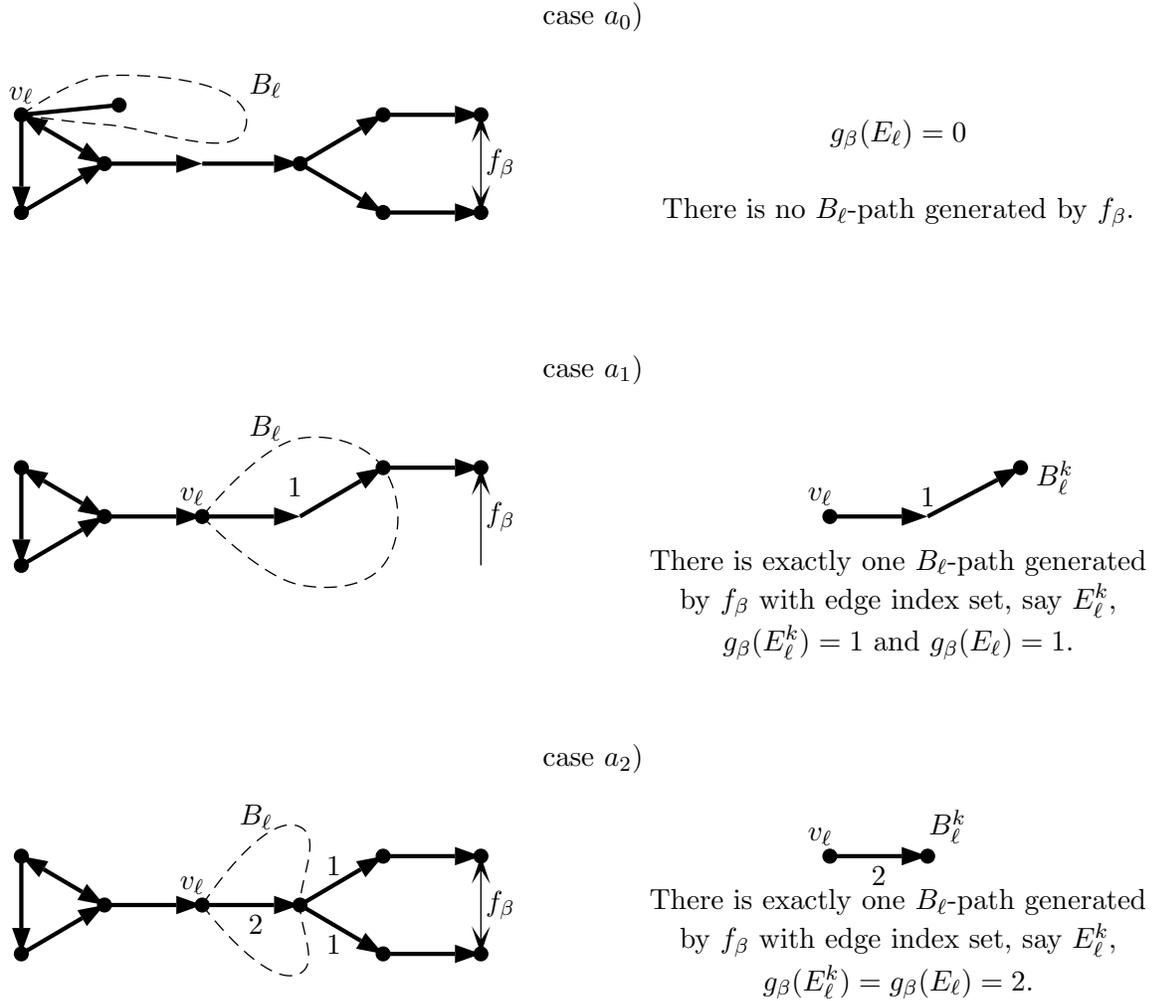


\begin{center}
case $a_0$)
\end{center}
$
\begin{array}{cl}

\psset{xunit=1.3cm,yunit=1.3cm,linewidth=0.5pt,radius=0.1mm,arrowsize=7pt,
labelsep=1.5pt,fillcolor=black}

\pspicture(0,0)(6,2.5)

\pscircle[fillstyle=solid](0,1){.1}
\pscircle[fillstyle=solid](0,2){.1}
\pscircle[fillstyle=solid](1,2.1){.1}
\pscircle[fillstyle=solid](0.85,1.5){.1}
\pscircle[fillstyle=solid](2.85,1.5){.1}
\pscircle[fillstyle=solid](3.7,1){.1}
\pscircle[fillstyle=solid](3.7,2){.1}
\pscircle[fillstyle=solid](4.7,1){.1}
\pscircle[fillstyle=solid](4.7,2){.1}

\psline[linewidth=1.6pt,arrowinset=0]{->}(0,1)(0.85,1.5)

\psline[linewidth=1.6pt,arrowinset=0]{-}(0,2)(1,2.1)

\psline[linewidth=1.6pt,arrowinset=0]{<-}(0,1)(0,2)

\psline[linewidth=1.6pt,arrowinset=0]{<->}(0,2)(0.85,1.5)

\psline[linewidth=1.6pt,arrowinset=0]{->}(0.85,1.5)(1.85,1.5)

\psline[linewidth=1.6pt,arrowinset=0]{->}(1.85,1.5)(2.85,1.5)

\psline[linewidth=1.6pt,arrowinset=0]{->}(2.85,1.5)(3.7,2)

\psline[linewidth=1.6pt,arrowinset=0]{->}(2.85,1.5)(3.7,1)

\psline[linewidth=1.6pt,arrowinset=0]{->}(3.7,1)(4.7,1)

\psline[linewidth=1.6pt,arrowinset=0]{->}(3.7,2)(4.7,2)

\psline[arrowinset=.5,arrowlength=1.5]{<->}(4.7,1)(4.7,2)
\rput(4.9,1.5){$f_\beta$}

\pscurve[linestyle=dashed]{-}(0,2)(0.85,2.4)(2.3,1.9)(0.85,1.9)(0,2)
\rput(2.5,2.3){$B_\ell$}
\rput(0,2.2){$v_\ell$}

\endpspicture 
& 

\psset{xunit=1.3cm,yunit=1.3cm,linewidth=0.5pt,radius=0.1mm,arrowsize=7pt,
labelsep=1.5pt,fillcolor=black}

\pspicture(0,0)(5,1)

\rput(2.7,1.8){$ g_\beta(E_\ell)=0$}
\rput(2.7,1){There is no $B_\ell$-path generated by $f_\beta$.}

\endpspicture 
\end{array}
$

\begin{center}
case $a_1$)
\end{center}

$
\begin{array}{cl}

\psset{xunit=1.3cm,yunit=1.3cm,linewidth=0.5pt,radius=0.1mm,arrowsize=7pt,
labelsep=1.5pt,fillcolor=black}

\pspicture(0,0)(6,2.5)

\pscircle[fillstyle=solid](0,1){.1}
\pscircle[fillstyle=solid](0,2){.1}
\pscircle[fillstyle=solid](0.85,1.5){.1}
\pscircle[fillstyle=solid](1.85,1.5){.1}
\pscircle[fillstyle=solid](3.7,2){.1}
\pscircle[fillstyle=solid](4.7,2){.1}

\psline[linewidth=1.6pt,arrowinset=0]{->}(0,1)(0.85,1.5)

\psline[linewidth=1.6pt,arrowinset=0]{<-}(0,1)(0,2)

\psline[linewidth=1.6pt,arrowinset=0]{<->}(0,2)(0.85,1.5)

\psline[linewidth=1.6pt,arrowinset=0]{->}(0.85,1.5)(1.85,1.5)

\psline[linewidth=1.6pt,arrowinset=0]{->}(1.85,1.5)(2.85,1.5)

\psline[linewidth=1.6pt,arrowinset=0]{->}(2.85,1.5)(3.7,2)

\rput(2.8,1.8){$1$}

\psline[linewidth=1.6pt,arrowinset=0]{->}(3.7,2)(4.7,2)

\psline[arrowinset=.5,arrowlength=1.5]{->}(4.7,1)(4.7,2)
\rput(4.9,1.5){$f_\beta$}

\pscurve[linestyle=dashed]{-}(1.85,1.5)(2.85,2.3)(3.7,2)(3.7,1)(2.85,0.8)
(1.85,1.5)
\rput(2.5,2.4){$B_\ell$}
\rput(1.75,1.7){$v_\ell$}

\endpspicture 
& 

\psset{xunit=1.3cm,yunit=1.3cm,linewidth=0.5pt,radius=0.1mm,arrowsize=7pt,
labelsep=1.5pt,fillcolor=black}

\pspicture(0,0)(5,1)

\pscircle[fillstyle=solid](2,1.5){.1}
\pscircle[fillstyle=solid](3.95,2){.1}

\psline[linewidth=1.6pt,arrowinset=0]{->}(2,1.5)(3,1.5)

\psline[linewidth=1.6pt,arrowinset=0]{->}(3,1.5)(3.95,2)

\rput(3,1.7){$1$}

\rput(1.9,1.7){$v_\ell$}
\rput(4.3,1.9){$B_\ell^k$}

\rput(2.7,0.6){\shortstack{There is exactly one $B_\ell$-path generated \\ by $f_\beta$ with edge index set, say $E_\ell^k$, \\ 
$g_\beta(E_\ell^k)=1$ and $ g_\beta(E_\ell)=1$.}}

\endpspicture \\ & \\
\end{array}
$

\begin{center}
case $a_2$)
\end{center}

$
\begin{array}{cl}

\psset{xunit=1.3cm,yunit=1.3cm,linewidth=0.5pt,radius=0.1mm,arrowsize=7pt,
labelsep=1.5pt,fillcolor=black}

\pspicture(0,0)(6,2.5)

\pscircle[fillstyle=solid](0,1){.1}
\pscircle[fillstyle=solid](0,2){.1}
\pscircle[fillstyle=solid](0.85,1.5){.1}
\pscircle[fillstyle=solid](1.85,1.5){.1}
\pscircle[fillstyle=solid](2.85,1.5){.1}
\pscircle[fillstyle=solid](3.7,1){.1}
\pscircle[fillstyle=solid](3.7,2){.1}
\pscircle[fillstyle=solid](4.7,1){.1}
\pscircle[fillstyle=solid](4.7,2){.1}

\psline[linewidth=1.6pt,arrowinset=0]{->}(0,1)(0.85,1.5)

\psline[linewidth=1.6pt,arrowinset=0]{<-}(0,1)(0,2)

\psline[linewidth=1.6pt,arrowinset=0]{<->}(0,2)(0.85,1.5)

\psline[linewidth=1.6pt,arrowinset=0]{->}(0.85,1.5)(1.85,1.5)

\psline[linewidth=1.6pt,arrowinset=0]{->}(1.85,1.5)(2.85,1.5)
\rput(2.4,1.3){$2$}

\psline[linewidth=1.6pt,arrowinset=0]{->}(2.85,1.5)(3.7,2)
\rput(3.2,1.9){$1$}

\psline[linewidth=1.6pt,arrowinset=0]{->}(2.85,1.5)(3.7,1)
\rput(3.2,1.1){$1$}

\psline[linewidth=1.6pt,arrowinset=0]{->}(3.7,1)(4.7,1)

\psline[linewidth=1.6pt,arrowinset=0]{->}(3.7,2)(4.7,2)

\psline[arrowinset=.5,arrowlength=1.5]{<->}(4.7,1)(4.7,2)
\rput(4.9,1.5){$f_\beta$}

\pscurve[linestyle=dashed]{-}(1.85,1.5)(2.85,2.3)(2.85,1.5)(2.85,0.8)
(1.85,1.5)
\rput(2.4,2.4){$B_\ell$}
\rput(1.75,1.7){$v_\ell$}

\endpspicture & 

\psset{xunit=1.3cm,yunit=1.3cm,linewidth=0.5pt,radius=0.1mm,arrowsize=7pt,
labelsep=1.5pt,fillcolor=black}

\pspicture(0,0)(5,1)

\pscircle[fillstyle=solid](2,2){.1}
\pscircle[fillstyle=solid](3,2){.1}

\psline[linewidth=1.6pt,arrowinset=0]{->}(2,2)(3,2)
\rput(2.5,1.8){$2$}

\rput(3.2,2.3){$B_\ell^k$}
\rput(1.9,2.2){$v_\ell$}

\rput(2.7,1.1){\shortstack{There is exactly one $B_\ell$-path generated \\ by $f_\beta$ with edge index set, say $E_\ell^k$,  \\ $g_\beta(E_\ell^k)=g_\beta(E_\ell)=2$.}}

\endpspicture

\end{array}
$

\caption{A description of all possible $B_\ell$-paths generated by a nonbasic edge $f_\beta$ and the corresponding $E_\ell$-paths generated by $A_{\bullet \beta}$ in cases $a_0$, $a_1$ and $a_2$. (A number close to an edge corresponds to the weight of the corresponding edge in the fundamental circuit of $f_\beta$.)}  
\label{fig:lsubstem1}

\end{figure}

\begin{figure}[h!]

\begin{center}
case $a_3$)
\end{center}

$
\begin{array}{cl}
\psset{xunit=1.3cm,yunit=1.3cm,linewidth=0.5pt,radius=0.1mm,arrowsize=7pt,
labelsep=1.5pt,fillcolor=black}

\pspicture(0,0)(6,2.5)

\pscircle[fillstyle=solid](0,1){.1}
\pscircle[fillstyle=solid](0,2){.1}
\pscircle[fillstyle=solid](0.85,1.5){.1}
\pscircle[fillstyle=solid](1.85,1.5){.1}
\pscircle[fillstyle=solid](2.85,1.5){.1}
\pscircle[fillstyle=solid](3.7,1){.1}
\pscircle[fillstyle=solid](3.7,2){.1}
\pscircle[fillstyle=solid](4.7,1){.1}
\pscircle[fillstyle=solid](4.7,2){.1}

\psline[linewidth=1.6pt,arrowinset=0]{->}(0,1)(0.85,1.5)

\psline[linewidth=1.6pt,arrowinset=0]{<-}(0,1)(0,2)

\psline[linewidth=1.6pt,arrowinset=0]{<->}(0,2)(0.85,1.5)

\psline[linewidth=1.6pt,arrowinset=0]{->}(0.85,1.5)(1.85,1.5)

\psline[linewidth=1.6pt,arrowinset=0]{->}(1.85,1.5)(2.85,1.5)
\rput(2.4,1.3){$2$}

\psline[linewidth=1.6pt,arrowinset=0]{->}(2.85,1.5)(3.7,2)
\rput(3.2,1.9){$1$}

\psline[linewidth=1.6pt,arrowinset=0]{->}(2.85,1.5)(3.7,1)
\rput(3.2,1.1){$1$}

\psline[linewidth=1.6pt,arrowinset=0]{->}(3.7,1)(4.7,1)

\psline[linewidth=1.6pt,arrowinset=0]{->}(3.7,2)(4.7,2)

\psline[arrowinset=.5,arrowlength=1.5]{<->}(4.7,1)(4.7,2)
\rput(4.9,1.5){$f_\beta$}

\pscurve[linestyle=dashed]{-}(1.85,1.5)(2.85,2.3)(3.7,2)(3.7,1)(2.85,0.8)
(1.85,1.5)
\rput(2.5,2.4){$B_\ell$}
\rput(1.75,1.7){$v_\ell$}

\endpspicture &

\psset{xunit=1.3cm,yunit=1.3cm,linewidth=0.5pt,radius=0.1mm,arrowsize=7pt,
labelsep=1.5pt,fillcolor=black}

\pspicture(0,0)(5,1)

\pscircle[fillstyle=solid](0,1.5){.1}
\pscircle[fillstyle=solid](1,1.5){.1}
\pscircle[fillstyle=solid](1.95,2){.1}

\pscircle[fillstyle=solid](3,1.5){.1}
\pscircle[fillstyle=solid](4,1.5){.1}
\pscircle[fillstyle=solid](4.95,1){.1}

\psline[linewidth=1.6pt,arrowinset=0]{->}(0,1.5)(1,1.5)
\rput(0.5,1.3){$2$}

\psline[linewidth=1.6pt,arrowinset=0]{->}(1,1.5)(1.95,2)
\rput(1.45,1.9){$1$}

\rput(-.1,1.7){$v_\ell$}
\rput(2,1.5){$B_\ell^k$}

\psline[linewidth=1.6pt,arrowinset=0]{->}(3,1.5)(4,1.5)
\rput(3.5,1.3){$2$}

\psline[linewidth=1.6pt,arrowinset=0]{->}(4,1.5)(4.95,1)
\rput(4.45,1.1){$1$}

\rput(2.9,1.7){$v_\ell$}
\rput(4.5,1.7){$B_\ell^{k'}$}

\endpspicture\\

\psset{xunit=1.3cm,yunit=1.3cm,linewidth=0.5pt,radius=0.1mm,arrowsize=7pt,
labelsep=1.5pt,fillcolor=black}

\pspicture(0,0)(6,1.7)

\pscircle[fillstyle=solid](0,1){.1}
\pscircle[fillstyle=solid](0,2){.1}
\pscircle[fillstyle=solid](0.85,1.5){.1}
\pscircle[fillstyle=solid](1.85,1.5){.1}
\pscircle[fillstyle=solid](2.85,1.5){.1}
\pscircle[fillstyle=solid](3.7,1){.1}
\pscircle[fillstyle=solid](3.7,2){.1}
\pscircle[fillstyle=solid](4.7,1){.1}
\pscircle[fillstyle=solid](4.7,2){.1}

\psline[linewidth=1.6pt,arrowinset=0]{->}(0,1)(0.85,1.5)

\psline[linewidth=1.6pt,arrowinset=0]{<-}(0,1)(0,2)

\psline[linewidth=1.6pt,arrowinset=0]{<->}(0,2)(0.85,1.5)

\psline[linewidth=1.6pt,arrowinset=0]{->}(0.85,1.5)(1.85,1.5)

\psline[linewidth=1.6pt,arrowinset=0]{->}(1.85,1.5)(2.85,1.5)
\rput(2.4,1.3){$2$}

\psline[linewidth=1.6pt,arrowinset=0]{->}(2.85,1.5)(3.7,2)
\rput(3.2,1.9){$1$}

\psline[linewidth=1.6pt,arrowinset=0]{->}(2.85,1.5)(3.7,1)
\rput(3.2,1.1){$1$}

\psline[linewidth=1.6pt,arrowinset=0]{->}(3.7,1)(4.7,1)

\psline[linewidth=1.6pt,arrowinset=0]{->}(3.7,2)(4.7,2)

\psline[arrowinset=.5,arrowlength=1.5]{<->}(4.7,1)(4.7,2)
\rput(4.9,1.5){$f_\beta$}

\pscurve[linestyle=dashed]{-}(1.85,1.5)(2.85,2.3)(3.7,2)(3.7,1.5)(2.85,1.5)
(2.85,1)(1.85,1.5)
\rput(2.5,2.4){$B_\ell$}
\rput(1.75,1.7){$v_\ell$}

\endpspicture & 

\psset{xunit=1.3cm,yunit=1.3cm,linewidth=0.5pt,radius=0.1mm,arrowsize=7pt,
labelsep=1.5pt,fillcolor=black}

\pspicture(0,0)(5,1)

\pscircle[fillstyle=solid](0,1.5){.1}
\pscircle[fillstyle=solid](1,1.5){.1}
\pscircle[fillstyle=solid](1.95,2){.1}

\pscircle[fillstyle=solid](3,1.5){.1}
\pscircle[fillstyle=solid](4,1.5){.1}

\psline[linewidth=1.6pt,arrowinset=0]{->}(0,1.5)(1,1.5)
\rput(0.5,1.3){$2$}

\psline[linewidth=1.6pt,arrowinset=0]{->}(1,1.5)(1.95,2)
\rput(1.45,1.9){$1$}

\rput(-.1,1.7){$v_\ell$}
\rput(2,1.5){$B_\ell^k$}

\psline[linewidth=1.6pt,arrowinset=0]{->}(3,1.5)(4,1.5)
\rput(3.5,1.3){$2$}

\rput(2.9,1.7){$v_\ell$}
\rput(4.5,1.5){$B_\ell^{k'}$}

\endpspicture

 \\

\psset{xunit=1.3cm,yunit=1.3cm,linewidth=0.5pt,radius=0.1mm,arrowsize=7pt,
labelsep=1.5pt,fillcolor=black}

\pspicture(0,0)(6,1.6)

\pscircle[fillstyle=solid](0,1){.1}
\pscircle[fillstyle=solid](0,2){.1}
\pscircle[fillstyle=solid](0.85,1.5){.1}
\pscircle[fillstyle=solid](1.85,1.5){.1}
\pscircle[fillstyle=solid](2.85,1.5){.1}
\pscircle[fillstyle=solid](3.7,1){.1}
\pscircle[fillstyle=solid](3.7,2){.1}
\pscircle[fillstyle=solid](4.7,1){.1}
\pscircle[fillstyle=solid](4.7,2){.1}

\psline[linewidth=1.6pt,arrowinset=0]{->}(0,1)(0.85,1.5)

\psline[linewidth=1.6pt,arrowinset=0]{<-}(0,1)(0,2)

\psline[linewidth=1.6pt,arrowinset=0]{<->}(0,2)(0.85,1.5)

\psline[linewidth=1.6pt,arrowinset=0]{->}(0.85,1.5)(1.85,1.5)

\psline[linewidth=1.6pt,arrowinset=0]{->}(1.85,1.5)(2.85,1.5)
\rput(2.4,1.3){$2$}

\psline[linewidth=1.6pt,arrowinset=0]{->}(2.85,1.5)(3.7,2)
\rput(3.2,1.9){$1$}

\psline[linewidth=1.6pt,arrowinset=0]{->}(2.85,1.5)(3.7,1)
\rput(3.2,1.1){$1$}

\psline[linewidth=1.6pt,arrowinset=0]{->}(3.7,1)(4.7,1)

\psline[linewidth=1.6pt,arrowinset=0]{->}(3.7,2)(4.7,2)

\psline[arrowinset=.5,arrowlength=1.5]{<->}(4.7,1)(4.7,2)
\rput(4.9,1.5){$f_\beta$}

\pscurve[linestyle=dashed]{-}(2.85,1.5)(2.85,2.3)(3.7,2)(3.7,1)(2.85,0.8)
(2.85,1.5)
\rput(2.5,2.3){$B_\ell$}
\rput(2.65,1.7){$v_\ell$}

\endpspicture  & 

\psset{xunit=1.3cm,yunit=1.3cm,linewidth=0.5pt,radius=0.1mm,arrowsize=7pt,
labelsep=1.5pt,fillcolor=black}

\pspicture(0,0)(5,1)

\pscircle[fillstyle=solid](1,1.7){.1}
\pscircle[fillstyle=solid](1.95,2.2){.1}

\pscircle[fillstyle=solid](4,1.7){.1}
\pscircle[fillstyle=solid](4.95,1.2){.1}

\psline[linewidth=1.6pt,arrowinset=0]{->}(1,1.7)(1.95,2.2)
\rput(1.45,2.1){$1$}

\rput(0.9,1.9){$v_\ell$}
\rput(2,1.7){$B_\ell^k$}

\psline[linewidth=1.6pt,arrowinset=0]{->}(4,1.7)(4.95,1.2)
\rput(4.45,1.3){$1$}

\rput(3.9,1.9){$v_\ell$}
\rput(4.8,1.7){$B_\ell^{k'}$}

\rput(2.7,0.3){\shortstack{There are two $B_\ell$-paths generated by $f_\beta$
whose  \\ edge index sets are say  $E_\ell^k$ and $E_\ell^{k'}$,  \\ $g_\beta(E_\ell^k)=g_\beta(E_\ell^{k'})=1$ and $g_\beta(E_\ell)=2$.}}

\endpspicture

\end{array}$

\vspace{0.5cm}

\caption{A description of all possible $B_\ell$-paths generated by a nonbasic edge $f_\beta$ and the corresponding $E_\ell$-paths generated by $A_{\bullet \beta}$ in case $a_3$.  (A number close to an edge corresponds to the weight of the corresponding edge in the fundamental circuit of $f_\beta$.)}  
\label{fig:lsubstem2}

\end{figure}

\begin{prop}\label{propBlsubstems}
Suppose that $A$ has an $R^*$-cyclic, $R^*$-central or $R^*$-network representation $G(A)$.
Let $1\le \ell\le b$ and $\beta \in S^*$. If one faces  case  $a_0$ (respectively, $a_1$, $a_2$ and $a_3$), then the following respective properties hold.

\begin{itemize}

\item[$a_0$)] $g_\beta(E_\ell)=0$.

\item[$a_1$)] $E_\ell^{II}(A_{\bullet \beta})=\emptyset$ and $g_\beta(E_\ell)=g_\beta(E_\ell^I(A_{\bullet \beta}))=1$.

\item[$a_2$)] $E_\ell^I(A_{\bullet \beta})=E_\ell^{II}(A_{\bullet \beta})$ and $g_\beta(E_\ell)=g_\beta(E_\ell^I(A_{\bullet \beta}))=2$.

\item[$a_3$)] $E_\ell^I(A_{\bullet \beta})\neq E_\ell^{II}(A_{\bullet \beta})$, $g_\beta(E_\ell^I(A_{\bullet \beta}))=g_\beta(E_\ell^{II}(A_{\bullet \beta}))=1$ and  $g_\beta(E_\ell)=2$.

\end{itemize}

\noindent
The converse sense is true.\\
If $A$ is an $R^*$-network matrix, then cases $a_2$ and $a_3$ do not happen.
\end{prop}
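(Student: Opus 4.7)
The plan is to treat the four cases by tracking the procedure {\tt E$\ell$path} on the matrix $A_{E_\ell\times\bar f^*(E_\ell)}$ (which, by Lemma \ref{lembonsaicel}-type reasoning, is a network matrix represented by $B_\ell$ together with the nonbasic edges indexed by $\bar f^*(E_\ell)$), and then reading off $g_\beta$ from its definition. First I would use Lemma \ref{lemdefiWeight1} together with the enumeration of fundamental circuits of Chapter \ref{ch:Binetdef} to describe the intersection of the fundamental circuit of $f_\beta$ with $B_\ell$: the edges of weight $2$ form a (possibly empty) common sub-path (the ``stem'' shared by the two diverging branches), while the edges of weight $1$ form the singleton or the one-or-two maximal directed paths diverging from the stem. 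This description decouples the combinatorial analysis from the binet/network/central distinction.

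In case $a_0$, $s(A_{\bullet\beta})\cap E_\ell=\emptyset$, so $\mathcal I=\emptyset$ and $\{i:A_{i\beta}=2\}\cap E_\ell=\emptyset$; the ``otherwise'' branch of {\tt E$\ell$path} returns $E_\ell^I(A_{\bullet\beta})=E_\ell^{II}(A_{\bullet\beta})=\emptyset$, and since each $E_\ell^k$ is nonempty by definition, $g_\beta(E_\ell^k)=0$ for every $k$, giving $g_\beta(E_\ell)=0$. In case $a_1$, the single $B_\ell$-path has only weight-$1$ edges, so $\mathcal I=s(A_{\bullet\beta})\cap E_\ell$ is the edge index set of one directed path in $B_\ell$; applying Theorem \ref{stem} inside $H_{\mathcal I}(A_{E_\ell\times\bar f^*(E_\ell)})$, every minimal path from the root $j_0$ to any other blue vertex $j$ has an even number of yellow edges, so {\tt E$\ell$path} puts all such $j$ into $S_0$, yielding $\mathcal I^I=\mathcal I$ and $\mathcal I^{II}=\emptyset$; hence $E_\ell^I(A_{\bullet\beta})=\mathcal I$, $E_\ell^{II}(A_{\bullet\beta})=\emptyset$, and exactly one $E_\ell^k$ receives $g_\beta(E_\ell^k)=1$. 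Case $a_2$ is then immediate: all entries of $A_{\bullet\beta}$ on $E_\ell$ are $2$, so $\mathcal I=\emptyset$ and the ``otherwise'' branch gives $E_\ell^I(A_{\bullet\beta})=E_\ell^{II}(A_{\bullet\beta})=s_2(A_{\bullet\beta})\cap E_\ell$, which is nonempty, so one $E_\ell^k$ receives $g_\beta(E_\ell^k)=2$.

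Case $a_3$ is the main obstacle. Here the two $B_\ell$-paths $P_1,P_2$ share an initial sub-path $Q$ (possibly reduced to a single node), and $E(Q)=s_2(A_{\bullet\beta})\cap E_\ell$ while $E(P_1\triangle P_2)=s_1(A_{\bullet\beta})\cap E_\ell=\mathcal I$. By Theorem \ref{stem} applied with the two diverging directed paths, two vertices $j,j'$ with supports meeting $\mathcal I$ lie on the same branch if and only if every minimal path between them in $H_{\mathcal I}(A_{E_\ell\times\bar f^*(E_\ell)})$ has an even number of yellow edges. Since the spanning tree $T_{\mathcal I}$ built by {\tt E$\ell$path} uses minimal paths to $j_0$, the set $S_0$ recovers exactly one of the two branches, producing $\mathcal I^I$ and $\mathcal I^{II}$ as the edge index sets of the two diverging parts; adjoining $E(Q)$ then gives $\{E_\ell^I(A_{\bullet\beta}),E_\ell^{II}(A_{\bullet\beta})\}=\{E(P_1),E(P_2)\}$, both nonempty and distinct, so two distinct $E_\ell^k$ each contribute $1$ to $g_\beta(E_\ell)$, yielding $g_\beta(E_\ell)=2$. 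The delicate point here is handling the degenerate configuration in which one diverging branch is empty (the middle picture of Figure \ref{fig:lsubstem2}), where $|\mathcal I|\le 1$ and the procedure enters the ``otherwise'' branch; a direct check shows $E_\ell^I=E(Q)\cup\mathcal I$ and $E_\ell^{II}=E(Q)$ remain distinct and both nonempty, so the conclusion still holds.

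For the converse direction of each equivalence, I would invoke Lemma \ref{lemdigraphBlI} to interpret any nonempty $E_\ell^I(A_{\bullet\beta})$ and $E_\ell^{II}(A_{\bullet\beta})$ as edge index sets of $B_\ell$-paths generated by $f_\beta$, and then read off which of cases $a_0,a_1,a_2,a_3$ occurs from the coincidence or distinctness of these two sets combined with the vanishing or not of $s_2(A_{\bullet\beta})\cap E_\ell$; because the four cases form a partition of the possibilities for the intersection of the fundamental circuit with $B_\ell$, the numerical hypothesis on $g_\beta$ identifies the case uniquely. Finally, if $A$ is an $R^*$-network matrix then $G(A)$ has no basic (full) cycle, hence each fundamental circuit is a single cycle and meets the tree $B_\ell$ in at most one directed path, ruling out $a_3$; moreover network matrices are $\{0,\pm 1\}$-valued by Theorem \ref{thmSubclassNNetTot}, so $s_2(A_{\bullet\beta})=\emptyset$ and case $a_2$ cannot occur.
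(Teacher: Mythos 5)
Your proof is correct and follows essentially the same route as the paper, which disposes of this proposition in one line by citing the construction of $E_\ell^I(A_{\bullet\beta})$ and $E_\ell^{II}(A_{\bullet\beta})$, Lemma \ref{lemdigraphBlI} and the definition of $g$; you have simply unpacked that construction case by case via Theorem \ref{stem} and the classification of fundamental circuits. The only quibble is that in the degenerate sub-case of $a_3$ the set $\mathcal I$ need not satisfy $|\mathcal I|\le 1$, but your conclusion there holds regardless of which branch of the procedure is taken.
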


\begin{proof}
This directly follows from the construction of of $E_\ell^I(A_{\bullet \beta})$ and $E_\ell^{II}(A_{\bullet \beta})$,
Lemma \ref{lemdigraphBlI} and the definition of $g$.
\end{proof}\\

\begin{cor}\label{cordigraphDbeta2}
Suppose that $A$ is $R^*$-cyclic. Let $1\le \ell\le b$ and $\beta \in S^*$ such that $g_\beta (E_\ell) =2$. Then the fundamental circuit of $f_\beta$ contains the basic cycle.
\end{cor}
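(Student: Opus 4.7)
The plan is to apply Proposition \ref{propBlsubstems} to reduce the hypothesis $g_\beta(E_\ell)=2$ to the two remaining cases $a_2$ and $a_3$, and in each one to read off the geometry of $f_\beta$ inside the fixed $R^*$-cyclic representation $G(A)$ from the catalogue of fundamental circuits in Figure \ref{fig:Apositive}.

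First I would fix an $R^*$-cyclic representation $G(A)$, denote by $C$ the basic full cycle (with edge index set $R^*$) and by $B_\ell$ the bonsai corresponding to $E_\ell$. Recall that each bonsai is attached to $C$ at exactly one node, call it $v_\ell$, so any stem $p$ of $f_\beta$ that enters $B_\ell$ must exit it through $v_\ell$ before reaching $C$. This ``unique attachment'' observation is the geometric fact I will use repeatedly.

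Next I would handle case $a_2$. Here $E_\ell^I(A_{\bullet\beta})=E_\ell^{II}(A_{\bullet\beta})\neq\emptyset$ and every edge of the corresponding $B_\ell$-path carries weight $2$ in the fundamental circuit of $f_\beta$. By Lemma~\ref{lemdefiWeight1}, a $\pm 2$-entry in a column of a binet matrix forces $f_\beta$ to be a bidirected nonhalf $1$-edge whose stems $p$ and $p'$ both traverse that edge; in particular $p$ and $p'$ share a non-trivial suffix inside $B_\ell$, which is only possible if the two stems end at the same cycle node, i.e.\ $w=w'$. Since the basic cycle in an $R^*$-cyclic representation is full and non-degenerate, the corresponding case in Figure~\ref{fig:Apositive} (bidirected nonhalf $1$-edge, $w=w'$, full basic cycle) says exactly that the fundamental circuit of $f_\beta$ traverses every edge of the basic cycle once. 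Thus the basic cycle is contained in the fundamental circuit of $f_\beta$.

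Now I would treat case $a_3$, which is the delicate one. Here $f_\beta$ generates two distinct $B_\ell$-paths $E_\ell^I(A_{\bullet\beta})\neq E_\ell^{II}(A_{\bullet\beta})$, each of weight $1$. By Lemma~\ref{lemdigraphBlI} these are the intersections $p\cap B_\ell$ and $p'\cap B_\ell$ of the two stems of $f_\beta$ with the bonsai $B_\ell$; so both stems enter $B_\ell$. By the unique-attachment observation above, both stems leave $B_\ell$ through $v_\ell$ on their way to $C$, which forces $w=w'=v_\ell$. The remaining task is to rule out the possibility that $f_\beta$ is a directed edge. If $f_\beta$ were directed with $w=w'$, then by the corresponding row of Figure~\ref{fig:Apositive} its fundamental circuit would consist only of $f_\beta$ and the symmetric difference of the stems, and in particular would be disjoint from the basic cycle $C$; but then $s(A_{\bullet\beta})\cap R^*=\emptyset$, contradicting $\beta\in S^*$. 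Hence $f_\beta$ is bidirected nonhalf with $w=w'$, and the same case of Figure~\ref{fig:Apositive} as above shows that the whole basic cycle lies in the fundamental circuit of $f_\beta$.

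The main obstacle I expect is the bookkeeping in case $a_3$: one must carefully separate the shared suffix from the genuinely distinct initial portions of the two stems inside $B_\ell$, and use that a shared suffix would produce $g_\beta=2$ via weight-$2$ edges (case $a_2$) rather than via two distinct $B_\ell$-paths (case $a_3$), so in case $a_3$ the stems can only meet at $v_\ell$ itself. With that observation in hand the reduction to the bidirected-nonhalf configuration in Figure~\ref{fig:Apositive} is immediate, and the conclusion that the fundamental circuit contains the whole basic cycle follows.
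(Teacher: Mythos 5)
Your proposal is correct and follows essentially the same route as the paper, whose proof is just the one-line citation of Corollary \ref{corBidirectedCircuit}, Lemma \ref{lemdefiWeight1} and Proposition \ref{propBlsubstems} together with the catalogue of fundamental circuits in Figure \ref{fig:Apositive}; you have simply written out the case analysis ($a_2$ and $a_3$) that the paper leaves implicit. (Two harmless imprecisions: in case $a_3$ one only gets $w=w'$, not $w=w'=v_\ell$ unless $v_\ell$ lies on the cycle, and the two distinct $B_\ell$-paths of case $a_3$ may still share an initial segment of weight-$2$ edges — neither affects the conclusion.)
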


\begin{proof}
The proof follows from Corollary \ref{corBidirectedCircuit}, Lemma \ref{lemdefiWeight1} and Proposition \ref{propBlsubstems} (see the description of the different types of fundamental circuits at page \pageref{mycounter2} and Figure \ref{fig:Apositive}).\\
\end{proof}

The next proposition gives a graphical interpretation of an arc in $D$ as illustrated in Figure \ref{fig:digrapharc}.
In this figure, the  fundamental circuit of the nonbasic edge $f_3$ intersects the trees $B_2$, $B_3$ and $B_4$. 
We have the inequalities $g_3(E_2)\le g_3(E_4^1)$ and $g_3(E_4)\le g_3(E_3^2)$.

\begin{prop}\label{lemdigraphstem}
Suppose that $A$ has an $R^*$-cyclic or $R^*$-network representation $G(A)$. Let $1\le \ell,\ell'\le b$ and $q$ be the basic path in $G(A)$ from $v_\ell$ to $v_\ell^*$. Suppose that $E_{\ell'}$ contains an edge of $q$. Then $\{ i\, :\, e_i \in q\cap B_{\ell'}\}=E_{\ell'}^k$ for some $1\le k\le m(\ell')$ and $(E_\ell,E_{\ell'})_{E_{\ell'}^k}\in D$.
\end{prop}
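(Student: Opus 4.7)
The plan is to establish three claims whose conjunction yields $(E_\ell,E_{\ell'})_{E_{\ell'}^k}\in D$: (a) the edge set of $q\cap B_{\ell'}$ coincides with some $E_{\ell'}^k$; (b) $J_\ell^2=\emptyset$; and (c) $g(E_\ell)\le g(E_{\ell'}^k)$ componentwise.

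For (a), I would argue graphically first and then match to the combinatorial objects. Since $v_{\ell'}$ is by definition the unique vertex of $B_{\ell'}$ closest to the $R^*$-basic subgraph, the basic path $q$ from $v_\ell$ to $v_\ell^*$ must exit $B_{\ell'}$ at $v_{\ell'}$; hence $q\cap B_{\ell'}$ is a connected directed sub-path of $B_{\ell'}$ having $v_{\ell'}$ as an endpoint. Pick any $\beta\in f^*(E_\ell)$ (nonempty by the way $E_\ell$ is built). Its fundamental circuit is the unique circuit in the basic subgraph together with $f_\beta$; since $\beta\in S^*$ it intersects $R^*$, and since it meets $B_\ell$, the tree structure of the basic subgraph forces it to traverse the unique connector $q$ between $B_\ell$ and $R^*$, and in particular to traverse $q\cap B_{\ell'}$. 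Therefore $\beta\in f^*(E_{\ell'})$, and Lemma~\ref{lemdigraphBlI} shows that $q\cap B_{\ell'}$ agrees with $E_{\ell'}^I(A_{\bullet\beta})$ or $E_{\ell'}^{II}(A_{\bullet\beta})$, hence with some $E_{\ell'}^k$.

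For (b), every $B_\ell$-path ends at $v_\ell$ and arises as the $B_\ell$-intersection of the fundamental circuit of some $f_\beta$ with $\beta\in f^*(E_\ell)$. By Lemma~\ref{lemdefiWeight2} this circuit is consistently oriented (since $A\ge 0$), so the direction of the $B_\ell$-path at $v_\ell$ must match the direction of the basic edge of $q$ incident with $v_\ell$, which is independent of $\beta$; in case $a_3$ the two $B_\ell$-paths generated by a single $f_\beta$ share the edge at $v_\ell$, hence automatically agree. Thus all $B_\ell$-paths enter $v_\ell$ or all leave $v_\ell$, and Lemma~\ref{lemdigraphutile} yields $E_\ell^k\sim_{E_\ell}E_\ell^{k'}$ for every pair, forcing $J_\ell^2=\emptyset$.

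For (c), fix $\beta\in S^*$ and split according to $g_\beta(E_\ell)\in\{0,1,2\}$, guided by Proposition~\ref{propBlsubstems}. The value $0$ is trivial. If $g_\beta(E_\ell)=1$ (case $a_1$), the circuit argument of (a) produces $q\cap B_{\ell'}$ as part of the fundamental circuit of $f_\beta$ with weight-$1$ coefficients, so $f_\beta$ generates $E_{\ell'}^k$ in case $a_1$ as well, and $g_\beta(E_{\ell'}^k)\ge 1$. If $g_\beta(E_\ell)=2$, the fundamental circuit of $f_\beta$ is of type $a_2$ (every $B_\ell$-edge traversed twice) or $a_3$ (a handcuff structure as described by Corollary~\ref{corBidirectedCircuit}); in either case the continuation of the circuit through $v_\ell$ along $q$ inherits the same doubling, so $q\cap B_{\ell'}$ appears with weight-$2$ coefficients and $g_\beta(E_{\ell'}^k)=2$. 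The main obstacle is case $a_3$: one must invoke Corollary~\ref{corBidirectedCircuit} to exhibit the fundamental circuit as two negative cycles joined by a minimal connecting path, check that this connecting path together with its doubled coverage traverses the whole of $q$, and conclude that the weight-$2$ pattern propagates from $B_\ell$ all the way to $B_{\ell'}$.
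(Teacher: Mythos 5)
Your proposal is correct and follows essentially the same route as the paper's proof: you identify $q\cap B_{\ell'}$ as an $E_{\ell'}$-path via the fact that the stem of any $f_\beta$ with $\beta\in f^*(E_\ell)$ must traverse $q$, you derive $J_\ell^2=\emptyset$ from the observation that nonnegativity forces all $B_\ell$-paths to agree in orientation at $v_\ell$ (opposite to that of $q$) together with Lemma~\ref{lemdigraphutile}, and you obtain $g_\beta(E_\ell)\le g_\beta(E_{\ell'}^k)$ by the same case split on $g_\beta(E_\ell)\in\{0,1,2\}$ using Proposition~\ref{propBlsubstems}. Your treatment is somewhat more explicit than the paper's on why the weight-$2$ pattern propagates along $q$ in cases $a_2$/$a_3$, but the argument is the same.
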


\begin{proof}
Since $A$ is connected, there exists a column index $j$ in the global connecter set $f^*(E_\ell)$ of $E_\ell$, such that $q$ is a subpath of a stem issued from the nonbasic edge $f_j$. 
Then, $\{ i\, :\, e_i \in q\cap B_{\ell'}\}$ is an $E_{\ell'}$-path, say $E_{\ell'}^k$, generated by $A_{\bullet j}$. By definition of $g$, we have the inequality $g_j(E_{\ell'}^k)\geq 1$.
If $g_j(E_\ell)=2$ for some column index $j$, then by Proposition \ref{propBlsubstems} one is faced with case $a_2$ or $a_3$ with respect to $B_\ell$ and $f_j$. It follows that
$E_{\ell'}^k$ is the unique $E_{\ell'}$-path generated by $A_{\bullet j}$ and
$A_{ij}=2$ for all $i\in E_{\ell'}\cap s(A_{\bullet j})$. Hence $g_j(E_{\ell'}^k)=2$. Therefore, $g_j(E_\ell)\le g_j(E_{\ell'}^k)$.
Moreover, since $A$ is nonnegative, if $q$ is leaving $v_\ell$, then all $B_\ell$-paths of $G(A)$ are entering $v_\ell$. Similarly, if $q$ is entering $v_\ell$, then all $B_\ell$-paths of $G(A)$ are leaving $v_\ell$. Thus by Lemma \ref{lemdigraphutile} $J_\ell^2=\emptyset$.
We conclude that $(E_\ell,E_{\ell'})_{E_{\ell'}^k}\in D$. 
\end{proof}\\

\begin{prop}\label{lemdigraphstem2}
Suppose that $A$ has a $\{1,\rho\}$-central representation $G(A)$. Let $B_\ell$ and $B_{\ell'}$ be two bonsais on the right of $\{e_1,e_\rho\}$ such that $f^*(E_\ell) \cap f^*(E_{\ell'})\neq \emptyset$. Then $(E_\ell,E_{\ell'})\in D$ or $(E_{\ell'},E_\ell)\in D$.
\end{prop}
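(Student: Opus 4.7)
Let $\beta\in f^*(E_\ell)\cap f^*(E_{\ell'})$ and let $C$ denote the fundamental circuit of $f_\beta$ in $G(A)$. By definition of $S^*$ and of $f^*$, $C$ contains at least one edge of $\{e_1,e_\rho\}$ and meets both $B_\ell$ and $B_{\ell'}$. Since the two bonsais lie in the right subtree $G_1(A)$, which is attached to the rest of the basic $1$-tree only at the central node $v^*$, the first step of the plan is to show that $C\cap G_1(A)$ is a single directed stem $p$ that ends at $v^*$ and traverses basic edges of both $B_\ell$ and $B_{\ell'}$. Indeed, by Corollary~\ref{corBidirectedCircuit} the fundamental circuit $C$ is a path-wheel or handcuff around the unique basic cycle of $G(A)$, so its portion inside $G_1(A)$ is a single stem; were two stems of $f_\beta$ both to lie in $G_1(A)$, they would close a cycle within $G_1(A)$ without using any cycle edge, and $C$ would then be disjoint from the basic cycle, contradicting $\beta\in S^*$.

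Next I would exploit that $B_\ell$ and $B_{\ell'}$ are disjoint subtrees of $G_1(A)$: reading $p$ from $v^*$ outward, the first edge of $p$ that lies inside $B_\ell\cup B_{\ell'}$ unambiguously identifies which bonsai is nearer to $v^*$ along $p$. After swapping $\ell$ and $\ell'$ if necessary, I assume $B_{\ell'}$ is the one closer to $v^*$. The unique tree-path $q$ in $G_1(A)$ from $v_\ell$ to $v_\ell^*=v^*$ is then a subpath of $p$, and $q\cap B_{\ell'}$ is a nonempty basic directed subpath of $B_{\ell'}$ one of whose endnodes equals $v_{\ell'}$. Following the argument used in the proof of Proposition~\ref{lemdigraphstem}, the row index set $\{i\,:\,e_i\in q\cap B_{\ell'}\}$ coincides with $E_{\ell'}^{I}(A_{\bullet\beta})$ or $E_{\ell'}^{II}(A_{\bullet\beta})$, and hence equals some $E_{\ell'}$-path~$E_{\ell'}^k$.

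It then remains to check the two conditions defining an arc in $D$, namely $J_\ell^2=\emptyset$ and $g(E_\ell)\le g(E_{\ell'}^k)$ coordinatewise. The first is immediate from Lemma~\ref{lemdigraphutile}: since $G(A)$ is a proper (hence nonnegative) central representation in which $B_\ell$ hangs off the basic cycle through $v_\ell$, every $B_\ell$-path is consistently oriented at $v_\ell$, so all of them enter $v_\ell$ or all leave $v_\ell$. For the inequality, any $j'\in S^*$ with $g_{j'}(E_\ell)\neq 0$ produces a stem of $f_{j'}$ running through $v_\ell$ toward the basic cycle; by the argument of the first paragraph applied now to $f_{j'}$, this stem must cross $B_{\ell'}$ along the same subpath of $q$, and an inspection of the four cases $a_0$--$a_3$ of Proposition~\ref{propBlsubstems} then yields $g_{j'}(E_{\ell'}^k)\ge g_{j'}(E_\ell)$. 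The main obstacle is the structural claim in the first paragraph — ruling out any configuration in which $C\cap G_1(A)$ is richer than a single stem ending at $v^*$ — which relies on combining Corollary~\ref{corBidirectedCircuit} with the fact that the representation is proper and has exactly one basic cycle.
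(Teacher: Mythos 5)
Your overall route is the one the paper takes: pick $\beta\in f^*(E_\ell)\cap f^*(E_{\ell'})$, argue that one bonsai lies on the other's tree-path $q$ to the central node, identify $q\cap B_{\ell'}$ with an $E_{\ell'}$-path, and then verify $J_\ell^2=\emptyset$ via Lemma \ref{lemdigraphutile} and the inequality $g(E_\ell)\le g(E_{\ell'}^k)$ via Proposition \ref{propBlsubstems} — the paper simply says "the proof is similar to the proof of Proposition \ref{lemdigraphstem}" and that is exactly this argument. Your last two paragraphs are fine.

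The gap is in your first paragraph, and it sits precisely at the step you yourself flag as the main obstacle. Your claim that two stems of $f_\beta$ cannot both lie in $G_1(A)$ because "$C$ would then be disjoint from the basic cycle" is only valid when $f_\beta$ is \emph{directed}. If $f_\beta$ is a bidirected nonbasic link with both endnodes in $G_1(A)$, then $w=w'=w_\rho$ and, by the case analysis of Section \ref{sec:BinetDefi}, the fundamental circuit is $f_\beta\cup p\cup p'\cup C_1$ where $C_1$ is the portion of the basic cycle containing the bidirected edge — here the \emph{entire} basic cycle. So $\beta\in S_0\subseteq S^*$, the circuit is very much not disjoint from the cycle, and yet it meets $G_1(A)$ in two stems that share only the central node (and possibly a common initial segment of weight $2$). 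In that configuration $B_\ell$ and $B_{\ell'}$ can sit on different stems, neither lies on the other's path $q$ to $w_\rho$, and the reduction to Proposition \ref{lemdigraphstem} in your second paragraph does not apply. You need either to rule this configuration out or to handle it separately (and note that if each of the two bonsais then has a private column in $S_1\cup S_2$, the componentwise inequality $g(E_\ell)\le g(E_{\ell'}^k)$ fails in both directions, so the handling cannot just be "the same argument again"). As written, the proof does not go through for bidirected $f_\beta$ with both endnodes on the right of $\{e_1,e_\rho\}$.
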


\begin{proof}
Since $f^*(E_\ell) \cap f^*(E_{\ell'})\neq \emptyset$,  $B_\ell$ and  $B_{\ell'}$ contain each one at least one edge of the fundamental circuit of a nonbasic edge $f_j$ with $j\in f^*(E_\ell) \cap f^*(E_{\ell'})$. Then, the proof is similar to the proof of Proposition \ref{lemdigraphstem}.
\end{proof}\\

\begin{figure}[ht!]
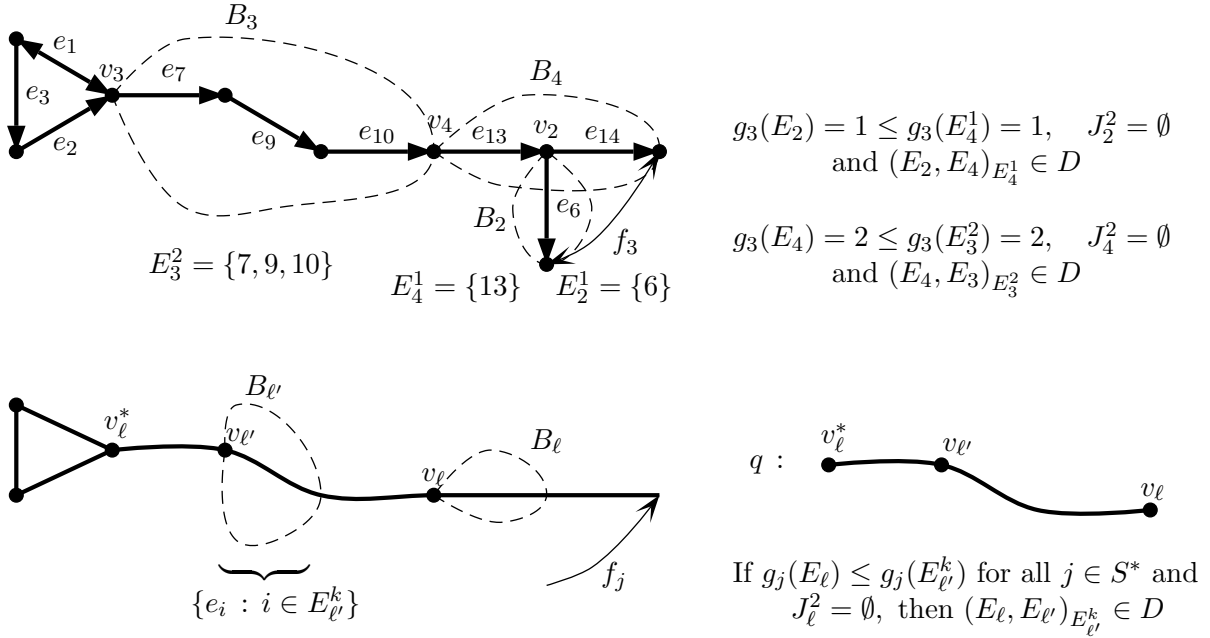

$
\begin{array}{cc}

\psset{xunit=1.5cm,yunit=1.5cm,linewidth=0.5pt,radius=0.1mm,arrowsize=7pt,
labelsep=1.5pt,fillcolor=black}

\pspicture(0,0)(6,2.5)

\pscircle[fillstyle=solid](0,1){.1}
\pscircle[fillstyle=solid](0,2){.1}
\pscircle[fillstyle=solid](0.85,1.5){.1}

\pscircle[fillstyle=solid](1.85,1.5){.1}

\pscircle[fillstyle=solid](2.7,1){.1}

\pscircle[fillstyle=solid](3.7,1){.1}

\pscircle[fillstyle=solid](4.7,1){.1}
\pscircle[fillstyle=solid](4.7,0){.1}

\pscircle[fillstyle=solid](5.7,1){.1}

\psline[linewidth=1.6pt,arrowinset=0]{->}(4.7,1)(4.7,0)
\rput(4.9,0.5){$e_6$}

\psline[linewidth=1.6pt,arrowinset=0]{->}(0,1)(0.85,1.5)
\rput(0.44,1.05){$e_2$}

\psline[linewidth=1.6pt,arrowinset=0]{<-}(0,1)(0,2)
\rput(0.2,1.5){$e_3$}

\psline[linewidth=1.6pt,arrowinset=0]{<->}(0,2)(0.85,1.5)
\rput(0.44,1.95){$e_1$}

\psline[linewidth=1.6pt,arrowinset=0]{->}(0.85,1.5)(1.85,1.5)
\rput(1.4,1.7){$e_7$}

\psline[linewidth=1.6pt,arrowinset=0]{->}(1.85,1.5)(2.7,1)
\rput(2.2,1.1){$e_9$}

\psline[linewidth=1.6pt,arrowinset=0]{->}(2.7,1)(3.7,1)
\rput(3.2,1.15){$e_{10}$}

\psline[linewidth=1.6pt,arrowinset=0]{->}(3.7,1)(4.7,1)
\rput(4.2,1.15){$e_{13}$}

\psline[linewidth=1.6pt,arrowinset=0]{->}(4.7,1)(5.7,1)
\rput(5.2,1.15){$e_{14}$}

\pscurve[arrowinset=.5,arrowlength=1.5]{<->}(4.7,0)(5.1,0.2)(5.7,1)
\rput(5.4,0.2){$f_3$}

\rput(0.85,1.7){$v_3$}
\rput(3.76,1.25){$v_4$}
\rput(4.7,1.2){$v_2$}

\rput(2,2.2){$B_3$}
\rput(4.7,1.7){$B_4$}
\rput(4.2,0.4){$B_2$}

\rput(2,0){$E^2_3=\{7,9,10\}$}
\rput(3.9,-0.2){$E^1_4=\{13\}$}
\rput(5.3,-0.2){$E^1_2=\{6\}$}

\pscurve[linestyle=dashed](0.85,1.5)(1.5,2)(3.7,1)(2.3,0.5)(1.5,0.5)(0.85,1.5)

\pscurve[linestyle=dashed](3.7,1)(4.5,1.5)(5.7,1)(4.3,0.7)(3.7,1)

\pscurve[linestyle=dashed](4.7,1)(5.1,0.5)(4.7,0)(4.4,0.5)(4.7,1)

\endpspicture &

\begin{array}{c}
g_3(E_2)=1\le g_3(E_4^1)=1, \quad J_2^2=\emptyset\\
\m{ and } (E_2,E_4)_{E_{4}^1}\in D \\
\\
g_3(E_4)=2 \le  g_3(E_3^2)=2, \quad J_4^2=\emptyset\\
\m{ and } (E_4,E_3)_{E_{3}^2}\in D \\
\\ \\ \\
\end{array}

\end{array}$
\vspace{1cm}

$
\begin{array}{cc}
\psset{xunit=1.5cm,yunit=1.2cm,linewidth=0.5pt,radius=0.1mm,arrowsize=7pt,
labelsep=1.5pt,fillcolor=black}

\pspicture(0,0)(6,1)

\pscircle[fillstyle=solid](0,1){.1}
\pscircle[fillstyle=solid](0,2){.1}
\pscircle[fillstyle=solid](0.85,1.5){.1}
\pscircle[fillstyle=solid](1.85,1.5){.1}
\pscircle[fillstyle=solid](3.7,1){.1}

\psline[linewidth=1.6pt,arrowinset=0]{-}(0,1)(0.85,1.5)

\psline[linewidth=1.6pt,arrowinset=0]{-}(0,1)(0,2)

\psline[linewidth=1.6pt,arrowinset=0]{-}(0,2)(0.85,1.5)

\pscurve[linewidth=1.6pt]{-}(0.85,1.5)(1.85,1.5)(2.7,1)(3.7,1)(4.7,1)(5.7,1)

\pscurve[arrowinset=.5,arrowlength=1.5]{->}(4.7,0)(5.1,0.2)(5.7,1)
\rput(5.3,0.1){$f_j$}

\rput(3.7,1.2){$v_\ell$}
\rput(2,1.65){$v_{\ell'}$}
\rput(0.9,1.8){$v_\ell^*$}

\rput(2.2,2.2){$B_{\ell'}$}
\rput(4.7,1.6){$B_\ell$}

\pscurve[linestyle=dashed](1.85,1.5)(1.95,2)(2.7,1)(2.3,0.5)(1.95,0.5)(1.85,1.5)

\pscurve[linestyle=dashed](3.7,1)(4.2,1.5)(4.7,1)(4.3,0.7)(3.7,1)

\rput(2.2,0.15){$\underbrace{\hspace{1.2cm}}$}
\rput(2.3,-0.2){$\{e_i\,:\, i\in E^k_{\ell'}\}$}

\endpspicture &

\begin{array}{l}

\psset{xunit=1.5cm,yunit=1.2cm,linewidth=0.5pt,radius=0.1mm,arrowsize=7pt,
labelsep=1.5pt,fillcolor=black}

\pspicture(0,0)(6,0)

\pscircle[fillstyle=solid](0.85,.5){.1}
\pscircle[fillstyle=solid](1.85,.5){.1}
\pscircle[fillstyle=solid](3.7,0){.1}

\pscurve[linewidth=1.6pt]{-}(0.85,.5)(1.85,.5)(2.7,0)(3.7,0)

\rput(3.72,.2){$v_\ell$}
\rput(2,.7){$v_{\ell'}$}
\rput(0.9,.8){$v_\ell^*$}

\rput(.3,.5){$q\,:$}

\endpspicture\\ \\

\m{If } g_j(E_\ell)\le g_j (E_{\ell'}^k)\m{ for all } j\in S^* \m{ and } \\
\quad \quad J_\ell^2=\emptyset, \m{ then }(E_\ell,E_{\ell'})_{E_{\ell'}^k}\in D \\
\\

\end{array}

\end{array}$

\caption{The fundamental circuit of $f_3$ as in Figure 
\ref{fig:G(A)bon} and an illustration of a stem (issued from some $f_j$) which
intersects a bonsai $B_\ell$ and thus contains the basic path from $v_l$ to $v_l^*$.  }
\label{fig:digrapharc}
\end{figure}

Finally, to deal with directed cycles in $D$, we define the following relation. For any $E_\ell,E_{\ell'} \in D$, $E_\ell \sim_s E_{\ell'}$\index{relation!$\sim_s$} if and only if $E_\ell$ and $E_{\ell'}$ are in a same strongly connected component of $D$. This is clearly an equivalence relation. We state a useful lemma in the case where two bonsais are equivalent under the relation $\sim_s$.

\begin{lem}\label{lemcycle}
Let $E_\ell,E_{\ell'} \in D$ such that $E_\ell \sim_s E_{\ell'}$. Then $g(E_\ell)=g(E_{\ell'})$ and $m(\ell)=m(\ell')=1$.
\end{lem}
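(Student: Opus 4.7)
The plan is to exploit the componentwise inequality attached to every arc of $D$ and to propagate it around a directed cycle witnessing $E_\ell \sim_s E_{\ell'}$. First I would handle the case $E_\ell = E_{\ell'}$ trivially (for the equality $g(E_\ell) = g(E_{\ell'})$) and otherwise pick a directed cycle $E_\ell = E_{\ell_0}, E_{\ell_1}, \ldots, E_{\ell_t} = E_\ell$ in $D$ that passes through $E_{\ell'}$, which exists since the two bonsais lie in the same strongly connected component.

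Next I would use the two inequalities that come for free from the definitions of $D$ and of $g$: for each arc $(E_{\ell_i}, E_{\ell_{i+1}})_{E_{\ell_{i+1}}^{k_i}}$, the construction of $D$ on page~\pageref{mycounter} gives $g(E_{\ell_i}) \le g(E_{\ell_{i+1}}^{k_i})$ componentwise, while the definition $g(E_{\ell_{i+1}}) = \sum_{h=1}^{m(\ell_{i+1})} g(E_{\ell_{i+1}}^h)$ together with the componentwise non-negativity of each summand gives $g(E_{\ell_{i+1}}^{k_i}) \le g(E_{\ell_{i+1}})$. Chaining these around the cycle yields $g(E_\ell) \le g(E_{\ell_1}) \le \cdots \le g(E_{\ell_t}) = g(E_\ell)$, so every inequality collapses to an equality. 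In particular $g(E_\ell) = g(E_{\ell'})$, and for every index $i$ we have $g(E_{\ell_{i+1}}^{k_i}) = \sum_{h=1}^{m(\ell_{i+1})} g(E_{\ell_{i+1}}^h)$.

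To upgrade this to $m(\ell_{i+1}) = 1$, the key observation is that the vector $g(E_{\ell_{i+1}}^h)$ associated with each $E_{\ell_{i+1}}$-path is never the zero vector. Indeed, by the construction in Section \ref{sec:bonsaimat}, every $E_\ell$-path is equal to $E_\ell^I(A_{\bullet \beta})$ or $E_\ell^{II}(A_{\bullet \beta})$ for some $\beta \in f^*(E_\ell)$, and then the definition of $g$ on page~\pageref{mycounter7} forces $g_\beta(E_\ell^h) \ge 1$. Combined with componentwise non-negativity and the equality $g(E_{\ell_{i+1}}^{k_i}) = \sum_h g(E_{\ell_{i+1}}^h)$, this forces the sum to have a single term, i.e.\ $m(\ell_{i+1}) = 1$. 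Specializing to the indices $\ell$ and $\ell'$ on the cycle yields the lemma.

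The main obstacle I anticipate is the last paragraph: one must carefully check, using only the definitions of $E_\ell^I(A_{\bullet \beta})$, $E_\ell^{II}(A_{\bullet \beta})$ and $f^*(E_\ell^k)$, that no $E_\ell$-path can have identically vanishing vector $g(E_\ell^k)$; everything else is a routine telescoping argument.
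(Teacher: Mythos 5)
Your proposal is correct and follows essentially the same route as the paper: the paper uses transitivity of $\prec_D$ to obtain arcs in both directions between $E_\ell$ and $E_{\ell'}$ and then sandwiches $g(E_\ell)\le g(E_{\ell'}^{k'})\le g(E_{\ell'})\le g(E_\ell^{k})\le g(E_\ell)$, which is exactly your telescoping around the cycle, and its contradiction argument for $m(\ell)\geq 2$ (picking $j\in f^*(E_\ell^h)$ with $g_j(E_\ell^h)\geq 1$) is the same as your observation that no $E_\ell$-path has vanishing $g$-vector. Both proofs share the same implicit assumption that the strongly connected component is nontrivial, so no additional gap is introduced.
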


\begin{proof} 
Since the relation "$\,E_\ell\,{\prec_D}\, E_{\ell'} \Leftrightarrow 
\,(E_\ell,E_{\ell'})\in D\,$" is
transitive, it follows that  $g(E_\ell^k)=g(E_\ell)=g(E_{\ell'})= 
g(E_{\ell'}^{k'})$ for some $1 \le k \le m(\ell)$ and $1\le k' \le m(\ell')$. \\
On the other hand, suppose by contradiction that $m(\ell)\geq 2$. 
Let $h$ be such that $E_\ell^h \neq E_\ell^k$ and $j\in f^* (E_\ell^h)$. We have $1 \le g_j(E_\ell^h) \le g_j(E_\ell) =g_j(E_\ell^k)$. Thus we may assume that $E_\ell^I(A_{\bullet j})=E_\ell^h$, $E_\ell^{II}(A_{\bullet j})=E_\ell^k$ and we have the inequality $E_\ell^I(A_{\bullet j})\neq E_\ell^{II}(A_{\bullet j})$. By definition of $g$, it results that $g_j(E_\ell^k)=1$ and $g_j(E_\ell)=2$, contradicting the equality  
$ g_j(E_\ell) =g_j(E_\ell^k)$.
\end{proof}

\section{A spanning forest in $D$}\label{sec:FordigaphD}

Let $A$ be a connected matrix and $R^*$ a row index subset of $A$. In this section, we deal with the case where $A$ is cyclic 
or $R^*$-network, and we study some forests in $D$ deriving from an $R^*$-cyclic or $R^*$-central representation of $A$. Throughout this section, we assume that there is no directed cycle in $D$.

Suppose that $A$ has an $R^*$-cyclic or $R^*$-network representation $G(A)$. By Propositions \ref{lemdigraphstem}, the bidirected graph $G(A)$  \emph{induces the forest $T_{G(A)}$} \emph{ in $D$ }\index{forest!$T_{G(A)}$} defined as follows: 
for any $B_\ell,\, B_{\ell'} \subseteq G(A)$, $(E_\ell,E_{\ell'})_{E_{\ell'}^k} \in T_{G(A)}$ if and only if  $v_\ell$ is the endnode  ($\neq v_{\ell'}$) of the $B_{\ell'}$-path of $G(A)$ with edge index set $E_{\ell'}^k$. See Figure \ref{fig8:introalgo1} for an example.
Whenever $A$ has a $\{1,\rho\}$-central representation $G(A)$ whose $T$ is the basic maximal $1$-tree, we recall that $G_1(A)$ denotes the component of $T\verb"\"\{e_1,e_\rho\}$ containing $w_\rho$ (on the right of $\{e_1,e_\rho\}$). If $A$  
has a $\{1,\rho\}$-central representation $G(A)$, then 
using Proposition \ref{lemdigraphstem2}
$T_{G_1(A)}$\index{forest!$T_{G_1(A)}$}\label{mycounter9} is defined with respect to $G_1(A)$ in a same way as $T_{G(A)}$ with respect to $G(A)$. 
With the intention of capturing the main properties of this forest, we give some more definitions in the next paragraph.

\begin{figure}[h!]
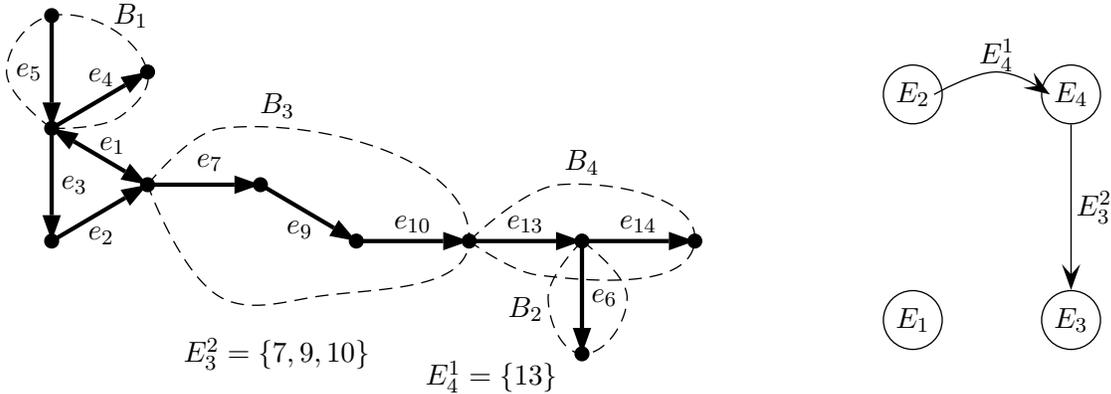


\vspace{1cm}
$
\begin{array}{cc}
\psset{xunit=1.5cm,yunit=1.5cm,linewidth=0.5pt,radius=0.1mm,arrowsize=7pt,
labelsep=1.5pt,fillcolor=black}

\pspicture(-0.5,0)(6,2.5)

\pscircle[fillstyle=solid](0,1){.1}
\pscircle[fillstyle=solid](0,2){.1}
\pscircle[fillstyle=solid](0,3){.1}
\pscircle[fillstyle=solid](0.85,1.5){.1}
\pscircle[fillstyle=solid](0.85,2.5){.1}
\pscircle[fillstyle=solid](1.85,1.5){.1}

\pscircle[fillstyle=solid](2.7,1){.1}

\pscircle[fillstyle=solid](3.7,1){.1}

\pscircle[fillstyle=solid](4.7,1){.1}
\pscircle[fillstyle=solid](4.7,0){.1}

\pscircle[fillstyle=solid](5.7,1){.1}

\psline[linewidth=1.6pt,arrowinset=0]{<->}(0,2)(0.85,1.5)
\rput(0.54,1.85){$e_1$}

\psline[linewidth=1.6pt,arrowinset=0]{->}(0,1)(0.85,1.5)
\rput(0.44,1.05){$e_2$}

\psline[linewidth=1.6pt,arrowinset=0]{<-}(0,1)(0,2)
\rput(0.2,1.5){$e_3$}

\psline[linewidth=1.6pt,arrowinset=0]{->}(0,2)(0.85,2.5)
\rput(0.44,2.45){$e_4$}

\psline[linewidth=1.6pt,arrowinset=0]{<-}(0,2)(0,3)
\rput(-0.2,2.5){$e_5$}

\psline[linewidth=1.6pt,arrowinset=0]{->}(4.7,1)(4.7,0)
\rput(4.9,0.5){$e_6$}

\psline[linewidth=1.6pt,arrowinset=0]{->}(0.85,1.5)(1.85,1.5)
\rput(1.4,1.7){$e_7$}

\psline[linewidth=1.6pt,arrowinset=0]{->}(1.85,1.5)(2.7,1)
\rput(2.2,1.1){$e_9$}

\psline[linewidth=1.6pt,arrowinset=0]{->}(2.7,1)(3.7,1)
\rput(3.2,1.15){$e_{10}$}

\psline[linewidth=1.6pt,arrowinset=0]{->}(3.7,1)(4.7,1)
\rput(4.2,1.15){$e_{13}$}

\psline[linewidth=1.6pt,arrowinset=0]{->}(4.7,1)(5.7,1)
\rput(5.2,1.15){$e_{14}$}



\rput(2,0){$E^2_3=\{7,9,10\}$}
\rput(3.9,-0.2){$E^1_4=\{13\}$}

\pscurve[linestyle=dashed](0,2)(0.6,2.1)(0.85,2.5)(0,3)(-.4,2.5)(0,2)
\rput(.7,3){$B_1$}

\pscurve[linestyle=dashed](0.85,1.5)(1.5,2)(3.7,1)(2.3,0.5)(1.5,0.5)(0.85,1.5)
\rput(2,2.2){$B_3$}

\pscurve[linestyle=dashed](3.7,1)(4.5,1.5)(5.7,1)(4.3,0.7)(3.7,1)
\rput(4.7,1.7){$B_4$}

\pscurve[linestyle=dashed](4.7,1)(5.1,0.5)(4.7,0)(4.4,0.5)(4.7,1)
\rput(4.2,0.4){$B_2$}

\endpspicture &

\psset{xunit=1.4cm,arrows=->,yunit=1.5cm,linewidth=0.5pt,radius=0.1mm,arrowsize=7pt,
labelsep=1.5pt,fillcolor=black}

\pspicture(0,0)(3,2)

\cnodeput(1.5,0.3){1}{$E_1$}
\cnodeput(3,0.3){3}{$E_3$}
\cnodeput(1.5,2.3){2}{$E_2$}
\cnodeput(3,2.3){4}{$E_4$}

\pscurve(1.7,2.3)(2.3,2.5)(2.8,2.3)
\rput(2.3,2.65){$E_4^1$}
\ncline{4}{3}
\naput{$E_3^2$}
      
\endpspicture

\end{array}
$

\vspace{.5cm}
\caption{A portion of $G(A)$ given in Figure \ref{fig:G(A)bon}
 and the spanning forest $T_{G(A)}$ of $D$ induced by $G(A)$.}
\label{fig8:introalgo1}
\end{figure}

For any $\beta \in S^*$, we define the following objects.
A directed path in $D$ is a \emph{ $\beta$-path}\index{path@$\beta$-path} if for each vertex 
$E_\ell$ of the path, we have $ g_\beta(E_\ell)=1$.
A \emph{ $\beta$-fork}\index{fork@$\beta$-fork} in $D$ is a graph consisting of three  directed paths $P_1=(E_{l_1},(E_{l_2},
E_{l_1}),E_{l_2},(E_{l_3},E_{l_2}),E_{l_3},\ldots, (E_{l_{t}},
E_{l_{t-1}}),E_{l_t})$, $P_2=(E_{l_t},(E_{l_{t+1}},E_{l_t}),
E_{l_{t+1}},\ldots,(E_{l_u},E_{l_{u-1}}),E_{l_{u}})$ and $P_3=(E_{l_t},(E_{l_{t+1}'},
E_{l_t}),\ldots,(E_{l_{v}'},E_{l_{v-1}'}),E_{l_{v}'})$  such that 
$1\le t\le u,v$ 
and $E_{l_t}$ is the unique vertex belonging to two of these paths. 
We have the equalities $g_\beta (E_{l_k})=1$
for $t+1 \le k \le u$, $ g_\beta (E_{l_k'})=1$ for $t+1\le k \le v$,
$g_\beta (E_{l_k})=2$ for $1\le k\le t$. Moreover, if $(E_{l_{t+1}},E_{l_t})$ and $(E_{l_{t+1}'},E_{l_t})$ 
have the same label, say $E_{l_t}^k$, then $g_\beta(E_{l_t}^k)=2$. If a $\beta$-fork is a path, then it is called  \emph{simple}\index{simple $\beta$-fork}. 
If a $\beta$-fork is simple (or equivalently $P_2=(E_{l_t})$ or $P_3=(E_{l_t})$), then the node $E_{l_t}$ is said to be \emph{$\beta$-free}\index{free@$\beta$-free}.
A source vertex of a $\beta$-path or a  $\beta$-fork is also called a \emph{$\beta$-free} vertex.
See Figure \ref{fig:betafork}. 

\begin{figure}[h!]
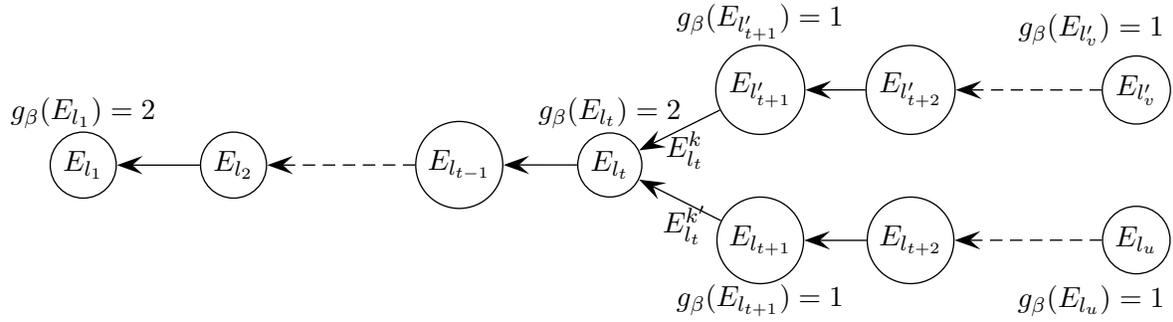

\vspace{.3cm}

\psset{xunit=2cm,yunit=1cm,linewidth=0.5pt,radius=0.1mm,arrowsize=7pt,
labelsep=1.5pt,fillcolor=black}

\pspicture(0,0)(15,3.5)

\cnodeput(0.5,1.5){1}{$E_{l_1}$}
\rput(0.5,2.2){$g_\beta(E_{l_1})=2$}
\cnodeput(1.5,1.5){2}{$E_{l_2}$}
\cnodeput(3,1.5){3}{$E_{l_{t-1}}$}
\cnodeput(4,1.5){4}{$E_{l_t}$}
\rput(4,2.2){$g_\beta(E_{l_t})=2$}

\rput(4.5,1.7){$E_{l_t}^k$}
\rput(4.5,0.7){$E_{l_t}^{k'}$}

\cnodeput(5,2.5){5}{$E_{l_{t+1}'}$}
\rput(5,3.4){$ g_\beta(E_{l_{t+1}'})=1$}
\cnodeput(6,2.5){6}{$E_{l_{t+2}'}$}
\cnodeput(7.5,2.5){7}{$E_{l_v'}$}
\rput(7.2,3.3){$ g_\beta(E_{l_{v}'})=1$}

\cnodeput(5,0.5){8}{$E_{l_{t+1}}$}
\rput(5,-0.3){$ g_\beta(E_{l_{t+1}})=1$}
\cnodeput(6,0.5){9}{$E_{l_{t+2}}$}
\cnodeput(7.5,0.5){10}{$E_{l_u}$}
\rput(7.2,-0.3){$g_\beta(E_{l_u})=1$}

\ncline{<-}{1}{2}
\ncline[linestyle=dashed]{<-}{2}{3}
\ncline{<-}{3}{4}

\ncline{<-}{4}{5}
\ncline{<-}{5}{6}
\ncline[linestyle=dashed]{<-}{6}{7}

\ncline{<-}{4}{8}
\ncline{<-}{8}{9}
\ncline[linestyle=dashed]{<-}{9}{10}

\endpspicture
\vspace{0.5cm}

If $k=k'$, then $g_\beta(E_{l_t}^k)=2$.

\caption{An illustration of a $\beta$-fork.}  
\label{fig:betafork}
\end{figure}

\begin{figure}[h!]
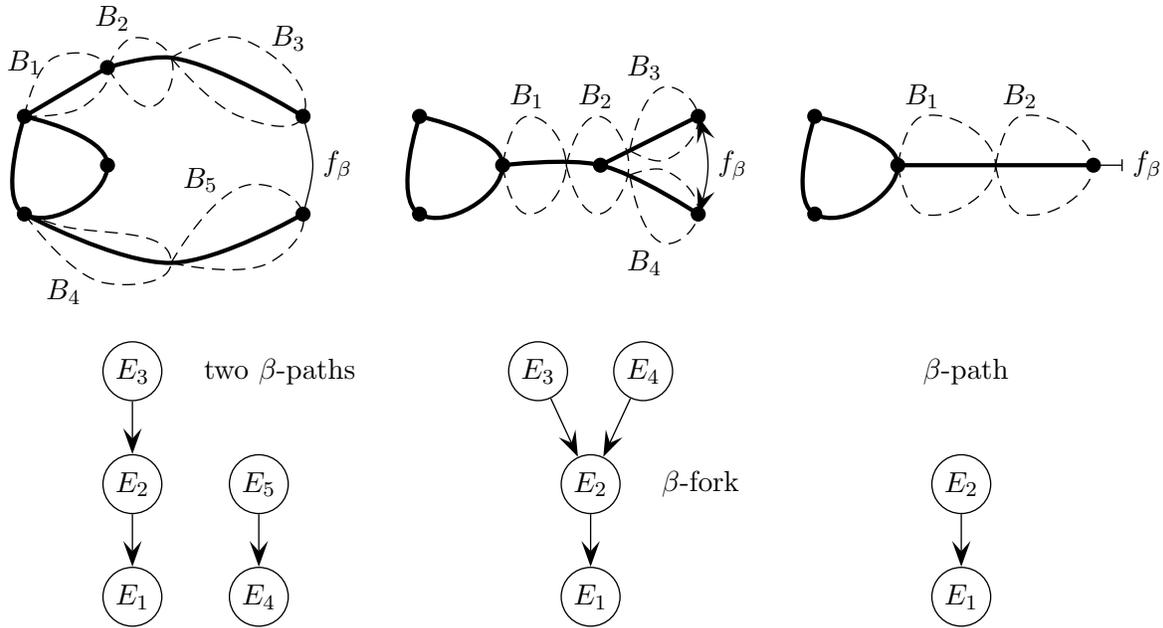


\vspace{1.1cm}
$
\begin{array}{ccc}
\begin{array}{c}
\psset{xunit=1.3cm,yunit=1.3cm,linewidth=0.5pt,radius=0.1mm,arrowsize=7pt,
labelsep=1.5pt,fillcolor=black}

\pspicture(0,-0.5)(3.5,2.5)

\pscircle[fillstyle=solid](0,1){.1}
\pscircle[fillstyle=solid](0,2){.1}
\pscircle[fillstyle=solid](0.85,1.5){.1}
\pscircle[fillstyle=solid](0.85,2.5){.1}

\pscircle[fillstyle=solid](2.85,1){.1}
\pscircle[fillstyle=solid](2.85,2){.1}

\pscurve[linewidth=1.6pt](0,2)(0.85,1.5)(0,1)(0,2)
\psline[linewidth=1.6pt,arrowinset=0]{-}(0,2)(0.85,2.5)

\pscurve[linewidth=1.6pt](0.85,2.5)(1.5,2.6)(2.85,2)
\pscurve[linewidth=1.6pt](0,1)(1.5,.5)(2.85,1)

\pscurve(2.85,1)(2.95,1.5)(2.85,2)
\rput(3.2,1.5){$f_\beta$}

\pscurve[linestyle=dashed](0,2)(0.6,2.1)(0.85,2.5)(0.3,2.6)(0,2)
\rput(0,2.55){$B_1$}

\pscurve[linestyle=dashed](0.85,2.5)(1.1,2.8)(1.5,2.6)(1.3,2.1)(.85,2.5)
\rput(.9,3){$B_2$}

\pscurve[linestyle=dashed](1.5,2.6)(2.2,2)(2.85,2)(2.2,2.8)(1.5,2.6)
\rput(2.7,2.8){$B_3$}

\pscurve[linestyle=dashed](0,1)(.8,.3)(1.5,.5)(.3,.9)(0,1)
\rput(.4,.2){$B_4$}

\pscurve[linestyle=dashed](1.5,.5)(2.5,.5)(2.85,1)(2.3,1.3)(1.5,.5)
\rput(1.8,1.35){$B_5$}

\endpspicture \\

\psset{xunit=1.4cm,arrows=->,yunit=1.5cm,linewidth=0.5pt,radius=0.1mm,arrowsize=7pt,
labelsep=1.5pt,fillcolor=black}

\pspicture(0,0)(2,2)

\cnodeput(.4,0){1}{$E_1$}
\cnodeput(.4,1){2}{$E_2$}
\cnodeput(.4,2){3}{$E_3$}
\cnodeput(1.6,0){4}{$E_4$}
\cnodeput(1.6,1){5}{$E_5$}

\ncline{3}{2}
\ncline{2}{1}
\ncline{5}{4}

\rput(1.8,2){two $\beta$-paths}
      
\endpspicture

\end{array} &

\begin{array}{c}
\psset{xunit=1.3cm,yunit=1.3cm,linewidth=0.5pt,radius=0.1mm,arrowsize=7pt,
labelsep=1.5pt,fillcolor=black}

\pspicture(0,-0.5)(3.5,2.5)

\pscircle[fillstyle=solid](0,1){.1}
\pscircle[fillstyle=solid](0,2){.1}
\pscircle[fillstyle=solid](0.85,1.5){.1}
\pscircle[fillstyle=solid](1.85,1.5){.1}

\pscircle[fillstyle=solid](2.85,1){.1}
\pscircle[fillstyle=solid](2.85,2){.1}

\pscurve[linewidth=1.6pt](0,2)(0.85,1.5)(0,1)(0,2)

\pscurve[linewidth=1.6pt](0.85,1.5)(1.85,1.5)(2.85,1)
\psline[linewidth=1.6pt,arrowinset=0](1.85,1.5)(2.85,2)

\pscurve{<->}(2.85,1)(2.95,1.5)(2.85,2)
\rput(3.2,1.5){$f_\beta$}


\pscurve[linestyle=dashed](0.85,1.5)(1.1,2)(1.5,1.5)(1.1,1)(.85,1.5)
\rput(1.1,2.2){$B_1$}

\pscurve[linestyle=dashed](1.5,1.5)(1.8,1)(2.1,1.3)(2.1,1.7)(1.8,2)(1.5,1.5)
\rput(1.8,2.2){$B_2$}

\pscurve[linestyle=dashed](2.15,1.4)(2.5,.7)(2.85,1)(2.5,1.45)(2.15,1.4)
\rput(2.3,.5){$B_4$}

\pscurve[linestyle=dashed](2.15,1.65)(2.5,2.3)(2.85,2)(2.5,1.55)(2.15,1.65)
\rput(2.3,2.5){$B_3$}

\endpspicture \\

\psset{xunit=1.4cm,arrows=->,yunit=1.5cm,linewidth=0.5pt,radius=0.1mm,arrowsize=7pt,
labelsep=1.5pt,fillcolor=black}

\pspicture(0,0)(2,2)

\cnodeput(1,0){1}{$E_1$}
\cnodeput(1,1){2}{$E_2$}
\cnodeput(.5,2){3}{$E_3$}
\cnodeput(1.5,2){4}{$E_4$}

\ncline{3}{2}
\ncline{4}{2}
\ncline{2}{1}

\rput(2,1){ $\beta$-fork}
      
\endpspicture

\end{array} & 

\begin{array}{c}
\psset{xunit=1.3cm,yunit=1.3cm,linewidth=0.5pt,radius=0.1mm,arrowsize=7pt,
labelsep=1.5pt,fillcolor=black}

\pspicture(0,-0.5)(3,2.5)

\pscircle[fillstyle=solid](0,1){.1}
\pscircle[fillstyle=solid](0,2){.1}
\pscircle[fillstyle=solid](0.85,1.5){.1}

\pscircle[fillstyle=solid](2.85,1.5){.1}

\pscurve[linewidth=1.6pt](0,2)(0.85,1.5)(0,1)(0,2)

\pscurve[linewidth=1.6pt](0.85,1.5)(1.85,1.5)(2.85,1.5)

\psline[arrowinset=.5,arrowlength=1.5]{|-}(3.15,1.5)(2.85,1.5)
\rput(3.4,1.5){$f_\beta$}


\pscurve[linestyle=dashed](0.85,1.5)(1.1,2)(1.85,1.5)(1.1,1)(.85,1.5)
\rput(1.1,2.2){$B_1$}

\pscurve[linestyle=dashed](1.85,1.5)(2.1,1)(2.85,1.5)(2.1,2)(1.85,1.5)
\rput(2.1,2.2){$B_2$}

\endpspicture \\

\psset{xunit=1.4cm,arrows=->,yunit=1.5cm,linewidth=0.5pt,radius=0.1mm,arrowsize=7pt,
labelsep=1.5pt,fillcolor=black}

\pspicture(0,0)(2,2)

\cnodeput(1,0){1}{$E_1$}
\cnodeput(1,1){2}{$E_2$}

\ncline{2}{1}

\rput(1,2){ $\beta$-path}
      
\endpspicture

\end{array}
\end{array}
$

\vspace{.3cm}
\caption{An illustration of different configurations of $T_{G(A)}(\{E_\ell\, :\, \beta \in f^*(E_\ell) \})$ (at the bottom) with respect to the fundamental circuit of a nonbasic edge $f_\beta$ with $\beta \in S^*$ (at the top). In each configuration, the basic cycle in $G(A)$ is drawn.}
\label{figcyclicfund}
\end{figure}

\noindent
For all $\beta \in S^*$, let $R_\beta =s(A_{\bullet \beta})\cap R^*$. If $R_\beta\neq R^*$ ($\beta \in S^*$), then
the set $R_\beta$ is called an \emph{interval}\index{interval}. 
If $A$ has an $R^*$-cyclic representation $G(A)$, then an interval $R_\beta$ corresponds to the edge index set of a consistently oriented path in the basic cycle, denoted as $p_\beta$, which is called an \emph{interval}\index{interval} in $G(A)$.
A forest $T_\Theta=(V',\Theta)$ in $D$ is said to be \emph{feasible}\index{feasible forest} if it has the following properties.

\begin{itemize}

\item[$\Pi_1$:] For all $\beta \in S^*$, the subgraph $T_{\Theta}(\{E_\ell\in V'\,\,:\,\, \beta \in f^*(E_\ell)\})$ is a $\beta$-fork, a $\beta$-path or a union of two disjoint $\beta$-paths.

\item[$\Pi_2$:]  For all $\beta\in S^*$, If $s_{\frac{1}{2}}(A_{\bullet \beta})\neq \emptyset$, then the subgraph $T_{\Theta}(\{E_\ell\in V' \, :\, \beta \in f^*(E_\ell)\})$ is a $\beta$-path.

\item[$\Pi_3$:] For all $\beta, \beta' \in S^*$  such that $R_\beta \neq R_{\beta'}$, the subgraph $T_{\Theta}(\{E_\ell\in V' \, :\, \beta,\beta' \in f^*(E_\ell)\,,\,$\\ $ E_\ell^I(A_{\bullet \beta}) \sim_{E_\ell} E_\ell^I(A_{\bullet \beta'}) \})$ is either a $\beta$-path or a $\beta'$-path.

\end{itemize}

We denote by $Sink(D)$ the set of sink vertices of $D$.
We observe that the property $\Pi_1$ is equivalent to the following.

\begin{itemize}

\item[$\Pi_1^*$:] For each vertex $E_{\ell}\in V$ and $1\le h\le m(\ell)$, we have 
$\begin{displaystyle}
\sum_{\substack{E_{\ell'}\,:\, (E_{\ell'},E_\ell)_{E^h_\ell}\in 
\Theta }}  
 g(E_{\ell'})\le g(E_\ell^h)
\end{displaystyle}$. Moreover, 
$\displaystyle{\sum_{E_\ell\,\in\, Sink(D)}} g(E_\ell) \le {\bf 2_{1\times s}}$.

\end{itemize}

On the other hand, in the case where $R^*=\{1,\rho\}$, we define $S_1=\{j\in S^* \, :\, 1 \in s(A_{\bullet j}),\, \rho \notin s(A_{\bullet j})\}$ and $S_2=\{j\in S^* \, :\, 1 \notin s(A_{\bullet j}), \,\rho \in s(A_{\bullet j})\}$. A forest $T_\Theta=(V',\Theta)$ in $D$ is said to be \emph{right-feasible}\index{right-feasible!forest} if it satisfies $\Pi_1$ and the following property.

\begin{itemize}

\item[$\Pi_2'$:]  For any $\beta\in S_k$ with $k\in \{1,2\}$, the subgraph $T_{\Theta}(\{E_\ell\in V' \, :\, \beta \in f^*(E_\ell)\})$ is a $\beta$-path.

\end{itemize}

Suppose that the matrix $A$ has an $R^*$-cyclic representation $G(A)$. For any $\beta \in S^*$, the different configurations of  the graph $T_{G(A)}(\{E_\ell \,: \, \beta \in f^*(E_\ell) \})$ with respect to the fundamental circuit of the nonbasic edge $f_\beta$ are shown in Figure \ref{figcyclicfund}.
For any $\beta,\beta'\in S^*$, denote by 
$v_{\beta_1}$ and $v_{\beta_2}$ (respectively, $v_{\beta_1'}$ and $v_{\beta_2'}$) the endnodes of the nonbasic edge $f_\beta$ (respectively, $f_{\beta'}$). 
(If $f_\beta$ has one endnode, then $v_{\beta_1}= v_{\beta_2}$.)
The pair of nonbasic edges $\{f_\beta,f_{\beta'}\}$ is said to be \emph{singular}\index{singular} if $\{v_{\beta_1}^* , v_{\beta_2}^*\}= \{ v_{\beta_1'}^* , v_{\beta_2'}^*\}$. Let us see two auxiliary lemmas.

\begin{lem}\label{lemdigraph12}
Suppose that $A$ has a cyclic representation $G(A)$ such that $e_1$ and $e_\rho$ are edges of the basic cycle incident with a common central node. If $1,\rho \in s(A_{\bullet j})$ for some column index  $j$, then the whole basic cycle is contained in the fundamental circuit of $f_j$.
\end{lem}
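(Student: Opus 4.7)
The plan is to analyze the fundamental circuit $C_j$ of $f_j$ via the classification of binet fundamental circuits given on page~\pageref{mycounter2} and summarized in Figure~\ref{fig:Apositive}, combined with the consistency condition of Lemma~\ref{lemdefiWeight2}. Since $G(A)$ is cyclic, the basic subgraph consists of a single basic tree (so the ``$T=T'$'' regime applies to $f_j$) with a single full basic cycle $C$; I will write $v$ for the central node, which is the common endpoint of $e_1$ and $e_\rho$. By properness of $G(A)$, one of $e_1$ and $e_\rho$ is the basic bidirected edge and the other is a directed edge, and both enter $v$, so both carry sign $+$ at $v$. Furthermore, since $A_{\bullet j}$ is nonnegative (the representation being cyclic), Lemma~\ref{lemdefiWeight2} guarantees that a minimal covering closed walk $w(C_j)$ of $C_j$ is consistently oriented according to $f_j$: at each internal node of the walk that is not an endnode of $f_j$, the two edges used by the walk at that node must carry opposite signs.

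I would then rule out the three remaining sub-cases in Figure~\ref{fig:Apositive} where $C_j\cap C$ is a strict subset of $C$, denoting by $w$ and $w'$ the nodes at which the stems issued from $f_j$ meet $C$. If $f_j$ is directed with $w=w'$, then $C_j$ contains no edges of $C$, contradicting $1\in s(A_{\bullet j})$. If $f_j$ is directed with $w\neq w'$, then $C_j\cap C=C_2$ contains only directed edges, so the bidirected edge $e_1$ cannot lie in it, again contradicting $1\in s(A_{\bullet j})$. If $f_j$ is a bidirected non-half edge with $w\neq w'$, then $C_j\cap C$ is the sub-path $C_1$ from $w$ to $w'$ containing $e_1$; if the central node $v$ coincides with $w$ or $w'$, only one of $e_1,e_\rho$ can lie on $C_1$ (it uses just one edge at its endpoints), contradicting $\rho\in s(A_{\bullet j})$, whereas if $v$ is strictly internal to $C_1$ the walk $w(C_j)$ traverses exactly the two edges $e_1$ and $e_\rho$ at $v$. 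Because the stems from the endnodes $v_{\beta_1},v_{\beta_2}$ of $f_j$ meet $C$ precisely at $w$ and $w'$, and $v\notin\{w,w'\}$, the node $v$ is not an endnode of $f_j$; consistency at $v$ then demands that $e_1$ and $e_\rho$ carry opposite signs at $v$, contradicting the equality of their signs noted above.

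The surviving sub-cases---$f_j$ a half-edge, or $f_j$ bidirected non-half with $w=w'$---are exactly the two handcuff configurations from Figure~\ref{fig:Apositive} in which the whole basic cycle $C$ appears as one of the two negative cycles of $C_j$, so the conclusion of the lemma follows. The key technical obstacle lies in the bidirected non-half case with $w\neq w'$: one must confirm that the central node $v$ is an internal node of the walk distinct from the endnodes of $f_j$, which is what allows Lemma~\ref{lemdefiWeight2} to force a consistency contradiction from the matching $+$ signs at $v$.
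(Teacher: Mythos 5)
Your proposal is correct and follows essentially the same route as the paper, whose proof is a one-line appeal to exactly the ingredients you use: the classification of fundamental circuits (Corollary \ref{corBidirectedCircuit} and the case analysis of Figure \ref{fig:Apositive}) together with the consistency criterion of Lemma \ref{lemdefiWeight2} applied to the nonnegative column $A_{\bullet j}$. You have merely written out the case analysis the paper leaves implicit, including the one genuinely nontrivial exclusion (the bidirected non-half edge with $w\neq w'$), where your observation that the central node is internal to the walk and carries two $+$ signs is precisely the consistency contradiction the paper's citation of Lemma \ref{lemdefiWeight2} is meant to supply.
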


\begin{proof}
Since $A$ is nonnegative, the proof follows from Corollary \ref{corBidirectedCircuit} and Lemma \ref{lemdefiWeight2} (see Figure \ref{fig:Apositive}).
\end{proof}\\

\begin{lem}\label{lemdigraphsim}
Suppose that $A$ has an $R^*$-cyclic representation $G(A)$. Then
the following holds.
\begin{itemize}

\item[1)] For any $1\le \ell\le b$ and $\beta \in f^*(E_\ell)$, if the set $E_\ell^{II}(A_{\bullet \beta})\neq \emptyset$, then $E_\ell^I(A_{\bullet \beta}) \sim_{E_\ell} E_\ell^{II}(A_{\bullet \beta})$.

\item[2)] For any $1\le \ell,\ell'\le b$ and $1\le k \le m(\ell)$, if $(E_{\ell'},E_\ell)_{E_\ell^k}\in D$ and $\beta, \beta' \in f^*(E_{\ell'})$, then $\beta, \beta' \in f^*(E_\ell)$, 
$E_{\ell'}^I(A_{\bullet \beta}) \sim_{E_{\ell'}} E_{\ell'}^{I}(A_{\bullet \beta'})$ and $E_\ell^I(A_{\bullet \beta}) \sim_{E_\ell} E_\ell^I(A_{\bullet \beta'})$.

\end{itemize}

\end{lem}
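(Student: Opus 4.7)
The plan is to prove the two parts separately, with Part 1 feeding into Part 2.

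\textbf{Part 1.} The hypothesis $E_\ell^{II}(A_{\bullet \beta})\neq\emptyset$ forces, via the construction of the procedure E$\ell$path and Proposition \ref{propBlsubstems}, that $f_\beta$ is in case $a_3$ with respect to $B_\ell$: it generates two distinct $B_\ell$-paths $B_\ell^I$ and $B_\ell^{II}$. Inspecting the three sub-configurations collected in Figure \ref{fig:lsubstem2} (which cover every possibility permitted by Corollary \ref{corBidirectedCircuit} and Lemmas \ref{lemdefiWeight1}-\ref{lemdefiWeight2}), the two paths share a common endnode and issue from it in the same direction, because they are the two alternative continuations of a single bifurcating stem of $f_\beta$. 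In any $R^*$-cyclic representation, $B_\ell$ is attached to the rest of $G(A)$ only through $v_\ell$, so every $B_\ell$-path must end at $v_\ell$; in particular this common endnode is $v_\ell$. The hypothesis of Lemma \ref{lemdigraphutile} is therefore satisfied, and it immediately delivers $E_\ell^I(A_{\bullet\beta})\sim_{E_\ell}E_\ell^{II}(A_{\bullet\beta})$.

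\textbf{Part 2.} I first unpack $(E_{\ell'},E_\ell)_{E_\ell^k}\in D$ into its defining conditions: $g(E_{\ell'})\le g(E_\ell^k)$ componentwise and $J_{\ell'}^2=\emptyset$. The inclusion $\beta,\beta'\in f^*(E_\ell)$ then follows at once: from $\beta\in f^*(E_{\ell'})$ we have $g_\beta(E_{\ell'})\ge 1$, whence $g_\beta(E_\ell)\ge g_\beta(E_\ell^k)\ge g_\beta(E_{\ell'})\ge 1$; and identically for $\beta'$. The equivalence $E_{\ell'}^I(A_{\bullet\beta})\sim_{E_{\ell'}}E_{\ell'}^I(A_{\bullet\beta'})$ is also immediate: $J_{\ell'}^2=\emptyset$ means that the procedure TwoClasses places every $E_{\ell'}$-path into the single class $J_{\ell'}^1$, so any two of them are related by $\sim_{E_{\ell'}}$.

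For the final assertion $E_\ell^I(A_{\bullet\beta})\sim_{E_\ell}E_\ell^I(A_{\bullet\beta'})$, my plan is to show that both of these $E_\ell$-paths are $\sim_{E_\ell}$-equivalent to the common label $E_\ell^k$ and then to invoke transitivity (which holds since $\sim_{E_\ell}$ is the equivalence relation induced by the partition $J_\ell^1\uplus J_\ell^2$). The inequality $g_\beta(E_\ell^k)\ge g_\beta(E_{\ell'})\ge 1$ ensures $g_\beta(E_\ell^k)\ge 1$, so $E_\ell^k$ is one of the $E_\ell$-paths generated by $A_{\bullet\beta}$, meaning $E_\ell^k\in\{E_\ell^I(A_{\bullet\beta}),E_\ell^{II}(A_{\bullet\beta})\}$. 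In the first case the equivalence $E_\ell^I(A_{\bullet\beta})\sim_{E_\ell}E_\ell^k$ is tautological; in the second case $E_\ell^{II}(A_{\bullet\beta})\neq\emptyset$, and Part 1 yields $E_\ell^I(A_{\bullet\beta})\sim_{E_\ell}E_\ell^{II}(A_{\bullet\beta})=E_\ell^k$. The same dichotomy, applied to $\beta'$, gives $E_\ell^I(A_{\bullet\beta'})\sim_{E_\ell}E_\ell^k$, and transitivity closes the argument.

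The only delicate point I anticipate is the geometric assertion underlying Part 1---that in case $a_3$ the two $B_\ell$-paths really do issue from $v_\ell$ in the same direction. This is the one step where I cannot reduce the claim to a direct matrix manipulation, and it requires going through the case analysis of bidirected fundamental circuits (Corollary \ref{corBidirectedCircuit} and the description at page \pageref{mycounter2}) to verify that the bifurcation described in Figure \ref{fig:lsubstem2} is the only possible structure. Once this is secured, everything else is bookkeeping: Part 2 becomes an almost purely combinatorial deduction that bypasses any further appeal to the graph $G(A)$, relying only on the definition of $D$, Part 1, and the equivalence-relation structure of $\sim_{E_\ell}$.
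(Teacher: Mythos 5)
Your proof is correct and follows essentially the same route as the paper's: part 1 reduces to showing that the two $B_\ell$-paths generated by $f_\beta$ both leave the common node $v_\ell$ and then invokes Lemma \ref{lemdigraphutile}, while part 2 extracts $J_{\ell'}^2=\emptyset$ from the definition of an arc in $D$, observes that $E_\ell^k$ equals $E_\ell^i(A_{\bullet\beta})$ and $E_\ell^{i'}(A_{\bullet\beta'})$ for some $i,i'\in\{I,II\}$, and closes with part 1 and the transitivity of $\sim_{E_\ell}$. One small imprecision: $E_\ell^{II}(A_{\bullet\beta})\neq\emptyset$ does not force case $a_3$ — it also occurs in case $a_2$, where $E_\ell^I(A_{\bullet\beta})=E_\ell^{II}(A_{\bullet\beta})$ and the claim is trivially true — but this does not affect the validity of your argument.
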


\begin{proof}
Using Corollary \ref{corBidirectedCircuit} and
Lemmas  \ref{lemdefiWeight2} and \ref{lemdigraphBlI}, one may deduce that the $B_\ell$-paths with edge index sets 
$E_\ell^I(A_{\bullet \beta})$ and $E_\ell^{II}(A_{\bullet \beta})$
are both leaving $v_\ell$. Then, the proof of part 1 follows from Lemma \ref{lemdigraphutile}.

Now let $1\le \ell,\ell'\le b$ and $1\le k \le m(\ell)$
such that $(E_{\ell'},E_\ell)_{E_\ell^k}\in D$ and there exist two elements $\beta, \beta' \in f^*(E_{\ell'})$. By definition of an arc in $D$, the set $J_{\ell'}^2$ is empty. It results that 
$E_{\ell'}^I(A_{\bullet \beta}) \sim_{E_{\ell'}} E_{\ell'}^I(A_{\bullet \beta'})$. Since the columns $A_{\bullet \beta}$ and $A_{\bullet \beta'}$ generate the same $E_\ell$-path $E_\ell^k$, 
we have the equality $E_\ell^i(A_{\bullet \beta})=E_\ell^{i'}(A_{\bullet \beta'})$ for some $i,i'\in \{I, II \}$. Finally, part 1 of the lemma implies that $E_\ell^I(A_{\bullet \beta}) \sim_{E_\ell} E_\ell^{I}(A_{\bullet \beta'})$.
\end{proof}\\

The following Lemma shall be useful in Section \ref{sec:cyc}.

\begin{lem}\label{lembonsaisim}
Let $1\le \ell,\ell'\le b$ and suppose that the bonsai matrices $N_\ell$ and $N_{\ell'}$ are network matrices. Then the following holds.
\begin{itemize}

\item[1)] For any $\beta \in f^*(E_\ell)$, if the set $E_\ell^{II}(A_{\bullet \beta})\neq \emptyset$, then $E_\ell^I(A_{\bullet \beta}) \sim_{E_\ell} E_\ell^{II}(A_{\bullet \beta})$.

\item[2)] If $(E_{\ell'},E_\ell)_{E_\ell^k}\in D$ and $\beta, \beta' \in f^*(E_{\ell'})$, then $\beta, \beta' \in f^*(E_\ell)$, 
$E_{\ell'}^I(A_{\bullet \beta}) \sim_{E_{\ell'}} E_{\ell'}^{I}(A_{\bullet \beta'})$ and $E_\ell^I(A_{\bullet \beta}) \sim_{E_\ell} E_\ell^I(A_{\bullet \beta'})$.

\end{itemize}

\end{lem}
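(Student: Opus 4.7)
The plan is to mirror the proof of Lemma \ref{lemdigraphsim}, but to work inside a network representation of the bonsai matrix $N_\ell$ rather than inside a cyclic representation of $A$. The essential translation device is Lemma \ref{lembonsainet2}: in any network representation $G(N_\ell)$, each $E_\ell$-path is the edge set of a directed path having $v_\ell$ as an endnode, and the relation $\sim_{E_\ell}$ coincides with the geometric property that two such paths both enter or both leave $v_\ell$. With this in hand, each claim reduces to an assertion about how the paths $E_\ell^I(A_{\bullet\beta})$ and $E_\ell^{II}(A_{\bullet\beta})$ sit at $v_\ell$ in $G(N_\ell)$.

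For part~1, fix $\beta\in f^*(E_\ell)$ with $E_\ell^{II}(A_{\bullet\beta})\neq\emptyset$ and a network representation $G(N_\ell)$; the goal is to show that the two directed paths with edge sets $E_\ell^I(A_{\bullet\beta})$ and $E_\ell^{II}(A_{\bullet\beta})$ both enter or both leave $v_\ell$. I would split according to whether $s_2(A_{\bullet\beta})\cap E_\ell$ is empty. If it is non-empty, then $E_\ell^I\cap E_\ell^{II}$ contains this common set, so the two directed paths in the tree $B_\ell$ starting at the common vertex $v_\ell$ share at least one edge; in a tree, two directed paths issued from a common vertex that share any edge must in fact share their entire initial segment, and in particular their first edge at $v_\ell$, so they both enter or both leave $v_\ell$ via this shared first edge.

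The main obstacle is the case $s_2(A_{\bullet\beta})\cap E_\ell=\emptyset$, in which $E_\ell^I=\mathcal{I}^I$ and $E_\ell^{II}=\mathcal{I}^{II}$ are non-empty, disjoint, and together partition $\mathcal{I}=s_1(A_{\bullet\beta})\cap E_\ell$. Here I would restrict $G(N_\ell)$ to its non-artificial part to obtain a network representation of the submatrix $A_{E_\ell\times\bar f^*(E_\ell)}$, which is a network matrix because $N_\ell$ is. In this restriction, $E_\ell^I$ and $E_\ell^{II}$ are edge-disjoint directed paths each having $v_\ell$ as endnode, so they lie in two distinct branches of $B_\ell$ at $v_\ell$ and share only the vertex $v_\ell$; consequently $\mathcal{I}$ is the edge index set of two disjoint directed paths with $v_\ell$ as unique common endnode, placing us precisely in the setting of Theorem \ref{stem}. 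The procedure E$\ell$path collects in $S_0$ the columns reachable from $j_0$ in $T_\mathcal{I}$ via an even number of yellow edges, and the assumption $\mathcal{I}^{II}\neq\emptyset$ produces at least one blue vertex $j$ linked to $j_0$ by an odd-yellow minimal path in $H_\mathcal{I}(A_{E_\ell\times\bar f^*(E_\ell)})$. Applying Lemma \ref{lemstem2} to this pair, after the standard orientation-reversing normalization in Theorem \ref{stem}, forces $p=E_\ell^I$ and $p'=E_\ell^{II}$ to both leave $v_\ell$ in the normalized representation, which translates back to $E_\ell^I$ and $E_\ell^{II}$ lying on the same side of $v_\ell$ in $G(N_\ell)$; Lemma \ref{lembonsainet2} then yields $E_\ell^I\sim_{E_\ell}E_\ell^{II}$.

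Part~2 is more direct and relies on unpacking the definition of the arc $(E_{\ell'},E_\ell)_{E_\ell^k}\in D$, which gives the componentwise inequality $g(E_{\ell'})\le g(E_\ell^k)$ together with $J_{\ell'}^2=\emptyset$. Given $\beta\in f^*(E_{\ell'})$ one has $g_\beta(E_{\ell'})\ge 1$, hence $g_\beta(E_\ell^k)\ge 1$, so $\beta\in f^*(E_\ell^k)\subseteq f^*(E_\ell)$, and likewise for $\beta'$; the equivalence $E_{\ell'}^I(A_{\bullet\beta})\sim_{E_{\ell'}}E_{\ell'}^I(A_{\bullet\beta'})$ is then immediate since $J_{\ell'}^2=\emptyset$ forces every $E_{\ell'}$-path to lie in the single class $J_{\ell'}^1$. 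For $E_\ell^I(A_{\bullet\beta})\sim_{E_\ell}E_\ell^I(A_{\bullet\beta'})$, both $A_{\bullet\beta}$ and $A_{\bullet\beta'}$ generate $E_\ell^k$, so $E_\ell^k$ belongs to $\{E_\ell^I(A_{\bullet\beta}),E_\ell^{II}(A_{\bullet\beta})\}$ and to $\{E_\ell^I(A_{\bullet\beta'}),E_\ell^{II}(A_{\bullet\beta'})\}$; invoking part~1 whenever $E_\ell^I(A_{\bullet\beta})\neq E_\ell^k$ (respectively for $\beta'$) shows that both $E_\ell^I(A_{\bullet\beta})$ and $E_\ell^I(A_{\bullet\beta'})$ are $\sim_{E_\ell}$-related to $E_\ell^k$, and transitivity of $\sim_{E_\ell}$ closes the argument.
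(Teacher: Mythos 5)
Your proof is correct and follows essentially the same route as the paper, whose own proof is just the one-line remark that the argument of Lemma \ref{lemdigraphsim} carries over with Lemma \ref{lembonsainet2} in place of Lemma \ref{lemdigraphutile}. You have simply filled in the details of that substitution correctly: part 1 via the shared-$2$-support versus disjoint-paths case split and Theorem \ref{stem}, and part 2 via the definition of an arc of $D$, the emptiness of $J_{\ell'}^2$, and transitivity of $\sim_{E_\ell}$.
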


\begin{proof}
The proof is similar to the proof of Lemma \ref{lemdigraphsim} by using Lemma \ref{lembonsainet2} instead of Lemma \ref{lemdigraphutile}.
\end{proof}\\

Now, we are ready to state the main result about the forest $T_{G(A)}$, whenever $A$ has an $R^*$-cyclic representation $G(A)$.

\begin{thm}\label{thmdigraphfeasible}
Suppose that $A$ has an $R^*$-cyclic representation $G(A)$. 
Then $T_{G(A)}$ is a feasible spanning forest of $D$.
\end{thm}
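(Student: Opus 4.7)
The plan is to first check that $T_{G(A)}$ is a well-defined spanning forest of $D$ and then verify properties $\Pi_1$, $\Pi_2$, $\Pi_3$ in turn, with the geometric content carried by the structure of fundamental circuits (Corollary~\ref{corBidirectedCircuit} and Figure~\ref{fig:Apositive}) and translated to $D$ via Lemma~\ref{lemdigraphBlI} and Proposition~\ref{propBlsubstems}.

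First I would observe that $T_{G(A)}$ is a spanning forest in $D$. Each bonsai $B_\ell$ either has $v_\ell$ lying on the basic cycle of $G(A)$ (so $E_\ell$ is a root in $T_{G(A)}$), or $v_\ell$ lies on a unique bonsai $B_{\ell'}$ strictly closer to the basic cycle, in which case the basic path from $v_\ell$ to $v_\ell^*$ is contained in $B_{\ell'}$ and its edge index set is by definition an $E_{\ell'}$-path, say $E_{\ell'}^k$. Thus each $E_\ell$ has at most one outgoing arc, so the digraph $T_{G(A)}$ is a forest spanning $V$. Proposition~\ref{lemdigraphstem} applied to each such $\ell$ with respect to the stem $q$ from $v_\ell$ to $v_\ell^*$ shows that every arc of $T_{G(A)}$ actually belongs to $D$.

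Next, I would verify $\Pi_1$ by running through the shape of the fundamental circuit of a given $f_\beta$ with $\beta\in S^*$. Since $R^*\subseteq R_\beta\cup\{\text{edges traversed by the stems}\}$ in $G(A)$, the intersection of the fundamental circuit with $\overline{R^*}$ consists of either one stem (half-edge/2-edge case with $p=p'$ or $T=T'$), a single stem that forks at a central node (handcuff type (iii)), or two disjoint stems (handcuff type (iv), or the $T=T'$, $w\neq w'$ case). The bonsais $E_\ell$ with $\beta\in f^*(E_\ell)$ are exactly those traversed by these stems, ordered by proximity to the basic cycle, so $T_{G(A)}(\{E_\ell:\beta\in f^*(E_\ell)\})$ is, respectively, a chain, a graph with a single branch point, or a disjoint union of two chains. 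To match this with the definition of a $\beta$-fork one needs to read off the $g_\beta$-values: Proposition~\ref{propBlsubstems} classifies each bonsai into one of the cases $a_0$--$a_3$, giving $g_\beta(E_\ell)=2$ on the common substem portion (where the stem is doubly covered with weight $2$ on each edge), $g_\beta(E_\ell)=1$ on the branches (singly covered with weight $1$), and $g_\beta(E_\ell)=0$ off the fundamental circuit. The label compatibility at the fork vertex $E_{l_t}$ (i.e.~if the two labels on outgoing arcs agree then $g_\beta(E_{l_t}^k)=2$) follows because in that situation both branches enter $B_{l_t}$ at the same endpoint of the same $B_{l_t}$-path.

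Property $\Pi_2$ drops out of the previous analysis: by Lemma~\ref{lemdefiWeight1}, $s_{\frac{1}{2}}(A_{\bullet\beta})\neq\emptyset$ forces $f_\beta$ to be a half-edge or a 2-edge, and in both cases the fundamental circuit uses a single stem from the basic cycle (see Figure~\ref{fig:Apositive}), producing a $\beta$-path rather than a fork or two disjoint paths.

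The main obstacle is property $\Pi_3$. Here I would argue as follows. Fix $\beta,\beta'\in S^*$ with $R_\beta\neq R_{\beta'}$ and consider a bonsai $E_\ell$ with $\beta,\beta'\in f^*(E_\ell)$ and $E_\ell^I(A_{\bullet\beta})\sim_{E_\ell}E_\ell^I(A_{\bullet\beta'})$. By Lemma~\ref{lemdigraphutile}, the $B_\ell$-paths generated by $f_\beta$ and by $f_{\beta'}$ both enter or both leave $v_\ell$, so they must coincide on an initial subpath at $v_\ell$ inside $B_\ell$; geometrically, $B_\ell$ lies on a common substem that both fundamental circuits share on their way to the basic cycle. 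The set $\Omega$ of such bonsais is therefore exactly the set of bonsais lying on the maximal common stem of the two fundamental circuits, i.e.~the portion shared before either $f_\beta$ or $f_{\beta'}$ branches off towards its own stem(s). Because the two fundamental circuits eventually diverge at some bonsai $E_{\ell_0}$ (they cannot coincide everywhere, since $R_\beta\neq R_{\beta'}$ implies they meet the basic cycle differently), $T_{G(A)}(\Omega)$ is a single chain from $E_{\ell_0}$ upward. On this common substem, every edge is singly covered by at least one of $f_\beta$ or $f_{\beta'}$, yielding a $\beta$-path or a $\beta'$-path depending on which nonbasic edge's fundamental circuit is ``doubled'' there. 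The subtlety, and the only real work, is that one has to verify this classification case-by-case using the descriptions in Figure~\ref{fig:Apositive} to rule out configurations where the common-stem portion contains a fork point of one of $f_\beta,f_{\beta'}$ that is still compatible with $\sim_{E_\ell}$; the equivalence relation $\sim_{E_\ell}$ precisely excludes such fork points, which is what makes the argument close.
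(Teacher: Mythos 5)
Your treatment of $\Pi_1$ and $\Pi_2$ follows the paper's route (Proposition \ref{propBlsubstems} together with the catalogue of fundamental circuits and Figure \ref{fig:Apositive}), and the preliminary check that $T_{G(A)}$ is a spanning forest of $D$, via Proposition \ref{lemdigraphstem}, is fine. The gap is in $\Pi_3$, where you explicitly defer ``the only real work'' to an unspecified case-by-case verification: that deferred work is precisely the content of the paper's proof, and your surrounding picture does not survive it. The paper organizes $\Pi_3$ around whether the pair $\{f_\beta,f_{\beta'}\}$ is \emph{singular}, i.e.\ whether the stems of $f_\beta$ and $f_{\beta'}$ attach to the basic cycle at the same two nodes. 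In the non-singular case one may assume $R_{\beta'}\neq R^*$, all common bonsais hang at the one shared attachment node, Corollary \ref{cordigraphDbeta2} rules out $g_{\beta'}(E_\ell)=2$, and Lemma \ref{lemdigraphsim} shows the $\sim$-restriction changes nothing; the common set is a $\beta'$-path. That is essentially your ``common substem'' picture.

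In the singular case, however, $R^*=R_\beta\uplus R_{\beta'}$ and the intervals $p_\beta,p_{\beta'}$ partition the basic cycle; the set $\{E_\ell:\beta,\beta'\in f^*(E_\ell)\}$ then splits into bonsais attached at the non-central junction $w_k$ and bonsais attached at the central node $w_\rho$, so it is \emph{not} ``a single chain from $E_{\ell_0}$ upward'' and there is no single maximal common stem. One must show, via Lemma \ref{lemdigraphutile}, that at $w_k$ the $B_\ell$-paths generated by $f_\beta$ and $f_{\beta'}$ have opposite orientations at $v_\ell$ (one leaves, one enters, since $e_k$ enters $w_k$ while $]w_k,w_{k+1}]$ leaves it), hence $E_\ell^I(A_{\bullet\beta})\nsim_{E_\ell}E_\ell^I(A_{\bullet\beta'})$, and then propagate this exclusion to ancestors using Lemma \ref{lemdigraphsim}; whereas at $w_\rho$ both paths leave $v_\ell$, the bonsais survive the restriction, and $g_\beta=g_{\beta'}=1$ there, so the restricted subgraph is simultaneously a $\beta$-path and a $\beta'$-path. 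None of this appears in your write-up. A smaller but real flaw: from ``both enter or both leave $v_\ell$'' you infer that the two $B_\ell$-paths ``coincide on an initial subpath'' inside $B_\ell$; two directed paths leaving the same node need not share an edge, so this inference, and hence the identification of your set $\Omega$ with ``the bonsais on the maximal common stem'', is not justified as stated.
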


\begin{proof}
The properties $\Pi_1$ and $\Pi_2$ follow from Proposition \ref{propBlsubstems} and
the description of the different types of fundamental circuits
described at page \pageref{mycounter2}
(see Figures \ref{fig:Apositive} and \ref{figcyclicfund}). 

Let us show the property $\Pi_3$. Let $\beta,\beta'\in S^*$ such that $R_\beta \neq R_{\beta'}$ and $\{E_\ell \, : \, \beta, \beta' \in f^*(E_\ell) \}\neq \emptyset$. We may assume that $R_{\beta'}\neq R^*$, so $R_{\beta'}$ is an interval.

Suppose first that the pair $\{f_\beta, f_{\beta'}\}$ is not singular. W.l.o.g, we have $v_{\beta_1'}^*=v_{\beta_1}^*$, $v_{\beta_2'}^*\neq v_{\beta_1}^*$ and $v_{\beta_2'}^*\neq v_{\beta_2}^*$. Therefore, for any $E_\ell$ such that $\beta,\beta' \in f^*(E_\ell)$, we have $v_\ell*=v_{\beta_1'}^*$. So
the graph $T_{G(A)}(\{E_\ell\,:\, \beta,\beta'\in f^*(E_\ell)\})$ is a $\beta'$-path (it can not be a $\beta'$-fork, otherwise $R_{\beta'}=R^*$). Hence, by Lemma \ref{lemdigraphsim} (part 2), if the subgraph $T_{G(A)}(\{E_\ell\,:\, \beta,\beta'\in f^*(E_\ell)\})$ 
has at least two vertices, then
$T_{G(A)}(\{E_\ell\,:\, \beta,\beta'\in f^*(E_\ell)\})=T_{G(A)}(\{E_\ell\,:\, \beta,\beta'\in f^*(E_\ell)\,,\,E_\ell^I(A_{\bullet \beta}) \sim_{E_\ell} E_\ell^I(A_{\bullet \beta'}) \})$. Thus
$T_{G(A)}(\{E_\ell\,:\, \beta,\beta'\in f^*(E_\ell)\,,\,E_\ell^I(A_{\bullet \beta}) \sim_{E_\ell} E_\ell^I(A_{\bullet \beta'}) \})$ is a $\beta'$-path.

Now suppose that the pair $\{f_\beta, f_{\beta'}\}$ is singular. 
It follows that $R^*=R_\beta \uplus R_{\beta'}$. Recall that
$w_1,\ldots,w_\rho$ are the vertices of the basic cycle, $e_1=[w_1,w_\rho]$ and $e_i=]w_{i-1},w_i]$ for $i=2,\ldots, \rho$.
By Lemma \ref{lemdigraph12}, we may assume that $1\in R_\beta \verb"\" R_{\beta'}$ and $\rho \in R_{\beta'} \verb"\" R_\beta$. Let $k=\max\{ i\,:\, e_i \in p_\beta\}$. Clearly, $k< \rho$,  $e_k$ ($\in p_\beta$) enters $w_k$
and $]w_k,w_{k+1}] \in p_{\beta'}$. If there exists a bonsai $B_\ell$ such that $v_\ell=w_k$ and $\beta,\beta' \in f^*(E_\ell)$, then the $B_\ell$-paths generated by $f_\beta$ and $f_{\beta'}$  leave and enter $v_\ell$, respectively. Thus, by Lemma \ref{lemdigraphutile}, we have that $E_\ell^I(A_{\bullet \beta}) \nsim_{E_\ell} E_\ell^I(A_{\bullet \beta'})$. Hence, by Lemma \ref{lemdigraphsim}, it does not exist an edge $(E_{\ell'},E_\ell)\in D$ such that $\beta,\beta'\in f^*(E_{\ell'})$.\\
On the other hand, if there is a bonsai $B_\ell$ such that $v_\ell^*=w_\rho$ and $\beta,\beta' \in f^*(E_\ell)$, then the $B_\ell$-paths generated by $f_\beta$ and $f_{\beta'}$ are both leaving $v_\ell$. Then by Lemma \ref{lemdigraphutile}
$E_\ell^I(A_{\bullet \beta}) \sim_{E_\ell} E_\ell^I(A_{\bullet \beta'})$. Therefore $T_{G(A)}(\{E_\ell\,:\, \beta,\beta'\in f^*(E_\ell)\,,\,E_\ell^I(A_{\bullet \beta}) \sim_{E_\ell} E_\ell^I(A_{\bullet \beta'}) \})$ is a $\beta$-path and a $\beta'$-path.
\end{proof}\\

Eventually, we can provide a lemma analoguous to Lemma \ref{lemdigraphsim} in the case where $A$ has a $\{1,\rho\}$-central representation $G(A)$, by restricting ourselves to the set of bonsais on the right of $\{e_1,e_\rho\}$. Then, we can prove the second main theorem.

\begin{thm}\label{thmdigraphrightfeasible}
Suppose that $A$ has a $\{1,\rho\}$-central representation $G(A)$. Then $T_{G_1(A)}$ is a right-feasible forest in $D$.
\end{thm}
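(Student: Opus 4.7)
The plan is to mirror the proof of Theorem \ref{thmdigraphfeasible}, specializing to right-side bonsais $B_\ell \subseteq G_1(A)$ and omitting the $\Pi_3$ verification, which does not appear in the definition of right-feasibility. First I would verify $\Pi_1$. Fix $\beta \in S^*$. By Proposition \ref{propBlsubstems} together with Corollary \ref{corBidirectedCircuit} and Lemma \ref{lemdefiWeight2}, the fundamental circuit of $f_\beta$ in $G(A)$ comprises $f_\beta$, up to two stems, and a consistently oriented subpath $p_\beta$ of the basic cycle (see Figures \ref{fig:Apositive} and \ref{figcyclicfund}). Intersecting this circuit with $G_1(A)$ amounts to discarding any stem lying entirely in $G_0(A)$ together with its cycle-attachment, and truncating $p_\beta$ at the central node $w_\rho$ whenever $p_\beta$ crosses $\{e_1, e_\rho\}$. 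Applying Proposition \ref{lemdigraphstem} iteratively along each surviving stem (descending from its free endpoint down to $w_\rho$), and running the same case analysis as in Figure \ref{figcyclicfund}, shows that $T_{G_1(A)}(\{E_\ell \in V(T_{G_1(A)}) : \beta \in f^*(E_\ell)\})$ is a $\beta$-fork, a $\beta$-path, or a disjoint union of two $\beta$-paths. This is exactly $\Pi_1$.

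Next I would verify $\Pi_2'$. Take $\beta \in S_1 \cup S_2$; by symmetry assume $\beta \in S_1$, so that $1 \in s(A_{\bullet \beta})$ while $\rho \notin s(A_{\bullet \beta})$. Under the standard labelling $e_1 = [w_1, w_\rho]$ and $e_\rho = \,]w_{\rho-1}, w_\rho]$ of the basic cycle, with $w_\rho$ the central node, $p_\beta$ must contain $e_1$ but not $e_\rho$. Since $p_\beta$ is a consistently oriented subpath of the basic cycle by Lemma \ref{lemdefiWeight2}, one endpoint of $p_\beta$ has to be $w_\rho$ itself and the other has to be some $w_k$ with $1 \leq k \leq \rho-2$, that is, a node of $G_0(A)$. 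Consequently exactly one of the stems of $f_\beta$ attaches to the basic cycle on the right of $\{e_1, e_\rho\}$ (namely at $w_\rho$), while the other stem sits entirely in $G_0(A)$. The right-side bonsais met by the fundamental circuit of $f_\beta$ therefore lie on a single stem and form a chain running from the endnode of $f_\beta$ down to $w_\rho$. By iterated application of Proposition \ref{lemdigraphstem} this chain yields a single $\beta$-path in $T_{G_1(A)}$, establishing $\Pi_2'$.

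The hard part will be the careful case analysis when $f_\beta$ is a handcuff (types (iii)--(iv) of Corollary \ref{corBidirectedCircuit}), because then $f_\beta$ has two disjoint stem-plus-cycle substructures and one must verify systematically which of these substructures land in $G_1(A)$ versus $G_0(A)$: subcases in which both substructures remain on the right contribute to the disjoint-$\beta$-paths case of $\Pi_1$, subcases in which exactly one remains on the right contribute a $\beta$-path, and for $\beta \in S_1 \cup S_2$ the argument above forces the latter subcase. Once the handcuff bookkeeping is handled exactly as in the proof of Theorem \ref{thmdigraphfeasible}, both $\Pi_1$ and $\Pi_2'$ fall out, so $T_{G_1(A)}$ is a right-feasible forest of $D$.
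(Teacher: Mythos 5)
Your proposal is correct and follows essentially the same route as the paper, which disposes of $\Pi_2'$ as "simple to show" via exactly the observation you make (for $\beta\in S_1\cup S_2$ the interval terminates at the central node $w_\rho$, so only one stem survives in $G_1(A)$) and handles $\Pi_1$ by the same appeal to Proposition \ref{propBlsubstems} and the fundamental-circuit case analysis used for Theorem \ref{thmdigraphfeasible}, with $\Pi_3$ indeed not required. The only cosmetic point is that the arc-creating statement you want on the right of $\{e_1,e_\rho\}$ is Proposition \ref{lemdigraphstem2} rather than Proposition \ref{lemdigraphstem}, which is the variant already used to define $T_{G_1(A)}$.
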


\begin{proof}
The property $\Pi_2'$ is simple to show and the remaining part of the proof is similar to the proof of Theorem \ref{thmdigraphfeasible}.
\end{proof}\\

\section{The procedure Forest}\label{sec:For}

Let $A$ be a matrix with entries $0$, $1$, $2$ or $\frac{1}{2}$ and $R^*$ a row index subset of $A$. Let $D=(V,\Upsilon)$ be a digraph as constructed in Section \ref{sec:DefDigraphD} and $D'=(V',\Upsilon')\subseteq D$ an induced subgraph without any directed cycle.
In this section,  
we describe a procedure called Forest taking $A$, $D'$ as well as a subset $V_0$ of $V'$ as input, and computing a spanning forest of $D'$, when it does not stop. 

Let $V_0'$ denote the set of $E_\ell \in V'$ such that $E_\ell$ is a sink vertex of $D'$, or there exists $\beta \in f^*(E_\ell)$ such that $s_{\frac{1}{2}}(A_{\bullet \beta})\neq \emptyset$, or there exist $\beta,\beta' \in f^*(E_\ell)$ such that $R_\beta$ and $R_{\beta'}$ are not equal ($R_\beta\neq R_{\beta'}$) and $E_\ell^I(A_{\bullet \beta}) \sim_{E_\ell} E_\ell^I(A_{\bullet \beta'})$.

On the other hand, given a row index $\rho\neq 1$ such that $R^*=\{1,\rho\}$, we define $S_1=\{j\in S^* \, :\, 1 \in s(A_{\bullet j}),\, \rho \notin s(A_{\bullet j})\}$, $S_2=\{j\in S^* \, :\, 1 \notin s(A_{\bullet j}), \,\rho \in s(A_{\bullet j})\}$ and let
$V_0''$ denote the set of $E_\ell \in V'$ such that $E_\ell$ is a sink vertex of $D'$ or $f^*(E_\ell)\cap (S_1 \cup S_2)\neq \emptyset$. We will prove the following.

\begin{thm}\label{thmForest1}
The procedure Forest with input $A$, $D'$ and $V_0=V_0'$ 
outputs a feasible spanning forest of $D'$ if and only if one exists.
\end{thm}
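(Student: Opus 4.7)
My proof proposal for Theorem \ref{thmForest1}:

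The plan is to establish both directions by designing an invariant that is maintained throughout the procedure Forest, so that upon successful termination the constructed forest $T_\Theta$ satisfies $\Pi_1$, $\Pi_2$, $\Pi_3$, and upon failure no feasible spanning forest can exist. Since $D'$ is acyclic by hypothesis, vertices can be linearly ordered by the height function induced by the relation $\prec_D$, and I would have Forest process the vertices in reverse topological order starting from $V_0 = V_0'$. The natural invariant is that after processing the vertices of height at most $h$, the arcs selected so far form a forest satisfying the three feasibility properties when restricted to the processed subset, and moreover the partial assignment is "extendable" in the sense that no future choice has already been ruled out by a violated necessary condition.

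First I would prove the easier direction: if Forest does not stop, then the output is feasible. Since $V_0'$ contains every sink of $D'$, the boundary case of $\Pi_1^*$ (the inequality $\sum_{E_\ell \in Sink(D')} g(E_\ell) \le \mathbf{2}_{1\times s}$) has to be verified at initialization, and the procedure must refuse to proceed otherwise. For each non-base vertex $E_\ell$, the outgoing arc $(E_\ell, E_{\ell'})_{E_{\ell'}^k}$ is chosen only when adding it does not violate $\Pi_1^*$ at $E_{\ell'}$ and does not create a $\beta$-branching incompatible with $\Pi_2$ or $\Pi_3$ at any $\beta \in f^*(E_\ell)$. Because each arc affects $\Pi_2$ and $\Pi_3$ only through the indices $\beta$ with $\beta \in f^*(E_\ell) \cap f^*(E_{\ell'})$, Lemma \ref{lembonsaisim}(2) guarantees that the new arc propagates the equivalence classes $\sim_{E_\ell}$ coherently, so the invariant survives. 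The role of $V_0'$ is precisely to absorb the vertices on which $\Pi_2$ or the "singular pair" clause of $\Pi_3$ would force the vertex to be a $\beta$-free root: vertices with a $\frac{1}{2}$-support in $f^*(E_\ell)$ must be $\beta$-paths' sources by $\Pi_2$, and vertices where two distinct intervals $R_\beta \neq R_{\beta'}$ fall in the same $\sim_{E_\ell}$-class must be the source of a $\beta$- and $\beta'$-path by $\Pi_3$; including them in $V_0$ encodes exactly this constraint.

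The harder direction is to show that if some feasible spanning forest $T_{\Theta^*}$ of $D'$ exists, then the procedure successfully returns one. Here I would use an exchange argument: by induction on the number of vertices processed, if $T_{\Theta^*}$ agrees with the procedure up to stage $h$ but differs at the next vertex $E_\ell$, I build a modified feasible forest $T_{\Theta'}^*$ that agrees with the procedure also at $E_\ell$. The modification replaces the arc chosen by $T_{\Theta^*}$ at $E_\ell$ by the arc chosen by the procedure, possibly rerouting some of the descendants of $E_\ell$ in $T_{\Theta^*}$ so that all constraints on $S^*$-indices remain satisfied. The key structural observation making this rerouting possible is that an arc $(E_\ell, E_{\ell'})_{E_{\ell'}^k}$ exists in $D$ only when the inequality $g(E_\ell) \le g(E_{\ell'}^k)$ holds componentwise and $J_\ell^2 = \emptyset$, so any two candidate arcs from $E_\ell$ produce the same componentwise "flow" of indices into the parent; combining this with Lemma \ref{lembonsaisim} and the two definitions of $\beta$-fork and $\beta$-path, one verifies that the exchange keeps $\Pi_1^*$, $\Pi_2$, and $\Pi_3$ intact.

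The main obstacle will be the exchange step, because $\Pi_3$ is a delicate global property: a single arc can participate in the $\beta$-path of many column indices $\beta \in S^*$ simultaneously, and swapping one arc might appear to break $\Pi_3$ for a pair $(\beta, \beta')$ of singular nonbasic edges. To handle this I expect to need a separate lemma saying that whenever the procedure legally adds an arc at $E_\ell$, the set of indices $\beta$ for which the arc is used in the $\beta$-fork structure is exactly the set $f^*(E_\ell)$ compatible with the current $\sim_{E_\ell}$-class at $E_\ell$; under this lemma the exchange argument becomes essentially local and the induction closes. Finally, the contrapositive of the "if" direction gives the other half of the equivalence: if the procedure stops, then at the stopping vertex $E_\ell$ no arc is available, so in any candidate feasible spanning forest some property is unavoidably violated, contradicting feasibility.
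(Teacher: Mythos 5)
Your overall strategy --- process $D'$ in layers starting from $V_0=V_0'$, maintain the invariant that the partial forest is the restriction of some feasible spanning forest, and close the induction with an exchange argument --- is the same as the paper's. But two load-bearing steps are missing or misstated. First, the base case: the subroutine Spanning$V_0$ simply adds \emph{every} legal arc from each new layer of sinks into the already-built part, and nothing in your proposal shows that the resulting forest $T_0$ on $V_0'$ coincides with the restriction of an \emph{arbitrary} feasible spanning forest. The paper proves this as a separate claim: for each $E_\ell\in V_0'$, either $E_\ell$ is a sink of $D'$, or the defining condition of $V_0'$ (some $\beta\in f^*(E_\ell)$ with $s_{\frac{1}{2}}(A_{\bullet \beta})\neq\emptyset$, or $\beta,\beta'$ with $R_\beta\neq R_{\beta'}$ and $E_\ell^I(A_{\bullet\beta})\sim_{E_\ell}E_\ell^I(A_{\bullet\beta'})$) combined with $\Pi_2$ or $\Pi_3$ forces the relevant subgraph of any feasible forest to be a single $\beta$-path, and the layering of Spanning$V_0$ plus acyclicity of $D'$ then pins down each arc uniquely. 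Saying that $V_0'$ ``encodes exactly this constraint'' asserts the conclusion rather than proving it. Relatedly, your argument for feasibility of the output relies on checks the procedure does not perform (e.g.\ testing $\sum_{E_\ell\in Sink(D')}g(E_\ell)\le \mathbf{2}_{1\times s}$ at initialization); the paper gets feasibility for free, since at the final stage the constructed forest spans $V'$ and agrees with the restriction of a feasible forest, hence equals it.

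Second, the exchange step --- which you correctly identify as the crux --- is deferred to an unproven auxiliary lemma whose statement is too vague to verify, and your ``key structural observation'' (that any two candidate arcs from $E_\ell$ produce the same flow into the parent) does not by itself resolve the conflict at the parent's capacity $g(E_u^h)$. The paper's resolution is concrete: if the procedure routes $E_\ell\in V_k$ via $(E_\ell,E_u)_{E_u^h}$ but the reference feasible forest $T_f$ routes an unprocessed vertex $E_{\ell'}$ to $E_u$ with $g(E_\ell)+g(E_{\ell'})\nleqslant g(E_u^h)$, then because $T_f$ satisfies $\Pi_1$, the vertex $E_\ell$ is either isolated in $T_f$ or attached by an arc $(E_\ell,E_{u'})_{E_{u'}^{h'}}$ with $E_{u'}^{h'}\neq E_u^h$; transitivity of $\prec_D$ then gives $(E_{\ell'},E_{u'})_{E_{u'}^{h'}}\in D$, and rerouting $E_{\ell'}$ there preserves $\Pi_1$ because the corresponding $\beta$-fork in $T_f$ is non-simple, so $E_{u'}$ has no other predecessor containing $\beta$. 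Without this (or an equivalent) explicit rerouting argument, the induction does not close.
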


\begin{thm}\label{thmForest2}
Suppose $R^*=\{ 1,\rho\}$. The procedure Forest with input $A$, $D'$ and $V_0=V_0''$ 
outputs a right-feasible spanning forest of $D'$ if and only if one exists.
\end{thm}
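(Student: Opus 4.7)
The plan is to adapt the proof of Theorem \ref{thmForest1} to the right-feasible setting, replacing the pair of properties $(\Pi_2, \Pi_3)$ by $\Pi_2'$. I expect the procedure Forest to process vertices of $D'$ in reverse topological order: vertices of $V_0$ are treated as roots of the forest being built, and each remaining vertex $E_\ell \in V' \setminus V_0$ is attached to some predecessor $E_{\ell'}$ via an arc $(E_\ell, E_{\ell'})_{E_{\ell'}^k}$, the candidate arcs being pruned so as to preserve the constraints at each step. First I would verify that the choice of $V_0''$ captures exactly the set of vertices whose attachment is forced by $\Pi_2'$ together with sink vertices: indeed, whenever $f^*(E_\ell) \cap (S_1 \cup S_2) \neq \emptyset$, the component of the forest containing $E_\ell$ and indexed by such a $\beta$ is required to be a $\beta$-path, so $E_\ell$ cannot itself serve as a branching vertex for that $\beta$, and it is natural to treat it as a root.

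Next, I would establish soundness. Assume the procedure terminates successfully and outputs a spanning forest $T_\Theta$. Property $\Pi_1$ is equivalent to $\Pi_1^*$, a local condition on each label $E_\ell^h$ and on sinks, so it is preserved provided Forest checks, at the moment it inserts arc $(E_\ell, E_{\ell'})_{E_{\ell'}^k}$, that $g(E_\ell) + \sum g(E_{\ell''}) \le g(E_{\ell'}^k)$ over vertices already attached to $E_{\ell'}$ via the same label. Property $\Pi_2'$ is maintained by requiring, for every $\beta \in f^*(E_\ell) \cap (S_k)$ with $k \in \{1,2\}$, that the chosen arc does not introduce a $\beta$-fork; this is a local check along the single path in $T_\Theta$ from $E_\ell$ toward its $\beta$-free ancestor, similar to the check carried out in Theorem \ref{thmForest1} for $\Pi_2$ and $\Pi_3$.

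For completeness, I would argue by (strong) induction on the height in $D'$ that, if a right-feasible spanning forest $T^*$ exists, then after processing every vertex up to height $h$ the current partial forest $T_\Theta$ can be extended by arcs of $T^*$ restricted to the remaining vertices. The induction step uses an exchange lemma: if at some step Forest selects an arc $(E_\ell, E_{\ell'})_{E_{\ell'}^k}$ different from the arc used by $T^*$, then swapping these two arcs within $T^*$ produces another spanning forest that is still right-feasible and agrees with $T_\Theta$ on the vertices processed so far. The swap is admissible because $D'$ is acyclic (so the swap introduces no cycle) and because the feasibility properties $\Pi_1$, $\Pi_2'$ depend only on the local $\beta$-fork/$\beta$-path structure, which is preserved by exchanging two arcs that are interchangeable at $E_\ell$'s processing step.

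The hard part will be the exchange argument inside the completeness proof. The delicate point is that swapping arcs can have nonlocal consequences on property $\Pi_2'$: moving $E_\ell$ from one parent to another may change the component of the forest indexed by some $\beta \in f^*(E_\ell) \cap (S_1 \cup S_2)$, and one must ensure that this component remains a $\beta$-path rather than becoming a $\beta$-fork or a disconnected pair of $\beta$-paths. Handling this will rely on the characterization of arcs $(E_\ell, E_{\ell'})_{E_{\ell'}^k}$ in $D$ via the inequality $g(E_\ell) \le g(E_{\ell'}^k)$ and on Lemma \ref{lemcycle} (to exclude pathological equivalences under $\sim_s$), together with the fact that the vertices of $V_0''$ carrying the $S_1 \cup S_2$ columns are already fixed as roots, which limits how these columns can propagate through the forest.
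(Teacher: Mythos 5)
Your proposal is correct and takes essentially the same route as the paper, whose own proof of Theorem \ref{thmForest2} consists of a single sentence deferring to the argument for Theorem \ref{thmForest1}. Your adaptation --- showing the subforest on $V_0''$ is forced by $\Pi_2'$ (the analogue of Claim 2), preserving $\Pi_1$ via the legality inequality at each layer, and closing completeness with the arc-exchange argument --- is exactly the intended transcription, with the only cosmetic caveat that the vertices of $V_0''$ are not literally roots but rather carry a uniquely determined subforest.
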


The construction of a spanning forest of $D'$ can be reduced to a sequence of 2-SAT problems.
Assume that vertex sets $V_0,\ldots, V_{k-1}$ in  $V'$ and a forest $T_{\Theta}=(U=\cup _{j=0}^{k-1}V_j,\Theta)$ have been 
constructed, and $V_k\subseteq V' \verb"\" U$ for some $k\geq 1$.
We construct an instance $I_k$ of 2-SAT.

For any $E_{\ell}\in V_k$, $E_u\in U$ and $1\le h\le m(u)$,
an arc $(E_{\ell},E_{u})_{E^h_u}\in \Upsilon'$  is said to be \emph{legal}\index{legal arc}
if and only if we have the inequality
$$ g(E_{\ell}) +
\begin{displaystyle}
\sum_{\substack{E_{u'}\,:\, (E_{u'},E_{u})_{E^h_u}\in 
\Theta }}  
 g(E_{u'})\le g(E_u^h)
\end{displaystyle}.$$
For all $E_{\ell}\in V_k$, if there exists a legal arc $(E_{\ell},
E_{u})\in \Upsilon'$
labeled $E^h_u$, we define the variable 
$X^l_{E_u^h}$. The variable $X^l_{E_u^h}$ gets a true value if and only 
if we add the arc $(E_{\ell},E_{u})_{E_u^h}$ in $\Theta$. 
Now let us see the clauses. If there
exist  two legal arcs  $(E_{\ell},E_{u})_{E_u^h},
(E_{\ell'},E_{u})_{E_u^h} \in \Upsilon'$ such that  
$$ g_\beta (E_{\ell})+ g_\beta (E_{\ell'})+
\begin{displaystyle}
\sum_{\substack{E_{u'}\,:\, (E_{u'},E_{u})_{E_u^h}
\in \Theta }}  
 g_\beta (E_{u'})
\end{displaystyle} > g_\beta (E_{u}^h)$$ for some
$E_{\ell},E_{\ell'}\in V_k$ ($\ell\neq \ell'$), $E_{u}\in U$ and $\beta\in S^*$, we 
add in $I_k$ the clauses 
$X^l_{E_u^h}\vee X^{\ell'}_{E_u^h}$ and $\bar X^l_{E_u^h}\vee \bar X ^{\ell'}_{E_u^h}$. 
Thus if these clauses are true, exactly one
of the arcs $(E_{\ell},E_{u})_{E_u^h}$ and
$(E_{\ell'},E_{u})_{E^h_u} $ will be in $T_\Theta$.
The variable $X^l_{E_u^h}$ is said to be \emph{associated to the vertex $E_\ell\in V$ }\index{variable associated to a vertex}.

Let us assume that there are at
most two variables associated to each vertex in $V_k$.
If exactly one variable, say $X_{E_u^h}^l$, is associated to a vertex $E_\ell$
and there exists some $\beta \in f^*(E_\ell)$ such that 
$\displaystyle{\sum_{E_u \in Sink(T_{\Theta})}} g_\beta (E_u)=2$, 
we set $X_{E_u^h}^l=1$. If there are exactly two variables, say $X_{E_u^h}^l$ and $X_{E_{u'}^{h'}}^l$, associated to $E_\ell$, then we include in $I_k$ the clauses $X_{E_u^h}^l\vee X_{E_{u'}^{h'}}^l$ and  $\bar X_{E_u^h}^l\vee\bar  X_{E_{u'}^{h'}}^l$. 
Note as before that if these clauses are true, exactly one of both legal arcs leaving $E_\ell$ will be in $T_\Theta$.

In the procedure Forest, we use a subroutine called Spanning$V_0$ which outputs a spanning forest of $D'(V_0)$.

\begin{tabbing}
\textbf{Procedure\,\,Spanning$V_0$($D'(V_0)$)}\\

\textbf{Input:} An induced subgraph $D'(V_0) \subseteq D'$  where $V_0\subseteq V'$.\\
\textbf{Output:} A spanning forest $T_\Theta=(V_0,\Theta)$ of $D'(V_0)$.\\ 
1)\verb"  "\= let $\Theta=\emptyset$; $U=\emptyset$; $\tilde D=D'(V_0)$, $T_\Theta=(U,\Theta)$;\\
2) \> {\bf while }\= $\tilde D \neq \emptyset$ {\bf do}\\
3)  \>            \> compute $W$ the set of sink vertices of $\tilde D$;\\ 
  \>            \> add each legal arc from a node of $W$ to a node of $U$ in $\Theta$; \\
  \>            \> $\tilde D=\tilde D \verb"\" W$; $U=U \cup W$;\\
\> {\bf endwhile }\\
\> output $T_\Theta=(V_0,\Theta)$;
\end{tabbing}

\begin{tabbing}
\textbf{Procedure\,\,Forest($D'$,$V_0$)}\\

\textbf{Input: }\= An induced subgraph $D'\subseteq D$ without any directed\\
 \> cycle and a subset $V_0\subseteq V'$.\\
\textbf{Output: } either a spanning forest $T_\Theta=(V',\Theta)$ of $D'$, or stops.\\ 

1)\verb"  "\=  call {\tt Spanning$V_0$}($D'(V_0)$);\\
2)\> let $k=1$; $U=V_0$; $\tilde D=D'\verb"\" V_0$; $T_\Theta=(U,\Theta)$;\\
3) \> {\bf while }\= $\tilde D\neq \emptyset$ {\bf do}\\
4) \>            \> compute $V_k$ the set of sink vertices of $\tilde D$;\\ 
5)    \>		 \>  if some vertex of $V_k$ has more than two associated                                variables: STOP;\\
6) \>            \> compute a truth assignment of $I_k$, if one exists, otherwise STOP; \\
\>           \>  for each true variable,  
                    add the corresponding arc in $\Theta$;\\    
 \>            \> $\tilde D=\tilde D\verb"\"V_k$; $U=U \cup V_k$ and 
                    $k=k+1$;\\
\> {\bf endwhile }\\
\> output $T_\Theta$;
\end{tabbing}

\noindent
{\bf Proof of Theorem \ref{thmForest1}.}
Suppose that $V_0=V_0'$ and $D'$ has a feasible spanning forest.
Let  $n$ be the number of passages in loop 3. 
For all $0\le k\le n$, let $\Theta_k:=\Theta$ as in the procedure Forest after $k$ passages in loop 3 and 
$T_k=( \cup _{j=0}^{k}V_j,\Theta_k)$, where $T_0=(V_0,\Theta_0)$ denotes the forest output by Spanning$V_0$.
Let us prove inductively the following.

\begin{tabbing}
{\bf statement:} \= (i) \,  \= For all $0\le k \le n$, there exists a feasible spanning forest  $T_f$ in $D'$ \\
 \> \> such that $T_f(\cup_{j=0}^k V_j)=T_k$.\\
\> (ii)  \> $T_n$ spans $V'$.
\end{tabbing}

Let $r$ be the number of passages through loop 2 in the procedure SpanningV0.
For $k=0$, statement (i) follows from Claim 2 below. For $j=1,\ldots,r$ denote by $U_j$ the set $U$ after $j$ passages through loop 2. For proving Claim 2, we use the following.\\

\noindent
{\bf Claim 1.} \quad
For all $2\le j \le r$ and any vertex $E_\ell\in U_j$,
there exists $E_{\ell'}\in U_{j-1}$ such that $(E_\ell,E_{\ell'})\in \Upsilon'$. Moreover, 
for each arc $(E_\ell,E_{\ell'})\in \Upsilon'$ , if  $E_\ell\in
U_j$ and $E_{\ell'}\in U_{j'}$  with $j,j'\geq 1$, 
then  $j>j'$. \\

\noindent
{\bf Proof of Claim 1.} \quad This is due to step 3 in the procedure SpanningV0.
{\hfill$\BBox{\rule{.3mm}{3mm}}$} \\

\noindent
{\bf Claim 2.} \quad For any feasible spanning forest $T_f$ of $D'$, $T_f(V_0)=T_0$.\\

\noindent
{\bf Proof of Claim 2.} \quad
Let $E_\ell\in V_0=V_0'$ and $T_f$ be a feasible spanning forest of $D'$. If $E_\ell$ is a sink vertex of $D'$, then clearly $E_\ell$ is a sink vertex of $T_f$ and $T_0$. 
 
Suppose now there exist $\beta,\beta' \in f^*(E_\ell)$ 
such that $R_\beta\neq R_{\beta'}$ and $E_\ell^I(A_{\bullet \beta}) \sim_{E_\ell}  E_\ell^I(A_{\bullet \beta'})$. By definition of $V_0'$, the graph $T_f(\{ E_{\ell'}\in V_0\,:\, \beta,\beta' \in f^*(E_{\ell'})
\,,\, E_{\ell'}^I(A_{\bullet \beta}) \sim_{E_{\ell'}}   
E_{\ell'}^I(A_{\bullet \beta'}) \})$ is a $\beta$-path. By Claim 1 
and since $D'$ has no directed cycle, it follows  that $T_f(\{ E_{\ell'}\in V_0\,:\, \beta,\beta' \in f^*(E_{\ell'})
\,,\, E_{\ell'}^I(A_{\bullet \beta}) \sim_{E_{\ell'}}   
E_{\ell'}^I(A_{\bullet \beta'})
\})= 
T_0(\{ E_{\ell'}\in V_0\,:\, \beta,\beta' \in f^*(E_{\ell'}) \,,\, E_{\ell'}^I(A_{\bullet \beta}) \sim_{E_{\ell'}}   
E_{\ell'}^I(A_{\bullet \beta'})\})$.
Thus, for any $E_{\ell'}\in V_0$ $1\le k \le m(\ell')$, we have $(E_\ell,E_{\ell'})_{E_{\ell'}^k} \in T_f \Leftrightarrow (E_\ell,E_{\ell'})_{E_{\ell'}^k}\in T_0$. We obtain the same conclusion if $s_{\frac{1}{2}}(A_{\bullet \beta})\neq \emptyset$ for some $\beta\in f^*(E_\ell)$, because $T_f(\{ E_{\ell'}\in V_0\,:\, \beta \in f^*(E_{\ell'}) \})$ is a $\beta$-path.
This completes the proof.
{\hfill$\BBox{\rule{.3mm}{3mm}}$} \\

Let $k\geq 1$. Suppose that the procedure Forest has constructed a forest $T_{k-1}$, $V_k\neq \emptyset$, and there exists a feasible spanning forest $T_f$ of $D'$ such that $T_f(\cup_{j=0}^{k-1} V_j)=T_{k-1}$. Since $T_f$ satisfies the property $\Pi_1$ and $T_f( \cup_{j=0}^{k-1} V_j)= T_{k-1}$, there are at most two legal arcs leaving each vertex of $V_k$. So there are at most two variables associated to each vertex of 
$V_k$ and the procedure Forest does not stop at step 5.
Moreover, for any variable $X_{E_u^h}^l$ in $I_k$, by setting 
$X_{E_u^h}^l=1$ if and only if $(E_\ell,E_u)_{E_u^h}\in T_f$, one obtains a truth assignment of $I_k$. This proves that the procedure Forest computes a forest $T_k$.

Finally, let us prove that there exists a feasible spanning forest $T_f'$ of $D'$ such that $T_f'(\cup_{j=0}^k V_j) = T_k$. By previous arguments, this implies statement (ii). The analogue of Claim 1 is the following.\\

\begin{figure}[h!]
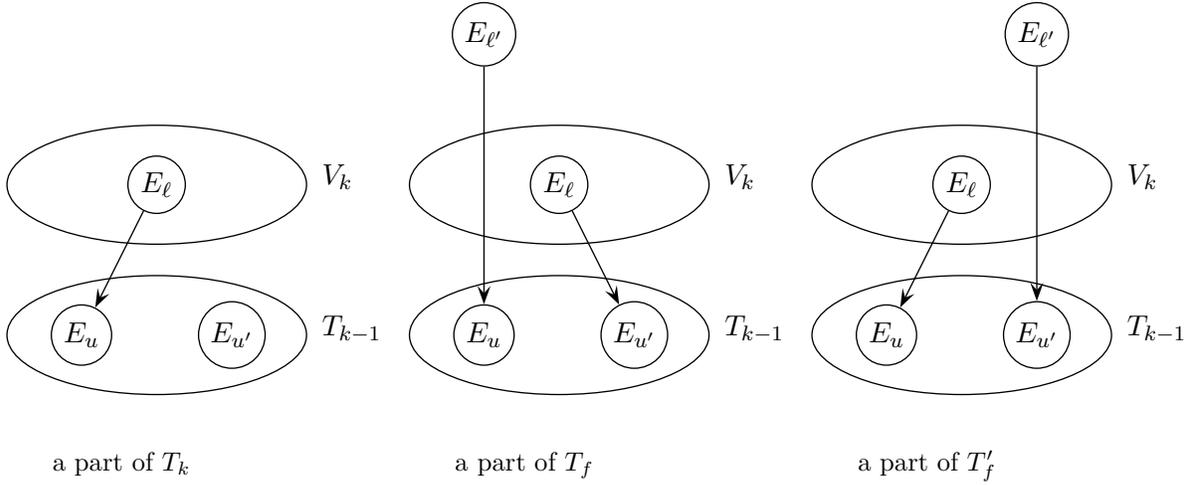

\vspace{.3cm}
$
\begin{array}{ccc}

\psset{xunit=1cm,yunit=1cm,linewidth=0.5pt,radius=0.3mm,arrowsize=5pt,
labelsep=1.5pt}

\pspicture(0,0)(5,5)

\cnodeput(2,3){1}{$E_{\ell}$}
\cnodeput(1,1){2}{$E_u$}
\cnodeput(3,1){3}{$E_{u'}$}

\psellipse(2,3)(2,0.8)
\psellipse(2,1)(2,0.8)

\ncline{->}{1}{2}


\put(4.2,3){\shortstack{$V_k$}}
\put(4.2,1){\shortstack{$T_{k-1}$}}

\put(.5,-0.8){\shortstack{\small{ a part of  $T_k$}}}
\endpspicture

&

\psset{xunit=1cm,yunit=1cm,linewidth=0.5pt,radius=0.3mm,arrowsize=5pt,
labelsep=1.5pt}

\pspicture(0,0)(5,5)

\cnodeput(1,5){0}{$E_{\ell'}$}
\cnodeput(2,3){1}{$E_{\ell}$}
\cnodeput(1,1){2}{$E_u$}
\cnodeput(3,1){3}{$E_{u'}$}

\psellipse(2,3)(2,0.8)
\psellipse(2,1)(2,0.8)

\ncline{->}{1}{3}
\ncline{->}{0}{2}


\put(4.2,3){\shortstack{$V_k$}}
\put(4.2,1){\shortstack{$T_{k-1}$}}

\put(.5,-0.8){\shortstack{\small{ a part of  $T_f$}}}
\endpspicture    &

\psset{xunit=1cm,yunit=1cm,linewidth=0.5pt,radius=0.3mm,arrowsize=5pt,
labelsep=1.5pt}

\pspicture(0,0)(4,5)

\cnodeput(3,5){0}{$E_{\ell'}$}
\cnodeput(2,3){1}{$E_{\ell}$}
\cnodeput(1,1){2}{$E_u$}
\cnodeput(3,1){3}{$E_{u'}$}

\psellipse(2,3)(2,0.8)
\psellipse(2,1)(2,0.8)

\ncline{->}{1}{2}
\ncline{->}{0}{3}


\put(4.2,3){\shortstack{$V_k$}}
\put(4.2,1){\shortstack{$T_{k-1}$}}

\put(.5,-0.8){\shortstack{\small{ a part of  $T_f'$}}}
\endpspicture   

\end{array}
$

\vspace{1cm} 
\caption{An illustration of the proof of Theorem \ref{thmForest1}, in case $(E_{\ell'},E_\ell)\in D'$, $(E_\ell,E_u)_{E_u^h}\in T_k$ and $g(E_\ell)+g(E_{\ell'}) \nleqslant g(E_u^h)$ for some $E_{\ell'} \in V'-\cup_{j=0}^k V_j$, $E_\ell \in V_k$ and $E_u \in \cup_{j=0}^{k-1} V_j$.} 
\label{fig:situationa2}
\end{figure}

\noindent
{\bf Claim 3.} \quad
For any vertex $E_\ell\in V_j$ $1\le j \le n$ and $0\le j' \le j$,
there exists $E_u\in V_{j'}$ such that $(E_\ell,E_u)\in \Upsilon'$. Moreover, 
for each arc $(E_\ell,E_u)\in \Upsilon'$ , if  $E_\ell\in
V_j$ and $E_u \in V_{j'}$  with $j,j'\geq 0$, 
then  $j>j'$. \\

\noindent
{\bf Proof of Claim 3.} \quad This is due to step 4 in the procedure Forest and the transitivity of the relation $\prec_D$.
{\hfill$\BBox{\rule{.3mm}{3mm}}$} \\

Now, let us construct a new forest $T_{f}'$ such that $T_f'(\cup_{j=0}^k V_j)=T_k$. Add any arc $(E_i,E_{i'})$ of $T_f$ such that $E_i,E_{i'} \in V' \verb"\" \cup_{j=0}^k V_j$ or $E_i,E_{i'} \in \cup_{j=0}^{k-1} V_j$
and any arc of $\Theta_k\verb"\" \Theta_{k-1}$ in $T_f'$. Then, for any arc $(E_{\ell'},E_u)_{E_u^h} \in T_f$ such that $
E_\ell \in V' \verb"\"\cup_{j=0}^k V_j$ and $E_u \in \cup_{j=0}^{k-1} V_j$, proceed as follows. By Claim 3, there exists $E_\ell \in V_k$ such that $(E_{\ell'},E_\ell)\in D'$. If $(E_\ell,E_u)_{E_u^h} \notin T_k$ or $g(E_\ell)+g(E_{\ell'}) \le g(E_u^h)$, then add $(E_{\ell'},E_u)_{E_u^h}$ in $T_f'$. Otherwise, since $T_f$ satisfies $\Pi_1$, $E_\ell$ is an isolated node in $T_f$ or $(E_\ell,E_{u'}^{h'}) \in T_f$ for some vertex $E_{u'} \in 
\cup_{j=0}^{k-1} V_j$ and $E_u^h \neq E_{u'}^{h'}$.
By transitivity of the relation $\prec_D$, it results that $(E_{\ell'},E_{u'})_{E_{u'}^{h'}}\in D$, and add $(E_{\ell'},E_{u'})_{E_{u'}^{h'}}$ in $T_f'$. Since $T_f$ satisfies $\Pi_1$, for any $\beta\in f^*(E_{\ell'})$,
we observe that $T_f(\{E_i\in V' \, : \, \beta \in f^*(E_i) \})$ is a non-simple $\beta$-fork and $\beta\in f^*(E_\ell)$, hence $E_{u'} $ has no predecessor $E_i$ in $T_f$, except $E_i=E_\ell$, such that $\beta \in f^*(E_i)$. 
Thus $T_f'$ satisfies $\Pi_1$. This completes the proof of Theorem \ref{thmForest1}.
{\hfill$\BBox{\rule{.3mm}{3mm}}$} \\

\noindent
{\bf Proof of Theorem \ref{thmForest2}.} The proof uses the same ideas as the proof of Theorem \ref{thmForest1}.
{\hfill$\BBox{\rule{.3mm}{3mm}}$} \\


\clearpage
\thispagestyle{empty}
\cleardoublepage
\verb"   "
\newpage

\chapter{Recognizing $R^*$-cyclic matrices}\label{ch:cyc}

Let $A$ be a connected matrix of size $n\times m$ with entries $0$, $1$, $2$, or $\frac{1}{2}$. Let $\alpha$  be the number of nonzero entries of $A$ and $R^*$ a row index subset of $A$ such that $R^*\subseteq s(A_{\bullet j})$ for at least one column index $j$. In this chapter, we describe a procedure called RCyclic and provide a proof of Theorems \ref{thmcyclicproCyc} and \ref{thmcyclic1} below. Before reading this chapter, the reader is referred to Chapter \ref{ch:multidiD}.

\begin{thm}\label{thmcyclicproCyc}
The matrix $A$ can be tested for having an $R^*$-cyclic representation by the procedure RCyclic.
The running time of this procedure is $O(n m \alpha)$.
\end{thm}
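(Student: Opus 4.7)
The plan is to design RCyclic as a four-phase procedure that leverages the machinery of Chapter \ref{ch:multidiD} and reduces the problem to standard network-matrix recognition plus a $2$-SAT layered search. First, partition $\overline{R^*}$ into bonsais $E_1,\ldots,E_b$, and for each bonsai invoke the subroutines E$\ell$path and TwoClasses of Section \ref{sec:bonsaimat} to produce the $E_\ell$-paths, the classes $J_\ell^1, J_\ell^2$, and the bonsai matrix $N_\ell$; then assemble the digraph $D$ of Section \ref{sec:DefDigraphD}. By Lemma \ref{lemdigraphDtimeD}, this preprocessing costs $O(nm\alpha)$. Second, for each $N_\ell$ run the network-recognition algorithm of Theorem \ref{thmSubclassNTutteCunNet}; by Lemma \ref{lembonsaicel}, if $A$ is $R^*$-cyclic every $N_\ell$ must be a network matrix, so failure here certifies non-$R^*$-cyclicity, and otherwise we extract a $v_\ell$-rooted basic representation $B_\ell$, for a total cost of $O(n\alpha)$. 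Third, after collapsing strongly connected components of $D$ (justified by Lemma \ref{lemcycle}) and checking that the result is acyclic, call the procedure Forest of Section \ref{sec:For} with $V_0=V_0'$; by Theorem \ref{thmForest1} this returns a feasible spanning forest $T$ iff one exists, and by Theorem \ref{thmdigraphfeasible} such a forest must exist whenever $A$ is $R^*$-cyclic.

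Fourth, assemble the representation. Form an auxiliary matrix $O(R^*)$ (containing the rows $A_{R^*\bullet}$) by contracting, for each sink bonsai of $T$, the subtree $B_\ell$ to a single node as dictated by the forest, and recognize $O(R^*)$ as a network matrix; its network representation gives the basic cycle after cutting along a chosen node and reclosing with the central bidirected edge. Attach each $B_\ell$ at the node $v_\ell$ prescribed by its parent edge in $T$, and perform the switchings at $v_\ell$ dictated by the equivalence classes $\sim_{E_\ell}$. By Lemmas \ref{lemdigraphutile}, \ref{lembonsainet2}, and \ref{lembonsaicontra}, this produces orientations under which, for each nonbasic edge $f_\beta$, the union of the selected $B_\ell$-paths together with the appropriate arc of the basic cycle forms a fundamental circuit of the type listed in Proposition \ref{propBlsubstems}; Lemma \ref{lemdefiWeight1} and the procedure WeightCircuit of Section \ref{sec:BinetDefi} then guarantee that the entries match $A_{\bullet \beta}$ exactly.

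The non-trivial part of the correctness argument is this final gluing step: one must show that a single global choice of orientations simultaneously realizes the correct fundamental circuit for \emph{every} column of $A$. This is where the properties $\Pi_1,\Pi_2,\Pi_3$ of a feasible forest become essential, since they force the subforest $T(\{E_\ell : \beta \in f^*(E_\ell)\})$ to be a $\beta$-path, a $\beta$-fork, or a union of two $\beta$-paths, matching the four circuit types of Corollary \ref{corBidirectedCircuit}; compatibility of the switchings across distinct $\beta$ is then ensured by Lemma \ref{lemdigraphsim} applied to every arc of $T$. For the running time, Phase~1 costs $O(nm\alpha)$, Phase~2 costs $O(n\alpha)$, Phase~3 is dominated by the total size of the layered $2$-SAT instances (each solvable in linear time) which is $O(nm\alpha)$, and Phase~4 is linear in the output size; altogether $O(nm\alpha)$, as claimed.
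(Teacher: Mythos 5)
Your Phases 1--3 follow the paper's route (bonsais, bonsai matrices, the digraph $D$, and the procedure Forest), and your complexity accounting is essentially the paper's. The genuine gap is in Phase 4, which is precisely where the paper has to work hardest. Knowing a $v_\ell$-rooted network representation of each $N_\ell$ tells you nothing about \emph{where on the basic cycle} the attachment node $v_\ell$ of a sink bonsai sits, and that location is what determines which edges of $R^*$ appear in which fundamental circuits. The paper resolves this with machinery you never invoke: the \emph{central} bonsais $V_c$ (which must attach at the central node $w_\rho$, by Lemma \ref{lemcyclic2}), the poset $\Lambda_c$ of intervals which must decompose as an $R^*$-double chain into a (unique, by Lemma \ref{lemcyclic5}) exclusive pair of chains $\Lambda_c^1,\Lambda_c^2$ telling which intervals leave $w_\rho$ on which side, the $R^*$-compatibility check on $V_c$, and the $R^*$-open matrix $O(R^*)$ of Proposition \ref{propcyclicM*}. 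Your description of $O(R^*)$ as obtained ``by contracting each sink bonsai of $T$ to a single node'' is not the paper's object and would not work: the paper's $O(R^*)$ keeps the \emph{non-central} sink bonsais intact (its row set is $R=\cup_{E_\ell\in Sink(D')\setminus V_c}E_\ell\cup R^*$), replaces the $\tfrac12$-entries by $1$, cuts the cycle at $w_\rho$ into a path $p^*$ flanked by two artificial edges $e_0,\tilde e_\rho$, and appends the columns indexed by $F_c^1,F_c^2$ together with two artificial rows so that a network representation of $O(R^*)$ forces each such column to end at the correct extremity of $p^*$. Without this, ``cutting along a chosen node and reclosing with the central bidirected edge'' is unjustified: you have given no way to choose the cut node, no way to decide which intervals and which central bonsais lie on which side of it, and no argument that the reclosed graph realizes the columns in $S^*$ (in particular the columns $j$ with $R^*\subseteq s(A_{\bullet j})$, whose circuits must wrap the whole cycle).

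A secondary inaccuracy: you assert that every $N_\ell$ must be a network matrix and propose to test all of them. This is a valid necessary condition (all $B_\ell$-paths do share the endnode $v_\ell$, so Lemma \ref{lembonsaicel} applies), but it is not how Theorem \ref{thmcyclic1} is organized -- the characterization only requires it for $E_\ell\in V_c\cup\overline{Sink(D')}$, the remaining sink bonsais being absorbed into $O(R^*)$ -- and, more importantly, passing this test contributes nothing toward locating the sink bonsais on the cycle, which is the step your argument leaves open.
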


Let us introduce some notations and definitions. We note $S^*=\{j\, : \, s(A_{\bullet j}) \cap R^*\neq \emptyset \}$, $\rho=|R^*|$ and $R_j=s(A_{\bullet j}) \cap R^*$, for all $j\in S^*$; the set $R_j$ is called an \emph{interval}\index{interval}. Up to row permutations, we may assume $R^*=\{1,\ldots,\rho\}$. Let $D=(V,\Upsilon)$ be a digraph with respect to $R^*$ as computed in Chapter \ref{ch:multidiD}. We will see a procedure called Initialization which produces an induced subgraph of $D$, denoted as $D'$, having no directed cycle. Further, we will define and construct a set $V_c$ of particular bonsais, called central bonsais, and a matrix $O(R^*)$ called the \emph{$R^*$-open matrix}\index{open@$R^*$-open matrix} with respect to $D'$.
Let $$\Lambda_c=\{R_j\, : \,j\in f^*(E_\ell) \m{ and } E_\ell \in V_c\}.$$  Whenever $A$ is $R^*$-cyclic, it will be proved that the poset $(\Lambda_c, \subseteq)$ can be decomposed into two chains forming an $R^*$-double chain, a notion that will be defined later.
The set $V_c$ is called \emph{$R^*$-compatible}\index{compatible@$R^*$-compatible} if for any $\beta \in S^*$ such that $\sum_{E_\ell \in V_c} g_\beta(E_\ell)\geq 2$, we have the equality $R_\beta = R^*$. 

Let $D'$ be a maximal induced subgraph of $D$ having no directed cycle, and $O(R^*)$ the corresponding $R^*$-open matrix.
Let us give two simple necessary conditions for $A$ to be $R^*$-cyclic that directly follow from Corollary \ref{corBidirectedCircuit} and Lemma \ref{lemdefiWeight1}. For any $1\le j\le m$, if $s_{\frac{1}{2}}(A_{\bullet j})\neq \emptyset$, then $s_{\frac{1}{2}}(A_{\bullet j})=R^*$ and $s_2(A_{\bullet j})=\emptyset$; and if $s_2(A_{\bullet j})\neq \emptyset$, then $s_2(A_{\bullet j})\cap R^*=\emptyset$ and $R^*\subseteq s(A_{\bullet j})$. 
Under these two conditions, the following holds.

\begin{thm}\label{thmcyclic1}
The matrix $A$ is $R^*$-cyclic if and only if the digraph $D$ has a feasible spanning forest, the poset $(\Lambda_c, \subseteq)$ is an $R^*$-double chain,
the set $V_c$ is $R^*$-compatible and the $R^*$-open matrix $O(R^*)$ as well as each bonsai matrix $N_\ell$ with $E_\ell \in V_c\cup  \overline{Sink(D')}$ are network matrices.
\end{thm}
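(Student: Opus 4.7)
The plan is to prove both implications. The forward direction extracts the needed structure from a given $R^*$-cyclic representation $G(A)$, while the reverse direction reconstructs such a representation from the stated data.

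For the necessity, suppose $A$ has an $R^*$-cyclic representation $G(A)$. First I would observe that the arcs of $D$ reflect a proximity relation to the basic cycle (Proposition \ref{lemdigraphstem}), which is acyclic in any fixed representation; hence a maximal induced acyclic subgraph $D'$ compatible with $G(A)$ exists and determines $V_c$ as the set of indices of those bonsais in $G(A)$ whose tree intersects the basic cycle. Theorem \ref{thmdigraphfeasible} then directly supplies the feasible spanning forest $T_{G(A)}$ of $D$. For each $E_\ell \in V_c$ and each $j \in f^*(E_\ell)$, the interval $R_j$ is the edge index set of a consistently oriented subpath of the basic cycle (by Lemma \ref{lemdefiWeight2} and the case analysis of fundamental circuits summarized in Figures \ref{fig:Apositive} and \ref{fig:lsubstem1}--\ref{fig:lsubstem2}). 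The set of subpaths of a cycle, partially ordered by inclusion, decomposes canonically into two chains indexed by the two sides of the central node; this decomposition gives the $R^*$-double chain structure on $(\Lambda_c, \subseteq)$. The $R^*$-compatibility of $V_c$ follows from Corollary \ref{cordigraphDbeta2}: if $\sum_{E_\ell \in V_c} g_\beta(E_\ell) \geq 2$, the fundamental circuit of $f_\beta$ must contain the whole basic cycle, forcing $R_\beta = R^*$. Each bonsai matrix $N_\ell$ for $E_\ell \in V_c \cup \overline{Sink(D')}$ is a network matrix by Lemma \ref{lembonsaicel} applied to the subtree $B_\ell \subseteq G(A)$, since all $B_\ell$-paths there share the common endnode $v_\ell$. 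Finally, a network representation of $O(R^*)$ is obtained from $G(A)$ by contracting every non-central bonsai to a single vertex, ``cutting'' the basic cycle at the central node to produce a directed path, switching at suitable nodes, and adjoining two auxiliary directed end-edges, exactly as described informally in Section \ref{sec:OverRecBinet}.

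For the sufficiency, the strategy would be explicit reconstruction. Given a network representation of each $N_\ell$ with $E_\ell \in V_c \cup \overline{Sink(D')}$, Lemma \ref{lembonsainet2} supplies $v_\ell$-rooted trees $B_\ell$ in which each $E_\ell^k$ is the edge index set of a $B_\ell$-path emanating from $v_\ell$. A network representation of $O(R^*)$ together with the $R^*$-double chain decomposition of $\Lambda_c$ allows the reconstruction of the basic cycle: the two chains correspond to the two consistently oriented subpaths into which the cycle is split at the central node, and the inclusion order within each chain dictates the cyclic arrangement of the central bonsais. Once the cycle is assembled, the feasible spanning forest $T_\Theta$ of $D$ tells us how to graft on each non-central bonsai: an arc $(E_\ell, E_{\ell'})_{E_{\ell'}^k}$ of $T_\Theta$ prescribes identifying $v_\ell$ with the endnode of the $B_{\ell'}$-path with edge index set $E_{\ell'}^k$. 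Orienting the unique bidirected edge in the basic cycle at the central node and verifying that the resulting representation is proper is then routine.

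The main obstacle will be the column-by-column verification in the reverse direction: one must show that for every $\beta \in S^*$, the union of the $B_\ell$-paths prescribed by the $\beta$-fork or $\beta$-paths of $T_\Theta$, together with the placement of the central bonsais along the cycle, yields precisely the fundamental circuit of $f_\beta$ with the weights prescribed by Lemma \ref{lemdefiWeight1} and Proposition \ref{propBlsubstems}. The properties $\Pi_1$--$\Pi_3$ of a feasible forest regulate the interaction of $\beta$ with non-central bonsais, the $R^*$-double chain property enforces consistency of the cyclic gluing at the central bonsais, and $R^*$-compatibility handles exactly those columns $\beta$ for which two (or more) central bonsais contribute, forcing $R_\beta = R^*$ so that the full cycle is legitimately covered. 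Assembling these three conditions into a clean invariant that is preserved as one traverses the cycle is the heart of the argument.
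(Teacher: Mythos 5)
Your overall architecture coincides with the paper's: necessity by extracting the data from a $D'$-clean representation (Theorem \ref{thmdigraphfeasible}, Corollary \ref{cordigraphDbeta2}, Lemma \ref{lembonsaicel}), sufficiency by reconstruction. But there are concrete gaps in both directions. In the necessity direction, your description of the network representation of $O(R^*)$ is backwards and misses its key step. The rows of $O(R^*)$ are indexed by $R=\cup_{E_\ell\in Sink(D')\setminus V_c}E_\ell\cup R^*$, so it is the central and the non-sink bonsais that get contracted, while the non-central sink bonsais are kept. More importantly, merely cutting the cycle at the central node does not work: a non-central sink bonsai attached at $w_\rho$ whose intervals contain $1$ but not $\rho$ must be detached from $w_\rho$, have all its edges reversed, and be reattached at the new endpoint $w_0$ --- otherwise the corresponding columns of $O(R^*)$ are not incidence vectors of directed paths. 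This is equation (\ref{eqncyclicSink}) in the proof of Proposition \ref{propcyclicM*}, and it is the reason the artificial edges $e_0$, $\tilde e_\rho$ and the exclusive chains $\Lambda_c^1,\Lambda_c^2$ appear in the definition of $O(R^*)$. Relatedly, identifying $V_c$ with ``bonsais whose tree intersects the basic cycle'' is not the right invariant: centrality corresponds to $v_\ell=w_\rho$ (Lemma \ref{lemcyclic2}), and there are non-central sink bonsais with $v_\ell=w_\rho$ (Lemma \ref{lemcyclicchain}) --- these are exactly the ones that must be moved. You also need Proposition \ref{propcyclicgood} (passing to a $D'$-clean representation) before Lemma \ref{lemcyclic4} applies, since $D$ may contain directed cycles and an arbitrary representation need not satisfy $Sink(D')\subseteq Sink(T_{G(A)})$.

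In the sufficiency direction you correctly name the obstacle but do not resolve it, and that is where the proof lives. The paper proceeds in two stages: GASink first assembles an $R^*$-cyclic representation of the submatrix $A'=A_{\cup_{E_\ell\in Sink(D')}E_\ell\cup R^*\bullet}$ from $G(O(R^*))$ and the $v_\ell$-rooted representations of the central bonsai matrices --- Proposition \ref{propcyclicA'} is the column-by-column verification you flag --- and only then grafts the remaining bonsais along a $D'$-clean feasible spanning forest. One point your sketch does not identify at all: to attach a bonsai $E_\ell\in Sink(T_\Theta)\setminus Sink(D')$ one must know that $s(A'_{\bullet j})$ is the same for every $j\in f^*(E_\ell)$, so that the attachment node $v_\ell^*$ is well defined; this is Lemma \ref{lemcyclicpathA'}, and it is precisely what property $\Pi_3$ of a feasible forest is for. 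Without it the grafting step is not well posed, so the ``clean invariant'' you defer cannot be set up.
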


Suppose that $A$ has an $R^*$-cyclic representation $G(A)$. If $v_i$ is a node of $G(A)$, then we note $v_i^*$\index{vertex!$v_i^*$} the endnode of the basic path from $v_i$ to the basic cycle. 
Recall that, for all $1\le \ell \le b$, $v_\ell$ denotes the closest vertex of $B_\ell$ to the basic cycle. Moreover,
$G(A)$ induces the following spanning forest $T_{G(A)}$ of $D$: for all $1\le \ell,\ell'\le b$ ($\ell\neq \ell'$), $(E_\ell,E_{\ell'})\in T_{G(A)}$ if and only if $v_\ell$ is a node of $B_{\ell'}$ (distinct from $v_{\ell'}$).
For all $j\in S^*$, the interval $R_j$ is equal to the edge index set of a consistently oriented path $p_j$ which  is called an \emph{interval}\index{interval} in $G(A)$ and lies on the basic cycle.

Before embarking on the proof of Theorems \ref{thmcyclicproCyc} and \ref{thmcyclic1}, Section \ref{sec:infcyc} deals with some intuitive notions and graphical ideas on which these are based using an example. Then, in Section \ref{sec:cyc}, a formal proof of these theorems is given.

\section{An informal sketch of a recognition procedure}\label{sec:infcyc}

Let us consider the $R^*$-cyclic matrix $A$ given in Figure \ref{fig:cyclicA} where $R^*=\{1,2,3\}$. The goal is to construct an $R^*$-cyclic representation of $A$, for instance the one given in Figure \ref{fig:cyclicA}, without knowing that such a representation exists. We depict several steps of the recognition procedure RCyclic applied on $A$, and motivate some definitions in an informal way.

\vspace{1cm}

\begin{figure}[ht!]
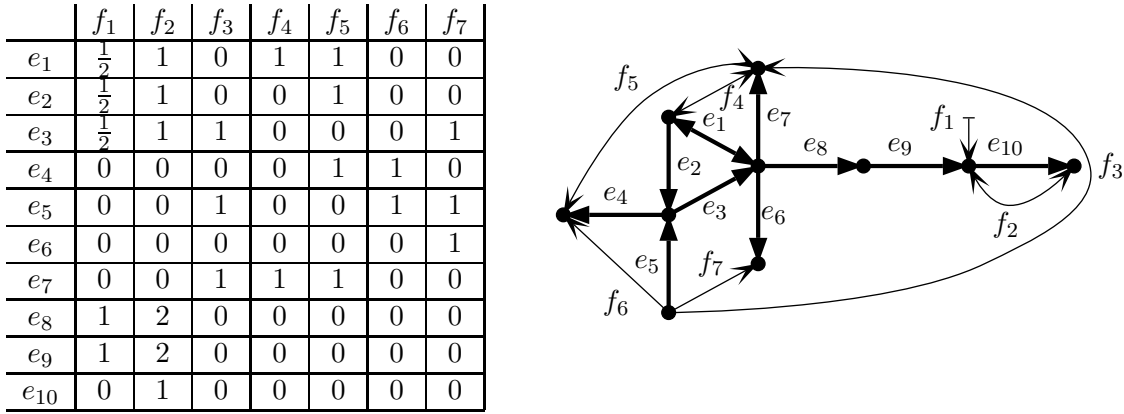


$
\begin{array}{cl}

\begin{tabular}{c|c|c|c|c|c|c|c|}
  & $f_1$ & $f_2$ & $f_3$ & $f_4$ & $f_5$ & $f_6$ & $f_7$  \\
  \hline
$e_1$  &$\frac{1}{2}$&1&0&1&1&0&0 \\
\hline
$e_2$  &$\frac{1}{2}$&1&0&0&1&0&0 \\
\hline
$e_3$  &$\frac{1}{2}$&1&1&0&0&0&1 \\
\hline
$e_4$  &0&0&0&0&1&1&0 \\
\hline
$e_5$  &0&0&1&0&0&1&1\\
\hline
$e_6$  &0&0&0&0&0&0&1 \\
\hline
$e_7$  &0&0&1&1&1&0&0 \\
\hline
$e_8$  &1&2&0&0&0&0&0 \\
\hline
$e_9$  &1&2&0&0&0&0&0 \\
\hline
$e_{10}$  &0&1&0&0&0&0&0 \\
\hline
\end{tabular}  &

\psset{xunit=1.4cm,yunit=1.3cm,linewidth=0.5pt,radius=0.1mm,arrowsize=7pt,
labelsep=1.5pt,fillcolor=black}

\pspicture(-1.5,1)(5,2)

\pscircle[fillstyle=solid](0,0){.1}
\pscircle[fillstyle=solid](-1,1){.1}
\pscircle[fillstyle=solid](0,1){.1}
\pscircle[fillstyle=solid](0,2){.1}
\pscircle[fillstyle=solid](0.85,1.5){.1}
\pscircle[fillstyle=solid](1.85,1.5){.1}
\pscircle[fillstyle=solid](.85,.5){.1}
\pscircle[fillstyle=solid](.85,2.5){.1}
\pscircle[fillstyle=solid](2.85,1.5){.1}
\pscircle[fillstyle=solid](3.85,1.5){.1}

\psline[linewidth=1.6pt,arrowinset=0]{<->}(0,2)(0.85,1.5)
\rput(0.44,1.95){$e_1$}

\psline[linewidth=1.6pt,arrowinset=0]{<-}(0,1)(0,2)
\rput(0.2,1.5){$e_2$}

\psline[linewidth=1.6pt,arrowinset=0]{->}(0,1)(0.85,1.5)
\rput(0.44,1.05){$e_3$}

\psline[linewidth=1.6pt,arrowinset=0]{<-}(-1,1)(0,1)
\rput(-0.5,1.2){$e_4$}

\psline[linewidth=1.6pt,arrowinset=0]{->}(0,0)(0,1)
\rput(-0.2,0.5){$e_5$}

\psline[linewidth=1.6pt,arrowinset=0]{->}(.85,1.5)(.85,.5)
\rput(1,1){$e_6$}

\psline[linewidth=1.6pt,arrowinset=0]{->}(.85,1.5)(.85,2.5)
\rput(1.05,2){$e_7$}

\psline[linewidth=1.6pt,arrowinset=0]{->}(0.85,1.5)(1.85,1.5)
\rput(1.4,1.7){$e_8$}

\psline[linewidth=1.6pt,arrowinset=0]{->}(1.85,1.5)(2.85,1.5)
\rput(2.2,1.7){$e_9$}

\psline[linewidth=1.6pt,arrowinset=0]{->}(2.85,1.5)(3.85,1.5)
\rput(3.2,1.7){$e_{10}$}


\psline[arrowinset=.5,arrowlength=1.5]{|->}(2.85,2)(2.85,1.5)
\rput(2.6,2){$f_{1}$}

\pscurve[arrowinset=.5,arrowlength=1.5]{<->}(2.85,1.5)(3.2,1.1)(3.85,1.5)
\rput(3.2,.9){$f_2$}

\pscurve[arrowinset=.5,arrowlength=1.5]{->}(0,0)(3,0.5)(4,1.5)(.85,2.5)
\rput(4.2,1.5){$f_3$}

\psline[arrowinset=.5,arrowlength=1.5]{<->}(0,2)(.85,2.5)
\rput(.6,2.2){$f_4$}

\pscurve[arrowinset=.5,arrowlength=1.5]{<->}(-1,1)(-0.2,2.2)(.7,2.55)(.85,2.5)
\rput(-.4,2.4){$f_5$}

\psline[arrowinset=.5,arrowlength=1.5]{->}(0,0)(-1,1)
\rput(-0.5,0.1){$f_6$}

\psline[arrowinset=.5,arrowlength=1.5]{->}(0,0)(.85,.5)
\rput(.4,.5){$f_7$}

\endpspicture 

\end{array}$

\vspace{.7cm}

\caption{A binet matrix $A$ and a $\{1,2,3\}$-cyclic representation $G(A)$ of $A$.}
\label{fig:cyclicA}

\end{figure}

Suppose that $\overline{R^*}$ has been partitioned into $E_1=\{4,5\}$, $E_2=\{6\}$, $E_3=\{7\}$, $E_4=\{8\}$, $E_5=\{9\}$ and  $E_6=\{10\}$. Let $D$ be a digraph as constructed in Chapter \ref{ch:multidiD} with respect to $R^*$  (see Figure \ref{fig:cyclicdigraphD}).
In general, let us recall that this construction makes only use of the matrix $A$, and $D$ is unique, provided that $A$ is $R^*$-cyclic as mentioned at page \pageref{mycounter}.
We observe that $D$ contains a directed cycle, namely $(E_4,(E_4,E_5),E_5,(E_5,E_4),E_4)$. At first, the procedure RCyclic searches for a maximal induced subgraph in $D$ having no directed cycle, for instance $D\verb"\"\{E_5\}$. So let $D'=D\verb"\"\{E_5\}$.

\vspace{1cm}

\begin{figure}[ht!]
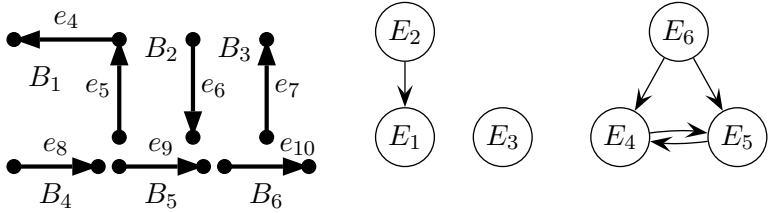


$
\begin{array}{cc}

\psset{xunit=1.4cm,yunit=1.3cm,linewidth=0.5pt,radius=0.1mm,arrowsize=7pt,
labelsep=1.5pt,fillcolor=black}

\pspicture(-1.5,0)(2,2.5)

\pscircle[fillstyle=solid](0,0){.1}
\pscircle[fillstyle=solid](-1,1){.1}
\pscircle[fillstyle=solid](0,1){.1}

\pscircle[fillstyle=solid](.7,0){.1}
\pscircle[fillstyle=solid](.7,1){.1}

\pscircle[fillstyle=solid](1.4,0){.1}
\pscircle[fillstyle=solid](1.4,1){.1}

\pscircle[fillstyle=solid](-1,-0.3){.1}
\pscircle[fillstyle=solid](-.2,-.3){.1}

\pscircle[fillstyle=solid](0,-0.3){.1}
\pscircle[fillstyle=solid](.8,-.3){.1}

\pscircle[fillstyle=solid](1,-0.3){.1}
\pscircle[fillstyle=solid](1.8,-.3){.1}

\psline[linewidth=1.6pt,arrowinset=0]{<-}(-1,1)(0,1)
\rput(-0.5,1.2){$e_4$}

\psline[linewidth=1.6pt,arrowinset=0]{->}(0,0)(0,1)
\rput(-0.2,0.5){$e_5$}

\rput(-.7,.6){$B_1$}

\psline[linewidth=1.6pt,arrowinset=0]{->}(.7,1)(.7,0)
\rput(.9,.5){$e_6$}

\rput(.4,.9){$B_2$}

\psline[linewidth=1.6pt,arrowinset=0]{<-}(1.4,1)(1.4,0)
\rput(1.6,.5){$e_7$}

\rput(1.1,.9){$B_3$}

\psline[linewidth=1.6pt,arrowinset=0]{->}(-1,-.3)(-.2,-.3)
\rput(-.6,-.1){$e_8$}

\rput(-.6,-.6){$B_4$}

\psline[linewidth=1.6pt,arrowinset=0]{->}(0,-.3)(.8,-.3)
\rput(.4,-.1){$e_9$}

\rput(.4,-.6){$B_5$}

\psline[linewidth=1.6pt,arrowinset=0]{->}(1,-.3)(1.8,-.3)
\rput(1.7,-.1){$e_{10}$}

\rput(1.4,-.6){$B_6$}

\endpspicture &

\psset{xunit=1.3cm,arrows=->,yunit=1.4cm,linewidth=0.5pt,radius=0.1mm,arrowsize=6pt,
labelsep=1pt,fillcolor=black}

\pspicture(-0.3,0)(2,1.5)


\cnodeput(0.2,0){1}{$E_1$}
\cnodeput(0.2,1){2}{$E_2$}
\cnodeput(3,1){6}{$E_6$}
\cnodeput(2.4,0){4}{$E_4$}
\cnodeput(3.6,0){5}{$E_5$}
\cnodeput(1.2,0){3}{$E_3$}

\ncline{2}{1}
\ncline{6}{4}
\ncline{6}{5}
\ncarc{5}{4}
\ncarc{4}{5}
      
\endpspicture 
\end{array}$

\vspace{.7cm}

\caption{the bonsais in $G(A)$ and the digraph $D$ (with respect to $R^*=\{1,2,3\}$), where $A$ is given in Figure \ref{fig:cyclicA}.}
\label{fig:cyclicdigraphD}

\vspace{.3cm}
\end{figure}

Then, the procedure RCyclic constructs an $R^*$-cyclic representation of the matrix $$A'=A_{\cup_{E_\ell \in Sink(D')} E_\ell \cup R^* \bullet},$$ or determines that $A$ is not $R^*$-cyclic. How does it proceed? Let us look at the submatrix $A_{R^*\bullet}^{\frac{1}{2}\rightarrow 1}$ in Figure \ref{fig:cyclicA}. We observe that it is an interval matrix. It will be proved that this follows from the nonnegativity of $A$ and the fact that this matrix is $R^*$-cyclic. Further, one can extend the matrix $A_{R^*\bullet}^{\frac{1}{2}\rightarrow 1}$ to a larger one, say $A_{R\bullet}^{\frac{1}{2}\rightarrow 1}$, so that $R^*\subseteq R$ and $A_{R\bullet}^{\frac{1}{2}\rightarrow 1}$ is a network matrix. Following this idea, one can try to locate some special row index sets that have to be disjoint from $R$, otherwise $A_{R\bullet}^{\frac{1}{2}\rightarrow 1}$ might be a non-network matrix. This motivates a notion of "central" bonsai. 

Let $E_\ell$ be a bonsai. Provided that $A$ has an $R^*$-cyclic representation $G(A)$, it will be proved that if $E_\ell$ is central, then $v_\ell$ corresponds to a central node in $G(A)$, and so all $B_\ell$-paths in $G(A)$ are leaving $v_\ell$. 

Two necessary conditions for $E_\ell$ to be central are that $J_\ell^2= \emptyset$ (see Lemmas \ref{lembonsaicel} and \ref{lembonsainet2}) and $E_\ell$ is a sink vertex of $D'$. Let us consider the bonsais $E_1$ and $E_3$ in $Sink(D')$. We have that $f^*(E_1)=\{3,5,7\}$, $f^*(E_3)=\{3,4,5\}$, $R_3=\{3\}$, $R_4=\{1\}$, $R_5=\{1,2\}$ and  $R_7=\{3\}$. We observe that none of the posets $(\{R_j \, : \, j \in f^*(E_1) \}, \subseteq )$ and $(\{R_j \, : \, j \in f^*(E_3) \}, \subseteq )$ is a chain. On the other hand, the $E_1$-paths (resp., $E_3$-paths) generated by the columns of $A$ are $\{4\}$ and $\{5 \}$ (resp., $\{7\}$), and $\{4 \} \nsim_{E_1} \{ 5\}$ (see Lemma \ref{lemdigraphutile}). Thus
$J_1^2\neq \emptyset$ and $J_3^2=\emptyset$ (see Lemmas \ref{lembonsaicel} and \ref{lembonsainet2}). Hence $E_1$ is not central, and we will see that
$E_3$ is central because $E_3 \in Sink(D')$, $J_3^2=\emptyset$ and $(\{ R_j\, ,\, j\in f^*(E_3) \}, \subseteq )$ is not a chain. Since $J_1^2\neq \emptyset$ and $(\{ R_j\, ,\, j\in f^*(E_1) \}, \subseteq )$ is not a chain, provided that $A$ is $R^*$-cyclic, it can be proved that $v_1$ is not equal to the central node in any $R^*$-cyclic representation of $A$.

Moreover, besides the fact that $E_4 \in Sink(D')$ and $J_4^2 = \emptyset$, the bonsai $E_4$ is central for two reasons. The first one  is that there exists some $j\in f^*(E_4)$, namely $j=1$, such that $R_j=R^*$.  The second is that there exists some $j\in f^*(E_4)$, namely $j=2$, such that $g_j(E_4)=2$. The bonsai $E_6$ is not central because it is not a sink vertex of $D'$. Actually, $E_3$ and $E_4$ are the only central bonsais in $D'$. Let $$R= \cup_{E_\ell \in Sink(D')\verb"\"V_c} E_\ell \cup R^*=E_1 \cup R^*.$$ Is the matrix $A_{R \bullet}^{\frac{1}{2}\rightarrow 1}$ a network matrix? 
In general, one can show that it is whenever $A$ is $R^*$-cyclic. Given network representations of $A_{R \bullet}^{\frac{1}{2}}$, $N_3$ and $N_4$, it turns out sometimes that it is not sufficient to construct an $R^*$-cyclic representation of $A'= A_{\cup_{E_\ell \in Sink(D')}E_\ell \cup R^*\bullet }$.

\begin{figure}[h!]
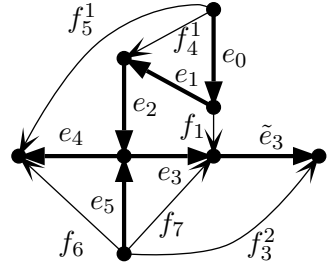


$
\begin{array}{cl}

\begin{tabular}{c|c|c|c|c|c|c|c|c|c|c|c|c|}
  & $f_1$ & $f_2$ & $f_6$ & $f_7$ &
 $f_4^1$ & $f_5^1$ & $f_3^2$ & &  \\
  \hline
$e_1$  &1&1&0&0& 1&1&0&1&1    \\
\hline
$e_2$  &1&1&0&0&0&1& 0&1&1\\
\hline
$e_3$  &1&1&0&1&0&0& 1&1&1\\
\hline
$e_4$  &0&0&1&0&0&1& 0&0&0\\
\hline
$e_5$  &0&0&1&1&0&0& 1&0&0\\
\hline
$e_0$  &0&0&0&0&1&1& 0&0&1\\
\hline
$\tilde e_3$  &0&0&0&0&0&0& 1&0&1\\

\hline
\end{tabular}  &

\psset{xunit=1.4cm,yunit=1.3cm,linewidth=0.5pt,radius=0.1mm,arrowsize=7pt,
labelsep=1.5pt,fillcolor=black}

\pspicture(-1.5,1)(5,2.5)

\pscircle[fillstyle=solid](0,0){.1}
\pscircle[fillstyle=solid](-1,1){.1}
\pscircle[fillstyle=solid](0,1){.1}
\pscircle[fillstyle=solid](0,2){.1}
\pscircle[fillstyle=solid](0.85,1.5){.1}
\pscircle[fillstyle=solid](1.85,1){.1}
\pscircle[fillstyle=solid](.85,1){.1}
\pscircle[fillstyle=solid](.85,2.5){.1}

\psline[linewidth=1.6pt,arrowinset=0]{<-}(0,2)(0.85,1.5)
\rput(0.6,1.8){$e_1$}

\psline[linewidth=1.6pt,arrowinset=0]{<-}(0,1)(0,2)
\rput(0.2,1.5){$e_2$}

\psline[linewidth=1.6pt,arrowinset=0]{->}(0,1)(.85,1)
\rput(0.44,.8){$e_3$}

\psline[linewidth=1.6pt,arrowinset=0]{<-}(-1,1)(0,1)
\rput(-0.5,1.2){$e_4$}

\psline[linewidth=1.6pt,arrowinset=0]{->}(0,0)(0,1)
\rput(-0.2,0.5){$e_5$}


\psline[linewidth=1.6pt,arrowinset=0]{<-}(.85,1.5)(.85,2.5)
\rput(1.05,2){$e_0$}

\psline[linewidth=1.6pt,arrowinset=0]{->}(0.85,1)(1.85,1)
\rput(1.4,1.2){$\tilde e_3$}



\psline[arrowinset=.5,arrowlength=1.5]{->}(.85,1.5)(.85,1)
\rput(.65,1.3){$f_{1}$}


\pscurve[arrowinset=.5,arrowlength=1.5]{->}(0,0)(1,0.1)(1.85,1)
\rput(1.3,.1){$f_3^2$}

\psline[arrowinset=.5,arrowlength=1.5]{<-}(0,2)(.85,2.5)
\rput(.6,2.2){$f_4^1$}

\pscurve[arrowinset=.5,arrowlength=1.5]{<-}(-1,1)(-0.2,2.2)(.7,2.55)(.85,2.5)
\rput(-.4,2.4){$f_5^1$}

\psline[arrowinset=.5,arrowlength=1.5]{->}(0,0)(-1,1)
\rput(-0.5,0.1){$f_6$}

\psline[arrowinset=.5,arrowlength=1.5]{->}(0,0)(.85,1)
\rput(.45,.3){$f_7$}

\endpspicture

\end{array}$

\caption{the network matrix $O(R^*)$ and a network representation $G(O(R^*))$ of $O(R^*)$ (without some nonbasic edges), where the basic tree is denoted by $T$.}
\label{fig:cyclicM*}

\end{figure}

Let $V_c$ be the set of central bonsais and 
$\Lambda_c=\{R_j \, :\, j\in f^*(E_\ell)\m{ and } E_\ell \in V_c \}$. For our example, $\Lambda_c=\{R^*, \{1\},\{1,2\},\{3 \} \}$. Provided that $A$ has an $R^*$-cyclic representation $G(A)$ as in Figure \ref{fig:cyclicA},
we notice that the intervals in $G(A)$ with edge index set in $\Lambda_c$ are all incident with the central node, so the poset $(\Lambda_c\verb"\"\{R^*\}, \subseteq)$  can be split up  into two chains, namely $\Lambda_c^1=(\{\{ 3\}\},\subseteq)$ and $\Lambda_c^2=(\{\{1\} ,\{1,2\} \},\subseteq)$. The procedure RCyclic produces these two chains or stops, if they do not exist. Then, it constructs a network representation $G(A_{R \bullet})$ of $A_{R \bullet}$ such that for $i=1$ and $2$, every
interval in $\Lambda_c^i$ corresponds to the edge index set of a path, and all these paths have a common endnode, provided that such a representation exists. On this purpose the matrix $O(R^*)$ as given in Figure \ref{fig:cyclicM*}
is constructed. Let us remark that $O(R^*)$ has two more rows than 
$A_{R \bullet}$.

Given a basic network representation $G(O(R^*))$ of $O(R^*)$ and, for every $E_\ell \in V_c$, a $v_\ell$-rooted network representation $B_\ell$ of $N_\ell$, under certain conditions, the procedure RCyclic constructs a basic $R^*$-cyclic representation $G(A')$ of the matrix $A'=A_{\cup_{E_\ell\in Sink(D')}E_\ell\cup R^* \bullet}$ as illustrated in Figure \ref{fig:cyclicGA'}. This will be accomplished by a subroutine called GASink.

\begin{figure}[h!]
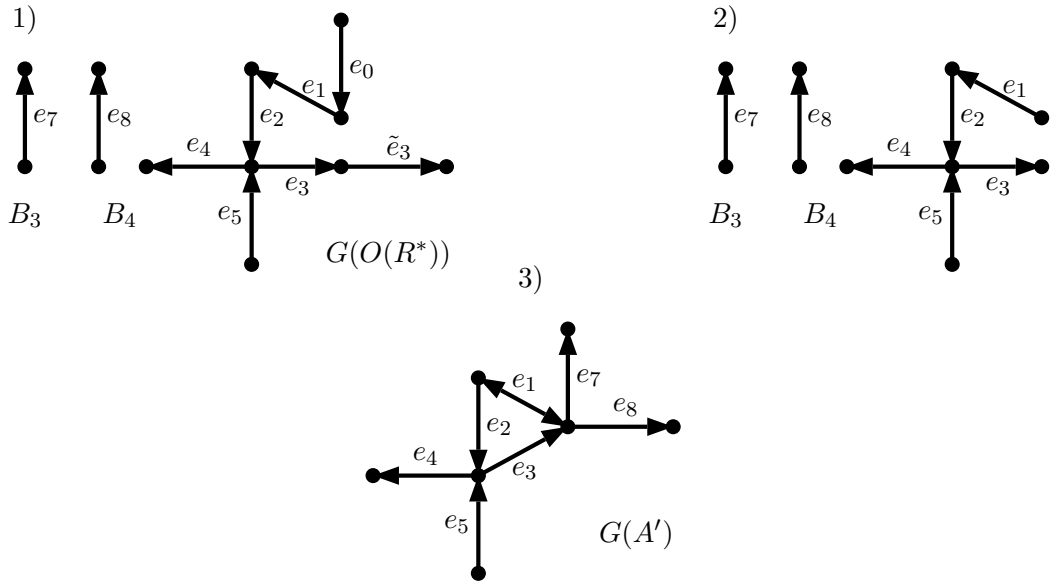

\vspace{.3cm}
\begin{center}
$
\begin{array}{cc}

\begin{array}{cc}

\psset{xunit=1.4cm,yunit=1.3cm,linewidth=0.5pt,radius=0.1mm,arrowsize=7pt,
labelsep=1.5pt,fillcolor=black}

\pspicture(-.5,0)(.7,2)

\pscircle[fillstyle=solid](0,1){.1}
\pscircle[fillstyle=solid](0,2){.1}

\pscircle[fillstyle=solid](.7,1){.1}
\pscircle[fillstyle=solid](0.7,2){.1}

\rput(0,2.5){$1)$}

\psline[linewidth=1.6pt,arrowinset=0]{->}(0,1)(0,2)
\rput(0.2,1.5){$e_7$}

\rput(0,.5){$B_3$}

\psline[linewidth=1.6pt,arrowinset=0]{->}(.7,1)(0.7,2)
\rput(0.9,1.5){$e_8$}

\rput(.9,.5){$B_4$}

\endpspicture &

\psset{xunit=1.4cm,yunit=1.3cm,linewidth=0.5pt,radius=0.1mm,arrowsize=7pt,
labelsep=1.5pt,fillcolor=black}

\pspicture(-1.2,0)(4,2.5)

\pscircle[fillstyle=solid](0,0){.1}
\pscircle[fillstyle=solid](-1,1){.1}
\pscircle[fillstyle=solid](0,1){.1}
\pscircle[fillstyle=solid](0,2){.1}
\pscircle[fillstyle=solid](0.85,1.5){.1}
\pscircle[fillstyle=solid](1.85,1){.1}
\pscircle[fillstyle=solid](.85,1){.1}
\pscircle[fillstyle=solid](.85,2.5){.1}

\rput(1.3,0.1){$G(O(R^*))$}

\psline[linewidth=1.6pt,arrowinset=0]{<-}(0,2)(0.85,1.5)
\rput(0.6,1.8){$e_1$}

\psline[linewidth=1.6pt,arrowinset=0]{<-}(0,1)(0,2)
\rput(0.2,1.5){$e_2$}

\psline[linewidth=1.6pt,arrowinset=0]{->}(0,1)(.85,1)
\rput(0.44,.8){$e_3$}

\psline[linewidth=1.6pt,arrowinset=0]{<-}(-1,1)(0,1)
\rput(-0.5,1.2){$e_4$}

\psline[linewidth=1.6pt,arrowinset=0]{->}(0,0)(0,1)
\rput(-0.2,0.5){$e_5$}


\psline[linewidth=1.6pt,arrowinset=0]{<-}(.85,1.5)(.85,2.5)
\rput(1.05,2){$e_0$}

\psline[linewidth=1.6pt,arrowinset=0]{->}(0.85,1)(1.85,1)
\rput(1.4,1.2){$\tilde e_3$}

\endpspicture
\end{array}    &

\begin{array}{cc}

\psset{xunit=1.4cm,yunit=1.3cm,linewidth=0.5pt,radius=0.1mm,arrowsize=7pt,
labelsep=1.5pt,fillcolor=black}

\pspicture(0,0)(.7,2)

\pscircle[fillstyle=solid](0,1){.1}
\pscircle[fillstyle=solid](0,2){.1}

\pscircle[fillstyle=solid](.7,1){.1}
\pscircle[fillstyle=solid](0.7,2){.1}

\rput(0,2.5){$2)$}

\psline[linewidth=1.6pt,arrowinset=0]{->}(0,1)(0,2)
\rput(0.2,1.5){$e_7$}

\rput(0,.5){$B_3$}

\psline[linewidth=1.6pt,arrowinset=0]{->}(.7,1)(0.7,2)
\rput(0.9,1.5){$e_8$}

\rput(.9,.5){$B_4$}

\endpspicture &

\psset{xunit=1.4cm,yunit=1.3cm,linewidth=0.5pt,radius=0.1mm,arrowsize=7pt,
labelsep=1.5pt,fillcolor=black}

\pspicture(-1.2,0)(3,2.5)

\pscircle[fillstyle=solid](0,0){.1}
\pscircle[fillstyle=solid](-1,1){.1}
\pscircle[fillstyle=solid](0,1){.1}
\pscircle[fillstyle=solid](0,2){.1}
\pscircle[fillstyle=solid](0.85,1.5){.1}
\pscircle[fillstyle=solid](.85,1){.1}

\psline[linewidth=1.6pt,arrowinset=0]{<-}(0,2)(0.85,1.5)
\rput(0.6,1.8){$e_1$}

\psline[linewidth=1.6pt,arrowinset=0]{<-}(0,1)(0,2)
\rput(0.2,1.5){$e_2$}

\psline[linewidth=1.6pt,arrowinset=0]{->}(0,1)(.85,1)
\rput(0.44,.8){$e_3$}

\psline[linewidth=1.6pt,arrowinset=0]{<-}(-1,1)(0,1)
\rput(-0.5,1.2){$e_4$}

\psline[linewidth=1.6pt,arrowinset=0]{->}(0,0)(0,1)
\rput(-0.2,0.5){$e_5$}

\endpspicture
\end{array}  
\end{array}
$
\end{center}

\begin{center}
 
\psset{xunit=1.4cm,yunit=1.3cm,linewidth=0.5pt,radius=0.1mm,arrowsize=7pt,
labelsep=1.5pt,fillcolor=black}

\pspicture(-1,0)(2,2.7)

\pscircle[fillstyle=solid](0,0){.1}
\pscircle[fillstyle=solid](-1,1){.1}
\pscircle[fillstyle=solid](0,1){.1}
\pscircle[fillstyle=solid](0,2){.1}
\pscircle[fillstyle=solid](0.85,1.5){.1}
\pscircle[fillstyle=solid](1.85,1.5){.1}
\pscircle[fillstyle=solid](.85,2.5){.1}

\rput(.5,3){$3)$}

\rput(1.5,.4){$G(A')$}

\psline[linewidth=1.6pt,arrowinset=0]{<->}(0,2)(0.85,1.5)
\rput(0.44,1.95){$e_1$}

\psline[linewidth=1.6pt,arrowinset=0]{<-}(0,1)(0,2)
\rput(0.2,1.5){$e_2$}

\psline[linewidth=1.6pt,arrowinset=0]{->}(0,1)(0.85,1.5)
\rput(0.44,1.05){$e_3$}

\psline[linewidth=1.6pt,arrowinset=0]{<-}(-1,1)(0,1)
\rput(-0.5,1.2){$e_4$}

\psline[linewidth=1.6pt,arrowinset=0]{->}(0,0)(0,1)
\rput(-0.2,0.5){$e_5$}


\psline[linewidth=1.6pt,arrowinset=0]{->}(.85,1.5)(.85,2.5)
\rput(1.05,2){$e_7$}

\psline[linewidth=1.6pt,arrowinset=0]{->}(0.85,1.5)(1.85,1.5)
\rput(1.4,1.7){$e_8$}

\endpspicture 

\end{center}

\caption{Two steps in the construction of a basic
$\{1,2,3\}$-cyclic representation $G(A')$ of the matrix $A'$, where $A$ is given in Figure \ref{fig:cyclicA}. (See pictures from 1 to 3.)}
\label{fig:cyclicGA'}
\end{figure}

Finally, given a basic $R^*$-cyclic representation $G(A')$ of $A'$, a $v_\ell$-rooted network representation of $N_\ell$ for $l=2$, $5$ and $6$, as well as a feasible spanning forest $T_\Theta$ of $D$ such that $Sink(D') \subseteq Sink(T_\Theta)$, the 
procedure RCyclic provides a basic $R^*$-cyclic representation of $A$ as illustrated in Figure \ref{fig:cyclicrecA}.\\

\begin{figure}[h!]
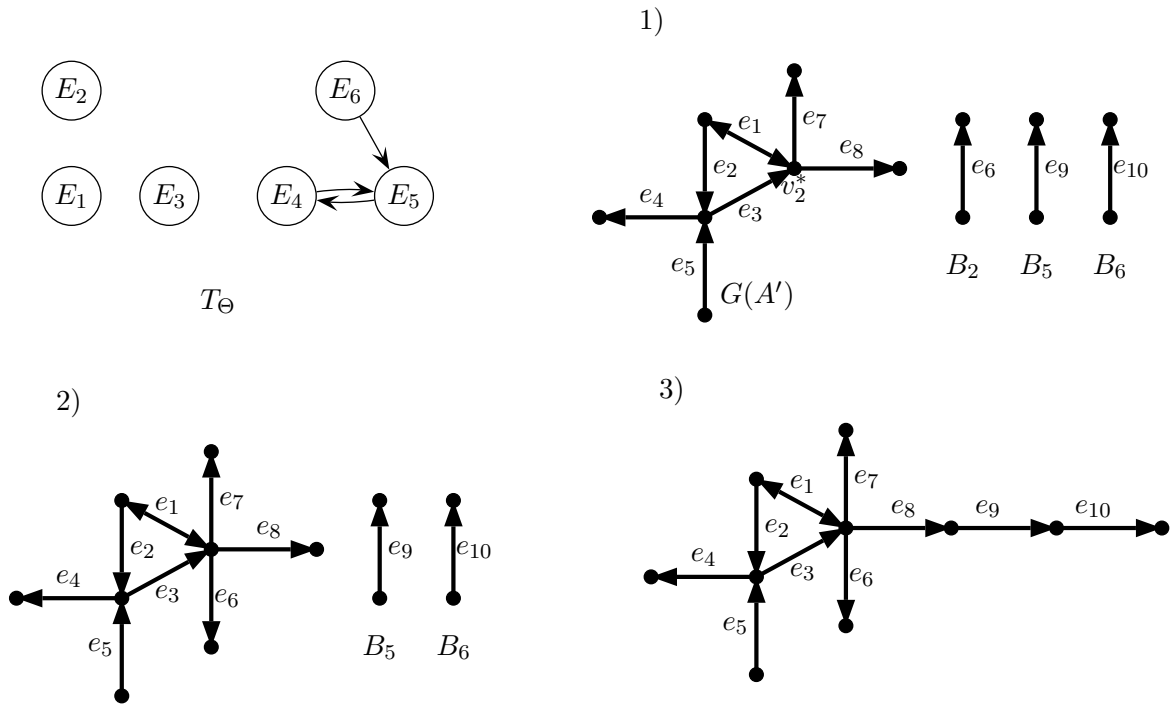

\vspace{.7cm}
\begin{center}
$
\begin{array}{cc}

\psset{xunit=1.3cm,arrows=->,yunit=1.4cm,linewidth=0.5pt,radius=0.1mm,arrowsize=6pt,
labelsep=1pt,fillcolor=black}

\pspicture(-0.5,0)(2.5,1.5)


\cnodeput(0.2,0){1}{$E_1$}
\cnodeput(0.2,1){2}{$E_2$}
\cnodeput(3,1){6}{$E_6$}
\cnodeput(2.4,0){4}{$E_4$}
\cnodeput(3.6,0){5}{$E_5$}
\cnodeput(1.2,0){3}{$E_3$}

\rput(1.7,-1){$T_\Theta$}
\ncline{6}{5}
\ncarc{5}{4}
\ncarc{4}{5}
      
\endpspicture   &

\begin{array}{cc}

\psset{xunit=1.4cm,yunit=1.3cm,linewidth=0.5pt,radius=0.1mm,arrowsize=7pt,
labelsep=1.5pt,fillcolor=black}

\pspicture(-3.5,0)(2,2.7)

\pscircle[fillstyle=solid](0,0){.1}
\pscircle[fillstyle=solid](-1,1){.1}
\pscircle[fillstyle=solid](0,1){.1}
\pscircle[fillstyle=solid](0,2){.1}
\pscircle[fillstyle=solid](0.85,1.5){.1}
\pscircle[fillstyle=solid](1.85,1.5){.1}
\pscircle[fillstyle=solid](.85,2.5){.1}

\rput(-.5,3){$1)$}

\rput(.85,1.3){$v_2^*$}

\rput(.5,.2){$G(A')$}

\psline[linewidth=1.6pt,arrowinset=0]{<->}(0,2)(0.85,1.5)
\rput(0.44,1.95){$e_1$}

\psline[linewidth=1.6pt,arrowinset=0]{<-}(0,1)(0,2)
\rput(0.2,1.5){$e_2$}

\psline[linewidth=1.6pt,arrowinset=0]{->}(0,1)(0.85,1.5)
\rput(0.44,1.05){$e_3$}

\psline[linewidth=1.6pt,arrowinset=0]{<-}(-1,1)(0,1)
\rput(-0.5,1.2){$e_4$}

\psline[linewidth=1.6pt,arrowinset=0]{->}(0,0)(0,1)
\rput(-0.2,0.5){$e_5$}


\psline[linewidth=1.6pt,arrowinset=0]{->}(.85,1.5)(.85,2.5)
\rput(1.05,2){$e_7$}

\psline[linewidth=1.6pt,arrowinset=0]{->}(0.85,1.5)(1.85,1.5)
\rput(1.4,1.7){$e_8$}

\endpspicture &

\psset{xunit=1.4cm,yunit=1.3cm,linewidth=0.5pt,radius=0.1mm,arrowsize=7pt,
labelsep=1.5pt,fillcolor=black}

\pspicture(-.2,0)(2.3,2)

\pscircle[fillstyle=solid](0,1){.1}
\pscircle[fillstyle=solid](0,2){.1}

\pscircle[fillstyle=solid](.7,1){.1}
\pscircle[fillstyle=solid](0.7,2){.1}

\pscircle[fillstyle=solid](1.4,1){.1}
\pscircle[fillstyle=solid](1.4,2){.1}


\psline[linewidth=1.6pt,arrowinset=0]{->}(0,1)(0,2)
\rput(0.2,1.5){$e_6$}

\rput(0,.5){$B_2$}

\psline[linewidth=1.6pt,arrowinset=0]{->}(0.7,1)(0.7,2)
\rput(0.9,1.5){$ e_9$}

\rput(.7,.5){$B_5$}

\psline[linewidth=1.6pt,arrowinset=0]{->}(1.4,1)(1.4,2)
\rput(1.6,1.5){$ e_{10}$}

\rput(1.4,.5){$B_6$}

\endpspicture 

\end{array}
\end{array}
$
\end{center}

\vspace{.5cm}

\begin{center}
$
\begin{array}{cc}
 
\begin{array}{cc}

\psset{xunit=1.4cm,yunit=1.3cm,linewidth=0.5pt,radius=0.1mm,arrowsize=7pt,
labelsep=1.5pt,fillcolor=black}

\pspicture(-1,0)(2,2.7)

\pscircle[fillstyle=solid](0,0){.1}
\pscircle[fillstyle=solid](-1,1){.1}
\pscircle[fillstyle=solid](0,1){.1}
\pscircle[fillstyle=solid](0,2){.1}
\pscircle[fillstyle=solid](0.85,1.5){.1}
\pscircle[fillstyle=solid](1.85,1.5){.1}
\pscircle[fillstyle=solid](.85,.5){.1}
\pscircle[fillstyle=solid](.85,2.5){.1}

\rput(-.5,3){$2)$}


\psline[linewidth=1.6pt,arrowinset=0]{<->}(0,2)(0.85,1.5)
\rput(0.44,1.95){$e_1$}

\psline[linewidth=1.6pt,arrowinset=0]{<-}(0,1)(0,2)
\rput(0.2,1.5){$e_2$}

\psline[linewidth=1.6pt,arrowinset=0]{->}(0,1)(0.85,1.5)
\rput(0.44,1.05){$e_3$}

\psline[linewidth=1.6pt,arrowinset=0]{<-}(-1,1)(0,1)
\rput(-0.5,1.2){$e_4$}

\psline[linewidth=1.6pt,arrowinset=0]{->}(0,0)(0,1)
\rput(-0.2,0.5){$e_5$}

\psline[linewidth=1.6pt,arrowinset=0]{->}(.85,1.5)(.85,.5)
\rput(1,1){$e_6$}

\psline[linewidth=1.6pt,arrowinset=0]{->}(.85,1.5)(.85,2.5)
\rput(1.05,2){$e_7$}

\psline[linewidth=1.6pt,arrowinset=0]{->}(0.85,1.5)(1.85,1.5)
\rput(1.4,1.7){$e_8$}

\endpspicture &

\psset{xunit=1.4cm,yunit=1.3cm,linewidth=0.5pt,radius=0.1mm,arrowsize=7pt,
labelsep=1.5pt,fillcolor=black}

\pspicture(-.2,0)(1.7,2)

\pscircle[fillstyle=solid](0,1){.1}
\pscircle[fillstyle=solid](0,2){.1}

\pscircle[fillstyle=solid](.7,1){.1}
\pscircle[fillstyle=solid](0.7,2){.1}


\psline[linewidth=1.6pt,arrowinset=0]{->}(0,1)(0,2)
\rput(0.2,1.5){$e_9$}

\rput(0,.5){$B_5$}

\psline[linewidth=1.6pt,arrowinset=0]{->}(0.7,1)(0.7,2)
\rput(0.9,1.5){$ e_{10}$}

\rput(.7,.5){$B_6$}

\endpspicture 

\end{array} &

\psset{xunit=1.4cm,yunit=1.3cm,linewidth=0.5pt,radius=0.1mm,arrowsize=7pt,
labelsep=1.5pt,fillcolor=black}

\pspicture(-1.5,1)(5,2)

\pscircle[fillstyle=solid](0,0){.1}
\pscircle[fillstyle=solid](-1,1){.1}
\pscircle[fillstyle=solid](0,1){.1}
\pscircle[fillstyle=solid](0,2){.1}
\pscircle[fillstyle=solid](0.85,1.5){.1}
\pscircle[fillstyle=solid](1.85,1.5){.1}
\pscircle[fillstyle=solid](.85,.5){.1}
\pscircle[fillstyle=solid](.85,2.5){.1}
\pscircle[fillstyle=solid](2.85,1.5){.1}
\pscircle[fillstyle=solid](3.85,1.5){.1}

\rput(-.8,2.9){$3)$}

\psline[linewidth=1.6pt,arrowinset=0]{<->}(0,2)(0.85,1.5)
\rput(0.44,1.95){$e_1$}

\psline[linewidth=1.6pt,arrowinset=0]{<-}(0,1)(0,2)
\rput(0.2,1.5){$e_2$}

\psline[linewidth=1.6pt,arrowinset=0]{->}(0,1)(0.85,1.5)
\rput(0.44,1.05){$e_3$}

\psline[linewidth=1.6pt,arrowinset=0]{<-}(-1,1)(0,1)
\rput(-0.5,1.2){$e_4$}

\psline[linewidth=1.6pt,arrowinset=0]{->}(0,0)(0,1)
\rput(-0.2,0.5){$e_5$}

\psline[linewidth=1.6pt,arrowinset=0]{->}(.85,1.5)(.85,.5)
\rput(1,1){$e_6$}

\psline[linewidth=1.6pt,arrowinset=0]{->}(.85,1.5)(.85,2.5)
\rput(1.05,2){$e_7$}

\psline[linewidth=1.6pt,arrowinset=0]{->}(0.85,1.5)(1.85,1.5)
\rput(1.4,1.7){$e_8$}

\psline[linewidth=1.6pt,arrowinset=0]{->}(1.85,1.5)(2.85,1.5)
\rput(2.2,1.7){$e_9$}

\psline[linewidth=1.6pt,arrowinset=0]{->}(2.85,1.5)(3.85,1.5)
\rput(3.2,1.7){$e_{10}$}

\endpspicture 

\end{array}
$

\end{center}

\caption{A feasible spanning forest $T_\Theta$ of $D$ and two steps in the reconstruction of a basic $\{1,2,3\}$-cyclic representation $G(A)$ of $A$, where $A$ is given in Figure \ref{fig:cyclicA}.}
\label{fig:cyclicrecA}
\end{figure}

\section{The procedure RCyclic}\label{sec:cyc}

In this section, we deal with the general framework of the recognition problem, and provide a proof of Theorems \ref{thmcyclicproCyc} and \ref{thmcyclic1}. We describe here the initialization step of the procedure RCyclic. This enables us to focus on an induced subgraph of $D$ without any directed cycle.

\begin{tabbing}
\textbf{Procedure\,\,Initialization($A$,$R^*$)}\\

\textbf{Input:} A matrix $A$ and a row index subset $R^*$ of $A$.\\
\textbf{Output: }\= An induced subgraph $D' \subseteq D$ without any directed cycle,\\
\> or determines that $A$ is not $R^*$-cyclic.\\ 
1)\verb"  "\= {\bf for }\= every column index $j$, {\bf do} \\ \> \> check that if 
 $s_{\frac{1}{2}}(A_{\bullet j})\neq \emptyset$, then $s_{\frac{1}{2}}(A_{\bullet j})=R^*$ and $s_2(A_{\bullet j})=\emptyset$,\\
\> \> and if $s_2(A_{\bullet j})\neq \emptyset$, then $s_2(A_{\bullet j})\cap R^*=\emptyset$ and $R^*\subseteq s(A_{\bullet j})$;\\
\> \> otherwise STOP: output that $A$ is not $R^*$-cyclic;\\
\> {\bf endfor }\\
2) \> let $D'=D$;\\
\> {\bf while } \= $D'$ has a directed cycle $C$, ${\bf  do}$\\
\> \> remove from $D'$ all vertices of $C$ except one;\\
\> {\bf endwhile }\\
\> output $D'$;\\
\end{tabbing}

Suppose that the procedure Initialization has output an induced subgraph $D'$ of $D$. Let $S_0^*=\{j\,:\, R_j=R^*\}$.
A spanning forest $T_\Theta$ of $D$ is called \emph{$D'$-clean}\index{clean@$D'$-clean!forest} if $Sink(D')\subseteq Sink(T_\Theta)$.
Provided that $A$ is $R^*$-cyclic, a \emph{$D'$-clean}\index{clean@$D'$-clean!representation} $R^*$-cyclic representation $G(A)$ of $A$ verifies the inclusion $Sink(D') \subseteq Sink(T_{G(A)})$. 
A bonsai $E_\ell\in D'$ is designated as a \emph{central}\index{central!bonsai} bonsai if $J_\ell^2=\emptyset$ and $E_\ell$ is a sink vertex of $D'$ satisfying at least one of the following conditions:

\begin{itemize}

\item[1.] The vertex $E_\ell$ is a $\beta$-fork for some $\beta\in f^*(E_\ell)$.

\item[2.] The global connecter set of $E_\ell$ intersects $S_0^*$: $f^*(E_\ell) \cap S_0^* \neq \emptyset$.

\item[3.] The poset $(\{R_j \, : \, j\in f^*(E_\ell) \},\subseteq )$ is not a chain.

\end{itemize}

Let $V_c$\index{set!$V_c$} be the set of central bonsais. The set $V_c$ is called \emph{$R^*$-compatible}\index{compatible@$R^*$-compatible} if for any $\beta \in S^*$ such that $\sum_{E_\ell \in V_c} g_\beta(E_\ell)\geq 2$, we have the equality $R_\beta = R^*$. 

Throughout this section, whenever $A$ has an $R^*$-cyclic representation $G(A)$, since $R^*\subseteq s(A_{\bullet j})$ for at least one column index $j$ and $A$ is nonnegative,
we may assume that $w_1,\ldots,w_\rho$ are the vertices of the basic cycle and
$e_1=[w_1,w_\rho]$, $e_{i+1}=]w_i,w_{i+1}] \in G(A)$ for $i=1,\ldots, \rho -1$. (If $\rho=1$, then $e_1=[w_1,w_1] \in G(A)$.) The following statements are good 
spices in the cooking of Theorem \ref{thmcyclic1}.

\begin{prop}\label{propcyclicgood}
If $A$ is $R^*$-cyclic, then there exists a $D'$-clean $R^*$-cyclic representation of $A$.
\end{prop}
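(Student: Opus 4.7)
The plan is to prove the stronger statement that \emph{every} $R^*$-cyclic representation $G(A)$ is already $D'$-clean, so no modification is required. By Theorem~\ref{thmdigraphfeasible}, whenever $A$ has an $R^*$-cyclic representation $G(A)$, the induced forest $T_{G(A)}$ is a feasible spanning forest of $D$ and in particular $T_{G(A)}\subseteq D$. It therefore suffices to prove that every vertex $E_\ell\in Sink(D')$ has no outgoing arc in $T_{G(A)}$.

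The main auxiliary fact I need is that every strongly connected component $C$ of $D$ satisfies $C\cap D'\neq\emptyset$ after Initialization. The plan is to prove this by induction on the iterations of the while-loop in Initialization: any cycle $C^\star$ chosen at a given step lies inside a strongly connected component of the current induced subgraph, which is itself contained in a strongly connected component $\tilde C$ of $D$ (passing to an induced subgraph only refines SCCs). Since distinct SCCs of $D$ are disjoint, $\tilde C$ is uniquely determined and $C^\star\subseteq\tilde C$; Initialization keeps one vertex of $C^\star$, so $\tilde C\cap D'$ remains non-empty. Any other SCC of $D$ is disjoint from $C^\star$ and is thus unaffected by the current removal step.

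Armed with this, I would argue by contradiction. Suppose some $(E_\ell,E_{\ell'})_{E_{\ell'}^k}\in T_{G(A)}$ with $E_\ell\in Sink(D')$. If $E_{\ell'}\in D'$, the arc persists in the induced subgraph $D'$, directly contradicting the sink property of $E_\ell$. Otherwise $E_{\ell'}\notin D'$; since Initialization only removes vertices of cycles of $D$, the SCC $C$ of $E_{\ell'}$ in $D$ is non-trivial, and by the auxiliary fact I may pick $E_u\in C\cap D'$ with $E_u\sim_s E_{\ell'}$. Lemma~\ref{lemcycle} forces $m(\ell')=m(u)=1$, hence $k=1$, and yields $g(E_u^1)=g(E_u)=g(E_{\ell'})=g(E_{\ell'}^1)$. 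The arc conditions $g(E_\ell)\le g(E_{\ell'}^1)$ and $J_\ell^2=\emptyset$ then transfer verbatim to produce $(E_\ell,E_u)_{E_u^1}\in D$, and this arc lies in $D'$ since both its endpoints do, again contradicting $E_\ell\in Sink(D')$.

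The main obstacle I expect lies in the bookkeeping of the iterative Initialization procedure, that is, ensuring that no strongly connected component of $D$ is inadvertently emptied when cycles are broken one by one. The inductive auxiliary fact above disposes of this by exploiting that any cycle detected in the current digraph is automatically confined to a single SCC of $D$, so the ``kept'' vertex is always in the corresponding SCC.
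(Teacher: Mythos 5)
Your proposal tries to prove the stronger statement that \emph{every} $R^*$-cyclic representation is already $D'$-clean, but that stronger statement is false, and the failure occurs precisely in the second branch of your contradiction argument. Suppose $(E_\ell,E_{\ell'})_{E_{\ell'}^k}\in T_{G(A)}$ with $E_\ell\in Sink(D')$ and $E_{\ell'}\notin D'$, and suppose moreover that $E_\ell$ itself lies in the non-trivial strongly connected component $C$ of $E_{\ell'}$ (nothing prevents this: $E_\ell$ can be exactly the one vertex of $C$ that Initialization kept, and it is then a sink of $D'$ as soon as it has no out-neighbour in $D'$ outside $C$). Because arcs inside an SCC of $D$ go both ways (transitivity of $\prec_D$) and $D'$ is an induced subgraph with no directed cycle, $C\cap D'$ is a single vertex, so your $E_u$ is forced to be $E_\ell$ itself. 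The arc you construct is then the loop $(E_\ell,E_\ell)$, and with the paper's definition of a sink ($\{v\}$ is \emph{closed}, so loops are permitted) this yields no contradiction with $E_\ell\in Sink(D')$. Your first branch and your auxiliary invariant about SCCs meeting $D'$ are fine, but they do not cover this case.

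This uncovered case is not a technicality: it is exactly the situation the proposition is about. A representation $G(A)$ may attach $B_\ell$ to a bonsai $B_{\ell'}$ whose index was discarded by Initialization, with $E_\ell\sim_s E_{\ell'}$, and such a $G(A)$ is genuinely not $D'$-clean. The paper's proof therefore does \emph{not} claim that every representation is clean; it first observes that two bonsais of $Sink(D')$ cannot lie in a common subpath of $T_{G(A)}$, and then uses Lemma~\ref{lemcycle} ($g(E_\ell)=g(E_{\ell'})$ and $m(\ell)=m(\ell')=1$ for SCC-mates) to argue that bonsais belonging to a directed cycle of $D$ are interchangeable, so that $B_\ell$ can be \emph{moved} to sit directly on the basic cycle, producing a new, $D'$-clean representation. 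Your argument omits this constructive modification step entirely, so as written it does not establish the proposition.
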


\begin{proof}
Suppose that $A$ has an $R^*$-cyclic representation $G(A)$.
Oberve that any two bonsais in $Sink(D')$ are not in a same subpath of $T_{G(A)}$ (otherwise, by transitivity of the relation $\prec_D$ and since $D'$ is an induced subgraph of $D$, one of both bonsais would not be in $Sink(D')$, a contradiction). Then, by Lemma \ref{lemcycle} and construction of $D'$, it is possible to move if necessary some bonsais in $G(A)$ corresponding to vertices in a directed cycle of $D$, in order to obtain a $D'$-clean $R^*$-cyclic representation of $A$.
\end{proof}\\

\begin{lem}\label{lemcyclic4}
Suppose that $A$ has a $D'$-clean $R^*$-cyclic representation $G(A)$. Then for any bonsai $E_\ell\in Sink(D')$,  the node $v_\ell$ belongs to the basic cycle of $G(A)$.
\end{lem}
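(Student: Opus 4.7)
My plan is to argue by contradiction. Assume $v_\ell$ does not lie on the basic cycle of $G(A)$. Then the basic path $q$ from $v_\ell$ to $v_\ell^*$ (the corresponding endnode on the cycle) contains at least one edge. Let $e$ be the first edge of $q$. Since $v_\ell$ is the \emph{closest} vertex of $B_\ell$ to the basic cycle, the subtree $B_\ell$ lies entirely on the side of $v_\ell$ away from the cycle, so $e \notin E_\ell$; of course $e \notin R^*$ either. Hence $e$ belongs to some other bonsai $E_{\ell'}$ with $\ell' \neq \ell$, and $v_\ell$ is an endpoint of $e$, so $v_\ell$ is a node of $B_{\ell'}$.

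Now I would apply Proposition \ref{lemdigraphstem} to the situation: $E_{\ell'}$ contains an edge (namely $e$) of the path $q$ from $v_\ell$ to $v_\ell^*$, so the proposition yields that $\{i : e_i \in q \cap B_{\ell'}\} = E_{\ell'}^k$ for some $1 \le k \le m(\ell')$ and $(E_\ell, E_{\ell'})_{E_{\ell'}^k} \in D$. Moreover, $v_\ell \neq v_{\ell'}$: the path $q$ continues from its intersection with $B_{\ell'}$ toward the cycle, and $v_{\ell'}$ is by definition the endpoint of $B_{\ell'}$ closest to the cycle, which therefore lies strictly past $v_\ell$ along $q$. By the very definition of the induced forest $T_{G(A)}$, the arc $(E_\ell, E_{\ell'})_{E_{\ell'}^k}$ then belongs to $T_{G(A)}$, so $E_\ell$ is not a sink of $T_{G(A)}$.

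On the other hand, $G(A)$ being $D'$-clean means precisely that $Sink(D') \subseteq Sink(T_{G(A)})$, and since by hypothesis $E_\ell \in Sink(D')$, we must have $E_\ell \in Sink(T_{G(A)})$, contradicting the previous paragraph. The only nonroutine point in this argument is verifying that the edge $e$ starting $q$ at $v_\ell$ necessarily belongs to a bonsai distinct from $E_\ell$ and that the resulting $v_{\ell'}$ differs from $v_\ell$; both are guaranteed by the minimality built into the definition of $v_\ell$ (closest node of $B_\ell$ to the cycle).
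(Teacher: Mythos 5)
Your proof is correct and takes essentially the same route as the paper, which simply observes in one line that if $v_\ell$ is not on the basic cycle then there is an arc leaving $E_\ell$ in $T_{G(A)}$, contradicting $Sink(D')\subseteq Sink(T_{G(A)})$. Your write-up just fills in the details of why such an arc exists (the first edge of the path from $v_\ell$ to the cycle lies in another bonsai $E_{\ell'}$ with $v_{\ell'}\neq v_\ell$).
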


\begin{proof}
For all $1\le \ell \le b$, if $v_\ell$ is not a node of the basic cycle, then there exists an edge leaving $E_\ell$ in the digraph $T_{G(A)}$, contradicting the inclusion $Sink(D') \subseteq Sink(T_{G(A)})$.
\end{proof}

\begin{lem}\label{lemcyclicchain}
Suppose that $A$ has an $R^*$-cyclic representation $G(A)$.
If some bonsai $E_\ell \in Sink(D')\verb"\"V_c$ is such that $v_\ell=w_{\rho}$, then $J_\ell^2=\emptyset$ and the poset $(\{R_j\,:\, j\in f^*(E_\ell)\},\subseteq )$ is a chain.
\end{lem}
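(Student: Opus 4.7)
The plan is to prove first that $J_\ell^2=\emptyset$ by exploiting the fact that at the central node $v_\ell=w_\rho$ both incident basic cycle edges are entering, and then to derive the chain property from the hypothesis $E_\ell\in Sink(D')\setminus V_c$ together with the definition of central bonsai.

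First I would recall that in the $R^*$-cyclic representation $G(A)$, the basic cycle has vertices $w_1,\ldots,w_\rho$ with $e_1=[w_1,w_\rho]$ bidirected and $e_{i+1}=\,]w_i,w_{i+1}]$ directed from $w_i$ to $w_{i+1}$ for $1\le i\le \rho-1$; thus the two basic cycle edges incident to $v_\ell=w_\rho$, namely $e_1$ and $e_\rho$, both enter $v_\ell$ and carry sign $+$ at $v_\ell$. Fix any $1\le k\le m(\ell)$ and any $\beta\in f^*(E_\ell^k)$. Since $\beta\in S^*$, the fundamental circuit $C$ of $f_\beta$ contains at least one basic cycle edge, and by Lemma \ref{lemdigraphBlI} it contains the $B_\ell$-path $B_\ell^k$ whose endnode on the basic subgraph is $v_\ell$. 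Moreover $v_\ell$ cannot be an endnode of $f_\beta$ (else the stem from that endnode to the basic cycle would be trivial, contradicting $B_\ell^k\neq\emptyset$), so by Lemma \ref{lemdefiWeight2}, since $A_{\bullet\beta}\ge 0$, the minimal covering walk $w(C)$ is consistently oriented at $v_\ell$.

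Next I would analyze the walk-edges at $v_\ell$ in every passage of $w(C)$ through $v_\ell$. Because the bonsai $B_\ell$ meets the basic cycle only at $v_\ell$, the walk has to cross between the basic cycle part and the bonsai part through $v_\ell$. Using the handcuff classification of Corollary \ref{corBidirectedCircuit} together with Figure \ref{fig:Apositive}, at each such passage the walk pairs either a basic cycle edge with a bonsai edge (in which case consistency at $v_\ell$ forces the bonsai edge to carry sign $-$, i.e.\ to leave $v_\ell$, since the basic cycle edge has sign $+$) or two bonsai edges that close a subcycle around $v_\ell$ while the basic cycle is traversed in a separate passage; in this second situation the two passes of $w(C)$ at $v_\ell$ each pair a basic cycle edge ($e_1$ or $e_\rho$, sign $+$) with a bonsai edge, so consistency at both passes forces both bonsai edges used at $v_\ell$ to leave $v_\ell$. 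In every case the edge of $B_\ell^k$ incident to $v_\ell$ leaves $v_\ell$; as $k$ was arbitrary, every $B_\ell$-path leaves $v_\ell$, and by Lemma \ref{lemdigraphutile} this yields $E_\ell^k\sim_{E_\ell}E_\ell^{k'}$ for all $k,k'$, so $J_\ell^2=\emptyset$.

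Finally, since $E_\ell\in Sink(D')\setminus V_c$ and we have just shown $J_\ell^2=\emptyset$, the definition of a central bonsai forces all three alternatives (1), (2), (3) to fail; in particular alternative (3) fails, which is exactly the statement that $(\{R_j:j\in f^*(E_\ell)\},\subseteq)$ is a chain. The main obstacle I anticipate is the case analysis in the second paragraph, and more precisely the handcuff case where two $B_\ell$-paths of $f_\beta$ meet at $v_\ell$: one must verify that the orientation of the basic cycle traversal around $v_\ell=w_\rho$ really forces both bonsai edges at $v_\ell$ to leave, so that Lemma \ref{lemdigraphutile} places all paths in one $\sim_{E_\ell}$-class rather than splitting them into two.
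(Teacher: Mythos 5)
Your proof follows essentially the same route as the paper's: since $e_1$ and $e_\rho$ both enter the central node $w_\rho=v_\ell$ and $A$ is nonnegative, Lemma \ref{lemdefiWeight2} forces every $B_\ell$-path to leave $v_\ell$, Lemma \ref{lemdigraphutile} then gives $J_\ell^2=\emptyset$, and the chain property follows because $E_\ell\in Sink(D')\setminus V_c$ with $J_\ell^2=\emptyset$ rules out the third condition of centrality. The extra walk-consistency case analysis you supply is just an expansion of the one-line appeal to Lemma \ref{lemdefiWeight2} that the paper makes.
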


\begin{proof}
Let $E_\ell\in Sink(D')\verb"\"V_c$ such that $v_\ell=w_{\rho}$.
Since the edges $e_1$ and $e_\rho$ enter the central node $w_{\rho}$ in $G(A)$ and the matrix $A$ is nonnegative, by Lemma \ref{lemdefiWeight2} it follows that all $B_\ell$-paths in $G(A)$ are leaving $v_\ell$. Then, by Lemma \ref{lemdigraphutile}, we have that $J_\ell^2=\emptyset$. By the third condition in the definition of a central bonsai, the proof is done.
\end{proof}\\

\begin{lem}\label{lemcyclic2}
Suppose that $A$ has a $D'$-clean $R^*$-cyclic representation $G(A)$. Then for any central bonsai $E_\ell$ ($1\le \ell\le b$), $v_\ell$ is equal to the central node in $G(A)$.
\end{lem}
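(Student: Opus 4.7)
The plan is to argue by contradiction: assume $E_\ell$ is central with $v_\ell\neq w_\rho$, and deduce that none of the three alternatives in the definition of a central bonsai can hold.

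Since $E_\ell\in Sink(D')$ and $G(A)$ is $D'$-clean, Lemma \ref{lemcyclic4} places $v_\ell$ on the basic cycle; write $v_\ell=w_i$ with $1\le i<\rho$. Under the standing convention $e_1=[w_1,w_\rho]$ and $e_{k+1}=\,]w_k,w_{k+1}]$ for $k=1,\ldots,\rho-1$, the vertex $w_\rho$ is the unique non-consistent node of the cycle, whereas $w_i$ is consistent ($e_i$ enters and $e_{i+1}$ leaves $w_i$).

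First I would observe that for every $\beta\in f^*(E_\ell)$ the stem of $f_\beta$ traversing $B_\ell$ must attach to the cycle exactly at $v_\ell$, since $v_\ell$ is by definition the closest vertex of $B_\ell$ to the cycle and already lies on it. Consequently $v_\ell$ is an endpoint of the consistently oriented cycle-path whose edge index set is the interval $R_\beta$. Two consequences drop out immediately: (a) no $R_\beta$ can equal $R^*$, since both endpoints of $R^*$ are $w_\rho\neq w_i$, so $f^*(E_\ell)\cap S_0^*=\emptyset$ and condition~$2$ fails; (b) Corollary \ref{cordigraphDbeta2} would force $R_\beta=R^*$ whenever $g_\beta(E_\ell)=2$, so by (a) we have $g_\beta(E_\ell)\le 1$ for every $\beta\in f^*(E_\ell)$, and the single-vertex digraph $E_\ell$ therefore cannot be a $\beta$-fork, so condition~$1$ also fails.

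The main step, and the one I expect to be the main obstacle, is to rule out condition~$3$. Using $J_\ell^2=\emptyset$, Lemma \ref{lemdigraphutile} says that all $B_\ell$-paths in $G(A)$ either simultaneously leave $v_\ell$ or simultaneously enter $v_\ell$. Because $A\ge 0$, Lemma \ref{lemdefiWeight2} ensures that the minimal covering walk of each fundamental circuit is consistently oriented according to its nonbasic edge; consistency at the consistent vertex $w_i$ then pairs the bonsai-side edge of the walk at $v_\ell$ with exactly one incident cycle edge --- $e_i$ when the $B_\ell$-paths leave $v_\ell$, and $e_{i+1}$ when they enter. Thus every interval $R_\beta$ with $\beta\in f^*(E_\ell)$ belongs to a single family of intervals rooted at $w_i$ and extending in a fixed direction: either the backward family $\{i\}\subset\{i-1,i\}\subset\cdots\subset\{2,\ldots,i\}\subset\{1,2,\ldots,i\}$ (the last member wrapping through $w_1$ and the bidirected edge $e_1$ to end at $w_\rho$), or the forward family $\{i+1\}\subset\{i+1,i+2\}\subset\cdots\subset\{i+1,\ldots,\rho\}$. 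Either family is linearly ordered by inclusion, so $(\{R_\beta:\beta\in f^*(E_\ell)\},\subseteq)$ is a chain, violating condition~$3$.

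All three defining alternatives having been excluded, $E_\ell$ cannot be a central bonsai --- a contradiction, so $v_\ell=w_\rho$. The hard part is the consistency-direction step: one must carefully track how the sign pattern at $w_i$ couples the common orientation of the $B_\ell$-paths with the unique admissible cycle edge at $w_i$, and, in the backward case, verify that the wrap-around interval through $e_1$ is still a legitimate member of the nested family.
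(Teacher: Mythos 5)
Your proof is correct and follows essentially the same route as the paper's: conditions 1 and 2 of centrality are dispatched via Corollary \ref{cordigraphDbeta2} and the consistency constraint of Lemma \ref{lemdefiWeight2} (which forces the attachment node to be the unique non-consistent node $w_\rho$ whenever the whole cycle is covered), and condition 3 via Lemma \ref{lemdigraphutile}, which pins all intervals to a single direction out of $v_\ell$ and hence into a chain. The only difference is presentational — you package everything as one contradiction and spell out the nesting of the two interval families more explicitly than the paper does.
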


\begin{proof}
Let $E_\ell$ be a sink vertex of $D'$ such that $J_\ell^2=\emptyset$.
If the vertex $E_\ell$ is a $\beta$-fork  for some $\beta \in S^*$ or $f^*(E_\ell) \cap S_0^*\neq \emptyset$, since $A$ is nonnegative, the conclusion of the lemma follows from Lemma \ref{lemdefiWeight2} and Corollary \ref{cordigraphDbeta2}.

Now assume that the poset $(\{R_j \, : \, j\in f^*(E_\ell)\},\subseteq)$ is not a chain. By Lemma \ref{lemcyclic4}  $v_\ell=v_\ell^*$.
Suppose by contradiction that $v_\ell$ is not a central node, say $v_\ell=w_k$ for some $k< \rho$. Since $J_\ell^2=\emptyset$,
by Lemma \ref{lemdigraphutile}, either all
$B_\ell$-paths of $G(A)$ enter $v_\ell$ or leave $v_\ell$. Assume that all $B_\ell$-paths of $G(A)$ are entering $v_\ell$. It results that any interval $p_j$ of $G(A)$ (with $j \in f^*(E_\ell)$) is leaving $v_\ell=w_k$, so $p_j$ contains the edge $]w_{k},w_{k+1}]$ and is included in the path $]w_k,]w_{k},w_{k+1}],\ldots,]w_{\rho-1},w_\rho],w_\rho]$. Thus the poset $(\{R_j \, : \, j\in f^*(E_\ell)\},\subseteq)$ is a chain, a contradiction. We obtain a similar contradiction if all $B_\ell$-paths of $G(A)$ are leaving $v_\ell$.
\end{proof}\\

Let $\Lambda$ and $\Lambda'$ be two sets whose elements are subsets of $R^*$.
A pair of posets $(\Lambda, \subseteq)$ and $(\Lambda', \subseteq)$ is said to be \emph{exclusive}\index{exclusive pair of posets} if for all $R\in \Lambda, R' \in \Lambda'$, we have $R\nsubseteq R'$ and $R'\nsubseteq R$.
A poset $(\Lambda, \subseteq)$  is called an \emph{$R^*$-double chain}\index{double@$R^*$-double chain} if $(\Lambda\verb"\"\{R^*\}, \subseteq)$ is spanned by an exclusive pair of chains. 

\begin{lem}\label{lemcyclic5}
If a poset $(\Lambda,\subseteq)$ is an $R^*$-double chain, then a spanning of 
$(\Lambda\verb"\"\{R^*\},\subseteq)$ by an exclusive pair of chains is unique.
\end{lem}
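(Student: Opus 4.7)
The plan is to reduce the uniqueness question to a statement about the comparability graph of $\Lambda\setminus\{R^*\}$. I would define $H$ to be the undirected graph on vertex set $\Lambda\setminus\{R^*\}$ whose edges join those pairs $\{R,R'\}$ satisfying $R\subseteq R'$ or $R'\subseteq R$. The goal is then to show that, for any spanning of $(\Lambda\setminus\{R^*\},\subseteq)$ by an exclusive pair of chains $\{\Lambda^1,\Lambda^2\}$, each non-empty $\Lambda^i$ coincides with a single connected component of $H$; since the connected components of $H$ depend only on $\Lambda$, this will force the unordered pair $\{\Lambda^1,\Lambda^2\}$ to be uniquely determined.

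The key step has two halves. First, since $\Lambda^i$ is a chain, any two of its elements are comparable and hence adjacent in $H$, so $\Lambda^i$ spans a clique of $H$ and in particular lies entirely inside a single connected component. Second, by exclusivity, no element of $\Lambda^1$ is comparable to any element of $\Lambda^2$, so $H$ has no edge running between $\Lambda^1$ and $\Lambda^2$. Propagating this fact along an arbitrary path in $H$ (a simple induction on the path length) shows that any vertex reachable from a point of $\Lambda^1$ must itself lie in $\Lambda^1$; thus each $\Lambda^i$ is also a union of connected components of $H$. Combining the two halves, each non-empty $\Lambda^i$ is exactly one connected component of $H$.

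Consequently $H$ has at most two connected components, and the unordered pair $\{\Lambda^1,\Lambda^2\}$ is forced (with the convention that an empty chain is allowed) to be the family of connected components of $H$, which is intrinsic to $\Lambda$ alone. I do not foresee any genuine obstacle: the argument is a direct consequence of the definition of an exclusive pair together with the transitivity of ``belonging to the same chain''. The only minor subtlety is bookkeeping for the case in which one of the two chains is empty, and this is easily absorbed into the statement by identifying an empty chain with the absence of a second connected component.
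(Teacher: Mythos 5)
Your proof is correct, and it reaches the paper's conclusion by a somewhat different packaging of the same two underlying facts. The paper argues by contradiction: it supposes two distinct exclusive spannings $\{\Lambda_1,\Lambda_2\}$ and $\{\Lambda_3,\Lambda_4\}$, observes (after renumbering) that one chain of the second spanning must meet both $\Lambda_1$ and $\Lambda_2$, and then notes that two such elements are comparable because they lie in a common chain, contradicting the exclusivity of $(\Lambda_1,\Lambda_2)$. You instead prove an intrinsic characterization: each non-empty chain of any exclusive spanning is precisely one connected component of the comparability graph of $\Lambda\setminus\{R^*\}$, so the unordered pair of chains is determined by $\Lambda$ alone. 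Both routes rest on exactly the same observations (elements of one chain are pairwise comparable; elements of different chains of an exclusive pair are incomparable), but yours buys a little more: it exhibits the chains explicitly rather than merely showing two spannings cannot differ, and it absorbs the degenerate case where $\Lambda\setminus\{R^*\}$ is itself a chain --- which the paper dispatches separately at the start of its proof --- into the convention that one of the two chains may be empty.
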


\begin{proof}
Clearly, if $(\Lambda\verb"\"\{R^*\},\subseteq)$ is a chain, then we can not span it by an exclusive pair of nonempty chains and so there is a unique spanning chain. Suppose by contradiction that there exist four nonempty chains $(\Lambda_1,\subseteq)$, $(\Lambda_2,\subseteq)$, $(\Lambda_3,\subseteq)$ and $(\Lambda_4,\subseteq)$ such that for $i=1\m{ and }3$, the pair of chains $(\Lambda_i,\subseteq)$ and $(\Lambda_{i+1},\subseteq)$ is  exclusive and spans $(\Lambda\verb"\"\{R^*\},\subseteq)$, and $\Lambda_3\neq \Lambda_1, \Lambda_2$. Up to a renumbering of the chains, we may suppose that there exist $R\in \Lambda_3\cap \Lambda_1 $ and
$R' \in \Lambda_3\cap \Lambda_2$. Since $(\Lambda_3,\subseteq)$ is a chain, it follows that $R\subseteq R'$ or $R'\subseteq R$, which contradicts the fact that the pair of chains $(\Lambda_1,\subseteq)$ and $(\Lambda_2,\subseteq)$ is exclusive.
\end{proof}\\

Now let us prove that the poset $(\Lambda_c, \subseteq)$ is an $R^*$-double chain, provided that $A$ is $R^*$-cyclic.

\begin{prop}\label{propcyclic3}
If $A$ is $R^*$-cyclic, then the poset $(\Lambda_c,\subseteq)$ is an $R^*$-double chain and $V_c$ is $R^*$-compatible.
\end{prop}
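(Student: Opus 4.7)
The plan is to fix, by Proposition~\ref{propcyclicgood}, a $D'$-clean $R^*$-cyclic representation $G(A)$ of $A$, and then exploit Lemma~\ref{lemcyclic2}, which forces $v_\ell = w_\rho$ for every $E_\ell \in V_c$. Recall our convention at the start of Section~\ref{sec:cyc} that the basic cycle has vertices $w_1,\ldots,w_\rho$ with $e_1=[w_1,w_\rho]$ bidirected (both heads) and $e_{i+1}=]w_i,w_{i+1}]$ directed, so that $w_\rho$ is the unique non-consistent (central) node.

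First I would describe $\Lambda_c$ explicitly. Let $R_j\in\Lambda_c$, coming from $j\in f^*(E_\ell)$ with $E_\ell$ central. The interval $p_j$ is a consistently oriented subpath of the basic cycle, and since $B_\ell$ is attached to the cycle only at $v_\ell=w_\rho$, one of the two stems of $f_j$ must enter the cycle at $w_\rho$. In an $R^*$-cyclic representation the basis has a single maximal negative $1$-tree (so only the case $T=T'$ of Figure~\ref{fig:Apositive} occurs) and the basic cycle is full. Inspecting the admissible fundamental-circuit types under these restrictions, $p_j$ must be one of: an \emph{initial segment} $\{1,\ldots,k\}$ (when $p_j$ leaves $w_\rho$ via $e_1$, corresponding to a bidirected $1$-edge $f_j$), a \emph{final segment} $\{k,\ldots,\rho\}$ (when $p_j$ leaves $w_\rho$ via $e_\rho$, corresponding to a directed $1$-edge $f_j$), or all of $R^*$ (when $f_j$ is a half-edge or a $2$-edge whose fundamental circuit engulfs the basic cycle).

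Given this classification, set
$$\Lambda_c^1=\{R_j\in\Lambda_c\setminus\{R^*\}:R_j\text{ is an initial segment}\},\quad
\Lambda_c^2=\{R_j\in\Lambda_c\setminus\{R^*\}:R_j\text{ is a final segment}\}.$$
Each is totally ordered by inclusion, and a typical initial segment $\{1,\ldots,k\}$ with $k<\rho$ is incomparable with any final segment $\{k',\ldots,\rho\}$ with $k'>1$ because $\rho\notin\{1,\ldots,k\}$ and $1\notin\{k',\ldots,\rho\}$. Hence the pair $(\Lambda_c^1,\Lambda_c^2)$ is exclusive and spans $\Lambda_c\setminus\{R^*\}$, so $(\Lambda_c,\subseteq)$ is an $R^*$-double chain.

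For $R^*$-compatibility of $V_c$, take $\beta\in S^*$ with $\sum_{E_\ell\in V_c}g_\beta(E_\ell)\ge 2$. Two configurations can produce this inequality. Either (a) there exist two distinct $E_\ell,E_{\ell'}\in V_c$ with $\beta\in f^*(E_\ell)\cap f^*(E_{\ell'})$, or (b) there exists $E_\ell\in V_c$ with $g_\beta(E_\ell)=2$. In case (a), since the subtrees $B_\ell$ and $B_{\ell'}$ share only the single node $w_\rho$, no simple stem can meet both interiors, so $f_\beta$ must possess two distinct stems $p,p'$, each entering one of them and meeting the basic cycle at $w=w'=w_\rho$; $f_\beta$ is then a bidirected non-half edge, falling under the case "$w=w'$, basic cycle full" of Figure~\ref{fig:Apositive}, so its fundamental circuit covers the entire basic cycle and $R_\beta=R^*$. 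In case (b), Proposition~\ref{propBlsubstems} forces us into case $a_2$ or $a_3$; both correspond (via Figure~\ref{fig:lsubstem2} and Figure~\ref{fig:Apositive}) to $f_\beta$ being a bidirected edge with $w=w'=v_\ell=w_\rho$ and a full basic cycle in its fundamental circuit, again giving $R_\beta=R^*$.

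The main obstacle is the last case analysis: one has to check carefully that no configuration with $\sum g_\beta(E_\ell)\ge 2$ can arise in which a single stem of $f_\beta$ is long enough to record contributions from several central bonsais, or in which two stems reach the cycle at two \emph{distinct} points yet still accumulate a total of $2$ over $V_c$. Both are prevented precisely by the fact, established via Lemma~\ref{lemcyclic2}, that all central bonsais are glued to the cycle at the single node $w_\rho$ and diverge into pairwise disjoint subtrees, together with the constraint (inherited from the $a_3$/$a_2$ descriptions) that any $g_\beta(E_\ell)=2$ contribution already encodes a double traversal of the central node.
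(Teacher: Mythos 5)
Your proof is correct and follows essentially the same route as the paper's: both arguments rest on Lemma \ref{lemcyclic2} to pin every central bonsai to the central node $w_\rho$, split $\Lambda_c\setminus\{R^*\}$ into the intervals through $e_1$ and those through $e_\rho$ (the paper phrases this as $\{R_j: 1\in R_j,\rho\notin R_j\}$ versus $\{R_j: 1\notin R_j,\rho\in R_j\}$, citing Lemma \ref{lemdigraph12}, which is exactly your initial/final-segment classification), and then handle compatibility via the fundamental-circuit analysis at $w_\rho$, where the paper simply cites Corollary \ref{cordigraphDbeta2} and "follows its lines" for the two-bonsai case while you re-derive it from Proposition \ref{propBlsubstems} and Figure \ref{fig:Apositive}. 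The only cosmetic remark is that in your case (a) the claim that $f_\beta$ is bidirected should be justified by noting that a directed edge with $w=w'$ would have a fundamental circuit disjoint from the basic cycle, contradicting $\beta\in S^*$.
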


\begin{proof}
Suppose that $A$ has an $R^*$-cyclic representation $G(A)$. 
By Lemmas  \ref{lemdigraph12} and \ref{lemcyclic2}, we deduce that the poset $(\Lambda_c\verb"\"\{R^*\},\subseteq)$ is spanned by the exclusive pair of chains $(\{R_j \in \Lambda_c \,:\, 1\in R_j, \rho \notin R_j\},\subseteq)$ and $(\{R_j \in \Lambda_c \,:\, 1\notin R_j, \rho\in R_j\},\subseteq)$.

On the other hand, let $E_\ell\in V_c$ such that $g_\beta(E_\ell)=2$ for some $\beta \in S^*$. By Corollary \ref{cordigraphDbeta2} $R_j=R^*$. Similarly, if there exist $E_\ell,E_{\ell'}\in V_c$ such that $g_\beta(E_\ell)=1$ and $g_\beta(E_{\ell'})=1$ for some $\beta \in S^*$, then by Lemma \ref{lemcyclic2} $v_\ell=v_{\ell'}=w_\rho$.
Following the lines of the proof of Corollary \ref{cordigraphDbeta2}, we obtain the same conclusion.  
\end{proof}\\

By assuming that the poset $(\Lambda_c,\subseteq)$ is an $R^*$-double chain, the $R^*$-open matrix $O(R^*)$ is defined as follows. Let us partition $\Lambda_c\verb"\"\{R^*\}$ into two subsets $\Lambda_c^1$  and $\Lambda_c^2$ such that the posets $(\Lambda_c^1,\subseteq)$ and $(\Lambda_c^2,\subseteq)$ build up an exclusive pair of chains. Let $F_c^i=\{j\, :\,\exists\hspace{.07cm} E_\ell\in V_c \,\, \m{s.t.}\, j\in f^*(E_\ell),\,  R_j \in \Lambda^i_c \}$ for $i=1$ and $2$, $R=\displaystyle{\cup_{E_\ell\in Sink(D')\verb"\"V_c}} E_\ell \cup R^*$ and $F=f(R) \verb"\" (F_c^1 \cup F_c^2)$. Then

$$
O(R^*)=\left[ 
\begin{array}{ccccc} 
A^{\frac{1}{2}\rightarrow 1}_{R\times F} & 
A_{R \times F_c^1} & A_{R \times F_c^2} & 
\chi^{R}_{R^*} & \chi^{R}_{R^*} \\
\bf{0_{1\times m}} & \bf{1_{1\times |F_c^1|}} &\bf{0_{1\times |F_c^2|}} & 0& 1 \\
\bf{0_{1\times m}} & \bf{0_{1\times |F_c^1|}} &\bf{1_{1\times |F_c^2|}} & 0& 1 \\ 
\end{array} \right],$$ 

\noindent
where the rows of $O(R^*)$ except the two last ones are indexed by the elements of the set $R$. If $O(R^*)$ has a network representation $G(O(R^*))$, then the two last rows of $O(R^*)$ correspond to basic \emph{artificial}\index{artificial edge} edges denoted as $e_0$ and $\tilde e_\rho$.
Let us see the graphical interpretation of $O(R^*)$.

\begin{prop}\label{propcyclicM*}
Suppose that $A$ is $R^*$-cyclic. Then the $R^*$-open matrix $O(R^*)$ is a network matrix.
\end{prop}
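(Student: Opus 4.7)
The plan is to construct a network representation of $O(R^*)$ explicitly from a chosen $R^*$-cyclic representation of $A$. First I would fix, using Proposition \ref{propcyclicgood}, a $D'$-clean $R^*$-cyclic representation $G(A)$ of $A$; by Lemma \ref{lemcyclic2}, every central bonsai $E_\ell \in V_c$ is then attached to the basic cycle at the central node $w_\rho$. By the proof of Proposition \ref{propcyclic3} combined with Lemma \ref{lemcyclic5}, the unique exclusive pair of chains $\Lambda_c^1, \Lambda_c^2$ spanning $(\Lambda_c \setminus \{R^*\}, \subseteq)$ admits a concrete geometric interpretation: $R_j \in \Lambda_c^1$ iff the basic-cycle interval $p_j$ in $G(A)$ leaves $w_\rho$ through $e_1$, and $R_j \in \Lambda_c^2$ iff it leaves through $e_\rho$. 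Moreover, for each $E_\ell \in V_c$ the fact that $J_\ell^2 = \emptyset$ combined with Lemma \ref{lemdigraphutile} forces all $B_\ell$-paths in $G(A)$ to be oriented identically at $w_\rho$, which will be essential when routing nonbasic edges to the two sides of the split below.

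Next I would build a digraph $\widetilde G$ and a spanning tree $\widetilde T$ of it as follows. Take the subgraph of $G(A)$ induced by the edges indexed by $R$ together with the nonbasic edges indexed by $F \cup F_c^1 \cup F_c^2$. Split the central node $w_\rho$ into two copies $w_\rho^{(1)}$ and $w_\rho^{(2)}$, routing $e_1$ to $w_\rho^{(1)}$ and $e_\rho$ to $w_\rho^{(2)}$, and distributing the nonbasic edges indexed by $F_c^1 \cup F_c^2$ according to chain membership (those in $F_c^1$ attach at $w_\rho^{(1)}$, those in $F_c^2$ at $w_\rho^{(2)}$, after tracing their stems through the relevant central bonsai). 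Using Lemma \ref{lemdefiSwi} to perform a switching if needed, convert the bidirected edge $e_1$ into a consistently oriented directed edge. Finally, introduce two fresh vertices $u_1, u_2$ and directed artificial edges $e_0$ incident to $w_\rho^{(1)}$ and $\tilde e_\rho$ incident to $w_\rho^{(2)}$; the non-central bonsais in $Sink(D') \setminus V_c$ remain attached at their original nodes $v_\ell$ on the opened cycle, producing the spanning tree $\widetilde T$.

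Having constructed $\widetilde T$, I would verify that each column of $O(R^*)$ is the signed edge-incidence vector of a directed path in $\widetilde T$. For columns in $F$, the description of fundamental circuits at page \pageref{mycounter2} and Lemma \ref{lemdefiWeight1} show that the restriction of $A_{\bullet j}$ to $R$ has support equal to an interval of the basic cycle plus subpaths in the non-central bonsais; after the cycle is opened, the $\frac{1}{2}$ entries (which arose from half-edges or $2$-edges whose minimal covering walk traversed the cycle twice, cf.\ Lemma \ref{lemdefiWeight2}) become genuine $1$ entries of a single directed path, exactly matching the $\frac{1}{2} \rightarrow 1$ transformation defining the block $A^{\frac{1}{2}\rightarrow 1}_{R\times F}$. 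For columns in $F_c^i$, the nonbasic edge's stem in $G(A)$ entered a central bonsai through $w_\rho$ on the side corresponding to $\Lambda_c^i$, so after the split its fundamental cycle in $\widetilde T$ uses exactly the artificial edge on that side, yielding the entry $1$ in the correct artificial row. The last two columns encode the set of edges $R^*$ as a directed path in $\widetilde T$, once without and once with both artificial edges. The main obstacle will be the orientation bookkeeping: the most delicate point is to verify that a single switching at $w_\rho^{(1)}$ or $w_\rho^{(2)}$ (cf.\ Proposition \ref{propBidirectedSwitch}) simultaneously fixes the signs of all fundamental cycles meeting $e_1$, and that the chain decomposition uniformly assigns the correct side of the split to every nonbasic edge of $F_c^1 \cup F_c^2$ independently of the choice of the $D'$-clean representation.
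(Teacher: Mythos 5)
Your overall strategy is the paper's: fix a $D'$-clean $R^*$-cyclic representation, cut the basic cycle at the central node $w_\rho$, direct the formerly bidirected edge $e_1$, append the two artificial pendant edges, and check column by column that $O(R^*)$ is realized by directed paths. Your reading of the chains $\Lambda_c^1,\Lambda_c^2$ (membership according to whether the interval contains $e_1$ or $e_\rho$) matches the paper's identity $\Lambda_c^i=\Lambda_c\cap\Lambda^i$, and your treatment of the central bonsais and of the columns indexed by $F_c^1\cup F_c^2$ is essentially Claim 2 of the paper's proof.

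There is, however, a genuine gap: you say the non-central bonsais in $Sink(D')\setminus V_c$ ``remain attached at their original nodes $v_\ell$ on the opened cycle,'' but the rows of these bonsais \emph{are} rows of $O(R^*)$, and some of them have $v_\ell=w_\rho$, i.e.\ they sit exactly at the node you are splitting. For such a bonsai $E_\ell$, Lemma \ref{lemcyclicchain} and Lemma \ref{lemdigraph12} force all its intervals $R_j$ ($j\in f^*(E_\ell)$) to lie entirely in $\Lambda^1$ or entirely in $\Lambda^2$ (the paper's decomposition $\mathcal{B}^1\uplus\mathcal{B}^2$ in (\ref{eqncyclicSink})); if they lie in $\Lambda^1$, then after the cut the interval path $R_j$ leaves $w_0$ through the now-directed $e_1$, while the $B_\ell$-path still leaves the other copy of the central node. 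Left in place, the column $A^{\frac{1}{2}\rightarrow 1}_{R\times\{j\}}$ is the incidence vector of two \emph{disjoint} paths, not one, so the construction fails. The paper repairs this by detaching every $B_\ell\in\mathcal{B}^1$, \emph{reversing the orientation of all its edges}, and reattaching it at $w_0$; the reversal is forced because the interval path leaves $w_0$ while the original $B_\ell$-paths left $w_\rho$. Your proposed substitute --- ``a single switching at $w_\rho^{(1)}$ or $w_\rho^{(2)}$'' --- does not achieve this: switching at one node flips only the sign at that endnode, turning the incident directed edges into bidirected ones and destroying the digraph structure a network representation requires. The correct operation is the column operation of reversing every edge of the affected subtrees, and deciding \emph{which} subtrees to reverse is precisely the content of (\ref{eqncyclicSink}) that your argument is missing.
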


\begin{proof}
By Proposition \ref{propcyclicgood}, let $G(A)$ be a basic  $D'$-clean $R^*$-cyclic representation of $A$. 
Let $\Lambda^1=\{R_j \, : \, \rho \notin R_j, 1\in R_j \}$ and $\Lambda^2=\{R_j \, : \, 1 \notin R_j, \rho \in R_j \}$.
For $i=1$ and $2$, let $\mathcal{B}^i=\{ E_\ell \in Sink(D')\verb"\"V_c \, : \, v_\ell=w_\rho\,,\, \{R_j,\, j\in f^*(E_\ell)\}\subseteq \Lambda^i\}$. 

For any $E_\ell \in Sink(D')\verb"\"V_c$ such that $v_\ell=w_\rho$, we make the following observations. From the definition of a central bonsai and Lemma \ref{lemcyclicchain}, it results that $R_j\neq R^*$ for all $j\in f^*(E_\ell)$ and the poset 
$(\{R_j\,:\, j\in f^*(E_\ell)\},\subseteq )$ is a chain. Hence, using Lemma \ref{lemdigraph12}, the set $\{R_j\,:\, j\in f^*(E_\ell)\}$ is included in $\Lambda^1$ or $\Lambda^2$. Thus 

\begin{eqnarray}\label{eqncyclicSink}
\{ E_\ell \in Sink(D')\verb"\"V_c \, : \, v_\ell=w_\rho\}= \mathcal{B}^1\uplus \mathcal{B}^2. 
\end{eqnarray}

\noindent
Furthermore, by Lemmas \ref{lemdigraph12} and \ref{lemcyclic2},  $\Lambda_c \verb"\" \{R^* \} \subseteq \Lambda^1 \cup \Lambda^2$. So by Lemma \ref{lemcyclic5}, we may assume that

\begin{eqnarray}\label{eqncyclicSinkVc}
\Lambda_c^1=\Lambda_c\cap \Lambda^1 \m{ and } \Lambda_c^2=\Lambda_c \cap \Lambda^2.
\end{eqnarray} 

 Since $G(A)$ is $D'$-clean, by Lemma \ref{lemcyclic4} we deduce that $R$ is the edge index set of a connected graph in $G(A)$. Using $G(A)$, we construct a basic network representation $T$ of the matrix $O(R^*)$. First, let us 
contract all basic edges with index in $\cup_{E_\ell\in\overline{ (Sink(D'))} \cup V_c} E_\ell$. Then create two new basic edges (and vertices) $e_0=]\tilde w_0,w_0]$ and $\tilde e_\rho =]w_\rho,\tilde w_\rho]$ and replace the bidirected edge $[w_1,w_\rho]$ by $]w_0,w_1]$. Finally, for every $E_\ell\in \mathcal{B}^1$, let us get the bonsai $B_\ell$ loose from the central node (by making a copy of $v_\ell=w_\rho$), reverse the orientation of all its basic edges and identify the copy of $v_\ell$ with $w_0$. (See Figure \ref{fig:cyclicM*} for an example.)

Let us prove that $T$ is a basic network representation of $O(R^*)$. To see that each column of $O(R^*)$ is the edge incidence vector of a directed path in $T$, we state two claims.\\

\noindent
{\bf Claim 1:} \quad For all $j\in F$, the column $[ (A^{\frac{1}{2}\rightarrow 1}_{R\times \{j\} })^T\,\, 0 \,\, 0]^T$ is the edge incidence vector of a directed path in $T$.\\

\noindent
{\bf Claim 2:} \quad For all $j\in F_c^1$ (respectively, $j \in F_c^2$), $s(A_{\bullet j})\cap R$ is the edge index set of a directed path in $T$ whose $w_0$ (resp., $w_\rho$) is an endnode.\\

\noindent
{\bf Proof of Claim 1.} \quad Let $j\in F$. If $1,\rho \notin s(A_{\bullet j})$, then by construction of $T$, up to a reversing of all edges the paths in $G(A)$ and in $T$ with edge index set $s(A_{\bullet j})\cap R$ are isomorphic.
Suppose that $1,\rho \in s(A_{\bullet j})$.  By Lemma \ref{lemdigraph12} $R_j=R^*$. If $j$ is in the global connector set of a bonsai $E_\ell \in Sink(D')$, then by Lemma \ref{lemcyclic4} $v_\ell=w_\rho$. Hence, by Lemma \ref{lemcyclicchain} or the definition of a central bonsai  $J_\ell^2=\emptyset$. Therefore $E_\ell\in V_c$, a contradiction. Thus $s(A_{\bullet j})\cap R=R^*$ is the edge index set of a directed path in $T$. At last, if exactly one of the indexes $1$ or $\rho$ is in $s(A_{\bullet j})$, then by equation (\ref{eqncyclicSink}) and construction of $T$, the proof follows.\\

\noindent
{\bf Proof of Claim 2.} \quad  
Let $j\in F_c^1$ (the proof is symmetric with $j\in F_c^2$). By definition of $F_c^1$, there exists a bonsai $E_\ell\in V_c$ such that $j\in f^*(E_\ell)$ and $R_j \in \Lambda_c^1$. 
Hence $1\in s(A_{\bullet j})\cap R$, $\rho \notin s(A_{\bullet j})\cap R$ and by Lemma \ref{lemcyclic2} $v_\ell=w_\rho$. Since $E_\ell \cap R = \emptyset$, it follows that $s(A_{\bullet j})\cap R$ is the edge index set of a directed path in $G(A)$ with one endnode equal to $w_\rho$. Moreover, by (\ref{eqncyclicSinkVc}), $R_j\in \Lambda^1$. Then, by construction of $T$, the proof of the claim follows.\\

\noindent
The vectors $[(\chi_{R^*}^{R})^T \,\, 0 \,\, 0]^T$ and $[ (\chi_{R^*}^{R})^T \,\, 1 \,\, 1]^T$ are clearly edge incidence vectors  of directed paths in $T$. Using Claims 1 and 2, it is easy to conclude the proof of Proposition \ref{propcyclicM*}.
\end{proof}\\

Suppose that the matrix $O(R^*)$ has a basic network representation $G(O(R^*))$. Since $[(\chi_{R^*}^{R})^T \, 0 \, 0]^T$ is a column of $O(R^*)$, the set $R^*$ is the edge index set of a directed path denoted as $p^*$ in $G(O(R^*))$. Moreover, since $A$ is connected $(O(R^*))_{R \bullet}$ is connected, hence $R$ is the edge index set of a tree in $G(O(R^*))$. Thus, if $F_c^1 \neq \emptyset$ and $F_c^2 \neq \emptyset$, since the vectors $[(\chi_{R^*}^{R})^T \, \, 1 \, \, 1 ]^T$, $[A_{R \times \{j_1\}}^T \, \, 1 \, \, 0 ]^T$  and $[A_{R \times \{j_2\} }^T \, \, 0 \, \, 1 ]^T$ (for some $j_1\in F_c^1$, $j_2\in F_c^2$) are edge incidence vectors of paths in $G(O(R^*))$, we deduce that $e_0$ is incident with one endnode of $p^*$ and $\tilde e_\rho$ with the other one.
We obtain the same conclusion that whenever $F_c^1=\emptyset$ or $F_c^2=\emptyset$.
By reversing the orientation of all edges in $G(O(R^*))$ if necessary, we may note 
$$p^*=]w_0,]w_0,w_1],\ldots,]w_{\rho-1},w_\rho],w_\rho],$$ $ e_0=]\tilde w_0,w_0]$ and $\tilde e_\rho=]w_\rho,\tilde w_\rho]$. For ease of notation, a basic subtree in $G(O(R^*))$ with edge index set equal to $E_\ell$ for some $1\le \ell\le b$ is called a bonsai $B_\ell$\index{bonsai $B_\ell$}, and $v_\ell$\index{vertex!$v_\ell$} denotes the closest node in $B_\ell$ to the path $p^*$.

At this point, we are ready to describe the main subroutine of the procedure RCyclic. Let $$A'= A_{\cup_{E_\ell\in Sink(D')} E_\ell
\cup R^* \bullet }.$$ In the following procedure, 
for each $E_\ell\in V_c$, if $N_\ell$ has a $v_\ell$-rooted network representation $B_\ell$, then up to a reversing of all edges in $B_\ell$,
we assume that all $B_\ell$-paths in $B_\ell$ leave $v_\ell$
(since $J_\ell^2= \emptyset$,  this is justified by Lemma \ref{lembonsainet2}).

\begin{tabbing}
\textbf{Procedure\,\,GASink($A$,$R^*$)}\\

\textbf{Input: }\= A matrix $A$ and a row index subset $R^*$ of $A$ such that $R^*\subseteq s(A_{\bullet j})$\\
\> for at least one column index $j$.\\
\textbf{Output: }\= Either a basic $R^*$-cyclic representation $G(A')$ of $A'= A_{\cup_{E_\ell\in Sink(D')} E_\ell
\cup R^* \bullet }$ and  \\
\> $D'\subseteq D$, or determines that $A$ is not $R^*$-cyclic;\\
1) \verb"  "\=  call {\tt Initialization}($A$,$R^*$) outputing a subgraph $D'\subseteq D$, or the fact that $A$ is not\\
\>  $R^*$-cyclic.\\
2) \> compute, if they exist, $O(R^*)$, a basic network representation $G(O(R^*))$ of $O(R^*)$\\
\> and a $v_\ell$-rooted network representation $B_\ell$ of $N_\ell$ for every $E_\ell \in V_c$,\\
\>  otherwise STOP: output that $A$ is not $R^*$-cyclic;\\
3) \> check that $V_c$ is $R^*$-compatible, otherwise STOP: output that $A$ is not $R^*$-cyclic;\\
4) \> check that $Sink(D')$ is feasible, otherwise STOP: output that $A$ is not $R^*$-cyclic;\\
5) \> delete $e_0$, $\tilde e_\rho$, $\tilde w_0$ and $\tilde w_\rho$; replace the edge $]w_0,w_1]$ by $[w_0,w_1]$; \\ 
\> reverse the orientation of all  edges of the bonsais connected at $w_0$; \\
\> identify $w_0$ with $w_\rho$, and for each $E_\ell \in V_c$ identify $v_\ell$ with $w_\rho$;\\
\> output the resulting $1$-tree and $D'\subseteq D$;
\end{tabbing}

Let us show the correctness of the procedure GASink.

\begin{prop}\label{propcyclicA'}
The output of the procedure GASink is correct.
\end{prop}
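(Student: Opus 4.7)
The plan is to verify two directions separately: soundness (every time GASink stops with ``$A$ is not $R^*$-cyclic,'' this conclusion is correct) and completeness of the construction (when GASink proceeds to step 5, the output $G(A')$ is indeed a basic $R^*$-cyclic representation of $A'$).

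For soundness, I would go through each stopping point and invoke the corresponding result from the preceding text. The termination in step 1 inside the subroutine {\tt Initialization} follows from Corollary \ref{corBidirectedCircuit} and Lemma \ref{lemdefiWeight1}, which give the two necessary conditions on $\frac12$- and $2$-supports. The removal of directed cycles in the second phase of {\tt Initialization} is justified by Proposition \ref{propcyclicgood}: if $A$ is $R^*$-cyclic, then a $D'$-clean $R^*$-cyclic representation exists, so after deletion of all but one vertex of each directed cycle, the induced subgraph $D'$ still supports an $R^*$-cyclic representation of $A$. Termination in step 2 of GASink is handled by Proposition \ref{propcyclicM*} (which asserts that $O(R^*)$ is a network matrix whenever $A$ is $R^*$-cyclic), together with Lemma \ref{lembonsaicel} applied to each $E_\ell \in V_c$ and the observation that, by Lemma \ref{lemcyclic2}, $v_\ell$ is the central node of any $D'$-clean $R^*$-cyclic representation of $A$, so the network representation of $N_\ell$ can indeed be chosen to be $v_\ell$-rooted. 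Step 3 is covered by Proposition \ref{propcyclic3}, and step 4 by Theorem \ref{thmdigraphfeasible} restricted to $Sink(D')$.

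For the construction in step 5, I would show that the resulting bidirected graph, call it $\tilde G$, is a basic $R^*$-cyclic representation of $A'$. First, $\tilde G$ has the correct basic structure: the directed path $p^* = ]w_0, ]w_0,w_1], \dots, ]w_{\rho-1},w_\rho], w_\rho]$ becomes, after identifying $w_0$ with $w_\rho$ and replacing $]w_0,w_1]$ by $[w_0,w_1]$, a full basic cycle on vertices $w_1, \dots, w_\rho$ with the bidirected edge $[w_1,w_\rho]$ and directed edges $]w_{i-1},w_i]$ for $i=2,\dots,\rho$; this matches the canonical form of an $R^*$-cyclic representation described in Section \ref{sec:Binetrep}. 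Every central bonsai $B_\ell$ ($E_\ell \in V_c$) is attached to $w_\rho$ via identification of $v_\ell$ with $w_\rho$, with all its $B_\ell$-paths leaving $v_\ell$ (by the normalization before step 5). Each non-central sink bonsai $E_\ell \in Sink(D') \setminus V_c$ is still attached to $\tilde G$ via $G(O(R^*))$ as encoded in the rows of $O(R^*)$ indexed by $R \supseteq E_\ell$. Finally, the reversal of orientation on bonsais initially hanging from $w_0$ is precisely what encodes that these intervals, originally recorded in $F_c^1$ via the artificial row attached to $e_0$, emanate from $w_\rho$ on the ``other side'' of the central node after $w_0$ and $w_\rho$ are identified.

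The main obstacle, and the bulk of the work, will be checking the column-by-column correspondence between $A'$ and the fundamental circuits in $\tilde G$. Concretely, for every column $A'_{\bullet j}$ (with $j \in f(R^*)$ or $j \in \bar f^*(E_\ell)$ for some $E_\ell \in Sink(D')$), I would argue as follows. If $j \in F \cup S_0^*$, i.e.\ $j$ does not come from a central bonsai, the fundamental circuit of $f_j$ in $\tilde G$ is obtained from the directed path of $G(O(R^*))$ with edge incidence vector $[(A^{\frac12 \to 1}_{R \times \{j\}})^T \ 0 \ 0]^T$ (or $[(\chi_{R^*}^R)^T \ 0 \ 0]^T$ if $j \in S_0^*$ and falls entirely inside $R^*$), possibly concatenated with the basic directed path of the appropriate $B_\ell$, and one verifies by applying the WeightCircuit procedure of Section \ref{sec:BinetDefi} that the resulting column of weights equals $A'_{\bullet j}$. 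The required reorientation from step 5 converts entries in $F_c^1 \cup F_c^2$ from ``edges entering $w_0$ and then leaving $w_\rho$'' into bona fide $2$-edges or $1$-edges of an $R^*$-cyclic representation, matching the $1$- and $2$-entries of $A'_{\bullet j}$ via Lemma \ref{lemdefiWeight1}, while the $R^*$-compatibility of $V_c$ (checked in step 3) guarantees that no column acquires more than the allowed $2$-entries, and the feasibility check of step 4 together with Lemma \ref{lembonsaisim} ensures that the bonsai paths glue consistently with $G(O(R^*))$. Combining these calculations with Lemma \ref{lemdefiWeight2} to recover nonnegativity completes the verification that $\tilde G$ is an $R^*$-cyclic representation of $A'$.
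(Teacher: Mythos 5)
Your overall architecture matches the paper's proof: each STOP is justified by the corresponding necessity result (Corollary \ref{corBidirectedCircuit} and Lemma \ref{lemdefiWeight1} for step 1 of {\tt Initialization}, Proposition \ref{propcyclicM*} and Lemma \ref{lembonsaicel} for step 2, Proposition \ref{propcyclic3} for step 3, Theorem \ref{thmdigraphfeasible} for step 4), and then one checks column by column that the $1$-tree produced in step 5 is a basic $R^*$-cyclic representation of $A'$. Two small remarks on the soundness half: the well-definedness of $O(R^*)$ itself (i.e.\ that $(\Lambda_c,\subseteq)$ is an $R^*$-double chain, Proposition \ref{propcyclic3}) is already needed at step 2, not only at step 3; and your appeal to Lemma \ref{lemcyclic2} to justify that the representation of $N_\ell$ can be chosen $v_\ell$-rooted is fine but not strictly needed, since Lemma \ref{lembonsaicel} delivers this directly.

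The genuine gap is in the second half. You reduce the column-by-column verification to ``apply WeightCircuit and check,'' but the one case that actually requires an argument is a column index $j_0$ lying in the global connecter set of \emph{both} a central bonsai $E_{l_c}$ and a non-central bonsai $E_\ell\in Sink(D')$. A priori such a column could have $R_{j_0}=R^*$, in which case its support would contain the whole basic cycle \emph{plus} a path inside $B_\ell$ hanging off an interior node of the cycle \emph{plus} a path in $B_{l_c}$ at the central node --- and that is not a fundamental circuit of any nonbasic edge, so $\tilde G$ would fail to represent $A'_{\bullet j_0}$. The paper closes this by contradiction: if $R_{j_0}=R^*$ then $s(A'_{\bullet j_0})=E_\ell^k\cup R^*$ forces $v_\ell=w_0$ or $v_\ell=w_\rho$ in $G(O(R^*))$, so all $B_\ell$-paths leave (or all enter) $v_\ell$, whence $J_\ell^2=\emptyset$ by Lemma \ref{lemdigraphutile} and $E_\ell$ would be central --- contradiction. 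Hence $R_{j_0}\neq R^*$, so $j_0\in F_c^1\cup F_c^2$, the column $[A_{R\times\{j_0\}}^T\,1\,0]^T$ (or $[\cdot\,0\,1]^T$) forces $v_\ell$ to be an \emph{inner} node of $p^*$, and $R^*$-compatibility of $V_c$ gives $g_{j_0}(E_{l_c})=1$, so the column is realized by a single consistently oriented path. Your sketch invokes $R^*$-compatibility only to bound the number of $2$-entries and never rules out the $R_{j_0}=R^*$ configuration, so as written the verification does not go through for these mixed columns.
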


\begin{proof} 
Suppose that $A$ is $R^*$-cyclic.
By Corollary \ref{corBidirectedCircuit}, Lemma \ref{lemdefiWeight1} and the description of the different types of fundamental circuits given at page \pageref{mycounter2} (see Figure \ref{fig:Apositive}), the subroutine Initialization does not stop at its step 1 and produces an induced subgraph $D'\subseteq D$ without any directed cycle by construction.
By Proposition \ref{propcyclic3}, the $R^*$-open matrix $O(R^*)$ is well defined. Furthermore,  by Proposition \ref{propcyclicM*} and Lemma \ref{lembonsaicel} (respectively, Proposition \ref{propcyclic3} and Theorem \ref{thmdigraphfeasible}), the procedure GASink does not stop at step 2 (respectively, step 3 and 4). Finally, by construction, it outputs a $1$-tree.

Now suppose that the procedure outputs a $1$-tree denoted as
$G(A')$. One needs to show that each column in $A'$ can be interpreted as a binet matrix whose $G(A')$ is a basic binet representation. Let $j_0$ be an index of a nonzero column in $A'$. 

If $s(A_{\bullet j_0}') \cap R^* =\emptyset$, then since $s(A_{\bullet j_0}') \subseteq E_\ell$ for some bonsai $E_\ell \in Sink(D')$ and $B_\ell$ is a $v_\ell$-rooted network representation of $N_\ell$ it follows that $A_{\bullet j_0}'$ is the edge incidence vector of a directed path in $G(A')$. 
Now assume that $j_0\in S^*$. Since $Sink(D')$ is feasible, $j_0$ is in the global connecter set of at most two bonsais. 
If $j_0$ is not in the global connecter set of a central bonsai, then $[A_{R\times \{j_0\}}^T\, 0 \, 0]^T$ is the edge incidence vector of a directed path in $G(O(R^*))$. So by construction $s(A_{\bullet j_0}') \cap R= s_1(A_{\bullet j_0}') \cap R$ and $s(A_{\bullet j_0}') \cap R$ is the edge index set of a consistently oriented path or a pathcycle.

Suppose that $j_0$ is in the global connecter set of a central bonsai, say $E_{l_c}$, and a non-central one, say $E_\ell$ in $Sink(D')$. Suppose by contradiction that $R_{j_0}=R^*$. Since $s(A_{\bullet j_0}')=E_\ell^k \cup R^*$ for some $1\le k \le m(\ell)$, it follows that 
$v_\ell= w_0$ or $v_\ell= w_\rho$ in $G(O(R^*))$; this implies that all paths in $G(O(R^*))$ with edge index set equal to $E_\ell^1, \ldots , E_\ell^{m(\ell)}$ leave $v_\ell$ or they all enter $v_\ell$. Then, the bonsai $B_\ell$ of $G(O(R^*))$ with one more edge entering $v_\ell$ is a $v_\ell$-rooted network representation of $N_\ell$, so by Lemma \ref{lemdigraphutile} $J_\ell^2=\emptyset$. Hence $E_\ell$ is central, a contradiction. Therefore $R_{j_0}\neq R^*$. It follows that $j_0$ belongs to $F_c^1$ or $F_c^2$. Assume $j_0 \in F_c^1$ (if $j_0 \in F_c^2$, then the proof is similar). Since
$[A_{R\times \{j_0\}}^T  \, 1 \, 0]^T$ is the edge incidence vector of a directed path in $G(O(R^*))$ containing $e_0=]\tilde w_0,w_0]$ and a subpath of $p^*=]w_0,]w_o,w_1],\ldots, w_\rho]$, this implies that $v_\ell$ is an inner node of the path $p^*$ ($v_\ell\neq w_0$ and $v_\ell\neq w_\rho$). Moreover, since $V_c$ is $R^*$-compatible, $g_{j_0}(E_{l_c})=1$. So $A'_{\bullet j_0}$ is the incidence vector of a consistently oriented path in $G(A')$.

If $j_0$ is in the global connecter set of one or two central bonsais, with similar arguments one may prove that $A'_{\bullet j_0}$ is the edge index set of some basic fundamental circuit in $G(A')$. This completes the proof.
\end{proof}\\

The following lemma will justify a way of connecting some 
bonsais to a basic $R^*$-cyclic representation of $A'$ whenever $A$ is $R^*$-cyclic.

\begin{lem}\label{lemcyclicpathA'}
Suppose that $A'$ has a basic $R^*$-cyclic representation $G(A')$. Let $T_\Theta$ be a $D'$-clean feasible spanning forest of $D$ and $E_\ell\in Sink(T_\Theta) \verb"\" Sink(D')$ such that $N_\ell$ is a network matrix.
Then $s(A'_{\bullet j})=s(A'_{\bullet j'})$ for all $j,j' \in f^*(E_\ell)$. Moreover, for any $j\in f^*(E_\ell)$, $s(A'_{\bullet j})$ is the edge index set of a pathcycle, or a consistently oriented path with exactly one endnode lying on the basic cycle of $G(A')$.
\end{lem}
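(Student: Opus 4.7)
The plan is to leverage that $E_\ell \notin Sink(D')$ forces $J_\ell^2 = \emptyset$, so that the feasibility property $\Pi_3$ becomes a rigid constraint tying the supports together.

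First, I would note that since $E_\ell \notin Sink(D')$, there is an arc $(E_\ell, E_{\ell'}) \in D'$ whose existence in $D$ (by the definition of an arc in Chapter \ref{ch:multidiD}) requires $J_\ell^2 = \emptyset$. Lemma \ref{lembonsainet2}, applied to any $v_\ell$-rooted network representation of $N_\ell$, then tells us that all $E_\ell$-paths lie in a single class under $\sim_{E_\ell}$; in particular $E_\ell^I(A_{\bullet j}) \sim_{E_\ell} E_\ell^I(A_{\bullet j'})$ for every $j, j' \in f^*(E_\ell)$, and Lemma \ref{lembonsaisim}(1) guarantees $E_\ell^I(A_{\bullet j}) \sim_{E_\ell} E_\ell^{II}(A_{\bullet j})$ whenever the latter is nonempty.

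Next, I would decompose $s(A'_{\bullet j}) = R_j \cup \bigcup_{E_{\ell''} \in Sink(D')} (s(A_{\bullet j}) \cap E_{\ell''})$. Any $E_{\ell''}$ contributing to the union satisfies $j \in f^*(E_{\ell''})$, so by $\Pi_1$ it lies in the same $j$-substructure of $T_\Theta$ as $E_\ell$. Since both are sinks of $T_\Theta$ (the second by $D'$-cleanliness) and a $j$-fork or a single $j$-path admits a unique sink, the $j$-substructure must split into two disjoint $j$-paths, admitting at most one companion sink $E_{\ell''}^{(j)}$. To compare $j, j' \in f^*(E_\ell)$, the first paragraph places $E_\ell$ in the set governed by $\Pi_3$, so when $R_j \neq R_{j'}$ the restricted subgraph is a single $j$- or $j'$-path terminating at $E_\ell$; propagating Lemma \ref{lembonsaisim}(2) along this path identifies $E_{\ell''}^{(j)} = E_{\ell''}^{(j')}$ (when both exist) and matches $s(A_{\bullet j}) \cap E_{\ell''}^{(j)} = s(A_{\bullet j'}) \cap E_{\ell''}^{(j')}$.

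Finally, I would invoke Corollary \ref{corBidirectedCircuit} and Lemma \ref{lemdefiWeight1} to classify the fundamental circuit of $f_j$ in the $R^*$-cyclic representation $G(A')$. Because $G(A')$ has a single basic negative cycle (indexed by $R^*$) and $R_j \neq \emptyset$, the admissible shapes of $s(A'_{\bullet j})$ reduce to a pathcycle or a consistently oriented path with exactly one endnode on the basic cycle; the remaining ``chord'' option, in which the circuit would be a positive cycle built from $f_j$ and a strict interval of the basic cycle alone, is incompatible with $j \in f^*(E_\ell)$ and the companion-sink configuration of the previous paragraph. This classification, combined with the matched companion contributions, then forces $R_j = R_{j'}$, completing $s(A'_{\bullet j}) = s(A'_{\bullet j'})$. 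The main obstacle will be the last step of paragraph 2 coupled with the shape classification: chaining together $\Pi_3$, Lemma \ref{lembonsaisim}, and the two admissible shapes in $G(A')$ to rule out the possibility that two different intervals $R_j, R_{j'}$ both arise at $E_\ell$ is delicate, and the rigidity coming from $J_\ell^2 = \emptyset$ together with the placement of $E_\ell$ as a terminal sink in every relevant $\beta$-substructure of $T_\Theta$ is what eventually closes the loop.
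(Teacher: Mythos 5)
Your skeleton matches the paper's: identify a sink $E_{\ell'}$ of $D'$ with $(E_\ell,E_{\ell'})\in D$, use $\Pi_1$/$\Pi_1^*$ to force $g_\beta(E_\ell)=g_\beta(E_{\ell'})=1$ and $g_\beta(E_{\ell''})=0$ for every other sink of $T_\Theta$, and bring in Lemma \ref{lembonsaisim}(2) and $\Pi_3$. But the decisive step --- concluding $R_j=R_{j'}$ --- is not closed. You accept that when $R_j\neq R_{j'}$ the $\Pi_3$-subgraph is ``a single $j$- or $j'$-path terminating at $E_\ell$'' and then try to manufacture a contradiction out of the shape classification of fundamental circuits in $G(A')$; you yourself flag this as the delicate part that ``eventually closes the loop,'' and as written it does not. (A smaller slip: the arc out of $E_\ell$ need not lie in $D'$, since $E_\ell$ may have been deleted from $D'$ as part of a directed cycle; what the construction of $D'$ and the transitivity of $\prec_D$ actually give is an arc $(E_\ell,E_{\ell'})\in D$ to some sink of $D'$, which is all that is needed for $J_\ell^2=\emptyset$.)

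The missing idea is the same two-sinks observation you already use, correctly, for the $\Pi_1$ structure --- you just fail to apply it to the $\Pi_3$ structure. By Lemma \ref{lembonsaisim}(2) applied to the arc $(E_\ell,E_{\ell'})_{E_{\ell'}^k}$, the companion $E_{\ell'}$ satisfies $j,j'\in f^*(E_{\ell'})$ and $E_{\ell'}^I(A_{\bullet j})\sim_{E_{\ell'}}E_{\ell'}^I(A_{\bullet j'})$, so $E_{\ell'}$ belongs to the very subgraph that $\Pi_3$ governs, alongside $E_\ell$. Since $E_{\ell'}\in Sink(D')\subseteq Sink(T_\Theta)$ by $D'$-cleanliness, and $E_\ell\in Sink(T_\Theta)$ with $E_\ell\neq E_{\ell'}$, that subgraph contains two distinct vertices with no outgoing arc in $T_\Theta$ and therefore cannot be a single directed $j$- or $j'$-path. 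Contraposing $\Pi_3$ gives $R_j=R_{j'}$ at once; combined with $g_j(E_{\ell'})=g_{j'}(E_{\ell'})=1$ and the fact that the arc label forces $s(A_{\bullet j})\cap E_{\ell'}=E_{\ell'}^k=s(A_{\bullet j'})\cap E_{\ell'}$, the support equality follows, and only then should the shape of $s(A'_{\bullet j})$ be read off from the fundamental circuit in $G(A')$. In short: the geometric classification is a corollary of the combinatorial identity, not a tool for proving it.
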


\begin{proof}
By construction of $D'$ and using the transitivity of the relation $\prec_{D}$, there exists a sink vertex of $D'$, say $E_{\ell'}$, such that $(E_\ell,E_{\ell'})\in D$. By the property $\Pi_1$ or $\Pi_1^*$ satisfied by $T_\Theta$, it follows that $g_\beta(E_\ell) =1$, $g_\beta(E_{\ell'})=1$ and $g_\beta(E_{\ell''})=0$ for any $\beta \in f^*(E_\ell)$ and $E_{\ell''} \in Sink(T_\Theta) \verb"\"\{E_\ell,E_{\ell'}\}$. Suppose that there exist two indexes $\beta,\beta'\in f^*(E_\ell)$. By Lemma \ref{lembonsaisim} (part 2) this implies that $E_{\ell'}^I(A_{\bullet \beta}) \sim_{E_{\ell'}} E_{\ell'}^{I}(A_{\bullet \beta'})$ and $E_\ell^I(A_{\bullet \beta}) \sim_{E_\ell} E_\ell^I(A_{\bullet \beta'})$, 
hence the graph $T_\Theta(\{ E_u\in Sink(T_\Theta)\, :\, \beta,\beta'  \in f^*(E_u) \,,\, E_u^I(A_{\bullet \beta})\sim_{E_u} E_u^I(A_{\bullet \beta'})\})$ is neither a $\beta$-path nor a $\beta'$-path. As $T_\Theta$ is feasible, $T_\Theta$ satisfies in particular property $\Pi_3$ and we deduce that $R_\beta=R_{\beta'}$.
This implies Lemma \ref{lemcyclicpathA'}.
\end{proof}\\

By Lemma \ref{lemcyclicpathA'}, if $A'$ has a basic $R^*$-cyclic representation $G(A')$ and $D$ some $D'$-clean feasible spanning forest $T_\Theta$, then for any $E_\ell\in Sink(T_\Theta) \verb"\" Sink(D')$, we define a vertex $v_\ell^*$\index{vertex!$v_\ell^*$} in $G(A')$ as follows.
For any $j\in f^*(E_\ell)$, if $s(A_{\bullet j})\cap R^*\neq R^*$, then $v_\ell^*$ is equal to the endnode of the path with edge index set $s(A_{\bullet j})\cap E$ lying on the basic cycle, otherwise $v_\ell^*$ is the central node. 

Finally, we provide the procedure RCyclic for testing whether $A$ is $R^*$-cyclic or not. 
Let $V_0'$ denote the set of $E_\ell \in V'$ such that $E_\ell$ is a sink vertex of $D'$, or there exists $\beta \in f^*(E_\ell)$ such that $s_{\frac{1}{2}}(A_{\bullet \beta})\neq \emptyset$, or there exist $\beta,\beta' \in f^*(E_\ell)$ such that 
the intervals $R_\beta$ and $R_{\beta'}$ are not equal ($R_\beta\neq R_{\beta'}$) and $E_\ell^I(A_{\bullet \beta}) \sim_{E_\ell} E_\ell^I(A_{\bullet \beta'})$.
We use the subroutine Forest described in Section \ref{sec:For} taking $A$, $D'$ and a subset $V_0\subseteq V'$ as input and outputing, for $V_0= V_0'$, a feasible spanning forest of $D'$ whenever $A$ is $R^*$-cyclic.

\begin{tabbing}
\textbf{Procedure\,\,RCyclic($A$,$R^*$)}\\

\textbf{Input: }\= A matrix $A$ and a row index subset $R^*$ of $A$ such that $R^*\subseteq s(A_{\bullet j})$\\
\> for at least one column index $j$.\\
\textbf{Output: } \= Either a basic $R^*$-cyclic representation $G(A)$ of $A$,\\
\> or determines that none exists.\\ 
1) \verb"  "\=  call {\tt GASink}($A$,$R^*$) outputing    
a basic $R^*$-cyclic representation $G(A')$ of some submatrix\\
\> $A'$ of $A$ and $D'\subseteq D$, or the fact that $A$ is not $R^*$-cyclic; \\
2) \> call {\tt Forest}($D'$,$V_0= V_0'$) and check that the output of {\tt Forest} is a feasible forest,\\
\> otherwise STOP: output that $A$ is not $R^*$-cyclic;\\
\> compute a $D'$-clean feasible spanning forest $T_\Theta$ of $D$;\\
3) \>  for every $E_\ell\in V(D)\verb"\"V(D')$, compute a $v_\ell$-rooted network representation $B_\ell$ of $N_\ell$,\\
\> if one exists; otherwise STOP: output that $A$ is not $R^*$-cyclic;\\
4) \> {\bf for }\= each $E_\ell\in Sink(T_\Theta)\verb"\"Sink(D')$, {\bf do}\\
   \>            \> identify the node $v_\ell$ with $v_\ell^*$ and orient the edges of $B_\ell$ so that 
for any $1\le k \le m(\ell)$\\
\> \>  and $\beta \in f^*(E_\ell)$, $E_\ell^k\cup R_\beta $  is the edge index set of a consistently oriented path;\\ 
\> {\bf endfor }\\
5) \> {\bf for }\= any  $E_{\ell'} \in T_\Theta\verb"\"Sink(T_\theta)$, {\bf do}\\
   \>     \> let $(E_{\ell'},E_\ell)_{E_\ell^k}\in T_{\Theta}$ and identify $v_{\ell'}$ with the endnode ($\neq v_\ell$) of the $B_\ell$-path\\
\> \> with  edge index set $E_\ell^k$, and orient the edges of $B_{\ell'}$ so that for all $1\le j \le m(\ell')$,\\
\> \> $E_{\ell'}^j\cup E_\ell^k $ is the edge index set  of a directed path;\\ 
   \> {\bf endfor }\\
\> output the obtained $1$-tree;
\end{tabbing}

\noindent
{\bf Proof of Theorems \ref{thmcyclicproCyc} and \ref{thmcyclic1}.} \quad The proof of the "only if" part of Theorem \ref{thmcyclic1} follows from Propositions \ref{propcyclicgood}, \ref{propcyclic3} and \ref{propcyclicM*} and Lemma \ref{lembonsaicel}.

Let us show the correctness of the procedure RCyclic. Suppose that $A$ is $R^*$-cyclic. By Proposition \ref{propcyclicA'}, the subroutine  GASink produces a basic $R^*$-cyclic representation $G(A')$ of $A'$. By Theorem \ref{thmForest1}, the subroutine Forest outputs a feasible spanning forest of $D'$. Then, by Lemma \ref{lemcycle}, it is easy to compute a $D'$-clean feasible spanning forest of $D$. By Lemma 
\ref{lembonsaicel}, in step 3 a $v_\ell$-rooted network representation of the bonsai matrix $N_\ell$ is computed, for every $E_\ell \in V(D)\verb"\" V(D')$. By Lemma \ref{lemcyclicpathA'} it follows that for any $E_\ell \in Sink(T_\Theta)\verb"\"Sink(D')$ the vertex $v_\ell^*$ is well defined. At step 4 and 5, for any $E_\ell\in T_\Theta$ and $1\le k \le m(\ell)$, $E_\ell^k$ is the edge index set of a path in $B_\ell$ by Lemma \ref{lembonsainet2}. Finally, by construction, the procedure RCyclic outputs a $1$-tree $G(A)$. 

Now suppose that the the procedure RCyclic outputs a $1$-tree $G(A)$. One need to show that $G(A)$ is a basic $R^*$-cyclic representation of $A$. At the end of step 1, by Proposition \ref{propcyclicA'} we have a basic $R^*$-cyclic representation of $A'$. Then, using Lemmas \ref{lemcyclicpathA'}
and \ref{lembonsainet2} and the fact that $T_\Theta$ is feasible, we deduce that $G(A)$ is a basic $R^*$-cyclic representation of $A$. This implies the "if" part of Theorem \ref{thmcyclic1}.

At last, let us analyse the running time of the procedure RCyclic. By Lemma \ref{lemdigraphDtimeD}, the computation of $D$ requires time $O(nm \alpha)$.
Let $k=|V_c \cup \overline{Sink(D')}|$, $n_0=|R|+2$ and $n_\ell= |E_\ell|+2$ for all $E_\ell \in V_c \cup \overline{Sink(D')}$.
Denote by $\alpha_0$ (respectively, $\alpha_\ell$) the number of nonzero elements of the matrix $O(R^*)$ (respectively, $N_\ell$) for all $1\le \ell \le b$ such that $E_\ell \in V_c \cup \overline{Sink(D')}$. 
Since $\sum_{E_\ell \in V_c \cup \overline{Sink(D')}}\, n_\ell +n_0 = n+ 2(k+1)$ and $\sum_{E_\ell \in V_c \cup \overline{Sink(D')}}\, \alpha_\ell +\alpha_0 \le 3 \alpha$, using Theorem \ref{thmSubclassNTutteCunNet}, step 2 in the subroutine GASink and step 3 in the procedure RCyclic altogether take time at most
$$\sum_{E_\ell \in V_c \cup \overline{Sink(D')}} n_\ell \alpha_\ell +n_0 \alpha_0 \le (3 n) (3\alpha ) =C n \alpha,$$

\noindent
for some constant $C$. All other steps perform in time $O(\alpha)$. This concludes the proof of Theorem
\ref{thmcyclicproCyc}.
{\hfill$\BBox{\rule{.3mm}{3mm}}$}


\chapter{Recognizing $\frac{1}{2}$-binet matrices}
\label{ch:halfbinet}

Let $A$ be a matrix of size $n\times m$ with $0$, $1$ or $2$ coefficients and $\alpha$ the number of nonzero elements in $A$.
In this chapter, we address the problem of recognizing whether $A$ is $\frac{1}{2}$-binet.  More generally, given $A$ 
and a row index subset $Q$ of $A$, 
we describe a polynomial-time procedure, called OnehalfbinetQ,
that computes a $\frac{1}{2}$-binet representation of $A$ such that each element in $Q$ is a basic half-edge index, or determines that none exists. We will prove the following theorem.

\begin{thm}\label{thmOnhalfbinetQ}
Let $Q$ be a row index subset of $A$. The matrix $A$ can be tested for having a $\frac{1}{2}$-binet representation such that each element in $Q$ is a basic half-edge index, in time $O(n m^2 \alpha)$, by the procedure OnehalfbinetQ.
\end{thm}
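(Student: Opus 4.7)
The plan is to reduce the recognition of $\frac{1}{2}$-binet matrices to the recognition of $\{n+1\}$-cyclic matrices, which is already handled by the procedure RCyclic. The reduction exploits the structural observation recorded in Section \ref{sec:OverRecBinet}: in any $\frac{1}{2}$-binet representation $G(A)$, each basic cycle is a half-edge, so identifying the endnodes of all basic half-edges into a single vertex $w$ and attaching a new basic negative loop $e_{n+1}$ at $w$ yields a $\{n+1\}$-cyclic representation, provided we also append a row recording which nonbasic edges were bidirected and hence now pass through $w$.

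I would start by a short initialization: check that every entry of $A$ lies in $\{0,1,2\}$ and that each column $j$ with $s_2(A_{\bullet j})\neq\emptyset$ satisfies $|s_2(A_{\bullet j})|\leq 2$ (since by Corollary \ref{corBidirectedCircuit} and Lemma \ref{lemdefiWeight1} a nonbasic $2$-edge has its two $2$-entries located on the half-edges of two distinct basic $1$-trees). Let $S_2=\{j\,:\,s_2(A_{\bullet j})\neq\emptyset\}$. The main combinatorial step is to compute a family $\mathscr{I}$ of pairwise disjoint row index subsets and an accompanying column index set $S(\mathscr{I})$ such that, whenever $A$ has a $\frac{1}{2}$-binet representation in which every row of $Q$ is the index of a basic half-edge, each member of $\mathscr{I}$ is the edge index set of a basic negative $1$-tree of $G(A)$ and $S(\mathscr{I})\cup S_2$ is exactly the index set of the nonbasic bidirected edges. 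The pieces of $\mathscr{I}$ are forced by block decomposition of $A$ and by the pairing information carried by the $2$-entries: for $j\in S_2$, the two elements of $s_2(A_{\bullet j})$ must lie in two distinct members of $\mathscr{I}$, and the $Q$-constraint fixes which index in each member plays the role of the basic half-edge. Working through these constraints, $\mathscr{I}$ is determined up to at most $O(m)$ candidate families.

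Given $\mathscr{I}$, I would form
$$A'=\left[\begin{array}{c}A\\(\chi_{S(\mathscr{I})\cup S_2}^{\{1,\ldots,m\}})^T\end{array}\right]$$
and invoke RCyclic on $(A',\{n+1\})$. If a $\{n+1\}$-cyclic representation $G(A')$ is produced, its basic loop is $e_{n+1}$ at some vertex $w$; I would then split $w$ into one copy per basic subtree attached to it and replace $e_{n+1}$ by a basic entering half-edge on each copy, producing a $\frac{1}{2}$-binet representation of $A$ whose basic half-edges are precisely those prescribed by $\mathscr{I}$, in particular meeting the $Q$-constraint. Conversely, any $\frac{1}{2}$-binet representation satisfying the $Q$-constraint, via the identification described above, furnishes a $\{n+1\}$-cyclic representation of the corresponding $A'$; hence the reduction is faithful.

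The hard part is to show that the at most $O(m)$ candidate families $\mathscr{I}$ can indeed be enumerated so that at least one of them leads to a successful call of RCyclic exactly when $A$ is $\frac{1}{2}$-binet with the desired $Q$-property; this requires a careful analysis of how the row index subsets are forced by the blocks of $A$, by the $2$-support pairings, and by $Q$, exactly analogous to the analyses carried out for the bicyclic and central cases in Chapters \ref{ch:bicyc} and \ref{ch:central}. Once this is in place, the complexity follows from Theorem \ref{thmcyclicproCyc}: $A'$ has $n+1$ rows, $m$ columns, and at most $\alpha+m$ nonzero entries, so each call of RCyclic costs $O(nm\alpha)$, and trying $O(m)$ configurations for $\mathscr{I}$ yields the claimed $O(nm^2\alpha)$ bound.
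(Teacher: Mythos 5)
Your reduction target is the right one: the paper also forms $A'=\bigl[\begin{smallmatrix}A\\ \delta\end{smallmatrix}\bigr]$ with $\delta$ the indicator of $S(\mathscr{I})\cup S_2$, calls RCyclic on $(A',\{n+1\})$, and recovers a $\frac{1}{2}$-binet representation by contracting the basic loop. But the core of the argument is exactly the part you defer as ``the hard part,'' and your sketch of it does not work. First, the family $\mathscr{I}$ is not obtained by enumerating $O(m)$ candidates and testing each with RCyclic; the paper computes a \emph{single} canonical family, proved to be $1$-forest (i.e.\ forced in \emph{every} $\frac{1}{2}$-binet representation), so no enumeration is needed, and your claim that the candidates can be bounded by $O(m)$ is unsubstantiated. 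Second, and more importantly, the columns that must be declared bidirected are not determined by ``block decomposition and the pairing carried by the $2$-entries.'' The $2$-entries only account for $S_2$. The subtle part is the $\{0,1\}$-columns that correspond to $2$-edges which \emph{cannot} all be made directed by switching: these are detected via \emph{odd-cyclic} column subsets (minimal non-network submatrices of $A_{\bullet V(H)}$, Lemma \ref{lemhalfbinet2edge} and Theorem \ref{thm:simplenet}) and via minimal \emph{linking paths} of odd length between marked vertices of $H(A)$ (Lemma \ref{lem:linkingpath}). Your proposal contains no mechanism for finding these, and without it the set $S(\mathscr{I})$ is simply not defined. Third, even once $\mathscr{I}$ is known, one must prove the \emph{existence} of a representation in which exactly $S(\mathscr{I})\cup S_2$ indexes the bidirected nonbasic edges; the paper does this by showing the auxiliary graph on basic half-edges is bipartite and applying switchings componentwise, an argument absent from your proposal.

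On two smaller points: the $O(nm^2\alpha)$ bound in the paper comes from the loop in S2OddCycle (up to $m$ network tests, each $O(nm\alpha)$ by Theorem \ref{thmSubclassNTutteCunNet}), followed by a \emph{single} $O(nm\alpha)$ call to RCyclic — not from $O(m)$ calls to RCyclic. And the constraint $Q$ is not used to ``fix which index in each member of $\mathscr{I}$ plays the half-edge''; it is handled separately by augmenting $A$ with the odd-cycle gadget $N^{(q)}$ of Lemma \ref{lemsimplebinet_q}, which forces the rows of $Q$ to be half-edge indices before Onehalfbinet is ever invoked.
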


Let $S_2=\{ j \,: \, s_2(A_{\bullet j}) \neq \emptyset\}$.
A family $\mathscr{I}$ of row index subsets of $A$ is said to be \emph{$1$-forest}\index{forest@$1$-forest} if one can prove the following: If $A$ is $\frac{1}{2}$-binet, then in any $\frac{1}{2}$-binet representation of $A$, each element of $\mathscr{I}$ corresponds to the edge index set of a basic (negative) $1$-tree and the elements of $\mathscr{I}$ are pairwise disjoint. 

Our first aim is to compute some $1$-forest family $\mathscr{I}$ such that if $A$ is $\frac{1}{2}$-binet, then there exists a $\frac{1}{2}$-binet representation of $A$ in which all nonbasic bidirected edges (except half-edges) correspond to columns of $A$ whose support intersects two distinct elements of $\mathscr{I}$. Then, we reduce our problem to the recognition of an $\{i\}$-cyclic matrix for some row index $i$. 

\section{An informal sketch of a recognition procedure}\label{sec:infcycOnehalfbinet}

We first illustrate and comment the procedure OnehalfbinetQ with input the $\frac{1}{2}$-binet matrix $A$ given in Figure \ref{fig:OnehalfbinetA} and the empty row index subset ($Q=\emptyset$). We have to build up a $\frac{1}{2}$-binet representation of $A$ without knowing that such a representation exists. Since the first column of $A$ has a nonempty $2$-support, this implies the following. Provided that $A$ has a binet representation $G(A)$, $f_1$ is a bidirected non-half
$1$-edge whose fundamental circuit contains a basic half-edge, and the set $R_1=s(A_{\bullet 1})=\{1,2\}$ is the edge index set of a $1$-tree. The procedure OnehalfbinetQ sets
$\mathscr{I}=\{R_1\}$ which is $1$-forest and $H=H(A)\verb"\"\{1\}$.

\vspace{.3cm}
\begin{figure}[h!]
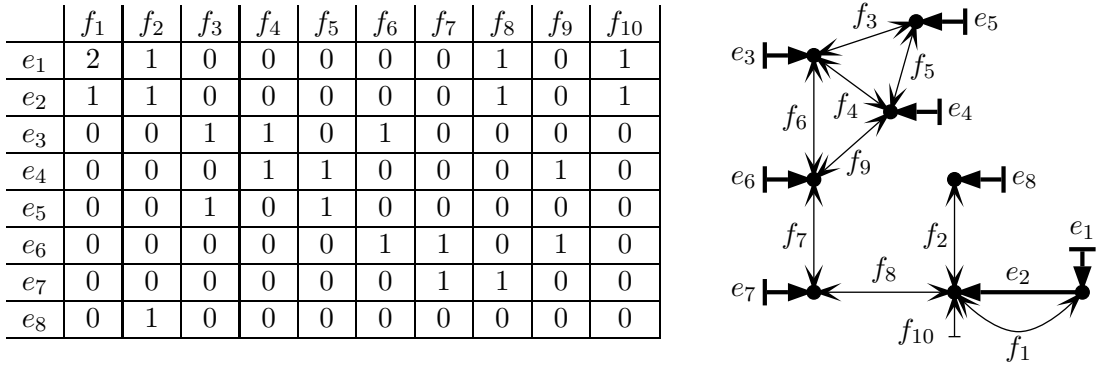

$
\begin{array}{cc}

\begin{tabular}{c|c|c|c|c|c|c|c|c|c|c|}
  & $f_1$ & $f_2$ & $f_3$ & $f_4$ & $f_5$ & $f_6$ & $f_7$ & $f_8$ & $f_9$ & $f_{10}$  \\
  \hline
$e_1$  &2&1&0&0&0&0&0&1&0&1 \\
\hline
$e_2$  &1&1&0&0&0&0&0&1&0&1 \\
\hline
$e_3$  &0&0&1&1&0&1&0&0&0&0 \\
\hline
$e_4$  &0&0&0&1&1&0&0&0&1&0\\
\hline
$e_5$  &0&0&1&0&1&0&0&0&0&0 \\
\hline
$e_6$  &0&0&0&0&0&1&1&0&1&0 \\
\hline
$e_7$  &0&0&0&0&0&0&1&1&0&0 \\
\hline
$e_8$  &0&1&0&0&0&0&0&0&0&0 \\
\hline
\end{tabular}  &

\psset{xunit=1.7cm,yunit=1.5cm,linewidth=0.5pt,radius=0.1mm,arrowsize=7pt,labelsep=1.5pt,fillcolor=black}

\pspicture(-1,1)(3,2)

\pscircle[fillstyle=solid](0,0){.1}
\pscircle[fillstyle=solid](0,1){.1}
\pscircle[fillstyle=solid](0,2.1){.1}
\pscircle[fillstyle=solid](0.6,1.6){.1}
\pscircle[fillstyle=solid](1.1,0){.1}
\pscircle[fillstyle=solid](1.1,1){.1}
\pscircle[fillstyle=solid](.8,2.4){.1}
\pscircle[fillstyle=solid](2.1,0){.1}

\psline[linewidth=1.6pt,arrowinset=0]{|->}(2.1,.4)(2.1,0)
\rput(2.1,.55){$e_1$}

\psline[linewidth=1.6pt,arrowinset=0]{<-}(1.1,0)(2.1,0)
\rput(1.6,.15){$e_2$}

\psline[linewidth=1.6pt,arrowinset=0]{<-|}(.8,2.4)(1.2,2.4)
\rput(1.35,2.4){$e_5$}

\psline[linewidth=1.6pt,arrowinset=0]{|->}(-.4,2.1)(0,2.1)
\rput(-0.55,2.1){$e_3$}

\psline[linewidth=1.6pt,arrowinset=0]{<-|}(.6,1.6)(1,1.6)
\rput(1.15,1.6){$e_4$}

\psline[linewidth=1.6pt,arrowinset=0]{|->}(-.4,1)(0,1)
\rput(-0.55,1){$e_6$}

\psline[linewidth=1.6pt,arrowinset=0]{|->}(-.4,0)(0,0)
\rput(-0.55,0){$e_7$}

\psline[linewidth=1.6pt,arrowinset=0]{|->}(1.5,1)(1.1,1)
\rput(1.65,1){$e_8$}

\pscurve[arrowinset=.5,arrowlength=1.5]{<->}(1.1,0)(1.6,-.35)(2.1,0)
\rput(1.6,-.5){$f_1$}

\psline[arrowinset=.5,arrowlength=1.5]{<->}(1.1,0)(1.1,1)
\rput(.95,.5){$f_2$}

\psline[arrowinset=.5,arrowlength=1.5]{<->}(0,2.1)(.8,2.4)
\rput(.4,2.45){$f_3$}

\psline[arrowinset=.5,arrowlength=1.5]{<->}(0,2.1)(.6,1.6)
\rput(.25,1.65){$f_4$}

\psline[arrowinset=.5,arrowlength=1.5]{<->}(0.6,1.6)(.8,2.4)
\rput(0.85,2){$f_5$}

\psline[arrowinset=.5,arrowlength=1.5]{<->}(0,1)(0,2.1)
\rput(-.15,1.55){$f_6$}

\psline[arrowinset=.5,arrowlength=1.5]{<->}(0,1)(0,0)
\rput(-.15,0.5){$f_7$}

\psline[arrowinset=.5,arrowlength=1.5]{<->}(0,0)(1.1,0)
\rput(.55,.2){$f_8$}

\psline[arrowinset=.5,arrowlength=1.5]{<->}(0,1)(0.6,1.6)
\rput(.35,1.15){$f_9$}

\psline[arrowinset=.5,arrowlength=1.5]{|->}(1.1,-.4)(1.1,0)
\rput(.8,-0.35){$f_{10}$}

\endpspicture 
\end{array}$

\vspace{.2cm}
\caption{A $\frac{1}{2}$-binet matrix $A$ and a $\frac{1}{2}$-binet representation $G(A)$ of $A$.}  
\label{fig:OnehalfbinetA}
\end{figure}

Then consider the submatrix $A_{\bullet V(H)}$. We notice that it is not a network matrix, since it contains the submatrix $A_{\{3,4,5\} \times \{3,4,5\}}$ of determinant $2$. The second step consists in computing a minimal column index subset $J$ such that $A_{\bullet J}$ is not a network matrix. For instance, $J=\{3,4,5\}$ is such a subset. Observe that  $\{3,4,5\}$ induces an odd cycle in $H(A)$ and $s(A_{\bullet 3})\cap s(A_{\bullet 4}) \cap s(A_{\bullet 5})$ is empty. Provided that $A$ has a $\frac{1}{2}$-binet representation $G(A)$,
this implies that $f_3$, $f_4$ and $f_5$ are $2$-edges in $G(A)$ and $Q_3=s(A_{\bullet 3})\cap s(A_{\bullet 4})=\{3\}$, $Q_4=s(A_{\bullet 4})\cap s(A_{\bullet 5})=\{4\}$ and $Q_5=s(A_{\bullet 5})\cap s(A_{\bullet 3})=\{5\}$ are edge index sets of $1$-trees. So, the procedure sets $\mathscr{I}:= \{R_1, Q_3,Q_4,Q_5\}$, and $\mathscr{I}$ is $1$-forest. 

\vspace{.3cm}
\begin{figure}[h!]
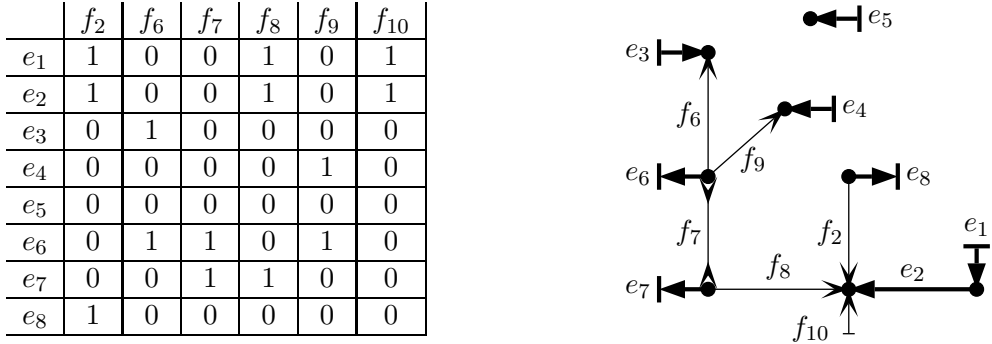


\begin{center}
$
\begin{array}{cc}

\begin{tabular}{c|c|c|c|c|c|c|}
   & $f_2$ & $f_6$ & $f_7$ & $f_8$ & $f_9$ & $f_{10}$  \\
  \hline
$e_1$  &1&0&0&1&0&1 \\
\hline
$e_2$  &1&0&0&1&0&1 \\
\hline
$e_3$  &0&1&0&0&0&0 \\
\hline
$e_4$  &0&0&0&0&1&0\\
\hline
$e_5$  &0&0&0&0&0&0 \\
\hline
$e_6$  &0&1&1&0&1&0 \\
\hline
$e_7$  &0&0&1&1&0&0 \\
\hline
$e_8$  &1&0&0&0&0&0 \\
\hline
\end{tabular}  &

\psset{xunit=1.7cm,yunit=1.5cm,linewidth=0.5pt,radius=0.1mm,arrowsize=7pt,labelsep=1.5pt,fillcolor=black}

\pspicture(-2,1)(3,2)

\pscircle[fillstyle=solid](0,0){.1}
\pscircle[fillstyle=solid](0,1){.1}
\pscircle[fillstyle=solid](0,2.1){.1}
\pscircle[fillstyle=solid](0.6,1.6){.1}
\pscircle[fillstyle=solid](1.1,0){.1}
\pscircle[fillstyle=solid](1.1,1){.1}
\pscircle[fillstyle=solid](.8,2.4){.1}
\pscircle[fillstyle=solid](2.1,0){.1}

\psline[linewidth=1.6pt,arrowinset=0]{|->}(2.1,.4)(2.1,0)
\rput(2.1,.55){$e_1$}

\psline[linewidth=1.6pt,arrowinset=0]{<-}(1.1,0)(2.1,0)
\rput(1.6,.15){$e_2$}

\psline[linewidth=1.6pt,arrowinset=0]{|->}(-.4,2.1)(0,2.1)
\rput(-0.55,2.1){$e_3$}

\psline[linewidth=1.6pt,arrowinset=0]{<-|}(.6,1.6)(1,1.6)
\rput(1.15,1.6){$e_4$}

\psline[linewidth=1.6pt,arrowinset=0]{<-|}(.8,2.4)(1.2,2.4)
\rput(1.35,2.4){$e_5$}

\psline[linewidth=1.6pt,arrowinset=0]{|<-}(-.4,1)(0,1)
\rput(-0.55,1){$e_6$}

\psline[linewidth=1.6pt,arrowinset=0]{|<-}(-.4,0)(0,0)
\rput(-0.55,0){$e_7$}

\psline[linewidth=1.6pt,arrowinset=0]{|<-}(1.5,1)(1.1,1)
\rput(1.65,1){$e_8$}

\psline[arrowinset=.5,arrowlength=1.5]{<-}(1.1,0)(1.1,1)
\rput(.95,.5){$f_2$}

\psline[arrowinset=.5,arrowlength=1.5]{->}(0,1)(0,2.1)
\rput(-.15,1.55){$f_6$}

\psline[arrowinset=.5,arrowlength=1.5]{>-<}(0,1)(0,0)
\rput(-.15,0.5){$f_7$}

\psline[arrowinset=.5,arrowlength=1.5]{->}(0,0)(1.1,0)
\rput(.55,.2){$f_8$}

\psline[arrowinset=.5,arrowlength=1.5]{->}(0,1)(0.6,1.6)
\rput(.35,1.15){$f_9$}

\psline[arrowinset=.5,arrowlength=1.5]{|->}(1.1,-.4)(1.1,0)
\rput(.8,-0.35){$f_{10}$}

\endpspicture 
\end{array}$
\end{center}

\vspace{.2cm}
\caption{The binet matrix $A_{\bullet V(H)}$, where $V(H)= \{Ê2,6,7,8,9,10\}$, and a $\frac{1}{2}$-binet representation $G(A_{\bullet V(H)})$ of $A_{\bullet V(H)}$ all of whose basic half-edges with index in $R$, for any $R\in \mathscr{I}$, are entering, with $\mathscr{I}:= \{R_1, Q_3,Q_4,Q_5\}$.}  
\label{fig:OnehalfbinetnetH}
\end{figure}

By removing $3$, $4$ and $5$ from $H$, the matrix $A_{\bullet V(H)}$ becomes a network matrix. Now assume that $H$ is the subgraph of $H(A)$ induced by the vertex set $\{2,6,7,8,9,10\}$.
Let $G(A_{\bullet V(H)})$ be a $\frac{1}{2}$-binet representation of $A_{\bullet V(H)}$  all of whose basic half-edges with index in $R$, for any $R\in \mathscr{I}$, are entering. See Figure \ref{fig:OnehalfbinetnetH}. Since $s(A_{\bullet 6}) \cap Q_3 \neq \emptyset$, $s(A_{\bullet 8}) \cap R_1 \neq \emptyset$ and $\mathscr{I}$ is $1$-forest, one can prove that for any minimal path $h$ in $H$, any nonbasic edge with index in $V(h)$ is a $2$-edge. Let $h=(6,(6,7),7,(7,8),8)$. Since the length of $h$ is odd (and the basic half-edges with index in $Q_3$ and $R_1$ are entering),
at least one of the nonbasic $2$-edges $f_6$, $f_7$ or $f_8$ is bidirected; that is undesirable (see $f_7$ in Figure \ref{fig:OnehalfbinetnetH}). 

To bypass this problem, the procedure OnehalfbinetQ computes a minimal path from $6$ to $8$ in $H$, for instance $h$ as above. Then it adds the row index subsets $Q_6=s(A_{\bullet 6}) \cap s(A_{\bullet 7})=\{6\}$ and $Q_7=s(A_{\bullet 7}) \cap s(A_{\bullet 8})=\{7\}$ in $\mathscr{I}$, and deletes the vertices $6$, $7$ and $8$ in  $H$. Then $9$ is also removed from $H$, since the support of $A_{\bullet 9}$ intersects two elements of $\mathscr{I}$, namely $Q_4$ and $Q_6$. 
Following this way, the procedure computes a graph $H\subseteq H(A)$ and a $1$-forest family $\mathscr{I}$ such that $A_{\bullet V(H)}$ is a network matrix decomposable into blocks, and the row index set of any block intersects at most one element in $\mathscr{I}$. This implies the following. There exists a $\frac{1}{2}$-binet representation $G(A_{\bullet V(H)})$ of $A_{\bullet V(H)}$ such that all $2$-edges in $G(A_{\bullet V(H)})$ are directed, and the basic half-edge of any $1$-tree with edge index set in $\mathscr{I}$ is entering (see Figure \ref{fig:OnehalfbinetnetHH}).

\vspace{.3cm}
\begin{figure}[h!]
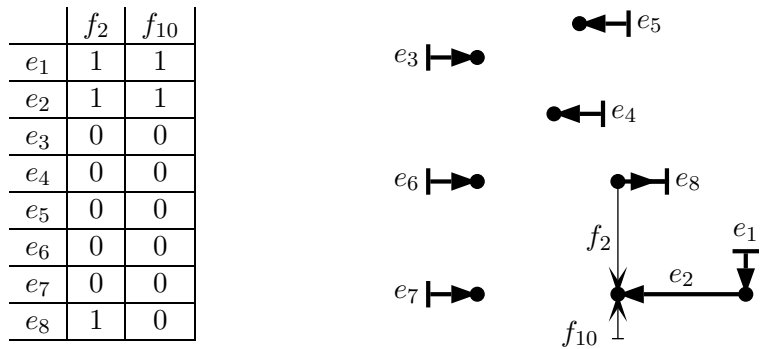


\begin{center}
$
\begin{array}{cc}

\begin{tabular}{c|c|c|}
   & $f_2$ & $f_{10}$  \\
  \hline
$e_1$  &1&1 \\
\hline
$e_2$  &1&1 \\
\hline
$e_3$  &0&0 \\
\hline
$e_4$  &0&0 \\
\hline
$e_5$  &0&0\\
\hline
$e_6$  &0&0 \\
\hline
$e_7$  &0&0 \\
\hline
$e_8$  &1&0 \\
\hline
\end{tabular}  &

\psset{xunit=1.7cm,yunit=1.5cm,linewidth=0.5pt,radius=0.1mm,arrowsize=7pt,labelsep=1.5pt,fillcolor=black}

\pspicture(-2,1)(3,2)

\pscircle[fillstyle=solid](0,0){.1}
\pscircle[fillstyle=solid](0,1){.1}
\pscircle[fillstyle=solid](0,2.1){.1}
\pscircle[fillstyle=solid](0.6,1.6){.1}
\pscircle[fillstyle=solid](1.1,0){.1}
\pscircle[fillstyle=solid](1.1,1){.1}
\pscircle[fillstyle=solid](.8,2.4){.1}
\pscircle[fillstyle=solid](2.1,0){.1}

\psline[linewidth=1.6pt,arrowinset=0]{|->}(2.1,.4)(2.1,0)
\rput(2.1,.55){$e_1$}

\psline[linewidth=1.6pt,arrowinset=0]{<-}(1.1,0)(2.1,0)
\rput(1.6,.15){$e_2$}

\psline[linewidth=1.6pt,arrowinset=0]{|->}(-.4,2.1)(0,2.1)
\rput(-0.55,2.1){$e_3$}

\psline[linewidth=1.6pt,arrowinset=0]{<-|}(.6,1.6)(1,1.6)
\rput(1.15,1.6){$e_4$}

\psline[linewidth=1.6pt,arrowinset=0]{<-|}(.8,2.4)(1.2,2.4)
\rput(1.35,2.4){$e_5$}

\psline[linewidth=1.6pt,arrowinset=0]{|->}(-.4,1)(0,1)
\rput(-0.55,1){$e_6$}

\psline[linewidth=1.6pt,arrowinset=0]{|->}(-.4,0)(0,0)
\rput(-0.55,0){$e_7$}

\psline[linewidth=1.6pt,arrowinset=0]{|-<}(1.5,1)(1.2,1)
\psline[linewidth=1.6pt,arrowinset=0]{-}(1.5,1)(1.1,1)
\rput(1.65,1){$e_8$}

\psline[arrowinset=.5,arrowlength=1.5]{<-}(1.1,0)(1.1,1)
\rput(.95,.5){$f_2$}

\psline[arrowinset=.5,arrowlength=1.5]{|->}(1.1,-.4)(1.1,0)
\rput(.8,-0.35){$f_{10}$}

\endpspicture 
\end{array}$
\end{center}

\vspace{.2cm}
\caption{The binet matrix $A_{\bullet V(H)}$ and a $\frac{1}{2}$-binet representation $G(A_{\bullet V(H)})$ of $A_{\bullet V(H)}$, where $V(H)= \{Ê2,10\}$, such that every $2$-edge is directed and the basic half-edge of any $1$-tree with edge index set in $\mathscr{I}$ is entering, with $\mathscr{I}:= \{R_1, Q_3,Q_4,Q_5, Q_6,Q_7\}$.}  
\label{fig:OnehalfbinetnetHH}
\end{figure}

At last, let $S(\mathscr{I})$ be the index set of columns whose support intersects two elements of $\mathscr{I}$. Whenever $A$ is binet, it will be proved that there exists a $\frac{1}{2}$-binet representation $G(A)$ of $A$
such that $S(\mathscr{I})\cup S_2$ is the index set of nonbasic bidirected edges (except half-edges), and for each $R \in \mathscr{I}$, the basic half-edge with index in $R$ is entering. Let $\delta$ be the row vector of size $m$ given by  $\delta_j= \left\{
\begin{array}{ll}
1 & \mbox{ if } j\in S(\mathscr{I})\cup S_2 \\
0 & \mbox{ Otherwise } \\
\end{array}\right.
$ and $A'=\left[ \begin{array}{c}
A \\
\delta 
\end{array} \right]$. 

\begin{figure}[h!]
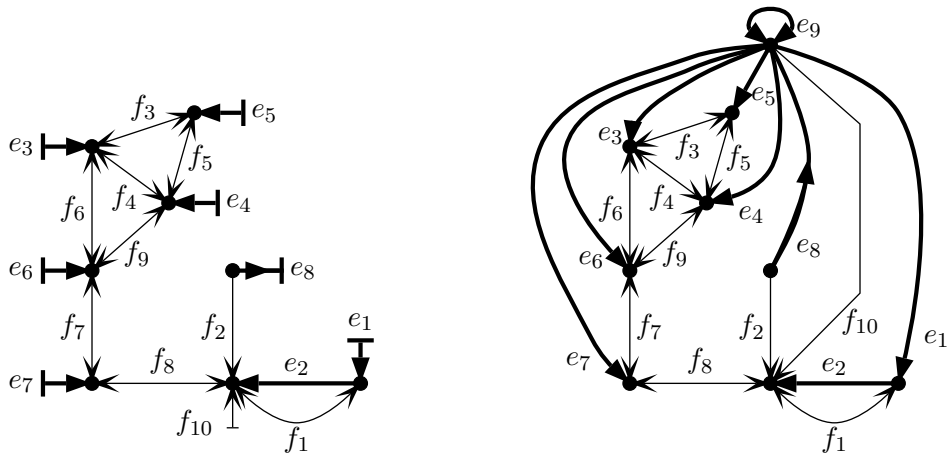

\vspace{1.3cm}
$
\begin{array}{cc}

\psset{xunit=1.7cm,yunit=1.5cm,linewidth=0.5pt,radius=0.1mm,arrowsize=7pt,labelsep=1.5pt,fillcolor=black}

\pspicture(-1,0)(3,3)

\pscircle[fillstyle=solid](0,0){.1}
\pscircle[fillstyle=solid](0,1){.1}
\pscircle[fillstyle=solid](0,2.1){.1}
\pscircle[fillstyle=solid](0.6,1.6){.1}
\pscircle[fillstyle=solid](1.1,0){.1}
\pscircle[fillstyle=solid](1.1,1){.1}
\pscircle[fillstyle=solid](.8,2.4){.1}
\pscircle[fillstyle=solid](2.1,0){.1}

\psline[linewidth=1.6pt,arrowinset=0]{|->}(2.1,.4)(2.1,0)
\rput(2.1,.55){$e_1$}

\psline[linewidth=1.6pt,arrowinset=0]{<-}(1.1,0)(2.1,0)
\rput(1.6,.15){$e_2$}

\psline[linewidth=1.6pt,arrowinset=0]{|->}(-.4,2.1)(0,2.1)
\rput(-0.55,2.1){$e_3$}

\psline[linewidth=1.6pt,arrowinset=0]{<-|}(.6,1.6)(1,1.6)
\rput(1.15,1.6){$e_4$}

\psline[linewidth=1.6pt,arrowinset=0]{<-|}(.8,2.4)(1.2,2.4)
\rput(1.35,2.4){$e_5$}

\psline[linewidth=1.6pt,arrowinset=0]{|->}(-.4,1)(0,1)
\rput(-0.55,1){$e_6$}

\psline[linewidth=1.6pt,arrowinset=0]{|->}(-.4,0)(0,0)
\rput(-0.55,0){$e_7$}

\psline[linewidth=1.6pt,arrowinset=0]{|-<}(1.5,1)(1.2,1)
\psline[linewidth=1.6pt,arrowinset=0]{-}(1.5,1)(1.1,1)
\rput(1.65,1){$e_8$}

\pscurve[arrowinset=.5,arrowlength=1.5]{<->}(1.1,0)(1.6,-.35)(2.1,0)
\rput(1.6,-.5){$f_1$}

\psline[arrowinset=.5,arrowlength=1.5]{<-}(1.1,0)(1.1,1)
\rput(.95,.5){$f_2$}

\psline[arrowinset=.5,arrowlength=1.5]{<->}(0,2.1)(.8,2.4)
\rput(.4,2.45){$f_3$}

\psline[arrowinset=.5,arrowlength=1.5]{<->}(0,2.1)(.6,1.6)
\rput(.25,1.65){$f_4$}

\psline[arrowinset=.5,arrowlength=1.5]{<->}(0.6,1.6)(.8,2.4)
\rput(0.85,2){$f_5$}

\psline[arrowinset=.5,arrowlength=1.5]{<->}(0,1)(0,2.1)
\rput(-.15,1.55){$f_6$}

\psline[arrowinset=.5,arrowlength=1.5]{<->}(0,1)(0,0)
\rput(-.15,0.5){$f_7$}

\psline[arrowinset=.5,arrowlength=1.5]{<->}(0,0)(1.1,0)
\rput(.55,.2){$f_8$}

\psline[arrowinset=.5,arrowlength=1.5]{<->}(0,1)(0.6,1.6)
\rput(.35,1.15){$f_9$}

\psline[arrowinset=.5,arrowlength=1.5]{|->}(1.1,-.4)(1.1,0)
\rput(.8,-0.35){$f_{10}$}

\endpspicture  &

\psset{xunit=1.7cm,yunit=1.5cm,linewidth=0.5pt,radius=0.1mm,arrowsize=7pt,labelsep=1.5pt,fillcolor=black}

\pspicture(-1,0)(3,3)

\pscircle[fillstyle=solid](0,0){.1}
\pscircle[fillstyle=solid](0,1){.1}
\pscircle[fillstyle=solid](0,2.1){.1}
\pscircle[fillstyle=solid](0.6,1.6){.1}
\pscircle[fillstyle=solid](1.1,0){.1}
\pscircle[fillstyle=solid](1.1,1){.1}
\pscircle[fillstyle=solid](.8,2.4){.1}
\pscircle[fillstyle=solid](1.1,3){.1}
\pscircle[fillstyle=solid](2.1,0){.1}

\pscurve[linewidth=1.6pt,arrowinset=0]{<-}(2.1,0)(2.1,2.4)(1.1,3)
\rput(2.4,.4){$e_1$}

\psline[linewidth=1.6pt,arrowinset=0]{<-}(1.1,0)(2.1,0)
\rput(1.6,.15){$e_2$}

\pscurve[linewidth=1.6pt,arrowinset=0]{->}(1.1,3)(.5,2.7)(0.05,2.3)(0,2.1)
\rput(-0.15,2.2){$e_3$}

\pscurve[linewidth=1.6pt,arrowinset=0]{<-}(.6,1.6)(1.1,1.9)(1.1,3)
\rput(.95,1.5){$e_4$}

\psline[linewidth=1.6pt,arrowinset=0]{<-}(.8,2.4)(1.1,3)
\rput(1.05,2.55){$e_5$}

\pscurve[linewidth=1.6pt,arrowinset=0]{->}(1.1,3)(.5,2.8)(-.5,2.1)(0,1)
\rput(-0.3,1.1){$e_6$}

\pscurve[linewidth=1.6pt,arrowinset=0]{->}(1.1,3)(.5,2.9)(-.7,2.1)(-.3,.4)(0,0)
\rput(-0.4,0.2){$e_7$}

\pscurve[linewidth=1.6pt,arrowinset=0]{-}(1.1,1)(1.4,2)(1.1,3)
\pscurve[linewidth=1.6pt,arrowinset=0]{->}(1.1,1)(1.3,1.45)(1.4,2)
\rput(1.4,1.2){$e_8$}

\pscurve[linewidth=1.6pt,arrowinset=0]{<->}(1.1,3)(1.25,3.25)(0.95,3.25)(1.1,3)
\rput(1.4,3.15){$e_9$}

\pscurve[arrowinset=.5,arrowlength=1.5]{<->}(1.1,0)(1.6,-.35)(2.1,0)
\rput(1.6,-.5){$f_1$}

\psline[arrowinset=.5,arrowlength=1.5]{<-}(1.1,0)(1.1,1)
\rput(.95,.5){$f_2$}

\psline[arrowinset=.5,arrowlength=1.5]{<->}(0,2.1)(.8,2.4)
\rput(.45,2.1){$f_3$}

\psline[arrowinset=.5,arrowlength=1.5]{<->}(0,2.1)(.6,1.6)
\rput(.25,1.65){$f_4$}

\psline[arrowinset=.5,arrowlength=1.5]{<->}(0.6,1.6)(.8,2.4)
\rput(0.85,2){$f_5$}

\psline[arrowinset=.5,arrowlength=1.5]{<->}(0,1)(0,2.1)
\rput(-.15,1.55){$f_6$}

\psline[arrowinset=.5,arrowlength=1.5]{<->}(0,1)(0,0)
\rput(.15,0.5){$f_7$}

\psline[arrowinset=.5,arrowlength=1.5]{<->}(0,0)(1.1,0)
\rput(.55,.2){$f_8$}

\psline[arrowinset=.5,arrowlength=1.5]{<->}(0,1)(0.6,1.6)
\rput(.35,1.15){$f_9$}

\psline[arrowinset=.5,arrowlength=1.5]{->}(1.1,3)(1.8,2.3)(1.8,.8)(1.1,0)
\rput(1.8,.55){$f_{10}$}

\endpspicture 
\end{array}$

\vspace{.8cm}
\caption{A $\frac{1}{2}$-binet representation $G(A)$ of $A$ and a $\{n+1\}$-cyclic representation of $A'$.}  
\label{fig:OnehalfbinetA'}
\end{figure}

One claims that $A$ is $\frac{1}{2}$-binet if and only if $A'$ is $\{ n+1 \}$-cyclic. 
Indeed, suppose that $A$ is $\frac{1}{2}$-binet and let $G(A)$
be a $\frac{1}{2}$-binet representation of $A$ as described above. Let us construct a $\{n+1\}$-cyclic representation of $A'$ as follows. Create a new vertex $v$ and a loop entering $v$. Then, replace each half-edge entering (respectively, leaving) a node, say $v'$, by a directed edge $]v,v']$ (respectively, $]v',v]$). 
Conversely, given a $\{ n+1 \}$-cyclic representation of the matrix $A'$, by contracting the basic loop, one obtains a $\frac{1}{2}$-binet representation of $A$. See Figure \ref{fig:OnehalfbinetA'}.

\section{The procedure OnehalfbinetQ}\label{sec:recOnehalfbinet}

In this section, we deal with the general framework of the recognition problem. 
A column index subset $J$ of $A$ is said to be \emph{odd-cyclic}\index{odd-cyclic} if and only if 
the subgraph of $H(A)$ induced by $J$ is
an odd cycle of length at least 5, or a  triangle such that  $s(A_{\bullet j_1})\cap s(A_{\bullet j_2})\cap s(A_{\bullet j_3})=\emptyset$ where $J=\{j_1,j_2,j_3\}$. Observe that in the graph $H(A)$, the vertex set of a triangle may be not odd-cyclic. Before describing a procedure for our recognition problem, we state an auxiliary lemma and the main theorem.

\begin{lem}\label{lemsimponetwo}
If A is a $\{0,1\}$-matrix having a $\frac{1}{2}$-binet representation with exactly one or two basic maximal 1-trees, then $A$ is a network matrix.
\end{lem}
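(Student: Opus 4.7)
My plan is to exhibit a digraph $G'$ whose network matrix equals $A$, by adding a single new vertex $w$ to the given $\frac{1}{2}$-binet representation $G$ and replacing the half-edges by directed links to $w$.

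I would first analyze the structural possibilities for the nonbasic edges. Because every basic cycle in $G$ is a half-edge, Lemma~\ref{lemdefiWeight1} implies that $\pm\frac{1}{2}$ entries cannot occur in $A$, and that a $\pm 2$ entry can arise only from a nonbasic bidirected non-half $1$-edge whose fundamental circuit meets a basic half-edge. Since $A\in\{0,1\}^{n\times m}$, such nonbasic edges are impossible, so the only nonbasic edges of $G$ are directed links (or loops), nonbasic half-edges, and --- in the case of two basic $1$-trees --- $2$-edges joining $T_1$ and $T_2$. Writing $A=B^{-1}N$ with $B$ block-diagonal along the two $1$-trees, one also checks that the sign pattern of a $2$-edge is severely constrained: a $2$-edge that is directed, or that is bidirected with both endpoints signed $-$, would contribute a $-1$ entry to $A$, contradicting $A\in\{0,1\}$. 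Hence in the two-$1$-tree case every $2$-edge is bidirected with both endpoints signed $+$.

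For Case~1 (one basic $1$-tree), the construction of $G'$ is direct: introduce $w$ and replace each half-edge (basic or nonbasic) by a directed edge between its endpoint and $w$, oriented so that a half-edge with sign $+$ at $v$ becomes $(w,v)$ and one with sign $-$ at $v$ becomes $(v,w)$. All remaining edges are already directed and are left unchanged. A column-wise check shows that $In(G')$ with the $w$-row deleted equals $In(G)$, because the only columns of $In(G')$ that differ from those of $In(G)$ are the half-edge columns, each of which gains a single $\mp 1$ entry in the $w$-row that vanishes upon deletion. Consequently the basis $B$ of $In(G)$ with $A=B^{-1}N$ remains a basis of the RIMD of $G'$ obtained by deleting the $w$-row, and $A$ is the network matrix of $G'$ with respect to this basis.

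For Case~2, the bidirected $2$-edges obstruct the direct recipe, and I would first remove them by switching (Lemma~\ref{lemdefiSwi}, which preserves $A$) simultaneously at all nodes of $T_2$. An edge internal to $T_2$ has both endpoint signs flipped at once, so it remains directed (its orientation merely reverses); the basic half-edge at the root of $T_2$ remains a half-edge; and each $2$-edge, having exactly one endpoint in $T_2$, has a single sign flipped, so by the structural analysis above (all $2$-edges are bidirected $++$) it becomes directed. After this uniform switching the representation has no bidirected edges other than half-edges, and the Case~1 construction applied to it produces the desired digraph $G'$. The main obstacle is exactly the handling of these bidirected $2$-edges: the argument hinges on the reduction established in the first paragraph that they must all be of the ``both ends~$+$'' type, which is precisely the point where the hypothesis $A\in\{0,1\}$ (and not merely $A\in\{0,\pm1\}$) is essential, since it is what lets a single uniform switching of $T_2$ eliminate every bidirected $2$-edge without creating any new bidirected edge.
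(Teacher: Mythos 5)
Your proof is correct and follows essentially the same route as the paper's: rule out bidirected non-half $1$-edges via the $\pm 2$-entry argument of Lemma~\ref{lemdefiWeight1}, use switching to make all remaining non-half edges directed, and then convert every half-edge into a directed edge incident with one new vertex whose row is redundant. The only cosmetic difference is the order of operations — the paper switches first so that one basic half-edge enters and the other leaves, whereas you classify the $2$-edges under the ``both half-edges entering'' convention and then switch all of $T_2$ — but this yields the same normalized representation and the same final construction.
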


\begin{proof}
Let $G(A)$ be a $\frac{1}{2}$-binet representation of $A$ with exactly one or two basic maximal
$1$-trees. By switching operations if necessary, we may suppose that one basic half-edge of $G(A)$ is entering and the other one (if one exists) is leaving. Since any nonbasic nonhalf and
bidirected $1$-edge would have a non-empty $2$-support, it follows that all edges except half-edges are directed (see Lemmas \ref{lemdefiWeight1} and \ref{lemdefiWeight2} ).

Then create a new vertex $v$ and replace each half-edge  entering (respectively, leaving) a node, say $v'$, by a directed edge from $v$ to $v'$ (respectively, $v'$ to $v$). The obtained bidirected graph is a network representation of $A$. See Figure \ref{fig:vsimplenet}.
\end{proof}\\

\begin{figure}[ht]
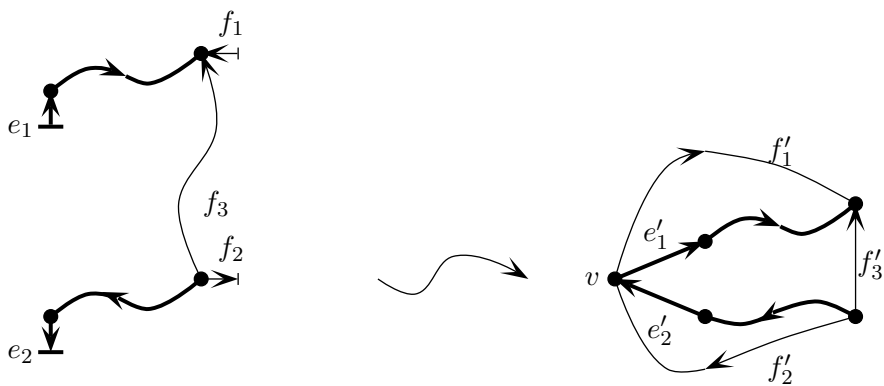


$
\begin{array}{ccc}

\psset{xunit=1cm,yunit=1cm,linewidth=0.5pt,radius=0.1mm,arrowsize=7pt,
labelsep=1.5pt,fillcolor=black}

\pspicture(0,0)(6,4.5)

\pscircle[fillstyle=solid](2,.5){.1}
\pscircle[fillstyle=solid](4,1){.1}

\psline[arrowinset=.5,arrowlength=1.5]{->|}(4,1)(4.5,1)
\rput(4.4,1.4){$f_2$}

\psline[linewidth=1.5pt]{->|}(2,.5)(2,0)

\pscurve[linewidth=1.5pt]{-<}(2,.5)(2.5,.8)(3,.7)
\pscurve[linewidth=1.5pt]{-}(2.9,.75)(3.3,.6)(4,1)

\rput(1.6,0){$e_2$}

\pscircle[fillstyle=solid](2,3.5){.1}
\pscircle[fillstyle=solid](4,4){.1}

\psline[arrowinset=.5,arrowlength=1.5]{<-|}(4,4)(4.5,4)
\rput(4.4,4.4){$f_1$}

\psline[linewidth=1.5pt]{<-|}(2,3.5)(2,3)

\pscurve[linewidth=1.5pt]{->}(2,3.5)(2.5,3.8)(3,3.7)
\pscurve[linewidth=1.5pt]{-}(3,3.7)(3.3,3.6)(4,4)

\rput(1.6,3){$e_1$}

\pscurve[arrowinset=.5,arrowlength=1.5]{<-}(4,4)(4.2,3)(3.7,2)(4,1)
\rput(4.2,2){$f_3$}

\endpspicture &

\psset{xunit=1cm,yunit=1cm,linewidth=0.5pt,radius=0.1mm,arrowsize=7pt,
labelsep=1.5pt,fillcolor=black}

\pspicture(0,0)(2,2)
\pscurve[arrowinset=.5,arrowlength=1.5]{->}(0,1)(.5,0.8)(1,1.3)(2,1)

\endpspicture

 &

\psset{xunit=1cm,yunit=1cm,linewidth=0.5pt,radius=0.1mm,arrowsize=7pt,
labelsep=1.5pt,fillcolor=black}

\pspicture(0,0)(5,3)

\pscircle[fillstyle=solid](0.8,1){.1}

\pscircle[fillstyle=solid](2,0.5){.1}
\pscircle[fillstyle=solid](2,1.5){.1}
\pscircle[fillstyle=solid](4,2){.1}
\pscircle[fillstyle=solid](4,0.5){.1}

\psline[linewidth=1.5pt]{->}(0.8,1)(2,1.5)
\rput(1.35,1.6){$e_1'$}

\pscurve[linewidth=1.5pt]{->}(2,1.5)(2.5,1.8)(3,1.7)
\pscurve[linewidth=1.5pt]{-}(3,1.7)(3.3,1.6)(4,2)

\psline[linewidth=1.5pt]{<-}(0.8,1)(2,0.5)
\rput(1.4,0.32){$e_2'$}

\pscurve[linewidth=1.5pt]{-<}(2,0.5)(2.5,0.4)(3,0.6)
\pscurve[linewidth=1.5pt]{-}(2.9,0.55)(3.5,0.7)(4,0.5)

\rput(0.5,1){$v$}

\psline[arrowinset=.5,arrowlength=1.5]{<-}(4,2)(4,0.5)
\rput(4.2,1.2){$f_3'$}

\pscurve[arrowinset=.5,arrowlength=1.5]{->}(0.8,1)(1.5,2.5)(2,2.7)
\pscurve[arrowinset=.5,arrowlength=1.5]{-}(2,2.7)(3,2.5)(4,2)
\rput(3,2.7){$f_1'$}

\pscurve[arrowinset=.5,arrowlength=1.5]{-}(0.8,1)(1.5,-.2)(2,-.2)
\pscurve[arrowinset=.5,arrowlength=1.5]{->}(4,.5)(3,.2)(2,-.2)
\rput(3,-.2){$f_2'$}

\endpspicture 
\end{array}$

\vspace{1cm}

\caption{How to obtain a network representation of a matrix $A$, given a $\frac{1}{2}$-binet representation of $A$ in which 
all nonbasic edges except half-edges are directed.
An edge $e_i$, respectively $f_j$, is replaced by the directed edge $e_i'$, respectively $f_j'$.}  
\label{fig:vsimplenet}

\end{figure}

\begin{thm}\label{thm:simplenet}
Suppose that $A$ is a $\frac{1}{2}$-binet $\{0,1\}$-matrix.
Then $A$  is a network matrix if and only if each column index subset of $A$ is not odd-cyclic.
\end{thm}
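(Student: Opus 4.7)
For the ``only if'' direction, I will prove the contrapositive. Suppose $A$ has an odd-cyclic column subset $J$. By Theorem~\ref{thmSubclassNNetTot} it suffices to exhibit a square submatrix with determinant $\pm 2$. If $J=\{j_1,j_2,j_3\}$ is a triangle with empty triple intersection, then intersection indices $i_{ab}\in s(A_{\bullet j_a})\cap s(A_{\bullet j_b})$ are pairwise distinct (any coincidence would force an element in the triple intersection), and the resulting $3\times 3$ submatrix is, up to row permutation, the vertex--edge incidence matrix of $C_3$, with determinant $\pm 2$. If $J=\{j_1,\ldots,j_k\}$ is an induced cycle with $k\geq 5$ odd, then non-consecutive columns have disjoint supports by inducedness, so picking $i_\ell\in s(A_{\bullet j_\ell})\cap s(A_{\bullet j_{\ell+1}})$ (indices mod $k$) yields $k$ distinct row indices; the resulting $k\times k$ submatrix is the vertex--edge incidence matrix of $C_k$, whose determinant equals $\pm 2$ (either by direct cofactor expansion or by suitable signing reducing to Proposition~\ref{propBidirectedmatQ}).

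For the ``if'' direction, assume $A$ is $\{0,1\}$-valued, $\frac{1}{2}$-binet, and contains no odd-cyclic column subset. Fix a $\frac{1}{2}$-binet representation $G(A)$ with basic maximal $1$-trees $T_1,\ldots,T_p$. Since every basic cycle is a half-edge and no entry of $A$ equals $\pm 2$, Lemma~\ref{lemdefiWeight1} rules out nonbasic bidirected non-half $1$-edges in $G(A)$. Define the auxiliary graph $\Gamma$ on vertex set $\{T_1,\ldots,T_p\}$ whose edges record the nonbasic bidirected $2$-edges of $G(A)$. The plan is: (i) show $\Gamma$ is bipartite, (ii) use this to switch $G(A)$ into a $\frac{1}{2}$-binet representation in which every nonbasic $2$-edge is directed and every basic half-edge is entering, and (iii) generalize the construction in the proof of Lemma~\ref{lemsimponetwo} by adding a single new vertex $v^\ast$ that absorbs all half-edges, producing a network representation of $A$.

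The crux is the bipartiteness of $\Gamma$. If $\Gamma$ contained an odd cycle, take a shortest one, necessarily induced, say $T_{i_1}\text{--}\cdots\text{--}T_{i_k}\text{--}T_{i_1}$ realized by bidirected $2$-edges $f_1,\ldots,f_k$. Consecutive $f_\ell$ and $f_{\ell+1}$ share at least the basic half-edge of $T_{i_{\ell+1}}$, whereas non-consecutive $f_\ell$ and $f_{\ell'}$ involve disjoint pairs of trees and hence have disjoint supports, so $\{f_1,\ldots,f_k\}$ induces an odd cycle in $H(A)$. For $k=3$, any basic edge lying in all three fundamental circuits would have to belong to three different trees simultaneously, which is impossible, so the triple intersection is empty and $\{f_1,f_2,f_3\}$ is odd-cyclic; for $k\geq 5$ the induced odd cycle itself is already odd-cyclic. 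Either way this contradicts the hypothesis. Once $\Gamma$ is bipartite with classes $V_1\uplus V_2$, a global switching at every vertex of every tree in $V_1$ flips the sign of each $2$-edge between $V_1$ and $V_2$ (turning bidirected into directed) while leaving $A$ unchanged by Lemma~\ref{lemdefiSwi}; a subsequent local switching inside each tree enforces the basic half-edge to be entering, without affecting inter-tree $2$-edges.

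The main obstacle I anticipate is the bookkeeping in step (iii): one must verify that after introducing $v^\ast$ and replacing each half-edge at $v'$ by a directed edge between $v^\ast$ and $v'$ (in the orientation dictated by whether the half-edge is entering or leaving), every column of $A$ remains the edge incidence vector of a directed path in the resulting digraph, i.e.\ each fundamental circuit of $G(A)$ becomes a consistently oriented directed cycle. Nonnegativity of $A$ together with Lemma~\ref{lemdefiWeight2} should deliver this uniformly for basic half-edges, nonbasic half-edges, directed $1$-edges within trees, and the newly directed $2$-edges, but the case analysis must be carried out carefully to close the argument.
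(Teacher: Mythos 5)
Your ``only if'' direction is essentially the paper's argument (exhibit the $\{0,1\}$ incidence matrix of an odd cycle as a submatrix, determinant $\pm 2$, contradict total unimodularity via Theorem~\ref{thmSubclassNNetTot}) and is fine. The problem is in the ``if'' direction: your auxiliary graph $\Gamma$ records only the \emph{bidirected} $2$-edges, whereas the paper's graph $L_{G(A)}$ has an edge for \emph{every} $2$-edge, directed or not. This difference is not cosmetic. Because $A\geq 0$, a $2$-edge between two basic $1$-trees is bidirected exactly when the two half-edges have the same orientation (relative to the consistently oriented stems) and directed exactly when they have opposite orientations. Switching every node of every tree in $V_1$ therefore flips the type of \emph{every} $2$-edge with exactly one endpoint-tree in $V_1$: the bidirected ones crossing the bipartition become directed, as you say, but any \emph{directed} $2$-edge joining a tree of $V_1$ to a tree of $V_2$ becomes bidirected. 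Since $\Gamma$ is blind to directed $2$-edges, nothing forces them to stay inside a colour class. Concretely: take trees $T_1,T_2,T_3$ with a bidirected $2$-edge $T_1T_2$ and a directed $2$-edge $T_2T_3$ and no other $2$-edges; this configuration contains no odd-cyclic set, $\Gamma$ is the single edge $T_1T_2$, and the legal bipartition $V_1=\{T_1,T_3\}$, $V_2=\{T_2\}$ turns $T_2T_3$ bidirected after your switching. Your step (ii) therefore does not produce a representation with all $2$-edges directed, and step (iii) cannot start.

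The fix is exactly the paper's choice: put an edge of the auxiliary graph for every $2$-edge (equivalently, join two half-edges whenever they lie in a common fundamental circuit). Your bipartiteness argument goes through verbatim for this larger graph, since it never used that the $f_\ell$ were bidirected, only that each one connects two trees; and with a bipartition of the full graph every $2$-edge crosses between the classes, so after switching one class each $2$-edge sees one entering and one leaving half-edge and is directed. A second, smaller inaccuracy: your ``subsequent local switching inside each tree'' to make every half-edge entering cannot be done ``without affecting inter-tree $2$-edges'' --- the half-edge's orientation relative to the $2$-edges incident to its tree is a switching invariant of that tree, so flipping it necessarily flips those $2$-edges. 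This normalization is unnecessary anyway: the final construction with the new vertex $v^\ast$ (Figure~\ref{fig:vsimplenet}) works with a mixture of entering and leaving half-edges, which is how the paper concludes.
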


\begin{proof}
Let $G(A)$ be a $\frac{1}{2}$-binet representation of $A$.

Suppose that there exists an odd-cyclic set, say $\{1,\ldots, d\}$, in $H(A)$. From the definition of an odd-cyclic set, it follows that $s(A_{\bullet j})\cap s(A_{\bullet k})\cap s(A_{\bullet l})=\emptyset$ for any triplet $\{j,k,l\}\subseteq \{1,\ldots,d\}$. By a relabeling of the edges we may suppose
that $e_k$ (corresponding to the $k$th row of $A$) is a basic edge contained in the fundamental circuit of $f_k$
and $f_{k+1}$ for  $k=1,\ldots,d-1$, and $e_d$ is in the fundamental circuit of 
$f_d$ and $f_{1}$. Then, since $d$ is odd, the submatrix 

$$
A_{\{1,\ldots, d\}^2}=
\left( \begin{array}{ccccc}
1 & 1 & 0 & \cdots & 0 \\
0 & 1 & 1 & & \vdots \\
\vdots & \ddots & \ddots & \ddots & 0\\
 0 & & & 1 & 1\\
 1 & 0 & \cdots &  0 & 1 
 \end{array} \right)$$

\noindent
of $A$ has a determinant equal to $2$. Thus $A$ is not totally unimodular, and so not a network matrix by Theorem \ref{thmSubclassNNetTot}. 

Conversely, suppose that each column index subset of $A$ is not odd-cyclic. We may assume that $e_1, \ldots,e_r$ are the basic half-edges of $G(A)$ and let 
$L_{G(A)}=(\{e_1,\ldots,e_r\},L)$ be the graph 
such that for $1\le i,i' \le r$, $(e_i,e_{i'})\in L$
if and only if the basic half-edges $e_i$ and $e_{i'}$ in $G(A)$ are in the
fundamental circuit of a $2$-edge.
Since an odd cycle in $L_{G(A)}$ would imply the existence of an odd-cyclic set, it results from the hypothesis that $L_{G(A)}$
is bipartite. Let $B_1\uplus B_2$ be a bipartition of $\{e_1, \ldots,e_r\}$  into two colour classes.
 
Using switching operations, one can construct a new $\frac{1}{2}$-binet representation $G'(A)$ such that all half-edges belonging to $B_1$ (respectively, $B_2$) are entering (respectively, leaving). So, since the fundamental circuit of any $2$-edge contains an entering and a leaving basic half-edge, all $2$-edges are directed. We obtain from $G'(A)$ a network representation of $A$ by
creating a new vertex $v$ and replacing each half-edge by a directed edge incident with $v$ as illustrated in Figure
\ref{fig:vsimplenet}.
\end{proof}\\

The first step of the recognition procedure deals with the columns of $A$ having an entry equal to $2$.
Let $S_2= \{ j\,:\, s_2(A_{\bullet j})\neq \emptyset \}$. 
Denote by $C_1, \ldots, C_{l_0}$ the connected components of $H(A_{\bullet S_2})$ and we note $R_l:=\underset{\small{j\in V(C_l)}}{\cup} s(A_{\bullet j})$
for all $1\le l\le l_0$. In the next lemma, we give the graphical interpretation of these sets.

\begin{lem}\label{lemhalfbinet2Sup}
Suppose that the matrix $A$ has a $\frac{1}{2}$-binet representation $G(A)$. Then the family $\{ R_1,\ldots, R_{l_0} \}$ is $1$-forest.
\end{lem}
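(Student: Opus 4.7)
The plan is to trace through what the structural restriction $s_2(A_{\bullet j})\neq\emptyset$ forces on the nonbasic edge $f_j$ in any $\frac{1}{2}$-binet representation, and then translate the structural information into adjacency information in $H(A_{\bullet S_2})$. Fix a $\frac{1}{2}$-binet representation $G(A)$. By definition of $\frac{1}{2}$-binet, every basic maximal negative $1$-tree of $G(A)$ contains exactly one basic cycle, which is a half-edge. By Lemma~\ref{lemdefiWeight1}, an entry $A_{ij}=2$ can only occur if $f_j$ is a bidirected nonhalf $1$-edge whose fundamental circuit contains a basic half-edge and the basic edge $e_i$ lies in both stems issued from $f_j$. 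So for each $j\in S_2$ the nonbasic edge $f_j$ is a bidirected nonhalf $1$-edge, and in particular both of its endnodes lie in a single basic maximal $1$-tree of $G(A)$, which I shall call $T(j)$.

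Next, I would invoke the description of the fundamental circuits collected at page \pageref{mycounter2} (the row ``$f_j$: a bidirected nonhalf-edge'' of Figure~\ref{fig:Apositive}): the fundamental circuit of such an $f_j$ is $f_j$ together with the unique half-edge $h(j)$ of $T(j)$ and two stems lying inside $T(j)$ joining the endnodes of $f_j$ to the endnode of $h(j)$. Two immediate consequences follow, and these are the heart of the argument: first, $s(A_{\bullet j})\subseteq E(T(j))$; second, the index of $h(j)$ belongs to $s(A_{\bullet j})$.

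From these two facts I would read off the connected component structure of $H(A_{\bullet S_2})$. If $j,j'\in S_2$ satisfy $T(j)=T(j')$, then $h(j)=h(j')$ is a common element of $s(A_{\bullet j})$ and $s(A_{\bullet j'})$, so $(j,j')$ is an edge of $H(A_{\bullet S_2})$. Conversely, if $T(j)\neq T(j')$, then $E(T(j))\cap E(T(j'))=\emptyset$ (distinct basic maximal $1$-trees have disjoint edge sets), so $s(A_{\bullet j})\cap s(A_{\bullet j'})=\emptyset$ and $j$ is not adjacent to $j'$. Hence the equivalence classes under ``$T(\cdot)$ is the same'' are exactly the vertex sets $V(C_l)$ of the connected components of $H(A_{\bullet S_2})$, and each $C_l$ can be unambiguously attached to one basic maximal $1$-tree $T_l$ with $R_l\subseteq E(T_l)$. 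Pairwise disjointness of the family $\{R_1,\ldots,R_{l_0}\}$ is then immediate because the $T_l$ themselves are pairwise edge-disjoint.

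The delicate point I expect to have to be careful with is the claim that $R_l$ \emph{equals} (and not merely is contained in) the edge index set of a basic $1$-tree, which is what ``corresponds to'' in the definition of a $1$-forest family seems to demand. An edge $e_i\in E(T_l)$ not covered by any column of $V(C_l)$ would have to be covered by some other nonbasic column of $A$ (since $A$ is connected and so has no zero row), and such a column must be a directed $1$-edge of $T_l$ because its $2$-support is empty. The closing step of the plan is to argue that in any such situation one can locally re-route the basic tree $T_l$ to make it coincide with the subtree spanned by $R_l$ (by contracting the uncovered directed branches into adjacent basic edges), without disturbing any fundamental circuit. This rerouting is what turns the inclusion $R_l\subseteq E(T_l)$ into the required equality, and is the main obstacle since it has to be carried out simultaneously for all $l$ and compatibly with the other nonbasic columns; once it is handled, the conclusion that $\{R_1,\ldots,R_{l_0}\}$ is $1$-forest is immediate.
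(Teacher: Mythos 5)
Your first three paragraphs are essentially the paper's argument (which is very short): by Corollary~\ref{corBidirectedCircuit} and Lemma~\ref{lemdefiWeight1}, each $f_j$ with $j\in S_2$ is a bidirected nonhalf $1$-edge whose basic fundamental circuit is a connected basic subgraph containing the unique half-edge of its maximal $1$-tree; adjacency in $H(A_{\bullet S_2})$ forces these subgraphs to chain together inside one maximal $1$-tree, and pairwise disjointness of the $R_l$ is immediate from the component structure. Your observation that the classes of the relation ``same maximal $1$-tree'' are exactly the vertex sets $V(C_l)$ is a slightly sharper statement than the paper records, but the substance is the same.

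The last paragraph, however, rests on a misreading of what has to be proved, and the repair you sketch there would not be admissible. The definition of a $1$-forest family only asks that each $R_l$ be the edge index set of \emph{a} basic (negative) $1$-tree, i.e.\ a connected basic subgraph containing exactly one (negative) cycle --- not the edge index set of a basic \emph{maximal} $1$-tree. The subgraph indexed by $R_l$ is the union of the basic fundamental circuits of the $f_j$, $j\in V(C_l)$: it is connected (consecutive supports intersect along the connected graph $C_l$), and its unique cycle is the common half-edge $h(j)$, which you have already shown lies in every $s(A_{\bullet j})$. So the inclusion $R_l\subseteq E(T_l)$ together with $h(j)\in R_l$ already finishes the proof; there is nothing to upgrade to an equality. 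Moreover, the ``re-routing'' you propose could not be used even if it were needed: the $1$-forest property must hold in \emph{any} given $\frac{1}{2}$-binet representation, so one is not free to modify the basic tree of the representation under consideration. Deleting that final paragraph leaves a complete and correct proof.
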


\begin{proof}
It is immediate from the definition of $H(A_{\bullet S_2})$ that the sets $R_1, \ldots, R_{l_0}$ are pairwise disjoint. Further,
by Corollary \ref{corBidirectedCircuit} and Lemma \ref{lemdefiWeight1}, for all $j\in S_2$, the nonbasic edge $f_j$ is a $1$-edge whose fundamental circuit contains a basic half-edge.
So for all $1\le l\le l_0$, since $C_l$ is a connected subgraph of $H(A)$, $R_l$ is the edge index set of a $1$-tree. 
\end{proof}\\

Furthermore, we will search for odd-cyclic vertex sets in $H(A_{\bullet \overline{S_2}})$. The following lemma shows that if $A$ is $\frac{1}{2}$-binet, then nonbasic edges with index in an odd-cyclic subset of $\overline{S_2}$ are $2$-edges.

\begin{lem}\label{lemhalfbinet2edge}
Suppose that $A$ has a $\frac{1}{2}$-binet representation $G(A)$. Then, in any $\frac{1}{2}$-binet representation of $A$, each odd-cyclic subset of $\overline{S_2}$ is an index set of $2$-edges. 
\end{lem}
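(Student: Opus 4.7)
My plan is to work by contradiction, combining a structural classification of the admissible nonbasic edge types with the non–network-matrix criterion of Theorem \ref{thm:simplenet}.

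First I would fix an arbitrary $\frac{1}{2}$-binet representation $G(A)$ and, before touching odd-cyclic subsets at all, establish the following classification. In a $\frac{1}{2}$-binet representation every basic cycle is a half-edge, so Lemma \ref{lemdefiWeight1} rules out any $\pm\frac{1}{2}$-entry in $A$. For $j \in \overline{S_2}$ the column $A_{\bullet j}$ has no $\pm 2$-entry either, and any bidirected non-half 1-edge $f_j$ would force a $\pm 2$-entry at the basic half-edge of its basic 1-tree (again by Lemma \ref{lemdefiWeight1}). Hence for $j \in \overline{S_2}$ the edge $f_j$ falls into exactly one of three types in $G(A)$: a 2-edge, a directed 1-edge, or a half-edge. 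The support $s(A_{\bullet j})$ lies in two distinct basic 1-trees in the first case, and in a single basic 1-tree in the other two.

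Next I would set up the contradiction. Let $J \subseteq \overline{S_2}$ be odd-cyclic and assume some $f_{j_0}$ with $j_0 \in J$ is not a 2-edge in $G(A)$, so its support is contained in a single basic 1-tree $T_0$. Consider the submatrix $A' := A_{\bullet J}$; by Theorem \ref{thmBinetsub} it is binet, and the restriction of $G(A)$ to the nonbasic columns in $J$ is a $\frac{1}{2}$-binet representation $G(A')$ inheriting all edge types. Since $A'$ is a $\{0,1\}$-matrix and $J$ is an odd-cyclic subset of its own column index set, Theorem \ref{thm:simplenet} tells me $A'$ is \emph{not} a network matrix. My goal is to contradict this by exhibiting a $\frac{1}{2}$-binet representation of $A'$ with at most two basic 1-trees and then applying Lemma \ref{lemsimponetwo}.

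The core step is a propagation argument around the odd cycle in $H(A')$. Label the cyclic sequence $j_0, j_1, \ldots, j_{d-1}$; each shared basic edge in $s(A_{\bullet j_k}) \cap s(A_{\bullet j_{k+1}})$ pins down a common basic 1-tree between the fundamental circuits of $f_{j_k}$ and $f_{j_{k+1}}$. Starting from $f_{j_0}$ with support in $T_0$, I would show inductively that each $f_{j_k}$'s fundamental circuit uses $T_0$ as one of its (one or two) basic 1-trees; the only way the "other" 1-tree can change as one walks around the cycle is via a 2-edge. Closing up around an odd cycle, and for $d = 3$ exploiting the empty triple-intersection (which excludes a shared half-edge of $T_0$ appearing in all three supports) to prevent three distinct secondary trees, I would conclude $\bigcup_{k} s(A_{\bullet j_k})$ meets at most two basic 1-trees of $G(A')$. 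Discarding the basic 1-trees that are disjoint from this union yields a $\frac{1}{2}$-binet representation of $A'$ with at most two basic 1-trees, and Lemma \ref{lemsimponetwo} forces $A'$ to be a network matrix—the desired contradiction.

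The main obstacle is the propagation step in the preceding paragraph, specifically the $d = 3$ triangle case with two 2-edges and one non-2-edge, where a naive argument allows three distinct basic 1-trees to appear; the empty triple-intersection must be used carefully there to rule this out. If that direct combinatorial argument leaves residual cases, my backup is a determinant-theoretic route: the odd-cyclic structure of $J$ lets me choose distinct basic row indices $i_0, \ldots, i_{d-1}$ with $e_{i_k} \in s(A_{\bullet j_k}) \cap s(A_{\bullet j_{k+1}})$ so that the $d \times d$ submatrix $A_{\{i_0,\ldots,i_{d-1}\} \times J}$ equals (up to sign permutations) the cyclic matrix $Q$ of the proof of Theorem \ref{thm:simplenet} with $|\det Q| = 2$. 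Combining the basis-exchange identity $\det(B^I) = \det(B)\cdot\det(A_{I,J})$ with Theorem \ref{thmBidirected2mod} (determinants of nonsingular square submatrices of the incidence matrix are powers of $2$) and Lemma \ref{lemBidirectedNonsing} (each factor of $2$ counts a negative 1-tree component), the swap of $\{e_{i_k}\}$ for $\{f_{j_k}\}$ must increase the component count by exactly one, and a local analysis of which types of $f_{j_k}$ can realize such an increase forces every $f_{j_k}$ to connect two distinct basic 1-trees—i.e., to be a 2-edge.
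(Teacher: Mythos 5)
Your overall strategy (assume some $f_{j_0}$ in the odd-cyclic set $J$ is not a $2$-edge, invoke Theorem \ref{thm:simplenet} to get that $A_{\bullet J}$ is not a network matrix, and derive a contradiction from the structure of the fundamental circuits) is the same as the paper's, and your preliminary classification of the admissible edge types for $j\in\overline{S_2}$ is correct. However, the core propagation step has a genuine gap, and it is not confined to the $d=3$ triangle case you flag. The problematic configuration is the one with exactly two $2$-edges, say $f_{j_1}$ and $f_{j_2}$, whose fundamental circuits share exactly \emph{one} basic half-edge: then three distinct basic $1$-trees $T_1,T_2,T_3$ genuinely carry edges of $\bigcup_k s(A_{\bullet j_k})$, for arbitrary odd $d$. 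In that situation your inductive claim that every $f_{j_k}$'s fundamental circuit uses the fixed tree $T_0$ is simply false (the "other" tree changes at each $2$-edge, and after two $2$-edges the circuit need not meet $T_0$ at all), and your plan of discarding basic $1$-trees disjoint from the union cannot reduce the count below three, so Lemma \ref{lemsimponetwo} is not applicable. The empty triple-intersection condition does not rescue this: it only rules out a common half-edge in all three supports, not the three-tree configuration itself.

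What the paper does at exactly this point is different in kind: it first uses the chordlessness of the odd cycle in $H(A)$ to pin down that the two $2$-edges must be the \emph{adjacent} pair $j_1=1$, $j_2=d$, so that all intermediate $f_j$ are $1$-edges with both endnodes in the middle tree $T_2$; it then builds a network representation of $A_{\bullet J}$ \emph{directly} via the "new vertex $v$ plus half-edge-to-arc" transformation of Figure \ref{fig:vsimplenet} (after switchings making both $2$-edges directed), rather than via Lemma \ref{lemsimponetwo}. That direct construction is the missing ingredient in your proof; without it, or some substitute, the $t=2$ case is open. Your determinant-theoretic backup does not close it either: knowing that the basis swap multiplies the determinant by $\pm 2$ constrains the component structure of the new basis globally, but the step from there to "each individual $f_{j_k}$ joins two distinct basic $1$-trees" is precisely the local analysis you have not supplied, and it is not a routine consequence of Theorem \ref{thmBidirected2mod} and Lemma \ref{lemBidirectedNonsing}.
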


\begin{proof}
Let $S$ be an odd-cyclic subset of $\overline{S_2}$. We may assume  that $S=\{1,\ldots, d\}$. By Theorem \ref{thm:simplenet}, we deduce that
the matrix $A_{\bullet S}$ is not a network matrix. (*)\\
Up to a relabeling of the edges, we may assume that the subgraph of $H(A)$ induced by $\{1,\ldots, d\}$ is the cycle $(1,(1,2),2,...,d,(d,1),1)$. If all $f_j$ ($1\le j \le d$) are $1$-edges, then since $(j,j+1)\in H(A)$ for $j=1,\ldots,d-1$, the basic subgraph of $G(A)$ with edge index set $\cup_{j=1}^{d} s(A_{\bullet j})$ is connected. If we delete all nonbasic edges with index in $\overline{S}$, we obtain a $\frac{1}{2}$-binet representation of $A_{\bullet S}$ with exactly one basic maximal $1$-tree, so using Lemma \ref{lemsimponetwo} that contradicts (*). 

Denote by $f_{j_1},f_{j_2},\ldots, f_{j_t}$ the succession of $2$-edges with index between $1$ and $d$ so that
$1\le j_1< j_2 < \ldots < j_t \le d$. If $t=1$, then using Lemma \ref{lemsimponetwo} it contradicts the observation (*). Assume $t=2$. If $f_{j_1}$ and $f_{j_2}$ have two common basic half-edges in their fundamental circuits, using Lemma \ref{lemsimponetwo} we also obtain a contradiction. Otherwise, let $e_{h_1}$ and $e_{h_2}$ (respectively, $e_{h_2}$ and $e_{h_3}$) be the basic half-edges in the fundamental circuit of $f_{j_1}$ (respectively, $f_{j_2}$). Denote by $T_1$, $T_2$ and $T_3$ the basic maximal $1$-trees containing $e_{h_1}$, $e_{h_2}$ and $e_{h_3}$, respectively.
Then, any $1$-edge $f_j$ with $1\le j < j_1$ (respectively, $j_t < j \le d$) has its endnodes in $T_1$ (respectively, $T_3$). Hence $1$ is not adjacent to $d$ in $H(A)$, except if $1=j_1$ and $d=j_2$. So $1=j_1$ and $d=j_2$. Since all edges $f_j$ with $1< j < d$ are $1$-edges with endnodes in $T_2$, one can construct a network representation of $A_{\bullet S}$, using the same transformation as illustrated in Figure \ref{fig:vsimplenet}. This is in contradiction with (*).
Thus $t\geq 3$.

Since there is a path from $j_1$ to $j_2$ in $H(A)$, all of whose inner vertices are indexes of $1$-edges, it follows that $f_{j_1}$ and $f_{j_2}$ have a common basic half-edge in their fundamental circuits. So $(j_1,j_2) \in H(A)$ and in a same way, one can prove that $(j_1,(j_1,j_2),j_2,\ldots, j_t, (j_t,j_1),j_1)$ is a cycle in $H(A)$. Since the cycle $(1,(1,2),2, \ldots, d, (d,1),1)$ has no chord in $H(A)$, we deduce that $l=d$, which concludes the proof.
\end{proof}\\

In what follows, if $\{ j_1,\ldots, j_t\}$ is an odd-cyclic set, then we will assume that the subgraph of $H(A)$ induced by this  set is equal to $(j_1,(j_1,j_2),j_2, \ldots, j_t,
(j_t,j_1),j_1)$. Denote by $Q_{j_l}=s(A_{\bullet j_l})\cap
s(A_{\bullet j_{l+1}})$ for $l=1,\ldots,t-1$ and $Q_{j_t}=s(A_{\bullet j_t})\cap  
s(A_{\bullet j_{1}})$.

\begin{lem}\label{lemHafbinetoddcyclic}
Suppose that the matrix $A$ has a $\frac{1}{2}$-binet representation $G(A)$. Let $\{ j_1,\ldots, j_t\}\subseteq \overline{S_2}$ be an odd-cyclic set. Then the family $\{Q_{j_1},\ldots, Q_{j_t}\}$ is $1$-forest.
\end{lem}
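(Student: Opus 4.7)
The plan is to combine Lemma~\ref{lemhalfbinet2edge} with a careful bookkeeping of how basic half-edges appear in the fundamental circuits of the 2-edges indexed by the odd cycle. First I would invoke Lemma~\ref{lemhalfbinet2edge} to conclude that in any $\frac{1}{2}$-binet representation $G(A)$, every $f_{j_l}$ is a 2-edge; since every basic cycle in $G(A)$ is a half-edge, the fundamental circuit of each $f_{j_l}$ is a handcuff of type (iv) whose two cycles are basic half-edges sitting in two distinct basic maximal $1$-trees (Corollary~\ref{corBidirectedCircuit}). Attach to each $f_{j_l}$ the unordered pair $\{e_{h_l^1},e_{h_l^2}\}$ of basic half-edges in its fundamental circuit.

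Next I would use the combinatorial structure of an odd-cyclic set: the cycle $(j_1,j_2,\ldots,j_t)$ in $H(A)$ is chordless, and for $t=3$ one also has $s(A_{\bullet j_1})\cap s(A_{\bullet j_2})\cap s(A_{\bullet j_3})=\emptyset$. This translates into the following key claim, which I expect to be the main obstacle: each consecutive pair $f_{j_l},f_{j_{l+1}}$ shares \emph{exactly one} basic half-edge, call it $e_{h_l}$, and the indices $e_{h_1},\ldots,e_{h_t}$ are pairwise distinct. To prove this I would argue by contradiction: if $f_{j_l}$ and $f_{j_{l+1}}$ shared two half-edges (i.e.\ connected the same pair of trees), then the third 2-edge $f_{j_{l+2}}$ in the cycle (respectively $f_{j_3}$ in the triangle case) would, in order to share a basic edge with $f_{j_{l+1}}$, be forced to contain one of those shared half-edges in its support, producing a chord in $H(A)$ (or violating the triple-empty-intersection condition when $t=3$). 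The chordlessness also yields that $s(A_{\bullet j_l})\cap s(A_{\bullet j_{l'}})=\emptyset$ whenever $|l-l'|\geq 2$ modulo $t$, which I will need in the last step.

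With this in hand, I would identify $Q_{j_l}$ geometrically. Denote by $T_l$ the basic maximal $1$-tree of $G(A)$ containing the shared half-edge $e_{h_l}$. Both $f_{j_l}$ and $f_{j_{l+1}}$ have one endnode in $T_l$, and their supports restricted to $T_l$ are the two directed stems from those endnodes to the endnode $w_l$ of $e_{h_l}$, together with $e_{h_l}$ itself. The common part of two such stems meeting at $w_l$ is the path from the "merge vertex" to $w_l$, to which the half-edge is appended; this subgraph is a subtree of $T_l$ together with one additional (half-)edge, hence a basic negative $1$-tree. Since $f_{j_l},f_{j_{l+1}}$ share only the tree $T_l$ (by the key claim above), the contribution from any other tree to $s(A_{\bullet j_l})\cap s(A_{\bullet j_{l+1}})$ is empty, so $Q_{j_l}$ is exactly the edge index set of this $1$-tree.

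Finally, pairwise disjointness of $\{Q_{j_1},\ldots,Q_{j_t}\}$ would follow on two levels. For non-consecutive indices $l,l'$ with $|l-l'|\geq 2$ (mod $t$), one has $s(A_{\bullet j_l})\cap s(A_{\bullet j_{l'}})=\emptyset$ by chordlessness, so $Q_{j_l}\cap Q_{j_{l'}}=\emptyset$ a fortiori. For consecutive indices $l,l+1$, the sets $Q_{j_l}$ and $Q_{j_{l+1}}$ live inside the distinct basic maximal $1$-trees $T_l$ and $T_{l+1}$ (they are distinct because $e_{h_l}\neq e_{h_{l+1}}$ and a basic maximal $1$-tree contains a unique basic cycle), so again the intersection is empty. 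This establishes that $\{Q_{j_1},\ldots,Q_{j_t}\}$ satisfies both requirements of being $1$-forest, completing the argument.
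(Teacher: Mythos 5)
Your proof is correct and follows essentially the same route as the paper's: invoke Lemma~\ref{lemhalfbinet2edge} to make every $f_{j_l}$ a $2$-edge, use the chordlessness (and, for $t=3$, the empty triple intersection) to force consecutive fundamental circuits to share exactly one basic half-edge, and identify $Q_{j_l}$ as the common part of the two stems plus that half-edge, i.e.\ a negative $1$-tree. The only cosmetic difference is that the paper obtains pairwise disjointness of the $Q_{j_l}$ in one line directly from the definition of an odd-cyclic set, whereas you route the consecutive case through the distinctness of the trees $T_l$; both are valid.
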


\begin{proof}
From the definition of an odd-cyclic set, it results that the sets $Q_{j_1},\ldots, Q_{j_t}$ are pairwise disjoint and $t\geq 3$. 
By Lemma \ref{lemhalfbinet2edge}, for all $l=1,\ldots ,t$, $f_{j_l}$ is a 2-edge. So, if the fundamental circuits of $f_{j_1}$ and $f_{j_2}$ for instance share two common basic half-edges, then $s(A_{\bullet j_1})\cap s(A_{\bullet j_2})\cap s(A_{\bullet j_3})\neq \emptyset$ which contradicts the definition of an odd-cyclic set. Thus the fundamental circuits of $f_{j_1}$ and $f_{j_2}$ have exactly one common basic half-edge. Therefore $Q_{j_1}$ is the edge index set of a negative $1$-tree. Similarly, $Q_{j_l}$ is the edge index set of a negative $1$-tree, for $l=2,\ldots,t$.
\end{proof}\\

In the following procedure, since it is hard to find an odd cycle of length at least $5$
in a graph or prove that none exists, odd-cyclic sets 
are discovered by searching for minimal column subsets of $A$ forming a non-network matrix. Odd-cyclic vertex sets are removed out of a subgraph $H$ of $H(A)$, until $H$ has no odd-cyclic subset any more. On the other hand, any row index subset $Q_{j_l}$ (for some column index $j_l$) which does not intersect any element of $\mathscr{I}$ is added in $\mathscr{I}$.

\begin{tabbing}
\textbf{Procedure\,\,S2OddCycle($A$)}\\

\textbf{Input:} A matrix $A$.\\
\textbf{Output:} Either a graph $H$ and a family $\mathscr{I}$ updated, or determines that $A$ is not $\frac{1}{2}$-binet.\\

1)\verb"  "\= let $\mathscr{I} = \{R_1,\ldots,R_{l_0}\}$ and $H=H(A)\verb"\"S_2$;\\

2) \> {\bf while }\= $A_{\bullet V(H)}$ is not a network matrix,  {\bf do}\\
3)\> \> search for a minimal subset $J=\{ j_1,\ldots,j_t\}$ in $V(H)$ such that $A_{\bullet J}$ is not\\
\> \> a network matrix;\\
4)\> \> if $J$ is not odd-cyclic, then STOP: output that $A$ is not $\frac{1}{2}$-binet;\\
5)  \> \> for $l=1,\ldots,t$, if $Q_{j_l}\cap R =  \emptyset$ for all $R\in \mathscr{I}$, then add $Q_{j_l}$ in $\mathscr{I}$; \\ 
6)    \> \> $H=H\verb"\" \{j_1,\ldots,j_t\}$;\\
\> {\bf endwhile }\\
\> output $H$ and $\mathscr{I}$;\\
\end{tabbing}

Suppose that $A$ is $\frac{1}{2}$-binet and the procedure S2OddCycle has output a graph $H$ and a family $\mathscr{I}$.
One idea is to prove the existence of a $\frac{1}{2}$-binet representation of $A$, if one exists, such that all bidirected $2$-edges are known precisely. By assuming that the matrix $A_{\bullet V(H)}$ is decomposed into (connected) blocks, provided that there exist two elements of $\mathscr{I}$ that are row index subsets of a same block, it seems unconvenient to find a $\frac{1}{2}$-binet representation in which all bidirected $2$-edges are known.
For avoiding this case, we will consider a procedure called Path.

For a given $\mathscr{I}$ and a subgraph $H$ of $H(A)$, a vertex $j$ of $H$ is \emph{marked}\index{marked} if and only if there exists at least one element $R$ of $\mathscr{I}$ such that $s(A_{\bullet j})\cap R\neq \emptyset.$ A path from a vertex $j$ to $j'$
in $H$ is called \emph{linking}\index{linking path} if all its nodes are not marked except $j$ and $j'$. Moreover, if $j=j'$, then one requires the existence of two sets $R,R' \in \mathscr{I}$ ($R\neq R'$) such that $s(A_{\bullet j})\cap R\neq \emptyset$ and 
$s(A_{\bullet j})\cap R'\neq \emptyset$, otherwise for any $R,R' \in \mathscr{I}$ such that $s(A_{\bullet j}) \cap R \neq \emptyset$ and $s(A_{\bullet j'}) \cap R'\neq \emptyset$, we have $s(A_{\bullet j'}) \cap R =\emptyset$ and $s(A_{\bullet j}) \cap R'=\emptyset$. Let us see a useful lemma.

\begin{lem}\label{lem:linkingpath}
Suppose that $A$ has a $\frac{1}{2}$-binet representation $G(A)$. Let $\mathscr{I}$ be a $1$-forest family of row index subsets of $A$ and $H$ a subgraph of $H(A)$. Then, all vertices of a minimal linking path in $H$ correspond to indexes of $2$-edges in $G(A)$.
\end{lem}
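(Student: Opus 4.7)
The proof naturally splits into two parts: the endpoints $j_0$ and $j_t$ of the minimal linking path $p = (j_0, \ldots, j_t)$, and the inner vertices $j_i$ ($1 \le i \le t-1$). For a vertex $j$ of $H$, write $\mathcal{T}(j)$ for the set of basic maximal negative $1$-trees of $G(A)$ that meet the fundamental circuit of $f_j$; then $|\mathcal{T}(j)| = 1$ precisely when $f_j$ is a half-edge or a $1$-edge, and $|\mathcal{T}(j)| = 2$ precisely when $f_j$ is a $2$-edge. The lemma is therefore equivalent to $|\mathcal{T}(j_i)| = 2$ for every $0 \le i \le t$.

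\textbf{Endpoints.} Suppose, for contradiction, that $|\mathcal{T}(j_0)| = 1$, say $\mathcal{T}(j_0) = \{T\}$. Since $j_0$ is an endpoint of a linking path, it is marked, i.e.\ there is some $R_0 \in \mathscr{I}$ with $s(A_{\bullet j_0}) \cap R_0 \neq \emptyset$. Because $\mathscr{I}$ is $1$-forest, its elements index pairwise disjoint basic maximal negative $1$-trees of $G(A)$, so the non-empty intersection forces $T$ to be the tree of $R_0$; in particular $s(A_{\bullet j_0}) \subseteq R_0$. Now $(j_0,j_1) \in H$ gives $s(A_{\bullet j_0}) \cap s(A_{\bullet j_1}) \neq \emptyset$, whence $s(A_{\bullet j_1}) \cap R_0 \neq \emptyset$. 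If $t \ge 2$ this marks the inner vertex $j_1$, contradicting the definition of a linking path; if $t = 1$ it puts $R_0$ into the marking set of $j_t$, contradicting the disjointness of the marking sets of $j_0$ and $j_t$. Thus $|\mathcal{T}(j_0)| = 2$, and symmetrically $|\mathcal{T}(j_t)| = 2$. The degenerate case $t = 0$ (where the linking condition directly forces $f_j$ to meet two disjoint $\mathscr{I}$-trees) fits into the same pattern.

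\textbf{Inner vertices.} Now assume, for contradiction, that some inner vertex $j_i$ with $1 \le i \le t-1$ satisfies $|\mathcal{T}(j_i)| = 1$, say $\mathcal{T}(j_i) = \{T\}$; then $T \notin \mathscr{I}$, because $j_i$ is unmarked. Since both $(j_{i-1},j_i)$ and $(j_i,j_{i+1})$ are edges of $H$, we have $T \in \mathcal{T}(j_{i-1}) \cap \mathcal{T}(j_{i+1})$, i.e.\ both neighbours already have basic edges lying in $T$. The plan is to combine this with the chordlessness of $p$ and the description of fundamental circuits in a negative $1$-tree (Corollary~\ref{corBidirectedCircuit} together with the enumeration at page~\pageref{mycounter2} and Figure~\ref{fig:Apositive}) to derive a contradiction, by a case analysis on the type of $f_{j_{i-1}}$ and $f_{j_{i+1}}$ (directed or bidirected $1$-edge within $T$, half-edge of the $1$-tree $T$, or $2$-edge whose second tree lies outside $T$). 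In each case one aims to produce either an edge of $H$ between $j_{i-1}$ and $j_{i+1}$, giving a chord of $p$ and contradicting minimality, or a basic edge of some $R \in \mathscr{I}$ that lies in $s(A_{\bullet j_k})$ for an inner $j_k$, contradicting its unmarkedness.

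\textbf{Main obstacle.} The delicate point is precisely this inner case. Even though all three fundamental circuits meet $T$, they need not pairwise share a basic edge: inside a single negative $1$-tree, fundamental circuits can occupy disjoint subpaths, so a purely tree-level incidence argument fails. The real work is to use the explicit shape of the fundamental circuits (stem, positive cycle, and basic negative cycle of $T$) together with the sign consistency forced by the nonnegativity of $A$, in order to propagate the overlap of $f_{j_i}$ with its neighbours along $T$ until it produces a genuine basic-edge intersection between $s(A_{\bullet j_{i-1}})$ and $s(A_{\bullet j_{i+1}})$, or else reaches an $\mathscr{I}$-tree. Setting up this case analysis cleanly, and in particular choosing the right way to reroute the path through $T$, is the core difficulty of the proof.
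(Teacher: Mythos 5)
There are two genuine gaps. First, your endpoint argument rests on a misreading of the definition of a $1$-forest family: the elements of $\mathscr{I}$ are only required to be edge index sets of basic (negative) $1$-trees, not of \emph{maximal} $1$-trees. Consequently, from $\mathcal{T}(j_0)=\{T\}$ and $s(A_{\bullet j_0})\cap R_0\neq\emptyset$ you may conclude that $R_0$ indexes a sub-$1$-tree of $T$, but not that $s(A_{\bullet j_0})\subseteq R_0$; the fundamental circuit of $f_{j_0}$ can leave the subtree indexed by $R_0$ while staying inside $T$, and then $s(A_{\bullet j_1})$ may meet $s(A_{\bullet j_0})$ outside $R_0$, so $j_1$ need not become marked and your contradiction evaporates. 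The correct argument is necessarily global: since consecutive supports intersect, the fundamental circuits along the path must pass from the maximal $1$-tree containing $R_0$ to the (different) maximal $1$-tree containing the set marking the other endpoint, and a $1$-edge's circuit never leaves its maximal $1$-tree, so some $f_{j_k}$ whose circuit meets $T$ must be a $2$-edge. The circuit of such a $2$-edge contains the basic half-edge of $T$, whose index lies in $R_0$ (any sub-$1$-tree of $T$ contains $T$'s unique cycle), so $j_k$ is marked by $R_0$ and the linking-path conditions force $k$ to be the endpoint. This is the step your local argument cannot replace.

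Second, the inner-vertex case is not actually carried out: you state a plan and then correctly flag it as the "core difficulty," but the difficulty dissolves once you use the one structural fact you are missing, namely that in a $\frac{1}{2}$-binet representation the fundamental circuit of \emph{every} $2$-edge is a handcuff containing the basic cycles, i.e.\ the half-edges, of both maximal $1$-trees it touches. Take a maximal run $f_{j_l},\dots,f_{j_{l'}}$ of consecutive $1$-edges on the path (your single-vertex setup does not handle runs, since the two neighbours of a $1$-edge may themselves be $1$-edges and then no chord between them is expected). All circuits in the run lie in one maximal $1$-tree $T''$, and the two flanking $2$-edges $f_{j_{l-1}}$ and $f_{j_{l'+1}}$ each have a foot in $T''$; hence both of their supports contain the index of $T''$'s half-edge, so $j_{l-1}$ and $j_{l'+1}$ are adjacent, giving a chord that contradicts minimality. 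No case analysis on directed versus bidirected $1$-edges, and no propagation along subpaths of $T''$, is needed.
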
 

\begin{proof}
Let $\gamma=(j_1, (j_1,j_2),j_2,\ldots,j_{t-1},(j_{t-1},j_t),j_t)$ be a minimal linking path in $H$. If $t=1$, the conclusion is clear. Now assume $t\geq 2$. Let $R,R'\in \mathscr{I}$ ($R\neq R'$) such that
$s(A_{\bullet j_1})\cap R\neq \emptyset$ and $s(A_{\bullet j_t})\cap R'\neq \emptyset$. Since $\mathscr{I}$ is a $1$-forest family, we may denote by $T$ and $T'$ the basic $1$-trees in $G(A)$ with edges index sets $R$ and $R'$, respectively. Denote by $e_i$ and $e_{i'}$ the half-edges of $T$ and $T'$, respectively. Since $T\neq T'$ and using the definition of $H$,  $e_i$ as well as $e_{i'}$ have to be contained in the fundamental circuit of a $2$-edge with index in $\gamma$. By definition of a minimal linking path, we deduce that
$f_{j_1}$ is the unique nonbasic edge whose fundamental circuit contains $e_i$, so $f_{j_1}$ is a  $2$-edge. For a same reason, $f_{j_t}$ is a  $2$-edge. 

Now, suppose that there exists a vertex in $\gamma$ corresponding to the index of a $1$-edge in $G(A)$. We may assume that $f_{j_l},f_{j_{l+1}},\ldots, f_{j_{l'}}$ are $1$-edges and $f_{j_{l-1}}$ and $f_{j_{l'+1}}$ are $2$-edges for some $1<l\le l'< t$. It results that 
$j_{l-1}$ and $j_{l'+1}$ are adjacent in $H$, which contradicts the minimality of $\gamma$.
\end{proof}\\

For a path $(j_1, (j_1,j_2),j_2,\ldots,j_{t-1},(j_{t-1},j_t),j_t)$ in $H(A)$, we denote by $Q_{j_l}=s(A_{\bullet j_l})\cap  s(A_{\bullet j_{l+1}})$ for $l=1,\ldots,t-1$.
The procedure Path is stated below.

\begin{tabbing}
\textbf{Procedure\,\,Path($A$,$H$,$\mathscr{I}$)}\\

\textbf{Input:} A matrix $A$, a graph $H \subseteq H(A)$ and a family $\mathscr{I}$ of row index subsets of $A$.\\
\textbf{Output:} A graph $H$ and a family $\mathscr{I}$ updated.\\

1) \= {\bf while }\= there exists a minimal linking path in $H$, {\bf do}\\
2)   \>            \> let $(j_1, (j_1,j_2),j_2,\ldots,j_{t-1},(j_{t-1},j_t),j_t)$ be a minimal linking path in $H$;\\
3)    \> \> for $l=1,\ldots,t-1$, if $Q_{j_l}\cap R =  \emptyset$ for all  $R\in \mathscr{I}$, then add $Q_{j_l}$ in $\mathscr{I}$; \\ 
    \> \> $H=H\verb"\" \{j_1,\ldots,j_t\}$;\\
\> {\bf endwhile }\\
\> output $H$ and $\mathscr{I}$;\\
\end{tabbing}

Finally, for a given family $\mathscr{I}$ of row index subsets of $A$, let $S(\mathscr{I})$ be the set of vertices $j$ in $H(A)$ such that $s(A_{\bullet j})\cap R\neq \emptyset$ and $s(A_{\bullet j})\cap R'\neq \emptyset$ for some $R,R' \in \mathscr{I}$ ($R\neq R'$).
We define the matrix $A'=\left[ \begin{array}{c}
A \\
\delta  \end{array} \right]$, where $\delta$ is a row vector given by  $\delta_j= \left\{
\begin{array}{ll}
1 & \mbox{ if } j\in S(\mathscr{I})\cup S_2 \\
0 & \mbox{ Otherwise } \\
\end{array}\right.
$. Now one can describe a main procedure and prove its correctness.

\begin{tabbing}
\textbf{Procedure\,\,Onehalfbinet($A$)}\\
\textbf{Input:} A  matrix $A$ with $0$, $1$, or $2$ entries.\\
\textbf{Output:} Either a $\frac{1}{2}$-binet representation $G(A)$ of $A$, or determines that none exists.\\
1)\verb"  "\= call {\tt S2OddCycle}($A$) outputing some $H\subseteq H(A)$ and $\mathscr{I}$, or the fact that $A$ is not $\frac{1}{2}$-binet;\\
2)  \> call {\tt Path}($A$,$H$,$\mathscr{I}$) outputing some $H\subseteq H(A)$ and $\mathscr{I}$;\\
3)        \> construct the matrix $A'$ as above and call {\tt RCyclic}($A'$,$\{n+1\}$) of Section \ref{sec:cyc}; \\
\> if we have a $\{n+1\}$-cyclic representation $G(A')$, then go to 4,\\
\>  otherwise STOP: output that $A$ is not $\frac{1}{2}$-binet;\\
4) \> output the bidirected graph $G(A)$ obtained from $G(A')$ by contracting the basic loop; \\

\end{tabbing}

\begin{thm}\label{thmOnehalbinet}
The output of the procedure Onehalfbinet is correct. 
\end{thm}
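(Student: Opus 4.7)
The plan is to establish the biconditional: the procedure Onehalfbinet succeeds and outputs a valid $\frac{1}{2}$-binet representation of $A$ if and only if $A$ is $\frac{1}{2}$-binet. Since the procedure only halts with failure in S2OddCycle (step 4) or at step 3 (when RCyclic fails), it suffices to show two things: (i) soundness, that a successful output is always a valid $\frac{1}{2}$-binet representation; and (ii) completeness, that if $A$ is $\frac{1}{2}$-binet then neither failure point is triggered.

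For soundness, suppose the procedure reaches step 4. By Theorem \ref{thmcyclicproCyc}, the output of RCyclic at step 3 is a $\{n+1\}$-cyclic representation $G(A')$ of $A'$. Since the last row of $A'$ is the characteristic vector $\delta$ of $S(\mathscr{I})\cup S_2$, the basic edge $e_{n+1}$ is a loop in $G(A')$; contracting it turns every basic edge incident with the loop's vertex into a half-edge, while every nonbasic edge whose fundamental circuit contains $e_{n+1}$ (that is, every column indexed by $j$ with $\delta_j=1$) becomes a bidirected edge, and every other nonbasic edge becomes a directed edge. This is exactly the graphical construction sketched in Section \ref{sec:infcycOnehalfbinet} run in reverse, and a direct verification (using Lemmas \ref{lemdefiWeight1} and \ref{lemdefiWeight2}) shows that the resulting graph is a $\frac{1}{2}$-binet representation of $A$.

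For completeness, assume $A$ has a $\frac{1}{2}$-binet representation $G(A)$. I would first show by induction that the family $\mathscr{I}$ remains $1$-forest throughout steps 1 and 2. The initial family $\{R_1,\ldots,R_{l_0}\}$ is $1$-forest by Lemma \ref{lemhalfbinet2Sup}. Each time step 5 of S2OddCycle adds sets $Q_{j_l}$ coming from an odd-cyclic set $J$, Lemma \ref{lemHafbinetoddcyclic} guarantees these are edge index sets of pairwise disjoint basic $1$-trees in $G(A)$, and the intersection test with existing elements of $\mathscr{I}$ ensures global disjointness. Similarly, each addition made by Path is justified by Lemma \ref{lem:linkingpath}. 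Next, to rule out failure at step 4 of S2OddCycle, observe that at every iteration, $A_{\bullet V(H)}$ remains a submatrix of the $\frac{1}{2}$-binet matrix $A$, hence is $\frac{1}{2}$-binet; moreover $V(H)\subseteq \overline{S_2}$ so $A_{\bullet V(H)}$ is a $\{0,1\}$-matrix. Theorem \ref{thm:simplenet} therefore guarantees that any minimal $J\subseteq V(H)$ for which $A_{\bullet J}$ is not a network matrix must be odd-cyclic.

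The main obstacle is to rule out failure of RCyclic at step 3, which amounts to exhibiting a $\{n+1\}$-cyclic representation of $A'$. My plan is to modify $G(A)$ so that the nonbasic bidirected edges (other than half-edges) are precisely those indexed by $S(\mathscr{I})\cup S_2$, and so that for each $R\in\mathscr{I}$ the unique basic half-edge of the $1$-tree with edge index set $R$ is entering. To achieve the first condition, note that after Path terminates, every connected block of $A_{\bullet V(H)}$ contains rows intersecting at most one element of $\mathscr{I}$ (otherwise a minimal linking path would still exist). Using Lemma \ref{lemsimponetwo} blockwise together with an argument analogous to the proof of Theorem \ref{thm:simplenet}, the basic half-edges not appearing in any $R\in\mathscr{I}$ can be re-signed by switching so that all nonbasic edges of $G(A_{\bullet V(H)})$ become directed. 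The second condition is then achieved by switching at entire basic $1$-trees (which leaves $A$ unchanged by Lemma \ref{lemdefiSwi}). With $G(A)$ in this canonical form, adding a new vertex $v$, a basic negative loop at $v$ indexed by $n+1$, and replacing each basic half-edge by a directed edge between $v$ and its endnode (with matching orientation) yields a cyclic representation of $A'$ whose basic cycle is exactly $\{n+1\}$; RCyclic will then succeed by Theorem \ref{thmcyclicproCyc}, completing the argument.
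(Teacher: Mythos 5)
Your proof follows the paper's argument essentially step for step: Theorem \ref{thm:simplenet} rules out failure in step 4 of S2OddCycle, Lemmas \ref{lemhalfbinet2Sup}, \ref{lemHafbinetoddcyclic} and \ref{lem:linkingpath} show $\mathscr{I}$ stays $1$-forest, the Path subroutine forces distinct elements of $\mathscr{I}$ into different connected components of $G(A_{\bullet V(H)})$ so that switching yields the canonical form, and the $\{n+1\}$-cyclic representation of $A'$ is built by adding a vertex $v$ carrying the loop. One small correction to the final construction: you must replace \emph{every} half-edge by a directed edge through $v$ — nonbasic ones included (e.g.\ $f_{10}$ in Figure \ref{fig:OnehalfbinetA'}), not only the basic ones — since a surviving nonbasic half-edge would form a handcuff with the new basic loop and so acquire a $\frac{1}{2}$-entry in row $n+1$, and its column would no longer match $A'$.
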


\begin{proof} 
Suppose that $A$ has a $\frac{1}{2}$-binet representation $G(A)$. Using Theorem \ref{thm:simplenet}, we deduce that the subroutine S2OddCycle does not stop in its step 4. Let 
$\mathscr{I}$ and $H$ be obtained by performing successively the procedures S2OddCycle and Path. By Lemmas \ref{lemhalfbinet2Sup}, \ref{lemHafbinetoddcyclic} and \ref{lem:linkingpath}, it results that $\mathscr{I}$ is $1$-forest. Thus, for any $R \in \mathscr{I}$, $R$ is the edge index set of a basic $1$-tree in $G(A)$. Moreover, by construction, we have the equality 
$$
\{1,\ldots,m\}= S_2 \uplus S(\mathscr{I}) \uplus V(H).
$$
By the subroutine S2OddCycle, the matrix $A_{\bullet V(H)}$ is a network matrix. Moreover, thanks to the subroutine Path, for $R,R'\in \mathscr{I}$ ($R\neq R'$), the basic $1$-trees with edge index sets $R$ and $R'$
are in different connected components of the subgraph $G(A_{\bullet V(H)})$. So, up to switching at all nodes of some basic maximal $1$-trees, we may assume that for all $R\in \mathscr{I}$, the basic half-edge with index in $R$ is entering and all $2$-edges  in $G(A_{\bullet V(H)})$ are directed (see also the proof of Theorem \ref{thm:simplenet}).
Thus, up to switching operations, we may assume that $G(A)$ is a $\frac{1}{2}$-binet representation such that $S(\mathscr{I})$ corresponds to the index set of bidirected $2$-edges and each basic half-edge with index in some $R\in \mathscr{I}$ is entering.

Using $G(A)$, we construct a $\{n+1\}$-cyclic representation of $A'$ as follows. Create a new vertex $v$ and a loop entering $v$ corresponding to the last row of $A'$. Then replace each half-edge which is entering (respectively, leaving) a node say $v'$, by a directed edge from $v$ to $v'$ (respectively, $v'$ to $v$). 
By Theorem \ref{thmcyclicproCyc}, the procedure RCyclic computes a $\{n+1\}$-cyclic representation of $A'$ in step 3.

Whatever input, it is straightforward that the output of the procedure Onehalfbinet (if it does not stop) is a $\frac{1}{2}$-binet representation of $A$.
\end{proof}\\


For any positive integer $q$, we define 

\begin{center}
$
N^{(1)}=
\left( \begin{array}{ccc}
1 & 1 & 0  \\
0 & 1 & 1 \\
1 & 0 & 1 \\
0 & 0 & 0 \\ 
\vdots & \vdots & \vdots \\
0 & 0 & 0 \\ 
\end{array} \right)$ \hspace{1cm}and  \hspace{1cm}
$
N^{(q)}=
\left( \begin{array}{ccccc}
1 & 1 & 0 & \cdots & 0 \\
0 & 1 & 1 & & \vdots \\
\vdots & \ddots & \ddots & \ddots & 0\\
 0 & & & 1 & 1\\
 1 & 0 & \cdots &  0 & 1 \\
0& & \cdots & & 0\\
\vdots& &  & & \vdots\\
0& & \cdots & & 0\\
 \end{array} \right)$,
\end{center}
where $N^{(1)}$ is of size $(n+2)\times 3$, and $N^{(q)}$ of size $(n+1) \times (q+1)$ (respectively, $n\times q$) if $q$ is even (respectively, odd and at least equal to $3$). Then, we define
\begin{center}
$A^{(1)}=\left( \begin{array}{c} 0_{2\times m}\\ A \end{array} \,\, N^{(1)}\right)$\hspace{.7cm} and \hspace{.7cm}
$A^{(q)}=\left( \begin{array}{c} 0_{(q+1)mod 2 \times m}\\ A \end{array} \,\, N^{(q)}\right)$ for $q\geq 2$.
\end{center} The relation between $A$ and $A^{(q)}$ appears in the following lemma.

\begin{lem}\label{lemsimplebinet_q}
The matrix $A$ has a $\frac{1}{2}$-binet representation such that the first
$q$ rows correspond to (basic) half-edges if and only if $A^{(q)}$ has a $\frac{1}{2}$-binet representation.
\end{lem}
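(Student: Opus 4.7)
The proof hinges on the observation that the appended matrix $N^{(q)}$ is designed so that its nonzero columns form an odd-cyclic set in $A^{(q)}$. Let $L = q$ if $q$ is odd and $q \geq 3$, and $L = q+1$ otherwise (so $L = 3$ for $q=1,2$). In every case $L$ is odd and at least $3$. The top $L$ rows of $N^{(q)}$ form the incidence structure of a cycle of length $L$: column $k$ has support $\{k-1,k\}$ (indices mod $L$). Hence, in $H(A^{(q)})$, the $L$ columns of $N^{(q)}$ induce an $L$-cycle, with empty triple intersection of supports; by definition this column set is odd-cyclic. Moreover, the pairwise intersections $Q_{j_1},\ldots,Q_{j_L}$ of supports of adjacent columns are exactly the $L$ singleton row index sets corresponding to the top $L$ rows of $A^{(q)}$.

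For the \emph{only if} direction, I would invoke Lemma~\ref{lemhalfbinet2edge} on this odd-cyclic set to conclude that each column of $N^{(q)}$ is a (graph-theoretic) $2$-edge in any $\frac{1}{2}$-binet representation $G$ of $A^{(q)}$. Then Lemma~\ref{lemHafbinetoddcyclic} yields that the family $\{Q_{j_1},\ldots,Q_{j_L}\}$ is $1$-forest. Since each $Q_{j_l}$ is a singleton $\{i_l\}$, it is the edge index set of a basic negative $1$-tree consisting of a single edge, and in a $\frac{1}{2}$-binet representation (whose basic cycles are all half-edges by definition) such a $1$-tree must be a basic half-edge. Translating row indices of $A^{(q)}$ back to $A$ (accounting for the $L-q$ extra rows prepended via the $0_{(L-q)\times m}$ block), we conclude that rows $1,\ldots,q$ of $A$ correspond to basic half-edges in the restriction of $G$. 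Deleting from $G$ the $L$ nonbasic edges associated with the columns of $N^{(q)}$, together with the $L-q$ basic half-edges corresponding to the extra rows (which become isolated after this deletion, since the extra rows are zero in the first $m$ columns of $A^{(q)}$), yields a $\frac{1}{2}$-binet representation of $A$ with rows $1,\ldots,q$ as basic half-edges.

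For the \emph{if} direction, given a $\frac{1}{2}$-binet representation $G(A)$ with rows $1,\ldots,q$ as basic half-edges at nodes $v_1,\ldots,v_q$, I would introduce $L-q$ new isolated nodes each carrying a new basic half-edge, one per prepended row of $A^{(q)}$. Using Lemma~\ref{lemdefiSwi} (switching at nodes does not change the matrix), I would switch if necessary so that all $L$ half-edges $e_1,\ldots,e_L$ corresponding to the top $L$ rows of $A^{(q)}$ are uniformly signed (all entering). I would then add $L$ new bidirected links $f_{j_1},\ldots,f_{j_L}$ connecting these half-edges cyclically, with each $f_{j_l}$ linking the two nodes whose half-edges correspond to the support of the $l$-th column of $N^{(q)}$. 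Choosing the signs of each $f_{j_l}$ at its two endnodes so that the minimal covering closed walk of its fundamental circuit (a type (iv) handcuff of two half-edges joined by $f_{j_l}$) is consistently oriented according to $f_{j_l}$, Lemma~\ref{lemdefiWeight2} guarantees nonnegativity of the corresponding column, while Lemma~\ref{lemdefiWeight1} forces the weights to be exactly $+1$ at the two incident half-edges. The resulting bidirected graph is a $\frac{1}{2}$-binet representation of $A^{(q)}$.

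\textbf{Main obstacle.} The conceptually delicate point is the passage from ``singleton intersections $Q_{j_l}$ are $1$-trees'' to ``the corresponding rows are basic half-edges'', which essentially uses the defining constraint of a $\frac{1}{2}$-binet representation that every basic cycle is a half-edge. The remaining work, verifying that the cyclic gluing of $L$ new bidirected links in the reverse direction actually produces the $+1$ entries of $N^{(q)}$, is routine provided one keeps careful bookkeeping of signs at the endnodes of the new edges.
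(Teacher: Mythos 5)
Your argument is correct and is exactly the route the paper takes: its one-line proof states that the constructive direction is ``not difficult'' and that the converse ``follows from Lemma~\ref{lemhalfbinet2edge}'', which is precisely the odd-cyclic-set argument you spell out (together with Lemma~\ref{lemHafbinetoddcyclic} and the observation that a single-edge negative $1$-tree in a $\frac{1}{2}$-binet representation must be a basic half-edge). The only slip is terminological: you have swapped the labels, since for the statement as written the ``only if'' direction is the construction of a representation of $A^{(q)}$ from one of $A$, while the direction that invokes Lemma~\ref{lemhalfbinet2edge} is the ``if'' direction.
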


\begin{proof}
The "only if" part is not difficult and the "if" part follows from Lemma \ref{lemhalfbinet2edge}.
\end{proof}\\

\noindent
From Lemma \ref{lemsimplebinet_q}, we easily deduce the following procedure that recognizes whether  the matrix $A$ has a $\frac{1}{2}$-binet representation such that each element in $Q$ is a basic half-edge index, where $Q$ is a given row index subset of $A$.

\begin{tabbing}
\textbf{Procedure\,\,OnehalfbinetQ($A$,$Q$)}\\
\textbf{Input:} A  matrix $A$ with entries $0$, $1$, or $2$ 
and a row index subset $Q$.\\
\textbf{Output:} \= A $\frac{1}{2}$-binet representation $G(A)$ such that each element in $Q$ corresponds to\\
\> a basic half-edge index, or determines that none exists.\\
1)\verb"  "\= let $q=|Q|$, $\tilde A=A$ and permute the rows of $\tilde A$  so that those with index in $Q$\\
\>  appear first;\\
2) \> call {\tt Onehalfbinet}($\tilde A^{(q)}$);\\
 \> if the procedure Onehalfbinet outputs a $\frac{1}{2}$-binet representation 
$G(\tilde A^{(q)})$, then let $G(\tilde A)$  \\
\> be the graph obtained from $G(\tilde A^{(q)})$ by deleting the non-basic edges corresponding to \\
\> the columns of $N^{(q)}$ and basic ones corresponding to rows with $n$ first zero entries,\\
\>    otherwise STOP: output that $A$ has no $\frac{1}{2}$-binet representation in which each element\\
\>  of $Q$ corresponds to a basic half-edge index;\\
\> up to a relabeling of basic edges, output a $\frac{1}{2}$-binet representation $G(A)$ of $A$;
 
\end{tabbing}

Finally, we can prove Theorem \ref{thmOnhalfbinetQ}.\\

\noindent
{\bf Proof of Theorem \ref{thmOnhalfbinetQ}.} \quad The correctness of the procedure OnehalfbinetQ follows from Lemma 
\ref{lemsimplebinet_q} and Theorem \ref{thmOnehalbinet}. We know by Theorem \ref{thmSubclassNTutteCunNet}
that determining whether a given matrix of size $n\times m$ with $\beta$ nonzero entries is a network matrix takes time $O(n \beta)$. Thus step 3 in the subroutine S2OddCycle works in time $O(nm\alpha)$. Then, the number of passages through step 2 in the same subroutine does not exceed $m$. Hence the subroutine S2OddCycle takes time $O(nm^2\alpha)$.
It is not difficult to see that the subroutine Path takes time at most $O(n m^2)$, and by Theorem \ref{thmcyclicproCyc}, the procedure RCyclic works in time $O(n m \alpha)$. This concludes the proof.
{\hfill$\BBox{\rule{.3mm}{3mm}}$} \\


\chapter{Recognizing bicyclic matrices}\label{ch:bicyc}

In this Chapter, we provide a characterization of nonnegative 
$\frac{1}{2}$-equisupported bicyclic matrices as well as a recognition procedure called Bicyclic for these matrices.
Let $A$ be a connected $\frac{1}{2}$-equisupported
 matrix of size $n\times m$ with entries $0$, $1$, $\frac{1}{2}$ or $2$ and at least one $\frac{1}{2}$-entry. Let $\alpha$ be the number of nonzero elements in $A$.
A proof of the following theorem will be given.

\begin{thm}\label{thmbicyclicpro}
The matrix $A$ can be tested for being bicyclic by the procedure   Bicyclic. The computational effort required is $O(nm \alpha)$.
\end{thm}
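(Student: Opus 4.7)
The plan is to build the procedure Bicyclic directly along the lines sketched in Section~\ref{sec:OverRecBinet}, and then prove its correctness and complexity by reducing the bicyclic recognition problem to two calls of RCyclic (one per basic cycle) plus one call of OnehalfbinetQ (on the ``rest'' of the graph), combined over a small family of admissible bipartitions.

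First I would exploit the $\frac{1}{2}$-equisupportedness of $A$ to define, once and for all, the candidate common basic-cycle edge index set $R^{*}:=s_{1/2}(A_{\bullet j})$ (for any $j\in S_{1/2}$). By Lemmas \ref{lemhalf1} and \ref{lemhalf1b} applied with the assumption that $A$ is not just cyclic, $R^{*}$ must be the edge index set of the union of the two basic cycles in every bicyclic representation of $A$, if one exists. I would then decompose $A_{\bullet \overline{S_{1/2}}}$ into maximal blocks with row index sets $C_{1},\ldots,C_{r}$, the \emph{cells}, and show (essentially by the same connectedness argument used for bonsais in Chapter~\ref{ch:multidiD}) that in any bicyclic representation $G(A)$, each $C_{k}$ is the edge index set of a basic subtree, so that a bicyclic representation of $A$ induces a bipartition $\Sigma(\mathcal{K})=\{\mathcal{K}_{I},\mathcal{K}_{II}\}$ of the set $\mathcal{K}$ of cells meeting $R^{*}$, corresponding to ``which basic $1$-tree each central cell is glued to''.

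The core combinatorial step is then to define, intrinsically from $A$, the notion of a \emph{bicompatible} bipartition of $\mathcal{K}$: a pair $\{\mathcal{K}_{I},\mathcal{K}_{II}\}$ such that for each $C_{k}\in\mathcal{K}$ for which $A_{C_{k}\bullet}^{1/2\to 1}$ is not a network matrix, or for which $s_{1/2}(A_{\bullet j})\not\subseteq R^{*}$ for some $j\in S_{1/2}$ meeting $C_{k}$, the cell $C_{k}$ is forced to contain a central edge of its side; and such that the induced splitting $R^{*}=R_{I}^{*}(\Sigma)\uplus R_{II}^{*}(\Sigma)$ is compatible with the supports of the columns of $A$. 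Defining a suitable equivalence relation on bicompatible bipartitions (identifying bipartitions that differ only by swapping cells whose central/non-central role is free), I would prove that the resulting quotient set $\mathscr{S}$ has cardinality bounded by an absolute constant (the number $18$ announced in the overview), and that if $A$ is bicyclic then \emph{some} representative $\Sigma\in\mathscr{S}$ is induced by an actual bicyclic representation. For each representative $\Sigma\in\mathscr{S}$ I would then run RCyclic on the two matrices $M_{I}(\Sigma)$ and $M_{II}(\Sigma)$ with target cycle index sets $R_{I}^{*}(\Sigma)$ and $R_{II}^{*}(\Sigma)$, and OnehalfbinetQ on $N(\Sigma)$ with $Q$ equal to the two artificial basic-cycle markers. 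Success in all three subcalls, followed by a direct gluing (identify the correct endnodes of the two cyclic representations of $M_{I}(\Sigma),M_{II}(\Sigma)$ with the half-edges of $N(\Sigma)$, then replace those half-edges by the cycles), yields a bicyclic representation of $A$; failure for every representative certifies that $A$ is not bicyclic. The hard part of the correctness proof will be the forward direction: showing that the induced bipartition of any hypothetical bicyclic representation is bicompatible, that the equivalence relation really does preserve the existence of a bicyclic representation, and that the gluing reproduces the original matrix entries with the right weights (this is where the proper orientation of central edges, and the distinction between cells forced to contain a central edge and those which are not, really matter).

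For the running time, the decomposition into cells, the computation of $R^{*}$, $\mathcal{K}$, and of the (at most $18$) bicompatible bipartitions all cost $O(nm)$ by standard block-decomposition bookkeeping, since each nonzero entry of $A$ is examined $O(1)$ times per bipartition. For each of the $O(1)$ representatives, the two RCyclic calls together cost $O(n\,m\,\alpha)$ by Theorem~\ref{thmcyclicproCyc} (since the row/column/nonzero counts of $M_{I}(\Sigma),M_{II}(\Sigma)$ are bounded by those of $A$ and their nonzeros partition a subset of the nonzeros of $A$), and the single OnehalfbinetQ call on $N(\Sigma)$ runs in $O(n m^{2}\alpha_{N})$ by Theorem~\ref{thmOnhalfbinetQ}; however, because $N(\Sigma)$ only involves the columns of $S_{1/2}$ and the rows outside $R^{*}$, and because the bicompatibility check has already eliminated any column that would produce an odd-cyclic obstruction of length $\geq 5$, the OnehalfbinetQ call here is used in a degenerate regime where its cost collapses to $O(nm\alpha)$. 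Multiplying by the constant $|\mathscr{S}|\leq 18$ preserves the bound, giving the claimed $O(nm\alpha)$ total complexity and completing the proof of Theorem~\ref{thmbicyclicpro}.
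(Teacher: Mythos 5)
Your overall architecture matches the paper's: the same cells $C_1,\ldots,C_r$, the same notion of central cell, bicompatible bipartitions with an equivalence relation whose quotient set has at most $18$ classes, two calls to RCyclic on $M_I(\Sigma)$ and $M_{II}(\Sigma)$, and a final gluing. The correctness skeleton (a hypothetical bicyclic representation induces a bicompatible bipartition; non-central cells can be moved between the two basic $1$-trees, which is what justifies testing only one representative per equivalence class) is exactly the paper's Proposition \ref{propbicyclicChar} and Theorem \ref{thmbicyclicChar}.

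The genuine gap is in your treatment of $N(\Sigma)$. You propose to call OnehalfbinetQ with $Q=\{n_I,n_{II}\}$, whose cost is $O(nm^2\alpha)$ by Theorem \ref{thmOnhalfbinetQ}; this already exceeds the claimed $O(nm\alpha)$ bound, and your escape hatch — that the call operates in a ``degenerate regime'' where the cost ``collapses'' because bicompatibility has eliminated odd-cyclic obstructions — is asserted, not proved. Bicompatibility constrains only the cells meeting $R^*$, whereas the $O(m)$ iterations of the non-network-submatrix search inside S2OddCycle are driven by the columns of $A_{\bullet V(H)}$, so there is no reason the loop degenerates. The paper avoids OnehalfbinetQ entirely here: it builds an augmented matrix $\tilde N(\Sigma)$ whose two extra column blocks and final all-ones-in-the-last-two-rows column force the two artificial edges $e_{n_I}$ and $e_{n_{II}}$ to share a node $v_0$, tests $\tilde N(\Sigma)$ for being an ordinary network matrix in $O(n\alpha)$ via Theorem \ref{thmSubclassNTutteCunNet}, and then deletes $v_0$ and switches to recover the required $\{n_I,n_{II}\}$ $\frac{1}{2}$-binet representation of $N(\Sigma)$ — exploiting (as in Lemma \ref{lemsimponetwo}) that a $\frac{1}{2}$-binet representation with exactly two basic half-edges is encodable as a network representation. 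Without this substitute construction, or a proof that OnehalfbinetQ genuinely runs faster on $N(\Sigma)$, your complexity claim does not go through.
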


If $A$ has a bicyclic representation $G(A)$, then since $A$ is $\frac{1}{2}$-equisupported, using Corollary \ref{corBidirectedCircuit} and Lemma \ref{lemdefiWeight1} it follows that every column with a nonempty $\frac{1}{2}$-support corresponds to a $2$-edge; and the $\frac{1}{2}$-support of a $2$-edge is equal to the edge index set of both basic cycles in $G(A)$. Furthermore, each basic cycle corresponds to a (closed) consistently oriented path.

Let $S_{\frac{1}{2}}=\{j\, :\, s_{\frac{1}{2}}(A_{\bullet j})\neq \emptyset\}$, $R^*=s_{\frac{1}{2}}(A_{\bullet j})$ for any $j\in S_{\frac{1}{2}}$, and $S^*=\{ j \, : \, s(A_{\bullet j}) \cap R^* \neq \emptyset \}$. Let us partition all row indexes of $A$ into subsets $C_1,\ldots, C_r$ called \emph{cells}\index{cell} so that $i$ and $i'$ belong to a same cell if and only if $i\in s(A_{\bullet j})$, $i'\in s(A_{\bullet j'})$, and $j$ and $j'$ are in a same connected component of 
$H(A)\verb"\"S_{\frac{1}{2}}$.  
For $k=1,\ldots, r$, let $$R_k= C_k \cap R^* \,\,\, \m{ and } \,\,\,A_k= A_{C_k \times f(C_k) }. $$ The set $R_k$ ($1\le k \le r$)
is called an \emph{interval}\index{interval}. Up to a renumbering of the cells,
we may note $R_1,\ldots,R_\xi$ the nonempty intervals.
Let $$\mathcal{K}= \{ C_1, \ldots, C_\xi\}\,\,\, \m{ and } \,\,\,F=\{ j \, : \, s(A_{\bullet j}) \subseteq \cup_{k=\xi+1}^r C_k \}.$$
If $A_k^{\frac{1}{2}\rightarrow 1}$ is not a network matrix or $ R_k\neq \cup_{j\in S_{\frac{1}{2}}} s(A_{\bullet j}) \cap C_k$ for some $1\le k \le \xi$, then the cell $C_k$ is called \emph{central}\index{central!cell}. 
For a matrix $N$ and a row index subset $R$ of $N$, an \emph{$R$ $\frac{1}{2}$-binet}\index{binet@$R$ $\frac{1}{2}$-binet!representation} representation of $N$ is a $\frac{1}{2}$-binet representation  of $N$ such that $R$ is the index set of all basic half-edges. A matrix is called \emph{$R$ $\frac{1}{2}$-binet}\index{binet@$R$ $\frac{1}{2}$-binet!matrix} if and only if it has an $R$ $\frac{1}{2}$-binet representation.
Later, we shall study some bipartitions of $\mathcal{K}$, denoted by $\Sigma(\mathcal{K})$,
and construct submatrices $M_I(\Sigma)$ and $M_{II}(\Sigma)$ of $A$ and a matrix $N(\Sigma)$ with respect to $\Sigma(\mathcal{K})$. For $i=I$ and $II$, $R_i^*(\Sigma)$ will denote the row indexes of $M_i(\Sigma)$ belonging to $R^*$.
We shall denote by $n_I$ and $n_{II}$ two particular row indexes of $N(\Sigma)$.
Then a proof of the following theorem will be given.

\begin{thm}\label{thmbicyclicChar}
The matrix $A$ is bicyclic if and only if there exists a bicompatible bipartition $\Sigma(\mathcal{K})$ such that for any bipartition  $\Sigma'(\mathcal{K})$ equivalent to $\Sigma(\mathcal{K})$, the matrix $M_i(\Sigma')$ is $E_i(\Sigma')$-cyclic for $i= I $ and $II$ and $N(\Sigma')$ is $\{n_I,n_{II}\}$ $\frac{1}{2}$-binet. 
\end{thm}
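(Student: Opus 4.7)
(Plan.)
The plan is to establish the two directions by constructing, respectively, an ``induced'' bipartition from a bicyclic representation and an explicit gluing of smaller representations into a bicyclic one. Throughout, the key fact to exploit is that in any bicyclic representation $G(A)$ of $A$ (if one exists), the common $\frac{1}{2}$-support $R^*$ is precisely the edge index set of the union of the two basic cycles, and every cell $C_k$ ($1\le k\le r$) corresponds to the edge index set of a subtree of $G(A)$.

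\emph{Forward direction.} Suppose $A$ has a bicyclic representation $G(A)$. I would first argue, using Corollary \ref{corBidirectedCircuit} and Lemma \ref{lemdefiWeight1} together with the $\frac{1}{2}$-equisupport hypothesis, that $R^*$ is the disjoint union of the edge index sets of the two basic cycles, and that each cell $C_k$ corresponds to a connected subgraph of $G(A)$ contained in exactly one of the two basic maximal $1$-trees $T_I$ and $T_{II}$. This yields the announced \emph{induced} bipartition $\Sigma(\mathcal{K})=\{\mathcal{K}_I,\mathcal{K}_{II}\}$, where $\mathcal{K}_i$ collects those cells intersecting $R^*$ whose subtree lies in $T_i$. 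The next step is to check that $\Sigma$ is bicompatible: the observations of the overview (a cell $C_k \in \mathcal{K}$ with $A_{C_k\bullet}^{\frac{1}{2}\to 1}$ not a network matrix forces a central edge in the corresponding subtree, and similarly when $s_{\frac{1}{2}}(A_{\bullet j})\nsubseteq R^*$) directly translate into the defining conditions of bicompatibility. Then the properties of the equivalence class (stated in the overview: ``if a bicompatible bipartition is induced by some bicyclic representation, then so is every equivalent one'') give, for every $\Sigma'\sim\Sigma$, a bicyclic representation $G'(A)$ inducing $\Sigma'$. From $G'(A)$ one extracts the three required representations: restricting $G'(A)$ to the edges and nodes of one basic maximal $1$-tree together with the fundamental circuits of the columns of $M_i(\Sigma')$ yields an $R_i^*(\Sigma')$-cyclic representation of $M_i(\Sigma')$ for $i=I,II$; collapsing each basic cycle of $G'(A)$ to a basic half-edge at the corresponding central node, and keeping only the edges and fundamental circuits attached to cells outside $\mathcal{K}$ and to columns in $S_{\frac{1}{2}}$, yields a $\{n_I,n_{II}\}$ $\frac{1}{2}$-binet representation of $N(\Sigma')$.

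\emph{Backward direction.} Conversely, suppose the three representations exist for some bicompatible $\Sigma(\mathcal{K})$ (applied to $\Sigma'=\Sigma$ itself). I would glue them as follows. Let $H_i$ be an $R_i^*(\Sigma)$-cyclic representation of $M_i(\Sigma)$ ($i=I,II$), and let $H_0$ be a $\{n_I,n_{II}\}$ $\frac{1}{2}$-binet representation of $N(\Sigma)$. In $H_0$ the rows $n_I,n_{II}$ correspond to two basic half-edges; identify the endnode of the $n_i$-half-edge in $H_0$ with the central node of the basic cycle of $H_i$, and delete both half-edges $n_I,n_{II}$. The resulting bidirected graph $G$ is connected and contains exactly two basic full cycles (those of $H_I$ and $H_{II}$), hence is a candidate bicyclic representation. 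It remains to verify that $G$ is indeed a binet representation of $A$, i.e.\ that every column of $A$ is correctly encoded by a fundamental circuit in $G$. Columns with support inside $\cup_{C_k\in\mathcal{K}_i}C_k$ are handled by $H_i$; columns with support disjoint from $R^*$ are handled by $H_0$; the remaining columns are those in $S_{\frac{1}{2}}$ and those whose support crosses between both ``sides''. The gluing is designed exactly so that in $G$, such a column induces a handcuff (two disjoint negative cycles joined by a path) whose weights match the entries of $A_{\bullet j}$, using Lemmas \ref{lemdefiWeight1} and \ref{lemdefiWeight2}.

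\emph{Main obstacle.} The delicate part is the claim, needed in the forward direction, that whenever $\Sigma\sim\Sigma'$ and $\Sigma$ is induced by a bicyclic representation of $A$, then so is $\Sigma'$. This is precisely where the strength of the equivalence relation enters, and its proof rests on showing that switching between equivalent bipartitions corresponds to a legitimate rearrangement of the bonsais attached to the two basic cycles -- most notably for the cells $C_k \in \mathcal{K}$ whose subtree contains a central edge, where the bicompatibility condition must be used to guarantee that reattaching the subtree to the other cycle still yields a valid bidirected graph representing $A$. The backward direction is then a careful but essentially bookkeeping gluing argument, whose correctness relies on the precise definition of the matrix $N(\Sigma)$ with its two designated rows $n_I,n_{II}$.
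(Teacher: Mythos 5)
Your overall strategy matches the paper's: the forward direction via the bipartition induced by a bicyclic representation (bicompatibility from the fact that each basic $1$-tree carries at most two central cells, then the cell-moving argument to cover the whole equivalence class, then extraction of the three representations by deletion/contraction), and the backward direction by gluing the two $R_i^*(\Sigma)$-cyclic representations to the $\{n_I,n_{II}\}$ $\frac{1}{2}$-binet representation of $N(\Sigma)$. Your remark that the hard point is showing that every bipartition equivalent to an induced one is itself induced is exactly where the paper spends its effort (Proposition \ref{propbicyclicChar}, using Lemma \ref{lembicyclicnoncentral} and the cut-and-reattach construction at an endnode of the interval $\mathcal{P}_l$).

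There is, however, one step that would fail as literally written: your gluing in the backward direction. You propose to identify only the endnode of the basic half-edge $e_{n_i}$ in $H_0=G(N(\Sigma))$ with the central node of $H_i=G(M_i(\Sigma))$ and then delete the two half-edges. But the row set of $N(\Sigma)$ contains $E_{I,\frac{1}{2}}$ and $E_{II,\frac{1}{2}}$, which are also row subsets of $M_I(\Sigma)$ and $M_{II}(\Sigma)$; so the basic edges indexed by $E_{i,\frac{1}{2}}$ occur in both $H_0$ and $H_i$, and a single-node identification leaves them duplicated and does not force the stems of the $2$-edges $f_j$ ($j\in S_{\frac{1}{2}}$) recorded in $H_0$ to coincide with the paths $q_{j,i}$ recorded in $H_i$. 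The paper instead identifies the entire subtree of $G(N)$ with edge index set $E_{i,\frac{1}{2}}$ with the corresponding rooted subtree of $G(M_i)$ (after contracting the half-edges), and it is precisely this subtree identification — justified by the observation that both subtrees are isomorphic copies rooted at the endnode of $e_{n_i}$, respectively at the central node — that makes the fundamental circuits of the columns in $S_{\frac{1}{2}}$ come out as the correct handcuffs. Your plan gestures at this ("the correctness relies on the precise definition of $N(\Sigma)$"), but the definition alone does not rescue the single-node gluing; you need the subtree identification.
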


In Section \ref{sec:sketchbicyc}, we provide some intuitions and ideas about Theorems \ref{thmbicyclicpro} and \ref{thmbicyclicChar} using an example. Then a formal proof of these theorems is given in Section \ref{sec:recbicyc}.

\section{An informal sketch of a recognition procedure}\label{sec:sketchbicyc}

Before embarking on the proof of Theorems \ref{thmbicyclicpro} and \ref{thmbicyclicChar}, we describe the graphical idea on which these are based using the example of Figure \ref{fig:bicyclicA}. As explained above, we partition the row indexes of $A$ into \emph{cells} $C_1,\ldots,C_r$ in the following way: two row indexes $i$ and $i'$  belong to a same cell if and only if $i\in s(A_{\bullet j})$, $i'\in s(A_{\bullet j'})$, and $j$ and $j'$ are in a same connected component of $H(A)\verb"\"S_{\frac{1}{2}}$.  
Up to a renumbering of cells, we have
$C_1=\{1,3,4\}$, $C_2=\{6,7\}$, $C_3=\{2,5\}$, $C_4=\{8,9\}$, $C_5=\{ 10 \}$ and $C_6=\{11\}$. Provided that $A$ has a bicyclic representation $G(A)$,
each cell $C_k$ ($1\le k \le r$) corresponds to the edge index set of a (basic) tree or $1$-tree in $G(A)$ which is called a \emph{cell} and denoted as $T(C_k)$ (see Figure \ref{fig:bicyclicCell}); moreover, a cell in $G(A)$ is called \emph{central} if and only if its corresponding edge index set is central.

\begin{figure}[ht!]
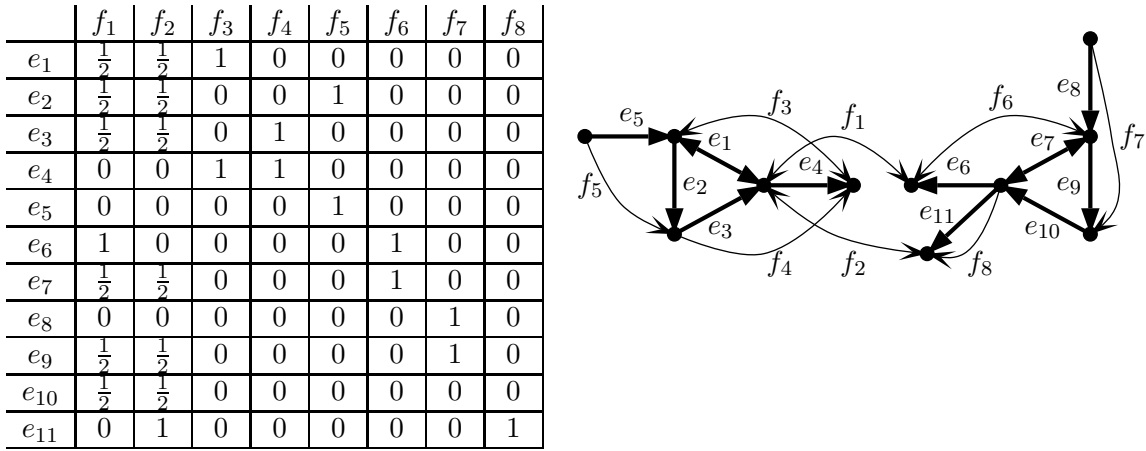

\vspace{.5cm}
$
\begin{array}{cl}

\begin{tabular}{c|c|c|c|c|c|c|c|c|}
  & $f_1$ & $f_2$ & $f_3$ & $f_4$ & $f_5$ & $f_6$ & $f_7$ & $f_8$ \\
  \hline
$e_1$  &$\frac{1}{2}$&$\frac{1}{2}$&1&0&0&0&0&0 \\
\hline
$e_2$  &$\frac{1}{2}$&$\frac{1}{2}$&0&0&1&0&0&0 \\
\hline
$e_3$  &$\frac{1}{2}$&$\frac{1}{2}$&0&1&0&0&0&0 \\
\hline
$e_4$  &0&0&1&1&0&0&0&0 \\
\hline
$e_5$  &0&0&0&0&1&0& 0&0\\
\hline
$e_6$  &1&0&0&0&0&1&0&0 \\
\hline
$e_7$  &$\frac{1}{2}$&$\frac{1}{2}$&0&0&0&1&0&0 \\
\hline
$e_8$  &0&0&0&0&0&0&1&0 \\
\hline
$e_{9}$  &$\frac{1}{2}$&$\frac{1}{2}$&0&0&0&0&1&0 \\
\hline
$e_{10}$ &$\frac{1}{2}$&$\frac{1}{2}$&0&0&0&0&0 &0\\
\hline  
$e_{11}$  &0&1&0&0&0&0&0&1 \\
\hline
\end{tabular}  &

\psset{xunit=1.4cm,yunit=1.3cm,linewidth=0.5pt,radius=0.1mm,arrowsize=7pt,
labelsep=1.5pt,fillcolor=black}

\pspicture(-1,1)(5,2.5)

\pscircle[fillstyle=solid](-.85,2){.1}
\pscircle[fillstyle=solid](0,1){.1}
\pscircle[fillstyle=solid](0,2){.1}
\pscircle[fillstyle=solid](0.85,1.5){.1}
\pscircle[fillstyle=solid](1.7,1.5){.1}
\pscircle[fillstyle=solid](2.25,1.5){.1}
\pscircle[fillstyle=solid](2.4,.8){.1}
\pscircle[fillstyle=solid](3.1,1.5){.1}
\pscircle[fillstyle=solid](3.95,1){.1}
\pscircle[fillstyle=solid](3.95,2){.1}
\pscircle[fillstyle=solid](3.95,3){.1}

\psline[linewidth=1.6pt,arrowinset=0]{<->}(0,2)(0.85,1.5)
\rput(0.44,1.95){$e_1$}

\psline[linewidth=1.6pt,arrowinset=0]{<-}(0,1)(0,2)
\rput(0.2,1.5){$e_2$}

\psline[linewidth=1.6pt,arrowinset=0]{->}(0,1)(0.85,1.5)
\rput(0.44,1.05){$e_3$}

\psline[linewidth=1.6pt,arrowinset=0]{->}(0.85,1.5)(1.7,1.5)
\rput(1.3,1.7){$e_4$}

\psline[linewidth=1.6pt,arrowinset=0]{->}(-.85,2)(0,2)
\rput(-0.4,2.2){$e_5$}

\psline[linewidth=1.6pt,arrowinset=0]{->}(3.1,1.5)(2.25,1.5)
\rput(2.7,1.7){$e_6$}

\psline[linewidth=1.6pt,arrowinset=0]{<->}(3.1,1.5)(3.95,2)
\rput(3.5,1.95){$e_7$}

\psline[linewidth=1.6pt,arrowinset=0]{->}(3.95,3)(3.95,2)
\rput(3.75,2.5){$e_8$}

\psline[linewidth=1.6pt,arrowinset=0]{<-}(3.95,1)(3.95,2)
\rput(3.75,1.5){$e_9$}

\psline[linewidth=1.6pt,arrowinset=0]{<-}(3.1,1.5)(3.95,1)
\rput(3.5,1.05){$e_{10}$}

\psline[linewidth=1.6pt,arrowinset=0]{->}(3.1,1.5)(2.4,.8)
\rput(2.5,1.2){$e_{11}$}


\pscurve[arrowinset=.5,arrowlength=1.5]{<->}(.85,1.5)(1.5,2)(2.25,1.5)
\rput(1.7,2.2){$f_1$}

\pscurve[arrowinset=.5,arrowlength=1.5]{<->}(.85,1.5)(1.5,1)(2.4,.8)
\rput(1.7,.7){$f_2$}

\pscurve[arrowinset=.5,arrowlength=1.5]{<->}(0,2)(.85,2.2)(1.7,1.5)
\rput(1,2.35){$f_3$}

\pscurve[arrowinset=.5,arrowlength=1.5]{->}(0,1)(.85,.8)(1.7,1.5)
\rput(1,.7){$f_4$}

\pscurve[arrowinset=.5,arrowlength=1.5]{->}(-.85,2)(-.5,1.3)(0,1)
\rput(-0.8,1.5){$f_5$}

\pscurve[arrowinset=.5,arrowlength=1.5]{<->}(2.25,1.5)(3.1,2.2)(3.95,2)
\rput(3.1,2.4){$f_{6}$}

\pscurve[arrowinset=.5,arrowlength=1.5]{->}(3.95,3)(4.15,2)(4.2,1.3)(3.95,1)
\rput(4.35,2){$f_{7}$}

\pscurve[arrowinset=.5,arrowlength=1.5]{->}(3.1,1.5)(2.75,.8)(2.4,.8)
\rput(2.9,.7){$f_{8}$}

\endpspicture

\end{array}$
\vspace{.5cm}

\caption{A bicyclic matrix $A$ such that $R^*=\{1,2,3,7,9,10\}$, $S_{\frac{1}{2}}=\{ 1,2 \}$, and a binet representation $G(A)$ of $A$.}
\label{fig:bicyclicA}

\end{figure}

\begin{figure}[ht!]
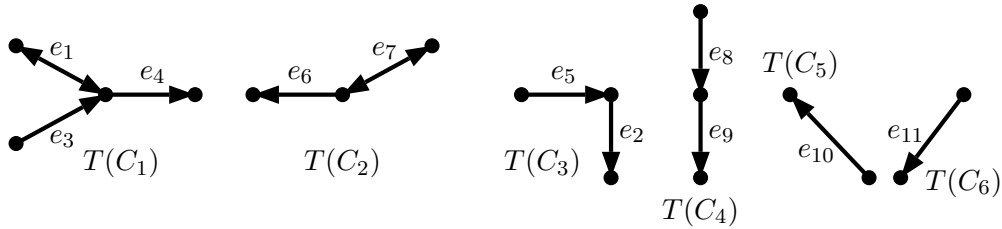


\begin{center}
\psset{xunit=1.4cm,yunit=1.3cm,linewidth=0.5pt,radius=0.1mm,arrowsize=7pt,
labelsep=1.5pt,fillcolor=black}

\pspicture(0,0)(9,2.5)

\pscircle[fillstyle=solid](0,1){.1}
\pscircle[fillstyle=solid](0,2){.1}
\pscircle[fillstyle=solid](0.85,1.5){.1}
\pscircle[fillstyle=solid](1.7,1.5){.1}

\pscircle[fillstyle=solid](2.25,1.5){.1}
\pscircle[fillstyle=solid](3.1,1.5){.1}
\pscircle[fillstyle=solid](3.95,2){.1}

\pscircle[fillstyle=solid](4.8,1.5){.1}
\pscircle[fillstyle=solid](5.65,1.5){.1}
\pscircle[fillstyle=solid](5.65,.65){.1}

\pscircle[fillstyle=solid](6.5,1.5){.1}
\pscircle[fillstyle=solid](6.5,2.35){.1}
\pscircle[fillstyle=solid](6.5,.65){.1}

\pscircle[fillstyle=solid](7.35,1.5){.1}
\pscircle[fillstyle=solid](8.1,.65){.1}

\pscircle[fillstyle=solid](8.4,.65){.1}
\pscircle[fillstyle=solid](9,1.5){.1}

\psline[linewidth=1.6pt,arrowinset=0]{<->}(0,2)(0.85,1.5)
\rput(0.44,1.95){$e_1$}

\psline[linewidth=1.6pt,arrowinset=0]{->}(0,1)(0.85,1.5)
\rput(0.44,1.05){$e_3$}

\psline[linewidth=1.6pt,arrowinset=0]{->}(0.85,1.5)(1.7,1.5)
\rput(1.3,1.7){$e_4$}

\rput(1,.8){$T(C_1)$}

\psline[linewidth=1.6pt,arrowinset=0]{<-}(2.25,1.5)(3.1,1.5)
\rput(2.7,1.7){$e_6$}

\psline[linewidth=1.6pt,arrowinset=0]{<->}(3.1,1.5)(3.95,2)
\rput(3.5,1.95){$e_7$}

\rput(3.1,.8){$T(C_2)$}

\psline[linewidth=1.6pt,arrowinset=0]{->}(5.65,1.5)(5.65,.65)
\rput(5.85,1.1){$e_2$}

\psline[linewidth=1.6pt,arrowinset=0]{->}(4.8,1.5)(5.65,1.5)
\rput(5.2,1.7){$e_5$}

\rput(5,.8){$T(C_3)$}

\psline[linewidth=1.6pt,arrowinset=0]{->}(6.5,2.35)(6.5,1.5)
\rput(6.7,1.9){$e_8$}

\psline[linewidth=1.6pt,arrowinset=0]{->}(6.5,1.5)(6.5,.65)
\rput(6.7,1.1){$e_9$}

\rput(6.5,.3){$T(C_4)$}

\psline[linewidth=1.6pt,arrowinset=0]{->}(8.1,.65)(7.35,1.5)
\rput(7.6,.9){$e_{10}$}

\rput(7.45,1.8){$T(C_5)$}

\psline[linewidth=1.6pt,arrowinset=0]{->}(9,1.5)(8.4,.65)
\rput(8.45,1.1){$e_{11}$}

\rput(9,.6){$T(C_6)$}

\endpspicture

\end{center}

\caption{The different cells in the bicyclic representation $G(A)$ of the matrix $A$ given in Figure \ref{fig:bicyclicA}.}
\label{fig:bicyclicCell}

\end{figure}

Among all cells given in Figure \ref{fig:bicyclicCell}, what are the central ones? We have $R_1= C_1 \cap R^*=\{ 1,3\}$, $R_2=C_2 \cap R^* =\{ 7 \}$ and 

$$ A_1^{\frac{1}{2} \rightarrow 1}= 
\begin{array}{c|c|c|c|c|}
 & f_1 & f_2 & f_3 & f_4 \\ 
\hline
e_1 & 1 & 1 & 1 & 0  \\
\hline
e_3 & 1 & 1 & 0 & 1  \\
\hline
e_4 & 0 & 0 & 1 & 1 \\
\hline
\end{array} \m{\quad , \quad }
A_2^{\frac{1}{2} \rightarrow 1}= 
\begin{array}{c|c|c|c|}
 & f_1 & f_2 & f_6 \\
\hline
e_6 & 1 & 0 & 1 \\
\hline
e_7 & 1 & 1 & 1   \\
\hline
\end{array}
 $$

\noindent
The matrix $A_1^{\frac{1}{2} \rightarrow 1}$ has a submatrix of determinant $2$, so it is not a network matrix. On the contrary, $A_2^{\frac{1}{2} \rightarrow 1}$ 
is a network matrix, but $R_2=\{7 \} \neq \{6,7\}= \cup_{j\in S_{\frac{1}{2}}} s(A_{\bullet j}) \cap C_2$. Hence the cells $C_1$ and $C_2$ are central, and one can verify that the other ones are not. One can prove that this implies the following. Provided that $A$ has a bicyclic representation $G(A)$, $T(C_1)$ and $T(C_2)$ contain each one at least one central edge.
Indeed, $e_1,e_3\in T(C_1)$ and $e_2\in T(C_2)$
(see Figures \ref{fig:bicyclicA} and \ref{fig:bicyclicCell}).
Nevertherless, a non-central cell in $G(A)$ might contain central edges, as evidenced by $T(C_5)$. 

Once the procedure Bicyclic has located all central cells, the goal is to split up all cells into two groups, say $\mathcal{K}_I$ and $\mathcal{K}_{II}$, satisfying the following condition. There exists a bicyclic representation $G(A)$ of $A$ in which $T_I$ and $T_{II}$ are the basic $1$-trees, and any cell in $\mathcal{K}_i$ corresponds to the edge index set of a subgraph of $T_i$ for $i=I$ and $II$. One difficulty is that the number of partitions of $\mathcal{K}$ may be huge. However, we observe in Figure \ref{fig:bicyclicA}, that it is possible to "move" a non-central cell, for instance $T(C_5)$, from one basic maximal $1$-tree to the other as illustrated in Figure \ref{fig:bicyclicmove}, in order to obtain a new bicyclic representation of $A$. This motivates a notion of "equivalent" bipartitions for reducing the number of bipartitions that have to be considered.

\begin{figure}[ht!]
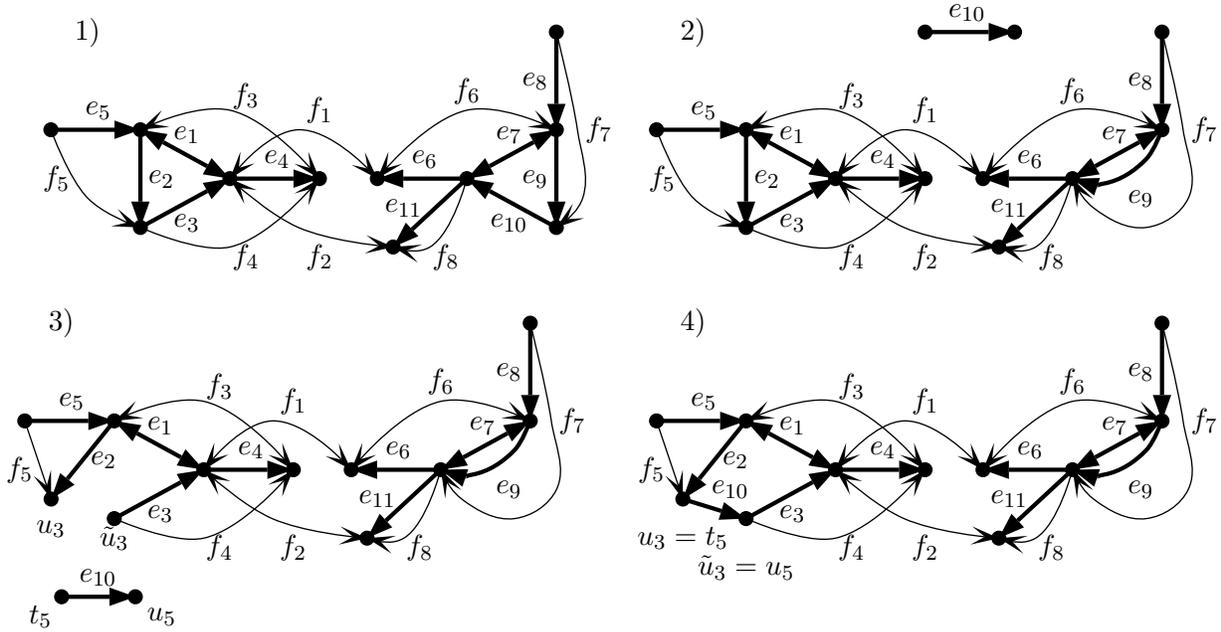

\vspace{1.6cm}
$
\begin{array}{cc}

\psset{xunit=1.4cm,yunit=1.3cm,linewidth=0.5pt,radius=0.1mm,arrowsize=7pt,
labelsep=1.5pt,fillcolor=black}

\pspicture(-1,0)(5,2.5)

\pscircle[fillstyle=solid](-.85,2){.1}
\pscircle[fillstyle=solid](0,1){.1}
\pscircle[fillstyle=solid](0,2){.1}
\pscircle[fillstyle=solid](0.85,1.5){.1}
\pscircle[fillstyle=solid](1.7,1.5){.1}
\pscircle[fillstyle=solid](2.25,1.5){.1}
\pscircle[fillstyle=solid](2.4,.8){.1}
\pscircle[fillstyle=solid](3.1,1.5){.1}
\pscircle[fillstyle=solid](3.95,1){.1}
\pscircle[fillstyle=solid](3.95,2){.1}
\pscircle[fillstyle=solid](3.95,3){.1}

\rput(-.5,3){1)}

\psline[linewidth=1.6pt,arrowinset=0]{<->}(0,2)(0.85,1.5)
\rput(0.44,1.95){$e_1$}

\psline[linewidth=1.6pt,arrowinset=0]{<-}(0,1)(0,2)
\rput(0.2,1.5){$e_2$}

\psline[linewidth=1.6pt,arrowinset=0]{->}(0,1)(0.85,1.5)
\rput(0.44,1.05){$e_3$}

\psline[linewidth=1.6pt,arrowinset=0]{->}(0.85,1.5)(1.7,1.5)
\rput(1.3,1.7){$e_4$}

\psline[linewidth=1.6pt,arrowinset=0]{->}(-.85,2)(0,2)
\rput(-0.4,2.2){$e_5$}

\psline[linewidth=1.6pt,arrowinset=0]{->}(3.1,1.5)(2.25,1.5)
\rput(2.7,1.7){$e_6$}

\psline[linewidth=1.6pt,arrowinset=0]{<->}(3.1,1.5)(3.95,2)
\rput(3.5,1.95){$e_7$}

\psline[linewidth=1.6pt,arrowinset=0]{->}(3.95,3)(3.95,2)
\rput(3.75,2.5){$e_8$}

\psline[linewidth=1.6pt,arrowinset=0]{<-}(3.95,1)(3.95,2)
\rput(3.75,1.5){$e_9$}

\psline[linewidth=1.6pt,arrowinset=0]{<-}(3.1,1.5)(3.95,1)
\rput(3.5,1.05){$e_{10}$}

\psline[linewidth=1.6pt,arrowinset=0]{->}(3.1,1.5)(2.4,.8)
\rput(2.5,1.2){$e_{11}$}


\pscurve[arrowinset=.5,arrowlength=1.5]{<->}(.85,1.5)(1.5,2)(2.25,1.5)
\rput(1.7,2.2){$f_1$}

\pscurve[arrowinset=.5,arrowlength=1.5]{<->}(.85,1.5)(1.5,1)(2.4,.8)
\rput(1.7,.7){$f_2$}

\pscurve[arrowinset=.5,arrowlength=1.5]{<->}(0,2)(.85,2.2)(1.7,1.5)
\rput(1,2.35){$f_3$}

\pscurve[arrowinset=.5,arrowlength=1.5]{->}(0,1)(.85,.8)(1.7,1.5)
\rput(1,.7){$f_4$}

\pscurve[arrowinset=.5,arrowlength=1.5]{->}(-.85,2)(-.5,1.3)(0,1)
\rput(-0.8,1.5){$f_5$}

\pscurve[arrowinset=.5,arrowlength=1.5]{<->}(2.25,1.5)(3.1,2.2)(3.95,2)
\rput(3.1,2.4){$f_{6}$}

\pscurve[arrowinset=.5,arrowlength=1.5]{->}(3.95,3)(4.15,2)(4.2,1.3)(3.95,1)
\rput(4.35,2){$f_{7}$}

\pscurve[arrowinset=.5,arrowlength=1.5]{->}(3.1,1.5)(2.75,.8)(2.4,.8)
\rput(2.9,.7){$f_{8}$}

\endpspicture
  &

\psset{xunit=1.4cm,yunit=1.3cm,linewidth=0.5pt,radius=0.1mm,arrowsize=7pt,
labelsep=1.5pt,fillcolor=black}

\pspicture(-0.5,0)(5,2.5)

\pscircle[fillstyle=solid](-.85,2){.1}
\pscircle[fillstyle=solid](0,1){.1}
\pscircle[fillstyle=solid](0,2){.1}
\pscircle[fillstyle=solid](0.85,1.5){.1}
\pscircle[fillstyle=solid](1.7,1.5){.1}
\pscircle[fillstyle=solid](1.7,3){.1}
\pscircle[fillstyle=solid](2.55,3){.1}
\pscircle[fillstyle=solid](2.25,1.5){.1}
\pscircle[fillstyle=solid](2.4,.8){.1}
\pscircle[fillstyle=solid](3.1,1.5){.1}
\pscircle[fillstyle=solid](3.95,2){.1}
\pscircle[fillstyle=solid](3.95,3){.1}

\rput(-.5,3){2)}

\psline[linewidth=1.6pt,arrowinset=0]{<->}(0,2)(0.85,1.5)
\rput(0.44,1.95){$e_1$}

\psline[linewidth=1.6pt,arrowinset=0]{<-}(0,1)(0,2)
\rput(0.2,1.5){$e_2$}

\psline[linewidth=1.6pt,arrowinset=0]{->}(0,1)(0.85,1.5)
\rput(0.44,1.05){$e_3$}

\psline[linewidth=1.6pt,arrowinset=0]{->}(0.85,1.5)(1.7,1.5)
\rput(1.3,1.7){$e_4$}

\psline[linewidth=1.6pt,arrowinset=0]{->}(-.85,2)(0,2)
\rput(-0.4,2.2){$e_5$}

\psline[linewidth=1.6pt,arrowinset=0]{->}(3.1,1.5)(2.25,1.5)
\rput(2.7,1.7){$e_6$}

\psline[linewidth=1.6pt,arrowinset=0]{<->}(3.1,1.5)(3.95,2)
\rput(3.5,1.95){$e_7$}

\psline[linewidth=1.6pt,arrowinset=0]{->}(3.95,3)(3.95,2)
\rput(3.75,2.5){$e_8$}

\pscurve[linewidth=1.6pt,arrowinset=0]{->}(3.95,2)(3.8,1.7)(3.4,1.45)(3.1,1.5)
\rput(3.75,1.3){$e_9$}

\psline[linewidth=1.6pt,arrowinset=0]{->}(1.7,3)(2.55,3)
\rput(2.1,3.2){$e_{10}$}

\psline[linewidth=1.6pt,arrowinset=0]{->}(3.1,1.5)(2.4,.8)
\rput(2.5,1.2){$e_{11}$}


\pscurve[arrowinset=.5,arrowlength=1.5]{<->}(.85,1.5)(1.5,2)(2.25,1.5)
\rput(1.7,2.2){$f_1$}

\pscurve[arrowinset=.5,arrowlength=1.5]{<->}(.85,1.5)(1.5,1)(2.4,.8)
\rput(1.7,.7){$f_2$}

\pscurve[arrowinset=.5,arrowlength=1.5]{<->}(0,2)(.85,2.2)(1.7,1.5)
\rput(1,2.35){$f_3$}

\pscurve[arrowinset=.5,arrowlength=1.5]{->}(0,1)(.85,.8)(1.7,1.5)
\rput(1,.7){$f_4$}

\pscurve[arrowinset=.5,arrowlength=1.5]{->}(-.85,2)(-.5,1.3)(0,1)
\rput(-0.8,1.5){$f_5$}

\pscurve[arrowinset=.5,arrowlength=1.5]{<->}(2.25,1.5)(3.1,2.2)(3.95,2)
\rput(3.1,2.4){$f_{6}$}

\pscurve[arrowinset=.5,arrowlength=1.5]{->}(3.95,3)(4.15,2)(4.2,1.3)(3.8,1)(3.3,1.2)(3.1,1.5)
\rput(4.35,2){$f_{7}$}

\pscurve[arrowinset=.5,arrowlength=1.5]{->}(3.1,1.5)(2.75,.8)(2.4,.8)
\rput(2.9,.7){$f_{8}$}

\endpspicture \\ \\ 

\psset{xunit=1.4cm,yunit=1.3cm,linewidth=0.5pt,radius=0.1mm,arrowsize=7pt,
labelsep=1.5pt,fillcolor=black}

\pspicture(-0.5,0)(5,2.5)

\pscircle[fillstyle=solid](-.85,2){.1}
\pscircle[fillstyle=solid](-.6,1.2){.1}
\pscircle[fillstyle=solid](0,1){.1}
\pscircle[fillstyle=solid](0,2){.1}
\pscircle[fillstyle=solid](0.85,1.5){.1}
\pscircle[fillstyle=solid](1.7,1.5){.1}
\pscircle[fillstyle=solid](-.5,.2){.1}
\pscircle[fillstyle=solid](.2,.2){.1}
\pscircle[fillstyle=solid](2.25,1.5){.1}
\pscircle[fillstyle=solid](2.4,.8){.1}
\pscircle[fillstyle=solid](3.1,1.5){.1}
\pscircle[fillstyle=solid](3.95,2){.1}
\pscircle[fillstyle=solid](3.95,3){.1}

\rput(-.5,3){3)}

\psline[linewidth=1.6pt,arrowinset=0]{<->}(0,2)(0.85,1.5)
\rput(0.44,1.95){$e_1$}

\psline[linewidth=1.6pt,arrowinset=0]{<-}(-.6,1.2)(0,2)
\rput(-0.1,1.6){$e_2$}
\rput(-.6,.9){$ u_3$}
\rput(0,.8){$\tilde u_3$}

\psline[linewidth=1.6pt,arrowinset=0]{->}(0,1)(0.85,1.5)
\rput(0.44,1.05){$e_3$}

\psline[linewidth=1.6pt,arrowinset=0]{->}(0.85,1.5)(1.7,1.5)
\rput(1.3,1.7){$e_4$}

\psline[linewidth=1.6pt,arrowinset=0]{->}(-.85,2)(0,2)
\rput(-0.4,2.2){$e_5$}

\psline[linewidth=1.6pt,arrowinset=0]{->}(3.1,1.5)(2.25,1.5)
\rput(2.7,1.7){$e_6$}

\psline[linewidth=1.6pt,arrowinset=0]{<->}(3.1,1.5)(3.95,2)
\rput(3.5,1.95){$e_7$}

\psline[linewidth=1.6pt,arrowinset=0]{->}(3.95,3)(3.95,2)
\rput(3.75,2.5){$e_8$}

\pscurve[linewidth=1.6pt,arrowinset=0]{->}(3.95,2)(3.8,1.7)(3.4,1.45)(3.1,1.5)
\rput(3.75,1.3){$e_9$}

\psline[linewidth=1.6pt,arrowinset=0]{->}(-.5,.2)(.2,.2)
\rput(-.15,.4){$e_{10}$}
\rput(-.7,0){$t_5$}
\rput(.45,0){$u_5$}

\psline[linewidth=1.6pt,arrowinset=0]{->}(3.1,1.5)(2.4,.8)
\rput(2.5,1.2){$e_{11}$}


\pscurve[arrowinset=.5,arrowlength=1.5]{<->}(.85,1.5)(1.5,2)(2.25,1.5)
\rput(1.7,2.2){$f_1$}

\pscurve[arrowinset=.5,arrowlength=1.5]{<->}(.85,1.5)(1.5,1)(2.4,.8)
\rput(1.7,.7){$f_2$}

\pscurve[arrowinset=.5,arrowlength=1.5]{<->}(0,2)(.85,2.2)(1.7,1.5)
\rput(1,2.35){$f_3$}

\pscurve[arrowinset=.5,arrowlength=1.5]{->}(0,1)(.85,.8)(1.7,1.5)
\rput(1,.7){$f_4$}

\psline[arrowinset=0.5,arrowlength=1.5]{->}(-.85,2)(-.6,1.2)
\rput(-0.9,1.5){$f_5$}

\pscurve[arrowinset=.5,arrowlength=1.5]{<->}(2.25,1.5)(3.1,2.2)(3.95,2)
\rput(3.1,2.4){$f_{6}$}

\pscurve[arrowinset=.5,arrowlength=1.5]{->}(3.95,3)(4.15,2)(4.2,1.3)(3.8,1)(3.3,1.2)(3.1,1.5)
\rput(4.35,2){$f_{7}$}

\pscurve[arrowinset=.5,arrowlength=1.5]{->}(3.1,1.5)(2.75,.8)(2.4,.8)
\rput(2.9,.7){$f_{8}$}

\endpspicture  &

\psset{xunit=1.4cm,yunit=1.3cm,linewidth=0.5pt,radius=0.1mm,arrowsize=7pt,
labelsep=1.5pt,fillcolor=black}

\pspicture(-0.5,0)(5,2.5)

\pscircle[fillstyle=solid](-.85,2){.1}
\pscircle[fillstyle=solid](-.6,1.2){.1}
\pscircle[fillstyle=solid](0,1){.1}
\pscircle[fillstyle=solid](0,2){.1}
\pscircle[fillstyle=solid](0.85,1.5){.1}
\pscircle[fillstyle=solid](1.7,1.5){.1}
\pscircle[fillstyle=solid](2.25,1.5){.1}
\pscircle[fillstyle=solid](2.4,.8){.1}
\pscircle[fillstyle=solid](3.1,1.5){.1}
\pscircle[fillstyle=solid](3.95,2){.1}
\pscircle[fillstyle=solid](3.95,3){.1}

\rput(-.5,3){4)}

\psline[linewidth=1.6pt,arrowinset=0]{<->}(0,2)(0.85,1.5)
\rput(0.44,1.95){$e_1$}

\psline[linewidth=1.6pt,arrowinset=0]{<-}(-.6,1.2)(0,2)
\rput(-0.1,1.6){$e_2$}
\rput(-.6,.8){$u_3=t_5$}
\rput(0,.5){$\tilde u_3=u_5$}

\psline[linewidth=1.6pt,arrowinset=0]{->}(0,1)(0.85,1.5)
\rput(0.44,1.05){$e_3$}

\psline[linewidth=1.6pt,arrowinset=0]{->}(0.85,1.5)(1.7,1.5)
\rput(1.3,1.7){$e_4$}

\psline[linewidth=1.6pt,arrowinset=0]{->}(-.85,2)(0,2)
\rput(-0.4,2.2){$e_5$}

\psline[linewidth=1.6pt,arrowinset=0]{->}(3.1,1.5)(2.25,1.5)
\rput(2.7,1.7){$e_6$}

\psline[linewidth=1.6pt,arrowinset=0]{<->}(3.1,1.5)(3.95,2)
\rput(3.5,1.95){$e_7$}

\psline[linewidth=1.6pt,arrowinset=0]{->}(3.95,3)(3.95,2)
\rput(3.75,2.5){$e_8$}

\pscurve[linewidth=1.6pt,arrowinset=0]{->}(3.95,2)(3.8,1.7)(3.4,1.45)(3.1,1.5)
\rput(3.75,1.3){$e_9$}

\psline[linewidth=1.6pt,arrowinset=0]{->}(-.6,1.2)(0,1)
\rput(-.15,1.3){$e_{10}$}

\psline[linewidth=1.6pt,arrowinset=0]{->}(3.1,1.5)(2.4,.8)
\rput(2.5,1.2){$e_{11}$}


\pscurve[arrowinset=.5,arrowlength=1.5]{<->}(.85,1.5)(1.5,2)(2.25,1.5)
\rput(1.7,2.2){$f_1$}

\pscurve[arrowinset=.5,arrowlength=1.5]{<->}(.85,1.5)(1.5,1)(2.4,.8)
\rput(1.7,.7){$f_2$}

\pscurve[arrowinset=.5,arrowlength=1.5]{<->}(0,2)(.85,2.2)(1.7,1.5)
\rput(1,2.35){$f_3$}

\pscurve[arrowinset=.5,arrowlength=1.5]{->}(0,1)(.85,.8)(1.7,1.5)
\rput(1,.7){$f_4$}

\psline[arrowinset=0.5,arrowlength=1.5]{->}(-.85,2)(-.6,1.2)
\rput(-0.9,1.5){$f_5$}

\pscurve[arrowinset=.5,arrowlength=1.5]{<->}(2.25,1.5)(3.1,2.2)(3.95,2)
\rput(3.1,2.4){$f_{6}$}

\pscurve[arrowinset=.5,arrowlength=1.5]{->}(3.95,3)(4.15,2)(4.2,1.3)(3.8,1)(3.3,1.2)(3.1,1.5)
\rput(4.35,2){$f_{7}$}

\pscurve[arrowinset=.5,arrowlength=1.5]{->}(3.1,1.5)(2.75,.8)(2.4,.8)
\rput(2.9,.7){$f_{8}$}

\endpspicture 

\end{array}$

\vspace{.5cm}

\caption{ How to move a non-central cell in a bicyclic representation $G(A)$ of $A$ from one basic $1$-tree to the other,  where $A$ and $G(A)$ are given in Figure \ref{fig:bicyclicA} (see pictures from 1 to 4). The vertices $u_3$, $\tilde u_3$, $u_5$ and $t_5$ are defined in the proof of Proposition \ref{propbicyclicChar}.}
\label{fig:bicyclicmove}

\end{figure}

Given a bipartition $\Sigma(\mathcal{K})=\{ \mathcal{K}_I, \mathcal{K}_{II} \}$ of $\mathcal{K}$ with $\mathcal{K}_I= \{ C_1, C_3\}$ and $\mathcal{K}_{II}= \{ C_2, C_4,C_5 \}$ for instance, we define $E_i(\Sigma)= \cup_{C_k \in \mathcal{K}_i} C_k$, $R_i^*(\Sigma)=E_i(\Sigma)\cap R^*$
and $M_i(\Sigma) = A_{E_i(\Sigma)\, f(E_i(\Sigma)) }$, for $i= I $ and $II$, as well as a matrix $N(\Sigma)$ having two particular row indexes denoted by $n_I$ and $n_{II}$. See Figure \ref{fig:bicyclicmatrices}. Then one computes if possible an $E_i(\Sigma)$-cyclic representation of $M_i(\Sigma)$, for $i=I$ and $II$, and a $\frac{1}{2}$-binet representation of $N(\Sigma)$ whose basic half-edges are $e_{n_I}$ and $e_{n_{II}}$. Provided that all these representations have been found, one can construct a bicyclic representation of $A$ (see Figure \ref{fig:bicyclicmatnet}).

\begin{figure}[h!]

\begin{center}
$
\begin{array}{ccccc}
M_{I}(\Sigma)=
\begin{tabular}{c|c|c|c|c|c|}
  & $f_1^I$ & $f_2^I$ & $f_3$ & $f_4$ & $f_5$ \\
  \hline
$e_1$  &$\frac{1}{2}$&$\frac{1}{2}$&1&0&0 \\
\hline
$e_2$  &$\frac{1}{2}$&$\frac{1}{2}$&0&0&1 \\
\hline
$e_3$  &$\frac{1}{2}$&$\frac{1}{2}$&0&1&0 \\
\hline
$e_4$  &0&0&1&1&0 \\
\hline
$e_5$  &0&0&0&0&1\\
\hline
\end{tabular}  &&&&
M_{II}(\Sigma)=
\begin{tabular}{c|c|c|c|c|}
  & $f_1^{II}$ & $f_2^{II}$ & $f_6$ & $f_7$ \\
  \hline
$e_6$  &1&0&1&0 \\
\hline
$e_7$  &$\frac{1}{2}$&$\frac{1}{2}$&1&0 \\
\hline
$e_8$  &0&0&0&1 \\
\hline
$e_{9}$  &$\frac{1}{2}$&$\frac{1}{2}$&0&1 \\
\hline
$e_{10}$ &$\frac{1}{2}$&$\frac{1}{2}$&0&0\\
\hline
\end{tabular} 
\end{array}
$
\end{center}

\begin{center}
$
N(\Sigma)=
\begin{tabular}{c|c|c|c|c|c|c|c|}
  & $f_1^I$ & $f_2^I$ & $f_1^{II}$ & $f_2^{II}$ & $f_1$ & $f_2$ & $f_8$ \\
  \hline
$e_6$  &0&0&1&0 &1&0&0 \\
\hline
$e_{11}$  &0&0&0&0&0&1&1\\
\hline
$e_{n_I}$  &1&1&0&0&1&1&0 \\
\hline
$e_{n_{II}}$ &0&0&1&1&1&1&0\\
\hline
\end{tabular}
$
\end{center}

\caption{An example of the matrices $M_I(\Sigma)$, $M_{II}(\Sigma)$ and $N(\Sigma)$ where $A$ is given in Figure \ref{fig:bicyclicA}, $\Sigma(\mathcal{K})=\{ \mathcal{K}_I, \mathcal{K}_{II} \}$ with $\mathcal{K}_I=\{ C_1,C_3\}$ and $\mathcal{K}_{II}=\{C_2, C_4, C_5 \}$.}
\label{fig:bicyclicmatrices}

\end{figure}

\begin{figure}[h!]
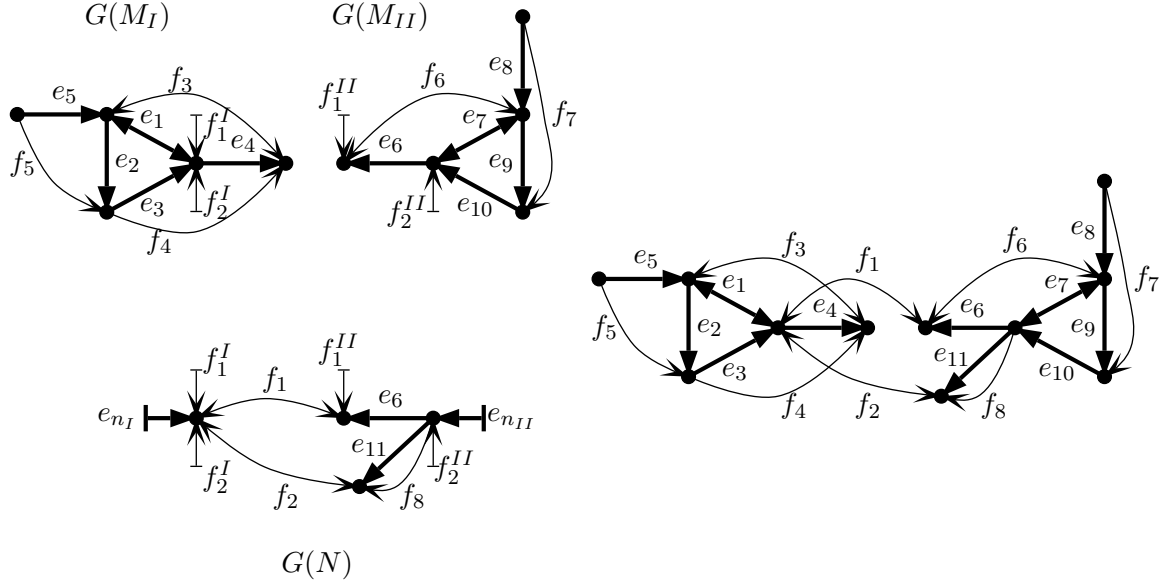


\vspace{1.5cm}

$
\begin{array}{cc}
\begin{array}{c}

\psset{xunit=1.4cm,yunit=1.3cm,linewidth=0.5pt,radius=0.1mm,arrowsize=7pt,
labelsep=1.5pt,fillcolor=black}

\pspicture(-1,0)(5,2.5)

\pscircle[fillstyle=solid](-.85,2){.1}
\pscircle[fillstyle=solid](0,1){.1}
\pscircle[fillstyle=solid](0,2){.1}
\pscircle[fillstyle=solid](0.85,1.5){.1}
\pscircle[fillstyle=solid](1.7,1.5){.1}
\pscircle[fillstyle=solid](2.25,1.5){.1}
\pscircle[fillstyle=solid](3.1,1.5){.1}
\pscircle[fillstyle=solid](3.95,1){.1}
\pscircle[fillstyle=solid](3.95,2){.1}
\pscircle[fillstyle=solid](3.95,3){.1}

\rput(0.2,3){$G(M_{I})$}
\rput(2.6,3){$G(M_{II})$}

\psline[linewidth=1.6pt,arrowinset=0]{<->}(0,2)(0.85,1.5)
\rput(0.44,1.95){$e_1$}

\psline[linewidth=1.6pt,arrowinset=0]{<-}(0,1)(0,2)
\rput(0.2,1.5){$e_2$}

\psline[linewidth=1.6pt,arrowinset=0]{->}(0,1)(0.85,1.5)
\rput(0.44,1.05){$e_3$}

\psline[linewidth=1.6pt,arrowinset=0]{->}(0.85,1.5)(1.7,1.5)
\rput(1.3,1.7){$e_4$}

\psline[linewidth=1.6pt,arrowinset=0]{->}(-.85,2)(0,2)
\rput(-0.4,2.2){$e_5$}

\psline[linewidth=1.6pt,arrowinset=0]{->}(3.1,1.5)(2.25,1.5)
\rput(2.7,1.7){$e_6$}

\psline[linewidth=1.6pt,arrowinset=0]{<->}(3.1,1.5)(3.95,2)
\rput(3.5,1.95){$e_7$}

\psline[linewidth=1.6pt,arrowinset=0]{->}(3.95,3)(3.95,2)
\rput(3.75,2.5){$e_8$}

\psline[linewidth=1.6pt,arrowinset=0]{<-}(3.95,1)(3.95,2)
\rput(3.75,1.5){$e_9$}

\psline[linewidth=1.6pt,arrowinset=0]{<-}(3.1,1.5)(3.95,1)
\rput(3.5,1.05){$e_{10}$}



\psline[arrowinset=.5,arrowlength=1.5]{<-|}(.85,1.5)(.85,2)
\rput(1.05,1.9){$f_1^I$}

\psline[arrowinset=.5,arrowlength=1.5]{|->}(2.25,2)(2.25,1.5)
\rput(2.2,2.2){$f_1^{II}$}

\psline[arrowinset=.5,arrowlength=1.5]{<-|}(.85,1.5)(.85,1)
\rput(1.05,1.1){$f_2^{I}$}

\psline[arrowinset=.5,arrowlength=1.5]{|->}(3.1,1)(3.1,1.5)
\rput(2.85,1){$f_2^{II}$}

\pscurve[arrowinset=.5,arrowlength=1.5]{<->}(0,2)(.85,2.2)(1.7,1.5)
\rput(.7,2.35){$f_3$}

\pscurve[arrowinset=.5,arrowlength=1.5]{->}(0,1)(.85,.8)(1.7,1.5)
\rput(.5,.7){$f_4$}

\pscurve[arrowinset=.5,arrowlength=1.5]{->}(-.85,2)(-.5,1.3)(0,1)
\rput(-0.8,1.5){$f_5$}

\pscurve[arrowinset=.5,arrowlength=1.5]{<->}(2.25,1.5)(3.1,2.2)(3.95,2)
\rput(3.1,2.4){$f_{6}$}

\pscurve[arrowinset=.5,arrowlength=1.5]{->}(3.95,3)(4.15,2)(4.2,1.3)(3.95,1)
\rput(4.35,2){$f_{7}$}


\endpspicture \\

\psset{xunit=1.4cm,yunit=1.3cm,linewidth=0.5pt,radius=0.1mm,arrowsize=7pt,
labelsep=1.5pt,fillcolor=black}

\pspicture(-1,0)(5,2.5)

\pscircle[fillstyle=solid](0.85,1.5){.1}
\pscircle[fillstyle=solid](2.25,1.5){.1}
\pscircle[fillstyle=solid](2.4,.8){.1}
\pscircle[fillstyle=solid](3.1,1.5){.1}

\rput(2,0){$G(N)$}

\psline[linewidth=1.6pt,arrowinset=0]{|->}(0.35,1.5)(0.85,1.5)
\rput(0.1,1.5){$e_{n_I}$}

\psline[linewidth=1.6pt,arrowinset=0]{->}(3.1,1.5)(2.25,1.5)
\rput(2.7,1.7){$e_6$}

\psline[linewidth=1.6pt,arrowinset=0]{<-|}(3.1,1.5)(3.6,1.5)
\rput(3.85,1.5){$e_{n_{II}}$}

\psline[linewidth=1.6pt,arrowinset=0]{->}(3.1,1.5)(2.4,.8)
\rput(2.5,1.2){$e_{11}$}


\psline[arrowinset=.5,arrowlength=1.5]{<-|}(.85,1.5)(.85,2)
\rput(1.05,2.1){$f_1^I$}

\psline[arrowinset=.5,arrowlength=1.5]{|->}(2.25,2)(2.25,1.5)
\rput(2.2,2.2){$f_1^{II}$}

\psline[arrowinset=.5,arrowlength=1.5]{<-|}(.85,1.5)(.85,1)
\rput(1.05,.9){$f_2^{I}$}

\psline[arrowinset=.5,arrowlength=1.5]{|->}(3.1,1)(3.1,1.5)
\rput(3.3,1){$f_2^{II}$}

\pscurve[arrowinset=.5,arrowlength=1.5]{<->}(.85,1.5)(1.5,1.7)(2.25,1.5)
\rput(1.6,1.9){$f_1$}

\pscurve[arrowinset=.5,arrowlength=1.5]{<->}(.85,1.5)(1.5,1)(2.4,.8)
\rput(1.7,.7){$f_2$}

\pscurve[arrowinset=.5,arrowlength=1.5]{->}(3.1,1.5)(2.75,.8)(2.4,.8)
\rput(2.9,.7){$f_{8}$}

\endpspicture

\end{array} &

\psset{xunit=1.4cm,yunit=1.3cm,linewidth=0.5pt,radius=0.1mm,arrowsize=7pt,
labelsep=1.5pt,fillcolor=black}

\pspicture(-0.15,1.5)(5.5,2.5)

\pscircle[fillstyle=solid](-.85,2){.1}
\pscircle[fillstyle=solid](0,1){.1}
\pscircle[fillstyle=solid](0,2){.1}
\pscircle[fillstyle=solid](0.85,1.5){.1}
\pscircle[fillstyle=solid](1.7,1.5){.1}
\pscircle[fillstyle=solid](2.25,1.5){.1}
\pscircle[fillstyle=solid](2.4,.8){.1}
\pscircle[fillstyle=solid](3.1,1.5){.1}
\pscircle[fillstyle=solid](3.95,1){.1}
\pscircle[fillstyle=solid](3.95,2){.1}
\pscircle[fillstyle=solid](3.95,3){.1}

\psline[linewidth=1.6pt,arrowinset=0]{<->}(0,2)(0.85,1.5)
\rput(0.44,1.95){$e_1$}

\psline[linewidth=1.6pt,arrowinset=0]{<-}(0,1)(0,2)
\rput(0.2,1.5){$e_2$}

\psline[linewidth=1.6pt,arrowinset=0]{->}(0,1)(0.85,1.5)
\rput(0.44,1.05){$e_3$}

\psline[linewidth=1.6pt,arrowinset=0]{->}(0.85,1.5)(1.7,1.5)
\rput(1.3,1.7){$e_4$}

\psline[linewidth=1.6pt,arrowinset=0]{->}(-.85,2)(0,2)
\rput(-0.4,2.2){$e_5$}

\psline[linewidth=1.6pt,arrowinset=0]{->}(3.1,1.5)(2.25,1.5)
\rput(2.7,1.7){$e_6$}

\psline[linewidth=1.6pt,arrowinset=0]{<->}(3.1,1.5)(3.95,2)
\rput(3.5,1.95){$e_7$}

\psline[linewidth=1.6pt,arrowinset=0]{->}(3.95,3)(3.95,2)
\rput(3.75,2.5){$e_8$}

\psline[linewidth=1.6pt,arrowinset=0]{<-}(3.95,1)(3.95,2)
\rput(3.75,1.5){$e_9$}

\psline[linewidth=1.6pt,arrowinset=0]{<-}(3.1,1.5)(3.95,1)
\rput(3.5,1.05){$e_{10}$}

\psline[linewidth=1.6pt,arrowinset=0]{->}(3.1,1.5)(2.4,.8)
\rput(2.5,1.2){$e_{11}$}


\pscurve[arrowinset=.5,arrowlength=1.5]{<->}(.85,1.5)(1.5,2)(2.25,1.5)
\rput(1.7,2.2){$f_1$}

\pscurve[arrowinset=.5,arrowlength=1.5]{<->}(.85,1.5)(1.5,1)(2.4,.8)
\rput(1.7,.7){$f_2$}

\pscurve[arrowinset=.5,arrowlength=1.5]{<->}(0,2)(.85,2.2)(1.7,1.5)
\rput(1,2.35){$f_3$}

\pscurve[arrowinset=.5,arrowlength=1.5]{->}(0,1)(.85,.8)(1.7,1.5)
\rput(1,.7){$f_4$}

\pscurve[arrowinset=.5,arrowlength=1.5]{->}(-.85,2)(-.5,1.3)(0,1)
\rput(-0.8,1.5){$f_5$}

\pscurve[arrowinset=.5,arrowlength=1.5]{<->}(2.25,1.5)(3.1,2.2)(3.95,2)
\rput(3.1,2.4){$f_{6}$}

\pscurve[arrowinset=.5,arrowlength=1.5]{->}(3.95,3)(4.15,2)(4.2,1.3)(3.95,1)
\rput(4.35,2){$f_{7}$}

\pscurve[arrowinset=.5,arrowlength=1.5]{->}(3.1,1.5)(2.75,.8)(2.4,.8)
\rput(2.9,.7){$f_{8}$}

\endpspicture

\end{array}$

\caption{An $E_i(\Sigma)$-cyclic representation of $M_i(\Sigma)$ for $i=I$ and $II$, a $\{n_I,n_{II}\}$ $\frac{1}{2}$-binet representation of $N(\Sigma)$ and a bicyclic representation of $A$, where $A$ is given in Figure \ref{fig:bicyclicA} and 
$M_I(\Sigma)$, $M_{II}(\Sigma)$ and $N(\Sigma)$ in Figure \ref{fig:bicyclicmatrices}.}
\label{fig:bicyclicmatnet}

\end{figure}

\section{The procedure Bicyclic}\label{sec:recbicyc}

In this section, we describe the procedure Bicyclic and provide a proof of Theorems \ref{thmbicyclicpro} and \ref{thmbicyclicChar}.
Suppose that $A$ has a bicyclic representation $G(A)$. For all $1\le k \le \xi$, the interval $R_k$ represents the edge index set of a basic loop, or a consistently oriented path,
denoted as $\mathcal{P}_k$, in a basic cycle. 
For every column index $j$, the nonbasic edge $f_j$ is a $2$-edge if and only if $j\in S_{\frac{1}{2}}$. Observe that for all $1\le k \le r$, the cell $C_k$ is the edge index set of a (basic) tree or $1$-tree of $G(A)$, denoted as $T(C_k)$, which is called a \emph{cell}\index{cell}. We denote by $T_I$ and $T_{II}$ the maximal basic $1$-trees of $G(A)$. We say that $G(A)$ \emph{induces the bipartition $\Sigma(\mathcal{K})=\{\mathcal{K}_I, \mathcal{K}_{II} \}$}\index{induce a bipartition}, where $\mathcal{K}_i=\{C_k \, : \, T(C_k) \subseteq T_i\}$ for $i=I$ and $II$.
A cell in $G(A)$ is said to be \emph{central}\index{central!cell} if and only if its edge index set is central. The following lemma gives an enlightenment on the notion of central cell.

\begin{lem}\label{lembicycliccentral}
Suppose that $A$ has a bicyclic representation $G(A)$. Then every central cell in $G(A)$ contains a central edge. 
\end{lem}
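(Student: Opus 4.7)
The plan is to prove the contrapositive: assuming that the subgraph $T(C_k)$ of $G(A)$ corresponding to a cell $C_k \in \mathcal{K}$ contains no central edge, I would show both that $A_k^{\frac{1}{2}\rightarrow 1}$ is a network matrix and that $R_k = \bigcup_{j\in S_{\frac{1}{2}}} s(A_{\bullet j}) \cap C_k$. This matches the two defining conditions for a central cell and delivers the lemma.

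First I would set up the geometry. Since the only nonbasic edges that cross between the two basic $1$-trees of $G(A)$ are the $2$-edges (indexed by $S_{\frac{1}{2}}$), and since cell membership is controlled by the graph $H(A)\setminus S_{\frac{1}{2}}$, the cell $T(C_k)$ lies inside a single basic $1$-tree, say $T_I$. Because $G(A)$ is a proper bicyclic representation, the basic cycle $C_I$ contains exactly two central edges sharing the central node $w_\rho$. Removing both central edges from $C_I$ leaves a path that avoids $w_\rho$, so the hypothesis forces $T(C_k)\cap C_I$ to be a sub-path of that remaining path; in particular $T(C_k)$ is a subtree of $T_I$ (it contains no full basic cycle) and $R_k$ sits on an arc of $C_I$ strictly separated from the central region.

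For the network property, I would show that each column of $A_k^{\frac{1}{2}\rightarrow 1}$ is, up to sign, the incidence vector of a directed path in the tree $T(C_k)$. For $j\notin S_{\frac{1}{2}}$, the edge $f_j$ is a $1$-edge whose fundamental circuit in $T_I$ is either a positive cycle or a pathcycle (Corollary \ref{corBidirectedCircuit}); intersected with the cycle-free subtree $T(C_k)$ this must be a path. For $j\in S_{\frac{1}{2}}$, Lemma \ref{lemdefiWeight1} tells us $f_j$ is a $2$-edge with $\frac{1}{2}$-weights on $R^*$ and $1$-weights on the stems; restricting to $C_k$ and applying the $\frac{1}{2}\rightarrow 1$ substitution turns this into the union of $R_k$ with the stem traces lying in $T(C_k)$, which again closes up to a path in the subtree. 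Lemma \ref{lemdefiWeight2}, together with the non-negativity of $A$, then provides a consistent signing of $T(C_k)$ that realizes every such column as a signed path-incidence vector, so $A_k^{\frac{1}{2}\rightarrow 1}$ is a network matrix.

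For the support equality I would argue by contradiction: if there were $j\in S_{\frac{1}{2}}$ and $i\in s(A_{\bullet j})\cap C_k$ with $i\notin R_k$, then $a_{ij}=1$ and $e_i$ would be a stem edge of the $2$-edge $f_j$ inside $T(C_k)$. The stem of $f_j$ reaches the basic cycle $C_I$ at a node $w$, and cell connectivity through $1$-edges (each of whose fundamental circuits must stay inside $T(C_k)$, hence must be a positive cycle by the no-full-cycle structure above) forces $w$ to be a vertex of $T(C_k)$ and the arc of $C_I$ from $w$ toward $R_k$ to lie inside $T(C_k)$. The hard part will be closing this step: I would need to show that such an arc cannot avoid $w_\rho$, so that one of the two central edges is swept into $T(C_k)$, contradicting the hypothesis. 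I expect this to require a small case analysis on the position of $w$ relative to $R_k$ on $C_I$, combined with a sign/orientation argument using Lemma \ref{lemdefiWeight2} applied to the bridging $1$-edges, ruling out the configuration where $w\neq w_\rho$ and the bridge stays strictly inside the non-central arc; this is where the technical core of the proof lives.
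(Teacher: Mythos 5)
Your overall strategy (prove the contrapositive: a cell containing no central edge satisfies both conditions for being non-central) is logically the same as the paper's, which argues directly but reduces at once to the case $e_1\notin\mathcal{P}_k$, shows the cell is then a basic network representation of $A_k^{\frac{1}{2}\rightarrow 1}$, and concludes that centrality can only come from the support condition, which is then shown to force $e_\rho$ into the cell. Your treatment of the network-matrix half is essentially the paper's; you should however also dispose of possible $2$-entries in the columns of $A_k$ (a $2$-entry would put a whole basic cycle, hence a central edge, into the part of the fundamental circuit meeting $T(C_k)$, which your hypothesis excludes), since otherwise $A_k^{\frac{1}{2}\rightarrow 1}$ is not even a $\{0,1\}$-matrix.

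The genuine gap is in the support-equality step, and you have correctly located it yourself. The case analysis you postpone is unnecessary because the attachment node $w$ of the stem is forced: \emph{the stem of every $2$-edge meets the basic cycle at the central node}. Indeed, since $A$ is nonnegative, Lemma \ref{lemdefiWeight2} says the minimal covering closed walk of the handcuff is consistently oriented according to $f_j$; this walk traverses the basic cycle once, arriving and departing along the stem at $w$, so consistency at both passages through $w$ forces the two cycle edges incident with $w$ to carry the same sign at $w$, i.e. $w$ is an inconsistent node of the basic cycle. In a proper bicyclic representation of a nonnegative matrix the basic cycle is a closed consistently oriented path, whose unique inconsistent node is the central node $w_\rho$; hence $w=w_\rho$. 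Once this is known, the conclusion is immediate: your stray edge $e_i$ (with $i\in s(A_{\bullet j})\cap C_k$, $i\notin R_k$) lies on a subtree hanging off $w_\rho$, while $\mathcal{P}_k$ is a nonempty arc of the cycle not containing $e_1$; the connected subgraph $T(C_k)$ contains both, hence contains the path joining them in $T_I\setminus\{e_1\}$, whose first cycle edge is $e_\rho$ — a central edge, contradiction. Without the observation that $w=w_\rho$, the configuration you worry about (the bridge staying strictly inside the non-central arc) is not excluded by the sign arguments you sketch, so as written the proof is incomplete at precisely its decisive step.
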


\begin{proof}
Let $T(C_k)$ be a central cell, for some $1 \le k \le \xi$, and suppose that it is contained in $T_I$. Since $C_k \cap R^*\neq \emptyset$, we may assume that the basic cycle of $T_I$ is not a loop. Let us denote by $e_1$ and $e_\rho$ the central edges of $T_I$ so that $e_1$ is bidirected. 
We may also assume that $e_1 \notin \mathcal{P}_k$. Hence, $ T(C_k)$ is a directed tree, and no column in $A_k$ has a $2$-entry, because the fundamental circuit of a column with a nonempty $2$-support contains a whole basic cycle. Moreover, the intersection of the fundamental circuit of any $2$-edge with $T(C_k)$ is a consistently oriented path. Therefore, 
$T(C_k)$ is a basic network representation of $A_k^{\frac{1}{2}\rightarrow 1}$. 

From the definition of a central cell, it follows that $\cup_{j\in S_{\frac{1}{2}}} s(A_{\bullet j}) \cap C_k\neq R_k$. So there exists a basic edge $e_i$ and a $2$-edge $f_j$ ($j\in S_{\frac{1}{2}}$), such that $e_i$ is in the fundamental circuit of $f_j$ and $i \in C_k\verb"\"R_k$. Then, since $A$ is nonnegative, by Lemma \ref{lemdefiWeight2}
the stem issued from $f_j$ in $T_I$ contains
the central node of $T_I$ and $e_i$ (see Figure \ref{fig:Apositive}).
As $T(C_k)$ is connected and contains at least one edge of the basic cycle in $T_I$, we deduce that $e_\rho$ belongs to $T(C_k)$.
\end{proof}\\

A bipartition of $\mathcal{K}$ into two subsets $\Sigma(\mathcal{K}) = \{ \mathcal{K}_I , \mathcal{K}_{II} \}$ (where $\mathcal{K}=  \mathcal{K}_I \biguplus \mathcal{K}_{II} $) is said to be \emph{bicompatible}\index{bicompatible bipartition} if $\mathcal{K}_i\neq \emptyset$ and $\mathcal{K}_i$ contains at most two central cells for $i=I$ and $II$. A bipartition  $\Sigma'(\mathcal{K})=\{\mathcal{K}_I', \mathcal{K}_{II}' \}$ is called \emph{equivalent}\index{equivalent bipartitions} to a bipartition $\Sigma(\mathcal{K})= \{ \mathcal{K}_I , \mathcal{K}_{II} \}$, if 
$\Sigma'(\mathcal{K})=\Sigma(\mathcal{K})$ or "$\mathcal{K}_I$ and $\mathcal{K}_{II}$ have each one at least one non-central cell, and one can obtain $\Sigma'$ from $\Sigma$ by moving some non-central cells between $\mathcal{K}_I$ and $\mathcal{K}_{II}$ and keeping at least one non-central cell in each subset". Clearly, this is an equivalence relation.
Denote by $\mathscr{S}$ the quotient set of all bicompatible bipartitions by this equivalence relation.
Given a bipartition $\Sigma(\mathcal{K}) = \{ \mathcal{K}_I , \mathcal{K}_{II} \}$, we define 
$$E_i(\Sigma)= \cup_{C_k \in \mathcal{K}_i} C_k, \quad R_i^*(\Sigma)=E_i(\Sigma)\cap R^*$$ 
and $$ M_i(\Sigma) = A_{E_i(\Sigma)\, f(E_i(\Sigma)) }$$ for $i= I $ and $II$. For $i=I$ and $II$, let $$E_{i,\frac{1}{2}} =\{ i' \in E_i(\Sigma)\verb"\"
R^* \, :\, i' \in s_{\frac{1}{2}} (A_{\bullet j}) \m{ for some } j\in S_{\frac{1}{2}} \}$$ and $E= \cup_{k=\xi+1}^{r} C_k\cup 
E_{I,\frac{1}{2}} \cup E_{II,\frac{1}{2}}$. Moreover, let 

$$N(\Sigma) = \left( \begin{array}{cccc}

\begin{array}{c}
A_{E_{I,\frac{1}{2}} \times S_{\frac{1}{2}} } \\
0_{|\overline{E_{I,\frac{1}{2}}}| \times |S_{\frac{1}{2}}| }
\end{array}  &
\begin{array}{c}
0_{|\overline{E_{II,\frac{1}{2}}}| \times |S_{\frac{1}{2}}| }\\
A_{E_{II,\frac{1}{2}} \times S_{\frac{1}{2}} } \\
\end{array}  &

A_{E \times S_{\frac{1}{2}} } & A_{E \times F } \\

1_{1 \times |S_{\frac{1}{2}}| } & 0_{1 \times |S_{\frac{1}{2}}| } &
1_{1 \times |S_{\frac{1}{2}}| } &
0_{1 \times |F| } \\
0_{1 \times |S_{\frac{1}{2}}| } & 1_{1 \times |S_{\frac{1}{2}}| } &
1_{1 \times |S_{\frac{1}{2}}| } &
0_{1 \times |F| } \\
\end{array}\right), $$

\noindent 
where the rows of $N(\Sigma)$ (except the two last ones) are indexed by the elements in $E$ ($\overline{E_{i,\frac{1}{2}}}= E \verb"\" E_{i,\frac{1}{2}}$ for $i=I$ and $II$).
See Figure \ref{fig:bicyclicmatrices}. Denote by $n_I$ and $n_{II}$ the indexes of the two last rows, respectively. 

\begin{lem}\label{lembicyclicnoncentral}
Suppose that $A$ has a bicyclic representation $G(A)$. For any non-central cell $C_l$ and a cell $C_{l'}$ for some $1\le l \le \xi$ and $1\le l' \le r$, a node belonging to $T(C_l)$ and $T(C_{l'})$ is either equal to a central node, if $R^*=R_l$, or an endnode of the interval with edge index set $R_l$ otherwise.
\end{lem}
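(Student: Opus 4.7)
The plan is a proof by contradiction. Let $v$ be a node shared by $T(C_l)$ and $T(C_{l'})$ with $l'\ne l$, and assume that $v$ is neither a central node (in the case $R^*=R_l$) nor an endnode of the consistent sub-path $\mathcal{P}_l$ of the basic cycle with edge index set $R_l$ (in the case $R^*\ne R_l$). The aim is to contradict one of the two non-centrality conditions on $C_l$: either (a) $A_l^{\frac{1}{2}\rightarrow 1}$ is a network matrix, or (b) $R_l=\cup_{j\in S_{\frac{1}{2}}} s(A_{\bullet j})\cap C_l$.

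The first step is to localize the situation at $v$. Because the cells partition the basic edge set of $G(A)$, the edges incident with $v$ split among cells; let $e_k\in C_l$ and $e_{i'}\in C_{l'}$ be two such edges at $v$. I would begin by arguing that $v$ must lie on the basic cycle of the maximal basic $1$-tree $T$ containing $T(C_l)$: if $v$ were an internal node of some off-cycle subtree of $T$, then any nonbasic edge $f_{j}\in \overline{S_{\frac{1}{2}}}$ whose fundamental circuit supports $e_{i'}$ would, by Corollary \ref{corBidirectedCircuit} and Lemma \ref{lemdefiWeight2}, also support a basic edge of $T(C_l)$ at $v$, placing $e_k$ and $e_{i'}$ in the same cell, which contradicts $C_l\ne C_{l'}$.

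Knowing that $v$ lies on the basic cycle of $T$, I split into the two cases of the statement. When $R^*\ne R_l$, $\mathcal{P}_l$ is a proper consistent sub-path of the basic cycle, and at any inner node of $\mathcal{P}_l$ both basic cycle edges already belong to $C_l$; the edge $e_{i'}$ of $C_{l'}$ at $v$ must then lie in a subtree $S_v$ attached at $v$ off the cycle. I would consider whether some stem edge of a $2$-edge $f_j\in S_{\frac{1}{2}}$ lies in $S_v$: if it does, then that stem edge belongs to $s(A_{\bullet j})\cap C_l$ after being shown to lie in $C_l$ by producing a connecting $1$-edge, contradicting condition (b); if it does not, the columns of $\overline{S_{\frac{1}{2}}}$ supporting $S_v$ either stay inside $S_v$ or cross back through $v$ onto $\mathcal{P}_l$, and the latter forces $e_{i'}\in C_l$ by the very definition of the cells, again a contradiction. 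The case $R^*=R_l$ is handled by the same analysis, with the central node of the (now cyclic) $\mathcal{P}_l$ playing the role of an endnode.

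The main obstacle will be the production, in the ``stem-in-$S_v$'' sub-case, of the nonbasic $1$-edge $f_{j'}\in \overline{S_{\frac{1}{2}}}$ whose fundamental circuit bridges $S_v$ and $\mathcal{P}_l$. Its construction rests on the connectedness of $A$ and on a careful choice of the two endnodes of $f_{j'}$ using the fundamental-circuit description given on page~\pageref{mycounter2}, arranged so that the resulting fundamental circuit does not wrap around a full basic cycle. This is what guarantees $f_{j'}\in \overline{S_{\frac{1}{2}}}$, which in turn legitimizes merging $C_l$ and $C_{l'}$ via $f_{j'}$ and yields the desired contradiction.
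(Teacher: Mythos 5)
The paper disposes of this lemma in a single sentence, so any real argument has to be reconstructed; your global strategy (contradict one of the two defining conditions of non-centrality, using connectivity and the cell construction) is the right skeleton, but two of your steps fail and one essential case is missing. In the reduction to ``$v$ lies on the basic cycle'' you claim that any $f_j\in\overline{S_{\frac{1}{2}}}$ whose fundamental circuit supports $e_{i'}$ must also support an edge of $C_l$ at $v$. That is false: a $1$-edge with both endnodes inside the subtree hanging below $v$ has its entire fundamental circuit inside that subtree, and in fact, since every $\overline{S_{\frac{1}{2}}}$-column has its support inside a single cell by the very definition of the cells, no such column can ever meet both $C_l$ and $C_{l'}$ — so this mechanism cannot produce the contradiction. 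The correct engine is the $2$-edges: connectivity of $A$ forces $C_{l'}$ (trapped below $v$) to meet $s(A_{\bullet j})$ for some $j\in S_{\frac{1}{2}}$, and the corresponding stem must then climb through the unique path from $v$ to the basic cycle, whose edges lie in $C_l\setminus R_l$, violating condition (b). The same objection applies to your ``stem-in-$S_v$'' sub-case: there is no reason the stem edge should lie in $C_l$, and no ``connecting $1$-edge'' can be produced to force it there.

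More seriously, your case analysis never distinguishes whether the inner node $v$ of $\mathcal{P}_l$ is the central node of the basic cycle, and that is precisely the configuration where condition (a) — which you announce as a target but never actually use — is indispensable. Because $A$ is nonnegative, every stem of a $2$-edge attaches to its basic cycle at the central node (Lemma \ref{lemdefiWeight2}, as exploited in the proof of Lemma \ref{lembicycliccentral}), and no $\overline{S_{\frac{1}{2}}}$-column can have the central node as an inconsistent interior node of its circuit. Hence, if $v$ is an inner node of $\mathcal{P}_l$ other than the central node, no stem enters $S_v$ at all and connectivity of $C_{l'}$ already fails — a clean contradiction needing neither (a) nor (b). But if $v$ \emph{is} the central node, both central edges lie in $C_l$, the stems do enter $S_v$ yet may avoid $C_l$ entirely, so (b) can hold; the chain of $\overline{S_{\frac{1}{2}}}$-columns that must link the two central edges inside $C_l$ is then forced off the cycle through $S_v$, and together with the column $\chi_{R_l}$ contributed by the $2$-edges it yields a non-totally-unimodular submatrix of $A_l^{\frac{1}{2}\rightarrow 1}$, contradicting (a). One can write down explicit small instances where (b), connectivity and the cell structure are all satisfied with the central node interior to $\mathcal{P}_l$ and only (a) fails, so without this case your proof does not close.
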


\begin{proof}
This follows from the connectivity of $A$, the construction of the cells and the definition of a central cell.
\end{proof}\\


Let us prove the following proposition.

\begin{prop}\label{propbicyclicChar}
If the matrix $A$ is bicyclic, then there exists a bicompatible bipartition $\Sigma(\mathcal{K})$ such that for any bipartition  $\Sigma'(\mathcal{K})$ equivalent to $\Sigma(\mathcal{K})$, the matrix $M_i(\Sigma')$ is $R_i^*(\Sigma')$-cyclic for $i= I $ and $II$ and $N(\Sigma')$ is $\{n_I,n_{II}\}$ $\frac{1}{2}$-binet. 
\end{prop}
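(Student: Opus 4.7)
The plan is to start from any bicyclic representation $G(A)$ of $A$ and use it to define a bipartition that provably has the required stability under equivalence. Let $G(A)$ be a bicyclic representation of $A$ with basic maximal $1$-trees $T_I$ and $T_{II}$. Define $\Sigma(\mathcal{K})=\{\mathcal{K}_I,\mathcal{K}_{II}\}$ to be the bipartition induced by $G(A)$, namely $\mathcal{K}_i=\{C_k\in\mathcal{K}\,:\,T(C_k)\subseteq T_i\}$ for $i=I,II$. The first step is to check that $\Sigma(\mathcal{K})$ is bicompatible. Non-emptiness of each $\mathcal{K}_i$ follows from the presence of at least one $\frac{1}{2}$-entry: since $A$ is $\frac{1}{2}$-equisupported with $R^*\neq\emptyset$, every interval in $G(A)$ lies on a basic cycle, and each basic cycle of the proper bicyclic representation contains at least one edge whose index belongs to some $C_k\in\mathcal{K}$. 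For the bound on central cells, Lemma \ref{lembicycliccentral} asserts that every central cell contains a central edge; as each $T_i$ has exactly one central node incident to at most two central edges (the basic bidirected edge and the basic directed edge forced by properness), $\mathcal{K}_i$ contains at most two central cells.

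The crucial step is to show that if $\Sigma'(\mathcal{K})$ is equivalent to $\Sigma(\mathcal{K})$, then $A$ admits a bicyclic representation $G'(A)$ inducing $\Sigma'$. Equivalence means $\Sigma'$ is obtained from $\Sigma$ by relocating non-central cells between $\mathcal{K}_I$ and $\mathcal{K}_{II}$, keeping both sides non-empty. For a single non-central cell $T(C_l)$ to be moved, Lemma \ref{lembicyclicnoncentral} implies that $T(C_l)$ meets the remainder of its host tree only at isolated contact nodes that are either the central node (if $R_l=R^*$) or endnodes of the interval with edge index set $R_l$. This is exactly the situation illustrated in Figure \ref{fig:bicyclicmove}: one can detach $T(C_l)$ at those contact nodes, switch at its nodes if necessary to adjust signs, and reattach it to the twin tree at the corresponding nodes (after possibly splitting a contact node into a pair of identified vertices $u_l$ and $t_l$), producing another valid bicyclic representation of $A$. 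Iterating this operation cell by cell, and using the fact that both classes retain a non-central cell at every stage so that the moved piece always has a legitimate anchoring target, one constructs $G'(A)$ inducing $\Sigma'$.

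Given $G'(A)$ inducing $\Sigma'$, the two final verifications are essentially extractions from $G'(A)$. For the $R_i^*(\Sigma')$-cyclic representation of $M_i(\Sigma')$, take the maximal basic $1$-tree $T_i$ of $G'(A)$ together with all nonbasic edges with index in $f(E_i(\Sigma'))$, and replace each $2$-edge $f_j$ ($j\in S_{\frac{1}{2}}$) by a half-edge $f_j^i$ attached at the appropriate endpoint of the stem of $f_j$ inside $T_i$ (as in the blocks of Figure \ref{fig:bicyclicmatnet}). The weights predicted by Lemma \ref{lemdefiWeight1} coincide with $M_i(\Sigma')$ by construction, and the basic cycle of $T_i$ has edge index set exactly $R_i^*(\Sigma')$; this furnishes the required cyclic representation. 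For $N(\Sigma')$, start from the union of the cells $T(C_k)$ with $C_k\notin\mathcal{K}$ together with the fundamental circuits of the nonbasic edges with index in $S_{\frac{1}{2}}\cup F$, then contract each basic cycle into a single node and add a basic half-edge at the central node of each of the two former basic cycles; those two half-edges are exactly $e_{n_I}$ and $e_{n_{II}}$. A direct verification using the definition of weights shows the resulting $\frac{1}{2}$-binet representation realises $N(\Sigma')$ with $\{n_I,n_{II}\}$ as the index set of its basic half-edges.

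The main obstacle will be the cell-by-cell moving argument in the second paragraph: one must check that after each move the switching needed to preserve the signing convention does not disturb previously moved cells, and that when two equivalent bipartitions differ by simultaneously moving several cells, the moves can be sequenced so that at each intermediate step both classes still carry a non-central cell, ensuring that the anchoring endnode used for reattachment remains available. This sequencing, together with the bookkeeping of orientations at the endpoints of the intervals $\mathcal{P}_k$, is the delicate technical content; everything else reduces to applying the definitions of $M_i(\Sigma')$, $N(\Sigma')$ and the weight lemmas to read off the announced representations from $G'(A)$.
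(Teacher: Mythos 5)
Your proposal is correct and follows essentially the same route as the paper's proof: take the bipartition induced by a bicyclic representation, invoke Lemma \ref{lembicycliccentral} for bicompatibility, obtain the $R_i^*(\Sigma)$-cyclic and $\{n_I,n_{II}\}$ $\frac{1}{2}$-binet representations by contraction/half-edge insertion, and justify stability under equivalence by the detach-and-reattach move of non-central cells based on Lemma \ref{lembicyclicnoncentral}. The sequencing issue you flag at the end is exactly the part the paper also treats only briefly ("the other cases can be treated in a similar way"), so your account matches the paper's level of detail.
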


\begin{proof}
Let $G(A)$ be a bicyclic representation of $A$ and $\Sigma(\mathcal{K})=\{\mathcal{K}_I,\mathcal{K}_{II}\}$ the bipartition induced by $G(A)$. By Lemma \ref{lembicycliccentral}, we deduce that $T_I$ and $T_{II}$ contain each one at most two central cells. Thus the bipartition 
$\Sigma(\mathcal{K})$ is bicompatible. For any $i\in \{I,II\}$, by deleting all nonbasic edges with index not in $f(E_i(\Sigma))$ and contracting all basic edges with index not in $E_i(\Sigma)$ (in any order), one obtains an $R_i^*(\Sigma)$-cyclic representation of $M_i (\Sigma)$. 

A $\{n_I,n_{II}\}$ $\frac{1}{2}$-binet representation of $N(\Sigma)$ is obtained as follows. Observe that for $i=I$ and $II$, $E_{i,\frac{1}{2}}$ is the edge index set a tree rooted at the central node of $T_i$; for any $2$-edge $f_j$ ($j\in S_{\frac{1}{2}}$), the intersection of this rooted tree with the fundamental circuit of $f_j$ is a directed path denoted by $q_{j,i}$ starting at the central node of $T_i$
($q_{j,i}$ may be equal to the central node of $T_i$).
For $i=I$ and $II$, add a nonbasic half-edge entering the terminal node of $q_{j,i}$ for all $j\in S_{\frac{1}{2}}$, and create a basic half-edge $n_i$ entering the central node of $T_i$. Then, delete all nonbasic edges (except half-edges) with index not in $F\cup S_{\frac{1}{2}}$, all basic edges with index not in $E\cup \{n_I, n_{II}\}$ and the remaining isolated nodes. (See Figure \ref{fig:bicyclicmatnet}.)

Suppose now that $\mathcal{K}_I$ contains at least one non-central cell, say $C_l$, and $\mathcal{K}_{II}$ at least two, say $C_{l'}$ and $C_{l''}$. Let us prove that there exists a bicyclic representation of $A$ inducing a bicompatible bipartition $\Sigma'(\mathcal{K})= \{ \mathcal{K}_I', \mathcal{K}_{II}'\}$ of $\mathcal{K}$ with $\mathcal{K}_I'= \mathcal{K}_I \cup \{ C_{l''} \} $ and $\mathcal{K}_{II}'= \mathcal{K}_{II} \verb"\" \{ C_{l''} \}$.
From the definition of a central cell, it follows that $\cup_{j\in S_{\frac{1}{2}}} s(A_{\bullet j}) \cap  C_k =R_k$ and $A_k^{\frac{1}{2} \rightarrow 1}$ is a network matrix for $k=l,$ $l'$ and $l''$. For any $1 \le k \le r$, let $$G_k=
G(A_{C_k \times (f(C_k) \verb"\" S_{\frac{1}{2}})}) \subseteq G(A).$$

Assume that $T(C_{l''})$ has only directed edges. 
Make a copy $\tilde G_{l''}$ of $G_{l''}$ and denote by $t_{l''}$ (resp., $u_{l''}$) the first (resp., terminal) node of the copy of $\mathcal{P}_{l''}$ in $\tilde G_{l''}$. 
Suppose that there exists an endnode of $\mathcal{P}_l$, say $u_l$, which is not central and such that $\mathcal{P}_l$ enters $u_l$. Let $\mathcal{T}(u_l)$ be the set of subgraphs $G_k$ in $G(A)$ with $1\le k \le r$ containing $u_l$. 
Then contract all edges of $G_{l''}$ and cut the basic cycle in $T_I$ at $u_l$, by creating a copy $\tilde u_l$ of $u_l$ so that $u_l \in G_l$ and $\tilde u_l \in G_k$ for all
$G_k \in \mathcal{T}(u_l)$ with $k\neq l$. Finally, identify $t_{l''}$ with $u_l$, and $u_{l''}$ with $\tilde u_l$. Thus, we obtain a new bicyclic representation of $A$ inducing a bipartition $\Sigma'(\mathcal{K})$ equivalent to $\Sigma(\mathcal{K})$. 
See Figure \ref{fig:bicyclicmove} for an example with $C_l=C_3=\{2,5\}$, $C_{l'}=\{8,9\}$ and $C_{l''}=C_5= \{10 \}$. 

The other cases can be treated in a similar way
by using Lemma \ref{lembicyclicnoncentral}, a network representation of $A_k^{\frac{1}{2}\rightarrow 1}$ for $k=l$, $l'$ or $l''$, and switching operations if necessary.
By moving some cells this way, one can prove as above that 
for any bipartition  $\Sigma'(\mathcal{K})$ equivalent to $\Sigma(\mathcal{K})$, the matrix $M_i(\Sigma')$ is $R_i^*(\Sigma')$-cyclic for $i= I $ and $II$ and $N(\Sigma')$ is $\{n_I,n_{II}\}$ $\frac{1}{2}$-binet. 
\end{proof}\\

For computing a $\{n_I,n_{II}\}$ $\frac{1}{2}$-binet representation of $N(\Sigma)$, we use the following auxiliary matrix:

$$\tilde N(\Sigma) = \left( 
\begin{array}{cc}
 N(\Sigma) & 
\begin{array}{ccc}
\begin{array}{c}
A_{E_{I,\frac{1}{2}} \times S_{\frac{1}{2}} } \\
0_{|\overline{E_{I,\frac{1}{2}}}| \times |S_{\frac{1}{2}}| }
\end{array}  &
\begin{array}{c}
0_{|\overline{E_{II,\frac{1}{2}}}| \times |S_{\frac{1}{2}}| }\\
A_{E_{II,\frac{1}{2}}\times S_{\frac{1}{2}} } \\
\end{array}  &

0_{|E| \times 1 } \\

1_{1 \times |S_{\frac{1}{2}}| } & 1_{1 \times |S_{\frac{1}{2}}| } & 1 \\
1_{1 \times |S_{\frac{1}{2}}| } & 1_{1 \times |S_{\frac{1}{2}}| } & 1\\
\end{array}

\end{array}\right). $$

Suppose that the matrix $\tilde N(\Sigma)$ has a network representation $G(\tilde N)$. Due to the last column of $\tilde N$ the basic edges $e_{n_I}$ and $e_{n_{II}}$ are adjacent. Denote by $v_0$ the node in $G(\tilde N)$ incident with $e_{n_I}$ and $e_{n_{II}}$. 
Observe that for any $j\in S_{\frac{1}{2}}$ and $i\in \{I,II\}$, $(s(A_{\bullet j}) \cap E_{i,\frac{1}{2}}) \cup \{n_i\}$ and $(s(A_{\bullet j}) \cap E_{i,\frac{1}{2}})\cup \{n_I,n_{II} \}$ are edge index sets of directed paths. Moreover, since $A$ is connected, for all cell $C_k$ with $\xi < k \le r$ there exists $j' \in S_{\frac{1}{2}}$ 
such that  $s(A_{\bullet j'}) \cap C_k \neq \emptyset$. This implies that the only basic edges incident with $v_0$ are $e_{n_I}$ and $e_{n_{II}}$.

\begin{tabbing}
\textbf{Procedure\,\,Bicyclic($A$)}\\

\textbf{Input: } \=  A connected $\frac{1}{2}$-equisupported
 matrix $A$ with entries $0$, $1$, $\frac{1}{2}$ or $2$ and\\
\>  at least one $\frac{1}{2}$-entry.\\
\textbf{Output:} A bicyclic representation $G(A)$ of $A$, or determines that none exists.\\
1)\verb"  "\= compute the quotient set $\mathscr{S}$;\\
2)\>{\bf forÊ} \= every class $[\Sigma(\mathcal{K})]$ in $\mathscr{S}$, {\bf do}\\ 
3) \> \> for $i=I $ and $II$, call {\tt RCyclic}($M_i(\Sigma)$,$R_i^*(\Sigma)$) of Section \ref{sec:cyc};\\
\> \> if we have an $R_i^*(\Sigma)$-cyclic representation $G(M_i)$ of $M_i(\Sigma)$ for $i=I$ and $II$,\\
\> \> then go to 4, otherwise return to $2$;\\
4) \> \>   compute a network representation $G(\tilde N)$ of $\tilde N(\Sigma)$, if one exists, otherwise return to 2,\\
 \> \> delete $v_0$ and using switching operations, build up a $\{n_I,n_{II}\}$ $\frac{1}{2}$-binet representation \\
\> \>$G(N)$  of $N(\Sigma)$ such that $e_{n_I}$ and $e_{n_{II}}$ are entering;\\
5) \>\>  contract all half-edges in $G(M_I)$, $G(M_{II})$ and $G(N)$; and for $i=I$ and $II$, identify \\
\> \> the subtree of $G(N)$ with edge index set $E_{i,\frac{1}{2}}$ with the corresponding tree \\
\> \> in $G(M_i)$ by keeping the labeling of the basic edges in $G(M_i)$;\\
\> \>  output the resulting bicyclic representation $G(A)$ of $A$ and STOP;\\
\> {\bf endfor}\\
\> output that $A$ is not bicyclic;
\end{tabbing}

Finally, we prove Theorems \ref{thmbicyclicpro} and \ref{thmbicyclicChar}.\\

\noindent
{\bf Proof of Theorems \ref{thmbicyclicpro} and \ref{thmbicyclicChar}.} \quad The "only if" part of Theorem \ref{thmbicyclicChar} follows from Proposition \ref{propbicyclicChar}.

Let us prove the counter part of Theorem \ref{thmbicyclicChar}
and the correctness of the procedure Bicyclic.
Suppose that there exists a bicompatible bipartition 
$\Sigma_0(\mathcal{K})=\{\mathcal{K}_I, \mathcal{K}_{II} \}$ 
such that for any bipartition  $\Sigma(\mathcal{K})$ equivalent to $\Sigma_0(\mathcal{K})$, the matrix $M_i(\Sigma)$ is $R_i^*(\Sigma)$-cyclic for $i= I $ and $II$ and $N(\Sigma)$ is $\{n_I,n_{II}\}$ $\frac{1}{2}$-binet. 
Suppose that the procedure Bicyclic is dealing with the class
$[\Sigma_0(\mathcal{K})]$ in step 2. By Theorem \ref{thmcyclicproCyc}, for some bipartition $\Sigma$ equivalent to $\Sigma_0$,
the procedure RCyclic computes an $R_i^*(\Sigma)$-cyclic representation $G(M_i)$ of $M_i(\Sigma)$ for $i=I$ and $II$, and a $\{n_I,n_{II}\}$ $\frac{1}{2}$-binet representation $G(N)$ of $N(\Sigma)$ is computed.
Since $A$ is connected, a cell $C_k$ such that $R_k=\emptyset$ ($\xi < k \le r$)
necessarily intersects the support of a column with index in $S_{\frac{1}{2}}$, and for each $j\in F$, $s(A_{\bullet j})$ is the edge index set of a path in $G(N)$. We observe that
$(A^T_{E_{I,\frac{1}{2}}\times S_{\frac{1}{2}} } \,  0^T_{|\overline{E_{I,\frac{1}{2}}}| \times |S_{\frac{1}{2}}| } \, 1^T_{1 \times |S_{\frac{1}{2}}| } \,
0^T_{1 \times |S_{\frac{1}{2}}| })^T$
is a submatrix of $N(\Sigma)$, and $s(A_{\bullet j}) \cap E_{I,\frac{1}{2}}$ is the edge index set of a directed path in $G(M_I)$ starting at the central node, for all $j\in S_{\frac{1}{2}}$. Thus,
the basic subtrees of $G(N)$ and $G(M_I)$ with edge index set  $E_{I,\frac{1}{2}}$ are isomorphic; the former is rooted at the endnode of $e_{n_I}$ while the latter is rooted at the central node of $G(M_I)$. The same thing holds with $G(M_{II})$, $E_{II,\frac{1}{2}}$ and $e_{n_{II}}$ instead of $G(M_{I})$, $E_{I,\frac{1}{2}}$ and $e_{n_I}$, respectively. We deduce that the procedure Bicyclic outputs a bicyclic representation of $A$ (see Figure \ref{fig:bicyclicmatnet} for an example). 

For the proof of Theorem \ref{thmbicyclicpro}, it remains to show that the procedure Bicyclic works in time $O(nm \alpha)$.
In a bicompatible bipartition $\Sigma(C)=\{\mathcal{K}_I, \mathcal{K}_{II}\}$, since any $\mathcal{K}_i$ ($i=I$ or $II$)
has at most 2
central cells, there are at most $6$ different ways of partitioning the central cells into two groups. Furthermore, any bipartition of the central cells can be extended to
at most $3$ nonequivalent bicompatible partitions of $\mathcal{K}$. Therefore, the number of passages through step 1 is upper bounded by $18$. By Theorem \ref {thmSubclassNTutteCunNet}, the computational effort required to obtain a network representation of a matrix $A_k^{\frac{1}{2}\rightarrow 1}$ for some $k$ is $O(|C_k| \alpha_k)$ where $\alpha_k$ is the number of nonzero elements in $A_k$. Thus computing the quotient set $\mathscr{S}$ can be executed in time $O(n\alpha)$. Similarly, step 4 takes time $O(n\alpha)$. Finally, by Theorem \ref{thmcyclicproCyc}, the procedure RCyclic works in time $O(n m\alpha)$. This completes the proof.
{\hfill$\BBox{\rule{.3mm}{3mm}}$} \\


\chapter{Recognizing $\{\epsilon,\rho\}$-central matrices}\label{ch:central}

In this chapter, we provide a characterization of $\{\epsilon,\rho\}$-central (non-network) $\{0,1,2\}$-matrices, where $\epsilon$ and $\rho$ are two given row indexes, as wel as a recognition procedure.
Let $A$ be a $\{0,1,2\}$-matrix of size $n\times m$ and $\alpha$ the number of nonzero entries in $A$.
We assume that $A$ is not a network matrix. Let $\epsilon$ and $\rho$ ($\rho \neq 1$) be two row indexes and $S_0=\{j \, :\, \epsilon ,\rho \in s(A_{\bullet j})\}$. 
We will prove the following.

\begin{thm}\label{thmcentralCarEmpty}
Suppose $S_0=\emptyset$. The matrix $A$ can be tested for being $\{\epsilon,\rho\}$-central by the procedure CentralI. The running time of the procedure is $O(n^3 \alpha)$.
\end{thm}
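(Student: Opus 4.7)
The plan is to follow the roadmap indicated in Section \ref{sec:OverRecBinet}, establishing correctness of CentralI through a sequence of reductions that match the combinatorial structure of a hypothetical $\{\epsilon,\rho\}$-central representation of $A$, and then bounding the complexity by inspecting each step. First, I would describe the initialization procedure (from Section \ref{sec:PreCentral}) that transforms $A$ into a matrix $A'$ satisfying the assumption $\mathscr{A}$, such that $A$ is $\{\epsilon,\rho\}$-central if and only if $A'$ is $\{1,\rho\}$-central. This reduction is purely combinatorial (relabeling rows and restricting the family of representations to bypass the pathological configuration of Figure \ref{figOverviewAssumpA}) and costs $O(n\alpha)$. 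From this point on, work with $A'$ and the pair $\{1,\rho\}$.

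Next, I would build the digraph $D$ with its bonsais $E_1,\ldots,E_b$, the bonsai matrices $N_1,\ldots,N_b$, and identify the set of shared bonsais (using the sets $S_1, S_2$ defined in the excerpt). The core structural lemma to prove is the following: if $A'$ has a $\{1,\rho\}$-central representation $G(A')$, then looking at the bonsais intersecting the basic cycle of $G(A')$, the shared ones form a contiguous block flanked by two non-shared blocks, and the extremal shared pair (or singleton) is a \emph{left-extreme} set with cardinality at most two. I would prove this by analyzing which fundamental circuits can coexist around the central node and invoking Proposition \ref{propBlsubstems} together with Theorem \ref{thmdigraphrightfeasible}. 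The crucial consequence is that it is sufficient to try, as candidate left-extreme sets, the family of \emph{left-compatible} sets $U$ computable from $A'$ alone, yielding $O(n^2)$ candidates.

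For each left-compatible set $U$ and each $U$-spanning pair $(j_1,j_2) \in S_1\times S_2$, I would define $V(j_1,j_2)$ and the 2-SAT instance $\Omega(U,j_1,j_2)$, with one variable $x_l$ per bonsai $E_l\in V(j_1,j_2)$, and clauses enforcing the necessary network-matrix conditions on the auxiliary matrices containing $A_{E_l\cup\{1,\rho\}\bullet}$ or $A_{E_l\bullet}$ (together with the compatibility of shared bonsais, the feasibility of placing $U$ on the left, and the spanning-pair constraints). The main technical lemma would state: if $A'$ has a $\{1,\rho\}$-central representation $G(A')$ with $U$ left-extreme and $(j_1,j_2)$ a corresponding spanning pair, then $\Omega(U,j_1,j_2)$ is satisfiable, and conversely, from any truth assignment of $\Omega(U,j_1,j_2)$ together with network representations of every $N_l$ ($E_l\notin V(j_1,j_2)$) one constructs an $\{1,\rho\}$-central representation of $A'$. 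In the construction, bonsais with $x_l=0$ are placed on the left and bonsais with $x_l=1$ on the right; the $\{\epsilon,\rho\}$-noncorelated network recognition of Section \ref{sec:CentralNetcorelated} handles the case $|U|=1$ where $e_\epsilon$ and $e_\rho$ must be nonalternating inside $A_{E_u\cup\{\epsilon,\rho\}\bullet}$. The main obstacle I expect is precisely this last step: verifying that an assignment always yields a valid proper binet representation requires combining (i) the feasibility of the spanning forest of $D$ obtained from the bonsais on the right and (ii) the consistent orientation of the basic cycle after gluing in the left-hand bonsais. I would isolate this as a separate lemma and prove it by induction on the number of shared bonsais placed on the left.

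Finally, for the complexity analysis, each of the $O(n^2)$ left-compatible sets $U$ admits only $O(1)$ distinct spanning pairs relevant up to the equivalence induced by $V(\cdot,\cdot)$ (by an elementary pigeonhole argument on the connector sets). For a fixed $(U,j_1,j_2)$, constructing $\Omega(U,j_1,j_2)$ needs $O(n\alpha)$ time, since each clause test reduces either to a network-matrix check via Theorem \ref{thmSubclassNTutteCunNet} or to a $\{1,\rho\}$-noncorelated check via Section \ref{sec:CentralNetcorelated}, both of which run in $O(n\alpha)$. Solving the 2-SAT instance and assembling the representation take linear time in the instance size. Multiplying the $O(n^2)$ outer iterations by the $O(n\alpha)$ cost per iteration yields $O(n^3\alpha)$, establishing the claimed bound.
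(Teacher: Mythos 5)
Your proposal follows essentially the same route as the paper's proof: the initialization to a matrix satisfying assumption $\mathscr{A}$, the digraph $D$ with its bonsais and bonsai matrices, the reduction of the left-extreme set to a polynomial family of left-compatible candidates $U$, the $2$-SAT instances $\Omega(U,j_1,j_2)$ whose satisfiability is equivalent (together with the network conditions on the remaining $N_\ell$) to the existence of a $\{1,\rho\}$-central representation, and the $O(n^2)\times O(n\alpha)$ accounting. One correction to your complexity step: the $\{1,\rho\}$-noncorelated check of Theorem \ref{thmNetCorelatedMain} costs $O(n^2\alpha)$, not $O(n\alpha)$, but since it is invoked only for the $O(n)$ singleton candidates $U=(E_u)$ (and the matrices $L_u$, $L_{u,1}$, $L_{u',2}$ depend on the individual bonsais rather than on the pair, so they can be precomputed), the overall $O(n^3\alpha)$ bound is unaffected.
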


\begin{thm}\label{thmcentralCarNotEmpty}
Suppose $S_0\neq \emptyset$. The matrix $A$ can be tested for being $\{\epsilon,\rho\}$-central by the procedure CentralII. The running time of the procedure is $O(n^3 \alpha)$.
\end{thm}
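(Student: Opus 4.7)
The plan is to mirror the template of Theorem \ref{thmcentralCarEmpty} for the case $S_0 = \emptyset$, adapting each step so that it accommodates the extra columns of $S_0$. First I would invoke the initialization procedure of Section \ref{sec:PreCentral} to reduce to an input $A'$ satisfying assumption $\mathscr{A}$, so that $A$ is $\{\epsilon,\rho\}$-central iff $A'$ is $\{1,\rho\}$-central. Then I would construct the bonsais $E_1,\ldots,E_b$, the bonsai matrices $N_l$, and the digraph $D$ of Chapter \ref{ch:multidiD} with respect to $R^* = \{1,\rho\}$.

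The crucial structural step is to prove the two claims that distinguish this case from $S_0 = \emptyset$. First, in any hypothetical $\{1,\rho\}$-central representation $G(A')$, every bonsai $B_l$ whose subtree meets the basic cycle must be $S_0$-straight: this follows because any $j \in S_0$ contains both $1$ and $\rho$ in its support, so by Lemma \ref{lemdigraph12} the fundamental circuit of $f_j$ wraps the whole basic cycle, forcing $s(A'_{\bullet j}) \cap E_l$ to equal $s(A'_{\bullet j'}) \cap E_l$ for all $j,j' \in S_0$. Second, the set of right-extreme bonsais has cardinality at most two; this is the direct analogue of the left-extreme bound from Section \ref{sec:S0empty} and is proved by analysing the cyclic order in which $S_0$-straight bonsais can approach the central node along the basic cycle under assumption $\mathscr{A}$. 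These two facts together imply that the number of right-compatible sets $U$ to enumerate is $O(n^2)$.

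For each right-compatible set $U$, I would (i) verify that $N_l$ is a network matrix for every bonsai $E_l \notin V_0'$, using Theorem \ref{thmSubclassNTutteCunNet} in time $O(n \alpha)$; (ii) construct the $2$-SAT instance $\Lambda(U)$ described in Section \ref{sec:OverRecBinet}, whose variable $x_l$ (for $E_l \in V_0'$) encodes whether $B_l$ lies on the left of $\{e_1, e_\rho\}$, and whose clauses enforce network-feasibility of the submatrices attached at each bonsai, the $S_0$-straight constraint on the right side, and the combinatorial compatibility dictated by a right-feasible spanning forest of $D$ (computed by the procedure Forest of Section \ref{sec:For}); and (iii) solve $\Lambda(U)$ in linear time. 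Given a satisfying assignment together with the network representations of the $N_l$ and, where required, the $\{1,\rho\}$-noncorelated network representations supplied by Section \ref{sec:CentralNetcorelated}, I would assemble a basic $\{1,\rho\}$-central representation of $A'$ by wrapping the $S_0$-straight bonsais and $U$ along the basic cycle and attaching all remaining bonsais through the spanning forest of $D$. The converse direction, that any $\{1,\rho\}$-central representation of $A'$ yields a satisfying assignment for $\Lambda(U)$ at the right-compatible set $U$ of its right-extreme bonsais, closes the equivalence.

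The main obstacle will be verifying that $\Lambda(U)$ faithfully captures both the local network-matrix requirements at each bonsai and the global $S_0$-straight requirement along the basic cycle using only binary clauses; this is precisely where the bound $|U|\le 2$ is essential, since without it a variable $x_l$ could participate in arbitrarily many coupled constraints and the 2-SAT encoding would break. Once this is established, the complexity bound follows from the budget $O(n^2)$ on right-compatible sets times $O(n \alpha)$ per set for the network-matrix tests and the forest construction, giving $O(n^3 \alpha)$ overall.
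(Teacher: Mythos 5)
Your plan follows the paper's proof essentially step for step: initialization to secure assumption $\mathscr{A}$, the bonsai/digraph machinery of Chapter~\ref{ch:multidiD}, the observations that cycle-meeting bonsais must be $S_0$-straight and that the right-extreme set has cardinality at most two, enumeration of right-compatible sets $U$ with a $2$-SAT instance $\Lambda(U)$ per set, assembly of the representation via the right-feasible forest, and the same $O(n^2)$-sets-times-$O(n\alpha)$-per-set accounting. The only small deviation is your appeal to the $\{1,\rho\}$-noncorelated machinery of Section~\ref{sec:CentralNetcorelated}, which the paper needs only in the $S_0=\emptyset$ case (for a left-compatible singleton); here condition $\mu.4$ merely requires $L_\ell^0$ to be a network matrix.
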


A characterization of cyclic matrices without $\frac{1}{2}$- nor $2$-entry  turns out to be the most critical part in the recognition of binet matrices. Provided that $A$ is cyclic, there is a priori no direct way of detecting the edge index set of the basic cycle, in some cyclic representation of $A$. To illustrate 
this, consider the cyclic matrix in Figure \ref{fig:central1min}. Its minimally non-network submatrices are:

$$
\begin{array}{ccc}
M_1=
\begin{tabular}{c|c|c|c|c|c|}
  & $f_1$ & $f_2$ & $f_3$  \\
  \hline
$e_1$  &0&1&1\\
\hline
$e_2$  &1&0&1 \\
\hline
$e_3$  &1&1&0\\
\hline
\end{tabular}  & &
M_2=
\begin{tabular}{c|c|c|c|c|c|}
  & $f_1$ & $f_2$ &  $f_4$ \\
  \hline
$e_3$  &1&1&0 \\
\hline
$e_4$  &1&0&1\\
\hline
$e_5$  &0&1&1 \\
\hline
\end{tabular}  

\end{array} $$

\begin{figure}[h!]
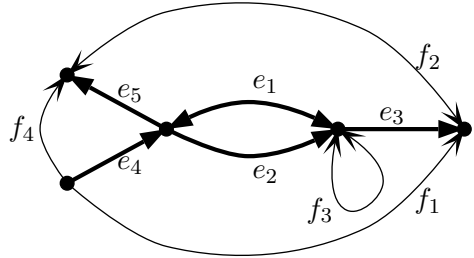


\begin{center}
$
\begin{array}{cc}

\begin{tabular}{c|c|c|c|c|}
  & $f_1$ & $f_2$ & $f_3$ & $f_4$  \\
  \hline
$e_1$  &0&1&1&0 \\
\hline
$e_2$ &1&0&1&0 \\
\hline
$e_3$  &1&1&0&0 \\
\hline
$e_4$  &1&0&0&1\\
\hline
$e_5$  &0&1&0&1 \\
\hline
\end{tabular}  &

\psset{xunit=1.2cm,yunit=1.2cm,linewidth=0.5pt,radius=0.1mm,arrowsize=7pt,
labelsep=1.5pt,fillcolor=black}

\pspicture(-1.5,0.8)(5,3)

\pscircle[fillstyle=solid](1,1){.1}
\pscircle[fillstyle=solid](2.9,1){.1}
\pscircle[fillstyle=solid](4.3,1){.1}
\pscircle[fillstyle=solid](-0.1,.4){.1}
\pscircle[fillstyle=solid](-0.1,1.6){.1}

\pscurve[linewidth=1.6pt,arrowinset=0]{<->}(1,1)(1.9,1.3)(2.9,1)
\rput(2.1,1.5){$e_{1}$}

\pscurve[linewidth=1.6pt,arrowinset=0]{->}(1,1)(1.9,.7)(2.9,1)
\rput(2.1,.5){$e_{2}$}

\psline[linewidth=1.6pt,arrowinset=0]{->}(2.9,1)(4.3,1)
\rput(3.5,1.15){$e_{3}$}

\psline[linewidth=1.6pt,arrowinset=0]{<-}(1,1)(-0.1,.4)
\rput(.6,.6){$e_{4}$}

\psline[linewidth=1.6pt,arrowinset=0]{->}(1,1)(-0.1,1.6)
\rput(.6,1.4){$e_{5}$}

\pscurve[arrowinset=.5,arrowlength=1.5]{->}(-0.1,.4)(1,-0.3)(3.2,-0.1)(4.3,1)
\rput(3.9,.2){$f_{1}$}

\pscurve[arrowinset=.5,arrowlength=1.5]{<->}(-0.1,1.6)(1,2.3)(3.2,2.1)(4.3,1)
\rput(3.9,1.8){$f_{2}$}

\pscurve[arrowinset=.5,arrowlength=1.5]{<->}(2.9,1)(3,.1)(3.4,.4)(2.9,1)
\rput(2.7,0.1){$f_{3}$}

\pscurve[arrowinset=.5,arrowlength=1.5]{<-}(-0.1,1.6)(-0.4,1)(-0.1,.4)
\rput(-.6,1){$f_{4}$}

\endpspicture 

\end{array}$
\end{center}

\vspace{.3cm}
\caption{A cyclic matrix $A$ and a $\{1,2\}$-central representation of $A$.} 
\label{fig:central1min}

\end{figure}

By careful analysis, one can show that there is no cyclic representation of $A$ whose basic cycle has an edge index set equal to the row index set of $M_1$ or $M_2$. However, given the row indexes $1$ and $2$ and since $A$ is nonnegative, we describe a procedure for computing a $\{1,2\}$-central representation of $A$.

The recognition of $\{\epsilon,\rho\}$-central matrices can be 
viewed as a generalization of Schrijver's method outlined on page \pageref{mycounter3}. Instead of considering a row index $i$, and then carrying on with the connected components of the matrix $A_{\overline{\{i\}}\times\overline{f(\{i\})} }$, we fix two row indexes, namely $\epsilon$ and $\rho$, and deal with the connected components of the matrix $A_{\overline{\{\epsilon, \rho\}}\times\overline{f(\{\epsilon, \rho\})} }$.

The chapter is organised as follows. First, we give the main notations with graphical interpretations and state two theorems 
characterizing $\{\epsilon,\rho\}$-central matrices
in case $S_0= \emptyset$ (respectively, $S_0 \neq \emptyset$) requiring certain assumptions that are discussed in Section \ref{sec:PreCentral}. Then, in Sections \ref{sec:S0empty} and \ref{sec:S0notempty}, we deal with the proof of the first and second theorem, respectively.  Section \ref{sec:CentralNetcorelated} provides a procedure for recognizing $\{1,\rho\}$-noncorelated network matrices, that is used in Section \ref{sec:S0empty}.

\section{Preliminaries}\label{sec:PreCentral}

Let $\epsilon$ and $\rho\neq 1$ be two row indexes of $A$,
$R^*= \{ \epsilon, \rho \}$ and
$D=(V,\Upsilon)$ be a digraph with respect to $R^*$, as computed in Section \ref{sec:DefDigraphD}.
We will make use of the whole terminology described in Chapter \ref{ch:multidiD}.
Let $S^*=\{ j \, :\, s(A_{\bullet j})\cap R^*\neq \emptyset \}$, $S_0=\{j\in S^* \, :\, \epsilon,\rho \in s(A_{\bullet j})\}$,
$S_1=\{j\in S^* \, :\, \epsilon \in s(A_{\bullet j}),\, \rho \notin s(A_{\bullet j})\}$ and $S_2=\{j\in S^* \, :\, \epsilon \notin s(A_{\bullet j}), \,\rho \in s(A_{\bullet j})\}$. 
For any bonsai $E_\ell$ and $k\in \{0,1,2\}$, we define the \emph{connecter set}\index{connecter set} $f_{S_k}(E_\ell)= \{j\in S_k \, : \, s(A_{\bullet j}) \cap E_\ell \neq \emptyset \}$. For any $j\in S^*$ and $1\le \ell \le b$, denote $E_{\ell j}=E_\ell \cap s(A_{\bullet j})$, and for $S\subseteq S_1 \cup S_2$,  $E_{\ell S}=\cup_{j\in S} E_{\ell j}$. Throughout this section we assume $\epsilon=1$. 

\begin{figure}[h!]
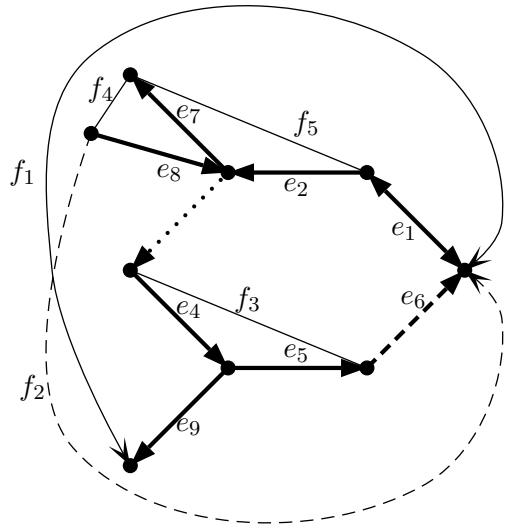

\vspace{.5cm}
$
\begin{array}{cc}

\begin{array}{c}
S_1=\{1\}; \quad S_2=\{2\};\\\\
E_1=\{2;7;8\};\,\,\, E_2=\{4;5\};\,\,\, E_3=\{9\};\\ \\
\begin{tabular}{c|c|c|c|}
 $\ell=$  &  $1$ &  $2$ &  $3$ \\
 \hline
$f_{S_1}(E_\ell)$  & \{1\} &\{1\}&\{1\} \\
 \hline
$f_{S_2}(E_\ell)$  &\{2\}&\{2\}&$\emptyset$\\
\hline
$E_{\ell  S_1}$  &\{2\}&\{4\}& \{9\}\\
\hline
$E_{\ell  S_2}$  &\{8\}&\{4,5\}&$\emptyset$\\
 \hline

\end{tabular}   
\end{array}  &

\psset{xunit=1.3cm,yunit=1.3cm,linewidth=0.5pt,radius=0.1mm,arrowsize=7pt,
labelsep=1.5pt,fillcolor=black}

\pspicture(-2,1)(3.7,3.5)

\pscircle[fillstyle=solid](0,1){.1}
\pscircle[fillstyle=solid](1,0){.1}
\pscircle[fillstyle=solid](1,2){.1}
\pscircle[fillstyle=solid](2.42,0){.1}
\pscircle[fillstyle=solid](2.42,2){.1}
\pscircle[fillstyle=solid](3.42,1){.1}
\pscircle[fillstyle=solid](-.4,2.4){.1}
\pscircle[fillstyle=solid](0,3){.1}
\pscircle[fillstyle=solid](0,-1){.1}

\psline[linewidth=1.6pt,arrowinset=0]{<->}(3.42,1)(2.42,2)
\rput(2.8,1.4){$e_{1}$}

\psline[linewidth=1.6pt,arrowinset=0]{->}(2.42,2)(1,2)
\rput(1.7,1.85){$e_{2}$}

\psline[linestyle=dotted,linewidth=1.6pt,arrowinset=0]{->}(1,2)(0,1)

\psline[linewidth=1.6pt,arrowinset=0]{->}(0,1)(1,0)
\rput(.6,.6){$e_{4}$}

\psline[linewidth=1.6pt,arrowinset=0]{->}(1,0)(2.42,0)
\rput(1.7,.15){$e_{5}$}

\psline[linewidth=1.6pt,arrowinset=0,linestyle=dashed]{->}(2.42,0)(3.42,1)
\rput(2.9,.7){$e_{6}$}

\psline[linewidth=1.6pt,arrowinset=0]{->}(1,2)(0,3)
\rput(.6,2.6){$e_{7}$}

\psline[linewidth=1.6pt,arrowinset=0]{<-}(1,2)(-.4,2.4)
\rput(.4,2){$e_{8}$}

\psline[linewidth=1.6pt,arrowinset=0]{->}(1,0)(0,-1)
\rput(.6,-.6){$e_{9}$}

\pscurve[arrowinset=.5,arrowlength=1.5]{<->}(0,-1)(-.8,1)(-.4,3.2)(3,3.2)(3.8,1.5)(3.42,1)
\rput(-1.1,2){$f_{1}$}

\pscurve[arrowinset=.5,arrowlength=1.5,linestyle=dashed]{->}(-.4,2.4)(-.6,-.8)(3,-1.2)(3.8,.5)(3.42,1)
\rput(-1,-.2){$f_{2}$}




\psline[arrowinset=.5,arrowlength=1.5]{-}(0,1)(2.42,0)
\rput(1.2,.7){$f_{3}$}

\psline[arrowinset=.5,arrowlength=1.5]{-}(0,3)(-.4,2.4)
\rput(-.3,2.85){$f_{4}$}

\psline[arrowinset=.5,arrowlength=1.5]{-}(0,3)(2.42,2)
\rput(1.8,2.5){$f_{5}$}


\endpspicture 
\end{array}
$
\vspace{1.5cm}
\caption{a $\{1,6\}$-central representation of some binet $\{0,1\}$-matrix, where $E_1$ is disjointly shared, $E_2$ is jointly shared and $E_3$ is $S_1$-dominated. ($f_3$, $f_4$ and $f_5$ are directed edges.)} 
\label{fig:central2}

\end{figure}

We say that a bonsai $E_\ell \in V$ ($1\le \ell \le b$) is \emph{$S_k$-dominated}\index{dominated@$S_k$-dominated} if $f_{S_k}(E_\ell)\neq \emptyset$ and $f_{S_{k'}}(E_\ell)= \emptyset$ for some $k,k' \in \{ 1,2\}$ ($k\neq k'$).
A bonsai $E_\ell\in V$ is  \emph{shared}\index{shared} if $f_{S_1}(E_\ell) \neq \emptyset $ and $f_{S_2}(E_\ell) \neq \emptyset $. A shared bonsai $E_\ell\in V$ is said to be \emph{disjointly shared}\index{disjointly shared} if $E_{\ell S_1} \cap E_{\ell  S_2} = \emptyset $, otherwise  \emph{ jointly shared}\index{jointly shared}. See Figure \ref{fig:central2}.

\begin{figure}[t!]
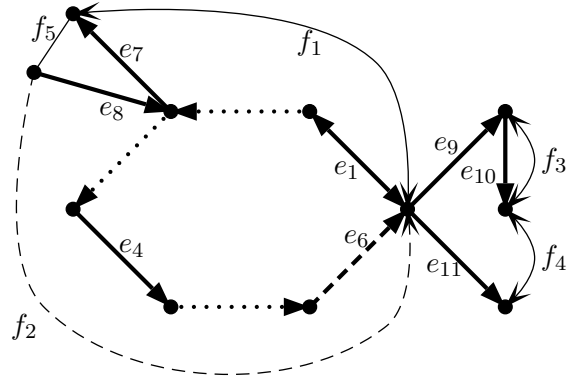

$
\begin{array}{cc}
\begin{array}{c}
S_0=\{3,4\};\,\,\,S_1=\{1\}; \,\,\, S_2=\{2\};\\\\
E_1=\{7;8\};\,\,\, E_2=\{4\}; \\
E_3=\{9\};\,\,\, E_4=\{10\};\,\,\, E_5=\{11\};\\ \\
\begin{tabular}{c|c|c|c|}
 $\ell=$  &  $1$ &  $2$ &  $3,4,5$\\
   \hline
$f_{S_0}(E_\ell)$  &$\emptyset$&\{3,4\} & \{3,4\}  \\
 \hline
$f_{S_1}(E_\ell)$  & \{1\}& $\emptyset$ &$\emptyset$ \\
 \hline
$f_{S_2}(E_\ell)$ &\{2\}&\{2\}&$\emptyset$\\
\hline
$E_{\ell  S_1}$ & \{7\}&$\emptyset$&$\emptyset$\\
\hline
$E_{\ell  S_2}$  & \{8\} &\{4\}&$\emptyset$ \\
 \hline

\end{tabular}   
\end{array}   &

\psset{xunit=1.3cm,yunit=1.3cm,linewidth=0.5pt,radius=0.1mm,arrowsize=7pt,
labelsep=1.5pt,fillcolor=black}

\pspicture(-1.5,1)(5,3.5)

\pscircle[fillstyle=solid](0,1){.1}
\pscircle[fillstyle=solid](1,0){.1}
\pscircle[fillstyle=solid](1,2){.1}
\pscircle[fillstyle=solid](2.42,0){.1}
\pscircle[fillstyle=solid](2.42,2){.1}
\pscircle[fillstyle=solid](3.42,1){.1}
\pscircle[fillstyle=solid](4.42,1){.1}
\pscircle[fillstyle=solid](4.42,2){.1}
\pscircle[fillstyle=solid](-.4,2.4){.1}
\pscircle[fillstyle=solid](0,3){.1}
\pscircle[fillstyle=solid](4.42,0){.1}

\psline[linewidth=1.6pt,arrowinset=0]{<->}(3.42,1)(2.42,2)
\rput(2.8,1.4){$e_{1}$}

\psline[linewidth=1.6pt,arrowinset=0,linestyle=dotted]{->}(2.42,2)(1,2)

\psline[linewidth=1.6pt,arrowinset=0,linestyle=dotted]{->}(1,2)(0,1)

\psline[linewidth=1.6pt,arrowinset=0]{->}(0,1)(1,0)
\rput(.6,.6){$e_{4}$}

\psline[linewidth=1.6pt,arrowinset=0,linestyle=dotted]{->}(1,0)(2.42,0)

\psline[linewidth=1.6pt,arrowinset=0,linestyle=dashed]{->}(2.42,0)(3.42,1)
\rput(2.9,.7){$e_{6}$}

\psline[linewidth=1.6pt,arrowinset=0]{->}(1,2)(0,3)
\rput(.6,2.6){$e_{7}$}

\psline[linewidth=1.6pt,arrowinset=0]{<-}(1,2)(-.4,2.4)
\rput(.4,2){$e_{8}$}



\psline[linewidth=1.6pt,arrowinset=0]{->}(3.42,1)(4.42,2)
\rput(3.82,1.65){$e_{9}$}

\psline[linewidth=1.6pt,arrowinset=0]{<-}(4.42,1)(4.42,2)
\rput(4.15,1.35){$e_{10}$}

\psline[linewidth=1.6pt,arrowinset=0]{->}(3.42,1)(4.42,0)
\rput(3.8,.4){$e_{11}$}



\pscurve[arrowinset=.5,arrowlength=1.5]{<->}(0,3)(3.1,2.5)(3.42,1)
\rput(2.42,2.7){$f_{1}$}


\pscurve[arrowinset=.5,arrowlength=1.5,linestyle=dashed]{->}(-.4,2.4)(-.3,0)(3.2,-.2)(3.42,1)
\rput(-.5,-.2){$f_{2}$}

\pscurve[arrowinset=.5,arrowlength=1.5]{<->}(4.42,1)(4.72,1.5)(4.42,2)
\rput(4.92,1.5){$f_{3}$}

\pscurve[arrowinset=.5,arrowlength=1.5]{<->}(4.42,0)(4.72,.5)(4.42,1)
\rput(4.92,.5){$f_{4}$}

\psline[arrowinset=.5,arrowlength=1.5]{-}(0,3)(-.4,2.4)
\rput(-.3,2.85){$f_{5}$}





\endpspicture 
\end{array}
$
\vspace{1cm}
\caption{a (partial) $\{1,6\}$-central representation of some binet $\{0,1,2\}$-matrix, where $E_1$ is sensitive,  $E_2$ and $E_4$ are $S_0$-straight, $E_3 \in V_2$ and 
$E_5\in V_0-(V_2 \cup V_{st})$. ($f_5$ is directed.)} 

\label{fig:central3}

\end{figure}

\begin{table}[h!]
\begin{center}

\begin{tabular}{c|c|c|c|c|}
  & $f_1$ & $f_2$ & $f_4$ &  \\
 \hline
$e_7$  &1&0&1& 0 \\
\hline
$e_8$  &0&1&1& 0\\
\hline
$\tilde e_1$  &1&0&0&1 \\
\hline
$\tilde e_2$  &0&1&0&1 \\
\hline
\end{tabular} 

\end{center}
\vspace{.2cm}
\caption{The bonsai matrix $N_1$ with respect to $E_1$
given in Figure \ref{fig:central3}.} 
\label{tab:central4}

\end{table}

A bonsai $E_\ell\in V$ is called 
\emph{sensitive}\index{sensitive}, if it is disjointly shared, $f_{S_0}(E_\ell) = \emptyset$, $J_\ell^1=\{E_{\ell j} \, :\, j\in f_{S_k}(E_\ell) \}$ and 
$J_\ell^2=\{E_{\ell j} \, :\, j\in f_{S_{k'}}(E_\ell) \}$ for some $k, k' \in \{1,2\}$ $k\neq k'$, and $N_\ell$ is a network matrix (see page \pageref{mycounter6} for the definition of $J_\ell^1$ and $J_\ell^2$). For every $1\le l \le b$, the bonsai matrix $N_l$ is defined in Section \ref{sec:bonsaimat}.
For instance, the bonsai $E_1$ in Figure \ref{fig:central2} is disjointly shared but not sensitive, because $J_1^2=\emptyset$, while 
$E_1$ in Figure \ref{fig:central3} is sensitive (see Table \ref{tab:central4}).
For $1\le \ell , \ell' \le b$,
we say that $E_\ell$ and $E_{\ell '}$ are \emph{$S_k$ linked}\index{linked@$S_k$-linked}, or
$E_\ell$ is \emph{$S_k$ linked} to $E_{\ell '}$,
 if $f_{S_k}(E_\ell) \cap f_{S_k}(E_{\ell '})\neq \emptyset $ for some $k \in \{ 1, 2\}$. (If $E_\ell$ and $E_{\ell '}$ are $S_1$ and $S_2$ linked, then we write that $E_\ell$ and $E_{\ell '}$ are $S_1,S_2$ linked.) As an example,
 in Figure \ref{fig:central2}, $E_1$ and $E_2$ are $S_1$, $S_2$ linked.

Let $V_0=\{ E_\ell \, : \, f_{S_0}(E_\ell)\neq \emptyset \}$, $V_0'=V_0 \cup \{\m{shared bonsai} \}$, $V_2=\{ E_\ell \, :\, \exists \beta \in S_0 \m{ with }$\\ $g_\beta(E_\ell)=2 \}$ (see page \pageref{mycounter7} for the definition of $g$). A bonsai $E_\ell$ is said to be \emph{$S_0$-straight}\index{straight@$S_0$-straight} if $E_\ell \in V_0-V_2$ and $E_{\ell \beta}=E_{\ell \beta'}$  for any $\beta,\beta'\in S_0$. Let $V_{st}$ denote the set of $S_0$-straight bonsais. 
In Figure \ref{fig:central3}, $E_2$ and $E_4$ are $S_0$-straight, while $E_3\in V_2$ since $g_3(E_3)=2$ (see Proposition \ref{propBlsubstems}), and $E_5$ is neither in $V_2$ nor $S_0$-straight.

For a set $V' \subseteq V$, we define the undirected graph $F^*(V')$\index{graph!$F^*(V)$}\label{mycounter4} whose vertex set is $V'$, and two vertices $E_\ell$ and $E_{\ell '}$ are adjacent if and only if $f^*(E_\ell) \cap f^*(E_{\ell '}) \neq \emptyset $, $(E_\ell,E_{\ell '})\notin D$ and $(E_{\ell '},E_\ell) \notin D$.  Provided that $A$ has a $\{1,\rho\}$-central representation $G(A)$, if some bonsai $E_\ell \in V$ has some property (for instance $E_\ell$ is sensitive), then we say that the corresponding bonsai $B_\ell$ in $G(A)$ has this property (for instance $B_\ell$ is sensitive), and conversely.

Before explaining the graphical motivations of these definitions, we state the main theorems. Later, we 
will give the definition of left-compatibility for a set $U\subseteq V$ of bonsais. For any left-compatible set $U\subseteq V$, we will define a
$U$-spanning pair $(j_1,j_2)$ of column indexes as well as a subset $V(j_1,j_2)$ of $V$, and we will describe an instance $\Omega(U,j_1,j_2)$ of the $2$-SAT problem, where variables are associated to bonsais in $V(j_1,j_2)$.
We will also need the following.
\begin{tabbing}
{\bf assumption $\mathscr{A}$:} \= for any sensitive bonsai $E_\ell$, if some bonsai $E_{\ell '}$ is  $S_1,S_2$  linked to $E_\ell$,  then \\ \> $J_{\ell '}^2=\emptyset$  and $N_{\ell '}$  is a network matrix.
\end{tabbing}
\noindent
Under this assumption, we shall prove the following theorems.

\begin{thm}\label{thmS0empty}
Suppose $S_0=\emptyset$. Then the matrix $A$ is $\{1,\rho\}$-central if and only if $g_\beta(E_\ell)\le 1$ for all $1\le \ell\le b$, $\beta \in f^*(E_\ell)$, and
there exists a left-compatible set $U$ such that for any $U$-spanning pair $(j_1,j_2)$, the graph $F^*(\overline{V(j_1,j_2)})$ is bipartite, for all $E_\ell \in \overline{V(j_1,j_2)}$ the bonsai matrix $N_\ell$ is a network matrix and $J_\ell^2= \emptyset$, and
the instance $\Omega(U,j_1,j_2)$ of the $2$-SAT problem has a truth assignment.
\end{thm}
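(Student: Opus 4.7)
The plan is to prove the two directions separately, using the machinery of Chapter \ref{ch:multidiD} and the structural observations about $\{1,\rho\}$-central representations.

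\emph{Forward direction.} Assume $A$ has a $\{1,\rho\}$-central representation $G(A)$. First I would use Corollary \ref{cordigraphDbeta2} together with $S_0=\emptyset$ to get the bound $g_\beta(E_\ell)\le 1$: if $g_\beta(E_\ell)=2$ for some $\beta\in f^*(E_\ell)$, then the fundamental circuit of $f_\beta$ contains the whole basic cycle and hence $\{1,\rho\}\subseteq s(A_{\bullet \beta})$, contradicting $S_0=\emptyset$. Next, following the informal description of the sequence of shared bonsais along the basic cycle (Figure \ref{figOverviewShared}), I would read off the left-extreme pair (or singleton) and show that its index set is a left-compatible set $U$; a $U$-spanning pair $(j_1,j_2)$ is then obtained by choosing nonbasic edges witnessing the extremity. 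For any such choice, every bonsai $B_\ell$ with $E_\ell\notin V(j_1,j_2)$ lies on the right of $\{e_1,e_\rho\}$, so it is attached to the basic $1$-tree in the ``usual'' network way. From this one deduces, via Lemma \ref{lembonsaicel} and Lemma \ref{lemdigraphutile}, that $N_\ell$ is a network matrix and $J_\ell^2=\emptyset$. Bipartiteness of $F^*(\overline{V(j_1,j_2)})$ comes from the fact that the right-side bonsais occupy two sides of the central node (left component vs right component of $T\setminus\{e_1,e_\rho\}$) and an edge of $F^*$ forces two bonsais to sit on opposite sides: the $2$-colouring ``left of $\{e_1,e_\rho\}$ vs right of $\{e_1,e_\rho\}$'' witnesses bipartiteness. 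Finally, assigning $x_\ell=0$ iff $B_\ell$ is on the left of $\{e_1,e_\rho\}$ in $G(A)$ gives a truth assignment of $\Omega(U,j_1,j_2)$ by the construction of that instance (each clause encodes a forbidden same-side configuration coming from property $\Pi_1$--$\Pi_3$ of feasible forests and from Lemmas \ref{lemdigraphsim}, \ref{lembonsaisim}).

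\emph{Backward direction.} Conversely, assume we have $U$, a $U$-spanning pair $(j_1,j_2)$, the bipartiteness of $F^*(\overline{V(j_1,j_2)})$, network representations of the $N_\ell$ with $E_\ell\in\overline{V(j_1,j_2)}$ (all with $J_\ell^2=\emptyset$), and a truth assignment $x$ of $\Omega(U,j_1,j_2)$. I would first build a basic $\{1,\rho\}$-central skeleton as follows. Using a $2$-colouring of $F^*(\overline{V(j_1,j_2)})$, place each bonsai in $\overline{V(j_1,j_2)}$ on the right side, split between the two sub-trees hanging off the central node via the two colour classes, and use the $v_\ell$-rooted network representations of $N_\ell$ (Lemma \ref{lembonsainet2}) to orient the attached paths correctly. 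Then, for the bonsais $E_\ell\in V(j_1,j_2)$, read the truth value $x_\ell$: if $x_\ell=0$, attach $B_\ell$ on the left of $\{e_1,e_\rho\}$, and if $x_\ell=1$, attach it on the right, arranging the attachment nodes along the basic cycle in the order dictated by the inclusion poset of the intervals $R_j$ and by the constraints in $\Omega(U,j_1,j_2)$. The $U$-spanning pair anchors the two extreme shared bonsais so that the resulting succession on the basic cycle (non-shared, then shared, then non-shared) is realisable. One then verifies, column by column, that the obtained bidirected graph is a basic $\{1,\rho\}$-central representation of $A$, by checking that each column $A_{\bullet j}$ equals the weighted edge vector of a fundamental circuit of the type described in Figures \ref{fig:lsubstem1}--\ref{fig:lsubstem2}.

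\emph{Role of assumption $\mathscr{A}$.} Throughout the construction, assumption $\mathscr{A}$ is what prevents the undesirable configuration of two shared bonsais sharing a node off the basic cycle, as illustrated in Figure \ref{figOverviewAssumpA}: whenever a bonsai $E_{\ell'}$ is $S_1,S_2$-linked to a sensitive bonsai $E_\ell$, the assumption forces $J_{\ell'}^2=\emptyset$ and $N_{\ell'}$ to be a network matrix, so $B_{\ell'}$ can be safely attached at a single node. Without this, the decomposition in terms of left-extreme sets and the clause structure of $\Omega(U,j_1,j_2)$ would not capture all feasible representations.

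\emph{Main obstacle.} The hardest part will be the backward direction: converting the $2$-SAT truth assignment together with the individual network representations into a coherent $\{1,\rho\}$-central bidirected graph. The delicate point is showing that, once the bonsais in $V(j_1,j_2)$ are placed according to $x$ and the bonsais outside according to the bipartition of $F^*(\overline{V(j_1,j_2)})$, the total arrangement along the basic cycle is consistent with every column $A_{\bullet j}$ producing the correct fundamental circuit. This requires a careful case analysis using Proposition \ref{propBlsubstems}, Lemmas \ref{lemdigraphutile}, \ref{lembonsainet2}, \ref{lembonsaicontra}, and the properties of left-compatible sets, and it is precisely here that assumption $\mathscr{A}$ and the clauses of $\Omega(U,j_1,j_2)$ must jointly eliminate every obstruction.
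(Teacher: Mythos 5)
Your overall architecture matches the paper's: the forward direction extracts a left-extreme set, shows it is left-compatible (Lemma \ref{lemCellularleft-extremecomp}), reads a truth assignment of $\Omega(U,j_1,j_2)$ off the side of $\{e_1,e_\rho\}$ on which each bonsai sits (Proposition \ref{propCellulartruth}), and gets bipartiteness from the left/right two-colouring (Proposition \ref{propOpposite}); the backward direction assembles a representation from the truth assignment, the rooted network representations of the bonsai matrices, and the orderings induced by $\prec^{j_1}$ and $\prec^{j_2}$, exactly as the subroutines GAR and CentralI do. The bound $g_\beta(E_\ell)\le 1$ is also obtained the same way (the paper routes it through Lemma \ref{lemCentralElj} rather than Corollary \ref{cordigraphDbeta2}, which is stated only for $R^*$-cyclic matrices, but the substance is identical).

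There is, however, one concretely false step: the claim that ``every bonsai $B_\ell$ with $E_\ell\notin V(j_1,j_2)$ lies on the right of $\{e_1,e_\rho\}$'', together with the matching instruction in the backward direction to place each bonsai of $\overline{V(j_1,j_2)}$ on the right side. A bonsai outside $V(j_1,j_2)$ is merely $S_1$- or $S_2$-dominated with $j_1,j_2\notin f^*(E_\ell)$; nothing prevents it from hanging off the left path $w_1,\dots,w_{\rho-1}$, and in general some of these bonsais must be on the left. This is precisely why the bipartiteness of $F^*(\overline{V(j_1,j_2)})$ is a nontrivial necessary condition: under your claim the two-colouring ``left versus right'' would be monochromatic on $\overline{V(j_1,j_2)}$, which contradicts your own justification of bipartiteness two sentences later. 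It is also why the paper needs Proposition \ref{propOpposite}(iii), Claim 5 and steps 3--4 of CentralI: the colour class of $E_\ell$ is dictated by which side of $\{e_1,e_\rho\}$ the set $Opposite(E_\ell)$ occupies, and $B_\ell$ must be attached on the \emph{opposite} side so that the fundamental circuit of each $f_j$ with $j\in f^*(E_\ell)$ has its second stem in the correct component; attaching a left-destined $S_1$-dominated bonsai on the right yields a circuit whose edge vector does not equal $A_{\bullet j}$. The conclusion you draw for these bonsais ($J_\ell^2=\emptyset$ and $N_\ell$ a network matrix) is still true, but it must come from Proposition \ref{propSkwatered}, which covers both sides, not from the false placement claim.
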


\begin{thm}\label{thmS0notempty}
Suppose $S_0\neq \emptyset$. Then the matrix $A$ is $\{1,\rho\}$-central if and only if the graph  $F^*(\overline{V_0'})$ is bipartite, for any $E_\ell \in \overline{V_0'}$ the bonsai matrix $N_\ell$ is a network matrix and $J_\ell^2= \emptyset$, and
there exists a right-compatible set $U$ such that the instance $\Lambda(U)$ of the $2$-SAT problem has a truth assignment.
\end{thm}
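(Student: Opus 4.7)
The plan follows the template used in Theorem \ref{thmS0empty}, adapted to the regime $S_0\neq\emptyset$. For necessity, I would fix a $\{1,\rho\}$-central representation $G(A)$ with basic maximal $1$-tree $T$ and central node incident to $e_1$ and $e_\rho$, so that every bonsai $B_\ell$ lies either on the right of $\{e_1,e_\rho\}$ (inside $G_1(A)$) or on the left. The first step would be to establish that any bonsai $B_\ell$ meeting the edge set of the basic cycle satisfies $E_\ell\in V_{st}\cup\{\text{shared}\}$, hence $E_\ell\in V_0'$. Consequently every $E_\ell\in\overline{V_0'}$ sits strictly on one side of $\{e_1,e_\rho\}$ without touching the basic cycle, so all $B_\ell$-paths generated by columns in $f^*(E_\ell)$ share the endnode $v_\ell$ and all enter, or all leave, $v_\ell$. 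By Lemma \ref{lemdigraphutile} this yields $J_\ell^2=\emptyset$, and the bonsai matrix $N_\ell$ is then a network matrix by Lemma \ref{lembonsaicel}, establishing condition (b).

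Next I would prove that $F^*(\overline{V_0'})$ is bipartite. An edge $(E_\ell,E_{\ell'})$ in this graph witnesses a common column $j\in f^*(E_\ell)\cap f^*(E_{\ell'})$ with the pair not $D$-comparable. Since neither bonsai lies in $V_0'$, we have $j\in S_1\cup S_2$, and the stem issued from $f_j$ in $G(A)$ runs from one side of $\{e_1,e_\rho\}$, through the basic cycle, to the other side. Proposition \ref{lemdigraphstem2} applied separately on each side then forces $E_\ell$ and $E_{\ell'}$ to lie on opposite sides; hence the two-colouring ``left/right of $\{e_1,e_\rho\}$'' is a proper $2$-colouring of $F^*(\overline{V_0'})$, giving (a). For (c), I take $U$ to be the set of right-extreme bonsais of $G(A)$ (of cardinality at most two as asserted in the informal discussion, and right-compatible by definition), and set $x_\ell=0$ iff $B_\ell$ lies on the left of $\{e_1,e_\rho\}$ for each $E_\ell\in V_0'$. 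I would verify clause by clause that this assignment satisfies $\Lambda(U)$: each clause is designed to encode a geometric obstruction, so the verification reduces to reading off from $G(A)$ which side each bonsai in $V_0'$ occupies, and appealing to the $S_0$-straightness analysis around the basic cycle together with the extremality of the chosen $U$.

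For sufficiency, given the three conditions, the construction reverses the necessity argument. A truth assignment of $\Lambda(U)$ partitions $V_0'$ into its left and right parts; the bipartition of $F^*(\overline{V_0'})$, together with the freedom for disconnected components to be placed on either side, extends this partition to all of $V$. For each $E_\ell\in\overline{V_0'}$, condition (b) supplies a $v_\ell$-rooted network representation of $N_\ell$, oriented according to the chosen side. For $E_\ell\in V_0'$, the right-compatibility of $U$ provides, in the spirit of Proposition \ref{propcyclicM*}, a companion ``open'' matrix whose network representation encodes how the right-side $S_0$-straight bonsais attach to the basic cycle through the $f_j$-stems for $j\in S_0$. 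Gluing these network representations along the basic path $\{e_1,e_\rho,\ldots\}$ yields the required $\{1,\rho\}$-central representation of $A$; correctness reduces to checking that each column of $A$ is realised as the edge incidence vector of its proper fundamental circuit, which follows from Lemmas \ref{lemdefiWeight1} and \ref{lemdefiWeight2} together with the constraints imposed by $\Lambda(U)$.

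The main obstacle, parallel to Theorem \ref{thmS0empty}, will be proving that at most two right-extreme bonsais can occur (so each variable of $\Lambda(U)$ admits at most two legal truth values and the instance is genuinely $2$-SAT) and that the clauses of $\Lambda(U)$ capture \emph{every} geometric constraint imposed by the $S_0$ columns. The interaction between sensitive bonsais and $S_0$-straight bonsais near the central node, together with the bookkeeping of $J_\ell^1$ versus $J_\ell^2$ for shared bonsais in $V_0'$, is the delicate combinatorial point; once these structural lemmas are in place the rest of the argument is parallel to the $S_0=\emptyset$ case already treated in Section \ref{sec:S0empty}.
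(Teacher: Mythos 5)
Your proposal follows essentially the same route as the paper: necessity is obtained from the side-placement analysis (the paper's Proposition \ref{propSkwatered} for bonsais in $\overline{V_0'}$, Proposition \ref{propOppositeII} for bipartiteness, Lemma \ref{lemCellularF*VS0} showing the right-extreme set is right-compatible — note this does require proof via the right-feasibility of $T_{G_1(A)}$, it is not immediate from the definition — and Proposition \ref{propCellulartruth2} for the truth assignment), while sufficiency is the gluing construction the paper packages as the procedures GAR and CentralII together with Claims 0--5. The approach and the key structural lemmas you identify match the paper's proof.
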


Suppose that $A$ has a basic $\{1,\rho\}$-central  representation $G(A)$. Up to row permutations and changing  the value of $\rho$,
we may assume that $w_1,\ldots,w_\rho$ are the vertices of the basic cycle in this order, $e_1=[w_1,w_\rho]$ and $e_\rho=]w_{\rho-1},w_\rho]\in G(A)$. Hence $w_\rho$ is a central node. 
Recall that $T$ denotes the basic maximal $1$-tree in $G(A)$,
$G_0(A)$ the connected component of $T-\{e_1,e_\rho\}$ containing $w_1$ (and $w_{\rho-1}$) and $G_1(A)$ the other connected component. We say that a subgraph of $G_0(A)$ and $G_1(A)$ is \emph{on the left} and \emph{on the right}, respectively, \emph{of $\{e_1,e_\rho\}$}. For all $1\le \ell\le b$, the bonsai $E_\ell$ is the edge index set of a tree in $G(A)$ denoted as $B_\ell$ and called a \emph{bonsai in $G(A)$}.\\
For any $w_i$ and $w_{i'}$ on the basic cycle  with $1\le i,i' \le \rho-1$, let $p(w_i,w_{i'})$ be the basic path in $G_0(A)$ between $w_i$ and $w_{i'}$, and $I(w_i,w_{i'})$ its row index set. For any two bonsais
$B_\ell$ and $B_{\ell '}$ intersecting the basic cycle, if $i\le i'$ for all $w_i \in B_\ell$ and $w_{i'} \in B_{\ell '}$ ($1\le i,i' \le \rho-1$), then  $B_\ell$ is \emph{preceding}\index{preceding} $B_{\ell '}$ and $B_{\ell '}$ is \emph{succeeding}\index{succeeding} $B_\ell$. As an example, in Figure \ref{fig:central3}, $B_1$ is preceding $B_2$.\\
If a bonsai $B_\ell$ is on the left of $\{e_1,e_\rho\}$, then denote by $v_{\ell ,1}$ (respectively, $v_{\ell ,2}$) the node of $B_\ell$ which is the closest to $w_1$ (respectively, $w_{\rho-1}$) in $G_0(A)$.
If a bonsai $B_\ell$ is on the right of $\{ e_1, e_\rho \}$, we denote by $v_\ell$ the closest vertex of $B_\ell$ to the central node $w_\rho$ ($v_\ell$ may be equal to $w_\rho$). 

For a bonsai $E_\ell$, under certain conditions and provided that $A$ is $\{1,\rho\}$-central, a subset $E_{\ell j}$ of $E_\ell$, with $j\in f^*(E_\ell)$, can be interpreted as the edge index set of a path.

\begin{lem}\label{lemCentralElj}
Suppose that $A$ has a $\{1,\rho\}$-central representation $G(A)$. Let $j\in f_{S_k}(E_\ell)$ for some $1\le \ell \le b$ and $k\in \{0,1,2\}$. If $k\in \{1,2\}$, or $k=0$ and $B_\ell$ is $S_0$-straight, then the following holds:

\begin{itemize}

\item[(i)]  The set $E_{\ell j}$ is the edge index set of the unique $B_\ell$-path generated by $f_j$, and $g_j(E_\ell)=1$.

\item[(ii)] If $B_\ell$ is on the left of $\{e_1,e_\rho\}$, then the $B_\ell$-path generated by $f_j$  leaves $v_{\ell ,1}$ for $k=1$, and enters $v_{\ell ,2}$ for $k=2$.\\
If $B_\ell$ is on the right of $\{e_1,e_\rho\}$, then the $B_\ell$-path generated by $f_j$  leaves $v_\ell$.

\end{itemize}

\end{lem}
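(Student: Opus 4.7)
The plan is to establish both claims by classifying the type of the nonbasic edge $f_j$ and applying Proposition \ref{propBlsubstems}. Since $A$ is a $\{0,1,2\}$-matrix, Lemma \ref{lemdefiWeight1} rules out $f_j$ being a half-edge or a $2$-edge: either would produce $\pm\frac{1}{2}$-entries in $A_{\bullet j}$. Moreover, because the representation is cyclic there is only one basic maximal negative $1$-tree, so every nonbasic edge is a $1$-edge. Combining Corollary \ref{corBidirectedCircuit} with the weight analysis in Figure \ref{fig:Apositive} then shows that $f_j$ is either a directed $1$-edge or a bidirected non-half $1$-edge, with stems $p, p'$ terminating at nodes $w, w'$ on the basic cycle.

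First I would prove (i). Since $j \in f^*(E_\ell)$, case $a_0$ of Proposition \ref{propBlsubstems} is excluded. For $k \in \{1,2\}$, exactly one of $e_1, e_\rho$ belongs to the fundamental circuit of $f_j$. Inspecting Figure \ref{fig:Apositive} shows that this forces $f_j$ to be bidirected with $w \neq w'$ lying on the same side of $\{e_1, e_\rho\}$, so that the arc of the basic cycle in the circuit is precisely the $e_1$-containing one: directed $1$-edges require $C_2 \subseteq$ fundamental circuit and exclude the bidirected edge $e_1$, while bidirected $1$-edges with $w = w'$ and a full basic cycle put $\{e_1, e_\rho\}$ jointly into the support. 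In the remaining configuration every weight in $B_\ell$ equals $1$, so cases $a_2$ and $a_3$ (which would require either a $2$-entry or two distinct $B_\ell$-paths arising from two stems each intersecting $B_\ell$) are ruled out. Hence we land in case $a_1$, giving $g_j(E_\ell) = 1$ and $E_{\ell j}$ equal to the unique $B_\ell$-path. For $k = 0$ with $B_\ell$ being $S_0$-straight, the condition $E_\ell \notin V_2$ gives $g_\beta(E_\ell) \le 1$ and excludes case $a_2$; and the hypothesis $E_{\ell\beta} = E_{\ell\beta'}$ for all $\beta, \beta' \in S_0$, combined with the construction of $B_\ell$-paths in Section \ref{sec:bonsaimat}, forbids case $a_3$, which would otherwise produce two $B_\ell$-paths with distinct edge index sets.

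Next I would establish (ii) via Lemma \ref{lemdefiWeight2}: since $A$ is nonnegative, the minimal covering closed walk of the fundamental circuit of $f_j$ is consistently oriented according to $f_j$. The fixed orientations of $e_1 = [w_1, w_\rho]$ (entering both $w_1$ and $w_\rho$) and $e_i =\, ]w_{i-1}, w_i]$ for $i \ge 2$ then determine the orientation of the $B_\ell$-path uniquely. When $B_\ell$ is on the right of $\{e_1, e_\rho\}$ (attached at $w_\rho$), both $e_1$ and $e_\rho$ enter $w_\rho$, so consistency forces any stem of $f_j$ traversing $B_\ell$ to leave $v_\ell$. When $B_\ell$ is on the left (attached at some $w_i$ with $i \le \rho - 1$), for $k = 1$ the walk exits the cycle through $e_1$ at $w_1$, which constrains the $B_\ell$-path to leave $v_{\ell,1}$; symmetrically, for $k = 2$ the walk crosses $e_\rho$ at $w_{\rho-1}$, forcing the $B_\ell$-path to enter $v_{\ell,2}$.

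The principal obstacle will be the $k = 0$, $S_0$-straight case: because $f_j$ has both $e_1, e_\rho$ in its fundamental circuit, excluding cases $a_2$ and $a_3$ requires simultaneously exploiting $E_\ell \notin V_2$ and the equality $E_{\ell\beta} = E_{\ell\beta'}$ for all $\beta, \beta' \in S_0$, tracking carefully how the bookkeeping of $E_\ell^{I}(A_{\bullet\beta})$ and $E_\ell^{II}(A_{\bullet\beta})$ in Section \ref{sec:bonsaimat} transfers to actual $B_\ell$-paths in $G(A)$. The directional conclusion of (ii) in this same case also demands a more delicate consistency argument, since both basic cycle edges $e_1, e_\rho$ participate in the fundamental circuit at once.
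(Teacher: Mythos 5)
Your overall strategy is the paper's: classify the fundamental circuit of $f_j$, funnel the conclusion through Proposition \ref{propBlsubstems} to land in case $a_1$, and then read off the orientations in (ii) from Lemma \ref{lemdefiWeight2} together with $e_1=[w_1,w_\rho]$ and $e_\rho=]w_{\rho-1},w_\rho]$. Parts of your argument, however, rest on a misreading of the circuit types. For $k\in\{1,2\}$ you assert that having exactly one of $e_1,e_\rho$ in the circuit ``forces $f_j$ to be bidirected with $w\neq w'$ lying on the same side of $\{e_1,e_\rho\}$, so that the arc of the basic cycle in the circuit is precisely the $e_1$-containing one.'' That configuration is wrong on both counts: if $w$ and $w'$ both lie on the path from $w_1$ to $w_{\rho-1}$, the arc $C_1$ through the bidirected edge passes through $w_1$, $e_1$, $w_\rho$, $e_\rho$, $w_{\rho-1}$, so it contains \emph{both} $e_1$ and $e_\rho$ and the column belongs to $S_0$, not $S_1$. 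Moreover your claim excludes the genuine $k=2$ situation, in which $f_j$ is a \emph{directed} $1$-edge whose circuit uses the all-directed arc $C_2$ containing $e_\rho$ but not the bidirected $e_1$. The paper avoids this case analysis entirely: since exactly one of the two adjacent cycle edges $e_1,e_\rho$ lies in the circuit, Corollary \ref{corBidirectedCircuit} forces the basic fundamental circuit to be a non-closed path, which immediately gives a unique $B_\ell$-path and, via Lemma \ref{lemdefiWeight1}, $s_2(A_{\bullet j})=\emptyset$, hence case $a_1$ of Proposition \ref{propBlsubstems}. You should replace your classification by that one-line observation; as written, your justification of ``every weight in $B_\ell$ equals $1$'' does not cover $k=2$.

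Conversely, the step you single out as ``the principal obstacle'' --- the case $k=0$ with $B_\ell$ being $S_0$-straight --- is in fact immediate and requires none of the bookkeeping you describe. By definition an $S_0$-straight bonsai lies in $V_0-V_2$, so $g_\beta(E_\ell)\neq 2$ for every $\beta\in S_0$; since cases $a_2$ and $a_3$ of Proposition \ref{propBlsubstems} both have $g_\beta(E_\ell)=2$, the single inequality $g_j(E_\ell)\le 1$ (together with $g_j(E_\ell)\ge 1$ from $j\in f^*(E_\ell)$) rules out both at once and puts you in case $a_1$. The equality $E_{\ell\beta}=E_{\ell\beta'}$ for $\beta,\beta'\in S_0$ is not needed for part (i). Your treatment of part (ii) is correct and matches the paper.
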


\begin{proof}
If $k\in \{1,2\}$, then by Corollary \ref{corBidirectedCircuit},
the basic fundamental circuit of $f_j$ is a non-closed
path. Hence, $f_j$ generates exactly one $B_\ell$-path with edge index set $E_{\ell j}$ and, using Lemma \ref{lemdefiWeight1}, $s_2(A_{\bullet j})=\emptyset$. So, by Proposition \ref{propBlsubstems}, $g_j(E_\ell)=1$.
Similarly, if $k=0$ and $B_\ell$ is $S_0$-straight, since $g_j(E_\ell)=1$, by Proposition \ref{propBlsubstems},
$E_\ell^{II}(A_{\bullet j})=\emptyset$ and $E_\ell^I(A_{\bullet j})=E_{\ell j}$  is the edge index set of the unique $B_\ell$-path generated by $f_j$. Then part (ii) is implied by Lemma \ref{lemdefiWeight2} since $A$ is nonnegative, and the fact that $e_1=[w_1,w_\rho]$ and $e_\rho=]w_{\rho-1}, w_\rho]$.
\end{proof}\\

Suppose that $A$ has a $\{1,\rho\}$-central representation $G(A)$. By Lemma \ref{lemCentralElj}, for all $1\le \ell\le b$ and  $j\in f_{S_k} (E_\ell)$, if $k\in \{1,2\}$, or $k=0$ and $B_\ell$ is $S_0$-straight, then $E_{\ell j}$   is the edge index set of a $B_\ell$-path, denoted as $B_{\ell j}$.


\begin{lem}\label{lemCentralElS}
Suppose that $A$ has a $\{1,\rho\}$-central representation $G(A)$. Let $B_\ell$ be a bonsai on the left of $\{e_1,e_\rho\}$. Then 

\begin{itemize}

\item[(i)]  $E_{\ell S_k}$ is the edge index set of an out-rooted (resp., in-rooted) tree for $k=1$ (resp., $k=2$).

\item[(ii)] For all $j\in S_0$, $E_{\ell j}$ is the edge index set of edges belonging to $B_\ell$ and the basic cycle.

\end{itemize}

\end{lem}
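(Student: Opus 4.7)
The proof is essentially a packaging of Lemma \ref{lemCentralElj}. For every $j\in f_{S_k}(E_\ell)$ with $k\in\{1,2\}$, part~(i) of that lemma gives that $E_{\ell j}$ is the edge index set of the unique $B_\ell$-path $B_{\ell j}$ generated by $f_j$; part~(ii), applied to $B_\ell$ on the left of $\{e_1,e_\rho\}$, says $B_{\ell j}$ leaves $v_{\ell,1}$ when $k=1$ and enters $v_{\ell,2}$ when $k=2$. Since $B_\ell$ is a tree, the union of directed paths that all start (resp.\ end) at a common vertex is automatically an out-rooted (resp.\ in-rooted) subtree rooted at that vertex. Taking the union over $j\in f_{S_k}(E_\ell)$ therefore yields the claimed tree, rooted at $v_{\ell,1}$ when $k=1$ and at $v_{\ell,2}$ when $k=2$.

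\textbf{Part (ii), first step: pinning down $f_j$.} I would first invoke Lemma \ref{lemdigraph12}: for any $j\in S_0$ the whole basic cycle is contained in the fundamental circuit of $f_j$. Since $A$ is a $\{0,1,2\}$-matrix, Lemma \ref{lemdefiWeight1} rules out $f_j$ being a half-edge or a $2$-edge (either would force $\frac{1}{2}$-entries on the full basic cycle). Inspecting the remaining cases of Figure \ref{fig:Apositive} (directed $1$-edge, or bidirected non-half $1$-edge with $w\neq w'$) shows that in each of them only a proper arc of the basic cycle can sit in the fundamental circuit. So the only configuration compatible with the whole cycle being present is that $f_j$ is a bidirected non-half $1$-edge whose two basic stems $p$ and $p'$ both terminate at a single common node $w=w'$ on the basic cycle.

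\textbf{Part (ii), second step: locating $w$ and concluding.} The key step, and the main obstacle, is to show that $w=w_\rho$. I would argue by contradiction: if $w=w_k$ for some $1\le k\le \rho-1$, trace the minimal covering closed walk of the handcuff fundamental circuit starting at $w$. This walk goes once around the basic cycle and, doing so, enters $w_\rho$ via $e_\rho$ and leaves via $e_1$ (or vice versa); at $w_\rho$ both of these edges have sign $+$ (they are heads, by the central-node convention $e_1=[w_1,w_\rho]$ and $e_\rho=\,]w_{\rho-1},w_\rho]$), so this visit is inconsistent. Since $w_\rho$ is not an endnode of $f_j$ in this configuration, Lemma \ref{lemdefiWeight2} then forces $A_{\bullet j}$ to have a negative entry, contradicting $A\ge 0$. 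Hence $w=w_\rho$, so both stems of $f_j$ sit inside $G_1(A)\cup\{w_\rho\}$, on the right of $\{e_1,e_\rho\}$. As $B_\ell\subseteq G_0(A)$ and $G_0(A)\cap G_1(A)=\emptyset$, no stem edge of $f_j$ contributes to $E_{\ell j}=E_\ell\cap s(A_{\bullet j})$, so $E_{\ell j}$ is exactly the set of indexes of the basic cycle edges that lie in $B_\ell$.
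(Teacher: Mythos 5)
Your proof is correct and follows essentially the same route as the paper's: part (i) is read off from Lemma \ref{lemCentralElj}, and part (ii) combines Lemma \ref{lemdigraph12}, Corollary \ref{corBidirectedCircuit} and Lemma \ref{lemdefiWeight2} to show the fundamental circuit of $f_j$ is a handcuff meeting $G_0(A)$ only along the basic cycle. The one difference is that you spell out the step the paper leaves implicit — that nonnegativity forces the stems of $f_j$ to attach at the central node $w_\rho$, via the inconsistency of $w_\rho$ under the covering walk — which is a correct and welcome elaboration rather than a deviation.
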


\begin{proof}
The proof of part (i) directly follows from Lemma \ref{lemCentralElj}. Now let $j\in f_{S_0}(E_\ell)$. Since $A$ is nonnegative, by Corollary \ref{corBidirectedCircuit} and Lemma \ref{lemdigraph12}, the fundamental circuit of $f_j$ is a handcuff. Then, by Lemma \ref{lemdefiWeight2}, it follows that the fundamental circuit of $f_j$ intersected with $G_0(A)$ is equal to the basic path from $w_1$ to $w_{\rho-1}$ (contained in the basic cycle. this concludes the proof.
\end{proof}\\

Provided that $A$ has a $\{1,\rho\}$-central representation $G(A)$, by Lemma \ref{lemCentralElS} (i), we denote by $T_{\ell ,k}$ the subgraph of $G(A)$ with edge index set $E_{\ell  S_k}$ (see Figure \ref{fig:central4}).

\begin{figure}[t!]
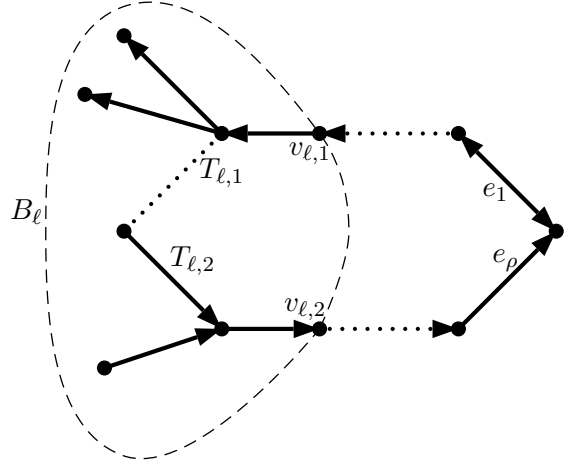


\begin{center}

\psset{xunit=1.3cm,yunit=1.3cm,linewidth=0.5pt,radius=0.1mm,arrowsize=7pt,
labelsep=1.5pt,fillcolor=black}

\pspicture(-2,-1)(3.7,3.5)

\pscircle[fillstyle=solid](-1,1){.1}
\pscircle[fillstyle=solid](0,0){.1}
\pscircle[fillstyle=solid](0,2){.1}
\pscircle[fillstyle=solid](1,0){.1}
\pscircle[fillstyle=solid](1,2){.1}
\pscircle[fillstyle=solid](2.42,0){.1}
\pscircle[fillstyle=solid](2.42,2){.1}
\pscircle[fillstyle=solid](3.42,1){.1}
\pscircle[fillstyle=solid](-1.4,2.4){.1}
\pscircle[fillstyle=solid](-1,3){.1}
\pscircle[fillstyle=solid](-1.2,-.4){.1}

\pscurve[arrowinset=.5,arrowlength=1.5,linestyle=dashed]{-}(1,0)(1.3,1)(1,2)(-1,3.3)(-1.8,1)(-1,-1.3)(1,0)

\rput(0,1.6){$T_{\ell ,1}$}
\rput(-.3,.7){$T_{\ell ,2}$}
\rput(.9,1.8){$v_{\ell ,1}$}
\rput(.85,.2){$v_{\ell ,2}$}
\rput(-2,1.2){$B_\ell$}

\psline[linewidth=1.6pt,arrowinset=0]{<->}(3.42,1)(2.42,2)
\rput(2.8,1.4){$e_{1}$}

\psline[linewidth=1.6pt,arrowinset=0,linestyle=dotted]{->}(2.42,2)(1,2)

\psline[linewidth=1.6pt,arrowinset=0]{<-}(0,2)(1,2)

\psline[linewidth=1.6pt,arrowinset=0,linestyle=dotted]{-}(0,2)(-1,1)

\psline[linewidth=1.6pt,arrowinset=0]{->}(-1,1)(0,0)

\psline[linewidth=1.6pt,arrowinset=0]{->}(0,0)(1,0)

\psline[linewidth=1.6pt,arrowinset=0,linestyle=dotted]{<-}(2.42,0)(1,0)

\psline[linewidth=1.6pt,arrowinset=0]{->}(2.42,0)(3.42,1)
\rput(2.9,.7){$e_{\rho}$}

\psline[linewidth=1.6pt,arrowinset=0]{->}(0,2)(-1,3)

\psline[linewidth=1.6pt,arrowinset=0]{->}(0,2)(-1.4,2.4)

\psline[linewidth=1.6pt,arrowinset=0]{<-}(0,0)(-1.2,-.4)

\endpspicture 
\end{center}

\caption{An illustration of the trees $T_{\ell ,1}$ and $T_{\ell ,2}$ whose edge index sets are $E_{\ell  S_1}$ and $E_{\ell  S_2}$, respectively, for some bonsai $E_\ell$.} 
\label{fig:central4}

\end{figure}

Let us see now a first enlightenment on sensitive bonsais.

\begin{lem}\label{lemroot}
Suppose that $A$ has a $\{1,\rho\}$-central  representation $G(A)$. Let $B_\ell$ be a shared bonsai on the left of $\{e_1, e_\rho\}$ in $G(A)$. Then we have
\begin{itemize}

\item If $v_{\ell ,1}=v_{\ell ,2}$, then $B_\ell$ is sensitive and $v_{\ell ,1}$ is the unique node of $B_\ell$ belonging to the basic cycle.

\item If $v_{\ell ,1} \neq v_{\ell ,2}$, then the path from $v_{\ell ,1}$ to $v_{\ell ,2}$ in $B_\ell$ lies on the basic cycle.

\end{itemize}
\end{lem}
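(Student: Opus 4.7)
The plan is to handle the two cases of the lemma separately, exploiting that $G_0(A)$ is a tree (being a connected component of $T\setminus\{e_1,e_\rho\}$, where $T$ is the basic maximal $1$-tree) and combining the uniqueness of paths in $G_0(A)$ with the root structure of $T_{\ell,1},T_{\ell,2}$ established in Lemma \ref{lemCentralElS}.

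For the second case ($v_{\ell,1}\neq v_{\ell,2}$) my argument is purely topological. Since $G_0(A)$ is a tree, the unique path joining $w_1$ and $w_{\rho-1}$ is the spine $w_1\,e_2\,w_2\,\cdots\,e_{\rho-1}\,w_{\rho-1}$ of basic cycle edges. The definition of $v_{\ell,1}$ (resp.\ $v_{\ell,2}$) as the closest vertex of $B_\ell$ to $w_1$ (resp.\ $w_{\rho-1}$) forces the unique path from $w_1$ to $v_{\ell,1}$ (resp.\ from $v_{\ell,2}$ to $w_{\rho-1}$) in $G_0(A)$ to meet $B_\ell$ only at its endpoint; concatenating these two paths with the unique path from $v_{\ell,1}$ to $v_{\ell,2}$ inside $B_\ell$ produces a path from $w_1$ to $w_{\rho-1}$ in $G_0(A)$, which by uniqueness must equal the spine. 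Hence the middle segment consists of basic cycle edges.

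For the first case ($v_{\ell,1}=v_{\ell,2}=:v$) I would first establish that $v$ is the unique vertex of $B_\ell$ on the basic cycle. Uniqueness is immediate: if $B_\ell$ contained two spine vertices $w_i,w_j$ with $i<j$, the subtree property of $B_\ell$ in $G_0(A)$ would force the spine subpath $w_i\,e_{i+1}\,\cdots\,e_j\,w_j$ into $B_\ell$, giving $v_{\ell,1}=w_i\neq w_j=v_{\ell,2}$, a contradiction. That $v$ lies on the basic cycle at all follows from $B_\ell$ being shared: for any $j_1\in f_{S_1}(E_\ell)$ the fundamental circuit of $f_{j_1}$ contains $e_1$ and the $B_\ell$-path leaving $v$, and the connectivity of this circuit together with the characterization of $v_{\ell,1}$ as the closest point of $B_\ell$ to $w_1$ forces $v$ to coincide with the cycle vertex where the stem of $f_{j_1}$ meets the spine.

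Then I would verify the four conditions defining sensitivity. \emph{Disjointly shared} follows because $T_{\ell,1}$ is out-rooted at $v$ and $T_{\ell,2}$ is in-rooted at $v$ by Lemmas \ref{lemCentralElS}(i) and \ref{lemCentralElj}(ii): no directed edge incident to $v$ can both leave and enter $v$, and more generally the orientations along the rooted paths preclude any shared edge. The condition $f_{S_0}(E_\ell)=\emptyset$ follows from Lemma \ref{lemCentralElS}(ii) together with the just-proved fact that $B_\ell$ contains no basic cycle edges, so $E_{\ell j}=\emptyset$ for every $j\in S_0$. The form of $J_\ell^1,J_\ell^2$ follows from Lemma \ref{lemCentralElj}(ii) (all $B_\ell$-paths for $j\in f_{S_1}(E_\ell)$ leave $v$, all those for $j\in f_{S_2}(E_\ell)$ enter $v$) combined with Lemma \ref{lemdigraphutile}. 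Finally, $N_\ell$ is a network matrix by Lemma \ref{lembonsaicel}, since all $B_\ell$-paths share the common endnode $v$. The main obstacle will be the existence part of step one in Case 1, namely showing that $v$ actually lies on the basic cycle: this requires a careful trace of how the fundamental circuit of $f_{j_1}$ connects $B_\ell$ to $e_1$ and rules out the possibility that $B_\ell$ is attached to the rest of $G_0(A)$ only through a non-cycle vertex shared with another bonsai.
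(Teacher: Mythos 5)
Your treatment of the case $v_{\ell,1}\neq v_{\ell,2}$, the uniqueness statement in the first case, and the verification of the four conditions of sensitivity are all sound and essentially match the paper's (much terser) argument. The genuine gap is exactly where you flagged it: showing that $v:=v_{\ell,1}=v_{\ell,2}$ actually lies on the basic cycle. The mechanism you sketch — connectivity of the fundamental circuit of a single $f_{j_1}$ with $j_1\in f_{S_1}(E_\ell)$, combined with the characterization of $v_{\ell,1}$ as the closest vertex of $B_\ell$ to $w_1$ — cannot work, because it makes no use of the $S_2$ column and would therefore "prove" the same conclusion for an $S_1$-dominated bonsai, whose root is routinely attached off the cycle. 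The definition of $v_{\ell,1}$ only says $v$ is the attachment point of $B_\ell$ in the direction of $w_1$; nothing about one circuit forces that attachment point onto the spine.

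The argument the paper uses (and that you need) pits the two kinds of columns against each other. Suppose $v$ is not on the cycle, and let $e_i$ be any basic edge on the (nonempty) path $q$ from $v$ to the cycle in $G_0(A)$. Pick $j_1\in f_{S_1}(E_\ell)$ and $j_2\in f_{S_2}(E_\ell)$; both exist since $B_\ell$ is shared. The fundamental circuits of $f_{j_1}$ and $f_{j_2}$ both contain $q$ (it is the only way to connect $B_\ell$ to $e_1$, respectively $e_\rho$), hence both contain $e_i$. By Lemma \ref{lemCentralElj}(ii) the $B_\ell$-path generated by $f_{j_1}$ leaves $v$ while the one generated by $f_{j_2}$ enters $v$; since $A$ is nonnegative, Lemma \ref{lemdefiWeight2} forces each fundamental circuit to be consistently oriented, and propagating consistency from $v$ along $q$ makes $q$ a directed path entering $v$ in the first circuit and leaving $v$ in the second. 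The edge $e_i$ would then carry both orientations, a contradiction. With this step supplied, the rest of your proof goes through.
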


\begin{proof}
Assume first that $v_{\ell ,1} =v_{\ell ,2}$. Suppose, to the contrary, that $v_{\ell ,1}$ does not belong to the basic cycle. Let $j_1 \in f_{S_1}(E_\ell)$, $j_2 \in f_{S_2}(E_\ell)$ and $e_i$ be a basic edge on the basic path from $v_{\ell ,1}$ to the basic cycle. By Lemma \ref{lemCentralElj}, the $B_\ell$-paths generated by $f_{j_1}$ and $f_{j_2}$ are leaving and entering $v_{\ell ,1}$, respectively. Then, since $e_i$ is in the fundamental circuit of $f_{j_1}$ and $f_{j_2}$ and using Lemma \ref{lemdefiWeight2}, we get a contradiction. Hence $v_{\ell ,1}$ is on the basic cycle. From the definition of $v_{\ell ,1}$ and $v_{\ell ,2}$, it follows that $v_{\ell ,1}$ is the unique node of $B_\ell$ belonging to the basic cycle. So
by Lemma \ref{lemCentralElS} (ii),
$f_{S_0}(E_\ell)= \emptyset$. Furthermore, using Lemmas 
\ref{lemCentralElj} and \ref{lemdigraphutile},  we deduce that $\{E_{\ell j} \, : \, j \in f_{S_1}(E_\ell) \} \subseteq J_\ell^1 $ and 
$\{E_{\ell j} \, : \, j \in f_{S_2}(E_\ell) \} \subseteq J_\ell^2$ or conversely, and $B_\ell$ is a $v_{\ell ,1}$-rooted network representation of the bonsai matrix $N_\ell$. Thus $B_\ell$ is sensitive.

Now assume $v_{\ell ,1} \neq v_{\ell ,2}$. Since $B_\ell$ is connected, there exists a (basic) path in $B_\ell$ from $v_{\ell ,1}$ to $v_{\ell ,2}$. Observe also that there exist a basic path from $v_{\ell ,1}$ to $w_1$ and an other from $v_{\ell ,2}$ to $w_{\rho-1}$ in $G_0(A)$ that do not intersect and do not contain any edge of $B_\ell$. This completes the proof.
\end{proof}\\

To make the assumption $\mathscr{A}$, we will need Lemmas \ref{lem2rid}, \ref{lemalpha} and \ref{lembeta} below. In the 
case where this assumption is not satisfied, a matrix $A'$ is obtained by adding some columns to $A$, and we consider a digraph $D'$ with respect to $A'$. By Lemma \ref{lemalpha}, it turns out that solving the recognition problem on $A$ is equivalent to solving it on $A'$, and $D'$ has one bonsai less than $D$. By adding at most $m$ columns to $A$, the assumption $\mathscr{A}$ will be satisfied. Let us see an auxiliary lemma.

\begin{lem}\label{lemright}
Suppose that $A$ has a $\{1,\rho\}$-central  representation $G(A)$.
Let $1\le \ell\le b$. If $B_\ell$ is on the right of $\{e_1 , e_\rho\} $ in $G(A)$, then $J_\ell^2 = \emptyset$ and $N_\ell$ is a network matrix.
\end{lem}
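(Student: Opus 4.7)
The plan is as follows. Fix a bonsai $B_\ell$ lying on the right of $\{e_1,e_\rho\}$, i.e.\ $B_\ell \subseteq G_1(A)$, and let $v_\ell$ be the node of $B_\ell$ closest to the central node $w_\rho$ (so $v_\ell$ is the unique node of $B_\ell$ lying on the basic path joining $B_\ell$ to $w_\rho$ inside $G_1(A)$). First I would observe that for every $j \in f^*(E_\ell)$, the $B_\ell$-paths generated by $f_j$ must attach to the basic cycle, and the only way to exit $B_\ell$ toward the basic cycle in $G_1(A)$ is through $v_\ell$. Hence $v_\ell$ is a common endnode of every $B_\ell$-path in $G(A)$, which is precisely the hypothesis needed to invoke Lemma~\ref{lemdigraphutile} and Lemma~\ref{lembonsaicel}.

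Next I would argue that the $B_\ell$-paths are \emph{uniformly oriented} at $v_\ell$: they either all enter $v_\ell$ or all leave $v_\ell$. This follows from the nonnegativity of $A$ via Lemma~\ref{lemdefiWeight2} (the signing condition in the procedure WeightCircuit). Indeed, for any $j \in f^*(E_\ell)$, the fundamental circuit of $f_j$ travels from $B_\ell$ through $v_\ell$ along the basic path $q$ from $v_\ell$ to $w_\rho$ (and then around a portion of the basic cycle), and $w(C)$ must be consistently oriented in the direction dictated by $f_j$. Two distinct $B_\ell$-paths generated by $f_{j_1}$ and $f_{j_2}$ therefore share the same traversal direction through $q$ at $v_\ell$, which forces them to both enter, or both leave, $v_\ell$.

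Given this uniformity, Lemma~\ref{lemdigraphutile} applies: for any two $E_\ell$-paths $E_\ell^k, E_\ell^{k'}$, we have $E_\ell^k \sim_{E_\ell} E_\ell^{k'}$, so the equivalence classes computed by the procedure TwoClasses collapse to a single class. Consequently $J_\ell^2 = \emptyset$. Finally, the common-endnode property together with $J_\ell^2=\emptyset$ is exactly the hypothesis of Lemma~\ref{lembonsaicel}, which yields that the bonsai matrix $N_\ell$ is a network matrix, completing the proof.

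The main obstacle I anticipate is a careful justification that $v_\ell$ really is a common endnode of \emph{every} $B_\ell$-path and that the uniform-orientation argument goes through for all four categories of fundamental circuits listed in Corollary~\ref{corBidirectedCircuit}. For the case $j \in S_0$ in particular, the fundamental circuit of $f_j$ is a handcuff whose connecting path passes through $\{e_1,e_\rho\}$ and the central node $w_\rho$; one has to check that the segment inside $B_\ell$ is still a single directed path ending at $v_\ell$ (with the proper orientation), rather than, say, two paths forking at an interior node of $B_\ell$. This is where the right-side hypothesis $B_\ell \subseteq G_1(A)$ is crucial, since it guarantees that $B_\ell$ and the basic cycle share no node other than $v_\ell$, ruling out the type (iii) and (iv) configurations of Corollary~\ref{corBidirectedCircuit} from splitting inside $B_\ell$.
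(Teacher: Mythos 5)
Your proof is correct and follows essentially the same route as the paper's: the paper likewise uses Lemma~\ref{lemdefiWeight2} (nonnegativity) to conclude that all $B_\ell$-paths are uniformly oriented at $v_\ell$ (the paper states they all leave $v_\ell$), then invokes Lemma~\ref{lemdigraphutile} for $J_\ell^2=\emptyset$ and observes that $B_\ell$ is a $v_\ell$-rooted network representation of $N_\ell$, which is exactly the content of your appeal to Lemma~\ref{lembonsaicel}. Your additional care about the common-endnode property and the handcuff cases is a sound elaboration of what the paper leaves implicit.
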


\begin{proof}
Using Lemma \ref{lemdefiWeight2}, it follows that
all $B_\ell$-paths leave $v_\ell$. Hence, by Lemma \ref{lemdigraphutile}, $J_\ell^2=\emptyset$. Moreover, $B_\ell$ is a $v_\ell$-rooted network representation of $N_\ell$.
\end{proof}\\

\begin{lem}\label{lem2rid}
Suppose that the matrix $A$ has a $\{1,\rho\}$-central  representation $G(A)$. Let $B_\ell$ be a sensitive bonsai ($1\le \ell \le b$). Then the following holds.
\begin{itemize}

\item[1)] The bonsai $B_\ell$ is on the left of $\{ e_1, e_\rho \}$.

\item[2)] If $v_{\ell ,1}\neq v_{\ell ,2}$, then for any shared bonsai $B_{\ell '}$ on the left of $\{ e_1,e_\rho\}$, we have $v_{\ell ',1}=v_{\ell ',2} \in B_\ell$ and $B_{\ell '}$ is $S_1,S_2$ linked to $B_\ell$.

\end{itemize}
\end{lem}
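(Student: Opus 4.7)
The plan is to prove the two parts separately, with Part 1) being a direct consequence of an earlier result and Part 2) requiring a detailed geometric analysis on the basic cycle together with the rigidity imposed by $N_\ell$ being a network matrix.

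For Part 1), recall that sensitivity implies both $J_\ell^1$ and $J_\ell^2$ are non-empty, since they index the $E_\ell$-paths generated by $f_{S_1}(E_\ell)$ and $f_{S_2}(E_\ell)$, each of which is non-empty by disjoint sharing. In particular $J_\ell^2\neq\emptyset$, so the contrapositive of Lemma \ref{lemright} excludes $B_\ell$ from the right of $\{e_1,e_\rho\}$, and $B_\ell$ therefore lies on the left.

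For Part 2), the assumption $v_{\ell,1}\neq v_{\ell,2}$ together with Lemma \ref{lemroot} places the non-trivial basic subpath $q_\ell:=p(v_{\ell,1},v_{\ell,2})$ inside $B_\ell$. Given any shared $B_{\ell'}$ on the left, I would proceed by first assuming the conclusion $v_{\ell',1}=v_{\ell',2}\in q_\ell$ and reading off the linkage: pick any $j_1'\in f_{S_1}(E_{\ell'})$; by Lemma \ref{lemCentralElj}(ii) its $B_{\ell'}$-path leaves $v_{\ell',1}$, and the fundamental circuit of $f_{j_1'}$ closes through $e_1$ via the basic arc from $w_1$ to $v_{\ell',1}$, which must traverse a nontrivial portion of $q_\ell\subseteq B_\ell$, forcing $j_1'\in f_{S_1}(E_\ell)$; the symmetric argument with $f_{S_2}(E_{\ell'})$ yields $S_2$ linkage, so $B_{\ell'}$ is $S_1,S_2$ linked to $B_\ell$.

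To establish $v_{\ell',1}=v_{\ell',2}\in B_\ell$ I argue by contradiction, enumerating the forbidden placements: either $v_{\ell',1}\neq v_{\ell',2}$, in which case Lemma \ref{lemroot} provides a second basic subpath $q_{\ell'}\subseteq B_{\ell'}$ disjoint from $q_\ell$ on the left arc (with WLOG $q_\ell$ preceding $q_{\ell'}$); or $v_{\ell',1}=v_{\ell',2}$ lies on $p(w_1,v_{\ell,1})$, or on $p(v_{\ell,2},w_{\rho-1})$. In each case, combining Lemma \ref{lemCentralElj}(ii) with Lemma \ref{lemdefiWeight2} and the consistent orientation of the basic cycle, one traces the fundamental circuit of a judiciously chosen column in $f_{S_1}(E_{\ell'})\cap f_{S_1}(E_\ell)$ or $f_{S_2}(E_\ell)\cap f_{S_2}(E_{\ell'})$: the corresponding $B_\ell$-path is forced to equal $q_\ell$, placing $q_\ell\subseteq T_{\ell,1}$, while a second column forces a $B_\ell$-path through $q_\ell$ that must lie in $T_{\ell,2}$. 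Disjoint sharing demands $T_{\ell,1}\cap T_{\ell,2}=\emptyset$ on edges, and sensitivity of $B_\ell$ (i.e.\ the $v_\ell$-rooted network realization of $N_\ell$ guaranteed by Lemmas \ref{lembonsaicel} and \ref{lembonsaicontra}) means no other $\sim_{E_\ell}$-collapse can absorb the conflict, so this edge-overlap is impossible. The main obstacle will be this final reconciliation: marshalling the algebraic rigidity of $N_\ell$ as a network matrix against the bifurcated realization of $B_\ell$ at the distinct anchors $v_{\ell,1}\neq v_{\ell,2}$ in $G(A)$, and handling all positional subcases uniformly without breaking the $\sim_{E_\ell}$ equivalence structure.
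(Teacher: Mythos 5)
Your Part 1) is exactly the paper's argument and is correct: disjoint sharing makes $f_{S_2}(E_\ell)\neq\emptyset$, so $J_\ell^2\neq\emptyset$ by the definition of sensitivity, and the contrapositive of Lemma \ref{lemright} puts $B_\ell$ on the left. For Part 2) your skeleton (use Lemma \ref{lemroot} to place all anchors on the basic cycle, then rule out the bad positions of $v_{\ell',1},v_{\ell',2}$ relative to $q_\ell=p(v_{\ell,1},v_{\ell,2})$, then read off the $S_1,S_2$ linkage) is also the paper's, but the mechanism you use to get the contradiction in the positional cases has a genuine gap.

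You derive the contradiction from an edge overlap: one column forces $q_\ell\subseteq T_{\ell,1}$, a ``second column forces a $B_\ell$-path through $q_\ell$ that must lie in $T_{\ell,2}$'', and disjoint sharing forbids $E_{\ell S_1}\cap E_{\ell S_2}\neq\emptyset$. The second half of this does not hold. Take the case where $v_{\ell',1}=v_{\ell',2}$ lies on $p(v_{\ell,2},w_{\rho-1})$: every $j_1\in f_{S_1}(E_{\ell'})$ indeed gives $E_{\ell j_1}$ equal to the edge set of $q_\ell$, but the columns of $f_{S_2}(E_{\ell'})$ tell you nothing about $B_\ell$ (their circuits run from $v_{\ell',2}$ to $w_{\rho-1}$ and miss $q_\ell$ entirely), so you are left with an arbitrary $j_2\in f_{S_2}(E_\ell)$. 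Lemma \ref{lemCentralElj} (ii) only says $B_{\ell j_2}$ \emph{enters} $v_{\ell,2}$; that path can descend through a branch of $B_\ell$ hanging off $v_{\ell,2}$ and use no edge of $q_\ell$, so no edge overlap between $T_{\ell,1}$ and $T_{\ell,2}$ need occur and disjoint sharing is not violated. The correct contradiction, which the paper uses and which you only gesture at with ``no other $\sim_{E_\ell}$-collapse can absorb the conflict'', is orientation-based rather than edge-based: $B_{\ell j_1}=q_\ell$ and $B_{\ell j_2}$ both enter the common node $v_{\ell,2}$, so Lemma \ref{lembonsaicontra} (applicable because sensitivity makes $N_\ell$ a network matrix) forces $E_{\ell j_1}\sim_{E_\ell}E_{\ell j_2}$, while sensitivity puts the $S_1$-generated and $S_2$-generated $E_\ell$-paths into the distinct classes $J_\ell^1$ and $J_\ell^2$. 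That contradiction works whether or not the two paths share edges, and it is what your argument is missing.
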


\begin{proof}
If $B_\ell$ is on the right of  $\{ e_1, e_\rho \}$, then
by Lemma \ref{lemright}  $J_\ell^2=\emptyset$,
contradicting the definition of a sensitive bonsai. Thus $B_\ell$ is on the left of $\{ e_1, e_\rho \}$. 

Now assume that $v_{\ell ,1} \neq v_{\ell ,2}$. Let $B_{\ell '}$ be a shared bonsai on the left of $\{ e_1,e_\rho\}$. By Lemma \ref{lemroot}, the nodes $v_{\ell ,1}$, $v_{\ell ,2}$, $v_{\ell '}^1$ and $v_{\ell '}^2$ belong to the basic cycle.
Suppose, by contradiction, that for some $k\in \{ 1,2\}$, $v_{\ell '}^k=w_{i_0}$ is such that $i_0 \geq i$ for all $w_i \in B_\ell$. This implies that  $I(v_{\ell ,1},v_{\ell ,2})=E_{\ell j}$ for all $j\in f_{S_1}(E_{\ell '})$. Let $j_1 \in f_{S_1}(E_{\ell '})$ and $j_2 \in f_{S_2}(E_\ell)$. Since $E_{\ell  j_1}=I(v_{\ell ,1},v_{\ell ,2})$,
the $B_\ell$-path $B_{\ell  j_1}$ enters $v_{\ell ,2}$, and by Lemma   \ref{lemCentralElj} (ii) $B_{\ell  j_2}$ enters $v_{\ell ,2}$. So, since $N_\ell$ is a network matrix, by Lemma \ref{lembonsaicontra} , $E_{\ell j_1} \sim_{E_\ell} E_{\ell j_2}$,
contradicting the definition of $B_\ell$. One gets a similar contradiction if $i_0 \le i$ for all $w_i \in B_\ell$. It results that $v_{\ell ',1},v_{\ell ',2} \in B_\ell$ and $v_{\ell ',k}\neq v_{\ell ,1},v_{\ell ,2}$ for $k=1$ and $2$. Since $E_\ell \cap E_{\ell '} = \emptyset$, $v_{\ell ',1}=v_{\ell ',2}$. At last, as $v_{\ell ',1}$
is an inner node on the path $p(v_{\ell ,1},v_{\ell ,2})$ and $B_{\ell '}$ is shared, the bonsais $B_\ell$ and $B_{\ell '}$ are $S_1,S_2$ linked.
\end{proof}\\

\begin{lem}\label{lemalpha}
Let $E_\ell$ be a sensitive bonsai and $E_{\ell '}$ some bonsai $S_1,S_2$ linked to $E_\ell$ such that $J_{\ell '}^2\neq \emptyset$ or $N_{\ell '}$ is not a network matrix. Then for any $j_1 \in f_{S_1}(E_\ell) \cap f_{S_1}(E_{\ell '})$, the matrix $A$ is $\{1,\rho\}$-central if and only if the matrix $A'=[A\, \, \chi^{\{1,\ldots,n \}}_{E_{\ell j_1} \cup E_{\ell 'j_1}} ]$
is $\{1,\rho\}$-central.
\end{lem}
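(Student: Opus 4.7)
The ``$\Leftarrow$'' direction is immediate: starting from any $\{1,\rho\}$-central representation $G(A')$ of $A'$, one deletes the nonbasic edge corresponding to the appended column $\chi^{\{1,\ldots,n\}}_{E_{\ell j_1}\cup E_{\ell'j_1}}$. The basis, the two central edges $e_1,e_\rho$ and the central node $w_\rho$ are left untouched, so the resulting bidirected graph is a $\{1,\rho\}$-central representation of $A$.

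For the ``$\Rightarrow$'' direction, start from a $\{1,\rho\}$-central representation $G(A)$ of $A$. First, Lemma \ref{lem2rid}(1) puts the sensitive bonsai $B_\ell$ on the left of $\{e_1,e_\rho\}$, and the contrapositive of Lemma \ref{lemright} applied to the hypothesis ``$J_{\ell'}^2\neq\emptyset$ or $N_{\ell'}$ is not a network matrix'' likewise forces $B_{\ell'}$ onto the left. Next, Lemma \ref{lemCentralElj} identifies $E_{\ell j_1}$ and $E_{\ell'j_1}$ as the edge index sets of two disjoint directed paths: $B_{\ell j_1}$ leaves $v_{\ell,1}$ and ends at some $u\in B_\ell$, and $B_{\ell'j_1}$ leaves $v_{\ell',1}$ and ends at some $u'\in B_{\ell'}$.

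The plan is then to adjoin a single new nonbasic edge $f_{\text{new}}$ to $G(A)$, keeping the same basis, so that its column in the binet matrix equals $\chi^{\{1,\ldots,n\}}_{E_{\ell j_1}\cup E_{\ell'j_1}}$. The corresponding incidence-matrix column is forced to be
$$
B\cdot \chi_{E_{\ell j_1}\cup E_{\ell'j_1}} \;=\; (\chi_u-\chi_{v_{\ell,1}})+(\chi_{u'}-\chi_{v_{\ell',1}}),
$$
and the bidirected-graph constraint $\sum_i|N_{i,\text{new}}|\le 2$ (equation (\ref{eqnBidirected})) forces $u=v_{\ell',1}$ or, symmetrically, $u'=v_{\ell,1}$, in which case $f_{\text{new}}$ is a directed link, say from $v_{\ell,1}$ to $u'$. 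To produce a representation of $A$ in which this coincidence actually occurs, Lemmas \ref{lemroot} and \ref{lem2rid}(2) are used to distinguish two subcases: when $v_{\ell,1}=v_{\ell,2}$, the bonsai $B_\ell$ touches the cycle only at $v_{\ell,1}$, and one arranges $v_{\ell',1}=v_{\ell,1}=u$ via switching operations; when $v_{\ell,1}\neq v_{\ell,2}$, Lemma \ref{lem2rid}(2) pins down $v_{\ell',1}=v_{\ell',2}$ inside $B_\ell$, on the cycle-portion $p(v_{\ell,1},v_{\ell,2})$, and one argues that $B_{\ell j_1}$ can be chosen to terminate exactly at $v_{\ell',1}$ by following the cycle edges of $B_\ell$. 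Once the coincidence is established, adding $f_{\text{new}}$ yields a bidirected graph whose basis, central edges and central node are unchanged, and which is therefore a $\{1,\rho\}$-central representation of $A'$.

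The principal obstacle is precisely this alignment step, i.e.\ ensuring the existence of a $\{1,\rho\}$-central representation of $A$ in which the head of $B_{\ell j_1}$ coincides with $v_{\ell',1}$. Achieving it requires combining the sensitive structure of $B_\ell$ — the strict bipartition $J_\ell^1\uplus J_\ell^2$ together with the network-matrix property of $N_\ell$, which rigidly constrain how the fundamental circuit of $f_{j_1}$ may enter and traverse $B_\ell$ — with the location information for $v_{\ell',1}$ inside $B_\ell$ provided by Lemma \ref{lem2rid}(2), and the argument must handle both subcases of Lemma \ref{lemroot} uniformly.
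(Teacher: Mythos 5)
Your reduction of the forward direction to a single geometric claim is the right one: after placing both $B_\ell$ and $B_{\ell'}$ on the left of $\{e_1,e_\rho\}$ (which you do correctly via Lemma \ref{lem2rid}(1) and Lemma \ref{lemright}), everything hinges on showing that $E_{\ell j_1}\cup E_{\ell' j_1}$ is the edge index set of a single path, so that appending the column $\chi_{E_{\ell j_1}\cup E_{\ell' j_1}}$ just adds one nonbasic link. But you then declare this alignment to be ``the principal obstacle,'' gesture at what a proof would need, and stop. That is a genuine gap: the crux of the lemma is left unproven. Moreover, the sketch you give for it is partly wrong. The coincidence is not something one ``arranges'' by modifying the representation or by switching operations (switchings only change signs at nodes and cannot move where a bonsai attaches); it holds automatically in \emph{any} $\{1,\rho\}$-central representation $G(A)$, and the argument is a short case analysis you have not carried out.

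Concretely: if $v_{\ell,1}\neq v_{\ell,2}$, Lemma \ref{lem2rid}(2) already gives $v_{\ell',1}=v_{\ell',2}$ lying on $p(v_{\ell,1},v_{\ell,2})\subseteq B_\ell$, so the fundamental circuit of $f_{j_1}$ traverses $B_{\ell j_1}$ from $v_{\ell,1}$ to $v_{\ell',1}$ and then continues into $B_{\ell'}$ along $B_{\ell' j_1}$; the union is a path. If $v_{\ell,1}=v_{\ell,2}$, then by Lemma \ref{lemroot} this node is some $w_{i_0}$ on the basic cycle and $v_{\ell',1},v_{\ell',2}$ also lie on the cycle; the missing step --- which is exactly where the hypothesis that $E_\ell$ and $E_{\ell'}$ are $S_1,S_2$ linked must be used --- is that $w_{i_0}$ is an \emph{inner} node of $p(v_{\ell',1},v_{\ell',2})$, because if $B_{\ell'}$ were entirely succeeding $w_{i_0}$ then $f_{S_2}(E_\ell)\cap f_{S_2}(E_{\ell'})=\emptyset$, and if entirely preceding then $f_{S_1}(E_\ell)\cap f_{S_1}(E_{\ell'})=\emptyset$, contradicting $S_1,S_2$-linkedness. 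In this case it is the head of $B_{\ell' j_1}$ that equals $v_{\ell,1}$, not $v_{\ell',1}=v_{\ell,1}$ as you assert. Your incidence-matrix computation showing that $\sum_i|N_{i,\mathrm{new}}|\le 2$ forces one of the two coincidences is a nice necessary condition, but it does not establish that the condition is met; the two paragraphs above are what actually establish it, with no modification of $G(A)$ required.
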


\begin{proof}
Suppose that $A$ has a $\{1,\rho\}$-central  representation $G(A)$. Since $J_{\ell '}^2\neq \emptyset$ or $N_{\ell '}$ is not a network matrix, Lemma \ref{lemright}  implies that $B_{\ell '}$ is on the left of $\{ e_1,e_\rho\}$.
As $B_{\ell '}$ is $S_1$, $S_2$ linked to $B_\ell$, $B_{\ell '}$ is shared. So, if $v_{\ell ,1}\neq v_{\ell ,2}$, then by Lemma \ref{lem2rid} $v_{\ell ',1}=v_{\ell ',2} \in B_\ell$. 
Suppose now that $v_{\ell ,1}= v_{\ell ,2}$. Since $B_\ell$ 
and $B_{\ell '}$ are shared, by Lemma \ref{lemroot}, $v_{\ell ,1}=w_{i_0}$ for some $2\le i_0 \le \rho-1$, and $v_{\ell ',1}$ and $v_{\ell ',2}$ lie on the basic cycle. Hence, if $i_0 \geq i $ for all $w_i \in B_{\ell '}$, then $f_{S_2}(E_\ell) \cap f_{S_2}(E_{\ell '}) = \emptyset $, which contradicts the fact that $B_\ell$ and $B_{\ell '}$ are $S_1,S_2$ linked. If $i_0 \le i$ for all $w_i \in B_{\ell '}$, then $f_{S_1}(E_\ell) \cap f_{S_1}(E_{\ell '})=\emptyset$ and we get a same contradiction. Therefore $w_{i_0}$ is an inner node on the path $p(v_{\ell ',1}, v_{\ell ',2}) \subseteq B_{\ell '}$. Whatever we have $v_{\ell ',1}=v_{\ell ',2} \in B_\ell$ or $v_{\ell ,1}=v_{\ell ,2} \in B_{\ell '}$, it yields 
that $E_{\ell j_1} \cup E_{\ell 'j_1}$ is the edge index set of a path in $G(A)$ for any $j_1 \in f_{S_1}(E_\ell) \cap f_{S_1}(E_{\ell '})$. Therefore $A'$ is $\{1,\rho\}$-central.
The converse part is trivial.
\end{proof}\\

\begin{figure}[h!]
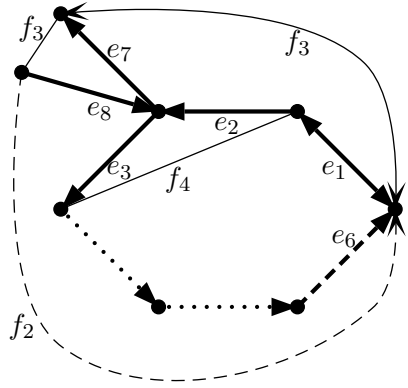


\begin{center}

\psset{xunit=1.3cm,yunit=1.3cm,linewidth=0.5pt,radius=0.1mm,arrowsize=7pt,
labelsep=1.5pt,fillcolor=black}

\pspicture(-2,-0.5)(3.7,3.5)

\pscircle[fillstyle=solid](0,1){.1}
\pscircle[fillstyle=solid](1,0){.1}
\pscircle[fillstyle=solid](1,2){.1}
\pscircle[fillstyle=solid](2.42,0){.1}
\pscircle[fillstyle=solid](2.42,2){.1}
\pscircle[fillstyle=solid](3.42,1){.1}
\pscircle[fillstyle=solid](-.4,2.4){.1}
\pscircle[fillstyle=solid](0,3){.1}

\psline[linewidth=1.6pt,arrowinset=0]{<->}(3.42,1)(2.42,2)
\rput(2.8,1.4){$e_{1}$}

\psline[linewidth=1.6pt,arrowinset=0]{->}(2.42,2)(1,2)
\rput(1.7,1.85){$e_{2}$}

\psline[linewidth=1.6pt,arrowinset=0]{->}(1,2)(0,1)
\rput(.6,1.4){$e_{3}$}

\psline[linewidth=1.6pt,arrowinset=0,linestyle=dotted]{->}(0,1)(1,0)

\psline[linewidth=1.6pt,arrowinset=0,linestyle=dotted]{->}(1,0)(2.42,0)

\psline[linewidth=1.6pt,arrowinset=0,linestyle=dashed]{->}(2.42,0)(3.42,1)
\rput(2.9,.7){$e_{6}$}

\psline[linewidth=1.6pt,arrowinset=0]{->}(1,2)(0,3)
\rput(.6,2.6){$e_{7}$}

\psline[linewidth=1.6pt,arrowinset=0]{<-}(1,2)(-.4,2.4)
\rput(.4,2){$e_{8}$}



\pscurve[arrowinset=.5,arrowlength=1.5,linestyle=dashed]{->}(-.4,2.4)(0,-.4)(3.2,0)(3.42,1)
\rput(-.4,-.2){$f_{2}$}

\pscurve[arrowinset=.5,arrowlength=1.5]{<->}(0,3)(3.1,2.5)(3.42,1)
\rput(2.42,2.7){$f_{3}$}



\psline[arrowinset=.5,arrowlength=1.5]{-}(0,3)(-.4,2.4)
\rput(-.3,2.85){$f_{3}$}


\psline[arrowinset=.5,arrowlength=1.5]{-}(0,1)(2.42,2)
\rput(1.2,1.3){$f_{4}$}




\endpspicture 

\end{center}
\vspace{.3cm}
\caption{an example of a (partial) $\{1,6\}$-central representation of some binet matrix, where $E_1=\{7,8\}$ and $E_2=\{2,3\}$ are sensitive, $v_{2,1}\neq v_{2,2}$, and $E_1$ and $E_2$ are $S_1$,$S_2$ linked.} 
\label{fig:central5}

\end{figure}

\begin{lem}\label{lembeta}
Suppose that $A$ has a $\{1,\rho\}$-central  representation $G(A)$.
Let $B_\ell$ be a sensitive bonsai and $B_{\ell '}$ some bonsai $S_1$,$S_2$ linked to $B_\ell$. Then, $B_{\ell '}$ is on the left of $\{e_1,e_\rho\}$ if and only if $J_{\ell '}^2\neq \emptyset$ or $N_{\ell '}$ is not a network matrix.
\end{lem}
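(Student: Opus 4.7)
The ``only if'' direction is immediate from Lemma~\ref{lemright}: taken contrapositively, it says that $B_{\ell'}$ on the right of $\{e_1,e_\rho\}$ forces $J_{\ell'}^2=\emptyset$ and $N_{\ell'}$ to be a network matrix, so if either fails then $B_{\ell'}$ must be on the left. For the ``if'' direction I would argue by contradiction: assume $B_{\ell'}$ is on the left and that simultaneously $J_{\ell'}^2=\emptyset$ and $N_{\ell'}$ is a network matrix. Because $B_{\ell'}$ is $S_1,S_2$-linked to the sensitive $B_\ell$, $B_{\ell'}$ is shared; Lemma~\ref{lem2rid}(1) places $B_\ell$ on the left too, so the four attachment nodes $v_{\ell,1},v_{\ell,2},v_{\ell',1},v_{\ell',2}$ are all defined.

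I would split into two cases. \emph{Case 1:} $v_{\ell,1}\ne v_{\ell,2}$. Lemma~\ref{lem2rid}(2), applied to the sensitive $B_\ell$ and the shared $B_{\ell'}$, forces $v_{\ell',1}=v_{\ell',2}\in B_\ell$; Lemma~\ref{lemroot} then makes $B_{\ell'}$ itself sensitive, which requires $J_{\ell'}^2\ne\emptyset$, contradicting the assumption. \emph{Case 2:} $v_{\ell,1}=v_{\ell,2}=w_{i_0}$. If $v_{\ell',1}=v_{\ell',2}$, Lemma~\ref{lemroot} again makes $B_{\ell'}$ sensitive and gives the same contradiction, so I may suppose $v_{\ell',1}\ne v_{\ell',2}$. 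Reusing the paragraph in the proof of Lemma~\ref{lemalpha} devoted to the case $v_{\ell,1}=v_{\ell,2}$, one obtains that $w_{i_0}$ is an inner node of the basic-cycle path $p(v_{\ell',1},v_{\ell',2})\subseteq B_{\ell'}$.

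The crux is then a careful analysis of two linking nonbasic edges. Pick $j_1\in f_{S_1}(E_\ell)\cap f_{S_1}(E_{\ell'})$ and $j_2\in f_{S_2}(E_\ell)\cap f_{S_2}(E_{\ell'})$. Since $B_\ell$ has $w_{i_0}$ as its only basic-cycle node and $v_{\ell',1},v_{\ell',2}\in B_{\ell'}$ lie on the basic cycle, the only endnode of $f_{j_1}$ that can sit on the left of $\{e_1,e_\rho\}$ is inside $B_\ell$, while the other endnode is in $G_1(A)$; an identical remark holds for $f_{j_2}$. Nonnegativity of $A$ together with Lemma~\ref{lemdefiWeight2} and the orientation conventions $e_1=[w_1,w_\rho]$, $e_i=]w_{i-1},w_i]$ force the fundamental circuit of $f_{j_1}$ to traverse the basic-cycle segment $w_\rho,e_1,w_1,e_2,\dots,e_{i_0},w_{i_0}$ before leaving through a stem into $B_\ell$; consequently $B_{\ell',j_1}$ is exactly the directed sub-path from $v_{\ell',1}$ to $w_{i_0}$, with $w_{i_0}$ as head. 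A symmetric analysis for $f_{j_2}$ (which contains $e_\rho$ but not $e_1$) shows that $B_{\ell',j_2}$ is the directed sub-path from $w_{i_0}$ to $v_{\ell',2}$, with $w_{i_0}$ as tail. Hence the two $B_{\ell'}$-paths share exactly one endnode, namely $w_{i_0}$, and one enters it while the other leaves. By Lemma~\ref{lembonsaicontra}, applied inside $B_{\ell'}$ (which is a network representation of $A_{E_{\ell'}\times \bar f^*(E_{\ell'})}$), this yields $E_{\ell',j_1}\not\sim_{E_{\ell'}} E_{\ell',j_2}$, contradicting $J_{\ell'}^2=\emptyset$.

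The hard part will be the fundamental-circuit analysis in Case~2: one must be certain that neither endnode of $f_{j_1}$ lies in $B_{\ell'}$ (so that $B_{\ell',j_1}$ is purely the basic-cycle segment I described) and that the consistency constraints at the bidirected edge $e_1$ really force the circuit to the left of $e_1$ to proceed exactly along $e_2,\dots,e_{i_0}$ and exit at $w_{i_0}$. The special cases in which $f_{j_1}$ or $f_{j_2}$ is a half-edge or a bidirected non-half $1$-edge have to be checked separately, but since $A$ is a $\{0,1,2\}$-matrix the admissible fundamental-circuit shapes are limited enough that the same skeleton of the argument carries through.
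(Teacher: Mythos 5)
Your proof is correct and follows essentially the same route as the paper's: the easy implication via Lemma~\ref{lemright}, then a case split on $v_{\ell,1}=v_{\ell,2}$ using Lemmas~\ref{lem2rid} and~\ref{lemroot} in the first case, and the inner-node argument from Lemma~\ref{lemalpha} together with Lemma~\ref{lemdefiWeight2} and Lemma~\ref{lembonsaicontra} in the second. (Note only that you have swapped the labels ``if'' and ``only if'' relative to the statement, though the content of each direction is identified and argued correctly.)
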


\begin{proof}
The proof of the "if" part follows from Lemma \ref{lemright}.

Suppose that $B_{\ell '}$ is on the left of $\{e_1,e_\rho\}$.
Since $B_{\ell '}$ is $S_1$, $S_2$ linked to $B_\ell$, $B_{\ell '}$ is shared. So, if $v_{\ell ,1}\neq v_{\ell ,2}$, then by Lemma \ref{lem2rid} $v_{\ell ',1}= v_{\ell ',2}$.
And Lemma \ref{lemroot} implies that $B_{\ell '}$ is sensitive.
In particular, $J_{\ell '}^2 \neq \emptyset$.

Now assume $v_{\ell ,1}=v_{\ell ,2}$. By Lemma \ref{lemroot}
$v_{\ell ,1}=w_{i_0}$ for some $2\le i_0 \le \rho-1$.  One can prove in the same way as in the proof of Lemma \ref{lemalpha} that 
$w_{i_0}$ is an inner node on the path $p(v_{\ell '}^1,v_{\ell '}^2)$
and $v_{\ell '}^1 \neq v_{\ell '}^2$ (otherwise $B_\ell$ and $B_{\ell '}$ would not be $S_1,S_2$ linked). Thus by Lemma \ref{lemdefiWeight2}, for any $k\in \{1,2\}$ and  $j_k \in f_{S_k}(E_\ell)\cap f_{S_k}(E_{\ell '})$,
the $B_\ell$-paths $B_{\ell  j_1}$ and $B_{\ell  j_2}$ leave and enter $v_{\ell ,1}=v_{\ell ,2}$, respectively; then it results that 
the $B_{\ell '}$-paths $B_{\ell ' j_1}$ and $B_{\ell ' j_2}$ enter and leave $v_{\ell ,1}$, respectively. 
If $N_{\ell '}$ is a network matrix, then by Lemma \ref{lembonsaicontra} $E_{\ell ' j_1} \nsim_{E_{\ell '}} E_{\ell ' j_2}$, so  $J_{\ell '}^2\neq \emptyset$. This completes the proof.
\end{proof}\\

The advantage of assumption $\mathscr{A}$ is revealed in the next lemma.

\begin{lem}\label{lembeta2}
Suppose that $A$ has a $\{1,\rho\}$-central  representation $G(A)$ and assumption $\mathscr{A}$ is satisfied. Then, for any sensitive bonsai $B_\ell$, there is no bonsai on the left of $\{e_1,e_\rho\}$ that is $S_1$, $S_2$ linked to $B_\ell$.
\end{lem}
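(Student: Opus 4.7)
The plan is to give a short contradiction argument that only needs to combine Lemma \ref{lembeta} with the hypothesis $\mathscr{A}$; no new graphical reasoning is required.

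First I would fix a sensitive bonsai $B_\ell$ and suppose, for a contradiction, that there is some bonsai $B_{\ell'}$ on the left of $\{e_1,e_\rho\}$ which is $S_1,S_2$ linked to $B_\ell$. Since $B_\ell$ is sensitive and $B_{\ell'}$ is $S_1,S_2$ linked to it, assumption $\mathscr{A}$ applies to the pair $(B_\ell,B_{\ell'})$, and yields
\[
J_{\ell'}^{2}=\emptyset \quad\text{and}\quad N_{\ell'}\ \text{is a network matrix.}
\]

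Next I would apply Lemma \ref{lembeta} to the same pair $(B_\ell,B_{\ell'})$. Lemma \ref{lembeta} states that, under the standing hypothesis that $A$ has a $\{1,\rho\}$-central representation $G(A)$, the bonsai $B_{\ell'}$ lies on the left of $\{e_1,e_\rho\}$ \emph{if and only if} $J_{\ell'}^{2}\neq\emptyset$ or $N_{\ell'}$ is not a network matrix. Since by assumption $B_{\ell'}$ is on the left of $\{e_1,e_\rho\}$, this equivalence forces at least one of $J_{\ell'}^{2}\neq\emptyset$ or $N_{\ell'}$ not being a network matrix to hold, directly contradicting the conclusion drawn from $\mathscr{A}$.

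There is no real obstacle here: the work has already been done in establishing Lemma \ref{lembeta}, which characterizes which bonsais $S_1,S_2$ linked to a sensitive one can sit on the left. The only thing to check is that the two statements are applied to exactly the same pair $(B_\ell,B_{\ell'})$, so that $\mathscr{A}$ and the characterization of Lemma \ref{lembeta} become direct negations of each other. Thus the proof is essentially a one-line combination of $\mathscr{A}$ and Lemma \ref{lembeta}, and can be written in a couple of lines.
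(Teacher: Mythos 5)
Your proof is correct and takes exactly the same route as the paper, which simply states that the lemma ``directly follows from assumption $\mathscr{A}$ and Lemma \ref{lembeta}''; you have merely spelled out the one-line contradiction explicitly. No gaps.
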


\begin{proof}
This directly follows from assumption $\mathscr{A}$ and Lemma \ref{lembeta}.
\end{proof}\\

In Figure \ref{fig:central5}, there is an example of a central representation of some binet matrix such that 
assumption $\mathscr{A}$  is not satisfied.
The following procedure is the first subroutine of the procedures CentralI and CentralII. It provides a matrix $A'$ of size $n\times m'$ ($m' \geq m$) such that $A'_{\bullet \{1,\ldots, m\}}=A$ and satisfying assumption $\mathscr{A}$.

\begin{tabbing}
\textbf{Procedure\,\,Initialization($A$,$\{\epsilon,\rho\}$)}\\

\textbf{Input:} A non-network matrix $A$ and two row indexes $\epsilon$ and $\rho\neq 1$.\\
\textbf{Output: }\= A matrix $A'$  such that $A'$ is $\{1,\rho\}$-central if and only if $A$ is $\{\epsilon,\rho\}$-central, \\
\> and assumption $\mathscr{A}$ is satisfied for $A'$.\\

1)\verb"  "\= let $A'=A$, permute the row of $A'$ labeled $\epsilon$ and the first one;\\
2) \> let $R^*=\{1,\rho\}$ and partition $\overline{R^*}$ into $E_1,\dots,E_b$ as described  in Section \ref{sec:DefDigraphD};\\ 
3) \> {\bf while }\= there exist  $1\le \ell,\ell' \le b$ such that  $E_\ell$ is sensitive, $E_{\ell '}$ is $S_1$,$S_2$ linked to $E_\ell$, \\
\> \> and $J_{\ell '}^2\neq \emptyset$ or $N_{\ell '}$ is not a network matrix, {\bf do}\\
\> \> choose $j_1 \in f_{S_1}(E_\ell) \cap f_{S_1}(E_{\ell '})$ and let $A'=[ A' \, \chi_{E_{\ell  j_1} \cup E_{\ell ' j_1}}^{\{1,\ldots, n\}} ]$;\\
\> \>  recompute the partition of $\overline{R^*}$ into $E_1,\dots,E_b$ with respect to $A'$;\\
\> {\bf endwhile} \\
\> output the matrix $A'$;

\end{tabbing}

\begin{lem}\label{lemCentralInit}
The output of the procedure Initialization is correct. The 
output matrix $A'$ has at most $2m$ columns.
\end{lem}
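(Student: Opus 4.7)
The plan is to establish the two assertions of the lemma separately: the correctness statement (that $A'$ is $\{1,\rho\}$-central if and only if $A$ is $\{\epsilon,\rho\}$-central, and that $A'$ satisfies assumption $\mathscr{A}$) and the size bound $m' \le 2m$.

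Step 1 of the procedure is a pure row permutation: after swapping rows $1$ and $\epsilon$, the resulting matrix is $\{1,\rho\}$-central precisely when the original $A$ is $\{\epsilon,\rho\}$-central, so we may henceforth work under the convention $\epsilon = 1$. For the loop, I would argue by induction on the number of iterations that at the start of each pass $A'$ is $\{1,\rho\}$-central if and only if $A$ is. The inductive step is a direct application of Lemma \ref{lemalpha}: whenever the loop body executes, it has selected a sensitive bonsai $E_\ell$, a bonsai $E_{\ell'}$ that is $S_1,S_2$-linked to $E_\ell$ with $J_{\ell'}^2 \neq \emptyset$ or $N_{\ell'}$ non-network, and a column index $j_1 \in f_{S_1}(E_\ell)\cap f_{S_1}(E_{\ell'})$, then appends $\chi^{\{1,\ldots,n\}}_{E_{\ell j_1}\cup E_{\ell'j_1}}$; these are exactly the hypotheses of Lemma \ref{lemalpha}. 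The while-condition is the logical negation of assumption $\mathscr{A}$, so upon termination $A'$ satisfies $\mathscr{A}$ by construction.

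For the size bound I would track how the bonsai partition evolves. The appended column has support $E_{\ell j_1}\cup E_{\ell'j_1}\subseteq \overline{R^*}$, so it belongs to $\overline{S^*}$ of the updated matrix and it is adjacent in $H(A'_{\bullet\overline{S^*}})$ to every $\overline{S^*}$-column whose support meets $E_\ell$ or $E_{\ell'}$. This fuses the connected components of $H(A'_{\bullet\overline{S^*}})$ previously associated with $E_\ell$ and $E_{\ell'}$ into a single component, so after the recomputation in step 3 the bonsais $E_\ell$ and $E_{\ell'}$ coalesce into one. Throughout the execution, the number of bonsais that can participate in the loop condition is bounded by the number of connected components of $H(A'_{\bullet\overline{S^*}})$, itself bounded by $|\overline{S^*}|$ at the start, which is at most $m$. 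Since each iteration strictly decreases this count by one, the loop executes at most $m-1$ times, and the output matrix $A'$ has at most $m+(m-1)<2m$ columns; in particular, the procedure terminates.

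The subtle point I expect to be the main obstacle is verifying that the fusion step really strictly decreases the number of bonsais. Concretely, one must check that just before the iteration $E_\ell$ and $E_{\ell'}$ correspond to genuinely distinct components of $H(A'_{\bullet\overline{S^*}})$, and that the newly added column actually bridges them—i.e., that each of $E_\ell$ and $E_{\ell'}$ already has at least one column in $\overline{S^*}$ whose support meets $E_{\ell j_1}$ and $E_{\ell'j_1}$, respectively. This amounts to unpacking the definition of a sensitive bonsai, which demands that $N_\ell$ be a well-defined non-trivial network matrix with both $J_\ell^1$ and $J_\ell^2$ non-empty, forcing $\bar f^*(E_\ell)\neq\emptyset$ and every row of $E_\ell$ to appear in some $\overline{S^*}$-column; an analogous observation for $E_{\ell'}$ follows either from $J_{\ell'}^2\neq\emptyset$ or from $N_{\ell'}$ failing to be a network matrix, both of which force sufficient $\overline{S^*}$-structure on $E_{\ell'}$. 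With this verification, the counting argument closes and both parts of the lemma follow.
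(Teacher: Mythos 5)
Your proof is correct and follows essentially the same route as the paper's: correctness via Lemma \ref{lemalpha} together with the observation that the while-condition is the negation of assumption $\mathscr{A}$, and the column bound via the fact that each iteration merges two bonsais, each of which is forced (by sensitivity, respectively by $J_{\ell'}^2\neq\emptyset$ or $N_{\ell'}$ non-network) to have at least two rows and hence a nonempty local connecter set, so at most $m$ columns are ever added. You supply the details of the merging/counting argument that the paper leaves implicit, including the verification that the appended column genuinely bridges the two components, but the underlying argument is the same.
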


\begin{proof}
Clearly, after each passage through step 3, the number of vertices in $D$ decreases by $1$. Moreover, if some bonsai $E_\ell$ is sensitive and $E_{\ell '}$ is such that $J_{\ell '}^2\neq \emptyset$ or $N_{\ell '}$ is not a network matrix, then $|E_\ell| \geq 2$ and $|E_{\ell '}| \geq 2$, which implies $|\bar f^*(E_\ell)| \geq 1$ and $|\bar f^*(E_{\ell '})| \geq 1$. So the number of columns added to $A$ does not exceed $m$. By step 3, the assumption $\mathscr{A}$ is satisfied for $A'$, and by step 1 and Lemma \ref{lemalpha}, $A'$
is $\{1,\rho\}$-central if and only if $A$
is $\{\epsilon ,\rho\}$-central.
\end{proof}\\

Finally, we state a proposition that will be used in Sections \ref{sec:S0empty} and \ref{sec:S0notempty}.

\begin{prop}\label{propSkwatered}
Suppose that $A$ has a $\{1,\rho\}$-central representation $G(A)$. Then, for any bonsai $B_\ell$ that is $S_k$-dominated for some $k\in \{ 1,2\}$, $J_\ell^2=\emptyset$ and the bonsai matrix $N_\ell$ is a network matrix.
\end{prop}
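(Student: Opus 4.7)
The plan is to split the proof into two cases according to whether the bonsai $B_\ell$ lies on the right or on the left of $\{e_1, e_\rho\}$ in $G(A)$. The first case is handled immediately by Lemma \ref{lemright}, which gives both $J_\ell^2 = \emptyset$ and that $N_\ell$ is a network matrix whenever $B_\ell$ is on the right of $\{e_1,e_\rho\}$. So the real content lies in the second case.

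Assume then that $B_\ell$ is on the left of $\{e_1,e_\rho\}$, and without loss of generality take $k=1$, so that $f_{S_2}(E_\ell)=\emptyset$ (the case $k=2$ is entirely symmetric, with $v_{\ell,2}$ replacing $v_{\ell,1}$ and every occurrence of ``leaves'' replaced by ``enters''). I intend to show that every $B_\ell$-path in $G(A)$ has $v_{\ell,1}$ as an endnode and leaves it. Once this is established, Lemma \ref{lemdigraphutile} (applied with $v_\ell = v_{\ell,1}$) forces all the $E_\ell$-paths $E_\ell^k$ into a single equivalence class, whence $J_\ell^2 = \emptyset$; and Lemma \ref{lembonsaicel} then yields that $N_\ell$ is a network matrix.

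For the common-endnode claim I distinguish the two possible origins of a $B_\ell$-path. For each $j \in f_{S_1}(E_\ell)$, Lemma \ref{lemCentralElj}(i)--(ii) gives a unique $B_\ell$-path with edge index set $E_{\ell j}$ leaving $v_{\ell,1}$. For each $j \in f_{S_0}(E_\ell)$, if any, Lemma \ref{lemCentralElS}(ii) says that $E_{\ell j}$ consists exactly of the edges of $B_\ell$ lying on the basic cycle. By Lemma \ref{lemroot}, these edges form the basic path $p(v_{\ell,1}, v_{\ell,2})$, which is nonempty iff $v_{\ell,1} \neq v_{\ell,2}$; since $j\in f^*(E_\ell)$ forces $E_{\ell j}\neq\emptyset$, the case $v_{\ell,1}=v_{\ell,2}$ is incompatible with $f_{S_0}(E_\ell)\neq\emptyset$. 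Because $A$ is nonnegative and the basic cycle is oriented from $w_1$ toward $w_{\rho-1}$ via $e_i = ]w_{i-1},w_i]$, the path $p(v_{\ell,1}, v_{\ell,2})$ is a single directed $B_\ell$-path from $v_{\ell,1}$ to $v_{\ell,2}$, so in particular it also leaves $v_{\ell,1}$.

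The main obstacle will be the clean treatment of the $f_{S_0}$-generated $B_\ell$-paths: one has to confirm, using Proposition \ref{propBlsubstems} and the handcuff structure of the fundamental circuit of $f_j$ with $j \in S_0$, that this circuit contributes exactly one $B_\ell$-path rather than a pair of paths leaving a common node, and that this path is consistently oriented from $v_{\ell,1}$ to $v_{\ell,2}$ rather than the reverse. All of this is packaged in Lemmas \ref{lemCentralElS} and \ref{lemroot} together with the orientation convention $e_1=[w_1,w_\rho]$, $e_\rho=]w_{\rho-1},w_\rho]$ fixed at the start of the section. Once the common-endnode property is secured, the conclusion is a direct appeal to Lemmas \ref{lemdigraphutile} and \ref{lembonsaicel}.
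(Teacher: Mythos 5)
Your proof is correct and follows essentially the same route as the paper: the right-hand case is dispatched by Lemma \ref{lemright}, and the left-hand case reduces to showing every $B_\ell$-path leaves $v_{\ell,1}$ (respectively enters $v_{\ell,2}$), after which Lemma \ref{lemdigraphutile} gives $J_\ell^2=\emptyset$ and the $v_{\ell,1}$-rooted network representation gives that $N_\ell$ is a network matrix; you merely spell out the $f_{S_0}$-generated paths that the paper leaves implicit. One small caveat: Lemma \ref{lemroot} is stated only for \emph{shared} bonsais, which an $S_k$-dominated bonsai is not, so you should instead invoke directly the elementary fact (proved the same way, via the paths from $v_{\ell,1}$ to $w_1$ and from $v_{\ell,2}$ to $w_{\rho-1}$ avoiding $B_\ell$) that the edges of $B_\ell$ on the basic cycle form the directed path $p(v_{\ell,1},v_{\ell,2})$.
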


\begin{proof}
Let $B_\ell$ be some $S_1$-dominated bonsai. (The case $S_2$ instead of $S_1$ is similar.) If $B_\ell$ is on the left of $\{e_1,e_\rho\}$, then all $B_\ell$-paths are leaving $v_{\ell ,1}$. 
By Lemma \ref{lemdigraphutile} and definition of $\sim_{E_\ell}$, $J_\ell^2=\emptyset$. Clearly,
the bonsai $B_\ell$ is a  $v_{\ell ,1}$-rooted
network representation of $N_\ell$. If $B_\ell$ is on the right of $\{e_1,e_\rho\}$, the conclusion results from Lemma \ref{lemright}.
\end{proof}\\

\section{The procedure CentralI}\label{sec:S0empty}

Througout this section we assume that $S_0 = \emptyset$, and $\epsilon=1$ (except in the procedures). We provide a proof of Theorem \ref{thmS0empty}. Suppose that $A$ has a $\{1,\rho\}$-central representation $G(A)$. If we look at the succession of bonsais intersecting the path from $w_1$ to $w_{\rho-1}$ in $G_0(A)$, we may identify a first group (maybe empty) of $S_1$-dominated  bonsais, a second with shared bonsais and a third with 
$S_2$-dominated  bonsais. This motivates the following definition. Given  a $\{1,\rho\}$-central representation $G(A)$ of $A$, a nonempty ordered set $U$ of at most two shared bonsais  is said to be \emph{left-extreme}\index{left-extreme} if the following holds.

\begin{itemize}
\item[a) ] $U=(E_u)$.\\
The bonsai $B_u$ intersects the basic cycle on the left of $\{e_1,e_\rho\}$ and for any shared bonsai $B_\ell$ intersecting the basic cycle on the left of $\{e_1,e_\rho\}$, we have $v_{\ell ,1}=v_{\ell ,2}\in B_u$.

\item[b) ] $U=(E_u,E_{u'})$. \\
The bonsais $B_u$ and $B_{u'}$ contain each one at least one edge of the basic cycle, $B_u$ is preceding $B_{u'}$ and any shared bonsai containing edges of the basic cycle is succeeding $B_{u}$ and preceding $B_{u'}$. 
\end{itemize}

In the case where $|U|=2$, $U$ is also called a \emph{left-extreme pair}. When it does not exist a shared bonsai containing at least one edge of the basic cycle, a left-extreme set may be not unique. In Figure \ref{fig:central2}, the ordered pair $(E_1,E_2)$ is left-extreme.
The notion of left-extreme set is very important and has inspired the definition of left-compatible set which will be given later. Lemma \ref{lemCellularleft-extremecomp} below shows that a left-extreme set is left-compatible. The procedure CentralI is based on a subroutine which is performed for every left-compatible set, as long as a $\{1,\rho\}$-central representation of $A$ has not been found. On the other hand, a $U$-spanning pair $(j_1,j_2)$ is such that $j_1\in S_1$, $j_2 \in S_2$ and if $|U|=2$ and $A$ has a $\{1,\rho\}$-central representation $G(A)$, then the whole basic cycle in $G(A)$
is "spanned" by the union of the fundamental circuits of $f_{j_1}$ and $f_{j_2}$. The following proposition deals with the case where there is no shared bonsai on the left of $\{e_1,e_\rho\}$ in some $\{1,\rho\}$-central representation of $A$.

\begin{prop}\label{propCellularnetwork}
Suppose that $S_0=\emptyset$ and there exists  a $\{1,\rho\}$-central  representation $G(A)$ of $A$ such that  all bonsais on the left of $\{ e_1, e_\rho\} $ are not shared. Then $A$ is a network matrix.
\end{prop}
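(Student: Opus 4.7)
The strategy is to construct an explicit network representation $H$ of $A$ by surgically modifying the $\{1,\rho\}$-central representation $G(A)$. Recall that the basic $1$-tree of $G(A)$ contains exactly one bidirected edge $e_1=[w_1,w_\rho]$ and directed edges $e_2,\ldots,e_\rho$ forming the path $w_1\to w_2\to\cdots\to w_\rho$, and that any nonbasic edge $f_j$ with $j\in S_1\cup S_2$ must be bidirected (otherwise its fundamental circuit would be a negative cycle containing the single bidirected edge $e_1$ or $e_\rho$, contradicting Corollary~\ref{corBidirectedCircuit}). Since $S_0=\emptyset$, Lemma~\ref{lemdigraph12} ensures no nonbasic edge's fundamental circuit traverses the entire basic cycle, so these circuits split cleanly into the $S_1$-type (using $e_1$ but not $e_\rho$), $S_2$-type (using $e_\rho$ but not $e_1$) and $\overline{S^*}$-type (using neither). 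Moreover, since no left bonsai is shared, Proposition~\ref{propSkwatered} gives that every left bonsai $B_\ell$ satisfies $J_\ell^2=\emptyset$ and has a $v_\ell$-rooted network representation.

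The construction of $H$ proceeds by introducing a fresh vertex $w_0$, deleting the bidirected edge $e_1$, and inserting a new directed basic edge $\tilde{e}_1$ incident with $w_0$ (say $(w_0,w_\rho)$), which inherits the row label of $e_1$. This turns the basic subgraph into a spanning tree on $n+1$ vertices with $n$ edges. Each nonbasic bidirected edge $f_j$ with $j\in S_1\cup S_2$ is then replaced by a directed nonbasic edge; for $j\in S_1$ (resp.\ $j\in S_2$), one endpoint of this replacement is placed at $w_0$ and the other at the endpoint of $f_j$ lying on the $S_1$-side (resp.\ $S_2$-side) of the basic cycle. Switching operations are applied at bonsai vertices to ensure all surviving edges are directed; this is possible on the left because each left bonsai $B_\ell$ admits the required consistent orientation (thanks to $J_\ell^2=\emptyset$ and $N_\ell$ being network), and on the right by Lemma~\ref{lemright}.

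The technical content of the proof is the column-by-column verification that the fundamental cycle of each nonbasic edge in the spanning tree of $H$ recovers the corresponding column of $A$ with the correct signs. For $j\in\overline{S^*}$ this is immediate, because the fundamental circuit in $G(A)$ was a positive cycle disjoint from $e_1$, hence unchanged in $H$. For $j\in S_2$ the fundamental cycle remains the same path in the original basic tree (which did not use $e_1$), and the replacement directed edge contributes the correct row-$\rho$ entry via the path through $e_\rho$. The main obstacle lies in the case $j\in S_1$: one must show that the unique tree-path in $H$ from the chosen endpoint of the replacement edge back to $w_0$ runs through exactly the basic edges that appeared in the original handcuff fundamental circuit of $f_j$, namely the stem of $f_j$ inside the left bonsais together with the relevant portion of the basic cycle and the new edge $\tilde{e}_1$. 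Here the non-sharedness assumption is crucial: it guarantees that the $S_1$-part and $S_2$-part of the left side of the basic cycle do not interleave, so the new endpoint of each replacement edge is unambiguously determined and the resulting path picks up neither stray edges from $S_2$-dominated bonsais nor spurious edges from the right side of the cycle. Once this is verified in all cases, $H$ is the desired network representation and $A$ is a network matrix.
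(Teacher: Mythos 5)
Your construction does not reproduce the columns of $A$, and the error is structural rather than cosmetic. First, a preliminary slip: in a $\{1,\rho\}$-central representation only $e_1=[w_1,w_\rho]$ is bidirected while $e_\rho=]w_{\rho-1},w_\rho]$ is directed, so for $j\in S_2$ the fundamental circuit of $f_j$ contains no bidirected basic edge and $f_j$ must be \emph{directed}, not bidirected as you claim; likewise, since $S_0=\emptyset$, Lemma~\ref{lemdigraph12} rules out handcuff circuits for $j\in S^*$, so the $S_1$-circuits are positive cycles, not handcuffs. The fatal problem is the surgery itself. After you delete $e_1$ and attach the fresh pendant vertex $w_0$ to $w_\rho$ via $\tilde e_1$, the unique tree path from $w_0$ to any vertex of $G_0(A)$ must begin with $\tilde e_1$ and then traverse $e_\rho,e_{\rho-1},\dots$, i.e.\ the arc $w_\rho-w_{\rho-1}-\cdots$ of the opened cycle. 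But for $j\in S_1$ the true column has support $\{1,2,\dots,i\}$ on the cycle (the arc $w_\rho-w_1-\cdots-w_i$ through $e_1$) together with the stem in a left bonsai and a path on the right; your replacement edge's fundamental cycle instead picks up rows $1,\rho,\rho-1,\dots,i+1$ and misses the right-hand path entirely. No choice of which endpoint of $f_j$ to retain fixes this, because the obstruction is that you cut the cycle open by removing $e_1$, which lies in the interior of every $S_1$-circuit. Symmetrically, rerouting an $S_2$-edge to $w_0$ injects row $1$ into a column whose support must avoid it.

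The paper's proof cuts the cycle at the right place instead: because no left bonsai is shared (and $S_0=\emptyset$), the attachment points of the $S_1$-dominated and $S_2$-dominated bonsais occupy disjoint, non-interleaved arcs of the path $w_1-\cdots-w_{\rho-1}$, so there is a node $w_k$ separating them. Splitting the cycle at $w_k$ leaves every fundamental circuit intact as a tree path (each such circuit uses only one of the two arcs incident to $w_k$). One then replaces $e_1=[w_1,w_\rho]$ by the directed edge $]w_1,w_\rho]$ and reverses the orientation of all edges in the subtree containing $w_1$, which turns the unique bidirected edge into a directed one without disturbing any column. This is where the hypothesis that no left bonsai is shared actually enters — it guarantees the existence of the separating node $w_k$ — whereas in your argument non-sharedness is invoked only vaguely and the hypothesis does not repair the broken paths.
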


\begin{proof}
Let $G(A)$ be a basic $\{1,\rho\}$-central  representation of $A$ such that all bonsais on the left of $\{ e_1, e_\rho\} $ are not shared. So one can cut the cycle at some node, by separating the $S_1$-dominated bonsais from the $S_2$-dominated ones. Then, replacing the bidirected edge $e_1=[w_1,w_\rho]$ by $]w_1, w_\rho]$ and reversing the orientation of all edges in the maximal subtree containing $w_1$ but not $e_1$ yields a network representation of $A$.
\end{proof}\\

Using Proposition \ref{propCellularnetwork}, since $A$ is not a network matrix, whenever $A$ is $\{1,\rho\}$-central, there always exists a left-extreme set of bonsais. Before formulating the notions of left-compatible set and $U$-spanning pair precisely, we study the following problem. Provided that $A$ has a $\{1,\rho\}$-central representation and given a shared bonsai $B_\ell$ on the left of $\{e_1,e_\rho\}$, is it possible to compute the indexes of edges belonging to the basic cycle and $B_\ell$? Surprisingly, the answer is yes, provided that the left-extreme set of bonsais is known and of cardinality $2$, and using the nonnegativity of the matrix $A$.

For any shared bonsai $E_u$ ($1\le u \le b$), we define $$I_{\cap}(E_u) = \underset{j\in S_1, j' \in S_2}{\cup} E_u\cap s(A_{\bullet j})\cap s(A_{\bullet j'})$$ 
and $$I(E_u)=  \underset{E_\ell :\, \m{sensitive} }{ \underset{
j\in f^*(E_\ell) \cap f^*(E_u)}{\cup}}  E_{uj}. $$ A vertex $\beta$ in the local connecter set of $E_u$ ($\beta \in \bar f^*(E_u)$) is said to be \emph{$S_k$-blue}\index{blue@$S_k$-blue}, for some $k\in \{ 1,2 \}$, if $s(A_{\bullet \beta}) \cap E_{u S_k} \neq \emptyset$. For any shared bonsai $E_u$ ($1\le u \le b$), under a certain condition, the following procedure computes a subset of $E_u$ denoted by $I_k(E_u)$, for some $k\in \{1,2\}$, whose interpretation appears in the next lemma.

\begin{tabbing}
\textbf{Procedure\,\,Create-Ik($E_u$,$k$)}\\

\textbf{Input:} A shared bonsai $E_u$ and an index $k\in \{1,2\}$.\\
\textbf{Output: }\= Either a row index subset $I_k(E_u)$ of $E_u$, or determines that $A$ has no\\
\>   $\{1,\rho\}$-central representation with a left-extreme pair in which $E_u$ is the  first\\
\>   bonsai for $k=1$ (respectively, second for $k=2$). \\

1)\verb"  "\= {\bf if }\= $E_u$ is  jointly shared, {\bf then}\\ 
\> \> let $I_k(E_u)= \cap_{j\in f_{S_k}(E_u)} \{ÊE_{uj} \, : \, I_\cap(E_u) \subseteq E_{uj} \}$; \\

\>{\bf otherwise } \\

2) \> \>  compute a yellow minimal path $h$ in $H_{E_{uS^*}}(A_{E_u \times\bar f^*(E_u)})$ from some $S_1$-blue  vertex $\beta_1$ \\
\> \> to a $S_2$-blue $\beta_2$; let $R_k=s(A_{\bullet \beta_k}) \cap E_{uS_k}$;\\ 

3)  \> \> compute $j_k \in S_k$ such that $R_k \subseteq E_{uj_k}$; if such a vertex ($j_k$) in $S_k$ does not exist, then \\
\> \>STOP: output that $A$ has no $\{1,\rho\}$-central representation with a left-extreme pair, \\
\> \> in which $E_u$ is the first bonsai for $k=1$ (resp., second for $k=2$); \\

4) \> \> {\bf if }\= the length of $h$ is even, {\bf then}\\ 
 \> \> \> let $I_k(E_u)= \cup_{j\in f_{S_k}(E_u)} \{ÊE_{uj} \, : \,E_{uj} \cap R_k=\emptyset\,; \, E_{uj} \subseteq E_{uj_k} \}$;\\
\> \> {\bf otherwise}  \\
 \> \> \> let $I_k(E_u)= \cap_{j\in f_{S_k}(E_u)} \{ÊE_{uj} \, : \,   R_k \subseteq E_{uj} \}$;\\
  \> \>{\bf endif} \\
\> {\bf endif} \\
\> output $I_k(E_u)$;

\end{tabbing}

\begin{lem}\label{lemCellularexcomp}
Suppose that $A$ has a $\{1,\rho\}$-central representation $G(A)$ and assumption $\mathscr{A}$ is satisfied. Let $U$ be a left-extreme set of bonsais. Then we have
\begin{itemize}

\item[(i)] If some bonsai $E_\ell \notin U$ is jointly shared and $B_\ell$ is on the left of $\{e_1,e_\rho\}$, then $I_\cap(E_\ell)$ corresponds to the index set of edges belonging to $B_\ell$ and the basic cycle.

\item[(ii)] If $U=(E_u)$ and there exists at least one sensitive bonsai distinct from $B_u$, then $I(E_u)$ corresponds to the index set of edges belonging to $B_u$ and the basic cycle.

\item[(iii)] If $U=(E_u,E_{u'})$, then $I_1(E_u)$ (respectively, $I_2(E_{u'})$) output by the procedure Create-Ik corresponds to the index set of edges belonging to $B_u$ (respectively, $B_{u'}$) and the basic cycle.

\end{itemize}
\end{lem}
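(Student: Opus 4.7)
The plan is to prove the three statements by exploiting Lemma \ref{lemroot} (to pin down the shape of a shared bonsai on the left of $\{e_1,e_\rho\}$) together with Lemma \ref{lemCentralElS} (which describes $E_{\ell S_1}$ and $E_{\ell S_2}$ as in- and out-rooted subtrees) and the structural information on $U$ that comes from being left-extreme. In every case I will argue that the set output by the construction equals $I(v_{\ell,1},v_{\ell,2})$, i.e.\ the edge-index set of the basic-cycle subpath contained in the relevant bonsai.

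For part (i), since $E_\ell$ is jointly shared it is by definition not sensitive, so Lemma \ref{lemroot} forces $v_{\ell,1}\neq v_{\ell,2}$ and the path $p(v_{\ell,1},v_{\ell,2})$ lies on the basic cycle. By Lemma \ref{lemCentralElS}, for $j\in S_1$ the set $E_{\ell j}$ is the edge-index set of the unique directed path in $B_\ell$ generated by $f_j$ leaving $v_{\ell,1}$, and for $j'\in S_2$ the set $E_{\ell j'}$ enters $v_{\ell,2}$. An edge $e_i$ of $B_\ell$ belongs to $s(A_{\bullet j})\cap s(A_{\bullet j'})$ for some such $j,j'$ precisely when it lies on both a path leaving $v_{\ell,1}$ and a path entering $v_{\ell,2}$, and this happens iff $e_i\in p(v_{\ell,1},v_{\ell,2})$; hence $I_{\cap}(E_\ell)=I(v_{\ell,1},v_{\ell,2})$.

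For part (ii), the hypothesis $U=(E_u)$ and clause~(a) of the definition of left-extreme force every shared bonsai $B_\ell$ on the left with $\ell\neq u$ to satisfy $v_{\ell,1}=v_{\ell,2}\in B_u$; if moreover $B_\ell$ is sensitive (hence shared) then by Lemma~\ref{lembeta2} and Lemma~\ref{lem2rid}, $B_\ell$ lies on the left, so the common node $v_{\ell,1}\in B_u$ lies on the basic cycle by Lemma~\ref{lemroot}. For any $j\in f^*(E_\ell)\cap f^*(E_u)$ the fundamental circuit of $f_j$ meets $B_\ell$ at $v_{\ell,1}$ and therefore must travel along the basic cycle through $B_u$ to reach an endpoint of $e_1$ or $e_\rho$; this shows $E_{uj}$ is the edge-index set of a subpath of the basic cycle lying in $B_u$ and containing the ``cycle-side'' of $v_{\ell,1}$. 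Conversely, every basic-cycle edge of $B_u$ is traversed by some such fundamental circuit because one of the two extremal sensitive bonsais (which exist by hypothesis and the left-extreme assumption) produces it. Taking the union over all sensitive $E_\ell$ and all $j\in f^*(E_\ell)\cap f^*(E_u)$ therefore recovers exactly $I(v_u,v_u^*)$, the edge-index set of $B_u\cap\text{(basic cycle)}$.

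For part (iii), I treat $I_1(E_u)$ (the case $I_2(E_{u'})$ is symmetric). Split on whether $E_u$ is jointly or disjointly shared. If $E_u$ is jointly shared, argue as in (i) that $v_{u,1}\neq v_{u,2}$, $p(v_{u,1},v_{u,2})$ lies on the basic cycle, and $I_{\cap}(E_u)=I(v_{u,1},v_{u,2})$; an $S_1$-path $E_{uj}$ leaving $v_{u,1}$ contains $I_{\cap}(E_u)$ precisely when it continues past $v_{u,2}$, and the intersection over all such $j\in f_{S_1}(E_u)$ singles out the basic-cycle portion of $B_u$ that is ``on the $v_{u,1}$ side,'' which is what we need. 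If $E_u$ is disjointly shared, the yellow minimal path $h$ in $H_{E_{uS^*}}(A_{E_u\times\bar f^*(E_u)})$ joining an $S_1$-blue vertex $\beta_1$ to an $S_2$-blue vertex $\beta_2$ exists because $v_{u,1}\neq v_{u,2}$ (no edge is simultaneously in $E_{uS_1}$ and $E_{uS_2}$), and Theorem~\ref{stem} translates the parity of the number of yellow edges of $h$ into whether $R_1$ and $R_2$ lie on a common directed subpath of $B_u$ or on opposite sides of $v_{u,1}=v_{u,2}$. In the even case the $S_1$-path $E_{uj_1}$ is strictly longer than the basic-cycle portion of $B_u$, so intersecting with paths $E_{uj}$ that miss $R_1$ cuts off exactly the edges not on the basic cycle; in the odd case $R_1$ itself already lies on the cycle and intersecting the $S_1$-paths that contain $R_1$ yields the basic-cycle part; in both cases the vertex $j_k$ is guaranteed to exist (otherwise the procedure stops correctly).

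The main obstacle is the disjointly shared case of (iii). The subtle point is to verify, using Theorem~\ref{stem} and Lemma~\ref{lembonsaicontra}, that the parity dichotomy in the procedure really corresponds to the two geometric configurations (``both $S_1$- and $S_2$-paths leave the same cycle-endpoint of $B_u$'' vs.\ ``they leave opposite cycle-endpoints''), and to rule out an alternative bicyclic representation. Assumption $\mathscr A$ (via Lemma~\ref{lembeta2}) is what keeps any sensitive interference out of $B_u$, so the existence of $j_1,j_2$ in step~3 is not a lucky accident but a necessary consequence whenever $A$ is $\{1,\rho\}$-central with left-extreme pair $(E_u,E_{u'})$.
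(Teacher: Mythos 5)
Your overall strategy (identify each computed set with $I(v_{\cdot,1},v_{\cdot,2})$ via Lemma~\ref{lemroot}, Lemma~\ref{lemCentralElS} and the parity machinery of Theorem~\ref{stem}) is the same as the paper's, but the proposal is missing the one ingredient the paper leans on everywhere: the left-extreme pair supplies \emph{witness columns} $j\in f_{S_1}(E_{u'})$ and $j'\in f_{S_2}(E_u)$ (nonempty because $E_{u'}$ and $E_u$ are shared) whose fundamental circuits traverse the \emph{entire} cycle portion of any bonsai lying between $B_u$ and $B_{u'}$, so that $E_{\ell j}\cap(\text{cycle})=I(v_{\ell,1},v_{\ell,2})$ exactly. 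In part (i) you never invoke $U$ at all, and your ``iff'' silently assumes that every edge of $p(v_{\ell,1},v_{\ell,2})$ lies on some $S_1$-path \emph{and} some $S_2$-path of $B_\ell$. That is false for joint sharedness alone: an $S_1$-column of $E_\ell$ may have an endnode at an interior node of $p(v_{\ell,1},v_{\ell,2})$, so its path stops short, and $E_{\ell S_1}\cap E_{\ell S_2}$ can then be a proper terminal subpath. The paper closes this by noting $B_\ell$ succeeds $B_u$ and precedes $B_{u'}$, so the columns of $f_{S_1}(E_{u'})$ and $f_{S_2}(E_u)$ give $I(v_{\ell,1},v_{\ell,2})\subseteq I_\cap(E_\ell)$. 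The same omission undermines part (iii): your upper bounds (``the intersection singles out the basic-cycle portion'', ``cuts off exactly the edges not on the cycle'') need a column $j$ with $E_{uj}=I(v_{u,1},v_{u,2})$ \emph{exactly} — the paper's equation~(\ref{eqnCellularex}) — to cap the intersection and to witness the union; without it the intersection could exceed, and the union fall short of, the cycle portion. (Note also that in the even case the procedure Create-Ik takes a \emph{union}, not an intersection.)

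Two further points are outright wrong as stated. In the jointly shared case of (iii) you assert $I_\cap(E_u)=I(v_{u,1},v_{u,2})$; in fact $I_\cap(E_u)=T_{u,1}\cap T_{u,2}$ is in general only a subpath of $p(v_{u,1},v_{u,2})$ with one endnode at $v_{u,2}$ (the paper's~(\ref{eqnCellularexT})) — recovering the full path from this partial information is precisely why $I_1(E_u)$ is defined as an intersection over the $S_1$-paths containing $I_\cap(E_u)$. And in part (ii) your ``conversely'' direction appeals to ``the two extremal sensitive bonsais'', whose existence is not guaranteed (the hypothesis gives only one sensitive bonsai $\neq B_u$, attached at some node of $B_u$ on the cycle, a priori possibly interior). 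The paper instead uses assumption $\mathscr{A}$ together with Lemma~\ref{lembonsaicontra} to show the attachment node must be $v_{u,1}$ or $v_{u,2}$, whence each individual $E_{uj}$ with $j\in f^*(E_\ell)\cap f^*(E_u)$ already equals $I(v_{u,1},v_{u,2})$; alternatively one can argue that the $S_1$- and $S_2$-columns of a single sensitive bonsai jointly sweep both sides of its attachment node, but you make neither argument. As it stands the proposal has genuine gaps in all three parts.
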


\begin{proof}
\begin{itemize}
\item[(i)] Since any edge with index in $I_\cap(E_\ell)$ lies on the basic cycle, from the definition of $U$, it follows that $|U|=2$
and $I_\cap (E_\ell) \subseteq I(v_{\ell ,1},v_{\ell ,2})$. Let $U=(E_u,E_{u'})$. By definition of $U$, $B_\ell$ is succeeding  $B_u$ and preceding $B_{u'}$. This implies that for any  $j\in f_{S_1}(E_{u'})$ and $j'\in f_{S_2}(E_u)$, $I(v_{\ell ,1},v_{\ell ,2})= E_u\cap s(A_{\bullet j}) \cap s(A_{\bullet j'}) \subseteq I_\cap (E_\ell)$.  Thus $ I_\cap(E_\ell)= I(v_{\ell ,1},v_{\ell ,1})$.

\item[(ii)] Suppose that $U=(E_u)$ and there exists at least one sensitive bonsai distinct from $B_u$. For any sensitive bonsai $B_{\ell }$, $\ell\neq u$, let us prove that $v_{\ell ,1}=v_{\ell ,2}$, and  $v_{\ell ,1}=v_{u,1}$ or $v_{\ell ,1}=v_{u,2}$. 

The fact that $v_{\ell ,1}=v_{\ell ,2}$ follows from the defintion of $U$. Moreover, by Lemma \ref{lemroot}, $v_{\ell ,1}$, $v_{u,1}$ and $v_{u,2}$ lie on the basic cycle. If $v_{\ell ,1}\notin B_u$, then we may assume that $B_\ell$ is succeeding $B_u$, hence $p(v_{\ell ,1},v_{u,2})$ is contained in the fundamental circuit of $f_j$ and $f_{j'}$ for any $j\in f_{S_1}(E_\ell)$ and $j'\in f_{S_2}(E_u)$, which contradicts the definition of $U$. Thus $v_{\ell ,1}\in B_u$. Suppose, to the contrary, that $v_{\ell ,1}$ is an inner node of the path $p(v_{u,1},v_{u,2})$. Then, for any $j\in f_{S_1}(E_\ell)$ and $j'\in f_{S_2}(E_\ell)$, $B_{u j}$ (respectively, $B_{u j'}$) is a $B_u$-path entering (respectively, leaving) $v_\ell$. 
So $B_u$ is $S_1$, $S_2$ linked to $B_\ell$, and by assumption $\mathscr{A}$, we have $J_u^2=\emptyset$ and $N_u$ is  a network matrix. Hence, since $B_u$ is  a network representation of $A_{E_u\times \bar f^*(E_u)}$,
Lemma \ref{lembonsaicontra} implies that $E_{u j} \nsim_{E_u} E_{u j'}$, which contradicts $J_u^2=\emptyset$.
Thus, $v_{\ell ,1}=v_{u,1}$ or $v_{\ell ,1}=v_{u,2}$. 

Therefore, either $f^*(E_\ell)\cap f^*(E_u)=\emptyset$ and $v_{u,1}=v_{u,2}$, or $f^*(E_\ell) \cap f^*(E_u)\neq \emptyset$ and $E_{uj}=I(v_{u,1},v_{u,2})$ for all $j\in f^*(E_\ell) \cap f^*(E_u)$. This implies that $I(E_u)$ represents the index set of edges belonging to $B_u$ and the basic cycle.

\item[(iii)] Suppose that $U=(E_{u},E_{u'})$.
We prove that $I_1(E_u)=I(v_{u,1},v_{u,2})$. (By symmetry, it can be proved that $I_2(E_{u'})=I(v_{u',1},v_{u',2})$.) 
By definition of a left-extreme pair, $B_{u}$ contains at least one edge of the basic cycle and is shared. Therefore, by Lemma \ref{lemroot}, $v_{u,1}\neq v_{u,2}$ and the path $p(v_{u,1},v_{u,2})$ corresponds to the intersection of $B_{u}$ with the basic cycle in $G(A)$. Notice that
$f_{S_1}(E_{u'})\neq \emptyset$ and
$p(v_{u,1},v_{u,2})$ is in the fundamental circuit of any edge with index in $f_{S_1}(E_{u'})$. Thus 

\begin{equation}\label{eqnCellularex}
I(v_{u,1},v_{u,2})=E_{uj} \m{ for all } j\in f_{S_1}(E_{u'}).
\end{equation}

Moreover, recall that $T_{u,k}$ denotes the tree with edge index set $E_{u S_k}$ for $k=1$ and $2$. By Lemma \ref{lemCentralElS} (i) and statement (\ref{eqnCellularex}), it results that 

\begin{equation}\label{eqnCellularexT}
T_{u,1}\cap T_{u,2} \m{ is a subpath of } p(v_{u,1},v_{u,2})
\m{ with one endnode equal to } v_{u,2}.
\end{equation}


\quad \, Suppose first that $B_{u}$ is  jointly shared. 
So $I_\cap(E_{u})\neq \emptyset$. Let  $j\in f_{S_1}(E_{u})$ such that $I_\cap(E_{u}) \subseteq E_{uj}$.
Since $I_\cap(E_{u})$ is the edge index set of the subgraph $T_{u,1}\cap T_{u,2}$, by (\ref{eqnCellularexT}), it follows that the node $v_{u,2}$ is contained in the $B_u$-path $B_{uj}$,  and clearly $v_{u,1} \in B_{uj}$; hence $I(v_{u,1},v_{u,2}) \subseteq E_{uj}$. So $I(v_{u,1},v_{u,2}) \subseteq I_1(E_{u})$. Moreover,
using (\ref{eqnCellularex}), we have that 
\begin{eqnarray*}
I_1(E_{u})&  = & \cap_{j\in f_{S_1}(E_u)} \{ÊE_{uj} \, : \, I_\cap(E_u) \subseteq E_{uj} \} \\
& \subseteq  & \cap_{j\in f_{S_1}(E_{u'})} \{ÊE_{uj} \, : \, I_\cap(E_u) \subseteq E_{uj} \} \\
& =  & I(v_{u,1},v_{u,2}).
\end{eqnarray*}
We conclude that $I(v_{u,1},v_{u,2}) = I_1(E_{u})$. See Figure \ref{fig:central6} for an example.

\begin{figure}[h!]
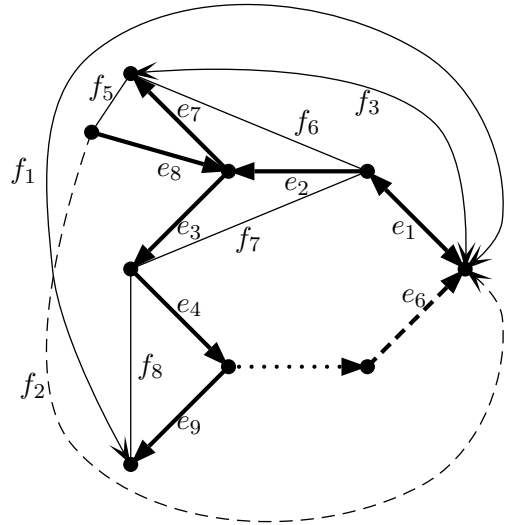

\vspace{.4cm}
$
\begin{array}{cc}

\begin{array}{c}
S_1=\{1,3\}; \,\,\, S_2=\{2\};\\
\\
E_1=\{2,3,7,8\}; \,\,\, E_2=\{4,9\};\\ 
\\
E_{11}=\{2,3\}; \,\,\, E_{13}=\{2,7\}; \,\,\, E_{12}=\{3,8\};\\
I_\cap(E_1)=\{3\};
 
\end{array}   &

\psset{xunit=1.3cm,yunit=1.3cm,linewidth=0.5pt,radius=0.1mm,arrowsize=7pt,
labelsep=1.5pt,fillcolor=black}

\pspicture(-2,1)(3.7,3.5)

\pscircle[fillstyle=solid](0,1){.1}
\pscircle[fillstyle=solid](1,0){.1}
\pscircle[fillstyle=solid](1,2){.1}
\pscircle[fillstyle=solid](2.42,0){.1}
\pscircle[fillstyle=solid](2.42,2){.1}
\pscircle[fillstyle=solid](3.42,1){.1}
\pscircle[fillstyle=solid](-.4,2.4){.1}
\pscircle[fillstyle=solid](0,3){.1}
\pscircle[fillstyle=solid](0,-1){.1}

\psline[linewidth=1.6pt,arrowinset=0]{<->}(3.42,1)(2.42,2)
\rput(2.8,1.4){$e_{1}$}

\psline[linewidth=1.6pt,arrowinset=0]{->}(2.42,2)(1,2)
\rput(1.7,1.85){$e_{2}$}

\psline[linewidth=1.6pt,arrowinset=0]{->}(1,2)(0,1)
\rput(.6,1.4){$e_{3}$}

\psline[linewidth=1.6pt,arrowinset=0]{->}(0,1)(1,0)
\rput(.6,.6){$e_{4}$}

\psline[linewidth=1.6pt,arrowinset=0,linestyle=dotted]{->}(1,0)(2.42,0)

\psline[linewidth=1.6pt,arrowinset=0,linestyle=dashed]{->}(2.42,0)(3.42,1)
\rput(2.9,.7){$e_{6}$}

\psline[linewidth=1.6pt,arrowinset=0]{->}(1,2)(0,3)
\rput(.6,2.6){$e_{7}$}

\psline[linewidth=1.6pt,arrowinset=0]{<-}(1,2)(-.4,2.4)
\rput(.4,2){$e_{8}$}

\psline[linewidth=1.6pt,arrowinset=0]{->}(1,0)(0,-1)
\rput(.6,-.6){$e_{9}$}

\pscurve[arrowinset=.5,arrowlength=1.5]{<->}(0,-1)(-.8,1)(-.4,3.2)(3,3.2)(3.8,1.5)(3.42,1)
\rput(-1.1,2){$f_{1}$}

\pscurve[arrowinset=.5,arrowlength=1.5,linestyle=dashed]{->}(-.4,2.4)(-.6,-.8)(3,-1.2)(3.8,.5)(3.42,1)
\rput(-1,-.2){$f_{2}$}

\pscurve[arrowinset=.5,arrowlength=1.5]{<->}(0,3)(3.1,2.5)(3.42,1)
\rput(2.42,2.7){$f_{3}$}



\psline[arrowinset=.5,arrowlength=1.5]{-}(0,3)(-.4,2.4)
\rput(-.3,2.85){$f_{5}$}

\psline[arrowinset=.5,arrowlength=1.5]{-}(0,3)(2.42,2)
\rput(1.8,2.5){$f_{6}$}

\psline[arrowinset=.5,arrowlength=1.5]{-}(0,1)(2.42,2)
\rput(1.2,1.3){$f_{7}$}

\psline[arrowinset=.5,arrowlength=1.5]{-}(0,1)(0,-1)
\rput(.2,0){$f_{8}$}

\endpspicture 
\end{array}
$
\vspace{2.2cm}
\caption{A (partial) $\{1,6\}$-central representation of some binet $\{0,1\}$-matrix, where the pair $(E_1,E_2)$ is left-extreme and $E_1$ is jointly shared.} 
\label{fig:central6}

\end{figure}

\quad \,Now suppose that $B_{u}$ is disjointly shared. 
Let $h$ be a yellow minimal path in 
$H_{E_{uS^*}}(A_{E_u \times\bar f^*(E_u)})$ from some $S_1$-blue  vertex $\beta_1$ to a $S_2$-blue $\beta_2$; let $R_i=s(A_{\bullet \beta_i}) \cap E_{uS_i}$ for $i=1$ and $2$. 
Denote by $p_1$ and $p_2$ the paths with edge index sets $R_1$ and $R_2$, respectively. Since $A$ is nonnegative, by Lemma \ref{lemdefiWeight2}
$p_1$ is a directed path. As $p_1$ is contained in the tree $T_{u,1}$ consisting of directed $B_u$-paths leaving the node $v_{u,1}$, $p_1$ is contained in a $B_u$-path. 
Thus there exists $j_1\in S_1$ such that $R_1 \subseteq E_{u j_1}$ and step 3 in the procedure Create-Ik is justified. Then, as $B_u$ is disjointly shared, by statement (\ref{eqnCellularex}),
$T_{u,1}\cap T_{u,2}=\{ v_{u,2}\}$. Hence, since $p_1\subseteq T_{u,1}$, $p_2\subseteq T_{u,2}$ and $h$ is a yellow path in $H_{E_{u S^*}}(A_{E_u \times \bar f^*(E_u)})$, one can prove that 
$v_{u,2}$ is a common endnode of the paths $p_1$ and $p_2$. Since $p_2$ enters $v_{u,2}$, by Lemma \ref{lemstem2} the path $h$ has an even length if and only if $p_1$ leaves $v_{u,2}$. 


\quad \, Suppose that $h$ has an even length. So $p_1$ leaves $v_{u,2}$. Since $R_1 \subseteq E_{uj_1}$, it follows that $I(v_{u,1},v_{u,2}) \cap R_1= \emptyset$ and $I(v_{u,1},v_{u,2})\subseteq E_{u j_1}$. Using
(\ref{eqnCellularex}), this implies that 
\begin{eqnarray*}
I(v_{u,1},v_{u,2}) & = & \cup_{j\in f_{S_1}(E_{u'})} \{ÊE_{uj} \, : \,E_{uj} \cap R_1=\emptyset\,; \, E_{uj} \subseteq  E_{uj_1} \} \\
& \subseteq &  I_1(E_u).
\end{eqnarray*}
Now, if $j\in f_{S_1}(E_{u})$ is such that $E_{uj} \cap R_1= \emptyset $ and $E_{uj} \subseteq E_{u j_1}$, then $E_{uj} \subseteq I(v_{u,1}, v_{u,2})$. Therefore $I_1(E_{u})= I(v_{u,1},v_{u,2})$.

\quad \,  Finally, suppose that $h$ has an odd length. So $p_1$ is entering $v_{u,2}$ and has to be contained in $p(v_{u,1},v_{u,2})$. So $R_1 \subseteq I(v_{u,1},v_{u,2})$, and using (\ref{eqnCellularex}) this implies that 
\begin{eqnarray*}
I_1(E_u) & \subseteq &  \cap_{j\in f_{S_1}(E_{u'})} \{ÊE_{uj} \, : \,   R_1 \subseteq E_{uj} \} \\
& =  & I(v_{u,1},v_{u,2}). 
\end{eqnarray*}
Now, if $j\in f_{S_1}(E_{u})$ is such that $R_1 \subseteq E_{uj}$, then $I(v_{u,1},v_{u,2})\subseteq E_{uj}$. Thus $I_1(E_{u})= I(v_{u,1},v_{u,2})$. This concludes the proof.
\end{itemize}
\end{proof}\\

For all $1\le \ell \le b$, let $E_\ell'=E_\ell \cup \{1, \rhoÊ\}$,
$$A_\ell= A_{E_\ell'  \times f(E_\ell)} \m{ and } A_\ell^\cap= [A_\ell \, \, \chi^{E_\ell'}_{I_\cap(E_\ell) \cup \{1, \rho \}} ].$$ The row indexes $1$ and $\rho$ of a matrix $A_\ell$ ($1\le \ell\le b$) are called \emph{artificial} as well as the corresponding edges\index{artificial edge} in any network representation of $A_\ell$ (if one exists).
For any shared  bonsai $E_u$ ($1\le u \le b$), let $$L_u=[A_u \, \, \chi^{E_u'}_{I(E_u)\cup \{1,\rho \}} ]$$ if there exists a sensitive bonsai distinct from $E_u$, and $L_u=A_u$ otherwise. Suppose that 
for some $k\in \{1,2\}$, the procedure Create-Ik with input $E_u$ and $k$ has output a nonempty subset $I_k(E_u)$ of $E_u$. Then we define $$L_{u,k}= [A_u \, \, \chi^{E_u'}_{I_k(E_u) \cup \{1, \rho \}} ].$$
A nonempty ordered set $U$ of at most two shared bonsais is said to be \emph{left-compatible}\index{left-compatible} if the following holds.

\begin{itemize}

\item[case: ]  $U=(E_u)$.\\ 
The matrix $L_u$
has a network representation in which $e_1$ and $e_\rho$ are nonalternating.

\item[case: ] $U=(E_u,E_{u'})$.\\
The procedure Create-Ik with input $E_u$ and index $1$ (resp., $E_{u'}$ and index $2$) outputs a nonempty set $I_1(E_u)$ (resp., $I_2(E_{u'})$). Moreover, $L_{u,1}$ and $L_{u',2}$ 
are network matrices, $f_{S_1}(E_{u'}) \subseteq \{ \beta \in f_{S_1}(E_{u}) \, : \, E_{u\beta}=I_1(E_u) \}$ and $f_{S_2}(E_u) \subseteq \{ \beta \in f_{S_2}(E_{u'}) \, : \, E_{u'\beta} =I_2(E_{u'}) \}$. 

\end{itemize}

\begin{figure}[h!]
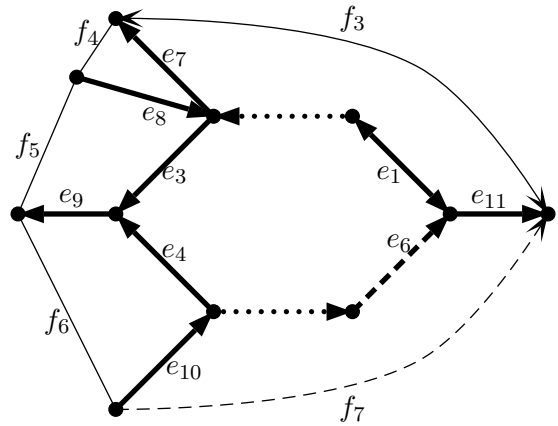

$
\begin{array}{cc}

\begin{array}{c}
E_1=\{3,4,7,8,9,10,11\};\\ 
\\

\begin{tabular}{c|c|c|c|c|c|}
   &  $f_3$ &  $f_4$ &  $f_5$ &  $f_6$ & $f_7$   \\
 \hline
$e_7$  &1&1&0&0&0 \\
 \hline
$e_8$  &0&1&1&0&0\\
\hline
$e_9$  &0&0&1&1&0\\
\hline
$e_{10}$  &0&0&0&1&1 \\
  \hline
$e_{11}$  &1&0&0&0&1\\
\hline

\end{tabular}
\end{array}   &

\psset{xunit=1.3cm,yunit=1.3cm,linewidth=0.5pt,radius=0.1mm,arrowsize=7pt,
labelsep=1.5pt,fillcolor=black}

\pspicture(-2,1)(3.7,3.5)

\pscircle[fillstyle=solid](0,1){.1}
\pscircle[fillstyle=solid](1,0){.1}
\pscircle[fillstyle=solid](1,2){.1}
\pscircle[fillstyle=solid](2.42,0){.1}
\pscircle[fillstyle=solid](2.42,2){.1}
\pscircle[fillstyle=solid](3.42,1){.1}
\pscircle[fillstyle=solid](4.42,1){.1}
\pscircle[fillstyle=solid](-.4,2.4){.1}
\pscircle[fillstyle=solid](0,3){.1}
\pscircle[fillstyle=solid](0,-1){.1}
\pscircle[fillstyle=solid](-1,1){.1}

\psline[linewidth=2pt,arrowinset=0]{<->}(3.42,1)(2.42,2)
\rput(2.8,1.4){$e_{1}$}

\psline[linewidth=2pt,arrowinset=0,linestyle=dotted]{->}(2.42,2)(1,2)

\psline[linewidth=2pt,arrowinset=0]{->}(1,2)(0,1)
\rput(.6,1.4){$e_{3}$}

\psline[linewidth=2pt,arrowinset=0]{<-}(0,1)(1,0)
\rput(.6,.6){$e_{4}$}

\psline[linewidth=2pt,arrowinset=0,linestyle=dotted]{->}(1,0)(2.42,0)

\psline[linewidth=2pt,arrowinset=0,linestyle=dashed]{->}(2.42,0)(3.42,1)
\rput(2.9,.7){$e_{6}$}

\psline[linewidth=2pt,arrowinset=0]{->}(1,2)(0,3)
\rput(.6,2.6){$e_{7}$}

\psline[linewidth=2pt,arrowinset=0]{<-}(1,2)(-.4,2.4)
\rput(.4,2){$e_{8}$}

\psline[linewidth=2pt,arrowinset=0]{->}(0,1)(-1,1)
\rput(-.45,1.16){$e_{9}$}

\psline[linewidth=2pt,arrowinset=0]{<-}(1,0)(0,-1)
\rput(.7,-.6){$e_{10}$}

\psline[linewidth=2pt,arrowinset=0]{->}(3.42,1)(4.42,1)
\rput(3.82,1.15){$e_{11}$}



\pscurve[arrowinset=.5,arrowlength=1.5]{<->}(0,3)(3.1,2.5)(4.42,1)
\rput(2.42,3){$f_{3}$}

\psline[arrowinset=.5,arrowlength=1.5]{-}(0,3)(-.4,2.4)
\rput(-.3,2.85){$f_{4}$}


\psline[arrowinset=.5,arrowlength=1.5]{-}(-1,1)(-.4,2.4)
\rput(-.9,1.7){$f_{5}$}

\psline[arrowinset=.5,arrowlength=1.5]{-}(-1,1)(0,-1)
\rput(-.6,-0.1){$f_{6}$}

\pscurve[arrowinset=.5,arrowlength=1.5,linestyle=dashed]{->}(0,-1)(3.1,-.5)(4.42,1)
\rput(2.42,-1){$f_{7}$}

\endpspicture 
\end{array}
$
\vspace{1.3cm}
\caption{A $\{1,6\}$-central representation of some binet $\{0,1\}$-matrix $A$ on the right, where $(E_1)$ is left-compatible and left-extreme, and
a non-network submatrix of $A$ on the left.} 
\label{fig:central7}

\end{figure}

\begin{figure}[h!]
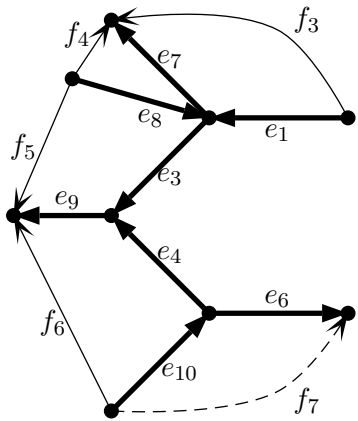

$
\begin{array}{cc}
L_1=
\begin{tabular}{c|c|c|c|c|c|}
   &  $f_3$ &  $f_4$ &  $f_5$ &  $f_6$ & $f_7$   \\
   \hline
$e_3$  &0&0&1&0&0 \\
\hline
$e_4$  &0&0&0&1&0 \\
 \hline
$e_7$  &1&1&0&0&0 \\
 \hline
$e_8$  &0&1&1&0&0\\
\hline
$e_9$  &0&0&1&1&0\\
\hline
$e_{10}$  &0&0&0&1&1 \\
  \hline
  \hline
$e_{1}$  &1&0&0&0&0\\
\hline
$e_{6}$  &0&0&0&0&1\\
\hline

\end{tabular}   &

\psset{xunit=1.3cm,yunit=1.3cm,linewidth=0.5pt,radius=0.1mm,arrowsize=7pt,
labelsep=1.5pt,fillcolor=black}

\pspicture(-2,1)(3.7,3.5)

\pscircle[fillstyle=solid](0,1){.1}
\pscircle[fillstyle=solid](1,0){.1}
\pscircle[fillstyle=solid](1,2){.1}
\pscircle[fillstyle=solid](2.42,0){.1}
\pscircle[fillstyle=solid](2.42,2){.1}
\pscircle[fillstyle=solid](-.4,2.4){.1}
\pscircle[fillstyle=solid](0,3){.1}
\pscircle[fillstyle=solid](0,-1){.1}
\pscircle[fillstyle=solid](-1,1){.1}


\psline[linewidth=2pt,arrowinset=0]{->}(2.42,2)(1,2)
\rput(1.7,1.85){$e_{1}$}

\psline[linewidth=2pt,arrowinset=0]{->}(1,2)(0,1)
\rput(.6,1.4){$e_{3}$}

\psline[linewidth=2pt,arrowinset=0]{<-}(0,1)(1,0)
\rput(.6,.6){$e_{4}$}

\psline[linewidth=2pt,arrowinset=0]{->}(1,0)(2.42,0)
\rput(1.7,.15){$e_{6}$}


\psline[linewidth=2pt,arrowinset=0]{->}(1,2)(0,3)
\rput(.6,2.6){$e_{7}$}

\psline[linewidth=2pt,arrowinset=0]{<-}(1,2)(-.4,2.4)
\rput(.4,2){$e_{8}$}

\psline[linewidth=2pt,arrowinset=0]{->}(0,1)(-1,1)
\rput(-.45,1.16){$e_{9}$}

\psline[linewidth=2pt,arrowinset=0]{<-}(1,0)(0,-1)
\rput(.7,-.6){$e_{10}$}




\pscurve[arrowinset=.5,arrowlength=1.5]{<-}(0,3)(1.7,2.9)(2.42,2)
\rput(2,3){$f_{3}$}

\psline[arrowinset=.5,arrowlength=1.5]{<-}(0,3)(-.4,2.4)
\rput(-.35,2.85){$f_{4}$}


\psline[arrowinset=.5,arrowlength=1.5]{<-}(-1,1)(-.4,2.4)
\rput(-.9,1.7){$f_{5}$}

\psline[arrowinset=.5,arrowlength=1.5]{<-}(-1,1)(0,-1)
\rput(-.6,-0.1){$f_{6}$}

\pscurve[arrowinset=.5,arrowlength=1.5,linestyle=dashed]{->}(0,-1)(1.7,-.8)(2.42,0)
\rput(2,-.9){$f_{7}$}

\endpspicture 
\end{array}
$
\vspace{1cm}
\caption{The matrix $L_1=A_1$ and a network representation of it such that $e_1$ and $e_6$ are nonalternating, with respect to the binet matrix whose a $\{1,6\}$-central representation is given in Figure \ref{fig:central7}.} 
\label{fig:central8}

\end{figure}

\noindent
The definition of left-compatible set is motivated by the next lemma. 
For an illustration, see Figures \ref{fig:central7} and \ref{fig:central8}.

\begin{lem}\label{lemCellularleft-extremecomp}
Suppose that $A$ has a $\{1,\rho\}$-central representation $G(A)$. Let $U$ be a left-extreme set of bonsais. Then $U$ is left-compatible.
\end{lem}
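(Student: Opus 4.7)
The plan is to analyse the two cases in the definition of left-compatibility separately, using the graphical interpretation provided by $G(A)$ together with Lemma~\ref{lemCellularexcomp}.

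First, I would treat the case $U=(E_u)$. By the definition of left-extreme, every shared bonsai $B_\ell \ne B_u$ on the left of $\{e_1,e_\rho\}$ satisfies $v_{\ell,1}=v_{\ell,2} \in B_u$, while all remaining bonsais on the left are $S_k$-dominated for some $k \in \{1,2\}$, so by Proposition~\ref{propSkwatered} their bonsai matrices are network matrices. I would construct a network representation of $L_u$ by starting from $G(A)$, contracting all basic edges outside $B_u \cup \{e_1,e_\rho\}$, replacing the bidirected edge $e_1=[w_1,w_\rho]$ by the directed edge $]w_1,w_\rho]$, and reversing the orientations of all edges of the resulting left half in order to cut the basic cycle; the point is that no shared bonsai forces both orientations on the $p(v_{u,1},v_{u,2})$-portion of $B_u$, so the reorientation is consistent. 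In this network representation $e_1$ and $e_\rho$ both leave $w_\rho$, hence they are nonalternating. When a sensitive bonsai distinct from $B_u$ exists, Lemma~\ref{lemCellularexcomp}(ii) guarantees that $I(E_u)=I(v_{u,1},v_{u,2})$, so the extra column $\chi^{E_u'}_{I(E_u)\cup\{1,\rho\}}$ is the edge-incidence vector of the directed path $p(v_{u,1},v_{u,2}) \cup \{e_1,e_\rho\}$ in the construction.

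Next I would treat the case $U=(E_u,E_{u'})$. By Lemma~\ref{lemCellularexcomp}(iii), the procedure Create-Ik applied to $(E_u,1)$ and $(E_{u'},2)$ returns the nonempty sets $I_1(E_u)=I(v_{u,1},v_{u,2})$ and $I_2(E_{u'})=I(v_{u',1},v_{u',2})$. A network representation of $L_{u,1}$ is obtained by taking the subtree $B_u \subseteq G(A)$ and attaching a directed edge $e_1$ and a directed edge $e_\rho$ so that $e_\rho$ enters the endpoint of $p(v_{u,1},v_{u,2})$ closer to $w_{\rho-1}$ and $e_1$ enters the endpoint closer to $w_1$; by Lemma~\ref{lemCentralElj} every column of $A_u$ is then the incidence vector of a directed path, and the appended column $\chi^{E_u'}_{I_1(E_u)\cup\{1,\rho\}}$ is the incidence vector of the directed path consisting of $p(v_{u,1},v_{u,2})$ together with $e_1,e_\rho$. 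The same argument, with the roles of $u$ and $u'$ and of $S_1$ and $S_2$ interchanged, shows $L_{u',2}$ is a network matrix. For the inclusion $f_{S_1}(E_{u'}) \subseteq \{\beta \in f_{S_1}(E_u) : E_{u\beta}=I_1(E_u)\}$, let $j \in f_{S_1}(E_{u'})$. Since $B_u$ precedes $B_{u'}$ on the basic cycle and $1 \in s(A_{\bullet j})$, the fundamental circuit of $f_j$ must traverse the whole subpath $p(v_{u,1},v_{u,2})$, so $I_1(E_u) \subseteq E_{uj}$; by Lemma~\ref{lemCentralElj}, $E_{uj}$ is the edge-index set of a directed $B_u$-path leaving $v_{u,1}$, and any edge of this path outside $p(v_{u,1},v_{u,2})$ would make the path leave the basic cycle at an internal node, a contradiction; hence $E_{uj}=I_1(E_u)$. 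A symmetric argument gives the second inclusion.

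The main obstacle lies in the case $|U|=1$: one has to check that the reorientation/contraction procedure applied to $G(A)$ really produces a consistent network representation of $L_u$, which requires handling all the other bonsais on the left of $\{e_1,e_\rho\}$ (sensitive ones reduced to a single node in $B_u$, and $S_k$-dominated ones whose bonsai matrices are network matrices by Proposition~\ref{propSkwatered}) and arguing that, in the resulting digraph, each column of $A_u$ and the extra column $\chi^{E_u'}_{I(E_u)\cup\{1,\rho\}}$ is an edge-incidence vector of a directed path; the use of assumption~$\mathscr{A}$ via Lemma~\ref{lembeta2} is essential here to ensure that no sensitive bonsai has both a $B_u$-path entering and a $B_u$-path leaving some common node on $p(v_{u,1},v_{u,2})$, which would obstruct the nonalternating orientation of $e_1$ and $e_\rho$.
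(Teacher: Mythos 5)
Your treatment of the case $U=(E_u,E_{u'})$ is sound and in fact more complete than the paper's (which dismisses it with ``similar''): you correctly build the network representations of $L_{u,1}$ and $L_{u',2}$ by attaching $e_1$ and $e_\rho$ as pendant directed edges to the tree $B_u$ (resp.\ $B_{u'}$), and your verification of the inclusion $f_{S_1}(E_{u'})\subseteq\{\beta\in f_{S_1}(E_u)\,:\,E_{u\beta}=I_1(E_u)\}$ via Lemma~\ref{lemCellularexcomp}(iii) and Lemma~\ref{lemCentralElj} is the right argument.

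The case $U=(E_u)$, however, contains a genuine flaw. Your construction keeps both $e_1$ and $e_\rho$ incident with the single node $w_\rho$ and contracts everything outside $B_u\cup\{e_1,e_\rho\}$; the result then contains the closed walk $e_1$, $p(v_{u,1},v_{u,2})$, $e_\rho$ among its \emph{basic} edges, i.e.\ it is a $1$-tree and not a tree. Reversing orientations of the ``left half'' does not cut this cycle --- cutting requires splitting the central node into two copies --- so what you obtain is at best a binet representation, not the network representation of $L_u$ that the definition of left-compatibility demands (a basis of an RIMD must correspond to a spanning tree, and a basic cycle makes the columns of the basis dependent). The repair is exactly what the paper does, and it bypasses the reorientation issue entirely: take the tree $B_u\subseteq G(A)$ and attach two \emph{new} pendant directed edges, $e_1$ entering $v_{u,1}$ and $e_\rho$ leaving $v_{u,2}$. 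Lemma~\ref{lemCentralElj}(ii) then makes every column of $A_u$ the incidence vector of a directed path (a $B_u$-path generated by $f_j$ with $j\in S_1$ leaves $v_{u,1}$ and so concatenates with $e_1$; one with $j\in S_2$ enters $v_{u,2}$ and concatenates with $e_\rho$), the extra column is the directed path $e_1,\,p(v_{u,1},v_{u,2}),\,e_\rho$ by Lemma~\ref{lemCellularexcomp}(ii), and that same path traverses $e_1$ and $e_\rho$ both forwardly, giving the required nonalternation. With this substitution your proof is correct; note also that the appeal to assumption~$\mathscr{A}$ and Lemma~\ref{lembeta2} is not needed for this lemma once the construction is local to $B_u$.
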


\begin{proof}
Suppose first that $U=(E_u)$. By definition, $B_u$ is shared.
If there is no sensitive bonsai distinct from $B_u$, then $I(E_u)=\emptyset$, otherwise $I(E_u)=I(v_{u,1}, v_{u,2})$ by Lemma \ref{lemCellularexcomp} (ii).
Consider the subgraph $B_u$ of $G(A)$ with one more basic edge $e_1$ entering $v_{u,1}$ and a basic edge $e_\rho$ leaving $v_{u,2}$. This yields
a basic network representation of $L_u$ in which $e_1$ and $e_\rho$ are nonalternating. The proof in the case $U=(E_u,E_{u'})$ is similar using Lemma \ref{lemCellularexcomp} (iii).
\end{proof}\\

Given a left-compatible set $U$, we define a \emph{$U$-spanning pair}\index{spanning@$U$-spanning pair} $(j_1,j_2)$ of column indexes as follows. If $U=(E_u)$, then $j_1 \in f_{S_1}(E_u)$, $j_2 \in f_{S_2}(E_u)$. If $U=(E_u,E_{u'})$, then $j_1 \in f_{S_1}(E_{u'})$, $j_2 \in f_{S_2}(E_{u})$. Observe that such a pair always exists.
For any $U$-spanning pair $(j_1,j_2)$, let $$V(j_1,j_2)= \{ E_\ell \in V \, :Ê\, j_1 \m{ or } j_2 \in f^*(E_\ell)\m{ or } E_\ell \m{ is shared } \}$$ and $R(j_1,j_2)= \cup_{E_\ell \in V(j_1,j_2)} E_\ell\cup \{1,\rho\}$. 

\begin{lem}\label{lemCentralRj}
Let $U$ be a left-compatible set and $(j_1,j_2)$ a $U$-spanning pair. If $A$ has a $\{1,\rho\}$-central representation $G(A)$ such that $U$ is left-extreme,
then $R(j_1,j_2)$ is the edge index set of a basic $1$-tree.
\end{lem}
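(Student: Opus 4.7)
The plan is to show that the subgraph $T_0$ of $G(A)$ with edge index set $R(j_1,j_2)$ is exactly a basic $1$-tree sitting inside the basic maximal $1$-tree $T$ of $G(A)$. Since $\{1,\rho\} \subseteq R(j_1,j_2)$ and the basic cycle $C$ (with edge index set $\{1,\ldots,\rho\}$) is the unique cycle of $T$, any basic $1$-tree containing $\{1,\rho\}$ must take $C$ as its cycle. So the statement reduces to proving (i) $\{1,\ldots,\rho\} \subseteq R(j_1,j_2)$ and (ii) the basic subgraph $T_0$ is connected.

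For (i), I would fix $i$ with $2 \le i \le \rho-1$, let $E_\ell$ be the bonsai containing $i$, and show $E_\ell \in V(j_1,j_2)$. Since the edge $e_i$ lies on the basic cycle, $e_i$ appears in the fundamental circuit of any $f_j$ for which $i \in s(A_{\bullet j})$. The strategy is to argue that $e_i$ is either covered by $f_{j_1}$'s or $f_{j_2}$'s fundamental circuit (yielding $j_1 \in f^*(E_\ell)$ or $j_2 \in f^*(E_\ell)$), or else $E_\ell$ is shared. In case $U=(E_u)$, the definition of left-extremity tells us that $B_u$ is the unique shared bonsai that can contain cycle edges on the left, while every other shared bonsai on the left touches $C$ in a single node inside $B_u$ by Lemma \ref{lemroot}. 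Hence cycle edges on the left lie either in $B_u$ (so in the shared $E_u \in V(j_1,j_2)$) or in a non-shared bonsai $E_\ell$ which, since $f^*(E_\ell)\neq \emptyset$ and $S_0=\emptyset$, must be $S_1$- or $S_2$-dominated; invoking Lemmas \ref{lemCentralElj}, \ref{lemCentralElS} and the nonnegativity of $A$ (via Lemma \ref{lemdefiWeight2}), I would show that the fundamental circuits of $f_{j_1}$ and $f_{j_2}$ together traverse every portion of $C$ that lies outside $B_u$, so $j_1 \in f^*(E_\ell)$ or $j_2 \in f^*(E_\ell)$ as required. In case $U=(E_u,E_{u'})$, Lemma \ref{lemCellularexcomp}(iii) identifies $I_1(E_u)=I(v_{u,1},v_{u,2})$ and $I_2(E_{u'})=I(v_{u',1},v_{u',2})$ as the cycle segments contained in $B_u$ and $B_{u'}$; the remaining segments of $C$ (namely $p(w_1,v_{u,1})$, $p(v_{u,2},v_{u',1})$ and $p(v_{u',2},w_{\rho-1})$) are traversed by the fundamental circuits of $f_{j_1}$ and $f_{j_2}$, so the same reasoning gives $E_\ell \in V(j_1,j_2)$ for cycle-incident bonsais.

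For (ii), each $B_\ell$ with $E_\ell \in V(j_1,j_2)$ is by construction a connected subtree of $T$ attached to $C$; once (i) guarantees that all of $C$ sits in $T_0$, adjoining each such $B_\ell$ preserves connectedness. Hence $T_0$ is a connected basic subgraph containing $C$ as its unique cycle, i.e.\ a $1$-tree.

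The main obstacle is Step (i): one needs to precisely describe how the fundamental circuits of $f_{j_1}$ and $f_{j_2}$ sweep across the portions of $C$ not covered by $B_u$ (or $B_u \cup B_{u'}$ in the pair case). Concretely, one must verify that, thanks to left-extremity of $U$ and the structure of nonnegative binet matrices (cf.\ Figure \ref{fig:Apositive}), there is no ``gap'' between the portion of $C$ reached from $e_1$ by $f_{j_1}$'s fundamental circuit, the piece contained in $B_u$ (and $B_{u'}$), and the portion reached from $e_\rho$ by $f_{j_2}$'s fundamental circuit. This is the only step where the specific definitions of $U$-spanning pair and left-extremity really come into play, and handling the two sub-cases $U=(E_u)$ and $U=(E_u,E_{u'})$ separately seems unavoidable.
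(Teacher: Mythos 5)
Your decomposition (first show the whole basic cycle lies in $R(j_1,j_2)$, then show connectivity) is the same one the paper uses, but two pieces of your argument are not sound as written. First, you explicitly leave Step (i) as an acknowledged ``obstacle'': you describe what would have to be verified about how the fundamental circuits of $f_{j_1}$ and $f_{j_2}$ sweep the cycle, but you do not verify it. This is the heart of the lemma. The paper compresses it into one observation: by the definition of a $U$-spanning pair together with left-extremity of $U$, the set $s(A_{\bullet j_1})\cup s(A_{\bullet j_2})$ is itself the edge index set of a basic $1$-tree containing the whole basic cycle --- for $U=(E_u,E_{u'})$ one has $j_1\in f_{S_1}(E_{u'})$ and $j_2\in f_{S_2}(E_u)$, so the fundamental circuit of $f_{j_1}$ covers the cycle from $w_1$ at least to $v_{u',1}$, that of $f_{j_2}$ covers it from $v_{u,2}$ to $w_{\rho-1}$, and since $B_u$ precedes $B_{u'}$ these two stretches overlap. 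Your sketch points in this direction but stops short of the argument.

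Second, your Step (ii) contains a false assertion: a bonsai $B_\ell$ with $E_\ell\in V(j_1,j_2)$ is \emph{not} in general ``attached to $C$''. A shared bonsai $B_\ell$ on the right of $\{e_1,e_\rho\}$ with $j_1,j_2\notin f^*(E_\ell)$ may be separated from the central node $w_\rho$ by a basic path passing through other bonsais, so adjoining $B_\ell$ to the cycle does not by itself preserve connectedness. The missing observation (which is exactly how the paper closes this case) is that every bonsai containing an edge of the basic path from $v_\ell$ to $w_\rho$ must itself be shared --- the stems of the $S_1$- and $S_2$-witnesses for $B_\ell$ both traverse that path --- hence every such bonsai also lies in $V(j_1,j_2)$ and the connecting path is contained in $R(j_1,j_2)$. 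For bonsais met by $f_{j_1}$ or $f_{j_2}$ the connectivity is fine for the reason you give (a nonempty part of each sits inside the $1$-tree $s(A_{\bullet j_1})\cup s(A_{\bullet j_2})$), and for shared bonsais on the left Lemma \ref{lemroot} places $v_{\ell,1}$ and $v_{\ell,2}$ on the cycle; it is only the right-hand shared case that your argument misses.
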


\begin{proof}
Suppose that $A$ has a $\{1,\rho\}$-central representation $G(A)$ such that $U$ is left-extreme.
Then,
by definition of a $U$-spanning pair, it follows that $s(A_{\bullet j_1}) \cup s(A_{\bullet j_2})$ is the edge index set of a basic $1$-tree. Let $B_\ell$ be a shared bonsai. If $B_\ell$ is on the left of $\{e_1,e_\rho\}$, then by Lemma \ref{lemroot} $v_{\ell ,1}$ and $v_{\ell ,2}$ are on the basic cycle. If $B_\ell$ is on the right of $\{e_1,e_\rho\}$, then any bonsai $B_{\ell '}$ containing an edge of
the basic path from $v_\ell$ to $w_\rho$ has to be shared. This completes the proof.
\end{proof}\\

Given a left-compatible set $U$ and a $U$-spanning pair $(j_1,j_2)$,
the Proposition \ref{propOpposite} below informs us on the bipartiteness of the graph $F^*(\overline{V(j_1,j_2)})$, in  case where $A$ is a $\{1,\rho\}$-central matrix.
By assuming that $F^*(\overline{V(j_1,j_2)})$ is bipartite, we denote by $\mathcal{U}_1,\ldots,\mathcal{U}_\xi$ the connected components of $F^*(\overline{V(j_1,j_2)})$ and for each connected component $\mathcal{U}_{\kappa}$ ($1\le \kappa \le \xi$), let $\mathcal{U}_{\kappa}= \mathcal{U}_{\kappa}^0 \biguplus \mathcal{U}_{\kappa}^1$ be a bipartition of $\mathcal{U}_{\kappa}$ into two colour classes. For any $E_\ell\in \overline{V(j_1,j_2)}$, let $$Opposite(E_\ell)=  ( \cup_{j \in f^*(E_\ell)} s(A_{\bullet j})- \cap_{j \in f^*(E_\ell)} s(A_{\bullet j}) ) \cap R(j_1,j_2)$$, and 
for any $1\le \kappa \le \xi$ and $i\in\{0,1\}$, let 
$Opposite(\mathcal{U}_{\kappa}^i)=\cup_{E_\ell \in \mathcal{U}_{\kappa}^i} Opposite(E_\ell)$. 
The following propositions show a part of Theorem \ref{thmS0empty}.

\begin{prop}\label{propOpposite}
Let $U$ be a left-compatible set and $(j_1,j_2)$ a $U$-spanning pair. Suppose that $A$ has a $\{1,\rho\}$-central  representation $G(A)$ such that $U$ is left-extreme. Then

\begin{itemize}

\item[(i)] The graph $F^*(\overline{V(j_1,j_2)})$ is bipartite.

\item[(ii)] For any $1 \le \kappa \le \xi$, $i,i'\in \{ 0 , 1\}$, $E_\ell \in \mathcal{U}_{\kappa}^i$ and $E_{\ell '} \in \mathcal{U}_{\kappa}^{i'}$, $B_\ell$ and $B_{\ell '}$ are at different sides of $\{ e_1, e_\rho\}$ if and only if $i\neq i'$.

\item[(iii)] For any $1 \le \kappa \le \xi$, $i\in \{ 0 , 1\}$ and $E_\ell \in \mathcal{U}_{\kappa}^i$, the bonsai $B_\ell$ and the basic subgraph of $G(A)$ with edge index set $Opposite(\mathcal{U}_{\kappa}^i)$ are on both sides of $\{ e_1, e_\rho\}$.

\end{itemize}

\end{prop}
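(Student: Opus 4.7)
The plan is to color each bonsai $E_\ell \in \overline{V(j_1,j_2)}$ by the side of $\{e_1,e_\rho\}$ on which its realization $B_\ell$ lies in $G(A)$, and show this side-coloring is a valid 2-coloring of $F^*(\overline{V(j_1,j_2)})$. First I would record the basic structural fact that since $V(j_1,j_2)$ contains every shared bonsai, each $E_\ell \in \overline{V(j_1,j_2)}$ is non-shared; hence $E_\ell$ is $S_1$-dominated or $S_2$-dominated, and by Proposition~\ref{propSkwatered}, $J_\ell^2=\emptyset$ and $N_\ell$ is a network matrix. Moreover, since $S_0=\emptyset$, every column $j\in f^*(E_\ell)$ lies in $S_1\cup S_2$, so the fundamental circuit of $f_j$ is a non-closed path meeting the basic cycle through exactly one of the central edges $e_1,e_\rho$.

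For part~(i), suppose $E_\ell$ and $E_{\ell'}$ are adjacent in $F^*(\overline{V(j_1,j_2)})$, so there is $j \in f^*(E_\ell)\cap f^*(E_{\ell'})$ and neither arc $(E_\ell,E_{\ell'})$ nor $(E_{\ell'},E_\ell)$ lies in $D$. If both $B_\ell, B_{\ell'}$ lie on the right of $\{e_1,e_\rho\}$, Proposition~\ref{lemdigraphstem2} yields an arc between them in $D$, a contradiction. If both lie on the left, the fundamental circuit of $f_j$ enters $G_0(A)$ through $e_1$ or $e_\rho$ and traverses a directed path containing edges of both $B_\ell$ and $B_{\ell'}$, so the stem issued from $f_j$ passes from the closer of $v_\ell,v_{\ell'}$ (to the basic cycle) to the farther one, and Proposition~\ref{lemdigraphstem} produces an arc of $D$ between them, again contradicting the hypothesis. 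Hence $B_\ell$ and $B_{\ell'}$ lie on opposite sides, so the side-coloring witnesses bipartiteness of $F^*(\overline{V(j_1,j_2)})$.

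Part~(ii) is then immediate: within each connected component $\mathcal{U}_\kappa$, the two color classes of any 2-coloring are determined up to a global swap, and by the argument of~(i) two bonsais of $\mathcal{U}_\kappa$ are on the same side of $\{e_1,e_\rho\}$ iff they lie in the same color class. Thus choosing the bipartition $\mathcal{U}_\kappa=\mathcal{U}_\kappa^0\uplus\mathcal{U}_\kappa^1$ so that $\mathcal{U}_\kappa^0$ indexes the bonsais on the left (say) and $\mathcal{U}_\kappa^1$ those on the right gives the stated equivalence.

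Part~(iii) is the delicate step. Fix $E_\ell \in \mathcal{U}_\kappa^i$; I would show that the basic subgraph with edge index set $Opposite(E_\ell)$ lies on the side of $\{e_1,e_\rho\}$ opposite to $B_\ell$. Since $E_\ell$ is, say, $S_1$-dominated and all columns $j\in f^*(E_\ell)$ satisfy $1\in s(A_{\bullet j})$, every fundamental circuit of $f_j$ (with $j\in f^*(E_\ell)$) contains the central edge $e_1$ and traverses a common stem inside $B_\ell$ plus a consistently oriented path lying on the opposite side of $\{e_1,e_\rho\}$; the edges of $R(j_1,j_2)$ common to \emph{every} such fundamental circuit form the basic path from $B_\ell$ up through $e_1$, while those traversed by \emph{some} but not all lie strictly on the opposite side. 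Taking the symmetric-difference intersected with $R(j_1,j_2)$ therefore isolates edges on that opposite side, so $Opposite(E_\ell)$ lies opposite to $B_\ell$. Since bonsais in the same class $\mathcal{U}_\kappa^i$ are on the same side (by~(ii)) while those in the other class $\mathcal{U}_\kappa^{1-i}$ are on the other side, taking the union over $\mathcal{U}_\kappa^i$ preserves this placement, yielding~(iii). The main obstacle is this last geometric accounting for $Opposite(\cdot)$, namely making precise how the union of stems of columns in $f^*(E_\ell)$ splits into a common part incident with $B_\ell$ and a variable part on the opposite side; everything else follows from Propositions~\ref{lemdigraphstem}, \ref{lemdigraphstem2}, and~\ref{propSkwatered} together with the bipartition argument above.
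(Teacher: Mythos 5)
Your proposal is correct and follows essentially the same route as the paper: part (i)/(ii) via the observation that two adjacent bonsais of $F^*(\overline{V(j_1,j_2)})$ lying on the same side of $\{e_1,e_\rho\}$ would force an arc of $D$ between them (the paper phrases this as "following the lines of the proof of Proposition~\ref{lemdigraphstem2}"), so the side-coloring is the bipartition; and part (iii) via the fact that the left-side portion of every fundamental circuit of $f_j$, $j\in f^*(E_\ell)$, is the common path from $v_{\ell,1}$ to $w_1$ while $B_\ell$ itself is disjoint from $R(j_1,j_2)$, so the variable part $Opposite(E_\ell)$ must lie on the opposite side. The only cosmetic difference is that you cite Proposition~\ref{lemdigraphstem} for the same-side-left case where the paper adapts the proof of Proposition~\ref{lemdigraphstem2}; the underlying stem argument is identical.
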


\begin{proof}
Let $E_\ell$ and $E_{\ell '}$ be two bonsais in 
$\overline{V(j_1,j_2)}$ such that $f^*(E_\ell) \cap f^*(E_{\ell '})\neq \emptyset$. Let $j\in f^*(E_\ell) \cap f^*(E_{\ell '})$. Assume $j\in S_1$ (the case $j\in S_2$ is similar). Since $V(j_1,j_2)$ contains all shared bonsais, $B_\ell$ and $B_{\ell '}$ are $S_1$-dominated and they both contain at least one edge of the fundamental circuit of $f_j$.
Following the lines of the proof of Proposition \ref{lemdigraphstem2}, one can prove that if $B_\ell$ and $B_{\ell '}$ are on the same side of $\{e_1,e_\rho\}$, then $(E_\ell,E_{\ell '})_{E_{\ell 'j}} \in D$ or $(E_{\ell '},E_\ell)_{E_{\ell j}} \in D$. 
Thus if $E_\ell$ and $E_{\ell '}$ are adjacent in $F^*(\overline{V(j_1,j_2)})$, then $B_\ell$ and $B_{\ell '}$ are at different sides of $\{e_1, e_\rho\}$. This induces a partition of $F^*(\overline{V(j_1,j_2)})$ into two colour classes, one for the bonsais on the left of $\{e_1, e_\rho\}$ and the other for those on the right of $\{e_1, e_\rho\}$. This completes the proof of (i) and (ii).

Now let $1 \le \kappa \le \xi$, $i\in \{ 0 , 1 \}$ and $E_\ell\in \mathcal{U}_{\kappa}^i$. We assume that
$B_\ell$ is $S_1$-dominated (the case $S_2$-dominated is similar). If $B_\ell$ is on the left of $\{e_1,e_\rho\}$, then the path from $v_{\ell ,1}$ to $w_1$ in $G_0(A)$ is in the fundamental circuit of every nonbasic edge with index in $f^*(E_\ell)$. Since $B_\ell$ does not contain any edge of the basic $1$-tree with edge index set $R(j_1,j_2)$
(see Lemma \ref{lemCentralRj}), we deduce that
the basic subgraph of $G(A)$ with edge index set $Opposite(E_\ell)$ is on the right of $\{e_1,e_\rho\}$.
Similarly, if $B_\ell$ is on the right of 
$\{e_1,e_\rho\}$, then the basic subgraph of $G(A)$ with edge index set $Opposite(E_\ell)$ is on the left of 
$\{e_1,e_\rho\}$. Combining this and part (ii) gives the desired result.
\end{proof}\\

Let $U$ be a left-compatible set and $(j_1,j_2)$ a $U$-spanning pair. We define relations $\prec^{j_1}$ and $\prec^{j_2}$ on $V(j_1,j_2)$. For $k=1$ and $2$ and $E_\ell,E_{\ell '} \in V(j_1,j_2)$, \index{relation!$\prec^{j_k}$}
$$E_\ell \prec^{j_k}E_{\ell '} \,\,\, \Leftrightarrow  \,\,\,
f_{S_k}(E_{\ell '}) \subseteq \{ \beta \in f_{S_k}(E_\ell) \,: \, E_{\ell \beta} = E_{\ell j_k} \}.$$ Clearly, the relations $\prec^{j_1}$ and $\prec^{j_2}$ are transitive. One can prove that if $U=(E_{u},E_{u'})$, then 
$E_{u} \prec^{j_1} E_{u'}$ and $E_{u'} \prec^{j_2} E_{u}$ (see the proof of Claim 2 at page \pageref{mycounter8}).
A bonsai $E_\ell\in V(j_1,j_2)\verb"\"U$ is said to be \emph{right-feasible}\index{right-feasible!bonsai} if 
\begin{itemize}
 
\item[$ \omega$.$0$ ]  $J_\ell^2= \emptyset$ and $N_\ell$ is a network matrix;

\end{itemize}

and \emph{left-feasible}\index{left-feasible!bonsai} if the following holds.
\begin{itemize}

\item If $E_\ell$ is disjointly shared, then 
\begin{enumerate}

\item[ $ \omega$.$1$  ] $E_\ell$ is sensitive;
\item[$ \omega$.$2$  ] for all $E_u \in U$, $E_u$ is not $S_1,S_2$ linked to $E_\ell$;
\item[$ \omega$.$3$  ] if $U=(E_u)$, then 
$E_{uj}=I(E_u)$ for all $j\in f_{S_k}(E_\ell)$ and some $k\in \{1,2\}$;
\item[$ \omega$.$4$ ] If $U=(E_{u},E_{u'})$, then either $E_{u j}=I_1(E_u)=I_2(E_{u'})$
for all $j\in f_{S_k}(E_\ell)$ and 
some $k\in \{1,2\}$, or $E_{u j}=I_1(E_u)$ and 
$E_{u' j'}=I_2(E_{u'})$ for 
all $j\in f_{S_1}(E_\ell)$ and $j'\in f_{S_2}(E_\ell)$;

\end{enumerate}
\item If $E_\ell$ is jointly shared, then 
\begin{itemize}

\item[$ \omega$.$5$ ] $A_\ell^\cap$ is a network matrix;
\item[$ \omega$.$6$ ] $|U|=2$ and writing $U=(E_u,E_{u'})$, we have
$I_\cap (E_\ell)=E_{\ell j_1}=E_{\ell j_2}$, $E_{u} \prec^{j_1} E_\ell \prec^{j_1} E_{u'}$ and $E_{u'} \prec^{j_2} E_\ell \prec^{j_2} E_{u}$.

\end{itemize}

\item If $E_\ell$ is $S_k$-dominated for some $k\in \{ 1,2 \}$,
then 
\begin{itemize}

\item[$ \omega$.$7$ ] $J_\ell^2= \emptyset$ and $N_\ell$ is a network matrix;
\item[$ \omega$.$8$ ] $E_u \prec^{j_k} E_\ell$ for all $E_u \in U\cup \{ E_{\ell '} \, :\, E_{\ell '} \m{ is sensitive }; \, j_k \in f^*(E_{\ell '}) \}$ or 
$E_\ell \prec^{j_k} E_u$ for all $E_u \in U\cup \{ 
E_{\ell '}\, : \, E_{\ell '}
\m{ is sensitive} \}$.
\end{itemize}

\end{itemize}

\begin{lem}\label{lemCellularfeasible}
Suppose that $A$ has a $\{1,\rho\}$-central  representation $G(A)$ and assumption $\mathscr{A}$ is satisfied. Let $U$ be a left-extreme set of bonsais, $(j_1,j_2)$ a $U$-spanning pair and $E_\ell \in V(j_1,j_2)\verb"\"U$. If $B_\ell$
is on the right of $\{ e_1,e_\rho\}$, then $E_\ell$ is right-feasible, otherwise left-feasible.
\end{lem}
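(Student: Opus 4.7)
The plan is to verify, case by case, the defining conditions of right- and left-feasibility according to the position of $B_\ell$ relative to $\{e_1,e_\rho\}$ and to the dominance type of $E_\ell$.

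The case where $B_\ell$ lies on the right of $\{e_1,e_\rho\}$ is immediate: Lemma \ref{lemright} gives $J_\ell^2=\emptyset$ and shows $N_\ell$ is a network matrix, which is condition $\omega.0$. So I would focus on the situation where $B_\ell$ is on the left of $\{e_1,e_\rho\}$ and split according to whether $E_\ell$ is disjointly shared, jointly shared, or $S_k$-dominated.

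If $E_\ell$ is disjointly shared, the first step is to establish $v_{\ell,1}=v_{\ell,2}$. When $|U|=1$ this is part of the definition of a left-extreme set; when $|U|=2$, I would invoke Lemma \ref{lem2rid}(2) together with assumption $\mathscr{A}$ to rule out the possibility $v_{\ell,1}\neq v_{\ell,2}$ for a disjointly shared $E_\ell\notin U$. Lemma \ref{lemroot} then upgrades the equality $v_{\ell,1}=v_{\ell,2}$ to $E_\ell$ being sensitive, giving $\omega.1$; property $\omega.2$ follows directly from Lemma \ref{lembeta2}. For $\omega.3$ and $\omega.4$, I would locate the common node $v_{\ell,1}\in B_u$ (in case $|U|=1$) or $v_{\ell,1}$ on $p(v_{u,1},v_{u,2})$ respectively $p(v_{u',1},v_{u',2})$ (in case $|U|=2$), arguing as in the proof of Lemma \ref{lemalpha} that the only way the $B_\ell$-paths issued from $f_j$ can reach the central node while respecting signs is through the entire relevant portion of the basic cycle; then reading off the consequent equalities $E_{uj}=I(E_u)$ (resp.\ $I_1(E_u)$, $I_2(E_{u'})$) follows from Lemma \ref{lemCellularexcomp}(ii)--(iii). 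The dichotomy in $\omega.4$ corresponds to whether $v_{\ell,1}$ is strictly between $B_u$ and $B_{u'}$ on the basic cycle or at an endnode where the $I_1$- and $I_2$-paths meet.

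If $E_\ell$ is jointly shared, Lemma \ref{lemCellularexcomp}(i) identifies $I_\cap(E_\ell)$ with $I(v_{\ell,1},v_{\ell,2})$, and since this requires $|U|=2$ with $B_\ell$ strictly between $B_u$ and $B_{u'}$, both parts of $\omega.6$ follow: the endpoints of $p(v_{\ell,1},v_{\ell,2})$ force $I_\cap(E_\ell)=E_{\ell j_1}=E_{\ell j_2}$, while the geometric nesting of the basic-cycle paths on either side translates directly into the chain $E_u\prec^{j_1}E_\ell\prec^{j_1}E_{u'}$ and its $j_2$-reverse. For $\omega.5$ one simply attaches the two artificial edges $e_1,e_\rho$ at $v_{\ell,1}$ and $v_{\ell,2}$ to the subgraph $B_\ell\subseteq G(A)$, producing a network representation of $A_\ell^\cap$. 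Finally, if $E_\ell$ is $S_k$-dominated, $\omega.7$ is immediate from Proposition \ref{propSkwatered}, and $\omega.8$ follows by tracking the position of the contact segment of $B_\ell$ with the basic cycle relative to those of $B_u$ for $E_u\in U$ and to those of the sensitive bonsais whose global connecter set contains $j_k$: since the stems all run through consistently oriented pieces of the basic cycle, their contact segments are totally ordered by inclusion, and $B_\ell$ is either the innermost or the outermost. The main obstacle will be the careful bookkeeping in cases $\omega.4$ and $\omega.8$, because the ordering relations $\prec^{j_k}$ require tracking at which endpoint ($v_{u,1}$ versus $v_{u,2}$, resp.\ $v_{u',1}$ versus $v_{u',2}$) each $B_\ell$-path meets the basic cycle, and how this interacts with the disjointly/jointly shared distinction for the intermediate bonsais.
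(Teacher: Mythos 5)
Your overall architecture --- the right side dispatched by Lemma \ref{lemright}, then a three-way split on the left into disjointly shared, jointly shared, and $S_k$-dominated, resolved respectively via Lemmas \ref{lemroot}/\ref{lembeta2}/\ref{lemCellularexcomp}, Lemma \ref{lemCellularexcomp}(i), and Proposition \ref{propSkwatered} plus an ordering argument --- is exactly the paper's, and most of the individual verifications go through as you describe.

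The one step that does not work as written is your derivation of $v_{\ell,1}=v_{\ell,2}$ for a disjointly shared $E_\ell\notin U$ when $|U|=2$. Lemma \ref{lem2rid}(2) is a statement about a bonsai already known to be \emph{sensitive} and having $v_{\ell,1}\neq v_{\ell,2}$; you cannot invoke it for $E_\ell$ before establishing sensitivity, and sensitivity is precisely what you intend to derive (via Lemma \ref{lemroot}) from $v_{\ell,1}=v_{\ell,2}$ --- the argument is circular. Applying it instead to some other sensitive bonsai does not help, since none need exist. The paper closes this step with a direct observation that bypasses Lemma \ref{lem2rid} entirely: if $v_{\ell,1}\neq v_{\ell,2}$, then by Lemma \ref{lemroot} the path $p(v_{\ell,1},v_{\ell,2})$ lies on the basic cycle, so $B_\ell$ contains a basic-cycle edge; since $B_\ell$ is shared and $U=(E_u,E_{u'})$ is left-extreme, $B_\ell$ sits between $B_u$ and $B_{u'}$, so these cycle edges lie in the support of a column of $S_1$ and of a column of $S_2$, giving $I_\cap(E_\ell)\neq\emptyset$ and hence $E_{\ell S_1}\cap E_{\ell S_2}\neq\emptyset$, contradicting ``disjointly shared''. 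With that substitution the rest of your proof matches the paper's. (Two minor remarks: for $|U|=1$ your appeal to the definition of a left-extreme set is fine; and you verify $\omega.5$ explicitly, a point the paper's proof actually leaves implicit.)
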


\begin{proof}
If $B_\ell$ is on the right of $\{ e_1,e_\rho\}$, then by Lemma \ref{lemright} it is right-feasible. Now assume that $B_\ell$ is on the left of $\{ e_1,e_\rho\}$. 

Suppose first that $B_\ell$ is disjointly shared. Clearly, $B_\ell$ does not contain any edge of the basic cycle (otherwise $U=(E_{u},E_{u'})$ for some $1\le u,u' \le b$
and $B_\ell$ would be succeeding $B_{u}$ and preceding $B_{u'}$, which implies that $I_\cap(E_\ell)\neq \emptyset$). By Lemma \ref{lemroot}, $v_{\ell ,1}=v_{\ell ,2}$ and $B_\ell$ is sensitive.  For any $E_u\in U$, by Lemma 
\ref{lembeta2}, $B_u$ is not $S_1,S_2$ linked to $B_\ell$; this implies that $B_\ell$ is either preceding or succeeding $B_u$. In case where $U=(E_u)$,  using Lemma \ref{lemCellularexcomp} (ii), it follows that $E_{uj}=I(E_u)$ for all $j\in f_{S_1}(E_\ell)$ if $E_\ell$ is succeeding $E_u$, or $E_{uj}=I(E_u)$ for all $j\in f_{S_2}(E_\ell)$ otherwise.
In a similar way using  Lemma \ref{lemCellularexcomp} (iii), one can deal with the case $|U|=2$, proving that $E_\ell$ is left-feasible.

Secondly, suppose that $E_\ell$ is jointly shared. By Lemma \ref{lemCellularexcomp} (i), $B_\ell$ contains at least one edge of the basic cycle. So, by definition of $U$, $U=(E_u,E_{u'})$ for some $1\le u,u'\le b$, and
$B_\ell$ is succeeding $B_{u}$ and preceding $B_{u'}$. Then, using Lemma \ref{lemCellularexcomp} (i) and since $j_1 \in f_{S_1}(E_{u'})$ and $j_2 \in f_{S_2}(E_{u})$, we have $I_\cap(E_\ell) =E_{\ell j_1}=E_{\ell j_2}$, $E_{u} \prec^{j_1} E_\ell \prec^{j_1} E_{u'}$ and $E_{u'} \prec^{j_2} E_\ell \prec^{j_2} E_{u}$. 
Thus $E_\ell$ is left-feasible.

Finally, suppose that  $E_\ell$ is $S_1$-dominated (the case $S_2$ instead of $S_1$ is symmetric). By Proposition \ref{propSkwatered}, $J_\ell^2=\emptyset$ and $N_\ell$ is a network matrix. Moreover, since $E_\ell \in V(j_1,j_2)$, $j_1\in f^*(E_\ell)$. Then,
either $B_\ell$ contains al least one edge of
the basic cycle and so  $E_\ell \prec^{j_1} E_u$ for all $E_u \in U\cup \{ E_{\ell '} \, :\, E_{\ell '} \m{ is sensitive} \}$, or
$E_u \prec^{j_1} E_\ell$
for all $E_u \in U\cup \{ E_{\ell '} \, :\, E_{\ell '} \m{ is sensitive }; \, j_1 \in f^*(E_{\ell '}) \}$. This completes the proof.
\end{proof}\\

Let $U$ be a left-compatible set and $(j_1,j_2)$ a $U$-spanning pair. 
A pair of bonsais $E_\ell,E_{\ell '} \in V(j_1,j_2)-U$ 
such that $f^*(E_\ell) \cap f^*(E_{\ell '})\neq \emptyset$
is said to be \emph{right-feasible}\index{right-feasible!pair of bonsais} if 
\begin{itemize}
 
\item[$ \phi$.$1$ ]  $(E_\ell,E_{\ell '}) \in D$ or $(E_{\ell '},E_\ell) \in D$;

\end{itemize}
and \emph{left-feasible}\index{left-feasible!pair of bonsais} if the following holds.
\begin{enumerate}

\item[$ \phi$.$2$ ] The bonsais $E_\ell$ and $E_{\ell '}$ are not both sensitive.
\item[$ \phi$.$3$ ] If $E_\ell$ and $E_{\ell '}$ are jointly shared, then 
$E_\ell \prec^{j_k} E_{\ell '}$ and $E_{\ell '} \prec^{j_{k'}} E_\ell$ for some $k,k' \in \{ 1,2\}$, $k\neq k'$.
\item[$ \phi$.$4$ ] If $E_\ell$ and $E_{\ell '}$ are $S_k$-dominated for some $k\in \{ 1,2\}$, then $E_\ell \prec^{j_k} E_{\ell '}$ or $E_{\ell '} \prec^{j_k} E_\ell$.
\item[$ \phi$.$5$ ] If $E_\ell$ is jointly shared and $E_{\ell '}$ sensitive, then
$E_\ell$ is not $S_1,S_2$ linked to $E_{\ell '}$ and $E_\ell \prec^{j_k} E_{\ell '}$ for some $k\in \{ 1, 2 \}$.

\end{enumerate}

\begin{lem}\label{lemCellularpair}
Suppose that $A$ has a $\{1,\rho\}$-central  representation $G(A)$ and assumption $\mathscr{A}$ is satisfied. Let $U$ be a left-extreme set of bonsais  and $(j_1,j_2)$ a $U$-spanning pair. Let $E_\ell,E_{\ell '}\in V(j_1,j_2)-U$ such that $f^*(E_\ell) \cap f^*(E_{\ell '})\neq \emptyset$. If $B_\ell$ and $B_{\ell '}$ are both on the right (respectively, the left) of $\{ e_1,e_\rho\}$, then the pair $E_\ell$, $E_{\ell '}$ is right-feasible (respectively, left-feasible).
\end{lem}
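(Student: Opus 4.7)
The plan is to treat the two cases (both bonsais on the right, both on the left of $\{e_1,e_\rho\}$) separately, as they have very different graphical meaning.

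For the \emph{right case}, I would observe that the situation is essentially covered by Proposition \ref{lemdigraphstem2}: both $B_\ell$ and $B_{\ell'}$ lie in the component $G_1(A)$ and they share at least one fundamental-circuit edge via some $j\in f^*(E_\ell)\cap f^*(E_{\ell'})$. Following the argument of Proposition \ref{lemdigraphstem} one can then exhibit the basic path from $v_\ell$ to the central node that passes through $v_{\ell'}$ (or vice versa), so by Lemmas \ref{lemdefiWeight2} and \ref{lemright} (which yields $J_\ell^2=J_{\ell'}^2=\emptyset$ on the right), the arc $(E_\ell,E_{\ell'})$ or $(E_{\ell'},E_\ell)$ must belong to $D$. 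This gives $\phi.1$.

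For the \emph{left case}, I would verify $\phi.2$--$\phi.5$ one by one, relying heavily on Lemmas \ref{lemroot}, \ref{lem2rid}, \ref{lemCellularexcomp} and \ref{lembeta2}. First, $\phi.2$ follows because if both $B_\ell$ and $B_{\ell'}$ were sensitive and shared a column index, they would be $S_1,S_2$-linked (since each is shared and they meet a common $f_j$), contradicting Lemma \ref{lembeta2}. For $\phi.3$, Lemma \ref{lemCellularexcomp}(i) forces $U=(E_u,E_{u'})$ and $I_\cap(E_\ell)=I(v_{\ell,1},v_{\ell,2})$, similarly for $E_{\ell'}$; since the intersections of $B_\ell$ and $B_{\ell'}$ with the basic cycle are two disjoint subpaths of $p(w_1,w_{\rho-1})$, one must be succeeding the other, and combining this with the ordering of $j_1\in f_{S_1}(E_{u'})$ and $j_2\in f_{S_2}(E_u)$ gives the two $\prec^{j_k}$ relations with opposite $k$. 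For $\phi.4$, if both are $S_1$-dominated, say, then $j_1\in f^*(E_\ell)\cap f^*(E_{\ell'})$, and since each bonsai has its stem issued from $f_{j_1}$ travelling through the central node, one of the two stems must be a subpath of the other; this yields the required $\prec^{j_1}$ relation. Finally for $\phi.5$, Lemma \ref{lembeta2} prevents the $S_1,S_2$-linking, and then the ordering argument of Lemma \ref{lemCellularexcomp}(i) applied to the jointly shared $E_\ell$ combined with the fact that the sensitive $E_{\ell'}$ sits at a single point on the basic cycle (Lemmas \ref{lem2rid}, \ref{lemroot}) shows that for one $k\in\{1,2\}$, every column in $f_{S_k}(E_{\ell'})$ sees the full intersection $E_{\ell j_k}$ of $B_\ell$ with the cycle, giving $E_\ell\prec^{j_k}E_{\ell'}$.

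The main obstacle I expect is $\phi.5$, and to a lesser extent $\phi.3$: these require a careful case analysis of the relative positions of the jointly shared bonsai's intersection with the basic cycle versus the single cycle-vertex of the sensitive bonsai (or the other jointly shared bonsai's interval). The combinatorics of ``succeeding vs.\ preceding vs.\ nested intervals'' on the path $p(w_1,w_{\rho-1})$ has to be enumerated carefully so as to rule out the configurations that would force $E_\ell$ and $E_{\ell'}$ to be $S_1,S_2$-linked (contradicting assumption $\mathscr{A}$ via Lemma \ref{lembeta2}) while simultaneously producing the correct $\prec^{j_k}$ direction. Once these cycle-level positional arguments are made rigorous, the conclusion drops out uniformly.
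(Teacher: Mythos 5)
Your overall strategy coincides with the paper's: the right-hand case is dispatched exactly as you propose, by the argument of Proposition \ref{lemdigraphstem} (two bonsais on the right meeting a common fundamental circuit lie on a common subpath of $T_{G_1(A)}$, so transitivity of $\prec_D$ gives $\phi.1$), and the left-hand case is a condition-by-condition check of $\phi.2$--$\phi.5$ using Lemmas \ref{lemroot}, \ref{lem2rid}, \ref{lembeta2}, \ref{lemCellularexcomp} and \ref{lemCellularfeasible}, with the succeeding/preceding dichotomy on the basic cycle doing the work for $\phi.3$ and $\phi.5$ just as you describe.

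There is, however, one inference that does not go through as you state it: your justification of $\phi.2$. You claim that two sensitive bonsais meeting a common $f_j$ would be $S_1,S_2$ linked. Since $S_0=\emptyset$, a single common column index $j$ lies in exactly one of $S_1$ or $S_2$, so it yields $f_{S_k}(E_\ell)\cap f_{S_k}(E_{\ell'})\neq\emptyset$ for only one $k$; being shared does not upgrade this to an intersection for both $k$, and Lemma \ref{lembeta2} only applies to bonsais that are $S_1,S_2$ linked (both intersections nonempty). So the contradiction you invoke does not materialize from that route. The paper argues $\phi.2$ differently: if both $B_\ell$ and $B_{\ell'}$ were sensitive and on the left, Lemmas \ref{lem2rid} and \ref{lembeta2} force $v_{\ell,1}=v_{\ell,2}$ and $v_{\ell',1}=v_{\ell',2}$, i.e.\ each bonsai meets the basic cycle in a single node; a fundamental circuit of some $f_j$ with $j\in S_1\cup S_2$ intersects $G_0(A)$ in a single path descending to the cycle and running to $w_1$ or $w_{\rho-1}$, so it can contain edges of at most one such bonsai, whence $f^*(E_\ell)\cap f^*(E_{\ell'})=\emptyset$, contradicting the hypothesis of the lemma. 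Replacing your parenthetical with this positional argument repairs the step; the rest of your proof is sound.
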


\begin{proof}
Let us first remark that since $U$ is left-extreme, by Lemma \ref{lemCellularleft-extremecomp}, $U$ is left-compatible. So $(j_1,j_2)$ is well defined.
Let $E_\ell,E_{\ell '}\in V(j_1,j_2)\verb"\"U$ such that $f^*(E_\ell) \cap f^*(E_{\ell '})\neq \emptyset$. If $E_\ell$ and $E_{\ell '}$ are both on the right of $\{ e_1,e_\rho\}$, then using Proposition \ref{lemdigraphstem}, the pair is right-feasible. 

Suppose now that  
$B_\ell$ and $B_{\ell '}$ are on the left of $\{ e_1,e_\rho\}$.
The bonsais $E_\ell$ and $E_{\ell '}$ can not be both sensitive (otherwise from Lemmas  \ref{lem2rid} and \ref{lembeta2}, it follows that $v_{\ell ,1}=v_{\ell ,2}$ and $v_{\ell ',1}=v_{\ell ',2}$, hence $f^*(E_\ell)\cap f^*(E_{\ell '})=\emptyset$). If $E_\ell$ and $E_{\ell '}$ are jointly shared, by Lemma \ref{lemCellularfeasible}, we have that $I_\cap(E_\ell)=E_{\ell j_1}=E_{\ell j_2}$ and $I_\cap(E_{\ell '})=E_{\ell 'j_1}=E_{\ell 'j_2}$. Since $B_\ell$ is preceding $B_{\ell '}$ or conversely, it follows that the pair $E_\ell$, $E_{\ell '}$ is left-feasible. The proof is also straightforward, using Proposition \ref{lemdigraphstem}, in case where $E_\ell$ and $E_{\ell '}$ are both $S_k$-dominated for some $k\in \{1,2\}$.

At last, suppose that $B_\ell$ is jointly shared and $B_{\ell '}$ is sensitive. By Lemma \ref{lemCellularfeasible} and  \ref{lemCellularexcomp} (i), $I_\cap(E_\ell)=E_{\ell  j_1}=E_{\ell  j_2}$ is the index set of edges in $B_\ell$ and the basic cycle. Then, by Lemma \ref{lembeta2}, $B_{\ell '}$
is not $S_1,S_2$ linked to $B_\ell$, so that it is either preceding $B_\ell$ in which case $E_\ell \prec^{j_2} E_{\ell '}$, or succeeding $B_\ell$ and $E_\ell \prec^{j_1} E_{\ell '}$. 
\end{proof}\\

Assume that the graph $F^*(\overline{(V(j_1,j_2)})$ is bipartite. Given a left-compatible set $U$ and a $U$-spanning pair $(j_1,j_2)$,
we are now ready to construct the instance $\Omega(U,j_1,j_2)$ of the $2$-SAT problem. 
The set of variables is $\{ x_\ell \, : \, E_\ell \in V(j_1,j_2)  \}$. 
Provided that $A$ is $\{1,\rho\}$-central, the equalities $x_\ell=0$ and $x_\ell=1$ mean that the bonsai $B_\ell$ is on the left and on the right of $\{e_1,e_\rho\}$, respectively, in some $\{1,\rho\}$-central representation of $A$. 

For any $E_u\in U$, set $x_u=0$ in $\Omega(U,j_1,j_2)$.
For any $E_\ell\in V(j_1,j_2)\verb"\"U$, if $E_\ell$ is not right-feasible (respectively, left-feasible), then let $x_\ell=0$ (respectively, $x_\ell=1$).
For any pair of bonsais $E_\ell,E_{\ell '} \in V(j_1,j_2)-U$ such that $f^*(E_\ell) \cap f^*(E_{\ell '}) \neq \emptyset$, if the pair is not right-feasible, then put the clause $\bar x_\ell \vee \bar x_{\ell '} $ in $\Omega(U,j_1,j_2)$. Furthermore, if  the pair is not left-feasible, then put the clause $x_\ell \vee x_{\ell '}$ in $\Omega(U,j_1,j_2)$. Thus if the pair is not right-feasible (respectively, left-feasible), then at most one of the variables $x_\ell$ and $x_{\ell '}$ has  value $1$ (respectively, $0$). 

For $\kappa=1,\ldots,\xi$, do as follows. For any two variables $x_\ell$ and $x_{\ell '}$ such that $E_\ell \, \cap \, Opposite(\mathcal{U}_\kappa^i)$ $\neq \emptyset $ and $E_{\ell '} \cap Opposite(\mathcal{U}_\kappa^i) \neq \emptyset $ with $i=0$ or $1$, put the equality $x_\ell = x_{\ell '}$ in $\Omega(U,j_1,j_2)$. Moreover, if $Opposite(\mathcal{U}_\kappa^0) \neq \emptyset$ and $Opposite(\mathcal{U}_\kappa^{1}) \neq \emptyset$, choose some $x_\ell$ and $x_{\ell '}$ such that 
$E_\ell \cap Opposite(\mathcal{U}_\kappa^0)\neq \emptyset $ and $E_{\ell '} \cap Opposite(\mathcal{U}_\kappa^{1})\neq \emptyset $. Then put the clauses $x_\ell \vee x_{\ell '}$ and $\bar x_\ell \vee \bar x_{\ell '}$ in $\Omega(U,j_1,j_2)$. These clauses ensure that the variables $x_\ell$ and $x_{\ell '}$ have different values.

\begin{prop}\label{propCellulartruth}
Suppose that $A$ has a $\{1,\rho\}$-central representation and assumption $\mathscr{A}$ is satisfied. Let $U$ be a left-extreme set. Then, for any $U$-spanning pair $(j_1,j_2)$, the instance $\Omega(U,j_1,j_2)$ has a truth assignment.
\end{prop}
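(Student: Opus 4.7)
The plan is to exhibit an explicit truth assignment dictated by the geometry of $G(A)$ and then verify that every clause of $\Omega(U,j_1,j_2)$ is satisfied by invoking the structural lemmas already established. Specifically, fix a $\{1,\rho\}$-central representation $G(A)$ in which $U$ is left-extreme. For every $E_\ell\in V(j_1,j_2)$ define
\[
x_\ell=\begin{cases} 0 & \text{if } B_\ell \text{ is on the left of } \{e_1,e_\rho\},\\ 1 & \text{if } B_\ell \text{ is on the right of } \{e_1,e_\rho\}. \end{cases}
\]
First I would check the unit clauses. For $E_u\in U$, the definition of a left-extreme set forces $B_u$ to lie on the left, so $x_u=0$ matches the imposed value. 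For $E_\ell\in V(j_1,j_2)\setminus U$, Lemma \ref{lemCellularfeasible} says that $E_\ell$ is right-feasible when $B_\ell$ is on the right and left-feasible when $B_\ell$ is on the left; hence whenever $\Omega$ forces $x_\ell=0$ (because $E_\ell$ is not right-feasible), the bonsai $B_\ell$ is necessarily on the left, and similarly for the other direction.

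Next I would handle the binary clauses of the form $\bar x_\ell\vee\bar x_{\ell'}$ and $x_\ell\vee x_{\ell'}$ attached to a pair of intersecting bonsais. Lemma \ref{lemCellularpair} tells us that such a clause is only inserted when the pair fails to be right-feasible (resp.\ left-feasible). The lemma directly gives the contrapositive: if $B_\ell$ and $B_{\ell'}$ were both on the right (resp.\ both on the left), then the pair would be right-feasible (resp.\ left-feasible). Thus under our assignment at least one of the two variables takes the opposite value, satisfying the clause.

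It remains to deal with the clauses coming from the bipartition $\mathcal{U}_\kappa=\mathcal{U}_\kappa^0\uplus\mathcal{U}_\kappa^1$ of the components of $F^*(\overline{V(j_1,j_2)})$. This is the step I expect to require the most care. The key tool is Proposition \ref{propOpposite}(iii): for any $E_\ell\in\mathcal{U}_\kappa^i$, the basic subgraph of $G(A)$ with edge index set $\mathit{Opposite}(\mathcal{U}_\kappa^i)$ lies on the side of $\{e_1,e_\rho\}$ opposite to $B_\ell$. In particular, $\mathit{Opposite}(\mathcal{U}_\kappa^i)$ is contained in a single side, independent of which $E_\ell\in\mathcal{U}_\kappa^i$ is chosen. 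Hence every $E_{\ell}\in V(j_1,j_2)$ that meets $\mathit{Opposite}(\mathcal{U}_\kappa^i)$ has its corresponding bonsai $B_{\ell}$ on that same fixed side, so all such variables $x_\ell$ agree; this verifies every equality clause $x_\ell=x_{\ell'}$. Finally, when both $\mathit{Opposite}(\mathcal{U}_\kappa^0)$ and $\mathit{Opposite}(\mathcal{U}_\kappa^{1})$ are nonempty, parts (ii) and (iii) of Proposition \ref{propOpposite} together force the two sets to lie on opposite sides of $\{e_1,e_\rho\}$; consequently a bonsai meeting $\mathit{Opposite}(\mathcal{U}_\kappa^0)$ and one meeting $\mathit{Opposite}(\mathcal{U}_\kappa^{1})$ receive different truth values, satisfying the remaining pair of clauses $x_\ell\vee x_{\ell'}$ and $\bar x_\ell\vee\bar x_{\ell'}$. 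Since every clause of $\Omega(U,j_1,j_2)$ has been accounted for, the proposed assignment is a truth assignment, completing the proof.
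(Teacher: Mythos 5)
Your proposal is correct and is essentially the paper's own proof: the paper likewise assigns $x_\ell=0$ or $1$ according to whether $B_\ell$ lies on the left or right of $\{e_1,e_\rho\}$ and then cites Lemmas \ref{lemCellularfeasible} and \ref{lemCellularpair} and Proposition \ref{propOpposite} to conclude. You merely spell out the clause-by-clause verification (and, appropriately, invoke part (iii) of Proposition \ref{propOpposite} for the $Opposite$-clauses, which the paper's one-line proof leaves implicit).
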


\begin{proof}
Let $(j_1,j_2)$ be a $U$-spanning pair. For each bonsai $E_\ell$, let $x_\ell=0$ if $E_\ell$ is on the left of $\{e_1,e_\rho\}$, and $x_\ell=1$ otherwise.
By Lemmas \ref{lemCellularfeasible} and \ref{lemCellularpair} and Proposition \ref{propOpposite} (ii), this is a truth assignment.
\end{proof}\\

The following procedure computes a truth assignment of 
$\Omega(U,j_1,j_2)$ for some left-compatible set $U$ and a $U$-spanning pair $(j_1,j_2)$, whenever $A$ is $\{\epsilon,\rho\}$-central.

\begin{tabbing}
\textbf{Procedure\,\,Truthassignment($A$,$\{\epsilon,\rho\}$)}\\

\textbf{Input:}\, \= A non-network matrix $A$ and two row indexes $\epsilon$ and  $\rho\neq 1$ such that $S_0= \emptyset$.\\

\textbf{Output:} \=  Either a left-compatible set $U$ and a $U$-spanning pair $(j_1,j_2)$ with a truth\\
\>  assignment of $\Omega(U,j_1,j_2)$, or determines that $A$ is  not $\{\epsilon ,\rho\}$-central.\\

1)\verb"  "\= call {\tt Initialization}($A$,$\{\epsilon,\rho\}$) outputing a matrix $A'$;\\

2) \> compute a digraph $D$ with respect to $A'$ and the row index subset $R^*=\{1,\rho\}$\\
\>  of $A'$;\\

3) \> if $g_\beta(E_\ell)=2$ for some $E_\ell\in V$, $\beta \in S^*$, then STOP: output that $A$ is not $\{\epsilon ,\rho\}$-central;\\

4) \> {\bf for } \= every ordered pair $U$ of shared bonsais, $|U|\le 2$, {\bf do }\\

\>  \> check whether $U$ is left-compatible; if it is not, then return to 4;\\

\> \> compute a $U$-spanning pair $(j_1,j_2)$, and test if $F^*(\overline{V(j_1,j_2)})$ is bipartite; \\
\> \>if it is not, then return to 4;\\

\> \> compute the instance $\Omega(U,j_1,j_2)$; compute a truth assignment of $\Omega(U,j_1,j_2)$, \\
\> \> if one exists, output it with $U$ and $(j_1,j_2)$ and STOP, otherwise return to 4;\\

 \> {\bf endfor }\\

\>  output that $A$ is not $\{\epsilon,\rho\}$-central;

\end{tabbing}

\begin{prop}\label{propCentraltruth}
The output of the procedure Truthassignment is correct.
\end{prop}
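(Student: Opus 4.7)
The proof amounts to showing that the two possible outputs of the procedure are each valid, namely (a) when the procedure outputs a triple $(U,(j_1,j_2),\text{truth assignment})$, the set $U$ really is left-compatible, $(j_1,j_2)$ is a $U$-spanning pair, and the assignment satisfies $\Omega(U,j_1,j_2)$; and (b) when the procedure declares ``$A$ is not $\{\epsilon,\rho\}$-central'', this conclusion is correct. Part (a) is immediate: left-compatibility is tested explicitly in step 4, the $U$-spanning pair is constructed by definition, and the truth assignment is accepted only once it has been verified to satisfy $\Omega(U,j_1,j_2)$. Part (b) requires showing that whenever $A$ is $\{\epsilon,\rho\}$-central, the procedure must exit through an affirmative output and never reach either the stopping clause in step 3 or the final stopping clause after the for-loop.

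So assume $A$ is $\{\epsilon,\rho\}$-central. By Lemma~\ref{lemCentralInit}, the matrix $A'$ produced in step 1 is $\{1,\rho\}$-central and satisfies assumption $\mathscr{A}$. Fix a $\{1,\rho\}$-central representation $G(A')$ of $A'$. Since $S_0=\emptyset$, every $\beta\in S^*$ lies in $f_{S_1}(E_\ell)\cup f_{S_2}(E_\ell)$ for the bonsais $E_\ell$ with $\beta\in f^*(E_\ell)$, so by Lemma~\ref{lemCentralElj}(i) we have $g_\beta(E_\ell)=1$ for every such pair; hence step 3 does not stop. This takes care of the first stopping clause.

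It remains to show that the for-loop in step 4 finds an affirmative output. Because $A'$ is not a network matrix (the property of being a network matrix is preserved by Initialization, and $A$ is assumed not to be a network matrix), Proposition~\ref{propCellularnetwork} applied to the representation $G(A')$ guarantees the existence of a shared bonsai on the left of $\{e_1,e_\rho\}$ in $G(A')$. Consequently a left-extreme set $U$ of bonsais (of cardinality $1$ or $2$) can be read off from $G(A')$ by following its definition. When the for-loop reaches this particular ordered pair (or singleton) $U$, Lemma~\ref{lemCellularleft-extremecomp} certifies that $U$ is left-compatible, so the left-compatibility check passes. For the $U$-spanning pair $(j_1,j_2)$ constructed by the procedure, Proposition~\ref{propOpposite}(i) ensures that $F^*(\overline{V(j_1,j_2)})$ is bipartite, so the bipartiteness check passes. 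Finally, Proposition~\ref{propCellulartruth} asserts that $\Omega(U,j_1,j_2)$ admits a truth assignment, which a standard $2$-SAT algorithm will find in the procedure's last step, causing it to STOP with a valid affirmative output.

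Thus the procedure never falls through the for-loop to the final ``not $\{\epsilon,\rho\}$-central'' statement when $A$ is $\{\epsilon,\rho\}$-central, proving part (b) and establishing the correctness of Truthassignment. The only conceptual work lies in invoking Initialization correctly and in recognizing that the left-extreme set exhibited by an actual representation passes each of the three tests in step 4; the most delicate of these is the bipartiteness step, which relies on the two-sided structure encoded in Proposition~\ref{propOpposite}.
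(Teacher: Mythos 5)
Your proof is correct and follows essentially the same route as the paper's: reduce to $A'$ via Lemma~\ref{lemCentralInit}, justify step 3 by Lemma~\ref{lemCentralElj}, then show that a left-extreme set of an actual representation passes the left-compatibility, bipartiteness, and satisfiability checks via Lemma~\ref{lemCellularleft-extremecomp}, Proposition~\ref{propOpposite}(i), and Proposition~\ref{propCellulartruth}. The only difference is that you explicitly invoke Proposition~\ref{propCellularnetwork} to guarantee the existence of a left-extreme set, a point the paper establishes in the surrounding text rather than inside the proof itself.
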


\begin{proof}
By Lemma \ref{lemCentralInit}, the subroutine Initialization outputs a matrix $A'$ of size $n\times m'$ whose $A_{\bullet \{1,\ldots,m \}}'=A$ and such that $A'$ is $\{1,\rho\}$-central if and only if $A$ is $\{\epsilon,\rho\}$-central, and assumption $\mathscr{A}$ is satisfied for $A'$. Step 3 is justified by Lemma \ref{lemCentralElj}.

Suppose that $A$ is $\{\epsilon,\rho\}$-central. Then, let $G(A')$ be a $\{1,\rho\}$-central representation of $A'$, and $U$ a left-extreme set of bonsais. Then, by Lemma \ref{lemCellularleft-extremecomp}, $U$ is left-compatible. Let $(j_1,j_2)$ be a $U$-spanning pair. By Proposition \ref{propOpposite} (i), $F^*(\overline{V(j_1,j_2)})$ is bipartite. Finally, using Proposition \ref{propCellulartruth}, the procedure Truthassignment outputs a truth assignment of $\Omega(U,j_1,j_2)$. This concludes the proof.
\end{proof}\\

Before proving Theorem \ref{thmS0empty}, we describe some advantages and properties which ensue from a truth assignment of $\Omega(U,j_1,j_2)$. Let $U$ be a left-compatible set and $(j_1,j_2)$ a $U$-spanning pair. If $|U|=2$, then we write $U=(E_{u},E_{u'})$, otherwise $U=(E_{u})$.
Suppose that each variable $x_\ell$ ($E_\ell \in V(j_1,j_2)$) has received a value $0$ or $1$, yielding a truth assignment of $\Omega(U,j_1,j_2)$. Let $$X_0(j_1,j_2)=\{ E_\ell \in V(j_1,j_2) \, :\, x_\ell=0\}$$ and $$X_1(j_1,j_2)= \{ E_\ell \in V(j_1,j_2) \,: \, x_\ell=1 \}.$$ 
By $\omega.6$, notice that if $E_\ell$ is jointly shared and $x_\ell=0$, then $|U|=2$. So,
up to a renumbering of the bonsais and a change of the value of $u$ and $u'$, we may assume that
$\{ E_\ell \, :\, x_\ell=0; \, j_1, j_2 \in f^*(E_\ell) \} = \{ E_u,E_{u+1} \ldots, E_{u'} \}$,
$\{ E_\ell \, :\, x_\ell=0; \, j_1 \in f^*(E_\ell) \}=\{E_1,E_2,\ldots, E_{\ell _1} \}$ and 
$\{ E_\ell \, :\, x_\ell=0; \, j_2 \in f^*(E_\ell) \}=\{E_{u},E_{u+1} \ldots, E_{u'}\} \cup \{E_{\ell _1+1},E_{\ell _1+2},\ldots,E_{\ell _2} \}$, where $1\le u \le u' \le \ell_1\le \ell_2 $ and $u=u'$ in case $|U|=1$. 
Now we state some useful claims.\\ 

\noindent
{\bf Claim 0.} \quad For any sensitive bonsai $E_\ell$, $x_\ell=0$ and there is no bonsai $E_{\ell '}$ ($\ell'\neq l$)  that is  $S_1$, $S_2$ linked to $E_\ell$ and such that $x_{\ell '}=0$.\\

\noindent
{\bf Proof of Claim 0.} \quad Since $J_\ell^2 \neq \emptyset$, $E_\ell$ is not right-feasible, so $x_\ell=0$.
Let $E_{\ell '}$ be a bonsai $S_1$, $S_2$ linked to $E_\ell$. Suppose, to the contrary, that $x_{\ell '}=0$. Thus, $E_{\ell '}$ is left-feasible, and the pair $E_\ell,E_{\ell '}$ is left-feasible. By $\phi.5$, $E_{\ell '}$ is not jointly shared, so $E_{\ell '}$ is disjointly shared and by $\omega.2$, $E_{\ell '}$ is sensitive, contradicting $\phi.2$.
{\hfill$\BBox{\rule{.3mm}{3mm}}$} \\

\noindent
{\bf Claim 1.} \quad For any $E_\ell \in V(j_1,j_2)$ such that 
$j_1, j_2 \notin f^*(E_\ell)$,  $x_\ell=0$ if and only if $E_\ell$ is sensitive.\\

\noindent
{\bf Proof of Claim 1.} \quad Let $E_\ell \in V(j_1,j_2)$ such that $j_1, j_2 \notin f^*(E_\ell)$. By definition of $V(j_1,j_2)$, it results that $E_\ell$ is shared. If $E_\ell$ is sensitive, then by Claim 0 $x_\ell=0$. Suppose now that
$x_\ell=0$. If $E_\ell$ is jointly shared, then, as $I_\cap(E_\ell) \neq \emptyset$, $\omega.6$ implies that $j_1,j_2 \in f^*(E_\ell)$, a contradiction. Thus $E_\ell$ is disjointly shared and by $\omega.1$, $E_\ell$ is sensitive.
{\hfill$\BBox{\rule{.3mm}{3mm}}$} \\

\noindent
{\bf Claim 2.}\label{mycounter8} \quad  Up to a renumbering of the bonsais and a change of the value of $u$ and $u'$, there exist orderings, say 

\begin{eqnarray}\label{eqnCellularj1}
E_1\prec^{j_1} E_2 \cdots \prec^{j_1}  E_{u} \prec^{j_1} E_{u+1}  \cdots  \prec^{j_1}  E_{u'} \prec^{j_1} E_{u'+1} \cdots \prec^{j_1}  E_{\ell _1} \label{eqnCellularj1} 
\end{eqnarray}

\begin{center}
and 
\end{center}
\begin{eqnarray}\label{eqnCellularj2}
E_{\ell _1+1}\prec^{j_2} E_{\ell _1 +2} \cdot \cdot \prec^{j_2} E_{\ell _2'}\prec^{j_2} E_{u'} \prec^{j_2} E_{u'-1} \cdot \cdot  \prec^{j_2} E_{u} \prec^{j_2} E_{\ell _2'+1} \prec^{j_2} E_{\ell _2'+2} \cdot \cdot \prec^{j_2} E_{\ell _2} 
\end{eqnarray}

\noindent 
of the sets $\{ E_\ell \, :\, x_\ell=0; \, j_1 \in f^*(E_\ell) \}$ and 
$\{ E_\ell \, :\, x_\ell=0; \, j_2 \in f^*(E_\ell) \}$, respectively, 
where $l_1\le l_2' \le l_2$,
$E_{u-1}\prec^{j_1} E_\ell$ (if $u\neq 1$) and $E_{\ell _2'} \prec^{j_2} E_\ell$ (if $l_2'\neq l_1$) for any sensitive bonsai $E_\ell$.\\

\noindent
{\bf Proof of Claim 2.} \quad Let us prove first that if $U=(E_u,E_{u'})$, then $E_u \prec^{j_1} E_{u'}$ (symmetrically, we can prove that $E_{u'} \prec^{j_2} E_u$). Let $j\in f_{S_1}(E_{u'})$. By definition of the $U$-spanning pair $(j_1,j_2)$, $j_1 \in f_{S_1}(E_u)$. Moreover, by definition of the left-compatible set $U$, $E_{uj}=I_1(E_u)$ and $E_{u j_1}=I_1(E_u)$. Thus, $E_{uj}=E_{u j_1}$. Therefore, $f_{S_1}(E_{u'}) \subseteq \{ \beta \in f_{S_1}(E_u) \, : \, E_{u\beta} =E_{u j_1}\}$. 

Let $E_\ell$ be a bonsai not in $U$ such that 
 $j_1,j_2\in f^*(E_\ell)$ and $x_\ell=0$ ($u \le \ell \le u'$). If $E_\ell$ is not disjointly shared, then by $\omega.1$ $E_\ell$ is sensitive and since $j_1,j_2\in f^*(E_u)$, this contradicts  $\omega.2$. Thus $E_\ell$ is jointly shared. Then, using  $\omega.6$ and $\phi.3$, the subsequences of (\ref{eqnCellularj1}) and (\ref{eqnCellularj2}) containing the bonsais  $E_u,E_{u+1},\ldots, E_{u'}$ are justified.
 
Suppose that there exists at least one sensitive bonsai $E_\ell \notin U$ such that $j_1 \in f^*(E_\ell)$ (the case $j_2 \in f^*(E_\ell)$ is similar). By $\phi.2$, $E_\ell$ is unique. Since $j_1 \in f^*(E_\ell) \cap f^*(E_u)$ for any $E_u \in U$, by Claim 0, it follows that $f_{S_2}(E_\ell)\cap f_{S_2}(E_u)=\emptyset $ for all $E_u \in U$. Therefore, by $\omega.3$ (in case $|U|=1$) or $\omega.4$, $E_{u'} \prec^{j_1} E_\ell$. Finally, using $\omega.8$ or $\phi.4$ for dealing with $S_1$ or $S_2$-dominated bonsais, this completes the proof.
{\hfill$\BBox{\rule{.3mm}{3mm}}$}\\


\noindent
{\bf Claim 3.} \quad Let $E_\ell$ be a sensitive bonsai such that $j_1,j_2 \notin f^*(E_\ell)$. If $u\neq u'$, or $u=u'$ and $I(E_u)\neq \emptyset$, then exactly one of the following cases holds.

\begin{itemize}

\item[(i)] For all $j\in f_{S_2}(E_\ell)$, we have $E_{uj}=I(E_u)$ if $u=u'$, or $E_{uj}=I_1(E_u)$ otherwise.

\item[(ii)] There exists an index $i(\ell)$ such that 
$u\le i(\ell)< u'$, $E_{i(\ell)} \prec^{j_1} E_\ell$ and $E_{i(\ell)+1} \prec^{j_2} E_\ell$.

\item[(iii)]  For all $j\in f_{S_1}(E_\ell)$, we have $E_{uj}=I(E_u)$ if $u=u'$, or $E_{uj}=I_2(E_u)$ otherwise.\\

\end{itemize}

\noindent
{\bf Proof of Claim 3.} \quad This follows from $\omega.3$, $\omega.4$,  $\phi.5$ and Claim 2.
{\hfill$\BBox{\rule{.3mm}{3mm}}$}\\

\noindent
{\bf Claim 4.} \quad  There exists a forest $T_1(j_1,j_2)$ in $D$ with vertex set $X_1(j_1,j_2)$ such that for any $E_\ell,E_{\ell '} \in X_1(j_1,j_2)$ with $f^*(E_\ell) \cap f^*(E_{\ell '}) \neq \emptyset$, $E_\ell$ and $E_{\ell '}$ are contained in a same subpath of $T_1(j_1,j_2)$.\\

\noindent
{\bf Proof of Claim 4.} \quad We first make two observations:
\begin{itemize}

\item[1)] For any pair of bonsais $E_\ell$ and $E_{\ell '}$ in $X_1(j_1,j_2)$ such that $f^*(E_\ell) \cap f^*(E_{\ell '}) \neq \emptyset$,  we have $(E_\ell,E_{\ell '})\in D$ or $(E_{\ell '},E_\ell)\in D$.

\item[2)] For any $E_\ell,E_{\ell '}\in D$, we have  $(E_\ell,E_{\ell '}) \in D \Rightarrow f^*(E_\ell) \subseteq  f^*(E_{\ell '})$.

\end{itemize}

The first observation follows from the right-feasibility of any pair of bonsais in $X_1(j_1,j_2)$, while the second  results from the definition of $D$.
Let us prove that for any $W\subseteq X_1(j_1,j_2)$, $W$ has the property claimed for $X_1(j_1,j_2)$. One proceeds by induction on the cardinality of the subsets of $X_1(j_1,j_2)$. 

Let $W\subseteq X_1(j_1,j_2)$. If  $|W|=1$, then the proof is clear. Suppose now that $|W|\geq 1$,  $W$ is a proper subset of $X_1(j_1,j_2)$ and there exists a forest $G$ with vertex set $W$ satisfying the desired property. Let $E_\ell\in X_1(j_1,j_2)\verb"\" W$. Consider the set $W'=\{E_{\ell '} \in W \,: \, f^*(E_\ell) \cap f^*(E_{\ell '})\neq \emptyset \}$. By the second observation,
we deduce that $W'$ is closed in $G$. Denote by $E_{r_1},\ldots,E_{r_k}$ the root vertices of the trees in $G(W')$.

Suppose that $G(W')$ contains at least two trees ($k\geq 2$). By induction hypothesis $f^*(E_{r_1}) \cap f^*(E_{r_2}) =\emptyset$. On the other hand, by the first observation, $(E_\ell,E_{r_1}) \in D$ or 
$(E_{r_1},E_\ell) \in D$. If $(E_\ell,E_{r_1}) \in D$, then by observation 2, $f^*(E_\ell) \subseteq f^*(E_{r_1})$ and since $f^*(E_\ell) \cap f^*(E_{r_2}) \neq \emptyset$, it follows that $f^*(E_{r_1}) \cap f^*(E_{r_2})\neq \emptyset$, a contradiction. We deduce that $(E_{r_i},E_\ell) \in D$ for $i=1,\ldots,k$. Adding these edges in $G(W')$ results in a forest of $D$ with vertex set $W\cup \{ E_\ell\}$ satisfying the claimed property.

Now suppose that $G(W')$ contains exactly one tree ($k=1$). Using $G(W')$, we construct a forest spanning $W'\cup \{ E_\ell\}$ and satisfying the claimed property.
Let $E_u$ be the bonsai in $G(W')$ of largest height such that $(E_\ell,E_u)\in D$. 
As in the previous case, we can show that if any two bonsais, say $E_{u'}$ and $E_{u''}$, in $G(W')$ are not in a same subpath of $G(W')$, then $(E_{u'},E_\ell) \in D$ and $(E_{u''},E_\ell) \in D$. This implies that  in the tree $G(W')$ the successor of $E_u$ (if it exists) has exactly one predecessor ($E_u$).
If $E_u$ exists, adding $(E_\ell,E_u)$ and $(E_{u'},E_\ell)$ in $G(W')$ and removing $(E_{u'},E_u)$, for each predecessor $E_{u'}$ of $E_u$ in $G(W')$, results in a forest satisfying the claimed property. Otherwise, adding $(E_{r_1},E_\ell)$ in $G(W')$ yields the desired forest.
{\hfill$\BBox{\rule{.3mm}{3mm}}$}\\

Below, we describe a subroutine of CentralI, called GAR, which constructs a basic $\{1,\rho\}$-central representation of $A_{R(j_1,j_2) \bullet}'$, where $A_{\bullet \times \{1,\ldots,m\}}'=A$,
provided that $A$ is $\{\epsilon,\rho\}$-central. In the following procedure, if we are dealing with a network representation of a matrix related to a bonsai $E_\ell$ (for instance $L_\ell$, $L_{\ell ,1}$, $A_\ell^\cap$, or $N_\ell$) such that $x_\ell=0$ and $j_k \in f^*(E_\ell)$ for some $k\in \{1,2\}$, then we denote by $v_{\ell ,k}$ the endnode of the path with edge index set $E_{\ell  j_k}$ incident with $e_1$ (for $k=1$) and $e_\rho$ (for $k=2$), and by $w_{\ell ,k}$ the other endnode of this  path.

For every sensitive bonsai $E_\ell$ such that $j_1,j_2 \notin f^*(E_\ell)$, we define a vertex $v_{\ell }^*$ as follows. If $u=u'$ and $I(E_u)=\emptyset$, then $v_\ell^*=v_{u,1}=v_{u,2}$; otherwise, in case (i) (resp., (ii) and (iii)) of Claim 3,
let $v_{\ell }^*=v_{u,1}$ (resp., $v_\ell^*=v_{i(\ell),2}$ and $v_\ell^*=v_{u',2}$). 
\vspace{.5cm}

\begin{tabbing}
\textbf{Procedure\,\,GAR($A$,$\{\epsilon,\rho\}$)}\\

\textbf{Input:}\, \= A non-network matrix $A$ and two row indexes $\epsilon$ and $\rho \neq 1$. \\

\textbf{Output: }\=  A basic $\{1,\rho\}$-central representation $G(A_{R(j_1,j_2)\bullet}')$ for some row index subset \\
\>  $R(j_1,j_2)$ of $A$, where $j_1\in S_1$, $j_2 \in S_2$, or determines that $A$ is not $\{\epsilon,\rho\}$-central.\\ 

1)\verb"  "\= call {\tt Truthassignment}($A$,$\{\epsilon,\rho\}$) outputing $U$, $(j_1,j_2)$ and a truth assignment of \\
\> $\Omega(U,j_1,j_2)$, or the fact that $A$ is not $\{\epsilon,\rho\}$-central;\\

2)\verb"  "\= consider basic network representations  $G(L_u)$ (in  case $U=(E_{u})$), $G(L_{u,1})$ and  \\
\> $G(L_{u',2})$ (in case $U=(E_{u},E_{u'})$ ), $G(A_\ell^\cap)$ for any jointly shared bonsai $E_\ell \notin U$ \\
\> such that $x_\ell=0$, and $G(N_\ell)$ for every $E_\ell \in V(j_1,j_2) \verb"\" \{ E_u, E_{u+1},\ldots,E_{u'} \}$;  contract all \\
\> artificial edges and  create the adjacent edges $e_1=[w_1,w_\rho]$ then $e_\rho=]w_{\rho-1},w_\rho]$;\\

3) \>  for $k=1$, $2$ and any two successive bonsais $E_\ell$ and $E_{\ell '}$ in the ordering (\ref{eqnCellularj1}) or (\ref{eqnCellularj2})\\ 
\> ($E_\ell \prec^{j_k} E_{\ell '}$) identify $v_{\ell ',k}$ with $w_{\ell ,k}$  and identify $v_{1,1}$ with $w_1$ and $v_{\ell _1+1,2}$ with $w_{\rho-1}$;\\

4) \> for every sensitive bonsai $E_\ell$ such that $j_1,j_2 \notin f^*(E_\ell)$, identify  $v_\ell$ with $v_\ell^*$;\\

5) \> compute a forest $T_1(j_1,j_2)$ as in Claim 4;\\
\> for every  root vertex $E_\ell$ of $T_1(j_1,j_2)$, identify $v_\ell$ with $w_\rho$;\\
 \> for any edge $(E_\ell,E_{\ell '})_{E_{\ell '}^k}$ in $T_1(j_1,j_2)$, identify $v_\ell$ with  the endnode ($\neq v_{\ell '}$)\\
 \>  of the $B_{\ell '}$-path with edge index set $E_{\ell '}^k$ in $G(N_{\ell '})$;\\

\>output the resulting bidirected graph $G(A_{R(j_1,j_2)\bullet}')$;

\end{tabbing}

\begin{prop}\label{propCentralGARI}
The output of GAR is correct.
\end{prop}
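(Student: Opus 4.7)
The plan is to proceed by separating two cases according to whether the call to \texttt{Truthassignment} in step~1 succeeds. If \texttt{Truthassignment} outputs that $A$ is not $\{\epsilon,\rho\}$-central, correctness of \texttt{GAR} follows directly from Proposition~\ref{propCentraltruth}. So I assume from now on that we are given a left-compatible set $U$, a $U$-spanning pair $(j_1,j_2)$, and a satisfying truth assignment of $\Omega(U,j_1,j_2)$; I have to verify that the bidirected graph produced by steps~2--5 is a basic $\{1,\rho\}$-central representation of $A'_{R(j_1,j_2)\,\bullet}$.

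First I would justify step~2, i.e.\ that every matrix whose network representation is requested is indeed a network matrix. For $E_u\in U$ this follows from the definition of a left-compatible set ($L_u$, or $L_{u,1}$ and $L_{u',2}$, are network matrices; left-compatibility also ensures the nonalternation of $e_1$ and $e_\rho$, giving a well-defined central node $w_\rho$ after the adjacent basic edges $e_1=[w_1,w_\rho]$ and $e_\rho=]w_{\rho-1},w_\rho]$ are created). For every $E_\ell \in V(j_1,j_2)\setminus U$ with $x_\ell = 1$, clause $\omega.0$ (right-feasibility) yields that $N_\ell$ is a network matrix and $J_\ell^2 = \emptyset$, so $N_\ell$ admits a $v_\ell$-rooted network representation by Lemmas~\ref{lembonsainet2} and~\ref{lembonsaicel}. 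For $E_\ell\in V(j_1,j_2)\setminus U$ with $x_\ell = 0$ and $E_\ell$ jointly shared, $\omega.5$ guarantees that $A_\ell^\cap$ is a network matrix; for $E_\ell$ disjointly shared (hence sensitive by $\omega.1$) or $S_k$-dominated, $N_\ell$ is a network matrix by the definition of sensitive or by $\omega.7$. The exceptional bonsais $E_{u+1},\ldots,E_{u'-1}$ (those satisfying $\omega.6$) need not be treated separately, since their contribution is already encoded through $L_{u,1}$ and $L_{u',2}$.

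Next I would check that the identifications in steps~3 and~4 are internally consistent. For step~3, Claim~2 in the preceding discussion supplies the linear orderings $\prec^{j_1}$ and $\prec^{j_2}$ of the bonsais with $x_\ell=0$, so each successive pair $E_\ell \prec^{j_k} E_{\ell'}$ produces a single well-defined node identification of $w_{\ell,k}$ with $v_{\ell',k}$; the fact that the intervals on the basic cycle induced by $j_1$ and $j_2$ cover it entirely then follows from the definition of a $U$-spanning pair together with the construction of $I_1(E_u)$ and $I_2(E_{u'})$ analyzed in Lemma~\ref{lemCellularexcomp}. For step~4, Claim~3 guarantees that exactly one of the three cases (i), (ii), (iii) holds for each sensitive bonsai $E_\ell$ with $j_1,j_2 \notin f^*(E_\ell)$, and hence the root $v_\ell^*$ is uniquely defined and belongs to the basic cycle that has just been assembled. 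For step~5, Claim~4 gives the forest $T_1(j_1,j_2)$ which records precisely how to glue the bonsais on the right of $\{e_1,e_\rho\}$: each $v_\ell$ is attached at a node of a strictly higher bonsai, so no conflict arises. Lemma~\ref{lemdigraphsim} (or its analogue Lemma~\ref{lembonsaisim}), combined with the definition of the labels $E_{\ell'}^k$ on arcs of $D$, ensures the identification is compatible with the sides dictated by $\sim_{E_{\ell'}}$.

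The main obstacle is the last verification: that every column of $A'_{R(j_1,j_2)\,\bullet}$ is the incidence vector of a binet fundamental circuit of the constructed graph. For nonbasic edges $f_\beta$ with $\beta \notin S^*$, this is immediate from the network representations of the bonsai matrices used in step~2. For $\beta\in S^*$, one has to trace which bonsais $B_\ell$ intersect the fundamental circuit of $f_\beta$: using property $\Pi_1$ of $T_{G(A)}$ (encoded through the clauses built from pairs $E_\ell,E_{\ell'}$ with $f^*(E_\ell)\cap f^*(E_{\ell'})\neq\emptyset$) and the orderings of Claim~2, one shows that the stems issued from $f_\beta$ on the left of $\{e_1,e_\rho\}$ glue to the right portion through a central node exactly as prescribed by Lemma~\ref{lemdefiWeight2}; the parity (consistent orientation) is guaranteed by the construction of $I_1(E_u), I_2(E_{u'})$ via the procedure \texttt{Create-Ik}, which, through Theorem~\ref{stem}, takes into account the number of yellow edges in the underlying minimal paths in $H_{E_{uS^*}}$. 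The most delicate subcase to handle is a jointly shared bonsai $E_\ell$ with $x_\ell=0$ and $u<\ell<u'$, where both $E_{\ell j_1}=E_{\ell j_2}=I_\cap(E_\ell)$ lie on the basic cycle; here one has to verify that the network representation of $A_\ell^\cap$ used in step~2 attaches to the basic cycle consistently with the orderings (\ref{eqnCellularj1}) and (\ref{eqnCellularj2}), which is exactly what $\omega.6$ and $\phi.3$ encode. Together these verifications show that the output graph is a basic $\{1,\rho\}$-central representation of $A'_{R(j_1,j_2)\,\bullet}$, completing the proof.
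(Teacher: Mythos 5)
Your argument is correct and takes essentially the same route as the paper's own (one-line) proof, which simply cites Proposition \ref{propCentraltruth} together with Claims 0--4; you have filled in the verifications those claims are meant to carry. (The only slip is the aside that the bonsais $E_{u+1},\ldots,E_{u'-1}$ are ``already encoded through $L_{u,1}$ and $L_{u',2}$'' --- in step 2 they receive their own network representations $G(A_\ell^\cap)$ --- but since you had already justified this via $\omega.5$ in the preceding sentence, nothing is lost.)
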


\begin{proof}
This results from Proposition \ref{propCentraltruth} and
Claims 0, 1, 2, 3 and 4.
\end{proof}\\

Finally, assume that we are given a left-compatible set $U$, a $U$-spanning pair $(j_1,j_2)$ such that the graph $F^*
(\overline{V(j_1,j_2)})$ is bipartite and a truth assignment of $\Omega(U,j_1,j_2)$. Let $R_i(j_1,j_2)=\cup_{E_\ell \in X_i(j_1,j_2)} E_\ell$ for $i=0$ and $1$. We state a last claim.\\

\noindent
{\bf Claim 5.} \quad The graph $F^*(\overline{V(j_1,j_2)})$
is partitionable into  two colour classes, say $Y_0$ and $Y_1$, such that

\begin{itemize}

\item for any $E_\ell \in \overline{V(j_1,j_2)}$, if $E_\ell\in Y_0$ (respectively, $E_\ell \in Y_1$), then $Opposite(E_\ell) \subseteq R_1(j_1,j_2)$ (respectively, $Opposite(E_\ell) \subseteq R_0(j_1,j_2)$);

\item for $k=0$ and $1$, there exists a forest in $D$, denoted as $T_k$, with vertex set $Y_k$; and
for any $E_\ell$ and $E_{\ell '}$ such that $f^*(E_\ell) \cap f^*(E_{\ell '}) \neq \emptyset$, either the bonsais are in two distinct colour classes, or they are contained in a same subpath of $T_0$ or $T_1$.\\

\end{itemize}

\noindent
{\bf Proof of Claim 5.} \quad Thanks to some clauses in $\Omega(U,j_1,j_2)$, up to a renumbering of the connected components of $F^*(\overline{V(j_1,j_2)})$, we may assume that for all $1\le \kappa \le \xi$, $Opposite(\mathcal{U}_\kappa^0) \subseteq R_1(j_1,j_2)$ and $Opposite(\mathcal{U}_\kappa^1) \subseteq R_0(j_1,j_2)$. Let $Y_0=\cup_{\kappa=1}^\xi \mathcal{U}_\kappa^0$ and $Y_1=\cup_{\kappa=1}^\xi \mathcal{U}_\kappa^1$. Clearly, $Y_0$ and $Y_1$ yield a bipartition of $F^*(\overline{V(j_1,j_2)})$ into two colour classes. By definition of $F^*(\overline{V(j_1,j_2)})$,
in each colour class, if two bonsais $E_\ell$ and $E_{\ell '}$ are such that $f^*(E_\ell) \cap f^*(E_{\ell '}) \neq \emptyset$, then $(E_\ell,E_{\ell '}) \in D$ or $(E_{\ell '},E_\ell) \in D$. Then, 
following the proof of claim 4, one proves the claim.
{\hfill$\BBox{\rule{.3mm}{3mm}}$}\\

Suppose that the procedure GAR has output a basic $\{1,\rho\}$-central representation \\
$G(A_{R(j_1,j_2)\bullet}')$ of the matrix $A_{R(j_1,j_2)\bullet}'$. For any $k\in\{0,1\}$ and every root vertex $E_\ell\in T_k$, we define  a node $z_\ell$ as follows. By claim 5, for all $j,j'\in f^*(E_\ell)$, 
$A_{R(j_1,j_2)\times \{ j\}}'\cap R_k(j_1,j_2)= A_{R(j_1,j_2)\times \{ j' \}}'\cap R_k(j_1,j_2)$. Let $z_\ell$ be the endnode, distinct from $w_\rho$, of the path in $G(A_{R(j_1,j_2)\bullet}')$ with edge index set $A_{R(j_1,j_2)\times \{ j\}}'\cap R_k(j_1,j_2)$ for any $j\in f^*(E_\ell)$.
We are now prepared to state the main procedure.

\begin{tabbing}
\textbf{Procedure\,\,CentralI($A$,$\{\epsilon,\rho\}$)}\\

\textbf{Input: }  \= A non-network matrix $A$ and two row indexes $\epsilon$ and $\rho \neq 1$  such that $S_0=\emptyset$. \\

\textbf{Output: } \=  A basic $\{\epsilon,\rho\}$-central representation $G(A)$ of $A$, or
determines that none exists.\\

1)\verb"  "\=call {\tt GAR}($A$,$\{\epsilon,\rho\}$) outputing 
a basic $\{1,\rho\}$-central representation $G(A_{R(j_1,j_2)\bullet}')$ \\
\> where $R(j_1,j_2)$ is  some row index subset  of some matrix $A'$, \\
\> or the fact that $A$ is not $\{\epsilon,\rho\}$-central;\\

2) \> for every $E_\ell\in  \overline{V(j_1,j_2)}$, compute a $v_\ell$-rooted network representation $B_\ell$ of $N_\ell$, \\
\> if one exists, otherwise STOP: output that $A$ is not $\{\epsilon,\rho\}$-central;\\

3) \> compute some forests $T_0$ and $T_1$ as in Claim 5; \\

4) \> for every root vertex $E_\ell$ of $T_0$ or $T_1$, identify $v_\ell$ with $z_\ell$;\\

 \> for any edge $(E_\ell,E_{\ell '})_{E_{\ell '}^k}$ in $T_0$ or $T_1$, identify $v_\ell$ with the endnode ($\neq v_{\ell '}$) of the $B_{\ell '}$-path\\
 \>  with edge index set $E_{\ell '}^k$ in $B_{\ell '}$; \\

\> up to a renumbering of the basic edges $e_1$ and $e_\epsilon$, output a basic $\{\epsilon,\rho\}$-central \\ 
\> representation of $A$;

\end{tabbing}

\noindent
{\bf Proof of Theorems \ref{thmcentralCarEmpty} and 
\ref{thmS0empty}.} \quad Let us prove the correctness of the procedure CentralI. By Proposition \ref{propCentralGARI}, the subroutine GAR in step 1 outputs  a basic $\{1,\rho\}$-central representation $G(A_{R(j_1,j_2)\bullet}')$ for some row index subset   $R(j_1,j_2)$ of $A$, where $j_1\in S_1$, $j_2 \in S_2$, or determines that $A$ is not $\{\epsilon,\rho\}$-central.
Then, in step 2, if one stops, then by Proposition \ref{propSkwatered}, $A$ is not $\{\epsilon,\rho\}$-central; otherwise, using Claim 5, in step 4, a basic $\{1,\rho\}$-central representation of $A'$ is computed.
At last, seeing step 1 of the subroutine Initialization, it is clear that the procedure CentralI outputs a basic $\{\epsilon,\rho\}$-central representation of $A$ if and only if one exists. 

The proof of Theorem \ref{thmS0empty} follows from 
Theorem \ref{thmcentralCarEmpty} and the tests and computations performed by the procedure CentralI.

Let us analyze the running time of CentralI. In the procedure Initialization, if $E_\ell$ is sensitive and $E_{\ell '}$ is $S_1$, $S_2$ linked to $E_\ell$ in step 3, then 
the bonsai matrix associated with $E_\ell\cup E_{\ell '}$ is not a network matrix. Thus we essentially have to check whether  the bonsai matrices $N_1,\ldots,N_b$ are network matrices, where $E_1,\ldots,E_b$ are given in step 2. By Theorem \ref{thmSubclassNTutteCunNet}, this takes time $O(n\alpha)$. The computation of the digraph $D$ described in Section \ref{sec:DefDigraphD} takes time $O(nm \alpha)$ by Lemma \ref{lemdigraphDtimeD}, and since $m\le 4\left( \begin{array}{c}
n\\
2 
\end{array} \right) + 2n +1 $ (see the output of the procedure Camion) the required time is bounded by $C n^3 \alpha$ for some constant $C$.

Let $U$ be a pair of shared bonsais as in step 4 of the procedure Truthassignment. By Theorem \ref{thmNetCorelatedMain}, if $U=E_u$, then computing a network representation of $L_u$ in which $e_1$ and $e_\rho$ are nonalternating takes time $O(n^2 \alpha)$. 
One interesting  observation is that for any bonsai $E_\ell$, the matrices $N_\ell$, $A_\ell$, $A_\ell^\cap$, $L_\ell$, $L_{\ell ,1}$ and $L_{\ell ,2}$  associated with $E_\ell$ are independent from $U$. Thus, it is possible to compute network representations of these matrices, if they exist, before performing the computations in step 4 of the procedure Truthassignment.
Checking whether every bonsai or pair of bonsais is left-feasible or right-feasible performs in time $O(n \alpha)$.
Computing a truth assignment of $\Omega(U,j_1,j_2)$, for any $U$-spanning pair $(j_1,j_2)$, takes time $O(n^2)$ which is bounded by $O(n\alpha)$,  as $n^2 \le n \alpha$. Finally,
 the computations in steps 2, 3 and 4 of the procedure CentralI can be executed  in time $O(n\alpha)$. Altogether, the running time of CentralI is $O(n^3 \alpha)$.
{\hfill$\BBox{\rule{.3mm}{3mm}}$}

\section{The procedure CentralII}\label{sec:S0notempty}

Througout this section, we assume that $S_0 \neq \emptyset$, and $\epsilon=1$ (except in the procedures). We provide a proof of Theorems \ref{thmcentralCarNotEmpty} and \ref{thmS0notempty}.

Suppose that $A$ has a $\{1,\rho\}$-central representation $G(A)$. A bonsai $E_u$ ($1\le u \le b$)
is called \emph{right-extreme}\index{right-extreme} if $B_u$ is $S_0$-straight and on the right of $\{ e_1,e_\rho\}$, and $E_u$ has no $S_0$-straight descendant in $T_{G_1(A)}$ (see page \pageref{mycounter9} for the definition of $T_{G_1(A)}$). The set of right-extreme bonsais is called the \emph{right-extreme set}\index{right-extreme set}. Its cardinality is clearly at most two. 

Like in the previous section, the notion of right-extreme set is related to that of right-compatibility defined as follows. We say that a set $U$ of at most two $S_0$-straight bonsais is \emph{right-compatible}\index{right-compatible} if the following holds.

\begin{itemize}

\item For any $E_u\in U$, $J_u^2=\emptyset$ and $N_u$ is a network matrix.

\item There exists a right-feasible subgraph of $D$ with vertex set $(V_0 \verb"\"V_{st})\cup U$, and in the case where $U=\{E_u,E_{u'}\}$, $E_u$ and $E_{u'}$ are not in a same subpath of this subgraph.

\end{itemize}

\noindent

\begin{lem}\label{lemCellularF*VS0}
Suppose that $A$ has a $\{1,\rho\}$-central representation. Let $U$ be the right-extreme set. Then $U$ is right-compatible.
\end{lem}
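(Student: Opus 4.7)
\bigskip

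The plan is to verify the two conditions in the definition of right-compatibility separately. Condition 1 is the easy part: each $E_u\in U$ is right-extreme, hence on the right of $\{e_1,e_\rho\}$, so Lemma \ref{lemright} applied to $B_u$ immediately yields $J_u^2=\emptyset$ and the fact that $N_u$ is a network matrix.

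For Condition 2, the natural candidate for the right-feasible subgraph is the restriction of the forest $T_{G_1(A)}$ to the vertex set $(V_0\setminus V_{st})\cup U$. The key preliminary claim is that every bonsai $E_\ell\in V_0\setminus V_{st}$ lies on the right of $\{e_1,e_\rho\}$ in $G(A)$. For bonsais in $V_0\setminus V_2$ this follows from Lemma \ref{lemCentralElS}(ii): if such a $B_\ell$ were on the left, then for every $j\in S_0$ the set $E_{\ell j}$ would be the common edge index set of the path of $B_\ell$ lying on the basic cycle (between $w_1$ and $w_{\rho-1}$), so $E_{\ell j}=E_{\ell j'}$ for all $j,j'\in S_0$, forcing $E_\ell\in V_{st}$, a contradiction. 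For bonsais in $V_2\cap V_0$ one argues similarly using that $g_\beta(E_\ell)=2$ together with Lemma \ref{lemCentralElS}(ii) forces the fundamental circuit of $f_\beta$ to wrap around the basic cycle in a way incompatible with $B_\ell$ being on the left. Consequently $(V_0\setminus V_{st})\cup U\subseteq V(T_{G_1(A)})$.

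Once this is established, Theorem \ref{thmdigraphrightfeasible} tells us that $T_{G_1(A)}$ itself is a right-feasible forest in $D$, and it is routine that restricting a right-feasible forest to any vertex subset yields another right-feasible forest, since properties $\Pi_1$ and $\Pi_2'$ are preserved under vertex deletion (each requires a subgraph induced on $\{E_\ell\in V'\,:\,\beta\in f^*(E_\ell)\}$ to be a $\beta$-path/fork, which remains a $\beta$-path/fork or a disjoint union of such after removing some vertices, and in fact stays a $\beta$-path under $\Pi_2'$ because subpaths of $\beta$-paths are $\beta$-paths). Thus $T_{G_1(A)}((V_0\setminus V_{st})\cup U)$ is right-feasible.

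Finally, when $U=\{E_u,E_{u'}\}$ it remains to show $E_u$ and $E_{u'}$ are not in a common subpath of this restricted forest. Suppose towards contradiction that they are; say $E_{u'}$ is a descendant of $E_u$ in $T_{G_1(A)}$, so that $v_{u'}$ lies on the basic path in $G_1(A)$ from $v_u$ to $w_\rho$. Any ancestor of $E_{u'}$ along this path that is $S_0$-straight would violate the right-extremality of $E_{u'}$ (which requires no $S_0$-straight descendant); but $E_u$ itself is such a bonsai, a contradiction. The main obstacle here is controlling the case of $V_2\cap V_0$ bonsais on the left, which may require a careful case analysis via Proposition \ref{propBlsubstems} and the description at page \pageref{mycounter2} to rule out such placements; this is where I expect the bulk of the technical work to lie, while the forest-restriction and descendant arguments are structural and short.
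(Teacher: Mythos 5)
Your skeleton matches the paper's: condition 1 via Lemma \ref{lemright}, condition 2 by extracting a forest on $(V_0\setminus V_{st})\cup U$ from $T_{G_1(A)}$ (which is right-feasible by Theorem \ref{thmdigraphrightfeasible}), and a descendant argument for $|U|=2$. But there is a genuine gap in the middle step. You assert that restricting a right-feasible forest to an arbitrary vertex subset is again right-feasible because $\Pi_1$ and $\Pi_2'$ survive vertex deletion. They do not: deleting an interior vertex of a $\beta$-path splits it into two disjoint $\beta$-paths, which already violates $\Pi_2'$ (it demands a single $\beta$-path for $\beta\in S_1\cup S_2$), and repeated deletions can produce three or more disjoint pieces, or new sinks whose $g_\beta$-values sum to more than $2$, violating $\Pi_1$ (equivalently $\Pi_1^*$). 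Your parenthetical "subpaths of $\beta$-paths are $\beta$-paths" does not apply, since the induced subgraph after removing an interior vertex is not a subpath. This is exactly the point the paper flags: $(V_0\setminus V_{st})\cup U$ need not be closed in $T_{G_1(A)}$, and the remedy is not restriction but reconnection — whenever $(E_a,E_b)$ and $(E_b,E_c)$ lie in $T_{G_1(A)}$ and $E_b$ is deleted, the transitivity of $\prec_D$ supplies an arc $(E_a,E_c)$ in $D$ (with the appropriate label, since $g(E_a)\le g(E_b)\le g(E_c^k)$ and $J_a^2=\emptyset$), so one rebuilds a forest on the desired vertex set whose $\beta$-subgraphs are contractions, not fragmentations, of those of $T_{G_1(A)}$.

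Two smaller points. First, in the $|U|=2$ case you invoke the right-extremality of $E_{u'}$ against an $S_0$-straight \emph{ancestor}; the definition forbids $S_0$-straight \emph{descendants}, so the contradiction is with the right-extremality of $E_u$ (which acquires $E_{u'}$ as an $S_0$-straight descendant) — same conclusion, wrong attribution. Second, the case you expect to carry "the bulk of the technical work" ($V_2\cap V_0$ bonsais on the left) is immediate: by Lemma \ref{lemCentralElS}(ii) any bonsai of $V_0$ on the left of $\{e_1,e_\rho\}$ is forced to be $S_0$-straight, and $S_0$-straightness excludes membership in $V_2$ by definition; in fact the paper's proof does not need this inclusion spelled out at all, since it works directly with $T_{G_1(A)}$ and transitivity.
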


\begin{proof}
By Lemma \ref{lemright}, for any $E_u \in U$, $J_u^2= \emptyset$ and $N_u$ is a network matrix.
On the other hand, by Theorem \ref{thmdigraphrightfeasible} the forest $T_{G_1(A)}$ is a right-feasible subgraph of $D$. 
The vertex set $(V_0 \verb"\" V_{st}) \cup U$ might be not closed in $T_{G_1(A)}$. However,
using the transitivity of the relation $\prec_D$, it results that there exists a right-feasible subgraph of $D$ with vertex set $(V_0 \verb"\"V_{st})\cup U$, and in the case where $U=\{E_u,E_{u'}\}$, $E_u$ and $E_{u'}$ are not in a same subpath of this subgraph. (Otherwise, $E_u$ (or $E_{u'}$) is a descendant of $E_{u'}$ (or $E_u$, respectively) in $T_{G_1(A)}$, contradicting the definition of a right-extreme set.)
Therefore $U$ is right-compatible.
\end{proof}\\

The way of dealing with the graph $F^*(\overline{V_0'})$ is the same as with $F^*( \overline{V(j_1,j_2)})$ in Section \ref{sec:S0empty}. Let $$R=\cup_{E_\ell\in V_0' } E_\ell\cup \{1,\rho\}.$$  

\begin{lem}
If  $A$ has a $\{1,\rho\}$-central  representation $G(A)$, then the basic subgraph with edge index set $R$ is a $1$-tree.
\end{lem}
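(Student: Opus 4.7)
The plan is to show two things: (i) the basic cycle $e_1, e_2, \ldots, e_\rho$ of $G(A)$ is entirely contained in the subgraph with edge index set $R$, and (ii) attaching the bonsais $B_\ell$ with $E_\ell \in V_0'$ to this cycle creates no new cycles, so the resulting subgraph is connected with exactly one cycle.

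For (i), I would use the hypothesis $S_0 \neq \emptyset$: fix any $j \in S_0$. Since $G(A)$ is $\{1,\rho\}$-central with $e_1=[w_1,w_\rho]$ and $e_\rho=\,]w_{\rho-1},w_\rho]$ incident with the common central node $w_\rho$, and since $1,\rho \in s(A_{\bullet j})$, Lemma \ref{lemdigraph12} guarantees that the whole basic cycle lies in the fundamental circuit of $f_j$. Hence $\{1, \ldots, \rho\} \subseteq s(A_{\bullet j})$. For every $i \in \{2,\ldots,\rho-1\}$, $i$ lies in some bonsai $E_\ell$ (because $\overline{R^*}$ is partitioned into $E_1, \ldots, E_b$), which forces $j \in f_{S_0}(E_\ell)$, so $E_\ell \in V_0 \subseteq V_0'$ and $i \in R$. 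Together with $1, \rho \in R$ by definition, this gives the whole basic cycle.

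For (ii), I would argue that every $B_\ell$ with $E_\ell \in V_0'$ is a subtree attached to the basic cycle in a cycle-free way. If $B_\ell$ lies on the right of $\{e_1,e_\rho\}$, it is a tree hanging from the single node $v_\ell = w_\rho$ (or a descendant of $w_\rho$ along other right-side bonsais already attached), so it contributes no new cycle. If $B_\ell$ lies on the left: when $E_\ell$ is shared, Lemma \ref{lemroot} applies, and the intersection of $B_\ell$ with the basic cycle is either the single node $v_{\ell,1}=v_{\ell,2}$ or the path $p(v_{\ell,1},v_{\ell,2})$ which is already on the basic cycle; when $E_\ell \in V_0$ but is not shared, a similar consideration based on Lemma \ref{lemCentralElS}(ii) shows that the part of $B_\ell$ meeting the basic cycle is a path of the cycle, while the rest of $B_\ell$ is a tree rooted on that path. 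In either case, adding $B_\ell$ to the cycle preserves the "one-cycle" property, and the connectivity to the basic cycle is automatic because the bonsais of $V_0'$ meet the basic cycle at $v_\ell$, $v_{\ell,1}$ or $v_{\ell,2}$.

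The main obstacle is the careful case distinction for left-side bonsais, where one must ensure that the path of $B_\ell$ joining $v_{\ell,1}$ to $v_{\ell,2}$ (when those two vertices differ) is already counted as part of the basic cycle rather than contributing a parallel path that would create a second cycle; this is exactly what Lemma \ref{lemroot} provides. Once both ingredients are in place, the subgraph with edge index set $R$ is connected (every bonsai touches the basic cycle) and has precisely one cycle (the basic cycle itself), i.e., it is a basic $1$-tree.
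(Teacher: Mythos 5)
Your proof is correct and follows essentially the same route as the paper's: the paper's own argument likewise first notes that the fundamental circuit of any $f_j$ with $j\in S_0$ contains the whole basic cycle (your Lemma~\ref{lemdigraph12} step), and then refers to the attachment argument of Lemma~\ref{lemCentralRj}, which handles the bonsais via Lemma~\ref{lemroot} exactly as you do. Your additional case analysis for left-side bonsais in $V_0$ that are not shared (via Lemma~\ref{lemCentralElS}(ii)) is a detail the paper leaves implicit, but it does not change the approach.
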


\begin{proof}
We know that the fundamental circuit of a nonbasic edge with index in $S_0$ contains the whole basic cycle, and for the rest of the proof see Lemma \ref{lemCentralRj}.
\end{proof}\\

The Proposition \ref{propOppositeII} below shows the bipartiteness of the graph $F^*(\overline{V_0'})$, provided that $A$ is $\{1,\rho\}$-central.
By assuming that $F^*(\overline{V_0'})$ is bipartite, we denote by $\mathcal{U}_1,\ldots,\mathcal{U}_\xi$ the connected components of $F^*(\overline{V_0'})$ and for each component $\mathcal{U}_{\kappa}$ ($1\le \kappa \le \xi$), let $\mathcal{U}_{\kappa}= \mathcal{U}_{\kappa}^0 \biguplus \mathcal{U}_{\kappa}^1$ be a bipartition of $\mathcal{U}_{\kappa}$ into two colour classes. For any $E_\ell\in \overline{V_0'}$, let $$Opposite(E_\ell)=  ( \cup_{j \in f^*(E_\ell)} s(A_{\bullet j})- \cap_{j \in f^*(E_\ell)} s(A_{\bullet j}) ) \cap R,$$ and 
for any $1\le \kappa \le \xi$ and $i=0$ and $1$, let 
$Opposite(\mathcal{U}_{\kappa}^i)=\cup_{E_\ell \in \mathcal{U}_{\kappa}^i} Opposite(E_\ell)$. 
The following proposition shows a part of Theorem \ref{thmS0notempty}.

\begin{prop}\label{propOppositeII}
Suppose that $A$ has a basic $\{1,\rho\}$-central  representation $G(A)$. Then

\begin{itemize}

\item[(i)] The graph $F^*(\overline{V_0'})$ is bipartite.

\item[(ii)] For any $1 \le \kappa \le \xi$, $i,i' \in \{ 0 , 1\}$, $E_\ell \in \mathcal{U}_{\kappa}^i$ and $E_{\ell '} \in \mathcal{U}_{\kappa}^{i'}$, $B_\ell$ and $B_{\ell '}$ are at different sides of $\{ e_1, e_\rho\}$ if and only if $i\neq i'$.

\item[(iii)] For any $1 \le \kappa \le \xi$, $i\in \{ 0 , 1\}$ and $E_\ell \in \mathcal{U}_{\kappa}^i$, the bonsai $B_\ell$ and the subgraph of $G(A)$ with edge index set $Opposite(\mathcal{U}_{\kappa}^i)$ are on both sides of $\{ 
e_1,e_\rho\}$.

\end{itemize}

\end{prop}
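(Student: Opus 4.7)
The proof of this proposition will follow the template of Proposition \ref{propOpposite} from the previous section, with $\overline{V_0'}$ playing the role that $\overline{V(j_1,j_2)}$ played there; the key new point is that all shared bonsais (not only those with a common column with $f_{j_1}$ or $f_{j_2}$) are absorbed into $V_0'$, which is what allows the same dichotomy argument to go through.

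The plan for (i) and (ii) is as follows. Take $E_\ell, E_{\ell'} \in \overline{V_0'}$ with $f^*(E_\ell) \cap f^*(E_{\ell'}) \neq \emptyset$ and pick some $j$ in this intersection. Since neither bonsai is shared nor in $V_0$, we have $f_{S_0}(E_\ell) = f_{S_0}(E_{\ell'}) = \emptyset$, so both $E_\ell$ and $E_{\ell'}$ are $S_k$-dominated for the same $k \in \{1,2\}$ (namely, the $k$ such that $j \in S_k$). I will argue that if $B_\ell$ and $B_{\ell'}$ lie on the same side of $\{e_1,e_\rho\}$ in $G(A)$, then $(E_\ell, E_{\ell'}) \in D$ or $(E_{\ell'}, E_\ell) \in D$, contradicting the definition of an edge of $F^*(\overline{V_0'})$. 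On the right, this is exactly Proposition \ref{lemdigraphstem2}. On the left, the argument mirrors the proof of Proposition \ref{lemdigraphstem}: by Lemma \ref{lemCentralElj}(ii), both $B_\ell$-paths and $B_{\ell'}$-paths generated by $f_j$ leave $v_{\ell,1}$ and $v_{\ell',1}$ respectively (or enter $v_{\ell,2},v_{\ell',2}$); since the fundamental circuit of $f_j$ in $G_0(A)$ is a directed path from the basic cycle to $w_1$, one of the two bonsais is closer to the basic cycle and its stem passes through the other one, which, combined with Proposition \ref{lemdigraphstem}, yields the required arc in $D$. This dichotomy defines the bipartition: declare $Y_0$ to be the bonsais of $\overline{V_0'}$ on the left of $\{e_1,e_\rho\}$ and $Y_1$ those on the right, proving (i); part (ii) is a direct restatement of the dichotomy within each connected component.

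For (iii), I fix $E_\ell \in \mathcal{U}_\kappa^i$ and, without loss of generality, assume $B_\ell$ is $S_1$-dominated and on the right of $\{e_1,e_\rho\}$, so $i = 1$. Because $E_\ell \notin V_0'$ the edges of $B_\ell$ are not indexed by elements of $R$, so $Opposite(E_\ell)$ contains only indices of basic edges outside $B_\ell$. For any $j \in f^*(E_\ell) \subseteq S_1$, the fundamental circuit of $f_j$ traces a stem from $v_\ell$ into $B_\ell$, then follows the basic path from $v_\ell$ through the central node $w_\rho$, crosses $e_1$, and continues on $G_0(A)$; its intersection with $R$ consists of the central edge $e_1$, edges of bonsais of $V_0'$ lying on the left that the stem crosses, and possibly edges of other bonsais on the right of $V_0'$ reached before $w_\rho$. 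The common part $\cap_{j \in f^*(E_\ell)} s(A_{\bullet j}) \cap R$ contains those edges common to every such stem, so the symmetric difference $Opposite(E_\ell)$ retains exactly the edges that distinguish the different stems, all of which lie in $G_0(A)$ (that is, on the left of $\{e_1,e_\rho\}$), and we extend this to $Opposite(\mathcal{U}_\kappa^i)$ by taking the union, yielding the required statement for $i=1$; the case $i=0$ is symmetric.

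The main obstacle I expect is the careful verification in (iii) that $Opposite(\mathcal{U}_\kappa^i)$ lies entirely on the side opposite to that containing the bonsais of $\mathcal{U}_\kappa^i$, since this requires showing that the common part $\cap_{j} s(A_{\bullet j}) \cap R$ captures all the edges of $R$ on the same side as $B_\ell$ traversed by every stem; this must be argued using the nonnegativity of $A$, Lemma \ref{lemdefiWeight2}, and the fact that the stems leaving $v_\ell$ all share the basic path from $v_\ell$ to $w_\rho$ before diverging into the left side. The rest of the proof is essentially parallel to the $S_0 = \emptyset$ case already treated in Proposition \ref{propOpposite}.
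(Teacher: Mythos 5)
Your proposal is correct and follows essentially the same route as the paper: the paper's entire proof of this proposition is the single sentence that it "can be proved along the same lines as Proposition \ref{propOpposite}", and your argument is precisely that adaptation, with $\overline{V_0'}$ replacing $\overline{V(j_1,j_2)}$ and the same side-dichotomy via arcs of $D$ for (i)–(ii) and the same common-stem cancellation for (iii). You in fact supply more detail than the paper does.
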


\begin{proof}
The proposition can be proved along the same lines as Proposition \ref{propOpposite}.
\end{proof}\\

Let $U$ be a right-compatible set and $j_0 \in S_0$. We define relations $\prec^1$ and $\prec^2$ on $V_{st} \cup \{ E_\ell \, : \, E_\ell \m{ is  sensitive} \}$. For any $k\in \{1,2\}$, $E_\ell \in V_{st}$ and $E_{\ell '}\in V_{st} \cup \{ E_{\ell ''} \, : \, E_{\ell ''} \m{ is sensitive} \}$, 
$$E_\ell \prec^k E_{\ell '}  \,\,\, \Leftrightarrow \,\,\, f_{S_k} (E_{\ell '}) \subseteq \{ \beta \in f_{S_k}(E_\ell) \,: \, E_{\ell  \beta}= E_{\ell  j_0} \}.$$ Clearly, the relations $\prec^1$ and $\prec^2$ are transitive. 
For all $1\le \ell \le b$, let $E_\ell'=E_\ell \cup \{1,\rho\}$.
A bonsai $E_\ell \in V_0' \verb"\" U$ is said to be \emph{right-feasible}\index{right-feasible!bonsai} if we have

\begin{itemize}

\item[$\mu.1$] $J_\ell^2= \emptyset$ and $N_\ell$ is a network matrix;
\item[$\mu.2$] if $E_\ell \in V_{st}$, then there exists some bonsai $E_u\in U$ such that $(E_\ell,E_u)\in D$ 
and for any $E_{\ell '}\in V_0 \verb"\" (V_2\cup V_{st})$, if $(E_{\ell '},E_u)\in D$ then $(E_{\ell '},E_\ell) \in D$;

\end{itemize}

\noindent
and \emph{left-feasible}\index{left-feasible!bonsai} if we have

\begin{itemize}

\item[$\mu.3$] $E_\ell$ is sensitive or $S_0$-straight;

\item[$\mu.4$] if $E_\ell$ is $S_0$-straight, then the matrix $L_\ell^0=[A_{E_\ell' \times f(E_\ell)} \, \chi_{E_{\ell j_0} \cup \{1,\rho \}  }^{E_\ell'} ]$ is a network matrix;

\end{itemize}

\begin{lem}\label{lemCellularfeasible2}
Suppose that $A$ has a $\{1,\rho\}$-central  representation $G(A)$. Let $U$ be the right-extreme set of bonsais. For any $E_\ell \in V_0'\verb"\" U$, if the bonsai $B_\ell$
is on the right (respectively, the left) of $\{ e_1,e_\rho\}$, then it is right-feasible (respectively, left-feasible).
\end{lem}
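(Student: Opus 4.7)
The plan is to split into two cases according to whether $B_\ell$ lies on the right or the left of $\{e_1,e_\rho\}$, and to verify the relevant conditions $\mu.i$ in each case.

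\emph{Right case.} Condition $\mu.1$ is immediate from Lemma~\ref{lemright}. For $\mu.2$, assume $E_\ell\in V_{st}$. Since $E_\ell\notin U$, the bonsai $E_\ell$ is not right-extreme, so it admits an $S_0$-straight descendant in $T_{G_1(A)}$. I would pick $E_u\in U$ to be an $S_0$-straight descendant of $E_\ell$ in $T_{G_1(A)}$ of maximal depth; maximality forces $E_u$ itself to be right-extreme, and $(E_\ell,E_u)\in D$ follows from the transitivity of $\prec_D$ applied along the descending path in $T_{G_1(A)}\subseteq D$. For the second clause of $\mu.2$, the key remark is that any $E_{\ell'}\in V_0\setminus(V_2\cup V_{st})$ has $B_{\ell'}$ on the \emph{right} of $\{e_1,e_\rho\}$: indeed, I will first verify (see the left case below) that every bonsai in $V_0$ lying on the left is automatically $S_0$-straight. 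Hence both $B_{\ell'}$ and $B_\ell$ lie on the right and, using Proposition~\ref{lemdigraphstem} together with the transitivity of $\prec_D$, they must both sit on the unique basic path from $B_u$ up to the central node $w_\rho$ in $G_1(A)$; so they are comparable for $\prec_D$. Choosing $E_u$ carefully as the deepest $S_0$-straight descendant of $E_\ell$ (and using that $E_u$ has no $S_0$-straight descendant) rules out the case $E_{\ell'}\prec_D E_\ell$, leaving $(E_{\ell'},E_\ell)\in D$.

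\emph{Left case, property $\mu.3$.} Suppose first $E_\ell\in V_0$. By Lemma~\ref{lemCentralElS}(ii), for every $j\in f_{S_0}(E_\ell)$ the set $E_{\ell j}$ is the edge index set of the portion of $B_\ell$ lying on the basic cycle, that is, the path $p(v_{\ell,1},v_{\ell,2})$; in particular this portion is nonempty, so $f_{S_0}(E_\ell)=S_0$ and $E_{\ell\beta}=E_{\ell\beta'}$ for all $\beta,\beta'\in S_0$. Moreover, these basic cycle edges of $B_\ell$ are directed (the only bidirected basic edge $e_1$ is not in $B_\ell$) and so carry weight $1$ in every such fundamental circuit, giving $g_\beta(E_\ell)=1$ and $E_\ell\notin V_2$. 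Hence $E_\ell$ is $S_0$-straight. If instead $E_\ell\notin V_0$, then $E_\ell$ is shared (since $E_\ell\in V_0'$) with $f_{S_0}(E_\ell)=\emptyset$. Using $S_0\neq\emptyset$, Lemma~\ref{lemdigraph12} says the fundamental circuit of any $f_{j_0}$ with $j_0\in S_0$ contains the whole basic cycle; combined with Lemma~\ref{lemroot}, if $v_{\ell,1}\neq v_{\ell,2}$ then the path $p(v_{\ell,1},v_{\ell,2})$ lies on the basic cycle and is hit by the fundamental circuit of $f_{j_0}$, forcing $j_0\in f^*(E_\ell)$, a contradiction. Therefore $v_{\ell,1}=v_{\ell,2}$ and $E_\ell$ is sensitive by Lemma~\ref{lemroot}. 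This proves $\mu.3$.

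\emph{Left case, property $\mu.4$.} Assume $E_\ell$ is $S_0$-straight. I will exhibit a network representation of $L_\ell^0=[A_{E_\ell'\times f(E_\ell)}\ \chi_{E_{\ell j_0}\cup\{1,\rho\}}^{E_\ell'}]$. Take the directed tree $B_\ell$ (all of its edges are directed, since $e_1$ is not in $B_\ell$), and attach two auxiliary directed edges $\tilde e_1$ and $\tilde e_\rho$ representing the rows indexed by $1$ and $\rho$: attach $\tilde e_1$ at $v_{\ell,1}$ and $\tilde e_\rho$ at $v_{\ell,2}$, each oriented so as to extend consistently the directed path $p(v_{\ell,1},v_{\ell,2})$ to which they are attached. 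A routine check of the four types of columns ($j\in\bar f^*(E_\ell)$; $j\in f_{S_1}(E_\ell)$; $j\in f_{S_2}(E_\ell)$; $j\in f_{S_0}(E_\ell)$ and the auxiliary column $\chi_{E_{\ell j_0}\cup\{1,\rho\}}$) shows that each is the incidence vector of a directed path in this enlarged graph, yielding the required network representation.

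The main obstacle will be the second clause of $\mu.2$: even once the right bonsai $E_u$ is correctly chosen, one must exclude the situation where a weird bonsai $E_{\ell'}$ would lie strictly between $E_\ell$ and $E_u$ in $T_{G_1(A)}$. The argument above circumvents this through the right-extremality of $E_u$ and the fact that a $V_0$-bonsai on the left is $S_0$-straight, but the combinatorial bookkeeping around $\prec_D$ and Proposition~\ref{lemdigraphstem} is delicate.
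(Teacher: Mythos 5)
Your overall strategy coincides with the paper's: Lemma~\ref{lemright} for $\mu.1$; picking $E_u\in U$ as a deepest $S_0$-straight descendant of $E_\ell$ in $T_{G_1(A)}$ and invoking transitivity of $\prec_D$ for the first clause of $\mu.2$; Lemma~\ref{lemCentralElS}(ii) for the $V_0$ part of $\mu.3$; Lemmas~\ref{lemdigraph12} and~\ref{lemroot} for the shared-but-not-in-$V_0$ part; and attaching two artificial directed edges at $v_{\ell,1}$ and $v_{\ell,2}$ for $\mu.4$. All of those steps are sound, and your explicit observation that a bonsai of $V_0$ lying on the left is automatically $S_0$-straight (hence every $E_{\ell'}\in V_0\setminus(V_2\cup V_{st})$ sits on the right) is exactly what the paper uses implicitly.

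The gap is in the second clause of $\mu.2$, precisely where you flag the argument as delicate. Having placed $E_\ell$, $E_{\ell'}$ and $E_u$ on a common subpath of $T_{G_1(A)}$, you must exclude $(E_\ell,E_{\ell'})\in D$, i.e. the configuration $E_\ell\to E_{\ell'}\to E_u$. You propose to do this ``using that $E_u$ has no $S_0$-straight descendant'', but that property is inapplicable: in the offending configuration $E_{\ell'}$ is an \emph{ancestor} of $E_u$, not a descendant, and $E_{\ell'}$ is not $S_0$-straight anyway, so the right-extremality of $E_u$ and your maximal-depth choice are perfectly compatible with a non-$S_0$-straight bonsai of $V_0\setminus V_2$ sitting strictly between $E_\ell$ and $E_u$. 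The paper closes this step with a label computation on $D$ rather than with extremality: since $E_\ell\in V_{st}$ one has $g_\beta(E_\ell)=1$ for every $\beta\in S_0$, so an arc $(E_\ell,E_{\ell'})_{E_{\ell'}^k}\in D$ would force $g_\beta(E_{\ell'}^k)\ge 1$ for every $\beta\in S_0$; as $E_{\ell'}\notin V_2$ this gives $g_\beta(E_{\ell'}^k)=1$ and $E_{\ell'}^{II}(A_{\bullet\beta})=\emptyset$, whence (Proposition~\ref{propBlsubstems}) $E_{\ell'\beta}=E_{\ell'}^k$ for every $\beta\in S_0$, i.e. $E_{\ell'}$ would be $S_0$-straight, contradicting $E_{\ell'}\notin V_{st}$. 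The same computation is also what you need to rule out $(E_u,E_{\ell'})\in D$ and thereby orient the comparability of $E_{\ell'}$ and $E_u$ correctly. With that substitution your proof matches the paper's.
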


\begin{proof}
Let $E_\ell \in V_0'\verb"\" U$. Suppose that $B_\ell$ is on the right of $\{ e_1,e_\rho\}$. By Lemma \ref{lemright}, $J_\ell^2=\emptyset$ and $N_\ell$ is a network matrix. Assume $E_\ell \in V_{st}$. 
By definition of $U$, $E_\ell$ has a descendant, say $E_u\in U$, in $T_{G_1(A)}$. 
Now let $E_{\ell '} \in V_0-(V_{st}+V_2)$ such that 
$(E_{\ell '},E_u) \in D$. If $f_{S_k}(E_{\ell '})=\emptyset$ for all $k\in \{1,2\}$, then clearly $(E_{\ell '},E_\ell) \in D$. Otherwise, let $j\in f_{S_k}(E_{\ell '})$ for some $k\in \{1,2\}$. Since $E_{\ell '} \notin V_{st}\cup V_2$, $(E_u,E_{\ell '}) \notin D$. As $j\in f_{S_k}(E_{\ell '}) \cap f_{S_k}(E_u)$, $E_{\ell '}$ is an ancestor of $E_u$ in $T_{G_1(A)}$. Now, since $E_\ell$ is also an ancestor of $E_u$ in $T_{G_1(A)}$ and $g_{j'}(E_\ell)=g_{j'}(E_{\ell '})=g_{j'}(E_u)=1$ for some $j' \in S_0$,  it follows that  $E_\ell$, $E_{\ell '}$ and $E_u$ are contained in a same subpath of $T_{G_1(A)}$. Finally, the fact that $E_{\ell '} \notin V_{st}$ and $E_\ell \in V_{st}$ implies that $(E_\ell,E_{\ell '}) \notin D$. So $(E_{\ell '},E_\ell) \in D$.

Now suppose that $B_\ell$ is on the left of $\{ e_1,e_\rho\}$. If $E_\ell\in V_0$, then by Lemma \ref{lemCentralElS} (ii) $B_\ell$ contains at least one edge of the basic cycle, $E_\ell$ is $S_0$-straight and $E_{\ell j} =I(v_{\ell ,1},v_{\ell ,2})$ for any $j\in S_0$;
we deduce that the matrix  $L_\ell^0$ is a network matrix. If $E_\ell\notin V_0$, then by definition of $V_0'$ the bonsai $E_\ell$ is shared,  and it results from Lemmas \ref{lemCentralElS} (ii) and \ref{lemroot} that $E_\ell$ is sensitive.
\end{proof}\\

A pair of bonsais $E_\ell,E_{\ell '}\in V_0' \verb"\"U$ such that $f^*(E_\ell) \cap f^*(E_{\ell '}) \neq \emptyset$ is said to be \emph{right-feasible}\index{right-feasible!pair of bonsais} if we have 

\begin{itemize}
\item[$\delta.1$] if $E_\ell$ and $E_{\ell '}$ are $S_k$-linked for some $k\in \{ 1,2\}$, then $(E_\ell,E_{\ell '}) \in D$ or $(E_{\ell '},E_\ell) \in D$;
\end{itemize}

\noindent 
and \emph{left-feasible}\index{left-feasible!pair of bonsais} if the following holds.

\begin{itemize}

\item[$\delta.2$] The bonsais are not both sensitive.

\item[$\delta.3$] If they are both $S_0$-straight, then $E_\ell \prec^k E_{\ell '}$ and $E_{\ell '} \prec^{k'} E_\ell$ for some $k,k' \in \{1,2\}$, $k\neq k'$.

\item[$\delta.4$] if $E_\ell$ is $S_0$-straight and $E_{\ell '}$ sensitive, then $E_\ell \prec^k E_{\ell '}$ for some $k\in \{1,2\}$, and
$E_\ell$ and $E_{\ell '}$ are not $S_1$, $S_2$ linked.

\end{itemize}

\begin{lem}\label{lemCellularpair2}
Suppose that $A$ has a $\{1,\rho\}$-central  representation $G(A)$ and assumption $\mathscr{A}$ is satisfied. Let $U$ be the right-extreme set of bonsais and $E_\ell,E_{\ell '}\in V_0'\verb"\" U$ such that $f^*(E_\ell) \cap f^*(E_{\ell '})\neq \emptyset$. If $B_\ell$ and $B_{\ell '}$ are both on the right (respectively, the left) of $\{ e_1,e_\rho\}$, then the pair is right-feasible (respectively, left-feasible).
\end{lem}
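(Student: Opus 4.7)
The plan is to mirror closely the proof of Lemma \ref{lemCellularpair}, adapting the arguments for $S_0\neq\emptyset$ and using the extra structural information that left-residing bonsais of $V_0$ are automatically $S_0$-straight.

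First I would dispose of the right-hand side case. If both $B_\ell$ and $B_{\ell'}$ sit on the right of $\{e_1,e_\rho\}$ and are $S_k$-linked for some $k\in\{1,2\}$, then Proposition \ref{lemdigraphstem} applied with respect to a nonbasic edge $f_j$, $j\in f_{S_k}(E_\ell)\cap f_{S_k}(E_{\ell'})$, yields an arc $(E_\ell,E_{\ell'})\in D$ or $(E_{\ell'},E_\ell)\in D$. This is precisely $\delta.1$, so the pair is right-feasible.

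For the left-hand side case I would first classify both bonsais. Since $E_\ell,E_{\ell'}\in V_0'$, each is either in $V_0$ or is shared with $f_{S_0}=\emptyset$. Following the argument already employed in the proof of Lemma \ref{lemCellularfeasible2}, a bonsai on the left of $\{e_1,e_\rho\}$ that lies in $V_0$ must be $S_0$-straight (by Lemma \ref{lemCentralElS}(ii), it contains basic cycle edges, its $S_0$-intersections all equal $I(v_{\ell,1},v_{\ell,2})$, and it is not in $V_2$ since $g_\beta(E_\ell)\le 1$ for $\beta\in S_0$ in that configuration); a left-residing bonsai in $V_0'\setminus V_0$ must be sensitive, by Lemma \ref{lemroot} applied to the shared bonsai with $v_{\ell,1}=v_{\ell,2}$. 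I would then treat three subcases.

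In the subcase where both bonsais are sensitive, Lemmas \ref{lem2rid} and \ref{lembeta2} together force $v_{\ell,1}=v_{\ell,2}$ and $v_{\ell',1}=v_{\ell',2}$, and neither bonsai lies inside the other's basic cycle path; this makes $f^*(E_\ell)\cap f^*(E_{\ell'})=\emptyset$, contradicting the hypothesis. So $\delta.2$ holds. In the subcase where both are $S_0$-straight, by Lemma \ref{lemCellularfeasible2} each one contains basic cycle edges, precisely $I(v_{\ell,1},v_{\ell,2})$ and $I(v_{\ell',1},v_{\ell',2})$ respectively, and one of them is preceding the other along the basic cycle; by the definition of $\prec^1$ and $\prec^2$ and the fact that $E_{\ell j_0}=E_{\ell j}$ for all $j\in S_0$ (and likewise for $\ell'$), this preceding/succeeding relation translates into $E_\ell\prec^k E_{\ell'}$ and $E_{\ell'}\prec^{k'} E_\ell$ for the appropriate $k\neq k'$, yielding $\delta.3$. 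In the mixed subcase, with $E_\ell$ $S_0$-straight and $E_{\ell'}$ sensitive, Lemma \ref{lembeta2} rules out that $E_\ell$ is $S_1,S_2$ linked to $E_{\ell'}$; and since $B_{\ell'}$ collapses to a single node on the basic cycle that is either preceding or succeeding $B_\ell$'s portion of the cycle, the $B_{\ell'}$-paths issued from a common $j\in f^*(E_\ell)\cap f^*(E_{\ell'})$ are controlled by a single $E_{\ell j_0}$, giving $E_\ell\prec^k E_{\ell'}$ for the appropriate $k$. This establishes $\delta.4$ and concludes the left-feasible case.

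The main obstacle I anticipate is the mixed subcase: extracting the direction $k$ for $E_\ell\prec^k E_{\ell'}$ requires carefully combining the location of the sensitive bonsai's unique basic-cycle node relative to the straight bonsai's interval $p(v_{\ell,1},v_{\ell,2})$ with Lemma \ref{lembeta2}, and making sure that sensitivity of $E_{\ell'}$ is compatible with the nontrivial intersection $f^*(E_\ell)\cap f^*(E_{\ell'})$ being in $S_1$ or $S_2$ rather than $S_0$. The rest is straightforward adaptation of the patterns already established in Lemmas \ref{lemCellularpair} and \ref{lemCellularfeasible2}.
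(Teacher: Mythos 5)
Your proposal is correct and follows essentially the same route as the paper: the right-hand case via an arc of $D$ between $S_k$-linked bonsais, and the left-hand case split into the same three subcases (both sensitive ruled out by Lemmas \ref{lem2rid} and \ref{lembeta2}; both $S_0$-straight handled by the preceding/succeeding relation on the basic cycle; the mixed case settled by Lemma \ref{lembeta2} and the position of the sensitive bonsai's node). The only differences are cosmetic — you spell out the "both $S_0$-straight" subcase that the paper dismisses as straightforward, and you lean on Proposition \ref{lemdigraphstem} where the paper argues directly via $T_{G_1(A)}$ and transitivity of $\prec_D$.
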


\begin{proof}
Suppose first that $B_\ell$ and $B_{\ell '}$ are on the right  of $\{e_1,e_\rho\}$ and they are
$S_k$-linked for some $k\in \{1,2\}$. Then they belong to a same subpath of $T_{G_1(A)}$. Using the transitivity of $\prec_D$, we get that $(E_\ell,E_{\ell '}) \in D$ or $(E_{\ell '},E_\ell) \in D$.

Now suppose that $B_\ell$ and $B_{\ell '}$ are on the left of $\{e_1,e_\rho\}$. They can not be both sensitive (otherwise from Lemmas \ref{lembeta2} and \ref{lem2rid}, it follows that $v_{\ell ,1}=v_{\ell ,2}$ and $v_{\ell ',1}=v_{\ell ',2}$, hence $f^*(E_\ell)\cap f^*(E_{\ell '})=\emptyset$). 
If $E_\ell$ and $E_{\ell '}$ are both $S_0$-straight, then the proof that they are left-feasible is straightforward.
At last, assume that $E_\ell$ is $S_0$-straight and $E_{\ell '}$ sensitive. By Lemma \ref{lembeta2}, $B_{\ell '}$
is not $S_1,S_2$ linked to $B_\ell$, so that $B_{\ell '}$ is either preceding $B_\ell$ in which case $E_\ell \prec^{2} E_{\ell '}$, or succeeding $B_\ell$ and $E_\ell \prec^{1} E_{\ell '}$.
\end{proof}\\

Assume that the graph $F^*(\overline{V_0'})$ is bipartite.
Given a right-compatible set $U$, 
we are now ready to construct the instance $\Lambda(U)$ of the $2$-SAT problem. 
The set of variables is $\{ x_\ell \, : \, E_\ell \in V_0' \}$. 
Provided that $A$ is $\{1,\rho\}$-central, the equalities $x_\ell=0$ and $x_\ell=1$ mean that the bonsai $B_\ell$ is on the left and on the right of $\{e_1,e_\rho\}$, respectively, in some $\{1,\rho\}$-central representation of $A$. 

For any $E_u\in U$, set $x_u=1$ in $\Lambda(U)$.
For any $E_\ell\in V_0'$, if $E_\ell$ is not right-feasible (respectively, left-feasible), then set $x_\ell=0$ (respectively, $x_\ell=1$).
For any pair of bonsais $E_\ell,E_{\ell '} \in V_0'\verb"\"U$ such that $f^*(E_\ell) \cap f^*(E_{\ell '}) \neq \emptyset$, if the pair is not right-feasible (resp., left-feasible), then put the clause $\bar x_\ell \vee \bar x_{\ell '} $ (resp., 
$x_\ell \vee x_{\ell '}$) in $\Lambda(U)$. Thus if the pair is not right-feasible (respectively, left-feasible), then at most one of the variables $x_\ell$ and $x_{\ell '}$ has  value $1$ (respectively, $0$). 

For $\kappa=1,\ldots,\xi$, do as follows. For any $i\in \{0,1\}$ and two variables $x_\ell$ and $x_{\ell '}$ such that $E_\ell \, \cap \, Opposite(\mathcal{U}_\kappa^i)$ $\neq \emptyset $ and $E_{\ell '} \cap Opposite(\mathcal{U}_\kappa^i) \neq \emptyset $, put the equality $x_\ell = x_{\ell '}$ in $\Lambda(U)$. Moreover, if $Opposite(\mathcal{U}_\kappa^0) \neq \emptyset$ and $Opposite(\mathcal{U}_\kappa^{1}) \neq \emptyset$, choose some $x_\ell$ and $x_{\ell '}$ such that 
$E_\ell \cap Opposite(\mathcal{U}_\kappa^0)\neq \emptyset $ and $E_{\ell '} \cap Opposite(\mathcal{U}_\kappa^{1})\neq \emptyset $. Then put the clauses $x_\ell \vee x_{\ell '}$ and $\bar x_\ell \vee \bar x_{\ell '}$ in $\Lambda(U)$. These clauses ensure that the variables $x_\ell$ and $x_{\ell '}$ have different values.

\begin{prop}\label{propCellulartruth2}
Suppose that $A$ has a $\{1,\rho\}$-central representation and assumption $\mathscr{A}$ is satisfied. Then, there exists a right-compatible set $U$ such that the instance $\Lambda(U)$ has a truth assignment.
\end{prop}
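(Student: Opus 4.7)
The plan is to mimic the structure of Proposition~\ref{propCellulartruth} from the previous section almost verbatim, replacing left-extreme/left-compatible sets by right-extreme/right-compatible sets, and using the machinery built up for the $S_0\neq\emptyset$ case (Lemmas~\ref{lemCellularfeasible2} and~\ref{lemCellularpair2} and Proposition~\ref{propOppositeII}). First I would fix a $\{1,\rho\}$-central representation $G(A)$ of $A$ and take $U$ to be the right-extreme set of bonsais in $G(A)$. By Lemma~\ref{lemCellularF*VS0}, $U$ is right-compatible, so the instance $\Lambda(U)$ is defined. Then I would set
\[
x_\ell = \begin{cases} 0 & \text{if } B_\ell \text{ is on the left of } \{e_1,e_\rho\} \text{ in } G(A),\\
1 & \text{if } B_\ell \text{ is on the right of } \{e_1,e_\rho\} \text{ in } G(A),\end{cases}
\]
for every $E_\ell\in V_0'$, and claim this is a truth assignment of $\Lambda(U)$.

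Next I would go through the clauses of $\Lambda(U)$ one family at a time. For $E_u\in U$, by definition of right-extreme we have $B_u$ on the right of $\{e_1,e_\rho\}$, hence $x_u=1$, matching the forced value in $\Lambda(U)$. For the unit clauses attached to single bonsais $E_\ell\in V_0'\setminus U$: Lemma~\ref{lemCellularfeasible2} tells us that if $B_\ell$ is on the right then $E_\ell$ is right-feasible (so the forced assignment $x_\ell=0$ does not occur), and if $B_\ell$ is on the left then $E_\ell$ is left-feasible (so the forced assignment $x_\ell=1$ does not occur), which in both cases is consistent with the chosen value of $x_\ell$. For the binary clauses coming from pairs $E_\ell, E_{\ell'}\in V_0'\setminus U$ with $f^*(E_\ell)\cap f^*(E_{\ell'})\neq\emptyset$: Lemma~\ref{lemCellularpair2} asserts that when both bonsais lie on the same side of $\{e_1,e_\rho\}$, the pair is right-feasible (resp.\ left-feasible) if they are both on the right (resp.\ left), hence the clauses $\bar x_\ell\vee\bar x_{\ell'}$ and $x_\ell\vee x_{\ell'}$ that actually appear in $\Lambda(U)$ are exactly those corresponding to pairs in which the two bonsais must live on opposite sides, which our assignment respects.

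Finally I would deal with the clauses arising from the connected components $\mathcal{U}_\kappa$ of $F^*(\overline{V_0'})$ and the $Opposite$ sets. By Proposition~\ref{propOppositeII}(ii), the bipartition $\mathcal{U}_\kappa = \mathcal{U}_\kappa^0 \uplus \mathcal{U}_\kappa^1$ can be chosen so that bonsais in the same class lie on the same side of $\{e_1,e_\rho\}$ and bonsais in different classes lie on opposite sides. Part~(iii) of the same proposition then forces the subgraph with edge index set $Opposite(\mathcal{U}_\kappa^i)$ to lie entirely on the side opposite to that of $\mathcal{U}_\kappa^i$; since $Opposite(\mathcal{U}_\kappa^i)\subseteq R$ and $R$ is decomposed through the sets $E_\ell$ for $E_\ell\in V_0'$, this means any two variables $x_\ell,x_{\ell'}$ both meeting $Opposite(\mathcal{U}_\kappa^i)$ describe bonsais on the same (opposite to $\mathcal{U}_\kappa^i$) side, yielding $x_\ell=x_{\ell'}$, while variables meeting $Opposite(\mathcal{U}_\kappa^0)$ and $Opposite(\mathcal{U}_\kappa^1)$ respectively describe bonsais on opposite sides, satisfying both $x_\ell\vee x_{\ell'}$ and $\bar x_\ell\vee\bar x_{\ell'}$.

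The main obstacle I expect is purely bookkeeping: one must verify that every clause generation rule in the construction of $\Lambda(U)$ is accounted for by exactly one of Lemma~\ref{lemCellularfeasible2}, Lemma~\ref{lemCellularpair2}, or Proposition~\ref{propOppositeII}, and that the forced assignments $x_u=1$ for $E_u\in U$ never clash with values dictated by the geometric assignment. Since $U$ was chosen to be right-extreme and the two lemmas above are stated precisely under the hypothesis that $U$ is the right-extreme set (together with assumption $\mathscr{A}$, which we have by hypothesis), no such clash can occur, and the verification goes through cleanly, establishing the proposition.
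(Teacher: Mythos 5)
Your proposal is correct and follows essentially the same route as the paper's proof: take $U$ to be the right-extreme set, invoke Lemma \ref{lemCellularF*VS0} for right-compatibility, assign $x_\ell$ according to the side of $\{e_1,e_\rho\}$ on which $B_\ell$ lies, and verify the clauses via Lemmas \ref{lemCellularfeasible2} and \ref{lemCellularpair2} together with Proposition \ref{propOppositeII} (ii) and (iii). The paper states this in four lines; your write-up simply makes the clause-by-clause verification explicit.
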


\begin{proof}
Let $U$ be the right-extreme set. By Lemma \ref{lemCellularF*VS0}, $U$ is right-compatible. For each bonsai $E_\ell$, let $x_\ell=0$ if $E_\ell$ is on the left of $\{e_1,e_\rho\}$, and $x_\ell=1$ otherwise.
By Lemmas \ref{lemCellularfeasible2} and \ref{lemCellularpair2} and  Proposition \ref{propOppositeII} (ii) and (iii), we deduce that this is a truth assignment.
\end{proof}\\

The following procedure computes a truth assignment of 
$\Lambda(U)$ for some right-compatible set $U$, provided that $A$ is $\{1,\rho\}$-central.

\begin{tabbing}
\textbf{Procedure\,\,Truthassignment($A$,$\{\epsilon,\rho\}$)}\\

\textbf{Input:}\, \= A non-network matrix $A$ and two row indexes $\epsilon$ and  $\rho\neq 1$ such that $S_0\neq \emptyset$.\\

\textbf{Output:} \=  Either a right-compatible set $U$ with a truth assignment of $\Lambda(U)$, \\
\> or determines that $A$ is  not $\{\epsilon ,\rho\}$-central.\\

1)\verb"  "\= call {\tt Initialization}($A$,$\{\epsilon,\rho\}$)
outputing a matrix $A'$;\\

2) \> check whether $F^*(\overline{V_0'})$ is bipartite; if it is not,  then output that $A$ is  not $\{\epsilon ,\rho\}$-central;\\

3) \> compute a digraph $D$ with respect to $A'$ and the row index subset\\
\> $R^*=\{1,\rho\}$ of $A'$.\\

4) \> {\bf for } \= every  set $U$ of $S_0$-straight bonsais, $|U|\le 2$, {\bf do }\\

\>  \> compute whether $U$ is right-compatible; if it is not, then return to 4;\\

\> \> compute the instance $\Lambda(U)$ and a truth assignment of $\Lambda(U)$, if one exists,\\
\> \> output it with $U$ and STOP, otherwise return to 4;\\

 \> {\bf endfor }\\

\>  output that $A$ is not $\{\epsilon,\rho\}$-central;

\end{tabbing}

\begin{prop}\label{propCentraltruthII}
The output of the procedure Truthassignment is correct.
\end{prop}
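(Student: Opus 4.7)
The plan is to mirror the structure of the proof of Proposition \ref{propCentraltruth} in the $S_0=\emptyset$ case, which was essentially a matter of stitching together the preceding correctness lemmas in the right order. The statement has two directions: if the subroutine outputs $(U,\,\text{truth assignment})$, then $A$ really is $\{\epsilon,\rho\}$-central (soundness), and if $A$ is $\{\epsilon,\rho\}$-central, then the subroutine cannot stop by declaring it is not (completeness).

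First, I would handle the preliminary step. By Lemma~\ref{lemCentralInit}, step~1 of Truthassignment replaces $A$ by a matrix $A'$ of size $n\times m'$ (with $A'_{\bullet\{1,\ldots,m\}}=A$) such that $A'$ is $\{1,\rho\}$-central if and only if $A$ is $\{\epsilon,\rho\}$-central, and such that assumption $\mathscr{A}$ is satisfied for $A'$. This reduction allows us to argue directly about $A'$ with the row indices $1$ and $\rho$, and to use all earlier lemmas (notably Lemmas~\ref{lem2rid}, \ref{lembeta2}, \ref{lemCellularfeasible2}, \ref{lemCellularpair2} and Proposition~\ref{propCellulartruth2}) that were stated under assumption~$\mathscr{A}$.

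Next comes soundness. Suppose the procedure halts in step~4 by outputting some $U$ and a truth assignment of $\Lambda(U)$. By construction $U$ passes the right-compatibility test, and the truth assignment is produced by a standard $2$-SAT solver applied to the instance $\Lambda(U)$, so both outputs are correct on their own terms; the actual use of this output (constructing a central representation) is not the concern here but of the analogue of GAR in Section~\ref{sec:S0notempty}, and is not part of the present claim.

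For completeness, the real content, suppose $A$ is $\{\epsilon,\rho\}$-central; equivalently (by Lemma~\ref{lemCentralInit}) $A'$ has a $\{1,\rho\}$-central representation $G(A')$. Then Proposition~\ref{propOppositeII}(i) ensures that $F^*(\overline{V_0'})$ is bipartite, so the procedure does not stop at step~2. Let $U$ be the right-extreme set associated to $G(A')$; Lemma~\ref{lemCellularF*VS0} gives that $U$ is right-compatible, so at some iteration of the loop in step~4, Truthassignment will consider this particular $U$ and pass the right-compatibility test. For this $U$, Proposition~\ref{propCellulartruth2} guarantees that $\Lambda(U)$ admits a truth assignment (the explicit one given by $x_\ell=1$ iff $B_\ell$ lies on the right of $\{e_1,e_\rho\}$ in $G(A')$), so the $2$-SAT solver succeeds and the procedure outputs an answer rather than proceeding to the final ``not $\{\epsilon,\rho\}$-central'' declaration.

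The only subtlety, and where a little care is needed, is that the loop in step~4 iterates over right-compatible candidates in some fixed order and may encounter some $U$ for which $\Lambda(U)$ is infeasible before reaching the right-extreme $U$; here one should note that the subroutine simply continues to the next $U$ on infeasibility (``otherwise return to 4''), so it cannot halt prematurely on a wrong conclusion, and it is guaranteed to eventually reach a $U$ of the desired form. This mirrors the argument for Proposition~\ref{propCentraltruth} and no genuinely new idea is required; the main thing is to verify this loop-control point carefully so that soundness is not broken by a badly chosen earlier $U$.
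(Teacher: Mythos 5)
Your proof is correct and follows essentially the same route as the paper's: reduce to $A'$ via Lemma~\ref{lemCentralInit}, use Proposition~\ref{propOppositeII}~(i) to pass step~2, and invoke Proposition~\ref{propCellulartruth2} (which rests on Lemma~\ref{lemCellularF*VS0}) to guarantee a satisfiable $\Lambda(U)$ for the right-extreme $U$. Your extra remarks on soundness and loop control are harmless elaborations of points the paper leaves implicit.
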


\begin{proof}
By Lemma \ref{lemCentralInit}, the subroutine Initialization outputs a matrix $A'$ of size $n\times m'$ whose $A_{\bullet \{1,\ldots,m \}}'=A$ and such that $A'$ is $\{1,\rho\}$-central if and only if $A$ is $\{\epsilon,\rho\}$-central, and assumption $\mathscr{A}$ is satisfied for $A'$. 

Suppose that $A$ is $\{\epsilon,\rho\}$-central. Then, let $G(A')$ be a $\{1,\rho\}$-central representation of $A'$. In step 2, by Proposition \ref{propOppositeII} (i), the graph
$F^*(\overline{V_0'})$ is bipartite.
 Finally, using Proposition \ref{propCellulartruth2}, the procedure Truthassignment outputs a truth assignment of $\Lambda(U)$. This concludes the proof.
\end{proof}\\

Before proving Theorem \ref{thmS0notempty}, we see some properties ensuing from a truth assignment of $\Lambda(U)$. Let $U$ be a right-compatible set. If $|U|=2$, then we write $U=(E_u,E_{u'})$, otherwise $U=(E_{u})$.
We suppose that each variable $x_\ell$ corresponding to a bonsai $E_\ell \in V_0'$ has received a value $0$ or $1$, yielding a truth assignment of $\Lambda(U)$. Let $$X_k=\{ E_\ell \in V_0' \, : \, x_\ell=k \}$$ for $k=0$ and $1$. We notice that if a variable $x_\ell$ is equal to $0$, then it is left-feasible and the corresponding bonsai $E_\ell$ is either sensitive or $S_0$-straight. Up to a renumbering of the bonsais in $X_0$, we may assume that $X_0\cap V_{st}= \{ E_1,E_2,\ldots, E_t \}$. 
We now state some useful claims.\\

\noindent
{\bf Claim 0.} \quad For any sensitive bonsai $E_\ell$, $x_\ell=0$ and there is no bonsai $E_{\ell '}$  that is  $S_1$, $S_2$ linked to $E_\ell$ and such that $x_{\ell '}=0$.\\

\noindent
{\bf Claim 1.} \quad For any $E_\ell \in V_0'\verb"\" V_{st}$,   $x_\ell=0$ if and only if $E_\ell$ is sensitive.\\

\noindent
{\bf Claim 2.} \quad  Up to a renumbering of the bonsais, there exist orderings, say 

\begin{eqnarray}\label{eqnCellularI}
E_1 \prec^1 E_2 \prec^1 \cdots E_{t-1} \prec^1 E_t \label{eqnCellularI}\\
\m{ and \hspace{2cm}}  \nonumber\\
E_t \prec^2 E_{t-1} \prec^2 \cdots E_2 \prec^2 E_1 \label{eqnCellularII}
\end{eqnarray} 

\noindent
of the set $X_0\cap V_{st}$. \\

\noindent
{\bf Claim 3.} \quad For every sensitive bonsai $E_\ell$, either there exists an index $i(\ell)$ with $1\le i(\ell) \le t$ such that $E_{i(\ell)} \prec^1 E_\ell$ and $E_{i(\ell)+1} \prec^2 E_\ell$, or $E_1 \prec^2 E_\ell$.\\

\noindent
{\bf Claim 4.} \quad  There exists a right-feasible forest, say $TX_1$, in $D$ with vertex set $X_1$. \\

The proof of Claim 0 directly follows from $\mu.1$, $\mu.3$, $\delta.2$ and $\delta.4$. The proof of Claim 1 follows from Claim 0 and $\mu.3$. The proof of Claim 2 (respectively, 3) is close to the proof of 
Claim 2  (respectively, 3) in Section \ref{sec:S0empty}, using $\delta.3$ (respectively, $\delta.4$). The proof of Claim 4 is a consequence of the definition of a right-compatible set, $\mu.2$ and $\delta.1$.

Below, we describe a subroutine of CentralII, called GAR, which constructs a basic $\{1,\rho\}$-central representation of a matrix $A_{R \bullet}'$, where $A_{\bullet \times \{1,\ldots,m\}}'=A$, provided that $A$ is $\{\epsilon,\rho\}$-central. In the following procedure, for any $S_0$-straight bonsai $E_\ell$ such that $x_\ell=0$, 
if the matrix $L_u^0$ has a basic network representation $G(L_u^0)$, we denote by $v_{\ell ,1}$ (respectively,  $v_{\ell ,2}$) the initial (respectively, terminal) node of the path with edge index set $E_{\ell  j}$ for any $j\in S_0$. At last,
for every sensitive bonsai $E_\ell$, we define a vertex $v_{\ell }^*$ as follows. If there exists an index $i(\ell)$ as defined in Claim 3, then $v_{\ell }^*=v_{i(\ell),2}$, otherwise $v_{\ell }^*=v_{1,1}$.

\begin{tabbing}
\textbf{Procedure\,\,GAR($A$,$\{\epsilon,\rho\}$)}\\

\textbf{Input:}\, \= A non-network matrix $A$ and two row indexes $\epsilon$ and $\rho \neq 1$. \\

\textbf{Output:}\=  A basic $\{1,\rho\}$-central representation $G(A_{R \bullet}')$ of a matrix $A_{R \bullet}'$,  \\
\>  or determines that $A$ is not $\{\epsilon,\rho\}$-central.\\ 

1)\verb"  "\= call {\tt Truthassignment}($A$,$\{\epsilon,\rho\}$) outputing $U$ and a truth assignment of \\
\> $\Lambda(U)$, or the fact that $A$ is not $\{\epsilon,\rho\}$-central;\\

2)\verb"  "\= consider basic network representations  
$G(L_\ell^0)$ for any $E_\ell\in V_{st}\cap X_0$, and $G(N_\ell)$\\
\>  for any $E_\ell \in V_0'\verb"\" (V_{st}\cap X_0)$; contract all artificial edges and  create the edges \\
\>  $e_1=[w_1,w_\rho]$ and $e_\rho=]w_{\rho-1},w_\rho]$;\\

3) \>  for  any $1\le i \le t-1$, identify $v_{i,2}$ with $v_{i+1,1}$,\\
\>  and identify $v_{1,1}$ with $w_1$ and $v_{t,2}$ with $w_{\rho-1}$;\\

4) \> for every sensitive bonsai $E_\ell$, identify  $v_\ell$ with $v_\ell^*$;\\

5) \>  compute a forest $TX_1$ as in Claim 4;\\
\> for every  root vertex $E_\ell$ of $TX_1$, identify $v_\ell$ with $w_\rho$;\\
 \> for any edge $(E_\ell,E_{\ell '})_{E_{\ell '}^k}$ in $TX_1$, identify $v_\ell$ with  the endnode ($\neq v_{\ell '}$)\\
 \>  of the $B_{\ell '}$-path with edge index set $E_{\ell '}^k$ in $G(N_{\ell '})$;\\

\>output the resulting bidirected graph $G(A_{R\bullet}')$;

\end{tabbing}

\begin{prop}\label{propCentralGARII}
The output of GAR is correct.
\end{prop}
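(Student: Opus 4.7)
The plan mirrors the proof of Proposition \ref{propCentralGARI}. If the subroutine \texttt{Truthassignment} called in step 1 declares that $A$ is not $\{\epsilon,\rho\}$-central, then Proposition \ref{propCentraltruthII} guarantees correctness immediately. So assume that \texttt{Truthassignment} returns a right-compatible set $U$ together with a truth assignment of $\Lambda(U)$, and set $R=\cup_{E_\ell\in V_0'}E_\ell\cup\{1,\rho\}$.

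First I would check that the basic subgraphs built in step~2 are well-defined. For every $E_\ell\in V_{st}\cap X_0$, $E_\ell$ is left-feasible, so by $\mu.4$ the matrix $L_\ell^0$ is a network matrix, which gives a basic network representation $G(L_\ell^0)$; by definition of $L_\ell^0$ the two artificial rows $1$ and $\rho$ correspond to edges whose contraction identifies the endnodes of a single (basic) path with edge index set $E_{\ell j_0}$ (any $j_0\in S_0$), so that the vertices $v_{\ell,1}$ and $v_{\ell,2}$ are unambiguously defined. For every $E_\ell\in V_0'\setminus(V_{st}\cap X_0)$, the variable $x_\ell$ is either $1$ (so $E_\ell$ is right-feasible and $\mu.1$ applies), or $0$ and $E_\ell\notin V_{st}$, in which case Claim~1 forces $E_\ell$ to be sensitive and the definition of sensitive bonsai gives that $N_\ell$ is a network matrix with $J_\ell^2=\emptyset$. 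In both cases $N_\ell$ admits a $v_\ell$-rooted network representation by Lemma~\ref{lembonsaicel}, so step~2 is legitimate.

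Next I would check that the gluing in steps~3--5 produces a single basic $1$-tree whose basic cycle is full and whose central edges are $e_1$ and $e_\rho$. The orderings (\ref{eqnCellularI})--(\ref{eqnCellularII}) of $X_0\cap V_{st}$ provided by Claim~2 ensure that successive identifications $v_{i,2}=v_{i+1,1}$ are coherent with both $\prec^1$ and $\prec^2$; this means that for every $j\in S_0$ the union of the paths with edge index sets $E_{\ell j_0}$ ($1\le\ell\le t$) is precisely the basic path on the left of $\{e_1,e_\rho\}$ in the resulting graph. Claim~3 then allows us to attach each sensitive bonsai at the unique node $v_\ell^*$ lying on this path whose left/right incidence with respect to $\{e_1,e_\rho\}$ is forced by which of the subpaths of (\ref{eqnCellularI})--(\ref{eqnCellularII}) precedes it. Finally Claim~4 produces a right-feasible forest $TX_1$ whose root vertices attach to $w_\rho$ and whose internal edges $(E_\ell,E_{\ell'})_{E_{\ell'}^k}\in D$ are realized by identifying $v_\ell$ with the far endpoint of the $B_{\ell'}$-path with edge index set $E_{\ell'}^k$; the definition of $D$ (in particular the inequality $g(E_\ell)\le g(E_{\ell'}^k)$) ensures that no two $B_\ell$-paths collide at the same node of $B_{\ell'}$, so the right-hand side is a genuine basic tree.

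The final and main step is to verify that each column of $A'_{R\bullet}$ is the support of a fundamental circuit of the assembled graph. For $j\in S_0$, this follows from Claim~2 and the construction of $G(L_\ell^0)$ (giving the left part of the basic cycle) together with the fact that the union of the $B_\ell$-paths attached on the right side traces the right part of the cycle through $w_\rho$. For $j\in S_1\cup S_2$, one argues as in the proof of Proposition \ref{propCentralGARI}: $\mu$ and $\delta$ conditions guarantee that the bonsais in $X_0$ touching the fundamental circuit of $f_j$ are totally ordered by one of the relations $\prec^k$, and the sensitive-bonsai case is handled by Claim~3. For $j\in\overline{S^*}$, $s(A'_{\bullet j})\subseteq E_\ell$ for exactly one $\ell$, and the claim reduces to the fact that $B_\ell$ is a basic network representation of $N_\ell$. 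The hard part of the argument is precisely this last verification for $j\in S_1\cup S_2$ when two bonsais in $X_0$ are $S_1$--$S_2$-linked across the sensitive-bonsai attachment point $v_\ell^*$; here I would invoke Claim~0 (no sensitive bonsai is $S_1,S_2$-linked to another bonsai in $X_0$) together with $\delta.4$ to rule out any ambiguity in the path along the basic cycle. Once this is established, the output is a valid basic $\{1,\rho\}$-central representation of $A'_{R\bullet}$, completing the proof.
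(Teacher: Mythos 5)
Your proposal is correct and follows exactly the route the paper takes: the paper's own proof is a one-line citation of Proposition \ref{propCentraltruthII} together with Claims 0--4, and your argument is simply a detailed expansion of how those ingredients justify steps 2--5 of GAR. The only cosmetic slip is citing Lemma \ref{lembonsaicel} for the existence of the $v_\ell$-rooted representations in step 2, where the relevant fact is that $N_\ell$ being a network matrix (from $\mu.1$ or sensitivity) already yields one via Lemma \ref{lembonsainet2}; this does not affect the substance.
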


\begin{proof}
This results from Proposition \ref{propCentraltruthII} and
Claims 0, 1, 2, 3 and 4.
\end{proof}\\

Finally, assume that the graph $F^*(\overline{V_0'})$ is bipartite and we are given a right-compatible set $U$ and a truth assignment of $\Lambda(U)$. We define $R_i=\cup_{E_\ell \in V_i} E_\ell$ for $i=0$ and $1$.
We state a last claim.\\

\noindent
{\bf Claim 5.} \quad The graph $F^*(\overline{V_0'})$
is partitionable into  two colour classes, say $Y_0$ and $Y_1$, such that

\begin{itemize}

\item for any $E_\ell \in \overline{V_0'}$, if $E_\ell\in Y_0$ (respectively, $E_\ell \in Y_1$), then $Opposite(E_\ell) \subseteq R_1$ (respectively, $Opposite(E_\ell) \subseteq R_0$);

\item for $k=0$ and $1$, there exists a forest in $D$, denoted as $T_k$, with vertex set $Y_k$; and
for any $E_\ell$ and $E_{\ell '}$ such that $f^*(E_\ell) \cap f^*(E_{\ell '}) \neq \emptyset$, either the bonsais are in two distinct colour classes, or they are contained in a same subpath of $T_0$ or $T_1$.\\

\end{itemize}

\noindent
{\bf Proof of Claim 5.} \quad The proof is identical to the proof of Claim 5 in Section \ref{sec:S0empty}.
{\hfill$\BBox{\rule{.3mm}{3mm}}$}\\

Suppose that the procedure GAR has output a basic $\{1,\rho\}$-central representation $G(A_{R\bullet}')$ of a matrix $A_{R\bullet}'$. For any $k\in\{0,1\}$ and every root vertex $E_\ell\in T_k$, we define  a node $z_\ell$ as follows. By claim 5, for all $j,j'\in f^*(E_\ell)$, 
$A_{R\times \{ j\}}'\cap R_k= A_{R\times\{  j'\}}'\cap R_k$. Let $z_\ell$ be the endnode, distinct from $w_\rho$,
of the path in $G(A_{R\bullet}')$ with edge index set $A_{R \times \{ j\}}'\cap R_k$ for any $j\in f^*(E_\ell)$.
We are now prepared to state the main procedure.

\begin{tabbing}
\textbf{Procedure\,\,CentralII($A$,$\{\epsilon,\rho\}$)}\\

\textbf{Input: }  \= A non-network matrix $A$ and two row indexes $\epsilon$ and $\rho \neq 1$ such that $S_0\neq \emptyset$. \\

\textbf{Output: } \=  A basic $\{\epsilon,\rho\}$-central representation $G(A)$ of $A$, or
determines that none exists.\\

1)\verb"  "\=call {\tt GAR}($A$,$\{\epsilon,\rho\}$) outputing a basic $\{1,\rho\}$-central representation $G(A_{R \bullet}')$ of a matrix $A_{R \bullet}'$,  \\
\>  or the fact that $A$ is not $\{\epsilon,\rho\}$-central.\\

2) \> for every $E_\ell\in \overline{V_0'}$, compute a $v_\ell$-rooted network representation $B_\ell$ of $N_\ell$, \\
\> if one exists, otherwise STOP: output that $A$ is not $\{\epsilon,\rho\}$-central;\\

3) \> compute some forests $T_0$ and $T_1$ as in Claim 5; \\

4) \> for every root vertex $E_\ell$ of $T_0$ or $T_1$, identify $v_\ell$ with $z_\ell$;\\

 \> for any edge $(E_\ell,E_{\ell '})_{E_{\ell '}^k}$ in $T_0$ or $T_1$, identify $v_\ell$ with the endnode ($\neq v_{\ell '}$) of the $B_{\ell '}$-path\\
 \>  with edge index set $E_{\ell '}^k$ in $B_{\ell '}$; \\

\> up to a renumbering of the basic edges $e_1$ and $e_\epsilon$, output a basic $\{\epsilon,\rho\}$-central \\ 
\> representation of $A$;

\end{tabbing}

\noindent
{\bf Proof of Theorems \ref{thmcentralCarNotEmpty} and  \ref{thmS0notempty}.} \quad  Let us prove the correctness of the procedure CentralII. By Proposition \ref{propCentralGARII}, the subroutine GAR in step 1 outputs  a basic $\{1,\rho\}$-central representation $G(A_{R\bullet}')$, or determines that $A$ is not $\{\epsilon,\rho\}$-central.

If one stops in step 2, then by Proposition \ref{propSkwatered} $A$ is not $\{\epsilon,\rho\}$-central; otherwise, using Claim 5, in step 4, a basic $\{1,\rho\}$-central representation of $A'$ is computed.
At last, seeing step 1 of the subroutine Initialization, it is clear that the procedure CentralII outputs a basic $\{\epsilon,\rho\}$-central representation of $A$. 

The proof of Theorem \ref{thmS0notempty} follows from 
Theorem \ref{thmcentralCarNotEmpty} and the tests and computations performed by the procedure CentralII.
The analysis of the running time of CentralII is the same as for CentralI. Thus, the number of operations in CentralII is $O(n^3 \alpha)$.
{\hfill$\BBox{\rule{.3mm}{3mm}}$}

\section{Recognizing $\{1,\rho\}$-noncorelated network matrices}\label{sec:CentralNetcorelated}

Let $A$ be a nonnegative connected network matrix $A$ of size $n \times m$, $\alpha$ the number of nonzero entries of $A$
and $\rho> 1$ a row index. Let $G(A)$ be a basic network representation of $A$. Our present goal is to recognize whether  $A$ is $\{1,\rho\}$-noncorelated. We analyze an algorithm which takes a basic network representation $G(A)$ as input. If $A$ is $\{1,\rho\}$-noncorelated, it  outputs a basic network representation $G'(A)$ such that $e_1$ and $e_\rho$ are alternating in $G(A)$ if and only if they are nonalternating in $G'(A)$. We will prove the following.

\begin{thm}\label{thmNetCorelatedMain}
Given the matrix $A$ there exists an algorithm which either determines that $A$ is  $\{1,\rho\}$-corelated, or 
provides basic network representations $G(A)$ and $G'(A)$ such that $e_1$ and $e_\rho$ are alternating in $G(A)$ if and only if they are nonalternating in $G'(A)$. The running time of the algorithm is bounded by $C(n^2 \alpha)$ for some constant $C$.
\end{thm}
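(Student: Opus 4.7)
The first step is to compute a basic network representation $G(A)$ of $A$ by invoking the algorithm of Theorem \ref{thmSubclassNTutteCunNet} in time $O(n\alpha)$. From $G(A)$ one reads off in $O(n)$ time the alternating or nonalternating status of $\{e_1,e_\rho\}$, by locating the unique basic path of the basic tree $T$ joining an endnode of $e_1$ to an endnode of $e_\rho$ and comparing the directions in which the two edges are traversed along it; since $A$ is nonnegative and every fundamental circuit of a nonbasic edge is a consistently oriented walk in $T$, this status is an invariant of the oriented tree.

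The substance of the algorithm is then to search for a second basic network representation $G'(A)$ of $A$ with the opposite status. The plan is to delete $e_1$ and $e_\rho$ from $T$ to obtain two or three subtrees $T_1,\ldots,T_k$, and for each nonbasic edge $f_j$ to record the pair of endnodes of $\{e_1,e_\rho\}$ used by its fundamental circuit together with the subtrees its circuit meets. I would then show that changing the alternating status amounts to performing a compatible ``boundary swap'' at one of the endnodes $u_1,v_1,u_2,v_2$ of $\{e_1,e_\rho\}$: a union $W$ of the subtrees $T_i$ is reattached after swapping $u_1\leftrightarrow v_1$ or $u_2\leftrightarrow v_2$ along the boundary, and the swap is compatible with $A$ exactly when every nonbasic edge whose fundamental circuit crosses the boundary of $W$ remains consistently oriented after the swap. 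This compatibility condition reduces to a $2$-SAT instance with $O(n)$ variables (one per subtree, indicating whether it is flipped) and $O(\alpha)$ clauses (one per nonbasic edge whose fundamental circuit is affected), in the same spirit as the instances $\Omega(U,j_1,j_2)$ and $\Lambda(U)$ of Sections \ref{sec:S0empty} and \ref{sec:S0notempty}.

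The algorithm enumerates the $O(n)$ candidate boundary swaps along the tree path of $T$ between $e_1$ and $e_\rho$; for each candidate it builds and solves the corresponding $2$-SAT instance in $O(n\alpha)$ time, and either applies a witness swap to yield $G'(A)$ or certifies that $A$ is $\{1,\rho\}$-corelated. The overall cost is $O(n^2\alpha)$, matching the claimed bound. The main obstacle I foresee is proving the structural characterization underlying this reduction, namely that every alternative basic network representation of $A$ with the opposite alternating status is obtained from $G(A)$ by a boundary swap of the form above. This is a Whitney-type $2$-isomorphism statement specialized to basic network representations and refined by the nonnegativity of $A$, which forces any alternative directed underlying graph realising $A$ to preserve the consistent orientation of every fundamental circuit. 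Once this structural result is established, correctness and complexity follow directly from the $2$-SAT reduction above.
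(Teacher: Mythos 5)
Your proposal correctly sets up the easy half (computing one representation $G(A)$ and reading off the status of $\{e_1,e_\rho\}$), but the substantive half rests entirely on a structural claim that you state and then explicitly leave unproven: that every basic network representation of $A$ with the opposite alternating status arises from $G(A)$ by a single ``boundary swap'' located on the tree path between $e_1$ and $e_\rho$, with compatibility expressible as a $2$-SAT instance over per-subtree flip variables. That claim \emph{is} the theorem; without it the enumeration-plus-$2$-SAT scheme is a heuristic, not a proof. It is also not obviously true as stated: by Whitney's $2$-isomorphism theorem an alternative representation is reached by a \emph{sequence} of twists at $2$-separations, the relevant separation pairs need not be the endnodes $u_1,v_1,u_2,v_2$ of $e_1$ and $e_\rho$, and the rearrangement that reverses the status may be buried inside a piece of the graph hanging off an interior vertex of the path (this is exactly the situation the paper's recursive calls handle). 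You would need to prove both that a single swap suffices and that the candidate locations you enumerate are exhaustive; neither is immediate from nonnegativity of $A$.

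For contrast, the paper avoids any global Whitney-type statement by a divide-and-conquer on a \emph{third} edge: it first disposes of the case where every column has at most two nonzeros via a cutvertex criterion in the auxiliary graph $L(A)$ (Theorem \ref{thmNetCorelatedtwo}), and otherwise picks an interior edge $e_{i^*}$ of a basic path of length $3$, decomposes $\overline{\{i^*\}}$ into bonsais, characterizes when the bonsais containing rows $1$ and $\rho$ can be placed on the same or different sides of $e_{i^*}$ purely combinatorially (independence $\Leftrightarrow$ different components of $F^*(V)$, Proposition \ref{propNetCorelatedBs}), and recurses into the two bonsai matrices $N_1$, $N_2$ (Theorem \ref{thmNetCorelatednoncor}); the $O(n^2\alpha)$ bound then falls out of the recursion $n_1+n_2=n+1$. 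If you want to pursue your route, the missing structural lemma is where all the work lies, and you would additionally need a separate treatment of the degenerate two-nonzeros-per-column case, which your scheme does not address.
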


Suppose first that each column of $A$ contains at most two nonzero entries. We consider the undirected graph $L(A)$ defined as follows. The vertex set is the row index set of $A$ and two row indexes $i$ and $i'$ are adjacent if and only if $s(A_{\bullet j})= \{Êi,i'\}$ for some column index $j$. With these preliminaries, we can state the following theorem.

\begin{thm}\label{thmNetCorelatedtwo}
Suppose that each column of $A$ has at most two nonzero entries. Then the matrix $A$ is $\{1,\rho\}$-noncorelated if and only if there exists a cutvertex in $L(A)$ separating $1$ from $\rho$.
\end{thm}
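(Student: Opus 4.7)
\textbf{Proof plan for Theorem \ref{thmNetCorelatedtwo}.}

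The plan is to interpret $L(A)$ structurally in terms of a basic network representation and then translate the alternating/nonalternating dichotomy for $e_1,e_\rho$ into a graph-theoretic statement about $L(A)$. Let $G(A)$ be a basic network representation of $A$ with spanning tree $T$. Since $A$ is nonnegative and each column has at most two nonzero entries, the fundamental cycle of any non-tree edge $f$ with $s(A_{\bullet f})=\{i,i'\}$ is a directed triangle consisting of $f$ and a directed path of length two through $e_i$ and $e_{i'}$; in particular $e_i$ and $e_{i'}$ share a common endpoint in $T$, and one is incoming and the other outgoing at that endpoint (both traversed forwardly along the fundamental cycle). Hence every edge of $L(A)$ records a pair of tree edges that are "consistently concatenated" at a common node of $T$.

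The first step is to record the following rigidity lemma: within any connected component of $L(A)$, once one decides a root orientation of a single tree edge, the orientations of all other tree edges in that component are forced (via a propagation along the edges of $L(A)$, each step inverting the pattern by Proposition \ref{cambip}/Lemma \ref{zij} style arguments applied to the triangles). Consequently, given two basic network representations $G(A)$ and $G'(A)$ of the same matrix, for every connected component $\mathcal{C}$ of $L(A)$ the restrictions of the two orientations to the tree edges indexed by $\mathcal{C}$ differ by a global reversal (equivalently, a switching at the appropriate node of $T$). This gives a clean description of the freedom between representations: they correspond bijectively to choices of a $\pm$-sign on each block (in the block-cutvertex sense) of $L(A)$, compatible at the cutvertices.

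For the backward direction, assume $v$ is a cutvertex of $L(A)$ separating $1$ from $\rho$. Let $V_1$ be the connected component of $L(A)-v$ containing $1$ and consider the basic network representation $G'(A)$ obtained from $G(A)$ by switching (reversing orientations) at the endpoints of $e_v$ corresponding to the $V_1$-side subtree; this is well-defined precisely because $v$ is a cutvertex, so every column of $A$ with two nonzeros has its support either entirely inside $V_1\cup\{v\}$ or entirely inside $V_0\cup\{v\}$ (where $V_0$ is the other side), so signs remain nonnegative after the switching. In $G(A)$ the tree path $q$ between $e_1$ and $e_\rho$ traverses $e_v$; the switching reverses the orientation of $e_1$ relative to $e_\rho$ inside $q$, thereby turning alternating into nonalternating or vice versa. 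Hence both representations exist, so $A$ is $\{1,\rho\}$-noncorelated.

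For the forward direction, assume $A$ is $\{1,\rho\}$-noncorelated and pick two representations $G(A)$ and $G'(A)$ with opposite alternation status of $e_1,e_\rho$. By the rigidity lemma, $G'(A)$ is obtained from $G(A)$ by performing a sequence of block-reversals in $L(A)$; the alternation of $e_1,e_\rho$ changes exactly when an odd number of blocks along any (hence every) path from $1$ to $\rho$ in the block-cutvertex tree of $L(A)$ are reversed. This forces $1$ and $\rho$ to lie in different blocks of $L(A)$, which is equivalent to the existence of a cutvertex of $L(A)$ separating $1$ from $\rho$.

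The main obstacle will be the rigidity lemma in the first step, specifically verifying carefully that propagation around any cycle in $L(A)$ is consistent (so that the mapping "block $\mapsto \pm 1$" is well-defined) and that two representations of $A$ correspond to such a choice of signs; this requires handling columns with a single nonzero entry (isolated vertices of $L(A)$ that are nevertheless constrained by the half-edge structure) and checking that switchings performed block-by-block indeed yield a valid network representation of the same matrix.
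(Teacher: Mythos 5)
Your backward direction follows the paper's strategy, but the operation you describe does not preserve the matrix. If you only reverse the orientations of the tree edges on the $V_1$-side while leaving the tree structure intact, then a column $j$ with $s(A_{\bullet j})=\{i,v\}$ and $i\in V_1$ is destroyed: $e_i$ and $e_v$ must form a directed path of length two at their common endnode, and reversing $e_i$ alone breaks this concatenation, so an entry of column $j$ changes sign or vanishes. The paper's procedure Move-T2 therefore detaches the subtree $T_2$ from its endnode of $e_{i^*}$, reverses all of its edges, and reattaches it at the \emph{other} endnode of $e_{i^*}$; it is this reattachment, which you omit, that restores the directed length-two paths for the columns straddling the cutvertex and hence keeps the matrix nonnegative and unchanged.

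The forward direction has a more serious gap: the rigidity lemma is false, because two network representations of the same matrix need not have the same underlying tree, so they are not related merely by block-wise orientation reversals. Concretely, take $A$ with exactly two columns, of supports $\{1,2\}$ and $\{2,3\}$. Then $L(A)$ is connected (a path), yet $A$ has one representation in which $e_1,e_2,e_3$ form a directed path (so $e_1$ and $e_3$ are nonalternating) and another in which $e_3$ is reattached at the tail of $e_2$ (so they are alternating). Your propagation argument would fix the relative orientation of $e_1$ and $e_3$ throughout the component and declare them corelated, contradicting the theorem itself, since here $2$ is a cutvertex separating $1$ from $3$. What propagation along a path of $L(A)$ cannot control is \emph{which} endnode of $e_i$ the next edge $e_{i'}$ attaches to. The paper avoids classifying all representations: when no cutvertex separates $1$ from $\rho$, it applies Menger's theorem to place $1$ and $\rho$ on a common cycle $C$ of $L(A)$, observes that the tree edges indexed by $V(C)$ must then form a star in \emph{every} representation, and reads off the alternation status of $e_1$ and $e_\rho$ from the parity of the two arcs of $C$ between $1$ and $\rho$ -- a representation-independent invariant. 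It is this ``cycle in $L(A)$ forces a star'' fact, not orientation rigidity, that makes the forward direction work; a path in $L(A)$ forces nothing of the sort, which is exactly why cutvertices matter.
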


\begin{proof}
Suppose that there is no cutvertex seperating $1$ from $\rho$ in $L(A)$. Then either $1$ and $\rho$ are the endnodes of a cut-edge in $L(A)$ and $A$ is clearly 
$\{1,\rho\}$-corelated, or 
the vertices $1$ and $\rho$ are contained in a  $2$-connected subgraph of $L(A)$. So assume that $1$ and $\rho$ are in a $2$-connected subgraph of $L(A)$.
By Menger's theorem (see \cite{Diestelgraph} for example), the vertices $1$ and $\rho$ belong to a cycle $C$ of $L(A)$. This implies that
the subgraph of $G(A)$ with edge index set $V(C)$ represents a star. 
So, if both paths in $C$ between $1$ and $\rho$ have an even length, then $e_1$ and $e_\rho$ are alternating in $G(A)$, otherwise nonalternating. Therefore $A$ is $\{1,\rho\}$-corelated.

Conversely, let $i^*$ be a cutvertex in $L(A)$ separating $1$ from $\rho$. Let us denote by $w_1$ and $w_2$ the endnodes of $e_{i^*}$ in $G(A)$.
Let $L_2$ be the connected component of $L(A)\verb"\"\{i^*\}$ containing the vertex $\rho$. Observe that the subgraph  of $G(A)$ with edge index set $V(L_2)$ (and no isolated vertex), call it $T_2$, is connected and contains exactly one endnode of $e_{i^*}$, say $w_2$. Now consider the following procedure.

\begin{tabbing}
\textbf{Procedure\,\,Move-T2($G(A)$,$i^*$)}\\

\textbf{Input:} A basic network representation $G(A)$ and a cutvertex $i^*$ seperating $1$ from $\rho$.\\
\textbf{Output:} \= A basic network representation $G'(A)$ such that $e_1$ and $e_\rho$ are alternating \\
\> in $G(A)$ if and only if they are nonalternating in $G'(A)$. \\

1)\verb"  "\=  make $T_2$ loose from $e_{i^*}$ by making a copy of $w_2$, say $w_2'$, in $T_2$; \\
2) \> reverse the orientation of all edges in $T_2$ and identify $w_2'$ with $w_1$;\\
 \> output the obtained basic network representation $G'(A)$ of $A$; \\
\end{tabbing}

Let $G'(A)$ be output by the procedure Move-T2.
We observe that the minimal paths linking $e_{i^*}$ and $e_1$ in $G(A)$ and $G'(A)$ are identical. Moreover, $e_2$ and $e_{i^*}$ are alternating in $G(A)$ if and only if they are alternating in $G'(A)$. Then, by construction,
$e_1$ and $e_\rho$ are alternating in $G'(A)$ if and only if $e_1$ and $e_\rho$ are nonalternating in $G(A)$.
\end{proof}\\


From now on, we assume that $A$ has a column with at least three nonzeros. Let $e_{i^*}$ be the middle edge of a basic path of length 3 in $G(A)$. (Such a path clearly exists.) Let $R^*=\{ i^*\}Ê$, $D$ be the digraph with respect to $R^*$ as constructed in Section \ref{sec:DefDigraphD} and $S^*=\{ j \, : \, i^* \in s(A_{\bullet j}) \}$. If two basic edges $e_i$ and $e_{i'}$ are at different sides of $e_{i^*}$ in $G(A)$, then they can not belong to a same bonsai. So $D$ has at least two vertices.
Two bonsais $E_\ell, E_{\ell '}\in D$ are said to be \emph{independent}\index{independent bonsais} if there exist network representations $G'(A)$ and $G''(A)$ of $A$
such that $B_\ell$ and $B_{\ell '}$ are 
at the same side of $e_{i^*}$ in $G'(A)$ and at different sides of $e_{i^*}$ in $G''(A)$; otherwise they are \emph{dependent}\index{dependent bonsais}. 

\begin{lem}\label{lemNetCorelated1=i*}
If for some $1\le \ell \le b$ either $1\in E_\ell$ and $\rho=i^*$, or $\rho \in E_\ell$ and $1=i^*$, or $1,\rho \in E_\ell$, then the following holds. The matrix $A$ is $\{1,\rho\}$-noncorelated if and only if $N_\ell$ is $\{1,\rho\}$-noncorelated.
\end{lem}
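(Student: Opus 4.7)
The plan is to reduce the $\{1,\rho\}$-noncorelation question for $A$ to the same question for the bonsai matrix $N_\ell$, by exploiting that the basic path joining $e_1$ and $e_\rho$ (or $e_1$ and $e_{i^*}$, resp.\ $e_\rho$ and $e_{i^*}$) in any basic network representation of $A$ lies entirely inside $B_\ell \cup \{e_{i^*}\}$, so only the internal structure of $B_\ell$ matters for whether $e_1$ and $e_\rho$ are alternating.

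First I would set up the graphical framework. Given any basic network representation $G(A)$, each $E_{\ell'}$ corresponds to a basic subtree $B_{\ell'}$ attached to (at most) one endnode of $e_{i^*}$ at the vertex $v_{\ell'}$. Since $B_\ell$ is a tree and, in each of the three hypotheses, $\{e_1,e_\rho\}\setminus\{e_{i^*}\}\subseteq E(B_\ell)$, the unique basic path joining the endnodes of $e_1$ and $e_\rho$ in $G(A)$ is contained in $B_\ell$ (case $1,\rho\in E_\ell$) or in $B_\ell\cup\{e_{i^*}\}$ (the mixed cases). Therefore whether $e_1$ and $e_\rho$ are alternating in $G(A)$ depends only on the orientations inside $B_\ell$ (and possibly the orientation of $e_{i^*}$, which is forced), and is independent of the chosen network representations of the other bonsai matrices $N_{\ell'}$.

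For the forward direction ($A$ $\{1,\rho\}$-noncorelated $\Rightarrow$ $N_\ell$ $\{1,\rho\}$-noncorelated), I would start from two basic network representations $G(A)$ and $G'(A)$ witnessing noncorelation, restrict each to the subgraph $B_\ell$ (together with $e_{i^*}$ when $i^*\in\{1,\rho\}$), and reinterpret this restriction as a $v_\ell$-rooted network representation of $N_\ell$: the artificial edge(s) of $N_\ell$ either play the role of $e_{i^*}$ (when one of $1,\rho$ equals $i^*$) or encode the single attachment direction at $v_\ell$. By Lemma~\ref{lembonsainet2} the paths with index set $E_\ell^k$ are then correctly represented, and the alternating/nonalternating status of $e_1,e_\rho$ is preserved by the restriction.

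For the converse, given two network representations of $N_\ell$ witnessing that it is $\{1,\rho\}$-noncorelated, I would plug each of them in turn into the same ambient network representation of $A$: fix any $v_{\ell'}$-rooted network representation $B_{\ell'}$ of $N_{\ell'}$ for every $\ell'\ne\ell$ and assemble them around $e_{i^*}$ as prescribed by the global gluing (the same gluing one reads off from the original $G(A)$), but with $B_\ell$ replaced by each of the two given representations of $N_\ell$. This yields two basic network representations of $A$ whose only difference lies inside $B_\ell\cup\{e_{i^*}\}$, hence $e_1$ and $e_\rho$ are alternating in exactly one of them, so $A$ is $\{1,\rho\}$-noncorelated.

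The main obstacle will be a careful bookkeeping of the artificial edges of $N_\ell$ and of the orientation of $e_{i^*}$ in the mixed cases $1\in E_\ell,\ \rho=i^*$ and $\rho\in E_\ell,\ 1=i^*$: one has to check that the correspondence between ``restricted $G(A)$'' and ``network representation of $N_\ell$'' matches up the role of $e_{i^*}$ with that of the artificial edge(s), so that the alternating pattern between $e_1$ and $e_\rho$ in one picture coincides with the alternating pattern in the other. Once that correspondence is set up cleanly, both implications follow from the restriction/extension argument above.
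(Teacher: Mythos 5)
Your restriction/gluing argument is correct and is exactly the argument the paper's one-line proof (``The proof is straightforward.'') leaves implicit: the path joining $e_1$ and $e_\rho$ lives in $B_\ell\cup\{e_{i^*}\}$, so only the representation of $N_\ell$ matters, and Lemma~\ref{lembonsainet2} lets you swap that representation inside a fixed ambient $G(A)$. The one point you flag -- matching $e_{i^*}$ with the artificial edge of $N_\ell$ in the mixed cases -- does go through, because for a nonnegative network matrix every fundamental circuit meeting $B_\ell$ and $e_{i^*}$ is a directed path, which forces $J_\ell^2=\emptyset$ and pins down the orientation of $e_{i^*}$ relative to the $B_\ell$-paths at $v_\ell$, exactly as the artificial row of $N_\ell$ does.
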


\begin{proof} 
The proof is straightforward. 
\end{proof}\\

For the remaining part of the section, we will need the following assumption.
\begin{tabbing}
{\bf assumption $\mathscr{B}$:} \= The matrix $A_{\overline{\{i \}} \times \overline{f(\{i \})}}$ is connected for $i=1$ and $\rho$, $i^*\neq 1,\rho$ and \\
\> $1$ and $\rho$ are not contained in a same bonsai of $D$.
\end{tabbing}
\noindent
Whenever the assumption $\mathscr{B}$ is satisfied, we will assume that $1\in E_1$, and $\rho \in E_2$. Let us see an auxiliary lemma then an important theorem.

\begin{lem}\label{lemNetCorelatedutile}
Suppose that the assumption $\mathscr{B}$ is satisfied. In any basic network representation of $A$, if $e_{i^*}$, $e_1$ and $e_\rho$ are contained in a same path, then $e_{i^*}$ is between $e_1$ and $e_{\rho}$.
\end{lem}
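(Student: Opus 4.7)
The plan is to exploit assumption $\mathscr{B}$ to force both $e_1$ and $e_\rho$ to be pendant (leaf) edges of the spanning tree in every basic network representation of $A$. Once this is established, the conclusion is essentially a triviality about paths in trees: any path that contains two pendant edges must have them as its extremities, so any other edge on the path, in particular $e_{i^*}$, automatically lies between them.

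For the pendant claim, let me focus on $e_1$; the case of $e_\rho$ is identical by symmetry. Consider any basic network representation $G'(A)$ with spanning tree $T$, and let $T^a$ and $T^b$ be the two components of $T\setminus\{e_1\}$, with basic-edge index sets $R^a$ and $R^b$ respectively (so $R^a\uplus R^b=\overline{\{1\}}$). For every column $j\in\overline{f(\{1\})}$ we have $1\notin s(A_{\bullet j})$, so the fundamental cycle of $f_j$ avoids $e_1$ and $s(A_{\bullet j})$ is entirely contained in $R^a$ or entirely in $R^b$. Consequently the matrix $A_{\overline{\{1\}}\times\overline{f(\{1\})}}$ decomposes into a block with row set $R^a$ and a block with row set $R^b$. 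By assumption $\mathscr{B}$ this matrix is connected, so it admits no such block decomposition and has no zero row; a basic edge $e_i$ lying in $T^b$ (say) would force row $i$ either to be all zero in $A_{\overline{\{1\}}\times\overline{f(\{1\})}}$ or to sit in a non-trivial second block. Either alternative contradicts connectedness, hence $R^b=\emptyset$ and $e_1$ is a pendant edge of $T$.

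Now suppose that in some basic network representation the tree path $P$ contains the three (pairwise distinct, thanks to $i^*\notin\{1,\rho\}$ from assumption $\mathscr{B}$) edges $e_1$, $e_{i^*}$ and $e_\rho$. Since $e_1$ is pendant, it can appear on $P$ only if the degree-$1$ endvertex of $e_1$ is an endpoint of $P$; the same holds for $e_\rho$. Therefore $P$ is exactly the unique tree path joining these two pendant vertices, and both $e_1$ and $e_\rho$ are end-edges of $P$, which forces the third edge $e_{i^*}$ to be an interior edge of $P$; this is precisely the assertion that $e_{i^*}$ lies between $e_1$ and $e_\rho$. The only point that is not immediate is the leaf argument of the previous paragraph, which requires translating the algebraic hypothesis ``$A_{\overline{\{i\}}\times\overline{f(\{i\})}}$ is connected'' into the graphical block decomposition induced by removing $e_i$ from the spanning tree; once that translation is in hand, everything else reduces to elementary tree combinatorics.
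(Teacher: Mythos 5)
Your proof is correct and rests on exactly the same mechanism as the paper's: deleting $e_1$ (resp. $e_\rho$) from the spanning tree splits the remaining basic edges into two sets that every column of $A_{\overline{\{1\}}\times\overline{f(\{1\})}}$ must respect, so connectedness of that matrix (assumption $\mathscr{B}$) forbids both sides from being nonempty. The paper invokes this directly as a contradiction when $e_1$ or $e_\rho$ sits between the other two edges, whereas you package it as the slightly stronger observation that $e_1$ and $e_\rho$ are pendant edges of the tree; the difference is purely organizational.
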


\begin{proof}
Suppose by contradiction that $e_1$ lies between $e_{i^*}$ and $e_\rho$ in some path contained in $G(A)$. (The proof is similar with $e_1$ and $e_2$ interchanged.) Then  
the matrix $A_{\overline{\{1 \}} \times \overline{f(\{1 \})}}$ is not connected, a contradiction.
\end{proof}\\

\begin{thm}\label{thmNetCorelatednoncor}
Suppose that the assumption $\mathscr{B}$ is satisfied. Then,
$A$ is $\{1,\rho\}$-noncorelated if and only if $N_1$ is $\{1,i^* \}$-noncorelated, or $N_2$ is $\{\rho,i^* \}$-noncorelated, or $E_1$ and $E_2$ are independent.
\end{thm}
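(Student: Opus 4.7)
My plan is to prove the equivalence by analyzing how the alternating relationship of $e_1$ and $e_\rho$ in a basic network representation $G(A)$ decomposes through the pivotal edge $e_{i^*}$. The key structural observation is that in any basic network representation, each bonsai $B_\ell$ is a connected subtree lying entirely in one of the two connected components of the basic tree after removing $e_{i^*}$, so we may speak of $B_\ell$ being ``on the left'' or ``on the right'' of $e_{i^*}$. Combined with Lemma~\ref{lemNetCorelatedutile} and assumption $\mathscr{B}$, this means that $e_{i^*}$ lies on the (unique basic) path from $e_1$ to $e_\rho$ precisely when $B_1$ and $B_2$ are on opposite sides of $e_{i^*}$. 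In that case the alternating of $e_1,e_\rho$ is determined (modulo 2) by combining the alternating of $e_1,e_{i^*}$ read off from $N_1$ and the alternating of $e_\rho,e_{i^*}$ read off from $N_2$; in the opposite case the path avoids $e_{i^*}$ and the alternating is governed by the internal attachment structure of $B_1$ and $B_2$ together with intervening bonsais.

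For the $(\Leftarrow)$ direction I would treat the three conditions separately. If $N_1$ is $\{1,i^*\}$-noncorelated, I would fix a representation of $A$ in which $B_1$ and $B_2$ are on opposite sides of $e_{i^*}$ (which exists since otherwise $E_1,E_2$ would be dependent in the ``same side'' sense and the third condition fails, allowing me to reduce to case 3) and substitute the two representations of $N_1$ guaranteed by its noncorelation; the alternating of $e_1,e_{i^*}$ flips while the rest of $A$ is unchanged, so the alternating of $e_1,e_\rho$ flips as well. The case of $N_2$ $\{\rho,i^*\}$-noncorelated is symmetric. If $E_1$ and $E_2$ are independent, I would use their definition to exhibit representations $G'(A),G''(A)$ placing them on the same and opposite sides of $e_{i^*}$ respectively; a bookkeeping of direction changes along the two resulting paths from $e_1$ to $e_\rho$ shows their alternating patterns must differ (switching $B_2$ across $e_{i^*}$ forces an extra traversal of $e_{i^*}$, changing the parity of direction reversals).

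For the $(\Rightarrow)$ direction I would take witnesses $G_a(A),G_n(A)$ realizing alternating and nonalternating pairs and distinguish cases by the side-placement of $B_1,B_2$ in each. If $B_1,B_2$ lie on opposite sides of $e_{i^*}$ in both, the decomposition formula above forces a flip inside $N_1$ or $N_2$, yielding condition (1) or (2). If the side-placement differs between $G_a$ and $G_n$, then by definition $E_1$ and $E_2$ are independent. The main obstacle is the case where $B_1,B_2$ lie on the same side of $e_{i^*}$ in both witnesses: here $e_{i^*}$ is absent from both $e_1$-$e_\rho$-paths, so the change in alternating must be local to the interaction of $B_1,B_2$ with the intervening bonsais; I would argue by tracing the parity along the shared attachment vertices of $B_1$ to its neighboring bonsai on the path (and similarly for $B_2$) that the flip must already occur inside $N_1$ or inside $N_2$, again recovering condition (1) or (2). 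Ensuring this parity argument is airtight—in particular verifying that the shared portion of both witnesses can be aligned so that only $N_1$ or $N_2$ differs—is where the real technical work sits.
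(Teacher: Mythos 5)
Your overall route is the paper's: the backward direction by substituting an alternative representation of $N_1$ (resp.\ $N_2$) for the bonsai $B_1$ (resp.\ $B_2$), or by moving a bonsai across $e_{i^*}$ when $E_1$ and $E_2$ are independent, and the forward direction by a side-placement analysis relative to $e_{i^*}$. Two remarks on the backward direction. First, in your argument for condition (1) the requirement that $B_1$ and $B_2$ lie on opposite sides of $e_{i^*}$ is unnecessary, and the justification you give for arranging it is circular: the paper simply contracts the artificial edge $e_{i^*}$ in the alternative representation $T_1$ of $N_1$ and grafts the resulting tree $T_1'$ in place of $B_1$, which works whatever the side-placement, because every column of $A$ meeting $E_1$ restricts to the same rooted path structure in $B_1$ and in $T_1'$. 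Second, your condition-(3) parity bookkeeping tacitly needs $N_1$ and $N_2$ to be corelated (otherwise the sub-relationships $(e_1,e_{i^*})$ and $(e_\rho,e_{i^*})$ could also flip between the two representations and cancel the parity change); this is harmless since one may assume conditions (1) and (2) fail, but it must be said, and the paper does split the cases this way.

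The genuine gap is the one you flag yourself: the forward direction when $B_1$ and $B_2$ lie on the same side of $e_{i^*}$ in both witnesses. ``Tracing the parity along the shared attachment vertices'' is not yet an argument, and aligning two arbitrary witnesses so that they differ only inside $N_1$ or inside $N_2$ is exactly the hard part. The paper sidesteps this by proving the contrapositive: assume $N_1$ is $\{1,i^*\}$-corelated, $N_2$ is $\{\rho,i^*\}$-corelated and $E_1,E_2$ are dependent. Then the alternating status of $(e_1,e_{i^*})$ and of $(e_\rho,e_{i^*})$ is the same in \emph{every} representation (by contracting everything outside $E_1\cup\{i^*\}$, resp.\ $E_2\cup\{i^*\}$, one turns a representation of $A$ into one of $N_1$, resp.\ $N_2$), and the side-placement of $B_1,B_2$ is the same in every representation. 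In the same-side configuration both bonsais reach $e_{i^*}$ through the same endpoint $w_2$, and Lemma~\ref{lemNetCorelatedutile} --- which is where assumption $\mathscr{B}$ enters --- forces $e_1$ off the path from $v_2$ to $w_2$ and $e_\rho$ off the path from $v_1$ to $w_2$; hence the alternating status of $(e_1,e_\rho)$ is a fixed function of the two sub-statuses and cannot vary, so $A$ is corelated. Recasting your forward direction in this contrapositive form dissolves the alignment problem and completes the proof.
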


\begin{proof}
Suppose first that $N_1$ and $N_2$ are $\{1,i^*\}$-corelated and $\{\rho,i^*\}$-corelated, respectively, and $E_1$ and $E_2$ are dependent. Consider the case where $e_1$ and $e_{i^*}$ as well as $e_\rho$ and $e_{i^*}$ are 
nonalternating in all network representations of $N_1$ and $N_2$, respectively; and suppose that
$B_1$ and $B_2$ are at the same side of $e_{i^*}$ in all network representations of $A$. Let us denote by $e_{i^*}=(w_1, w_2)$ in $G(A)$.
Up to a reversing of the orientation of all edges, we may assume that $v_1^*=v_2^*=w_2$ (see page \pageref{mycounter5} for the definiton of $v_1^*$ and $v_2^*$). 

Observe that $e_1$ and $e_{i^*}$ as well as $e_\rho$ and $e_{i^*}$ are nonalternating in $G(A)$. (If $e_1$ and $e_{i^*}$ are alternating in $G(A)$, then by contracting all edges of $G(A)$ with index in $\{1,\ldots,n\}\verb"\"(E_1 \cup \{i^*\})$ we obtain a basic network representation of $N_1$ such that $e_1$ and $e_{i^*}$ are alternating, a contradiction.)
From Lemma \ref{lemNetCorelatedutile}, it results that $e_1$ (respectively, $e_\rho$) does not lie on the basic path from $v_2$ to $w_2$ (respectively, from $v_1$ to $w_2$). Thus $e_1$ and $e_\rho$ are alternating in $G(A)$. In the other cases, one can also prove with similar arguments that $A$ is $\{1,\rho\}$-corelated.\\

Conversely, suppose that $N_1$ is $\{1, i^*\}$-noncorelated.
Let $T_1$ be a basic network representation of $N_1$ such that $e_1$ and $e_{i^*}$ are alternating in $T_1$ if and only if they are nonalternating in $G(A)$. Recall that $v_1$ denotes the cutvertex of $e_{i^*}$ in $T_1$ as well as the closest node of $B_1$ in $G(A)$ to $e_{i^*}$. Let $T_1'$ be the tree obtained from $T_1$ by contracting $e_{i^*}$. Observe that the subgraphs of $T_1'$ and $B_1 \subseteq G(A)$ with edge index set $\cup_{j \in f^*(E_1)} s(A_{\bullet j}) \cap E_1$ are isomorphic and rooted at $v_1$.
Moreover, for all $j\in \bar f^*(E_1)$, $s(A_{\bullet j})\cap E_1$ is the edge index set of a directed path in $T_1'$ and $B_1\subseteq G(A)$.Thus it is possible to replace $B_1$ in $G(A)$ by $T_1'$ in order to obtain a basic network representation $G'(A)$ such that $e_1$ and $e_\rho$ are alternating in $G(A)$ if and only if they are nonalternating in $G'(A)$. Similarly, if $N_2$ is $\{\rho, i^*\}$-noncorelated, one can prove that $A$ is $\{1,\rho\}$-noncorelated. \\
At last, suppose that $N_1$ is $\{1,i^*\}$-corelated, $N_2$ is $\{\rho,i^*\}$-corelated and $E_1$ and $E_2$ are independent. We may suppose that $B_1$ and $B_2$ are at different sides of $e_{i^*}$ in $G(A)$. Consider the case where $e_1$ (respectively,  $e_\rho$) and $e_{i^*}$  are nonalternating in all network representations of $N_1$ (respectively, $N_2$). Let $G'(A)$ be a basic network representation of $A$ such that $B_1$ and $B_2$ are at the same side of $e_{i^*}$ in $G'(A)$. Using Lemma \ref{lemNetCorelatedutile}, it follows that $e_1$ and $e_\rho$ are not on the basic path from $v_2$ to $w_2$ and from $v_1$ to $w_2$, respectively, in $G(A)$ and $G'(A)$. So
$e_1$ and $e_\rho$ are nonalternating in $G(A)$, and 
$e_1$ and $e_\rho$ are alternating in $G'(A)$.
The other cases  can be analyzed in a same way. 
\end{proof}\\

By Lemma \ref{lemNetCorelated1=i*} and Theorem \ref{thmNetCorelatednoncor}, our initial problem can be reduced to determining whether the bonsais $E_1$ and $E_2$ are independent. Consider the undirected graph $F^*(V)$ defined at page \pageref{mycounter4}. Denote by $\mathcal{W}_1, \ldots, \mathcal{W}_c$ the connected components of $F^*(V)$. We will prove the following proposition.

\begin{prop}\label{propNetCorelatedBs}
The bonsais $E_1$ and $E_2$ are independent if and only if $E_1$ and $E_2$ are in different connected components of $F^*(V)$.
\end{prop}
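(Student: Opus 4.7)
The plan is to first establish the structural fact that edges of $F^*(V)$ encode a \emph{different-side} constraint across $e_{i^*}$, and then to read off both directions from this.

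First I would prove the key lemma: if $E_\ell, E_{\ell'} \in V$ are adjacent in $F^*(V)$, then in every basic network representation of $A$ the bonsais $B_\ell$ and $B_{\ell'}$ lie at different sides of $e_{i^*}$. By definition of adjacency, there exists $j \in f^*(E_\ell) \cap f^*(E_{\ell'})$, and neither $(E_\ell,E_{\ell'})$ nor $(E_{\ell'},E_\ell)$ belongs to $D$. The fundamental cycle of $f_j$ passes through $e_{i^*}$ and contains basic edges of both $B_\ell$ and $B_{\ell'}$. If $B_\ell$ and $B_{\ell'}$ were on the same side of $e_{i^*}$, then along that fundamental cycle one bonsai would be closer to $e_{i^*}$ than the other; the argument of Proposition \ref{lemdigraphstem} (adapted to the network setting) then yields $(E_\ell,E_{\ell'}) \in D$ or $(E_{\ell'},E_\ell) \in D$, contradicting adjacency in $F^*(V)$. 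Hence $B_\ell$ and $B_{\ell'}$ sit on opposite sides.

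As an immediate consequence, each connected component of $F^*(V)$ is bipartite, and for each component the two colour classes are forced (in every network representation) to occupy opposite sides of $e_{i^*}$; only the global labelling ``which class goes right, which goes left'' is free within a component. This yields the $(\Rightarrow)$ direction: if $E_1$ and $E_2$ lie in the same component $\mathcal{W}_\kappa$, then a path between them in $F^*(\mathcal{W}_\kappa)$ forces their sides to alternate, so the parity of the path determines whether $B_1$ and $B_2$ are on the same or on different sides of $e_{i^*}$ in every network representation; thus $E_1$ and $E_2$ cannot be independent.

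For the $(\Leftarrow)$ direction, suppose $E_1$ and $E_2$ lie in different components, say $\mathcal{W}_1$ and $\mathcal{W}_2$. Starting from $G(A)$, I would show that one can build a second network representation $G''(A)$ in which exactly one of $B_1, B_2$ has been moved to the opposite side of $e_{i^*}$. The operation is a ``component flip'': take the component $\mathcal{W}_2$ (together with all bonsais $E_{\ell'}$ with $(E_{\ell'},E_\ell) \in D$ for some $E_\ell \in \mathcal{W}_2$), detach the corresponding subgraph of $G(A)$ at $e_{i^*}$, and reattach it on the opposite side via the same cutvertex of $e_{i^*}$, reversing orientations if necessary (this is essentially the move used in the proof of Theorem \ref{thmNetCorelatedtwo}). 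The flip preserves the incidence data of every column $A_{\bullet j}$: columns with $j \in \bar f^*(E_\ell)$ for $E_\ell$ in the flipped part stay intact in the flipped subtree; columns with $j \in S^*$ still traverse $e_{i^*}$ once and read the same edges. Combined with $G(A)$ itself, this gives one representation where $B_1, B_2$ are at different sides and one where they are at the same side, proving that $E_1$ and $E_2$ are independent.

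The main obstacle I anticipate is verifying that the component flip is always feasible: one must check that detaching the union of the subtrees $B_{\ell}$ with $E_\ell \in \mathcal{W}_2$ (and their $D$-descendants) from one side of $e_{i^*}$ and reattaching it on the other genuinely yields a basic network representation of $A$. The delicate point is that a column $j \in S^*$ with $j \in f^*(E_{\ell_1}) \cap f^*(E_{\ell_2})$ where $E_{\ell_1}$ stays fixed and $E_{\ell_2}$ flips could, a priori, destroy the ``directed path'' property of $A_{\bullet j}$'s fundamental cycle. However, because $E_{\ell_1}$ and $E_{\ell_2}$ lie in different components of $F^*(V)$, either $(E_{\ell_1},E_{\ell_2}) \in D$ or $(E_{\ell_2},E_{\ell_1}) \in D$, which means one of them lies on the subpath from the other to $e_{i^*}$ inside $G(A)$; dragging the descendant along with its ancestor during the flip preserves this subpath, so $f_j$'s fundamental cycle remains intact. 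Making this invariance argument precise (in particular, choosing the right closure under $\prec_D$ of the flipped set) is the technical heart of the proof.
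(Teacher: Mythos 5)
Your forward direction is correct and is essentially the paper's argument: an edge of $F^*(V)$ forces the two bonsais onto opposite sides of $e_{i^*}$ in every representation (same side would place them on a common arm of a shared fundamental circuit, hence on a common path to $e_{i^*}$, hence comparable under $\prec_D$, contradicting non-adjacency in $D$), so the side alternates with fixed parity along any path inside a component and $E_1,E_2$ in one component cannot be independent.

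The gap is in the converse, and it sits exactly where you flag "the technical heart". Your component flip is underspecified at the landing spot: a bonsai of $\mathcal{W}_2$ is in general attached deep inside one arm of the tree, not at an endnode of $e_{i^*}$, and after detaching it you must reattach it at a node $z$ on the other side such that, for \emph{every} $j\in f^*(E_w)$ simultaneously, the path from $z$ to $e_{i^*}$ reads exactly the unflipped part of $s(A_{\bullet j})$; "the same cutvertex of $e_{i^*}$" is not such a node in general. Your repair ("drag the descendant along with its ancestor") also conflates an arc of $D$ with positional containment in the given $G(A)$: $(E_{\ell_1},E_{\ell_2})\in D$ is only a necessary condition for $B_{\ell_2}$ to lie between $B_{\ell_1}$ and $e_{i^*}$, and in the genuinely problematic case --- a bonsai of $\mathcal{W}_1$, or one carrying $B_1$, lying on the path from $B_2$ to $e_{i^*}$ --- dragging it along flips $B_1$ as well and separates nothing. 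The paper's proof resolves this with machinery your sketch does not contain: it forms the set $U$ of $D$-successors of $E_1$ outside $\mathcal{W}_1$, takes the path $\mathcal{P}(U)$ through $e_{i^*}$ with endnodes $z_1,z_2$, and swaps between $z_1$ and $z_2$ only the bonsais of $\Gamma_I$ (those attached at an endnode and tied to $\mathcal{W}_1$). Lemma \ref{lemNetCorelatedS1} guarantees that every column meeting a $\Gamma_I$-bonsai traverses all of $\mathcal{P}(U)$, so both endnodes are legitimate landing spots, Lemma \ref{lemNetCorelatedS2} guarantees no column ties a swapped bonsai to an unswapped one attached at the endnodes, and a preliminary case analysis (the Claim, via $\sim_s$ and a possible relabelling of $E_1$ and $E_2$) ensures $E_2$ and its descendants avoid $\Gamma_I$ so that $B_2$ stays put. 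Without identifying the correct set to move and the correct pair of attachment points, the construction of the second representation does not go through.
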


Let us see some lemmas that will be used for the proof of Proposition \ref{propNetCorelatedBs}.

\begin{lem}\label{lemNetcorelatedhelp}
Let $E_\ell, E_{\ell '} \in \mathcal{W}_{c_1}$ for some $1\le c_1 \le c$, and $E_u \in V \verb"\" \mathcal{W}_{c_1}$ such that $(E_\ell,E_u) \in D$. Then $(E_{\ell '},E_u) \in D$.
\end{lem}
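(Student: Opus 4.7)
The plan is to reduce to the case in which $E_\ell$ and $E_{\ell'}$ are adjacent in $F^*(V)$, and then extract a contradiction from transitivity of $\prec_D$. Since $E_\ell$ and $E_{\ell'}$ lie in the same connected component $\mathcal{W}_{c_1}$ of $F^*(V)$, there exists a path $E_\ell = E_{\ell_0}, E_{\ell_1}, \ldots, E_{\ell_t} = E_{\ell'}$ in $F^*(V)$. By a straightforward induction on $t$, it suffices to handle the base case $t=1$, so I would assume that $(E_\ell, E_{\ell'})$ is an edge of $F^*(V)$, which by definition means $f^*(E_\ell) \cap f^*(E_{\ell'}) \neq \emptyset$ and neither $(E_\ell, E_{\ell'})$ nor $(E_{\ell'}, E_\ell)$ belongs to $D$.

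Next I would exploit that $A$ is a network matrix, hence an $R^*$-network matrix with $R^* = \{i^*\}$, so by Proposition \ref{propBlsubstems} cases $a_2$ and $a_3$ cannot occur and every $g_\beta(E_h)$ takes values in $\{0,1\}$, with $g_\beta(E_h) = 1$ precisely when $\beta \in f^*(E_h)$. Combined with $g(E_u^k) \le g(E_u)$ componentwise, the hypothesis $(E_\ell, E_u) \in D$ and the arc definition in Section~\ref{sec:DefDigraphD} give $f^*(E_\ell) \subseteq f^*(E_u)$. Picking any $\beta \in f^*(E_\ell) \cap f^*(E_{\ell'})$, this yields $\beta \in f^*(E_{\ell'}) \cap f^*(E_u)$, so $E_{\ell'}$ and $E_u$ share at least one vertex of their global connecter sets.

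Since $E_u \notin \mathcal{W}_{c_1}$ while $E_{\ell'} \in \mathcal{W}_{c_1}$, the vertices $E_{\ell'}$ and $E_u$ are not adjacent in $F^*(V)$. Thus the very definition of $F^*(V)$ forces $(E_{\ell'}, E_u) \in D$ or $(E_u, E_{\ell'}) \in D$. The hard part, and really the only substantive step, is to exclude the second alternative: if $(E_u, E_{\ell'}) \in D$, then reading the two arcs as $E_u \prec_D E_\ell$ and $E_{\ell'} \prec_D E_u$ and invoking transitivity of $\prec_D$ (noted on page \pageref{mycounter}) would yield $E_{\ell'} \prec_D E_\ell$, i.e.\ $(E_\ell, E_{\ell'}) \in D$, contradicting the fact that $E_\ell$ and $E_{\ell'}$ are adjacent in $F^*(V)$. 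Hence $(E_{\ell'}, E_u) \in D$, completing the proof.
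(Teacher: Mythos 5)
Your proof is correct and follows essentially the same route as the paper's: reduce to the case where $E_\ell$ and $E_{\ell'}$ are adjacent in $F^*(V)$ via the connecting path, transfer a common index $\beta\in f^*(E_\ell)\cap f^*(E_{\ell'})$ into $f^*(E_u)$ using $(E_\ell,E_u)\in D$, deduce from non-adjacency of $E_{\ell'}$ and $E_u$ that one of the two arcs lies in $D$, and rule out $(E_u,E_{\ell'})\in D$ by transitivity of $\prec_D$. The only difference is that you spell out the inclusion $f^*(E_\ell)\subseteq f^*(E_u)$ via the function $g$, which the paper leaves implicit.
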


\begin{proof}
Suppose first that $(E_\ell,E_{\ell '})\in F^*(V)$.
Let $j\in f^*(E_\ell) \cap f^*(E_{\ell '})$. Since $(E_\ell,E_u)\in D$, $j\in f^*(E_u)$. So $j \in f^*(E_{\ell '}) \cap f^*(E_u) $, and
as $(E_{\ell '},E_u) \notin F^*(V)$ it follows that $(E_{\ell '},E_u) \in D$ or $(E_u,E_{\ell '}) \in D$. If $(E_u,E_{\ell '}) \in D$, then by transitivity of the relation $\prec_D$ we have $(E_\ell,E_{\ell '}) \in D$, which contradicts $(E_\ell,E_{\ell '}) \in F^*(V)$. Thus $(E_{\ell '},E_u) \in D$.
If $(E_\ell,E_{\ell '}) \notin F^*(V)$, since there is a path in $F^*(V)$ between $E_\ell$ and $E_{\ell '}$, we may conclude as before.
\end{proof}\\

\begin{lem}\label{lemNetCorelatedsim}
Let $E_{\ell }, E_{\ell '} \in \mathcal{W}_{c_1}$, $E_{u}, E_{u'} \in \mathcal{W}_{c_2}$  for some $1\le c_1,c_2 \le c$, $c_1\neq c_2$, such that $(E_{\ell },E_{u}) \in D$ and $(E_{u'},E_{\ell '}) \in D$. Then $|\mathcal{W}_{c_1}|=|\mathcal{W}_{c_2}|=1$ and $E_{\ell } \sim_s E_{u}$.
\end{lem}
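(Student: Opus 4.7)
The plan is to derive the two conclusions in sequence using Lemma \ref{lemNetcorelatedhelp} as the main engine, together with the transitivity of $\prec_D$ (and the fact, noted in observation 2 of Claim 4 in Section \ref{sec:cyc}, that $(E_a,E_b) \in D$ implies $f^*(E_a) \subseteq f^*(E_b)$).

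First, I would apply Lemma \ref{lemNetcorelatedhelp} twice to propagate the two given arcs. Since $(E_\ell,E_u) \in D$ with $E_\ell,E_{\ell'} \in \mathcal{W}_{c_1}$ and $E_u \notin \mathcal{W}_{c_1}$ (because $c_1 \neq c_2$), the lemma yields $(E_{\ell'},E_u)\in D$. Symmetrically, $(E_{u'},E_{\ell'})\in D$ together with $E_u,E_{u'}\in \mathcal{W}_{c_2}$ and $E_{\ell'}\notin \mathcal{W}_{c_2}$ gives $(E_u,E_{\ell'})\in D$. Hence $E_{\ell'}$ and $E_u$ lie on the directed $2$-cycle $(E_{\ell'},E_u),(E_u,E_{\ell'})$ in $D$, so $E_{\ell'} \sim_s E_u$.

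Next, I would prove $|\mathcal{W}_{c_1}|=1$ by contradiction. Suppose $\mathcal{W}_{c_1}$ contains two distinct vertices; then there is a path $E_\ell = E_{a_0},E_{a_1},\ldots,E_{a_t}=E_{\ell'}$ in $F^*(V)$ with $t\ge 1$. Since $(E_\ell,E_u)\in D$ and $E_u\notin \mathcal{W}_{c_1}$, iterated application of Lemma \ref{lemNetcorelatedhelp} along consecutive vertices of the path gives $(E_{a_i},E_u)\in D$ for every $i=0,\ldots,t$. Combining $(E_{a_{t-1}},E_u)\in D$ with the arc $(E_u,E_{\ell'})=(E_u,E_{a_t})\in D$ derived above, transitivity of $\prec_D$ yields $(E_{a_{t-1}},E_{a_t})\in D$. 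But $E_{a_{t-1}}$ and $E_{a_t}$ are adjacent in $F^*(V)$, which by the very definition of $F^*(V)$ forbids either direction of a $D$-arc between them -- a contradiction. Hence $|\mathcal{W}_{c_1}|=1$, and by the symmetric argument (using $(E_{u'},E_{\ell'})\in D$ and propagating within $\mathcal{W}_{c_2}$) $|\mathcal{W}_{c_2}|=1$ as well.

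Finally, since $\mathcal{W}_{c_1}=\{E_\ell\}=\{E_{\ell'}\}$ (so $E_\ell=E_{\ell'}$), the relation $E_{\ell'}\sim_s E_u$ established at the end of the first step becomes $E_\ell \sim_s E_u$, completing the proof. The main obstacle is the contradiction step: one must recognise that although Lemma \ref{lemNetcorelatedhelp} is stated only for outgoing edges from a component, combining its iterated use along an $F^*$-path with the transitivity of $\prec_D$ is enough to force a $D$-arc between the two endpoints of the last edge of the path, which directly violates the defining exclusion in $F^*(V)$.
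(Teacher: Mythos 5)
Your first step coincides with the paper's proof word for word: two applications of Lemma \ref{lemNetcorelatedhelp} give $(E_{\ell'},E_u)\in D$ and $(E_u,E_{\ell'})\in D$, hence $E_{\ell'}\sim_s E_u$. For the second step you reach the contradiction by a slightly different (and arguably cleaner) mechanism: you force a $D$-arc across the last edge of an $F^*$-path by transitivity of $\prec_D$, violating the defining exclusion of $F^*(V)$, whereas the paper picks an $F^*$-neighbour $E_{\ell''}$ of $E_{\ell'}$ and uses $E_{\ell'}\sim_s E_u$ together with Lemma \ref{lemcycle} (which yields $g(E_{\ell'})=g(E_u)$, hence $f^*(E_{\ell'})=f^*(E_u)$) to conclude $(E_u,E_{\ell''})\in F^*(V)$, contradicting $E_u\notin\mathcal{W}_{c_1}$. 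Your route avoids Lemma \ref{lemcycle} entirely; both rest on Lemma \ref{lemNetcorelatedhelp} and transitivity, so the difference is cosmetic rather than structural.

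There is one small hole in your contradiction step as written: from $|\mathcal{W}_{c_1}|\ge 2$ you infer a path of length $t\ge 1$ in $F^*(V)$ from $E_\ell$ to $E_{\ell'}$, but the hypotheses allow $E_\ell=E_{\ell'}$, in which case no such path exists even though the component has two vertices (and this degenerate case does occur in the applications of the lemma in Section \ref{sec:CentralNetcorelated}). The fix is immediate and uses exactly your machinery: take any $F^*$-neighbour $E_{a}$ of $E_{\ell'}$ (which exists since the component is connected with at least two vertices), apply Lemma \ref{lemNetcorelatedhelp} to get $(E_a,E_u)\in D$, and combine with $(E_u,E_{\ell'})\in D$ by transitivity to obtain the forbidden arc $(E_a,E_{\ell'})\in D$. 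With that adjustment the argument is complete.
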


\begin{proof}
Since $(E_{\ell },E_{u}) \in D$, by Lemma \ref{lemNetcorelatedhelp} $(E_{\ell '}, E_{u}) \in D$. Similarly, since $(E_{u'}, E_{\ell '}) \in D$, $(E_{u}, E_{\ell '}) \in D$. Thus $E_{\ell '} \sim_s E_{u}$. 
Suppose  by contradiction that $|\mathcal{W}_{c_1}| \geq 2$. (The case $|\mathcal{W}_{c_2}| \geq 2$ is symmetric.)
Let $E_{\ell ''}\in \mathcal{W}_{c_1}$ such that $(E_{\ell '}, E_{\ell ''}) \in F^*(V)$. As $E_{\ell '} \sim_s E_{u}$, 
using Lemma \ref{lemcycle}, it follows that $(E_{u},E_{\ell ''}) \in F^*(V)$, contradicting the fact that $E_u \notin \mathcal{W}_{c_1}$. This completes the proof.
\end{proof}\\

W.l.o.g, we may assume $E_1 \in \mathcal{W}_1$. For any bonsai $E_u\in D$, let $Des_{G(A)}(E_u)$ be the set of all descendants of $E_u$ in the forest $T_{G(A)}$. 
Let $U= \{ÊE_u \in V\verb"\"\mathcal{W}_1 \, : \, (E_1, E_u) \in D \m{ and } Des_{G(A)}(E_u) \cap \mathcal{W}_1 = \emptyset\}$. 

\begin{lem}\label{lemNetCorelatedU}
The set $U$ is a closed set in $T_{G(A)}$.
\end{lem}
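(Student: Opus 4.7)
The plan is to unpack the three defining conditions of $U$ — namely (i) $E_{u'}\notin\mathcal{W}_1$, (ii) $(E_1,E_{u'})\in D$, and (iii) $Des_{G(A)}(E_{u'})\cap\mathcal{W}_1=\emptyset$ — and verify each of them whenever $E_u\in U$ and $(E_u,E_{u'})\in T_{G(A)}$. Since $T_{G(A)}$ is a spanning forest of $D$, every arc of $T_{G(A)}$ is also an arc of $D$, and this single inclusion will do most of the work.

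Condition (ii) is the quickest: from $(E_u,E_{u'})\in T_{G(A)}\subseteq D$ and $(E_1,E_u)\in D$ (which holds because $E_u\in U$), the transitivity of the relation $\prec_D$ noted at page \pageref{mycounter} gives $(E_1,E_{u'})\in D$ directly.

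For conditions (i) and (iii), the essential observation is the descendant convention adopted in Section \ref{sec:IntbasicGr}: $w$ is a descendant of $v$ whenever there is a directed path from $v$ to $w$. Since $(E_u,E_{u'})$ is itself such a path (of length one), $E_{u'}\in Des_{G(A)}(E_u)$; combined with the hypothesis $Des_{G(A)}(E_u)\cap\mathcal{W}_1=\emptyset$ this forces $E_{u'}\notin\mathcal{W}_1$, settling (i). For (iii), any directed path in $T_{G(A)}$ starting at $E_{u'}$ can be prefixed by the arc $(E_u,E_{u'})$ to yield a directed path starting at $E_u$, so $Des_{G(A)}(E_{u'})\subseteq Des_{G(A)}(E_u)$; intersecting with $\mathcal{W}_1$ transfers the emptiness from $E_u$ to $E_{u'}$.

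There is no real obstacle here; the sharper structural lemmas above (Lemmas \ref{lemNetcorelatedhelp} and \ref{lemNetCorelatedsim}) are what is needed to control how the components $\mathcal{W}_i$ interact with $D$-arcs that are not necessarily in the forest, but for closedness of $U$ with respect to $T_{G(A)}$ alone — whose arcs form a far more constrained subset of $D$ — only the two elementary facts above (transitivity of $\prec_D$ and the hereditary nature of descendants in a forest) are invoked.
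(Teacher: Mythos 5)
Your proof is correct and follows essentially the same route as the paper, whose entire argument is the one-line remark that transitivity of $\prec_D$ does the job; you simply make explicit the two additional (and genuinely trivial) verifications that membership of $E_{u'}$ in $Des_{G(A)}(E_u)$ rules out $E_{u'}\in\mathcal{W}_1$ and that descendants are hereditary along arcs of the forest. Nothing further is needed.
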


\begin{proof}
Using  the transitivity of the relation $\prec_D$, the proof is done.
\end{proof}\\

By Lemma \ref{lemNetCorelatedU}, it follows that there exists a path $\mathcal{P}(U)$ in $T_{G(A)}$ whose vertex set is equal to $\cup_{E_u \in U} (s(A_{\bullet j}) \cap E_u) \cup \{i^*\}$ for any $j\in f^*(E_1)$. Denote by $z_1$ and $z_2$ the endnodes of $\mathcal{P}(U)$ and let $\Gamma= \{ E_\ell \, :\, v_\ell =z_1 \m{ or } z_2 \}$.

\begin{lem}\label{lemNetCorelatedalpha}
For any $E_\gamma \in \Gamma$, the following holds.
\begin{itemize}

\item[(i)] $E_\gamma \in \mathcal{W}_1$, or
\item[(ii)] $(E_\gamma,E_\ell) \in D $ for some $E_\ell \in \mathcal{W}_1$, or 
\item[(iii)] $f^*(E_\gamma) \cap f^*(E_\ell) = \emptyset$ for all $E_\ell \in \mathcal{W}_1$.

\end{itemize}
\end{lem}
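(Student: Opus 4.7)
The plan is to argue by contradiction, assuming that all three conditions (i)--(iii) fail simultaneously: $E_\gamma \notin \mathcal{W}_1$, some $E_\ell^0 \in \mathcal{W}_1$ has $f^*(E_\gamma) \cap f^*(E_\ell^0) \neq \emptyset$, and no $E_\ell \in \mathcal{W}_1$ satisfies $(E_\gamma, E_\ell) \in D$. First I would note that since $E_\gamma$ and $E_\ell^0$ lie in distinct connected components of $F^*(V)$ by the failure of (i), the pair $(E_\gamma, E_\ell^0)$ is not an edge of $F^*(V)$; yet their global connecter sets intersect, so by the defining property of $F^*(V)$ we must have $(E_\gamma, E_\ell^0) \in D$ or $(E_\ell^0, E_\gamma) \in D$. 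The failure of (ii) rules out the former, leaving $(E_\ell^0, E_\gamma) \in D$. Applying Lemma~\ref{lemNetcorelatedhelp} to $E_\ell^0, E_1 \in \mathcal{W}_1$ and $E_\gamma \in V \setminus \mathcal{W}_1$ then propagates this to $(E_1, E_\gamma) \in D$.

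Next I would split on whether $E_\gamma$ lies in $U$. Suppose $E_\gamma \in U$. The definition of $D$ forces $f^*(E_\ell) \subseteq f^*(E_{\ell'})$ whenever $(E_\ell, E_{\ell'}) \in D$, so for every $j \in f^*(E_1)$ and every $E_u \in U$ the fundamental circuit of $f_j$ traverses $B_u$; consequently $\mathcal{P}(U)$ is precisely the subpath of this fundamental circuit obtained by stripping off the portion lying inside $B_1$, with $z_1 = v_1$ and with $z_2$ the endpoint of $e_{i^*}$ opposite the $B_1$-side. The vertex $v_\gamma$, being the closest vertex of $B_\gamma$ to $e_{i^*}$, is then either the attachment between $B_\gamma$ and the bonsai of $U$ immediately below it, or---when $B_\gamma$ is the bottom-most bonsai of $U$---the endpoint of $e_{i^*}$ nearest $B_1$; in either situation $v_\gamma$ is a proper interior vertex of $\mathcal{P}(U)$, so $v_\gamma \notin \{z_1, z_2\}$, contradicting $E_\gamma \in \Gamma$.

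Hence $E_\gamma \notin U$. Combined with $E_\gamma \notin \mathcal{W}_1$ and $(E_1, E_\gamma) \in D$, the defining clause of $U$ forces $\mathrm{Des}_{G(A)}(E_\gamma) \cap \mathcal{W}_1 \neq \emptyset$. Picking any $E_\delta$ in this intersection, the fact that $E_\delta$ is a descendant of $E_\gamma$ in the forest $T_{G(A)} \subseteq D$ together with the transitivity of $\prec_D$ produces $(E_\gamma, E_\delta) \in D$ with $E_\delta \in \mathcal{W}_1$, contradicting the assumed failure of (ii). The hard part will be the middle step, where I must accurately pin down the geometry of $\mathcal{P}(U)$---its two endpoints $z_1, z_2$ and the way its interior is distributed among the bonsais of $U$ and the edge $e_{i^*}$---so that $v_\gamma$ can be shown to avoid both endpoints whenever $E_\gamma$ itself sits in $U$; the remaining parts are routine unwindings of the definitions of $F^*(V)$, $D$, and $U$ supported by Lemma~\ref{lemNetcorelatedhelp}.
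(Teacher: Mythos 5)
Your proof is correct and follows essentially the same route as the paper's: both reduce, via the definition of $F^*(V)$ and Lemma~\ref{lemNetcorelatedhelp}, to the situation $(E_1,E_\gamma)\in D$ and then derive a contradiction through $U$ and $\Gamma$. The only differences are organizational --- the paper observes $Des_{G(A)}(E_\gamma)\subseteq U$ (hence $E_\gamma\in U$) and contradicts $\Gamma\cap U=\emptyset$, whereas you case-split on $E_\gamma\in U$ and, in the complementary case, convert a descendant in $\mathcal{W}_1$ into a witness for (ii) by transitivity of $\prec_D$; be aware only that your precise description of $\mathcal{P}(U)$ (e.g.\ $z_1=v_1$, all of $U$ on the $B_1$-side of $e_{i^*}$) is not guaranteed in general, although the fact you actually need, namely that $v_\gamma$ is an interior vertex of $\mathcal{P}(U)$ whenever $E_\gamma\in U$, still holds.
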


\begin{proof}
Let $E_\gamma \in \Gamma$.
Suppose that $E_\gamma \notin \mathcal{W}_1$ and $f^*(E_\gamma) \cap f^*(E_\ell) \neq \emptyset$ for some $E_\ell \in \mathcal{W}_1$.
Since $(E_\gamma,E_\ell) \notin F^*(V)$, it results that $(E_\gamma,E_\ell) \in D$ or $(E_\ell,E_\gamma) \in D$. 
Suppose  by contradiction that $(E_\ell,E_\gamma) \in D$. 
By Lemma \ref{lemNetcorelatedhelp}, $(E_1,E_\gamma)\in D$.
Observe that $Des_{G(A)}(E_\gamma) \subseteq U$ by definition of $\Gamma$. So $Des_{G(A)}(E_\gamma) \cap \mathcal{W}_1=\emptyset$, hence $E_\gamma \in U$, which is impossible since $\Gamma \cap U = \emptyset$. Therefore $(E_\gamma,E_\ell) \in D$. This concludes the proof.
\end{proof}\\

Let $\Gamma_I=\{ E_\gamma \in \Gamma \, : \, E_\gamma \in \mathcal{W}_1 \m{ or } (E_\gamma,E_\ell) \in D \m{ for some } E_\ell\in \mathcal{W}_1\} $ and $\Gamma_{II}=\{ E_\gamma \in \Gamma \, : \, f^*(E_\gamma) \cap f^*(E_\ell) = \emptyset \m{ for all } E_\ell\in \mathcal{W}_1 \} $.

\begin{lem}\label{lemNetCorelatedS1}
For all $E_\gamma \in \Gamma_I$ and $E_u \in U$,  we have $(E_\gamma,E_u) \in D$.
\end{lem}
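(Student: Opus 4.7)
The plan is to split on the two cases defining $\Gamma_I$ and reduce each to Lemma~\ref{lemNetcorelatedhelp} together with the transitivity of the relation $\prec_D$. Fix $E_\gamma \in \Gamma_I$ and $E_u \in U$. By the definition of $U$ we have $E_u \in V \setminus \mathcal{W}_1$ and $(E_1, E_u) \in D$; also $E_1 \in \mathcal{W}_1$.

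First, I would handle the case $E_\gamma \in \mathcal{W}_1$. Since $E_1, E_\gamma$ are both in the connected component $\mathcal{W}_1$ of $F^*(V)$ and $E_u \notin \mathcal{W}_1$, Lemma~\ref{lemNetcorelatedhelp} applied to the arc $(E_1, E_u) \in D$ immediately yields $(E_\gamma, E_u) \in D$, as required.

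Second, suppose instead that $(E_\gamma, E_\ell) \in D$ for some $E_\ell \in \mathcal{W}_1$. Since $E_1, E_\ell \in \mathcal{W}_1$ and $E_u \notin \mathcal{W}_1$, Lemma~\ref{lemNetcorelatedhelp} applied to $(E_1, E_u) \in D$ gives $(E_\ell, E_u) \in D$. Combining this with $(E_\gamma, E_\ell) \in D$ and using the transitivity of $\prec_D$ (noted just after the definition of $D$ in Section~\ref{sec:DefDigraphD}), we conclude $(E_\gamma, E_u) \in D$.

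Since these two cases exhaust the definition of $\Gamma_I$, the lemma follows. I expect no real obstacle here: the argument is essentially a bookkeeping exercise in the definitions of $\mathcal{W}_1$, $U$, and $\Gamma_I$, and the only non-trivial ingredients are the already-established Lemma~\ref{lemNetcorelatedhelp} and the transitivity of $\prec_D$.
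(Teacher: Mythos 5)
Your proof is correct and is precisely the argument the paper compresses into its one-line justification: apply Lemma~\ref{lemNetcorelatedhelp} to the arc $(E_1,E_u)\in D$ guaranteed by the definition of $U$, and finish the second case of the definition of $\Gamma_I$ with the transitivity of $\prec_D$. No differences worth noting.
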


\begin{proof}
This directly follows from Lemma \ref{lemNetcorelatedhelp}
and the transitivity of the relation $\prec_D$.
\end{proof}\\

\begin{lem}\label{lemNetCorelatedS2}
For all $E_\gamma \in \Gamma_I$ and $E_{\gamma'} \in \Gamma_{II}$, $f^*(E_\gamma) \cap f^*(E_{\gamma'}) = \emptyset$.
\end{lem}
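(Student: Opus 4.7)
The plan is to argue by contradiction, splitting according to the two clauses in the definition of $\Gamma_I$. Suppose there exist $E_\gamma \in \Gamma_I$ and $E_{\gamma'} \in \Gamma_{II}$ together with some $\beta \in f^*(E_\gamma) \cap f^*(E_{\gamma'})$.

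In the first case $E_\gamma \in \mathcal{W}_1$, the conclusion is immediate: $\beta$ witnesses that $f^*(E_\gamma) \cap f^*(E_{\gamma'}) \neq \emptyset$ with $E_\gamma \in \mathcal{W}_1$, contradicting directly the defining property of $\Gamma_{II}$. In the second case, there is some $E_\ell \in \mathcal{W}_1$ with $(E_\gamma, E_\ell) \in D$. The key observation here, which I would lift from the general behaviour of $D$ recorded earlier (see in particular observation 2 in the proof of Claim 4 in Section \ref{sec:S0empty}), is that $(E_\gamma, E_\ell) \in D$ implies $f^*(E_\gamma) \subseteq f^*(E_\ell)$. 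Indeed, by the very definition of an arc of $D$, one has $g(E_\gamma) \le g(E_\ell^k)$ componentwise for some $1\le k \le m(\ell)$, so any $\beta \in f^*(E_\gamma)$ satisfies $g_\beta(E_\ell) \geq g_\beta(E_\ell^k) \geq g_\beta(E_\gamma) \geq 1$, which means $\beta \in f^*(E_\ell)$. Applying this to our chosen $\beta$ yields $\beta \in f^*(E_\ell) \cap f^*(E_{\gamma'})$ with $E_\ell \in \mathcal{W}_1$, again contradicting the definition of $\Gamma_{II}$.

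There is essentially no obstacle here; the lemma is a direct consequence of the ``downward-closure'' of $f^*(\cdot)$ along arcs of $D$ together with the partition of $\Gamma$ into $\Gamma_I$ (bonsais tied to $\mathcal{W}_1$ either by membership or by an outgoing arc of $D$) and $\Gamma_{II}$ (bonsais whose global connecter set is entirely disjoint from that of every bonsai of $\mathcal{W}_1$). The only care needed is to invoke the correct property of arcs in $D$ rather than the weaker property that $(E_\gamma, E_\ell) \in D$ merely produces some edge label, so I would write out the one-line derivation of $f^*(E_\gamma) \subseteq f^*(E_\ell)$ explicitly.
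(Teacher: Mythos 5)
Your proof is correct and follows essentially the same route as the paper: case on whether $E_\gamma \in \mathcal{W}_1$ or $(E_\gamma,E_\ell)\in D$ for some $E_\ell\in\mathcal{W}_1$, then use $f^*(E_\gamma)\subseteq f^*(E_\ell)$ to push the nonempty intersection down to a bonsai of $\mathcal{W}_1$ and contradict the definition of $\Gamma_{II}$. The only difference is that you spell out the derivation of $f^*(E_\gamma)\subseteq f^*(E_\ell)$ from the definition of an arc of $D$, which the paper states without proof (it is observation 2 in the proof of Claim 4 of Section \ref{sec:S0empty}).
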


\begin{proof}
Suppose by contradiction that $f^*(E_\gamma) \cap f^*(E_{\gamma'}) \neq \emptyset $ for some $E_\gamma \in \Gamma_I$ and $E_{\gamma'} \in \Gamma_{II}$. 
If $E_\gamma \in \mathcal{W}_1$, then this contradicts the definiton of $\Gamma_{II}$. So $E_\gamma \notin \mathcal{W}_1$ and by definition of $\Gamma_I$, $(E_\gamma,E_\ell) \in D$
for some $E_\ell \in \mathcal{W}_1$. This implies that $f^*(E_\gamma) \subseteq f^*(E_\ell)$ for some $E_l \in \mathcal{W}_1$,
and since $f^*(E_\gamma)\cap f^*(E_{\gamma'})\neq \emptyset$ it results that
$f^*(E_{\gamma'}) \cap f^*(E_\ell) \neq \emptyset $, a contradiction as before.
\end{proof}\\

\noindent
{\bf Proof of Proposition \ref{propNetCorelatedBs}.} \quad  Suppose first that the bonsais $E_1$ and $E_2$ belong to a same connected component of $F^*(V)$.
Let $E_\ell,E_{\ell '}$ be a pair of adjacent bonsais in $F^*(V)$. Since $f^*(E_\ell) \cap f^*(E_{\ell '}) \neq \emptyset$, if $B_\ell$ and $B_{\ell '}$ are at the same side of $e_{i^*}$ in $G(A)$, then $E_\ell$ and $E_{\ell '}$ belong to a same path in $T_{G(A)}$, therefore by transitivity of $\prec_D$ $(E_\ell,E_{\ell '}) \in D$ or $(E_{\ell '},E_\ell) \in D$, contradicting the fact that $(E_\ell,E_{\ell '}) \in F^*(V)$. Thus $B_\ell$ and $B_{\ell '}$ are at different sides of $e_{i^*}$ in $G(A)$. This implies that in any network representation of $A$, the bonsais $B_1$ and $B_2$ are at different sides of $e_{i^*}$ if and only if any path in $F^*(V)$ joining them has an odd length. Thus $E_1$ and $E_2$ are dependent.\\

Now suppose that $E_1$ and $E_2$ are in different connected components of $F^*(V)$. Up to a renumbering of the connected components of $F^*(V)$, we may always assume that $E_1 \in \mathcal{W}_1$ and $E_2 \in \mathcal{W}_2$. Furthermore,
up to a renumbering of the bonsais in $D$, we show that we may assume $E_2 \notin \Gamma_I$ and $des_{G(A)}(E_2) \cap \Gamma_I=\emptyset$. Before proving that, we state the following claim.\\

\noindent
{\bf Claim.} \quad Either $(E_{2},E_\ell) \notin D$ for all $E_\ell \in \mathcal{W}_1$, or $(E_{1},E_u) \notin D$ for all $E_u \in \mathcal{W}_2$, or $E_1 \sim_s E_2$.\\

\noindent
{\bf Proof of Claim.} \quad Suppose that $(E_{2},E_\ell) \in D$ for some $E_\ell \in \mathcal{W}_1$ and
$(E_{1},E_u) \in D$ for some $E_u \in \mathcal{W}_2$.
Then, by Lemma \ref{lemNetCorelatedsim}, $E_1 \sim_s E_2$.
{\hfill$\BBox{\rule{.3mm}{3mm}}$} \\

If $E_1 \sim_s E_2$, then 
up to a renumbering of the bonsais 
we may assume that $E_1$ is not a descendant of $E_2$ in $T_{G(A)}$ ($E_1 \notin Des_{G(A)}(E_2)$); therefore  
$(E_1,E_2) \in D$ and $Des_{G(A)}(E_2) \cap \mathcal{W}_1=\emptyset$ (otherwise by Lemma \ref{lemNetCorelatedsim} (with $l=1$ and $u'=u=2$) we get a contradiction),  and so $E_2 \in U$. Hence  $E_2 \notin \Gamma_I$ and $des_{G(A)}(E_2) \cap \Gamma_I=\emptyset$. Now if $E_1 \nsim_s E_2$, then by interchanging the label of $E_1$ and $E_2$ if necessary and using the Claim above we may assume that $(E_2,E_\ell) \notin D$ for all $E_\ell \in \mathcal{W}_1$, hence $E_2 \notin \Gamma_I$ and $des_{G(A)}(E_2) \cap \Gamma_I=\emptyset$. 
Consider the following procedure. 

\begin{tabbing}
\textbf{Procedure\,\,Intervert-$\Gamma_I$($G(A)$,$E_1$,$E_2$)}\\

\textbf{Input: }\= A basic network representation $G(A)$ and 
two bonsais $E_1$ and $E_2$ being in \\
\>different connected components of $F^*(V)$ and such that $E_2 \notin \Gamma_I$ and \\
\> $des_{G(A)}(E_2) \cap \Gamma_I=\emptyset$.\\
\textbf{Output:} \= A basic network representation $G'(A)$ such that $B_1$ and $B_2$ are at different\\ 
\>  sides of $e_{i^*}$ in $G(A)$ if and only if they are at the same side of $e_{i^*}$ in $G'(A)$. \\ 
 
1) \verb"  "\= for each $E_\ell \in \Gamma_I$, get the bonsai $B_\ell$ loose from $z_1$ or $z_2$ by making a copy $\tilde v_\ell$ of \\
\> $v_\ell=z_1$ or $z_2$;\\
2) \> for each $E_\ell \in \Gamma_I$, if $v_\ell$ was equal to $z_1$ (resp., $z_2$), identify $\tilde v_\ell$ with $z_2$ (resp., $z_1$);\\
\> output the obtained basic network representation $G'(A)$ of $A$;\\
\end{tabbing}

By Lemmas \ref{lemNetCorelatedS1} and \ref{lemNetCorelatedS2}, since $E_1 \in \Gamma_I$ or 
$des_{G(A)}(E_1) \cap \Gamma_I \neq \emptyset$, and 
$E_2 \notin \Gamma_I$ and
$des_{G(A)}(E_2) \cap \Gamma_I=\emptyset$, $B_1$ is moved by the procedure Intervert-$\Gamma_I$, while $B_2$ is not moved. So
the output of the procedure Intervert-$\Gamma_I$ is correct.
{\hfill$\BBox{\rule{.3mm}{3mm}}$}\\

\noindent
{\bf Proof of Theorem \ref{thmNetCorelatedMain}.} \quad
We first construct a basic network representation $G(A)$. This is known to take time $O(n \alpha)$ (see Theorem \ref{thmSubclassNTutteCunNet}).
Since $A$ is a network matrix, computing the digraph $D$ with respect to some $R^*$ ($|R^*|=1$) takes time $O(n\alpha)$. So
the following routines take time at most $C_0(n\alpha)$ for some constant $C_0$:

\begin{tabbing}
i) \verb"  "\= {\bf if } each column of $A$ has at most two nonzeros, {\bf then } \\
\> compute $L(A)$ and test if $L(A)$ has a cutvertex separating $1$ from $\rho$; if such a vertex \\
\> say $i^*$ exists, then call {\tt Move-T2}($G(A)$,$i^*$); otherwise output that $A$ is $\{1, \rho\}$-corelated;\\
\> {\bf endif } \\ 
(ii) \> {\bf if } $A$ has at least one column with three nonzeros, {\bf then } \\ 
\> choose a row index $i^*$ such that  $A_{\overline{\{i^*\}}\times \overline{f(\{i^*\})}}$ is not connected and compute the digraph\\
\> $D$ with respect to $R^*=\{i^*\}$; \\
\> test if the assumption $\mathscr{B}$ is satisfied; if it is, then test if $E_1$ and $E_2$ are in different\\
\> connected components of $F^*(V)$;  if they are, then relabel the bonsais or connected\\
\> components of $F^*(V)$ so that  $E_2 \notin \Gamma_I$  and $des_{G(A)}(E_2) \cap \Gamma_I=\emptyset$,\\
\> call {\tt  Intervert-$\Gamma_I$}($G(A)$,$E_1$,$E_2$);\\
\> {\bf endif } 
\end{tabbing}

By Theorems \ref{thmNetCorelatedtwo} and \ref{thmNetCorelatednoncor}, Lemma \ref{lemNetCorelated1=i*} and 
Proposition \ref{propNetCorelatedBs} (see also the proof of Proposition \ref{propNetCorelatedBs}), if $A$ stands each of these tests, then $A$ is $\{1,\rho\}$-noncorelated if and only if the matrix $N_\ell$ is $\{1,\rho\}$-noncorelated for some $1\le \ell \le b$ (in case where the assumption $\mathscr{B}$ is not satisfied), or $N_1$ or $N_2$ is $\{1,i^*\}$-noncorelated or $\{\rho,i^*\}$-noncorelated, respectively. This is a recursive definition of a test for being $\{1,\rho\}$-noncorelated.
It results an algorithm which determines that $A$ is  $\{1,\rho\}$-corelated, or provides a basic network representation $G'(A)$ such that $e_1$ and $e_\rho$ are alternating in $G(A)$ if and only if they are nonalternating in $G'(A)$. We show that its running time is bounded by $Cn^2 \alpha$ for some constant $C$, by induction on $n$. (The proof is inspired from \cite{ShrAlex}.)

If the algorithm stops during the step i) or ii), then the time is bounded by $C_0 n \alpha$ for some constant $C_0$.
Otherwise, in the worst case, we need to test the two bonsai matrices $N_1$ and $N_2$ for being $\{1,i^*\}$-noncorelated and $\{\rho,i^*\}$-noncorelated, respectively. This requires time at most $C n_1^2 \alpha_1+C n_2^2 \alpha_2 $ for some constant $C \geq C_0$, by the induction hypothesis, where $r_\ell$ (resp., $\alpha_\ell$) is the number of rows (resp., the number of nonzeros) of $N_\ell$, for $l=1$ and $2$. Moreover, steps i) and ii) require time at most $C_0 n \alpha$. Altogether, since $n_1 +n_2 =n+1$, $2\le n_1,n_2$ and $\alpha_1, \alpha_2\le  \alpha$, the time is bounded by:

$$ C_0 n \alpha + C n_1^2 \alpha_1 + C n_2^2 \alpha_2  \le C n^2\alpha,$$

\noindent
for some constant $C\geq C_0$ and $n\geq 5$. This terminates the proof.
{\hfill$\BBox{\rule{.3mm}{3mm}}$}\\


\clearpage
\thispagestyle{empty}
\cleardoublepage
\verb"   "
\newpage

\chapter{Conclusion}\label{ch:conclude}
   
In this last chapter we point out and discuss the main results and suggest some directions for further research.  

The main goal of this work was to recognize binet matrices.
To achieve this, given a matrix $A$,
we developed a subroutine which computes a Camion basis of the matrix $[I\, A]$ in time bounded by a polynomial function of the size of $A$, or outputs that $A$ is not binet. Then, we reduced the main problem to the recognition of nonnegative $\frac{1}{2}$-binet matrices, nonnegative bicyclic matrices and nonnegative cyclic matrices. Some characterizations of these subclasses of matrices were provided.

Furthermore, we presented a new characterization of
the Camion bases of the node-edge incidence matrix of any connected digraph, and a characterization of Camion bases of any given matrix with a polynomial-time recognition 
procedure. An algorithm which finds a
Camion basis was also described.

Finally, for recognizing cyclic matrices, we solved as a subproblem the recognition of nonnegative
$\{\epsilon,\rho\}$-noncorelated network matrices, where $\epsilon$ and $\rho$ are two given row indexes. We provided a nice characterization theorem for this class of matrices.

One can wonder whether the described method for recognizing binet matrices could be improved or whether there exists a  simpler and more elegant method. Is it really necessary to find a Camion basis first for recognizing binet matrices? We may try to answer these questions.

The way of computing the digraph $D$ in Chapter \ref{ch:multidiD} should be improvable in terms of time efficiency. This would yield a faster algorithm for recognizing nonnegative $R^*$-cyclic matrices, nonnegative bicyclic matrices and $\frac{1}{2}$-binet matrices. However, this does not modify the global complexity of the algorithm Binet. Moreover, the described method for recognizing nonnegative $\frac{1}{2}$-binet matrices does not seem efficient since one computes several network representations of different 
submatrices. There should exist another faster method.

Two other directions for recognizing binet matrices have been proposed by Kotnyek \cite{KotThesis}. The first one is to formulate the problem as a mixed integer programming problem, and then to solve it. This method does not lead to a polynomial-time algorithm so far. A second interesting approach is to convert first a given rational matrix of size $n\times m$ to an integral one by a finite number of steps; Kotnyek and Appa proved that even in the worst case we need only at most $2m$  pivoting operations to get an integral matrix, provided that the matrix we started with was binet. Then the goal is to find necessary and sufficient conditions for an integral matrix for being binet. The following theorem gives a necessary condition.

\begin{thm}(Kotnyek \cite{KotThesis})
If $A$ is an integral binet matrix, then there exist network matrices $N_1$ and $N_2$ such that (a) $A=N_1 + N_2$, and (b) both $[N_1\, N_2]$ and $\left[ \begin{array}{c}
N_1\\
N_2
\end{array} \right]$ are network matrices.
\end{thm}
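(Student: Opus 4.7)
\medskip

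\noindent
The plan is to exploit a ``signed double cover'' construction on a binet representation of $A$ and extract the decomposition from the resulting directed graph. Since $A$ is integral, Lemma \ref{lemBinetintegral} gives us a binet representation $G$ which is either cyclic or $\frac{1}{2}$-binet; in either case, every basic component of $G$ is a negative $1$-tree whose negative circuit is either a single full cycle containing exactly one bidirected edge (the cyclic case) or a single half-edge (the $\frac{1}{2}$-binet case). This structural simplicity is what the integrality assumption buys us, and it is what will make the cover construction behave well.

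\medskip

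\noindent
First I would construct the signed cover $\tilde G$ of $G$: split each vertex $v$ into two copies $v^+,v^-$, and lift each edge of $G$ to a pair of directed edges in $\tilde G$ in the standard way (directed edges of $G$ lift to edges that preserve the $\pm$ label, bidirected edges lift to edges that swap the label, and a half-edge or negative loop at $v$ lifts to the single directed edge joining $v^+$ and $v^-$, with head/tail determined by the sign). Then $\tilde G$ is an ordinary digraph on $2n$ nodes, its (reduced) node--edge incidence matrix is totally unimodular, and it carries an obvious $\mathbb{Z}/2$ action swapping the $+$ and $-$ copies. The crucial combinatorial check is that the lift of each negative $1$-tree component of $B$ is a spanning tree of its connected component in $\tilde G$: the lift of the (unique, negative) circuit of the component forms the path in $\tilde G$ that connects the $+$-copy of the tree to the $-$-copy, so the doubled tree plus this connecting piece gives a tree on $2n_i$ vertices with $2n_i-1$ edges. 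Consequently the lift $\tilde B$ is a basis of a reduced incidence matrix of $\tilde G$, and the lift $\tilde N$ of $N$ gives a network matrix $\tilde A := \tilde B^{-1}\tilde N$.

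\medskip

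\noindent
Next I would order the rows and columns of $\tilde A$ so that the $\mathbb{Z}/2$ action appears as a block permutation. Writing $\tilde A$ in the form
\[
\tilde A=\begin{pmatrix} N_1 & N_2\\ N_2 & N_1\end{pmatrix},
\]
the equivariance forced by the swap $v^+\leftrightarrow v^-$ guarantees this shape (possibly after signing some rows and columns to match the sign conventions of the lift). I then identify the entries of $A$ with the pair of lifts of each basic edge and each nonbasic edge of $G$: summing the two lifted rows (respectively columns) of $\tilde A$ recovers the corresponding row/column of $A$, so $A=N_1+N_2$. Because $N_1$, $N_2$, $[N_1\ N_2]$, and $\bigl[{N_1 \atop N_2}\bigr]$ are all obtained from $\tilde A$ by selecting subsets of rows and columns (and possibly signing them, which preserves being a network matrix by Lemma \ref{lemdefiRowCol}(c)), all four matrices inherit the network property from $\tilde A$.

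\medskip

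\noindent
The hardest part will be the careful bookkeeping in the signed cover, particularly reconciling the signing conventions: bidirected graphs carry more information than abstract signed graphs (individual endpoint signs, half-edges, and $\pm 2$ entries from negative loops), and I must check that the lift produces $\pm 1$ entries at exactly the right $v^{\pm}$ rows so that the sum $N_1+N_2$ recovers $A$ including its $\pm 2$ entries. Relatedly, I need to verify that the block structure of $\tilde A$ really is of the equivariant form displayed above and not merely block-permutation equivalent to it, and that the integrality of $A$ (i.e.\ the absence of $\pm \tfrac12$ entries, which is exactly the content of Lemma \ref{lemBinetintegral}) is what prevents the cover construction from producing fractional entries that would break the decomposition.
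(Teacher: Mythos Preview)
First, note that the paper does not actually prove this statement: it appears in the concluding chapter as a result attributed to Kotnyek, cited from \cite{KotThesis} without any argument. So there is no proof here to compare your attempt against; the question is simply whether your sketch can be completed.

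Your double-cover idea is natural, but the central structural claim fails in the cyclic case. If the unique basic component is a negative $1$-tree whose cycle is a full cycle of length $k\geq 2$, then every one of the $n$ basic edges is a link and lifts to a \emph{pair} of edges, so $\tilde B$ has $2n$ edges on $2n$ vertices. The lift of the negative $k$-cycle is a $2k$-cycle in $\tilde G$, not a path: walking once around the base cycle from $v_1^{+}$ does land you at $v_1^{-}$, but you have traversed only half of the lifted edges; the other half form the return walk and close up the $2k$-cycle. Hence $\tilde B$ contains a cycle and is not a basis of an RIMD of $\tilde G$. Your edge count ``$2n_i-1$'' is correct only when the negative circuit is a half-edge (which you lift, by fiat, to a single edge), and even then only for a single basic component: in a $\tfrac12$-binet representation with $c>1$ components the $c$ lifted trees are pairwise disjoint while $\tilde G$ itself is connected (since $G$ is connected and unbalanced), so $\tilde B$ is a proper spanning forest and again not a basis. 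The tidy block form $\bigl(\begin{smallmatrix} N_1 & N_2\\ N_2 & N_1\end{smallmatrix}\bigr)$ cannot survive these defects either, since any genuine spanning tree of the connected cover has $2n-1$ edges and the involution cannot act freely on an odd-size set of rows. The signed-cover heuristic is the right instinct for relating binet and network structure, but turning it into a proof of $A=N_1+N_2$ with both side conditions requires a substantially more careful construction than the one you outline.
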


As a corollary, another necessary condition can be derived, similar to the well-known characterization of totally unimodular matrices due to Ghouila-Houri \cite{Ghouila-HouriTotMod-62}. This latter claims that for each collection of columns of a totally unimodular matrix, there exists a scaling of the selected columns by $\pm 1$ such that the sum of the scaled columns is a vector of $0,\pm 1$ elements, and the same is true for rows.

\begin{thm}(Kotnyek \cite{KotThesis})
For each collection of columns or rows of an integral binet matrix, there exists a scaling of the selected columns or rows by $\pm 1$ such that the sum of the scaled columns or rows is a vector of $0$, $\pm 1$, $\pm 2$ elements.
\end{thm}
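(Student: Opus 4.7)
The plan is to derive this corollary directly from the preceding decomposition theorem, using Ghouila-Houri's classical characterization of totally unimodular matrices as the main tool. Recall that Ghouila-Houri's condition states that a matrix $M$ is totally unimodular if and only if every subset of its columns admits a $\pm 1$-scaling whose column sum lies in $\{0,\pm 1\}$, and dually for rows. Since network matrices are totally unimodular (Theorem \ref{thmSubclassNNetTot}), both $[N_1\,N_2]$ and $\left[\begin{smallmatrix}N_1\\N_2\end{smallmatrix}\right]$ supplied by the previous theorem are TU, and so both enjoy the row and column versions of Ghouila-Houri.

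The first step is the column statement. Given any collection of columns $S=\{j_1,\ldots,j_k\}$ of $A$, I would consider the stacked TU matrix $M=\left[\begin{smallmatrix}N_1\\N_2\end{smallmatrix}\right]$ and apply the column version of Ghouila-Houri to its columns indexed by $S$. This yields signs $\epsilon_1,\ldots,\epsilon_k\in\{\pm 1\}$ such that
\[
\sum_{i=1}^{k}\epsilon_i\begin{bmatrix}(N_1)_{\bullet j_i}\\(N_2)_{\bullet j_i}\end{bmatrix}\in\{0,\pm 1\}^{2n},
\]
which is precisely the statement that both $\sum_i\epsilon_i(N_1)_{\bullet j_i}$ and $\sum_i\epsilon_i(N_2)_{\bullet j_i}$ are simultaneously in $\{0,\pm 1\}^n$ with the \emph{same} choice of signs. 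Adding these vectors and invoking the decomposition $A=N_1+N_2$ gives
\[
\sum_{i=1}^{k}\epsilon_i A_{\bullet j_i}\in\{0,\pm 1\}^n+\{0,\pm 1\}^n\subseteq\{0,\pm 1,\pm 2\}^n,
\]
as desired.

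The second step is the row statement, which is completely symmetric: for any collection of rows $\{i_1,\ldots,i_k\}$ of $A$, I would apply the row version of Ghouila-Houri to the TU matrix $[N_1\,N_2]$. The resulting common row signs $\epsilon_1,\ldots,\epsilon_k$ make $\sum_l\epsilon_l(N_1)_{i_l\bullet}$ and $\sum_l\epsilon_l(N_2)_{i_l\bullet}$ both lie in $\{0,\pm 1\}^m$, hence their sum $\sum_l\epsilon_l A_{i_l\bullet}$ lies in $\{0,\pm 1,\pm 2\}^m$.

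There is essentially no genuine obstacle here: the real work has already been done in proving the preceding decomposition theorem. The only conceptual point to flag is that a naive attempt, such as signing columns of $N_1$ alone by Ghouila-Houri and hoping the resulting signs control $\sum_i\epsilon_i(N_2)_{\bullet j_i}$, will not work — the signs produced by Ghouila-Houri applied to $N_1$ are not forced to be compatible with $N_2$. The decisive observation is that the decomposition theorem provides precisely the stronger joint TU structure (of both the side-by-side and top-bottom concatenations) which allows a \emph{simultaneous} signing for the two summands, and this is exactly what yields the $\{0,\pm 1,\pm 2\}$ slack instead of $\{0,\pm 1\}$.
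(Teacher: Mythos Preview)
Your proof is correct and follows exactly the route the paper indicates: the theorem is stated as a corollary of the preceding decomposition $A=N_1+N_2$ (with $[N_1\,N_2]$ and $\left[\begin{smallmatrix}N_1\\N_2\end{smallmatrix}\right]$ both network, hence TU), to be derived via Ghouila-Houri's characterization. The paper does not spell out the details, but your use of the stacked matrix for columns and the side-by-side matrix for rows to obtain a \emph{common} signing for $N_1$ and $N_2$ is precisely the intended argument.
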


Unfortunately, these conditions are not sufficient, as matrix $A=\left[ \begin{array}{cc}
2 & 1\\
-1 & 1
\end{array} \right]$ shows. The advantage of transforming first a given matrix to an integral one follows from Lemma \ref{lemBinetintegral}: An integral matrix is binet if and only if it is cyclic or $\frac{1}{2}$-binet. From my point of view,
the main difficulty in this approach consists in recognizing cyclic $\{0,\pm 1\}$-matrices. The natural method we use for recognizing $\{\epsilon,\rho\}$-central $\{0,1\}$-matrices can not be directly adapted to $\{\epsilon, \rho\}$-central $\{0,\pm 1\}$-matrices for at least one reason:
Given an $\{\epsilon, \rho\}$-central $\{0,\pm 1\}$-matrix $A$ and an $\{\epsilon, \rho\}$-central representation $G(A)$ of $A$, it happens that a fundamental circuit contains $e_\epsilon$ and $e_\rho$, but not the whole basic cycle in $G(A)$. On the contrary, in any $\{\epsilon, \rho\}$-central representation $G(A)$ of a binet $\{0,1\}$-matrix $A$, if a fundamental circuit contains $e_\epsilon$ and $e_\rho$, then it contains the whole basic cycle in $G(A)$ (see Lemma \ref{lemdigraph12}). This small difference could be very hard to overcome.

A very nice and deep characterization of totally unimodular matrices over $GF(2)$ has been given by Tutte \cite{TutteMat-58} and proved by Gerards \cite{GerardsRegMat-89} in a simple way. So, inspired by Theorem \ref{thmMatroidConf}, we hope that there exists a simple characterization of integral binet matrices over $GF(3)$. Working on $GF(3)$ instead of $\mathbb{R}$, one can try to find a new algorithm for recognizing integral binet matrices. 

An other possibility for further research is connected to matroids. How can we translate our results about binet matrices to statements about signed-graphic matroids? In particular, is it possible to determine whether a matroid is signed-graphic using our recognition procedure for binet matrices?

We would like to mention a last open problem: the recognition of $2$-regular matrices. We believe that a combinatorial recognition algorithm should exist. Probably it would be similar to the recognition algorithm for totally unimodular matrices, originating from the decomposition theory of regular matroids, due to Seymour \cite{SeymourRegular-80}. See also \cite{SlilatyDecomp-06}, for $2$-sum and $3$-sum of signed-graphic matroids.

\nocite{*}
\bibliographystyle{plain}  
\bibliography{ThesisImpression}

\begin{thebibliography}{10}

\bibitem{SlilatyProjPlan-03}
Lowell Abrams and Daniel~C. Slilaty.
\newblock An algebraic characterization of projective-planar graphs.
\newblock {\em J. Graph Theory}, 42(4):320--331, 2003.

\bibitem{SlilatyCharImb-06}
Lowell Abrams and Daniel~C. Slilaty.
\newblock Algebraic characterizations of graph imbeddability in surfaces and
  pseudosurfaces.
\newblock {\em J. Knot Theory Ramifications}, 15(6):681--693, 2006.

\bibitem{AhujaMagOrlinNet-93}
Ravindra~K. Ahuja, Thomas~L. Magnanti, and James~B. Orlin.
\newblock {\em Network flows}.
\newblock Prentice Hall Inc., Englewood Cliffs, NJ, 1993.
\newblock Theory, algorithms, and applications.

\bibitem{AppaKotBinet}
Gautam Appa and Bal{\'a}zs Kotnyek.
\newblock Binet matrices, an extension of network matrices.
\newblock {\em CDAM Research Report Series}, 19, 2000.

\bibitem{AppaKotkReg-04}
Gautam Appa and Bal{\'a}zs Kotnyek.
\newblock Rational and integral {$k$}-regular matrices.
\newblock {\em Discrete Math.}, 275(1-3):1--15, 2004.

\bibitem{Appa-06}
Gautam Appa and Bal{\'a}zs Kotnyek.
\newblock A bidirected generalization of network matrices.
\newblock {\em Networks}, 47(4):185--198, 2006.

\bibitem{AusTrentNetwork-59}
Louis Auslander and H.~M. Trent.
\newblock Incidence matrices and linear graphs.
\newblock {\em J. Math. Mech.}, 8:827--835, 1959.

\bibitem{BixKurWag-77}
Robert~E. Bixby.
\newblock Kuratowski's and {W}agner's theorems for matroids.
\newblock {\em J. Combinatorial Theory Ser. B}, 22(1):31--53, 1977.

\bibitem{BixCun-80}
Robert~E. Bixby and William~H. Cunningham.
\newblock Converting linear programs to network problems.
\newblock {\em Mathematics of Operations Research}, 5:321--357, 1980.

\bibitem{BixWagner-88}
Robert~E. Bixby and Donald~K. Wagner.
\newblock An almost linear-time algorithm for graph realization.
\newblock {\em Math. Oper. Res.}, 13(1):99--123, 1988.

\bibitem{Zaslavsky-Bi-06}
Ethan~D. Bolker and Thomas Zaslavsky.
\newblock A simple algorithm that proves half-integrality of bidirected network
  programming.
\newblock {\em Networks}, 48(1):36--38, 2006.

\bibitem{BouchetFlow-83}
Andr\'e Bouchet.
\newblock Nowhere-zero integral flows on a bidirected graph.
\newblock {\em J. Combin. Theory Ser. B}, 34(3):279--292, 1983.

\bibitem{Camionmod}
Paul Camion.
\newblock Modules unimodulaires.
\newblock {\em J. Combinatorial Theory}, 4:301--362, 1968.

\bibitem{CunWH-NetSimMeth}
William~H. Cunningham.
\newblock A network simplex method.
\newblock {\em Math. Programming}, 11(2):105--116, 1976.

\bibitem{CunNetwork-79}
William~H. Cunningham.
\newblock Theoretical properties of the network simplex method.
\newblock {\em Math. Oper. Res.}, 4(2):196--208, 1979.

\bibitem{DeVosPhD-00}
Matt DeVos.
\newblock {\em Flows in graphs}.
\newblock PhD thesis, Princeton University, 2000.

\bibitem{Diestelgraph}
Reinhard Diestel.
\newblock {\em Graph Theory}.
\newblock Wiley-Interscience Series in Discrete Mathematics. Springer-Verlag
  New-York, Chichester, 1999.
\newblock A Wiley-Interscience Publication.

\bibitem{DoobMat-73}
Michael Doob.
\newblock An interrelation between line graphs, eigenvalues, and matroids.
\newblock {\em J. Combinatorial Theory Ser. B.}, 15:40--50, 1973.

\bibitem{EdmondsMatching1}
Jack Edmonds.
\newblock Maximum matching and a polyhedron with {$0,1$}-vertices.
\newblock {\em J. Res. Nat. Bur. Standards Sect. B}, 69B:125--130, 1965.

\bibitem{EdmondsMatching-67}
Jack Edmonds.
\newblock An introduction to matching.
\newblock 1967.
\newblock Mimeographed notes, Engineering Summer Conference, University of
  Michigan.

\bibitem{EdmondsMatching2}
Jack Edmonds and Ellis~L. Johnson.
\newblock Matching: {A} well-solved class of integer linear programs.
\newblock In {\em Combinatorial Structures and their Applications (Proc.
  Calgary Internat., Calgary, Alta., 1969)}, pages 89--92. New York, 1970.

\bibitem{EdmJohnMatEu-73}
Jack Edmonds and Ellis~L. Johnson.
\newblock Matching, {E}uler tours and the {C}hinese postman.
\newblock {\em Math. Programming}, 5:88--124, 1973.

\bibitem{EdmKarpNetwork-70}
Jack Edmonds and Richard~M. Karp.
\newblock Theoretical improvements in algorithmic efficiency for network flow
  problems.
\newblock In {\em Combinatorial Structures and their Applications (Proc.
  Calgary Internat. Conf., Calgary, Alta., 1969)}, pages 93--96. Gordon and
  Breach, New York, 1970.

\bibitem{FonRaco}
Jean Fonlupt and M.~Raco.
\newblock Orientation of matrices.
\newblock {\em Math. Programming Stud.}, 22:86--98, 1984.
\newblock Mathematical programming at Oberwolfach, II (Oberwolfach, 1983).

\bibitem{FukMusCamion07}
Komei Fukuda and Antoine Musitelli.
\newblock New polynomial-time algorithms for {C}amion bases.
\newblock {\em Discrete Math.}, 306(24), 2006.

\bibitem{GerardsRegMat-89}
A.~M.~H. Gerards.
\newblock A short proof of {T}utte's characterization of totally unimodular
  matrices.
\newblock {\em Linear Algebra Appl.}, 114/115:207--212, 1989.

\bibitem{GerardsGraphs-90}
A.~M.~H. Gerards.
\newblock {\em Graphs and polyhedra. {B}inary spaces and cutting planes},
  volume~73 of {\em CWI Tract}.
\newblock Stichting Mathematisch Centrum Centrum voor Wiskunde en Informatica,
  Amsterdam, 1990.

\bibitem{GerSchAlex-Edm-86}
A.~M.~H. Gerards and Alexander Schrijver.
\newblock Matrices with the {E}dmonds-{J}ohnson property.
\newblock {\em Combinatorica}, 6(4):365--379, 1986.

\bibitem{Ghouila-HouriTotMod-62}
Alain Ghouila-Houri.
\newblock Caract\'erisation des matrices totalement unimodulaires.
\newblock {\em C. R. Acad. Sci. Paris}, 254:1192--1194, 1962.

\bibitem{GomoryIP-58}
Ralph~E. Gomory.
\newblock Outline of an algorithm for integer solutions to linear programs.
\newblock {\em Bull. Amer. Math. Soc.}, 64:275--278, 1958.

\bibitem{GouldNetwork-58}
Roderick Gould.
\newblock Graphs and vector spaces.
\newblock {\em J. Math. Phys.}, 37:193--214, 1958.

\bibitem{HararyFSigned-54}
Frank Harary.
\newblock On the notion of balance of a signed graph.
\newblock {\em Michigan Math. J.}, 2:143--146 (1955), 1953--54.

\bibitem{HellerHof}
I.~Heller and A.~J. Hoffman.
\newblock On unimodular matrices.
\newblock {\em Pacific J. Math.}, 12:1321--1327, 1962.

\bibitem{HochIneq-02}
Dorit~S. Hochbaum.
\newblock Solving integer programs over monotone inequalities in three
  variables: a framework for half integrality and good approximations.
\newblock {\em European J. Oper. Res.}, 140(2):291--321, 2002.
\newblock O.R. for a united Europe (Budapest, 2000).

\bibitem{HofKrus}
A.~J. Hoffman and J.~B. Kruskal.
\newblock Integral boundary points of convex polyhedra.
\newblock In {\em Linear inequalities and related systems}, Annals of
  Mathematics Studies, no. 38, pages 223--246. Princeton University Press,
  Princeton, N. J., 1956.

\bibitem{KotThesis}
Bal{\'a}zs Kotnyek.
\newblock {\em A generalization of totally unimodular and network matrices}.
\newblock PhD thesis, London School of Economics, 2002.

\bibitem{LawlerOpt-76}
Eugene~L. Lawler.
\newblock {\em Combinatorial optimization: networks and matroids}.
\newblock Holt, Rinehart and Winston, New York, 1976.

\bibitem{MoharGraphSurface-01}
Bojan Mohar and Carsten Thomassen.
\newblock {\em Graphs on surfaces}.
\newblock Johns Hopkins Studies in the Mathematical Sciences. Johns Hopkins
  University Press, Baltimore, MD, 2001.

\bibitem{MurtyNetwork-92}
Katta~G. Murty.
\newblock {\em Network programming}.
\newblock Prentice Hall Inc., Englewood Cliffs, NJ, 1992.

\bibitem{OrlinNetSimpMethod-96}
James~B. Orlin.
\newblock A polynomial time primal network simplex algorithm for minimum cost
  flows.
\newblock In {\em SODA '96: Proceedings of the seventh annual ACM-SIAM
  symposium on Discrete algorithms}, pages 474--481, Philadelphia, PA, USA,
  1996. Society for Industrial and Applied Mathematics.

\bibitem{PaganoPHD}
S.R. Pagano.
\newblock {\em Separability and representability of bias matroids of signed
  graphs,}.
\newblock PhD thesis, State University of New York at Binghamton, 1998.

\bibitem{SlilatyDecomp-06}
H.~Qin and Slilaty~D. C.
\newblock Decompositions of signed-graphic matroids.
\newblock {\em Discrete Math.}
\newblock to appear.

\bibitem{Slilaty-07}
H.~Qin, Slilaty~D. C., and Zhou X.
\newblock The regular excluded minors for signed-graphic matroids.
\newblock paper submitted in march 2007.

\bibitem{ShrAlex}
Alexander Schrijver.
\newblock {\em Theory of linear and integer programming}.
\newblock Wiley-Interscience Series in Discrete Mathematics. John Wiley \& Sons
  Ltd., Chichester, 1986.
\newblock A Wiley-Interscience Publication.

\bibitem{ShrAlex-Dis-91}
Alexander Schrijver.
\newblock Disjoint homotopic paths and trees in a planar graph.
\newblock {\em Discrete Comput. Geom.}, 6(6):527--574, 1991.

\bibitem{SeymourRegular-80}
Paul~D. Seymour.
\newblock Decomposition of regular matroids.
\newblock {\em J. Combin. Theory Ser. B}, 28(3):305--359, 1980.

\bibitem{SeymourRegularAppl-85}
Paul~D. Seymour.
\newblock Applications of the regular matroid decomposition.
\newblock In {\em Matroid theory (Szeged, 1982)}, volume~40 of {\em Colloq.
  Math. Soc. J\'anos Bolyai}, pages 345--357. North-Holland, Amsterdam, 1985.

\bibitem{s-scah-79}
R.W. Shannon.
\newblock Simplicial cells in arrangements of hyperplanes.
\newblock {\em Geom. Dedic.}, 8:179--187, 1979.

\bibitem{SlilatyMatdualtop-02}
Daniel~C. Slilaty.
\newblock Matroid duality from topological duality in surfaces of nonnegative
  {E}uler characteristic.
\newblock {\em Combin. Probab. Comput.}, 11(5):515--528, 2002.

\bibitem{SlilatyCoSign-05}
Daniel~C. Slilaty.
\newblock On cographic matroids and signed-graphic matroids.
\newblock {\em Discrete Math.}, 301(2-3):207--217, 2005.

\bibitem{SlilatyBiasunique-06}
Daniel~C. Slilaty.
\newblock Bias matroids with unique graphical representations.
\newblock {\em Discrete Math.}, 306(12):1253--1256, 2006.

\bibitem{TardosPolAlg-86}
{\'E}va Tardos.
\newblock A strongly polynomial algorithm to solve combinatorial linear
  programs.
\newblock {\em Oper. Res.}, 34(2):250--256, 1986.

\bibitem{TruemperMat-92}
Klaus Truemper.
\newblock {\em Matroid decomposition}.
\newblock Academic Press Inc., Boston, MA, 1992.

\bibitem{TutteMat-58}
W.~T. Tutte.
\newblock A homotopy theorem for matroids. {I}, {II}.
\newblock {\em Trans. Amer. Math. Soc.}, 88:144--174, 1958.

\bibitem{Tutte-60}
W.~T. Tutte.
\newblock An algorithm for determining whether a given binary matroid is
  graphic.
\newblock {\em Proc. Amer. Math. Soc.}, 11:905--917, 1960.

\bibitem{Tutte-65}
W.~T. Tutte.
\newblock Lectures on matroids.
\newblock {\em J. Res. Nat. Bur. Standards Sect. B}, 69B, 1965.

\bibitem{Tutte-67}
W.~T. Tutte.
\newblock On even matroids.
\newblock {\em J. Res. Nat. Bur. Standards Sect. B}, 71B:213--214, 1967.

\bibitem{WelshMatTheory-76}
D.~J.~A. Welsh.
\newblock {\em Matroid theory}.
\newblock Academic Press [Harcourt Brace Jovanovich Publishers], London, 1976.
\newblock L. M. S. Monographs, No. 8.

\bibitem{WhittleGF3-97}
Geoff Whittle.
\newblock On matroids representable over {${\rm GF}(3)$} and other fields.
\newblock {\em Trans. Amer. Math. Soc.}, 349(2):579--603, 1997.

\bibitem{WhittleMat-05}
Geoff Whittle.
\newblock Recent work in matroid representation theory.
\newblock {\em Discrete Math.}, 302(1-3):285--296, 2005.

\bibitem{Zaslavsky-Signed-82}
Thomas Zaslavsky.
\newblock Signed graphs.
\newblock {\em Discrete Appl. Math.}, 4(1):47--74, 1982.

\bibitem{ZaslavskyBias-91}
Thomas Zaslavsky.
\newblock Biased graphs. {II}. {T}he three matroids.
\newblock {\em J. Combin. Theory Ser. B}, 51(1):46--72, 1991.

\bibitem{Zaslavsky-Or-91}
Thomas Zaslavsky.
\newblock Orientation of signed graphs.
\newblock {\em European J. Combin.}, 12(4):361--375, 1991.

\bibitem{Zaslavsky-Bib-98}
Thomas Zaslavsky.
\newblock A mathematical bibliography of signed and gain graphs and allied
  areas.
\newblock {\em Electron. J. Combin.}, 5:Dynamic Surveys 8, 124 pp.
  (electronic), 1998.
\newblock Manuscript prepared with Marge Pratt.

\bibitem{Zaslavsky-Glos-99}
Thomas Zaslavsky.
\newblock Glossary of signed and gain graphs and allied areas.
\newblock {\em Electron J Combin. (Dynamic Surveys in Combinatorics DS9)},
  1999.

\end{thebibliography}

\printindex


\clearpage
\thispagestyle{empty}
\cleardoublepage
\begin{center}
{\bfseries {\scshape Musitelli Antoine}}
\end{center}

\begin{tabular}{l}
Swiss nationality\\
\end{tabular}
\hfill
\begin{tabular}{r}
antoine.musitelli@epfl.ch
\end{tabular}

\vspace{2ex}

\begin{tabular}{p{12.5cm}}
{\bf Education}\quad \hrulefill \\
\end{tabular}

\vspace{1ex}

\begin{tabular}{p{0.1cm}l}
 & {\bf Doctorate of Philosophy in Mathematics} from 2004 to 2007 \\
 & \'Ecole Polytechnique F\'ed\'erale de Lausanne (EPFL), Switzerland \\
 & \\
 & {\bf Research and Teaching Assistant} from 2004 to 2007 \\
 & Operations Research chair, Prof. Liebling, EPFL, Switzerland \\
 & \\
 & {\bf Research and Teaching Assistant} from 2003 to 2004 \\
 & University of Geneva, Switzerland \\
 & \\
 & {\bf Master in Mathematics} 2003\\
 & University of Geneva, Switzerland \\
 & \\
 & {\bf Baccalaureate (Sciences)} 1999\\
 &  Coll\`ege Calvin, Geneva, Switzerland \\
\end{tabular}

\vspace{2ex}

\begin{tabular}{p{12.5cm}}
{\bf Research}\quad \hrulefill \\
\end{tabular}

\vspace{1ex}

\begin{tabular}{p{0.1cm}l}
&\begin{tabular}{cl}
 $\bullet$ & graph theory\\
 $\bullet$ & Combinatorial geometry\\
 $\bullet$ & scheduling\\
\end{tabular}
\end{tabular}

\vspace{2ex}

\begin{tabular}{p{12.5cm}}
{\bf Teaching Assistant}\quad \hrulefill \\
\end{tabular}

\vspace{1ex}

\begin{tabular}{p{0.1cm}l}
&\begin{tabular}{cl}
 $\bullet$ & Operations Research (2004 - 2006)\\
 $\bullet$ & Game Theory (2006)\\
 $\bullet$ & Combinatorics (2005)\\
 $\bullet$ & Linear Algebra (2004 - 2005)\\
 $\bullet$ & Numerical Analysis (2003-2004)\\
 $\bullet$ & Geometry (2003 - 2004)\\
 $\bullet$ & Supervision of semester and master projects\\
\end{tabular}
\end{tabular}


\begin{tabular}{p{12.5cm}}
{\bf Publications}\quad \hrulefill \\
\end{tabular}

\vspace{1ex}

\begin{tabular}{p{0.1cm}p{12cm}}
& P. de la Harpe \& A. Musitelli.Expanding graphs, {R}amanujan graphs, and 1-factor perturbations. In {\em Bulletin of the Belgian Mathematical Society. Simon Stevin}, p. 673--680, 2006.\\
&\\
& K. Fukuda \& A. Musitelli. New polynomial-time algorithms for {C}amion bases. {\em Discrete Mathematics}, vol. 306 (2006), p. 3302--3306 
\end{tabular}

\vspace{2ex}

\begin{tabular}{p{12.5cm}}
{\bf Languages}\quad \hrulefill \\
\end{tabular}

\vspace{1ex}

\begin{tabular}{p{0.1cm}l}
&\begin{tabular}{cl}
 $\bullet$ & French: native language\\
 $\bullet$ & English: fluent\\
 $\bullet$ & German \& Italian: working knowledge\\
\end{tabular}
\end{tabular}

\end{document}